\documentclass{article}
\usepackage{amsmath, amsfonts, amssymb, mathrsfs, amsthm, accents, sectsty,hyphenat, palatino, calc, url, stmaryrd, mathdots,array}

\usepackage{authblk}
\usepackage[all]{xy}
\usepackage[usenames]{color}

\usepackage[T3,T1]{fontenc}
\DeclareSymbolFont{tipa}{T3}{cmr}{m}{n}
\DeclareMathAccent{\invbreve}{\mathalpha}{tipa}{16}

\def\mf#1{\mathfrak{#1}}
\def\mc#1{\mathcal{#1}}
\def\mb#1{\mathbb{#1}}
\def\tx#1{\mathrm{#1}}
\def\tb#1{\textbf{#1}}

\def\ts#1{\textsf{#1}}
\def\tr{\tx{tr}\,}
\def\R{\mathbb{R}}

\def\C{\mathbb{C}}
\def\Q{\mathbb{Q}}
\def\A{\mathbb{A}}
\def\H{\mathbb{H}}
\def\Z{\mathbb{Z}}
\def\N{\mathbb{N}}
\def\F{\mathbb{F}}
\def\G{\mathbb{G}}

\def\rel{\textrm{-rel}}
\def\Grel{\mathbb{G}\textrm{-rel}}

\def\fka{\mathfrak{a}}
\def\fke{\mathfrak{e}}
\def\fkh{\mathfrak{h}}

\def\fkX{\mathfrak{X}}
\def\cA{\mathcal{A}}
\def\cC{\mathcal{C}}
\def\cE{\mathcal{E}}
\def\cG{\mathcal{G}}
\def\cH{\mathcal{H}}
\def\cI{\mathcal{I}}
\def\cL{\mathcal{L}}
\def\cS{\mathcal{S}}
\def\cN{\mathcal{N}}
\def\cZ{\mathcal{Z}}
\def\cX{\mathcal{X}}
\def\cY{\mathcal{Y}}
\def\mC{\mathscr{C}}
\def\dblbs{\backslash\!\backslash}

\def\lmod{\setminus}
\def\bs{\backslash}
\def\Gal{{\rm{Gal}}}
\def\ol#1{\overline{#1}}

\def\hat{\widehat}

\def\sspl{\rm{ss}}
\def\Res{\rm{Res}}
\def\der{\rm{der}}
\def\bdd{\rm{bdd}}

\def\Int{\tx{Int}}
\def\iso{\tx{iso}}
\def\simp{\tx{sim}}
\def\Im{\tx{Im}}
\def\Cent{\tx{Cent}}
\def\Irr{\tx{Irr}}
\def\isom{\stackrel{\sim}{\ra}}

\def\GL{\tx{GL}}
\def\SL{\tx{SL}}
\def\U{\tx{U}}
\def\Sp{\tx{Sp}}
\def\SU{\tx{SU}}
\def\SO{\tx{SO}}
\def\O{\tx{O}}
\def\Hom{\tx{Hom}}
\def\End{\tx{End}}
\def\Aut{\tx{Aut}}
\def\aut{\tx{aut}}
\def\Out{\tx{Out}}
\def\disc{\tx{disc}}
\def\cusp{\tx{cusp}}
\def\reg{\tx{reg}}
\def\el{\tx{ell}}

\def\temp{\tx{temp}}
\def\unit{\tx{unit}}
\def\cont{\tx{cont}}
\def\scusp{\tx{scusp}}

\def\id{\tx{id}}
\def\sgn{\tx{sgn}}
\def\Sym{\tx{Sym}}
\def\As{\tx{As}}

\def\Ind{\tx{Ind}}
\def\Res{\tx{Res}}
\def\EXC{\tx{exc}}
\def\rad{\tx{rad}}

\def\rw{\rightarrow}

\def\lrw{\longrightarrow}
\def\hrw{\hookrightarrow}

\def\ra{\rightarrow}

\def\lra{\longrightarrow}
\def\hra{\hookrightarrow}

\def\dirlim#1{\lim\limits_{\substack{\longrightarrow\\#1}}}

\def\sm{\smallsetminus}
\def\<{\langle}
\def\>{\rangle}
\def\lg{\langle}
\def\rg{\rangle}

\def\tilde{\widetilde}

\def\srad{S^\tx{rad}}

\newtheorem{thm}{Theorem}[subsection]
\newtheorem{thm*}[thm]{Theorem*}
\newtheorem{lem}[thm]{Lemma}
\newtheorem{lem*}[thm]{Lemma*}
\newtheorem{pro}[thm]{Proposition}
\newtheorem{prop*}[thm]{Proposition*}

\newtheorem{cor}[thm]{Corollary}
\newtheorem{cor*}[thm]{Corollary*}
\newtheorem{dfn}[thm]{Definition}
\newtheorem{fct}[thm]{Fact}

\newtheorem{rem}[thm]{Remark}

\newtheorem{lemm}[thm]{Lemma}
\newtheorem{prop}[thm]{Proposition}
\newtheorem{hypo}[thm]{Hypothesis}

\newtheorem{coro}[thm]{Corollary}

\providecommand{\abs}[1]{\lvert#1\rvert}

\sectionfont{\center\sc\normalsize}
\subsectionfont{\bf\normalsize}

\numberwithin{equation}{subsection}

\hyphenation{pa-ra-bo-lic}

\title{Endoscopic Classification of Representations: Inner Forms of Unitary Groups}

\author{Tasho Kaletha\thanks{partially supported by NSF grant DMS-161489.}}
\author{Alberto Minguez\thanks{partially supported by the
ANR ArShiFo ANR-BLAN-0114, MTM2010-19298 (FEDER) and P12-FQM-2696.}}
\author{Sug Woo Shin\thanks{partially supported by NSF grant DMS-1162250
  and a Sloan Fellowship.}}
\author{Paul-James White}
\affil{Harvard/Princeton}
\affil{Jussieu}
\affil{UC Berkeley/MIT}
\affil{Oxford}

\begin{document}

\date{\today}
\maketitle
\let\thefootnote\relax\footnote{
The research leading to these results has received funding from the European Research Council under the
European Community's Seventh Framework Programme (FP7/2007-2013) / ERC Grant agreement n. 290766 (AAMOT).
}

\newpage

\tableofcontents
\newpage

The principal goal of our paper is to classify the automorphic representations (over number fields) or the irreducible admissible representations (over local fields) of unitary groups which are not quasi-split, under the assumption that the same is known for quasi-split unitary groups. The classification of automorphic representations is given in terms of automorphic representations of general linear groups. At the core of our proof is the trace formula as in Arthur's seminal work \cite{Arthur} but our novelty is to treat the new phenomena which do not appear in the quasi-split case. We note that the local classification result for general reductive groups over local archimedean fields has been known due to Langlands and Shelstad. %
 Concerning the analogue for unitary groups over non-archimedean fields, complete results are due to Rogawski \cite{Rog90} for unitary groups in up to three variables and some partial results are due to Moeglin \cite{Moe11} in general. Our secondary aim is to build some foundation for the endoscopic study of more general non-quasi-split reductive groups. Arthur himself outlined a strategy to deal with inner forms of quasi-split reductive groups, most notably in the last chapter of \cite{Arthur}. The basic materials we develop are largely complementary to his.

The current work constitutes the first installment in a series of three papers. Here in the first part we state the main results (Theorems \ref{thm:locclass-single}, \ref{thm:main-global}, and \ref{thm:lir}) in full generality and establish most of the main argument, but obtain complete results only for tempered parameters (i.e. $A$-parameters which are generic in Arthur's terminology) locally and globally and only for unitary groups arising from Hermitian spaces associated to quadratic extensions of local or global fields. In the local case, this accounts for all unitary groups, but in the global case, this omits unitary groups arising from more general central simple algebras equipped with an involution of the second kind. In particular we obtain a precise version of the local Langlands correspondence for all unitary groups, which includes a partition of the tempered spectrum into $L$-packets according to tempered parameters, endoscopic character identities for tempered representations, and internal parametrisation of each tempered packet via certain characters of the centralizer group of the tempered parameter. (The Langlands quotient theorem allows us to extend the construction of $L$-packets from the tempered case to the general case.) We are going to extend our results to the case of non-tempered local and global $A$-parameters (but still for pure inner twists) in the second part of the series and to all unitary groups in the final part. As each extension requires a good amount of additional technical work, we decided to separate it to keep the first paper readable and in a reasonable length. We marked each main theorem as well as some lemmas and propositions with symbol $*$ to indicate that the theorem, lemma, or proposition is only partially proven in this paper and to be completed in the next two papers.

As the work of Arthur \cite{Arthur} and Mok \cite{Mok} is the most relevant to our paper, we would like to explain the relevance at the outset. Arthur established the endoscopic classification for quasi-split symplectic and special orthogonal groups over local and global fields and also outlined some general strategy for other groups. Modeling on his work, Mok gave the analogous classification for quasi-split unitary groups. Our work borrows the basic structure of proof (as one can see from the table of contents) and the proof of a number of intermediate assertions from Arthur. From Mok's work we strived to import only the main theorems (other than the basic setup) so as to make our paper more self-contained. In this spirit we usually repeat or sketch the argument also when we could have referred to a result in \cite{Arthur} (with just a few words for modifications). We tried to choose compatible conventions (if not always notation) with \cite{Arthur} and \cite{Mok}, but see \S\ref{sub:notation} below for differences.

Having mentioned that we take on faith the main results of \cite{Mok}, which will be explained more precisely in \S\ref{sub:results-qsuni} below, our results in this first paper are unconditional otherwise (except we are still awaiting the sequel to \cite{CLWFL1} and \cite{CLWFL2}, where Chaudouard and Laumon will establish the weighted fundamental lemma in general; we need it for the stabilization of the untwisted trace formula for inner forms of unitary groups but note that the sequel is already needed in the quasi-split case).%
At the time of writing Mok's main results are themselves conditional (see \cite{Mok} for a fuller discussion), most notably on the stabilization of the twisted trace formula. Waldspurger in partial collaboration with Moeglin has been actively releasing a series of papers on the latter problem lately so we are hopeful that presumably the most serious hypothesis will be removed in the near future.

Now we discuss more details about inner forms of a quasi-split (unitary) group $G^*$ over a local or global field $F$ of characteristic zero and their endoscopic classification, which are the main subject of this paper and the sequels. More precisely we consider inner twists, which consist of $(G,\xi)$ where $G$ is an inner form of $G^*$ and $\xi:G^*\ra G$ is an isomorphism over $\ol{F}$, an algebraic closure of $F$, such that $\xi^{-1}\sigma(\xi)$ is an inner automorphism of $G^*$. Given an $L$-parameter or an $A$-parameter, it is believed that the corresponding packet is determined by $(G,\xi)$ in both local and global cases. However in the local case, it is not known how to give a natural internal parametrisation of each packet when $G$ is not quasi-split; in the quasi-split case there is a canonical way relying on an extra choice of Whittaker datum for $G=G^*$. A closely related problem is that there is no natural way to normalize local endoscopic transfer factors for $G$ in general. One might try to proceed by fixing an arbitrary normalization of transfer factors at each place but this would entail multiple issues. For example, one would need to worry about compatibility of choices when dealing with multiple groups simultaneously. One would also need to incorporate these choices in all stated theorems. In particular, the local classification theorem will have to give the internal parametrisation of $L$-packets dependent on the arbitrary choice of a transfer factor.

In response to these issues, various rigidifications of inner twists have been suggested. A full discussion of this matter would lead us too far off field and we refer the reader to \cite{Kal14} and the introduction to \cite{Kal13}. Here we limit ourselves to a brief overview. The prototype is Vogan's notion of pure inner twists. An (equivalence class of) pure inner twist corresponds to an element of the Galois cohomology set $H^1(\tx{Gal}(\ol{F}/F),G^*)$ over any ground field $F$, in contrast to an (equivalence class of) an inner twist, which corresponds to an element of $H^1(\tx{Gal}(\ol{F}/F),G^*_\tx{ad})$. When $F$ is local, a pure inner twist and a Whittaker datum for $G^*$ determine a canonical normalization of the transfer factors. When $F$ is a number field, a pure inner twist and a Whittaker datum for $G^*$ lead to normalized transfer factors at each place which satisfy the necessary product formula, as can be shown by means of results on Galois cohomology for algebraic groups. However the limitation of pure inner twists is that not all inner twists can be rigidified as pure inner twists. Extensions of the concept of pure inner twists in the local context have been put forward by Adams-Barbasch-Vogan \cite{ABV92} in the archimedean case and by Kaletha \cite{Kal13} in both the archimedean and the non-archimedean cases, the latter nontrivially coinciding with the former in the archimedean case. These extensions have the capacity to rigidify all inner twists over local fields but their global analogues have not been found yet. A different extension stems from the work of Kottwitz on isocrystals with additional structure. It was initially only available in the non-archimedean context \cite{Kot85}, \cite{Kot97}, but was recently extended to the archimedean context as well as to the global context \cite{Kot14}. This extension has the capacity to rigidify all inner twists of a quasi-split group with connected center. It is this latter extension of the concept of pure inner twists that we have chosen to use in this article. Since the center of a quasi-split unitary group is connected, every inner twist  can be rigidified using an isocrystal, and we call the result an extended pure inner twist.

Once an inner twist and its rigidification are given the precise statements of the main theorems in the local and global cases are given in \S\ref{sub:main-local-thm} and \S\ref{sub:main-global-thm} below. In the course of proving them, however, we were surprised by the subtlety of the local intertwining relation (LIR) for inner forms (of unitary groups). Not only its proof but its correct formulation already seems highly nontrivial to us even with the notion of extended pure inner twist in hand, much of the difficulty being rooted in the correct normalization of intertwining operators for parabolically induced representations. It is perhaps fair to say that the difficulty and novelty of our work are mainly concentrated on the LIR. Actually the LIR is the main reason why we divide our work into three papers; it seems sensible to establish the LIR in three steps as quite a few technical results may be reasonably isolated to the second and third steps. Once LIR is proven in full, the main local theorem (i.e. the local classification theorem) for unitary groups will follow rather painlessly by imitating the arguments in the quasi-split case. On the other hand it is worth pointing out that the LIR is still far from trivial for inner twists of a general linear group and will be completed in the third article, even though the local classification theorem is already known for them. Unlike the quasi-split case, the main global theorem is an immediate consequence of the LIR and the local classification theorem.

The general structure of our argument is similar to that in the quasi-split case but there are differences. The spine of the argument is again a long induction that is resolved at the end of the paper. However, we have a stronger starting point than is the case in \cite{Arthur} and \cite{Mok} since we are given some fundamental facts about the parameters and stable linear forms on quasi-split groups (such as the two seed theorems, cf. Propositions \ref{prop:1st-seed-thm}, \ref{prop:2nd-seed-thm}, and \ref{prop:local-stable-linear} below) as well as the stable multiplicity formula (Proposition \ref{p:stable-multiplicity}). The twisted trace formula also plays a smaller role than in \cite{Arthur} and \cite{Mok}. In fact, in the current paper we can make do without it, but it will be used in the third part of our series to establish the LIR in the remaining cases. Thanks to all this our use of the trace formula is mostly more transparent and simpler than \cite{Arthur} and \cite{Mok}. It is also worth noting that our proof of LIR is quite different. It might look like a great advantage to assume the LIR for quasi-split groups from the start, but this does not take us too far because we cannot always realize a non-quasi-split local unitary group as the localization of a global unitary group at a place, with the property that the group is quasi-split away from that particular place, when the number of variables is even. It turns out that we should generally allow the global unitary group to be non-quasi-split at another auxiliary place. This roughly means that if the LIR is known at one non-quasi-split place then one hopes to deduce the LIR at the other non-quasi-split place by a global method based on the trace formula. So the basic strategy for proving LIR is to build a reduction step by globalising the given non-quasi-split local unitary group and its tempered parameter to a global unitary group and a global parameter such that the global parameter at the auxiliary place is easier to deal with than the original local parameter. Combining this idea with other reduction steps of purely local nature due to Arthur, we reduce the proof of LIR in some sense to the proof of LIR for the real unitary group $U(3,1)$ relative to its Levi subgroup isomorphic to $U(1,1)\times \C^\times$ and some special parameters amenable to explicit computations.

Our style of exposition is deliberately mixed. We elaborate, sometimes in great length, on the foundational or new materials regarding inner twists and their endoscopic study, such as how extended pure inner twists are tied with the normalization of transfer factors and local intertwining operators. We hope that this will serve as a stepstone to further investigation of inner twists. In contrast we often try to be brief and refer to an external reference (most likely \cite{Arthur}) for details especially when we are proving assertions which can be shown in the same way as for quasi-split groups with rather obvious modifications. For instance this applies to the so-called standard model in \S\ref{sub:std-model}, the initial step in the comparison of the trace formulas. On the other hand we occasionally give more details than Arthur even if the statements and proofs are similar, when we believe the effort is worthwhile. This is the case, for example, in the reduction steps when proving the LIR, cf. \S\ref{sub:prelim-local-intertwining1} and \S\ref{sub:prelim-local-intertwining} below. Let us also mention that the contents are arranged in such a way that whenever possible, we explain what happens for general reductive groups first and then specialize to (inner twists of) unitary groups and general linear groups.

Now we explain the organization of the paper with some commentaries. Chapter \ref{s:chapter0} is the preliminary chapter where basic definitions and properties of unitary groups and their $L$-groups, their inner twists and extended pure inner twists as well as their Levi subgroups, parabolic subgroups and the associated Weyl groups. Part of the chapter would offer an introduction to the relatively new notion of extended pure inner twists with examples.
Chapter \ref{s:param-main-thm} begins with some basic definitions and technical lemmas in endoscopy. Next we introduce local and global parameters for unitary and general linear groups. As in \cite{Arthur} and \cite{Mok} our global parameters are formally constructed from automorphic representations of general linear groups to avoid the issue with the hypothetical Langlands group. There is nothing new except \S\ref{subsub:relevance-global} on the relevance of parameters, which matters only outside the quasi-split case, and \S\ref{subsub:canonical-def} on canonical global parameters (without reference to any $L$-morphism). The endoscopic correspondence for parameters in \S\ref{sub:endo-correspondence} is not new either but we included a version which is suitable for extended pure inner twists. After giving a precise list of main results we are importing from \cite{Mok}, we state the local and global main theorems in the last two sections of Chapter \ref{s:param-main-thm}. The proof of these theorems will take up this paper and the two forthcoming papers except that we establish on the spot Theorem \ref{thm:LLC-GLmD} concerning the inner twists of local general linear groups. Chapter \ref{chapter2} is all about local intertwining operators and LIR. Our sections \S\ref{sec:diag} through \ref{sec:lir} correspond to much of \cite[2.3-2.4]{Arthur} but contain new materials and constructions as we remarked earlier. Notice that our statement of the LIR in Theorem \ref{thm:lir} consists of three parts, only the last of which was referred to as the LIR in \cite{Arthur}. We initiate the reduction steps for LIR by purely local methods in \S\S\ref{sub:prelim-local-intertwining1}-\ref{sub:prelim-local-intertwining} and then verify LIR for $U(3,1)$ in some special cases, which will enter the proof of LIR as the basis of the inductive argument. Chapter \ref{chapter3} is concerned with the global machinery based on the trace formula and analogous to \S3.1 through \S4.5 of \cite{Arthur}. We claim little originality here other than taking care of the necessary changes and additional justifications for inner twists. Chapter \ref{chapter4} is similar to \S6.2 through \S6.7 of \cite{Arthur} but the proof of LIR is different. As explained above we need different kinds of globalizations as constructed in \S\ref{sub:Globalization-param} to feed into the trace formula argument in the following two subsections. All this effort toward LIR comes together in \S\ref{sub:LIR-proof} to finish the proof for tempered parameters on local unitary groups. The remainder of the chapter flows in a similar manner as the parallel part in \cite{Arthur} and leads to the proof of the main local theorem for the same parameters. Chapter \ref{chapter5} carries out the rather short and straightforward proof of the main global theorem assuming all the local results. So the proof is complete only for global generic parameters on pure inner twists of a quasi-split unitary group. Finally Appendix \ref{sec:appendix} extends Ban's result on the invariance of the Knapp-Stein $R$-groups under Aubert involutions to the case of unitary groups and beyond. Thereby it fills in the gap in the case of quasi-split unitary groups and will provide us with an input in the sequels to this paper.

We end introduction by mentioning the results which are not completed in this paper. The immediate sequel \cite{KMS_A} will be devoted to the study of non-generic local $A$-parameters, thereby completing the proof of the main local theorem (Theorem \ref{thm:locclass-single}) in the remaining cases. Lemma \ref{lem:iop1m} will be fully proved along the way. In the final paper \cite{KMS_B} we will justify some facts about transfer factors which are unproven outside the case of pure inner twists (see \S\ref{subsub:normtf} below) and settle Lemmas \ref{lem:iop1pm}, \ref{lem:lirdesc1}, and \ref{lem:lirdesc2} as well as Proposition \ref{pro:iop3lg} and Theorem \ref{thm:main-global} in complete generality.

\bigskip

\noindent\textbf{Acknowledgment}

  We are grateful to Jussieu and MIT for their hospitality during our visit and collaboration in the summer of 2012 and 2013, respectively. Our project began in Jussieu and would have been impossible without the generous help and support from Michael Harris. It will be obvious to the reader that our work owes immensely to the lifelong work of James Arthur. Special thanks are due to him for his patience to answer our questions and his kindness to allow access to his book \cite{Arthur} before publication. We are grateful to Robert Kottwitz for providing the cohomological foundations of extended pure inner twists and preparing the manuscript \cite{Kot14} that is used throughout this paper. We would like to thank Ioan Badulescu, Wee Teck Gan, Kaoru Hiraga, Atsushi Ichino, and Colette Moeglin for helpful discussions and communications.

\setcounter{section}{-1}

\section{Chapter 0: Inner Twists of Unitary Groups}\label{s:chapter0}

\subsection{Notation}\label{sub:notation}

  In this paper we declare once and for all that every field is of characteristic zero. So a local field is either $\R$, $\C$, or a finite extension of $\Q_p$ for some prime $p$. A global field is going to be synonymous to a number field, a finite extension of $\Q$.
  Let $F$ be a field (of characteristic zero). We denote its full Galois group $\Gal(\ol{F}/F)$ by $\Gamma $ or $\Gamma_F$.
  Now let $E/F$ be a quadratic extension of number fields, and let $v$ be a place of $F$. We write $F_v$ for the completion of $F$ with respect to $v$, and $\Gamma_v$ or $\Gamma_{F_v}$ for $\Gal(\ol{F}_v/F_v)$. For such an $F$ we fix an $F$-embedding $\ol{F}\hra \ol{F}_v$ between algebraic closures for every place $v$ of $F$ so that there is an induced embedding $\Gamma_v\hra \Gamma$ at every $v$. Moreover we fix an $F$-embedding $E\hra \ol{F}$ once and for all. This determines an embedding $E\hra \ol{F}_v$ for each $v$, singling out a distinguished place $w$ of $E$ above $v$. When $v$ splits in $E$, we often write $\ol w$ for the other place above $v$. The ring of ad\'eles over a number field $F$ is denoted $\A_F$, or simply $\A$ if the field $F$ is well understood. For a finite set $S$ of places of $F$ we write $\A_F^S$ for the ring of ad\'eles away from $S$ (i.e. the restrict product is taken away from $S$).

  When $G$ is an algebraic group over a field $E$, which is an extension of a field $F$, we use $\Res_{E/F} G$ to denote the Weil restriction of scalars. The center of a group $G$ is written as $Z(G)$.

  Every infinite tensor product (over the places of a number field) is denoted by the usual symbol for tensor product $\otimes$ but understood as a restricted tensor product.

  By $\SU(2)$ we denote the compact special unitary group in two variables.

  The following is the list of major instances where our notation or convention is quite different from \cite{Arthur} and \cite{Mok}.
\begin{itemize}
  \item Here $G$ is typically an inner form of $U_{E/F}(N)$ (or a connected reductive group in a general discussion). We write $G^*$ for the quasi-split form of $G$, which is hence $U_{E/F}(N)$ in most instances.
   \item In \cite{Mok} an endoscopic datum for the quasi-split unitary group $U_{E/F}(N)$ is denoted $(G',s',\xi')$ (or $U(N_1)\times U(N_2),\zeta_\chi$), often abbreviated as $G'$, and the transfer of a function $f$ on $U_{E/F}(N)$ to $G'$ is denoted $f'$. Our notation for an endoscopic datum is $(G^{\fke},s^\fke,\eta^\fke)$ and abbreviated as $\fke$. (In the twisted case, appearing rarely, write $\tilde\fke=(G^{\tilde\fke},s^{\tilde\fke},\eta^{\tilde\fke})$.) The transfer from $G$ to $G^\fke$ is denoted $f^\fke$. To be precise the transfer $f^\fke$ also depends on the structure of inner twist or extended pure inner twist on $G$, but we omit the reference to such a structure in the notation.
   \item When $\psi$ is an (Arthur) parameter for $G$, over either a local or global field, their group $\cS_\psi$ is denoted by $\ol{\cS}_\psi$ in our paper. We generally use the bar notation to indicate that the definition involves taking quotient by the Galois invariants in the center of $\hat G$. (See diagram \eqref{eq:diagram-mod-central-global} below for instance.) Our $\cS_\psi$ modulo such Galois invariants is their $\cS_\psi$.
  \item The conjugate dual of an irreducible representation $\pi$ is denoted $\pi^*$ in \cite{Mok}. We write $\pi^\star$ to avoid using $*$ too much.
  \item Our formulation of local intertwining operators and local intertwining relations is a little different. It is technical to explain here but the reader is referred to Chapter \ref{chapter2} below for details.

\end{itemize}

\subsection{Reductive groups and their $L$-groups}\label{sub:L-groups}

\subsubsection{The based root datum and the Langlands dual group} \label{subsub:brd}

Let $G$ be connected reductive group defined over an algebraically closed field. Associated to $G$ there is the based root datum $\tx{brd}(G)=(X,\Delta,Y,\Delta^\vee)$ obtained as follows. For any Borel pair $(T,B)$ of $G$ we have the based root datum
\[ \tx{brd}(T,B)=(X^*(T),\Delta(T,B),X_*(T),\Delta(T,B)^\vee). \]
Given two pairs $(T_i,B_i)$, $i=1,2$, there exists a canonical isomorphism $T_1 \rw T_2$ given by conjugation by any element of $g$ which sends $(T_1,B_1)$ to $(T_2,B_2)$. This isomorphism produces an isomorphism $\tx{brd}(T_1,B_1) \rw \tx{brd}(T_2,B_2)$. In this way, we obtain a system $\tx{brd}(T,B)$ indexed by the set of Borel pairs for $G$, and taking its inverse limit, we obtain $\tx{brd}(G)$.

In fact, in the same way one can define the absolute Borel pair $(T,B)$ of $G$.

Let $G$ now be a quasi-split connected reductive group defined over a field $F$ of characteristic zero. Fix an algebraic closure $\ol{F}$ of $F$ and let $\Gamma$ denote the Galois group of $\ol{F}/F$. If a Borel pair $(T_1,B_1)$ of $G$ is defined over $F$, then $\tx{brd}(T_1,B_1)$ carries a natural $\Gamma$-action. If $(T_2,B_2)$ is a second such pair, then the natural isomorphism $\tx{brd}(T_1,B_1) \rw \tx{brd}(T_2,B_2)$ is $\Gamma$-equivariant. This gives a natural $\Gamma$-action on $\tx{brd}(G)$.

A dual group for $G$ is the datum of a connected reductive group $\hat G$ defined over some field, which we will take to be $\C$, together with an action of $\Gamma$ on $\hat G$ by algebraic automorphisms preserving a splitting of $\hat G$, and a fixed $\Gamma$-equivariant isomorphism $\tx{brd}(\hat G) \cong \tx{brd}(G)^\vee$. Here the $\Gamma$-action on $\tx{brd}(\hat G)$ is derived in the same way as that on $\tx{brd}(G)$, namely using $\Gamma$-invariant Borel pairs $(\hat T,\hat B)$ which exist by assumption.

Given $\hat G$, we have the Galois form of the $L$-group given by $^LG = \hat G \rtimes \Gamma$. If $F$ is a local or global field, it may also be useful to consider the Weil-form of the $L$-group, given by $^LG = \hat G \rtimes W_F$, where $W_F$ is the Weil group of $F$.

\subsubsection{Quasi-split unitary groups $U_{E/F}(N)$}\label{subsub:qsuni}

  In this paragraph and the next we set up the basic notation for quasi-split unitary groups as well as the relevant general linear groups which possess unitary groups as twisted endoscopic groups. The $L$-groups of these groups will be made explicit below. In the next subsection they will be related to each other via $L$-morphisms and endoscopic data.

Let $E$ be a quadratic $F$-algebra so that $E$ is either a quadratic field extension or isomorphic to $F\times F$. In the latter case fix the isomorphism and identify $E=F\times F$. Define $c\in \Aut_F(E)$ to be the unique nontrivial automorphism in the former case. In the latter $c(x,y):=(y,x)$ for $x,y\in F$. Following Rogawski \cite[\S1.9]{Rog90}, we define the quasi-split unitary group in $N$-variables defined over $F$ and split over $E$ as
\[ U_{E/F}(N)(\ol{F}) = \tx{GL}_N(\ol{F}) \]
with the Galois action is given by
\[ \sigma^N(g) = \theta_{N,\sigma}(\sigma(g)) \]
for any $\sigma \in \Gamma$, $g \in \tx{GL}_N(\ol{F})$. Here,
\[ \theta_{N,\sigma}(g) = \begin{cases} g&,\sigma \in \Gamma_E\\ \tx{Ad}(J_N)g^{-t}&,\sigma \in \Gamma_F \sm \Gamma_E \end{cases} \]
where
\[ J_N = \begin{bmatrix} &&&&1\\&&&-1\\&&1\\&\iddots\\(-1)^{N-1} \end{bmatrix} \]
and $g^{-t}$ denotes the transpose of the inverse of $g$. This is the definition used in \cite[\S1.9]{Rog90} (the element $\xi \in E^\times$ of trace zero used there is irrelevant for the definition of the group and we have thus omitted it). The standard splitting $(T^*,B^*,\{X_\alpha\})$ of $\tx{GL}_N$, consisting of the group $T^*$ of diagonal matrices, the group $B^*$ of upper triangular matrices, and the set $\{E_{i,i+1}\}_{i=1,\dots,N-1}$, is invariant under $\sigma^{N}$ and provides a standard $F$-splitting for $U_{E/F}(N)$.

As a dual group for $U_{E/F}(N)$, we fix $\hat U_{E/F}(N) = \tx{GL}_N(\C)$ endowed with the standard splitting $(\hat T,\hat B,\{\hat X_\alpha\})$, where $\hat T$ is the subgroup of diagonal matrices, $\hat B$ is the subgroup of upper triangular matrices, and $\{\hat X_\alpha\}$ is the subset of root vectors given by $E_{i,i+1}$ for $i=1,\dots,N-1$. The action of $\Gamma$ factors through $\Gamma_{E/F}$ and the non-trivial element $\tau \in \Gamma_{E/F}$ acts by $g \mapsto \tx{Ad}(J_N)g^{-t}$. The adjoint group $U_{E/F}(N)_\tx{ad}=U_{E/F}(N)/Z(U_{E/F}(N))$ has as its dual group $[\hat U_{E/F}(N)]_\tx{sc}=\tx{SL}_N(\C)$. The centers of the dual groups of $U_{E/F}(N)$ and $U_{E/F}(N)_\tx{ad}$ are given by $\C^\times$ and $\mu_N$, respectively, with $\tau$ acts by inversion on both of these subgroups. We form the $L$-group $^L  U_{E/F}(N)$ to be the semi-direct product $\tx{GL}_N(\C)\rtimes \Gamma$ using the above $\Gamma$-action.

So far we allowed the case when $E=F\times F$. It is worth noting some simplifications in this case. It is easy to see that the projection from $F\times F$ to the first (resp. second) copy of $F$ induces an isomorphism $U_{E/F}(N)\simeq \GL(N)_F$, to be denoted $\iota_1$ (resp. $\iota_2$). An easy computation shows that $\iota_2\iota_1^{-1}(g)=J_N {}^t g^{-1} J_N^{-1}$. Moreover $\iota_j$, $j\in\{1,2\}$, induces isomorphisms $\hat \iota_j:\hat{\GL}(N)_F\simeq \hat U_{E/F}(N)$ and $^L \iota_j:{}^L \GL(N)_F\simeq {}^L U_{E/F}(N)$.

When $E/F$ is a quadratic extension of number fields and $v$ is a place of $F$ split as $w\ol w$ in $E$, we may assume that $w$ is a distinguished place, cf. \S\ref{sub:notation}. In this case we identify $E_v:=E\otimes_F F_v=F_w\times F_{\ol w}$ and write $\iota_w$ for $\iota_1$ and $\iota_{\ol w}$ for $\iota_2$. Similarly for $\hat\iota_w$, $^L \iota_w$, $\hat\iota_{\ol w}$, and $^L \iota_{\ol w}$.

\subsubsection{The groups $G_{E/F}(N)$ and $\tilde G_{E/F}(N)$}\label{subsub:G(N)-defn}

  Let $E$, $F$, and $c$ be as above. Denote by $\Res_{E/F}$ the Weil restriction of scalars. By applying $\Res_{E/F}$ to $\GL(N)$ over $E$, we obtain an $F$-algebraic group
  $$G_{E/F}(N):=\Res_{E/F}\GL(N)_E.$$
  We sometimes simplify $G_{E/F}(N)$ as $G(N)$ when there is no danger of confusion. There is an $F$-automorphism of order two on $G_{E/F}(N)$ given by
  $$\theta(g):=J_N {}^t c(g)^{-1}J_N^{-1},\quad g\in G_{E/F}(N).$$
  The dual group and $L$-group of $G_{E/F}(N)$ are given as
  $$\hat G_{E/F}(N) = \GL(N,C)\times \GL(N,\C),\quad ^L G_{E/F}(N) = \hat G_{E/F}(N)\rtimes W_F,$$
  where $W_F$ acts through $\Gal(E/F)$ on $\hat G_{E/F}(N)$ by permuting the two factors (so that the nontrivial Galois element sends $(g_1,g_2)$ to $(g_2,g_1)$.) Here we are using the standard $F$-splitting of $G_{E/F}(N)$ consisting of the diagonal maximal torus, the upper triangular Borel, and $\{E_{i,i+1}\}_{i=1}^{N-1}$ on each copy of $ \GL(N,C)$.
  We have an automorphism $\hat \theta$ of $\hat G_{E/F}(N)$ such that
  $$\hat \theta (g_1,g_2)=(J_N {}^t g_2^{-1} J_N^{-1},J_N {}^t g_1^{-1} J_N^{-1}).$$
  (There is a general recipe for $\hat \theta$ when $\theta$ is given, cf. \S\ref{subsub:endoscopic-triples} below.)
  Define a $G_{E/F}(N)$-bitorsor $$\tilde G_{E/F}(N):=G_{E/F}(N)\rtimes \theta$$
  so that if we identify $\tilde G_{E/F}(N)\simeq G_{E/F}(N)$ via $g\rtimes \theta\mapsto g$ then the action of $g\in G_{E/F}(N)$ is multiplication by $g$ (resp. $\theta(g)$) on the left (resp. right).

\subsection{Inner twists} \label{sub:inner}

  Our goal in this volume is to give an endoscopic classification of representations for unitary groups which are not quasi-split. This can be done by realizing the unitary group as an inner twist of a quasi-split unitary group and comparing stable trace formulas. In order to carry out this comparison however, it turns out that the notion of an inner twist is insufficient. For a general discussion of the problem, we refer the reader to \cite{Kal14} and the introduction of \cite{Kal13}. In this section we will introduce two modifications of the notion of inner twist that will be important for us. The first was defined by Vogan in \cite{Vog93} and is called a \emph{pure inner twist}. This notion is sufficient when dealing with unitary groups associated to Hermitian spaces defined over quadratic extensions of the ground field. In particular, it is sufficient for all unitary groups over local fields. The second notion stems from the work of Kottwitz on isocrystals with additional structure \cite{Kot85}, \cite{Kot97}, \cite{Kot14} and is called an \emph{extended pure inner twist}. It generalizes the notion of a pure inner twist and is needed when one wants to work with a unitary group associated to a non-split central simple algebra endowed with an involution of the second kind. The additional datum provided by an (extended) pure inner twist will play a role not only in the technical parts of the paper, but already in the statement of our local theorems. The global theorems will be less dependent on this additional datum.

\subsubsection{Definition of inner twists and their variants}\label{subsub:inner}

Let $F$ be a field of characteristic zero, $\ol{F}$ an algebraic closure, and $\Gamma$ the Galois group of $\ol{F}/F$. Let $G$ be a connected reductive group defined over $F$. An inner form of $G$ is a connected reductive group $G_1$ defined over $F$ for which there exists an isomorphism $\xi : G \times \ol{F} \rw G_1 \times \ol{F}$ with the property that for all $\sigma \in \Gamma$, the automorphism $\xi^{-1}\sigma(\xi) = \xi^{-1}\circ\sigma\circ\xi\circ\sigma^{-1}$ of $G$ is inner. The isomorphism $\xi : G \rw G_1$ itself is called an inner twists. If $\xi_1 : G \rw G_1$ and $\xi_2 : G \rw G_2$ are two inner twists, then an isomorphism $\xi_1 \rw \xi_2$ consists of an isomorphism $f : G_1 \rw G_2$ defined over $F$ and having the property that $\xi_2^{-1}\circ f \circ \xi_1$ is an inner automorphism of $G$. The map $\xi \mapsto \xi^{-1}\sigma(\xi)$ sets up a bijection from the set of isomorphism classes of inner twists of $G$ to the set set $H^1(\Gamma,G_\tx{ad})$

For a fixed inner form $G_1$ of $G$, the set of all inner twists $G \rw G_1$ carries an equivalence relation, with two twists being equivalent if and only if $\tx{id}_{G_1}$ is an isomorphism between them. We will oftentimes work with the classes of this equivalence relation.

A \textbf{pure inner twist} $(\xi,z) : G \rw G_1$ is a pair consisting of an inner twist $\xi : G \rw G_1$ and an element $z \in Z^1(\Gamma,G)$ having the property that $\xi^{-1}\sigma(\xi) = \tx{Ad}(z_\sigma)$. Given two pure inner twists $(\xi_1,z_1) : G \rw G_1$ and $(\xi_2,z_2) : G \rw G_2$, an isomorphism $(\xi_1,z_1) \rw (\xi_2,z_2)$ consists of a pair $(f,g)$, with $f : \xi_1 \rw \xi_2$ an isomorphism of inner twists, and $g \in G$ an element satisfying the conditions $\xi_2^{-1}\circ f\circ\xi_1 = \tx{Ad}(g)$ and $z_2(\sigma) = gz_1(\sigma)\sigma(g)^{-1}$. The map $(\xi,z) \mapsto z$ sets up a bijection from the set of isomorphism classes of pure inner twists to the set $H^1(\Gamma,G)$.

Before we can introduce the notion of an extended pure inner twist, we need to recall some recent results of Kottwitz \cite{Kot14}. Let $F$ be a local or global field. In \cite[\S10]{Kot14}, Kottwitz has constructed a certain cohomology set $B(F,G)$ for each affine algebraic group $G$ defined over $F$. This set is constructed as the colimit
\[ B(F,G) = \varinjlim_K H^1_\tx{alg}(\mc{E}(K/F),G(K)) \]
where $K$ runs over the set of finite Galois extensions of $F$ in a fixed algebraic closure $\ol{F}$ of $F$, $\mc{E}(K/F)$ is a certain extension
\[ 1 \rw \mb{D}_{K/F}(K) \rw \mc{E}(K/F) \rw \Gamma_{K/F} \rw 1 \]
of the Galois group of the extension $K/F$ by group of $K$-points of a certain group $\mb{D}_{K/F}$ of multiplicaitve type defined over $F$, and $H^1_\tx{alg}(\mc{E}(K/F),G(K))$ is a set of algebraic cohomology classes of $\mc{E}(K/F)$ with values in $G(K)$, as defined in \cite[\S2]{Kot14}.

The set $B(F,G)$ comes equipped with a Newton map \cite[\S10.7]{Kot14}
\[ \nu : B(F,G) \rw [\tx{Hom}_{\ol{F}}(\mb{D}_F,G)/G(\ol{F})]^\Gamma, \]
where $\mb{D}_F = \varprojlim_K \mb{D}_{K/F}$. The preimage of $\tx{Hom}_F(\mb{D}_F,Z(G))$ under the Newton map is called the set of \emph{basic} elements, denoted by $B(F,G)_\tx{bsc}$. There is a natural inclusion $H^1(\Gamma,G) \rw B(F,G)_\tx{bsc}$. When the center $Z(G)$ of $G$ is trivial, this inclusion is also surjective. Using this fact, we obtain a natural map $B(F,G)_\tx{bsc} \rw B(F,G/Z(G))_\tx{bsc} \cong H^1(\Gamma,G/Z(G))$.

The notion of basic can already be defined on the level of cocycles, so we can speak of $Z^1_\tx{bsc}(\mc{E}(K/F),G(K))$. Moreover, if $H$ is another group for which there is a natural inclusion $Z(H) \subset Z(G)$, we can speak of $H$-basic elements, which are those whose Newton point belongs to the subgroup $Z(H)$ of $Z(G)$. We will use the corresponding notation $B(F,G)_{H-\tx{bsc}}$ and $Z^1_{H-\tx{bsc}}(\mc{E}(K/F),G(K))$. From an element of $Z^1_\tx{bsc}(\mc{E}(K/F),G(K))$ we obtain an element of $Z^1(\Gamma_{K/F},G_\tx{ad}(K))$. In other words, basic algebraic 1-cocycles of $\mc{E}(K/F)$ with values in $G(K)$ lead to inner forms of $G$.

When $F$ is local, Kottwitz has constructed \cite[\S11]{Kot14} a canonical map of pointed sets
\begin{equation} \label{eq:kotisoloc} \kappa_G:B(F,G)_\tx{bsc} \rw X^*(Z(\hat G)^\Gamma), \end{equation}
which is a bijection if $F$ is $p$-adic, and which extends the map
\[ H^1(\Gamma,G) \rw X^*(\pi_0(Z(\hat G)^\Gamma)) \]
defined in \cite{Kot86}.

Let now $F$ be global. For each place $v$ of $\ol{F}$ there is a localization map $B(F,G) \rw B(F_v,G)$ which preserves the property of being basic \cite[\S10.9]{Kot14}. If we choose one place of $\ol{F}$ lying over each place of $F$ and combine these localization maps, we obtain a map
\begin{equation} \label{eq:h1algloc} B(F,G)_\tx{bsc} \rw \coprod_v B(F_v,G)_\tx{bsc}, \end{equation}
where $\coprod$ denotes the subset of the direct product consisting of elements whose components at almost all $v$ are equal to the neutral element in $B(F_v,G)_\tx{bsc}$. Kottwitz has shown \cite[\S15.7]{Kot14} that the kernel of \eqref{eq:h1algloc} is in bijection with $\tx{ker}^1(F,G)$. The usual twisting argument tells us that the fiber over $b \in B(F,G)_\tx{bsc}$ is in bijection with $\tx{ker}^1(F,G^b)$, where $G^b$ is the inner form of $G$ corresponding to $b$. According to \cite[(4.2.2)]{Kot84} (in which the assumption of not having $E_8$ factors can be dropped by the work of Chernousov \cite{Cher89}), each set $\tx{ker}^1(F,G^b)$ is in bijection with the finite abelian dual to $\tx{ker}^1(\Gamma,Z(\hat G))$. Moreover, according to \cite[Prop. 15.6]{Kot14}, the image of \eqref{eq:h1algloc} is equal to the kernel of the composition
\begin{equation} \label{eq:kotisolocglo} \coprod_v B(F_v,G)_\tx{bsc} \stackrel{\eqref{eq:kotisoloc}}{\lrw} \bigoplus_v X^*(Z(\hat G)^{\Gamma_v}) \stackrel{\sum}{\lrw} X^*(Z(\hat G)^\Gamma). \end{equation}

We will be most interested in the case when $G$ is a quasi-split connected reductive group. Using the cohomology sets $B(F,G)$ for local and global fields we define an \textbf{extended pure inner twist} $(\xi,z) : G \rw G_1$ to consist of an inner twist $\xi : G \rw G_1$ and an element $z \in Z^1_\tx{bsc}(\mc{E}(K/F),G(K))$, for some finite Galois extension $K/F$, whose image in $Z^1_\tx{bsc}(\mc{E}(K/F),G_\tx{ad}(K))=Z^1(\Gamma_{K/F},G_\tx{ad}(K))$ equals $\xi^{-1}\sigma(\xi)$. Since $K$ is arbitrary, it will be convenient to use the symbol $Z^1_\tx{bsc}(\mc{E},G)$ instead of $Z^1_\tx{bsc}(\mc{E}(K/F),G(K))$. An isomorphism of extended pure inner twists is defined in the same way as for pure inner twists. The isomorphism classes of extended pure inner twists are in bijection with the set $B(F,G)_\tx{bsc}$. When $G$ has connected center, Kottwitz has shown \cite[Prop. 10.4]{Kot14} that the natural map $B(F,G)_\tx{bsc} \rw B(F,G_\tx{ad})_\tx{bsc} = H^1(\Gamma,G_\tx{ad})$ is surjective. This implies that, when $G$ has connected center, any inner twist $\xi : G \rw G_1$ can be made into an extended pure inner twist $(\xi,z) : G \rw G_1$.

Given two extended pure inner twists $(\xi_1,z_1) : G \rw G_1$ and $(\xi_2,z_2) : G_1 \rw G_2$, we define composition and inverse as $(\xi_2,z_2) \circ (\xi_1,z_1) = (\xi_2\circ\xi_2,\xi_1^{-1}(z_2)\cdot z_1)$ and $(\xi_1,z_1)^{-1}=(\xi_1^{-1},\xi_1(z_1)^{-1})$. The operation of composition allows us to talk about commutative diagrams of pure inner twists.

\subsubsection{Extended pure inner twists of general linear groups} \label{subsub:inner-GL}

Let us describe concretely the discussion of Section \ref{subsub:inner} for the group $\tx{GL}_N$. It is well-known that the inner forms of $\tx{GL}_N$ are the groups $\tx{Res}_{D/F}\tx{GL}_M$, where $D$ is any division algebra over $F$ of degree $s^2$, where $s|N$ and $M=N/s$.

Let first $F$ be local. We have the commutative diagram
\[ \xymatrix{
B(F,\tx{GL}_N)_\tx{bsc}\ar[d]\ar[r]&H^1(\Gamma,\tx{PGL}_N)\ar[d]\\
\Z\ar[r]&\Z/N\Z
} \]
where the left map is \eqref{eq:kotisoloc}, the right map is the classical Kottwitz map \cite[\S1]{Kot86}, and the horizontal maps are the natural maps.

When $F$ is a $p$-adic field, both vertical maps are bijections. Moreover, if $D$ is a division algebra of degree $s^2$ and invariant $r/s$, then the equivalence classes of extended inner twists $\tx{GL}_N \rw \tx{Res}_{D/F}\tx{GL}_M$ correspond to the elements $x \in \Z$ whose class in $\Z/N\Z$ is represented by $r\frac{N}{s}$.

When $F$ is the field of real numbers, both vertical maps are injective, but the description of their images depends on whether $N$ is even or odd. When $N$ is odd, the image of the left map is $N\Z$ and the image of the right map is $0$. When $N$ is even, the image of the left map is $\frac{N}{2}\Z$ and the image of the right map is $\frac{N}{2}\Z/N\Z$. Moreover, the non-trivial element $x \in \frac{N}{2}\Z/N\Z$ corresponds to the inner form $\tx{Res}_{\H/\R}\tx{GL}_\frac{N}{2}$, where $\H$ denotes the quaternion algebra.
When $F$ is the field of complex numbers, both vertical maps are injective, the image of the left map is $N\Z$ and the image of the right map is $0$.

Let now $F$ be global. Then the localization map \eqref{eq:h1algloc} is injective due to the vanishing of $\tx{ker}^1(\Gamma,\C^\times)$. Thus we can identify $B(F,\tx{GL}_N)_\tx{bsc}$ with the subset of $\bigoplus_v \Z$ consisting of tuples that sum to zero and whose entries for places $v|\infty$ belong to $\frac{N}{2}\Z$ when $v$ is real and $N$ is even and to $N\Z$ otherwise.

\subsubsection{Extended pure inner twists of unitary groups} \label{subsub:inner-U}

Let us now describe concretely the discussion of Section \ref{subsub:inner} in the case of the quasi-split group $U_{E/F}(N)$ defined in Section \ref{subsub:qsuni}.

We begin with $F$ being a local field. In that case, the injection $H^1(\Gamma,U_{E/F}(N)) \rw B(F,U_{E/F}(N))_\tx{bsc}$ is in fact a bijection, so it suffices to recall the classical computations of Galois cohomology. We have the diagram

\[ \xymatrix{
H^1(\Gamma,U_{E/F}(N))\ar[r]\ar[d]&H^1(\Gamma,U_{E/F}(N)_\tx{ad})\ar[d]\\
\Z/2\Z\ar[r]&\Z/\delta\Z
} \]
where $\delta=2$ if $N$ is even and $\delta=1$ if $N$ is odd. The top map is surjective. When $F$ is $p$-adic, the vertical maps are bijective. When $F$ is real, the vertical maps are surjective, but not injective. In fact, we have $H^1(\Gamma,U_{E/F}(N))=\{(p,q)|0 \leq p,q \leq N, p+q=N\}$ and $H^1(\Gamma,U_{E/F}(N)_\tx{ad})$ is the quotient of this set by the symmetry relation $(p,q) \sim (q,p)$. The left map is given by $(p,q) \mapsto \lfloor \frac{N}{2}\rfloor + q \textrm{ mod } 2$.

We now turn to $F$ global. The localization map \eqref{eq:h1algloc} is injective due to the vanishing of $\tx{ker}^1(F,\C^\times_{(-1)})$, where $\C^\times_{(-1)}$ denotes the abelian group $\C^\times$ on which $\Gamma_E$ acts trivially and each element of $\Gamma \setminus \Gamma_E$ acts by inversion. Thus we can identify $B(F,U_{E/F}(N))_\tx{bsc}$ with the kernel of \eqref{eq:kotisolocglo}. From the discussion of local linear and unitary groups, we can now extract the following: For each place $v$ of $F$, let $\Xi_v$ be an equivalence class of extended inner twists of $U_{E/F}(N)$, and let $a_v$ be its invariant, which for $v$ is finite and split in $E$ belongs to $\Z$, for $v$ is finite and inert in $E$ belongs to $\Z/2\Z$, for $v$ is real and split in $E$ belongs to $\frac{N}{2}\Z \subset \Z$ if $N$ is even and to $N\Z \subset \Z$ if $N$ is odd, and for $v$ real and inert belongs to $\Z/2\Z$. Then the collection $(\Xi_v)$ is the localization of the a global extended inner twist if and only if almost all $a_v$ are equal to zero and, after mapping each $a_v$ into $\Z/2\Z$ using the natural projection $\Z \rw \Z/2\Z$, the sum of all $a_v$ is zero in $\Z/2\Z$.

It is also instructive to describe which collections of local inner twists come from a global inner twist. If $v$ is a finite place split in $E$, such a twist gives the group $\tx{Res}_{D_v/F_v}\tx{GL}_{M_v}$ for some division algebra $D_v/F_v$, which provides an element $a_v := N\cdot \tx{inv}(D_v) \in \Z/N\Z \rw \Z/2\Z$. If $v$ is finite and inert in $E$ we obtain either the quasi-split group $U_{E_v/F_v}(N)$, which leads to $a_v = 0 \in \Z/2\Z$, or the unique non-quasi-split inner twist of that group (only available for even $N$), which leads to $a_v = 1 \in \Z/2\Z$. If $v$ is real and split in $E$, we obtain either the group $\tx{GL}_N$, which leads to $a_v = 0 \in \Z/2\Z$, or its unique non-quasi-split inner twist $\tx{Res}_{\H/\R}\tx{GL}_\frac{N}{2}$ (available only for even $N$), which leads to $a_v = \frac{N}{2} \in \Z/2\Z$. When $v$ is real and inert in $E$, we obtain the unitary group $U(p,q)$ for some $0\leq p,q \leq N$ with $p+q=N$, and it leads to $a_v = 0 \in \Z/2\Z$ if $N$ is odd or to $a_v= \frac{N}{2}+q \in \Z/2\Z$ if $N$ is even. In order for this collection of inner twists to come from a global inner twist it is necessary that $a_v=0$ for almost all $v$. This is also sufficient if $N$ is odd, and if $N$ is even a necessary and sufficient additional condition is that the sum of all $a_v$ is zero in $\Z/2\Z$.

Finally, let us note which inner forms of unitary groups can be realized as pure inner forms. This discussion works over any field $F$. For any separable quadratic extension of fields $E/F$ we have the exact sequence
\[ H^1(\Gamma,U_{E/F}(N)) \rw H^1(\Gamma,U_{E/F}(N)/U_{E/F}(1)) \rw H^1(\Gamma,\tx{Res}_{E/F}(\tx{PGL}(N))). \]
By the generalized Hilbert 90 theorem \cite[Theorem 29.2]{BookInvol}, we know that the connecting homomorphism
\[ H^1(\Gamma,\tx{Res}_{E/F}(\tx{PGL}(N))) \rw H^2(\Gamma,\tx{Res}_{E/F}(\mb{G}_m)) = H^2(\Gamma_E,\mb{G}_m)=\tx{Br}(E) \]
is injective. The composed homomorphism $H^1(\Gamma,U_{E/F}(N)/U_{E/F}(1)) \rw H^1(\Gamma,\tx{Res}_{E/F}(\tx{PGL}(N))) = \tx{Br}(E)$ sends a unitary group $G$ to the class of the division algebra over $E$ which is the base algebra of the Hermitian space defining $G$. This shows that the inner forms of unitary groups that can be realized as pure inner forms are precisely those unitary groups constructed from Hermitian spaces over the field $E$, rather than over a division algebra over $E$.

\subsection{Parabolic subgroups and Levi subgroups}

  Here we collect some general facts about parabolic and Levi subgroups of a reductive group and its $L$-group and give a classification of Levi subgroups for inner forms of unitary groups. We work with the Galois form of the $L$-group in this section for the obvious reason that the Weil group for a general field $F$ does not make sense. When $F$ is a local or global field, it is straightforward to adapt the results to the Weil form of the $L$-group.

\subsubsection{Relationship between $G$ and $^LG$} \label{subsub:levis}

Let $G$ be a connected reductive algebraic group, defined and quasi-split over a field $F$. Let $\mc{P}(G)$ denote the set of $G$-conjugacy classes of parabolic subgroups of $G$. The inclusion relation among parabolic subgroups imposes a partial ordering $\leq$ on $\mc{P}(G)$. There is a canonical order-preserving bijection between $\mc{P}(G)$ and the powerset of of $\Delta$. Fixing a particular Borel pair $(T,B)$ of $G$, each class in $\mc{P}$ has a unique element $P$ that contains $B$. Moreover, $P$ has a unique Levi factor that contains $T$.

Now let $\Gamma$ be a group acting on $G$. Then $\Gamma$ acts on $\mc{P}$ and on $\Delta$, and the bijection $\mc{P} \leftrightarrow \mf{P}(\Delta)$ is $\Gamma$-equivariant. For any class $[P] \in \mc{P}$, a necessary condition for this class to have a $\Gamma$-invariant element is that it be $\Gamma$-invariant itself. When $G$ possesses a $\Gamma$-invariant Borel subgroup $B$, then this condition is also sufficient, for then the unique element of $[P]$ containing $B$ is necessarily $\Gamma$-invariant. If $G$ does not contain a $\Gamma$-invariant Borel subgroup, this condition is not sufficient.

Let $\mc{M}$ be the set of $G$-conjugacy classes of Levi components of parabolic subgroups. Recalling that all Levi components of a given parabolic subgroup are conjugate under that parabolic subgroup, we see that we have a $\Gamma$-equivariant surjection $\mc{P} \rw \mc{M}$. Two elements of $\mc{P}$ lying in the same fiber of this map are called associate. By the same argument as above, the existence of a $\Gamma$-fixed Borel pair of $G$ ensures that each $\Gamma$-invariant class in $\mc{M}$ has a $\Gamma$-invariant element.

Now consider the semi-direct product $\ts{G} = G \rtimes \Gamma$. A subgroup $\ts{P}$ of this product will be called \textbf{full} if the restriction of the projection $G \rtimes \Gamma \rw \Gamma$ to $\ts{P}$ is surjective. It will be called a parabolic subgroup, if it is full and $\ts{G} \cap \ts{P}$ is a parabolic subgroup of $G$. The maps
\[ \ts{P} \mapsto \ts{P} \cap G \qquad \tx{and} \qquad P \mapsto N(P,\ts{G}) \]
are mutually inverse bijections between the set of parabolic subgroups of $\ts{G}$ and the set of those parabolic subgroups of $G$ whose conjugacy class is $\Gamma$-invariant.

Given a parabolic subgroup $\ts{P} \subset \ts{G}$, we will call a subgroup $\ts{M} \subset \ts{P}$ a Levi-factor of $\ts{P}$ if $\ts{M}$ is a full subgroup of $\ts{P}$ and moreover $\ts{M} \cap G$ is a Levi factor of $\ts{P} \cap G$. The maps
\[ \ts{M} \mapsto \ts{M} \cap G \qquad \tx{and} \qquad M \mapsto N(M,\ts{P}) \]
are mutually inverse bijections between the set of Levi factors of $\ts{P}$ and the set of Levi factors of $P$. All these bijections are evidently equivariant under conjugation by $\ts{G}$.

A Levi subgroup $\ts{M}$ of a parabolic subgroup of $\ts{G}$ will be called a Levi subgroup of $\ts{G}$ for short. If we put $A_\ts{M}=\tx{Cent}(\ts{M},G)^\circ$, then we have $\ts{M} = \tx{Cent}(A_\ts{M},\ts{G})$.

It is elementary to check the validity of the above statements. In doing so, the following elementary observation is useful: Two subgroups $\ts{A},\ts{B}$ of $\ts{G}$ are equal as soon as $\ts{A} \subset \ts{B}$, $\ts{A} \cap G = \ts{B} \cap G$, and the images of $\ts{A},\ts{B}$ in $\Gamma$ coincide.

\subsubsection{Transfer of Levis and relevance of parabolics}\label{subsub:transfer-Levi-parabolic}

We continue with $F$ a field of characteristic zero and $G^*$ a quasi-split group defined over $F$. Set ${^LG^*} = \hat G^* \rtimes \Gamma$, the Galois form of the $L$-group of $G^*$. The discussion in Section \ref{subsub:levis} provides a bijection between the conjugacy classes defined over $F$ of parabolic subgroups of $G ^*\times \ol{F}$ and the $^LG^*$-conjugacy classes of parabolic subgroups of $^LG^*$. Since $G^*$ is quasi-split, and $\hat G^*$ has a $\Gamma$-fixed pinning, every conjugacy class defined over $F$ of parabolic subgroups of $G^*$ has an element defined over $F$, and every ${^LG^*}$-conjugacy class of parabolic subgroups of ${^LG^*}$ contains an element whose intersection with $\hat G^*$ is $\Gamma$-invariant.

Let $\Xi : G^* \rw G$ be an equivalence class of inner twists. It provides  a $\Gamma$-equivariant bijection $\mc{P}(G^*) \rw \mc{P}(G)$, hence a bijection $\mc{P}(G^*)^\Gamma \rw \mc{P}(G)^\Gamma$. If $G$ is not quasi-split, then an element of $\mc{P}(G)^\Gamma$ may not have a $\Gamma$-invariant member. This necessitates the following definition.

\begin{dfn} An element of $\mc{P}(G)^\Gamma$ is called \textbf{relevant} if it has a $\Gamma$-invariant representative. We will write $\mc{P}(G)_\tx{rel}$ for the set of relevant elements of $\mc{P}(G)^\Gamma$.
\end{dfn}

The study of what elements of $\mc{P}(G)^\Gamma$ are relevant leads naturally to the question of transfer of Levi subgroups. Let $M^*$ be a Levi subgroup of $G^*$, and let $A_{M^*}$ be the maximal split torus in $Z(M^*)$. It is known that $M=\tx{Cent}(A_{M^*},G^*)$.

\begin{lem} \label{lem:levitran} The following are equivalent.
\begin{enumerate}
\item There exists $\xi \in \Xi$ whose restriction to $A_{M^*}$ is defined over $F$.
\item There exists a Levi subgroup $M \subset G$ and $\xi \in \Xi$ which restricts to an inner twist $\xi : M^* \rw M$.
\item The class of $H^1(F,G^*_\tx{ad})$ determined by $\Xi$ belongs to the image of the injection $H^1(F,M^*/Z(G^*)) \rw H^1(F,G^*_\tx{ad})$.
\end{enumerate}
\end{lem}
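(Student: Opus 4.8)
The plan is to prove $(1)\Leftrightarrow(2)$ by directly manipulating the inner twist and its cocycle, and then $(2)\Leftrightarrow(3)$ by reinterpreting that cocycle cohomologically. Write $L^* := M^*/Z(G^*)$ for the Levi subgroup of $G^*_\tx{ad}$ attached to $M^*$, so that the map in $(3)$ is $H^1(F,L^*)\to H^1(F,G^*_\tx{ad})$; its injectivity is the standard fact that $H^1(F,-)$ of a parabolic $P^*\subset G^*_\tx{ad}$ injects into $H^1(F,G^*_\tx{ad})$, together with $H^1(F,N^*)=0$ for the unipotent radical $N^*$ of a parabolic with Levi factor $L^*$. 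I will freely use that $G^*(\ol F)\to G^*_\tx{ad}(\ol F)$ is surjective, so that each inner automorphism $\xi^{-1}\sigma(\xi)$ of $G^*$ equals $\tx{Ad}(g_\sigma)$ for some $g_\sigma\in G^*(\ol F)$, and the elementary identities $\tx{Cent}(A_{M^*},G^*)=M^*$ and $\tx{Cent}(M^*,G^*)=Z(M^*)$ (an element centralizing $M^*$ centralizes a maximal torus of $M^*$, hence lies in that torus, hence in $Z(M^*)$).

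For $(2)\Rightarrow(1)$: inner automorphisms of $M^*$ are trivial on $Z(M^*)\supseteq A_{M^*}$, so if $\xi\in\Xi$ restricts to an inner twist $M^*\to M$ then $\xi^{-1}\sigma(\xi)$ fixes $A_{M^*}$ pointwise, which unwinds to $\sigma\circ\xi=\xi\circ\sigma$ on $A_{M^*}$, i.e.\ $\xi|_{A_{M^*}}$ is defined over $F$. For $(1)\Rightarrow(2)$: given $\xi\in\Xi$ with $\xi|_{A_{M^*}}$ defined over $F$, the torus $\xi(A_{M^*})\subset G$ is $F$-split and defined over $F$, so $M:=\tx{Cent}(\xi(A_{M^*}),G)$ is a Levi subgroup of $G$ over $F$; since $\xi$ carries centralizers to centralizers it maps $M^*=\tx{Cent}(A_{M^*},G^*)$ onto $M\times\ol F$, and since $\xi^{-1}\sigma(\xi)=\tx{Ad}(g_\sigma)$ fixes $A_{M^*}$ pointwise we get $g_\sigma\in\tx{Cent}(A_{M^*},G^*)(\ol F)=M^*(\ol F)$, so $\xi^{-1}\sigma(\xi)|_{M^*}$ is inner in $M^*$ and $\xi:M^*\to M$ is an inner twist.

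For $(2)\Leftrightarrow(3)$ the idea is to shrink or enlarge the structure group of the class $[\Xi]$. Given $(2)$, write $\xi^{-1}\sigma(\xi)|_{M^*}=\tx{Ad}(m_\sigma)$ with $m_\sigma\in M^*(\ol F)$; comparing with $\tx{Ad}(g_\sigma)$ on $M^*$ forces $m_\sigma^{-1}g_\sigma\in\tx{Cent}(M^*,G^*)(\ol F)=Z(M^*)(\ol F)$, so the image of $g_\sigma$ in $G^*_\tx{ad}(\ol F)$ lies in the image $L^*(\ol F)$ of $M^*(\ol F)$; hence the cocycle $\sigma\mapsto\xi^{-1}\sigma(\xi)$ representing $[\Xi]$ is $L^*$-valued, so $[\Xi]$ lies in the image of $H^1(F,L^*)$. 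Conversely, given $(3)$, choose $c\in Z^1(F,L^*)$ representing $[\Xi]$; since altering an inner twist by $\tx{Ad}(g)$, $g\in G^*(\ol F)$, changes its cocycle by a coboundary and $G^*(\ol F)\twoheadrightarrow G^*_\tx{ad}(\ol F)$, one may choose $\xi\in\Xi$ with $\xi^{-1}\sigma(\xi)=\tx{Ad}(c_\sigma)$ for all $\sigma$. Lifting $c_\sigma$ to $\tilde c_\sigma\in M^*(\ol F)$ shows $\xi^{-1}\sigma(\xi)$ preserves $M^*$ and is inner on it; then $M:=\xi(M^*)$ is an $F$-subgroup, $\xi(A_{M^*})$ is $F$-split and defined over $F$ (because $\xi^{-1}\sigma(\xi)$ is trivial on $A_{M^*}\subset Z(M^*)$), $M=\tx{Cent}(\xi(A_{M^*}),G)$ is a Levi of $G$, and $\xi:M^*\to M$ is an inner twist, which is $(2)$.

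I expect the one genuinely delicate point to be the passage between $M^*/Z(M^*)$ and $M^*/Z(G^*)$ in the step $(2)\Leftrightarrow(3)$: one must see that ``$\xi^{-1}\sigma(\xi)$ is inner on $M^*$'' pins its $G^*_\tx{ad}$-valued cocycle down to values in $M^*/Z(G^*)$, not merely in $M^*/Z(M^*)$, which is exactly where $\tx{Cent}(M^*,G^*)=Z(M^*)$ and the choice of a lift into $M^*(\ol F)$ (rather than into $M^*_\tx{ad}(\ol F)$) enter; this is also why $(3)$ must be phrased with the quotient $Z(G^*)$ and why the injectivity $H^1(F,M^*/Z(G^*))\hookrightarrow H^1(F,G^*_\tx{ad})$ is invoked. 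Everything else is formal.
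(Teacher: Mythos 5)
Your proof is correct and rests on exactly the same ingredients as the paper's: the centralizer identity $\tx{Cent}(A_{M^*},G^*)=M^*$, the observation that inner automorphisms coming from $M^*$ fix $A_{M^*}$ pointwise, and the reinterpretation of the cocycle $\sigma\mapsto\xi^{-1}\sigma(\xi)$ as taking values in $M^*/Z(G^*)$. The paper organizes this as a quick cycle $(1)\Rightarrow(2)\Rightarrow(3)\Rightarrow(1)$ and never needs $\tx{Cent}(M^*,G^*)=Z(M^*)$ explicitly (since $g_\sigma\in M^*$ is already established via $\tx{Cent}(A_{M^*},G^*)=M^*$ in the $(1)\Rightarrow(2)$ step), whereas you prove the equivalences $(1)\Leftrightarrow(2)$ and $(2)\Leftrightarrow(3)$ separately and invoke $\tx{Cent}(M^*,G^*)=Z(M^*)$ in the $(2)\Rightarrow(3)$ direction; this is a harmless organizational difference, not a different route.
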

\begin{proof}
Take $\xi \in \Xi$ with $\xi|_{A_{M^*}}$ defined over $F$ and put $A_{M}=\xi(A_{M^*})$. Then $A_{M}$ is a split torus in $G$, and $M= \tx{Cent}(A_{M},G)$ is a Levi subgroup of $G$, and $A_{M}$ is the maximal split torus in $Z(M)$. Moreover, for any $g \in G^*$ with $\xi^{-1}\sigma(\xi)=\tx{Ad}(g)$, we have $g \in \tx{Cent}(A_{M^*},G^*) = M^*$. Thus $\xi : M^* \rw M$ is an inner twist. It follows right away that the cocycle $\sigma \mapsto \xi^{-1}\sigma(\xi)$ is inflated from $M^*/Z(G^*)$. Finally, if we assume that $\xi^{-1}\sigma(\xi)$ takes values in $M^*/Z(G^*)$, then we see right away that $\xi|_{A_{M^*}}$ is defined over $F$.
\end{proof}

\begin{dfn}\label{d:Levi-transfer} We say that $M^*$ \textbf{transfers} to $G$ (with respect to $\Xi$) if the equivalent conditions of the proposition hold. \end{dfn}

The following question will also be relevant: Given an equivalence class of inner twists $\Xi_M : M^* \rw M$, does there exist an equivalence class of inner twists $\Xi : G^* \rw G$ and an embedding of $M$ as a Levi-subgroup of $G$ such that $\Xi_M$ is the restriction of $\Xi$? For this we consider the following diagram
\begin{equation} \label{eq:itex} \xymatrix{
1\ar[r]&H^1(\Gamma,\frac{M^*}{Z(G^*)})\ar[r]&H^1(\Gamma,\frac{M^*}{Z(M^*)})\ar[r]\ar[dr]&H^2(\Gamma,\frac{Z(M^*)}{Z(G^*)})\\
&&&H^2(\Gamma,Z(M^*))\ar[u]\\
&&&H^2(\Gamma,Z(G^*))\ar[u]\\
&&&1\ar[u]
} \end{equation}
To justify the two ``1''s in the diagram, recall that an induced torus is an algebraic torus whose character module (equivalently co-character module) has a $\Gamma$-invariant basis. For such a torus $S$ Shapiro's lemma and Hilbert's theorem 90 imply $H^1(\Gamma,S)=0$. Furthermore, if $\hat S$ is the complex dual torus, then $\hat S^\Gamma$ is connected, because the $X^*(\hat S^\Gamma/\hat S^{\Gamma,\circ})=X_*(S)_{\Gamma,\tx{tor}}=0$.
\begin{lem} \label{lem:zmzgind} If $M^* \subset G^*$ is a Levi subgroup, then the algebraic group $Z(M^*)/Z(G^*)$ is an induced torus. If $\ts{M} \subset {^LG^*}$ is a Levi subgroup, then $Z(\ts{M} \cap \hat G^*)/Z(\hat G^*)$ is the dual of an induced torus.
\end{lem}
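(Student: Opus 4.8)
The plan is to reduce both assertions to a single short computation with root lattices; no step presents a genuine obstacle, the content being essentially bookkeeping. First I would fix a $\Gamma$-stable Borel pair $(T^*,B^*)$ of $G^*$, which exists since $G^*$ is quasi-split, and — after conjugating $M^*$ by an element of $G^*(F)$ — assume that $M^*$ is the standard Levi subgroup containing $T^*$, i.e. the subgroup generated by $T^*$ together with the root subgroups for roots in the $\Z$-span of a subset $\Delta_M\subseteq\Delta$ of the simple roots of $(G^*,B^*,T^*)$. Since $M^*$ is defined over $F$, the subset $\Delta_M$ is stable under the action of $\Gamma$ on $\Delta$. For the second assertion I would invoke the correspondence of \S\ref{subsub:levis} between Levi subgroups of ${}^LG^* = \hat G^*\rtimes\Gamma$ and $\Gamma$-stable Levi subgroups of $\hat G^*$, together with the $\Gamma$-fixed pinning of $\hat G^*$, to assume likewise that $\hat M := \ts{M}\cap\hat G^*$ is the standard Levi of $\hat G^*$ attached to a $\Gamma$-stable subset of the simple roots of $\hat G^*$. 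In this way the second assertion becomes the first one applied to the complex reductive group $\hat G^*$ with its $\Gamma$-action by pinned automorphisms, so it suffices to treat $M^*\subset G^*$.

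The key input is the standard description of the center of a connected reductive group as the common kernel of its roots on a maximal torus: $Z(M^*) = \bigcap_{\alpha\in\Delta_M}\ker(\alpha\colon T^*\to\mathbb{G}_m)$, where intersecting over the simple roots of $M^*$ suffices because every root of $M^*$ is an integral combination of them. Thus $Z(M^*)$ is the kernel of the homomorphism $T^*\to\mathbb{G}_m^{\Delta_M}$ with components the roots in $\Delta_M$; since in characteristic zero such a homomorphism factors as a faithfully flat surjection onto its image followed by a closed immersion, $T^*/Z(M^*)$ is the subtorus of $\mathbb{G}_m^{\Delta_M}$ whose character group $X^*(T^*/Z(M^*))$ is exactly the subgroup of $X^*(T^*)$ generated by $\Delta_M$, namely the root lattice $\Z\Delta_M$ of $M^*$; likewise $X^*(T^*/Z(G^*)) = \Z\Delta$. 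Applying the exact contravariant functor $X^*$ to the short exact sequence of diagonalizable groups $1\to Z(G^*)\to Z(M^*)\to Z(M^*)/Z(G^*)\to 1$ identifies $X^*(Z(M^*)/Z(G^*))$ with the kernel of the natural surjection $X^*(T^*)/\Z\Delta_M \to X^*(T^*)/\Z\Delta$, that is, with $\Z\Delta/\Z\Delta_M$.

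It remains to observe that $\Z\Delta/\Z\Delta_M$ is a permutation $\Gamma$-module. Indeed the simple roots $\Delta$ form a $\Z$-basis of the root lattice $\Z\Delta$, so $\Z\Delta/\Z\Delta_M$ is free with basis the images of $\Delta\setminus\Delta_M$; in particular $Z(M^*)/Z(G^*)$ is a torus. As $\Gamma$ permutes $\Delta$ and preserves $\Delta_M$, it permutes this basis, so $X^*(Z(M^*)/Z(G^*))$ has a $\Gamma$-invariant $\Z$-basis and $Z(M^*)/Z(G^*)$ is an induced torus. Running the identical argument for $\hat G^*$ and $\hat M$ shows that $Z(\ts{M}\cap\hat G^*)/Z(\hat G^*)$ is an induced torus; since the $\Z$-dual of a permutation $\Gamma$-module is again a permutation $\Gamma$-module, an induced torus is in particular the dual of an induced torus, which is the stated form of the second assertion.

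I do not expect a real obstacle; the two points that need a little care are the equality $X^*(T^*/Z(M^*)) = \Z\Delta_M$ as honest subgroups of $X^*(T^*)$ (rather than up to saturation), which is precisely the closed-immersion statement used above, and the fact that $Z(M^*)$ itself may be disconnected — as already happens for $M^* = \tx{S}(\GL_2\times\GL_2)$ inside $\SL_4$ — which is exactly why one must pass to the quotient $Z(M^*)/Z(G^*)$ and work with character groups of possibly disconnected diagonalizable groups instead of arguing with identity components.
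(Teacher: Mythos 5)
Your proof is correct and takes essentially the same approach as the paper: both reduce to the computation that $X^*(Z(M^*)/Z(G^*)) \cong \Z\Delta/\Z\Delta_M$ is a permutation $\Gamma$-module with basis the images of $\Delta\setminus\Delta_M$, and both handle the dual side by running the same argument with the $\Gamma$-pinned complex group. The only cosmetic difference is that you pass through $X^*(T^*/Z(M^*))=\Z\Delta_M$ and $X^*(T^*/Z(G^*))=\Z\Delta$ before taking the quotient, while the paper directly identifies $Z(M^*)/Z(G^*)$ inside $T^*/Z(G^*)$ and reads off the basis.
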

\begin{proof}
Let $T^* \subset G^*$ be a minimal Levi. Conjugating $M^*$ within $G^*$ we may assume that $T^* \subset M^*$. Then $Z(M^*)/Z(G^*)$ is the subgroup of $T^*/Z(G^*)$ given as the intersection of the kernels of a $\Gamma$-invariant set of simple roots (for some choice of a Borel subgroup $B^*$ containing $T^*$). The complement of that set in the set of all simple roots projects under $X^*(T^*/Z(G^*)) \rw X^*(Z(M^*)/Z(G^*))$ to a basis. This proves the first statement. The second is proved the same way, by taking $\hat T^*$ to be part of a $\Gamma$-invariant splitting of $\hat G^*$.
\end{proof}

With the above diagram at hand, we can now answer the question as follows.
\begin{cor} \label{cor:itex}
There exists an embedding of $M$ as a Levi subgroup of $G$ such that the equivalence class $\Xi_M : M^* \rw M$ of inner twists is the restriction of an equivalence class of inner twists $\Xi : G^* \rw G$ if and only if the class of $\Xi_M$ in $H^1(\Gamma,M^*/Z(M^*))$ is mapped via the connecting homomorphism into the subgroup $H^2(\Gamma,Z(G^*))$ of $H^2(\Gamma,Z(M^*))$.
\end{cor}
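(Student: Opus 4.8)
The plan is to reinterpret both sides of the claimed equivalence as statements about Galois cohomology classes and then read off the result from the exact sequences already assembled in \eqref{eq:itex}. Throughout, recall that an equivalence class of inner twists $G^* \rw G$ is the same datum as a class in $H^1(\Gamma, G^*_\tx{ad}) = H^1(\Gamma, G^*/Z(G^*))$, and likewise $\Xi_M$ is a class in $H^1(\Gamma, M^*/Z(M^*))$. So the corollary will follow once we identify, inside the top row of \eqref{eq:itex}, the condition ``$\Xi_M$ extends to $G^*$'' with a condition on the connecting homomorphism $\delta$ of the statement.

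The heart of the argument is the following reformulation: $\Xi_M$ is the restriction of some equivalence class of inner twists $\Xi : G^* \rw G$ in which $M$ is embedded as a Levi subgroup if and only if $\Xi_M$ lies in the image of the map $H^1(\Gamma, M^*/Z(G^*)) \rw H^1(\Gamma, M^*/Z(M^*))$ appearing in \eqref{eq:itex}. For the forward direction I would choose $\xi \in \Xi$ restricting to an inner twist $\xi|_{M^*} : M^* \rw M$ (which exists by the meaning of restriction, cf. Lemma \ref{lem:levitran}(2)); exactly as in the proof of Lemma \ref{lem:levitran} one has $\xi^{-1}\sigma(\xi) = \tx{Ad}(g_\sigma)$ with $g_\sigma \in \tx{Cent}(A_{M^*},G^*) = M^*$, and then $\sigma \mapsto g_\sigma Z(G^*)$ is a cocycle in $Z^1(\Gamma, M^*/Z(G^*))$ whose image in $H^1(\Gamma, M^*/Z(M^*))$ is visibly the class of $\xi|_{M^*}$, i.e. $\Xi_M$. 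For the reverse direction I would take a cocycle $\bar z \in Z^1(\Gamma, M^*/Z(G^*))$ representing a preimage of $\Xi_M$, push it to a cocycle $z \in Z^1(\Gamma, G^*/Z(G^*)) = Z^1(\Gamma, G^*_\tx{ad})$, and let $\xi : G^* \rw G$ be the associated inner twist, normalized so that $\xi^{-1}\sigma(\xi) = \tx{Ad}(g_\sigma)$ with $g_\sigma \in M^*$ a lift of $\bar z_\sigma$; since $\tx{Ad}(g_\sigma)$ fixes $A_{M^*}$ pointwise, $\xi|_{A_{M^*}}$ is defined over $F$, so by Lemma \ref{lem:levitran} $M^*$ transfers to a Levi $M \subset G$ and the restricted inner twist $\xi|_{M^*}$ has class the image of $\bar z$ in $H^1(\Gamma, M^*/Z(M^*))$, namely $\Xi_M$.

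Granting this, the rest is a diagram chase in \eqref{eq:itex}, using Lemma \ref{lem:zmzgind} to the effect that $Z(M^*)/Z(G^*)$ is an induced torus, hence $H^1(\Gamma, Z(M^*)/Z(G^*)) = 0$. The exactness of the top row (the leading ``$1$'' being exactly this vanishing) shows that $\Xi_M$ lies in the image of $H^1(\Gamma, M^*/Z(G^*))$ precisely when its image under the connecting map $\delta' : H^1(\Gamma, M^*/Z(M^*)) \rw H^2(\Gamma, Z(M^*)/Z(G^*))$ is trivial. By the commutativity of \eqref{eq:itex} --- functoriality of connecting homomorphisms for the evident morphism from the central extension $1 \rw Z(M^*) \rw M^* \rw M^*/Z(M^*) \rw 1$ to the central extension $1 \rw Z(M^*)/Z(G^*) \rw M^*/Z(G^*) \rw M^*/Z(M^*) \rw 1$ --- one has $\delta' = \bar p_* \circ \delta$, where $\delta$ is the connecting homomorphism of the statement and $\bar p_* : H^2(\Gamma, Z(M^*)) \rw H^2(\Gamma, Z(M^*)/Z(G^*))$ is induced by the projection. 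Finally, the vertical column of \eqref{eq:itex} is the long exact sequence of $1 \rw Z(G^*) \rw Z(M^*) \rw Z(M^*)/Z(G^*) \rw 1$, which (again using $H^1(\Gamma, Z(M^*)/Z(G^*)) = 0$) exhibits $H^2(\Gamma, Z(G^*))$ as the kernel of $\bar p_*$. Chaining the equivalences: $\Xi_M$ lies in the image of $H^1(\Gamma, M^*/Z(G^*))$ iff $\bar p_*(\delta(\Xi_M))$ is trivial iff $\delta(\Xi_M)$ lies in $H^2(\Gamma, Z(G^*)) \subseteq H^2(\Gamma, Z(M^*))$, which combined with the reformulation is the assertion.

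I expect the only genuinely delicate point to be the reformulation step --- specifically, checking that the inner twist obtained by restricting $\xi$ to $M^*$ really represents the predicted class in $H^1(\Gamma, M^*/Z(M^*))$, independently of the choice of representative cocycle and of the lift $\xi$. Everything downstream is formal, modulo the standard facts about long exact sequences attached to central extensions with abelian kernel (injectivity on $H^1$ when the kernel has trivial $H^1$, and the lifting criterion ``a class lifts iff its image under the connecting map is trivial''), which is exactly what makes sense of the truncated exact sequences drawn in \eqref{eq:itex}.
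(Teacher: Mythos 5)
Your proof is correct and is the diagram chase the paper has in mind when it writes ``With the above diagram at hand, we can now answer the question as follows'': you reformulate the extendability of $\Xi_M$ as membership in the image of $H^1(\Gamma,M^*/Z(G^*)) \to H^1(\Gamma,M^*/Z(M^*))$ (the argument you give is the right one, by the same $g_\sigma \in M^*$ analysis as in Lemma~\ref{lem:levitran}), and then read off the criterion from the exactness of the two legs of \eqref{eq:itex}, using Lemma~\ref{lem:zmzgind} in exactly the two places where the diagram has a ``$1$''.
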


When $F$ is a number field, it is useful to know a local-global principle for transfers.

\begin{lem}\label{lem:transfer-Levi-locglo}
  Assume that $F$ is a number field. Then $M^*$ transfers to $G$ over $F$ if and only if $M^*$ transfers to $G$ over $F_v$ at every place $v$ of $F$.
\end{lem}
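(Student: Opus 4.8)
The forward implication is immediate from Lemma~\ref{lem:levitran}(3) together with the functoriality of the localization maps on Galois cohomology. For the converse I would set things up cohomologically. Let $\alpha\in H^1(F,G^*_\tx{ad})$ be the class attached to the equivalence class of inner twists $\Xi$, and for a place $v$ write $\alpha_v\in H^1(F_v,G^*_\tx{ad})$ for its localization. Fix an $F$-parabolic subgroup $P^*\subset G^*$ with Levi factor $M^*$ (possible since $M^*$ is an $F$-Levi of the quasi-split $G^*$), and set $\ol P=P^*/Z(G^*)$ and $\ol M=M^*/Z(G^*)$, viewed inside $\ol G=G^*_\tx{ad}$. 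Because the unipotent radical $\ol N$ of $\ol P$ is split unipotent and $\tx{char}\,F=0$, we have $H^1(k,\ol N)=0$ for $k=F$ and $k=F_v$ (and likewise after twisting by cocycles in $\ol M$), so $\ol M\hra\ol P$ and $\ol P\rw\ol M$ induce mutually inverse bijections $H^1(k,\ol M)\leftrightarrow H^1(k,\ol P)$, compatibly with the maps into $H^1(k,\ol G)$. Hence, by Lemma~\ref{lem:levitran}(3), the statement ``$M^*$ transfers to $G$ over $k$'' is equivalent to ``$\alpha_k\in\tx{im}\bigl(H^1(k,\ol P)\rw H^1(k,\ol G)\bigr)$'' for $k=F$ and for each $k=F_v$.

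Next I would convert this into a statement about rational points. Pick a cocycle $a\in Z^1(F,\ol G)$ representing $\alpha$. Since $\ol P$ is self-normalizing in $\ol G$, the projective variety $\ol G/\ol P$ is the variety of parabolic subgroups of $\ol G$ having the same type as $P^*$, and it carries the $F$-point $[\ol P]$. Twisting by $a$ yields a smooth projective $F$-variety $X_a:={}_a(\ol G/\ol P)$, which is precisely the variety of parabolic subgroups of the inner form $G_\tx{ad}={}_a\ol G$ of $G^*_\tx{ad}$ having the same type as $P^*$; it is homogeneous under the connected group $G_\tx{ad}$. The standard twisting formalism applied to the exact sequence of pointed sets attached to $\ol P\hra\ol G\rw\ol G/\ol P$ gives, for $k=F$ and each $k=F_v$,
\[ \alpha_k\in\tx{im}\bigl(H^1(k,\ol P)\rw H^1(k,\ol G)\bigr)\quad\Longleftrightarrow\quad X_a(k)\neq\emptyset. \]
Combining this with the previous paragraph, the lemma is reduced to the assertion that $X_a(F)\neq\emptyset$ whenever $X_a(F_v)\neq\emptyset$ for all $v$.

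This last assertion --- the Hasse principle for the projective homogeneous space $X_a$, equivalently: a connected reductive group over a number field has a parabolic subgroup of a prescribed type if and only if it does so over every completion --- is the crux of the argument and the only genuinely nontrivial input. It is classical (it goes back to Harder), and I would simply cite it rather than reprove it. I should emphasize that the reduction from the Levi $\ol M$ to the \emph{parabolic} $\ol P$, carried out just before invoking this principle, is essential and not cosmetic: for a merely reductive (non-parabolic) subgroup the analogous homogeneous space need not be projective and the Hasse principle can genuinely fail --- a Brauer--Manin obstruction may intervene --- so one must arrange to be dealing with a projective homogeneous space before applying it.
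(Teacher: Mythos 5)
Your proof is correct, but it takes a genuinely different route from the paper's. You reduce the statement to the Hasse principle for the projective homogeneous space of parabolics of a fixed type under $G_\tx{ad}$, and the two reduction steps you perform (passing from the Levi $\ol M$ to the parabolic $\ol P$ by killing the unipotent radical, then identifying fibers of $H^1(k,\ol P)\rw H^1(k,\ol G)$ with rational points of the twisted flag variety) are standard and carried out correctly; your emphasis that it is crucial to land on a \emph{projective} homogeneous space is well placed, as the Hasse principle can fail otherwise. You then invoke Harder's theorem as the essential external input. The paper's proof (attributed to Kottwitz) never leaves the cohomological framework: it uses Kottwitz's exact sequence \cite[Prop.\ 2.6]{Kot86} together with the surjectivity of $\pi_0(Z(\hat G^*_\tx{sc})^\Gamma)\rw\pi_0(Z(\hat M^*_\tx{sc})^\Gamma)$ (Lemma \ref{lem:zmzgind}) to produce a global class $x'_M\in H^1(\Gamma,M^*/Z(G^*))$ with the correct localizations, and then deals with the residual ambiguity in $\tx{ker}^1$ by invoking the Poitou--Tate-type duality of \cite[(4.2.2)]{Kot84} between $\tx{ker}^1(F,H)$ and $\tx{ker}^1(\Gamma,Z(\hat H))$. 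Your argument is shorter at the cost of citing a heavier classical result as a black box; the paper's is longer but stays entirely within the abelianized-cohomology toolkit that is already set up and used elsewhere in the paper, which is presumably why the authors preferred it.
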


\begin{proof}
We reproduce here the proof that one of us learned from Robert Kottwitz while being a graduate student. We are thankful to Kottwitz for allowing us to reproduce this proof.

If $M^*$ transfers to $G$ over $F$, then it tautologically does so over all completions $F_v$. Assume conversely that $M^*$ transfers to $G$ over each $F_v$. Let $x \in H^1(\Gamma,G_\tx{ad}^*)$ be the class of $\Xi$. Our task is to show that $x$ is the image of $x_M \in H^1(\Gamma,M^*/Z(G^*))$ under the injection $H^1(\Gamma,M^*/Z(G^*)) \rw H^1(\Gamma,G_\tx{ad}^*)$. Our assumption implies that the restriction $x_v \in H^1(\Gamma_v,G_\tx{ad}^*)$ is the image of $x_{M,v} \in H^1(\Gamma_v,M^*/Z(G^*))$ under the injection $H^1(\Gamma_v,M^*/Z(G^*)) \rw H^1(\Gamma_v,G_\tx{ad}^*)$, for each place $v$.

We claim first that there exists $x'_M \in H^1(\Gamma,M^*/Z(G^*))$ whose restriction at each $v$ is equal to $x_{M,v}$. According to \cite[Prop. 2.6]{Kot86}, this is equivalent to showing that if $\chi_{M,v}$ is the character of $\pi_0(Z(\hat M^*_\tx{sc})^\Gamma)$ corresponding to $x_{M,v}$ under the map \cite[Theorem 1.2]{Kot86}, then the sum of all $\chi_{M,v}$ is the trivial character. But we know that this is true for the pull-backs $\chi_{M,v}|_{\pi_0(Z(\hat G^*_\tx{sc})^\Gamma)}$, because they correspond to the classes $(x_v)$ under the analogous map for the group $G^*_\tx{ad}$. We consider the exact sequence
\[ 1 \rw Z(\hat G^*_\tx{sc}) \rw Z(\hat M^*_\tx{sc}) \rw Z(\hat M^*_\tx{sc})/Z(\hat G^*_\tx{sc}) \rw 1. \]
According to Lemma \ref{lem:zmzgind}, the third term is an induced torus, so its $\Gamma$-fixed points are connected, which implies that the map
\[ \pi_0(Z(\hat G^*_\tx{sc})) \rw \pi_0(Z(\hat M^*_\tx{sc})) \]
is surjective. We conclude that the sum of the characters $\chi_{M,v}$ is indeed trivial and obtain the existence of a class $x'_M \in H^1(\Gamma,M^*/Z(G^*))$ as required.

Let $x' \in H^1(\Gamma,G^*_\tx{ad})$ be the image of $x'_M$. We have by construction $x'_v=x_v$ for all $v$, but this does not yet mean that $x'=x$. Let $G'$ be the twist of $G$ by $x'$ and let $M'$ be the twist of $M^*$ by $x_M$. By construction $M'$ is a Levi subgroup of $G'$ and $x'=1$ in $H^1(\Gamma,G')$. Thus $x \in \tx{ker}^1(\Gamma,G'_\tx{ad})$.

We claim that the natural map $\tx{ker}^1(\Gamma,M'/Z(G')) \rw \tx{ker}^1(\Gamma,G'_\tx{ad})$ is surjective (in fact it is bijective, but we will not need this). By \cite[(4.2.2)]{Kot84}, there is a natural duality between $\tx{ker}^1(\Gamma,H)$ and $\tx{ker}^1(\Gamma,Z(\hat H))$ for any reductive group $H$. Thus it is enough to show that $\tx{ker}^1(\Gamma,Z(\hat G^*_\tx{sc})) \rw \tx{ker}^1(\Gamma,Z(\hat M^*_\tx{sc}))$ is injective. This however follows from the surjectivity of $\pi_0(Z(\hat G^*_\tx{sc})) \rw \pi_0(Z(\hat M^*_\tx{sc}))$ by \cite[Cor. 2.3]{Kot84}.

The surjectivity of $\tx{ker}^1(\Gamma,M'/Z(G')) \rw \tx{ker}^1(\Gamma,G'_\tx{ad})$ allows us to choose $x_M \in \tx{ker}^1(\Gamma,M'/Z(G'))$ lifting $x$. Then $x_M$, viewed as an element of $H^1(\Gamma,M^*/Z(G^*))$ also lifts $x$ and the proof is complete.
\end{proof}

Having discussed the transfer of Levi subgroups, we return to the relevance of parabolic subgroups. Let $F$ be an arbitrary field of characteristic zero again. If a Levi subgroup $M^* \subset G^*$ transfers to $M \subset G$, then a $\xi$ as in part 2 of Lemma \ref{lem:levitran} provides a $\Gamma$-equivariant isomorphism $\mf{a}_{M^*} \rw \mf{a}_{M}$ and thus also a $\Gamma$-equivariant bijection between the set of parabolic subgroups of $G^*$ with Levi factor $M^*$ to the set of parabolic subgroups of $G$ with Levi factor $M$. This bijection depends only on $\Xi$ and not on $\xi$.

\begin{lem} \label{lem:tranrel} An element element $[P] \in \mc{P}(G^*)^\Gamma$ maps to a relevant element $[P] \in \mc{P}(G)^\Gamma$ if and only if one (hence every) Levi component of one (hence every) $\Gamma$-invariant element $P^*$ of $[P^*]$ transfers to $G$.
\end{lem}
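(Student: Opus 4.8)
\emph{Proof proposal.} The strategy is to establish the equivalence by proving two implications and chaining them, which simultaneously yields both ``(hence every)'' clauses. Throughout, recall that $G^*$ is quasi-split with a $\Gamma$-fixed Borel pair, so $[P^*]$ has $\Gamma$-invariant members; any such $P^*$ is defined over $F$ and therefore possesses an $F$-rational Levi component $M^*$, which is $\Gamma$-invariant and satisfies $M^*=\tx{Cent}(A_{M^*},G^*)$ (cf. \S\ref{subsub:levis}). I would first show: if \emph{some} $\Gamma$-invariant Levi component $M^*$ of \emph{some} $\Gamma$-invariant $P^*\in[P^*]$ transfers to $G$, then $[P]:=\Xi([P^*])$ is relevant. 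I would then show: if $[P]$ is relevant, then \emph{every} $\Gamma$-invariant Levi component of \emph{every} $\Gamma$-invariant $P^*\in[P^*]$ transfers to $G$. The two together give the lemma.

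For the first implication, Lemma~\ref{lem:levitran} lets us pick $\xi\in\Xi$ restricting to an inner twist $\xi\colon M^*\rw M$ for some Levi $M\subseteq G$; moreover, as recorded in the proof of that lemma, any $g_\sigma\in G^*$ with $\xi^{-1}\sigma(\xi)=\tx{Ad}(g_\sigma)$ lies in $\tx{Cent}(A_{M^*},G^*)=M^*$, hence in $P^*$. Then $\tx{Ad}(g_\sigma)$ preserves $P^*$, so for every $\sigma$ we get $\sigma(\xi(P^*))=\sigma(\xi)(P^*)=\xi(\tx{Ad}(g_\sigma)P^*)=\xi(P^*)$. Thus $\xi(P^*)$ is defined over $F$; being a member of $\Xi([P^*])=[P]$, it exhibits $[P]$ as relevant.

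For the second implication --- the substantive part --- fix a $\Gamma$-invariant $P^*\in[P^*]$, a $\Gamma$-invariant Levi $M^*\subseteq P^*$, and an $F$-rational representative $P\in[P]$. The plan is to normalize a representative $\xi\in\Xi$ in two stages. First, since $\xi_0(P^*)$ and $P$ lie in the same $G(\ol F)$-conjugacy class for any $\xi_0\in\Xi$, we may replace $\xi_0$ by $\tx{Ad}(h)\circ\xi_0\in\Xi$ for a suitable $h\in G(\ol F)$ and so assume $\xi(P^*)=P$. Second, choosing an $F$-rational Levi $M\subseteq P$, the Levi factors $\xi^{-1}(M)$ and $M^*$ of $P^*$ are conjugate by some $p\in P^*(\ol F)$; replacing $\xi$ by $\xi\circ\tx{Ad}(p^{-1})\in\Xi$, which still has $\xi(P^*)=P$ because $p\in P^*$, we may assume in addition $\xi(M^*)=M$. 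Now for each $\sigma$, both $\xi$ and $\sigma(\xi)$ carry $P^*$ to $P$ and $M^*$ to $M$, since $P$ and $M$ are $F$-rational; hence any $g_\sigma$ with $\xi^{-1}\sigma(\xi)=\tx{Ad}(g_\sigma)$ normalizes both $P^*$ and $M^*$, i.e. $g_\sigma\in N_{G^*}(P^*)\cap N_{G^*}(M^*)=N_{P^*}(M^*)$. The key group-theoretic input is that $N_{P^*}(M^*)=M^*$: writing $g_\sigma=mu$ with $m\in M^*(\ol F)$ and $u$ in the unipotent radical $U_{P^*}$, one finds that $\tx{Ad}(u)$ preserves $M^*$, hence $utu^{-1}\in M^*\cap tU_{P^*}=\{t\}$ for every $t\in A_{M^*}$, so $u$ centralizes $A_{M^*}$ and therefore $u\in\tx{Cent}(A_{M^*},G^*)\cap U_{P^*}=M^*\cap U_{P^*}=\{1\}$. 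Consequently $g_\sigma\in M^*(\ol F)$, so $\xi$ restricts to an inner twist $M^*\rw M$, and Lemma~\ref{lem:levitran} shows $M^*$ transfers to $G$.

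I expect the main obstacle to be precisely this two-stage normalization of $\xi$ in the second implication together with the identity $N_{P^*}(M^*)=M^*$, which is what promotes the a priori membership $g_\sigma\in P^*$ to $g_\sigma\in M^*$ and hence delivers an inner twist of Levi subgroups rather than merely of parabolics. Everything else is formal manipulation of the $\Gamma$-equivariant bijection $\mc P(G^*)\rw\mc P(G)$ attached to $\Xi$ and repeated use of Lemma~\ref{lem:levitran}.
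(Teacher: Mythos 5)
Your proof is correct and follows essentially the same route as the paper: in one direction invoke Lemma~\ref{lem:levitran} to get an inner twist $\xi:M^*\to M$ and deduce $\xi(P^*)$ is $\Gamma$-invariant, and conversely normalize $\xi\in\Xi$ so that $\xi(P^*,M^*)=(P,M)$ and use $N_{P^*}(M^*)=M^*$ to conclude the cocycle lands in $M^*/Z(G^*)$. The only cosmetic differences are that you verify $\xi(P^*)$ is $\Gamma$-invariant by a direct cocycle computation (the paper instead cites $\Gamma$-equivariance of the induced map $\mf{a}_{M^*}\to\mf{a}_M$), and you spell out the elementary proof of $N_{P^*}(M^*)=M^*$ which the paper takes as known.
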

\begin{proof}
Let $P^*$ be a parabolic subgroup of $G^*$ defined over $F$. It has a $\Gamma$-invariant Levi component $M^*$ \cite[\S20.5]{Brl91}. If $M^*$ transfers to $G$, then fixing an element $\xi \in \Xi$ as above and looking at the map $\mf{a}_{M^*} \rw \mf{a}_{M}$ it provides, we see that $P=\xi(P^*)$ is $\Gamma$-invariant. Thus $\Xi([P^*])$ is relevant.

Conversely suppose that $\Xi([P^*])$ is relevant, and let $\xi \in \Xi$ be such that $P=\xi(P^*)$ is $\Gamma$-invariant. Then $\xi^{-1}\sigma(\xi) \in N(P^*,G^*_\tx{ad}) = P^*/Z(G^*)$. Fix Levi factors $M^*$ and $M$ of $P^*$ and $P$ which are $\Gamma$-invariant. Modifying $\xi$ by an element of $P^*$ we achieve that $\xi(M^*)=M$. But then $\xi^{-1}\sigma(\xi) \in N(M^*,P^*/Z(G^*))=M^*/Z(G^*)$ and we see that $\xi : M^* \rw M$ is an inner twist. In particular $\xi|_{A_{M^*}}$ is defined over $F$.
\end{proof}

\begin{cor} \label{cor:relinc} If $[P^*] \in \mc{P}(G)_\tx{rel}$ and $[P] \in \mc{P}(G)^\Gamma$ with $[P^*] \leq [P]$, then $[P] \in \mc{P}(G)_\tx{rel}$.
\end{cor}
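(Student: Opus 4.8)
The plan is to pass to the quasi-split side via the $\Gamma$-equivariant, order-preserving bijection $\mc{P}(G^*)\rw\mc{P}(G)$ furnished by $\Xi$, and there to reduce the statement to a monotonicity property of the transfer of Levi subgroups. Concretely, let $[Q_1]\leq[Q_2]$ in $\mc{P}(G^*)^\Gamma$ be the elements corresponding under this bijection to the given $[P^*]\leq[P]$, so that the hypothesis says the image of $[Q_1]$ in $\mc{P}(G)^\Gamma$ is relevant and the goal is the same for $[Q_2]$. By Lemma \ref{lem:tranrel} this amounts to showing: if a Levi component of a $\Gamma$-invariant representative of $[Q_1]$ transfers to $G$, then so does a Levi component of a $\Gamma$-invariant representative of $[Q_2]$.

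First I would fix a $\Gamma$-stable Borel pair $(T^*,B^*)$ of $G^*$ --- available since $G^*$ is quasi-split --- and take the standard representatives $P_1^*\subseteq P_2^*$ of $[Q_1]\leq[Q_2]$ containing $B^*$, together with their standard Levi components $M_1^*\subseteq M_2^*$ containing $T^*$. All four subgroups are then $\Gamma$-invariant, and from $M_1^*\subseteq M_2^*$ (with $Z(M_2^*)\subseteq T^*\subseteq M_1^*$) one gets the reverse inclusion $A_{M_2^*}\subseteq A_{M_1^*}$ of maximal split central tori. By hypothesis and Lemma \ref{lem:tranrel}, $M_1^*$ transfers to $G$.

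The heart of the matter is the implication ``$M_1^*$ transfers $\Rightarrow$ $M_2^*$ transfers''. I would read it off Lemma \ref{lem:levitran}(3): that lemma identifies ``$M^*$ transfers'' with the condition that the class $x$ of $\Xi$ in $H^1(F,G^*_\tx{ad})$ lie in the image of the injection $H^1(F,M^*/Z(G^*))\rw H^1(F,G^*_\tx{ad})$; since $M_1^*\subseteq M_2^*$, the inclusion $M_1^*/Z(G^*)\hra M_2^*/Z(G^*)$ factors the map for $M_1^*$ through the one for $M_2^*$, so the image for $M_1^*$ sits inside that for $M_2^*$ and the implication is immediate. (Equivalently, in the language of Lemma \ref{lem:levitran}(1)--(2): choose $\xi\in\Xi$ with $\xi|_{A_{M_1^*}}$ defined over $F$; then $\xi|_{A_{M_2^*}}$ is also defined over $F$ because $A_{M_2^*}\subseteq A_{M_1^*}$, the group $M_2:=\tx{Cent}(\xi(A_{M_2^*}),G)=\xi(M_2^*)$ is a Levi subgroup of $G$ defined over $F$, and $\xi$ restricts to an inner twist $M_2^*\rw M_2$ since any $g_\sigma$ with $\xi^{-1}\sigma(\xi)=\tx{Ad}(g_\sigma)$ already lies in $\tx{Cent}(A_{M_1^*},G^*)=M_1^*\subseteq M_2^*$.) Since $M_2^*$ is a $\Gamma$-invariant Levi component of the $\Gamma$-invariant representative $P_2^*$ of $[Q_2]$, Lemma \ref{lem:tranrel} then yields that the image of $[Q_2]$ in $\mc{P}(G)^\Gamma$, namely $[P]$, is relevant.

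I do not expect a genuine obstacle: this is essentially bookkeeping on top of Lemmas \ref{lem:levitran} and \ref{lem:tranrel}. The only slightly delicate point is organizing the choices --- picking the representatives and their Levi factors so that they are simultaneously $\Gamma$-invariant and nested --- so that the inclusion of Levi subgroups, which reverses to an inclusion of their split central tori $A_{M_2^*}\subseteq A_{M_1^*}$, lets rationality of $\xi$ on the smaller torus propagate and deliver the transfer of the larger Levi $M_2^*$.
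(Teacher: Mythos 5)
Your proof is correct and follows essentially the same route as the paper's: choose nested $\Gamma$-invariant representatives with nested Levi components, observe that the inclusion of Levi subgroups reverses to an inclusion of maximal split central tori, and propagate transfer through Lemmas \ref{lem:levitran} and \ref{lem:tranrel}. You spell out the bookkeeping and offer a cohomological variant via Lemma \ref{lem:levitran}(3), but the core argument (rationality of $\xi$ on the smaller torus $A_{M_1^*}$ restricting to the larger Levi's torus $A_{M_2^*}\subseteq A_{M_1^*}$) is exactly the paper's.
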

\begin{proof} We may choose $P^*$ and $P$ within the corresponding classes as well as Levi factors $M^*$ and $M$ in such a way that $M^* \subset M$. Then $A_{M} \subset A_{M^*}$ and the statement follows from the preceding lemma.
\end{proof}

From now on we assume that $F$ is local. In that setting, Kottwitz's map
\[ H^1(\Gamma,G^*_\tx{ad}) \rw X^*(Z(\hat G^*_\tx{sc})^\Gamma) \]
assigns to $\Xi$ a character $\zeta_\Xi : Z(\hat G^*_\tx{sc})^\Gamma \rw \C^\times$. A ${^LG^*}$-conjugacy class of parabolic subgroups will be called \textbf{$\Xi$-relevant} (or \emph{$(G,\xi)$-relevant} or \emph{$\xi$-relevant}, where $\xi$ is any element of $\Xi$), if the corresponding element of $\mc{P}(G)^\Gamma$ is relevant. Given a ${^LG^*}$-conjugacy class of Levi subgroups of ${^LG^*}$, let $\ts{M}$ be an element of it. We will write $\hat M^*$ for $\ts{M} \cap \hat G^*$ and $\hat M^*_\tx{sc}$ for the inverse image of $\hat M^*$ in $\hat G^*_\tx{sc}$. Note that this is different from the simply-connected cover of $\hat M^*$. The latter is equal to $\hat M^*_{\tx{sc},\tx{der}}$, the derived group of $\hat M^*_\tx{sc}$, which is easily seen to be simply-connected. Consider
\[ Z(\hat G^*_\tx{sc})^\Gamma \cap Z(\hat M^*_\tx{sc})^{\Gamma,\circ}. \]
This is a subgroup of $Z(\hat G^*_\tx{sc})^\Gamma$ which is evidently independent of the choice of $\ts{M}$ within its ${^LG^*}$-conjugacy class.

\begin{lem}\label{lem:triv-on-ZGcapZM} If the parabolic subgroup $\ts{P}$ of ${^LG^*}$ is $\Xi$-relevant, then for one (hence any) Levi component $\ts{M}$ of $\ts{P}$, the character $\zeta_\Xi$ is trivial on $Z(\hat G^*_\tx{sc})^\Gamma \cap Z(\hat M^*_\tx{sc})^{\Gamma,\circ}$. Conversely, if the character $\zeta_\Xi$ is trivial on $Z(\hat G^*_\tx{sc})^\Gamma \cap Z(\hat M^*_\tx{sc})^{\Gamma,\circ}$, then any parabolic subgroup $\ts{P}$ of ${^LG^*}$ with Levi factor $\ts{M}$ is $\Xi$-relevant if $F$ is $p$-adic, and is $\Xi'$-relevant for some $\Xi'$ belonging to the same $K$-group as $\Xi$, if $F$ is real.
\end{lem}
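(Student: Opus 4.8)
The plan is to convert the geometric condition of $\Xi$-relevance first into a statement about the transfer of a Levi subgroup, then into the membership of the class of $\Xi$ in the image of a map of Galois cohomology sets, and finally to read this off on the dual side via Kottwitz's map. Concretely: let $\ts M$ be a Levi factor of $\ts P$, set $\hat M^* = \ts M \cap \hat G^*$, and let $M^* \subset G^*$ be the Levi subgroup dual to $\ts M$ in the sense of \S\ref{subsub:transfer-Levi-parabolic} (a $\Gamma$-invariant Levi subgroup defined over $F$, which we may take to be a Levi factor of a $\Gamma$-invariant parabolic $P^*$ representing the class attached to $\ts P$). By the definition of $\Xi$-relevance together with Lemma \ref{lem:tranrel}, the parabolic $\ts P$ is $\Xi$-relevant if and only if $M^*$ transfers to $G$ with respect to $\Xi$; and by Lemma \ref{lem:levitran} this in turn holds if and only if the class of $\Xi$ in $H^1(\Gamma, G^*_\tx{ad})$ lies in the image of the injection $H^1(\Gamma, M^*/Z(G^*)) \ra H^1(\Gamma, G^*_\tx{ad})$. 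Here $M^*/Z(G^*)$ is the image of $M^*$ in $G^*_\tx{ad} = G^*/Z(G^*)$, hence a Levi subgroup of $G^*_\tx{ad}$, whose dual group is the Levi subgroup $\hat M^*_\tx{sc}$ of $\hat{G^*_\tx{ad}} = \hat G^*_\tx{sc}$.

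Next I would dualize. Kottwitz's map of \cite{Kot86}, applied functorially to the inclusion $M^*/Z(G^*) \hra G^*_\tx{ad}$, yields a commutative square
\[ \xymatrix{
H^1(\Gamma, M^*/Z(G^*))\ar[r]\ar[d]&H^1(\Gamma, G^*_\tx{ad})\ar[d]\\
X^*(\pi_0(Z(\hat M^*_\tx{sc})^\Gamma))\ar[r]&X^*(Z(\hat G^*_\tx{sc})^\Gamma)
} \]
in which the right vertical arrow sends $\Xi$ to $\zeta_\Xi$ (we use that $Z(\hat G^*_\tx{sc})$ is finite, $\hat G^*_\tx{sc}$ being semisimple and simply connected, so $\pi_0(Z(\hat G^*_\tx{sc})^\Gamma) = Z(\hat G^*_\tx{sc})^\Gamma$), and the bottom arrow is restriction of characters along $Z(\hat G^*_\tx{sc})^\Gamma \hra Z(\hat M^*_\tx{sc})^\Gamma$ after inflating characters of $\pi_0(Z(\hat M^*_\tx{sc})^\Gamma)$ to $Z(\hat M^*_\tx{sc})^\Gamma$. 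The kernel of the resulting homomorphism $Z(\hat G^*_\tx{sc})^\Gamma \ra \pi_0(Z(\hat M^*_\tx{sc})^\Gamma)$ is exactly $Z(\hat G^*_\tx{sc})^\Gamma \cap Z(\hat M^*_\tx{sc})^{\Gamma,\circ}$, and since $\C^\times$ is divisible a character of $Z(\hat G^*_\tx{sc})^\Gamma$ lies in the image of the bottom arrow if and only if it is trivial on this kernel. Hence $\zeta_\Xi$ lies in the image of the bottom arrow precisely when $\zeta_\Xi$ is trivial on $Z(\hat G^*_\tx{sc})^\Gamma \cap Z(\hat M^*_\tx{sc})^{\Gamma,\circ}$.

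It remains to conclude, distinguishing the two cases. If $F$ is $p$-adic, both vertical arrows are bijective, so the image of $H^1(\Gamma, M^*/Z(G^*)) \ra H^1(\Gamma, G^*_\tx{ad})$ is exactly the set of classes $\Xi$ with $\zeta_\Xi$ trivial on $Z(\hat G^*_\tx{sc})^\Gamma \cap Z(\hat M^*_\tx{sc})^{\Gamma,\circ}$; together with the first paragraph this proves both implications at once. If $F = \R$, the vertical arrows are still defined but no longer surjective; commutativity of the square alone nonetheless shows that $\Xi$ lying in the image forces $\zeta_\Xi$ to be trivial on the kernel, which establishes the necessity half of the lemma over $\R$. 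For the sufficiency half over $\R$ I would pass to a $K$-group: enlarging $G^*$ to a $K$-group $\tb{G}$ containing the inner twist attached to $\Xi$, and correspondingly enlarging $M^*/Z(G^*)$ to the induced $K$-group of Levi subgroups, the maps of the type $H^1(\Gamma, -) \ra X^*(\cdots)$, taken over all components, become jointly surjective onto $X^*(Z(\hat G^*_\tx{sc})^\Gamma)$ (and likewise for the Levi); the $p$-adic argument then applies inside $\tb{G}$ and produces a component $\Xi'$ of $\tb{G}$, an inner twist in the same $K$-group as $\Xi$, for which $M^*$ transfers, and hence for which $\ts P$ is $\Xi'$-relevant.

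The step I expect to be the crux is precisely the real case: the failure of Kottwitz's map to be bijective over $\R$ — it is not even injective in general, as the computations for real forms of $U(N)$ in \S\ref{subsub:inner-U} already show — is exactly what obstructs realizing $\Xi$-relevance for the given $\Xi$ and forces the enlargement to a $K$-group. Making this precise requires the construction and basic properties of $K$-groups over $\R$, and in particular the observation that a $K$-group of $G^*$ restricts to a $K$-group of the Levi $M^*/Z(G^*)$, so that the dualization square, read for the $K$-group, has surjective vertical arrows. By contrast the remaining ingredients — that $M^*/Z(G^*)$ is a Levi subgroup of $G^*_\tx{ad}$ with dual group $\hat M^*_\tx{sc}$, the commutativity of the square with the stated bottom arrow, and the divisibility argument extending characters — are routine and run parallel to material already used in \S\ref{subsub:transfer-Levi-parabolic}.
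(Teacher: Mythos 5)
Your proposal is correct and follows the same route as the paper: reduce to transfer of $M^*$ via Lemmas \ref{lem:tranrel} and \ref{lem:levitran}, dualize through Kottwitz's map using the fact that $\hat M^*_\tx{sc}$ is the dual of $M^*/Z(G^*) \subset G^*_\tx{ad}$, and then read off the image condition on the character side. The only cosmetic differences are that the paper quotes \cite[Lemma 1.1]{Art99} for the equivalence ``$\zeta_\Xi$ lies in the image of $X^*(\pi_0(Z(\hat M^*_\tx{sc})^\Gamma)) \ra X^*(Z(\hat G^*_\tx{sc})^\Gamma)$ iff it kills $Z(\hat G^*_\tx{sc})^\Gamma \cap Z(\hat M^*_\tx{sc})^{\Gamma,\circ}$'' where you derive it directly from divisibility of $\C^\times$, and you spell out the $K$-group argument for the real case where the paper leaves it implicit in the citation.
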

\begin{proof}
Fix a Levi component $\ts{M}$ of $\ts{P}$, and a parabolic subgroup $P^*$ of $G^*$ corresponding to $\ts{P}$. Then $\hat M^* = \ts{M}\cap \hat G^*$ is a Levi of $\hat G^*$ and is the dual group of a Levi component $M^*$ of $P^*$. According to Lemma \ref{lem:tranrel}, $P^*$ is relevant if and only if $M^*$ transfers to $G$, which by Lemma \ref{lem:levitran} is equivalent to $\Xi$ belonging to the image of $H^1(F,M^*/Z(G^*)) \rw H^1(F,G^*_\tx{ad})$. Then $\hat M^*_\tx{sc}$ is a dual group for $M^*/Z(G^*)$. The map $H^1(F,M^*/Z(G^*)) \rw H^1(F,G^*_\tx{ad})$ translates under the Kottwitz homomorphism \cite[\S1.2]{Kot86} to the map
\[ X^*(\pi_0(Z(\hat M^*_\tx{sc})^\Gamma)) \rw X^*(Z(\hat G^*_\tx{sc})^\Gamma). \]
According to \cite[Lemma 1.1]{Art99}, we see that $\zeta_\Xi$ belongs to the image of the last map if and only if it satisfies the assumption of the Lemma.
\end{proof}

\begin{lem} \label{lem:cart} The square
\[ \xymatrix{
Z(\hat G^*_\tx{sc})^\Gamma\ar[r]\ar[d]&Z(\hat G^*)^\Gamma\ar[d]\\
Z(\hat M^*_\tx{sc})^\Gamma\ar[r]&Z(\hat M^*)^\Gamma
} \]
is both cartesian and cocartesian.
\end{lem}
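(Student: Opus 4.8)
The plan is to recognize the square as coming from a short exact sequence of diagonalizable groups and then take Galois cohomology. Write $p\colon\hat G^*_\tx{sc}\rw\hat G^*$ for the canonical isogeny of $\hat G^*_\tx{sc}$ onto $\hat G^*_\tx{der}$, so that by definition $\hat M^*_\tx{sc}=p^{-1}(\hat M^*)$, and fix a $\Gamma$-stable maximal torus $\hat T^*\subset\hat M^*$, with $\hat T^*_\tx{sc}=p^{-1}(\hat T^*)$. The first task is to check two structural facts over $\C$: (i) $p^{-1}(Z(\hat G^*))=Z(\hat G^*_\tx{sc})$; and (ii) $p(Z(\hat M^*_\tx{sc}))=Z(\hat M^*)\cap\hat G^*_\tx{der}$, with $Z(\hat M^*)=Z(\hat G^*)\cdot p(Z(\hat M^*_\tx{sc}))$. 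Both are elementary once one writes the various centres as the common kernels of the appropriate root sets inside $\hat T^*$, resp.\ $\hat T^*_\tx{sc}$, uses that $p$ carries $\hat T^*_\tx{sc}$ onto $\hat T^*\cap\hat G^*_\tx{der}$ compatibly with roots, and uses the standard decomposition $\hat T^*=(\hat T^*\cap\hat G^*_\tx{der})\cdot Z(\hat G^*)^\circ$.

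With (i) and (ii) in hand, the maps $u\mapsto(u,p(u)^{-1})$ and $(u,v)\mapsto p(u)v$ assemble into a short exact sequence of $\Gamma$-modules
\[ 1\lra Z(\hat G^*_\tx{sc})\lra Z(\hat M^*_\tx{sc})\times Z(\hat G^*)\lra Z(\hat M^*)\lra 1 , \]
whose exactness on the left is exactly cartesianness of the square over $\C$ and whose exactness on the right is exactly cocartesianness. Applying the left-exact functor $(-)^\Gamma$ at once gives cartesianness of the square of $\Gamma$-invariants; the only remaining point is cocartesianness, i.e.\ surjectivity of $\mu\colon Z(\hat M^*_\tx{sc})^\Gamma\times Z(\hat G^*)^\Gamma\rw Z(\hat M^*)^\Gamma$, which by the long exact sequence is equivalent to the vanishing of the connecting homomorphism $\delta\colon Z(\hat M^*)^\Gamma\rw H^1(\Gamma,Z(\hat G^*_\tx{sc}))$.

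To force $\delta=0$ it is enough that $H^1(\Gamma,Z(\hat G^*_\tx{sc}))\rw H^1(\Gamma,Z(\hat M^*_\tx{sc}))$ be injective. Here I would invoke the exact sequence $1\rw Z(\hat G^*_\tx{sc})\rw Z(\hat M^*_\tx{sc})\rw\hat S\rw 1$ with $\hat S:=Z(\hat M^*_\tx{sc})/Z(\hat G^*_\tx{sc})$: by (i) and (ii) the map $p$ induces a $\Gamma$-equivariant isomorphism $\hat S\isom Z(\hat M^*)/Z(\hat G^*)$, and the latter is the dual of an induced torus by Lemma~\ref{lem:zmzgind}, so $\hat S^\Gamma$ is connected --- in particular a divisible abelian group --- by the remark recalled just before Lemma~\ref{lem:zmzgind}. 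Since $\hat G^*_\tx{sc}$ is semisimple, $Z(\hat G^*_\tx{sc})$ is finite and $H^1(\Gamma,Z(\hat G^*_\tx{sc}))$ is annihilated by $|Z(\hat G^*_\tx{sc})|$; a homomorphism from a divisible group to a group of finite exponent is zero, so the connecting map $\hat S^\Gamma\rw H^1(\Gamma,Z(\hat G^*_\tx{sc}))$ vanishes, yielding the injectivity of $H^1(\Gamma,Z(\hat G^*_\tx{sc}))\rw H^1(\Gamma,Z(\hat M^*_\tx{sc}))$ and hence $\delta=0$. The resulting short exact sequence of $\Gamma$-invariants then says precisely that the square is both cartesian and cocartesian.

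The step I expect to be the main obstacle is this last one: since $(-)^\Gamma$ does not preserve pushouts, cocartesianness over $\C$ does not formally descend, and the discrepancy is controlled only by the induced-torus structure of the central quotient --- so Lemma~\ref{lem:zmzgind} carries the real weight of the argument, while the verification of (i) and (ii) (including the connectedness of $p^{-1}$ of a Levi) is routine bookkeeping with root data.
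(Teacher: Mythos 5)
Your proposal is correct and follows the same strategy as the paper: establish the short exact sequence $1 \rw Z(\hat G^*_\tx{sc}) \rw Z(\hat G^*) \times Z(\hat M^*_\tx{sc}) \rw Z(\hat M^*) \rw 1$ over $\C$, take $\Gamma$-invariants, and reduce cocartesianness to the injectivity of $H^1(\Gamma,Z(\hat G^*_\tx{sc})) \rw H^1(\Gamma,Z(\hat M^*_\tx{sc}))$, which is deduced from Lemma~\ref{lem:zmzgind} via the connectedness of $[Z(\hat M^*_\tx{sc})/Z(\hat G^*_\tx{sc})]^\Gamma$. Your divisibility-versus-finite-exponent phrasing and the paper's ``factors through $\pi_0$'' phrasing of the vanishing of the connecting map are the same observation; the only difference is that you spell out the root-datum bookkeeping over $\C$ that the paper compresses into ``easily checked'' and a short identification of quotients.
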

\begin{proof}
The fact that the square is cartesian is easily checked. To show that it is cocartesian, assume first that the $\Gamma$-action is trivial. The group $\hat M^*_\tx{sc}$ is a Levi subgroup of $\hat G^*_\tx{sc}$.
Moreover, we have $Z(\hat M^*)/Z(\hat G^*)=Z(\hat M^*/Z(\hat G^*))$
as well as the analogous statement with $\hat M^*$ replaced by $\hat M^*_\tx{sc}$. Since $\hat M^*/Z(\hat G^*) = \hat M^*_\tx{sc}/Z(\hat G^*_\tx{sc})$, we see that the square is indeed cocartesian assuming $\Gamma$ acts trivially. Putting together what we have proved, we see that the sequence
\[ 1 \rw Z(\hat G^*_\tx{sc}) \rw Z(\hat G^*) \times Z(\hat M^*_\tx{sc}) \rw Z(\hat M^*) \rw 1, \]
where the first map is given by $z \mapsto (z^{-1},z)$, is exact. We now want to show that
\[ Z(\hat G^*)^\Gamma \times Z(\hat M^*_\tx{sc})^\Gamma \rw Z(\hat M^*)^\Gamma \]
is surjective, and for this it is enough to show that
\[ H^1(\Gamma,Z(\hat G^*_\tx{sc})) \rw H^1(\Gamma,Z(\hat M^*_\tx{sc})) \]
is injective. This follows from Lemma \ref{lem:zmzgind} and the remark preceding it, which imply that $[Z(\hat M^*_\tx{sc})/Z(\hat G^*_\tx{sc})]^\Gamma$ is connected and consequently that the connecting homomorphism
\[ (Z(\hat M^*_\tx{sc})/Z(\hat G^*_\tx{sc}))^\Gamma \rw H^1(\Gamma,Z(\hat G^*_\tx{sc})), \]
which has to factors though $\pi_0$, is trivial.
\end{proof}

We close this discussion on the relationship between extended inner twists of $G^*$ and those of $M^*$. Note first that in general there is no map $B(F,M^*)_\tx{bsc} \rw B(F,G^*)_\tx{bsc}$. There is however a natural map $B(F,M^*)_{G^*\tx{-bsc}} \rw B(F,G^*)_\tx{bsc}$.

\begin{lem} \label{lem:gbasl}
\begin{enumerate}
\item The map $B(F,M^*)_{G^*-\tx{bsc}} \rw B(F,G^*)_\tx{bsc}$ is injective and corresponds, via the natural transformations $\kappa_{M^*}$ and $\kappa_{G^*}$ of \cite[\S11]{Kot14} to the natural inclusion $Z(\hat G^*) \rw Z(\hat M^*)$.
\item Let $\chi \in X^*(Z(\hat M^*)^\Gamma)$ be the image under $\kappa_{M^*}$ of an element of $x \in B(F,M^*)_\tx{bsc}$. Then $x \in B(F,M^*)_{G^*-\tx{bsc}}$ if and only if for some finite Galois extension $L/F$ containing splitting $G^*$, $N_{L/F}(\chi)$ annihilates $Z(\hat M^*) \cap \hat G^*_\tx{der}$.
\end{enumerate}
\end{lem}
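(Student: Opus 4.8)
The plan is to deduce both assertions from the naturality of Kottwitz's maps $\kappa$ for the inclusion $M^*\hra G^*$, together with the description of the Newton map on basic classes in terms of $\kappa$.

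For the first part, note first that the map is well defined: a cocycle in $Z^1_{G^*-\tx{bsc}}(\mc E,M^*)$, regarded as valued in $G^*(K)$, has Newton point in $Z(G^*)$, hence is basic over $G^*$, and cohomologous cocycles of this type stay cohomologous over $G^*$. Commutativity of the square built from $\kappa_{M^*}$ and $\kappa_{G^*}$ is an instance of the functoriality of the construction in \cite[\S11]{Kot14}: for any homomorphism of connected reductive groups the square relating the sets $B(F,-)$ to the character groups of the $\Gamma$-fixed points of the centres of the dual groups commutes, the lower map being induced by the dual homomorphism of centres, which for $M^*\hra G^*$ is the inclusion $Z(\hat G^*)\hra Z(\hat M^*)$; restriction to the $G^*$-basic part then gives the claim. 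For injectivity I would use the standard twisting argument, reducing to the statement that each fibre is a single point: twisting $M^*\subset G^*$ by a cocycle representing a class in the image identifies a fibre with the fibre over the neutral element for a Levi inclusion $M\subset G$, with $M$ an inner form of $M^*$. A cocycle in that fibre, inflated to $G$, is a coboundary $\partial g$ with $g\in G(K)$; since $\mb D_{K/F}(K)$ acts trivially on $G(K)$, $\partial g$ vanishes on $\mb D_{K/F}(K)$, so the cocycle is inflated from $Z^1(\Gamma_{K/F},M(K))$, and the fibre is identified with $\ker\big(H^1(\Gamma,M)\to H^1(\Gamma,G)\big)$. This kernel vanishes because $F$ is local and $M$ is a Levi subgroup of $G$; concretely this follows from Kottwitz's description of $H^1$ over local fields \cite{Kot86} together with the surjectivity of $\pi_0(Z(\hat G^*_\tx{sc})^\Gamma)\to\pi_0(Z(\hat M^*_\tx{sc})^\Gamma)$, which is a consequence of Lemma~\ref{lem:zmzgind} and the remark preceding it (the same input that drives the proof of Lemma~\ref{lem:transfer-Levi-locglo}).

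For the second part I would argue via the Newton point. A basic class $x$ is $G^*$-basic exactly when its Newton point $\nu_x$, a priori a $\Gamma$-fixed element of $X_*(Z(M^*)^\circ)\otimes\Q$, lies in the subspace $X_*(Z(G^*)^\circ)\otimes\Q$. On basic classes the Newton map is obtained from $\kappa_{M^*}$ by rationalisation: if $L/F$ is a finite Galois extension splitting $G^*$ and $N_{L/F}(\chi)\in X^*(Z(\hat M^*))^\Gamma$ denotes the norm of any lift of $\chi$ to $X^*(Z(\hat M^*))$ (this is independent of the lift, since the norm annihilates the submodule generated by the elements $\lambda-\gamma\lambda$), then under the canonical isomorphism $X^*(Z(\hat M^*))\otimes\Q\cong X_*(Z(M^*)^\circ)\otimes\Q$ one has $\nu_x=[L:F]^{-1}N_{L/F}(\chi)$. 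Hence $x$ is $G^*$-basic iff the image of $N_{L/F}(\chi)$ in $X_*(Z(M^*)^\circ)\otimes\Q$ lies in $X_*(Z(G^*)^\circ)\otimes\Q$. Finally, via the same isomorphism this subspace is the $\Q$-span of the image of the character group of $\hat G^*/\hat G^*_\tx{der}$ under the restriction $X^*(\hat G^*)\to X^*(Z(\hat M^*))$, and an element of $X^*(Z(\hat M^*))$ lies in that $\Q$-span iff it annihilates the identity component of $Z(\hat M^*)\cap\hat G^*_\tx{der}=\ker\big(Z(\hat M^*)\to\hat G^*/\hat G^*_\tx{der}\big)$; because $N_{L/F}(\chi)$ is a norm, this is equivalent to annihilating all of $Z(\hat M^*)\cap\hat G^*_\tx{der}$, which is the asserted condition.

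The main obstacle is the identity $\nu_x=[L:F]^{-1}N_{L/F}(\chi)$ on basic classes together with the resulting passage from the rational condition ``$\nu_x\in X_*(Z(G^*)^\circ)\otimes\Q$'' to the integral condition on $N_{L/F}(\chi)$: since $Z(\hat M^*)\cap\hat G^*_\tx{der}$ may be disconnected, one must verify that a norm element cannot distinguish this group from its identity component. In the first part the only substantial input is the vanishing of $\ker(H^1(\Gamma,M)\to H^1(\Gamma,G))$ for Levi subgroups over local fields, for which I would again invoke Lemma~\ref{lem:zmzgind}.
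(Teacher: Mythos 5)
Your treatment of part~1 is sound and close to the paper's: the paper also reduces injectivity to the injectivity of $H^1(\Gamma,M^*)\to H^1(\Gamma,G^*)$ and of $\Hom_F(\mb{D}_F,Z(G^*))\to\Hom_F(\mb{D}_F,Z(M^*))$, and gets the compatibility with $Z(\hat G^*)\hookrightarrow Z(\hat M^*)$ from duality with the map of algebraic fundamental groups $\Lambda_{M^*}\to\Lambda_{G^*}$. Your twisting reduction is a valid unpacking of the first ingredient.

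Part~2 has a real gap, and the reason you offer does not repair it. You assert that, because $N_{L/F}(\chi)$ is a norm, annihilating $(Z(\hat M^*)\cap\hat G^*_\tx{der})^\circ$ already forces annihilating all of $Z(\hat M^*)\cap\hat G^*_\tx{der}$. For an unconstrained norm and a \emph{fixed} $L$ this is false. Take $G^*=M^*=\tx{GL}_2$ over $\Q_p$, so $Z(\hat M^*)=\mathbb{G}_m$, $Z(\hat M^*)\cap\hat G^*_\tx{der}=\mu_2$, and let $L=F$ (which splits $G^*$); then $N_{F/F}$ is the identity, the character $\chi=1\in X^*(Z(\hat M^*)^\Gamma)=\Z$ lies in the image of $\kappa_{M^*}$, and it annihilates $\mu_2^\circ=\{1\}$ but not $\mu_2$, even though every basic class here is trivially $G^*$-basic. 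What actually closes the gap is not the norm property but the finer, integral content of Diagram~(1.6) in \cite{Kot14}, which the paper uses directly: for $L$ large enough, the Newton point of $x$ lies in the lattice $X^*(\hat M^*/\hat M^*_\tx{der})^\Gamma\otimes[L:F]^{-1}\Z$, so $N_{L/F}(\chi)$ is the restriction to $Z(\hat M^*)$ of a (unique) $y\in X^*(\hat M^*/\hat M^*_\tx{der})^\Gamma$. Since $X^*(\hat G^*/\hat G^*_\tx{der})^\Gamma$ is saturated inside $X^*(\hat M^*/\hat M^*_\tx{der})^\Gamma$, for such a $y$ the rational vanishing on $(Z(\hat M^*)\cap\hat G^*_\tx{der})^\circ$ does upgrade to the integral vanishing on $Z(\hat M^*)\cap\hat G^*_\tx{der}$. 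If you wish to stay with your purely rational route, you must instead exploit the ``for some $L$'' in the statement and enlarge $L$ so that $[L:F]$ kills $\pi_0(Z(\hat M^*)\cap\hat G^*_\tx{der})$; either fix works, but the equivalence as you state it does not hold for the $L$ you have fixed.
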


\begin{proof}
The injectivity of $B(F,M^*)_{G^*-\tx{bsc}} \rw B(F,G^*)_\tx{bsc}$ follows directly from the injectivity of $H^1(\Gamma,M^*) \rw H^1(\Gamma,G^*)$ and the injectivity of $\tx{Hom}_F(\mb{D}_F,Z(G^*)) \rw \tx{Hom}_F(\mb{D}_F,Z(M^*))$. The compatibility of this map with $Z(\hat G^*) \rw Z(\hat M^*)$ follows from the fact that the latter is dual to the map $\Lambda_{M^*} \rw \Lambda_{G^*}$ of algebraic fundamental groups coming from $M^* \subset G^*$.

For the second claim, we use Diagram (1.6) in \cite{Kot14} for the Galois extension $L/F$. Recalling that $X(L)=\Z$ has trivial $\Gamma$-action and is identified with $[L:F]^{-1}\Z \subset X^*(\mb{D}_F)$, we see that the Newton point of $x$ belongs to $X^*(\hat M^*/\hat M^*_\tx{der})^\Gamma \otimes [L:F]^{-1}\Z$ and is the unique element $y \otimes [L:F]^{-1}$ thereof such that the restriction of $y \in X^*(\hat M^*/\hat M^*_\tx{der})^\Gamma$ to $Z(\hat M^*)$ is equal to $N_{L/F}(\chi)$.
We conclude that the Newton point $y \otimes [L:F]^{-1}$ belongs to $X^*(\hat G^*/\hat G^*_\tx{der})^\Gamma \otimes [L:F]^{-1}\Z$ if and only if $N_{L/F}(\chi)$ annihilates $Z(\hat M^*) \cap \hat G^*_\tx{der}$.
\end{proof}

\subsubsection{The group $W(M)$} \label{subsub:wm}

As before, $F$ is a field of characteristic zero and $G^*$ is a quasi-split group defined over $F$. Let $\Xi : G^* \rw G$ be an equivalence class of inner twists. For a Levi subgroup $M \subset G$, we have the relative Weyl-group $W(M)=N(M,G)/M$. It is a finite group scheme defined over $F$. It is well-known that the map $H^1(\Gamma,M) \rw H^1(\Gamma,G)$ is injective and this implies that $N(M,G)(F)/M(F) \rw W(M)(F)$ is an isomorphism.

Consider now the dual side. Let $\ts{M} \subset {^LG^*}$ be a Levi-subgroup of $^LG^*$. Then we have the finite group
\[ N(\ts{M},\hat G^*)/(\hat G^* \cap \ts{M}) = N(A_\ts{M},\hat G^*)/\tx{Cent}(A_\ts{M},\hat G^*). \]
Let $\ts{P} \subset {^LG^*}$ be a parabolic subgroup with Levi-component $\ts{M}$, and assume that the conjugacy classes of $\ts{P}$ and $P$ match. Then we claim that the data of $(\ts{M},\ts{P})$, $(M,P)$, and $\Xi$, provides an isomorphism
\[  N(\ts{M},\hat G^*)/(\hat G^* \cap \ts{M}) \rw W(M)(F). \]

To see this, fix $\Gamma$-invariant splittings of $G^*$ and $\hat G^*$. There exists a unique standard parabolic pair $(M^*,P^*)$ of $G^*$ for which there exists $\xi \in \Xi$ that caries it over to $(M,P)$. Then this $\xi$ is unique up to composition by an inner automorphism of $M^*$ and is an inner twist $M^* \rw M$. Thus we obtain a canonical isomorphism $W(M^*) \cong W(M)$ defined over $F$. There also exists a unique standard parabolic pair $(\hat M^*,\hat P^*)$ of $\hat G^*$ such that $(\hat M^*\rtimes \Gamma,\hat P^*\rtimes\Gamma)$ is $\hat G^*$-conjugate to $(\ts{M},\ts{P})$. An element of $\hat G^*$ that conjugates $(\hat M^*\rtimes\Gamma,\hat P^*\rtimes\Gamma)$ to $(\ts{M},\ts{P})$ is unique up to multiplication by an element of $\hat M^*$ and so we have a canonical isomorphism
\[ N(\ts{M},\hat G^*)/(\hat G^* \cap \ts{M}) \rw N(\hat M^* \rtimes \Gamma,\hat G^*)/\hat M^*. \]
Next we claim that the inclusion
\[ N(\hat M^*,\hat G^*)^\Gamma/(\hat M^*)^\Gamma \rw N(\hat M^*\rtimes\Gamma,\hat G^*)/\hat M^* \]
is an isomorphism. It is clearly injective. For surjectivity, take $g \in N(\hat M^* \rtimes\Gamma,\hat G^*) \subset N(\hat M^*,\hat G^*)$. Then
\[ g\hat M^*\sigma(g^{-1}) = \hat M^*\Leftrightarrow g\sigma(g^{-1}) \in \hat M^*\]
together with the injectivity of $H^1(\Gamma,\hat M^*) \rw H^1(\Gamma,\hat G^*)$ imply $g \in N(\hat M^*,\hat G^*)^\Gamma \cdot \hat M^*$ as claimed.
Furthermore, we have the isomorphism
\[ N(\hat M^*,\hat G^*)^\Gamma/\hat M^{*,\Gamma} \rw [N(\hat M^*,\hat G^*)/\hat M^*]^\Gamma. \]
It remains to show that we have a canonical $\Gamma$-equivariant isomorphism
\[ W(M^*) \cong N(\hat M^*,\hat G^*)/\hat M^*. \]
This follows from the fact that $W(M^*)$ can be realized as a subquotient of the absolute Weyl group $W(T,G)$. If we let $W(T^*,M^*,G^*)$ be the  stabilizer in $W(T^*,G^*)$ of the root system of $M^*$, then we have
\[ W(T^*,M^*) \subset W(T^*,M^*,G^*) \subset W(T^*,G^*) \]
and $W(M^*)=W(T^*,M^*,G^*)/W(T^*,M^*)$.
The $\Gamma$-equivariant isomorphism $W(T^*,G^*) \cong W(\hat T^*,\hat G^*)$ restricts to isomorphisms $W(T^*,M^*,G^*) \cong W(\hat T^*,\hat M^*,\hat G^*)$ and $W(T^*,M^*) \cong W(\hat T^*,\hat M^*)$. Composing the isomorphisms described above we obtain the isomorphism
\[  N(\ts{M},\hat G^*)/(\hat G^* \cap \ts{M}) \rw W(M)(F). \]
It is easy to see that this isomorphism does not depend on the chosen splittings of $G^*$ and $\hat G^*$.

\subsubsection{Centralizers of parameters and Levi subgroups} \label{subsub:centlevi}

For this subsection, the field $F$ may be \emph{arbitrary}. Keep assuming that $G^*$ is quasi-split over $F$. Let $L$ be an abstract group equipped with a surjective map to $\Gamma$, and let $\phi : L \rw {^LG^*}$ be an L-homomorphism, i.e. a group homomorphism commuting with the projections of $L$ and $^L G^*$ onto $\Gamma$. (Later we will be mainly concerned with the case where $F$ is a local or global field and $\phi$ is an $L$-parameter or $A$-parameter. In that case $\phi$ will be a map over the Weil group $W_F$ but it is easy to adapt our discussion below to that setup.) We introduce the following notation
\[ S_\phi = \tx{Cent}(\phi(L),\hat G^*),\qquad S_\phi^\tx{rad} = \tx{Cent}(\phi(L),\hat G^*_\tx{der})^\circ,\qquad S_\phi^\natural = S_\phi/S_\phi^\tx{rad}. \]
We will assume that $S_\phi^\circ$ is reductive. This is automatically true if $L$ is an algebraic group, or a locally compact group, and $\phi$ respects this structure. Then one observes that
\[ [S_\phi^\circ]_\tx{der} \subset S_\phi^\tx{rad} \]
as the left hand side is a connected subgroup of $S_\phi$ generated by commutators. In particular, $S_\phi^\circ/S_\phi^\tx{rad}$ is a torus, and the map $Z(S_\phi^\circ)^\circ \rw S_\phi^\circ/S_\phi^\tx{rad}$ is surjective.

It is clear that $Z(\hat G^*)^\Gamma$ is a central subgroup of $S_\phi$. Restricting the central character of irreducible representations thus provides a map $\tx{cc} : \tx{Irr}(S_\phi) \rw X^*(Z(\hat G^*)^\Gamma)$.

\begin{lem} \label{lem:sphilevi} Consider the maps
\begin{eqnarray*}
\left\{\begin{array}{c}\textrm{Levi subgroups of }^L G^*\\ \textrm{containing }\phi(L)\end{array}\right\}&\rw&\left\{\begin{array}{c}\textrm{Levi subgroups}\\ \textrm{of }S_\phi^\circ\end{array}\right\}\\
\tx{cntr} : \ts{M}&\mapsto&S_\phi^\circ \cap \ts{M}\\
\tx{infl} : \tx{Cent}(Z(M')^\circ,{^LG^*})&\mapsfrom&M'
\end{eqnarray*}
These maps are inclusion preserving, and satisfy
\[ \tx{cntr}(\tx{infl}(M'))=M'\qquad\tx{and}\qquad\tx{infl}(\tx{cntr}(\ts{M})) \subset \ts{M}. \]
In particular, $\tx{cntr}$ is surjective and $\tx{infl}$ is injective.
\end{lem}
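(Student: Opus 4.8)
The plan is to verify the four displayed assertions in order, the first two (that $\tx{cntr}$ and $\tx{infl}$ are inclusion preserving) being immediate from the definitions, and then to check the two composition identities, from which surjectivity of $\tx{cntr}$ and injectivity of $\tx{infl}$ follow formally. Throughout I work with the Galois form ${^LG^*} = \hat G^* \rtimes \Gamma$ and recall from Section \ref{subsub:levis} that a Levi subgroup $\ts{M}$ of ${^LG^*}$ is characterized by $\ts{M} = \tx{Cent}(A_\ts{M},{^LG^*})$ where $A_\ts{M} = \tx{Cent}(\ts{M},\hat G^*)^\circ$; moreover $\ts{M}$ is full over $\Gamma$, so $\ts{M}\cap\hat G^* = \hat M^*$ is a $\Gamma$-stable Levi of $\hat G^*$. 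On the other side, a Levi subgroup $M'$ of the connected reductive group $S_\phi^\circ$ is $M' = \tx{Cent}(Z(M')^\circ, S_\phi^\circ)$, equivalently $M' = \tx{Cent}(A_{M'}, S_\phi^\circ)$ for the maximal split subtorus $A_{M'}$ of $Z(M')^\circ$ (though splitness is not essential over $\C$; here $S_\phi^\circ$ is a complex group and every torus is split, so $A_{M'} = Z(M')^\circ$).

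For well-definedness of $\tx{cntr}$: if $\ts{M} \supset \phi(L)$ is a Levi of ${^LG^*}$, then $S_\phi^\circ \cap \ts{M} = \tx{Cent}(\phi(L), \hat M^*)^\circ$ — wait, more precisely $S_\phi^\circ \cap \ts{M} = (S_\phi \cap \hat M^*)^\circ$ since $\ts{M}\cap\hat G^* = \hat M^*$ and $S_\phi \subset \hat G^*$ — is the identity component of the centralizer of $\phi(L)$ in $\hat M^*$. One shows this is a Levi subgroup of $S_\phi^\circ$ by exhibiting it as $\tx{Cent}(A, S_\phi^\circ)$ for a suitable subtorus $A$: take $A = (Z(\hat M^*)^\Gamma \cap S_\phi)^\circ$, or more robustly note that $A_\ts{M}$ commutes with $\phi(L)$ (as $\phi(L)\subset\ts{M}$), hence $A_\ts{M}^\circ \subset S_\phi^\circ$, and $S_\phi^\circ \cap \ts{M} = \tx{Cent}(A_\ts{M}, \hat G^*)^\circ \cap S_\phi^\circ$ contains and is contained in $\tx{Cent}(A_\ts{M}, S_\phi^\circ)$; a small argument with root systems (the roots of $\hat G^*$ vanishing on $A_\ts{M}$ that survive in $S_\phi^\circ$) identifies the two, so $S_\phi^\circ\cap\ts{M}$ is indeed a Levi of $S_\phi^\circ$. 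For $\tx{infl}$: given a Levi $M'$ of $S_\phi^\circ$, the torus $Z(M')^\circ$ centralizes $\phi(L)$ (since $M' \supset S_\phi^\circ \supset$ nothing forces this — rather $Z(M')^\circ \subset S_\phi^\circ = \tx{Cent}(\phi(L),\hat G^*)^\circ$, so yes it centralizes $\phi(L)$), hence $\tx{Cent}(Z(M')^\circ, {^LG^*}) \supset \phi(L)$, and it is a Levi of ${^LG^*}$ because $Z(M')^\circ$ is a torus in $\hat G^*$ whose centralizer is automatically a Levi (the centralizer of any torus in a connected reductive group is a Levi, and fullness over $\Gamma$ holds since $\phi(L)$ surjects onto $\Gamma$ and lies in it). Inclusion-preservation of both maps is clear: larger $\ts{M}$ gives larger $S_\phi^\circ\cap\ts{M}$, and larger $M'$ gives smaller $Z(M')^\circ$ hence larger $\tx{Cent}(Z(M')^\circ,{^LG^*})$ — so $\tx{infl}$ reverses? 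No: a larger Levi $M'$ has a \emph{smaller} central torus but the containment $M_1' \subset M_2'$ gives $Z(M_2')^\circ \subset Z(M_1')^\circ$ hence $\tx{Cent}(Z(M_1')^\circ) \subset \tx{Cent}(Z(M_2')^\circ)$, so $\tx{infl}$ is inclusion preserving as claimed.

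The two composition identities are the substance. For $\tx{cntr}(\tx{infl}(M')) = M'$: by definition this is $S_\phi^\circ \cap \tx{Cent}(Z(M')^\circ, {^LG^*}) = \tx{Cent}(Z(M')^\circ, S_\phi^\circ)$ (intersecting the centralizer in ${^LG^*}$ with $S_\phi^\circ$ gives the centralizer of $Z(M')^\circ$ inside $S_\phi^\circ$), and since $M'$ is a Levi of $S_\phi^\circ$ this equals $M'$ exactly. For $\tx{infl}(\tx{cntr}(\ts{M})) \subset \ts{M}$: write $M' = S_\phi^\circ \cap \ts{M}$; then $\tx{infl}(M') = \tx{Cent}(Z(M')^\circ, {^LG^*})$, and the point is that $Z(M')^\circ \supset A_\ts{M}^\circ$ — indeed $A_\ts{M}^\circ \subset S_\phi^\circ$ (shown above) and $A_\ts{M}$ is central in $\ts{M}$ hence central in $S_\phi^\circ\cap\ts{M} = M'$, so $A_\ts{M}^\circ \subset Z(M')^\circ$ — whence $\tx{Cent}(Z(M')^\circ, {^LG^*}) \subset \tx{Cent}(A_\ts{M}^\circ, {^LG^*}) = \tx{Cent}(A_\ts{M}, {^LG^*}) = \ts{M}$. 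Finally, $\tx{cntr}\circ\tx{infl} = \tx{id}$ forces $\tx{cntr}$ surjective and $\tx{infl}$ injective. The main obstacle I anticipate is the root-theoretic step identifying $S_\phi^\circ \cap \ts{M}$ with a genuine Levi subgroup of $S_\phi^\circ$ — one must be careful that the centralizer of a connected reductive subgroup like $S_\phi^\circ$ in $\hat G^*$ interacts well with the Levi structure, and that passing to identity components does not destroy the Levi property; Arthur handles the analogous point in the quasi-split setting (e.g.\ \cite[\S1]{Arthur}) and the argument there transfers, but it deserves to be spelled out rather than waved through.
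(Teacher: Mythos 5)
Your proposal is correct and takes essentially the same route as the paper's proof: the key observations are that $A_\ts{M}\subset S_\phi^\circ$ (because $\phi(L)\subset\ts{M}$), that $S_\phi^\circ\cap\ts{M}=\tx{Cent}(A_\ts{M},S_\phi^\circ)$ is therefore a Levi of $S_\phi^\circ$, and that $A_\ts{M}\subset Z(S_\phi^\circ\cap\ts{M})^\circ$, from which the two composition identities fall out. One place you over-thought things: no ``small argument with root systems'' is needed to identify $S_\phi^\circ\cap\ts{M}$ with $\tx{Cent}(A_\ts{M},S_\phi^\circ)$ — since $S_\phi^\circ\subset\hat G^*$, the identity $S_\phi^\circ\cap\tx{Cent}(A_\ts{M},{^LG^*})=\tx{Cent}(A_\ts{M},S_\phi^\circ)$ is purely set-theoretic, and $\ts{M}=\tx{Cent}(A_\ts{M},{^LG^*})$ is already given by the theory in Section~\ref{subsub:levis}; likewise $A_\ts{M}=\tx{Cent}(\ts{M},\hat G^*)^\circ$ is already connected, so the $A_\ts{M}^\circ$ in your write-up is redundant.
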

\begin{proof}
Given $\ts{M}$, put $A_\ts{M} = \tx{Cent}(\ts{M},\hat G^*)^\circ$. Then $\ts{M}=\tx{Cent}(A_\ts{M},{^LG^*})$, and
\[ S_\phi^\circ \cap \ts{M} = \tx{Cent}(A_\ts{M},S_\phi^\circ). \]
By assumption, $\phi(L) \subset \ts{M}$, thus $A_\ts{M} \subset S_\phi^\circ$, and it follows that $\tx{Cent}(A_\ts{M},S_\phi^\circ)$ is a Levi subgroup of $S_\phi^\circ$.

Conversely, let $M'$ be a Levi subgroup of $S_\phi^\circ$. Then $Z(M')^\circ$ is a torus in $\hat G^*$, hence $\tx{Cent}(Z(M')^\circ,\hat G^*)$ is a Levi subgroup of $\hat G^*$. Moreover, $\ts{M}:=\tx{Cent}(Z(M')^\circ,{^LG^*})$ contains $\phi(L)$ by definition, and the surjectivity of $L \rw \Gamma$ ensures that $\ts{M}$ is full.

From $S_\phi^\circ \cap \tx{Cent}(Z(M')^\circ,{^LG^*}) = \tx{Cent}(Z(M')^\circ,S_\phi^\circ) = M'$ we conclude that $\tx{cntr}(\tx{infl}(M'))=M'$. Conversely, $A_\ts{M} \subset Z(S_\phi^\circ\cap\ts{M})^\circ$, thus $\tx{infl}(\tx{cntr}(\ts{M}))\subset \ts{M}$.
\end{proof}

In particular, we see that the minimal Levi subgroups of $^LG^*$ containing $\phi(L)$ are precisely the centralizers in $^LG^*$ of the maximal tori of $S_\phi^\circ$ and are thus conjugate under $S_\phi^\circ$.

\begin{lem} \label{lem:squot} We have
\[ S_\phi/S_\phi^\tx{rad}Z(\hat G^*)^\Gamma = S_\phi/S_\phi^\circ Z(\hat G^*)^\Gamma. \]
\end{lem}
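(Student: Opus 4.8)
The plan is to reduce the asserted equality of quotients to a surjectivity statement about tori, and then verify it by passing to the cocenter torus of $\hat G^*$. Since $S_\phi^\tx{rad}\subseteq S_\phi^\circ$ and $Z(\hat G^*)^\Gamma$ is central in $S_\phi$, both $S_\phi^\tx{rad}Z(\hat G^*)^\Gamma$ and $S_\phi^\circ Z(\hat G^*)^\Gamma$ are subgroups of $S_\phi$ with the first contained in the second, so the lemma is equivalent to $S_\phi^\circ\subseteq S_\phi^\tx{rad}Z(\hat G^*)^\Gamma$. The identity component of $S_\phi^\tx{rad}Z(\hat G^*)^\Gamma$ is $S_\phi^\tx{rad}\cdot[Z(\hat G^*)^\Gamma]^\circ$, and $[Z(\hat G^*)^\Gamma]^\circ$ is connected, hence contained in $S_\phi^\circ$; as $S_\phi^\circ$ is connected, the lemma is therefore equivalent to the surjectivity of the homomorphism of tori
\[ g\colon [Z(\hat G^*)^\Gamma]^\circ \longrightarrow S_\phi^\circ/S_\phi^\tx{rad}, \]
the target being a torus by the observation recalled just before Lemma~\ref{lem:sphilevi}.

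I would then compare both tori inside $\hat G^*_\tx{ab}:=\hat G^*/\hat G^*_\tx{der}$ via the $\Gamma$-equivariant projection $\pi\colon\hat G^*\to\hat G^*_\tx{ab}$, using three observations. First, $\pi$ kills $S_\phi^\tx{rad}\subseteq\hat G^*_\tx{der}$, while $\ker(\pi|_{S_\phi^\circ})=S_\phi^\circ\cap\hat G^*_\tx{der}$ has identity component $(S_\phi\cap\hat G^*_\tx{der})^\circ=S_\phi^\tx{rad}$ because $S_\phi^\circ$ has finite index in $S_\phi$; hence $\pi$ induces an isogeny $\bar\pi\colon S_\phi^\circ/S_\phi^\tx{rad}\to\pi(S_\phi^\circ)$. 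Second, for $s\in S_\phi$ and $\ell\in L$ mapping to $\sigma\in\Gamma$, writing $\phi(\ell)=n_\ell\rtimes\sigma$ in $\hat G^*\rtimes\Gamma$, the relation $n_\ell\,\sigma(s)\,n_\ell^{-1}=s$ becomes $\sigma(\pi(s))=\pi(s)$; since $L$ surjects onto $\Gamma$ this gives $\pi(S_\phi)\subseteq(\hat G^*_\tx{ab})^\Gamma$, so the connected group $\pi(S_\phi^\circ)$ lies in $[(\hat G^*_\tx{ab})^\Gamma]^\circ$. Third, $Z(\hat G^*)^\circ\to\hat G^*_\tx{ab}$ is a $\Gamma$-equivariant surjection of tori (surjective since $\hat G^*=\hat G^*_\tx{der}\cdot Z(\hat G^*)^\circ$), and a $\Gamma$-equivariant surjection of tori induces a surjection on the identity components of $\Gamma$-fixed points --- a routine fact, obtained by tensoring the dual injection of character lattices with $\Q$ and using semisimplicity of the group ring of the finite quotient of $\Gamma$ through which the action factors; combined with $[Z(\hat G^*)^\Gamma]^\circ=[(Z(\hat G^*)^\circ)^\Gamma]^\circ$ this yields $\pi\bigl([Z(\hat G^*)^\Gamma]^\circ\bigr)=[(\hat G^*_\tx{ab})^\Gamma]^\circ$.

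Assembling these: since $[Z(\hat G^*)^\Gamma]^\circ\subseteq S_\phi^\circ$, the third point gives $\pi(S_\phi^\circ)\supseteq[(\hat G^*_\tx{ab})^\Gamma]^\circ$, and with the second point $\pi(S_\phi^\circ)=[(\hat G^*_\tx{ab})^\Gamma]^\circ$. Now $\bar\pi\circ g$ is simply $\pi$ restricted to $[Z(\hat G^*)^\Gamma]^\circ$, so it has image $[(\hat G^*_\tx{ab})^\Gamma]^\circ=\pi(S_\phi^\circ)=\bar\pi\bigl(S_\phi^\circ/S_\phi^\tx{rad}\bigr)$; since $\bar\pi$ has finite kernel, the subtorus $\mathrm{im}(g)\subseteq S_\phi^\circ/S_\phi^\tx{rad}$ then has the same dimension as $S_\phi^\circ/S_\phi^\tx{rad}$ and hence equals it, so $g$ is surjective. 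This gives $S_\phi^\circ=S_\phi^\tx{rad}\cdot[Z(\hat G^*)^\Gamma]^\circ$, and in particular the lemma.

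I expect the genuine content --- and the point most likely to trip one up --- to be the realization that the torus $S_\phi^\circ/S_\phi^\tx{rad}$ is being required to sit inside the image of $Z(\hat G^*)^\Gamma$, a group that can be finite (for $U_{E/F}(N)$ with $E$ a field it is $\mu_2$); in that case the lemma forces $S_\phi^\circ=S_\phi^\tx{rad}$ outright, and passing to $\hat G^*_\tx{ab}$ is precisely what makes this transparent, since $(\hat G^*_\tx{ab})^\Gamma$ is then finite as well. The only other care required is bookkeeping about which of the maps above are isogenies rather than isomorphisms; the dimension count for tori in the last step is what disposes of the $\pi_0$-discrepancies hiding in the possibly disconnected groups $S_\phi\cap\hat G^*_\tx{der}$ and $Z(\hat G^*)^\Gamma$.
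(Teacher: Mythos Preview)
Your proof is correct and takes a genuinely different route from the paper's. The paper works through the Levi structure: it picks a minimal Levi $\ts{M}$ of ${^LG^*}$ containing $\phi(L)$, so that $A_{\ts{M}}=Z(\hat M^*)^{\Gamma,\circ}$ is a maximal torus of $S_\phi^\circ$ and hence surjects onto $S_\phi^\circ/S_\phi^\tx{rad}$; it then invokes Lemma~\ref{lem:cart} (the cartesian--cocartesian square for $Z(\hat G^*)$, $Z(\hat G^*_\tx{sc})$, $Z(\hat M^*)$, $Z(\hat M^*_\tx{sc})$) together with \cite[Lemma~1.1]{Art99} to write $A_{\ts{M}}\subset Z(\hat M^*_\tx{sc})^{\Gamma,\circ}\cdot Z(\hat G^*)^\Gamma$, and finishes by noting that $Z(\hat M^*_\tx{sc})^{\Gamma,\circ}$ lands in $S_\phi^\tx{rad}$.

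Your argument bypasses Levi subgroups entirely: you project to the cocenter $\hat G^*_\tx{ab}=\hat G^*/\hat G^*_\tx{der}$, observe that $\pi(S_\phi)\subset(\hat G^*_\tx{ab})^\Gamma$ directly from the centralizer condition, and then compare dimensions of tori using that $[Z(\hat G^*)^\Gamma]^\circ$ already surjects onto $[(\hat G^*_\tx{ab})^\Gamma]^\circ$. This is more self-contained---it avoids both Lemma~\ref{lem:cart} and the external citation---and in fact yields the slightly sharper identity $S_\phi^\circ=S_\phi^\tx{rad}\cdot[Z(\hat G^*)^\Gamma]^\circ$. The paper's approach, on the other hand, is in the same idiom as the surrounding material (Lemmas~\ref{lem:sphilevi}, \ref{lem:cart}, \ref{lem:rhofact} all revolve around the Levi machinery), so it reuses tools that are needed elsewhere anyway.
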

\begin{proof}
We need to show that $S_\phi^\circ Z(\hat G^*)^\Gamma = S_\phi^\tx{rad}Z(\hat G^*)^\Gamma$, and this is equivalent to the surjectivity of
\[ Z(\hat G^*)^\Gamma \rw \frac{S_\phi^\circ Z(\hat G^*)^\Gamma}{S_\phi^\tx{rad}}. \]
Let $\ts{M}$ be a minimal Levi subgroup of $^LG^*$ containing $\phi(L)$. Then $A_\ts{M}$ is a maximal torus of $S_\phi^\circ$, and since $S_\phi^\tx{rad}$ contains the derived group of $S_\phi^\circ$, we see that
\[ A_\ts{M} \rw \frac{S_\phi^\circ}{S_\phi^\tx{rad}} \]
is surjective. Notice that $A_\ts{M}=Z(\hat M^*)^{\Gamma,\circ}$. (Recall that $\hat M^*:=\ts{M}\cap \hat G^*$.) Applying Lemma \ref{lem:cart}, we see that
$A_\ts{M} \subset Z(\hat M^*_\tx{sc})^\Gamma \cdot Z(\hat G^*)^\Gamma,$
and thus
\[ Z(\hat M^*_\tx{sc})^\Gamma \cdot Z(\hat G^*)^\Gamma \rw \frac{S_\phi^\circ Z(\hat G^*)^\Gamma}{S_\phi^\tx{rad}} \]
is surjective. Applying \cite[Lemma 1.1]{Art99} to $\hat M^*_\tx{sc}$, we see
\[ Z(\hat M^*_\tx{sc})^\Gamma \cdot Z(\hat G^*)^\Gamma = Z(\hat M^*_\tx{sc})^{\Gamma,\circ} Z(\hat G^*)^\Gamma. \]
The image of $Z(\hat M^*_\tx{sc})^{\Gamma,\circ}$ in $\hat G^*$ belongs to both $S_\phi$ and $\hat G^*_\tx{der}$. Being connected, it thus belongs to $S_\phi^\tx{rad}$, and this proves the surjectivity of $Z(\hat G^*)^\Gamma \rw S_\phi^\circ Z(\hat G^*)^\Gamma/S_\phi^\tx{rad}$.
\end{proof}

\begin{dfn}\label{def:Xi-relevant-general} Let $\Xi : G^* \rw G$ be an inner twist and $\zeta_\Xi \in X^*(Z(\hat G^*_\tx{sc})^\Gamma)$ the corresponding character. We will say that $\phi : L \rw {^LG^*}$ is \textbf{$\Xi$-relevant}, if any Levi subgroup of $^LG^*$ containing $\phi(L)$ is the Levi-component of a $\Xi$-relevant parabolic subgroup of $^LG^*$.
\end{dfn}
According to Corollary \ref{cor:relinc} and the above remark, it is enough to check this for one smallest Levi subgroup of $^LG^*$ containing $\phi(L)$.

Now let $\ts{M}$ be any Levi subgroup of $^LG^*$ containing $\phi(L)$. Given $\zeta \in X^*(Z(\hat G^*)^\Gamma)$ mapping to $\zeta_\Xi$, we are going to construct a canonical element $\zeta_M \in X^*(Z(\hat M^*)^\Gamma)$ which maps to $\zeta$. For this, consider the diagram
\[ \xymatrix{
X^*(Z(\hat M^*)^\Gamma)\ar[d]\ar[r]&X^*(Z(\hat G^*)^\Gamma)\ar[d]\\
X^*(Z(\hat M^*_\tx{sc})^\Gamma)\ar[r]&X^*(Z(\hat G^*_\tx{sc})^\Gamma)\\
X^*(\pi_0(Z(\hat M^*_\tx{sc})^\Gamma))\ar@{^{(}->}[u]\ar@{^{(}->}[ur]
} \]
By assumption, $\zeta_{G^*}$ belongs to the image of the diagonal injection. Thus there exists a canonical element of $X^*(Z(\hat M^*_\tx{sc})^\Gamma)$ mapping to $\zeta_\Xi$. Applying Lemma \ref{lem:cart}, we see that the top square is cartesian, hence there exists a unique element $\zeta_{M^*} \in X^*(Z(\hat M^*)^\Gamma)$ which maps to $\zeta$ in $X^*(Z(\hat G^*)^\Gamma)$ and the the canonical lift of $\zeta_\Xi$ in $X^*(Z(\hat M^*_\tx{sc})^\Gamma)$.

It is clear form the construction that if $\ts{M} \subset \ts{M'}$ are two Levi subgroups of $^LG^*$ containing $\phi(L)$, then the image of $\zeta_{M^*}$ under $X^*(Z(\hat M^*)^\Gamma) \rw X^*(Z(\hat M^*)^\Gamma)$ is equal to $\zeta_{M}$.

\begin{lem} \label{lem:rhofact} Let $\ts{M}$ be a Levi subgroup of $^LG^*$ containing $\phi(L)$, and let $S_\phi(\ts{M}) = \tx{Cent}(\phi(L),\hat M^*)$. If $\rho \in \tx{Irr}(S_\phi(M^*)^\natural)$ satisfies $\tx{cc}(\rho)=\zeta_{M^*}$, then $\rho$ belongs to
\[ \tx{Irr}(S_\phi(\ts{M})/(S_\phi(\ts{M}) \cap S_\phi^\tx{rad})). \]
\end{lem}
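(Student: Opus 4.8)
The plan is to reduce the statement to the single assertion that $\rho$ is trivial on $N := S_\phi(\ts{M})\cap S_\phi^\tx{rad}$, and then to prove that assertion. First, $N$ admits the alternative description $N = \hat M^*\cap S_\phi^\tx{rad}$, since $S_\phi(\ts{M}) = S_\phi\cap\hat M^*$ and $S_\phi^\tx{rad}\subseteq S_\phi$. Write $S_\phi^\tx{rad}(\ts{M}) := \tx{Cent}(\phi(L),\hat M^*_\tx{der})^\circ$, so that $\rho\in\tx{Irr}(S_\phi(M^*)^\natural) = \tx{Irr}(S_\phi(\ts{M})/S_\phi^\tx{rad}(\ts{M}))$ is automatically trivial on $S_\phi^\tx{rad}(\ts{M})$. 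Because $\hat M^*_\tx{der}\subseteq\hat G^*_\tx{der}$ one has $S_\phi^\tx{rad}(\ts{M})\subseteq S_\phi^\tx{rad}$, and clearly $S_\phi^\tx{rad}(\ts{M})\subseteq\hat M^*$, so $S_\phi^\tx{rad}(\ts{M})\subseteq N$; hence $\rho$ factors through $S_\phi(\ts{M})/(S_\phi(\ts{M})\cap S_\phi^\tx{rad})$ precisely when it kills $N$.

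Next I would show $N\subseteq S_\phi^\tx{rad}(\ts{M})\cdot Z(\hat M^*)^\Gamma$. Since $S_\phi^\tx{rad}\subseteq S_\phi^\circ$ we have $N = \hat M^*\cap S_\phi^\tx{rad}\subseteq \hat M^*\cap S_\phi^\circ = S_\phi(\ts{M})^\circ$, the last group being a Levi subgroup of the reductive group $S_\phi^\circ$ by Lemma \ref{lem:sphilevi}, in particular reductive. Applying Lemma \ref{lem:squot} to $M^*$ and $\phi$ (legitimate, as $\ts{M}$ is an $L$-group of $M^*$ and the hypothesis that $S_\phi(\ts{M})^\circ$ be reductive holds) gives $S_\phi(\ts{M})^\circ Z(\hat M^*)^\Gamma = S_\phi^\tx{rad}(\ts{M})Z(\hat M^*)^\Gamma$, whence $N\subseteq S_\phi^\tx{rad}(\ts{M})Z(\hat M^*)^\Gamma$. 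Thus each $x\in N$ can be written $x = yz$ with $y\in S_\phi^\tx{rad}(\ts{M})$ and $z\in Z(\hat M^*)^\Gamma$; then $z = y^{-1}x$ lies in $N$ as well, so $z\in N\cap Z(\hat M^*)^\Gamma$. Since $z$ is central in $S_\phi(\ts{M})$, $\rho$ kills $S_\phi^\tx{rad}(\ts{M})$, and $\tx{cc}(\rho) = \zeta_{M^*}$, we get $\rho(x) = \zeta_{M^*}(z)\cdot\tx{id}$. So everything reduces to proving that $\zeta_{M^*}$ vanishes on $K := S_\phi^\tx{rad}\cap Z(\hat M^*)^\Gamma$; note $K = S_\phi^\tx{rad}\cap Z(\hat M^*)$, because any element of $Z(\hat M^*)$ centralizing $\phi(L)$ is automatically $\Gamma$-fixed, $\phi(L)$ surjecting onto $\Gamma$ and acting on $Z(\hat M^*)$ through $\Gamma$.

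To prove $\zeta_{M^*}|_K = 1$ I would unwind the construction of $\zeta_{M^*}$. By Lemma \ref{lem:cart} and the remark preceding it, $\zeta_{M^*}\in X^*(Z(\hat M^*)^\Gamma)$ is the unique character restricting to $\zeta$ on $Z(\hat G^*)^\Gamma$ and pulling back, along $p\colon Z(\hat M^*_\tx{sc})^\Gamma\to Z(\hat M^*)^\Gamma$, to the canonical lift $\tilde\zeta_\Xi$ of $\zeta_\Xi$; moreover $\tilde\zeta_\Xi$ is trivial on $Z(\hat M^*_\tx{sc})^{\Gamma,\circ}$, since it factors through $\pi_0(Z(\hat M^*_\tx{sc})^\Gamma)$. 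Mimicking the proof of Lemma \ref{lem:squot}, for the identity component $K^\circ\subseteq (Z(\hat M^*)^\Gamma)^\circ = A_\ts{M}$ the cocartesian property of the square of Lemma \ref{lem:cart} together with \cite[Lemma 1.1]{Art99} gives $A_\ts{M}\subseteq p(Z(\hat M^*_\tx{sc})^{\Gamma,\circ})\cdot Z(\hat G^*)^\Gamma$, while the image of $Z(\hat M^*_\tx{sc})^{\Gamma,\circ}$ in $\hat G^*$ lies in $S_\phi\cap\hat G^*_\tx{der}$ and hence, being connected, in $S_\phi^\tx{rad}$. So any $z\in K^\circ$ has the form $p(m)g$ with $m\in Z(\hat M^*_\tx{sc})^{\Gamma,\circ}$ and $g\in Z(\hat G^*)^\Gamma\cap\hat G^*_\tx{der}\cap S_\phi^\tx{rad}$, giving $\zeta_{M^*}(z) = \tilde\zeta_\Xi(m)\zeta(g) = \zeta(g)$. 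It then remains to see $\zeta(g) = 1$: here one invokes the $\Xi$-relevance of $\phi$, which by Lemma \ref{lem:triv-on-ZGcapZM} says $\zeta_\Xi$ is trivial on $Z(\hat G^*_\tx{sc})^\Gamma\cap Z(\hat M^*_\tx{sc})^{\Gamma,\circ}$, and one must verify that such a $g$ lifts through $\hat G^*_\tx{sc}$ into this subgroup — a connectedness argument exploiting that $g\in S_\phi^\tx{rad}$ and that $g$ is central in $\hat M^*$. The finite quotient $K/K^\circ$ is then disposed of by the same manipulation, or by first showing $K$ is connected from the Levi-type structure of $S_\phi^\tx{rad}$.

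The step I expect to be the main obstacle is this last one: controlling the central correction term $g$ and the component group $K/K^\circ$, i.e. making precise the passage through the simply connected covers $\hat G^*_\tx{sc}$ and $\hat M^*_\tx{sc}$ so that the $\Xi$-relevance hypothesis can be applied. Everything preceding it is formal given Lemmas \ref{lem:sphilevi}, \ref{lem:squot}, \ref{lem:cart} and \ref{lem:triv-on-ZGcapZM}.
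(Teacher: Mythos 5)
Your proof follows the paper's proof up to and including the reduction to the triviality of $\zeta_{M^*}$ on $K = Z(\hat M^*)^\Gamma \cap S_\phi^\tx{rad}$; the paper gets to the same place by observing that $S_\phi(\ts{M}) \cap S_\phi^\tx{rad}$ is a Levi subgroup of $S_\phi^\tx{rad}$, hence connected and equal to $S_\phi(\ts{M})^\circ \cap S_\phi^\tx{rad}$, then applying Lemma \ref{lem:squot} to $\ts{M}$ exactly as you do. The divergence is only in the final step, and there you introduce an unnecessary detour and leave a real gap.

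The detour: there is no need to split $K$ into $K^\circ$ and $K/K^\circ$ and worry separately about the component group. The cocartesian square of Lemma \ref{lem:cart} gives $Z(\hat M^*)^\Gamma = Z(\hat M^*_\tx{sc})^\Gamma \cdot Z(\hat G^*)^\Gamma$ directly (as the pushout), and \cite[Lemma 1.1]{Art99} applied to $\hat M^*_\tx{sc}$ upgrades this to $Z(\hat M^*)^\Gamma = Z(\hat M^*_\tx{sc})^{\Gamma,\circ} \cdot Z(\hat G^*)^\Gamma$ --- for the full group, not just $A_\ts{M}$. Since $Z(\hat M^*_\tx{sc})^{\Gamma,\circ} \subset S_\phi^\tx{rad}$ (it is connected, centralizes $\phi(L)$ since its elements are $\Gamma$-fixed and central in $\hat M^*$, and lies in $\hat G^*_\tx{der}$), you get $K = Z(\hat M^*_\tx{sc})^{\Gamma,\circ} \cdot (Z(\hat G^*)^\Gamma \cap S_\phi^\tx{rad})$ in one line, with no separate treatment of $\pi_0(K)$.

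The gap: you are right that the sticking point is to show $\zeta$ is trivial on $Z(\hat G^*)^\Gamma \cap S_\phi^\tx{rad}$, and your plan to produce a lift of $g$ through $\hat G^*_\tx{sc}$ into $Z(\hat G^*_\tx{sc})^\Gamma \cap Z(\hat M^*_\tx{sc})^{\Gamma,\circ}$ and then invoke Lemma \ref{lem:triv-on-ZGcapZM} is not spelled out and would require care (the covering $\hat G^*_\tx{sc} \to \hat G^*_\tx{der}$ need not behave well with respect to $S_\phi^\tx{rad}$, and $Z(\hat G^*)^\Gamma \cap S_\phi^\tx{rad}$ may be disconnected). The paper sidesteps this entirely: it simply uses the standing assumption that $\tx{Irr}(S_\phi^\natural,\zeta) \neq \emptyset$, which is by definition the statement that $\zeta$ annihilates $Z(\hat G^*)^\Gamma \cap S_\phi^\tx{rad}$. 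Deriving that nonemptiness from $\Xi$-relevance is the content of the separate Lemma \ref{lem:chi_z-triv-on-intersection} (which uses that the intersection $Z(\hat G^*)^\Gamma \cap S_\phi^\tx{rad}$ is central in $S_\phi^\tx{rad}$, hence contained in a maximal torus $(Z(\hat M^*_\tx{sc})^\Gamma)^\circ$, and then appeals to Lemma \ref{lem:triv-on-ZGcapZM}); you should not try to re-prove it inside Lemma \ref{lem:rhofact}. In short: cite the hypothesis $\tx{Irr}(S_\phi^\natural,\zeta) \neq \emptyset$ rather than re-deriving it, and use the cocartesian decomposition on the full group $Z(\hat M^*)^\Gamma$ rather than on $A_\ts{M}$.
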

\begin{proof}
We know that $S_\phi(\ts{M})=S_\phi \cap \ts{M}$ and that $S_\phi^\circ \cap \ts{M}$ is a Levi subgroup of $S_\phi^\circ$, in particular connected. Then $S_\phi(\ts{M}) \cap S_\phi^\tx{rad}$ is also connected, as it is a Levi subgroup of $S_\phi^\tx{rad}$. Then $S_\phi(\ts{M}) \cap S_\phi^\tx{rad} = S_\phi(\ts{M})^\circ \cap S_\phi^\tx{rad}$. We know from Lemma \ref{lem:squot} that $S_\phi(\ts{M})^\circ Z(\hat M^*)^\Gamma=S_\phi(\ts{M})^\tx{rad} Z(\hat M^*)^\Gamma$. Thus we get
\[ S_\phi(\ts{M}) \cap S_\phi^\tx{rad} \subset (S_\phi(\ts{M})^\tx{rad} Z(\hat M^*)^\Gamma) \cap S_\phi^\tx{rad} = S_\phi(\ts{M})^\tx{rad} (Z(\hat M^*)^\Gamma \cap S_\phi^\tx{rad}). \]
We already know that $\rho$ is trivial on $S_\phi(\ts{M})^\tx{rad}$, and is $\zeta_{M^*}$-isotypic on $Z(\hat M^*)^\Gamma$. Now,
\[ Z(\hat M^*)^\Gamma = Z(\hat M^*_\tx{sc})^\Gamma Z(\hat G^*)^\Gamma = Z(\hat M^*_\tx{sc})^{\Gamma,\circ} Z(\hat G^*)^\Gamma \]
by Lemma \ref{lem:cart}, as well as \cite[Lemma 1.1]{Art99} applied to $\hat M^*_\tx{sc}$. Since $Z(\hat M^*_\tx{sc})^{\Gamma,\circ} \subset S_\phi^\tx{rad}$, we see that
\[ (Z(\hat M^*_\tx{sc})^{\Gamma,\circ} Z(\hat G^*)^\Gamma) \cap S_\phi^\tx{rad} = Z(\hat M^*_\tx{sc})^{\Gamma,\circ} (Z(\hat G^*)^\Gamma \cap S_\phi^\tx{rad}). \]
By construction, $\zeta_{M^*}$ is trivial on $Z(\hat M^*_\tx{sc})^{\Gamma,\circ}$ and restricts to $\zeta$ on $Z(\hat G^*)^\Gamma$. Since we are assuming that $\tx{Irr}(S_\phi^\natural,\zeta) \neq \emptyset$, we know that $\zeta$ restricts trivially to $Z(\hat G^*)^\Gamma \cap S_\phi^\tx{rad}$.
\end{proof}

\subsubsection{Levi subgroups of linear groups}\label{subsub:Levi-GL}

Let $F$ be a local or a global field, and $G^*=\tx{GL}_N/F$. A Levi subgroup $M^* \subset G^*$ has the form
\[ M^* = \prod_{i=1}^k \tx{GL}_{N_i},\qquad \sum_i N_i = N. \]
An equivalence class $\Xi_M : M^* \rw M$ of inner twists breaks up into a product $(\Xi_i)$ with
\[ \Xi_i : \tx{GL}_{N_i} \rw \tx{Res}_{D_i/F} \tx{GL}_{M_i} \]
for some division algebras $D_i/F$ satisfying $[D_i:F]M_i^2=N_i^2$. According to Corollary \ref{cor:itex}, in order for $\Xi_M$ to come from an equivalence class of inner twists $\Xi : G^* \rw G$ with $M \subset G$ a Levi subgroup, it is necessary and sufficient that the division algebras $D_i$ are all the same.

\subsubsection{Levi subgroups of unitary groups}\label{subsub:Levi-U}

Let $F$ be a local or a global field, $E/F$ a quadratic extension, and $G^*=\tx{U}_{E/F}(N)$. A Levi subgroup $M^* \subset G^*$ has the form
\[ M^* = U_{E/F}(N_\frac{k+1}{2}) \times \prod_{i=1}^{\lfloor\frac{k}{2}\rfloor} \tx{Res}_{E/F} \tx{GL}_{N_i}, \qquad N_\frac{k+1}{2}+2\sum_{i=1}^{\lfloor\frac{k}{2}\rfloor} N_i=N. \]
Here the terms with index $\frac{k+1}{2}$ are understood to appear only if $k$ is odd. %

An equivalence class of inner twists $\Xi_M : M^* \rw M$ again breaks up into a product $(\Xi_i)$ with
\[ \Xi_\frac{k+1}{2} : U_{E/F}(N_\frac{k+1}{2}) \rw U,\qquad \Xi_i : \tx{Res}_{E/F}\tx{GL}_{N_i} \rw \tx{Res}_{D_i/F}\tx{GL}_{M_i}, \]
where $U$ is a not necessarily quasi-split unitary group, and $D_i$ is a division algebra over $E$ satisfying $[D_i:E]M_i^2=N_i^2$. We apply again Corollary \ref{cor:itex} to see when $\Xi_M$ comes from an equivalence class of inner twists $\Xi : G^* \rw G$ with $M \subset G$ a Levi subgroup. The first condition is that the classes of all $D_i$ in $H^2(\Gamma_E,\mb{G}_m)$ coincide, thus all $D_i$ are equal to a fixed division algebra $D/E$. The second condition is that the class of $D$ belongs to the image of
\begin{equation} \label{eq:h2ug} H^2(\Gamma,U_{E/F}(1)) \rw H^2(\Gamma,\tx{Res}_{E/F}\mb{G}_m) \rw H^2(\Gamma_E,\mb{G}_m), \end{equation}
where the first map is induced by the inclusion $U_{E/F}(1) \rw \tx{Res}_{E/F}\mb{G}_m$ and the second map is the Shapiro isomorphism. The third and final condition is that, in case $k$ is odd, the composition of the above map with the connecting homomorphism $H^1(\Gamma,U_{E/F}(N)_\tx{ad}) \rw H^2(\Gamma,U_{E/F}(1))$ sends $\Xi_\frac{k+1}{2}$ to the class of $D$.

If $F$ is local, then $H^2(\Gamma,U_{E/F}(1))=0$ and hence $D=E$. If $F$ is global then the map \eqref{eq:h2ug} is injective and its image corresponds to those division algebras $D$ which are split at all places of $E$ fixed under $\Gamma_{E/F}$, and which satisfy $\tx{inv}(D_w)=-\tx{inv}(D_{w'})$ for all pairs $(w,w')$ of places of $E$ which are conjugate under $\Gamma_{E/F}$.

\subsubsection{The group $W(M^*,G^*)$ for unitary groups and lifts of its elements} \label{subsub:wmgu}

Let $F$ be a local or global field of characteristic zero, $E/F$ a quadratic algebra, and $G^*=U_{E/F}(N)$. Thus $G^*$ is the quasi-split unitary group in $N$ variables when $E/F$ is a field extension, or $G^*$ is isomorphic to the general linear group in $N$ variables over $F$ when $E/F$ is the split quadratic algebra. Recall that $G^*$ has a distinguished pinning $(T^*,B^*,\{X_\alpha\}_{\alpha \in \Delta(T^*,G^*)})$.

Let $(M^*,P^*)$ be a standard parabolic pair -- $P^*$ is a parabolic subgroup of $G^*$ containing $B^*$, $M^*$ is a Levi subgroup of $P^*$ containing $T^*$, and both $M^*$ and $P^*$ are defined over $F$. We have the decomposition $M^*=M^*_+ \times M^*_-$, $M^*_+=M^*_1 \times \dots \times M^*_k$, where $M^*_i = \tx{Res}_{E/F}(\tx{GL}(N_i))$, $M^*_-=U_{E/F}(N_-)$ and $2\sum N_i+N_-=N$. The pinning of $G^*$ induces a pinning of $M^*$, namely $(T^*,B^* \cap M^*,\{X_\alpha\}_{\alpha \in \Delta(T^*,M^*)})$.

The relative Weyl group $W(M^*,G^*)$ can be identified with a subquotient of $W(T^*,G^*)$, namely
\[ N(A_{M^*},W(T^*,G^*))/Z(A_{M^*},W(T^*,G^*)), \]
where $A_{M^*}$ is the maximal split torus in the center of $M^*$. Note that the subgroup $Z(A_{M^*},W(T^*,G^*))$ of $W(T^*,G^*)$ is naturally identified with $W(T^*,M^*)$, the absolute Weyl group of $M^*$. This subquotient comes with a splitting
\begin{equation} \label{eq:wsp} W(M^*,G^*) \rw N(A_{M^*},W(T^*,G^*)) \end{equation}
defined to map a given coset of $Z(A_{M^*},W(T^*,G^*))$ in $N(A_{M^*},W(T^*,G^*))$ to its unique member that preserves the set of positive roots $R(T^*,M^* \cap B^*)$. Via this splitting we will regard elements of $W(M^*,G^*)$ as elements of $W(T^*,G^*)$ without change in notation.

Furthermore, there are two splittings of the quotient $W(T^*,G^*) = N(T^*,G^*)/T^*$. The first one is the traditional splitting for the group $G^*(\ol{F}) \cong \tx{GL}_N(\ol{F})$, given by permutation matrices. We denote it by $w \mapsto \hat w$. It is multiplicative, but not $\Gamma$-equivariant unless $E/F$ is split. The second one is the one studied by Langlands-Shelstad in \cite[\S2.1]{LS87}. We denote it by $w \mapsto \tilde w$. It is $\Gamma$-equivariant, but not multiplicative. The relationship between $\hat w$ and $\tilde w$ is easily described.

\begin{lem} \label{lem:lsl} If $w \in W(T^*,G^*)$ belongs to the image of \eqref{eq:wsp}, the element $\hat w\cdot \tilde w^{-1}$ belongs to $Z(M^*)$ and is of order 2.
\end{lem}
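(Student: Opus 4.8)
The plan is to work over $\ol F$, where $G^*\cong\tx{GL}_N$, $T^*$ is the diagonal torus, $W(T^*,G^*)\cong S_N$, and both $\hat w$ and $\tilde w$ are matrices in $N(T^*,\tx{GL}_N)(\ol F)$ lifting the same element $w\in N(T^*,G^*)/T^*$. It follows formally that the quotient $t_w:=\hat w\tilde w^{-1}$ lies in $T^*=Z(T^*)$; what must be shown is that $t_w$ in fact lies in the smaller torus $Z(M^*)$ and that $t_w^2=1$.

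First I would pin down the explicit matrix shape of $\tilde w$. With respect to the standard pinning of $\tx{GL}_N$, the element $\tilde{s_i}=\exp(E_{i,i+1})\exp(-E_{i+1,i})\exp(E_{i,i+1})$ is the identity matrix away from the $(i,i+1)$-block, where it equals $\left(\begin{smallmatrix}0&1\\-1&0\end{smallmatrix}\right)$; since the Langlands--Shelstad section is multiplicative along reduced words (that is, $\tilde{ws_i}=\tilde w\cdot\tilde{s_i}$ whenever $\ell(ws_i)=\ell(w)+1$), an induction on $\ell(w)$ yields $\tilde w=\hat w\,D_w$ with $D_w=\mathrm{diag}(\varepsilon_1,\dots,\varepsilon_N)$ and $\varepsilon_j=\varepsilon_j(w)=(-1)^{\#\{l>j:w(l)<w(j)\}}$. (The normalisation in \cite[\S2.1]{LS87} affects the $\varepsilon_j$ by at most a global sign, which is harmless below.) As $D_w^2=1$, this gives $t_w=\hat w\,D_w^{-1}\hat w^{-1}=\mathrm{diag}\big(\varepsilon_{w^{-1}(1)},\dots,\varepsilon_{w^{-1}(N)}\big)$, a diagonal matrix with entries in $\{\pm1\}$; in particular $t_w^2=1$.

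It remains to see that $t_w\in Z(M^*)$, i.e.\ that $t_w$ is scalar on each block of $M^*\subset\tx{GL}_N$, equivalently $\varepsilon_{w^{-1}(i)}=\varepsilon_{w^{-1}(i+1)}$ for every simple root $e_i-e_{i+1}$ of $M^*$ (those with $i,i+1$ in a common block). This is where the hypothesis enters: by the defining property of the splitting \eqref{eq:wsp}, $w$ --- and therefore also $w^{-1}$ --- preserves $R(T^*,M^*\cap B^*)$, hence carries simple roots of $M^*$ to simple roots of $M^*$; applied to $e_i-e_{i+1}$ this forces $w^{-1}(i+1)=w^{-1}(i)+1=:j+1$. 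Writing $i=w(j)$ and $i+1=w(j+1)$: since $w(j+1)=i+1>i$, the index $l=j+1$ contributes nothing to $\#\{l>j:w(l)<i\}$, so this count equals $\#\{l>j+1:w(l)<i\}$; and since the value $i$ is taken by $w$ only at $l=j$, the count $\#\{l>j+1:w(l)<i+1\}$ equals the same number. Hence $\varepsilon_j(w)=\varepsilon_{j+1}(w)$, which is exactly what was needed.

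The computation is routine once it is set up, so I expect the only real obstacle to be the first step: correctly identifying the matrix form --- and in particular the sign normalisation --- of the Langlands--Shelstad section, together with the single structural input that a representative $w$ chosen as in \eqref{eq:wsp} satisfies $w^{-1}(i+1)=w^{-1}(i)+1$ for $i,i+1$ in a common block of $M^*$, which is what makes the inversion counts pair off.
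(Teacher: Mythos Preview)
Your proof is correct and follows essentially the same strategy as the paper's: compute $t_w=\hat w\tilde w^{-1}$ explicitly as a diagonal matrix with entries in $\{\pm 1\}$, then verify it is constant on the blocks of $M^*$. The paper obtains the formula by citing \cite[Prop.~6.2.1]{Kal12a}, which gives $t_w=\prod_{\alpha>0,\,w^{-1}\alpha<0} y_\alpha(-1)$ (equivalent to your inversion-count formula), and then argues $Z(M^*)$-membership by observing that the index set $\{\alpha>0:w^{-1}\alpha<0\}$ is stable under $W(T^*,M^*)$ when $w$ lies in the image of \eqref{eq:wsp}; you instead derive the formula from scratch by induction on $\ell(w)$ and verify block-constancy by directly comparing adjacent inversion counts. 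These are cosmetic differences --- your version is more self-contained, the paper's slightly more conceptual --- but the underlying argument is the same.
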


\begin{proof}
Certainly $\hat w \cdot \tilde w^{-1}$ belongs to $T^*$. According to \cite[Prop 6.2.1]{Kal12a}, it is given by
\[ t_w := \prod_{\substack{\alpha \in R(T^*,G^*)\\ \alpha > 0\\w^{-1}\alpha<0}} y_\alpha(-1) \]
where $x_\alpha,y_\alpha \in X_*(T^*)$ are the unique members of the standard basis such that $\alpha^\vee=x_\alpha-y_\alpha$. If $w$ belongs to the image of \eqref{eq:wsp} and if $x_\alpha-y_\alpha$ is the coroot of a root belonging to the index set of the above product, then so is $x_\alpha-uy_\alpha$ for all $u \in W(T^*,M^*)$. Hence $t \in T^*$ is fixed under the action of $W(T^*,M^*)$ and, $M^* \times \ol{F}$ being the product of general linear groups, this is equivalent to $t \in Z(M^*)$.
\end{proof}

We see in particular that the automorphisms of $M^*$ given by conjugation by $\hat w$ and $\tilde w$ coincide. The group $W(M^*,G^*)$ is a finite algebraic group defined over $F$ and via the map $w \mapsto \hat w$, the group $W(M^*,G^*)(F)$ acts on $M^*$ by automorphisms over $F$. This action can be described as follows. There is a natural isomorphism $W(M^*,G^*)(F) \cong (\Z/2\Z)^k \rtimes S$ with $S \subset S_k$, corresponding to the decomposition $M^*=M_1^* \times \dots \times M_k^* \times M^*_-$. The subgroup $S \subset S_k$ is defined by $S=\{\sigma \in S_k| \forall i=1,\dots,k: \dim M_i^*=\dim M_{\sigma(i)}^* \}$. The $i$-th basis vector $(0,\dots,0,1,0,\dots,0)$ of $(\Z/2\Z)^k$ acts on $M^*$ by the pinned outer automorphism on $M^*_i$ and the identity on the other factors, and $\sigma \in S$ acts by permutation of the factors $M_1^*,\dots,M_k^*$.

The element $t_w = \hat w \cdot \tilde w^{-1}$ is not multiplicative in $w$. However, it does exhibit certain multiplicativity that will be useful for later. Namely, decompose $t_w = t_{w,+} \times t_{w,-}$ according to the decomposition $M = M_+ \times M_-$.

\begin{lem} \label{lem:tw-} The map $W(M^*,G^*)(F) \rw Z(M^*_-)$ given by $w \mapsto t_{w,-}$ is a group homomorphism.
\end{lem}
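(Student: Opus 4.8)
The plan is to reduce the multiplicativity of $w \mapsto t_{w,-}$ to an explicit computation in the root system of $G^*=U_{E/F}(N)$, using the formula for $t_w$ recalled in the proof of Lemma~\ref{lem:lsl}. Recall that for $w$ in the image of the splitting \eqref{eq:wsp} we have
\[ t_w = \prod_{\substack{\alpha \in R(T^*,G^*)\\ \alpha>0,\ w^{-1}\alpha<0}} y_\alpha(-1), \]
where $\alpha^\vee = x_\alpha - y_\alpha$ in terms of the standard basis of $X_*(T^*)$. The first step is to unwind which roots contribute to the $M^*_-$-component $t_{w,-}$: since $M^*_- = U_{E/F}(N_-)$ sits in the ``middle'' block of $\tx{GL}_N$, a coroot $x_\alpha - y_\alpha$ maps into $Z(M^*_-)$'s cocharacter lattice (after projecting via $X_*(T^*)\to X_*(Z(M^*_-))$) only through the components of $x_\alpha$ and $y_\alpha$ lying in the index set of the $M^*_-$-block. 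I would fix coordinates so that $T^* \cong \mathbb{G}_m^N$ with the outer involution $\tau$ acting by $i \mapsto N+1-i$ with a sign, identify $Z(M^*_-)$ concretely, and track exactly the contribution of each positive root $e_i - e_j$ (with $w^{-1}(e_i-e_j)<0$) to the middle coordinates.

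The key step is then a telescoping/cocycle argument. For $w_1, w_2 \in W(M^*,G^*)(F)$, the set $\{\alpha>0 : (w_1w_2)^{-1}\alpha<0\}$ decomposes, up to the $w_2^{-1}$-twist, as the disjoint-union-minus-intersection combination of $\{\alpha>0: w_1^{-1}\alpha<0\}$ and $\{\alpha>0: w_2^{-1}\alpha<0\}$; this is the standard inversion-set identity underlying the fact that $t_w$ is a $1$-cocycle for the $W$-action on $T^*$, i.e. $t_{w_1w_2} = t_{w_1}\cdot w_1(t_{w_2})$. So $t_w$ is already ``multiplicative up to a twist,'' and the whole content of the lemma is that \emph{on the $Z(M^*_-)$-component the twist disappears}: I must show $w_1$ acts trivially on $t_{w_2,-} \in Z(M^*_-)$ for all $w_1,w_2 \in W(M^*,G^*)(F)$. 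Using the description of the $W(M^*,G^*)(F)$-action on $M^*$ recorded just before the statement — $(\Z/2\Z)^k$ acts by pinned outer automorphisms on the $M^*_i$ and $S\subset S_k$ permutes them, while the action on $M^*_-$ is induced from the inclusion into $\tx{GL}_N$ — I would check that this action fixes $Z(M^*_-)$ pointwise: the permutations and outer automorphisms of the $\tx{GL}_{N_i}$-blocks do not touch the middle block, and the outer involution of $U_{E/F}(N)$ restricted to $M^*_-$ induces on $Z(M^*_-)$ the automorphism that is already built into the structure of $Z(M^*_-)$ as a torus inside the unitary group, hence acts trivially on the image of the relevant coroots. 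Once $w_1$ fixes $t_{w_2,-}$, the cocycle identity collapses to $t_{w_1w_2,-} = t_{w_1,-}\cdot t_{w_2,-}$, which is the claim.

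The main obstacle I anticipate is the bookkeeping in the first step: correctly identifying $Z(M^*_-)$ as a subtorus of $T^*$, pinning down which coordinates of the standard basis vectors $x_\alpha, y_\alpha$ survive the projection to $X_*(Z(M^*_-))$, and checking that the $\Gamma$-structure and the outer-automorphism action are compatible in the way claimed — in particular verifying that no ``sign'' from the matrix $J_N$ defining $U_{E/F}(N)$ sneaks into the action of $w_1$ on $t_{w_2,-}$. A secondary subtlety is that $t_w$ is defined only for $w$ in the image of \eqref{eq:wsp}, so when multiplying $w_1 w_2$ one must use that $\widehat{w_1 w_2}$ and $\widehat{w_1}\widehat{w_2}$ differ only by a permutation-matrix identity (the map $w\mapsto\hat w$ \emph{is} multiplicative) and that $\tilde w$ is $\Gamma$-equivariant, so the comparison of $t_{w_1w_2}$ with $t_{w_1}w_1(t_{w_2})$ is clean once one is careful that $w\mapsto\tilde w$ is not multiplicative — here is exactly where the defect lives, and I expect it to be absorbed into $Z(M^*)$ as in Lemma~\ref{lem:lsl}, contributing nothing new on the $M^*_-$-component.
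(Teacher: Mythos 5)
Your route is genuinely different from the paper's. The paper simply reads off $t_{w,-}$ from the explicit formula in the proof of Lemma~\ref{lem:lsl}, parametrizing $W(M^*,G^*)(F)\cong(\Z/2\Z)^k\rtimes S$ and computing $t_{e_i,-}=(-1)^{N_i}$ for the $\Z/2\Z$-generators and $t_{s,-}=1$ for $s\in S$; multiplicativity is then visible from the closed form (using $N_i=N_{s(i)}$ for $s\in S$). Your strategy instead tries to establish a cocycle relation for $t_w$ and then show the $W$-action on $Z(M^*_-)$ is trivial. The second half of that plan is fine, and in fact easier than you anticipate: the permutations $\hat w$ underlying $W(M^*,G^*)(F)$ fix the middle positions of $\{1,\dots,N\}$ pointwise, so the action on all of $M^*_-$, not just on $Z(M^*_-)$, is trivial — there is no $J_N$-sign subtlety to chase.

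The gap is in the first half. The relation $t_{w_1w_2}=t_{w_1}\cdot w_1(t_{w_2})$ is \emph{false}: already for $\GL_2$ with $w_1=w_2=s_\alpha$ one finds $t_{w_1}\cdot w_1(t_{w_2})=\alpha^\vee(-1)\neq 1=t_{w_1w_2}$. The correct relation picks up the Langlands--Shelstad $2$-cocycle, $t_{w_1w_2}=t_{w_1}\,w_1(t_{w_2})\,t(w_1,w_2)$, precisely because the inversion-set identity forces you to convert $w_1\beta<0$ back to a positive root, and $y_{-\gamma}(-1)=x_\gamma(-1)=\gamma^\vee(-1)\,y_\gamma(-1)$. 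You do gesture at this defect at the end, but your proposed resolution — ``absorbed into $Z(M^*)$ as in Lemma~\ref{lem:lsl}'' — is not enough: $Z(M^*)=Z(M^*_+)\times Z(M^*_-)$ has a nontrivial $M^*_-$-factor, so landing in $Z(M^*)$ says nothing about $t(w_1,w_2)_-$. What actually saves the argument is the sharper statement (proved in the paper in the course of Lemma~\ref{lem:iop2m}) that $t(w_1,w_2)\in A_{M^*}$, the maximal \emph{split} central torus, together with the observation that $A_{M^*_-}$ is trivial because $Z(M^*_-)=U_{E/F}(1)$ is anisotropic when $E/F$ is a field. You would need to cite that stronger fact to close the gap, and you should note that this step genuinely uses the unitary (non-split) case; the paper's direct computation from the explicit formula for $t_w$ bypasses the cocycle issue entirely and works uniformly.
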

\begin{proof}
Let us write $M = M_1 \times \dots\times M_k \times M_-$ and identify $W(M^*,G^*)(F)$ with $(\Z/2\Z)^k \rtimes S$, with $S \subset S_k$. The formula for $t_w$ given in the proof of Lemma \ref{lem:lsl} implies that $t_{e_i,-} = (-1)^{N_i}$ for $e_i \in (\Z/2\Z)^k$ the $i$-th standard basis vector, and $t_{s,-}=1$ for $s \in S$. The claim follows.
\end{proof}
We remark that this lemma can be used equally well for the dual group $\hat G^*$ and its Levi subgroup $\hat M^*$, with $W(M^*,G^*)(F)$ replaced by $W(\hat M^*,\hat G^*)^\Gamma$. The proof is the same.

\subsubsection{Extended pure inner twists for Levi subgroups of unitary groups}

If $\Xi : G^* \rw G$ is an equivalence class of extended pure inner twists and $(M^*,P^*)$ and $(M,P)$ are parabolic pairs whose conjugacy classes are identified by $\Xi$, then the set $\Xi_M = \{(\xi,z) \in \Xi| \xi(M^*,P^*)=(M,P) \}$ is an equivalence class of extended pure inner twists $M^* \rw M$.

\begin{lem} \label{lem:c1} Assume that $F$ is local, $\Xi : G^* \rw G$ is an equivalence class of extended pure inner twists, and $(M,P)$ is a parabolic pair of $G$ corresponding to $(M^*,P^*)$. There exists an element $(\xi,z) \in \Xi$ such that $\xi(M^*,P^*) = (M,P)$ and $z$ commutes with $\tilde w$ for all $w \in W(M^*,G^*)$.
\end{lem}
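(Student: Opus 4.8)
The plan is to show that, within the class $\Xi_M$, the cocycle $z$ can be taken with values in the ``unitary factor'' $M^*_-$ of the Levi $M^*$, which is centralised by every $\tilde w$. By the discussion immediately preceding the lemma, $\Xi_M=\{(\xi,z)\in\Xi\mid\xi(M^*,P^*)=(M,P)\}$ is a nonempty equivalence class of extended pure inner twists $M^*\to M$; fix $(\xi_1,z_1)\in\Xi_M$, so $z_1\in Z^1_\tx{bsc}(\mc E,M^*)$. I would first record that, for a point $g\in M^*$ over a large enough finite Galois extension, the isomorphism $(\tx{id}_G,g)$ of extended pure inner twists $G^*\to G$ carries $(\xi_1,z_1)$ to $(\xi_1\circ\tx{Ad}(g)^{-1},\,g\,z_1\,\sigma_{\mc E}(g)^{-1})$, which again lies in $\Xi$; and since $g\in M^*\subset P^*$ normalises $(M^*,P^*)$, the modified twist still carries $(M^*,P^*)$ to $(M,P)$. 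Thus I am free to replace $z_1$ by any cocycle cohomologous to it via an element of $M^*$.

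Next, write $M^*=M^*_+\times M^*_-$ with $M^*_+=\prod_{i=1}^k\Res_{E/F}\GL_{N_i}$, $M^*_-=U_{E/F}(N_-)$, and $z_1=(z_{1,+},z_{1,-})$ accordingly. By Lemma \ref{lem:lsl} the automorphism $\tx{Ad}(\tilde w)$ of $M^*$ coincides with $\tx{Ad}(\hat w)$, and by the explicit description of the $W(M^*,G^*)(F)$-action on $M^*$ recalled in \S\ref{subsub:wmgu} (permutations of the $M^*_i$ composed with pinned outer automorphisms of individual $M^*_i$) this action is trivial on the factor $M^*_-$; hence every $\tilde w$ centralises $M^*_-$, so a cocycle valued in $M^*_-$ automatically commutes with all $\tilde w$. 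It therefore suffices to conjugate $z_1$ into $M^*_-$ by an element of $M^*$, i.e.\ to show that the class of $z_{1,+}$ in $B(F,M^*_+)_\tx{bsc}=\prod_iB(F,\Res_{E/F}\GL_{N_i})_\tx{bsc}$ is trivial.

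This vanishing is the crux, and it is where locality of $F$ and $G^*$ being a unitary group are used. Viewed as a cocycle for $G^*$, $z_1$ represents the class of $\Xi$ in $B(F,G^*)_\tx{bsc}$, hence is basic, so its Newton point is a homomorphism $\mb D_F\to Z(G^*)=U_{E/F}(1)$. Its projection to the centre $\Res_{E/F}\mb G_m$ of the $i$-th factor is therefore an element of $X_*(U_{E/F}(1))_\Q$, on which the nontrivial element of $\Gamma_{E/F}$ acts by $-1$; but this same element is the Newton point of the basic $\Res_{E/F}\GL_{N_i}$-cocycle $z_{1,i}$, hence is $\Gamma_{E/F}$-invariant, hence $0$. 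Since a basic class of $\Res_{E/F}\GL_{N_i}$ is determined by its Newton point (as for $\GL$ over a local field), $z_{1,i}$ represents the trivial class, so $[z_{1,+}]$ is trivial; picking $g_+$ that trivialises $z_{1,+}$ and setting $g=(g_+,1)$, the cocycle $g\,z_1\,\sigma_{\mc E}(g)^{-1}$ takes values in $M^*_-$, and the first paragraph turns it into the required $(\xi,z)\in\Xi$.

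The essential content is the incompatibility used above: the Newton point of a $\Res_{E/F}\mb G_m$-isocrystal is $\Gamma_{E/F}$-fixed, whereas $\Gamma_{E/F}$ acts on $X_*(Z(G^*))=X_*(U_{E/F}(1))$ by $-1$, so all non-quasi-split behaviour of $\Xi$ is forced into $M^*_-$; the same can be seen from Lemma \ref{lem:gbasl}, which places $[\Xi_M]$ in $B(F,M^*)_{G^*-\tx{bsc}}$ and via $\kappa_{M^*}$ pins down its components. The degenerate case $E\cong F\times F$ ($G^*=\GL_N$, with no unitary factor) runs along the same lines, except that $G^*$-basicness then only forces the $\GL$-components of $z_1$ within each block of equal-size factors to have equal Newton points; one conjugates $z_1$ into the diagonal subgroup $\prod_j\GL_{d_j}\subset M^*$, which the permutation elements $\tilde w$ centralise. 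Some routine care is needed with the enlargement of the splitting field and with verifying that $(\tx{id}_G,g)$ is a bona fide isomorphism of extended pure inner twists lying in $\Xi$.
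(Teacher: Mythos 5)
Your proposal is correct and follows essentially the same route as the paper's own proof: both decompose $M^*$ into its linear part $M^*_+$ and hermitian part $M^*_-$, use $G^*$-basicness of $z_1$ together with the Newton map to show the class of the $M^*_+$-component is trivial (you do this factor-by-factor via Galois invariance vs.\ antiinvariance plus injectivity of Newton on $B(F,\Res_{E/F}\GL_{N_i})_{\tx{bsc}}$, while the paper observes directly that $\Hom_F(\mb D_F,Z(G^*))=0$, passes to $H^1(\Gamma,M^*)$, and invokes Hilbert~90), and then conclude using the fact that the Langlands--Shelstad lifts $\tilde w$ centralize $M^*_-$. Your treatment of the split case $E=F\times F$ also matches the paper's (equal Newton points $\Rightarrow$ equal classes $\Rightarrow$ choose equal representatives within equal-size blocks).
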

\begin{proof}
Consider first the case when $G^*$ is linear. Then we have the product decomposition $M^* \cong M^*_1 \times \dots \times M^*_k$ with $M^*_i=\tx{GL}_{N_i}$ and $N=\sum N_i$. Let $h \in H^1_\tx{G^*-\tx{bsc}}(\mc{E},M^*)$ be the class of $\Xi_M$ and decompose it $h=h_1 \times \dots \times h_k$ accordingly. The Newton point of all $h_i$ is the same.  The usual twisting argument, coupled with the generalized Hilbert theorem 90 \cite[Theorem 29.2]{BookInvol} implies that all fibers of the Newton map $H^1_\tx{bsc}(\mc{E},M^*_i) \rw \tx{Hom}_F(\mb{D}_F,\mb{G}_m)$ are trivial. Thus, for $i,j$ with $N_i=N_j$ the classes $h_i$ and $h_j$ in $H^1_\tx{bsc}(\mc{E},M^*_i)=H^1_\tx{bsc}(\mc{E},M^*_j)$ are equal. This allows us to choose a representative $z=z_1 \times \dots \times z_k$ of $h$ such that for any $i,j$ with $N_i=N_j$ we have $z_i=z_j$. Since $\tilde w$ acts by permuting the factors $M^*_i$, the statement follows.

Now consider the case when $G^*$ is unitary and let again $h \in H^1_\tx{G^*-\tx{bsc}}(\mc{E},M^*)$ be the class of $\Xi_M$. Since $\mb{D}_F$ is split and $Z(G^*)$ is anisotropic, the Newton point of $h$ is trivial, and thus $h \in H^1(\Gamma,M^*)$. We have the product decomposition $M^* \cong M^*_1 \times \dots \times M^*_k \times M^*_-$, with $M^*_i=\tx{Res}_{E/F}(\tx{GL}_{N_i})$, $M^*_-=U_{E/F}(N_-)$, and $N=2\sum N_i + N_-$. Decomposing $h=h_1 \times \dots \times h_k \times h_-$ accordingly, we see that all $h_i$ are trivial. Thus we may choose a representative $z=z_1 \times \dots \times z_k \times z_-$ with $z_i=1$. Since $\tilde w$ fixes $M^*_-$ pointwise, the statement follows.
\end{proof}

\begin{lem} \label{lem:c2} Assume that $F$ is global, $\Xi : G^* \rw G$ is an equivalence class of inner twists, and $(M,P)$ is a parabolic pair of $G$ corresponding to $(M^*,P^*)$. There exists an element $\xi \in \Xi$ such that $\xi(M^*,P^*) = (M,P)$ and an element $z \in Z^1_\tx{G^*-\tx{bsc}}(\mc{E},M^*)$ that commutes with $\tilde w$ for all $w \in W(M^*,G^*)$, such that $(\xi,z)$ is an extended pure inner twist.
\end{lem}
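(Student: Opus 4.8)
The plan is to imitate the proof of Lemma~\ref{lem:c1}, building $z$ factor by factor, while absorbing the two features that distinguish the global situation: over a number field $\mb{D}_F$ is no longer split, so a $G^*$-basic class can carry a nontrivial Newton point on the general linear factors of $M^*$, and the existence of a global cocycle with prescribed local behaviour has to be tested against Kottwitz's obstruction, exactly as in the proof of Lemma~\ref{lem:transfer-Levi-locglo}. First I would restrict $\Xi$ to an equivalence class of inner twists $\Xi_M:M^*\ra M$, with class $x_M\in H^1(\Gamma,M^*/Z(M^*))$, and use the decomposition $M^*=M^*_1\times\dots\times M^*_k\times M^*_-$ with $M^*_i=\Res_{E/F}\GL_{N_i}$ and $M^*_-=U_{E/F}(N_-)$ (in the linear case $M^*_i=\GL_{N_i}$ and $M^*_-$ is absent), writing $x_M=(x_1,\dots,x_k,x_-)$. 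By Lemma~\ref{lem:lsl} and \S\ref{subsub:wmgu}, conjugation by $\tilde w$ agrees with conjugation by $\hat w$ on $M^*$, and under $W(M^*,G^*)(F)\cong(\Z/2\Z)^k\rtimes S$ with $S\subset S_k$ the subgroup $S$ permutes the $M^*_i$ of equal rank, the $i$-th generator of $(\Z/2\Z)^k$ acts on $M^*_i$ by the pinned outer automorphism $\theta_i$ (whose fixed-point group is a form of $U_{E/F}(N_i)$), and $M^*_-$ is fixed pointwise. Hence it suffices to produce $z=(z_1,\dots,z_k,z_-)\in Z^1_\tx{bsc}(\mc{E},M^*)$, $G^*$-basic and refining $\Xi_M$, with $z_i=z_j$ whenever $N_i=N_j$, each $z_i$ valued in $(M^*_i)^{\theta_i}\cong U_{E/F}(N_i)$, and $z_-$ unconstrained.

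For the unitary factor, $x_-$ lifts to a basic cocycle $z_-\in Z^1_\tx{bsc}(\mc{E},M^*_-)$ because $M^*_-$ has connected centre \cite[Prop.~10.4]{Kot14}. For the general linear factors, the hypothesis that $(M,P)$ corresponds to $(M^*,P^*)$ — equivalently, that $M^*$ transfers to $G$ — forces, by \S\ref{subsub:Levi-U} (resp.\ \S\ref{subsub:Levi-GL}), the $x_i$ to come from a single division algebra $D$ over $E$ (resp.\ over $F$) which in the unitary case is split at the places of $E$ fixed under $\Gamma_{E/F}$ and satisfies $\inv(D_w)=-\inv(D_{\ol w})$ for conjugate pairs. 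The point is that these two conditions say precisely that the local inner twist $M^*_i\times F_v\ra\Res_{D/F}\GL_{M_i}\times F_v$ is $\theta_i$-fixed at every place $v$ (trivial at places inert or ramified in $E$, where $D$ is split; and at a split place $v=w\ol w$ given by a pair $(D_w,D_{\ol w})$ with $D_{\ol w}\cong D_w^\tx{op}$), which is exactly what permits a $\theta_i$-fixed, i.e.\ $U_{E/F}(N_i)$-valued, basic cocycle $z_i$ representing it. I would write $z_i$ down through its localizations, using the description of $B(F,U_{E/F}(N_i))_\tx{bsc}$ via the localization map (the analogue for $U_{E/F}(N_i)$ of \eqref{eq:h1algloc}--\eqref{eq:kotisolocglo}), taking the component at a split place $w\ol w$ to be the element of $B(F_v,\GL_{N_i})_\tx{bsc}\cong\Z$ of invariant $\inv(D_w)$ transported via $\iota_w$, and the other components trivial. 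Because the fibres of the relevant Newton maps over a fixed point are singletons (generalized Hilbert~90, \cite[Theorem~29.2]{BookInvol}, just as in Lemma~\ref{lem:c1}), the $z_i$ with equal $N_i$ coincide and may be taken literally equal, and all of them — together with $z_-$, since $\Xi$ is one global class — may be chosen with a common Newton point $\nu_0\in\Hom_F(\mb{D}_F,U_{E/F}(1))$.

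Assembling $z=(z_1,\dots,z_k,z_-)\in Z^1_\tx{bsc}(\mc{E},M^*)$, the common Newton point $\nu_0$ makes $z$ $G^*$-basic by the criterion of Lemma~\ref{lem:gbasl}(2); and since $M^*$ transfers to $G$, Lemmas~\ref{lem:levitran} and~\ref{lem:zmzgind} (through the induced torus $Z(M^*)/Z(G^*)$, whose $H^1$ vanishes) show that the class of $z$ in $H^1(\Gamma,M^*/Z(M^*))$ being $x_M$ already pins down $[\sigma\mapsto\tx{Ad}(z_\sigma)]=[\Xi]$ in $H^1(\Gamma,G^*/Z(G^*))$. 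Fixing $\xi\in\Xi$ with $\xi(M^*,P^*)=(M,P)$ and adjusting it by an inner automorphism coming from $M^*$ (which preserves $(M^*,P^*)$), we may arrange $\xi^{-1}\sigma(\xi)=\tx{Ad}(z_\sigma)$, so that $(\xi,z)$ is the required extended pure inner twist $G^*\ra G$ — $z$, being $G^*$-basic and valued in $M^*$, is a legitimate cocycle for one. The step I expect to be the main obstacle is the coordination in the second paragraph: choosing the local data so that the assembled $z$ is \emph{simultaneously} $\theta_i$-fixed on every general linear factor, $G^*$-basic, and a refinement of the \emph{prescribed} global class of $\Xi$, rather than of some inner twist merely sharing its localizations. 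This is the only point where one genuinely leaves Galois cohomology, and it should be settled by a $\tx{ker}^1$/localization argument parallel to that of Lemma~\ref{lem:transfer-Levi-locglo}, invoking the vanishing of $\tx{ker}^1(F,\C^\times_{(-1)})$ (resp.\ $\tx{ker}^1(F,\C^\times)$ in the linear case) and Kottwitz's description \cite[\S\S10--11,\,15]{Kot14} of the image of the localization map on basic cocycles.
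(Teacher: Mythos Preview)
Your strategy is correct in outline—the target shape of $z$, with each $z_i$ valued in $(M^*_i)^{\theta_i}\cong U_{E/F}(N_i)$ and $z_i=z_j$ whenever $N_i=N_j$, is exactly what the paper produces—but your construction via localization is not how the paper proceeds, and the step you flag as the main obstacle is one the paper sidesteps entirely. The paper never localizes. It stays in global cohomology and relies on a separate Lemma~\ref{lem:c3}, which says that the image of $H^1(\Gamma,U_{E/F}(N_i)/U_{E/F}(1))$ in $H^1(\Gamma,\Res_{E/F}\tx{PGL}(N_i))$ is exactly the preimage of $H^2(\Gamma,U_{E/F}(1))\subset H^2(\Gamma_E,\mb{G}_m)$. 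That lemma is proved by invoking the existence of an involution of the second kind on a central simple $E$-algebra with trivial corestriction to $F$ (\cite[Theorem~3.1]{BookInvol}). Your assertion that the place-by-place conditions on $D$ ``permit'' a global $U_{E/F}(N_i)$-valued cocycle is precisely this lemma; it is an input from the theory of algebras with involution, not a $\ker^1$/localization argument.

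Once the global lifts $\bar h_i'\in H^1(\Gamma,U_{E/F}(N_i)/U_{E/F}(1))$ are in hand, the paper notes that they all share the same image in $H^2(\Gamma,U_{E/F}(1))$ (as does $\bar h_-$), so the tuple $(\bar h_1',\dots,\bar h_k',\bar h_-)$ lifts to a single class $h'$ in $H^1(\Gamma,[\prod_i U_{E/F}(N_i)\times U_{E/F}(N_-)]/U_{E/F}(1)_{\tx{diag}})$. One then lifts $h'$ to $H'$ in $H^1_\tx{bsc}(\mc{E},\prod_i U_{E/F}(N_i)\times U_{E/F}(N_-))$ via connected-centre surjectivity \cite[Prop.~10.4]{Kot14}. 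Because $h'$ lies in ordinary Galois cohomology of a group whose centre is $U_{E/F}(1)^{k+1}/U_{E/F}(1)_{\tx{diag}}$, the Newton point of $H'$ is forced into the diagonal copy of $U_{E/F}(1)=Z(G^*)$. So both $G^*$-basicness and the ``common Newton point $\nu_0$'' that you hoped to arrange by hand fall out automatically from lifting through the diagonal quotient; injectivity of $H^1(\Gamma,M^*/Z(G^*))\to H^1(\Gamma,M^*/Z(M^*))$ then confirms that the resulting class refines the given $\Xi$. Your programme of assembling local data and then coordinating is thus replaced wholesale by this global diagonal-quotient construction, together with Lemma~\ref{lem:c3}.
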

\begin{proof}
Let $\xi \in \Xi$ be an arbitrary inner twist satisfying $\xi(M^*,P^*)=(M,P)$, let $h \in H^1(\Gamma,M^*/Z(G^*))$ be the class of the 1-cocycle $z_\sigma = \xi^{-1}\sigma(\xi)$. Decompose $M^* \cong M^*_1 \times \dots \times M^*_k \times M^*_-$, with $M^*_i=\tx{Res}_{E/F}(\tx{GL}_{N_i})$, $M^*_-=U_{E/F}(N_-)$, and $N=2\sum N_i + N_-$. We will produce a class $H \in H^1_\tx{G^*-\tx{bsc}}(\mc{E},U_{E/F}(N_1) \times \dots \times U_{E/F}(N_k) \times U_{E/F}(N_-))$  whose image in $H^1_\tx{G^*-\tx{bsc}}(\mc{E},M^*)$ lifts $h$ and which, when written as $H=H_1 \times \dots \times H_k \times H_-$, satisfies $H_i=H_j$ whenever $N_i=N_j$. Any 1-cocycle representing $H$ will then have the property of being fixed by all Langlands-Shelstad lifts of elements of $W(M^*,G^*)^\Gamma$.

We begin the construction. Let $\bar h$ be the image of $h$ in $H^1(\Gamma,M^*/Z(M^*))$, which we can write as $\bar h = \bar h_1 \times \dots \times \bar h_k \times \bar h_{-}$ according to the decomposition of $M^*$. We have $Z(M^*_i)=\tx{Res}_{E/F}\mb{G}_m$ and $Z(M^*_-)=U_{E/F}(1)$, and the embedding of $Z(G^*)=U_{E/F}(1)$ into $Z(M^*)=(\tx{Res}_{E/F}\mb{G}_m)^k \times U_{E/F}(1)$ is the diagonal embedding. According to Equation \eqref{eq:itex} and Corollary \ref{cor:itex}, the images of $\bar h_i$ in $H^2(\Gamma,\tx{Res}_{E/F}(\mb{G}_m))$ coincide and belong to the subset $H^2(\Gamma,U_{E/F}(1))$. Since the map $H^1(\Gamma,M^*_i/Z(M^*_i)) \rw H^2(\Gamma,Z(M^*_i))$ is injective by the generalized Hilbert theorem 90 \cite[Theorem 29.2]{BookInvol}, this means in particular that for any $i,j$ with $N_i=N_j$ the classes $\bar h_i$ and $\bar h_j$ are equal. Applying Lemma \ref{lem:c3} below we obtain a lift $\bar h_i' \in H^1(\Gamma,U_{E/F}(N_i)/U_{E/F}(1))$ for each $\bar h_i$ which we may moreover choose so that $\bar h_i'=\bar h_j'$ whenever $N_i=N_j$. By construction, all classes $\bar h_i'$ have the same image in $H^2(\Gamma,U_{E/F}(1))$ and this image also equals the image of $\bar h_-$. Thus, the element $\bar h_1' \times \dots \times \bar h_k' \times \bar h_-$ of
\[ H^1\left(\Gamma,\frac{U_{E/F}(N_1)}{U_{E/F}(1)} \times \dots \times \frac{U_{E/F}(N_k)}{U_{E/F}(1)} \times \frac{U_{E/F}(N_-)}{U_{E/F}(1)}\right) \]
lifts to an element $h'$ of
\[ H^1\left(\Gamma,\frac{U_{E/F}(N_1) \times \dots \times U_{E/F}(N_k) \times U_{E/F}(N_-)}{U_{E/F}(1)}\right).\]
The image of $h'$ in $H^2(\Gamma,U_{E/F}(1))=H^2(\Gamma,Z(G^*))$ equals the image of $h$ there. This, together with the injectivity of the maps $H^1(\Gamma,M^*/Z(G^*)) \rw H^1(\Gamma,M^*/Z(M^*))$ and $H^1(\Gamma,M^*_i/Z(M^*_i)) \rw H^2(\Gamma,Z(M^*_i))$, which is part of diagram \eqref{eq:itex} and follows from Lemma \ref{lem:zmzgind}, implies that the image of $h'$ under \[H^1\left(\Gamma,\frac{U_{E/F}(N_1) \times \dots \times U_{E/F}(N_k) \times U_{E/F}(N_-)}{U_{E/F}(1)}\right) \rw H^1\left(\Gamma,\frac{M^*}{Z(G^*)}\right) \]
equals $h$. Using \cite[Prop. 10.4]{Kot14} we now find a class $H' \in H^1_\tx{bsc}(\mc{E},U_{E/F}(N_1) \times \dots \times U_{E/F}(N_k) \times U_{E/F}(N_-))$ that lifts $h'$. The Newton point of this element necessarily belongs to the diagonally embedded copy of $U_{E/F}(1)$, hence the image of $H'$ in $H^1_\tx{bsc}(\mc{E},M^*)$ actually belongs to $H^1_\tx{G^*-\tx{bsc}}(\mc{E},M^*)$ and will be called $H$. By construction, the image of $H$ in $H^1(\Gamma,M^*/Z(G^*))$ equals $h$. Moreover, $H$ has a representative $z$ which is valued in the subgroup $U_{E/F}(N_1) \times \dots \times U_{E/F}(N_k) \times U_{E/F}(N_-)$ of $M^*$ and if we write it as $z=(z_1,\dots,z_k,z_-)$, then for all $N_i=N_j$ we have $z_i=z_j$. But this implies that $z$ is fixed by all Langlands-Shelstad lifts of elements of $W(M^*,G^*)^\Gamma$.
\end{proof}

\begin{lem} \label{lem:c3} Let $E/F$ be a separable extension of fields and let $R=\tx{Res}_{E/F}(\tx{GL}(N))$. Let $\theta$ be the pinned automorphism of $R$ whose group of fixed points is $G^*=U_{E/F}(N)$. The image of $H^1(\Gamma,G^*/Z(G^*))$ in $H^1(\Gamma,R/Z(R))$ coincides with the preimage of $H^2(\Gamma,Z(G^*))$ seen as a subset of $H^2(\Gamma,Z(R))$.
\end{lem}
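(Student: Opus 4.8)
The plan is to translate the statement, via Shapiro's lemma and the norm exact sequence, into a statement about central simple algebras over $E$, and then to recognise it as the Albert--Riehm--Scharlau theorem on the existence of involutions of the second kind.

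First I would record the relevant groups and maps. One has $Z(G^*)=U_{E/F}(1)$ and $Z(R)=\tx{Res}_{E/F}(\mb{G}_m)$; writing $\bar G^*:=G^*/Z(G^*)$ and $\bar R:=R/Z(R)=\tx{Res}_{E/F}(\tx{PGL}(N))$, the equality $Z(G^*)=G^*\cap Z(R)$ makes $\bar G^*\hra\bar R$ a closed immersion, and the map of the lemma is the induced map $a\colon H^1(\Gamma,\bar G^*)\to H^1(\Gamma,\bar R)$. By Shapiro's lemma $H^i(\Gamma,\bar R)=H^i(\Gamma_E,\tx{PGL}(N))$ and $H^i(\Gamma,Z(R))=H^i(\Gamma_E,\mb{G}_m)$, so $H^1(\Gamma,Z(R))=0$ and $H^2(\Gamma,Z(R))=\tx{Br}(E)$, while $H^2(\Gamma,\mb{G}_m)=\tx{Br}(F)$. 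Under these identifications the connecting map $\delta_R\colon H^1(\Gamma,\bar R)\to H^2(\Gamma,Z(R))$ attached to $1\to Z(R)\to R\to\bar R\to 1$ becomes the connecting map of $1\to\mb{G}_m\to\tx{GL}(N)\to\tx{PGL}(N)\to 1$ over $E$; it sends the class of a degree-$N$ central simple $E$-algebra $A$ to its Brauer class $[A]$, and is injective because every $E$-form of $\tx{GL}(N)$ has trivial $H^1$, so it identifies $H^1(\Gamma,\bar R)$ with the set of isomorphism classes of degree-$N$ central simple algebras over $E$. Finally, the norm sequence $1\to U_{E/F}(1)\to\tx{Res}_{E/F}(\mb{G}_m)\xrightarrow{N_{E/F}}\mb{G}_m\to 1$, together with $H^1(\Gamma,\mb{G}_m)=0$, shows that $H^2(\Gamma,Z(G^*))\to H^2(\Gamma,Z(R))=\tx{Br}(E)$ is injective with image the kernel of the corestriction $\tx{cor}_{E/F}\colon\tx{Br}(E)\to\tx{Br}(F)$. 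Hence $\delta_R^{-1}\big(H^2(\Gamma,Z(G^*))\big)$ corresponds, under the above identification, to the set of degree-$N$ central simple $E$-algebras $A$ with $\tx{cor}_{E/F}([A])=0$.

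It then remains to identify the image of $a$ with the same set of algebra classes. Since $\bar G^*=G^*/Z(G^*)$ equals $\tx{PGU}_{E/F}(N)$, the connected component of the automorphism group of the split algebra with involution $(\tx{Mat}_N(E),\tau_0)$, where $\tau_0$ is the adjoint involution of the standard Hermitian form, standard Galois descent (cf.\ \cite[\S 29]{BookInvol}) identifies $H^1(\Gamma,\bar G^*)$ with the set of isomorphism classes of pairs $(A,\tau)$, $A$ a degree-$N$ central simple $E$-algebra and $\tau$ an involution of the second kind on $A$ fixing $F$, the map $a$ being the forgetful map $(A,\tau)\mapsto A$. Thus the image of $a$ is the set of degree-$N$ central simple $E$-algebras that carry an involution of the second kind over $F$, and by the Albert--Riehm--Scharlau theorem \cite[Theorem 3.1]{BookInvol}, which holds over any field of characteristic $\neq 2$, this set is exactly $\{A:\tx{cor}_{E/F}([A])=0\}$; comparing with the previous paragraph finishes the proof, uniformly for $F$ local or global (over a number field, the local--global principle for the corestriction is subsumed in this theorem).

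I expect the only non-formal input to be this last equivalence --- the substantive classical ingredient; everything else is bookkeeping with Shapiro's lemma, the norm sequence, and the commutative ladder of central extensions $1\to Z(G^*)\to G^*\to\bar G^*\to 1$ and $1\to Z(R)\to R\to\bar R\to 1$ (whose compatibility already yields, without any of the above, the easy inclusion: the image of $a$ is contained in $\delta_R^{-1}(H^2(\Gamma,Z(G^*)))$, since the connecting maps commute with the vertical maps of the ladder).
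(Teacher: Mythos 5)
Your proof is correct and takes essentially the same route as the paper's: both rely on Shapiro's lemma and the injectivity of the connecting maps (generalized Hilbert 90, \cite[Theorem 29.2]{BookInvol}), the norm exact sequence to identify $H^2(\Gamma,Z(R))\to H^2(\Gamma,\mb{G}_m)$ with the corestriction on Brauer groups, and the Albert--Riehm--Scharlau theorem \cite[Theorem 3.1]{BookInvol} as the one non-formal input. The only presentational difference is that you frame $H^1(\Gamma,\bar G^*)$ via Galois descent as classifying pairs $(A,\tau)$ and then take the forgetful map, whereas the paper directly constructs the class of the inner form $U(A,\tau)$ and checks it lifts $h$ --- which is the same argument.
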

\begin{proof}
The image of $H^1(\Gamma,G^*/Z(G^*))$ in $H^1(\Gamma,R/Z(R))$ clearly maps into the image of $H^2(\Gamma,Z(G^*))$ in $H^2(\Gamma,Z(R))$ by functoriality of the long exact cohomology sequence. We turn to the converse inclusion.

According to the generalized Hilbert theorem 90 \cite[Theorem 29.2]{BookInvol}, the maps $H^1(\Gamma,R/Z(R)) \rw H^2(\Gamma,Z(R))$ and $H^2(\Gamma,Z(G^*)) \rw H^2(\Gamma,Z(R))$ are injective. Let $h \in H^1(\Gamma,R/Z(R))$ have image $x \in H^2(\Gamma,Z(R))$ and assume $x \in H^2(\Gamma,Z(G^*))$. We want to find a lift $h' \in H^1(\Gamma,G^*/Z(G^*))$ of $h$. The norm map $Z(R) \rw \mb{G}_m$ induces a map $H^2(\Gamma,Z(R)) \rw H^2(\Gamma,\mb{G}_m)$, which, under the Shapiro isomorphism $H^2(\Gamma,Z(R)) \cong H^2(\Gamma_E,\mb{G}_m)$ becomes identified with the corestriction map $H^2(\Gamma_E,\mb{G}_m) \rw H^2(\Gamma,\mb{G}_m)$. The element $x$ lies in the kernel of this map, which implies via \cite[Theorem 3.1]{BookInvol} that the central simple algebra $A/E$ corresponding to $x$ admits an involution $\tau$ of the second kind. The corresponding unitary group $U(A,\tau)$ (see \cite[\S23.A]{BookInvol}) is an inner form of $G^*$ and corresponds to a class $h' \in H^1(\Gamma,G^*/Z(G^*))$. The image of $h'$ in $H^2(\Gamma,Z(R)) \cong H^2(\Gamma_E,\mb{G}_m)$ corresponds to the central simple algebra $A$ and thus equals $x$. We conclude that the image of $h'$ in $H^1(\Gamma,R/Z(R))$ equals $h$.
\end{proof}

\section{Chapter 1: Parameters and the Main Theorems}\label{s:param-main-thm}

\subsection{Endoscopic data} \label{sub:endoscopic-data}

  In this section we introduce endoscopic data that are relevant for studying the representations of unitary groups. There are two kinds of endoscopic data: one for (not necessarily quasi-split) unitary groups and the other for a twisted form of $\GL(N)$. Though the latter already enters the very definition of parameters for unitary groups in a way, we will be mainly concerned with the former in this paper. One reason is that we never have to work directly with the twisted trace formula for $\GL(N)$ as long as we are willing to accept various results in the quasi-split case (in which the twisted formula is indispensable).

\subsubsection{Endoscopic triples} \label{subsub:endoscopic-triples}

  We recall some general definitions of endoscopic data from \cite[1.2]{LS87} and \cite[2.1]{KS99} taking into a simplification that the group $\cH$ can be taken to be the $L$-group of $H$ in all cases of our concern. Let $F$ be a local or global field. Consider a pair $(G^*,\theta^*)$ consisting of a connected quasi-split reductive group $G^*$ over $F$, equipped with a fixed $\Gamma$-invariant pinning, and a pinned automorphism $\theta^*$ of $G^*$. %
  As explained in \cite[1.2]{KS99} we have an automorphism $\hat \theta$ of $\hat G^*$ and may assume that $\hat \theta$ preserves a $\Gamma$-pinning for $\hat G^*$, the latter being fixed once and for all. Set $^L \theta:=\hat \theta\rtimes \id_{W_F}$, an automorphism of ${}^L G^*$.
  An (extended) \textbf{endoscopic triple} is a triple $(H,s,\eta)$, where $H$ is a connected quasi-split reductive group over $F$, $s\in \hat G^*$, and $\eta:{}^L H \ra {}^L G^*$ is an $L$-morphism such that
\begin{itemize}
  \item $\Int(s)\circ \hat \theta$ preserves a pair of a Borel subgroup and a maximal torus therein in $\hat G^*$, and $\Int(s)\circ {}^L \theta\circ \eta=\eta$.
  \item $\eta(\hat H)$ is the connected component of the subgroup of $\Int(s)\circ \hat\theta$-fixed elements in $\hat G^*$.
\end{itemize}
  We say that $(H,s,\eta)$ is \textbf{elliptic} if $\eta(Z(\hat H)^\Gamma)^0\subset Z(\hat G^*)$. In case $\theta=\id$ an endoscopic triple is often said to be \textbf{ordinary}. If $(H,s,\eta)$ is an endoscopic triple then by considering $(H,{}^L H,s,\eta)$, it can be viewed as an endoscopic datum in the sense of \cite[2.1]{KS99}. In general, not all endoscopic data arise in this way. In the cases needed for this volume however, all endoscopic data do arise in this way and we find it more convenient to work with the notion of (extended) endoscopic triples.

  We also need to define three notions of isomorphism between endoscopic triples. A \textbf{strict isomorphism} (resp. \textbf{weak isomorphism}, resp. \textbf{isomorphism}) is $(H,s,\eta)\ra (H',s',\eta')$ is an element $g\in \hat G^*$ such that $g \eta({}^L H)g^{-1}=\eta'({}^L H')$ and $gs\hat \theta(g)^{-1}=s'$ (resp. $gs\hat \theta(g)^{-1}=s'$ modulo $Z(\hat G^*)^\Gamma$, resp. $gs\hat \theta(g)^{-1}=s'$ modulo $Z(\hat G^*)$). The definition of isomorphism is the same as in \cite[(2.1.5), (2.1.6)]{KS99} but sometimes too loose for our purposes; that is why we introduce stricter versions. Clearly $\eta(h)$ for each $h\in \hat H$ is an automorphism of $(H,s,\eta)$. Define the outer automorphism group
  $$\Out_{G^*\rtimes \theta^*}((H,s,\eta)):=\Aut((H,s,\eta))/\eta(\hat H),$$
  using strict automorphisms. Likewise $\Out^{\mathrm{w}}_{G^*\rtimes \theta^*}((H,s,\eta))$ and $\ol{\Out}_{G^*\rtimes \theta^*}((H,s,\eta))$ are defined by means of weak automorphisms and automorphisms, respectively.\footnote{As usual the overline notation indicates that something is taken modulo the center of the dual group.}

  Finally when $(G,\theta)$ is an inner form of $(G^*,\theta^*)$ then the notion of endoscopic triple for $G$ is the same as that for $G^*$. In this paper we prefer to use the notation
  $$\fke=(G^\fke,s^\fke,\eta^\fke) $$
  to denote an endoscopic triple or a twisted endoscopic triple, instead of $(H,s,\eta)$.
  We write $\cE(G\rtimes \theta)$ (resp. $\cE^{\mathrm{w}}(G\rtimes \theta)$, resp. $\ol{\cE}(G\rtimes \theta)$) for the set of strict isomorphism classes (resp. weak isomorphism classes, resp. isomorphism classes) of all endoscopic triples.
  The corresponding subsets of elliptic endoscopic triples will be denoted $\cE(G\rtimes \theta)$, $\cE^{\mathrm{w}}_\el(G\rtimes \theta)$, and $\ol{\cE}_\el(G\rtimes \theta)$.
  It will be convenient to work with a set of representatives for the isomorphism classes, cf. \S\ref{subsub:endo-unitary} below. If $\theta=\id$ then simply write  $\cE(G)$, $\cE_{\el}(G)$, etc. In general we should have used endoscopic data in the definition, but this does no harm to us since all endoscopic data are represented by endoscopic triples in all cases we consider.

\subsubsection{Normalization of transfer factors} \label{subsub:normtf} \label{sec:normtfs}
An important ingredient in the stabilization of the Arthur-Selberg trace formula is the transfer factor, defined for ordinary endoscopy in \cite{LS87} and for twisted endoscopy in \cite{KS99}. Given an endoscopic triple $\mf{e}$ for a connected reductive (possibly twisted) group $G$ defined over a local field $F$, these references provide a canonical \emph{relative} transfer factor. For the purposes of the stabilization, we need an \emph{absolute} transfer factor. While in some cases one can work with an arbitrary choice of absolute transfer factor, in order to extract the necessary information from the spectral side of the trace formula, one needs to fix a specific normalization that has the right properties. We will do this now in the case of ordinary endoscopy, as well as in a simple case of twisted endoscopy, which will be used for the normalization of intertwining operators. In fact, the simple case of twisted endoscopy that we need is a generalization of ordinary endoscopy, so we will present our arguments in this case.

Let $F$ be a local field. Before we begin with the construction, we remind the reader that there are two different normalizations of the relative transfer factor for twisted endoscopy. These are explained in \cite[\S5]{KS12}, where they are called $\Delta_D$ and $\Delta'$. The factor $\Delta_D$ is compatible with the normalization of the local Artin reciprocity map $F^\times \rw W_F^\tx{ab}$ used by Deligne, which sends a uniformizing element to the inverse of the Frobenius automorphism, while the factor $\Delta'$ is compatible with the normalization which sends a uniformizing element to the Frobenius automorphism. The absolute transfer factor we will define will correspond to the relative transfer factor $\Delta'$ and will thus be compatible with the classical normalization of the reciprocity map, and so also with the classical Langlands correspondence for tori \cite{Lan97}. We will however use the symbol $\Delta$ to denote this absolute transfer factor, and not $\Delta'$. The reader should be warned that the symbol $\Delta$ is used in \cite{KS12} to denote yet another normalization of the relative transfer factor, which is available for ordinary, but not for twisted, endoscopy, and which follows the conventions of \cite{LS87}. The difference between the relative factor $\Delta$ of \cite{LS87} and the relative factor $\Delta'$ of \cite{KS12} is easy to explain: One obtains $\Delta$ from $\Delta'$ by replacing the endoscopic element $s$ with its inverse $s^{-1}$. Despite our notation, our absolute transfer factor $\Delta$ will be compatible with the relative factor $\Delta'$ of \cite{KS12}, and not with the relative factor $\Delta$. This choice of normalization was made for multiple reasons, an important one being that the internal structure of $L$-packets it leads to is compatible with the formulations of the local Langlands conjectures of \cite{Vog93} and \cite{GGP12}. It will however lead to the occasional appearance of an inverse in some formulas.

The construction of the transfer factor will involve the cohomology of complexes of tori of length two. When dealing with pure inner twists, it will be enough to consider Galois cohomology and all we need has already been developed in the appendices to \cite{KS99}. When dealing with extended pure inner twists, we have to use the more general cohomology $B(F,-)_\tx{bsc}$ defined by Kottwitz in \cite{Kot14} and discussed in Section \ref{subsub:inner}. More precisely, we will need the following.
\begin{itemize}
\item Let $T \rw S$ be a complex of length 2 of tori defined over a local field $F$. We have the cohomology group $B(F,T \rw S)$. This group is functorial in $T \rw S$ and fits into the two long exact sequences of hypercohomology described in \cite[\S A.1]{KS99}. There is a natural embedding $H^1(F,T \rw S) \rw B(F,T \rw S)$. Each element of $B(F,T \rw S)$ provides a character of $H^1(W_F,\hat S \rw \hat T)$. If the element happens to belong to the subgroup $H^1(F,T \rw S)$, this character coincides with the one constructed in \cite[\S A]{KS99}.

\item Let $T \rw S$ be a complex of length 2 of tori defined over a global field $F$. We have the cohomology group $B(\A/F,T \rw S)$. This group is functorial in $T \rw S$ and fits into the two long exact sequences of hypercohomology described in \cite[\S A.1]{KS99}. There is a natural embedding $H^1(\A/F,T \rw S) \rw B(\A/F,T \rw S)$. Each element of $B(\A/F,T \rw S)$ provides a character of $H^1(W_F,\hat S \rw \hat T)$. If the element happens to belong to the subgroup $H^1(\A/F,T \rw S)$, this character coincides with the one constructed in \cite[\S C]{KS99}.

\item There is a natural map $B(F_v,T \rw S) \rw B(\A/F,T \rw S)$ for each place $v$ of $F$. This map is dual to the restriction map $H^1(W_F,\hat S \rw \hat T) \rw H^1(W_{F_v},\hat S \rw \hat T)$.
 \end{itemize}

When $F$ is a $p$-adic field, these results were obtained in \cite[\S9,10,11]{Kot97}. In general they will be established in \cite{KMS_B}. For now, we will take them for granted. We emphasize again that in the case of pure inner twists, the cohomology groups $H^1(F,T \rw S)$ and $H^1(\A/F,T \rw S)$ are sufficient and the results we need have already been established in \cite{KS99}.

We now proceed to construct the transfer factor. Let $G^*$ a quasi-split connected reductive group defined over $F$ and endowed with a pinning, and let $\theta^*$ be an automorphism preserving that pinning. We assume that $Z(G^*)$ is connected. Let further $\psi_F : F \rw \C^\times$ be a non-trivial additive character. It determines, together with the pinning of $G^*$, a Whittaker datum for $G^*$, see \cite[\S5.3]{KS99}. Let $(\xi,z) : G^* \rw G$ be an extended pure inner twist. We assume that $\theta^*(z)=z$ and set $\theta=\xi\circ\theta^*\circ\xi^{-1}$. This is an automorphism of $G$ defined over $F$. Not all twisted groups $(G,\theta)$ arise in this way, but those are the ones that will be relevant for us. Luckily, normalizing twisted transfer factors for these special twisted groups is very similar to normalizing non-twisted transfer factors.

Let $\mf{e}$ be an endoscopic triple for $(G,\theta)$. Let $\delta \in G(F)$ be $\theta$-strongly regular and $\theta$-semi-simple and let $\gamma \in G^\mf{e}(F)$. Assume that $\gamma$ is a norm of $\delta$. Following \cite[\S5.3]{KS99} and \cite[\S5.4]{KS12} we are going to define the absolute transfer factor
\[ \Delta[\mf{e},\xi,z](\gamma,\delta) \]
as a product
\[ \epsilon(\frac{1}{2},V,\psi_F)\Delta_I^\tx{new}(\gamma,\delta)^{-1} \Delta_{II}(\gamma,\delta) \Delta_{III}(\gamma,\delta)^{-1}\Delta_{IV}(\gamma,\delta). \]
The first factor is the Artin $\epsilon$-factor, normalized according to Langlands' notation, for the virtual representation $V=X^*(T^*)^{\theta^*}\otimes \C - X^*(T^\mf{e}) \otimes \C$, where $T^*$ and $T^\mf{e}$ are minimal Levi subgroups of $G^*$ and $G^\mf{e}$. The definitions of $\Delta_{II}$ and $\Delta_{IV}$ are given in \cite[\S4.3,\S4.5]{KS99}. The definition of $\Delta_I^\tx{new}$ is given in \cite[\S3.4]{KS12}. We will use these definitions without modification. It is the factor $\Delta_{III}$ that we must define. A definition of $\Delta_{III}(\gamma,\delta)$ is given in \cite[\S5.3]{KS99} under the assumption $(\xi,z)=(\tx{id},1)$, while a definition of a relative factor $\Delta_{III}(\gamma,\delta,\gamma',\delta')$ is given in \cite[\S4.4]{KS99} for general $\xi$. We will now define a factor $\Delta_{III}(\gamma,\delta)$ adapted to the extended pure inner twist $(\xi,z)$ by extending the arguments in \cite[\S4.4]{KS99}.

We let $S' \subset G^\mf{e}$ be the centralizer of $\gamma$, a maximal torus of $G^\mf{e}$. Choose an admissible isomorphism $S' \rw S^*_{\theta^*}$, where $(S^*,C^*)$ is a Borel pair of $G^*$ invariant under $\theta^*$ with $S^*$ defined over $F$. The assumption that $\gamma$ is a norm of $\delta$ ensures the existence of $g \in G^*$ and $\delta^* \in S^*$ such that the image of $\delta^*$ in $S^*_{\theta^*}$ equals the image of $\gamma$ under $S' \rw S^*_{\theta^*}$ and moreover $\delta=\xi(g^{-1}\delta^*\theta^*(g))$. Let $K/F$ be an finite Galois extension such that the element $z \in B(F,G^*)_\tx{bsc}$ has a representative in $Z^1_\tx{bsc}(\mc{E}(K/F),G^*(K))$ and such that $g \in G^*(K)$. We use the same letter $z$ to denote this representative. For each $e \in \mc{E}(K/F)$, let $v(e)=gz(e)\sigma_e(g^{-1})$, where $\sigma_e \in \Gamma_{K/F}$ is the image of $e$. The argument of \cite[Lemma 4.4.A]{KS99} shows that $(v(e)^{-1},\delta^*)$ provides an element of $Z^1_\tx{bsc}(\mc{E}(K/F),S^*(K) \stackrel{1-\theta^*}{\lrw} S^*(K))$. Notice that the restriction of $v$ to the subgroup $\mb{D}_{K/F} \subset \mc{E}(K/F)$ is equal to the restriction of $z$, both of them taking values in $Z(G^*)$. In particular, $v$ is $G^*$-basic.

On the other hand, the $\chi$-data chosen for the construction of $\Delta$ provides, as described in \cite[\S4.4]{KS99}, an $L$-embedding $^LS' \rw {^LG^\mf{e}}$, which composed with $\eta^{\mf{e}}$ gives an $L$-embedding $f : {^LS'} \rw {^LG}$. The same $\chi$-data provides an $L$-embedding $^L(S^*_{\theta^*}) \rw {^LG}$. We compose this embedding with the isomorphism $^LS' \rw {^LS^*_{\theta^*}}$ dual to the chosen admissible isomorphism $S' \rw S^*_{\theta^*}$ and obtain a second embedding $g : {^LS'} \rw {^LG}$. We then have $f(x \rtimes w) =  g(a_S(w)\cdot x \rtimes w)$ for any $x \rtimes w \in \hat S' \rtimes W_F = {^LS'}$, and suitable $a_S(w) \in \hat S$. The dual of the admissible isomorphism $S' \rw S^*_{\theta^*}$ is a composition of $\hat S' \rw \hat T^{\hat\theta}$ and $\hat T \rw \hat S^*$, and via the latter map we can view $s^\mf{e}$ as an element of $\hat S^*$. A direct calculation shows that $(a_S^{-1},s^\mf{e})$ is an element of $Z^1(W_F,\hat S^* \stackrel{1-\hat\theta^*}{\lrw} \hat S^*)$.

We define $\Delta_{III}(\gamma,\delta)$ to be the pairing of these two objects with respect to the pairing between $B(F,S^* \rw S^*)$ and $H^1(W_F,\hat S^* \rw \hat S^*)$ discussed above. This concludes the construction of $\Delta[\mf{e},\xi,z]$. As a first step, we need to check that this construction actually gives a transfer factor.

\begin{pro} \label{pro:tf} Let $\gamma_1,\gamma_2 \in G^\mf{e}(F)$ be norms of strongly $\theta$-regular $\theta$-semi-simple elements $\delta_1,\delta_2 \in G(F)$. Then
\[ \frac{\Delta[\mf{e},\xi,z](\gamma_1,\delta_1)}{\Delta[\mf{e},\xi,z](\gamma_2,\delta_2)} = \Delta'(\gamma_1,\delta_1;\gamma_2,\delta_2), \]
where the right hand side is the canonical relative transfer factor of \cite[\S5.4]{KS12}.
\end{pro}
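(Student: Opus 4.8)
The plan is to reduce the claimed identity, which compares an absolute transfer factor at two pairs $(\gamma_i,\delta_i)$, to the known product formula for the relative transfer factor $\Delta'$ of \cite[\S5.4]{KS12}, together with the compatibility of our cohomological pairing with the one used by Kottwitz--Shelstad. Since $\Delta[\mf{e},\xi,z]$ is defined as a product of the five factors $\epsilon(\tfrac12,V,\psi_F)$, $\Delta_I^\tx{new}$, $\Delta_{II}$, $\Delta_{III}$, and $\Delta_{IV}$, and the relative factor $\Delta'(\gamma_1,\delta_1;\gamma_2,\delta_2)$ is by definition the product of the relative versions of $\Delta_I^\tx{new}$, $\Delta_{II}$, $\Delta_{III}$, $\Delta_{IV}$ (the $\epsilon$-factor and the character $\psi_F$ being independent of the pair), the proof splits into checking term by term that the ratio of our absolute factors gives the corresponding relative factor. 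For $\Delta_I^\tx{new}$, $\Delta_{II}$, and $\Delta_{IV}$ this is immediate: we took their definitions from \cite{KS99}, \cite{KS12} without modification, and these are precisely designed so that the ratio of the absolute versions is the relative version; in fact $\Delta_{II}$ and $\Delta_{IV}$ are already honestly absolute and the $\Delta_I^\tx{new}$ case is handled in \cite[\S3.4]{KS12}. The $\epsilon$-factor cancels outright in the ratio, so the only term requiring genuine argument is $\Delta_{III}$.

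For $\Delta_{III}$, first I would run the construction of $\Delta_{III}(\gamma_i,\delta_i)$ for $i=1,2$: choose admissible isomorphisms $S_i' \rw S^*_{i,\theta^*}$, elements $g_i \in G^*(K)$ and $\delta_i^* \in S_i^*$ with $\delta_i = \xi(g_i^{-1}\delta_i^*\theta^*(g_i))$, producing the basic cocycles $(v_i(e)^{-1},\delta_i^*) \in Z^1_\tx{bsc}(\mc{E}(K/F), S_i^* \stackrel{1-\theta^*}{\lrw} S_i^*)$, paired against $(a_{S_i}^{-1},s^\mf{e}) \in Z^1(W_F,\hat S_i^* \stackrel{1-\hat\theta^*}{\lrw}\hat S_i^*)$. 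The point is that $\Delta_{III}(\gamma_1,\delta_1)/\Delta_{III}(\gamma_2,\delta_2)$ should equal the relative $\Delta_{III}(\gamma_1,\delta_1;\gamma_2,\delta_2)$ of \cite[\S4.4]{KS99}, which in \emph{loc. cit.} is defined via a single pairing over a torus built from both $\delta_1$ and $\delta_2$. To match the two, I would pass to the torus $S_1^* \times S_2^*$ (or, following Kottwitz--Shelstad, an appropriate torus $U$ receiving maps from both $S_i^*$), observe that the pair of basic cocycles $(v_1,v_2)$ glues to a basic cocycle valued in $S_1^*/Z \times S_2^*/Z$-type data whose class represents the \emph{relative} invariant $\tx{inv}\!\left(\tfrac{\gamma_1,\delta_1}{\gamma_2,\delta_2}\right)$ of \cite[\S4.4]{KS99}, and dually the $(a_{S_i}^{-1},s^\mf{e})$ glue to the cocycle representing the object $\mathbf{A}$ of \emph{loc. cit.}. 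The bilinearity of the Tate--Nakayama / Kottwitz pairing then turns the ratio of the two absolute pairings into the single relative pairing, which is exactly $\Delta_{III}(\gamma_1,\delta_1;\gamma_2,\delta_2)$. The essential input here is the third bullet in the list preceding the proposition, i.e. that the pairing attached to $B(F,T\rw S)$ restricts to the Kottwitz--Shelstad pairing on $H^1(F,T\rw S)$ and is functorial and bilinear, so that when $z$ happens to be an honest Galois cocycle the whole computation collapses to \cite[\S4.4]{KS99}.

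I expect the main obstacle to be bookkeeping: namely, verifying carefully that the basic cocycle $(v(e)^{-1},\delta^*)$ produced from an extended pure inner twist behaves in the relative comparison exactly as the Galois cocycle of \cite[\S4.4]{KS99} does — in particular that the restriction to $\mb{D}_{K/F}$ (which equals the restriction of $z$ and lands in $Z(G^*)$, as noted in the construction) contributes trivially to the \emph{ratio} because it is independent of the pair $(\gamma_i,\delta_i)$, and that the two different choices of $g_i$, $\delta_i^*$, $S_i^*$, $\chi$-data, and admissible isomorphisms wash out of the final answer. Concretely one must check that changing these auxiliary choices alters $\Delta_{III}(\gamma_i,\delta_i)$ and the relative $\Delta_{III}$ by matching coboundary terms, which is a diagram chase in hypercohomology of the two-term complexes using the two long exact sequences of \cite[\S A.1]{KS99} — here the only new feature over \cite{KS99} is that one works in $B(F,-)_\tx{bsc}$ rather than $H^1(F,-)$, but the formal properties are identical by the bulleted facts we have taken for granted. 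Once $\Delta_{III}$ is matched, multiplying the four matched factors (and cancelling the $\epsilon$-factor) yields the stated formula.
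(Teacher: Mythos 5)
You take essentially the same route as the paper: both sides of the asserted identity are products of the five factors, the $\epsilon$-, $\Delta_I^\tx{new}$-, $\Delta_{II}$-, $\Delta_{IV}$-contributions match term by term by construction, and the proposition reduces to proving $\Delta_{III}(\gamma_1,\delta_1)/\Delta_{III}(\gamma_2,\delta_2) = \Delta_{III}(\gamma_1,\delta_1;\gamma_2,\delta_2)$, which you correctly propose to do by combining the data for the two pairs over a single torus and invoking bilinearity/functoriality of the pairing and its compatibility with the Kottwitz--Shelstad pairing on ordinary $H^1$. That is the paper's strategy. But two of the details you wave at are not right, and they are not just bookkeeping.

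First, the intermediate torus. It is not of the form $(S_1^*/Z)\times(S_2^*/Z)$: one must take $S_{12}^*:=(S_1^*\times S_2^*)/Z(G^*)$ with $Z(G^*)$ embedded \emph{anti-diagonally} by $z\mapsto(z,z^{-1})$. This specific embedding is doing the work: after interpreting the ratio as the pairing of $V_{12}=((v_1^{-1},\delta_1^*),(v_2^{-1},\delta_2^*)^{-1})$ against $A_{12}$, the restriction of $V_{12}$ to $\mb{D}_{K/F}$ equals $(z(d)^{-1},z(d))$, i.e. it factors through exactly this anti-diagonal $Z(G^*)$. So $V_{12}$ descends to a class $V_{12}'$ in ordinary Galois hypercohomology $Z^1(\Gamma_{K/F},S_{12}^*\rw S_{12}^*)$. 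This descent is the mechanism by which the $\mb{D}$-part disappears --- it is a statement about where a cocycle lives, not a cancellation of two complex numbers ``in the ratio.'' Without landing in $H^1(\Gamma,-)$ you have nothing to compare to the Kottwitz--Shelstad class $\tb{V}\in H^1(\Gamma,U\rw S_{12}^*)$; and even then, matching $V_{12}'$ with the image of $\tb{V}$ requires unravelling the cochains $v_{i,KS}(\sigma)=g_iu(\sigma)\sigma(g_i^{-1})$ with $u$ a $G^*_\tx{sc}$-valued lift of $\xi^{-1}\sigma(\xi)$, and relating them to your $v_i$ (built from $z$) via a central cochain $x$ with $z(e)=\bar u(\sigma_e)x(e)$, which is again absorbed by the quotient.

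Second, the dual side is not just ``the $a_{S_i}$ glue.'' To apply bilinearity of a \emph{single} pairing you must exhibit an element of $Z^1(W_F,\hat S_{12}^*\rw\hat S_{12}^*)$ that simultaneously maps to $\tb{A}\in H^1(W_F,\hat S_{12}^*\rw\hat U)$ and to the class of $A_{12}$ in $H^1(W_F,\hat S_1^*\times\hat S_2^*\rw\hat S_1^*\times\hat S_2^*)$. This requires identifying $\hat S_{12}^*$, via the connectedness of $Z(G^*)$, as the subgroup of pairs $(a,b)\in\hat S_1^*\times\hat S_2^*$ with $ab^{-1}\in\hat T_\tx{der}$, and then checking that $A_{12}$ actually lands there. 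The element $(s_1^\mf{e},s_2^\mf{e})$ does so because both coordinates equal $s^\mf{e}\in\hat T$; that $(a_{S_1}^{-1},a_{S_2}^{-1})$ does so is the content of the proof of \cite[Lemma 4.4.B]{KS99}, where one shows $a_{S_2}(w)/a_{S_1}(w)\in\hat S_{2,\tx{der}}^*$. Neither of these is automatic, and without them the ``bilinearity'' you invoke has no common complex over which to apply.
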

\begin{proof} The proof of this proposition is very similar to that of Proposition 2.3.1 in \cite{Kal11}. We give the details for the sake of completeness. By construction one sees immediately that the equality to be proved is equivalent to
\[ \frac{\Delta_{III}(\gamma_1,\delta_1)}{\Delta_{III}(\gamma_2,\delta_2)} = \Delta_{III}(\gamma_1,\delta_1;\gamma_2,\delta_2), \]
where the factors on the left are the ones just constructed, while the factor on the right is the one constructed in \cite[\S4.4]{KS99}. Let $(v_i(\sigma)^{-1},\delta^*_i)$ for $i=1,2$ be the two elements of $Z^1_\tx{G^*-\tx{bsc}}(\mc{E}(K/F),S_i^*(K) \stackrel{1-\theta^*}{\lrw} S_i^*(K))$ constructed above for the two pairs $\gamma_i,\delta_i$, and let $(a_{S_i}^{-1},s^\mf{e}_i)$ be the two corresponding elements of $Z^1(W_F,\hat S_i^* \stackrel{1-\hat\theta}{\lrw} \hat S_i^*)$. We can interpret the left hand side of our equality as the pairing of
\[ V_{12} := ((v_1^{-1},\delta^*_1),(v_2^{-1},\delta^*_2)^{-1}) \in Z^1_\tx{alg}(\mc{E}(K/F),[S_1^* \times S_2^*](K) \rw [S_1^* \times S_2^*](K)) \]
with the element
\[ A_{12} := ((a_{S_1}^{-1},s^\mf{e}_1),(a_{S_2}^{-1},s^\mf{e}_2)) \in Z^1(W_F,\hat S_1^* \times \hat S_2^* \rw \hat S_1^* \times \hat S_2^*). \]
On the other hand, the right hand side is given by the pairing of an element
$\tb{V} \in H^1(\Gamma,U \rw S_{12}^*)$ with an element $\tb{A} \in H^1(W_F,\hat S_{12}^* \rw \hat U)$, both constructed in \cite[\S4.4]{KS99}. Our task is to show that the two pairings give the same result. While doing so, we will recall the necessary notation from \cite{KS99}.

We consider the torus $S_{12}^* := S_1^* \times S_2^* /Z(G^*)$, with $Z(G^*)$ embedded via the map $z \mapsto (z,z^{-1})$. The quotient map $S_1^* \times S_2^* \rw S_{12}^*$ extends to a map of complexes of tori, under which we may map the element $V_{12}$ to obtain an element $V_{12}' \in Z^1_\tx{alg}(\mc{E}(K/F),S_{12}^*(K) \rw S_{12}^*(K))$. Recalling that the restrictions of $v_1$ and $v_2$ to $\mb{D}_{K/F}$ are both equal to the restriction of $z$, we see that the restriction of $V_{12}$ to $\mb{D}_{K/F}$ factors through the embedding of $Z(G^*)$ into $S_1^* \times S_2^*$ whose cokernel is $S_{12}^*$. This shows that $V_{12}'$ is trivial on $\mb{D}_{K/F}$ and thus belongs to the subgroup $Z^1(\Gamma_{K/F},S_{12}^*(K) \rw S_{12}^*(K))$ of $Z^1_\tx{alg}(\mc{E}(K/F),S_{12}^*(K) \rw S_{12}^*(K))$. Now recall that $U=S^*_{1,\tx{sc}} \times S^*_{2,\tx{sc}}/Z(G_\tx{sc}^*)$. We claim that the image of $\tb{V}$ under the obvious map $[U \rw S_{12}^*] \rw [S_{12}^* \rw S_{12}^*]$ equals the class of $V_{12}'$. This will be obvious once we recall the construction of $\tb{V}$. Indeed, $\tb{V}$ is represented by the cocycle $(V(\sigma),D)$, where $V(\sigma)=(v_{1,KS}(\sigma)^{-1},v_{2,KS}(\sigma))$ and $D=(\delta_1^*,\delta_2^{*,-1})$. The 1-cochains $v_{i,KS} \in C^1(\Gamma,S_i^*)$ are given by $v_{i,KS}(\sigma)=g_iu(\sigma)\sigma(g_i^{-1})$, where $\delta_i=\xi(g_i^{-1}\delta_i^*\theta^*(g_i))$ and $u(\sigma) \in C^1(\Gamma_{K/F},G^*_\tx{sc}(K))$ lifts the 1-cocycle $\xi^{-1}\sigma(\xi)$. For each $e \in \mc{E}(K/F)$, the images of $u(\sigma_e)$ and $z(e)$ in $G^*_\tx{ad}(K)$ are equal, where again $\sigma_e \in \Gamma_{K/F}$ is the image of $e$ under the natural map $\mc{E}(K/F) \rw \Gamma_{K/F}$. Hence there exists a 1-cochain $x \in C^1(\mc{E}(K/F),Z(G^*))$ with $z(e)=\bar u(\sigma_e)x(e)$, where $\bar u$ is the composition of $u$ with the natural map $G^*_\tx{sc}(K) \rw G^*(K)$. From this one sees that the image of $(V(\sigma),D)$ in $Z^1(\Gamma_{K/F},S_{12}^*(K) \rw S_{12}^*(K))$ is equal to $V_{12}$.

To complete the proof we need to produce an element of $H^1(W_F,\hat S^*_{12} \rw \hat S^*_{12})$ which simultaneously maps to $\tb{A} \in H^1(W_F,\hat S^*_{12} \rw \hat U)$ and to the class of $A_{12}$ in $H^1(W_F,\hat S^*_1 \times \hat S^*_2 \rw \hat S^*_1 \times \hat S^*_2)$. This can be done as follows. For the construction of the transfer factor we have chosen admissible isomorphisms $S_i^* \rw \hat T$. We now use them to identify $\hat S^*_1$ and $\hat S^*_2$ with $\hat T$. Recalling that by assumption $Z(G^*)$ is connected, one checks that the torus $\hat S^*_{12}$ dual to $S^*_{12}$ is equal to the subgroup of $\hat S^*_1 \times \hat S^*_2$ consisting of those pairs $(a,b)$ for which $ab^{-1} \in \hat T_\tx{der}$. By construction, the elements $s^\mf{e}_i \in \hat S_i^*$ are both identified with the element $s^\mf{e} \in \hat T$, so the pair $(s_1^\mf{e},s_2^\mf{e})$ belongs to the subgroup $\hat S^*_{12}$ of $\hat S^*_1 \times \hat S^*_2$. On the other hand, it is argued in the proof of \cite[Lemma 4.4.B]{KS99} that for each $w \in W_F$ the quotient $a_{S_2}(w)/a_{S_1}(w)$ belongs to $\hat S_{2,\tx{der}}^*$. Thus again, the pair $(a_{S_1}^{-1},a_{S_2}^{-1})$ takes values in the subgroup $\hat S^*_{12}$ of $\hat S^*_1 \times \hat S^*_2$. We have thus seen that $A_{12} \in Z^1(W_F,\hat S^*_{12} \rw \hat S^*_{12})$. The fact that its image in $Z^1(W_F,\hat S^*_{12} \rw \hat U)$ represents $\tb{A}$ follows by inspecting the construction of $\tb{A}$ given just before the statement of \cite[Lemma 4.4.B]{KS99}.

\end{proof}

We will now study some basic equivariance properties of the factor $\Delta[\mf{e},\xi,z]$. On the one hand, given $x \in Z(\hat G^*)^\Gamma$, we can consider the endoscopic triple $x\mf{e}=(G^\mf{e},xs^\mf{e},\eta^\mf{e})$. On the other hand, given $y \in Z^1_\tx{alg}(\mc{E},Z(G^*)^{\theta^*,\circ})$, we may consider the inner twist $(\xi,yz)$, which is of the same special type as $(\xi,z)$. In order to study how $\Delta[\mf{e},\xi,z]$ would change if we replace $\mf{e}$ by $x\mf{e}$ or $(\xi,z)$ by $(\xi,yz)$, we recall that Kottwitz's map \eqref{eq:kotisoloc} provides a pairing between the set $B(F,G^*)_\tx{bsc}$ and the group $Z(\hat G^*)^\Gamma$, as well as between the set $B(F,Z(G^*)^{\theta^*,\circ})$ and the group $[\hat G^*/\hat G^*_\tx{der}]_{\hat\theta,\tx{free}}^\Gamma$, which is the set of $\Gamma$-fixed points in the torsion-free quotient of the $\hat\theta$-coinvariants of $\hat G^*/\hat G^*_\tx{der}$. We denote both of these pairings by $\<\cdot,\cdot\>$.

\begin{lem} \label{lem:tfequi}
Let $\bar s^\mf{e}$ denote the image of $s^\mf{e}$ in $[\hat G^*/\hat G^*_\tx{der}]_{\hat\theta,\tx{free}}$. Then $\bar s^\mf{e}$ is $\Gamma$-fixed and we have
\[ \Delta[x\mf{e},\xi,z]=\<z,x\>\Delta[\mf{e},\xi,z] \]
and
\[ \Delta[\mf{e},\xi,yz]= \<y,\bar s^\mf{e}\>\Delta[\mf{e},\xi,z]. \]
\end{lem}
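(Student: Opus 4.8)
The plan is to trace through the definition of $\Delta_{III}$ and track how the two cocycle data it pairs depend on $x$ and on $y$. Recall that $\Delta[\mf{e},\xi,z]$ differs from $\Delta[x\mf{e},\xi,z]$ only in the factor $\Delta_{III}$, since $\Delta_I^\tx{new},\Delta_{II},\Delta_{IV}$ and the $\epsilon$-factor do not involve $s^\mf{e}$, while $\Delta_{III}(\gamma,\delta)$ is the Kottwitz pairing of the class of $(v(\cdot)^{-1},\delta^*) \in Z^1_\tx{bsc}(\mc{E}(K/F),S^* \xrightarrow{1-\theta^*} S^*)$ against the class of $(a_S^{-1},s^\mf{e}) \in Z^1(W_F,\hat S^* \xrightarrow{1-\hat\theta^*} \hat S^*)$. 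First I would observe that replacing $\mf{e}$ by $x\mf{e}$ replaces $s^\mf{e}$ by $xs^\mf{e}$ and leaves $\eta^\mf{e}$, the $\chi$-data, and hence the $L$-embeddings $f,g$ and the factor $a_S$ unchanged; so the second argument of the pairing is multiplied by $(1,x)$, i.e. by the image of $x \in Z(\hat G^*)^\Gamma \subset \hat S^{*,\Gamma}$ viewed as a cocycle in degree zero of the complex $\hat S^* \to \hat S^*$ concentrated appropriately. Functoriality of the hypercohomology pairing in the complex $\hat S^* \to \hat S^*$ then reduces the pairing of $(v^{-1},\delta^*)$ against $(1,x)$ to the ordinary Kottwitz pairing $\langle z, x\rangle$ between $B(F,G^*)_\tx{bsc}$ and $Z(\hat G^*)^\Gamma$: indeed the projection of $(v^{-1},\delta^*)$ to the relevant quotient complex recovers the class $z$ (this is exactly the observation already used in the proof of Proposition \ref{pro:tf} that $v|_{\mb{D}_{K/F}}=z|_{\mb{D}_{K/F}}$), and pairing with a class supported in $Z(\hat G^*)$ only sees this projection. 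This gives the first identity.

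For the second identity, replacing $(\xi,z)$ by $(\xi,yz)$ with $y \in Z^1_\tx{alg}(\mc{E},Z(G^*)^{\theta^*,\circ})$ changes the cocycle $v(e)=gz(e)\sigma_e(g^{-1})$ to $gy(e)z(e)\sigma_e(g^{-1})=y(e)\cdot v(e)$, since $y$ is central; hence the first argument of the pairing is multiplied by the image of $y$ under $Z^1_\tx{alg}(\mc{E},Z(G^*)^{\theta^*,\circ}) \to Z^1_\tx{bsc}(\mc{E},S^*\xrightarrow{1-\theta^*}S^*)$, which (because $y$ is $\theta^*$-invariant and central) lands in the degree-one part and in fact represents the image of $y$ in $B(F,Z(G^*)^{\theta^*,\circ})$. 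Meanwhile $a_S$ and $s^\mf{e}$ are unchanged. So functoriality of the pairing along the inclusion $Z(G^*)^{\theta^*,\circ} \hra S^*$ (dually along $\hat S^* \thra [\hat G^*/\hat G^*_\tx{der}]_{\hat\theta}$, landing after torsion in $[\hat G^*/\hat G^*_\tx{der}]_{\hat\theta,\tx{free}}$) reduces the extra factor to $\langle y, \bar s^\mf{e}\rangle$, where $\bar s^\mf{e}$ is the image of $s^\mf{e}$ in $[\hat G^*/\hat G^*_\tx{der}]_{\hat\theta,\tx{free}}$. The preliminary claim that $\bar s^\mf{e}$ is $\Gamma$-fixed follows from the endoscopic-triple condition $\Int(s^\mf{e})\circ {}^L\theta\circ\eta^\mf{e}=\eta^\mf{e}$, which forces $\sigma(s^\mf{e})$ and $s^\mf{e}$ to differ by an element of $\hat\theta$-coinvariant-trivial type modulo $\hat G^*_\tx{der}$ — concretely $\sigma(s^\mf{e})(s^\mf{e})^{-1} \in (1-\hat\theta)(\hat G^*/\hat G^*_\tx{der}) \cdot \hat G^*_\tx{der}$ — so its class in the torsion-free $\hat\theta$-coinvariants is $\Gamma$-invariant.

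I would organize the write-up as: (i) verify $\bar s^\mf{e} \in [\hat G^*/\hat G^*_\tx{der}]_{\hat\theta,\tx{free}}^\Gamma$; (ii) for the $x\mf{e}$ identity, note the only change is $s^\mf{e}\mapsto xs^\mf{e}$, split off the degree-zero central contribution $(1,x)$ in the $W_F$-cocycle, and invoke functoriality of the hypercohomology pairing together with the fact (from the proof of Proposition \ref{pro:tf}) that the projection of $(v^{-1},\delta^*)$ encodes $z$; (iii) for the $(\xi,yz)$ identity, note $v \mapsto yv$, push $y$ into $B(F,Z(G^*)^{\theta^*,\circ})$, and again apply functoriality, this time dualizing the inclusion $Z(G^*)^{\theta^*,\circ}\hra S^*$. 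The main obstacle I anticipate is purely bookkeeping: making precise the identification of the "central part" of the length-two cocycles $(v^{-1},\delta^*)$ and $(a_S^{-1},s^\mf{e})$ with elements of $B(F,G^*)_\tx{bsc}$, $B(F,Z(G^*)^{\theta^*,\circ})$, $Z(\hat G^*)^\Gamma$, and $[\hat G^*/\hat G^*_\tx{der}]_{\hat\theta,\tx{free}}^\Gamma$ respectively, and checking that the hypercohomology pairing of Section \ref{subsub:normtf} is compatible under the relevant functorial maps with the plain Kottwitz pairings $\langle\cdot,\cdot\rangle$ — this is the kind of diagram-chase that is routine in spirit but requires care with the two long exact sequences of \cite[\S A.1]{KS99}. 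Everything else is immediate from the construction.
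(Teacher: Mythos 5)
Your proposal follows essentially the same route as the paper's proof: isolate $\Delta_{III}$, observe that $x\mf{e}$ (resp.\ $yz$) multiplies the $W_F$-cocycle (resp.\ the $\mc{E}$-cocycle) by a class supported in the center, and then invoke functoriality of the hypercohomology pairing for the complex $S^* \xrightarrow{1-\theta^*} S^*$ along the inclusion of (or projection to) a one-term complex, identifying the surviving part of the other cocycle with $z$ on one side and with $\bar s^\mf{e}$ on the other. The argument for $\Gamma$-invariance of $\bar s^\mf{e}$ is likewise the one the paper intends.

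One bookkeeping point you gloss over and that the paper is careful about: the image of $(v(e)^{-1},\delta^*)$ in $B(F,G^*)_\tx{bsc}$ is $z^{-1}$, not $z$ (note the inverse on $v$), and the multiplier produced by $z\mapsto yz$ on the $\mc{E}$-cocycle is $(y^{-1},1)$, not $(y,1)$. Consequently the pairings you compute are $\langle z,x\rangle^{-1}$ and $\langle y,\bar s^\mf{e}\rangle^{-1}$, and the last step — that $\Delta_{III}$ enters the transfer factor via its inverse $\Delta_{III}^{-1}$ in the product defining $\Delta[\mf{e},\xi,z]$ — is exactly what flips these back to $\langle z,x\rangle$ and $\langle y,\bar s^\mf{e}\rangle$. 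Your write-up omits this inverse entirely, so as stated the signs do not quite close up; restoring them makes your proof coincide with the paper's.
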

\begin{proof}
That $\bar s^\mf{e}$ is $\Gamma$-fixed follows immediately from the definition of endoscopic triple given in Section \ref{subsub:endoscopic-triples}. Replacing $\mf{e}$ by $x\mf{e}$ multiplies the element
\[ (a_S^{-1},s^\mf{e}) \in Z^1(W_F,\hat S^* \stackrel{1-\hat\theta^*}{\lrw} \hat S^*) \]
by $(1,x)$ and hence the factor $\Delta_{III}$ gets multiplied by $\<(v(e)^{-1},\delta^*),(1,x)\>$. Since $(1,x)$ is the image of $x \in H^1(W_F,1 \rw Z(\hat G^*))$ under the map induced by
\[ [1 \rw Z(\hat G^*)] \rw [\hat S^* \stackrel{1-\hat\theta^*}{\lrw} \hat S^*] \]
we may map $(v(e)^{-1},\delta^*)$ under the dual map
\[ [S^* \stackrel{1-\theta^*}{\lrw} S^*] \rw [G^* \rw 1]. \]
The image of $(v(e)^{-1},\delta^*)$ in $B(F,G^*)_\tx{bsc}$ is by construction equal to $z^{-1}$ and thus $\Delta_{III}$ is multiplied by $\<z,x\>^{-1}$.

Replacing $z$ by $yz$ multiplies the element
\[ (v(e)^{-1},\delta^*) \in Z^1_\tx{alg}(\mc{E},S^* \stackrel{1-\theta^*}{\lrw} S^*) \]
by $(y^{-1},1)$ and hence the factor $\Delta_{III}$ is multiplied by $\<(y^{-1},1),(a_S^{-1},s^\mf{e})\>$. Since $(y^{-1},1)$ is the image of $y^{-1} \in Z^1_\tx{alg}(\mc{E},Z(G^*)^{\theta^*,\circ})$, we may map $(a_S^{-1},s^\mf{e})$ under the natural map induced by
\[ [\hat S^* \stackrel{1-\hat\theta^*}{\lrw} \hat S^*] \rw [1 \rw (\hat G^*/\hat G^*_\tx{der})_{\theta^*,\tx{free}}] \]
to obtain the element $\bar s^\mf{e} \in [\hat G^*/\hat G^*_\tx{der}]_{\theta^*,\tx{free}}^\Gamma$. Thus $\Delta_{III}$ gets multiplied by $\<y^{-1},s^\mf{e}\>$.

Since $\Delta_{III}$ contributes to $\Delta[\mf{e},\xi,z]$ via its inverse, the proof is complete.
\end{proof}

We now consider the global situation. Let $F$ be a global field and as before we chose an extension to $\ol{F}$ for each place $v$ of $F$, thereby identifying the absolute Galois group of $F_v$ with the decomposition group $\Gamma_v$. We assume that $(\xi,z) : G^* \rw G$ is an extended pure inner twist and $\mf{e}$ is an extended endoscopic triple, but now all defined over the global field $F$. Furthermore, we assume given a $\Gamma$-invariant pinning of $G^*$ and a non-trivial character $\psi_F : \A/\F \rw \C^\times$. At each place $v$ we obtain the corresponding local objects defined over $F_v$. Given $\gamma \in G^\mf{e}(\A)$ a norm of a $\theta$-strongly regular $\theta$-semi-simple element $\delta \in G(\A)$, we define
\[ \Delta_\A[\fke,\xi](\gamma,\delta) := \prod_v \Delta[\fke,\xi_v,z_v](\gamma_v,\delta_{v})  \]
Almost all terms in the product are equal to $1$, so the product is well-defined. Furthermore, the product is independent of $z$. This follows from Lemma \ref{lem:tfequi} and the exact sequence \eqref{eq:kotisolocglo}.

\begin{pro}\label{prop:trans-factor-coincide}
The factor $\Delta_\A[\mf{e},\xi]$ coincides with the inverse of the canonical adelic transfer factor defined in \cite[\S7.3]{KS99}.
\end{pro}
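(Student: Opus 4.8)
\emph{Proof plan.} The strategy is to pass to the individual places, to organise each local transfer factor into its five standard constituents, and to recombine them using the global product formulas of \cite[\S6]{KS99} together with the reciprocity law for the hypercohomology of two-term complexes of tori. By construction $\Delta_\A[\mf{e},\xi]=\prod_v\Delta[\mf{e},\xi_v,z_v]$, and the canonical adelic transfer factor of \cite[\S7.3]{KS99} is likewise a product of local factors, obtained from the canonical relative transfer factor together with the normalisation data coming from the global Whittaker datum attached to $\psi_F$ and from a global cocycle realising $\xi$; the identity is insensitive to the choice of $z$, as was already noted. First I would fix all the auxiliary data entering the construction of $\Delta[\mf{e},\xi_v,z_v]$ \emph{globally}: a $\theta^*$-invariant Borel pair $(S^*,C^*)$ of $G^*$ with $S^*$ over $F$, an admissible isomorphism $S'\rw S^*_{\theta^*}$ over $F$, global $a$-data and $\chi$-data, a finite Galois extension $K/F$ with a representative of $z$ in $Z^1_{\tx{bsc}}(\mc{E}(K/F),G^*(K))$, and an element $g\in G^*(K)$ with $\delta=\xi(g^{-1}\delta^*\theta^*(g))$ (which exists since $\gamma$ is an adelic norm of $\delta$). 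Localising, the decomposition of $\Delta[\mf{e},\xi_v,z_v](\gamma_v,\delta_v)$ into $\epsilon(\tfrac12,V,\psi_{F,v})$, $\Delta_{I,v}^{\tx{new}}$, $\Delta_{II,v}$, $\Delta_{III,v}$ and $\Delta_{IV,v}$ is compatible with these global choices.

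Next I would dispose of every constituent except $\Delta_{III}$. The local factors $\Delta_{II,v}$, $\Delta_{IV,v}$, $\epsilon(\tfrac12,V,\psi_{F,v})$ and $\Delta_{I,v}^{\tx{new}}$ (the last reconciled with the $\Delta_I$ of \cite{LS87} via \cite[\S3.4]{KS12}) are literally the same local objects used in building the adelic transfer factor of \cite[\S6]{KS99}; their products over $v$ are therefore, term by term, the corresponding constituents of that adelic factor — in particular $\prod_v\epsilon(\tfrac12,V,\psi_{F,v})$ is the global $\epsilon$-factor of the virtual representation $V$, a well-defined constant because $V$ has virtual dimension zero. Granting this, the proposition reduces to showing that $\prod_v\Delta_{III,v}(\gamma_v,\delta_v)$ equals the inverse of the $\Delta_{III}$-constituent of the canonical adelic transfer factor of \cite[\S6]{KS99}.

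This last comparison is the heart of the matter. With the global choices in place, the pair $(v(e)^{-1},\delta^*)$ is a single $G^*$-basic algebraic $1$-cocycle of $\mc{E}(K/F)$: its localisation at $v$ lies in $B(F_v,S^*\stackrel{1-\theta^*}{\lrw}S^*)$, and globally it defines a class in $B(\A/F,S^*\rw S^*)$; dually, $(a_S^{-1},s^{\mf{e}})$ defines a global class in $H^1(W_F,\hat S^*\rw\hat S^*)$. Since the localisation maps $B(F_v,S^*\rw S^*)\rw B(\A/F,S^*\rw S^*)$ are dual to the restriction maps on $H^1(W_F,\hat S^*\rw\hat S^*)$, the product $\prod_v\Delta_{III,v}(\gamma_v,\delta_v)$ computes the global pairing of the adelic class coming from that $1$-cocycle against the global $W_F$-class. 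This is precisely how the $\Delta_{III}$-constituent of the adelic transfer factor of \cite[\S6]{KS99} is defined, except that $s^{\mf{e}}$ there is replaced by $(s^{\mf{e}})^{-1}$. That single replacement is exactly the discrepancy between the relative factor $\Delta'$ of \cite{KS12} — with which our $\Delta$ is compatible by Proposition \ref{pro:tf} — and the relative factor $\Delta$ of \cite{LS87} underlying \cite[\S6--7]{KS99}, and it is the source of the inversion asserted in the proposition.

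The part I expect to be the main obstacle is the bookkeeping supporting this last step: one must check that the locally defined $\Delta_{III,v}$ — built from the pairing between $B(F_v,S^*\rw S^*)$ and $H^1(W_{F_v},\hat S^*\rw\hat S^*)$ — matches place by place the corresponding constituent of the adelic $\Delta_{III}$ of \cite[\S6]{KS99}, and that the requisite reciprocity statement for $B(\A/F,-)$, the analogue of $\sum_v\tx{inv}_v=0$, holds for the global class above. These are exactly the compatibilities recorded among the bulleted facts preceding Proposition \ref{pro:tf}, known in the $p$-adic case from \cite[\S9,10,11]{Kot97} and in general to be established in \cite{KMS_B}. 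Granting them, the five products recombine to the inverse of the canonical adelic transfer factor of \cite[\S7.3]{KS99}, which is the assertion.
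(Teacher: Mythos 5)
Your plan has a genuine gap in its first reduction step, namely the claim that the constituents $\Delta_{I,v}^{\tx{new}}$, $\Delta_{II,v}$, $\Delta_{IV,v}$, $\epsilon(\tfrac12,V,\psi_{F,v})$ "are literally the same local objects used in building the adelic transfer factor of [KS99, \S6]; their products over $v$ are therefore, term by term, the corresponding constituents of that adelic factor." The canonical adelic transfer factor of [KS99, \S7.3] is \emph{not} a product of five constituent products over $v$; it is defined by the formula
\[ \Delta_{\A,KS}(\gamma_1,\delta_1)=\Delta'_\A(\gamma_1,\delta_1;\gamma_0,\delta_0)^{-1}\langle\tx{obs}(\delta_0),\kappa_0\rangle, \]
where the decisive ingredient is a \emph{rational} base point $\gamma_0\in G^\mf{e}(F)$ and the global obstruction pairing $\langle\tx{obs}(\delta_0),\kappa_0\rangle$, together with the canonical \emph{relative} transfer factor. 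For a general adelic pair $(\gamma_1,\delta_1)$ the individual constituent products such as $\prod_v\Delta_{IV,v}(\gamma_{1,v},\delta_{1,v})$ are not trivial and do not separately appear in $\Delta_{\A,KS}$; it is only at the rational base point $(\gamma_0,\delta_0)$, where one can choose $a$-data and $\chi$-data globally, that the product formula forces $\prod_v\Delta_{I,v}^\tx{new}(\gamma_0,\delta_0)=\prod_v\Delta_{II,v}(\gamma_0,\delta_0)=\prod_v\Delta_{IV,v}(\gamma_0,\delta_0)=1$ (and likewise the $\epsilon$-factor product; note you only argue that it is a well-defined constant, not that it is trivial). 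Your setup tacitly assumes $g\in G^*(K)$ and $\delta^*\in S^*(K)$, which already forces rationality of $\gamma$ — an assumption you never state, and which is incompatible with treating an arbitrary adelic pair.

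The missing idea is the reduction step that the paper carries out at the outset: by Proposition \ref{pro:tf}, $\Delta_\A[\mf{e},\xi]$ and the product of local factors satisfy the same relative-factor relation, so the defining equation of $\Delta_{\A,KS}$ reduces the entire proposition to a single identity at the rational base point,
\[ \Delta_\A[\mf{e},\xi](\gamma_0,\delta_0)=\langle\tx{obs}(\delta_0),\kappa_0\rangle^{-1}. \]
Once this reduction is made, your core technical idea — that the product $\prod_v\Delta_{III,v}$ is computed by pairing a global class in $B(\A/F,S^*\to S^*)$ built from $(v(e)^{-1},\delta^*)$ against a global class in $H^1(W_F,\hat S^*\to\hat S^*)$ built from $(a_S^{-1},s^\mf{e})$, and that the outcome is $\langle\tx{obs}(\delta_0),\kappa_0\rangle$ — is correct in substance and matches the paper's final step (the comparison of $v$ with Kottwitz-Shelstad's $v_{KS}$, and of $(a_S^{-1},s^\mf{e})$ with $\kappa_0$). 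Likewise your attribution of the overall inverse to the discrepancy between the normalizations $\Delta'$ of [KS12] and $\Delta$ of [LS87] is accurate. The fix is therefore to insert the reduction to $(\gamma_0,\delta_0)$ before attempting to split off the constituents, rather than claiming a term-by-term matching for arbitrary adelic pairs.
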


\begin{proof}
Let $\Delta_{\A,KS}$ denote the canonical adelic transfer factor of \cite[\S7.3]{KS99}. It is defined under the assumption that there exists $\gamma_0 \in G^\mf{e}(F)$ which is a norm of a $\theta$-strongly regular, $\theta$-semi-simple element $\delta_0 \in G(\A)$. Under this assumption, for any $\gamma_1 \in G^\mf{e}(\A)$ and $\delta_1 \in G(\A)$ with the same properties, the defining equation is
\[ \Delta_{\A,KS}(\gamma_1,\delta_1)=\Delta_\A'(\gamma_1,\delta_1;\gamma_0,\delta_0)^{-1}\<\tx{obs}(\delta_0),\kappa_0\>. \]
Here the first factor on the right is the canonical relative adelic transfer factor and the second factor will be recalled in a moment. Given Proposition \ref{pro:tf}, in order to prove
\[ \Delta_\A[\mf{e},\xi](\gamma_1,\delta_1) = \Delta_{\A,KS}(\gamma_1,\delta_1)^{-1} \]
it will be enough to show
\[ \Delta_\A[\mf{e},\xi](\gamma_0,\delta_0) = \<\tx{obs}(\delta_0),\kappa_0\>^{-1}. \]
For this, we let $S' \subset G^\mf{e}$ be the centralizer of $\gamma_0$, a maximal torus defined over $F$. We choose an admissible isomorphism $S' \rw S^*_{\theta^*}$, where $S^* \subset G^*$ is a $\theta^*$-stable maximal torus defined over $F$ and contained in a $\theta^*$-stable Borel subgroup defined over $\ol{F}$. Choosing $a$-data and $\chi$-data globally, the argument in the proof of \cite[Lemma 7.3.A]{KS99} shows that the adelic versions of $\Delta_I^\tx{new}$, $\Delta_{II}$ and $\Delta_{IV}$ are all equal to $1$. We thus have to show that the adelic version of our factor $\Delta_{III}(\gamma_0,\delta_0)$ is equal to $\<\tx{obs}(\delta_0),\kappa_0\>$. For this we must recall the definitions of $\tx{obs}(\delta_0)$ and of $\kappa_0$. The construction of $\tx{obs}(\delta_0)$ is the subject of \cite[\S6.3]{KS99}. By assumption there exists $\delta^* \in S^*(\ol{\A})$ and $g \in G_\tx{sc}(\ol{\A})$ such that the image of $\delta^*$ in $S^*_{\theta^*}$ is equal to the image of $\gamma$ under the admissible isomorphism $S' \rw S^*_{\theta^*}$, and such that $\delta^*=g\xi^{-1}(\delta)\theta^*(g^{-1})$. As in the local case, one defines $v_{KS}(\sigma)=gu(\sigma)\sigma(g^{-1})$. Recall that $u \in C^1(\Gamma,G^*_\tx{sc})$ lifts $\xi^{-1}\sigma(\xi) \in Z^1(\Gamma,G^*_\tx{ad})$. Then
\[ (v_{KS}(\sigma)^{-1},\delta^*) \in Z^1\left(\Gamma,\frac{S^*_\tx{sc}(\A)}{S^*_\tx{sc}(F)} \stackrel{1-\theta^*}{\lrw}\frac{S^*(\A)}{S^*(F)}\right). \]
The class of this cocycle is called $\tx{obs}(\delta_0)$. It is seen to belong to the subgroup $H^1(\A/F,S^*_\tx{sc} \rw V)$ of $H^1(\A/F,S^*_\tx{sc} \rw S^*)$, where $V$ is the kernel of the natural projection $S^* \rw S^*_{\theta^*}$.

Now let $K/F$ be a large Galois extension for which $g \in G^*_\tx{sc}(\A_K)$ and $z$ has a representative in $Z^1_\tx{alg}(\mc{E}(K/F),G^*(K))$. Consider $v(e)=gz(e)\sigma_e(g^{-1})$ for $e \in \mc{E}(K/F)$. This is an element of $S^*(\A_K)$ and for any $d \in \mb{D}_{K/F}$, we have $v(de)=gz(d)z(e)\sigma_e(g^{-1})$ with $z(d) \in Z(G^*)(K)$. It follows that when valued in $S^*(\A_K)/S^*(K)$, the function $v$ is invariant under $\mb{D}_{K/F}$ and hence descends to $\Gamma_{K/F}$. We obtain $v \in Z^1(\Gamma,S^*(\ol{\A})/S^*(\ol{F}))$. Moreover, the argument used in the proof of Proposition \ref{pro:tf} shows that $v$ is the image of $v_{KS}$ under the natural map $Z^1(\Gamma,S^*_\tx{sc}(\ol{\A})/S^*_\tx{sc}(\ol{F})) \rw Z^1(\Gamma,S^*(\A)/S^*(\ol{F}))$. We conclude that the image of $(v_{KS}(\sigma)^{-1},\delta^*)$ in $Z^1(\A/F,S^* \rw S^*)$ is equal to $(v(e)^{-1},\delta^*)$.

We now turn to the element $\kappa_0$. Choose an admissible embedding $\hat S' \rw \hat G^\mf{e}$. Using global $\chi$-data, extend it to an $L$-embedding $^LS' \rw {^LG^\mf{e}}$ and compose it with $\eta^\mf{e}$ to obtain an $L$-embedding $f: {^LS'} \rw {^LG}$. Using the same $\chi$-data we obtain another $L$-embedding $g: {^LS'} \rw {^LG}$. We may arrange that the two $L$-embeddings coincide on $\hat S'$ and take it into $\hat T$. Then we have $f(x \rtimes w) = g(a_S(w)\cdot x \rtimes w)$ for any $x \rtimes w \in \hat S' \rtimes W_F = {^LS'}$. The properties of the endoscopic triple $\mf{e}$ imply the equation
\[ \tx{Int}(s)\circ {^L\theta}\circ f = f \]
from which we get $(1-\hat\theta)(a_S^{-1})=\partial s$. This means that $(a_S^{-1},s) \in Z^1(W_F,\hat S^* \stackrel{1-\hat\theta}{\lrw} \hat S^*)$. The image of the class of $(a_S^{-1},s)$ in the group $H^1(W_F,\hat V \rw \hat S^*_\tx{ad})$ under the natural map $[\hat S^* \rw \hat S^*] \rw [\hat V \rw \hat S^*_\tx{ad}]$ is the element $\kappa_0$. We thus see that
\[ \<\tx{obs}(\delta_0),\kappa_0\> = \<(v(\sigma)^{-1},\delta^*),(a_S^{-1},s)\>. \]
By construction, the image of $(a_S^{-1},s)$ under the restriction map $H^1(W_F,\hat S^* \rw \hat S^*) \rw H^1(W_{F_v},\hat S^* \rw \hat S^*)$ for each place $v$ of $F$ is equal to the element $(a_S^{-1},s)$ used in the construction of the local factor $\Delta_{III}$. At the same time, the element $(v(\sigma)^{-1},\delta^*) \in H^1(\A/F,S^* \rw S^*)$ is equal to the sum over all places of the images in this group of the elements $(v(e)^{-1},\delta^*) \in B(F_v,S^* \rw S^*)$ used in the construction of the local factor $\Delta_{III}$. We conclude that
\[ \<(v(\sigma)^{-1},\delta^*),(a_S^{-1},s)\> = \prod_v \Delta_{III}[\mf{e},\xi_v,z_v](\gamma_{0,v},\delta_{0,v}). \]
\end{proof}

We will now return to the case when $F$ is a local field and gather some properties of the transfer factor $\Delta[\mf{w},\xi,z]$ under the assumption $\theta=1$. First, we observe that the construction of the term $\Delta_{III}$ simplifies. The map $S^*(K) \stackrel{1-\theta*}{\lrw} S^*(K)$ is the trivial map, so $H^1_\tx{bsc}(\mc{E}(K/F),S^*(K) \rw S^*(K)) \cong H^1_\tx{bsc}(\mc{E}(K/F),S^*(K)) \times H^0(\mc{E}(K/F),S^*(K))$. The second factor is equal to $H^0(\Gamma_{K/F},S^*(K))=S^*(F)$. In the same way, $H^1(W_F,\hat S \rw \hat S) \cong H^1(W_F,\hat S) \times H^0(\Gamma,\hat S)$. Finally, the pairing between these groups also breaks into the product of the Langlands duality pairing between $S^*(F)$ and $H^1(W_F,\hat S)$ and the Kottwitz-pairing between $H^1_\tx{bsc}(\mc{E}(K/F))$ and $\hat S^\Gamma$. Finally, the element $\delta^* \in S^*$ used in the construction of $\Delta_{III}$ now belongs to $S^*(F)$. With this, one sees that the factor $\Delta[\mf{e},\xi,z](\gamma,\delta)$ is given by
\begin{equation} \label{eq:tfuntwist} \epsilon(\frac{1}{2},V,\psi_F)\Delta_I^\tx{new}(\gamma,\delta)^{-1} \Delta_{II}(\gamma,\delta) \<v(e),s^\mf{e}\>_{Kot} \<a_S,\delta^*\>_{Lan}\Delta_{IV}(\gamma,\delta). \end{equation}

\begin{lem} \label{lem:tflevi} Let $M^* \subset G^*$ be a standard Levi subgroup. Assume that $\xi(M) \subset G$ is a Levi subgroup defined over $F$, so in particular $z \in B(F,M^*)_{G^*-\tx{bsc}}$. Let $\mf{e}_M=(M^\mf{e},s^\mf{e},\xi^\mf{e})$ be an endoscopic triple for $M^*$ and let $\mf{e}=(G^\mf{e},s^\mf{e},\xi^\mf{e})$ be the corresponding endoscopic triple for $G^*$, so that $M^\mf{e} \subset G^\mf{e}$ is a Levi subgroup.
\begin{enumerate}
\item For each $G^*$-regular related pair of elements $\gamma \in M^\mf{e}(F)$ and $\delta \in M(F)$ we have
\[ \Delta[\mf{e}_M,\xi,z](\gamma,\delta)=\Delta[\mf{e},\xi,z](\gamma,\delta)\cdot \left(\frac{|D^{G^\mf{e}}_{M^\mf{e}}(\gamma)|}{|D^G_M(\delta)|}\right)^\frac{1}{2}. \]
\item Let $f \in \mc{H}(G)$ and $f^\mf{e} \in \mc{H}(G^\mf{e})$ have $\Delta[\mf{e},\xi,z]$-matching orbital integrals. Then $f_M$ and $f^\mf{e}_{M^\mf{e}}$ have $\Delta[\mf{e}_M,\xi,z]$-matching orbital integrals.
\end{enumerate}
\end{lem}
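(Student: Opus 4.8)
My plan is to prove (1) directly and deduce (2) formally. Since $\theta^*=1$ here, the transfer factor is the explicit product \eqref{eq:tfuntwist}. I would fix a maximal torus $S^*\subset M^*\subset G^*$ used in its construction, so that for $G$-regular $\gamma$ the torus $S'=\tx{Cent}(\gamma,G^\mf{e})=\tx{Cent}(\gamma,M^\mf{e})$ is a maximal torus of $M^\mf{e}$, and choose the $a$-data, $\chi$-data, the admissible isomorphism $S'\rw S^*$ and the relevant $L$-embeddings for $G$ and for $G^\mf{e}$ so that they restrict to valid choices for $M$ and $M^\mf{e}$ on the roots of $M$ and of $M^\mf{e}$; all transfer factors involved are independent of these choices. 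Writing \eqref{eq:tfuntwist} as $\Delta[\mf{e},\xi,z](\gamma,\delta)=\Delta_0(\gamma,\delta)\cdot\<v(e),s^\mf{e}\>_{Kot}$ with $\Delta_0=\epsilon(\tfrac{1}{2},V,\psi_F)\,\Delta_I^\tx{new}(\gamma,\delta)^{-1}\,\Delta_{II}(\gamma,\delta)\,\<a_S,\delta^*\>_{Lan}\,\Delta_{IV}(\gamma,\delta)$, the factor $\Delta_0$ involves neither $\xi$ nor $z$, while $\<v(e),s^\mf{e}\>_{Kot}$ (which generalizes the Langlands--Shelstad term $\Delta_{\tx{III}_1}$) carries the whole dependence on the extended pure inner twist. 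For $\Delta_0$ the identity $\Delta_0^M=\Delta_0^G\cdot\big(|D^{G^\mf{e}}_{M^\mf{e}}(\gamma)|/|D^G_M(\delta)|\big)^{1/2}$ is the classical descent of transfer factors along a Levi subgroup, cf.\ \cite{LS87}: with the compatible choices above $V$ is literally unchanged (a minimal Levi of $M^*$, resp.\ $M^\mf{e}$, is a minimal Levi of $G^*$, resp.\ $G^\mf{e}$), so $\epsilon(\tfrac{1}{2},V,\psi_F)$ and $\Delta_I^\tx{new}$ descend trivially; the roots of $G$ outside $M$ and of $G^\mf{e}$ outside $M^\mf{e}$ contribute to $\Delta_{II}\cdot\<a_S,\delta^*\>_{Lan}$ a factor that cancels; and $\Delta_{IV}$ supplies exactly the discriminant ratio.

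\textbf{The term from the inner twist.} The only place the new ingredient enters — and, I expect, the one real (if not deep) obstacle — is to check that $\<v(e),s^\mf{e}\>_{Kot}$ computed inside $M$ for $\mf{e}_M$ coincides with the same quantity computed inside $G$ for $\mf{e}$. By hypothesis $z\in B(F,M^*)_{G^*-\tx{bsc}}$, so by Lemma \ref{lem:gbasl} it has a representative in $Z^1_{G^*-\tx{bsc}}(\mc{E}(K/F),M^*(K))$ for a suitable $K$; and since $\delta\in M(F)$, the element $g$ with $\delta=\xi(g^{-1}\delta^*\theta^*(g))$ may be taken inside $M^*$. Then $\delta^*\in S^*(F)$ and $v(e)=gz(e)\sigma_e(g^{-1})\in S^*$, so the cocycle $(v(e)^{-1},\delta^*)$ is already defined inside $M^*$ and represents the same class in $B(F,S^*\rw S^*)$ whether one works relative to $M^*$ or to $G^*$; likewise $s^\mf{e}$, transported to $\hat S^*$ via the common admissible isomorphism, is the same element. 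As the pairing of $B(F,S^*\rw S^*)$ with $H^1(W_F,\hat S^*\rw\hat S^*)$ depends only on $S^*$, the factor $\<v(e),s^\mf{e}\>_{Kot}$ is unchanged, and together with the previous paragraph this proves part (1).

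\textbf{Part (2).} That $f$ and $f^\mf{e}$ have $\Delta[\mf{e},\xi,z]$-matching orbital integrals means $SO^{G^\mf{e}}_{\gamma'}(f^\mf{e})=\sum_{\delta'}\Delta[\mf{e},\xi,z](\gamma',\delta')\,O^G_{\delta'}(f)$ for every $G$-regular related pair $(\gamma',\delta')$, the sum over $G(F)$-conjugacy classes in the stable class. I would combine this with Harish--Chandra's parabolic descent of orbital integrals — for $G$-regular $\gamma\in M^\mf{e}(F)$ and $\delta\in M(F)$ one has $SO^{M^\mf{e}}_\gamma(f^\mf{e}_{M^\mf{e}})=|D^{G^\mf{e}}_{M^\mf{e}}(\gamma)|^{1/2}\,SO^{G^\mf{e}}_\gamma(f^\mf{e})$ and $O^M_\delta(f_M)=|D^G_M(\delta)|^{1/2}\,O^G_\delta(f)$, independently of the chosen parabolic — together with the fact that for $G$-regular elements every $G(F)$-class with norm $\gamma$ meets $M(F)$, and these classes correspond bijectively to the $M(F)$-classes with norm $\gamma$ for $\mf{e}_M$ (the norm relation for $\mf{e}_M$ being the restriction of that for $\mf{e}$, and the first cohomology sets of $\tx{Cent}(\delta,G)=\tx{Cent}(\delta,M)$ agreeing since $H^1(\Gamma,M)\rw H^1(\Gamma,G)$ is injective). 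Feeding part (1) into $\sum_\delta\Delta[\mf{e}_M,\xi,z](\gamma,\delta)\,O^M_\delta(f_M)$, the factors $|D^G_M(\delta)|^{\pm1/2}$ cancel, the remaining factor $|D^{G^\mf{e}}_{M^\mf{e}}(\gamma)|^{1/2}$ matches the one produced by descent on the $G^\mf{e}$-side, and what is left is exactly the matching identity for $f$ and $f^\mf{e}$ evaluated at $\gamma$. Hence $f_M$ and $f^\mf{e}_{M^\mf{e}}$ have $\Delta[\mf{e}_M,\xi,z]$-matching orbital integrals. This runs parallel to the quasi-split case treated in \cite{Arthur}, the only genuinely new input being part (1).
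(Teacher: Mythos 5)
Your proposal is correct and follows essentially the same route as the paper's proof: in part (1) you both use the explicit formula \eqref{eq:tfuntwist} with $a$-data, $\chi$-data and the conjugating element $g$ chosen compatibly for $M^*$ inside $G^*$ (the paper spells this out as the choice ``trivial on roots outside $M^*$'' and $g \in M^*$, verifying each factor $\epsilon(\frac{1}{2},V,\psi_F)$, $\Delta_I^\tx{new}$, $\Delta_{II}$, $\<v(e),s^\fke\>$, $\<a_S,\delta^*\>$, $\Delta_{IV}$ in turn, where you bundle the $z$-independent part into $\Delta_0$ and cite the classical Langlands--Shelstad descent for it while singling out the Kottwitz pairing as the new point), and in part (2) you both combine parabolic descent of orbital integrals with the injectivity of $H^1(\Gamma,M^\mf{e}) \rw H^1(\Gamma,G^\mf{e})$ and the discriminant cancellation. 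The only cosmetic difference is that you lean on \cite{LS87} for the $z$-independent factors rather than checking them one by one; the substance and the single genuinely new step (the Kottwitz term $\<v(e),s^\mf{e}\>$) are identical.
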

\begin{proof}
We begin with the first statement. We will compare the two transfer factors using the expression \eqref{eq:tfuntwist}. In order for the individual factors to make sense, we need to fix $a$-data and $\chi$-data. Since $S^*$ is a maximal torus of $M^*$, no root of $S^*$ outside of $M^*$ can be symmetric. We choose the $a$-data and $\chi$-data so that it is trivial on the roots of $S^*$ outside of $M^*$. We first observe that the virtual representation $V$ is the same for the pairs $(G^\mf{e},G^*)$ and $(M^\mf{e},M^*)$, because $G^*$ and $M^*$ share the same minimal Levi subgroup, and so do $G^\mf{e}$ and $M^\mf{e}$. The terms $\Delta_I^\tx{new}$ are also equal. The definition of this term is given in \cite[\S3.4]{KS12} and involves besides $a$-data also an element $h \in G^*$ conjugating $S^*$ to the fixed minimal Levi $T^*$. We are free to choose $h \in M^*$ with this property. Then formula \cite[(2.1.4)]{KS12}, once executed within $M^*$ and once within $G^*$, gives the same element of $H^1(\Gamma,T^*)$, due to our choice of $a$-data and the fact that the pinning of $M^*$ is inherited from $G^*$.

The terms $\Delta_{II}$ are equal due to our choice of $a$-data and $\chi$-data. The terms $\<v(e),s^\mf{e}\>$ are equal because the element $g \in G^*$ with $\delta=\xi(g^{-1}\delta^*g)$ that defines $v(e)$ can be chosen within $M^*$. The terms $\<a_S,\delta^*\>$ are equal because the 1-cocycle $a_S$ is constructed for $G$ and $M$ using the same $\chi$-data (in particular it is trivial for roots of $G^*$ outside $M^*$). Finally the ratio of the terms $\Delta_{IV}$ coincides with the ration of the Weyl discriminants in the statement of the Lemma.

We now proceed to the second statement, which essentially follows from the first. Namely, let $\gamma \in M^\mf{e}(F)$ and $\delta \in M(F)$ be $G^*$-strongly regular, which we may assume as the reduction to $M^*$-regular elements follows by continuity. Then we have the basic identity for parabolic descent of orbital integrals
\[ O^{M}_\delta(f_{M}) = |D^G_{M}(\delta)|^\frac{1}{2}O^{G}_\delta(f). \]
The same identity holds on the side of $G^\mf{e}$ for usual orbital integrals. Since the map $H^1(\Gamma,M^\mf{e}) \rw H^1(\Gamma,G^\mf{e})$ is injective, the set of $M^\mf{e}(F)$-conjugacy classes within the $M^\mf{e}$-stable class of $\gamma$ is in bijection with the set of $G^\mf{e}(F)$-conjugacy classed within the $G^\mf{e}$-stable class of $\gamma$. This implies that the analog of the above identity holds for the stable orbital integrals of $f^\mf{e}_{M^\mf{e}}$ and $f^\mf{e}$, that is
\[ SO^{M^\mf{e}}_\gamma(f^\mf{e}_{M^\mf{e}}) = |D^{G^\mf{e}}_{M^\mf{e}}(\gamma)|^\frac{1}{2}O^{G^\mf{e}}_\gamma(f^\mf{e}). \]
These two equations together with the first statement of the lemma now imply the second.
\end{proof}

\subsubsection{Simple descent for twisted endoscopy} \label{sec:prelsimp}

This section contains a simple version of descent for twisted endoscopy that will be used in the normalization of intertwining operators. Let $G^*$ be a quasi-split connected reductive group defined over a local field $F$ and let $(\xi,z) : G^* \rw G$ be an extended pure inner twist. We fix a natural number $n$ and define $\tilde G^* = G^* \times \dots \times G^*$, where we have taken $n$ copies of $G^*$. We define the group $\tilde G$ in the same way and let $\tilde\xi = (\xi,\dots,\xi)$ and $\tilde z=(z,\dots,z)$. Then $(\tilde\xi,\tilde z): \tilde G^* \rw \tilde G$ is an extended pure inner twist.

We fix a pinning $(T^*,B^*,\{X_\alpha\})$ of $G^*$ and let $\theta^*$ be an automorphism of $G^*$ that preserves the pinning. We assume that $\theta^*(z)=z$ and then set $\theta=\xi\circ\theta^*\circ\xi^{-1}$, which is an automorphism of $G$ defined over $F$. Define the automorphism $\tilde\theta^*$ of $\tilde G^*$ by $\tilde\theta^*(g_1,\dots,g_n)=(\theta^*(g_n),g_1,\dots,g_{n-1})$. The group $\tilde G^*$ inherits a pinning $(\tilde T^*,\tilde B^*,\{X_{\tilde\alpha}\})$ from $G^*$. More precisely, $\tilde T^*$ and $\tilde B^*$ are obtained by taking the product of $n$ copies of $T^*$ and $B^*$, respectively. Any $\tilde\alpha \in R(\tilde T^*,\tilde G^*) = \sqcup_{i=1}^n R(T^*,G^*)$ can be identified with a pair $(i,\alpha)$, where $1 \leq i \leq n$ and $\alpha \in R(T^*,G^*)$, and then $X_{\tilde\alpha}$ is equal to $(0,\dots,0,X_\alpha,0,\dots,0)$, with $X_\alpha$ placed in the $i$-th slot. The automorphism $\tilde\theta^*$ preserves this pinning. We have $\tilde\theta^*(\tilde z)=\tilde z$. Let $\tilde\theta$ be the automorphism of $\tilde G$ given by $\tilde\theta(g_1,\dots,g_n)=(\theta(g_n),g_1,\dots,g_{n-1})$. Then $\tilde\theta$ is defined over $F$ and satisfies $\tilde\theta=\tilde\xi\circ\tilde\theta^*\circ\tilde\xi^{-1}$.

The purpose of this section is to compare twisted endoscopy for $(\tilde G,\tilde\theta)$ with twisted endoscopy for $(G,\theta)$. Fix a Langlands dual group $\hat G$ for $G^*$ with a pinning $(\hat T,\hat B,{\hat X_\alpha})$ dual to the pinning of $G^*$. Let $\hat\theta$ be the automorphism of $\hat G$ dual to $\theta^*$ and preserving the pinning. Thus the action of $\hat\theta$ on $X^*(\hat T)$ is identified with the action of $\theta^*$ on $X_*(T^*)$. We form the dual group $\hat{\tilde G}$ of $\tilde G^*$ by taking $\hat G \times \dots \times \hat G$ with the pinning $(\hat{\tilde T},\hat{\tilde B},\{X_{\hat{\tilde\alpha}}\})$ inherited from $\hat G$. The automorphism $\hat{\tilde\theta}$ dual to $\tilde\theta^*$ is given by $\hat{\tilde\theta}(g_1,\dots,g_n) = (g_2,\dots,g_n,\hat\theta(g_1))$.

We begin our comparison by constructing an endoscopic datum for $(G^*,\theta^*)$ from one for $(\tilde G^*,\tilde\theta^*)$. Let $\mf{\tilde e}=(G^\mf{\tilde e}, \mc{G}^\mf{\tilde e}, s^\mf{\tilde e},\eta^\mf{\tilde e})$ be an endoscopic datum for $(\tilde G^*,\tilde\theta^*)$. Write $s^\mf{\tilde e} = (s_1,\dots,s_n)$ and let $s^\mf{e}= s_1 \cdot s_2 \dots s_n$. It is easy to see that $s^\mf{e} \in \hat G$ is a $\hat\theta$-semi-simple element, we will give the argument further down. A routine computation shows that
\begin{eqnarray} \label{eq:prelsimpiso}
\hat{\tilde G}^{s^\mf{\tilde e}\circ\hat{\tilde\theta}}&\rw&\hat G^{s^\mf{e}\circ\hat\theta} \nonumber\\
(g_1,\dots,g_n)&\mapsto&g_1 \nonumber \\
(g,s_1^{-1}gs_1,(s_2s_1)^{-1}g(s_2s_1),\dots,(s_1\dots s_{n-1})^{-1}g(s_1\dots s_{n-1}))&\mapsfrom&g
\end{eqnarray}
are mutually inverse isomorphisms. Letting $G^\mf{e}=G^\mf{\tilde e}$, $\mc{G}^\mf{e}=\mc{G}^\mf{\tilde e}$ and taking $\eta^\mf{e} : \mc{G}^\mf{e} \rw {^LG}$ to be the composition of $\eta^\mf{\tilde e}$ with the map $^L{\tilde G} \rw {^LG}$ sending $(g_1,\dots,g_n) \rtimes w$ to $g_1 \rtimes w$, we obtain an endoscopic datum $\mf{e}=(G^\mf{e},\mc{G}^\mf{e},s^\mf{e},\eta^\mf{e})$ for $(G^*,\theta^*)$. The routine verification that $\mf{e}$ satisfies the axioms of an endoscopic datum is left to the reader.

Next, we compare twisted conjugacy in $\tilde G$ and $G$. Consider the map $\Phi^* : \tilde G^* \rw G^*$ given by $\Phi^*(g_1,\dots,g_n)=g_n \cdot g_{n-1} \dots g_1$. It is clear that $\Phi^*$ is a map of varieties that respects the $F$-structure. One checks furthermore that for any $\tilde h = (h_1,\dots,h_n) \in \tilde G^*$ we have
\begin{equation} \label{eq:prelsimpconj}
\Phi^*(\tilde h\cdot \tilde g\cdot \tilde\theta^*(\tilde h^{-1})) = h_n \cdot \Phi^*(\tilde g) \cdot \theta^*(h_n^{-1}).
\end{equation}
We can also define the map $\Phi : \tilde G \rw G$ by the same formula and obtain the same properties. Note that
\begin{equation} \label{eq:prelsimp1} \Phi\circ\tilde\xi = \xi\circ\Phi^*. \end{equation}
Furthermore, note that $\Phi$ defines a bijection between the $\tilde\theta$-twisted conjugacy classes of $\tilde G$ and the $\theta$-twisted conjugacy classes of $G$, as well as between the $\tilde\theta$-twisted conjugacy classes of $\tilde G(F)$ and the $\theta$-twisted conjugacy classes of $G(F)$. We will prove the second point, the proof of the first being analogous. First, according to \eqref{eq:prelsimpconj} the map
\[ \Phi : \{ \tilde\theta-\textrm{twisted classes in } \tilde G(F)\} \quad \rw\quad \{\theta-\textrm{twisted classes in } G(F)\}  \]
is well-defined and surjective. To show injectivity, take $\tilde g,\tilde h \in \tilde G(F)$ and assume that $\Phi(\tilde h) = x\Phi(\tilde g)\theta(x^{-1})$ for some $x \in G(F)$. Replacing $\tilde g$ by its $\tilde\theta$-conjugate $(1,\dots,1,x)\tilde g\tilde\theta((1,\dots,1,x)^{-1})$ we may assume that $\Phi(\tilde g)=\Phi(\tilde h)$. Next we replace $\tilde g=(g_1,\dots,g_n)$ by $\tilde x^{-1}\tilde g\tilde\theta(\tilde x)$, where $\tilde x=(g_1,(g_2g_1),\dots,(g_{n-1}\dots g_2g_1),1)$. This does not change $\Phi(\tilde g)$, according to \eqref{eq:prelsimpconj}, but allows us to assume $\tilde g = (1,\dots,1,g_n)$. We do the same with $\tilde h$. But then $\Phi(\tilde g)=\Phi(\tilde h)$ implies $\tilde g = \tilde h$ and this completes the proof of claimed bijectivity.

The maps $\Phi^*$ and $\Phi$ preserve the notion of twisted semi-simplicity. Indeed, let $\tilde g = (g_1,\dots,g_n) \in \tilde G$. A maximal torus $\tilde S \subset \tilde G$ is of the form $\tilde S = S_1 \times \dots \times S_n$ for maximal tori $S_i \subset G$. If $\tilde S$ is preserved by $\tx{Ad}(\tilde g)\circ\tilde\theta$, then $S_n$ is preserved by $\tx{Ad}(\Phi(\tilde g))\circ\theta$. Conversely, if $S_n$ is preserved by $\tx{Ad}(\Phi(\tilde g))\circ\theta$, then $\tilde S = S_1 \times \dots \times S_n$ with $S_i = g_i S_{i-1}g_i^{-1}$ is preserved by $\tx{Ad}(\tilde g)\circ\tilde\theta$. The same argument works for Borel subgroups. We conclude that $\tilde g \in \tilde G$ is $\tilde\theta$-semi-simple if and only if $\Phi(\tilde g)$ is $\theta$-semi-simple. We will see below that $\Phi^*$ and $\Phi$ also preserve the notions of twisted strong regularity. Note that the analogous argument applied to $s^\mf{\tilde e} \in \hat{\tilde G}$ shows that $s^\mf{e} \in \hat G$ is $\hat\theta$-semi-simple.

We now study the notion of norms and transfer factors. These notions are unaffected if we replace the endoscopic datum $\mf{\tilde e}$ by an equivalent one. It will be convenient to do so and to thereby assume that $s^\mf{\tilde e} \in \hat{\tilde T}$. Then $s_1,\dots,s_n \in \hat T$ and $s^\mf{e} \in \hat T$. We claim that $\gamma \in G^\mf{\tilde e}$ is a norm of $\tilde\delta^* \in \tilde G^*$ (or of $\tilde\delta \in \tilde G$) if and only if $\gamma$, seen now as an element of $G^\mf{e}$, is a norm of $\delta^* = \Phi^*(\tilde\delta^*)$ (or of $\delta = \Phi(\tilde\delta)$). To see this, we note that the following two commutative diagrams are dual to each other
\begin{equation} \label{eq:prelsimp2} \xymatrix{
[\hat{\tilde T}]^{s^\mf{\tilde e}\hat{\tilde\theta}}\ar@{^(->}[r]&\hat{\tilde T}&&\tilde T^*_{\tilde\theta^*}\ar[d]^\cong&\tilde T^*\ar[l]\ar[d]^m\\
[\hat{T}]^{s^\mf{e}\hat{\theta}}\ar[u]^\cong\ar@{^(->}[r]&\hat T\ar[u]^\Delta&&T^*_{\theta^*}&T^*\ar[l]\\
} \end{equation}
Here the left-most vertical isomorphism comes from the isomorphism \eqref{eq:prelsimpiso}, the map $\Delta$ sends $t \in \hat T$ to $(t,\dots,t) \in \hat{\tilde T}$, and  $m$ is the multiplication map. This proves the claim for elements $\tilde g^* \in \tilde T^*$
and the general case follows from \eqref{eq:prelsimp1}, \eqref{eq:prelsimpconj}, and the fact that $\Phi$ and $\Phi^*$ induce bijections on the level of twisted conjugacy classes.

We now assume that $\gamma \in G^\mf{\tilde e}(F)$ is a norm of $\tilde\delta \in \tilde G(F)$ and set $\delta=\Phi(\tilde\delta)$. For simplicity we assume that there exists an isomorphism $^LG^\mf{e} \rw \mc{G}^\mf{e}$ and upgrade the endoscopic data $\mf{e}$ and $\mf{\tilde e}$ to endoscopic triples using this isomorphism, without change in notation. We obtain the transfer factors $\Delta[\mf{\tilde e},\tilde\xi,\tilde z]$ and $\Delta[\mf{e},\xi,z]$ as in Section \ref{sec:normtfs} and claim
\begin{equation} \label{eq:prelsimptf} \Delta[\mf{\tilde e},\tilde\xi,\tilde z](\gamma,\tilde\delta) = \Delta[\mf{e},\xi,z](\gamma,\delta). \end{equation}
Before we can begin the proof of this equality, we must choose various objects and fix notation. Let $S^\mf{e} \subset G^\mf{e}$ be the centralizer of $\gamma$. There exists a maximal torus $\tilde S^* \subset \tilde G^*$ defined over $F$ and a Borel subgroup $\tilde C^* \subset G^*$ defined over $\ol{F}$ and containing $\tilde S^*$, such that both $\tilde S^*$ and $\tilde C^*$ are $\tilde\theta^*$ invariant. Furthermore, there exists an admissible isomorphism $S^\mf{e} \rw \tilde S^*_{\tilde\theta^*}$ and an element $\tilde \delta^* \in \tilde S^*$ whose image in $\tilde S^*_{\tilde\theta^*}(F)$ equals the image of $\gamma$ under the admissible isomorphism. Finally, there exists $\tilde g \in \tilde G^*$ such that $\tilde\delta = \tilde g^{-1}\tilde\xi(\tilde\delta^*)\tilde\theta(\tilde g)$. All of this comes from the assumption that $\gamma$ is a norm of $\tilde\delta$.

The maximal torus $\tilde S^*$ is of the form $S^* \times \dots \times S^*$ for some $\theta^*$-stable maximal torus $S^* \subset G^*$. In the same way, $\tilde C^*=C^* \times \dots \times C^*$ for some $\theta^*$-stable Borel subgroup $C^* \subset G^*$ containing $S^*$.
Let $\delta^* = \Phi^*(\tilde\delta^*)$. According to \eqref{eq:prelsimpconj} and \eqref{eq:prelsimp1} we have $\delta=g_n^{-1}\xi(\delta^*)\theta(g_n)$, where we have written $\tilde g = (g_1,\dots,g_n)$.

We further have $R(\tilde S^*,\tilde G^*) = R(S^*,G^*) \sqcup \dots \sqcup R(S^*,G^*)$. We fix $\theta^*$-admissible $a$-data for $R(S^*,G^*)$ \cite[\S2.2]{KS12}. Placing it in each copy of the disjoint union, we obtain $\tilde\theta^*$-admissible $a$-data for $R(\tilde S^*,\tilde G^*)$.

We have the isomorphism
\[ [\tilde S^*]^{\tilde\theta^*} = \{(s,\dots,s)| s \in [S^*]^{\theta^*} \} \cong [S^*]^{\theta^*}. \]
This isomorphism induces a bijection between the sets of restricted roots
\begin{equation} \label{eq:prelsimp3} R_\tx{res}(\tilde S^*,\tilde G^*) \cong R_\tx{res}(S^*,G^*) \end{equation}
and this bijection preserves the type $R_1,R_2,R_3$ of each root \cite[\S1.3]{KS99}. We fix $\chi$-data for these two sets of restricted roots so that it is compatible with this bijection.

Having fixed the necessary objects for the construction of $\Delta[\mf{\tilde e},\tilde\xi,\tilde z](\gamma,\tilde\delta)$ and $\Delta[\mf{e},\xi,z](\gamma,\delta)$, we begin the comparison. Consider first $\Delta_I^\tx{new}$. For this we choose $h \in G^*$ such that $hB^*h^{-1}=C^*$. Then we obtain $t \in Z^1(\Gamma,S^*)$ by formula \cite[(2.1.4)]{KS12}. If we let $\tilde h = (h,\dots,h) \in \tilde G^*$, then $\tilde h\tilde B^*\tilde h^{-1}=\tilde C^*$. Using $\tilde h$ to obtain $\tilde t \in Z^1(\Gamma,\tilde S^*)$ by the same formula, we see that $\tilde t = (t,\dots,t)$. Pairing $\tilde t$ with $s^\mf{\tilde e}=(s_1,\dots,s_n)$ under Tate-Nakayama duality gives the same result as pairing $t$ with $s_1 \dots s_n = s^\mf{e}$. This proves the equality
\[ \Delta_I^\tx{new}[\mf{\tilde e},\tilde\xi,\tilde z](\gamma,\tilde\delta) = \Delta_I^\tx{new}[\mf{e},\xi,z](\gamma,\delta). \]
Next we consider the factors $\Delta_{II}$ and $\Delta_{IV}$. The $\mf{\tilde e}$-versions of these involve terms of the form $[N_{\tilde\theta^*}\tilde\alpha](\tilde\delta^*)$, where $N_{\tilde\theta^*}\tilde\alpha$ is the sum of the members of the $\tilde\theta^*$-orbit of $\tilde\alpha$. But $[N_{\tilde\theta^*}\tilde\alpha](\tilde\delta^*) = \tilde\alpha_\tx{res}(N_{\tilde\theta^*}\tilde\delta^*)$, where now $N_{\tilde\theta^*}\tilde\delta^* \in [\tilde S^*]^{\tilde\theta^*}$ is the product of the members of the $\tilde\theta^*$-orbit of $\tilde\delta^*$, and $\tilde\alpha_\tx{res}$ is the restriction of $\tilde\alpha$ to $[\tilde S^*]^{\tilde\theta^*}$. One sees quickly that
\[ N_{\tilde\theta^*}(\tilde\delta^*) = (N_{\theta^*}(\delta^*),\dots,N_{\theta^*}(\delta^*)) \]
and it thus follows that if $\tilde\alpha_\tx{res}$ corresponds to $\alpha_\tx{res}$ under the bijection \eqref{eq:prelsimp3}, then $\tilde\alpha_\tx{res}(N_{\tilde\theta^*}(\tilde\delta^*)) = \alpha_\tx{res}(N_{\theta^*}(\delta^*))$. Recalling that this bijection preserves the types of restricted roots and using Lemmas 4.3.A and 4.5.A of \cite{KS99} we obtain the equalities
\[ \Delta_{II}[\mf{\tilde e},\tilde\xi,\tilde z](\gamma,\tilde\delta) = \Delta_{II}[\mf{e},\xi,z](\gamma,\delta), \]
and
\[ \Delta_{IV}[\mf{\tilde e},\tilde\xi,\tilde z](\gamma,\tilde\delta) = \Delta_{IV}[\mf{e},\xi,z](\gamma,\delta). \]

Finally, we consider the factor $\Delta_{III}$ constructed in Section \ref{sec:normtfs}. For this, we consider the following two dual commutative diagrams
\[ \xymatrix{
\tilde S^*\ar[d]_{1-\tilde\theta^*}\ar[r]^{p_n}&S^*\ar[d]^{1-\theta^*}&&&&\hat{\tilde S}&\hat S\ar[l]_{i_n}\\
\tilde S^*\ar[r]_m&S^*&&&&\hat{\tilde S}\ar[u]^{1-\hat{\tilde\theta}}&\hat S\ar[l]^\Delta\ar[u]_{1-\hat\theta}
} \]
Here $m$ is the multiplication map, $\Delta$ is the diagonal inclusion, $p_n$ is the projection of the $n$-th coordinate, and $i_n$ is the inclusion into the $n$-th coordinate. The left commutative diagram can be interpreted as a morphism of complexes of tori of length $2$, the complexes being given by the vertical maps, and the morphism being given by the horizontal maps. One checks that this morphism is a quasi-isomorphism. It follows that this morphism induces an isomorphism
\begin{equation} \label{eq:prelsimp5} B(F,\tilde S^* \stackrel{1-\tilde\theta^*}{\lrw} \tilde S^* ) \rw B(F,S^*\stackrel{1-\theta^*}{\lrw}S^*). \end{equation}
The element $\tx{inv}(\gamma,\tilde\delta) \in B(F,\tilde S^* \stackrel{1-\tilde\theta^*}{\lrw} \tilde S^* )$ is the class of $((\tilde g\tilde z(w)w(\tilde g)^{-1})^{-1},\tilde\delta^*)$ and this class is translated by the above isomorphism to $((g_nz(w)w(g_n)^{-1})^{-1},\delta^*)$, which is in fact a representative of $\tx{inv}(\gamma,\delta)$.

In order to prove the equality of the $\Delta_{III}$ factors for $\mf{\tilde e}$ and $\mf{e}$ it will be enough to show the following. Let $\tilde a_S : W_F \rw \hat{\tilde S}$ and $a_S : W_F \rw \hat S$ be the 1-cochains constructed at the end of Section \ref{sec:normtfs}. Then the map
\begin{equation} \label{eq:prelsimp4} H^1(W_F,\hat S \stackrel{1-\hat\theta}{\lrw} \hat S) \rw H^1(W_F,\hat{\tilde S} \stackrel{1-\hat{\tilde\theta}}{\lrw} \hat{\tilde S}) \end{equation}
induced by the right commutative diagram above, sends the class of $(a_S^{-1},s^\mf{e})$ to the class of $(\tilde a_S^{-1},s^\mf{\tilde e})$. We will in fact show that this is true already at the level of cocycles. For this we consider the following diagram
\[ \xymatrix{
&{^L\tilde G}&\\
{^LH}\ar[ur]^{\eta^\mf{\tilde e}}\ar[dr]_{\eta^\mf{e}}&&{^LG^1}\ar[lu]\ar[ld]\\
&{^LG}&\\
{^LS^H}\ar[uu]\ar@{=}[rr]&&^LS_\theta\ar[uu]
} \]
Here $\hat G^1$ is the group of fixed points of $\hat\theta$ in $\hat G$, or equivalently that group of fixed points of $\hat{\tilde\theta}$ in $\hat{\tilde G}$. The diagonal arrows on the right are given by
\begin{eqnarray*}
^LG^1&=&\{g \rtimes w| g \in \hat G^{\hat\theta}, w \in W_F \} \subset {^LG}\\
&=&\{(g,\dots,g) \rtimes w| g \in \hat G^{\hat\theta},w \in W_F \} \subset {^L\tilde G}.
\end{eqnarray*}
We recall that the diagonal arrows on the left are given by the condition that $\eta^\mf{e}(h \rtimes w)=g \rtimes w \in {^LG}$ if and only if
\[ \eta^\mf{\tilde e}(h \rtimes w) = (g,s_1^{-1}\cdot g\cdot ws_1,(s_1s_2)^{-1} \cdot g\cdot w(s_1s_2),\dots), \]
where $s^\mf{\tilde e}=(s_1,\dots,s_n)$. Writing $\sigma_S(w)$ for the action of $W_F$ on $\hat S$, we then compute
\[ \tilde a_S(w) = (a_S(w),(s_1^{-1}\sigma_S(w)s_1)\cdot a_S(w),((s_1s_2)^{-1}\sigma_S(w)(s_1s_2))\cdot a_S(w),\dots). \]
The element $(\tilde a_S^{-1},s^\mf{\tilde e})$ of $Z^1(W_F,\hat{\tilde S} \stackrel{1-\hat{\tilde\theta}}{\lrw} \hat{\tilde S})$ is thus given by
\[ [(a_S(w),(s_1^{-1}\sigma_S(w)s_1)\cdot a_S(w),((s_1s_2)^{-1}\sigma_S(w)(s_1s_2))\cdot a_S(w),\dots)^{-1},(s_1,\dots,s_n)]. \]
On the other hand, the image of the element $(a_S^{-1},s^\mf{e})$ under \eqref{eq:prelsimp4} is given by
\[ [(a_S(w),a_S(w),\dots,a_S(w))^{-1},(1,1,\dots,1,s_1\cdot s_2 \dots s_n)]. \]
These two cocycles are cohomologous via the coboundary $(1,s_1,s_1s_2,\dots) \in \hat{\tilde S}$. We have thus proved the following.
\begin{pro} \label{pro:simptfs} Given an endoscopic triple $\mf{\tilde e}$ for $(\tilde G,\tilde\theta)$ we obtain an endoscopic triple $\mf{e}$ for $(G,\theta)$ with the property $G^\mf{\tilde e}=G^\mf{e}$. Then $\gamma \in G^\mf{\tilde e}(F)$ is a norm of $\tilde\delta \in \tilde G(F)$ if and only if $\gamma \in G^\mf{e}$ is a norm of $\delta = \Phi(\tilde\delta) \in G(F)$. Furthermore,
\[ \Delta[\mf{\tilde e},\tilde\xi,\tilde z](\gamma,\tilde\delta) = \Delta[\mf{e},\xi,z](\gamma,\delta). \]
\end{pro}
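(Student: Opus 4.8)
The proof will mainly consist of collecting the ingredients that have already been assembled in the discussion preceding the statement. First I would record the existence of the endoscopic triple $\mf{e}$: it has been constructed explicitly, $G^{\mf{\tilde e}}=G^{\mf{e}}$ holds by fiat, and it remains to verify the axioms of Section \ref{subsub:endoscopic-triples}. The element $s^{\mf{e}}=s_1\cdots s_n$ is $\hat\theta$-semisimple by the same torus/Borel argument that was noted for $s^{\mf{\tilde e}}\in\hat{\tilde G}$, so $\tx{Int}(s^{\mf{e}})\circ\hat\theta$ preserves a Borel--torus pair; the identity $\eta^{\mf{e}}(\hat G^{\mf{e}})=(\hat G^{s^{\mf{e}}\circ\hat\theta})^\circ$ follows from the explicit isomorphism \eqref{eq:prelsimpiso}; and the intertwining condition $\tx{Int}(s^{\mf{e}})\circ{}^L\theta\circ\eta^{\mf{e}}=\eta^{\mf{e}}$ is obtained by pushing the corresponding identity for $\mf{\tilde e}$ forward along the projection ${}^L\tilde G\to{}^LG$, $(g_1,\dots,g_n)\rtimes w\mapsto g_1\rtimes w$. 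Ellipticity, when present, transports to $\mf{e}$ the same way.

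Next I would treat the norm correspondence. The two commutative squares in \eqref{eq:prelsimp2} are dual to one another and show that, for $\tilde\delta^*\in\tilde T^*$, the admissible-isomorphism data for $(\mf{\tilde e},\tilde G^*)$ and $(\mf{e},G^*)$ match under $\Phi^*$ together with the isomorphism $[\tilde T^*]^{\tilde\theta^*}\cong[T^*]^{\theta^*}$; hence $\gamma$ is a norm of $\tilde\delta^*$ if and only if it is a norm of $\delta^*=\Phi^*(\tilde\delta^*)$. The general statement for arbitrary $\tilde\delta\in\tilde G(F)$ then follows from \eqref{eq:prelsimp1}, \eqref{eq:prelsimpconj}, and the already-established bijections induced by $\Phi,\Phi^*$ between twisted conjugacy classes, together with their compatibility with twisted semisimplicity and strong regularity.

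Finally, for the transfer factor identity I would expand both sides as $\epsilon(\tfrac12,V,\psi_F)\,\Delta_I^{\tx{new}}(\cdot)^{-1}\Delta_{II}(\cdot)\Delta_{III}(\cdot)^{-1}\Delta_{IV}(\cdot)$ and match the five constituents. The virtual representation $V$ is unchanged: $T^{\mf{\tilde e}}=T^{\mf{e}}$ since $G^{\mf{\tilde e}}=G^{\mf{e}}$, and $X^*(\tilde T^*)^{\tilde\theta^*}\otimes\C\cong X^*(T^*)^{\theta^*}\otimes\C$ as $W_F$-modules by \eqref{eq:prelsimp2}, so the $\epsilon$-factors agree. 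The equalities $\Delta_I^{\tx{new}}[\mf{\tilde e}]=\Delta_I^{\tx{new}}[\mf{e}]$, $\Delta_{II}[\mf{\tilde e}]=\Delta_{II}[\mf{e}]$, $\Delta_{IV}[\mf{\tilde e}]=\Delta_{IV}[\mf{e}]$ are precisely what the diagonal choices of $h$, of $a$-data, and of $\chi$-data compatible with \eqref{eq:prelsimp3}, together with the computation $N_{\tilde\theta^*}(\tilde\delta^*)=(N_{\theta^*}(\delta^*),\dots,N_{\theta^*}(\delta^*))$ and Lemmas 4.3.A and 4.5.A of \cite{KS99}, produce. For $\Delta_{III}$ one uses that the quasi-isomorphism \eqref{eq:prelsimp5} carries $\tx{inv}(\gamma,\tilde\delta)$ to $\tx{inv}(\gamma,\delta)$, and that the map \eqref{eq:prelsimp4} carries $(a_S^{-1},s^{\mf{e}})$ to a $1$-cocycle cohomologous to $(\tilde a_S^{-1},s^{\mf{\tilde e}})$ via the coboundary $(1,s_1,s_1s_2,\dots)\in\hat{\tilde S}$.

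The step I expect to be the real obstacle is the $\Delta_{III}$ comparison. Since $\Delta_{III}$ is defined as the value of a fixed pairing on fixed cochains, the matching has to be checked at the level of cocycles rather than cohomology classes; this forces one to track how the $W_F$-action on $\hat S$ gets conjugated by the partial products $s_1\cdots s_j$ in the formula for $\tilde a_S$, and then to verify that the resulting cocycle differs from the image of $(a_S^{-1},s^{\mf{e}})$ exactly by the displayed coboundary. A secondary, largely bookkeeping, point is arranging all auxiliary data---the tori $\tilde S^*=(S^*)^n$ and $\tilde C^*=(C^*)^n$, the admissible isomorphism, the element $\tilde\delta^*$, and the $a$- and $\chi$-data---coherently in ``diagonal'' form so that the descent maps $\Phi^*,\Phi$ interact cleanly with them; this is what makes all of the above reductions available in the first place.
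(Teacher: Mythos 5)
Your proposal is correct and follows exactly the paper's own argument: the proposition is the capstone of the preceding discussion, and the paper itself proves it by the norm comparison via the dual diagrams, term-by-term matching of the transfer-factor constituents with diagonal choices of auxiliary data, and the cocycle-level $\Delta_{III}$ comparison settled by the coboundary $(1,s_1,s_1s_2,\dots)\in\hat{\tilde S}$. You have also correctly identified the $\Delta_{III}$ step as the one requiring work at the level of cocycles rather than cohomology classes, which is precisely where the paper invests its effort.
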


We will now compare twisted orbital integrals. Let $\tilde\delta \in \tilde G(F)$ be  $\tilde\theta$-semi-simple and let $\delta = \Phi(\tilde\delta) \in G(F)$. Write $I_{\tilde \delta}$ for the centralizer $\tx{Cent}(\tilde \delta\tilde\theta,\tilde G)$ and use the analogous notation $I_\delta$ for $\delta$. Just as in \eqref{eq:prelsimpiso} we have the mutually inverse isomorphisms
\begin{eqnarray} \label{eq:prelsimpiso2}
I_{\tilde\delta}&\rw&I_\delta\\
(h_1,\dots,h_n)&\mapsto&h_n \nonumber \\
(\delta_1\theta(h_n)\delta_1^{-1},\delta_2\delta_1\theta(h_n)(\delta_2\delta_1)^{-1},\dots,h_n)&\mapsfrom&h_n. \nonumber
\end{eqnarray}
In particular we see that $\delta$ is strongly $\theta$-regular if and only if $\tilde\delta$ is strongly $\tilde\theta$-regular

We fix a Haar measure $dg$ on $G(F)$ and endow $\tilde G(F)$ with the product Haar measure $d\tilde g = dg \times \dots \times dg$. For two functions $f_1,f_2$ on $G(F)$, we define their convolution as
\[ (f_1 * f_2)(g) = \int f_1(x)f_2(gx^{-1})dx. \]
Given functions $f_1,\dots,f_n$ on $G(F)$ we then have
\[ (f_1* \dots * f_n)(x) = \int f_1(x_1)f_2(x_2x_1^{-1})\dots f_{n-1}(x_{n-1}x_{n-2}^{-1})f_n(gx_{n-1}^{-1}) dx_1\dots dx_{n-1}. \]
Now let $\tilde f = f_1 \otimes \dots \otimes f_n$ be a function on $\tilde G(F)$ and consider the orbital integral
\[ \int_{\tilde G(F)/I_{\tilde\delta}} \tilde f(\tilde h\tilde\delta\tilde\theta(\tilde h^{-1})) d\tilde g/di \]
for a fixed Haar measure $di$ on $I_{\tilde\delta}(F)$. A direct computation shows that this orbital integral is equal to the orbital integral
\[ \int_{G(F)/I_\delta(F)} (f_1*\dots*f_n)(h_n\delta\theta(h_n^{-1})) dh_n/di \]
where $di$ has been transported to $I_\delta(F)$ via the isomorphism \eqref{eq:prelsimpiso2}. Combining this equation with Proposition \ref{pro:simptfs} we arrive at
\begin{cor} If the functions $f' \in \mc{H}(G^\mf{\tilde e})$ and $\tilde f = f_1 \times \dots \times f_n \in \mc{H}(\tilde G(F))$ have $\Delta[\mf{\tilde e},\tilde\xi,\tilde z]$-matching orbital integrals, then the functions $f' \in \mc{H}(G^\mf{e})$ and $f_1 * \dots *f_n \in \mc{H}(G(F))$ have $\Delta[\mf{e},\xi,z]$-matching orbital integrals.
\end{cor}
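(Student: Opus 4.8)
The plan is to assemble the three ingredients that have just been put in place: the equality of transfer factors $\Delta[\mf{\tilde e},\tilde\xi,\tilde z](\gamma,\tilde\delta) = \Delta[\mf{e},\xi,z](\gamma,\delta)$ from Proposition \ref{pro:simptfs}, the bijection induced by $\Phi$ between $\tilde\theta$-twisted conjugacy classes in $\tilde G(F)$ and $\theta$-twisted conjugacy classes in $G(F)$ (together with its compatibility with norms and with strong regularity), and the orbital-integral identity established immediately above the statement. None of these requires new input; the corollary is essentially a translation of the matching condition along these identifications.

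First I would unwind the hypothesis. Saying that $f'$ and $\tilde f$ have $\Delta[\mf{\tilde e},\tilde\xi,\tilde z]$-matching orbital integrals means that for every strongly $G$-regular semisimple $\gamma \in G^\mf{\tilde e}(F)$,
\[ SO^{G^\mf{\tilde e}}_\gamma(f') = \sum_{\tilde\delta} \Delta[\mf{\tilde e},\tilde\xi,\tilde z](\gamma,\tilde\delta)\, O^{\tilde\theta}_{\tilde\delta}(\tilde f), \]
where $\tilde\delta$ runs over the $\tilde\theta$-twisted conjugacy classes of $\tilde\theta$-strongly regular $\tilde\theta$-semisimple elements of $\tilde G(F)$, the summand vanishing unless $\gamma$ is a norm of $\tilde\delta$, and the whole right-hand side being understood to vanish if no such $\tilde\delta$ exists. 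Since the construction of $\mf{e}$ from $\mf{\tilde e}$ sets $G^\mf{e}=G^\mf{\tilde e}$ as $F$-groups and the function $f'$ is unchanged, the left-hand side is literally $SO^{G^\mf{e}}_\gamma(f')$ once one Haar measure is fixed on the common torus $S^\mf{e}$.

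Next I would reindex the right-hand side via $\Phi$. Writing $\delta := \Phi(\tilde\delta)$, the discussion preceding the statement shows $\tilde\delta \mapsto \delta$ is a bijection on twisted conjugacy classes carrying $\tilde\theta$-strongly regular $\tilde\theta$-semisimple classes onto $\theta$-strongly regular $\theta$-semisimple classes, Proposition \ref{pro:simptfs} gives that $\gamma$ is a norm of $\tilde\delta$ iff it is a norm of $\delta$ and then $\Delta[\mf{\tilde e},\tilde\xi,\tilde z](\gamma,\tilde\delta) = \Delta[\mf{e},\xi,z](\gamma,\delta)$, and the orbital-integral computation above shows $O^{\tilde\theta}_{\tilde\delta}(\tilde f) = O^{\theta}_{\delta}(f_1 * \cdots * f_n)$ provided the measure $di$ on $I_{\tilde\delta}(F)$ is transported to $I_\delta(F)$ through the isomorphism \eqref{eq:prelsimpiso2} and $\tilde G(F)$ carries the product measure $d\tilde g = dg\times\cdots\times dg$. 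Substituting these into the displayed identity yields
\[ SO^{G^\mf{e}}_\gamma(f') = \sum_{\delta} \Delta[\mf{e},\xi,z](\gamma,\delta)\, O^{\theta}_{\delta}(f_1 * \cdots * f_n), \]
the sum now over $\theta$-twisted conjugacy classes of $\theta$-strongly regular $\theta$-semisimple $\delta\in G(F)$, which is exactly the asserted matching; one notes in passing that $f_1 * \cdots * f_n \in \mc{H}(G(F))$ because convolution preserves smoothness and compact support.

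The only genuine obstacle is bookkeeping of Haar measures: one must pin down the measure on $I_\delta(F)$ as the image of a chosen measure on $I_{\tilde\delta}(F)$ under \eqref{eq:prelsimpiso2}, and correspondingly use the product measure on $\tilde G(F)$, so that the orbital-integral identity holds with matching normalizations. The restriction to strongly regular $\gamma$ is harmless since matching there is the definition (and propagates to general $\theta$-semisimple elements by the usual continuity arguments if desired). I do not expect any deeper difficulty beyond carefully chaining the already-proved identifications.
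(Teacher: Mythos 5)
Your proof is correct and is essentially the paper's own argument: the paper simply states "Combining this equation with Proposition \ref{pro:simptfs} we arrive at" the corollary, and your write-up supplies exactly the bookkeeping it elides — unwinding the matching condition, reindexing the twisted orbital sum via the $\Phi$-bijection on twisted conjugacy classes, invoking the norm-compatibility and transfer-factor equality of Proposition \ref{pro:simptfs}, and the orbital-integral identity with the compatible choice of Haar measures.
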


Our final task in this section is to compare twisted characters. An irreducible admissible representation $\tilde\pi$ of $\tilde G(F)$ is $\tilde\theta$-stable if and only if it is of the form $\tilde\pi = \pi \otimes \dots \otimes \pi$ for an irreducible admissible $\theta$-stable representation $\pi$ of $G(F)$. We write $V_\pi$ for the space on which $\pi$ acts and let $\tilde\pi$ act on $V_\pi^{\otimes n}$. Fix an isomorphism $s : (\pi\circ\theta^{-1},V_\pi) \rw (\pi,V_\pi)$ and define $\tilde s : (\tilde\pi \circ\tilde\theta^{-1},V_\pi^{\otimes n}) \rw (\tilde\pi,V_\pi^{\otimes n})$ by $\tilde s(v_1\otimes \dots \otimes v_n) = (s(v_n) \otimes v_1 \otimes \dots \otimes v_{n-1})$.

\begin{lem} \label{lem:traceprod} Let $f_1,\dots,f_n$ be smooth compactly supported functions of $G(F)$ and let $\tilde f = f_1 \otimes\dots\otimes f_n$. Then \[ \tx{tr}(\tilde\pi(\tilde f) \circ \tilde s) = \tx{tr}(\pi(f_1 * \dots *f_n)\circ s). \]
\end{lem}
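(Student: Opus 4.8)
The plan is to unwind both traces directly from the definitions. First I would choose bases and write out $\tilde\pi(\tilde f)\circ\tilde s$ explicitly as an operator on $V_\pi^{\otimes n}$. Since $\tilde f = f_1\otimes\dots\otimes f_n$ and $\tilde\pi = \pi\otimes\dots\otimes\pi$, we have
\[ \tilde\pi(\tilde f) = \int_{\tilde G(F)} \tilde f(\tilde g)\,\tilde\pi(\tilde g)\,d\tilde g = \pi(f_1)\otimes\dots\otimes\pi(f_n), \]
using the product Haar measure $d\tilde g = dg\times\dots\times dg$. Composing with $\tilde s$, which is the operator $v_1\otimes\dots\otimes v_n \mapsto s(v_n)\otimes v_1\otimes\dots\otimes v_{n-1}$, we obtain the operator $A := (\pi(f_1)\otimes\dots\otimes\pi(f_n))\circ\tilde s$, which sends $v_1\otimes\dots\otimes v_n$ to $\pi(f_1)s(v_n)\otimes \pi(f_2)v_1 \otimes\dots\otimes\pi(f_n)v_{n-1}$.

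Next I would compute $\tx{tr}(A)$ as a "cyclic trace". Fixing a basis $(e_a)_a$ of $V_\pi$ (or working formally with the integral kernel), one has $\tx{tr}(A) = \sum_{a_1,\dots,a_n} \langle A(e_{a_1}\otimes\dots\otimes e_{a_n}), e_{a_1}^\vee\otimes\dots\otimes e_{a_n}^\vee\rangle$, where $(e_a^\vee)$ is the dual basis. Plugging in the formula for $A$, this becomes
\[ \sum \langle \pi(f_1)s(e_{a_n}), e_{a_1}^\vee\rangle \langle \pi(f_2)e_{a_1}, e_{a_2}^\vee\rangle \cdots \langle \pi(f_n)e_{a_{n-1}}, e_{a_n}^\vee\rangle. \]
Summing over $a_1,\dots,a_{n-1}$ first collapses the intermediate factors into the composition $\pi(f_n)\circ\pi(f_{n-1})\circ\dots\circ\pi(f_2)\circ\pi(f_1)\circ s$ evaluated between $e_{a_n}$ and $e_{a_n}^\vee$; then summing over $a_n$ yields $\tx{tr}\big(\pi(f_n)\pi(f_{n-1})\cdots\pi(f_1)\,s\big)$. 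Finally I would identify the composition $\pi(f_n)\cdots\pi(f_1)$ with $\pi(f_1 * \dots * f_n)$: this is the standard fact that the group algebra action of $\mathcal H(G(F))$ is an algebra homomorphism for convolution, i.e. $\pi(\phi_1)\pi(\phi_2) = \pi(\phi_1 * \phi_2)$ with the convolution convention $(\phi_1 * \phi_2)(g) = \int \phi_1(x)\phi_2(gx^{-1})\,dx$ fixed in this section — here one must be careful that the ordering matches, so that $\pi(f_n)\cdots\pi(f_1) = \pi(f_1 * f_2 * \dots * f_n)$ exactly as defined in the iterated convolution formula given just above the lemma. That gives $\tx{tr}(A) = \tx{tr}(\pi(f_1*\dots*f_n)\circ s)$, as claimed.

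The main obstacle is purely bookkeeping: getting the order of composition and the placement of $s$ right, and matching it against the particular convolution convention adopted in this section (the one with $\phi_1(x)\phi_2(gx^{-1})$ and the iterated version displayed before the lemma). A clean way to avoid index-chasing is to argue more structurally: the permutation implicit in $\tilde s$ is an $n$-cycle, and the trace of a composition $(B_1\otimes\dots\otimes B_n)\circ(\text{cyclic shift twisted by }s)$ on $V^{\otimes n}$ equals the trace on $V$ of $B_1\circ s\circ B_n\circ B_{n-1}\circ\dots\circ B_2$ — a general and standard identity about traces of "twisted cyclic" operators on tensor powers. Applying it with $B_i = \pi(f_i)$ and then invoking the algebra homomorphism property finishes the proof. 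No convergence issues arise since all $f_i$ are smooth and compactly supported and $\pi$ is admissible, so each $\pi(f_i)$ has finite rank and all traces are finite sums.
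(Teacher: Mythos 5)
Your proof is correct and is essentially the same computation as the paper's: both reduce to the identity $\tx{tr}((\phi_1\otimes\cdots\otimes\phi_n)\circ\tilde s)=\tx{tr}(\phi_n\cdots\phi_1\circ s)$ for finite-rank operators $\phi_i=\pi(f_i)$, which you verify by expanding the trace in a basis and the paper by reducing, via multilinearity, to rank-one operators. One small internal inconsistency worth fixing: under the paper's convolution convention $(\phi_1*\phi_2)(g)=\int\phi_1(x)\phi_2(gx^{-1})\,dx$ one has $\pi(\phi_1*\phi_2)=\pi(\phi_2)\pi(\phi_1)$, \emph{not} $\pi(\phi_1)\pi(\phi_2)$ as you write in passing — your subsequent statement $\pi(f_n)\cdots\pi(f_1)=\pi(f_1*\cdots*f_n)$ is the correct one (and is exactly what the paper records as "a direct computation reveals $\pi(f_1*\cdots*f_n)=\phi_n\circ\cdots\circ\phi_1$"), so the earlier line is a typo rather than a gap.
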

To prove this, let $\phi_i = \pi_i(f_i) \in \tx{End}(V_\pi)$. Then $\tilde\pi(\tilde f) = \phi_1 \otimes \dots \otimes \phi_n \in \tx{End}(V_\pi^{\otimes n})$. On the other hand, a direct computation reveals $\pi(f_1* \dots *f_n)=\phi_n \circ \dots \circ\phi_1 \in \tx{End}(V_\pi)$. We are thus showing that given any Hilbert space $V$, any automorphism $s \in \tx{Aut}(V)$ and any trace class operators $\phi_1,\dots,\phi_n \in \tx{End}(V)$, the equality
\[ \tx{tr}((\phi_1 \otimes \dots \otimes \phi_n) \circ \tilde s|V^{\otimes n}) = \tx{tr}(\phi_n \circ\dots\circ\phi_1\circ s|V) \]
holds, where $\tilde s \in \tx{Aut}(V^{\otimes n})$ is defined as above. We first reduce to the case where each $\phi_i$ has finite rank, because a general trace class operator is a limit of finite rank operators. Now that $\phi_i$ is of finite rank, it is given by a finite sum $\sum_j \lambda_i^j \otimes v_i^j$ for $\lambda_i^j \in \tx{Hom}(V,\C)$ and $v_i^j \in V$. Since the equation we are proving is $n$-linear in $(\phi_1,\dots,\phi_n)$, we reduce to the case $\phi_i = \lambda_i \otimes v_i$. But then $\phi_n \circ \dots \circ\phi_1 \circ s = \lambda_2(v_1)\lambda_3(v_2)\dots\lambda_n(v_{n-1}) \cdot (\lambda_1\circ s \otimes v_n)$ and its trace is equal to $\lambda_2(v_1)\dots\lambda_n(v_{n-1}) \lambda_1(s(v_n))$. On the other hand, $(\phi_1 \otimes \dots \otimes \phi_n) \circ \tilde s = (\lambda_2 \otimes \dots \otimes \lambda_n \otimes \lambda_1 \circ s) \otimes (v_1 \otimes \dots \otimes  v_n)$ and its trace is again $\lambda_2(v_1)\dots \lambda_n(v_{n-1}) \lambda_1(s(v_n))$. This completes the proof of the lemma.

\subsubsection{Endoscopic data in the case of unitary groups} \label{subsub:endo-unitary}

  Here we will explicate the sets equivalence classes $\cE_\el(G^*)$ and $\cE^\tx{w}_\el(G^*)$ of elliptic endoscopic triples (\S\ref{subsub:endoscopic-triples}) in three cases below, cf. \cite[4.6,4.7]{Rog90}.
 \begin{itemize}
   \item $G^*=U_{E/F}(N)$ and $E/F$ is an extension of fields,
   \item $G^*=U_{E/F}(N)$ and $E=F\times F$,
   \item $G^*=\tilde G_{E/F}(N)$ and $E/F$ is an extension of fields. %
 \end{itemize}

We begin with a short discussion on characters. Let $E/F$ be a quadratic extension of local or global fields of characteristic zero. Set $C_E:=E^\times$ and $C_F:=F^\times$ in the local case and $C_E:=\A_E^\times/E^\times$ and $C_F:=\A_F^\times/F^\times$ in the global case. Note that $C_F\subset C_E$ and that there is a norm map $N_{E/F}:C_E\ra C_F$. We shall denote by $\omega_{E/F}:C_F/N_{E/F}C_E\ra \{\pm 1\}$
the character associated to the extension $E/F$ via class field theory.
Define the set of continuous characters
\[
	\mathcal{Z}_E = \{ \chi :  C_E \rightarrow \C^\times \textrm{ unitary}, \chi \circ c = \chi^{-1} \}.
\]
We have a partition
\[
	\mathcal{Z}_E = \mathcal{Z}^+_E \sqcup \mathcal{Z}_E^-
\]
where
\[
	\mathcal{Z}_E^+ = \{ \chi \in \mathcal{Z}_E : \chi|_{C_F} = 1 \} \textrm{ and }
	\mathcal{Z}_E^- = \{ \chi \in \mathcal{Z}_E : \chi|_{C_F} = \omega_{E/F} \}.
\]
Let  $\chi_{\kappa} \in \mathcal{Z}_E^\kappa$ for $\kappa = \pm1$.
We shall often implicitly view these characters as characters of the Weil group $W_E$ via class field theory.

We now discuss the endoscopic groups of $G^*=U_{E/F}(N)$, where $E$ is a field. Then $\cE^\tx{w}_{\el}(G^*)$ is identified with the following set of triples:
  $$(G^\fke,s^\fke,\eta^\fke)=\left(U_{E/F}(N_1)\times U_{E/F}(N_2),\left(
                                       \begin{array}{cc}
                                         I_{N_1} & 0 \\
                                         0 & -I_{N_2} \\
                                       \end{array}
                                     \right)
  ,\zeta_{(\chi_1,\chi_2)}\right),$$
  as the pair $(N_1,N_2)$ varies subject to the condition that $N_1\ge N_2\ge 0$, $N=N_1+N_2$. For each $(N_1,N_2)$, we choose any $\chi_i\in \cZ_E^{\kappa_i}$ with $\kappa_i=(-1)^{N-N_i}$ for $i=1,2$ and define the $L$-homomorphism

\begin{eqnarray*}
\zeta_{(\chi_1,\chi_2)} : {}^L (U_{E/F}(N_1) \times U_{E/F}(N_2)) &\rightarrow& {}^L U_{E/F}(N)		\\
				(g_1,g_2) \rtimes 1 &\mapsto& \mathrm{diag}(g_1,g_2) \rtimes 1	\\
				(I_{N_1},I_{N_2}) \rtimes w &\mapsto&
							 \mathrm{diag}(\chi_{\kappa_1}(w) I_{N_1}, \chi_{\kappa_2}(w) I_{N_2}) \rtimes w
								\text{ if } w \in W_E	\\
				(I_{N_1},I_{N_2}) \rtimes w_c &\mapsto&	\mathrm{diag}(\kappa_1 \Phi_{N_1},\kappa_2 \Phi_{N_2}) \cdot \Phi_N^{-1} \rtimes w_c,
\end{eqnarray*}
where $W_F = W_E \cup W_E w_c$. We note that the $\widehat{U}_{E/F}(N)$-conjugacy class of $\eta_{\chi_\kappa}$ is independent of the choice of $w_c$.

The choice of $\chi_i$ does not change the isomorphism class of the endoscopic triple. The set $\cE^\tx{w}_\el(G^*)$ is in bijection with the set $\{N_1\ge N_2\ge 0,~N=N_1+N_2\}$. The outer automorphism group $\Out_{G^*}(\fke)$ %
is trivial if $N_1\neq N_2$ and isomorphic to $\Z/2$ if $N_1=N_2$, in which case the nontrivial automorphism swaps $U_{E/F}(N_1)$ and $U_{E/F}(N_2)$. The set $\cE_\el(G^*)$ has a similar description. It is a double cover of $\cE^\tx{w}_{\el}(G^*)$, with distinct elements $(G^\mf{e},\pm s^\mf{e},\eta^\mf{e})$ mapping to the same element in $\cE^\tx{w}_{\el}(G^*)$.

The second case to consider is $G^*=U_{E/F}(N)$ with $E=F\times F$ so that $G^*$ is isomorphic to $\GL(N,F)$. Then it is elementary to verify that $\cE^\tx{w}_{\el}(G^*)=\{(G^*,1,\id)\}$ and $\cE_{\el}(G^*)=\{(G^*,z,\id)|z \in \C^\times\}$.

Finally let $G^*=\tilde G_{E/F}(N)$ where $E$ is a field. We first describe the \emph{simple} endoscopic triples. They are given by $(U_{E/F}(N),1,\eta_\chi)$, where $\eta_\chi$ is the following $L$-embedding
\begin{eqnarray*}
\eta_{\chi_\kappa} : {}^L U_{E/F}(N) &\rightarrow&  {}^L G_{E/F}(N)		\\
				g \rtimes 1 &\mapsto&	(g , J_N{^tg^{-1}}J_N^{-1}) \rtimes 1			\\
				I_N \rtimes w &\mapsto& (\chi_\kappa(w) I_N, \chi^{-1}_\kappa(w) I_N) \rtimes w \textrm{ if } w \in W_E \\
				I_N \rtimes w_c &\mapsto& (I_N, \kappa I_N) \rtimes w_c
 \end{eqnarray*}
if $W_F = W_E \cup W_E w_c$.  We note that the
$\widehat{G}_{E/F}(N)$-conjugacy class of $\eta_{\chi_\kappa}$ is independent of the choice of $w_c$.

We note that when $\kappa = 1$ (resp. $-1$), this $L$-homomorphism is referred to as \emph{base change} (resp. $\emph{twisted base change}$).  If $\chi$ is chosen to be the identity character, then this $L$-homomorphism is referred to as \emph{standard base change}. More generally, we will say that the \emph{parity} of the simple triple $(U_{E/F}(N),1,\eta_\chi)$ is $(-1)^{N-1}\kappa$.

We now turn to the set of elliptic endoscopic triples for $\widehat{G}_{E/F}(N)$. A set of representatives for it is given by
  $$\left(U_{E/F}(N_1)\times U_{E/F}(N_2),\left(
                                       \begin{array}{cc}
                                         I_{N_1} & 0 \\
                                         0 & -I_{N_2} \\
                                       \end{array}
                                     \right)
  ,\eta_{(\chi_1,\chi_2)}\right),$$
  where $(N_1,N_2)$ varies over the set satisfying $N_1\ge N_2\ge 0$ and $N=N_1+N_2$, and for each $(N_1,N_2)$, we choose any $\chi_i\in \cZ_E^{\kappa_i}$, $i=1,2$. The $L$-homomorphism
\[
	\eta_{\underline{\chi}} : {}^L (U_{E/F}(N_1) \times U_{E/F}(N_2)) \rightarrow {}^L  G_{E/F}(N)
\]
is given by composing the product of the $L$-homomorphisms
\[
	\eta_{\kappa_1} \times \eta_{\kappa_2} : {}^L (U_{E/F}(N_1) \times U_{E/F}(N_2)) \rightarrow
	 {}^L  (G_{E/F}(N_1)\times G_{E/F}(N_2))
\]
with the natural diagonal $L$-embedding
\begin{eqnarray*}
	 {}^L  (G_{E/F}(N_1)\times G_{E/F}(N_2)) &\rightarrow& {}^L G_{E/F}(N)	\\
    (g_1,g_2) \times (h_1,h_2) \rtimes w&\mapsto&(\mathrm{diag}(g_1,h_1) \times \mathrm{diag}(g_2,h_2)) \rtimes w.
\end{eqnarray*}
There are generally two choices of $(\kappa_1,\kappa_2)$:
  $$(\kappa_1,\kappa_2)=\left\{ \begin{array}{ll}
                                        (1,-1)~\mbox{or}~(-1,1) & \mbox{if}~ N_1\equiv N_2~\mbox{mod}~2, \\
                                         (1,1)~\mbox{or}~(-1,-1) & \mbox{if}~ N_1 \,\not{\!\!\equiv}\, N_2~\mbox{mod}~2,
                                       \end{array}\right.$$
  The only nontrivial isomorphism occurs between $(U(N_1)\times U(N_1),(1,-1))$ and $(U(N_1)\times U(N_1),(-1,1))$. In other words, $\cE^\tx{w}_{\el}(G^*)$ is in bijection with the set $\{(N_1,N_2,\kappa_1,\kappa_2)\}$, where $N_1\ge N_2\ge 0$, $N=N_1+N_2$, and $(\kappa_1,\kappa_2)$ is as above except that only one of the two choices of $(\kappa_1,\kappa_2)$ is taken if $N_1=N_2$. The set $\cE_{\el}(G^*)$ is again a double cover of $\cE^\tx{w}_{\el}(G^*)$.

  For later discussions it is useful to fix a choice of characters $\chi_+\in \cZ^+_E$ and $\chi_-\in \cZ^-_E$ and also to fix representatives for endoscopic triples such that each character $\chi_i\in \cZ_E^{\kappa_i}$ above is $\chi_+$ if $\kappa_i=1$ and $\chi_-$ if $\kappa_i=-1$.

\subsection{Local parameters}\label{sub:local-param}

  In this subsection we will recall some of the well-known definitions about local parameters and representations to set up some notation.

\subsubsection{L-parameters and A-parameters}\label{subsub:L-param-A-param}

   Let $F$ be a local field. Define the local Langlands group
  $$L_F:=\left\{ \begin{array}{ll} W_F, & \mbox{if~$F$~is~archimedean},\\
W_F\times \SU(2), & \mbox{if~$F$~is~non-archimedean},\end{array} \right.$$
  as a topological group. The $L$-group $^L G=\hat G\rtimes W_F$ is also a topological group with complex topology on $\hat G$ and the usual topology on $W_F$.
  A (local) \textbf{$L$-parameter}, or a Langlands parameter, for a connected reductive group $G$ over $F$ is a continuous homomorphism
  $$\phi:L_F\ra {}^L G$$
  commuting with the canonical projections of $L_F$ and $^L G$ onto $W_F$ such that $\phi$ maps semisimple elements to semisimple elements, cf. \cite[\S8.2]{Bor79}. Two $L$-parameters are considered equivalent if they are conjugate under an element of $\hat G$. Henceforth $\Phi(G)$ (or $\Phi(G,F)$ if the base field is to be emphasized) will denote the set of equivalence classes of $L$-parameters. By abuse of notation we write a parameter to mean an equivalence class thereof. When $F$ is a completion of a global field $\dot F$ at a place $v$, then we often write $\Phi_v(G)$ for $\Phi(G,F)=\Phi(G,\dot F_v)$. We say that $\phi\in \Phi(G)$ is \textbf{bounded} if the image of $\phi$ in ${}^L G$ projects onto a relatively compact subset of $\hat G$ and \textbf{discrete} (or \textbf{square-integrable}) if the image does not lie in any proper parabolic subgroup of $^L G$. Define $\Phi_2(G)$ (resp. $\Phi_{\bdd}(G)$) to be the subset of discrete (resp. bounded) parameters, and put $\Phi_{2,\bdd}(G):=\Phi_2(G)\cap \Phi_{\bdd}(G)$.
  We associate some complex Lie groups to $\phi\in \Phi(G)$. The general formalism of \S\ref{subsub:centlevi} applies to $L=L_F$ (with $L$-groups in the Weil form), yielding the definition of $S_\phi$, which is the centralizer group of $\phi$ in $\hat G$, as well as $\srad_\phi$ and $S^\natural_\phi$. We also define
   $\ol{S}_\phi:=S_\phi/Z(\hat G)^\Gamma$, $\cS_\phi:=\pi_0(\ol{S}_\phi)$, and $\ol{\cS}_\phi:=\pi_0(\ol{S}_\phi)=S_\phi/S^0_\phi Z(\hat G)^\Gamma$.
  In particular $S_\phi=S_\phi(G)$ and $\ol{S}_\phi$ are (possibly disconnected) complex reductive groups \cite{Kot84} and
  since two equivalent parameters differ by an inner automorphism of $\hat G$, we see that an element in each of the groups $S_\phi$, $\srad_\phi$, $S^\natural_\phi$, $\ol S_\phi$, $\cS_\phi$, and $\ol \cS_\phi$ is well-defined up to an inner automorphism of $S_\phi$.

  If $G$ is not quasi-split it is expected that the representations of $G$ are classified by relevant parameters, which were introduced in a little more general setting, cf. Definition \ref{def:Xi-relevant-general}, but are recalled below. Suppose that $(G,\xi)$ is an inner twist of a quasi-split group $G^*$. Then we have an isomorphism ${}^L G \simeq {}^L G^*$ canonical up to an inner automorphism. In particular there is a canonical identification $\Phi(G)=\Phi(G^*)$.\footnote{Arthur's convention \cite[1.3]{Arthur} is a little different in that his $\Phi(G)$ is our $\Phi(G^*)_{(G,\xi)\rel}$. This should not cause confusion as we will never mention $\Phi(G)$ for non quasi-split groups $G$ later.} So a parameter for $G^*$ may be viewed as a parameter for $G$ for any inner twist $(G,\xi)$ (as we only care about the equivalence class thereof after all).
\begin{dfn}
  An $L$-parameter $\phi$ for $G^*$ is $(G,\xi)$-\textbf{relevant} if every Levi subgroup $^L M$ of $^L G$ such that $\phi(L_F)\subset {}^L M$ is relevant, i.e. if such an $^L M$ is a Levi component of a $(G,\xi)$-relevant parabolic subgroup of $^L G$, \S\ref{subsub:transfer-Levi-parabolic}.
\end{dfn}
 We simply call $\phi$ relevant if the inner twist is clear from the context. Obviously the relevance of a parameter is invariant under equivalence. Define $\Phi(G^*)_{(G,\xi)\rel}$ to be the subset of $(G,\xi)$-relevant equivalence classes. We define the sets $\Phi_2(G^*)_{(G,\xi)\rel}$ by intersecting $\Phi_2(G^*)$ with $\Phi(G^*)_{(G,\xi)\rel}$, and similarly $\Phi_{\bdd}(G^*)_{(G,\xi)\rel}$. There is no need to introduce $\Phi_{2,\bdd}(G^*)_{(G,\xi)\rel}$ as the relevance condition is vacuously satisfied for parameters in $\Phi_{2,\bdd}(G^*)$.

  The above parameter sets are in parallel with various sets consisting of representations of $G$. Write $\Pi(G)$ (or $\Pi(G(F))$) for the set of isomorphism classes of irreducible smooth representations of $G(F)$. Again we often omit the expression ``isomorphism classes of'' by abuse of terminology if the context is clear. Let $\Pi_\temp(G)$ (resp. $\Pi_2(G)$) denote the subset of tempered (resp. essentially square-integrable) representations of $G$ in $\Pi(G)$. Similarly as before $\Pi_{2,\temp}(G):=\Pi_2(G)\cap \Pi_\temp(G)$, which therefore consists of square-integrable representations. In addition we introduce $\Pi_{\unit}(G)$,  the subset of unitary representations. Then we have the following horizontal inclusions
  $$\xymatrix{  \Pi_{2,\temp}(G) \ar@{-->>}[d] & \subset &  \Pi_\temp(G) \ar@{-->>}[d] & \subset & \Pi(G) \ar@{-->>}[d] \\
   \Phi_{2,\bdd}(G^*) & \subset & \Phi_{\bdd}(G^*)_{(G,\xi)\rel} & \subset &  \Phi(G^*)_{(G,\xi)\rel}}$$
   in which we have inserted the conjectural local Langlands classification as dotted vertical arrows, which should each be a surjective finite-to-one map. In other words, the hypothetical finite-to-one surjection $\Pi(G)\ra \Phi(G^*)_{(G,\xi)\rel}$ giving the Langlands classification %
   should carry $\Pi_{2,\temp}(G)$ and $\Pi_\temp(G)$ surjectively onto $\Phi_{2,\bdd}(G^*)$ and $\Phi_{\bdd}(G^*)_{(G,\xi)\rel}$. The construction of the general surjection $\Pi(G)\ra \Phi(G^*)_{(G,\xi)\rel}$ is reduced by the Langlands quotient construction to the latter surjection.

   In this paper (cf. Theorem \ref{thm:locclass-single} below) we construct a map $\Pi_\temp(G)\ra \Phi_{\bdd}(G^*)_{(G,\xi)\rel}$ with such properties, uniquely determined by a family of character identities naturally occurring in the theory of endoscopy. Moreover the finite fiber over each $\phi\in \Phi(G^*)_{(G,\xi)\rel}$, referred to as the $L$-packet $\Pi_\phi(G,\xi)$, will be described in terms of certain characters on the group $S^\natural_\phi$.

   We now introduce (local) $A$-parameters. One main motivation is that they are useful to describe the local components of automorphic representations in the discrete spectrum. As the local components are unitary but not necessarily tempered, we need parameters to accommodate a little more than tempered representations, but not too much more to be practical. %
   This little more room is created by an extra $\SU(2)$-factor in the parameter.
   Thus a local \textbf{$A$-parameter}, or an Arthur parameter, is a continuous homomorphism
   $$\psi:L_F\times \SU(2) \ra {}^L G^*$$
   such that $\psi|_{L_F}$ is a bounded $L$-parameter. Equivalent to the latter condition is that $\psi$ has a relatively compact image in $\hat G^*$ and commutes with the natural projections of $L_F\times \SU(2)$ and ${}^L G^*$ onto $W_F$. Two $A$-parameters are equivalent if they are conjugate by an element of $\hat G^*$.
   The set of equivalence classes of $A$-parameters for $G^*$ is denoted $\Psi(G^*)$. In fact it will be convenient to introduce a set $\Psi^+(G^*)$ consisting of continuous homomorphisms $\psi:L_F\times \SU(2) \ra {}^L G^*$ without requiring that $\psi|_{L_F}$ be bounded. An $A$-parameter $\psi$, or generally an element of $\Psi^+(G^*)$, is said to be \textbf{generic} (resp. \textbf{non-generic}) if $\psi|_{\SU(2)}$ is trivial (resp. nontrivial). We define the groups $S_\psi$, $\srad_\psi$, $S^\natural_\psi$, $\ol{S}_\psi$, $\cS_\psi$, and $\ol{\cS}_\psi$ as before by replacing $\phi$ with $\psi\in \Psi^+(G^*)$. Likewise the parameter sets $\Psi(G^*)_{(G,\xi)\rel}$ and $\Psi^+(G^*)_{(G,\xi)\rel}$ are defined. %
   Finally $\psi\in \Psi^+(G^*)$ determines a canonical element $s_\psi\in S_\psi$ by
   $$s_\psi:=\psi\left( 1,\left(
                            \begin{array}{cc}
                              -1 & 0 \\
                              0 & -1 \\
                            \end{array}
                          \right)                          \right).$$

  Let us discuss the $A$-packet classification for inner forms with precise normalizations coming from the extended pure inner twist data. Let $G^*$ and $(G,\xi)$ be as above. Now suppose that $(G,\xi,z)$ is an extended pure inner twist. Write $\chi_z\in X^*(Z(\hat G^*)^\Gamma)$ for the image of $z$ under the Kottwitz map \eqref{eq:kotisoloc}. To each $\psi$ should be associated an $A$-packet, a finite subset $\Pi_\psi(G,\xi)$ of $\Pi(G)$ (depending only on the inner twist $(G,\xi)$ and not on $z$). Moreover there should be a map (which does depend on $z$)
  \begin{equation}\label{eq:Pi-to-Irr(S)}
    \Pi_\psi(G,\xi)\ra \Irr(S^\natural_\psi,\chi_z),
  \end{equation}
  where the latter denotes the set of irreducible characters on $S^\natural_\psi=S_\phi/\srad_\psi$ which transform under $Z(\hat G^*)^\Gamma$ as $\chi_z$. (Our work on the inner forms of unitary groups builds the pairing \eqref{eq:Pi-to-Irr(S)}. See \S\ref{subsub:statement-local-thm} below for precise statements.) If $\psi$ is not relevant then $\Pi_\phi(G,\xi)$ should be empty so \eqref{eq:Pi-to-Irr(S)} is vacuous.  When $\psi$ is  $(G,\xi)$-relevant then the following lemma tells us that $\Irr(S^\natural_\psi,\chi_z)$ is always nonempty.\footnote{However if $\psi$ is non-generic and if $G$ is not quasi-split, it occasionally happens that $\Pi_\psi(G,\xi)$ is empty, already for $G$ an inner form of a general linear group.}

\begin{lem}\label{lem:chi_z-triv-on-intersection}
  Suppose that the image of $\phi\in \Phi(G^*)_{(G,\xi)\rel}$ factors through a Levi embedding ${}^L M^*\hra {}^L G^*$. Then $\chi_z$ is trivial on $Z(\hat G^*)^\Gamma \cap \srad_\phi$. The same holds true for $\psi\in \Psi(G^*)_{(G,\xi)\rel}$.
\end{lem}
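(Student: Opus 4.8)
The assertion is exactly the content of Lemma \ref{lem:triv-on-ZGcapZM}, transported to the setting of parameters via the dictionary built in \S\ref{subsub:centlevi}. So my plan is to reduce the statement about $\chi_z$ and $\srad_\phi$ to the statement about $\zeta_\Xi$ and $Z(\hat G^*_\tx{sc})^\Gamma \cap Z(\hat M^*_\tx{sc})^{\Gamma,\circ}$, and then invoke Lemma \ref{lem:triv-on-ZGcapZM}. First I would fix a smallest Levi subgroup $^L M^*$ of $^L G^*$ through which $\phi$ factors; by Lemma \ref{lem:sphilevi} and the remark following it, such a $^L M^*$ corresponds to a maximal torus of $S_\phi^\circ$, namely $A_{\ts{M}} = Z(\hat M^*)^{\Gamma,\circ}$, and $[S_\phi^\circ]_\tx{der} \subset \srad_\phi$. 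Since $\phi$ is $(G,\xi)$-relevant, Definition \ref{def:Xi-relevant-general} says this $^L M^*$ is the Levi component of a $\Xi$-relevant parabolic, hence by Lemma \ref{lem:triv-on-ZGcapZM} the character $\zeta_\Xi$ is trivial on $Z(\hat G^*_\tx{sc})^\Gamma \cap Z(\hat M^*_\tx{sc})^{\Gamma,\circ}$.

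The link between $\chi_z \in X^*(Z(\hat G^*)^\Gamma)$ and $\zeta_\Xi \in X^*(Z(\hat G^*_\tx{sc})^\Gamma)$ is simply that $\zeta_\Xi$ is the pullback of $\chi_z$ under $Z(\hat G^*_\tx{sc})^\Gamma \rw Z(\hat G^*)^\Gamma$ (the Kottwitz maps \eqref{eq:kotisoloc} for $G^*$ and $G^*_\tx{ad}$ are compatible). So the plan is: take $x \in Z(\hat G^*)^\Gamma \cap \srad_\phi$; I want $\chi_z(x)=1$. The key step is to show that any such $x$ lifts to an element $\tilde x \in Z(\hat G^*_\tx{sc})^\Gamma \cap Z(\hat M^*_\tx{sc})^{\Gamma,\circ}$ (up to something on which $\chi_z$ is visibly trivial). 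For this I would argue as follows: the preimage of $Z(\hat G^*)^\Gamma \cap \srad_\phi$ in $Z(\hat G^*_\tx{sc})$ meets $Z(\hat G^*_\tx{sc})^\Gamma$ in a subgroup mapping onto a finite-index subgroup (using that $Z(\hat G^*_\tx{sc})^\Gamma$ surjects onto $\pi_0$ of the relevant groups via \cite[Lemma 1.1]{Art99} applied to $\hat M^*_\tx{sc}$, exactly as in the proofs of Lemmas \ref{lem:squot} and \ref{lem:rhofact}); and the image of $Z(\hat M^*_\tx{sc})^{\Gamma,\circ}$ in $\hat G^*$ lands in $\srad_\phi$ because it is connected, lies in $\hat G^*_\tx{der}$, and centralizes $\phi(L_F)$ (it sits in $A_{\ts{M}}$). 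The point is that $Z(\hat G^*)^\Gamma \cap \srad_\phi$ is generated, modulo $Z(\hat G^*_\tx{sc})$-torsion on which $\chi_z$ restricts trivially (this triviality using that $\Irr(S_\psi^\natural,\chi_z)\neq\emptyset$ as invoked in Lemma \ref{lem:rhofact}), by the image of $Z(\hat M^*_\tx{sc})^{\Gamma,\circ}$, i.e. essentially the Cartesian-square input of Lemma \ref{lem:cart}.

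Once that is in place, $\chi_z(x)$ equals $\zeta_\Xi$ evaluated on a lift $\tilde x \in Z(\hat G^*_\tx{sc})^\Gamma \cap Z(\hat M^*_\tx{sc})^{\Gamma,\circ}$, which is $1$ by Lemma \ref{lem:triv-on-ZGcapZM}. For the $A$-parameter case, I observe that for $\psi \in \Psi(G^*)_{(G,\xi)\rel}$ one has $\srad_\psi = \srad_{\phi_\psi}$ where $\phi_\psi$ is the $L$-parameter $\phi_\psi(w) = \psi(w,\tx{diag}(|w|^{1/2},|w|^{-1/2}))$ attached to $\psi$, and $\phi_\psi$ is again relevant and factors through the same $^L M^*$, so the generic case applies verbatim. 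The main obstacle I anticipate is the bookkeeping in the middle paragraph: cleanly identifying $Z(\hat G^*)^\Gamma \cap \srad_\phi$ with (the image of) $Z(\hat M^*_\tx{sc})^{\Gamma,\circ} \cdot (Z(\hat G^*)^\Gamma \cap \srad_\phi \cap Z(\hat G^*_\tx{sc})\text{-image})$ and keeping track of which intersections are connected — all the ingredients are present in Lemmas \ref{lem:cart}, \ref{lem:squot}, \ref{lem:rhofact} and \cite[Lemma 1.1]{Art99}, but assembling them in the right order is where the care is needed.
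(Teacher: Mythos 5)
Your overall plan is the right one — reduce to a minimal standard Levi ${}^L M^*$ through which $\phi$ factors, note it is relevant by Definition~\ref{def:Xi-relevant-general}, and transport the problem to $\zeta_\Xi$ via the pullback relation with $\chi_z$, so that Lemma~\ref{lem:triv-on-ZGcapZM} finishes the job. But there are three concrete problems in the execution.

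First, the middle paragraph is far more complicated than needed and misses the point that makes the argument short. Once you observe that $A_{\ts{M}} = (Z(\hat M^*)^\Gamma)^\circ$ is a maximal torus of $S_\phi^\circ$, you should immediately conclude that $(Z(\hat M^*_\tx{sc})^\Gamma)^\circ$ (or rather its image in $\hat G^*$) is a maximal torus of $\srad_\phi$: it is connected, lies in $\hat G^*_\tx{der}$, centralizes $\phi$, and equals $(Z(\hat M^*) \cap \srad_\phi)^\circ$. Then, since $Z(\hat G^*)^\Gamma \cap \srad_\phi$ consists of elements central in $\hat G^*$ and hence central in the connected reductive group $\srad_\phi$, it is literally contained in that maximal torus — no ``modulo torsion'', no invocation of Lemma~\ref{lem:cart} or Lemma~\ref{lem:squot}. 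So an element $x$ of the intersection lifts directly to $\tilde x \in Z(\hat M^*_\tx{sc})^{\Gamma,\circ}$, and because the image of $\tilde x$ in $\hat G^*$ is central and $\Gamma$-fixed, $\tilde x$ itself lies in $Z(\hat G^*_\tx{sc})^\Gamma$, so $\tilde x \in Z(\hat G^*_\tx{sc})^\Gamma \cap Z(\hat M^*_\tx{sc})^{\Gamma,\circ}$ and $\chi_z(x) = \zeta_\Xi(\tilde x) = 1$ by Lemma~\ref{lem:triv-on-ZGcapZM}.

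Second, invoking ``$\Irr(S_\psi^\natural,\chi_z)\neq\emptyset$'' at an intermediate step is circular: that nonemptiness is the consequence the paper draws \emph{from} this lemma (see the sentence immediately preceding the statement, and the hypothesis of Lemma~\ref{lem:rhofact}). You cannot use it to prove the lemma.

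Third, the reduction for $A$-parameters is wrong. In general $\srad_\psi \neq \srad_{\phi_\psi}$: one only has $S_\psi \subset S_{\phi_\psi}$, and the inclusion can be strict (already for the parameter $\mathbf{1}\otimes\mathrm{Sym}^1$ on $\GL_2$ one has $S_\psi = Z(\hat G)$ while $S_{\phi_\psi}$ is the full diagonal torus). The correct and much simpler reduction is that the entire argument of \S\ref{subsub:centlevi} — including Lemma~\ref{lem:sphilevi} and its consequences — was set up for an arbitrary group $L$ surjecting onto $\Gamma$ (resp.\ $W_F$), in particular for $L = L_F \times \SU(2)$. So one reruns the identical argument with $\psi$ in place of $\phi$; no comparison of $\srad_\psi$ with $\srad_{\phi_\psi}$ is required.
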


\begin{proof}
  We may assume that the Levi subgroup ${}^L M^*$ is minimal such that it contains the image of $\phi$. By the assumption ${}^L M^*$ is relevant for $\G$. Further we may conjugate $\phi$ so that $^L M^*$ is standard, i.e. of the form $\hat M^* \rtimes W_F$ as this does not change the intersection in the lemma. Then $(Z(\hat M^*)^{\Gamma})^0$ is a maximal torus of $S_\phi^\circ$, and hence $(Z(\hat M^*) \cap \srad_\phi)^0 = (Z(\hat M^*_{\sc})^{\Gamma})^0$ is a maximal torus of $\srad_\phi$. The intersection $Z(\hat G^*)^\Gamma \cap \srad_\phi$ is central in $\srad_\phi$ and thus contained in that maximal torus. Now we conclude by Lemma \ref{lem:triv-on-ZGcapZM}. The proof for $\psi$ is the same.
\end{proof}

  It is useful to make a simple observation about the parameters for groups obtained by restriction of scalars.

\begin{lem}\label{l:param-for-Res-groups}
  Let $E/F$ be a finite extension of local fields. Let $G^*$ be a connected quasi-split reductive group over $F$. Then there are canonical bijections
  $$\Phi(\Res_{E/F}G^*,F)\isom \Phi(G^*,E),\quad \Psi(\Res_{E/F}G^*,F)\isom \Psi(G^*,E).$$
\end{lem}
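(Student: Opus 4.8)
The plan is to exhibit the bijection on the dual side and transport it through the definition of parameters. Recall first the standard computation of the $L$-group of a restriction of scalars: if $H = \Res_{E/F} G^*$ with $[E:F]=d$, then $\hat H = \prod_{F\hra \ol{F}} \hat{G}^*$, a product of $d$ copies of $\hat{G}^*$ indexed by the $F$-embeddings $E\hra \ol F$, and $W_F$ acts on $\hat H$ by permuting the factors through its action on that index set, combined with the $W_E$-action on each factor; concretely $^LH = (\prod \hat G^*) \rtimes W_F = \Ind_{W_E}^{W_F}\,{}^L G^*$ (Shapiro-type induction at the level of $L$-groups). This is exactly the ``multiplicative induction'' description, and the same formalism applies verbatim with $L_F$ (resp.\ $L_F\times\SU(2)$) in place of $W_F$ since $L_F\ra W_F$ is surjective with the same kernel structure; writing $L_E$ for the preimage of $W_E$ we have $[L_F:L_E]=d$.

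First I would set up the map at the level of homomorphisms. Given $\phi\in\Phi(G^*,E)$, i.e.\ $\phi:L_E\ra{}^LG^*$ over $W_E$, I extend it to $\Ind(\phi):L_F\ra {}^LH$ over $W_F$ by the usual induction formula: fix coset representatives $1=t_1,\dots,t_d$ of $L_E$ in $L_F$, and send $x\in L_F$ to the element of $\prod\hat G^*$ whose $i$-th coordinate is $\phi(t_i^{-1} x t_{j(i,x)})$ together with the permutation $i\mapsto j(i,x)$ induced on cosets, semidirect with the image of $x$ in $W_F$. One checks directly that this is a continuous homomorphism over $W_F$, that it sends semisimple elements to semisimple elements (the condition is checked factor-by-factor and is visibly independent of the chosen representatives up to $\hat H$-conjugacy), and that it is a group homomorphism. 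Conversely, given $\Phi:L_F\ra{}^LH$ over $W_F$, composing with projection to the distinguished factor $\hat G^*$ (the one indexed by the fixed $F$-embedding $E\hra\ol F$) and restricting to $L_E$ gives back an $L$-parameter for $G^*$ over $E$; these two constructions are mutually inverse up to the respective notions of equivalence ($\hat G^*$-conjugacy on one side, $\hat H$-conjugacy on the other), which is a routine verification since conjugating $\phi$ by $g\in\hat G^*$ corresponds to conjugating $\Ind(\phi)$ by $(g,\dots,g)$, and conversely every $\hat H$-conjugacy can be absorbed into the choice of coset representatives plus a diagonal conjugation. The $A$-parameter statement is identical: one replaces $L_F$ by $L_F\times\SU(2)$ throughout and notes that the boundedness condition on $\psi|_{L_F}$ (equivalently, relative compactness of the image in $\hat G^*$, resp.\ $\hat H$) is preserved in both directions, because a subset of $\prod\hat G^*$ is relatively compact iff each coordinate projection is, and the coordinates of $\Ind(\psi)$ are all $\hat G^*$-conjugate to the image of $\psi$.

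The only genuinely delicate point — and the one I expect to be the main obstacle in writing this carefully — is the bookkeeping of the $\Gamma$- (or $W_F$-) action in the semidirect product, namely making the identification $^L(\Res_{E/F}G^*) = \Ind_{W_E}^{W_F}({}^LG^*)$ precise and checking that the induction of $L$-parameters really lands in the correctly-twisted $L$-group and not some cocycle-twisted variant. This is entirely standard (it is the same computation that underlies Shapiro's lemma and the behaviour of the dual group under Weil restriction, cf.\ the treatment of $\hat U_{E/F}(N)$ and $\hat G_{E/F}(N)$ in \S\ref{subsub:qsuni}--\ref{subsub:G(N)-defn} above), so in the write-up I would state the dual-group identification as a lemma with a one-line proof, then define the two maps explicitly as above, and check they are inverse bijections, taking care that ``canonical'' means independent of the choice of coset representatives (different choices differ by an $\hat H$-conjugacy, hence give the same equivalence class). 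No trace-formula or endoscopic input is needed; this is purely a statement about $L$-homomorphisms.
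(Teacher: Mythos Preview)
Your proposal is correct and takes essentially the same approach as the paper: both use the Shapiro-lemma/induction construction on the dual side, with the paper simply phrasing it as a bijection of non-abelian $H^1$ sets and citing \cite[Prop.~8.4]{Bor79}, whereas you write out the induced-parameter map explicitly. The explicit description you give (induction of $\phi$ via coset representatives, inverse by projection to the distinguished factor and restriction to $L_E$) is exactly what underlies the cited reference, so there is no substantive difference.
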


\begin{proof}
  Note that $\Phi(G^*,E)$ (resp. $\Phi(\Res_{E/F}G^*,F)$) is a subset of $H^1_{\cont}(L_E,\hat G)$ (resp. $H^1_{\cont}(L_F,\hat {\Res_{E/F}G^*})$. Then Shapiro's lemma provides a canonical bijection from $H^1_{\cont}(L_E,\hat G^*)$ to $H^1_{\cont}(L_F,\hat {\Res_{E/F}G^*})$ which carries $\Phi(G^*,E)$ onto $\Phi(\Res_{E/F}G^*,F)$ since $\hat {\Res_{E/F}G^*}$ is an induced group of $\hat G^*$ relative to the extension $E/F$. (See \cite[Prop 8.4]{Bor79} for details.) The argument for $A$-parameters is analogous.
\end{proof}

  There is a map
  \begin{eqnarray} \label{eq:artlanpar}
    \Psi^+(G^*) & \longrightarrow& \Phi(G^*) \nonumber \\
    \psi & \mapsto &  \phi_\psi:~ w~\mapsto~ \psi\left( w,\left(
                                                                           \begin{array}{cc}
                                                                             |w|^{1/2} & 0 \\
                                                                             0 &  |w|^{-1/2} \\
                                                                           \end{array}
                                                                         \right) \right).
  \end{eqnarray}
  In particular each $A$-parameter $\psi$ has an associated $L$-parameter $\phi_\psi$.

\subsubsection{Local parameters for general linear groups}\label{subsub:local-param-GL}

When $G$ is a general linear group we are on a firm ground thanks to the fact that the local Langlands correspondence is known by Harris-Taylor and Henniart in the non-archimedean case and Langlands in the archimedean case. It is fair to say that our local classification theorem for inner forms of $U_{E/F}(N)$ ultimately hinges on that for $\GL(N,E)$ via endoscopy, just like in the case of $U_{E/F}(N)$ itself.

Let $N\ge1$ be an integer. We shall write
 $$\Phi(N) := \Phi(\GL(N)),\quad \Phi_2(N):=\Phi_2(\GL(N)),\quad \Phi_{\bdd}(N) = \Phi_{\bdd}(\GL(N)),$$
and so on. We set $\Phi_{\mathrm{sim}}(N):=\Phi_2(N) $. Define $\Phi_{\mathrm{scusp}}(N)$ to be the subset of irreducible representations of $L_F$ which factor through $W_F$ if $F$ is non-archimedean. For an archimedean local field $F$ we take $\Phi_{\mathrm{scusp}}(1):=\Phi(1)$ and $\Phi_{\mathrm{scusp}}(N):=\emptyset$ if $N>1$. As before we define $\Phi_{\diamondsuit,\heartsuit}(N):=\Phi_{\diamondsuit}(N)\cap \Phi_{\heartsuit}(N)$ for any two subscript words $\diamondsuit$ and $\heartsuit$. Exactly the same convention will be adopted for $\Pi$ in place of $\Phi$ except that the convention for supercuspidal representations should be explained. We write $\Pi_{\scusp}(N)$ for the subset of supercuspidal representations in $\Pi(N)$, which has the usual meaning in the non-archimedean case while the archimedean convention is that $\Pi_{\mathrm{scusp}}(N)$ is just $\Pi(N)$ if $N=1$ and empty if $N>1$. The meaning of $\Pi(N)$, $\Pi_2(N)$, $\Pi_\temp(N)$, $\Pi_{\scusp,\temp}(N)$, $\Pi_{2,\temp}(N)$, etc, should be clear by now.

The local classification for $\GL(N,F)$ can now be formulated below. It is due to Harris-Taylor and Henniart (independently) in the non archimedean case and Langlands in the archimedean case.
We fix $\psi_F$ a non-trivial additive character of $F$. Recall that there is a canonical isomorphism $W_F^{\mathrm{ab}}\simeq F^\times$ for every local field $F$. Indeed if $F$ is non-archimedean it is given by local class field theory (and normalized to send a lift of the geometric Frobenius element to a uniformizer of $F$). If $F=\R$, it is induced by the map $W_\R=\C\coprod \C j\ra \R^\times$ such that $z\in \C\mapsto z\ol{z}$ and $j\mapsto -1$. If $F=\C$, we have $W_\C=W_\C^{\mathrm{ab}}=\C^\times$.
\begin{thm}\label{thm:LLC-GL}
There is a unique bijective correspondence $\phi \mapsto \pi$ from
$\Phi(N)$ onto $\Pi(N)$ such that the following hold.
\begin{enumerate}
  \item $\Phi(1) = \Pi(1)$ via the canonical isomorphism $W_F^{\mathrm{ab}}\simeq F^\times$ above.
  \item $\phi \otimes \chi \mapsto \pi \otimes (\chi \circ \det)$ for every character $\chi\in \Phi(1)=\Pi(1)$.
  \item $\det \circ \phi \mapsto \eta_\pi$ for the central character $\eta_\pi$ of $\pi$.
  \item $\phi^\vee \mapsto \pi^\vee$, where the superscripts designate dual representations.
  \item If $\phi_i \mapsto \pi_i, \ \ \phi_i \in \Phi(N_i)$ for $i=1,2$, then
  \[
	L(s, \pi_1 \times \pi_2) = L(s, \phi_1 \times \phi_2)\quad\mbox{and}\quad
	\epsilon(s, \pi_1 \times \pi_2, \psi_F) = \epsilon(s, \phi_1 \times \phi_2, \psi_F).
\]
\end{enumerate}
Furthermore, the bijection is compatible with the two chains
\[
	\Phi_{\mathrm{scusp}, \mathrm{bdd}}(N) \subset
	 	\Phi_{\mathrm{sim}, \mathrm{bdd}}(N)
			\subset \Phi_{\mathrm{bdd}}(N) \subset \Phi(N)
\]
and
\[
	 \Pi_{\scusp,\temp}(N) \subset  \Pi_{2,\temp}(N) \subset \Pi_{\temp}(N) \subset \Pi(N)
\]
in the sense that it maps each subset in the first chain onto its counterpart in the second chain.
\end{thm}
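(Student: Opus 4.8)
This theorem is imported from the literature rather than proved here, so the plan is to cite the established local Langlands correspondence for $\GL(N)$ and to record why the listed normalisations hold. For $F$ non-archimedean I would invoke the construction of Harris--Taylor and, independently, of Henniart of a bijection $\Phi(N) \rw \Pi(N)$, together with Henniart's numerical-local-constants characterisation, which shows that a bijection preserving the $\epsilon$-factors of pairs as in (5) is unique once (1) is pinned down. Property (1) is then just a matter of normalising the reciprocity isomorphism $W_F^\tx{ab} \simeq F^\times$ so that a lift of geometric Frobenius maps to a uniformiser, as stated. Properties (2), (3), (4) are formal consequences of (5): twisting a parameter by $\chi \in \Phi(1)$ multiplies the local constants of all pairs by the constants attached to $\chi \circ \det$, so by uniqueness it corresponds to $\pi \mapsto \pi \otimes (\chi \circ \det)$; pairing with the trivial one-dimensional parameter and using multiplicativity and inductivity of Artin and Rankin--Selberg local constants pins down the central character in (3); and the contragredient corresponds to $\phi^\vee$ because the $\epsilon$-factors of $\phi_1^\vee \times \phi_2^\vee$ and of $\pi_1^\vee \times \pi_2^\vee$ both match those of $\phi_1 \times \phi_2$ and $\pi_1 \times \pi_2$ under $s \mapsto 1-s$, which forces (4).

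For $F$ archimedean I would cite Langlands' classification of irreducible admissible representations of real reductive groups, specialised to $\GL(N,\R)$ and $\GL(N,\C)$; here $L_F = W_F$ and the correspondence is explicit, so properties (1)--(5) can be checked by direct inspection of the parametrisation. In particular (1) uses the stated normalisations $W_\R \rw \R^\times$, $z \mapsto z\bar z$, $j \mapsto -1$, and $W_\C = \C^\times$, while (5) is the compatibility of Artin local constants with Rankin--Selberg local constants in the archimedean case, due to Jacquet--Shalika and Shahidi.

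It then remains to check compatibility with the two chains of subsets, which I would do tier by tier. A parameter lies in $\Phi_\bdd(N)$ precisely when its image in $\hat{\GL}(N) = \GL(N,\C)$ is relatively compact, and this matches temperedness via the description of tempered representations as the irreducible constituents of representations unitarily induced from discrete series of Levi subgroups. Inside this, $\Phi_{\simple,\bdd}(N) = \Phi_{2,\bdd}(N)$ consists of the bounded parameters irreducible as representations of $L_F$, which by the same induction description matches the square-integrable tempered representations. Finally $\Phi_{\scusp,\bdd}(N)$ is cut out by the extra condition of factoring through $W_F$ when $F$ is non-archimedean (and equals $\Phi(1)$ for $N=1$, and is empty otherwise, when $F$ is archimedean), which on the representation side is exactly the supercuspidal locus, classically because supercuspidals of $\GL(N)$ correspond to irreducible $N$-dimensional Weil representations.

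The only genuine mathematical obstacle --- the existence of the $p$-adic correspondence (Harris--Taylor via the cohomology of certain Shimura varieties, with a later alternative proof of Scholze) together with Henniart's uniqueness argument --- lies entirely within the cited references. Within this paper the remaining work is purely a matter of reconciling the normalisations of the reciprocity map, of the additive character $\psi_F$, and of the $\epsilon$-factors used across those references, plus the routine subset-chain bookkeeping indicated above.
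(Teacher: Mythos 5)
Your proposal is correct and takes essentially the same route as the paper: the paper offers no proof of Theorem \ref{thm:LLC-GL} at all, simply attributing it to Harris--Taylor and Henniart in the non-archimedean case and Langlands in the archimedean case. Your more detailed account of how (2)--(4) follow from (5) and Henniart's uniqueness characterisation, and of the chain compatibilities, accurately reflects the standard reasons these normalisations are consistent, but it is elaboration on the citation rather than a divergence from it.
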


  So far we discussed subsets of $L$-parameters in $\Phi(N)$. We also introduce a chain of sets in the context of $A$-parameters:
  \begin{equation}\label{eq:local-Psi(N)-chain}
  \Psi_{\cusp}(N)\subset \Psi_{\simp}(N)\subset \Psi(N)\subset\Psi^+_{\unit}(N)\subset\Psi^+(N).
  \end{equation}
  The first set consists of (isomorphism classes of) irreducible $N$-dimensional unitary representations of $L_F$. The second set is the collection of $N$-dimensional representations of $L_F\times \SU(2)$ of the form $\mu\otimes \nu$, where $\mu\in \Psi_{\cusp}(m)$, $\nu=\Sym^{n-1}$, as $m,n\in \Z_{\ge 1}$ vary subject to $mn=N$. The set $\Psi(N)$ (resp. $\Psi^+(N)$), which was already defined above, consists of unitary (resp. possibly non-unitary) $N$-dimensional representations of $L_F\times \SU(2)$. The remaining set $\Psi^+_{\unit}(N)$ is explained below.

  For each $\psi\in \Psi^+(N)$ one attaches an admissible representation $\pi_\psi$ of $\GL(N,F)$ as follows. Observe that there is a decomposition
  $\psi=\oplus_{i\in I} (\psi_i\otimes \chi_i)$, where $\psi_i=\mu_i\otimes \Sym^{n_i-1}\in \Psi_{\simp}(N_i)$, $\chi_i:L_F\ra \C^\times$ is a quasi-character, and $N=\sum_{i\in I} N_i$. Define $\pi_{\psi_i}$ to be the representation of $\GL(N_i,F)$ whose L-parameter is $$\phi_{\psi_i}=\mu_i|\det|^{\frac{n_i-1}{2}}\oplus \mu_i|\det|^{\frac{n_i-3}{2}}\oplus \cdots \oplus \mu_i|\det|^{\frac{1-n_i}{2}},$$
  where each $\det$ is defined on a diagonal $\GL(N_i/n_i)$-block in $\GL(N_i)$.
  Viewing $\chi_i$ as a character of $\GL(1,F)$, and also of $\GL(N_i,F)$ via the determinant map, one can now define (a possibly reducible representation)
  \begin{equation}\label{eq:pi_psi-def}
  \pi_\psi:=\cI\left(\bigoplus_{i\in I} (\pi_{\psi_i}\otimes \chi_i)\right).
  \end{equation}
  Now $\Psi^+_{\unit}(N)$ is defined to be the set of $\psi\in \Psi^+(N)$ such that $\pi_\psi$ is irreducible and unitary.
  If $\psi\in \Psi(N)$ then all $\chi_i$ may be chosen to be trivial, and the above induction is irreducible and unitary by Bernstein's theorem (since each $\pi_{\psi_i}$ is unitary). Hence $\Psi(N)\subset \Psi^+_{\unit}(N)$, justifying the third inclusion in \eqref{eq:local-Psi(N)-chain}.

  Any $\psi\in \Psi^+(N)$ can be written as a direct sum of irreducible representations $\psi=\oplus_{j\in J_\psi} \ell_j \psi_j$, where $\ell_j\in \Z_{\ge 1}$ and $\psi_j$ are mutually non-isomorphic. Then $S_\psi\simeq \prod_{j\in J_\psi} \GL(\ell_j,\C)$, $\cS_\psi=\ol{\cS}_\psi=\{1\}$ and $S_\psi^\natural\simeq \C^\times$ (the latter isomorphism is induced by determinant on $\GL(N,\C)$).

\subsubsection{Local conjugate self-dual parameters}\label{subsub:local-conj-self-dual}

  Here we recall the definition and basic properties of conjugate self-dual parameters on $\GL(N)$ which will then be related to parameters on unitary groups below. Let $E$ be a quadratic field extension of $F$, and $c\in \Gal(E/F)$ be the nontrivial automorphism. Choose $\tilde c\in W_F$ lifting $c$.
  Let $\cL_E$ and $\cL_F$ designate either $L_E$ and $L_F$ or $L_E\times \SU(2)$ and $L_F\times \SU(2)$. We embed $W_F$ in $\cL_F$ via the canonical injection $W_F\hra L_F$ (isomorphism if $F$ is archimedean; $w\mapsto (w,1)$ if $F$ is non-archimedean). Then $\tilde c$ acts on $\cL_E$ by $g\mapsto \tilde c g \tilde c^{-1}$ (conjugation in $\cL_F$). Given a continuous representation $\rho:\cL_E\ra \GL(N,\C)$ define $\rho^\star:=(\rho^{\tilde c})^\vee$, where $\rho^{\tilde c}(g):=\rho(\tilde c g \tilde c^{-1})$ (the same underlying vector space with twisted action). Clearly the isomorphism class of $\rho^\star$ is well-defined, independently of the choice of $\tilde c$. We say that $\rho$ is \textbf{conjugate self-dual} if $\rho^\star$ is isomorphic to $\rho$.

   We introduce the notion of parity for conjugate self-dual parameters following section 7 of \cite{GGP12}. Let $V$ be an $N$-dimensional vector space over $\C$ with tautological action by $\GL(V)$. Let $H:=(\GL(V)\times \GL(V))\rtimes \Gal(E/F)$ be a semi-direct product where $\Gal(E/F)$ acts on $\GL(V)\times \GL(V)$ by permuting the two factors. Let $H^0:=\GL(V)\times \GL(V)$ denote the index 2 subgroup in $H$. There is a decomposition as $H$-representations
  $$\Ind^H_{H^0}(V\boxtimes V)=\As^+(V)\oplus \As^-(V),$$
  where $\As^+(V)$ and $\As^-(V)$ are irreducible and $N^2$-dimensional, characterized by the property that $\tr (w_c|\As^{\pm})=\pm N$.
  Note that $\As^+(V)$ (resp. $\As^-(V)$) is isomorphic to $V\boxtimes V\simeq \Hom_{\C}(V^\vee,V)$ as an $H^0$-representation. An element of $\As^+(V)$ (resp. $\As^-(V)$) is said to be nondegenerate if it corresponds to an isomorphism $V^\vee \simeq V$.

    A continuous representation $\rho:\cL_E\ra \GL(V)$ gives rise to a map $$\tilde\rho:\cL_F\ra H,$$
    given by $\tilde\rho(g):=(\rho(g),\rho(\tilde c g \tilde c^{-1}))\in H^0$ for $g\in \cL_E$ and $\tilde \rho(\tilde c):=(1,\rho(\tilde c^2))\in H\bs H^0$. The $\cL_F$-action through $\tilde \rho$ stabilizes $\As^+(V)$ and $\As^-(V)$ so we obtain
    $$\cL_F\ra \GL(\As^+(V)),\quad \cL_F\ra \GL(\As^-(V)).$$
     We call $\rho$ \textbf{conjugate-orthogonal} (resp. \textbf{conjugate-symplectic}) if $\cL_F$ fixes a nondegenerate vector in $\As^+(V)$ (resp. $\As^-(V)$). We say that $\rho$ has \textbf{parity} $\sigma$ if $\sigma$ is the sign such that $\As^\sigma(V)$ has a nondegenerate $\cL_F$-fixed vector. It is not hard to verify that these definitions do not depend on the choice of $\tilde c$. If $\rho$ is conjugate self-dual and irreducible then it follows from Schur's lemma that the parity of $\rho$ is unique. So such a $\rho$ is either conjugate orthogonal or conjugate symplectic but cannot be both. However the parity of $\rho$ is not unique in general in the reducible case (for a simple example consider the trivial representation $\rho$ when $N>1$).
  We remark that an alternative definition of conjugate orthogonal/symplectic parameters is given in the section 3 of \cite{GGP12} in terms of the existence of a certain nondegenerate bilinear pairing on $V$. The two definitions are shown to be equivalent in \cite[Prop 7.5.1]{GGP12}.

  The decomposition of $\rho$ into irreducibles may be put in the following form (cf. \cite[\S4]{GGP12})
   \begin{equation}\label{eq:decompose-rho}
   \rho=\left(\bigoplus_{i\in I_\rho} \ell_i \rho_i\right)\oplus \left(\bigoplus_{j\in J_\rho} \ell_j ( \rho_j\oplus \rho_j^\star)\right),
   \end{equation}
  where $\ell_i,\ell_j\ge 1$, $\sum_{i\in I_\rho} \dim \rho_i+2\sum_{j\in J_\rho}\dim \rho_j=N$, and $\rho_i\simeq \rho_i^\star$ and $\rho_j\ncong \rho_j^\star$ for every $i\in I_\rho$ and $j\in J_\rho$. We say that $\rho$ is \textbf{elliptic} if $\ell_i=1$ for all $i\in I_\rho$ and if $J_\rho$ is empty.

  The discussion so far applies when $\rho$ is either an element of $\Phi(\GL(N),E)$ or  $\Psi(\GL(N),E)$. The subset of conjugate self-dual parameters is to be denoted $\tilde\Phi(\GL(N),E)$ and $\tilde\Psi(\GL(N),E)$. The parity of such parameters is defined as above.
  The chain \eqref{eq:local-Psi(N)-chain} gives rise to a chain of subsets
 \begin{equation}\label{eq:local-Psi(N)-chain-csd}
  \tilde \Psi_{\simp}(N)\subset \tilde\Psi_{\el}(N)\subset\tilde\Psi(N)\subset\tilde\Psi^+_{\unit}(N)\subset\tilde\Psi^+(N),
  \end{equation}
  which are obtained by collecting conjugate self-dual parameters. Observe that we omitted $\tilde\Psi_{\cusp}(N)$ in the chain but inserted $\tilde\Psi_{\el}(N)$, the subset of $\tilde\Psi(N)$ consisting of elliptic parameters. It is clear from the definition that $\tilde\Psi_{\simp}(N)\subset \tilde\Psi_{\el}(N)$.

\subsubsection{Local parameters for unitary groups}\label{subsub:local-param-U}

 It is desirable to understand local $L$-parameters and $A$-parameters for $U_{E/F}(N)$ in relation to those for $G_{E/F}(N)$, which amounts to understanding (a suitable twist of) the local base change relative to $E/F$. A careful analysis is not only important in the purely local context but also motivates the somewhat indirect construction of global parameters on unitary groups via those on general linear groups, where the global analogue of $L_F$ is not available.

  Assume that $E$ is a quadratic field extension of $F$. (The case $E=F\times F$ is treated at the end of this subsection.)
  Let $\kappa\in \{\pm 1\}$ and choose $\chi_\kappa\in \cZ^\kappa_E$. The composition with the $L$-morphism $\eta_{\chi_\kappa}$ induces maps that we denote by the same notation:
  $$\eta_{\chi_\kappa,*}: \Phi(U_{E/F}(N)) \ra \Phi(G_{E/F}(N)),$$
    $$\eta_{\chi_\kappa,*}: \Psi(U_{E/F}(N)) \ra \Psi(G_{E/F}(N)).$$
  To describe the image of these maps, note that we can identify $\Phi(G_{E/F}(N))= \Phi(\GL(N),E)$ and $\Psi(G_{E/F}(N))= \Psi(\GL(N),E)$ via Lemma \ref{l:param-for-Res-groups}, cf. \cite[(2.2.2)]{Mok}. So the question is when a parameter for $\GL(N)$ over $E$ comes from a parameter for $U_{E/F}(N)$ via $\eta_{\chi_\kappa,*}$. It is not hard to see that the image of $\eta_{\chi_\kappa,*}$ consists of conjugate self-dual parameters. The subtle part is to determine whether a conjugate self-dual parameter lies in the image of $\eta_{\chi_\kappa,*}$ for $\kappa=1$ or $\kappa=-1$ (or both or neither). %

\begin{lem}\label{lem:image-xi_chi}
  The map $\eta_{\chi_\kappa,*}$ is injective on $\Phi(U_{E/F}(N))$ (resp. $\Psi(U_{E/F}(N))$) and restricts to a bijection from the preimage of $\Phi_\simp(G_{E/F}(N))$ (resp. the preimage of $\Psi_\simp(G_{E/F}(N))$) onto the subset of conjugate self-dual parameters with parity $(-1)^{N-1}\kappa$ in $\Phi_\simp(G_{E/F}(N))$ (resp. $\Psi_\simp(G_{E/F}(N))$).
\end{lem}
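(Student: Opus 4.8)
The plan is to reduce the statement to a clean description of conjugate self-dual parameters in terms of Asai $L$- and $\epsilon$-factors, using the Langlands correspondence for $\GL(N)$ over $E$ (Theorem \ref{thm:LLC-GL}) to move freely between Galois-theoretic and representation-theoretic pictures. First I would recall that $\eta_{\chi_\kappa,*}$ is injective: a parameter $\phi : L_F \ra {}^LU_{E/F}(N)$ is determined up to $\widehat{U}_{E/F}(N)$-conjugacy by the pair consisting of its restriction to $L_E$, which is just $\eta_{\chi_\kappa,*}(\phi)|_{L_E}$ twisted back by $\chi_\kappa^{-1}$, together with the recipe for how $\tilde c$ acts; since $\eta_{\chi_\kappa}$ is an $L$-embedding the twisted base change map loses no information, so injectivity follows from the fact that an $L$-parameter valued in $\widehat{U}_{E/F}(N) = \GL(N,\C)$ is reconstructed from its $W_E$-restriction and the image of $w_c$. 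The same argument applies verbatim with $L_F$ replaced by $L_F \times \SU(2)$, giving the $A$-parameter case.

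Next I would identify the image. A parameter $\rho \in \Phi(\GL(N),E)$ lies in the image of $\eta_{\chi_\kappa,*}$ if and only if the map $\rho$ extends to an $L$-homomorphism $L_F \ra {}^LU_{E/F}(N)$ after the $\chi_\kappa$-twist; concretely, writing ${}^LU_{E/F}(N) = \GL(N,\C)\rtimes\Gamma$ with $\tau$ acting by $g \mapsto J_N{}^tg^{-1}J_N^{-1}$, such an extension amounts to a matrix $A$ with $\rho(\tilde cg\tilde c^{-1}) = \chi_\kappa(\ldots)\cdot A J_N {}^t\rho(g)^{-1} J_N^{-1} A^{-1}$ and a cocycle condition at $\tilde c^2$. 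Unwinding, this says precisely that $\chi_\kappa^{-1}\otimes\rho$ is conjugate self-dual, and the cocycle condition at $\tilde c^2$ pins down which of the two signs — conjugate-orthogonal versus conjugate-symplectic — is realized. This is exactly the computation behind \cite[Theorem 8.1]{GGP12}: twisting $\rho$ by $\chi_\kappa$ shifts parity by $\kappa$ when $N$ is... I would track the sign bookkeeping carefully. On the simple/discrete part, $\rho \in \Psi_\simp(G_{E/F}(N))$ corresponds to $\mu\otimes\Sym^{n-1}$ with $\mu$ cuspidal, and conjugate self-duality forces a unique parity (Schur's lemma, as noted in \S\ref{subsub:local-conj-self-dual}), so the map is a bijection onto the parity-$(-1)^{N-1}\kappa$ subset once the sign normalization in the definition of $\eta_{\chi_\kappa}$ (the factors $\kappa_i\Phi_{N_i}$ and the $w_c$-value) is matched against the conjugate-orthogonal/symplectic dichotomy.

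The key steps, in order: (1) injectivity of $\eta_{\chi_\kappa,*}$ by reconstruction from the $W_E$-restriction plus the $w_c$-datum, uniformly for $\Phi$ and $\Psi$; (2) show the image consists of conjugate self-dual parameters by direct inspection of $\eta_{\chi_\kappa}$ on $w_c$; (3) on the preimage of $\Phi_\simp$ (resp. $\Psi_\simp$), identify the parity realized as $(-1)^{N-1}\kappa$ by the cocycle/sign computation at $\tilde c^2$, invoking \cite[\S7]{GGP12} for the parity formalism and the fact that an irreducible conjugate self-dual parameter has a unique parity; (4) surjectivity onto that parity subset, i.e. given a conjugate self-dual simple $\rho$ of the right parity, construct the extension explicitly, which is a matter of choosing the intertwining matrix $A$ and checking the normalization constants. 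The main obstacle I expect is step (3)–(4): getting the sign $(-1)^{N-1}\kappa$ exactly right requires carefully reconciling three conventions — Mok's $\eta_{\chi_\kappa}$ as written above (with $J_N$, $\Phi_N$, and the placement of $\chi_\kappa$), the Gan–Gross–Prasad Asai-sign convention, and the effect of tensoring a parity-$\sigma$ parameter by $\chi_\kappa \in \cZ_E^\kappa$ on its parity. This is not conceptually hard but it is the step where an off-by-a-sign error would invalidate the statement, so I would do it by testing on $N=1$ (characters of $C_E$, where $\tilde\Psi_\simp(1)$ is literally $\cZ_E = \cZ_E^+\sqcup\cZ_E^-$ and the claim reads off directly) and then propagating via $\mu \mapsto \mu\otimes\Sym^{n-1}$, using that $\Sym^{n-1}$ of $\SU(2)$ is orthogonal for $n$ odd and symplectic for $n$ even, exactly as in \cite[\S4, \S8]{GGP12}.
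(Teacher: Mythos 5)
Your proposal is correct and takes essentially the same approach as the paper, which simply cites \cite[Thm 8.1, Cor 8.2]{GGP12} and \cite[Lem 2.2.1]{Mok}, handles $\kappa=-1$ by twisting with a character in $\cZ_E^-$, and notes the $A$-parameter case follows by replacing $L_E$, $L_F$ with $L_E\times\SU(2)$, $L_F\times\SU(2)$. What you have done is unfold that one-paragraph citation into a sketch of the internal reconstruction-and-parity argument behind \cite[\S7--8]{GGP12}.
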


\begin{rem}
  The global counterpart of the lemma, Proposition \ref{prop:1st-seed-thm} below, is a much deeper result.
\end{rem}

\begin{proof}
  The lemma for $L$-parameters is proved in \cite[Thm 8.1, Cor 8.2]{GGP12}, cf. \cite[Lem 2.2.1]{Mok}. (The proof is given in \cite{GGP12} when $\kappa=1$ but it is easy to extend the result to the case $\kappa=-1$ by twisting by a character in $\cZ_E^-$.) The same argument works in the case of $A$-parameters, replacing $L_F$ and $L_E$ with $L_F\times \SU(2)$ and $L_E\times \SU(2)$.
\end{proof}

 Let $(G^{\tilde\fke},s^{\tilde\fke},\eta^{\tilde\fke})\in \cE_{\simp}(\tilde G (N))$ so that $G^{\tilde\fke}\simeq U_{E/F}(N)$. In comparison with \eqref{eq:local-Psi(N)-chain-csd} we produce another chain %
   \begin{equation}\label{eq:local-Psi(G)-chain}
 \Psi_{\simp}(G^{\tilde\fke})\subset \Psi_{2}(G^{\tilde\fke}) \subset \Psi(G^{\tilde\fke})\subset\Psi^+_{\unit}(G^{\tilde\fke})\subset \Psi^+(G^{\tilde\fke}),
  \end{equation}
 defined as follows. The set $\Psi^+(G^{\tilde\fke})$ was defined in \S\ref{subsub:L-param-A-param}. Take $\Psi_{\simp}(G^{\tilde\fke})$ (resp. $ \Psi_{2}(G^{\tilde\fke})$) to be the set of $\psi\in \Psi^+(G^{\tilde\fke})$ such that $\eta^{\tilde\fke}\psi$ belongs to $\tilde \Psi_{\simp}(N)$ (resp. $\tilde\Psi_{\el}(N)$). This set does not change if the group $G^{\tilde\fke}$ extends to another endoscopic triple, since $\eta^{\tilde\fke}\psi$ will only change by a character of $\GL(N)$. We can now rephrase the preceding lemma as the decomposition
 $$ \Phi_\simp(G(N))=\eta_{\chi_+,*}\Phi_\simp(U(N))\coprod\eta_{\chi_-,*}\Phi_\simp(U(N)) $$
 according to the parity of parameters, and likewise for $\Psi_\simp(G(N))$.
 The other sets $\Psi(G^{\tilde\fke})$ and $\Psi^+_{\unit}(G^{\tilde\fke})$ are given similarly in parallel with \eqref{eq:local-Psi(N)-chain-csd}. By construction $\psi\mapsto \eta^{\tilde\fke}\psi$ is an injective chain-preserving map from \eqref{eq:local-Psi(G)-chain} to \eqref{eq:local-Psi(N)-chain-csd}.

  Each parameter $\psi\in \tilde \Psi^+_{\simp}(\GL(N),E)$ can be written as $\psi=\phi\otimes \nu$, where $\phi\in \tilde\Phi_{\simp}(\GL(m),E)$, $\nu$ is the $\Sym^{n-1}$-representation of $\SU(2)$, and $mn=N$. Here we view $\phi$ (resp. $\nu$) as an irreducible conjugate self-dual representation of $L_E\times \SU(2)$ via its projection onto $L_E$ (resp. $\SU(2)$). We would like to compare the sign of the $L$-embedding that $\psi$ and $\phi$ each comes from. As irreducible conjugate self-dual representations, the parity of each of $\psi$, $\phi$, and $\nu$ is uniquely determined. Write $b(\psi)$, $b(\phi)$, and $b(\nu)$ for the parities. Since $\nu$ is orthogonal (resp. symplectic) as a representation of $\SU(2)$ when $n$ is even (resp. odd), the $b(\nu)=(-1)^{n-1}$. (To see this one can apply the criterion in \cite[\S3]{GGP12} for conjugate orthogonality/symplecticity, which reduces to the usual orthogonality/symplecticity in this case.) Then the lemma 3.2 of \cite{GGP12} tells us that $$b(\psi)=b(\phi)b(\nu)=(-1)^{n-1}b(\phi).$$
  Lemma \ref{lem:image-xi_chi} implies that $\psi\in \eta_{\chi(\psi),*}\Psi(U(N))$ and $\phi\in \eta_{\chi(\phi),*}\Psi(U(N))$ for $\chi(\psi)\in \cZ_E^{\kappa(\psi)}$ and $\chi(\phi)\in \cZ_E^{\kappa(\phi)}$ with $\kappa(\psi)=(-1)^{N-1}b(\psi)$ and $\kappa(\phi)=(-1)^{m-1}b(\phi)$. Thus
  \begin{equation}\label{eq:kappa(psi)-kappa(phi)}
    \kappa(\psi)=(-1)^{N-m-n-1}\kappa(\phi).
  \end{equation}

  We can make $S_\psi$ explicit for each $\psi\in \Psi^+(U_{E/F}(N))$. %
  Let $\kappa\in \{\pm 1\}$, $\chi\in \cZ_E^\kappa$, and set $\psi^N:=\eta_{\chi}\psi$. We can write
  $$\psi^N=\left(\bigoplus_{i\in I_{\psi^{N}}} \ell_i \psi^{N_i}_i\right)\oplus \left(\bigoplus_{j\in J_{\psi^N}} \ell_j ( \psi^{N_j}_j\oplus (\psi^{N_j}_j)^\star)\right),$$
  where the notation is as in \eqref{eq:decompose-rho}. Each $\psi^N_i\in \tilde \Phi (\GL(N_i))$ determines $\kappa_i\in \{\pm 1\}$ such that the parity of $\psi^N_i$ is $\kappa_i(-1)^{N_i-1}$. Consider a partition $I_{\psi^N}=I_{\psi^N}^+\coprod I_{\psi^N}^-$ such that $i\in I_{\psi^N}$ belongs to $I_{\psi^N}^+$ (resp. $I_{\psi^N}^-$) if $\kappa_i=\kappa (-1)^{N-N_i}$ (resp. otherwise). Then it follows that $\ell_i$ is even for each $i\in I_{\psi^N}^-$ and that
  $$S_\psi\simeq \prod_{i\in I_{\psi^N}^+} \O(\ell_i,\C) \times \prod_{i\in I_{\psi^N}^-} \Sp(\ell_i,\C)\times \prod_{j\in J_{\psi^N}} \GL(\ell_j,\C).$$
  (See \cite[\S4]{GGP12}.) The group $Z(\hat G)^\Gamma=\{\pm 1\}$ is embedded diagonally into the right hand side. It is easy to see that
  $$\cS_\psi\simeq \cS_\psi^\natural\simeq (\Z/2\Z)^{|I_{\psi^N}^+|},\quad \ol{\cS}_\psi\simeq \left\{
  \begin{array}{cc}
    (\Z/2\Z)^{|I_{\psi^N}^+|-1}, & \forall_{i\in I_{\psi^N}^+},~ 2|\ell_i,\\
    (\Z/2\Z)^{|I_{\psi^N}^+|}, & \mbox{otherwise}.
  \end{array}\right.$$

  Now we briefly discuss the easy case when $E=F\times F$. For each $\psi\in \Psi^+(U_{E/F}(N))$ one can write $\psi^N=\eta_\chi\psi$ in the form
  $\psi^N=\oplus_{j\in J_{\psi^N}} \ell_j \psi_j^{N_j}$. Then $S_\psi\simeq \prod_{j\in J_{\psi^N}} \GL(\ell_j,\C)$ and $\cS_\psi=\ol\cS_\psi=\{1\}$.

\subsection{Global parameters}\label{sub:global-param}

In this subsection we recall the formalism of global parameters for (quasi-split) unitary groups according to \cite{Arthur} and \cite{Mok}. Since the global analogue of $L_F$ of the last subsection is highly hypothetical, a different approach should be taken to define the analogous global $L$-parameters in an unconditional way. The idea is to substitute cuspidal automorphic representations of $\GL(N)$ for irreducible $N$-dimensional representations of $L_F$. Then, loosely speaking, a parameter for a unitary group can be constructed as a formal sum of cuspidal automorphic representations of general linear groups subject to a suitable conjugate self-duality condition.

\subsubsection{Conjugacy class data of automorphic representations}\label{subsub:conj-class-data}

We explain how to associate a string of local conjugacy classes in the $L$-group to an automorphic representation via Satake isomorphism.
This conjugacy class data is used to state a sign dichotomy when descending a conjugate self-dual automorphic representation of $\GL(N)$, cf. Proposition \ref{prop:1st-seed-thm} below. Another role is played in the decomposition of the discrete part of the trace formula in \S\ref{sub:I_disc} below.

Let $F$ be a global field, and $G$ a connected reductive group over $F$.
We have the sets of automorphic representations of $G(\A_F)$, cf. \cite[p.19]{Arthur}:
\[
	\mathcal{A}_{\mathrm{cusp}}(G) \subset \mathcal{A}_2(G) \subset \mathcal{A}(G),
\]
where each set consists of irreducible unitary representations of $G(\A_F)$ whose restrictions to $G(\A_F)^1$ are constituents of $L^2_{\diamondsuit}(G(F)\bs G(\A_F)^1)$ with $\diamondsuit\in \{\cusp,\disc,\emptyset\}$ in the same order.

For $S$ any finite set of primes outside of which $G$ is unramified, we define $\mathcal{C}^S_{\mathbb{A}_F}(G)$
to be the set of adelic families of semisimple $\widehat{G}$-conjugacy classes $c^S$ of the form
\[
	c^S = (c_v)_{v \not\in S}
\]
where $c_v$ is a $\widehat{G}$-conjugacy class in
${}^L G_v = \widehat{G} \rtimes W_{F_v} \subset {}^L G$ represented
by an element of the form $c_v = t_v \rtimes \mathrm{Frob}_v$ with $t_v$ a semi-simple element of $\widehat{G}$.
We define $\mathcal{C}_{\mathbb{A}_F}(G)$ to be the set of equivalence classes of such families,
$c^S$ and $(c')^{S'}$ being equivalent if $c_v$ equals $c'_v$ for almost all $v$.

Let $\pi = \otimes_v \pi_v$ be an automorphic representation of $G(\mathbb{A}_F)$ unramified outside some finite set $S$.
For all places $v \not\in S$, the Satake isomorphism associates to the unramified representation
$\pi_v \mapsto c(\pi_v) \in {}^L G_v$
a semisimple $\widehat{G}$-conjugacy class in ${}^L G_v$.   This allows us to associate to $\pi$
\[
	c^S(\pi) = \{ c(\pi_v) : v \not \in S\} \in \mathcal{C}_{\mathbb{A}_F}^S(G).
\]
We then have the mapping
\[
	\pi \mapsto c(\pi) = c^S(\pi)
\]
from $\mathcal{A}(G) \rightarrow  \mathcal{C}_{\mathbb{A}_F}(G)$.  We define the image of this map
to be $\mathcal{C}_{\mathrm{aut}}(G)$. When $G=\GL(N)$
we will see that the map $\pi \mapsto c(\pi)$ is particularly well-behaved.

\subsubsection{Global parameters for $\GL(N)$}\label{subsub:global-param-GL}

 Let $\cA^+_2(N)$ (resp. $\cA^+_{\cusp}(N)$) be the set of irreducible admissible representations of $\GL(N,\A_F)$ whose restriction to $\GL(N,\A_F)^1$ appears as a direct summand of $L^2_{\disc}(\GL(N,F)\bs \GL(N,\A_F)^1)$ (resp. $L^2_{\cusp}(\GL(N,F)\bs \GL(N,\A_F)^1)$). The subset of unitary representations in $\cA^+_2(N)$ (resp. $\cA^+_{\cusp}(N)$) is denoted $\cA_2(N)$ (resp. $\cA_{\cusp}(N)$).
  Let $\cA^+_{\iso}(N)$ denote the set of $\pi=\otimes_v\pi_v$ which is an isobaric sum
  \begin{equation}\label{eq:isobaric}
  \pi=\pi_1\boxplus \cdots \boxplus \pi_r
  \end{equation}
  for some $r\in \Z_{\ge 1}$, $N_1,...,N_r\in \Z_{\ge 1}$ $(N_1+\cdots+N_r=N)$, $\pi_i\in \cA^+_\cusp(N_i)$ for $1\le i\le r$. The isobaric sum \eqref{eq:isobaric} means that $\pi_v$ is the Langlands quotient of the normalized parabolic induction from $\pi_{1,v}\oplus \cdots \oplus \pi_{r,v}$ at every place $v$. The interpretation via the local Langlands correspondence is that the $L$-parameter for $\pi_v$ is the direct sum of the $L$-parameter for $\pi_{1,v},...,\pi_{r,v}$ at every $v$.

   Moeglin and Waldspurger identified $\cA_2(N)$ with a precise subset of $\cA^+_{\iso}(N)$. Namely $\cA_2(N)$ consists of representations of the following form:
  \begin{equation}\label{eq:MW-rep}
    \mu|\det|^{\frac{n-1}{2}} \boxplus \mu|\det|^{\frac{n-3}{2}} \boxplus \cdots \boxplus \mu|\det|^{\frac{1-n}{2}},
  \end{equation}
  where $N=mn$, $m,n\in \Z_{\ge 1}$, and $\mu\in \cA_{\cusp}(m)$.
  (No two representations of such form are isomorphic unless $m$, $n$ and $\mu$ are the same.) Define $\cA(N)$ to be the set of \eqref{eq:isobaric} such that $\pi_i\in \cA_2(N_i)$ instead of $\pi_i\in \cA^+_\cusp(N_i)$. Then we have a chain
   \begin{equation}\label{eq:chain-of-A(N)}
   \cA_{\cusp}(N)\subset \cA_2(N)\subset \cA(N)\subset \cA^+_{\iso}(N).
    \end{equation}
  Moreover an element of $\cA^+_{\iso}(N)$ is uniquely determined by the data at almost all (unramified) places by the Jacquet-Shalika theorem, which tells us that the map $$\cA^+_{\iso}(N)\lra \cC_{\A_F}(N),\quad \pi\mapsto c(\pi)$$
  is a bijection onto its image (which is much smaller than $\cC_{\A_F}(N)$). Here we write $\cC_{\A_F}(N)$ for $\cC_{\A_F}(\GL(N))$ as usual. The image in $\cC_{\A_F}(N)$ will be denoted $\cC_{\aut}(N)$.%

We shall introduce some formal parameter sets for $\GL(N)$ in the global setting
  $$ \Psi_{\cusp}(N)\subset \Psi_{\simp}(N) \subset \Psi(N)$$
  in parallel with the first three sets in \eqref{eq:chain-of-A(N)}.
First off, put $\Psi_{\mathrm{cusp}}(N): =  \mathcal{A}_{\mathrm{cusp}}(N)$.
Next, let $\Psi_{\mathrm{sim}}(N) = \Psi_{\mathrm{sim}}(\GL(N))$ denote the set of formal tensor products
\[
	\psi = \mu \boxtimes \nu
\]
where $\mu \in \mathcal{A}_{\mathrm{cusp}}(m)$ and  $\nu$ is the  irreducible representation of  $\SU(2)$ of dimension $n$ such that $N = m n$.  We shall associate to such a $\psi$ the automorphic representation $\pi_\psi$ given by \eqref{eq:MW-rep} so that $\psi\mapsto \pi_\psi$ is a bijection from $\Psi_{\mathrm{sim}}(N)$ onto $\mathcal{A}_2(N)$.
Finally $\Psi(N)$ is going to be the set of formal unordered direct sums
\begin{equation}\label{eq:formal-global-GL(N)}
	\psi = \ell_1 \psi_1 \boxplus \cdots \boxplus \ell_r \psi_r
\end{equation}
for positive integers $\ell_k$ and distinct elements $\psi_k = \mu_k \boxtimes \nu_k$ in $\Psi_{\mathrm{sim}}(N_k)$.
The ranks here are positive integers $N_k = m_k n_k$ such that
\[
	N = \ell_1 N_1 + \cdots \ell_r N_r = \ell_1 m_1 n_1 + \cdots + \ell_r m_r n_r.
\]
To such a $\psi$, we associate
 $$\pi_\psi:= \boxplus_{i=1}^r (\underbrace{\pi_{\psi_i}\boxplus \cdots \boxplus \pi_{\psi_i}}_{\ell_i})$$
 so that the map $\psi\mapsto \pi_\psi$ is a bijection from $\Psi(N)$ onto $\cA(N)$.
 By Bernstein's theorem that the normalized induction of irreducible unitary representations is irreducible, we see that $\pi_\psi$ may also be described as the normalized induction $\cI(\otimes_{i=1}^r (\pi_{\psi_i}\otimes\cdots \otimes \pi_{\psi_i}))$.\footnote{We could have introduced the analogues of $\Psi^+(N)$ and $\Psi^+_{\unit}(N)$ in \cite{Arthur} \S1.5 (in the global setting) but we do not need them.}

  Finally let $E$ be a finite field extension of $F$. In view of the functorial isomorphism $\Res_{E/F}\GL(N,R)=\GL(N,R\otimes_F E)$ for $F$-algebras $R$, we define $\Psi(\Res_{E/F}\GL(N))$ to be $\Psi(\GL(N)_E)$, namely the set of formal parameters for $\GL(N)$ over $E$. Likewise $\Psi_\diamondsuit(\Res_{E/F}\GL(N))$ is defined for various constraints $\diamondsuit$. In particular this applies to the case $[E:F]=2$, where the group is $G_{E/F}(N)$.

\subsubsection{Localizing global parameters for $\GL(N)$}\label{subsub:localizations-GL}

Consider a cuspidal  automorphic representation
$\mu \in \mathcal{A}_{\mathrm{cusp}}(N)$
and  $v$  a place of $F$.
  By
the local Langlands correspondence, we can associate to  $\mu_v \in \Pi(\GL(N, F_v))$ an $L$-parameter
$\phi_{\mu_v} \in \Phi(\GL(N, F_v))$.  The $L$-parameter $\phi_{\mu_v}$ can be viewed
as a generic unbounded $A$-parameter $\psi_{\mu_v} \in \Psi^+(\GL(N, F_v))$.

This allows us to define for every $N\in\Z_{\ge1}$
the localization map
\begin{eqnarray*}
	\Psi_{\mathrm{cusp}}(N) &\rightarrow& \Psi^+_{v}(N) \\
		\mu &\mapsto& \psi_{\mu_v}
\end{eqnarray*}
This can be extended to define the localization map
\begin{eqnarray*}
\Psi_{\mathrm{sim}}(N) &\rightarrow& \Psi^+_{v}(N)	\\
	\mu \boxtimes \nu &\mapsto& \phi_{\mu_v} \otimes \nu.
\end{eqnarray*}
This map can then be uniquely extended by requiring that $\boxplus$ be carried over to $\oplus$ to produce the localization map
\[
	\Psi(N) \rightarrow \Psi^+_v(N)		.
\]

  Similarly there is a localization map $\Psi(G_{E/F}(N))\ra \Psi^+_v(G_{E/F}(N))$. Concretely it is identified (via Lemma \ref{l:param-for-Res-groups}) with the localization map $\Psi_E(N)\ra \Psi^+_w(N)\times \Psi^+_{\ol w}(N)$ if $v$ splits as $w\ol w$ in $E$, or the localization map $\Psi_E(N)\ra \Psi^+_w(N)$ if $w$ is the only place of $E$ above $v$.

\subsubsection{Global parameters for $U_{E/F}(N)$}\label{subsub:global-param-U}

We begin with the discussion of conjugate self-dual parameters on $\GL(N)$ as the global analogue of \S\ref{subsub:local-conj-self-dual}. A careful consideration of parity will allow us to define formal global parameters for unitary groups, depending on the sign of an $L$-embedding $^L U_{E/F}(N)\hra {}^L G_{E/F}(N)$. A definition which is independent of choices will be discussed in \S\ref{subsub:canonical-def} below.

If $\pi$ is a cuspidal automorphic representation of $\GL(N, \A_E)$, we shall denote by
$\pi^\star = \pi^{\vee, c}$ the conjugate dual representation of $\pi$.
This notation shall also be used in the context of local $L$-parameters and $A$-parameters to refer to
the conjugate dual representation.

Consider a
partition $N_1 + \cdots + N_r = N$ and a formal
global parameter $\psi =   \ell_1 \psi_1 \boxplus \cdots \boxplus \ell_r \psi_r \in \Psi(N)$
where  $\psi_i = \mu_i \boxtimes \nu_i \in \Psi_{\mathrm{sim}}(N_i)$ and $N_i = m_i n_i$ for $i=1,\ldots, r$.
We define the conjugate dual parameter
\[
\psi^\star =  \ell_1 \psi^\star_1 \boxplus \cdots \boxplus \ell_r \psi^\star_r
\]
where $\psi^\star_i = \mu_i^\star \boxtimes \nu_i$ for $i=1,\ldots, r$.   We remark that
$\pi_\psi^{\star} \simeq \pi_{\psi^\star}$.
Such a parameter $\psi$ is said to be \textbf{conjugate self-dual} if $\psi=\psi^\star$ up to reordering, or more precisely if there exists an involution $i \leftrightarrow i^\star$ of the indexing set
$\{1,\ldots,r\}$ such that
\[
	\psi_i^\star = \psi_{i^\star} \textrm{ and } \ell_i = \ell_{i^\star} \text{ for } i =1,\ldots, r.
\]
The subset of parameters $\psi \in \Psi(N)$ that are conjugate self-dual is denoted by $\widetilde{\Psi}(N)$.
If the parameter $\psi$ satisfies in addition the condition that
\[
i^\star = i \textrm{ and } \ell_i=1 \textrm{ for } i =1,\ldots, r,
\]
 then $\psi$ is said to be \textbf{elliptic}.
The subset of elliptic parameters is denoted  by $\widetilde{\Psi}_{\mathrm{ell}}(N)$.
We shall also have need of the subset of conjugate self-dual simple parameters that shall be denoted by
$\widetilde{\Psi}_{\mathrm{sim}}(N)$.    We shall denote by $\widetilde{\Phi}(N) \subset \widetilde{\Psi}(N)$ the subset of conjugate self-dual generic parameters. (Recall that $\psi$ is called generic if $\nu_i$ is the trivial representation of $\SU(2)$ for all $i=1,\ldots,r$.) A generic parameter is often written as $\phi$. Put $\widetilde{\Phi}_{\mathrm{sim}}(N):=\widetilde{\Phi}(N)\cap \widetilde{\Psi}_{\simp}(N)$.

A conjugate self-dual cuspidal automorphic representation $\phi$ is
said to be \textbf{conjugate-orthogonal} (resp. \textbf{conjugate-symplectic}) if
the Asai $L$-function
\[
		L(s, \phi, \mathrm{As}^+)\quad \textrm{ (resp. $L(s, \phi, \mathrm{As}^{-})$) }
\]
has a pole at $s=1$.  Then $\phi \in \widetilde{\Phi}_{\mathrm{sim}}(N)$ is
either conjugate-orthogonal or conjugate-symplectic, and this is mutually exclusive. Indeed, there is a decomposition of the Rankin-Selberg $L$-function $$L(s,\phi\times \phi^c)=L(s, \phi, \mathrm{As}^+)L(s, \phi, \mathrm{As}^-),$$
which has a simple pole at $s=1$ since $\phi$ is cuspidal and $\phi^\star\simeq \phi$. Each factor on the right hand side has no zero at $s=1$ by a result of Shahidi (\cite[Thm 5.1]{Shahidi81}). Hence exactly one factor on the right has a pole at $s=1$. (See the paragraph preceding the theorem 2.5.4 in \cite{Mok} for more details and references.)

We are almost ready to define the global parameters for $U_{E/F}(N)$ by means of conjugate self-dual parameters. Let us fix
a pair of characters
 $\chi_{+} \in \mathcal{Z}_E^+ $ and $\chi_{-} \in \mathcal{Z}_E^-$.
(We can make the definition more natural by considering all choices of $\chi_+$ and $\chi_-$ at once. See \S\ref{subsub:canonical-def} below.)

\begin{prop}(1st seed theorem)\label{prop:1st-seed-thm}
  For each $\phi\in \tilde{\Phi}_{\simp}(N)$ there exists a unique $\fke_\phi=(G_\phi,s_\phi,\eta_\phi)\in \tilde\cE_{\el}(N)$ such that $c(\phi)=\eta_\phi(c(\pi))$ for some $\pi\in \cA_2(G)$. The datum $\fke_\phi$ is simple and may be represented by
  \[
	(U_{E/F}(N),1, \eta_{\chi_\kappa}),
\]
  where $\kappa \in \{\pm1\}$ is equal to $(-1)^{N-1}$ (resp. $(-1)^N$) if $\phi$ is conjugate-orthogonal
(resp. conjugate-symplectic).
\end{prop}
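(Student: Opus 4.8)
The plan is to reduce this global statement to the combination of the local sign dichotomy (Lemma \ref{lem:image-xi_chi}) and Arthur's seed theorem for quasi-split classical groups as imported from \cite{Mok}, using the fact that the conjugacy class data $c(\phi)$ of a cuspidal $\phi \in \tilde\Phi_{\simp}(N)$ determines $\phi$ by the Jacquet--Shalika theorem. First I would recall that $\phi \in \tilde\Phi_{\simp}(N)$ corresponds, under $\psi \mapsto \pi_\psi$, to a conjugate self-dual cuspidal representation of $\GL(N,\A_E)$, and that by the discussion following \eqref{eq:chain-of-A(N)} such a representation is pinned down by its Satake data $c(\phi) \in \cC_{\aut}(N)$. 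So the content of the existence and uniqueness assertion is: there is exactly one elliptic endoscopic triple $\fke = (G^\fke, s^\fke, \eta^\fke) \in \tilde\cE_\el(N)$ and one $\pi \in \cA_2(G^\fke)$ with $\eta^\fke(c(\pi)) = c(\phi)$, and it is the simple one $(U_{E/F}(N), 1, \eta_{\chi_\kappa})$ with the stated parity.

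The uniqueness of $\fke_\phi$ among \emph{all} of $\tilde\cE_\el(N)$ (not just the simple triples) I would handle by the ellipticity/cuspidality mismatch: if $\fke$ were a non-simple elliptic triple $U_{E/F}(N_1)\times U_{E/F}(N_2)$ with $N_1, N_2 > 0$, then $\eta^\fke(c(\pi))$ would be the Satake data of an isobaric sum $\pi_1 \boxplus \pi_2$ which cannot equal $c(\phi)$ for a cuspidal simple $\phi$ (again invoking Jacquet--Shalika and the classification of $\cA_2$); similarly the element $s^\fke$ in the double cover $\tilde\cE_\el(N) \to \tilde\cE^\tx{w}_\el(N)$ is forced because the condition $\Int(s)\circ {}^L\theta \circ \eta = \eta$ together with the normalization of the $L$-embedding $\eta_{\chi_\kappa}$ leaves only $s = 1$ (as opposed to $-1$) giving base change rather than twisted base change in a way compatible with $\phi$ being a \emph{stable} base change rather than an unstable one. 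That reduces everything to: a simple triple $(U_{E/F}(N), 1, \eta_{\chi_\kappa})$ has $c(\phi)$ in the image of $\eta_{\chi_\kappa}$ for some $\pi \in \cA_2(U_{E/F}(N))$ if and only if $\kappa$ is the stated parity.

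For that last equivalence I would invoke the quasi-split seed theorem from \cite{Mok} (the global analogue of Lemma \ref{lem:image-xi_chi}): Mok's classification for quasi-split $U_{E/F}(N)$ establishes precisely that every $\phi \in \tilde\Phi_{\simp}(N)$ descends to the quasi-split unitary group under exactly one of $\eta_{\chi_+}$, $\eta_{\chi_-}$, and that the relevant $\kappa$ is governed by whether $\phi$ is conjugate-orthogonal or conjugate-symplectic, via the pole of $L(s,\phi,\As^\pm)$ at $s=1$; the precise formula $\kappa = (-1)^{N-1}$ in the conjugate-orthogonal case and $\kappa = (-1)^N$ in the conjugate-symplectic case matches the local parity bookkeeping recorded after Lemma \ref{lem:image-xi_chi} (using $b(\psi) = b(\phi)b(\nu)$ with $\nu$ trivial, so $b(\phi) = \kappa(-1)^{N-1}$ and conjugate-orthogonality means $b(\phi) = +1$). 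The existence of a $\pi \in \cA_2(U_{E/F}(N))$ realizing $c(\phi)$ is again part of Mok's main theorem (the image of the global base-change map is exactly the conjugate self-dual discrete spectrum of $\GL(N,\A_E)$ of the correct parity).

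The main obstacle I anticipate is not any single hard computation but the \emph{bookkeeping of normalizations}: one must be careful that the sign convention in the definition of $\eta_{\chi_\kappa}$ in \S\ref{subsub:endo-unitary} (where $w_c \mapsto (I_N, \kappa\Phi_N)\Phi_N^{-1}$-type formulas appear), the definition of ``parity'' of a simple triple as $(-1)^{N-1}\kappa$, and the Asai-$L$-function characterization of conjugate-orthogonal/symplectic all line up, so that the final statement comes out with $(-1)^{N-1}$ and $(-1)^N$ in the places claimed and not swapped. I would therefore devote the bulk of the written proof to tracing these conventions through, citing \cite[Thm 2.5.4]{Mok} and \cite[\S3]{GGP12} for the compatibility, rather than to the structural reduction, which is essentially formal once Jacquet--Shalika and Mok's classification are in hand.
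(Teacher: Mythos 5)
The paper's own proof is a one-line citation to Mok's Theorems 2.4.2 and 2.5.4 (Mok's seed theorem for quasi-split unitary groups together with the Asai $L$-function characterization of parity). Your proposal correctly identifies that the heart of the matter is Mok, and your unpacking of the reduction — Jacquet--Shalika pins down $\pi_\phi$ from $c(\phi)$, a composite elliptic triple $U(N_1)\times U(N_2)$ would produce a non-cuspidal isobaric sum, and the parity bookkeeping matches via $b(\phi)=\kappa(-1)^{N-1}$ — is sound. Note you should cite both Theorems 2.4.2 and 2.5.4, since 2.4.2 gives the existence and uniqueness of the descent datum while 2.5.4 supplies the Asai pole criterion that determines $\kappa$.

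However, the middle of your argument contains a genuine confusion. You write that ``the element $s^\fke$ in the double cover $\tilde\cE_\el(N)\to\tilde\cE^{\mathrm{w}}_\el(N)$ is forced because the condition $\Int(s)\circ{}^L\theta\circ\eta=\eta$ ... leaves only $s=1$ (as opposed to $-1$) giving base change rather than twisted base change.'' This mixes up two different dichotomies. The distinction between base change and twisted base change is the choice of sign $\kappa$ in the $L$-embedding $\eta_{\chi_\kappa}$ (the paper says this explicitly right after defining $\eta_{\chi_\kappa}$), not the choice of the twisting element $s$. The $s=1$ versus $s=-1$ ambiguity is precisely what the ``double cover'' $\tilde\cE_\el(N)\to\tilde\cE^{\mathrm{w}}_\el(N)$ records, and the condition $\Int(s)\circ{}^L\theta\circ\eta=\eta$ does \emph{not} rule out the central twist, since $\Int(s)$ is trivial for central $s$. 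Moreover, the condition $c(\phi)=\eta_\phi(c(\pi))$ in the statement of the proposition involves only $\eta_\phi$ and not $s_\phi$, so it cannot by itself distinguish $s=1$ from $s=-1$; the uniqueness assertion is really at the level of weak classes (or, equivalently, one should read the proposition as specifying the representative $s=1$ by convention). The phrase ``$\phi$ being a stable base change rather than an unstable one'' is also off: the parity of $\phi$ determines which embedding $\eta_{\chi_\pm}$ it descends along, but ``stable/unstable'' is not the right description of that dichotomy. I would excise this paragraph and replace it by the observation that uniqueness within $\tilde\cE^{\mathrm{w}}_\el(N)$ follows from your cuspidality argument plus Mok, and that the representative with $s=1$ is a normalization, not a consequence of the descent condition.
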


\begin{proof}
  This follows from the theorems 2.4.2 and 2.5.4 of \cite{Mok}.
\end{proof}

 Given a $\psi^N \in \widetilde{\Psi}(N)$, which admits a decomposition \eqref{eq:formal-global-GL(N)}, write $K_{\psi^N}$ for the indexing set $\{1,\ldots, r\}$. We have the decomposition $K_{\psi^N} = I_{\psi^N} \sqcup J_{\psi^N} \sqcup (J_{\psi^N})^\star$, where $I_{\psi^N}$ consists of the set of indices that are fixed under the involution $i\leftrightarrow i^\star$ whilst
$J_{\psi^N}$ is a set of representatives for the orbits of size two of that involution.  We can then write
the parameter $\psi$ in the following form.
\begin{equation}\label{eq:psi-global-dec}
  	\psi^N = (\boxplus_{i \in I_{\psi^N}} \ell_i \psi^{N_i}_i) \boxplus (\boxplus_{j \in J_{\psi^N}} \ell_j (\psi^{N_j}_j \boxplus \psi^{N_j}_{j^\star})),
\end{equation}
where  $\psi_i = \mu_i \boxtimes \nu_i \in \Psi_{\mathrm{sim}}(N_i)$, $N_i = m_i n_i$ for $i=1,\ldots, r$ as before, and where $\psi^{N_i}_i$ are conjugate self-dual and $\psi^{N_j}_j$ are not.
To each $i \in I_{\psi}$, we can associate a pair  $(U_{E/F}(m_i), \eta_{\chi_{\kappa(\mu_i)}})$
for a unique $\kappa(\mu_i)\in \{\pm1\}$ by considering the pair associated to the simple generic factor
$\mu_i \in \widetilde{\Phi}(m_i)$.  We shall let $H_i := U_{E/F}(m_i)$.
To each $j \in J_{\psi}$, we shall let $H_j := G_{E/F}(m_j)$.

We shall denote by $\{K_{\psi}\}$ the set of orbits of $K_\psi$ under the involution $k\leftrightarrow k^\star$,
which can be identified with $I_{\psi} \sqcup J_\psi$.
To each $k \in \{K_{\psi}\}$, we have associated  $H_k$, a connected reductive group defined over $F$.
We shall form
 the fibre product over $W_F$
\[
	\mathcal{L}_{\psi} = \prod_{k \in \{K_\psi\}} ({}^L H_k \rightarrow W_F).
\]
The group $\mathcal{L}_{\psi}$
shall serve as our substitute for the part of the global Langlands group that accounts for the parameter $\psi$.
If $k=i \in I_{\psi}$, we have the embedding
\[
	\widetilde{\mu}_i := \eta_{\chi_{\kappa(\mu_i)}} : {}^L U_{E/F}(m_i) \rightarrow {}^L G_{E/F}(m_i)
\]
If $k=j \in J_{\psi}$,  then
we define the embedding
\begin{eqnarray*}
\widetilde{\mu}_j:  {}^L G_{E/F}(m_j) &\rightarrow& {}^L G_{E/F}(2 m_j)	\\
					g_1 \times g_2 \times w	&\mapsto&
                    \left(
                      \begin{array}{cc}
                        g_1 & 0 \\
                        0 & J_n {}^t g_2^{-1} J_n^{-1} \\
                      \end{array}
                    \right)
                   \times
                    \left(
                      \begin{array}{cc}
                        g_2 & 0 \\
                        0 & J_n {}^t g_1^{-1} J_n^{-1} \\
                      \end{array}
                    \right)\times w,
\end{eqnarray*}
for $w\in W_F$.
Putting everything together,  we associate to a parameter $\psi^N \in \widetilde{\Psi}(N)$ the
$L$-homomorphism
\[
	\widetilde{\psi}^N : \mathcal{L}_{\psi} \times \SL(2, \C) \rightarrow {}^L G_{E/F}(N)
\]
which is defined as the direct sum
\[
	\widetilde{\psi}^N = \left(\bigoplus_{i \in I_\psi} \ell_i(\tilde \mu_i \boxtimes \nu_i) \right)
		\oplus
		\left(\bigoplus_{j \in J_\psi} \ell_j(\tilde\mu_j \boxtimes \nu_j) \right)
\]
where we have identified an $n$-dimensional representation $\nu : \SL(2, \C) \rightarrow \GL(n,\C)$
as the homomorphism
\begin{eqnarray*}
	\overline{\nu} : \SL(2, \C) &\rightarrow&  \widehat{G}_{E/F}(n) = \GL(n,\C) \times \GL(n,\C)	\\
		g &\mapsto& \nu(g) \times \nu(g).
\end{eqnarray*}
We remind the reader that any finite dimensional representation of $\SL(2,\C)$ is self-dual.
Consequently, we shall  write $\nu$ in place of $\widetilde{\nu}$.

We shall now define the parameter set
$\Psi(U_{E/F}(N), \eta_{\chi_\kappa})$ to be the set %
 consisting of pairs
$\psi = (\psi^N, \widetilde{\psi})$ where $\psi^N \in \widetilde{\Psi}(N)$ and
\[
	\widetilde{\psi} : \mathcal{L}_{\psi^N} \times \SL(2,\C) \rightarrow {}^L U_{E/F}(N)
\]
is an $L$-homomorphism (considered up to $\widehat{U}$-conjugacy) such that
\[
	\widetilde{\psi}^N = \eta_{\chi_\kappa} \circ \widetilde{\psi}
\]
Denote by $\Psi_2(U_{E/F}(N), \eta_{\chi_\kappa})$ the subset of $\psi=(\psi^N,\tilde\psi)$ such that
$\psi^N$ is elliptic (i.e. all $\ell_i$ are 1 and $J_{\psi^N}$ is empty in \eqref{eq:psi-global-dec}).

  The group $\cL_\psi$ is a substitute for the conjectural global Langlands group so that the datum $\tilde{\psi}$ allows to define various invariants associated with $\psi = (\psi^N, \tilde{\psi})\in \Psi(U_{E/F}(N), \eta_{\chi_\kappa})$, which is only a formal parameter.
    If $\psi_1=\psi_2$ in the parameter set then $\psi_1^N=\psi_2^N$ in $\tilde \Psi(N)$, so in particular there is an isomorphism $\cL_{\psi_1}\simeq \cL_{\psi_2}$ which is canonical (since there is a unique bijection between $\{K_{\psi_1}\}$ and $\{K_{\psi_2}\}$ matching the $\star$-orbits of simple parameters).

   The centralizer group for $\psi$ and its variants may be defined as follows, where we put $G^*:=U_{E/F}(N)$.%
   $$S_\psi:=\Cent(\Im \tilde \psi,\hat G^*),\quad \ol S_\psi:=S_\psi/Z(\hat G^*)^\Gamma,$$
   $$\cS_\psi:=\pi_0(S_\psi),\quad \ol \cS_\psi:=\pi_0(\ol S_\psi),$$
  $$\srad_\psi:=(S_\psi\cap \hat G_{\der}^*)^0,\quad S^\natural_\psi:=S_\psi/\srad_\psi.$$
  We also write $S_\psi(G^*)$ and so on if the group $G^*$ is to be emphasized.
  If two parameters $\psi_1=(\psi_1^N,\tilde\psi_1)$ and $\psi_2=(\psi_2^N,\tilde\psi_2)$ are equal then $\tilde\psi_1$ and $\tilde\psi_2$ are $\hat G^*$-conjugate by definition. So there is an isomorphism $S_{\psi_1}\simeq S_{\psi_2}$, which is canonical up to $S_{\psi_1}$-conjugacy. Similarly the elements of $\ol S_\psi$, $\cS_\psi$, $\ol \cS_\psi$, $\srad_\psi$, and $S^\natural_\psi$ are well-defined up to inner automorphisms of $S_\psi$.

  To describe the group $S_\psi$ explicitly, consider
   the decomposition of $\psi^N$ into irreducibles as in \eqref{eq:psi-global-dec} (with $\psi^N$ in place of $\psi$). Define (cf. \eqref{eq:kappa(psi)-kappa(phi)}) $$\kappa(\psi_i):=(-1)^{N_i-m_i-n_i-1}\kappa(\mu_i).$$
    Take a partition $I_{\psi^N}=I_{\psi^N}^+\coprod I_{\psi^N}^-$ such that $i\in I_{\psi^N}$ belongs to $I_{\psi^N}^+$ (resp. $I_{\psi^N}^-$) if $\kappa(\psi_i)=\kappa (-1)^{N-N_i}$ (resp. otherwise). It follows from the analogue of Lemma \ref{lem:image-xi_chi} for $\cL_\psi$ in place of $\cL_E$, using an analogous argument for obtaining \eqref{eq:kappa(psi)-kappa(phi)}, that the parameter $\tilde \psi_i$ factors through $\eta_{\kappa}(\psi_i)$. As in the local case
     we have that $\ell_i$ is even for each $i\in I_{\psi^N}^-$ and that
   \begin{equation}\label{e:global-centralizer}
  S_\psi\simeq \prod_{i\in I_{\psi^N}^+} \O(\ell_i,\C) \times \prod_{i\in I_{\psi^N}^-} \Sp(\ell_i,\C)\times \prod_{j\in J_{\psi^N}} \GL(\ell_j,\C).
   \end{equation}
  Again the group $Z(\hat G^*)^\Gamma=\{\pm 1\}$ is embedded diagonally into the right hand side and
  $$\cS_\psi\stackrel{\tx{can}}{\simeq} S^\natural_\psi\simeq (\Z/2\Z)^{|I_{\psi^N}^+|},\quad \ol{\cS}_\psi\simeq \left\{
  \begin{array}{cc}
    (\Z/2\Z)^{|I_{\psi^N}^+|-1}, & \forall_{i\in I_{\psi^N}^+},~ 2|\ell_i,\\
    (\Z/2\Z)^{|I_{\psi^N}^+|}, & \mbox{otherwise}.
  \end{array}\right.$$

  It is useful to consider a finer chain of parameter sets in $\Psi(G^*,\eta_{\chi})$ for the later trace formula argument. We write
  \begin{equation}\label{e:Psi-chain-global-U}
    \Psi_{\simp}(G^*,\eta_{\chi})\subset \Psi_2(G^*,\eta_{\chi})\subset \Psi_{\el}(G^*,\eta_{\chi})\subset \Psi_{\disc}(G^*,\eta_{\chi})\subset \Psi(G^*,\eta_{\chi}),
  \end{equation}
  which are defined in terms of the group $\ol{S}_\psi$ as follows, cf. \cite[\S4.1]{Arthur}.
  \begin{eqnarray}
    \Psi_{\simp}(G^*,\eta_{\chi})&:=&\{\psi\in \Psi(G^*,\eta_{\chi})~:~ |\ol S_\psi|=1\}, \nonumber \\
    \Psi_{\el}(G^*,\eta_{\chi})&:=&\{\psi\in \Psi(G^*,\eta_{\chi})~:~ \exists s\in \ol{S}_{\psi,\sspl},~|\ol S_{\psi,s}|<\infty\},\nonumber \\
    \Psi_{\disc}(G^*,\eta_{\chi})&:=&\{\psi\in \Psi(G^*,\eta_{\chi})~:~ |Z( S_\psi)|<\infty\}. \nonumber
  \end{eqnarray}
  Here $S_{\psi,s}$ denotes the centralizer of $s$ in $S_\psi$. The set $\Psi_2(G^*,\eta_{\chi})$ was already defined and can also be characterized as the subset of $\psi\in \Psi(G^*,\eta_{\chi})$ such that $\ol S_\psi$ is finite.

  We have not talked about the trace formula yet but motivate the above definition by means of the trace formula. A parameter $\psi$ in $\Psi_{\simp}(G^*,\eta_{\chi})$ (resp. $\Psi_2(G^*,\eta_{\chi})$) is supposed to contribute to the discrete spectrum of $\GL(N,\A_E)$ (resp. $G^*(\A_F)$). The set $\Psi_{\el}(G^*,\eta_{\chi})$ consists of $\psi$ which contributes to the discrete spectrum of an elliptic endoscopic group of $G^*$. The condition $\psi\in\Psi_{\disc}(G^*,\eta_{\chi})$ should mean that the $\psi$-part of the discrete part of the trace formula for $G^*$ does not vanish identically (i.e. $I^{G^*}_{\disc,\psi}\neq 0$ in later notation).
  The reader is cautioned that $\Psi_{2}(G^*,\eta_{\chi})$ (resp. $\Psi_{\el}(G^*,\eta_{\chi})$) is not the intersection of $\Psi_{2}(G(N))$ (resp. $\Psi_{\el}(G(N))$) with $\Psi(G^*,\eta_{\chi})$ though it is true that $\Psi_{\simp}(G^*,\eta_{\chi})=\Psi(G^*,\eta_{\chi})\cap \tilde \Psi_{\simp}(N)$.
\begin{lem}\label{lem:global-disc-param}
  Let $\psi=(\psi^N,\tilde\psi)\in \Psi(G^*,\eta_\chi)$. Suppose that $\ell_1,...,\ell_r\in \Z_{\ge1}$ are the multiplicities of simple factors in the decomposition of $\psi^N$. Then $\psi\in \Psi_2(G^*,\eta_\chi)$ if and only if $\ell_i=1$ for all $1\le i\le r$ and $I^-_{\psi^N}=J_{\psi^N}=\emptyset$.
\end{lem}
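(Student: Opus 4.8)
The plan is to unwind the explicit description of $\ol{S}_\psi$ given in \eqref{e:global-centralizer} and compare it with the definition $\Psi_2(G^*,\eta_\chi)=\{\psi : |\ol{S}_\psi|<\infty\}$ noted just before the lemma. Recall from \eqref{e:global-centralizer} that
\[
  S_\psi\simeq \prod_{i\in I_{\psi^N}^+}\O(\ell_i,\C)\times\prod_{i\in I_{\psi^N}^-}\Sp(\ell_i,\C)\times\prod_{j\in J_{\psi^N}}\GL(\ell_j,\C),
\]
with $Z(\hat G^*)^\Gamma=\{\pm1\}$ embedded diagonally, so $\ol{S}_\psi$ is this product modulo $\{\pm1\}$. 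The key point is that a complex reductive group is finite if and only if its identity component is trivial, and quotienting by the finite central $\{\pm1\}$ does not affect finiteness. So the plan is: first observe $|\ol{S}_\psi|<\infty$ iff $|S_\psi|<\infty$ iff each factor in the product is finite. Then note $\O(\ell_i,\C)$ is finite iff $\ell_i=1$ (it is $\{\pm1\}$ in that case), $\Sp(\ell_i,\C)$ is never finite (for $\ell_i\ge 2$, which is forced since $\ell_i$ is even and positive), and $\GL(\ell_j,\C)$ is never finite. Hence $|S_\psi|<\infty$ iff $I_{\psi^N}^-=\emptyset$, $J_{\psi^N}=\emptyset$, and $\ell_i=1$ for all $i\in I_{\psi^N}^+$. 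Since $I_{\psi^N}=I_{\psi^N}^+\sqcup I_{\psi^N}^-$, the conditions $I_{\psi^N}^-=\emptyset$ together with $\ell_i=1$ for $i\in I_{\psi^N}^+$ say exactly that $\ell_i=1$ for all $i$ indexing simple factors (as the $\ell_j$ for $j\in J_{\psi^N}$ are then vacuous) and $I_{\psi^N}^-=J_{\psi^N}=\emptyset$.

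To make the enumeration of simple factors precise: the simple factors of $\psi^N$ in the decomposition \eqref{eq:psi-global-dec} are indexed by $I_{\psi^N}\sqcup J_{\psi^N}\sqcup (J_{\psi^N})^\star$, with multiplicities $\ell_i$ ($i\in I_{\psi^N}$) and $\ell_j=\ell_{j^\star}$ ($j\in J_{\psi^N}$). If $J_{\psi^N}=\emptyset$ then the $\ell$'s appearing are precisely the $\ell_i$ for $i\in I_{\psi^N}=I_{\psi^N}^+$, and asking all of them to be $1$ together with $I_{\psi^N}^-=\emptyset$ is the same as asking all multiplicities to be $1$. Conversely, if some $\ell_j$ with $j\in J_{\psi^N}$ exceeds (or even equals) anything, the $\GL(\ell_j,\C)$ factor is infinite regardless, so $J_{\psi^N}$ must be empty for $\psi\in\Psi_2$; and then the remaining conditions are as stated. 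I will spell this out so the statement "$\ell_i=1$ for all $1\le i\le r$" (using the indexing $K_{\psi^N}=\{1,\dots,r\}$) matches.

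There is essentially no hard step here; the only thing requiring a word of care is the bookkeeping between the two indexings — the global parameter is written both as $\ell_1\psi_1\boxplus\cdots\boxplus\ell_r\psi_r$ with $r$ simple factors and as \eqref{eq:psi-global-dec} with index set $I_{\psi^N}\sqcup J_{\psi^N}\sqcup(J_{\psi^N})^\star$ — and making sure that "$\ell_i=1$ for all $i$ and $I_{\psi^N}^-=J_{\psi^N}=\emptyset$" is genuinely equivalent to the finiteness of every factor in \eqref{e:global-centralizer}. The main (very minor) obstacle is therefore just to state cleanly why $\O(1,\C)$ is finite while $\O(\ell,\C)$, $\Sp(\ell,\C)$ for $\ell\ge 2$ and $\GL(\ell,\C)$ for $\ell\ge 1$ are infinite; this is standard (dimension count on identity components), so the proof will be short.

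\begin{proof}
Recall that $\Psi_2(G^*,\eta_\chi)$ consists of those $\psi\in\Psi(G^*,\eta_\chi)$ for which $\ol S_\psi$ is finite, equivalently for which $S_\psi$ is finite, since $\ol S_\psi=S_\psi/Z(\hat G^*)^\Gamma$ and $Z(\hat G^*)^\Gamma=\{\pm1\}$ is finite. By \eqref{e:global-centralizer} we have
\[
  S_\psi\simeq \prod_{i\in I_{\psi^N}^+}\O(\ell_i,\C)\times\prod_{i\in I_{\psi^N}^-}\Sp(\ell_i,\C)\times\prod_{j\in J_{\psi^N}}\GL(\ell_j,\C),
\]
so $S_\psi$ is finite if and only if each factor in this product is finite. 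Now $\GL(\ell,\C)$ is infinite for every $\ell\ge1$, and $\Sp(\ell,\C)$ is infinite for every $\ell\ge2$; since each $\ell_i$ with $i\in I_{\psi^N}^-$ is a positive even integer, hence $\ge2$, finiteness of $S_\psi$ forces $I_{\psi^N}^-=\emptyset$ and $J_{\psi^N}=\emptyset$. Finally $\O(\ell,\C)$ is finite if and only if $\ell=1$ (in which case it equals $\{\pm1\}$), and is infinite for $\ell\ge2$. Hence $S_\psi$ is finite if and only if $I_{\psi^N}^-=J_{\psi^N}=\emptyset$ and $\ell_i=1$ for all $i\in I_{\psi^N}^+=I_{\psi^N}$.

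It remains to match this with the multiplicities $\ell_1,\dots,\ell_r$ of the simple factors in the decomposition of $\psi^N$. In the notation of \eqref{eq:psi-global-dec}, the index set $K_{\psi^N}=\{1,\dots,r\}$ decomposes as $I_{\psi^N}\sqcup J_{\psi^N}\sqcup(J_{\psi^N})^\star$, with the multiplicity of the factor indexed by $i\in I_{\psi^N}$ being $\ell_i$ and with $\ell_j=\ell_{j^\star}$ for $j\in J_{\psi^N}$. If $\psi\in\Psi_2(G^*,\eta_\chi)$ then by the above $J_{\psi^N}=\emptyset$, so $K_{\psi^N}=I_{\psi^N}$, and the condition that $\ell_i=1$ for all $i\in I_{\psi^N}$ says exactly that $\ell_i=1$ for all $1\le i\le r$; moreover $I^-_{\psi^N}=J_{\psi^N}=\emptyset$. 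Conversely, if $\ell_i=1$ for all $1\le i\le r$ and $I^-_{\psi^N}=J_{\psi^N}=\emptyset$, then in \eqref{e:global-centralizer} only $\O(1,\C)$-factors occur, so $S_\psi$ is finite and $\psi\in\Psi_2(G^*,\eta_\chi)$. This proves the lemma.
\end{proof}
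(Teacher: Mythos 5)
Your proof is correct and takes the same approach as the paper: both reduce the finiteness of $\ol{S}_\psi$ to that of $S_\psi$ via the finiteness of $Z(\hat G^*)^\Gamma$ and then read off the conclusion from the explicit product decomposition \eqref{e:global-centralizer}. You simply spell out the factor-by-factor finiteness check and the indexing bookkeeping that the paper leaves implicit.
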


\begin{proof}
  From the explicit description of \eqref{lem:global-disc-param} and the finiteness of $Z(\hat G^*)^\Gamma$ it is clear that $\ol{S}_\psi$ is finite if and only if $S_\psi$ is finite if and only if all $\ell_i$ are equal to 1 and both $I^-_{\psi^N}$ and $J_{\psi^N}$ are empty.
\end{proof}

  We say that $\psi=(\psi^N,\tilde\psi)$ is \textbf{generic} if $\psi^N\in \tilde\Phi(N)$, i.e. if $\psi^N$ is generic. Passing to subsets of generic parameters in \eqref{e:Psi-chain-global-U}, we obtain a chain of sets
   $$\Phi_{\simp}(G^*,\eta_{\chi})\subset \Phi_2(G^*,\eta_{\chi})\subset \Phi_{\el}(G^*,\eta_{\chi})\subset \Phi_{\disc}(G^*,\eta_{\chi})\subset \Phi(G^*,\eta_{\chi}).$$

\subsubsection{Localizing global parameters for $U_{E/F}(N)$}\label{subsub:localizations-U}

Fix characters $\chi_{+} \in \mathcal{Z}_E^+ $ and $\chi_{-} \in \mathcal{Z}_E^-$ as above. Let $\kappa\in \{\pm 1\}$ and
 consider a parameter $\psi = (\psi^N, \widetilde{\psi}) \in \Psi(U_{E/F}(N),\eta_{\chi_\kappa})$.
If $v$ is a place of $F$,
we would like to define the localization $\psi_v \in \Psi^+(U_{E_v/F_v}(N))$ as the ($\widehat{U}_{E_v/F_v}(N)$-conjugacy class of)
$L$-homomorphism
\[
	\psi_v : L_{F_v} \times \SU(2) \rightarrow {}^L U_{E_v/F_v}(N)
\]
such that $\psi_v^N = \eta_{\chi_\kappa} \circ \psi_v$. If such a homomorphism exists, it is necessarily unique by Lemma \ref{lem:image-xi_chi}.

Consider the easier case where $v$ splits as $w\ol w$ in $E$ so that $E_v=F_w\times F_{\ol w}$. (Recall that $w$ is determined by the composite of the distinguished embeddings $E\hra \ol{F}$ and $\ol{F}\hra \ol{F}_v$.) Then $\psi^N\in \Psi(GL(N)_E)$ has localizations $\psi^N_w\in \Psi(\GL(N)_{E_w})$ and $\psi^N_{\ol w}\in \Psi(\GL(N)_{E_{\ol w}})$, cf. \S\ref{subsub:localizations-GL}. Let $^L \xi_w: {}^L \GL(N)_{E_w}\simeq {}^L U_{E_v/F_v}(N)$ be the isomorphism induced by the isomorphism $\xi_w: U_{E_v/F_v}(N)\simeq \GL(N)_{E_w}$. Then
$$\psi_v: L_{F_v} \times \SU(2) \stackrel{\psi^N_w}{\ra} {}^L \GL(N)_{E_w} \stackrel{^L \xi_w}{\rightarrow} {}^L U_{E_v/F_v}(N)$$
(or its suitable $\hat U_{E_v/F_v}(N)=\GL(N,\C)$-conjugate) is the desired localization.

When $v$ does not split in $E$ the existence of a localization $\psi_v$ turns out to be highly nontrivial and is only proved as a part of an inductive argument proving all the main theorems in \cite{Mok} (like Proposition \ref{prop:1st-seed-thm} above), as it was the case in \cite{Arthur} for symplectic and orthogonal groups. More specifically, the statement is contained in Theorem 2.4.10 and Corollary 2.4.11 in \cite{Mok}.

We summarize this discussion as follows.

\begin{prop}(2nd seed theorem)\label{prop:2nd-seed-thm}
  For each $\psi\in\Psi(U_{E/F}(N),\eta_\kappa)$ there exists $\psi_v \in \Psi^+(U_{E_v/F_v}(N))$ as above, i.e. such that $\psi_v^N = \eta_{\chi_\kappa} \circ \psi_v$. The isomorphism class of $\psi_v$ is uniquely determined.
\end{prop}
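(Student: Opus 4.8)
The plan is to reduce the existence and uniqueness of $\psi_v$ at a non-split place $v$ to the corresponding statements of \cite{Mok}. Uniqueness is the easy half: it is an immediate consequence of Lemma \ref{lem:image-xi_chi}, which asserts that $\eta_{\chi_\kappa,*}$ is injective on $\Psi^+(U_{E_v/F_v}(N))$; so if two candidate localizations $\psi_v,\psi_v'$ both satisfy $\psi_v^N=\eta_{\chi_\kappa}\circ\psi_v=\eta_{\chi_\kappa}\circ\psi_v'$, they must coincide. For existence, first I would dispose of the split case as in the discussion preceding the statement: when $v=w\bar w$, one transports the localized $\GL(N)_{E_w}$-parameter $\psi^N_w$ through the isomorphism $^L\xi_w:{}^L\GL(N)_{E_w}\isom{}^LU_{E_v/F_v}(N)$ coming from $\xi_w:U_{E_v/F_v}(N)\isom\GL(N)_{E_w}$; one checks directly from the definitions of $\eta_{\chi_\kappa}$ and of the base-change identification that $\eta_{\chi_\kappa}\circ({}^L\xi_w\circ\psi^N_w)$ agrees with $\psi^N_v$ up to $\hat U$-conjugacy, which is exactly the condition required.

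The substance is the case where $v$ is inert (or ramified) in $E$. Here I would not attempt an independent construction; instead I would cite the fact, embedded in the inductive proof of the main theorems of \cite{Mok}, that every global parameter $\psi^N\in\widetilde\Psi(N)$ lying in the image of $\eta_{\chi_\kappa}$ (i.e.\ coming from some $\psi\in\Psi(U_{E/F}(N),\eta_{\chi_\kappa})$) has the property that its localization $\psi^N_v\in\Psi^+(\GL(N),E_v)$ is conjugate self-dual of the correct parity $(-1)^{N-1}\kappa$ at the relevant simple constituents. Concretely: write $\psi^N$ in the form \eqref{eq:psi-global-dec}, so that $\psi^N=(\boxplus_{i\in I_{\psi^N}}\ell_i\psi_i^{N_i})\boxplus(\boxplus_{j\in J_{\psi^N}}\ell_j(\psi_j^{N_j}\boxplus\psi_{j^\star}^{N_j}))$. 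For each simple conjugate self-dual factor $\psi_i=\mu_i\boxtimes\nu_i$ one invokes Theorem 2.4.10 and Corollary 2.4.11 of \cite{Mok} (the global analogue of Lemma \ref{lem:image-xi_chi}, applied to the cuspidal datum $\mu_i$) to produce a local parameter $\mu_{i,v}\in\Psi^+(U_{E_v/F_v}(m_i))$ with $\eta_{\chi_{\kappa(\mu_i)}}\circ\mu_{i,v}$ equal to the localization $\mu_{i,v}^{m_i}$; tensoring with $\nu_i$ and using \eqref{eq:kappa(psi)-kappa(phi)} to track the parity shift, one gets a local factor of the desired parity. For the $J_{\psi^N}$-factors, which are not conjugate self-dual, one uses that $\psi_j^{N_j}\boxplus\psi_{j^\star}^{N_j}$ localizes to a parameter of the form $\rho_v\oplus\rho_v^\star$ and hence, by the reducible part of Lemma \ref{lem:image-xi_chi} together with the explicit form of $\widetilde\mu_j$, descends to $U_{E_v/F_v}(2m_j)$. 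Assembling these local factors by direct sum produces $\psi_v$, and by construction $\eta_{\chi_\kappa}\circ\psi_v$ recovers $\psi^N_v$ up to $\hat U$-conjugacy.

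Finally one must check that the assembled $\psi_v$ is genuinely a homomorphism into ${}^LU_{E_v/F_v}(N)$ (not merely into a product of $L$-groups of the constituents) — i.e.\ that the partial descents are compatible with the global embedding $\widetilde\psi:\mathcal L_{\psi^N}\times\SL(2,\C)\to{}^LU_{E/F}(N)$ after restriction to the local Langlands group $L_{F_v}\times\SU(2)$. This follows because the restriction map $\mathcal L_{\psi^N}\to{}^LH_k$ has a local analogue compatible with the $\eta$'s, so that the factorization witnessed globally by $\widetilde\psi$ persists locally; then Lemma \ref{lem:image-xi_chi} guarantees that the resulting local $\hat U$-conjugacy class is well-defined. \textbf{I expect the main obstacle to be the bookkeeping in the inert case}: one is essentially quoting a deep theorem of \cite{Mok} rather than proving it, and the real work is making precise that the seed results there (Theorems 2.4.2, 2.4.10, 2.5.4 and Corollary 2.4.11), which are stated for cuspidal and simple data, can be glued across the decomposition \eqref{eq:psi-global-dec} so as to produce a single local parameter with the correct $\hat U$-conjugacy class and correct behaviour under $\eta_{\chi_\kappa}$ — in particular that the parity signs $\kappa(\psi_i)$ computed from \eqref{eq:kappa(psi)-kappa(phi)} match the ones dictated by the local constituents.
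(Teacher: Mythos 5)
Your proposal is correct and follows the same route as the paper: uniqueness via the injectivity in Lemma \ref{lem:image-xi_chi}, the split case by transport through $^L\xi_w$, and the non-split case by citing Theorem 2.4.10 and Corollary 2.4.11 of \cite{Mok}. The paper in fact goes no further than the citation for the inert case — it explicitly notes that the existence there ``is only proved as a part of an inductive argument proving all the main theorems in \cite{Mok}'' — so your additional gluing sketch is extra detail beyond the paper's own treatment rather than a divergent approach.
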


  Proposition \ref{prop:2nd-seed-thm} can be used to produce a map $L_{F_v}\ra \cL_\psi$ at each $v$ as well as a localization map for centralizer groups $S_\psi\ra S_{\psi_v}$ and its variants. We only sketch the idea, keeping the same notation as in \S\ref{subsub:global-param-U}. Before the proposition our definition and construction in \S\ref{subsub:global-param-U} yield the following diagram without the dotted arrows, in which every triangle and rectangle (not involving the dotted arrows) commute.
   $$\xymatrix{
  L_{F_v}\times \SU(2) \ar@{-->}[d] \ar@{-->}[r]^-{ \psi_v} \ar@/^2pc/[rr]^-{\psi^N_v} & {}^L U_{E_v/F_v}(N) \ar[d]\ar[r]^-{\eta_\chi}
  &  {}^L G_{E_v/F_v}(N) \ar[d]\ar[r] & W_{F_v} \ar[d]\\
  \cL_\psi \times \SL(2,\C)\ar[r]^-{\tilde\psi} \ar@/_2pc/[rr]_-{\tilde \psi^N} & {}^L U_{E/F}(N) \ar[r]^-{\eta_\chi} & {}^L G_{E/F}(N) \ar[r] &  W_F}$$
  Proposition \ref{prop:2nd-seed-thm} provides us with the top dotted arrow such that the top triangle commutes (i.e. $\psi^N_v=\eta_\chi\psi_v$) and the left dotted arrow such that the rectangle enclosed by the first and third vertical arrows commutes. Since $\eta_\chi$ is an injection (locally and globally), it follows that the leftmost rectangle also commutes. Hence the full diagram commutes.

  The diagram gives us localization maps for the centralizer groups. The second vertical arrow carries $\Im (\psi_v)$ into $\Im (\tilde\psi)$, inducing a map
  $$S_\psi\ra S_{\psi_v},\quad\mbox{thus also}\quad \cS_\psi\ra \cS_{\psi_v}~~\mbox{and}~~S^\natural_\psi\ra S^\natural_{\psi_v}.$$
  The map $S_\psi\ra S_{\psi_v}$ sends $Z(\hat U_{E/F}(N))^{\Gamma}$ into $Z(\hat U_{E/F}(N))^{\Gamma_v}$, so we also have maps
  $$\ol S_\psi\ra \ol S_{\psi_v}\quad\mbox{and}\quad \ol \cS_\psi\ra \ol \cS_{\psi_v}.$$
  Each localization map is canonical up to an inner automorphism of $S_{\psi_v}$.

\subsubsection{Parameters for endoscopic groups of $U_{E/F}(N)$}\label{subsub:param-end-U}

  So far we have discussed the parameters sets only for $G^*=U_{E/F}(N)$. We will introduce the analogues for endoscopic groups of $G^*$ (which include Levi subgroups) in both global and local settings.

  First we consider the global case.
  Let $(G^\fke,s^\fke,\eta^\fke)\in \cE(G^*)$.
  Define the parameter set $\Psi(G^\fke,\eta^\fke)$ to be
  \begin{equation}\label{eq:param-end-U-glo}\Psi(G^\fke,\eta^\fke):= \left\{(\psi^N,\tilde\psi^\fke), ~ \psi^N\in \Psi(N),~\tilde\psi^\fke:\cL_{\psi^N}\ra {}^L G^\fke,~\mbox{s.t.}~\eta^\fke\circ \tilde\psi^\fke=\tilde\psi^N\right\},\end{equation}
   where $\tilde\psi^\fke$ is viewed as a $\hat G^\fke$-conjugacy class of $L$-morphisms. In other words $(\psi^N_1,\tilde\psi^\fke_1)$ and $(\psi^N_2,\tilde\psi^\fke_2)$ are considered the same element if $\psi^N_1=\psi^N_2$ and if $\tilde\psi^\fke_1$ and $\tilde\psi^\fke_2$ are $\hat G^\fke$-conjugate. %
  For a more explicit description we fix $\chi_+\in \cZ^+_E$ and $\chi_-\in \cZ^-_E$ globally and locally. Let $\kappa\in \{\pm1\}$ so that we have $^L G^* \hra {}^L G_{E/F}(N)$. Notice that there is an isomorphism
  \begin{equation}\label{eq:Gfke-product}G^\fke\simeq \prod_{i=1}^a U_{E/F}(n_i)\times \prod_{j=1}^b G_{E/F}(m_j)\end{equation}
  for suitable $a,b\ge 0$,  $n_1,...,n_a\ge 1$, and $m_1,...,m_b\ge 1$ such that $N=\sum_i n_i + 2\sum_j m_j$. Let $\underline \kappa=(\kappa_i)_{i=1}^a$ be a collection of signs given by $\kappa_i=(-1)^{N-n_i}\kappa$.
   Without disturbing the weak isomorphism class of the endoscopic triple we can choose $\eta^\fke$ such that the following diagram commutes.
$$\xymatrix{^L\left( \prod_i U_{E/F}(n_i)\times \prod_j G_{E/F}(m_j)\right) \ar[rr]^-{( \eta_{\kappa_i},\chi_\kappa\tilde \mu_j)} \ar[d]^-{\eta^\fke} & &
    {}^L\left( \prod_i G_{E/F}(n_i)\times \prod_j G_{E/F}(2m_j)\right) \ar[d] \\ {}^L U_{E/F}(N) \ar[rr]^-{\eta_\kappa} & & {}^L G_{E/F}(N)}$$
   Here the right vertical map is given as in \S\ref{subsub:endo-unitary}, and $\chi_\kappa\tilde \mu_j$ is the twist of the $L$-morphism $\tilde \mu_j$ in \S\ref{subsub:global-param-U} by $\chi_\kappa$. With the aid of the above diagram we identify
   $$\Psi(G^\fke,\eta^\fke)=\prod_{i=1}^a \Psi(U_{E/F}(n_i),\eta_{\kappa_i}) \times \prod_{j=1}^b \Psi(G_{E/F}(m_j)).$$
   We define $\Psi_2(G^\fke,\eta^\fke)$ by the product of the subsets of discrete parameters on the right hand side. The subset of generic parameters is denoted $\Phi(G^\fke,\eta^\fke)$, $\Phi_2(G^\fke,\eta^\fke)$, etc. For $\psi\in \Psi(G^\fke,\eta^\fke)$ we define the centralizer group $S_\psi$ and its variants as in the case $G^\fke=U_{E/F}(N)$.

  Next we put ourselves in the local case, where $\Psi(G^\fke)$ and $\Psi_2(G^\fke)$ are defined by a general discussion of \S\ref{subsub:L-param-A-param} without reference to an $L$-morphism $\eta^\fke:{}^L G^\fke \ra {}^L G^*$. Still once we fix a choice of $\eta^\fke$, we can show that the two descriptions in the global setting are valid. Namely there is a natural bijection between $\Psi(G^\fke)$ and the right hand side of \eqref{eq:param-end-U-glo} with $\cL_{\psi^N}$ replaced with $L_F\times \SU(2)$. The explicit description of $G^\fke$ as in \eqref{eq:Gfke-product} is again available. Accordingly there is a decomposition $\Psi(G^\fke)=\prod_{i=1}^a \Psi(U_{E/F}(n_i))\times \prod_{j=1}^b \Psi(G_{E/F}(m_j))$ and similarly for discrete parameters.

  Just like when $G^\fke=U_{E/F}(N)$ (or a general linear group), we have a localization map $\Psi(G^\fke,\eta^\fke)\ra \Psi(G^\fke_v)$ compatibly with the product decompositions in the source and the target.

\subsubsection{Relevance of global parameters for $U_{E/F}(N)$}\label{subsub:relevance-global}

  So far our global discussion has been only about quasi-split groups. Now we consider the notion of relevant parameters for inner forms.
  As above let $\kappa\in \{\pm 1\}$ and $\psi = (\psi^N, \widetilde{\psi}) \in \Psi(U_{E/F}(N),\eta_\kappa)$.
 Now consider an inner twist $(G,\xi)$ of $G^*=U_{E/F}(N)$.
  The global parameter $\psi$ is said to be \textbf{$(G,\xi)$-relevant} if it is relevant locally everywhere, i.e. if $\psi_v$ is $(G_v,\xi_v)$-relevant at every place $v$.
  We write $\Psi(G^*,\eta_\kappa)_{(G,\xi)\rel}$, or $\Psi(G^*,\eta_\kappa)_{\Grel}$ if $\G$ is an extended pure inner twist extending $(G,\xi)$,  for the subset of $(G,\xi)$-relevant parameters in $\Psi(G^*,\eta_\kappa)$. Similarly the notation $\Psi_2(G^*,\eta_\kappa)_{(G,\xi)\rel}$ etc will have the obvious meaning.

\begin{lem}\label{l:relevance-and-Levi}
  Let $\psi\in \Psi(G^*,\eta_\kappa)$ and $M^*\subset G^*$ a Levi subgroup. Suppose that $\psi$ comes from a parameter on $M^*$ and that $\psi$ is relevant for an inner twist $(G,\xi)$ of $G^*$. Then $M^*$ transfers to $G$ in the sense of Definition \ref{d:Levi-transfer}.
\end{lem}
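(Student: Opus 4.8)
The plan is to reduce to a purely local statement and then invoke the local-global principle for Levi transfer, Lemma \ref{lem:transfer-Levi-locglo}. So the first move is to fix a place $v$ of $F$ and aim to show that $M^*$ transfers to $G$ over $F_v$; once this is established at every place, Lemma \ref{lem:transfer-Levi-locglo} finishes the argument at once.

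Next I would unwind the hypothesis that $\psi$ ``comes from a parameter on $M^*$'': it means that the $L$-homomorphism $\tilde\psi : \cL_{\psi^N}\times\SL(2,\C)\ra {}^L G^*$ factors, up to $\hat G^*$-conjugacy, through the Levi embedding ${}^L M^*\hra {}^L G^*$. Localizing the resulting parameter on $M^*$ as in \S\ref{subsub:param-end-U}, and using the uniqueness of localizations in Proposition \ref{prop:2nd-seed-thm} together with the compatibility of $\eta_{\chi_\kappa}$ with localization and with the Levi embedding (as recorded in the commuting diagram of \S\ref{subsub:localizations-U}), I would conclude that the localization $\psi_v$ itself factors through ${}^L M^*_v\hra {}^L U_{E_v/F_v}(N)$. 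In particular ${}^L M^*_v$ is a Levi subgroup of ${}^L U_{E_v/F_v}(N)$ containing the image of $\psi_v$.

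Then I would feed in the $(G,\xi)$-relevance of $\psi$, which by definition gives $(G_v,\xi_v)$-relevance of $\psi_v$ at the fixed place $v$. By Definition \ref{def:Xi-relevant-general}, every Levi subgroup of ${}^L U_{E_v/F_v}(N)$ containing the image of $\psi_v$ --- in particular ${}^L M^*_v$ --- is the Levi component of a $(G_v,\xi_v)$-relevant parabolic subgroup. Translating this back to $U_{E_v/F_v}(N)$ (so that the corresponding class of parabolics of $U_{E_v/F_v}(N)$ with Levi factor $M^*_v$ maps to a relevant element of $\mc{P}(G_v)^\Gamma$ under the bijection determined by $\xi_v$) and applying Lemma \ref{lem:tranrel}, which says that relevance of a parabolic is equivalent to transfer of its Levi component, I obtain that $M^*_v$ transfers to $G_v$, i.e. $M^*$ transfers to $G$ over $F_v$. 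Since $v$ was arbitrary, the reduction is complete and Lemma \ref{lem:transfer-Levi-locglo} applies.

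I expect the only genuinely delicate point --- hence the step meriting care --- to be the localization assertion: checking that ``$\psi$ comes from a parameter on $M^*$'' localizes cleanly to ``$\psi_v$ factors through ${}^L M^*_v$'', which rests on the uniqueness clause of Proposition \ref{prop:2nd-seed-thm} and on arranging the various $L$-embeddings ($\eta_{\chi_\kappa}$, the Levi embedding, and their localizations) to be compatible. Everything else is a direct concatenation of Definition \ref{def:Xi-relevant-general} with the already-established Lemmas \ref{lem:tranrel} and \ref{lem:transfer-Levi-locglo}.
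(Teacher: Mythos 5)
Your proof is correct and follows essentially the same route as the paper: reduce to local relevance at each place, obtain local transfer of $M^*$, and conclude via the local-global principle of Lemma \ref{lem:transfer-Levi-locglo}. The paper's own proof is far terser — it simply asserts that, by definition of relevance, $M^*$ transfers to $G$ locally everywhere — whereas you carefully unwind that assertion by localizing the hypothesis on $\psi$ (via Proposition \ref{prop:2nd-seed-thm} and the compatibility of the $L$-embeddings) and then applying Definition \ref{def:Xi-relevant-general} and Lemma \ref{lem:tranrel}; the extra detail does not change the argument but makes the ``by definition'' step transparent.
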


\begin{proof}
  By definition $M^*$ transfers to $G$ locally at every place of $F$. We conclude by Lemma \ref{lem:transfer-Levi-locglo}.
\end{proof}

\subsubsection{Canonical global parameters for $U_{E/F}(N)$}\label{subsub:canonical-def}

  Our definition of various global parameter sets has not been optimal in that it depends on the choice of $\chi_+$ and $\chi_-$, which has been fixed thus far. To remove the dependence we consider all possible choices of $\chi_+$ and $\chi_-$ simultaneously. Define $\Psi(G^*)$ to be the set of equivalence classes\footnote{In the global setup we refrain from using $\Psi(G^*)$ as an abbreviation for $\Phi(G^*,\eta_\chi)$ so as to avoid confusion.}
  \begin{equation}\label{eq:Psi(G^*)-canonical}
  \Psi(G^*):=\left(\coprod_{\chi\in \cZ_E^+\coprod \cZ_E^-} \Psi(G^*,\eta_\chi)\right) / \sim,
  \end{equation}
  where $\psi=(\psi^N,\tilde \psi)\in \Psi(G^*,\eta_\chi)$ and $\psi'=((\psi')^N,\tilde \psi')\in\Psi(G^*,\eta_{\chi'})$ are considered equivalent if
  \begin{itemize}
    \item $(\psi')^N=\psi^N\otimes \chi'\chi^{-1}$ and
    \item the $\hat G^*$-orbits of $\tilde \psi$ and $\tilde \psi'$ are the same via the canonical isomorphism $\cL_\psi\simeq \cL_{\psi'}$.
  \end{itemize}
  Let us elaborate on these conditions. When we write $\psi^N=\boxplus_{i=1}^r \ell_i \mu_i\boxtimes \nu_i$ as usual, the character twist $\psi^N\otimes \chi'\chi^{-1}$ is defined to be $$\boxplus_{i=1}^r \ell_i \left(\mu_i\otimes(\chi'\chi^{-1}\circ\det)\right)\boxtimes \nu_i.$$
  We explained in \S\ref{subsub:localizations-U} that $\cL_\psi\simeq \cL_{\psi'}$ canonically if $\psi^N=(\psi')^N$. Now it is not hard to see that the same remains true when they differ by $\chi'\chi^{-1}$. (Since $\chi'\chi^{-1}$ is conjugate self-dual, the group $^L H_k$ with  its projection onto $W_F$ for $\psi$ and $\psi'$ are identified for each $k$ up to reordering $k$'s.)

\begin{lem}\label{lem:localization-compat}
  Suppose that $\psi\in\Psi(G^*,\eta_\chi)$ and $\psi'\in\Psi(G^*,\eta_{\chi'})$ are equivalent. Let $v$ be a place of $F$. Then $\psi_v=\psi'_v$ in $\Psi^+_v(G^*)$. In particular the localization maps patch together to a map
  $$\Psi(G^*)\ra \Psi_v(G^*),\quad [\psi]\mapsto [\psi]_v.$$
\end{lem}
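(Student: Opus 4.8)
The statement is essentially a compatibility check between the equivalence relation $\sim$ defining $\Psi(G^*)$ and the localization maps $\Psi(G^*,\eta_\chi)\ra\Psi^+_v(G^*)$ constructed in \S\ref{subsub:localizations-U} via Proposition \ref{prop:2nd-seed-thm}. So the plan is: first unwind what $\psi\sim\psi'$ means concretely at the level of the parameters $\psi^N$, $(\psi')^N$ and the $L$-homomorphisms $\tilde\psi$, $\tilde\psi'$; then show that the resulting local $L$-homomorphisms $\psi_v$ and $\psi'_v$ into ${}^L U_{E_v/F_v}(N)$ are $\hat U_{E_v/F_v}(N)$-conjugate, using the injectivity (local uniqueness) statement of Lemma \ref{lem:image-xi_chi} / Proposition \ref{prop:2nd-seed-thm} to reduce to an equality after composing with $\eta_{\chi_v}$ resp. $\eta_{\chi'_v}$.

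\textbf{Key steps.} First I would recall that by hypothesis $(\psi')^N=\psi^N\otimes\chi'\chi^{-1}$ in $\tilde\Psi(N)$, hence at the place $v$ we have $(\psi')^N_v=\psi^N_v\otimes(\chi'\chi^{-1})_v$ in $\Psi^+_v(\GL(N),E)$ — this is just functoriality of localization with respect to character twists, which is built into the definition of the localization map in \S\ref{subsub:localizations-GL} together with the compatibility of the local Langlands correspondence with twisting (Theorem \ref{thm:LLC-GL}(2)). Second, from the definition of $\psi_v$ and $\psi'_v$ (Proposition \ref{prop:2nd-seed-thm}) we have $\eta_{\chi_\kappa}\circ\psi_v=\psi^N_v$ and $\eta_{\chi'_{\kappa'}}\circ\psi'_v=(\psi')^N_v$, where $\kappa,\kappa'$ are the signs of $\chi,\chi'$; note that since $\chi$ and $\chi'$ both lie in $\cZ^+_E\coprod\cZ^-_E$ and $\chi'\chi^{-1}\in\cZ^+_E$, the signs $\kappa$ and $\kappa'$ coincide. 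The third and central step is to compare the two embeddings $\eta_{\chi_\kappa}$ and $\eta_{\chi'_\kappa}$: from the explicit formula for $\eta_{\chi_\kappa}$ in \S\ref{subsub:local-param-U} (the local form of \S\ref{subsub:endo-unitary}) one reads off that $\eta_{\chi'_\kappa}(g\rtimes w)=(\chi'\chi^{-1})_v(w)\cdot\eta_{\chi_\kappa}(g\rtimes w)$ for $w\in W_{E_v}$ (and the $w_c$-component changes only by a scalar that is absorbed into $\hat U$-conjugacy, since $\chi'\chi^{-1}$ is conjugate self-dual and the $\hat G$-conjugacy class of $\eta$ is independent of the auxiliary choices). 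Therefore $\eta_{\chi'_\kappa}\circ\psi_v$ and $\psi^N_v\otimes(\chi'\chi^{-1})_v=(\psi')^N_v$ agree, i.e. $\psi_v$ also satisfies the defining property of $\psi'_v$ relative to $\eta_{\chi'_\kappa}$; by the uniqueness clause in Proposition \ref{prop:2nd-seed-thm} this forces $\psi_v=\psi'_v$ in $\Psi^+_v(U_{E_v/F_v}(N))$. Finally, one must observe that the canonical identification $\cL_\psi\simeq\cL_{\psi'}$ (which localizes to the identity on $L_{F_v}\times\SU(2)$, as it merely reindexes simple factors) makes the second bullet of the definition of $\sim$ compatible with the first, so no further verification is needed; and the split-place case $v=w\ol w$ is immediate from the explicit description of the localization there via ${}^L\xi_w$.

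\textbf{Main obstacle.} I expect the only real subtlety to be the bookkeeping in the third step — making sure the comparison $\eta_{\chi'_\kappa}(\cdot)=(\chi'\chi^{-1})_v(\cdot)\,\eta_{\chi_\kappa}(\cdot)$ is valid up to the right notion of conjugacy, i.e. that the change in the $w_c$-component genuinely disappears after passing to $\hat U_{E_v/F_v}(N)$-conjugacy classes, and that the two sides of the twist are normalized consistently (one should use that $\chi'\chi^{-1}$ being conjugate self-dual means $(\chi'\chi^{-1})^\star=\chi'\chi^{-1}$, so twisting preserves the conjugate self-dual locus and matches the $\star$-orbit structure of $\cL_\psi$ and $\cL_{\psi'}$). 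Everything else is a direct consequence of the uniqueness in the 2nd seed theorem and the already-established compatibility of localization with character twists.
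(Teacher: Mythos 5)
Your proof follows the same route as the paper's: compatibility of localization with the character twist by $\chi'\chi^{-1}$, a comparison of the two $L$-embeddings $\eta_\chi$ and $\eta_{\chi'}$, and the injectivity of Lemma~\ref{lem:image-xi_chi} (equivalently the uniqueness clause of Proposition~\ref{prop:2nd-seed-thm}) to conclude $\psi_v=\psi'_v$. The one point that does not hold up is the side claim that $\kappa=\kappa'$. The coproduct in \eqref{eq:Psi(G^*)-canonical} runs over all of $\cZ_E^+\coprod\cZ_E^-$, and nothing in the equivalence relation forces $\chi'\chi^{-1}$ to lie in $\cZ_E^+$: twisting by $\chi'\chi^{-1}\in\cZ_E^-$ flips the parities of the simple factors (cf.~Lemma~\ref{lem:image-xi_chi} and \eqref{eq:kappa(psi)-kappa(phi)}), producing equivalent parameters with $\kappa'=-\kappa$; already for $N=1$ one can twist a conjugate-orthogonal Hecke character by an element of $\cZ_E^-$. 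When $\kappa'=-\kappa$ your third step has to compare $\eta_{\chi'_{\kappa'}}$ with $\eta_{\chi_\kappa}$, and the discrepancy at $w_c$ is $(I_N,\kappa'\kappa I_N)=(I_N,-I_N)$, a nontrivial \emph{central} element of $\hat G_{E/F}(N)$ which is therefore fixed by conjugation and is \emph{not} ``absorbed into $\hat U_{E_v/F_v}(N)$-conjugacy'' as you assert.

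What rescues the argument --- and this is exactly what the paper's parenthetical records --- is that the combined $W_{E_v}$- and $w_c$-discrepancy between $\eta_{\chi'_{\kappa'}}$ and $\eta_{\chi_\kappa}$ is precisely the $L$-parameter $W_{F_v}\to {}^L G_{E_v/F_v}(1)$ of the conjugate self-dual character $(\chi'\chi^{-1})_v$, whose $w_c$-component carries the sign $\kappa'\kappa$. It is the tensor twist by this parameter, not conjugation, that accounts for the $w_c$-difference, and with that observation $\eta_{\chi'_{\kappa'}}\circ\psi_v$ corresponds to $\psi^N_v\otimes(\chi'\chi^{-1})_v=(\psi')^N_v$ regardless of whether $\kappa=\kappa'$, after which the uniqueness argument closes as you intended. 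So the approach is correct and agrees with the paper's, but the $\kappa=\kappa'$ shortcut and the ``absorbed by conjugacy'' hedge should be replaced by the twist-by-$(\chi'\chi^{-1})_v$ identification.
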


\begin{proof}
  The localizations $\eta_\chi\circ \psi_v$ and $\eta_{\chi'}\circ \psi'_v$ correspond to $\psi^N_v$ and $ (\psi')^N_v$ respectively, if we view the representations $\psi^N_v$ and $ (\psi')^N_v$ of $\GL(N,E_v)$ as $L$-parameters $L_{F_v}\ra {}^L G(N)$ via the local Langlands correspondence and Shapiro's lemma. Since $(\psi')_v^N\simeq \psi^N_v\otimes \chi'_v\chi_v^{-1}$ by assumption, the correspondence between $\eta_{\chi'}\circ \psi'_v$ and $ (\psi')^N_v$ implies that $\eta_\chi\circ \psi'_v$ corresponds to $\psi^N_v$ (noting that $\chi'\chi^{-1}$ corresponds to the parameter $W_F\ra {}^L G(1)$ such that $w\mapsto (\chi'\chi^{-1}(w),(\chi')^{-1}\chi(w))\rtimes w$ on $W_{E}$ and $w_c\mapsto (\kappa'\kappa,1)\rtimes w_c$, cf. \S\ref{subsub:endo-unitary}). We deduce from the injectivity in Lemma \ref{lem:image-xi_chi} that $\psi_v$ and $\psi'_v$ are isomorphic parameters.
\end{proof}

  Further the following are verified rather easily.

 \begin{itemize}
   \item If $\psi$ and $\psi'$ are equivalent parameters as above then we have isomorphisms $S_\psi\simeq S_{\psi'}$ and $\ol{S}_\psi\simeq \ol{S}_{\psi'}$ canonical up to an inner automorphism of $S_\psi$. %
       Hence we may associate to each $[\psi]\in \Psi(G^*)$ the groups
        \begin{equation}\label{eq:S_[psi]etc}
        S_{[\psi]},\quad \ol S_{[\psi]},\quad \cS_{[\psi]},\quad \ol\cS_{[\psi]}, \quad S^\natural_{[\psi]}.
        \end{equation}
   \item When $\diamondsuit\in \{\simp,2,\el,\disc\}$,
         the set $\Psi_{\diamondsuit}(G^*)$ is well-defined as the image of $\Psi_{\diamondsuit}(G^*,\eta_\chi)$ in $\Psi(G^*)$ independently of the choice of $\chi$, since the set is characterized by means of the group $\ol S_\psi$. The definition of $\Phi_{\diamondsuit}(G^*)$ is similar.
   \item The localization map induces $S_{[\psi]}\ra S_{[\psi_v]}$ (canonical up to $S_{[\psi_v]}$-conjugacy) and similarly for the other groups in \eqref{eq:S_[psi]etc}.
   \item For each inner twist $(G,\xi)$ of $G^*$, the $(G,\xi)$-relevance property is invariant under the equivalence relation. So $\Psi_{\diamondsuit}(G^*)_{(G,\xi)\rel}$ is well-defined.
 \end{itemize}

\subsection{A correspondence of endoscopic data}\label{sub:endo-correspondence}

  Here we recall a bijective correspondence between two kinds of endoscopic data from a discussion in \cite[\S1.4]{Arthur} in the case of ordinary endoscopy for $U_{E/F}(N)$.

   Some terminology and notation need to be set up. For the moment we focus on the local case and will remark below what to do in the global case. Let $G^*$ be a connected reductive quasi-split group over $F$. We assume that
   $G^*_{\der}$ is \emph{simply connected}, which suffices for our purpose. (See \cite[\S4.8]{Arthur} for the analogous discussion without the assumption.)
   Two ordinary endoscopic triples $\fke_1=(G^\fke_1,s^\fke_1,\eta^\fke_1)$ and $\fke_2=(G^\fke_2,s^\fke_2,\eta^\fke_2)$ for $G^*$ are considered strictly equivalent (resp. weakly equivalent, resp. equivalent) if $s^\fke_1=zs^\fke_2$ for $z=1$ (resp. some $z\in Z(\hat G^*)^\Gamma$, resp. some $z\in Z(\hat G^*)$) and $\eta^\fke_1({}^L G^\fke_1)=\eta^\fke_2({}^L G^\fke_2)$ as subgroups of $^L G^*$. The equivalence here is different from the isomorphism defined in \S\ref{subsub:endoscopic-triples} in that there is no conjugation by an element of $\hat G^*$.
   For simplicity we further assume that that weak equivalence is the same as equivalence and that weak isomorphism is the same as isomorphism. While this may not be true in general, in our special case this is true and can be checked explicitly. Henceforth we only distinguish between equivalence/isomorphism and strict equivalence/isomorphism.

   Define $E(G^*)$ (resp. $\ol{E}(G^*)$) to be the set of strict equivalence (resp. equivalence) classes of endoscopic data for $G^*$. Of course $E(G^*)$ is just the set of all endoscopic data but our terminology is chosen in parallel with the definitions of \S\ref{subsub:endoscopic-triples} as the set of $G^*$-orbits on $E(G^*)$ (resp. $\ol{E}(G^*)$) is none other than $\cE(G^*)$ (resp. $\ol{\cE}(G^*)$) there. Let $F(G^*)$ denote the set of $A$-parameters $\cL_F\ra {}^L G^*$. (Here we literally mean the parameters, not the isomorphism classes thereof.)%
   For each $\psi\in F(G^*)$ write $\ol{S}_{\psi,\sspl}$ for the set of semisimple elements in $\ol{S}_\psi$. Define two sets
   \begin{eqnarray}
    X(G^*) &:=& \{(\fke=(G^\fke,s^\fke,\eta^\fke),\psi^\fke):~\fke\in E(G^*),~\psi^\fke \in F(G^\fke) \}, \nonumber\\
    Y(G^*) &:=& \{(\psi,s):~\psi\in F(G^*),~s\in S_{\psi,\sspl} \}.\nonumber
   \end{eqnarray}
   Similarly define $\ol{X}(G^*)$, and $\ol{Y}(G^*)$ with $E(G^*)$ and $S_{\psi,\sspl}$ replaced by $\ol{E}(G^*)$ and $\ol{S}_{\psi,\sspl}$, respectively. Naturally $X(G^*)$ and $Y(G^*)$ are left $\hat G^*\times Z(\hat G^*)^\Gamma$-sets by the following formulas: for $g\in G^*$ and $z\in Z(\hat G^*)^\Gamma$,
   \begin{eqnarray}
    g\cdot (G^\fke,s^\fke,\eta^\fke),\psi^\fke) &=& (G^\fke,g s^\fke g^{-1},g\eta^\fke g^{-1}),\psi^\fke),\nonumber\\
    g\cdot (\psi,s)&=&(g\psi g^{-1},g s g^{-1}), \nonumber\\
    z\cdot (G^\fke,s^\fke,\eta^\fke),\psi^\fke) &=& (G^\fke,z s^\fke,\eta^\fke),\psi^\fke),\nonumber\\
    z\cdot (\psi,s)&=&(\psi ,zs ).\nonumber
   \end{eqnarray}
   The right hand side of the first formula is understood as the representative in $E(G^*)$ in its strict equivalence class fixed above.
   We see that $\ol{X}(G^*)$, and $\ol{Y}(G^*)$ are none other than the quotient sets of $X(G^*)$ and $Y(G^*)$ by the action of $Z(\hat G^*)^\Gamma$.   Put
   $$\cX(G^*):=\hat G^* \dblbs X(G^*),\quad \cY(G^*):=\hat G^* \dblbs Y(G^*),$$
   standing for the sets of $\hat G^*$-orbits in $X(G^*)$ and $Y(G^*)$, respectively. Likewise $\ol\cX(G^*):=\hat G^* \dblbs \ol X(G^*),\quad \ol\cY(G^*):=\hat G^* \dblbs \ol Y(G^*)$. Note that there is a canonical identification $\cE(G^*)=\hat G^*\dblbs E(G^*)$ and $\ol\cE(G^*)=\hat G^*\dblbs \ol E(G^*)$. It is easy to see that
      \begin{eqnarray}
    \cX(G^*) &=& \{(\fke=(G^\fke,s^\fke,\eta^\fke),\psi^\fke):~\fke\in \cE(G^*),~\psi^\fke \in \tilde\Psi(G^\fke) \}, \label{eq:cX(G^*)cY(G^*)-def}\\
    \cY(G^*) &=& \{(\psi,s):~\psi\in \Psi(G^*),~s\in S_{\psi,\sspl}/\sim \},\nonumber\\
    \ol\cX(G^*) &=& \{(\fke=(G^\fke,s^\fke,\eta^\fke),\psi^\fke):~\fke\in \ol\cE(G^*),~\psi^\fke \in \tilde\Psi(G^\fke) \}, \nonumber\\
    \ol\cY(G^*) &=& \{(\psi,s):~\psi\in \Psi(G^*),~s\in \ol{S}_{\psi,\sspl}/\sim \},\nonumber
   \end{eqnarray}
   where $S_{\psi,\sspl}/\sim$ and $\ol{S}_{\psi,\sspl}/\sim$ is the set of semisimple conjugacy classes in $S_{\psi}$ and $\ol{S}_{\psi}$, respectively, and $\tilde\Psi(G^\fke)$ is the quotient set of $\Psi(G^\fke)$ modulo the action of $\Out_G(G^\fke)$ (resp. $\ol{\Out}_G(G^\fke)$).

\begin{lem}\label{lem:local-endo-data-corr} Let $G^*$ be a connected reductive quasi-split group over a local field with simply connected derived subgroup. Then the natural map
\begin{equation}\label{eq:X(G^*)toY(G^*)}
     X(G^*)\ra Y(G^*),\quad (\fke,\psi^\fke)\mapsto (\eta^\fke\psi^\fke,s^\fke).
   \end{equation}
   is a $\hat G^*\times Z(\hat G^*)^\Gamma$-equivariant bijection, thus induces a $\hat G^*$-equivariant bijection $\ol X(G^*)\simeq \ol Y(G^*)$, a $Z(\hat G^*)^\Gamma$-equivariant bijection $\cX(G^*)\simeq \cY(G^*)$, and a bijection $\ol \cX(G^*)\simeq \ol \cY(G^*)$.
\end{lem}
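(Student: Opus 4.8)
The plan is to show that the natural map \eqref{eq:X(G^*)toY(G^*)} is a bijection that intertwines the two $\hat G^* \times Z(\hat G^*)^\Gamma$-actions, and then to deduce the three further bijections purely formally by passing to quotients. First I would check $\hat G^* \times Z(\hat G^*)^\Gamma$-equivariance, which is essentially immediate from the action formulas: given $(\fke,\psi^\fke)$ with $\fke = (G^\fke,s^\fke,\eta^\fke)$, conjugating $\fke$ by $g \in \hat G^*$ replaces $\eta^\fke$ by $g\eta^\fke g^{-1}$ and $s^\fke$ by $gs^\fke g^{-1}$, so $\eta^\fke\psi^\fke \mapsto g\eta^\fke g^{-1} \cdot \psi^\fke = g(\eta^\fke\psi^\fke)g^{-1}$, matching the action on $Y(G^*)$; and multiplying $s^\fke$ by a central $\Gamma$-fixed element $z$ on the $X$-side corresponds exactly to replacing $s$ by $zs$ on the $Y$-side since $\eta^\fke$ is unchanged. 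So the content of the lemma is injectivity and surjectivity of the underlying map of sets.

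For surjectivity, start with $(\psi,s) \in Y(G^*)$, so $\psi : \cL_F \to {}^L G^*$ and $s \in S_{\psi,\sspl}$, i.e. $s$ is a semisimple element of $\hat G^*$ centralizing $\psi(\cL_F)$. The recipe for producing an endoscopic triple out of the pair $(\phi,s)$ (where $\phi = \phi_\psi$ is the associated $L$-parameter, or one works directly with $\psi$) is the standard one, cf.\ \cite[\S1.4]{Arthur}: set $\hat H := \Cent(s,\hat G^*)^\circ$, observe that $\psi(\cL_F) \subset \Cent(s,\hat G^*)$ normalizes $\hat H$, and use this to equip $\hat H$ with a Galois action making it the dual group of a quasi-split group $G^\fke$ over $F$; the map $\eta^\fke : {}^L G^\fke \to {}^L G^*$ is built from the inclusion $\hat H \hra \hat G^*$ together with the cocycle coming from $\psi$, and $\psi$ factors as $\eta^\fke \circ \psi^\fke$ for a parameter $\psi^\fke \in F(G^\fke)$. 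The hypothesis that $G^*_\der$ is simply connected is used here to guarantee that $\hat G^*$ has connected center so that the relevant obstruction in $H^2$ vanishes and $G^\fke$ actually exists over $F$ — this is exactly the simplification Arthur invokes, and it is the reason the assumption appears in the statement. By construction $(\eta^\fke\psi^\fke, s^\fke) = (\psi,s)$ after we identify $s^\fke$ with $s$, giving surjectivity.

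For injectivity, suppose $(\fke_1,\psi^{\fke_1})$ and $(\fke_2,\psi^{\fke_2})$ map to the same $(\psi,s)$. Then $s^{\fke_1} = s = s^{\fke_2}$ as elements of $\hat G^*$, and $\eta^{\fke_1}\psi^{\fke_1} = \psi = \eta^{\fke_2}\psi^{\fke_2}$ as homomorphisms $\cL_F \to {}^L G^*$. From $s^{\fke_1} = s^{\fke_2}$ we get $\eta^{\fke_1}(\hat G^{\fke_1}) = \Cent(s,\hat G^*)^\circ = \eta^{\fke_2}(\hat G^{\fke_2})$, hence $\eta^{\fke_1}({}^L G^{\fke_1}) = \eta^{\fke_2}({}^L G^{\fke_2})$ as subgroups of ${}^L G^*$ (both are the full preimage of $W_F$ under the projection restricted to the group generated by $\Cent(s,\hat G^*)^\circ$ and the image of $\psi$). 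Since in $E(G^*)$ endoscopic data are taken up to strict equivalence (no conjugation, and $s^\fke$ on the nose), this forces $\fke_1 = \fke_2$, and then $\psi^{\fke_1} = \psi^{\fke_2}$ because $\eta^{\fke_i}$ is injective. Thus \eqref{eq:X(G^*)toY(G^*)} is a bijection. Finally, the remaining three assertions follow formally: $\ol X(G^*) = X(G^*)/Z(\hat G^*)^\Gamma$ and $\ol Y(G^*) = Y(G^*)/Z(\hat G^*)^\Gamma$ by definition, so the $Z(\hat G^*)^\Gamma$-equivariant bijection descends to $\ol X(G^*) \simeq \ol Y(G^*)$, which is then visibly $\hat G^*$-equivariant; taking $\hat G^*$-orbits on both sides of the original bijection (resp.\ of the barred one) gives $\cX(G^*) \simeq \cY(G^*)$ (resp.\ $\ol\cX(G^*) \simeq \ol\cY(G^*)$), with the remaining $Z(\hat G^*)^\Gamma$-equivariance of $\cX(G^*) \simeq \cY(G^*)$ inherited from the unquotiented statement, and the explicit descriptions \eqref{eq:cX(G^*)cY(G^*)-def} drop out once one checks that taking $\hat G^*$-orbits turns $F(G^\fke)$ into $\tilde\Psi(G^\fke)$ and $S_{\psi,\sspl}$ into the set of semisimple conjugacy classes. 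I expect the one genuinely delicate point to be the construction of the $F$-structure on $G^\fke$ in the surjectivity step — making the Galois action on $\hat H = \Cent(s,\hat G^*)^\circ$ well-defined and verifying it is realized by an actual quasi-split $F$-group — but under the simply-connected hypothesis this is exactly the construction carried out in \cite[\S1.2--1.4]{Arthur} and \cite[\S2.1]{KS99}, so I would cite it rather than reprove it; everything else is bookkeeping with the action formulas and with the passage to quotients.
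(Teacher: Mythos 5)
Your proof is correct and takes essentially the same approach as the paper: both reduce to constructing an endoscopic triple from a pair $(\psi,s)$ by centralizing $s$ and using $\psi$ to supply the Galois structure, with the simply-connectedness hypothesis and the citations to Langlands and Kottzwitz--Shelstad guaranteeing that the resulting group can be upgraded to a genuine quasi-split $F$-group with $\cG^\fke \cong {}^L G^\fke$. The only cosmetic difference is that you split the argument into surjectivity plus injectivity, whereas the paper phrases it as constructing an explicit inverse map and declaring the verification routine.
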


\begin{rem}
  Two (weakly) equivalent pairs $(\fke_1,\psi_1^\fke)$ and $(\fke_2,\psi^\fke_2)$ do not have the same image in $Y(G^*)$ under \eqref{eq:X(G^*)toY(G^*)} in general. This is why we prefer to fix the set of representatives for $E(G^*)$.
\end{rem}

  \begin{proof}
    The equivariance is immediately verified.
    It suffices to show that \eqref{eq:X(G^*)toY(G^*)} is a bijection, which we do by
    constructing the inverse map. Let $(\psi,s)\in Y(G^*)$. %
    Define $\cG^\fke:=\hat G^\fke \cdot \psi(L_F\times \SL(2,\C))$, which is a subgroup of ${}^L G^*$. Then $\cG^\fke$ is a split extension of $W_F$ by $\hat G^\fke$, where a splitting $W_F\ra \cG^\fke$ is given by $w\mapsto \psi(w)$. The induced (finite) Galois action on $\hat G^\fke$ determines a quasi-split group $G^\fke$ over $F$. Moreover we have an isomorphism between $\eta^\fke:{}^L G^\fke \rw \cG^\fke$ since $G_{\der}$ is simply connected. (See \cite[Prop 1]{Lan79}; also see the second
    last paragraph above \cite[Lem 2.2.A]{KS99}.) On the other hand, the image of $\psi$ is contained in $\cG^\fke$ by construction, so $\psi=\eta^\fke \psi^\fke$ for some $\psi^\fke\in F(G^\fke)$. Replacing $(G^\fke,s^\fke,\eta^\fke)$ with its (quasi-)equivalent representative in $E(G^*)$, we get a map $Y(G^*)$ to $X(G^*)$. It is routine to verify that \eqref{eq:X(G^*)toY(G^*)} is inverse to the latter map.
  \end{proof}

   As we already remarked, the above discussion applies to unitary groups as the two assumptions at the start of this subsection are satisfied. For instance we can take $G^*=U_{E/F}(N)$, where $E$ is a quadratic algebra over a local field $F$. Now we assume that $E$ is a quadratic field extension over a \emph{global} field $F$. (Still $G^*=U_{E/F}(N)$.) The description of $\cX(G^*)$ and $\cY(G^*)$ continues to make sense, where the global parameter sets are understood as in \S\ref{subsub:global-param-U}. We take it as the definition of the sets $\cX(G^*)$ and $\cY(G^*)$. The global map $\cX(G^*)\ra \cY(G^*)$ can be analogously defined as follows. Let $(\fke,\psi^\fke)\in \cX(G^*)$, where we write $\psi^\fke=(\psi^N,\tilde\psi^\fke)$ following the convention of \S\ref{subsub:global-param-U}. Then we associate the element of $\cY(G^*)$ given by $(\psi^N,\eta^\fke\tilde\psi^\fke)\in \Psi(G^*)$ and (the conjugacy class of) $s^\fke\in \ol{S}_{\psi,\sspl}$. One checks that this map $\cX(G^*)\ra \cY(G^*)$ is well-defined and that there is an inverse map as in the proof of Lemma \ref{lem:local-endo-data-corr}. Since the arguments are very similar we omit the details and just record the result.

\begin{lem}\label{lem:global-endo-data-corr}
  Let  $G^*=U_{E/F}(N)$ be a global unitary group. Then the above map  $\cX(G^*)\ra \cY(G^*)$ is a bijection.
\end{lem}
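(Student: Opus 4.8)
\textbf{Proof plan for Lemma \ref{lem:global-endo-data-corr}.}

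The plan is to mimic the proof of the local statement (Lemma \ref{lem:local-endo-data-corr}) as closely as possible, so that the work reduces to checking that the construction of the inverse map goes through with the formal global substitute $\cL_{\psi^N}$ in place of $\cL_F$. First I would spell out the forward map explicitly on representatives: given $(\fke,\psi^\fke)\in\cX(G^*)$ with $\fke=(G^\fke,s^\fke,\eta^\fke)\in\ol\cE(G^*)$ and $\psi^\fke=(\psi^N,\tilde\psi^\fke)$ satisfying $\eta^\fke\circ\tilde\psi^\fke=\tilde\psi^N$, send it to the pair $(\psi,s)$ with $\psi=(\psi^N,\eta^\fke\circ\tilde\psi^\fke)\in\Psi(G^*)$ and $s$ the image of $s^\fke$ in $\ol S_{\psi,\sspl}$ (a semisimple element since $\Int(s^\fke)$ acts on $\hat G^*$ with $\eta^\fke(\hat G^\fke)$ as the identity component of its fixed points, hence $s^\fke$ centralizes $\Im\tilde\psi$; it is semisimple because $\fke$ is an endoscopic triple). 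One then checks this is well-defined on $\hat G^*$-orbits and on isomorphism classes modulo $\ol\Out_G(G^\fke)$ — this is the routine bookkeeping already recorded in the paragraph preceding the statement.

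The substance is the construction of the inverse. Given $(\psi,s)=((\psi^N,\tilde\psi),s)\in\cY(G^*)$ with $s\in\ol S_{\psi,\sspl}$, I would choose a lift $\tilde s\in S_{\psi,\sspl}\subset\hat G^*$ and set $\cG^\fke:=\tx{Cent}(\tilde s,\hat G^*)^\circ\cdot\tilde\psi(\cL_\psi\times\SL(2,\C))\subset {}^LG^*$, exactly paralleling the local case. As in the local proof, $\cG^\fke$ is a split extension of $W_F$ by the connected reductive group $\hat G^\fke:=\tx{Cent}(\tilde s,\hat G^*)^\circ$, with splitting $w\mapsto\tilde\psi(w)$ (using $\cL_\psi\to W_F$); the induced finite Galois action on $\hat G^\fke$ together with the fact that it preserves a pinning (arrange the pinning of $\hat G^*$ so that $\hat G^\fke$ inherits one, using that $\tilde s$ is semisimple) determines a quasi-split group $G^\fke$ over $F$ with $\hat G^\fke$ as dual group. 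Since $G^*_\der$ is simply connected — which holds for $U_{E/F}(N)$ — the result of Langlands \cite[Prop 1]{Lan79} (cf. the remark before \cite[Lem 2.2.A]{KS99}) gives an $L$-isomorphism $\eta^\fke:{}^LG^\fke\to\cG^\fke$. Since $\Im\tilde\psi\subset\cG^\fke$, we obtain $\tilde\psi^\fke$ with $\eta^\fke\circ\tilde\psi^\fke=\tilde\psi$, hence $\eta^\fke\circ\tilde\psi^\fke$ composed with $\eta_{\chi}$ recovers $\tilde\psi^N$, so $(\psi^N,\tilde\psi^\fke)\in\Psi(G^\fke,\eta^\fke)=\Psi(G^\fke)$ in the sense of \S\ref{subsub:param-end-U}. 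Passing to the fixed representative of the equivalence class of $(G^\fke,\tilde s,\eta^\fke)$ in $\ol E(G^*)$, and then to $\hat G^*$-orbits, produces a well-defined element of $\cX(G^*)$, and one checks the two maps are mutually inverse by the same formal argument as in Lemma \ref{lem:local-endo-data-corr}.

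The main obstacle, and the only place where the global situation genuinely differs from the local one, is verifying that the pair $(G^\fke,\tilde\psi^\fke)$ produced this way actually lands in the formal global parameter set $\Psi(G^\fke)$ as defined in \S\ref{subsub:param-end-U} — i.e. that $\tilde\psi^\fke$, which a priori is just an $L$-homomorphism $\cL_\psi\times\SL(2,\C)\to{}^LG^\fke$, has the product shape \eqref{eq:Gfke-product} with each unitary factor carrying the correct sign $\kappa_i$ and each general-linear factor arising from a genuine $\Psi(G_{E/F}(m_j))$. This is exactly the content of the parity analysis in \S\ref{subsub:global-param-U} (the computation leading to \eqref{eq:kappa(psi)-kappa(phi)} and \eqref{e:global-centralizer}, applied now with $\cL_\psi$ replacing $\cL_E$, as already invoked there): the decomposition of $\psi^N$ into $\star$-orbits of simple constituents forces the decomposition of $\hat G^\fke=\tx{Cent}(\tilde s,\hat G^*)^\circ$ into the product of orthogonal, symplectic and general-linear blocks, and matching the Galois action through $\tilde\psi$ on each block against the standard $L$-embeddings $\eta_{\kappa_i}$ pins down the signs. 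Once this identification is in place, the rest is the routine verification of the axioms of an endoscopic datum and of the mutual-inverse property, which I would indicate but not carry out in detail, referring to the proof of Lemma \ref{lem:local-endo-data-corr}.
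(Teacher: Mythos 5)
Your proposal is correct and reconstructs precisely the details the paper elides with ``the arguments are very similar [to Lemma \ref{lem:local-endo-data-corr}], so we omit them'': one runs the same centralizer construction with $\cL_\psi$ in place of $L_F$, and the splitting that the local proof got from $w\mapsto\psi(w)$ is furnished here by composing $\tilde\psi$ with the canonical section $W_F\to\cL_\psi$ coming from the semidirect-product structure of each factor ${}^LH_k$ (worth stating explicitly, since your phrasing ``using $\cL_\psi\to W_F$'' reads as the wrong direction). The only place where you mislocate the work is the last paragraph: membership $(\psi^N,\tilde\psi^\fke)\in\Psi(G^\fke,\eta^\fke)$ is automatic from the defining compatibility in \eqref{eq:param-end-U-glo} the moment $\eta^\fke$ is constructed, so the parity bookkeeping of \S\ref{subsub:global-param-U} is a consequence of the construction rather than a condition one must verify.
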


  As Arthur remarks in \cite[\S1.4]{Arthur}, the bijection $\cX(G^*)\leftrightarrow \cY(G^*)$ and its variants reduce many questions on the transfer of characters to a study of the groups $S_\psi$. The two lemmas above will be used repeatedly without citing them every time. For instance we will simply say that the pairs $(\fke,\psi^\fke)$ and $(\psi,s)$ correspond or are associated.

\subsection{Results on quasi-split unitary groups}\label{sub:results-qsuni}

  Here is the list of the main results in the quasi-split case that we import from \cite{Mok}. The numbering for the lemmas and theorems below is as in that paper.
\begin{itemize}
  \item The two seed theorems (Theorems 2.4.2, 2.4.10);
  \item The main local theorems (Theorems 2.5.1, 3.2.1);
  \item The main global theorems (Theorems 2.5.2, 2.5.4);
  \item Well-definedness and multiplicativity of normalized intertwining operators (Proposition 3.3.1), c.f. our Lemma 2.2.3;
  \item The local intertwining relation (Theorem 3.4.3 and Corollary 7.4.7), cf. our Theorem \ref{thm:lir};
  \item The twisted local intertwining relation for the twisted group $\tilde G_{E/F}(N)$ (Corollary 3.5.2), c.f. our Proposition \ref{pro:tlir};
  \item The endoscopic expansion of the stable elliptic inner product (Proposition 7.5.3, c.f. \cite[Proposition 6.5.1]{Arthur}).
  \item The stable multiplicity formula (Theorem 5.1.2)
  \item The spectral and endoscopic sign lemmas (Lemmas 5.5.1, 5.6.1, respectively)
\end{itemize}
  In addition we need the analogue of Ban's results (\cite{Ban, Ban2}) for quasi-split and non-quasi-split unitary groups. In the quasi-split case this enters the proof of the main theorems and was stated as Proposition 8.2.5 in \cite{Mok} but without justification. In Appendix \ref{sec:appendix} we prove this in more generality than is needed in \cite{Mok} and our sequel paper on non-generic parameters.

  In the discussion of global parameters we already cited the two seed theorems and the theorem 2.5.4 above. In this subsection we focus on stating the main local theorems. The theorem 2.5.2 above is not logically necessary but its statement is subsumed in Theorem \ref{thm:main-global}. (Of course we are not reproving the theorem 2.5.2 in that that theorem is intertwined with the other main results when it comes to proof.) It should be noted that our main results in \S\ref{sub:main-local-thm} and \S\ref{sub:main-global-thm} are in parallel with the main local theorem above and the theorem 2.5.2. The inner form case of the local intertwining relation will be stated later in Theorem \ref{thm:lir} after setting up some additional definition and notation. The sign lemmas turn out to be pertinent only to the quasi-split groups and will be utilized in the trace formula comparison in \S\ref{sub:std-model} below.

  Before stating the main local theorem in the quasi-split case in full we single out the statement on the existence of stable linear forms.

\begin{prop}[cf. Theorem 3.2.1.(a), \cite{Mok}]\label{prop:local-stable-linear}
  There is a unique family of stable linear forms $f\mapsto f^{G^*}(\psi)$ on $G^*=U_{E/F}(N)$ (as $N$ and $\psi\in \Psi(G^*)$ vary) satisfying a character identity \cite[Thm 3.2.1.(a)]{Mok} relative to twisted endoscopic data in $\cE_\el(\tilde G_{E/F}(N))$. %
\end{prop}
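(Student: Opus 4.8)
The statement is essentially a citation of \cite[Theorem 3.2.1.(a)]{Mok}, so the plan is to explain how the assertion is deduced from Mok's work together with the reduction of twisted endoscopy for $\tilde G_{E/F}(N)$ carried out in Section \ref{sec:prelsimp}. First I would recall that Mok's Theorem 3.2.1.(a) provides, for the quasi-split unitary group $G^*=U_{E/F}(N)$ and each $\psi\in\Psi(G^*)$, a stable linear form $f\mapsto f^{G^*}(\psi)$ on $\mc{H}(G^*)$, characterized by the requirement that for every simple twisted endoscopic datum $\tilde\fke=(U_{E/F}(N),1,\eta_{\chi_\kappa})\in\cE_\simp(\tilde G_{E/F}(N))$ and every matching pair of functions $\tilde f\in\mc{H}(\tilde G_{E/F}(N))$ and $f^{\tilde\fke}\in\mc{H}(G^{\tilde\fke})$, one has the spectral identity equating the $\psi$-part of the $\tilde\theta$-twisted trace of $\tilde f$ against the appropriate sum of $\pi_\psi$-traces on $\GL(N,\A_E)$ (or the local analogue) with $(f^{\tilde\fke})^{G^*}(\psi)$. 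The content here is just that such a family exists, is unique, and depends only on the local data — nothing needs to be reproved.

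Next I would address the two points where our conventions differ from Mok's, so that the cited result transfers cleanly. The first is the normalization of the twisted transfer factor $\Delta[\tilde\fke,\xi,z]$: by Proposition \ref{pro:tf} and Proposition \ref{prop:trans-factor-coincide} our absolute factor agrees with the canonical relative factor $\Delta'$ of \cite{KS12}, hence with the one implicit in \cite{Mok} up to the explicit sign bookkeeping already discussed in Section \ref{sec:normtfs}; this only affects the identity by the harmless inverses noted there and does not disturb existence or uniqueness. The second is our bar-notation for centralizer groups and the normalization of parameters via $\eta_{\chi_\kappa}$ — but the stable linear form $f^{G^*}(\psi)$ depends only on $\psi\in\Psi(G^*)$ in the sense of \S\ref{subsub:canonical-def}, which we have already checked is independent of the choice of $\chi_\kappa$, so there is no ambiguity. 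Uniqueness follows from the fact that matching functions $f^{\tilde\fke}$ exhaust a dense enough subspace: two stable linear forms satisfying the same character identity against all $\tilde f$ must agree on all of $\mc{H}(G^*)$ by the injectivity of the (stable) orbital integral map together with the twisted transfer and the surjectivity of transfer onto the space of stable orbital integrals.

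Finally, although the problem statement only asks for the quasi-split case, I would record that this is genuinely an import rather than a new argument: the only place our Section \ref{sec:prelsimp} is used is to identify the simple twisted endoscopic triples $\mf{\tilde e}$ for $(\tilde G_{E/F}(N),\tilde\theta)$ with ordinary endoscopic triples for $G_{E/F}(N)$ and to match the corresponding transfer factors and orbital integrals (Proposition \ref{pro:simptfs} and the subsequent corollary and Lemma \ref{lem:traceprod}), which is exactly the bridge needed to phrase Mok's identity in our framework. The main (and only) obstacle is purely bookkeeping: tracking the signs introduced by our choice of $\Delta'$ versus $\Delta$ and by the parity conventions for $\eta_{\chi_\kappa}$, so that the character identity we state matches the one in \cite[Theorem 3.2.1.(a)]{Mok} on the nose. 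There is no new analytic or geometric input.
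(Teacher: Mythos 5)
Your proposal correctly identifies that the paper gives no proof of this proposition: it is imported wholesale from \cite[Theorem 3.2.1.(a)]{Mok} (listed among the results taken on faith in \S\ref{sub:results-qsuni}), and the text immediately following the proposition statement explicitly "leav[es] the details to Mok's paper." One inaccuracy in your commentary: you say Section \ref{sec:prelsimp} supplies "the bridge needed to phrase Mok's identity in our framework," but that section treats twisted endoscopy for the $n$-fold product $G^*\times\cdots\times G^*$ with a cyclic-shift automorphism and is used only in the normalization of intertwining operators — it plays no role in this proposition, which is a direct citation with no intervening reduction.
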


  We informally explain the above character identity, leaving the details to Mok's paper as we will not need to use it explicitly in the arguments of our paper. The transfer of orbital integrals from $\tilde G(N)$ to $U(N)$ leads to a transfer of stable distributions from $\tilde G(N)$ to $U(N)$. The character identity says that for every endoscopic triple $(G^{\tilde\fke},s^{\tilde \fke},\eta^{\tilde \fke})$ with $G^{\tilde\fke}=U_{E/F}(N)$, the linear form $f\mapsto f^{G^{\tilde\fke}}(\psi)$ is the transfer of the (twisted) stable linear form $f^N \mapsto f^N(\eta^{\tilde \fke}\psi)$ (evaluating the twisted character of the conjugate self-dual representation $\pi_{\eta^{\tilde \fke}\psi}$ against $f^N$). It is worth pointing out that the stable linear form $f\mapsto f^{G^*}(\psi)$ is intrinsic to $G^*$, namely that it is independent of how $G^*$ extends to a twisted endoscopic datum. (There are two extensions up to isomorphism.) We will abbreviate $f^{G^*}(\psi)$ as $f(\psi)$ when there is no danger of confusion.

  The proposition is used to make sense of stable linear forms on all endoscopic groups of $U_{E/F}(N)$. Let $H=\Res_{F'/F}\GL(N)$ with $F'=F$ or $F'=E$. For $f\in \cH(H(F))=\cH(\GL(N,F'))$ and $\psi\in \Psi(H,F)=\Psi(\GL(N),F')$, we define $f(\psi):=f(\pi_\psi)$, where $\pi_\psi\in \Pi(\GL(N,F'))$ is as in \eqref{eq:pi_psi-def}. Hence for any endoscopic triple $(G^\fke,s^\fke,\eta^\fke)$ for $U_{E/F}(N)$, the stable linear form $f'(\psi')$ for $f'\in \cH(G^\fke)$ and $\psi'\in \Psi(G^\fke)$ is defined  in the obvious manner since either $G^\fke$ is a finite product of general linear groups (if $E=F\times F$) or a finite product of groups of the form $U_{E/F}(N_i)$ and $\Res_{E/F}\GL(N_j)$ for $E/F$ a quadratic field extension and positive integers $N_i$'s and $N_j$'s.

  We are ready to state the rest of the main local theorem.

\begin{prop}[cf. Theorems 2.5.1, 3.2.1, \cite{Mok}]\label{prop:main-local-qsplit}

\begin{enumerate}
\item Let $\psi \in \Psi(G^*)$. There exists a nonempty finite set $\Pi_\psi(G^*)$ with a map to $\Pi_\tx{unit}(G^*)$ as well as a map to $\tx{Irr}(\ol{\cS}_\psi)$.
\item For each $\pi \in \Pi_\tx{unit}(G^*)$ in the image of $\Pi_\psi(G^*)$, the central character $\omega_\pi : Z(G^*)(F) \rw \C^\times$ has a Langlands parameter given by the composition
\[\xymatrix{ L_F\ar[r]^-{\phi_\psi}&{^L G^*}\ar[r]^-{\tx{det}\rtimes\tx{id}}& \C^\times\rtimes W_F}. \]
\item Let $(G^\fke,s^\fke,\eta^\fke)$ be an endoscopic triple with $s^\fke \in S_\psi$, and let $\psi^\fke\in \Psi(G^\fke)$ be a parameter such that $\eta^\fke\circ\psi^\fke=\psi$. If $f^\fke(\psi^\fke)$ is the stable distribution on $G^\fke(F)$ associated to the parameter $\psi^\fke$, and $f^\fke \in \mc{H}(G^\fke)$ and $f \in \mc{H}(G^*)$ are two functions with $\Delta[\xi,z]$-matching orbital integrals, then we have
    \[ f^\fke(\psi^\fke) = \sum_{\pi \in \Pi_\psi(G^*)} \<\pi,s^\fke\cdot s_\psi\> f(\pi), \]
where $\<\pi,-\>$ denotes the character of the irreducible representation of $\ol{\cS}_\psi$ that $\pi$ corresponds to by 1.
\item If $\psi \in \Phi_{\bdd}(G^*)$, then the map $\Pi_\psi(G^*) \rw \Pi_\tx{unit}(G^*)$ is injective and its image belongs to $\Pi_\tx{temp}(G^*)$. The map $\Pi_\psi(G^*) \rw \tx{Irr}(\ol{\cS}_\psi)$ is also injective, and bijective if $F$ is nonarchimedean.
\item Suppose that $F$ is nonarchimedean, $G^*$ is unramified over $F$, and $\psi$ is unramified. Then the preimage of the trivial character on $\ol{\cS}_\psi$ is the unique unramified representation in $\Pi_\psi(G^*)$ (relative to the distinguished hyperspecial subgroup).
\item As $\psi$ runs over $\Phi_{\bdd}(G^*)$ the sets $\Pi_\psi(G^*)$ are disjoint and exhaust $\Pi_\tx{temp}(G^*)$.
\end{enumerate}

\end{prop}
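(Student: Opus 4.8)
The plan is to deduce this proposition directly from the main local theorem of \cite{Mok} (Theorems 2.5.1 and 3.2.1), which is among the results we are taking on faith as listed in \S\ref{sub:results-qsuni}; the only real work is to translate between the conventions of \cite{Mok} and ours. First I would record that the group denoted $\cS_\psi$ in \cite{Mok} is our $\ol\cS_\psi$ (cf.\ \S\ref{sub:notation}), so that the maps $\Pi_\psi(G^*)\rw\Irr(\ol\cS_\psi)$ and the pairing $\<\pi,-\>$ appearing in parts 1, 3, and 4 are literally those of \cite{Mok}, and that the nonemptiness and finiteness of $\Pi_\psi(G^*)$ in part 1, as well as the existence of the stable linear forms $f^\fke(\psi^\fke)$ used in part 3, are already granted (the latter by Proposition \ref{prop:local-stable-linear}). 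I would also invoke the explicit description of $\ol\cS_\psi$ from \S\ref{subsub:local-param-U}: for $G^*=U_{E/F}(N)$ and for each of its elliptic endoscopic groups, $\ol\cS_\psi$ is an elementary abelian $2$-group. This fact will be used below, and it also makes the order of the product $s^\fke\cdot s_\psi$ in part 3 immaterial.

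Next I would reconcile the normalizations of the transfer factor. In part 3 the functions $f$ and $f^\fke$ are required to have $\Delta[\xi,z]$-matching orbital integrals with $(\xi,z)=(\id,1)$, i.e.\ matching relative to the Whittaker-normalized absolute transfer factor constructed in \S\ref{subsub:normtf}. By Proposition \ref{pro:tf} this factor is compatible with the relative factor $\Delta'$ of \cite[\S5.4]{KS12}, whereas the endoscopic character identity in \cite[Thm 3.2.1]{Mok} is phrased with the Whittaker normalization attached to the relative factor $\Delta$ of \cite{LS87}. As recalled in \S\ref{subsub:normtf}, these two relative factors differ precisely by replacing the endoscopic element $s^\fke$ with its inverse. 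Since $\ol\cS_\psi$ is an elementary abelian $2$-group, $s^\fke$ and $(s^\fke)^{-1}$ have the same image in $\ol\cS_\psi$, so Mok's identity is unaffected by this change and takes exactly the form displayed in part 3. I would also note that the canonical element $s_\psi\in S_\psi$ attached to $\psi$ in \S\ref{subsub:L-param-A-param} agrees with the one used in \cite{Mok}.

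Finally I would treat the remaining assertions. Part 2 — that the Langlands parameter of the central character $\omega_\pi$ is $\det\circ\phi_\psi$ — is among the compatibility statements of \cite[Thm 2.5.1]{Mok}, and is a consequence of the construction of $\Pi_\psi(G^*)$ from the endoscopic character identities together with the local Langlands correspondence for $\GL(N,E)$ (our Theorem \ref{thm:LLC-GL}); here one also uses that $\phi_\psi$ is defined exactly as the twist \eqref{eq:artlanpar} of $\psi$. Parts 4, 5, and 6 — the tempered and injectivity statements for $\psi\in\Phi_\bdd(G^*)$ (with bijectivity of $\Pi_\phi(G^*)\rw\Irr(\ol\cS_\phi)$ in the nonarchimedean case), the identification of the unramified member with the trivial character, and the disjointness and exhaustion of $\Pi_\temp(G^*)$ — are the corresponding clauses of \cite[Thm 2.5.1]{Mok}, transcribed. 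Since the whole statement is a transcription of \cite{Mok}, there is no substantive obstacle; the only point demanding genuine care is the transfer-factor bookkeeping of the preceding paragraph, where one must check that the sign conventions of \cite{LS87}, \cite{KS12}, and \cite{Mok} line up as claimed — and it is precisely the elementary abelian structure of $\ol\cS_\psi$ that makes the discrepancy invisible.
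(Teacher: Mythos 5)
Your proposal is correct, and it is essentially the approach the paper takes: the proposition is stated as an import from \cite{Mok} (see the list in \S\ref{sub:results-qsuni} and the ``cf.'' in the citation), and the paper provides no proof beyond identifying Mok's $\cS_\psi$ with our $\ol{\cS}_\psi$ and remarking, after the statement, that this is the $(\tx{id},1)$-case of Theorem \ref{thm:locclass-single} with $\tx{Irr}(S_\psi^\natural,\chi_z)=\tx{Irr}(\ol{\cS}_\psi)$ via Lemma \ref{lem:squot}. What you add that the paper leaves implicit is the reconciliation of the $\Delta'$-normalization of \S\ref{subsub:normtf} (via Proposition \ref{pro:tf}) with the Langlands--Shelstad normalization used by Mok: the paper acknowledges the $\Delta$ versus $\Delta'$ discrepancy only obliquely, in the remark accompanying the proof of Proposition \ref{p:std-model-end}, where it explains the appearance of $x^{-1}$. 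Your observation that the discrepancy is invisible in the statement of this proposition because $\ol{\cS}_\psi$ is an elementary abelian $2$-group (so that $s^\fke$ and $(s^\fke)^{-1}$ have the same image) is a genuine and worthwhile check; a slightly cleaner way to see the immateriality of the product order is simply that $s_\psi$ lies in the center of $S_\psi$, being in the image of $\psi$.
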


In addition, for $\pi$ in the $L$-packet for a bounded parameter, we know that $\lg \cdot,\pi\rg=1$ if and only if $\pi$ has Whittaker model with respect to the given Whittaker datum. This is \cite[Cor 9.2.4]{Mok} (cf. \cite[Prop 8.3.2]{Arthur}).

The theorem above corresponds to the case of the extended pure inner twist $(G^*,\id,1)$ in Theorem \ref{thm:locclass-single} below. In that case $\chi_z=1$ so $\Irr(S^\natural_\psi,\chi_z)=\tx{Irr}(\ol{\cS}_\psi)$ in view of Lemma \ref{lem:squot}.

We remark that that the map $\Pi_\psi(G^*) \rw \tx{Irr}(\ol{\cS}_\psi)$ in part 1 depends on the additive character $\psi_F : F \rw \C^\times$. This character, together with the fixed pinning of $G^*$, gives rise to a Whittaker datum, which is used in the normalization of the transfer factors. These in turn influence the transfer $f^\mf{e}$ of $f$, and hence by part 3 of the theorem also the pairing $\<\pi,-\>$, which is just a different notation for the map in part 1.

\subsection{The main local theorem}\label{sub:main-local-thm}

  We are about to state our main local theorem on the local endoscopic classification for inner forms of the unitary group $U_{E/F}(N)$. The results are novel when $N>3$, $N$ is even, and $E/F$ are non-archimedean local fields. (The archimedean case is due to Langlands and Shelstad. In the non-archimedean setting, the case $N\le 3$ is due to Rogawski. In the quasi-split and non-quasi-split cases, Moeglin has obtained partial results in \cite{Moe07,Moe11} and later Mok completed the classification for quasi-split unitary groups (including the global case) in \cite{Mok} adapting Arthur's strategy. Note that $U_{E/F}(N)$ admits a non-quasi-split inner form exactly when $N$ is even.) Nevertheless our theorem includes the archimedean case as well as the split case (i.e. $E=F\times F$) in the statement. The point is to have a precise and consistent normalization for local results in order to provide input for the global theorem.

\subsubsection{Statement of the main local theorem}\label{subsub:statement-local-thm}

   Let $F$ be a local field. Fix a non-trivial character $\psi_F : F \rw \C^\times$. If $E/F$ is a quadratic extension we have the quasi-split unitary group $G^* := U_{E/F}(N)$ defined in Section \ref{subsub:qsuni}. It is endowed with a standard $F$-splitting which determines, together with $\psi_F$, a Whittaker datum as in \cite[\S5.3]{KS99}.
For $\psi \in \Psi(G^*)$ recall the algebraic groups $S_\psi = \tx{Cent}(\psi,\hat G^*)$, $S_\psi^\tx{rad} = [S_\psi \cap [\hat G^*]_\tx{der}]^\circ$ and $S_\psi^\natural = S_\psi / S_\psi^\tx{rad}$.
 Consider an extended inner twist $(\xi,z) : G^*\ra G$. The datum $(\xi,z)$ determines a character $\chi_z\in X^*(Z(\hat G)^\Gamma)$ via \eqref{eq:kotisoloc}. Write $\Irr(S^\natural_\psi,\chi_z)$ for the set of characters on $S^\natural_\psi$ whose pullbacks via $Z(\hat G)^\Gamma\hra S_\psi\twoheadrightarrow S^\natural_\psi$ are $\chi_z$. Recall from Lemma \ref{lem:chi_z-triv-on-intersection} that $\Irr(S^\natural_\psi,\chi_z)$ is nonempty when $\psi$ is $(G,\xi)$-relevant. In order to understand how the local classification depends on $z$, note that when $\xi$ is fixed $z$ can only be replaced by $zx$ for some $x \in Z^1_\tx{alg}(\mc{E},Z(G^*))$, and that $x$ determines a character $\chi_x \in X^*([\hat G/\hat G_\tx{sc}]^\Gamma)$. This character provides a character on $Z(\hat G)^\Gamma$ by pull-back along $Z(\hat G)^\Gamma \rw \hat G^\Gamma \rw [\hat G/\hat G_\tx{sc}]^\Gamma$. It furthermore provides a character on $S_\psi^\natural$ as follows: If we twist the $W_F$-action on the exact sequence
\[ 1 \rw \hat G_\tx{sc} \rw \hat G \rw \hat G/\hat G_\tx{sc} \rw 1 \]
by $\psi$, then the action on the third term remains unchanged, and taking $W_F$-invariants we obtain a map
\[ S_\psi \hrw H^0(\psi(W_F),\hat G) \rw [\hat G/\hat G_\tx{sc}]^\Gamma \]
via which $\chi_x$ provides a character on $S_\psi$ that descends to $S_\psi^\natural$.

  The following local classification theorem is our main local result. In this paper we establish it under the hypothesis that $\psi$ is generic. The theorem will be proven in full in the next paper \cite{KMS_A}.

\begin{thm*} \label{thm:locclass-single}

\begin{enumerate}
\item Let $\psi \in \Psi(G^*)$. There exists a finite set $\Pi_\psi(G,\xi)$ with a map to $\Pi_\tx{unit}(G)$. If $\psi$ is a generic parameter (i.e. $\psi\in \Phi_{\bdd}(G^*)$) then this set is empty if and only if $\psi$ is not $(G,\xi)$-relevant. In general the set is empty if  $\psi$ is not $(G,\xi)$-relevant. %

\item The set $\Pi_\psi(G,\xi)$ is independent of $z$. It is equipped with a map
\[ \Pi_\psi(G,\xi) \rw \tx{Irr}(S_\psi^\natural,\chi_z),\qquad \pi \mapsto \<\pi,-\>_{\xi,z}, \]
whose dependence on $z$ is expressed by
\[ \<\pi,-\>_{\xi,zx} = \<\pi,-\>_{\xi,z} \otimes \chi_x \]
for any $x \in Z^1_\tx{alg}(\mc{E},G^*)$. Both the set $\Pi_\psi(G,\xi)$ and the pairing $\<-,-\>_{\xi,z}$ depend only on the equivalence class $\Xi$ of $(\xi,z)$ and can therefore be denoted by $\Pi_\psi(G,\Xi)$ and $\<-,-\>_\Xi$.

\item For each $\pi \in \Pi_\tx{unit}(G)$ in the image of $\Pi_\psi(G,\xi)$, the central character $\omega_\pi : Z(G)(F) \rw \C^\times$ has a Langlands parameter given by the composition
\[\xymatrix{ L_F\ar[r]^-{\phi_\psi}&{^L G^*}\ar[r]^-{\tx{det}\rtimes\tx{id}}& \C^\times \rtimes W_F}. \]

\item Let $(G^\fke,s^\fke,\eta^\fke)$ be an endoscopic triple with $s^\fke \in S_\psi$, and let $\psi^\fke\in \Psi(G^\fke)$ be a parameter such that $\eta^\fke\circ\psi^\fke=\psi$. If $f^\fke \mapsto f^\fke(\psi^\fke)$ is the stable distribution on $G^\fke(F)$ associated to the parameter $\psi^\fke$, and $f^\fke \in \mc{H}(G^\fke)$ and $f \in \mc{H}(G)$ are two functions with $\Delta[\fke,\xi,z]$-matching orbital integrals, then we have
    \begin{equation} \label{eq:eci} f^\fke(\psi^\fke) = e(G) \sum_{\pi \in \Pi_\psi(G,\Xi)} \<\pi,s^\fke\cdot s_\psi\> f(\pi). \end{equation}

\item Suppose that $\psi \in \Phi_{\bdd}(G^*)$, i.e. $\psi$ is generic. Then the map $\Pi_\psi(G,\Xi) \rw \Pi_\tx{unit}(G)$ is injective and its image belongs to $\Pi_\tx{temp}(G)$. If $F$ is nonarchimedean the map $\Pi_\psi(G,\Xi) \rw \tx{Irr}(S_\psi^\natural,\chi_z)$ is bijective. If $F$ is archimedean, let $(\xi_i,z_i) : G \rw G_i$ be pure inner twists such that $\{z_i\}$ is a set of representatives for the image of $H^1(\Gamma,G_\tx{sc}) \rw H^1(\Gamma,G)$. Then the maps $\Pi_\psi(G_i,\xi_i\circ\xi) \rw \tx{Irr}(S_\psi^\natural,\chi_z)$ given by $\pi_i \mapsto \<\pi_i,-\>_{\xi_i\circ\xi,\xi^{-1}(z_i)z}$ lead to a bijection
    \[ \bigsqcup_i \Pi_\psi(G_i,\xi_i\circ\xi) \rw \tx{Irr}(S_\psi^\natural,\chi_z). \]

\item As $\psi$ runs over $\Phi_{\bdd}(G^*)$ (resp. $\Phi_{2,\bdd}(G^*)$) the sets $\Pi_\psi(G,\Xi)$ are disjoint and exhaust $\Pi_\tx{temp}(G)$ (resp. $\Pi_{2,\temp}(G)$).
\end{enumerate}

\end{thm*}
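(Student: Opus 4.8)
The plan is to prove Theorem~\ref{thm:locclass-single} by a long induction on $N$, run in parallel with the global Theorem~\ref{thm:main-global} and with the local intertwining relation (Theorem~\ref{thm:lir}), and resolved only at the end of the paper. Several cases are disposed of outside the induction: when $E=F\times F$ the group $G$ is an inner form of $\GL(N)$ and the statement reduces to the local Langlands correspondence for $\GL(N)$ (Theorem~\ref{thm:LLC-GL}) together with its extension to inner forms established in Theorem~\ref{thm:LLC-GLmD}; when $F$ is archimedean it follows from the work of Langlands and Shelstad, reformulated in terms of extended pure inner twists and checked to be compatible with the formulations of \cite{Vog93}; and when $G=G^*$ is quasi-split it is Proposition~\ref{prop:main-local-qsplit}, imported from \cite{Mok}, once the identification of $\tx{Irr}(\ol\cS_\psi)$ with $\tx{Irr}(S_\psi^\natural,\chi_z)$ (via Lemma~\ref{lem:squot}) is made. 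Thus the genuinely new content is the non-quasi-split $p$-adic case with $N$ even, and the present paper treats it only for generic $\psi$.

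First I would reduce to generic bounded parameters $\psi\in\Phi_{\bdd}(G^*)$. For a general $\psi$ one passes to $\phi_\psi$ via \eqref{eq:artlanpar} and builds $\Pi_\psi(G,\xi)$ out of the tempered packets by the Langlands quotient construction, which yields parts~1--3 in the non-tempered range once they are known in the tempered range; the full analysis of non-generic local $A$-parameters, completing parts~1--4 for such $\psi$, is deferred to \cite{KMS_A}. So the crux is: for $\psi\in\Phi_{\bdd}(G^*)$, construct the finite set $\Pi_\psi(G,\xi)$, the map to $\Pi_\temp(G)$, and the internal parametrisation $\pi\mapsto\<\pi,-\>_{\xi,z}\in\tx{Irr}(S_\psi^\natural,\chi_z)$, so that the endoscopic character identity \eqref{eq:eci} holds. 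Uniqueness of the packet and the pairing then comes from linear independence of characters applied to \eqref{eq:eci} for enough endoscopic triples, and the dependence on $z$ (part~2) is extracted by tracking the transfer-factor equivariance of Lemma~\ref{lem:tfequi} through \eqref{eq:eci}.

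The construction proceeds by comparison of stabilized trace formulas --- the ``standard model'' of \S\ref{sub:std-model}, essentially as in \cite{Arthur} and \cite{Mok} --- feeding in the seed theorems (Propositions~\ref{prop:1st-seed-thm}, \ref{prop:2nd-seed-thm}), the stable multiplicity formula, and the quasi-split results of \S\ref{sub:results-qsuni}. The essential local ingredient that is not automatic for inner forms is the local intertwining relation (Theorem~\ref{thm:lir}). Unlike the quasi-split case, its proof cannot be purely local: one globalizes the given $p$-adic unitary group and its tempered parameter to a global unitary group and parameter (using the globalizations of \S\ref{sub:Globalization-param}), arranged so that at an auxiliary non-quasi-split place the parameter is more tractable, and then runs the trace formula to propagate LIR from that place to the original one. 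Combined with the Arthur-style purely local reduction steps of \S\S\ref{sub:prelim-local-intertwining1}--\ref{sub:prelim-local-intertwining}, this reduces LIR to an explicit computation for the real group $U(3,1)$ relative to the Levi $U(1,1)\times\C^\times$ and certain special parameters. With LIR available, parts~4, 5, 6 --- injectivity and temperedness of $\Pi_\psi(G,\Xi)\rw\Pi_\temp(G)$, bijectivity onto $\tx{Irr}(S_\psi^\natural,\chi_z)$ after summing over the $K$-group in the archimedean case, and disjointness and exhaustion as $\psi$ ranges over $\Phi_{\bdd}(G^*)$ or $\Phi_{2,\bdd}(G^*)$ --- follow by imitating the quasi-split arguments.

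The main obstacle, as the authors stress, is the LIR: even its correct \emph{formulation} for inner forms is delicate, being tied to the normalization of intertwining operators for parabolically induced representations relative to the extended pure inner twist data (Chapter~\ref{chapter2}). This is why the full LIR --- and with it Theorem~\ref{thm:locclass-single} in the non-generic range and Theorem~\ref{thm:main-global} in full --- is completed only over the three papers of the series; in this first installment one proves LIR, and hence Theorem~\ref{thm:locclass-single}, only for tempered parameters on pure inner twists, i.e.\ for unitary groups attached to Hermitian spaces over $E$.
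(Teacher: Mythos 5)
Your proposal is an accurate high-level reconstruction of the paper's strategy: initial reductions (independence of $z$ via Proposition~\ref{p:local-thm-other-ext-pure-inner-tw}, quasi-split case from \cite{Mok}, archimedean case from Langlands--Shelstad, $E=F\times F$ via Theorem~\ref{thm:LLC-GLmD}), then the long induction interwoven with Theorems~\ref{thm:lir} and~\ref{thm:main-global}, globalizing and comparing trace formulas, with LIR ultimately anchored in the explicit $U(3,1)$ computation of \S\ref{sec:lirexp}, and parts~4--6 then deduced along the lines of the quasi-split argument. This matches the paper's route; the only caveat is that it is a roadmap rather than a proof --- the substance lives in the detailed construction of the normalized intertwining operators and transfer factors for extended pure inner twists, which your outline correctly identifies as the bottleneck but does not reproduce.
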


\subsubsection{Initial reductions}\label{subsub:initial-red}

  As before suppose that $E$ is a quadratic algebra over a local field $F$ (allowing $E=F\times F$). Let $(G,\xi)$ be an inner twist of $G^*=U_{E/F}(N)$.

\begin{pro}\label{p:local-thm-other-ext-pure-inner-tw}
  Let $z,z' \in Z^1_\tx{bsc}(\mc{E},G^*)$ be such that $(\xi,z)$ and $(\xi,z')$ are extended pure inner twists $G^* \rw G$. If Theorem \ref{thm:locclass-single} holds true for $(\xi,z)$ then it does so for  $(\xi,z')$.
\end{pro}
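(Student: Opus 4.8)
The plan is to transfer Theorem~\ref{thm:locclass-single} from $(\xi,z)$ to $(\xi,z')$ by recording how each of its constituents changes when $z$ is replaced by $z'$. Since $\xi$ is held fixed, $z$ and $z'$ have the same image $\sigma\mapsto\xi^{-1}\sigma(\xi)$ in $Z^1(\Gamma,G^*_\tx{ad})$, so the pointwise quotient $x:=z'z^{-1}$ is valued in $Z(G^*)$; a direct check with the cocycle relations, using that $\sigma\in\mc{E}$ preserves $Z(G^*)$, shows $x\in Z^1_\tx{alg}(\mc{E},Z(G^*))$, and hence $z'=xz$ with $x$ as in part~2 of the theorem. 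As $Z(G^*)=U_{E/F}(1)$ is a connected (hence $\theta^*$-invariant, with $\theta^*=\tx{id}$) torus, Lemma~\ref{lem:tfequi} applies verbatim and yields, for every ordinary endoscopic triple $\fke$,
\[ \Delta[\fke,\xi,z'] \;=\; \<x,\bar s^\fke\>\,\Delta[\fke,\xi,z], \]
where $\<x,\bar s^\fke\>\in\C^\times$ is the Kottwitz pairing of $x$ with the $\Gamma$-fixed image $\bar s^\fke$ of $s^\fke$ in the torsion-free quotient of the cocenter $\hat G^*/\hat G^*_\tx{der}$.

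With this in hand I would \emph{define} the packet for $(\xi,z')$ to be the set $\Pi_\psi(G,\xi)$ already attached to $(\xi,z)$, and the pairing by $\<\pi,-\>_{\xi,z'}:=\<\pi,-\>_{\xi,z}\otimes\chi_x$, and then verify parts~1--6. Parts~1, 3 and 6, and the independence-of-$z$ clause of part~2, are literally the same statements as for $(\xi,z)$: they refer only to $\psi$, to the $(G,\xi)$-relevance of $\psi$ (a property of the inner twist, not of $z$), to the first map $\Pi_\psi(G,\xi)\to\Pi_\tx{unit}(G)$, and to central characters. For the transformation law in part~2, additivity of $x\mapsto\chi_x$ gives $\<\pi,-\>_{\xi,z'x'}=\<\pi,-\>_{\xi,z}\otimes\chi_{xx'}=\<\pi,-\>_{\xi,z'}\otimes\chi_{x'}$; and one checks $\<\pi,-\>_{\xi,z'}\in\Irr(S^\natural_\psi,\chi_{z'})$ by functoriality of Kottwitz's map~\eqref{eq:kotisoloc} under $Z(G^*)\hra G^*$ applied to $z'=xz$, which gives $\chi_{z'}=\chi_z\cdot(\chi_x|_{Z(\hat G^*)^\Gamma})$. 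Part~5 then follows since tensoring by the character $\chi_x$ of $S^\natural_\psi$ is a bijection $\Irr(S^\natural_\psi,\chi_z)\to\Irr(S^\natural_\psi,\chi_{z'})$, and in the archimedean $K$-group statement $\chi_x$ is pulled back from $Z(G^*)$ and is therefore the same character on every member of the $K$-group, so it carries the bijection for $(\xi,z)$ to the one for $(\xi,z')$.

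The real content is part~4. Given $f^\fke\in\mc{H}(G^\fke)$ and $f\in\mc{H}(G)$ with $\Delta[\fke,\xi,z']$-matching orbital integrals, the displayed relation between transfer factors shows that $f^\fke$ and $\<x,\bar s^\fke\>\,f$ have $\Delta[\fke,\xi,z]$-matching orbital integrals, so part~4 for $(\xi,z)$ gives
\[ f^\fke(\psi^\fke)\;=\;e(G)\,\<x,\bar s^\fke\>\sum_{\pi\in\Pi_\psi(G,\Xi)}\<\pi,s^\fke s_\psi\>_{\xi,z}\,f(\pi). \]
Comparing with the identity to be proved for $(\xi,z')$, namely $f^\fke(\psi^\fke)=e(G)\sum_\pi\<\pi,s^\fke s_\psi\>_{\xi,z}\,\chi_x(s^\fke s_\psi)\,f(\pi)$, everything reduces to the scalar equality
\[ \chi_x(s^\fke s_\psi)\;=\;\<x,\bar s^\fke\>. \]
Here $\chi_x(s_\psi)=1$ because $s_\psi=\psi(1,\tx{diag}(-1,-1))$ lies in the image of the Arthur $\SL(2,\C)$ and hence maps trivially to the cocenter of $\hat G^*$, so it remains to prove $\chi_x(s^\fke)=\<x,\bar s^\fke\>$. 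This is a compatibility, internal to isocrystal/Galois cohomology and the Kottwitz and Langlands pairings, between the definition of $\chi_x$ as a character of $S^\natural_\psi$ (obtained from $\chi_x\in X^*([\hat G^*/\hat G^*_\tx{sc}]^\Gamma)$ via $S_\psi\hra H^0(\psi(W_F),\hat G^*)\to[\hat G^*/\hat G^*_\tx{sc}]^\Gamma$) and the pairing of Lemma~\ref{lem:tfequi} evaluated at $s^\fke$. I expect this bookkeeping to be the only genuine obstacle; it would be dispatched by unwinding both sides through~\eqref{eq:kotisoloc} and the naturality of $\<\cdot,\cdot\>$ under the morphism of length-two complexes $[Z(G^*)\to 1]\to[S^*\stackrel{1-\theta^*}{\lrw}S^*]$ used in the proof of Lemma~\ref{lem:tfequi}. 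With that identity established, part~4, and hence all of Theorem~\ref{thm:locclass-single}, holds for $(\xi,z')$.
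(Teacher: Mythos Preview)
Your proof is correct and follows essentially the same route as the paper's: define the packet and pairing for $(\xi,z')$ via the twist by $\chi_x$, observe that parts~1, 3, 5, 6 are insensitive to $z$, and reduce part~4 to the scalar identity $\chi_x(s^\fke s_\psi)=\<x,\bar s^\fke\>$ using Lemma~\ref{lem:tfequi}. The ``genuine obstacle'' you flag at the end is in fact a non-issue here: since $G^*=U_{E/F}(N)$ has $\hat G^*_\tx{sc}=\hat G^*_\tx{der}=\tx{SL}_N(\C)$, the quotient $[\hat G^*/\hat G^*_\tx{sc}]^\Gamma$ through which $\chi_x$ is defined coincides with $[\hat G^*/\hat G^*_\tx{der}]^\Gamma$ through which $\<x,\bar s^\fke\>$ is defined, and the map $S_\psi\to[\hat G^*/\hat G^*_\tx{sc}]^\Gamma$ simply sends $s^\fke$ to $\bar s^\fke$; the paper accordingly writes the transfer factor relation directly as $\Delta[\fke,\xi,zx]=\chi_x(s)\,\Delta[\fke,\xi,z]$ without further comment.
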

\begin{proof}
As we already remarked prior to stating Theorem \ref{thm:locclass-single}, $z'=zx$ for some $x \in Z^1_\tx{alg}(\mc{E},Z(G^*))$. Applying the Theorem to $(\xi,z)$ we obtain the set $\Pi_\psi(G,\xi)$ and its map to $\Pi_\tx{unit}(G)$. These serve $(\xi,z')$ as well. We also obtain the map $\pi \mapsto \<\pi,-\>_{\xi,z}$ and define the map $\pi \mapsto \<\pi,-\>_{\xi,z'}$ by the formula in part 2 of the Theorem. Parts 3 and 6 are independent of $z$ and thus remain valid, while part 5 depends on $z$ in an obvious way and also remains valid. To treat part 4, recall that according to Lemma \ref{lem:tfequi} the transfer factors $\Delta[\fke,\xi,z]$ and $\Delta[\fke,\xi,zx]$ are related by the equation
\[ \Delta[\fke,\xi,zx](\gamma,\delta) = \Delta[\fke,\xi,z](\gamma,\delta) \cdot \chi_x(s), \]
for any strongly regular semi-simple related elements $\gamma \in G^\fke(F)$ and $\delta \in G(F)$. It follows that if $f$ and $f^\fke$ have $\Delta[\fke,\xi,z]$-matching orbital integrals, then $f$ and $f^\fke \cdot \chi_x(s)$ have $\Delta[\fke,\xi,z']$-matching orbital integrals. Noting that $s_\psi$ lifts to $\hat G_\tx{sc}$ and thus belongs to the kernel of $\chi_x$, we see that the two versions of equation \eqref{eq:eci} for $(\xi,z)$ and $(\xi,z')$ are equivalent.
\end{proof}

\begin{cor}\label{c:local-thm-q-split}
  Theorem \ref{thm:locclass-single} holds true if $G=G^*$ is quasi-split (which includes the case $E=F\times F$).
\end{cor}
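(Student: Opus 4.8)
The plan is to deduce the corollary from the imported quasi-split results of \cite{Mok} by a purely formal reduction. First I would treat the canonical extended pure inner twist $(\mathrm{id}_{G^*},1)\colon G^*\ra G^*$. For $z=1$ the associated character $\chi_z\in X^*(Z(\hat G^*)^\Gamma)$ is trivial, so $\Irr(S^\natural_\psi,\chi_z)=\Irr(\ol\cS_\psi)$ by Lemma \ref{lem:squot}; under this identification, and since $e(G^*)=1$ because $G^*$ is quasi-split, each of the six parts of Theorem \ref{thm:locclass-single} for the datum $(G^*,\mathrm{id},1)$ becomes verbatim one of the parts of Proposition \ref{prop:main-local-qsplit}. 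Part (1): the relevance clause is automatic, every parameter being relevant for a quasi-split group. Parts (2)--(3): the central character and the endoscopic character identity \eqref{eq:eci}, with the requisite stable linear forms supplied by Proposition \ref{prop:local-stable-linear}. Part (4): vacuous here, and only records how $\<\pi,-\>_{\mathrm{id},z}$ is to be defined for other $z$. Part (5): the nonarchimedean case is part 4 of Proposition \ref{prop:main-local-qsplit}, while in the archimedean case the $K$-group bijection reduces to the classification of packets over the real forms of $U_{E/F}(N)$ due to Langlands--Shelstad and Adams--Barbasch--Vogan recalled in the introduction. Part (6): the disjointness and exhaustion statement. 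None of this uses genericity, so the quasi-split case in fact holds for all $\psi\in\Psi(G^*)$.

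Second, for an arbitrary $z\in Z^1_\tx{bsc}(\mc{E},G^*)$ for which $(\mathrm{id}_{G^*},z)$ is an extended pure inner twist $G^*\ra G^*$, I would invoke Proposition \ref{p:local-thm-other-ext-pure-inner-tw} with the two twists $(\mathrm{id},1)$ and $(\mathrm{id},z)$: the theorem has just been established for the former, hence holds for the latter. This already covers what is needed downstream, where one always reduces to the quasi-split group with $\xi=\mathrm{id}$. More generally, an extended pure inner twist $(\xi,z)\colon G^*\ra G^*$ with quasi-split target has the image of $[z]\in B(F,G^*)_\tx{bsc}$ in $B(F,G^*_\tx{ad})_\tx{bsc}=H^1(\Gamma,G^*_\tx{ad})$ equal to the neutral class, since the quasi-split form of $U_{E/F}(N)$ is the inner form attached to the trivial cohomology class; using the exact sequence relating $B(F,-)_\tx{bsc}$ along $Z(G^*)\hra G^*\ra G^*_\tx{ad}$ one then normalizes $(\xi,z)$ to a datum with $\xi=\mathrm{id}$, which at worst relabels the packet and the pairing by an automorphism of $G^*$, and the previous steps apply.

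There is no deep obstacle: the whole content is the translation between our normalization and that of \cite{Mok}, packaged by Lemma \ref{lem:squot} and Proposition \ref{p:local-thm-other-ext-pure-inner-tw}. The only points asking for a little care are the cohomological normalization in the last step and the identification, in the archimedean case of part (5), of our formulation with the pre-existing classification at the real places; both are bookkeeping rather than genuine difficulties.
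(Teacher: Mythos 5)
Your proposal is correct and follows the same route as the paper: invoke Proposition \ref{prop:main-local-qsplit} (resp. Theorem \ref{thm:LLC-GLmD} for $E=F\times F$) for the trivial extended pure inner twist $(\mathrm{id},1)$, identify $\Irr(S^\natural_\psi,\chi_1)$ with $\Irr(\ol\cS_\psi)$ via Lemma \ref{lem:squot}, and then apply Proposition \ref{p:local-thm-other-ext-pure-inner-tw} to reach arbitrary $(\mathrm{id},z)$. Your final paragraph about normalizing a general $(\xi,z)$ with quasi-split target back to $\xi=\mathrm{id}$ is sound but unnecessary, since any inner twist $\xi\colon G^*\to G^*$ has cohomologically trivial class in $H^1(\Gamma,G^*_\tx{ad})$ and hence is equivalent to the identity inner twist, which is already covered.
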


\begin{proof}
  Mok's main local theorem (Proposition \ref{prop:main-local-qsplit}) tells us that the theorem is true for the trivial extended pure inner twist $(G^*,\id,1)$. (If $E=F\times F$ then the theorem is no more than Theorem \ref{thm:LLC-GL}.) So the corollary is an immediate consequence of Proposition \ref{p:local-thm-other-ext-pure-inner-tw}.
\end{proof}

\subsubsection{The local classification for linear groups} \label{sec:loclin}

Let $F$ be a local field and $G^*=\tx{GL}_N$. We take $^LG=\hat G=\tx{GL}_N(\C)$ as the dual group. Recall from Section \ref{subsub:inner-GL} that an inner form $G$ of $G^*$ is of the form $\tx{Res}_{D/F}(\tx{GL}(M))$ where $D$ is a division algebra over $F$ of degree $d^2$ and $N=Md$. Every $\phi \in \Phi(G^*)$ can be written as $\phi=k_1\phi_1 \oplus \dots \oplus k_t \phi_t$ with $\phi_i \in \Phi_2(\tx{GL}_{n_i})$ and $k_1n_1 + \dots +k_t n_t =N$. Then $\phi$ is relevant for $G$ if $d$ divides $n_i$ for all $1 \leq i\leq t$. The Kottwitz sign $e(G)$ is explicitly given as $e(G)=(-1)^{N-M}$.

To each Arthur parameter $\psi \in \Psi(G^\ast)$ we associate a Langlands parameter $\phi_\psi$ by \eqref{eq:artlanpar} and to it, we can further associate a representation $\pi^*_\psi$ of $G^*$ via the local Langlands correspondence of Theorem \ref{thm:LLC-GL}.

Let $\psi=k_1\psi_1 \oplus \dots \oplus k_t \psi_t \in \Psi(N)$ be a decomposition of an Arthur parameter into simple constituents. We would like to associate to $\psi$ a sign $a_\psi$ but to do so we have to differentiate the real and $p$-adic cases.

Suppose first that $F$ is \textit{real}. Then $\psi_i$ is of the form $\nu_i \otimes \tx{Sym}^{m_i-1}$ where $\nu_i \in \Phi_2(f_i)$, $f_i \in \{1,2\}$ and and $\tx{Sym}^{m_i-1}$ is the irreducible representation of ${\rm SU}(2)$ of dimension $m_i$. We set $a_{\psi_i}=1$ if $f_i=2$ and  $a_{\psi_i}=(-1)^{m_i/2}$ if $f_i=1$. We then define
\begin{equation*}a_\psi= \underset{i=1}{\overset{t}{\prod}}a_{\psi_i}^{k_i}.
\end{equation*}

Suppose now that $F$ is {\textit{$p$-adic}. Then $\psi_i$ is of the form $\nu_i \otimes \tx{Sym}^{r_i-1}\otimes \tx{Sym}^{m_i-1}$  with $\nu_i \in \Phi_2(f_i)$. We set $a_{\psi_i}=1$ unless $d$ divides $f_im_i$. In this case , let $s$ be the smallest positive integer such that $d$ divides $f_is$ (thus $s|m_i)$; we set $a_{\psi_i}=1$ if $s$ is odd and $a_{\psi_i}=(-1)^{\frac{r_im_i}{s}}$ if $s$ is even. We then define
\begin{equation*} a_\psi= \underset{i=1}{\overset{t}{\prod}}a_{\psi_i}^{k_i}.
\end{equation*}

Finnaly, in the $p$-adic case denote, for any Arthur parameter $\psi \in \Psi(G^\ast)$,
$$\hat{\psi}(w,u_1,u_2)=\psi(w,u_2,u_1), \qquad w \in W_F, u_1,u_2 \in {\rm SU}(2)$$
for the dual parameter that interchanges the two ${\rm SU}(2)$-factors in $W_F \times {\rm SU}(2) \times {\rm SU}(2)$. With this notation we can now recall the results of \cite{DKV84}, \cite{Bad} and \cite{BadRen}. We remark that the ``Langlands-Jacquet correpondence'' ${\bf LJ}$ is normalized in different ways in the two references \cite{Bad} and \cite{BadRen}.

\begin{thm}\label{thm:LLC-GLmD}
Let $\psi=k_1\psi_1 \oplus \dots \oplus k_t \psi_t \in \Psi(N)$ be a decomposition of an Arthur parameter into simple consituents.
\begin{enumerate}
\item Suppose that for each $1 \leq i \leq t$ we have that at least one of $\psi_i|_{L_F}$ and $\hat\psi_i|_{L_F}$
is relevant (we disregard $\hat\psi_i|_{L_F}$ if $F$ is real). Then there exists a unique irreducible unitary representation $\pi_\psi$ of $G(F)$ such that
\begin{equation}\label{eq:tr-GLmD}\tx{tr}(\pi^*_\psi(f^*))=e(G) a_\psi \tx{tr}(\pi_\psi(f))
\end{equation}
for any $f^* \in \mc{H}(G^*)$, $f \in \mc{H}(G)$ such that $O_{\delta^*}(f^*)=O_\delta(f)$ whenever $\delta^* \in G^*(F)$ and $\delta \in G(F)$ are stably conjugate.
\item Suppose there exists $1 \leq i \leq t$ such that neither of $\psi_i|_{L_F}$ and $\hat\psi_i|_{L_F}$ is relevant (we disregard $\hat\psi_i|_{L_F}$ if $F$ is real). Then for any $f^* \in \mc{H}(G^*)$, $f \in \mc{H}(G)$ such that $O_{\delta^*}(f^*)=O_\delta(f)$ whenever $\delta^* \in G^*(F)$ and $\delta \in G(F)$ are stably conjugate, we have
\[\tx{tr}(\pi^*_\psi(f^*))=0.\]
\item As $\psi$ runs over $\Phi(G^*)$ the representations $\pi_\psi$ are different and exhaust $\Pi_\tx{temp}(G)$.

\end{enumerate}
\end{thm}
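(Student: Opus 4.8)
The plan is to prove Theorem \ref{thm:LLC-GLmD} by reducing it to the classification of the discrete series of $G(F)=\tx{Res}_{D/F}(\tx{GL}(M))$ via the Deligne--Kazhdan--Vign\'eras / Badulescu correspondence, together with the compatibility of this correspondence with parabolic induction. First I would recall the Langlands--Jacquet transfer $\tb{LJ}$, which on the level of Grothendieck groups is a map $\tx{Groth}(G^*)\rw\tx{Groth}(G)$ characterized by the character relation $\tx{tr}(\pi^*(f^*))=\sum_\pi \pm\tx{tr}(\pi(f))$ for $f^*,f$ with matching (stable, i.e. plain since these are inner forms of $\tx{GL}_N$) orbital integrals; the sign and the fact that $\tb{LJ}$ sends an irreducible essentially square-integrable $\pi^*$ of $\tx{GL}_{n}(F)$ either to $\pm$ an irreducible essentially square-integrable representation of $\tx{Res}_{D/F}\tx{GL}_{n/d}$ (when $d\mid n$) or to zero (otherwise) is exactly the content of \cite{DKV84}, \cite{Bad}, \cite{BadRen}. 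I would fix once and for all the normalization of $\tb{LJ}$ used in \cite{BadRen}, and note the Kottwitz sign $e(G)=(-1)^{N-M}$ and its multiplicativity under Levi subgroups, which matches the multiplicativity of the local Jacquet--Langlands signs.

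Next I would treat the simple case. If $\psi=\mu\otimes\tx{Sym}^{r-1}\otimes\tx{Sym}^{m-1}\in\Psi_\tx{sim}(N)$ (with the archimedean case being the degenerate version with no $W_F$-$\tx{SU}(2)$ factor), then $\phi_\psi$ is an essentially square-integrable parameter (a single Zelevinsky/speh segment), $\pi^*_\psi$ is the corresponding Speh representation, and by the results of \cite{Bad}, \cite{BadRen} on the image of $\tb{LJ}$ on the unitary dual, $\tb{LJ}(\pi^*_\psi)$ is, up to the explicit sign $a_{\psi}$ and $e(G)$, either an irreducible unitary representation $\pi_\psi$ of $G(F)$ (precisely when $d\mid f m$, i.e.\ when $\psi|_{L_F}$ or $\hat\psi|_{L_F}$ is relevant) or identically zero (otherwise). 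This establishes parts 1 and 2 for simple $\psi$; the sign bookkeeping is just unwinding the definitions of $a_{\psi_i}$ in the real and $p$-adic cases against the formula in \cite[Thm]{BadRen}, using the $p$-adic operation $\psi\mapsto\hat\psi$ exchanging the two $\tx{SU}(2)$'s to match the relevance condition (a square-integrable parameter and its Aubert--Zelevinsky dual have the same relevance).

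Then I would pass to the general case $\psi=k_1\psi_1\oplus\dots\oplus k_t\psi_t$ by parabolic induction. Set $M^*=\prod_i\tx{GL}_{n_i}^{k_i}\subset\tx{GL}_N$; by Corollary \ref{cor:itex} (see also \S\ref{subsub:Levi-GL}), this Levi transfers to $G$ exactly when $d\mid n_i$ for all $i$ — equivalently when every $\psi_i|_{L_F}$ is relevant — in which case $M=\prod_i(\tx{Res}_{D/F}\tx{GL}_{n_i/d})^{k_i}\subset G$. If some $\psi_i$ is irrelevant on both $L_F$-factors, then $\pi^*_{\psi_i}$ lies in the kernel of $\tb{LJ}$ on the relevant block, and since $\pi^*_\psi$ is the irreducible (unitary) induction of $\otimes_i(\pi^*_{\psi_i})^{\otimes k_i}$ and $\tb{LJ}$ commutes with parabolic induction, one gets $\tx{tr}(\pi^*_\psi(f^*))=0$, giving part 2. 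If all $\psi_i|_{L_F}$ (or $\hat\psi_i|_{L_F}$) are relevant, then $\tb{LJ}$ applied to the induced representation, using compatibility with induction and the simple case for each factor, produces $e(G)a_\psi\cdot[\text{induced rep on }G]$; Bernstein's irreducibility theorem for unitary induction on $G(F)$ (valid since $G(F)=\prod_i\tx{GL}_{n_i/d}(D)^{k_i}$ is a product of inner forms of general linear groups) shows this induced representation is a single irreducible unitary $\pi_\psi$, and the multiplicativity $e(G)=\prod e(M_i)$, $a_\psi=\prod a_{\psi_i}^{k_i}$ closes part 1. Uniqueness in part 1 follows from linear independence of characters. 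Finally part 3 — that $\{\pi_\psi\}$ is exactly $\Pi_\tx{temp}(G)$ — follows because for $\psi\in\Phi_\tx{bdd}(G^*)$ one recovers from the simple case the classification of $\Pi_{2,\tx{temp}}$ of each $\tx{GL}_{n/d}(D)$ by \cite{DKV84}, \cite{Bad}, and then $\Pi_\tx{temp}(G)$ is obtained from these by (irreducible, by Bernstein) unitary parabolic induction, matching the combinatorics of $\Phi_\tx{bdd}(G^*)$.

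The main obstacle I anticipate is not any single hard theorem but the careful reconciliation of normalizations: the sign $a_\psi$ as defined in \S\ref{sec:loclin} must be shown to coincide with the product of Jacquet--Langlands signs coming out of \cite{Bad}/\cite{BadRen} under \emph{their} normalization of $\tb{LJ}$ — which, as the excerpt itself warns, differs between those two references — and simultaneously to be compatible with the Kottwitz sign $e(G)$ and with parabolic descent; getting the $p$-adic formula involving the auxiliary integer $s$ (the smallest $s>0$ with $d\mid f_i s$) right, and matching it to the structure of the image of $\tb{LJ}$ on Speh representations, is where the real care is needed.
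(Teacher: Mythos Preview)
The paper does not actually prove Theorem \ref{thm:LLC-GLmD}: it is stated explicitly as a recall of results from \cite{DKV84}, \cite{Bad}, and \cite{BadRen} (see the sentence immediately preceding the theorem), and no argument is given beyond the citation. Your proposal is a faithful sketch of the proof as it appears in those references---reducing to the simple case via the Jacquet--Langlands/$\tb{LJ}$ transfer on discrete series, then extending by compatibility with parabolic induction and Bernstein's irreducibility---and your identification of the sign-normalization bookkeeping as the delicate point is exactly the issue the paper itself flags (``$\tb{LJ}$ is normalized in different ways in the two references''). So there is no discrepancy to report: you have outlined the proof the paper is citing, not one it gives.
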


 Let $\psi \in \Psi(G)$ be an Arthur parameter. To deduce Theorem \ref{thm:locclass-single} from Theorem \ref{thm:LLC-GLmD}, we will show that the group $S_\psi^\natural$ is a complex torus of rank 1 and we will assign to each $\rho \in X^*(S_\psi^\natural)$ an equivalence class of extended pure inner twists $\Xi : G^* \rw G$, a representation $\pi$ of $G(F)$ (possibly empty) and use \eqref{eq:tr-GLmD} to prove  that the character identities of Theorem \ref{thm:locclass-single} hold.

Decompose $\psi = k_1\psi_1 \oplus \dots \oplus k_t\psi_t$ into irreducible representations with $\dim(\psi_i)=n_i$. The parameter $\psi$ is then discrete for the Levi subgroup $M^* \subset G^*$ dual to $\hat M = \tx{GL}_{n_1}(\C)^{k_1} \times \dots \times \tx{GL}_{n_t}(\C)^{k_t}$. We have $S_\psi \cong \tx{GL}_{k_1} \times \dots \times \tx{GL}_{k_t}$, with each entry $c$ of $\tx{GL}_{k_i}$ corresponding to an $n_i \times n_i$-block of the form $c\cdot \tb{1}_{n_i}$ inside of $\hat G$. We have
\[ S_\psi \cap \hat G_\tx{der} = \{(M_1,\dots,M_t)|\prod_i \tx{det}(M_i)^{n_i}=1\}. \]
Let $n'=(n_1,\dots,n_t)$ be the greatest common divisor, and let $n_i'=n_i/n'$. Then
\[ (S_\psi \cap \hat G_\tx{der})^0 = \{(M_1,\dots,M_t)|\prod_i \tx{det}(M_i)^{n_i'}=1\}. \]
It follows that we have the isomorphism
\[ S_\psi^\natural \cong \C^\times,\qquad (M_1,\dots,M_t) \mapsto \prod_i \tx{det}(M_i)^{n_i'}. \]
The inclusion $Z(\hat G) \rw S_\psi$ induces the surjective map
\[ \C^\times = Z(\hat G) \rw S_\psi^\natural = \C^\times,\qquad z \mapsto z^{n/n'}. \]
Dual to this map is an inclusion $X^*(S_\psi^\natural) \subset X^*(Z(\hat G))$ and we use it to view any $\rho \in X^*(S_\psi^\natural)$ as an element of $X^*(Z(\hat G))$. This element gives rise to an equivalence class of extended pure inner twists $(\xi,z) : G^* \rw G$. One checks using Lemma \ref{lem:triv-on-ZGcapZM} that the inner forms $G$ obtained in this way are precisely those to which the Levi subgroup $M^*$ of $G^*$ transfers.
This completes the construction.

Let $s \in S_\psi$ be a semi-simple element. Conjugating within $S_\psi$ we may assume that it belongs to the standard diagonal torus of $S_\psi \cong \tx{GL}_{k_1} \times \dots \times \tx{GL}_{k_t}$, hence also to the standard diagonal torus of $\hat G$. The group $\hat L = \hat G_s$ is connected and is a semi-standard Levi subgroup of $\hat G$ and contains $\hat M$, because $Z(\hat M)$ is the standard maximal torus of $S_\psi$. Let $L^*$ be the standard Levi subgroup of $G^*$ dual to $\hat L$. It contains $M^*$ and the parameter $\psi$ provides a representation $\sigma$ of $L^*(F)$. We may take $\hat L\times W_F$ as the L-group of $L^*$ and in this way $(L^*,s)$ becomes an endoscopic datum for $G^*$.

We now discuss $\Delta[\xi,z]$. Let $\gamma \in L^*(F)$ and $\delta \in G(F)$ be semi-simple and $G^*$-regular. These elements are related if and only if they are stably conjugate, with $\gamma$ viewed as an element of $G^*(F)$ via the inclusion $L^* \subset G^*$.

\begin{lem} \label{lem:tfgln} We have
\[ \Delta[\xi,z](\gamma,\delta) = D_{G^*/L^*}(\gamma) \cdot \rho(s). \]
\end{lem}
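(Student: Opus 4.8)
The plan is to compute $\Delta[\xi,z](\gamma,\delta)$ directly from the explicit expression \eqref{eq:tfuntwist} for the transfer factor of ordinary endoscopy, using that $G^*=\tx{GL}_N$ and that the endoscopic group $L^*$ sits inside $G^*$ as a standard Levi subgroup. First I would take the $a$-data and $\chi$-data to be trivial; this is legitimate because the centralizer $S^*\cong S'$ of a regular semisimple element of $\tx{GL}_N$ is an induced torus and has no symmetric roots. With these choices $\Delta_{II}(\gamma,\delta)=1$ and $\Delta_I^\tx{new}(\gamma,\delta)=1$, and the virtual representation $V=X^*(T^*)\otimes\C-X^*(T^\mf{e})\otimes\C$ vanishes because $G^*$ and $L^*$ share the same minimal Levi (the diagonal split torus of rank $N$), so $\epsilon(\tfrac12,V,\psi_F)=1$. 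Moreover the admissible $L$-embedding ${}^LS^*\rw{}^LG$ may be taken to factor through the standard Levi ${}^LL^*$ (the $\chi$-data being trivial on the roots outside $L^*$), so the cochain $a_S$ is trivial and $\<a_S,\delta^*\>_\tx{Lan}=1$; this is precisely the argument used in the proof of Lemma \ref{lem:tflevi}. Hence \eqref{eq:tfuntwist} collapses to
\[ \Delta[\xi,z](\gamma,\delta) = \<v(e),s\>_\tx{Kot}\cdot\Delta_{IV}(\gamma,\delta). \]

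Next I would evaluate $\Delta_{IV}(\gamma,\delta)=|D^{G}(\delta)|^{1/2}/|D^{L^*}(\gamma)|^{1/2}$, the ratio of Weyl discriminants. Two regular semisimple elements of $\tx{GL}_N(F)$ with the same characteristic polynomial are $\tx{GL}_N(F)$-conjugate (their centralizer tori being induced, so $H^1(F,S^*)=0$), so $\gamma$, viewed in $G^*(F)$, is actually $G^*(F)$-conjugate to $\delta$; and the Weyl discriminant of a regular semisimple element of any inner form of $\tx{GL}_N$ depends only on the \'etale $F$-algebra it generates and is therefore transported by $\xi$. Thus $|D^{G}(\delta)|=|D^{G^*}(\gamma)|$, whence $\Delta_{IV}(\gamma,\delta)=|D^{G^*}(\gamma)|^{1/2}/|D^{L^*}(\gamma)|^{1/2}=D_{G^*/L^*}(\gamma)$.

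The heart of the matter is then the identity $\<v(e),s\>_\tx{Kot}=\rho(s)$. I would exploit the fact, recorded above, that the inner forms $G$ arising in this construction are exactly those to which $M^*\subset L^*$ transfers: consequently $\xi$ may be chosen with $\xi^{-1}\sigma(\xi)$ inflated from $L^*/Z(G^*)$ and $z$ chosen valued in $M^*$, hence in $L^*$, and since $L^*$ transfers to $L=\xi(L^*)\subset G$ the element $\delta$ may be taken in $L(F)$ and the conjugating element $g$ with $\delta=\xi(g^{-1}\delta^*g)$, $\delta^*=\gamma$, taken in $L^*$. Then $v(e)=gz(e)\sigma_e(g^{-1})$ is an $S^*$-valued cocycle cohomologous, already within $L^*$, to $z$, so its image in $B(F,L^*)_\tx{bsc}$ is the class of $z$. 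Since the endoscopic element $s=s^\mf{e}$ lies in $Z(\hat L)^\Gamma$ (it centralizes $\hat L=\hat G_s$) and $Z(L^*)$ is connected, functoriality of Kottwitz's maps $\kappa$ under the inclusions $S^*\hrw L^*\hrw G^*$ gives $\<v(e),s\>_\tx{Kot}=\kappa_{L^*}([z])(s)$, where $\kappa_{L^*}([z])\in X^*(Z(\hat L)^\Gamma)$ restricts on $Z(\hat G)^\Gamma$ to $\chi_z=\rho$. Combining this with Lemma \ref{lem:gbasl} (which identifies $B(F,M^*)_{G^*-\tx{bsc}}\rw B(F,G^*)_\tx{bsc}$ with the inclusion $Z(\hat G)\rw Z(\hat M)$ under the $\kappa$'s) and the explicit presentation $S_\psi\cong\prod_i\tx{GL}_{k_i}$, $S_\psi^\natural\cong\C^\times$ via $(M_i)\mapsto\prod_i\det(M_i)^{n_i'}$, $Z(\hat G)\rw S_\psi^\natural$ being $z\mapsto z^{N/n'}$, one checks that $\kappa_{L^*}([z])$ is the character of $Z(\hat L)^\Gamma$ obtained by pulling $\rho\in X^*(S_\psi^\natural)$ back along $Z(\hat L)^\Gamma\cap S_\psi\rw S_\psi^\natural$, and evaluating at $s$ yields $\rho(s)$.

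I expect the main obstacle to be exactly this last step: keeping straight the several tori $Z(\hat G)$, $Z(\hat L)$, $\hat{S^*}^\Gamma$, $S_\psi^\natural$ and the functoriality diagrams for the $\kappa$-maps, and checking that the representative $z$ can indeed be chosen $M^*$-valued with Newton point central in $L^*$, so that $\kappa_{L^*}$ applies and $v(e)$ is $L^*$-cohomologous to $z$. Once these compatibilities are pinned down the identity $\<v(e),s\>_\tx{Kot}=\rho(s)$ is forced, and the lemma follows by assembling it with the two preceding displays.
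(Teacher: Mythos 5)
Your proposal is correct and follows essentially the same route as the paper: both proofs reduce the transfer factor to the Weyl-discriminant contribution $D_{G^*/L^*}(\gamma)$ plus a cohomological pairing of $z$ with $s$, and then identify that pairing with $\rho(s)$ by pushing from the centralizer torus $S^*$ up through the chain $S^* \subset L^*$, $M^* \subset L^* \subset G^*$ and using Lemma~\ref{lem:gbasl}. The only stylistic difference is that you unwind the absolute factor \eqref{eq:tfuntwist} term by term (killing $\epsilon(\tfrac12,V,\psi_F)$, $\Delta_I^\tx{new}$, $\Delta_{II}$, $\<a_S,\delta^*\>$ with suitable $a$- and $\chi$-data), whereas the paper collapses all of this at once into the equality $\Delta[\xi,z](\gamma,\delta)=\Delta(\gamma,\gamma)\cdot\<\tx{inv}(\gamma,\delta),s\>_{S^*}$ and evaluates $\Delta(\gamma,\gamma)$ as a routine Whittaker-normalized computation; and in the final step the paper is more explicit than you are, passing from $\<-,-\>_{L^*}$ to $\<-,-\>_{M^*}$, using $Z(\hat M)\cap S_\psi^{\rm rad}=[Z(\hat M)\cap\hat G_\tx{der}]^0$ to see that $\<z,-\>_{M^*}$ factors through $S_\psi^\natural$, and then lifting $s$ to $\dot s\in Z(\hat G)$ via the surjection $Z(\hat G)\twoheadrightarrow S_\psi^\natural$ to reach $\<z,\dot s\>_{G^*}=\rho(\dot s)=\rho(s)$ — this is precisely the bookkeeping you flag as the main remaining obstacle, so you correctly identified where the substance of the argument lies.
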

\begin{proof}
Let us adjust $(\xi,z)$ within its equivalence class so that $\xi(\gamma)=\delta$. Then $L=\xi(L^*)$ is a Levi subgroup of $G$ defined over $F$ and $(\xi,z) : L^* \rw L$ is an extended pure inner twist. Let $S^*=\tx{Cent}(\gamma,G^*)$. We have
\[ \Delta[\xi,z](\gamma,\delta) = \Delta(\gamma,\gamma) \cdot \<\tx{inv}(\gamma,\delta),s\>_{S^*}. \]
The term $\Delta(\gamma,\gamma)$ is computed with respect to the Whittaker normalization of the transfer factor for the endoscopic datum $(L^*,s)$ of $G^*$. A routine calculation shows that it equals $D_{G^*/L^*}(\gamma)=\prod_\alpha|\alpha(\gamma)-1|^\frac{1}{2}$, the product being taken over all roots of $S^*$ in $G^*$ outside of $L^*$. The term $\tx{inv}(\gamma,\delta)$ belongs to $H^1_\tx{G-\tx{bsc}}(\mc{E},S^*)$ (and is equal to the class of $z$ there), while the term $s$ belongs to $Z(\hat L^*) \subset [\hat S^*]^\Gamma$. The pairing is a-priori that between $H^1_\tx{bsc}(\mc{E},S^*)$ and $[\hat S^*]^\Gamma$, but since $s \in Z(\hat L^*)$ we may also map $\tx{inv}(\gamma,\delta)$ to $H^1_\tx{G-bsc}(\mc{E},L^*)$ and then pair it with $s \in Z(\hat L^*)$. We have seen
\[ \Delta[\xi,z](\gamma,\delta)=D_{G/M}(\gamma) \cdot \<z,s\>_{L^*}. \]
Recall that $M^*$ is contained in $L^*$ and transfers to $G$. Hence it also transfers to $L$. Adjusting $(\xi,z) : L^* \rw L$ within its equivalence class we achieve that $M=\xi(M^*)$ is defined over $F$ and $(\xi,z) : M^* \rw M$ is an extended pure inner twist, in particular $z \in Z^1_\tx{G-bsc}(\mc{E},M^*)$. We are thus free to replace the pairing $\<-,-\>_{L^*}$ with the pairing $\<-,-\>_{M^*}$ between $H^1_\tx{bsc}(\mc{E},M^*)$ and $Z(\hat M)$. Since $z$ is $G^*$-basic, Lemma \ref{lem:gbasl} tells us that $\<z,-\>_{M^*}$ annihilates $[Z(\hat M) \cap \hat G_\tx{der}]^0$, the latter being contained in $N_{K/F}(Z(\hat M) \cap \hat G_\tx{der})$ for any finite Galois extension $K/F$. But  $Z(\hat M)$ is a maximal torus of the connected reductive group $S_\psi$, thus $Z(\hat M) \cap S_\psi^\tx{rad}$ is a maximal torus of $S_\psi^\tx{rad}$ (in particular, it is connected) and equals $[Z(\hat M) \cap \hat G_\tx{der}]^0$. We conclude that the inclusion $Z(\hat M) \subset S_\psi$ induces an isomorphism $Z(\hat M)/[Z(\hat M) \cap \hat G_\tx{der}]^0 \rw S_\psi^\natural$. Using the surjection $Z(\hat G) \rw S_\psi^\natural$ we choose $\dot s \in Z(\hat G)$ mapping to the image of $s$ in $S_\psi^\natural$. With this, we see
\[ \Delta[\xi,z](\gamma,\delta) = D_{G/M}(\gamma) \cdot \<z,\dot s\>_{M^*}. \]
But since $\dot s$ now resides in $Z(\hat G)$, we may finally replace $\<-,-\>_{M^*}$ with $\<-,-\>_{G^*}$. Recall however that $\<z,-\>_{G^*}$ is the character on $Z(\hat G)$ determining the equivalence class of the extended pure inner twist $(\xi,z) : G^* \rw G$ and was by construction equal to the restriction of $\rho \in X^*(S_\psi^\natural)$ to $Z(\hat G)$. Thus $\<z,\dot s\>_{G^*}=\rho(\dot s)=\rho(s)$. We arrive at
\[ \Delta[\xi,z](\gamma,\delta) = D_{G^*/L^*}(\gamma) \cdot \rho(s) \]
and the proof is complete.
\end{proof}

\begin{proof}[Proof of Theorem \ref{thm:locclass-single} for $G$.]
Let $\psi \in \Psi(G^*)$ and $\rho \in X^*(S_\psi^\natural)$. Let $(\xi,z) : G^* \rw G$ be the equivalence class of extended pure inner twists constructed as above from $\psi$ and $\rho$. Let $\pi^*_\psi$ be the representation of $G^*(F)$ associated to $\psi$ via the local Langlands correspondence.

Decompose as before $\psi = k_1\psi_1 \oplus \dots \oplus k_t\psi_t$ into irreducible representations.
If, for all $1 \leq i \leq t$, either $\psi_i|_{L_F}$ or $\hat\psi_i|_{L_F}$ is relevant, set $\Pi_\psi(G,\xi)=\{ \pi_\psi\}$ where $\pi_\psi$ is the representation  associated to $\psi$ by Theorem \ref{thm:LLC-GLmD}. Otherwise, set $\Pi_\psi(G,\xi)=\emptyset$.

To prove  Theorem \ref{thm:locclass-single} for $G$ it is enough to show that, if $f \in \mc{H}(G)$ and $f' \in \mc{H}(L^*)$ have $\Delta[\xi,z]$-matching orbital integrals, then
\begin{enumerate}
\item $ \tx{tr}(\sigma(f')) = e(G)\rho(s)\rho(s_\psi)\tx{tr}(\pi_\psi(f))$, if $\Pi_\psi(G,\xi)=\{ \pi_\psi\}$;
\item $\tx{tr}(\pi^*_\psi(f'))=0$, if $\Pi_\psi(G,\xi)=\emptyset$.
\end{enumerate}

The discussion of $\Delta[\xi,z]$ implies that we can take $f' \in \mc{H}(L^*)$ to be the constant term along $L^*$ of $\rho(s)f^*$. Then, by Theorem \ref{thm:LLC-GLmD}, we have
\begin{eqnarray*}
\tr(\sigma(f'))&=&\tr(\pi_\psi^*(\rho(s)f^*))\\&=&\begin{cases} e(G)\rho(s)a_\psi\tr(\pi_\psi(f)) &\text{if $\Pi_\psi(G,\xi)=\{ \pi_\psi\}$;} \\ 0 & \text{if $\Pi_\psi(G,\xi)=\emptyset$},\end{cases}
\end{eqnarray*}
so Theorem \ref{thm:locclass-single} is reduced to show that in the first case
\[\rho(s_\psi)=a_\psi.
\]
Suppose  first $\psi$ is a discrete parameter.

If $F$ is real then, by Corollary \ref{c:local-thm-q-split}, we just need to consider the case where $N$ is even and $G=\GL_{N/2}(\mb{H})$, where $\mb{H}$ denotes the quaternion algebra.
Then $\psi$ is of the form $\nu \otimes \tx{Sym}^{m-1}$ where $\nu$ is a representation of $W_F$ of dimension $f\in \{1,2\}$ and $fm=N$, and  $s_\psi=(-1)^{m-1}\tx{Id}_N$.
By \ref{subsub:inner-GL}, $\rho(s)=s^k$, with $k \equiv N/2 \mod N$, so $\rho(s_\psi)=(-1)^{(m-1)k}  $. This is equal to $1$ unless $m$ is even and $N/2$ is odd, that is if and only if $f=1$ and $m/2$ is odd. Thus $\rho(s_\psi)=a_\psi.$

If $F$ is $p$-adic, then $\psi$ is of the form $\nu \otimes \tx{Sym}^{n-1} \otimes \tx{Sym}^{m-1}$ where $\nu$ is a representation of $W_F$ of dimension $f$ and $fnm=N$. Again,  $s_\psi$ can be computed explicitely and one has $s_\psi=(-1)^{m-1}\tx{Id}_N$. Then $\rho(s_\psi)=(-1)^{(m-1)k}  $, with $k \equiv r\frac{N}{d} \mod N$ where $\frac{r}{d}$ is the Hasse invariant of $G$ (see Paragraph \ref{subsub:inner-GL}).

Hence $\rho(s_\psi)=(-1)^{(m-1)r\frac{fnm}{d}}$. If $d$ divides $nf$ then $\rho(s_\psi)=1$. If $d$ divides $mf$ let $s$ be the smallest integer such that $d|sf$. Let $m'\in \Z$ such that $m=m's$. Thus, if $s$ is odd
we have that $\rho(s_\psi)=(-1)^{(m's-1)rnm'\frac{fs}{d}}=1$. If $s$ is even, then $m$ and $d$ are even, and
$\frac{fs}{d}$ and $r$ need ot be odd. We deduce that $\rho(s_\psi)=(-1)^{\frac{nm}{s}}$. Again we have $\rho(s_\psi)=a_\psi.$

Suppose now $\psi = k_1\psi_1 \oplus \dots \oplus k_t\psi_t$. Then the discussion above tells us that $\<\pi_\psi,\dot s_\psi\>_{G^*}= \underset{i=1}{\overset{t}{\prod}} \<\pi_{\psi_i},\dot s_{\psi_i}\>_{\GL_{n_i}^*}^{k_i}$. The multiplicativity definition of $a_\psi$ gives us $\rho(s_\psi)=a_\psi.$
\end{proof}

\begin{rem}
For $G^*=\GL_N(F)$ we have a necessary and sufficient condition for an Arthur packet to be non-empty. Namely if $\psi=k_1\psi_1 \oplus \dots \oplus k_t \psi_t \in \Psi(N)$ is a decomposition of an Arthur parameter $\psi \in \Psi(G^*)$ into simple consituents, then $\Pi_\psi(G,\xi)$ is non-empty if and only for all $1 \leq i \leq t$, either $\phi_{\psi_i}$ or $\phi_{\widehat{\psi_i}}$ is $(G,\xi)$-relevant. And all Arthur packets are either empty or singletons.
\end{rem}

\subsubsection{Local packets for parameters in $\Psi^+(U_{E/F}(N))$}\label{subsub:local-packet-Psi+}

  In the above theorem we considered packets of an extended pure inner twist $(G,\xi,z)$ associated to parameters $\psi$ only in $\Psi(U_{E/F}(N))$. As a preparation for the global theorem in the next subsection we introduce $\Pi_\psi=\Pi_\psi(G,\xi)$ when $\psi$ belongs to the larger set $\Psi^+(U_{E/F}(N))$. The necessity comes from the possibility that the generalized Ramanujan conjecture might fail, in which case the localization of the global parameter may not be in $\Psi(U_{E/F}(N))$ but only in $\Psi^+(U_{E/F}(N))$. The construction is basically the same as in \cite[2.5]{Mok}, cf. \cite[1.5]{Arthur}. The idea is to go down to a Levi subgroup $M$ of $G$ such that $\psi$ lies (not just in $\Psi^+(M^*)$ but) in $\Psi(M^*)$ up to a character twist, where the packet is already defined, and then induce up from $M$ to $G$. We will also define a pairing between $S^\natural_\psi$ and $\Pi_\psi$.

  We need a preliminary discussion of characters on Levi subgroups. Suppose that $P^*=M^*N_{P^*}$ is a standard parabolic subgroup of $U_{E/F}(N)$ and that $M^*$ transfers to $G$. Lemmas \ref{lem:levitran} and \ref{lem:tranrel} allow us to arrange that $P^*$ transfers to a standard parabolic subgroup $P=M N_P$ and that $\xi:G^*\ra G$ restricts to an inner twist $\xi:M^*\ra M$ by disturbing $(G,\xi,z)$ within its equivalence class if necessary. In particular the isomorphism $\xi:A_{M^*}\ra A_M$ is defined over $F$. Consider a point $\lambda$ in the open chamber of $P$ in $$\fka_M^*:=X(M^*)_F\otimes_\Z \R=X(A_{M^*})_F\otimes_\Z \R=X(A_M)_F\otimes_\Z \R=X(M)_F\otimes_\Z \R.$$
  The first and third identifications are induced by inclusions $A_{M^*}\hra M^*$ and $A_M\hra M$. Then $\lambda$ gives rise to an unramified quasi-character $\chi^*_\lambda:M^*(F)\ra \C^\times$ and $\chi_\lambda:M(F)\ra \C^\times$. Both $\chi^*_\lambda$ and $\chi_\lambda$ correspond to the same element $\phi_\lambda\in H^1(W_F,Z(\hat M^*))$ (which may also be viewed as an $L$-parameter for $M^*$), cf. \cite[10.2]{Bor79}. Every element, in particular $\phi_\lambda$, acts on the set $\Psi(M^*)$ by multiplication, cf. \cite[8.5]{Bor79}. (Since $\phi_\lambda$ has central image, there is no need to distinguish left and right multiplications.)

  Now we explain how to associate a packet to a given $\psi\in \Psi^+(U_{E/F}(N))$. There exist a standard parabolic subgroup $P^*=M^*N_{P^*}$ of $U_{E/F}(N)$, a parameter $\psi_{M}\in \Psi(M^*)$, and a point $\lambda\in\fka_M^*$ such that $\psi_{M,\lambda}:=\psi_{M}\cdot \psi_\lambda$ is carried to $\psi$ under ${}^L M^*\hra {}^L U_{E/F}(N)$, where $\chi_\lambda:W_F\ra {}^L M^*$ is the parameter given by $\lambda$, cf. \cite[1.5]{Arthur}. (In particular the image of $\chi_\lambda$ is contained in $Z(\hat M^*)\rtimes W_F$.)
  If $\psi$ is not $(G,\xi)$-relevant, simply define $\Pi_\psi(G,\xi)$ to be the empty set. Now assume that $\psi$ is $(G,\xi)$-relevant so that $M^*$ transfers to $G$ and the discussion in the preceding paragraph applies. Using Lemma \ref{lem:c1} we may replace $(\xi,z)$ by an equivalent pair such that $(\xi,z)$ equips (not only $G$ but also) $M$ with the structure of an extended pure inner twist. Let us write $(M,\xi_M,z_M)$ for the latter.
  Theorem \ref{thm:locclass-single} attaches a packet $\Pi_{\psi_{M}}=\Pi_{\psi_M}(M,\xi_M)$ and a pairing $\lg \cdot,\pi_M\rg_{\xi_M,z_M}$ for each $\pi_M\in \Pi_{\psi_{M}}$.
  Now we are ready to define the packet
  $$\Pi_\psi(G,\xi):=\{\pi=\cI_P(\pi_M\otimes\chi_\lambda):\pi_M\in \Pi_{\psi_M}\},$$
  which may consist of reducible or non-unitary representations, just like in the quasi-split case, cf. \cite[2.5]{Mok}. As usual we write $\Pi_\psi$ for $\Pi_\psi(G,\xi)$ if the inner twist is clear from the context.
  It is easy to see that different choices of $\psi_M$ lead to the same definition of $\Pi_\psi(G,\xi)$. Indeed it suffices to check that $\Pi_{\psi_M}$ is invariant under the Weyl group of $M$ in $G$. This is trivial when $G$ is linear. When $G$ is unitary, with the decomposition $M=M_+\times M_-$ as before, each Weyl element acts trivially on $M_-$ and permutes the linear factors of $M_+$, but the packets of linear groups are singletons so we are done.
  It is also worth pointing out that $\Pi_\psi(G,\xi)$ depends only on $(G,\xi)$ and not on the extended pure inner twist by virtue of Theorem \ref{thm:locclass-single}.

  Our next task is to give the pairing between $S^\natural_\psi$ and $\Pi_\psi$  for $\psi\in \Psi^+(U_{E/F}(N))$, which depends on the extended pure inner twist.
  Observe that the centralizer group $S_\psi$ is contained in $\hat M^*$ by the assumption on $\psi$ so that $S_\psi=S_{\psi_{M}}$ and that $Z(\hat M^*)^\Gamma \subset S_\psi$. So
  \begin{equation}\label{eq:SZ=SMZM}
    S_\psi^0 Z(\hat G^*)^\Gamma = S_{\psi_{M}}^0 Z(\hat M^*)^\Gamma.
  \end{equation} (The inclusion $\subset $ is clear. For the other inclusion it is enough to see that $\pi_0(Z(\hat G^*)^\Gamma)\ra\pi_0(Z(\hat M^*)^\Gamma)$ is onto, which is verified explicitly for the list of Levi subgroups in \S\ref{subsub:Levi-GL} and \S\ref{subsub:Levi-U}.) The assumption that $M^*$ transfers to a Levi subgroup $M\subset G$ shows that the character $\chi_z\in X^*(Z(\hat G^*)^\Gamma)$ coming from $z$ is trivial on $\srad_\psi \cap Z(\hat G^*)^\Gamma$ by Lemma \ref{lem:chi_z-triv-on-intersection}. Hence $\chi_z$ extends uniquely to a character $\tilde \chi_z$ on $\srad_\psi Z(\hat G^*)^\Gamma=S^0_\psi Z(\hat G^*)^\Gamma$ which is trivial on $\srad_\psi$. Let $\chi_{z,M}$ denote the restriction of $\tilde \chi_z$ to $Z(\hat M^*)^\Gamma$ via \eqref{eq:SZ=SMZM}. Then $\chi_{z,M}=\chi_{z_M}$ since $\chi_{z_M}$ coincides with $\chi_z$ on $Z(\hat G^*)^\Gamma$ and also extends uniquely to $S^0_\psi Z(\hat G^*)^\Gamma$ by the same reasoning.
 Lemma \ref{lem:squot} and \eqref{eq:SZ=SMZM} imply that $\srad_\psi Z(\hat G^*)^\Gamma = \srad_{\psi_{M}} Z(\hat M^*)^\Gamma$, and also that
   \begin{equation}\label{eq:Irr(S)=Irr(S_M)}
   \Irr(S^\natural_\psi,\chi_z)=\Irr(S^\natural_{\psi_{M}},\chi_{z,M})
   \end{equation} since both sets parametrize characters on $S_\psi$ whose restriction to $\srad_\psi Z(\hat G)^\Gamma$ is $\tilde \chi_z$. Thanks to the above identification we can now define for each $\pi=\cI_P(\pi_M\otimes\chi_\lambda)\in \Pi_\psi$ that
  $$\lg s,\pi\rg_{\xi,z} := \lg s,\pi_M \rg_{\xi_M,z_M},\quad s\in S^\natural_\psi,$$
  which makes sense because $\lg \cdot,\pi_M\rg_{\xi_M,z_M}$ can be viewed as a character in $\Irr(S^\natural_\psi,\chi_z)$ via \eqref{eq:Irr(S)=Irr(S_M)} and the equality $\chi_{z,M}=\chi_{z_M}$.

\subsection{Main global theorem}\label{sub:main-global-thm}

   In this subsection we state our main global theorem describing a spectral decomposition of the discrete automorphic spectrum for inner forms of unitary groups via $L$-packets. So let $(G,\xi)$ be an inner twist of $G^*=U_{E/F}(N)$ where $E$ is a quadratic extension of a number field $F$.
  We choose $z$ such that $(G,\xi,z)$ is an extended pure inner twist; this is possible thanks to Lemma \ref{lem:c2}. The role of $z$ is auxiliary in that the final theorem turns out to be independent of the choice of $z$.

   To state the decomposition we would like to attach certain global packets $\Pi_\psi(G,\xi)$ to $\psi\in \Psi(G^*,\eta_\chi)$.
      As discussed in \S\ref{subsub:localizations-U} there is a localization map $\psi\mapsto \psi_v$, from $\Psi(G^*,\eta_\chi)$ to $\Psi^+_{\unit}(G^*_v)$. Note that $\psi_v$ is unramified at all but finitely many places $v$ (since we have $\psi_v^N=\eta_\chi\circ \psi_v$ from \S\ref{subsub:localizations-U} and know that $\eta_{\chi}$ and $\psi_v^N$ are unramified at almost all $v$). At the end of the last subsection we attached a local packet $\Pi_{\psi_v}(G_v,\xi_v)$ equipped with a map
 $$\Pi_{\psi_v}(G_v,\xi_v)\ra \Irr(S^\natural_\psi,\chi_z),\quad \pi_v\mapsto \lg \cdot,\pi_v\rg_{\xi_v,z_v}.$$
  Define the global packet for $\psi$ by
 $$\Pi_\psi(G,\xi):=\left\{\bigotimes_v \pi_v~:~\pi_v\in \Pi_{\psi_v}(G_v,\xi_v),~~\lg\cdot,\pi_v\rg_{\xi_v,z_v}=1~\mbox{for~almost~all}~v\right\}.$$
 When $G$ is not quasi-split and $\psi$ is not generic, the packet $\Pi_\psi(G,\xi)$ may be empty because $\Pi_{\psi_v}(G_v,\xi_v)$ are possibly empty, for instance if $\psi$ is not $(G,\xi)$-relevant.
  To each $\pi=\otimes_v \pi_v\in \Pi_\psi(G,\xi)$ we attach a character on $S^\natural_\psi$ by
  $$\lg s,\pi\rg_\xi:=\prod_v \lg s,\pi_v\rg_{\xi_v,z_v},\quad s\in S^\natural_{\psi},$$
   where the latter $s$ denotes the image of $s$ under $S^\natural_{\psi}\ra S^\natural_{\psi_v}$. If $s\in Z(\hat G^*)^\Gamma$ then $\lg s,\pi\rg=\prod_v \lg s,\pi_v\rg=\prod_v\chi_{z_v}(s)=1$ since $\prod_v \chi_{\xi_v,z_v}$ is the trivial character by the product formula \eqref{eq:kotisolocglo}. Since the natural map $Z(\hat G^*)^\Gamma\ra S^\natural_{\psi}$ has cokernel $\ol\cS_{\psi}$ (Lemma \ref{lem:squot}), the global character $\lg \cdot,\pi\rg_\xi$ descends to a character on the finite abelian group $\ol\cS_{\psi}$. It can also be seen from the product formula that the global character $\lg \cdot,\pi\rg_\xi$ is independent of the choice of $z$, explaining the omission of $z$ from the notation.

   The final ingredient in the global theorem is the character $\epsilon_\psi: \ol \cS_{\psi}\ra \{\pm 1\}$. We refer the reader to \cite[2.5]{Mok}, cf. \cite[1.5]{Arthur}, for the precise definition and simply note that $\epsilon_\psi=1$ identically if $\psi$ is generic. %
   In particular the definition of $\epsilon_\psi$ is the same as in the quasi-split case and does not depend on $(G,\xi)$.
   Finally put
   $$\Pi_\psi(G,\xi,\epsilon_\psi):=\{\pi\in \Pi_\psi(G,\xi)~:~\lg\cdot,\pi\rg_\xi=\epsilon_\psi\}.$$

\begin{thm*}\label{thm:main-global}
  Let $E/F$ be a quadratic extension of global fields and $\kappa\in \{\pm 1\}$. Fix $\chi_\kappa\in \cZ^\kappa_E$. Let $(G,\xi)$ be an inner twist of $G^*=U_{E/F}(N)$. Then there is a $G(\A_F)$-module isomorphism
  $$L^2_{\disc}(G(F)\bs G(\A_F))\simeq \bigoplus_{\psi\in \Psi_2(G^*,\eta_{\chi_\kappa})} \bigoplus_{\pi\in \Pi_\psi(G,\xi,\epsilon_\psi)} \pi.$$
\end{thm*}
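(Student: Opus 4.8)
The plan is to follow the same stabilization/comparison strategy that Arthur and Mok use in the quasi-split case, but feeding in the local classification (Theorem \ref{thm:locclass-single}) for inner forms — which at this stage is known for generic $\psi$ and for pure inner twists — as the crucial input. Since the LIR and the local main theorem are being established in parallel inside the long induction of this paper, the bulk of the work for the global statement is \emph{not} the trace-formula manipulation (which by then will be in place from Chapters \ref{chapter3} and \ref{chapter4}) but rather organizing the spectral side so that the packets $\Pi_\psi(G,\xi,\epsilon_\psi)$ fall out with the correct multiplicity one. Concretely, I would proceed as follows.

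First I would decompose $L^2_{\disc}(G(F)\bs G(\A_F))$ along the (hypothetical) parameters using the stable multiplicity formula (Proposition \ref{p:stable-multiplicity}) on the quasi-split side together with the endoscopic character identity \eqref{eq:eci} of Theorem \ref{thm:locclass-single} at each place. The standard model of \S\ref{sub:std-model} expresses $I^G_{\disc,\psi}(f)$ as a sum over elliptic endoscopic data $\fke$ of $G^*$ (which transfer to $G$ — here the relevance condition and Lemma \ref{l:relevance-and-Levi} enter) of transfers $\widehat{f^\fke}(\psi^\fke)$, weighted by the usual constants. Using the correspondence of endoscopic data $\cX(G^*)\leftrightarrow\cY(G^*)$ (Lemmas \ref{lem:local-endo-data-corr}, \ref{lem:global-endo-data-corr}) this sum is reorganized as a sum over semisimple $s\in\ol\cS_\psi$, and the local identities \eqref{eq:eci} turn $\widehat{f^\fke}(\psi^\fke)$ into $\prod_v e(G_v)\sum_{\pi_v}\<\pi_v,s^\fke s_{\psi_v}\>f_v(\pi_v)$. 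Taking the product over $v$ and summing over $s$, the orthogonality of characters on the finite abelian group $\ol\cS_\psi$ collapses the sum: the coefficient of a given $\pi=\otimes\pi_v\in\Pi_\psi(G,\xi)$ becomes $|\ol\cS_\psi|^{-1}\sum_{s}\epsilon_\psi(s)\<s,\pi\>_\xi$, which is $1$ if $\<\cdot,\pi\>_\xi=\epsilon_\psi$ and $0$ otherwise. This is exactly the assertion, so the multiplicity is $1$ precisely on $\Pi_\psi(G,\xi,\epsilon_\psi)$.

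Second I would handle the bookkeeping needed to make the previous step rigorous: (i) verifying that only $\psi\in\Psi_2(G^*,\eta_{\chi_\kappa})$ contribute (by Lemma \ref{lem:global-disc-param} and the fact that $|\ol\cS_\psi|<\infty$ is forced by discreteness, exactly as in \cite[\S4.1]{Arthur}); (ii) checking that the global pairing $\<\cdot,\pi\>_\xi$ is well defined on $\ol\cS_\psi$ — this follows from the product formula for the characters $\chi_{z_v}$ coming from \eqref{eq:kotisolocglo}, as noted before the theorem, and is independent of the auxiliary $z$; (iii) confirming that the sign character $\epsilon_\psi$ is the same as in the quasi-split case — this is immediate since $\epsilon_\psi$ is defined purely in terms of $\psi^N$ and the symplectic-root data of $\hat G^*$, which is insensitive to the inner twist; (iv) dealing with the possibility that some $\psi_v$ lies only in $\Psi^+_{\unit}(G^*_v)$ rather than $\Psi(G^*_v)$ (failure of Ramanujan), using the packets constructed in \S\ref{subsub:local-packet-Psi+}. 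I would also need Proposition \ref{prop:trans-factor-coincide} to know that the adelic transfer factor used in the trace formula is the product of the local ones normalized by $(\xi_v,z_v)$, so that the local identities can indeed be multiplied together.

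The main obstacle, and the reason this theorem is only proved here for \emph{generic} $\psi$ on \emph{pure} inner twists, is precisely that the input Theorem \ref{thm:locclass-single} and the LIR (Theorem \ref{thm:lir}) are only available in that generality at this point in the series; for non-generic $\psi$ one additionally has to control the non-tempered packets (the subject of \cite{KMS_A}), and for non-pure inner twists one must first normalize transfer factors in the cases left open in \S\ref{subsub:normtf} (the subject of \cite{KMS_B}). Within the generic/pure case, the genuinely delicate point — as Arthur and Mok both stress — is the passage from ``the sum of the multiplicities over a parameter equals the expected expression'' to ``each individual $\pi$ occurs with the stated multiplicity''; here one must rule out cancellation between distinct $\pi$ in the same packet, which is where the injectivity of $\Pi_\psi(G,\Xi)\to\Pi_\tx{unit}(G)$ in part 5 of Theorem \ref{thm:locclass-single} (together with linear independence of characters on $G(\A_F)$) is essential. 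Given all of that machinery, the argument is, as the introduction promises, an ``immediate consequence'' — the proof is mostly a matter of assembling these pieces in the right order rather than proving anything new.
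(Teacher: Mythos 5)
Your proposal is correct and follows essentially the same route as the paper's own proof in Chapter~\ref{chapter5}: one feeds the stable multiplicity formula and the endoscopic/spectral expansions of the standard model (Propositions~\ref{p:std-model-end}, \ref{p:std-model-spec}, Corollary~\ref{cor:std-model-result}) together with the local intertwining relation and the local classification into the comparison, and then Fourier inversion on $\ol{\cS}_\psi$ produces the multiplicity-one statement on $\Pi_\psi(G,\xi,\epsilon_\psi)$. The only organizational difference is cosmetic: the paper packages the comparison via the auxiliary linear form ${}^0r^G_{\disc,\psi}$ of~\eqref{eq:0rG} (showing it vanishes) rather than tracking the coefficient of each $\pi$ directly, but the two bookkeepings are equivalent once one invokes linear independence of characters on $G(\A_F)$ as you do.
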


  We could have stated the theorem with $\kappa=1$ and $\chi_\kappa=1$ but it seems preferable not to do so since different choices of $\kappa$ and $\chi_\kappa$ naturally appear when studying parameters coming from endoscopic groups. A better statement would be to take the first sum over $\psi\in\Psi_2(G^*)$, the canonical set of discrete parameters defined in \S\ref{subsub:canonical-def} by considering all choices of $\kappa$ and $\chi_\kappa$ at once. Note that the localizations $\psi_v$ of $\psi$ are well defined according to Lemma \ref{lem:localization-compat}, so $\Pi_\psi(G,\xi)$ is defined independently of $\kappa$ and $\chi_\kappa$. One also checks that the same independence holds for $\epsilon_\psi$, hence also for $\Pi_\psi(G,\xi,\epsilon_\psi)$.

  Theorem \ref{thm:main-global} will be completely proved in \cite{KMS_A} if $(G,\xi)$ is realized as a pure inner twist and in \cite{KMS_B} in general. More precisely we prove the theorem in Chapter \ref{chapter5} under two hypotheses, which are resolved in \cite{KMS_A} and \cite{KMS_B} in the corresponding cases. The unconditional result of this paper towards the theorem,  is a natural decomposition of $L^2_{\disc}(G(F)\bs G(\A_F))$ according to the parameters $\psi\in \Psi(G^*,\eta_{\chi_\kappa})$ only when $\psi$ is generic and $(G,\xi)$ comes from a pure inner twist, in which case the two hypotheses are verified.

\begin{rem}
  We cannot say that the $\psi$-part of the spectrum is everywhere tempered for each generic parameter $\psi$ due to the possible failure of the Ramanujan conjecture for general linear groups, but it does follow from the result of this paper (and the result in the quasi-split case) that a discrete automorphic representation $\pi$ of $G(\A_F)$ which is everywhere tempered does occur in the $\psi$-part for some generic parameter $\psi$. Indeed we show in \S\ref{sub:stable-multiplicity} that such a $\pi$ should occur in the $\psi$-part for some $\psi\in \Psi_2(G^*,\eta_{\chi_\kappa})$. If $\psi$ were non-generic then $\pi_v$ are unramified and belong to the packet for the non-generic parameter $\psi_v$ at almost all finite places $\pi_v$, so cannot be tempered.
\end{rem}

\section{Chapter 2: The local intertwining relation}\label{chapter2}

  Here we define normalized local intertwining operators motivated by their global analogues (cf. \ref{sec:giop} below) and formulate the local intertwining relation (Theorem \ref{thm:lir} below), which plays an essential role in the interpretation of the spectral side of the stabilized trace formula and lies at the heart of the proof of the main theorems. Once the local intertwining relation is stated, we prove initial reduction steps by purely local methods based on the induction hypothesis and a few special cases for the real unitary group $U(3,1)$. The latter is not only an illustration but serves as a cornerstone for the inductive argument when completing the proof of the local intertwining relation in Chapter \ref{chapter4}.

\subsection{The basic diagram} \label{sec:diag}
We introduce a diagram of complex algebraic groups that will be useful in both the local and the global context. It can be stated in the general context where $G$ is a quasi-split connected reductive group defined over a local or global field $F$, $M \subset G$ is a proper Levi subgroup, and $\psi \in \Psi(M)$ is a parameter. Recall the possibly disconnected complex reductive group $S_\psi = \tx{Cent}(\psi,\hat G)$ and its connected subgroup $S_\psi^\tx{rad}=\tx{Cent}(\psi,[\hat G_\tx{der}])^0$ which contains the derived subgroup of $S_\psi^0$. When $F$ is global, one should be careful about the meaning of $S_\psi$. We will assume that $S_\psi$ has been properly defined. This is the case for the groups considered in this paper by virtue of Section \ref{sub:global-param}. The quotient $S_\psi^\natural=S_\psi/S_\psi^\tx{rad}$ is a complex diagonalizable group. We also have the analogs $S_\psi(M)=\tx{Cent}(\psi,\hat M)=\tx{Cent}(A_{\hat M},S_\psi)$ and $S_\psi^\natural(M)$ relative to $M$. Note that $S_\psi^\natural(M)$ is not a subgroup of $S_\psi^\natural$, because $S_\psi(M)^\tx{rad}$ is not equal to $S_\psi^\tx{rad} \cap S_\psi(M)$. We write $N_\psi(M,G)$ for the normalizer of $A_{\hat M}$ in $S_\psi$.

Writing $Z$ for centralizer and $N$ for normalizer, we recall the three finite groups
\[ W_\psi^0(M,G) = \frac{N(A_{\hat M},S_\psi^0)}{Z(A_{\hat M},S_\psi^0)}, \qquad W_\psi(M,G) = \frac{N(A_{\hat M},S_\psi)}{Z(A_{\hat M},S_\psi)} \]
and
\[ R_\psi(M,G) = \frac{N(A_{\hat M},S_\psi)}{N(A_{\hat M},S_\psi^0)\cdot Z(A_{\hat M},S_\psi)}. \]
Define moreover
\[ S^{\natural\natural}_\psi(M) = \frac{Z(A_{\hat M},S_\psi)}{Z(A_{\hat M},S_\psi^\tx{rad})},\quad S^\natural_\psi(M,G) = \frac{N(A_{\hat M},S_\psi)}{N(A_{\hat M},S_\psi^\tx{rad})}, \]
and
\[ N^\natural_\psi(M,G) = \frac{N(A_{\hat M},S_\psi)}{Z(A_{\hat M},S_\psi^\tx{rad})},\quad W_\psi^\tx{rad}(M,G) = \frac{N(A_{\hat M},S_\psi^\tx{rad})}{Z(A_{\hat M},S_\psi^\tx{rad})}. \]
Since $S_\psi^\tx{rad}$ contains the derived subgroup of $S_\psi^0$, we have the equality $N(A_{\hat M},S_\psi^0)=N(A_{\hat M},S_\psi^\tx{rad})\cdot Z(A_{\hat M},S_\psi^0)$.

We obtain the following commutative diagram with exact rows and columns.
\begin{equation} \label{eq:diag} \xymatrix{
&&1\ar[d]&1\ar[d]\\
&&W_\psi^\tx{rad}(M,G)\ar@{=}[r]\ar[d]&W_\psi^0(M,G)\ar[d]\\
1\ar[r]&S^{\natural\natural}_\psi(M)\ar[r]\ar@{=}[d]&N^\natural_\psi(M,G)\ar[r]\ar[d]&W_\psi(M,G)\ar[r]\ar[d]&1\\
1\ar[r]&S^{\natural\natural}_\psi(M)\ar[r]&S^\natural_\psi(M,G)\ar[r]\ar[d]&R_\psi(M,G)\ar[r]\ar[d]&1\\
&&1&1
} \end{equation}

\subsection{Local intertwining operator I} \label{sec:iop1}

Let $F$ be a local field of characteristic zero, $E/F$ a quadratic algebra, and $G^*=U_{E/F}(N)$. Let $M^* \subset G^*$ be a proper standard Levi subgroup. Let $\xi : G^* \rw G$ be an inner twist which restricts to an inner twist $\xi : M^* \rw M$. Let $P \subset G$ be a parabolic subgroup defined over $F$ with Levi factor $M$. Let $\psi \in \Psi(M^*)$ be a parameter. We assume the validity of Theorem \ref{thm:locclass-single} for $M$ (as part of the induction hypothesis) and obtain the packet $\Pi_\psi(M,\xi)$. We are concerned with the case that this packet is non-empty, which we now assume. Let $\pi \in \Pi_\psi(M,\xi)$.

For simplicity we often write $N^\natural_\psi(M,G)$, $S^\natural_\psi(M,G)$, etc for the groups $N^\natural_\psi(M^*,G^*)$, $S^\natural_\psi(M^*,G^*)$ etc introduced above (note that the group $G$ was assumed to be quasi-split in the preceding subsection) and similarly write $\hat M$ and $\hat G$ for $\hat M^*$ and $\hat G^*$ in Chapter \ref{chapter2}, except in \S\ref{sec:lir} and \S\ref{sub:prelim-local-intertwining} where we have the generality that $M^*$ may not transfer to $G$, in which case there is no $M$.

We will describe a normalization of the intertwining operator
\[ R_{P'|P}(\xi,\psi) : \mc{H}_{P}(\pi) \rw \mc{H}_{P'}(\pi) \]
for two parabolic subgroups $P$ and $P'$ of $G$ with common Levi factor $M$. The normalization will essentially be the standard one, as outlined for example in \cite[\S2.3]{Arthur}. We must however use the inner twist $\xi$ in order to specify the relevant Haar measures as well as the Galois representations on the dual side that will provide the normalizing factors. In doing so, we must ensure that the end result will transform well when we change $\xi$ within its $M^*$-equivalence class.

We begin by considering the
un-normalized intertwining operator. For this, recall \cite[\S1]{ArtIOR1} the spaces $\mf{a}_M=\tx{Hom}(X^*(M)^\Gamma,\R)$ and $\mf{a}_{M,\C}^*=X^*(M)^\Gamma \otimes \C$, as well as the function $H_M : M(F) \rw \mf{a}_M$ defined by $\exp(\<H_M(m),\chi\>)=|\chi(m)|_F$ for all $\chi \in X^*(M)^\Gamma$ and $m \in M(F)$. For each $\lambda \in \mf{a}_{M,\C}^*$ we have the character
\[ M(F) \rw \C^\times,\qquad m \mapsto \exp(\<H_M(m),\lambda\>). \]
Let $\pi_\lambda$ denote the tensor product of $\pi$ with this character. Consider the operator
$J_{P'|P}(\xi,\psi_F) : \mc{H}_{P}(\pi_\lambda) \rw \mc{H}_{P'}(\pi_\lambda)$ given by the integral formula
\[ [J_{P'|P}(\xi,\psi_F)f](g)=\int_{N(F) \cap N'(F)\lmod N'(F)} f(n'g)dn'. \]
Here $N$ and $N'$ are the unipotent radicals of $P$ and $P'$ and $\psi_F : F \rw \C^\times$ is a non-trivial additive character. This integral is absolutely convergent whenever the real part of $\lambda \in \mf{a}_{M,\C}^*$ belongs to a certain cone. As a function of $\lambda$, it has a meromorphic continuation to all of $\mf{a}_{M,\C}^*$. It is important to specify the measure $dn'$ precisely. The additive character $\psi_F : F \rw \C^\times$ specifies a Haar measure on the additive group $F$ that is self-dual with respect to $\psi_F$. To specify $dn'$ it is thus enough to give a top form on the vector space $\mf{n}(F) \cap \mf{n'}(F) \lmod \mf{n'}(F) \cong (\mf{\bar n} \cap \mf{n'})(F)$, where we use Gothic letters for the Lie algebras of $N$ and $N'$, and we denote by $\bar N$ the unipotent radical of the parabolic subgroup of $G$ that is $M$-opposite to $P$.

By assumption $\xi : M^* \rw M$ is an inner twist. Thus $P^*=\xi^{-1}(P)$ and $P'^*=\xi^{-1}(P')$ are parabolic subgroups of $G^*$ with Levi factor $M^*$ and defined over $F$. Then $\xi$ restricts to an isomorphism of $\ol{F}$-vector spaces $\xi : \mf{\bar n}^* \cap \mf{n}'^* \rw \mf{\bar n} \cap \mf{n}$. The standard pinning of $G^*$ can be used to obtain a Chevalley basis of $G^*$, i.e. a choice of a non-zero vector $X_\alpha \in \mf{g}^*_\alpha$ for each absolute root $\alpha \in R(T^*,G^*)$. Each vector $X_\alpha$ is determined up to multiplication by $\pm 1$. The vector space $\mf{\bar n}^* \cap \mf{n}'^*$ is a direct sum of root spaces $\mf{g}^*_\alpha$ and the corresponding $X_\alpha$ form a basis for it. Choose an arbitrary order of that basis and let $\eta^*$ be the top form with value $1$ on that ordered basis. Let $\eta$ be the transport of $\eta^*$ under $\xi$. It is a top form on the $\ol{F}$-vector space $\mf{\bar n} \cap \mf{n}'$. Up to multiplication by $\pm 1$, $\eta$ is independent of the choice of order as well as of the choice of $X_\alpha$. Let $a \in \ol{F}$ be such that $a\eta$ is defined over $F$. We define $dn'$ to be the Haar measure on $\bar N(F) \cap N'(F)$ given by $|a|_F^{-1}\cdot |d(a\eta)|_F$. It is independent of the choice of $a$. Having specified the measure $dn'$, the definition of the un-normalized operator $J_{P'|P}(\xi,\psi_F)$ is now complete.

Recall the modulus character $\delta_P : M(F) \rw \R_{>0}$. It is the restriction to $M(F)$ of the character
\[ M \rw \R_{>0},\qquad m \mapsto |\det(\tx{Ad}(m);\mf{n})|_F \]
which we will also denote by $\delta_P$. Here $|-|_F$ is the unique extension to $\ol{F}$ of the absolute value of $F$. It then follows immediately that for $m \in M^*$ the following equation holds
\begin{equation} \label{eq:iop1xc} J_{P'|P}(\xi\circ\tx{Ad}(m),\psi_F) = \delta_{P'}(\xi(m))^\frac{1}{2}\delta_P(\xi(m))^{-\frac{1}{2}} J_{P'|P}(\xi,\psi_F). \end{equation}

Next, we introduce the normalizing factor $r_{P'|P}(\psi_\lambda,\psi_F)$. Let $\hat M$ be the standard Levi subgroup of $\hat G$ corresponding to $M$ and let $\hat P$ and $\hat P'$ be the parabolic subgroups with Levi factor $\hat M$ dual to $P^*$ and $P'^*$. Let $A_{\hat M}=Z(\hat M)^{\Gamma,\circ}$.
The character $\exp(\<H_M(\cdot),\lambda\>)$ has the parameter $W_F \rw A_{\hat M}$ given by $w\mapsto |w|^\lambda$. The parameter of $\pi_\lambda$ is then $\psi_\lambda(w)=\psi(w)|w|^\lambda$. This is still an element of $\Psi(M^*)$. From it, we construct a Langlands parameter
\[ \phi_{\psi_\lambda} : L_F \rw {^LM},\qquad w \mapsto \psi_\lambda\left( w,\begin{bmatrix} |w|^\frac{1}{2}\\ &|w|^{-\frac{1}{2}}\end{bmatrix}\right). \]
Let $\rho_{P'|P}$ denote the adjoint representation of $^LM$ on $\mf{\hat n'} \cap \mf{\hat n} \lmod \mf{\hat n'}$. Following Arthur \cite[\S2.3]{Arthur} we set
\[ r_{P'|P}(\xi,\psi_\lambda,\psi_F) = \frac{L(0,\rho^\vee_{P'|P}\circ\phi_{\psi_\lambda})}{L(1,\rho^\vee_{P'|P}\circ\phi_{\psi_\lambda})} \frac{\epsilon(\frac{1}{2},\rho^\vee_{P'|P}\circ\phi_{\psi_\lambda},\psi_F)}{\epsilon(0,\rho^\vee_{P'|P}\circ\phi_{\psi_\lambda},\psi_F)}, \]
where we are using the Artin $L$- and $\epsilon$-factors of the given representation of $L_F$. We define the normalized intertwining operator $R_{P'|P}(\xi,\psi_\lambda)$ as the product
\[ R_{P'|P}(\xi,\psi_\lambda) = r_{P'|P}(\xi,\psi_\lambda,\psi_F)^{-1} J_{P'|P}(\xi,\psi_F). \]
It follows from  \cite[(3.6.6)]{TateCorvallis} that the dependence on $\psi_F$ of the two factors cancels out. It is known that the function $\lambda \mapsto R_{P'|P}(\xi,\psi_\lambda)$ extends meromorphically to all $\lambda \in \mf{a}_{M,\C}^*$. We will argue that it is defined and non-zero at $\lambda=0$ and will then set $R_{P'|P}(\xi,\psi)$ to be the value at $\lambda=0$.

The essential part of the argument involves the case where $\psi=\phi$ belongs to $\Phi(M^*)$, i.e. it is a tempered Langlands parameter. In that case we have $\phi_{\psi_\lambda}=\phi_\lambda$.

\begin{lem} \label{lem:iop1rm} Assume $F=\R$ and $\psi=\phi \in \Phi_{\bdd}(M^*)$. The function $\lambda \mapsto R_{P'|P}(\xi,\phi_\lambda)$ has neither a zero nor a pole at $\lambda=0$. If $P,P',P''$ are parabolic subgroups of $G$ defined over $F$ with Levi factor $M$, then
\[ R_{P''|P}(\xi,\phi) = R_{P''|P'}(\xi,\phi) \circ R_{P'|P}(\xi,\phi). \]
\end{lem}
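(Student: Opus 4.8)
The strategy is to reduce the statement for general $M^*$ to the case of a maximal Levi by a standard composition-of-intertwining-operators argument, and then to settle the maximal case by transporting the computation to the quasi-split group $G^*$, where the corresponding analytic facts are available from the classical theory of intertwining operators for real reductive groups and from the archimedean local Langlands correspondence of Langlands--Shelstad. First I would observe that both the holomorphy/non-vanishing assertion and the cocycle relation are insensitive to replacing $\xi$ by $\xi\circ\tx{Ad}(m)$ for $m\in M^*(\R)$: by \eqref{eq:iop1xc} the unnormalized operator $J_{P'|P}(\xi\circ\tx{Ad}(m),\psi_F)$ differs from $J_{P'|P}(\xi,\psi_F)$ only by the scalar $\delta_{P'}(\xi(m))^{1/2}\delta_P(\xi(m))^{-1/2}$, the normalizing factor $r_{P'|P}$ is unchanged since it depends only on $\phi$ and not on $\xi$, and the scalars telescope correctly in a composition $P\rightsquigarrow P'\rightsquigarrow P''$. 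Hence it suffices to prove the lemma for one convenient choice of $\xi$ within its $M^*$-equivalence class.

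Next I would invoke the chamber decomposition: for parabolics $P,P'$ with common Levi $M$, one can write $R_{P'|P}$ as a composition of operators $R_{P_{i+1}|P_i}$ along a minimal gallery of adjacent chambers in $\mf{a}_M$, where $P_i$ and $P_{i+1}$ are adjacent, i.e. share a codimension-one wall, which means the relevant ``root'' $\beta$ is simple for the pair and $M_\beta:=\tx{Cent}(\ker\beta,G)$ is a Levi with $M\subset M_\beta$ of semisimple rank one over $M$. For such a pair, $R_{P'|P}(\xi,\phi_\lambda)$ depends only on the analogous operator inside $M_\beta$, because the unipotent radicals involved, the modulus characters, and the adjoint representation $\rho_{P'|P}$ all decompose according to $M_\beta$; thus the whole problem reduces to the maximal-Levi case $G=M_\beta$. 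Granting the maximal case, the cocycle relation $R_{P''|P}=R_{P''|P'}\circ R_{P'|P}$ follows by the standard Knapp--Stein style argument: one checks it first when the galleries from $P$ to $P'$ and from $P'$ to $P''$ concatenate to a minimal gallery from $P$ to $P''$ (where it is automatic), and then uses the rank-one relation $R_{\bar Q|Q}\circ R_{Q|\bar Q}=\tx{id}$ to handle backtracking, so the composition is independent of the chosen gallery and agrees with $R_{P''|P}$.

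For the maximal-Levi case itself, which I expect to be the main obstacle, the plan is as follows. Choosing $\xi$ so that it restricts to an inner twist $\xi:M^*\rw M$ over $\R$ and fixing $\pi\in\Pi_\phi(M,\xi)$, I would compare $R_{P'|P}(\xi,\phi_\lambda)$ with the corresponding normalized operator on $G^*$ attached to a member $\pi^*$ of the quasi-split packet $\Pi_\phi(M^*)$ with the same infinitesimal character. The unnormalized operator $J_{P'|P}(\xi,\psi_F)$ on $G$ and its counterpart on $G^*$ have the same analytic behaviour in $\lambda$ because, after the identification of $\mf{\bar n}\cap\mf{n}'$ with $\mf{\bar n}^*\cap\mf{n}'^*$ via $\xi$ and the matching of Haar measures built into our normalization, the poles and zeros of the unnormalized intertwining integral for a tempered representation are governed by the same $c$-function data — concretely, by the rank-one reducibility points, which for inner forms are controlled by those of the quasi-split group through the local classification on $M$. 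Meanwhile the normalizing factor $r_{P'|P}(\xi,\phi_\lambda,\psi_F)$ is literally the Langlands--Shelstad/Artin factor of $\phi$ and is by construction (via the quotient of Artin $L$-functions and the ratio of $\epsilon$-factors) holomorphic and non-vanishing at $\lambda=0$ precisely when it cancels the leading behaviour of $J_{P'|P}$ — this is the content of the Langlands--Shelstad normalization, which for $F=\R$ rests on the known structure of the Plancherel measure and the tempered $L$-functions. Thus $R_{P'|P}(\xi,\phi_\lambda)$ is holomorphic and invertible at $\lambda=0$, and one reads off the rank-one cocycle identity $R_{\bar P|P}(\xi,\phi)\circ R_{P|\bar P}(\xi,\phi)=\tx{id}$ from the corresponding scalar identity $r_{\bar P|P}r_{P|\bar P}=$ (the inverse of the $\mu$-function normalizing constant) together with the functional equation $J_{\bar P|P}\circ J_{P|\bar P}=\mu(\phi_\lambda)^{-1}\tx{id}$. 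The delicate point throughout is to track the $\pm 1$ ambiguities and the measure factors carefully so that nothing is introduced that depends on $\xi$ beyond its $M^*$-equivalence class; once that bookkeeping is done, the real-place input is entirely classical.
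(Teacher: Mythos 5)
The proposal takes a genuinely different and, as written, incomplete route. The paper's proof observes that Arthur's \cite[\S3]{ArtIOR1} already establishes the holomorphy, non-vanishing, and multiplicativity of normalized intertwining operators for \emph{any} connected reductive group over $\R$ — not just quasi-split ones — and that the operator $R_{P'|P}(\xi,\phi_\lambda)$ defined here differs from Arthur's only by a (positive, $\lambda$-independent) measure constant. Consequently the first assertion is immediate, and the whole remaining work of the proof is a careful measure comparison: one chooses $\xi$ within its $M^*$-equivalence class (invoking \eqref{eq:iop1xc} and a fixed standard $\psi_F$) so that $\xi(T^*)$ is $\theta$-stable for a Cartan involution $\theta$, and then a direct computation with the trace form $B(X,Y)=\tr(XY)$ shows that the top form used to define $dn'$ on $\bar N(\R)\cap N'(\R)$ actually reproduces Arthur's measure, after which \cite[Thm.\ 2.1]{ArtIOR1} gives the cocycle identity on the nose rather than up to a constant.

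Your proposal instead reduces to the maximal-Levi case via a gallery decomposition of $R_{P'|P}$ and then proposes to transfer the rank-one analytic facts to $G$ by comparison with the quasi-split form $G^*$, asserting that the $c$-function data and reducibility points of the inner form are ``controlled by those of the quasi-split group through the local classification on $M$.'' This is the gap: no such transfer is established or needed, and it is not a consequence of the local classification (which gives a parameterization of irreducible representations, not an a priori matching of Plancherel densities or of the analytic behaviour of unnormalized intertwining integrals between an inner form and its quasi-split form). Arthur's results apply directly to $G$ over $\R$, so the detour through $G^*$ both introduces an unjustified step and obscures what is actually the crux of the lemma, namely that the \emph{specific} measure normalization built out of $\xi$ and the pinning of $G^*$ agrees with Arthur's, not merely up to a positive constant depending on the pair $(P,P')$. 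If such constants were allowed to depend on $(P,P')$, the cocycle relation would fail; the paper's explicit computation with the Cartan-stable choice of $\xi$ and the form $B$ is precisely what removes this ambiguity, and your sketch acknowledges the issue only in passing (``track the $\pm1$ ambiguities and the measure factors carefully'') without doing the bookkeeping.
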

\begin{proof}
The essential work has already been done by Arthur in \cite[\S3]{ArtIOR1}. In fact, the first statement follows directly from his work, as our operator $R_{P'|P}(\xi,\phi_\lambda)$ differs from the operator defined by Arthur only by multiplication by a positive real number coming from the different measures used in the definition of $J_{P'|P}$. To obtain the second statement, we need to compare the measures more carefully. First we notice that changing $\psi_F$ does not influence the validity of the second statement. We take $\psi_F$ to be the standard additive character $\psi_F(x)=e^{2\pi ix}$. Furthermore, notice that \eqref{eq:iop1xc} allows us to replace $\xi$ with an $M^*$-equivalent inner twist. We thus arrange the following: The maximal torus $S=\xi(T^*)$ of $G$ is defined over $F$, invariant under a Cartan involution $\theta$ of $G$, and for any choice of a Chevalley basis $X_\alpha$ extending the standard pinning of $G^*$, we have $\theta\sigma\xi(X_\alpha)=-\xi(Y_\alpha)$, where $Y_\alpha \in \mf{g}^*_{-\alpha}$ is chosen so that $[X_\alpha,Y_\alpha]=H_\alpha$.

Let $B$ be the symmetric bilinear form on $\mf{g}^*=\tx{Mat}(N,N;\C)$ given by $\tr(X\cdot Y)$. It is $G^* \rtimes \Gamma$-invariant. We use the same letter for the transport of $B$ to $\mf{g}$ via $\xi$, where it is again a $G \rtimes \Gamma$-invariant symmetric bilinear form. One checks immediately that each root $\alpha \in R(S,G) \subset \mf{s}^*$ is identified with its coroot $H_\alpha \in \mf{s}$ under the form $B$. The constant $\alpha_{P'|P}$ defined in \cite[\S3]{ArtIOR1} is thus equal to $1$. Moreover, the form $-B(X,\theta X)$ is positive definite on $\mf{g}(\R)$. To see this, write $\mf{g} = \mf{s} \oplus \mf{x}$, where $\mf{x}$ is the sum of all non-trivial root spaces for $S$. Both spaces $\mf{s}$ and $\mf{x}$ are defined over $\R$. Take first a non-zero $X \in \mf{s}(\R)$ and observe $-B(X,\theta X)=-B(X,\theta\sigma X)=-B(X,-\bar X)=\tx{tr}(X\bar X)>0$, where $\bar\ $ denotes complex conjugation of the entries of the matrix $X$. Next, focus on $\mf{x}$. It decomposes as direct sum of vector spaces $\mf{x}=\oplus_{a \in R(S,G)/\Gamma} \mf{g}_a$ and each factor $\mf{g}_a=\oplus_{\alpha \in a}\mf{g}_\alpha$ is defined over $\R$. Let $a \in R(S,G)/\Gamma$ and $\alpha \in a$. If $\alpha$ is a complex root, i.e. $\sigma\alpha \neq \pm\alpha$, then the $\R$-vector space $\mf{g}_a(\R)$ is two-dimensional and has basis $X,iX$, where $X=\xi(X_\alpha)+\sigma\xi(X_\alpha)$. Since $\theta\sigma\alpha=-\alpha$ we conclude that $\theta\alpha =-\sigma\alpha \neq \pm\alpha$ and thus $-B(X,\theta X)=2$. If $\alpha$ is a real root, i.e. $\sigma\alpha=\alpha$, the $\R$-vector space $\mf{g}_a(\R)$ is one-dimensional and has basis $X=\xi(X_\alpha)$ or $X=i\xi(X_\alpha)$. In either case we have $-B(X,\theta X)=1$. Finally, if $\alpha$ is an imaginary root, i.e. $\sigma\alpha=-\alpha$, then $\mf{g}_a(\R)$ is again one-dimensional and has basis $X=\xi(X_\alpha)+\sigma\xi(X_\alpha)$, but now $\sigma\alpha=-\alpha$ implies $\theta\alpha=\alpha$, which again leads to $-B(X,\theta X)=1$. Besides showing that the form $-B(X,\theta X)$ is positive definite on $\mf{g}(\R)$, this argument also exhibits an orthonormal basis for the vector space $\mf{n'}(\R) \cap \mf{\bar n}(\R)$. It follows that the measure on $N'(R) \cap \bar N(\R)$ defined by Arthur in loc. cit. can be characterized as follows: Fix the basis ${X,iX}$ with $X=\xi(X_\alpha)+\sigma\xi(X_\alpha)$ for $\mf{g}_a$ when $a$ consists of complex roots, and the basis $X=\xi(X_\alpha)$ or $X=i\xi(X_\alpha)$ when $a=\{\alpha\}$ for a real root $\alpha$. This gives a basis of $\mf{n'}(\R) \cap \mf{\bar n}(\R)$. Note that imaginary roots do not appear in that subspace of $\mf{x}$. Then Arthur's measure corresponds to a top form whose absolute value on that basis is equal to $2^k$, where $k$ is the number of summands $\mf{g}_a$ with $a$ consisting of complex roots. It is now straightforward to check that the top form $\eta$ used in our construction of $J_{P'|P}$ satisfies this property, bearing in mind the arrangement we have made for $\xi$. Thus, for this particular kind of $\xi$, our measure coincides with Arthur's and \cite[Theorem 2.1]{ArtIOR1} implies the claim.
\end{proof}

\begin{lem} \label{lem:iop1lg} Let $F$ be a global field, $E/F$ a quadratic algebra, $G^*=U_{E/F}(N)$ with standard Levi subgroup $M^*$, $\xi : G^* \rw G$ an inner twist restricting to an inner twist $\xi : M^* \rw M$, and $\pi$ a discrete automorphic representation of $M$ belonging to the global packet $\Pi_\psi(M,\xi)$. Then
\[ R_{P'|P}(\xi,\psi_\lambda) = \bigotimes_v R_{P'|P}(\xi_v,\psi_{\lambda,v}), \]
where the left-hand side is the global intertwining operator \eqref{eq:giop1r}.
\end{lem}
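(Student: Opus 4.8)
The plan is to reduce the global identity to a product of local ones by unwinding the definitions of both sides as integrals over unipotent radicals. First I would recall the definition of the global operator $J_{P'|P}(\xi,\psi_F)$ acting on the induced representation built from the discrete automorphic representation $\pi=\otimes_v\pi_v$: it is given by the integral over $N(\A)\cap N'(\A)\backslash N'(\A)$ of the right translate, and this adelic integral factors as a restricted tensor product $\bigotimes_v J_{P'|P}(\xi_v,\psi_{F,v})$ of the local un-normalized operators, provided the local Haar measures multiply to the chosen global one and the local operators act by the identity on the spherical vector for almost all $v$. So the first step is to check compatibility of measures: the global top form $\eta$ on $\overline{\mf n}\cap\mf n'$ transported via $\xi$ from the standard Chevalley basis of $G^*$ restricts at each place to the local top form $\eta_v$ used in the local construction, and the self-dual Haar measures attached to $\psi_F$ and to the $\psi_{F,v}$ are compatible under $F\backslash\A$; this is exactly the kind of routine bookkeeping already implicit in \cite[\S2.3]{Arthur} and in the construction in \S\ref{sec:iop1}.

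The second step is to handle the normalizing factors. The global normalizing factor $r_{P'|P}(\xi,\psi_\lambda,\psi_F)$ is built from completed (global) Artin $L$- and $\epsilon$-factors of $\rho^\vee_{P'|P}\circ\phi_{\psi_\lambda}$, while the local factors $r_{P'|P}(\xi_v,\psi_{\lambda,v},\psi_{F,v})$ use the local $L$- and $\epsilon$-factors. Here I would invoke the Euler product factorization $L(s,\rho)=\prod_v L_v(s,\rho_v)$ and the corresponding factorization $\epsilon(s,\rho,\psi_F)=\prod_v\epsilon_v(s,\rho_v,\psi_{F,v})$ of the global $\epsilon$-factor into local $\epsilon$-factors (a standard fact, see \cite[(3.6.6)]{TateCorvallis} and Tate's Corvallis article), applied with the Langlands parameter $\phi_{\psi_\lambda}$ of the global packet, whose localizations are the $\phi_{\psi_{\lambda,v}}$ by the construction in \S\ref{subsub:localizations-U}. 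Since the global $\epsilon$-factor has no dependence on $\psi_F$ (product formula) and each local factor carries its dependence on $\psi_{F,v}$, the cancellations in the ratio defining $r$ happen uniformly, so $r_{P'|P}(\xi,\psi_\lambda,\psi_F)=\prod_v r_{P'|P}(\xi_v,\psi_{\lambda,v},\psi_{F,v})$ as meromorphic functions of $\lambda$, with the product converging in the domain of absolute convergence and then continuing meromorphically. Dividing the factored $J$ by the factored $r$ then gives $R_{P'|P}(\xi,\psi_\lambda)=\bigotimes_v R_{P'|P}(\xi_v,\psi_{\lambda,v})$, initially where everything converges and then by meromorphic continuation in $\lambda$.

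The third step is a convergence and well-definedness check: one must verify that for almost all $v$ the local normalized operator $R_{P'|P}(\xi_v,\psi_{\lambda,v})$ fixes the distinguished spherical vector in $\mc H_{P_v}(\pi_{\lambda,v})$, so that the restricted tensor product on the right makes sense and agrees with the adelic integral. This follows from the unramified computation of Langlands (Gindikin--Karpelevich), which shows that at unramified places the un-normalized operator applied to the spherical vector equals precisely the local normalizing factor times the spherical vector; this is the standard input and requires $\xi_v$ to be unramified and $\psi_{\lambda,v}$ unramified, which holds for almost all $v$ as noted in \S\ref{subsub:localizations-U}.

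\emph{The main obstacle} I anticipate is not any single deep fact but the careful matching of the auxiliary choices entering the local and global constructions: in particular, ensuring that the transported top form $\eta$ and hence the Haar measure $dn'$ localize correctly place by place, and that the Langlands parameter $\phi_{\psi_\lambda}$ used to form the global normalizing factor genuinely localizes to the local parameters $\phi_{\psi_{\lambda,v}}$ (this uses the second seed theorem, Proposition \ref{prop:2nd-seed-thm}, at the non-split places). Once these compatibilities are in place the identity is an essentially formal consequence of the Euler product structure of $L$- and $\epsilon$-factors together with the multiplicativity of the adelic unipotent integral, exactly paralleling the argument in \cite{Arthur} with the inner twist $\xi$ carried along throughout.
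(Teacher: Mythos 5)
Your overall architecture — factor the un-normalized adelic operator $J_{P'|P}$ into local operators via measure compatibility, and factor the normalizing factor via an Euler product — is the same as the paper's, and your treatment of the measures via the self-dual measure on $\A$ giving $\A/F$ volume $1$ is exactly the paper's argument. However, there is a genuine gap in the normalizing-factor step.

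You write that the global normalizing factor is built from ``completed (global) Artin $L$- and $\epsilon$-factors of $\rho^\vee_{P'|P}\circ\phi_{\psi_\lambda}$,'' and then invoke the Euler product of Artin $L$-functions. This misrepresents the definition: in \eqref{eq:giop1r} the global factor $r_{P'|P}(\psi_{M^*})$ is built from \emph{automorphic} $L$- and $\epsilon$-factors attached to the cuspidal (or discrete) data of $\psi_{M^*}$ — there is no global Langlands group and hence no global Artin $L$-function of a parameter. The Euler product of the automorphic $L$-function gives you, at each place $v$, the local \emph{representation-theoretic} $L$- and $\epsilon$-factors. The local normalizing factor in \S\ref{sec:iop1}, on the other hand, is defined via the \emph{Artin} $L$- and $\epsilon$-factors of the local Langlands parameter $\phi_{\psi_{\lambda,v}}$. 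The step your argument silently elides is the agreement, place by place, of these two kinds of local factors. This is a nontrivial input: it is precisely Henniart's theorem \cite{Hen10} on the compatibility of local $L$- and $\epsilon$-factors under the local Langlands correspondence, which the paper cites explicitly at this point. Without invoking that result, the identification of $r_{P'|P}(\psi_{M^*})$ with $\prod_v r_{P'|P}(\xi_v,\psi_{\lambda,v},\psi_{F_v})$ does not follow. Your third paragraph's convergence discussion, while reasonable, is not where the difficulty lies; the substantive content is the Henniart input.
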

\begin{proof} Let $\psi_\A : \A/F \rw \C^\times$ be a non-trivial additive character, and let $\psi_{F_v}$ be its restriction to $F_v$ for all $v$. Henniart's result \cite{Hen10} implies that at each place $v$ the normalizing factor on the left-hand side, which involves quotients of automorphic $L$- and $\epsilon$-factors, is equal to the normalizing factor on the right-hand side, which involves quotients of Artin $L$- and $\epsilon$-factors. It remains to prove the analog of the claimed equation for the un-normalized operators $J_{P'|P}$ and this comes down to comparing the local measures $dn'$ used in the definition of $J_{P'|P}(\xi_v,\psi_{F_v})$ with the adelic measure $dn'$ used in the definition of $J_{P'|P}$ of equation \eqref{eq:giop1}. The top form $\eta^*$ of $\mf{n'}^* \cap \mf{\bar n}^*$ determined (up to $\{\pm 1\}$) by the distinguished pinning of $G^*$ is defined over $F$ and hence $\eta$ is defined over $\ol{F}$, which allows us to find $a \in \ol{F}$ so that $a\eta$ is defined over $F$. The product of the local measures $|a|_{F_v}^{-1}|d(a\eta)|_{F_v}$ is then equal to the adelic measure corresponding to the top form $a\eta$ and the measure on $\A$ that is self-dual with respect to $\psi_\A$. According to \cite[(3.3)]{Tate67}, the latter assigns $\A/F$ volume $1$ and we conclude that the product of the local measures involved in $J_{P'|P}(\xi_v,\psi_{F_v})$ is equal to the adelic measure involved in the global operator $J_{P'|P}$.
\end{proof}

\begin{lem*} \label{lem:iop1pm}
Let $F$ be a $p$-adic field and $\psi=\phi \in \Phi(M^*)$. Assume the validity of Theorem \ref{thm:locclass-single} and \ref{thm:main-global} for the proper Levi subgroups of $G$. The function $\lambda \mapsto R_{P'|P}(\xi,\phi_\lambda)$ has neither a zero nor a pole at $\lambda=0$.
If $P,P',P''$ are parabolic subgroups of $G$ defined over $F$ with Levi factor $M$, then
\[ R_{P''|P}(\xi,\phi) = R_{P''|P'}(\xi,\phi) \circ R_{P'|P}(\xi,\phi). \]
\end{lem*}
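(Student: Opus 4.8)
The plan is to reduce the $p$-adic statement of Lemma \ref{lem:iop1pm} to the archimedean case (Lemma \ref{lem:iop1rm}) by a global argument, exactly as Arthur does in \cite[\S2.3-2.4]{Arthur} for quasi-split groups, but being careful to carry along the inner twist data. The key tool is the multiplicativity and factorization of the normalized intertwining operators established in Lemma \ref{lem:iop1lg}, together with the known case of quasi-split $p$-adic groups contained in Proposition \ref{prop:main-local-qsplit} via \cite{Mok}.

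First I would recall that, since $R_{P''|P'}\circ R_{P'|P}$ and $R_{P''|P}$ are both normalized versions of the same unnormalized composition of intertwining integrals, the multiplicativity assertion is equivalent to the corresponding multiplicativity of the \emph{normalizing factors} $r_{P'|P}(\xi,\phi_\lambda,\psi_F)$, which holds formally from the additivity of $\rho_{P'|P}$ in $\mf{\hat n}$ (this is \cite[(3.6.6)]{TateCorvallis} type bookkeeping and requires no hypotheses), \emph{provided} the operators are holomorphic and invertible at $\lambda=0$. So the real content is the first assertion: that $\lambda\mapsto R_{P'|P}(\xi,\phi_\lambda)$ has neither a zero nor a pole at $\lambda=0$. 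For this I would proceed exactly as in \cite[Proof of Lemma 2.3.1]{Arthur} and \cite{Mok}: choose a globalization. That is, using the induction hypotheses (Theorems \ref{thm:locclass-single} and \ref{thm:main-global} for proper Levi subgroups, together with the generic-parameter results imported from \cite{Mok}), globalize the local data $(G,M,\xi,\phi,\pi)$ to a global unitary group $\dot G$, a global Levi $\dot M$, a global inner twist $\dot\xi$, and a discrete automorphic representation $\dot\pi\in\Pi_{\dot\psi}(\dot M,\dot\xi)$ whose localization at our chosen place $v_0$ is $\pi$, and which at all other places is either unramified or archimedean.

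Then I would invoke Lemma \ref{lem:iop1lg} to factor the global normalized intertwining operator $R_{\dot P'|\dot P}(\dot\xi,\dot\psi_\lambda)=\bigotimes_v R_{P'|P}(\dot\xi_v,\dot\psi_{\lambda,v})$. The global operator is known to be holomorphic and nonzero at $\lambda=0$ by the general theory of Eisenstein series (Langlands' functional equations give the global operator a meromorphic continuation, and the analytic properties at $\lambda=0$ are standard — this is where \cite{Art78} or \cite[\S7]{ArtIOR2} enters, and it is unconditional). Each local factor at a finite split place or an unramified place is a genuine holomorphic nonvanishing operator by the explicit Gindikin-Karpelevich / unramified computation, and each archimedean factor is holomorphic and nonvanishing at $\lambda=0$ by Lemma \ref{lem:iop1rm} (after noting that a complex place is handled trivially since then $G_v$ is a general linear group and the rank-one reduction is classical). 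Since a product of holomorphic functions that is itself holomorphic and nonzero forces each factor to be holomorphic and nonzero (zeros and poles cannot cancel between distinct places because they occur along different hyperplanes), we conclude that $R_{P'|P}(\xi,\phi_\lambda)=R_{P'|P}(\dot\xi_{v_0},\dot\psi_{\lambda,v_0})$ is holomorphic and nonzero at $\lambda=0$, as desired. With holomorphy in hand, the multiplicativity $R_{P''|P}(\xi,\phi)=R_{P''|P'}(\xi,\phi)\circ R_{P'|P}(\xi,\phi)$ follows from the corresponding identity for the unnormalized operators $J$ (which is the cocycle relation for intertwining integrals, valid generically in $\lambda$) combined with the multiplicativity of the normalizing factors $r$, by specializing at $\lambda=0$.

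The main obstacle, and the step requiring the most care, is the globalization itself: one must arrange a global unitary group $\dot G$ with $\dot G_{v_0}\cong G$ such that the auxiliary places behave well, which is delicate precisely for even $N$ where a non-quasi-split local unitary group cannot always be realized as the localization of a global group quasi-split everywhere else (this is exactly the phenomenon flagged in the introduction). One must allow $\dot G$ to be non-quasi-split at a second auxiliary place $v_1$, and then either choose that place to be archimedean — so that Lemma \ref{lem:iop1rm} applies there — or arrange it to be split in $E$ — so that $\dot G_{v_1}$ is an inner form of a general linear group and the rank-one analysis is again classical. Verifying that a globalization with one of these properties exists, that the global parameter $\dot\psi$ is discrete and generic with the correct localization at $v_0$, and that $\Pi_{\dot\psi}(\dot M,\dot\xi)$ is non-empty and contains a representation with the right local component, is the technical heart; it will be carried out in detail in \S\ref{sub:Globalization-param} and used systematically there. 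Everything else is formal manipulation of $L$- and $\epsilon$-factors and the factorization of Eisenstein series intertwining operators.
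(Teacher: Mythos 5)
Your proposal follows essentially the same route as the paper: reduce to a discrete series (via the reduction steps of \cite[\S2]{ArtIOR1}), globalize so that the $p$-adic place in question is the only non-quasi-split non-archimedean place, invoke Lemma \ref{lem:iop1lg} to factor the global intertwining operator, and use the holomorphy and multiplicativity of the global operator together with the known behavior at all the other places (quasi-split by \cite{Mok}, archimedean by Lemma \ref{lem:iop1rm}) to pin down the remaining factor. The paper uses the specific globalization Lemma \ref{lem:globalize_for_iop1}, which puts the auxiliary non-quasi-split place at an archimedean place — exactly one of the two alternatives you flag — so your identification of the even-$N$ obstruction and the proposed fixes line up with what the paper actually does.
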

\begin{proof}
We assume that $E/F$ is an extension of fields. The case where $E/F$ is a split algebra and hence $G^*$ is a linear group will be treated in \cite{KMS_B}.

The case $G=G^*$ and $\xi=\tx{id}$ is part of our assumptions listed in Section \ref{sub:results-qsuni}. Equation \eqref{eq:iop1xc} extends this to the case that $G$ is quasi-split and $\xi$ is arbitrary (but necessarily cohomologically trivial).

We now consider the case when $G$ is not quasi-split. By the reduction steps described in \cite[\S2]{ArtIOR1}, we may assume that $\pi$ is a discrete series representation. To handle this case, we pass to a global situation by applying Lemma \ref{lem:globalize_for_iop1}. It provides a quadratic extension of global fields $\dot E/\dot F$ and two places $u,v$ of $\dot F$, both non-split in $E$, with $v$ archimedean, such that with $\dot F_u \cong F$ and $\dot E_u \cong E$. Moreover, if $\dot G^* = U_{\dot E/\dot F}(N)$ and $\dot M^* \subset \dot G^*$ is the standard Levi subgroup with $\dot M^*_{u}=M^*$, then we obtain an inner twist $\dot \xi : \dot G^* \rw \dot G$ such that $\dot G$ is quasi-split away from $u$ and $v$ and $\dot M = \dot\xi(\dot M^*)$ is defined over $\dot F$, and we obtain an isomorphism $\iota : \dot G_u \rw G$ restricting to $\dot M_u \rw M$. Finally, we obtain an irreducible discrete automorphic representation $\dot \pi$ of $G$ such that $\iota$ provides an isomorphism $\dot\pi_{u} \cong \pi$.

According to Langlands' theory of Eisenstein series, the operator on the left side of the equality in Lemma \ref{lem:iop1lg} is defined and non-zero at $\lambda=0$, and its un-normalized  version $J_{P'|P}$ satisfies the desired multiplicativity property. The normalizing factor $r_{P'|P}(\xi_v\phi_{\lambda,v},\psi_{F_v})$ at each place $v$ also satisfies the required multiplicativity, so it follows that the normalized global operator, i.e. the left-hand side of the equality in Lemma \ref{lem:iop1lg}, satisfies the required multiplicaitivity.

We now look at the right-hand side of Lemma \ref{lem:iop1lg}. At all places away from $u$ and $v$, the local normalized intertwining operator is defined and satisfies the required multiplicativity. This is due to the fact that $\dot G$ is quasi-split at these places and so these properties are part of  the assumptions listed in Section \ref{sub:results-qsuni}. The same is true at the real place $v$ by Lemma \ref{lem:iop1rm}. This forces the operator at the place $u$ to also be defined at $\lambda=0$ and multiplicative in $P',P$.
\end{proof}

To treat non-tempered representations, we need the analog of Lemmas \ref{lem:iop1rm} and \ref{lem:iop1pm} for general parameters $\psi \in \Psi(M^*)$. We formulate this lemma now, but postpone the proof to \cite{KMS_A}.

\begin{lem*} \label{lem:iop1m} Let $F$ be a local field of characteristic zero. Assume the validity of Theorem \ref{thm:locclass-single} and \ref{thm:main-global} for the proper Levi subgroups of $G$. The function $\lambda \mapsto R_{P'|P}(\xi,\phi_\lambda)$ has neither a zero nor a pole at $\lambda=0$.
If $P,P',P''$ are parabolic subgroups of $G$ defined over $F$ with Levi factor $M$, then
\[ R_{P''|P}(\xi,\psi) = R_{P''|P'}(\xi,\psi) \circ R_{P'|P}(\xi,\psi). \]
\end{lem*}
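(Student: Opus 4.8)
The statement to prove, Lemma~\ref{lem:iop1m}, is the analog of Lemmas~\ref{lem:iop1rm} and~\ref{lem:iop1pm} for a general (possibly non-tempered) parameter $\psi\in\Psi(M^*)$, asserting that $\lambda\mapsto R_{P'|P}(\xi,\psi_\lambda)$ is holomorphic and non-zero at $\lambda=0$, and that the normalized operators are multiplicative in the parabolic. Since the excerpt explicitly says ``we formulate this lemma now, but postpone the proof to \cite{KMS_A}'', the proof in the present paper should be reduced to a form that makes that postponement transparent, together with whatever partial reduction is purely formal. The approach I would take is to reduce, by the standard Langlands-quotient and parabolic-descent machinery, to the tempered case, where Lemmas~\ref{lem:iop1rm} and~\ref{lem:iop1pm} already apply, modulo one residual point that genuinely requires the non-tempered packet theory and hence the sequel.

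\textbf{First step: reduce to $\psi=\phi_{M_1}\otimes\chi_\lambda$ on a smaller Levi.} A parameter $\psi\in\Psi(M^*)$ is of Arthur type, so $\psi|_{\SU(2)}$ may be non-trivial. The associated Langlands parameter $\phi_\psi$ lies in $\Phi(M^*)$ but is generally non-tempered; by the Langlands classification it is the twist of a tempered parameter on a Levi $M_1^*\subset M^*$ by a point in the (closed, not open) chamber. Writing $\pi\in\Pi_\psi(M,\xi)$, induction in stages identifies $\mc{H}_P(\pi)$ with a subrepresentation (or quotient) of $\mc{H}_{P_1}(\tau_1)$ for a tempered $\tau_1$ on $M_1$ twisted to the relevant point, and the intertwining operator $R_{P'|P}(\xi,\psi_\lambda)$ is obtained by restriction/compatibility from $R_{P_1'|P_1}(\xi,\phi_{\tau_1,\mu+\lambda})$. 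This is exactly the reduction carried out in \cite[\S2]{ArtIOR1} and recalled in the proof of Lemma~\ref{lem:iop1pm}; I would cite it verbatim. The multiplicativity in $P$ then follows formally from the multiplicativity for the tempered operator (Lemmas~\ref{lem:iop1rm}, \ref{lem:iop1pm}) together with the cocycle relation for the un-normalized $J_{P'|P}$, which holds generically in $\lambda$ by Langlands' theory and persists under meromorphic continuation.

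\textbf{Second step: holomorphy and non-vanishing at $\lambda=0$.} The un-normalized operator $J_{P'|P}(\xi,\psi_F)$ at the relevant (non-generic) point may a priori have a pole or zero; this is precisely the delicate analytic point. For tempered $\phi$ it is controlled by Lemmas~\ref{lem:iop1rm}/\ref{lem:iop1pm}, but for $\psi$ of Arthur type one needs to know that the normalizing factor $r_{P'|P}(\xi,\psi_\lambda,\psi_F)$ has a pole/zero of exactly matching order, which in turn is governed by the analytic behaviour of the Asai/Rankin--Selberg $L$-functions attached to the constituents of $\psi$ and by the fact that the non-tempered packet $\Pi_\psi(M,\xi)$ consists of the expected (possibly reducible) parabolically induced representations. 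Establishing this requires the main local theorem (Theorem~\ref{thm:locclass-single}) for non-generic $\psi$ on the Levi $M$, which is not available in this paper; this is the step that forces the appeal to \cite{KMS_A}. I would therefore, in the body of the present paper, prove only the tempered input and the formal reductions above, state the non-generic analytic input as the content borrowed from the sequel, and mark the lemma with the symbol $*$ accordingly (as the excerpt already does). \textbf{The main obstacle} is thus not the induction-in-stages bookkeeping but the interlocking of this holomorphy/non-vanishing statement with the not-yet-proven construction of non-tempered $A$-packets on Levi subgroups, which is why a complete proof is only possible once Theorem~\ref{thm:locclass-single} is established in full.
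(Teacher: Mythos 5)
There is nothing in the paper to compare your proposal against: the paper states this lemma and immediately postpones the proof to \cite{KMS_A} without giving any argument. Your write-up correctly identifies this, and your analysis of \emph{why} it is postponed — namely that the holomorphy and non-vanishing of the normalized operator at $\lambda=0$ for a genuinely non-generic $\psi$ is tied to the construction of non-tempered $A$-packets on the Levi $M$, i.e., Theorem~\ref{thm:locclass-single} for non-generic parameters, which is unavailable in this paper — is the right diagnosis. Two small caveats worth flagging. First, your ``first step'' suggests citing the proof of Lemma~\ref{lem:iop1pm} for a purely local reduction; but the paper's proof of Lemma~\ref{lem:iop1pm} is itself not a local reduction: it globalizes via Lemma~\ref{lem:globalize_for_iop1} and deduces the $p$-adic statement from the archimedean one and the quasi-split places, so a ``verbatim'' citation in the sense you describe is not available. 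Second, your ``second step'' correctly locates the analytic obstruction, but the multiplicativity statement for non-tempered $\psi$ is also not a formal corollary of the tempered case: the order of pole/zero of $J_{P'|P}$ and of $r_{P'|P}$ at $\lambda=0$ need not be zero individually once $\phi_\psi$ is unbounded, and matching these orders across different pairs of parabolics is part of what the sequel must establish, not something that ``persists under meromorphic continuation'' on its own. You might also note that the statement as printed in the paper writes $\phi_\lambda$ in the holomorphy clause where $\psi_\lambda$ is meant, a slip your prose silently corrects.
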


\begin{lem} \label{lem:iop123c} Let $n \in N(M,G)(F)$.
\begin{enumerate}
\item Given an isomorphism $\pi(n) : (V_\pi,\pi\circ\tx{Ad}(n)) \rw (V_\pi,\pi)$ we have an equality
$\mc{I}_{P'}^G(\pi(n))^{-1} \circ R_{P'|P}(\xi,\psi) \circ \mc{I}_P^G(\pi(n)) = R_{P'|P}(\xi,\psi)$
of intertwining operators $\mc{H}_P(\pi\circ\tx{Ad}(n)) \rw \mc{H}_{P'}(\pi\circ\tx{Ad}(n))$.
\item For a function $f$ defined on $G(F)$, let $[l(x)f](g) := f(x^{-1}g)$. Then we have an equality
$l(x)^{-1}\circ R_{P'|P}(\xi,\psi)\circ l(x) = R_{x^{-1}P'x|x^{-1}Px}(\tx{Ad}(x^{-1})\circ\xi,\psi)$
of intertwining operators $\mc{H}_{x^{-1}Px}(\pi) \rw \mc{H}_{x^{-1}P'x}(\pi)$.
\item For any $w \in W(M^*,G^*)(F)$ we have $R_{P'|P}(\tx{Ad}(\breve w)\circ\xi,\psi)=R_{P'|P}(\xi,w\psi)$.
\end{enumerate}
\end{lem}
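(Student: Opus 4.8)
\textbf{Proof strategy for Lemma \ref{lem:iop123c}.}

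The plan is to verify each of the three statements by a direct computation with the integral formula defining the un-normalized operator $J_{P'|P}$ together with the explicit description of the normalizing factor $r_{P'|P}$; since all three assertions concern intertwining operators between principal series, each identity can be checked first at the level of the un-normalized operators $J_{P'|P}(\xi,\psi_F)$ and then one tracks the effect on the normalizing factor $r_{P'|P}(\xi,\psi_\lambda,\psi_F)$ separately, passing to $\lambda=0$ at the end using Lemmas \ref{lem:iop1rm}, \ref{lem:iop1pm}, \ref{lem:iop1m} to guarantee that everything is holomorphic and non-vanishing there.

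For part (1), I would start from the integral $[J_{P'|P}(\xi,\psi_F)f](g)=\int_{N\cap N'\backslash N'} f(n'g)\,dn'$ acting on $\mc{H}_P(\pi\circ\tx{Ad}(n))$. The operator $\mc{I}_P^G(\pi(n))$ is applied fiberwise, i.e.\ by post-composing the value $f(g)\in V_\pi$ with $\pi(n)$, and since this pointwise operation commutes with the integration over $n'$ (the measure $dn'$ depends only on $\xi$ and $\psi_F$, not on $\pi$), one gets $\mc{I}_{P'}^G(\pi(n))\circ J_{P'|P}(\xi,\psi_F)=J_{P'|P}(\xi,\psi_F)\circ\mc{I}_P^G(\pi(n))$ at the un-normalized level. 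The normalizing factor $r_{P'|P}$ depends only on the parameter $\psi$ (through $\phi_{\psi_\lambda}$ and $\rho_{P'|P}$), not on the particular model $V_\pi$ or the choice of $\pi(n)$, so it is unaffected, and the claimed equality follows after dividing by $r_{P'|P}$ and evaluating at $\lambda=0$.

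For part (2), the substitution $g\mapsto x^{-1}g$ in the defining integral, combined with the change of variables $n'\mapsto x^{-1}n'x$ in the domain $N\cap N'\backslash N'$ (which carries it to $x^{-1}Nx\cap x^{-1}N'x\backslash x^{-1}N'x$), converts $l(x)^{-1}\circ J_{P'|P}(\xi,\psi_F)\circ l(x)$ into $J_{x^{-1}P'x|x^{-1}Px}(\xi',\psi_F)$ for the appropriate inner twist; the key point is that the measure $dn'$ is defined via the top form $\eta$, which is the $\xi$-transport of $\eta^*$, and conjugating $\xi$ by $\tx{Ad}(x^{-1})$ transports the same $\eta^*$ to the corresponding top form on $(\bar{\mf n}\cap\mf n')$ for the conjugated parabolics, so the measures match up correctly — this is exactly why the inner twist on the right-hand side is $\tx{Ad}(x^{-1})\circ\xi$ rather than $\xi$. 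For the normalizing factor one notes that $r_{x^{-1}P'x|x^{-1}Px}$ involves $\rho_{x^{-1}P'x|x^{-1}Px}$, which is the same representation of ${}^LM$ as $\rho_{P'|P}$ up to the identification induced by $\tx{Ad}(x^{-1})$, so the factors agree and the identity descends to the normalized operators. Part (3) is the special case of part (2) with $x$ replaced by a Langlands--Shelstad lift $\breve w=\tilde w$ (or the relevant representative) of an element $w\in W(M^*,G^*)(F)$: conjugation by such a lift carries $(M^*,P^*,P'^*)$ to $(M^*,wP^*w^{-1},wP'^*w^{-1})$ while at the level of representations $\pi\mapsto w\pi$, and since $w$ normalizes $M$ the parabolics $x^{-1}Px$, $x^{-1}P'x$ again have Levi factor $M$; the bookkeeping then identifies $R_{P'|P}(\tx{Ad}(\breve w)\circ\xi,\psi)$ with $R_{P'|P}(\xi,w\psi)$.

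\textbf{Main obstacle.} The routine part is the change-of-variables manipulation of the integral; the delicate point — and the one I would be most careful about — is bookkeeping the precise effect of conjugation on the Haar measure $dn'$, since this measure is pinned down by the top form $\eta$ obtained by transporting the Chevalley-basis form $\eta^*$ through $\xi$, and there is a sign ambiguity ($\eta$ is only well defined up to $\pm1$) as well as the subtlety that conjugation by $x$ (resp.\ by $\breve w$) must be matched on both the group side and the dual side so that $\rho_{P'|P}$ transforms consistently. Getting the inner twist appearing on the right-hand side exactly right (and not, say, off by an inner automorphism of $M^*$ that would change the answer via \eqref{eq:iop1xc}) is where the real content lies; the rest is formal.
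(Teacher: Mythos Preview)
Your treatment of parts (1) and (2) is correct and matches the paper's argument: for (1), the fiberwise action of $\mc{I}_P^G(\pi(n))$ on values commutes with integration over $n'$, and the normalizing factor does not see $\pi(n)$; for (2), the change of variables in the integral together with the transport of the Haar measure under $\tx{Ad}(x^{-1})$ gives the claimed identity for $J$, and the normalizing factors agree by definition. (One minor remark: rather than citing Lemmas \ref{lem:iop1rm}--\ref{lem:iop1m} to justify evaluation at $\lambda=0$, the paper simply works in general position and appeals to analytic continuation.)

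Your argument for part (3), however, has a genuine gap. Part (3) is \emph{not} a special case of part (2): plugging $x=\breve w$ into (2) gives
\[
l(\breve w)^{-1}\circ R_{P'|P}(\xi,\psi)\circ l(\breve w)
\;=\;
R_{w^{-1}P'w\,|\,w^{-1}Pw}\bigl(\tx{Ad}(\breve w^{-1})\circ\xi,\psi\bigr),
\]
an identity that involves conjugation by $l(\breve w)$ and changes the parabolic pair, whereas (3) is the direct equality $R_{P'|P}(\tx{Ad}(\breve w)\circ\xi,\psi)=R_{P'|P}(\xi,w\psi)$ with the \emph{same} parabolics and no $l(\cdot)$-conjugation. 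Your ``bookkeeping'' does not explain how to pass from one to the other, and (2) alone does not suffice.

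The paper argues (3) directly. For the normalizing factor one checks from the definition that $r_{P'|P}(\xi\circ\tx{Ad}(\tilde w),\psi,\psi_F)=r_{P'|P}(\xi,w\psi,\psi_F)$: the dependence of $r_{P'|P}$ on $\xi$ is only through the identification of the dual parabolics, and composing with $\tx{Ad}(\tilde w)$ has the same effect on $\rho^\vee_{P'|P}\circ\phi_{\psi}$ as replacing $\psi$ by $w\psi$. For the un-normalized operator, the only dependence on $\xi$ is through the Haar measure $dn'$, hence through the top form $\eta=\xi_*\eta^*$. The specific fact needed --- which you do not invoke --- is that the Langlands--Shelstad lift $\tilde w$ \emph{preserves the pinning of $G^*$}, so $\tx{Ad}(\tilde w)^*\eta^*=\eta^*$ (up to sign, which is irrelevant for the measure) and therefore $J_{P'|P}(\xi\circ\tx{Ad}(\tilde w),\psi_F)=J_{P'|P}(\xi,\psi_F)$. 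This pinning-preservation is what makes (3) go through, and it is what separates (3) from a generic instance of (2).
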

\begin{proof}
For all points we may assume that $\pi$ is in general position, so that both the un-normalized operator $J_{P'|P}$ and the normalizing factor $r_{P'|P}$ are defined, as the general case follows from this by analytic continuation. The first point is clear, as the operators $\mc{I}_P^G(\pi(n))$ and $\mc{I}_{P'}^G(\pi(n))$ act on the values of the functions that comprise $\mc{H}_P(\pi)$ and $\mc{H}_{P'}(\pi)$, while the operator $R_{P'|P}$ acts on their variables.

For the second point we note that while $J_{P'|P}(\xi,\psi_F)$ is given by integration over $N(F) \cap N'(F) \lmod N'(F)$ with respect to a measure $dn'$, the operator $l(x)^{-1}\circ J_{P'|P}(\xi,\psi_F)\circ l(x)$ is given by integration over $x^{-1}Nx(F) \cap x^{-1}N'x(F) \lmod x^{-1}N'x(F)$ with respect to the measure $\tx{Ad}(x^{-1})_*dn'$. This is the same as the operator $J_{x^{-1}P'x|x^{-1}Px}(\tx{Ad}(x^{-1})\circ\xi,\psi_F)$. On the other hand, by definition we have $r_{P'|P}(\xi,\psi,\psi_F)=r_{x^{-1}P'x|x^{-1}Px}(\tx{Ad}(x^{-1})\circ\xi,\psi,\psi_F)$.

For the last point, we have $r_{P'|P}(\xi\circ\tx{Ad}(\tilde w),\psi,\psi_F) = r_{P'|P}(\xi,w\psi,\psi_F)$. At the same time, the difference between $J_{P'|P}(\xi\circ\tx{Ad}(\tilde w)^{-1},\psi_F)$ and $J_{P'|P}(\xi,\psi_F)$ comes from transporting the top form $\eta^*$ to $\mf{n'} \cap \mf{\bar n}$ either via $\xi$ or via $\xi\circ\tx{Ad}(\tilde w)$. But $\tx{Ad}(\tilde w)$ preserves the pinning of $G^*$ so the top forms $\eta^*$ and $\tx{Ad}(\tilde w)^*\eta^*$ are equal.
\end{proof}

\subsection{Local intertwining operator II} \label{sec:iop2}

We keep the assumptions made in the beginning of the previous section: $F$ is a local field, $E/F$ is a quadratic algebra, $G^*=U_{E/F}(N)$, $M^* \subset G^*$ is a proper standard Levi subgroup, and $\xi : G^* \rw G$ is an inner twist that restricts to an inner twist $\xi : M^* \rw M$. Let $P$ be a parabolic subgroup of $G$ defined over $F$ with Levi factor $M$. Let $\psi \in \Psi(M^*)$ and $\pi \in \Pi_\psi(M,\xi)$.

In addition, we now assume that $P^*=\xi^{-1}(P)$ is a standard parabolic subgroup of $G^*$. For an element $w \in W(M^*,G^*)^\Gamma$ we let $\tilde w \in N(T^*,G^*)(F)$ be its Langlands-Shelstad lift as described in Section \ref{subsub:wmgu}. We assume that for $\sigma \in \Gamma$ the inner automorphism $\xi^{-1}\sigma(\xi)$ of $G^*$ fixes the $\tilde w$ for any $w \in W(M^*,G^*)^\Gamma$. This implies that $\breve w := \xi(\tilde w)$ belongs to $M(M,G)(F)$. The twist $\xi$ induces a $\Gamma$-equivariant isomorphism $W(M^*,G^*) \cong W(M,G)$ and we use it to regard $w$ as an element of $W(M,G)$. Then $\breve w$ is a lift of $w$.

We will describe a normalized intertwining operator
\[ l_P(w,\xi,\psi,\psi_F) : \mc{H}_{w^{-1}Pw}(\pi) \rw \mc{H}_{P}(\breve w\pi), \]
where the representation $\breve w\pi$ of $M(F)$ is defined by $\breve w\pi = \pi\circ\tx{Ad}(\breve w^{-1})$. We begin with the un-normalized intertwining operator
\[ l(\breve w) : \mc{H}_{w^{-1}P}(\pi) \rw \mc{H}_{P}(\breve w\pi),\qquad [l(\breve w)f](g) = f(\breve w^{-1}g), \]
where $f \in \mc{H}_{w^{-1}P}(\pi)$ and $g \in G(F)$. The map $w \mapsto \tilde w$ is not multiplicative and this failure is inherited by the map $w \mapsto \breve w$. For $w,w' \in W(T^*,G^*)$, Langlands and Shelstad provide in \cite[\S2.1]{LS87} a formula \[ \tilde w \cdot \tilde w' = t(w,w') \cdot \widetilde{ww'}, \]
for the failure of multiplicaitivity, with $t \in Z^2(W(T^*,G^*),T^*)$. When $w,w'$ belong to $W(M^*,G^*)$ the argument used in the proof of \cite[Lemma 2.3.4]{Arthur}, which will also be recalled in the proof below, shows that we have $t(w',w) \in A_{M^*}(F)$, where $A_{M^*}$ is the maximal split central torus of $M^*$. The failure of multiplicativity of $w \mapsto \breve w$ is then measured by $\xi(t(w',w)) \in A_M$. This failure necessitates a renormalization of the operator $l(\breve w)$ in order to obtain an operator that has the desired multiplicative properties. For this we follow the strategy outlined in \cite[\S2.3]{Arthur}. Let $\hat M,\hat P,\hat P'$ be dual to $M^*,P^*,P'^*=w^{-1}P^*w$ and let $\rho_{w^{-1}Pw|P}$ denote the adjoint representation of $^LM$ on the vector space $\mf{\hat n'} \cap \mf{\hat n} \lmod \mf{\hat n'} \cong \mf{\hat n'} \cap \mf{\hat{\bar n}}$, where $\mf{\hat n}$, $\mf{\hat n'}$, and $\mf{\hat{\bar n}}$ denote the Lie algebras of the unipotent radicals of $\hat P$, $\hat P'$ and the parabolic subgroup $\hat{\bar P}$ that is $\hat M$-opposite to $\hat P$. We then have the Artin $\epsilon$-factor $\epsilon(\frac{1}{2},\rho^\vee_{w^{-1}Pw|P}\circ\phi_\psi,\psi_F)$. Ideally, we would like to use it for our normalization purposes, but for global reasons we need to use its representation-theoretic analog instead. For this, recall the Levi subgroup $\tilde M$ of $\tx{Res}_{E/F}\tx{GL}(N)$ associated to $M^*$ as described in Section \ref{sec:giop} and let $\pi_\psi$ be the representation of $\tilde M(F)$ associated to $\phi_\psi$. We let
\[ \epsilon_P(w,\psi,\psi_F) := \epsilon(\frac{1}{2},\pi_\psi,\rho^\vee_{w^{-1}Pw|P},\psi_F). \]
Henniart has shown in \cite{Hen10} that this factor is equal to its Artin analog up to multiplication by a root of unity. Work in progress of Cogdell-Shahidi-Tsai aims at showing that this root of unity is equal to $1$. Until this has been proven, we will need to use the representation theoretic $\epsilon$-factor instead of the Artin factor.

Besides the $\epsilon$-factor, the work of Keys-Shahidi has shown that another normalizing factor needs to be added, namely the constant $\lambda(w,\psi_F)$ defined in \cite[(4.1)]{KS88} with respect to the canonical lift of $w \in W(M^*,G^*)^\Gamma$ to an element of $W(T^*,G^*)^\Gamma$.

Following Arthur, we define
\begin{equation} \label{eq:iop2} l_P(w,\xi,\psi,\psi_F) := \epsilon_{P}(w,\psi,\psi_F) \cdot \lambda(w,\psi_F)^{-1} \cdot l(\breve w). \end{equation}

\begin{lem} \label{lem:iop2m} For any $w,w' \in W(M,G)(F)$ we have
\[ l_P(w'w,\xi,\psi,\psi_F) = l_P(w',\xi,w\psi,\psi_F) \circ l_P(w,\xi,\psi,\psi_F). \]
\end{lem}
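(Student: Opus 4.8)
The proof is a direct computation unwinding the definition \eqref{eq:iop2} of $l_P(w,\xi,\psi,\psi_F)$ as the product of three factors: the representation-theoretic $\epsilon$-factor $\epsilon_P(w,\psi,\psi_F)$, the Keys--Shahidi constant $\lambda(w,\psi_F)^{-1}$, and the un-normalized operator $l(\breve w)$. I would check multiplicativity of each of the three factors separately (up to the predictable discrepancy coming from the failure of $w\mapsto\breve w$ to be multiplicative), and then verify that the discrepancies cancel. This is exactly the argument in \cite[Lemma 2.3.4]{Arthur}, which I would follow, supplying the modifications forced by the presence of the inner twist $\xi$.

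First I would analyze the un-normalized operator. By definition $[l(\breve w)f](g)=f(\breve w^{-1}g)$, so $l(\breve w')\circ l(\breve w) = l(\breve w'\breve w)$, and since $\breve w = \xi(\tilde w)$ and $\tilde w\cdot\tilde w' = t(w,w')\,\widetilde{ww'}$ with $t\in Z^2(W(T^*,G^*),T^*)$, we get $\breve w'\breve w = \xi(t(w',w))\,\widetilde{w'w}$. The key input here is that when $w,w'\in W(M^*,G^*)$ the cocycle value $t(w',w)$ lies in $A_{M^*}(F)$ — this is the argument recalled from \cite[Lemma 2.3.4]{Arthur}: $t(w',w)$ is a product of $y_\alpha(-1)$ over roots $\alpha>0$ with $(w'w)^{-1}\alpha<0$ but $w^{-1}\alpha>0$, and such roots lie outside $M^*$, hence the product lies in $Z(M^*)$; being a product of order-two elements of a split torus it actually lies in $A_{M^*}(F)$. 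Then $l(\xi(t(w',w)))$ acts on $\mc{H}_P$ by the scalar $\delta_P(\xi(t(w',w)))^{1/2}$ times the action of $\xi(t(w',w))$ through $\pi$ on the central character; combined with Lemma \ref{lem:lsl} and Lemma \ref{lem:tw-} governing how these torus elements behave, one reads off the exact scalar discrepancy between $l(\breve w'\breve w)$ and $l(\widetilde{w'w})$.

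Next I would handle the two normalizing constants. For the $\epsilon$-factor, the cocycle relation for Langlands $\epsilon$-factors of the tensor-inductive representations $\rho^\vee_{w^{-1}Pw|P}$, together with the behavior of $\phi_\psi$ under $w$ (note $\phi_{w\psi}=w\phi_\psi$), gives a near-multiplicativity $\epsilon_P(w'w,\psi,\psi_F) = \epsilon_P(w',w\psi,\psi_F)\,\epsilon_P(w,\psi,\psi_F)$ up to a correction that is again a value of a character on a torus of $M$ evaluated at $t(w',w)$; this is the standard decomposition of the adjoint action $\mf{\hat n'}\cap\mf{\hat{\bar n}}$ when one composes two Weyl elements. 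For the $\lambda$-factor, the cocycle relation $\lambda(w'w,\psi_F) = \lambda(w',\psi_F)\lambda(w,\psi_F)\cdot(\text{sign})$ is \cite[(4.1) and its consequences]{KS88}, with the sign again expressible via $t(w',w)$. Assembling: the three discrepancies — from $l(\breve w)$, from $\epsilon_P$, and from $\lambda^{-1}$ — are each governed by $t(w',w)\in A_{M^*}(F)$, and the content of \cite[Lemma 2.3.4]{Arthur} is precisely that they cancel; I would verify that the same cancellation persists after transporting via $\xi$, which it does because all three discrepancies are transported by the same map $\xi|_{A_{M^*}}$ (defined over $F$ since $\xi:M^*\to M$ is an inner twist) and depend on $t(w',w)$ through the identical character.

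\textbf{Main obstacle.} The one genuinely new point, compared to the quasi-split case treated by Arthur, is keeping careful track of how the inner twist $\xi$ interacts with the Langlands--Shelstad lifts: the hypothesis in force (that $\xi^{-1}\sigma(\xi)$ fixes $\tilde w$ for all $w\in W(M^*,G^*)^\Gamma$, which is achievable by Lemma \ref{lem:c1}) is exactly what is needed to guarantee $\breve w = \xi(\tilde w)\in N(M,G)(F)$ and that the formula $\breve w'\breve w = \xi(t(w',w))\widetilde{w'w}$ makes sense over $F$. I expect the main care to be needed in confirming that the normalizing factors $\epsilon_P$ and $\lambda$, which are defined purely in terms of $\psi$ and $w$ on the quasi-split side, are the correct ones to pair with the $\xi$-twisted operator $l(\breve w)$ — i.e. that no extra $\xi$-dependent scalar sneaks in. This follows because $l(\breve w)$ differs from the quasi-split $l(\tilde w)$ only by the adjoint action on the single factor where $\xi$ enters the definition of $J_{P'|P}$ through the choice of top form $\eta$, and $\tx{Ad}(\tilde w)$ preserves the pinning of $G^*$ (as used in Lemma \ref{lem:iop123c}(3)), so the $\eta$-normalization is $w$-invariant. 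Everything else is the bookkeeping of the three cocycle relations, which I would not reproduce in detail but cite from \cite{Arthur}, \cite{LS87}, and \cite{KS88}.
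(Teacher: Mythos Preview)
Your approach is essentially the paper's: reduce to the scalar identity \eqref{eq:iop2eq} expressing the central character of $\pi$ at $(w'w)^{-1}t(w',w)(w'w)$ as the product of the $\epsilon$-ratio and the $\lambda$-ratio, and follow Arthur's Lemma 2.3.4. A few points where your sketch is looser than it should be.

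First, your invocation of Lemmas \ref{lem:lsl} and \ref{lem:tw-} is misplaced; those compare the Langlands--Shelstad lift $\tilde w$ with the permutation-matrix lift $\hat w$, which plays no role here. The cocycle $t(w',w)$ comes directly from \cite[Lemma 2.1.A]{LS87}, and its invariance under $W(T^*,M^*)$ (hence membership in $X_*(A_{M^*})$) is the argument the paper gives verbatim. Your description of the relevant root set is also off: it is $\{\alpha>0,\ w\alpha<0,\ w'w\alpha>0\}$, not the set you wrote.

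Second, and more substantively, your framing ``three discrepancies, each governed by $t(w',w)$ through the identical character, which therefore cancel'' obscures what actually has to be checked. The $l(\breve w)$-discrepancy is the central character $\eta_\pi$ at (a conjugate of) $t(w',w)$, and the paper identifies this with $\lambda(m(x))$ via the description of $\eta_\pi$ in Theorem \ref{thm:locclass-single}. The $\epsilon$- and $\lambda$-discrepancies are not separately ``the same character at $t(w',w)$''; rather, the $\epsilon$-ratio is computed by decomposing $\rho_{w^{-1}P|P}$ over weight spaces, applying Henniart's comparison to pass to Artin factors, and then using \cite[(3.6.8)]{TateCorvallis} to get a determinant over the root space indexed by the same set. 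This determinant then splits as $\lambda(m(x))$ times a piece coming from the Galois action alone, and the genuinely new content for unitary groups is the verification of \eqref{eq:iop2eq2}: matching that Galois piece against the $\lambda$-ratio via a case analysis over the four types of roots in $R(T^*,G^*)$ (complex with $x\alpha+\alpha\notin R$, complex with $x\alpha+\alpha\in R$, roots of the form $\beta+x\beta$, and $x$-fixed roots). This case analysis is specific to $U_{E/F}(N)$ and cannot simply be cited from Arthur or Keys--Shahidi; you should carry it out.

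The inner-twist issues you flag as the ``main obstacle'' are in fact non-issues: once $\breve w=\xi(\tilde w)\in N(M,G)(F)$ (guaranteed by the standing hypothesis that $z$ commutes with $\tilde w$), everything else lives on the quasi-split side and $\xi$ plays no further role.
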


Before beginning with the proof, we comment briefly on the statement, which employs a slight abuse of notation. The operator $l_P(w,\xi,\psi,\psi_F)$ was defined as a linear map $\mc{H}_{w^{-1}Pw}(\pi) \rw \mc{H}_P(\breve w \pi)$ for any $\pi \in \Pi_\psi(M,\Xi_M)$. However, the same formula gives an operator $\mc{H}_{w^{-1}w'^{-1}Pw'w}(\pi) \rw \mc{H}_{w'^{-1}Pw'}(\breve w\pi)$, and this is what we are using here. Moreover, the operator $l_P(w',\xi,w\psi,\psi_F)$ was defined as an operator $\mc{H}_{w'^{-1}Pw'}(\pi') \rw \mc{H}_P(\breve w'\pi')$ for any $\pi' \in \Pi_{w\psi}(M,\Xi_M)$ and we are applying it here to $\pi' = \breve w\pi$.

\begin{proof}
We follow closely the proof of Lemma 2.3.4 in \cite{Arthur}. The right hand side operator sends $f \in \mc{H}_P(\pi)$ to the function
\[ g \mapsto \epsilon_{P}(w',w\psi,\psi_F)\epsilon_{P}(w,\psi,\psi_F)\lambda(w',\psi_F)^{-1}\lambda(w,\psi_F)^{-1}f(\breve w^{-1}\breve w'^{-1}g), \]
while the left hand side operator sends $f$ to the function
\[ g \mapsto \epsilon_{P}(w'w,\psi,\psi_F)\lambda(w'w,\psi_F)^{-1}\eta_\pi(\xi((w'w)^{-1}t(w',w)w'w))f(\breve w^{-1} \breve w'^{-1}g), \]
where $\eta_\pi$ is the central character of the representation $\pi$. Our goal is to show
\begin{eqnarray} \label{eq:iop2eq}
\eta_\pi(\xi((w'w)^{-1}t(w',w)w'w))&=&\lambda(w'w,\psi_F)\lambda(w,\psi_F)^{-1}\lambda(w',\psi_F)^{-1} \\
&\cdot&\epsilon_P(w',w\psi,\psi_F)\epsilon_P(w,\psi,\psi_F)\epsilon_P(w'w,\psi,\psi_F)^{-1}. \nonumber
\end{eqnarray}
We study first the left hand side and begin by recalling the argument that $t(w',w) \in A_{M^*}$. According to \cite[Lemma 2.1.A]{LS87}, we have
\[ (w'w)^{-1}t(w',w)w'w = \prod_{\alpha>0,w\alpha<0,w'w\alpha>0} \alpha^\vee(-1) = (-1)^{\sum_{\alpha>0,w\alpha<0,w'w\alpha>0}\alpha^\vee} \]
for $\alpha \in R(T^*,G^*)$ and positivity taken with respect to $B^*$. The set $\{\alpha>0,w\alpha<0,w'w\alpha>0\}$ is preserved by $\Gamma$, because the elements $w',w \in W(T^*,G^*)$ and the Borel subgroup $B^*$ are. This set is also preserved by any element $u \in W(T^*,M^*)$. This is because both $w',w$ normalize $W(T^*,M^*)$ and because $u$ preserves the positive roots in $R(T^*,G^*) \sm R(T^*,M^*)$.
It follows that
\[ \lambda := \sum_{\alpha>0,w\alpha<0,w'w\alpha>0}\alpha^\vee \in X_*(A_{M^*}). \]
This proves $t(w',w) \in A_{M^*}$ and also tells us that the left-hand side of \eqref{eq:iop2eq} is given by $\eta_\pi(\xi((-1)^\lambda))$.

The dual torus to $Z(M^*)$ is $\hat M/\hat M_\tx{sc,der}$, and the dual torus to $A_{M^*}$ is $[\hat M/\hat M_\tx{sc,der}]_\Gamma$.\footnote{Recall that we are following Arthur's convention that $\hat M_\tx{sc}$ is the preimage of $\hat M$ in $\hat G_\tx{sc}$, and $\hat M_\tx{sc,der}$ is the derived subgroup of $\hat M_\tx{sc}$, which coincides with the simply-connected cover of the derived subgroup of $\hat M$.} This gives a canonical isomorphism $X_*(A_{M^*}) \cong X^*(\hat M)^\Gamma$. According to Theorem \ref{thm:locclass-single}, $\eta_\pi$ is the character of $Z(M^*)(F)$ with parameter $W_F \stackrel{\phi_\psi}{\lrw} \hat M \rtimes W_F \rw \hat M/\hat M_\tx{sc,der} \rtimes W_F$. Composing this parameter with the projection $\hat M/\hat M_\tx{sc,der} \rtimes W_F \rw [\hat M/\hat M_\tx{sc,der}]_\Gamma \times W_F$ gives the parameter of the restriction of $\eta_\pi$ to $A_{M^*}$. We conclude that if $x \in W_F$ is any element with image $-1 \in F^\times$ under the Artin reciprocity map and $\phi_\psi(x)=m(x) \rtimes x$, then the left hand side of \eqref{eq:iop2eq} is equal to $\lambda(m(x))$.

Turning to the right hand side of \eqref{eq:iop2eq}, we study first the product of the three $\epsilon$-factors. Decompose $\mf{\hat g}$ under the action of $A_{\hat M}$ as a direct sum of weight spaces $\hat g_\beta$ for $\beta \in R(A_{\hat M},\hat G)$ and denote by $\beta>0$ those weights for which $\mf{\hat g}_\beta \subset \mf{\hat n}$. The adjoint representation of $^LM$ on the space $\mf{\hat n}' \cap \mf{\hat{\bar n}}$ is then the direct sum of $\mf{\hat g}_\beta$ for $\beta<0$ with $w\beta<0$, and its contragredient is the direct sum over $\beta>0$ with $w\beta<0$. If we denote the adjoint action of $^LM$ on $\mf{\hat g}_\beta$ by $\rho_\beta$, then the factor $\epsilon_P(w,\phi,\psi_F)$ decomposes as the product
\[ \epsilon_P(w,\psi,\psi_F) = \prod_{\beta>0,w\beta<0} \epsilon(\frac{1}{2},\pi_\psi,\rho_\beta,\psi_F). \]
The product of the three $\epsilon$-factors on the right hand side of \eqref{eq:iop2eq} is thus equal to
\[ \prod_{w\beta>0,w'w\beta<0}f(\beta) \cdot \prod_{\beta>0,w\beta<0}f(\beta) \cdot \prod_{\beta>0,w'w\beta<0} f(\beta)^{-1} \]
with $f(\beta)=\epsilon(\frac{1}{2},\pi_\psi,\rho_\beta,\psi_F)$. An elementary calculation shows that this triple product equals
\[ \prod_{\beta>0,w\beta<0,w'w\beta>0} f(\beta)\cdot f(-\beta). \]
Applying Henniart's result \cite{Hen10}, we conclude that if we replace $f$ by the function $f(\beta)=\epsilon(\frac{1}{2},\rho_\beta\circ\phi_\psi,\psi_F)$ that uses the Artin $\epsilon$-factor instead of the representation-theoretic one, the above product is unchanged. Letting $x \in W_F$ is any element whose image under the reciprocity map $W_F \rw F^\times$ is equal to $-1$ and using \cite[(3.6.8)]{TateCorvallis}, the above product then equals to
\[ \prod_{\beta>0,w\beta<0,w'w\beta>0} \det(\rho_\beta\circ\phi_\psi(x)). \]

The indexing set of the last displayed product is a subset of $X^*(A_{\hat M})$ and equals the restriction to $A_{\hat M}$ of the subset of $X^*(\hat T)$ given by $\{\alpha^\vee|\alpha \in R(T^*,G^*), \alpha>0,w\alpha<0,w'w\alpha>0\}$. Writing again $\phi_\psi(x)=m(x) \rtimes x \in \hat M \rtimes W_F$ we see that the last displayed product equals
\[ \tx{det}\left(\tx{Ad}(m(x) \rtimes x) \left| \bigoplus_{\alpha>0,w\alpha<0,w'w\alpha>0}\mf{\hat g}_{\alpha^\vee}\right.\right). \]
The determinant of the action of the maximal torus $\hat T \subset \hat M$ on this direct sum is the character of $\hat T$ given by the restriction to $\hat T$ of $\lambda \in X^*(\hat M)$. On the other hand, the action of $\hat M_\tx{sc,der}$ on this direct sum has trivial determinant. We conclude that
\[ \tx{det}\left(\tx{Ad}(m(x) \rtimes 1) \left| \bigoplus_{\alpha>0,w\alpha<0,w'w\alpha>0}\mf{\hat g}_{\alpha^\vee}\right.\right) \]
is equal to $\lambda(m(x))$, i.e. to the left-hand side of \eqref{eq:iop2eq}.

To complete the proof of \eqref{eq:iop2eq} we must show that
\begin{equation} \label{eq:iop2eq2} \tx{det}\left(\tx{Ad}(1 \rtimes x) \left| \bigoplus_{\alpha>0,w\alpha<0,w'w\alpha>0}\mf{\hat g}_{\alpha^\vee}\right.\right) = \lambda(w'w,\psi_F)^{-1}\lambda(w,\psi_F)\lambda(w',\psi_F). \end{equation}

The definition of $\lambda(w,\psi_F)$ given in \cite[(4.1)]{KS88} and specialized to our setting is
\[ \lambda(w,\psi_F) = \prod_{a \in \Delta_1(w)} \lambda(E/F,\psi_F) \cdot \prod_{a \in \Delta_2(w)} \lambda(E/F,\psi_F)^2, \]
where $\Delta_1(w)$ and $\Delta_2(w)$ are the sets of reduced relative roots $a \in X^*(A_{T^*})$ with $a>0$ and $wa<0$ for which the corresponding rank-1 semi-simple group has simply connected cover isomorphic to $\tx{Res}_{E/F}\tx{SL}(2)$ and $SU_{E/F}(3)$, respectively. Here $\lambda(E/F,\psi_F)$ is the Langlands constant \cite[Thm 2.1]{LanArt}. It satisfies $\lambda(E/F,\psi_F)^2=\eta_{E/F}(-1)$, where $\eta_{E/F} : F^\times/N_{E/F}(E^\times) \rw \{\pm 1\}$ is the non-trivial sign character coming from local class field theory. We apply the argument that we used above to study the product of the three $\epsilon$-factors again, first to the set $\Delta_1(w)$ and the function $f(a)=\lambda(E/F,\psi_F)$, and then to the set $\Delta_2(w)$ and the function $f(a)=\eta_{E/F}(-1)$. Initially we obtain in both cases the product
\[ \prod_{a>0,w'a<0}f(a) \cdot \prod_{a>0,wa<0}f(a) \cdot \prod_{a>0,w'wa<0} f(a)^{-1} \]
but since $w$ is defined over $F$ the function $f$ is invariant under $w$ and the first product can be taken over the set $wa>0,w'wa<0$ instead. In this way we obtain
\[ \lambda(w'w,\psi_F)^{-1}\lambda(w,\psi_F)\lambda(w',\psi_F) = \eta_{E/F}(-1)^t, \]
where $t$ is the number of reduced relative roots $a \in X^*(A_{T^*})$ with $a>0,wa<0,w'wa>0$ for which the corresponding rank-1 semi-simple subgroup of $G^*$ has simply connected cover isomorphic to $\tx{Res}_{E/F}\tx{SL}(2)$. Now $\eta_{E/F}(-1)=1$ if and only if $x \in W_E$. If this is the case, then both sides of \eqref{eq:iop2eq2} are equal to $1$. Assume thus that this is not the case and consider the left-hand side of \eqref{eq:iop2eq2}. We will distinguish four types of roots $\alpha \in R(T^*,G^*)$ with $\alpha>0,w\alpha<0,w'w\alpha>0$. The first type are those $\alpha$ for which $x\alpha \neq \alpha$ and $x\alpha+\alpha \notin R(T^*,G^*)$. Those are precisely those roots whose restriction $a=\alpha|_{T^*} \in X^*(A_{T^*})$ is a reduced relative root contributing a copy of $\tx{Res}_{E/F}(\tx{SL}(2))$, and hence a factor of $\eta_{E/F}(-1)=-1$ to right-hand side of \eqref{eq:iop2eq2}. At the same time, the subspace $\mf{\hat g}_{\alpha^\vee} \oplus \mf{\hat g}_{x\alpha^\vee}$ is $x$-invariant and the action of $x$ on it has determinant $-1$. The second type of roots $\alpha \in R(T^*,G^*)$ are those for which $x\alpha \neq \alpha$ but $x\alpha+\alpha \in R(T^*,G^*)$. Those are precisely those roots whose restriction $a \in X^*(A_{T^*})$ is a reduced relative root contributing a copy of $SU_{E/F}(3)$. They have no contribution to the right-hand side of \eqref{eq:iop2eq2}. At the same time, the subspace $\mf{\hat g}_{\alpha^\vee} \oplus \mf{\hat g}_{x\alpha^\vee} \oplus \mf{\hat g}_{\alpha^\vee+x\alpha^\vee}$ is $x$-invariant, with $x$ swapping the first two factors and acting by $-1$ on the third, hence having determinant $+1$. Thus, this subspace also has no contribution to the left-hand side of \eqref{eq:iop2eq2}. The third type of roots $\alpha \in R(T^*,G^*)$ are those of the form $\beta+x\beta$ for $\beta \in R(T^*,G^*)$. Their reduction $a \in X^*(A_{T^*})$ is not a reduced root, so has no contribution to the right-hand side of \eqref{eq:iop2eq2}. On the other hand, the corresponding subspace $\mf{\hat g}_{\alpha^\vee}$ has already been subsumed into the treatment of the second type of roots and thus need not be considered again. The fourth type of roots are those $\alpha \in R(T^*,G^*)$ with $\alpha=x\alpha$ but not of type 3. They do not contribute to the right hand side of \eqref{eq:iop2eq2}, because the corresponding rank-1 simply connected group is isomorphic to $\tx{SL}(2)$. The action of $x$ preserves and in fact pointwise fixes the subspace $\mf{\hat g}_{\alpha^\vee}$, hence this space does not contribute to the left-hand side of \eqref{eq:iop2eq2} either.
\end{proof}

The following is immediate from the construction.

\begin{fct} \label{fct:iop23c}
Given $n \in N(M,G)(F)$ and an isomorphism $\pi(n) : (V_\pi,\pi\circ\tx{Ad}(n)) \rw (V_\pi,\pi)$, we have an equality
\[ \pi(n)^{-1} \circ l_P(w,\xi,\psi,\psi_F) \circ \pi(n) = l_P(w,\xi,\psi,\psi_F) \]
of intertwining operators $\mc{H}_{w^{-1}Pw}(\pi\circ\tx{Ad}(n)) \rw \mc{H}_{P}(\pi\circ\tx{Ad}(n))$.
\end{fct}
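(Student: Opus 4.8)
\textbf{Plan for the proof of Fact \ref{fct:iop23c}.}

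The statement is that the normalized intertwining operator $l_P(w,\xi,\psi,\psi_F)$ commutes with the operators $\mc{I}^G_\bullet(\pi(n))$ induced from an intertwiner $\pi(n):(V_\pi,\pi\circ\tx{Ad}(n))\rw(V_\pi,\pi)$, in exactly the same spirit as part 1 of Lemma \ref{lem:iop123c}. The key observation, already made in the proof of Lemma \ref{lem:iop123c}(1), is that the induction functor acts on the \emph{values} of the functions in $\mc{H}_P(\pi)$ while the intertwining operator acts on their \emph{variables}; since these two actions are on different ``slots'' of the function, they automatically commute. So the plan is simply to unwind the definition \eqref{eq:iop2}, $l_P(w,\xi,\psi,\psi_F)=\epsilon_P(w,\psi,\psi_F)\cdot\lambda(w,\psi_F)^{-1}\cdot l(\breve w)$, and reduce the assertion to the corresponding statement for the un-normalized operator $l(\breve w)$, because the two scalar normalizing factors $\epsilon_P(w,\psi,\psi_F)$ and $\lambda(w,\psi_F)$ do not depend on the representation-theoretic data at all (they are attached to $w$, $\psi$, and $\psi_F$ only) and hence trivially commute with the scalar-free maps $\mc{I}^G_\bullet(\pi(n))$.

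Concretely, I would proceed as follows. First, recall that for $f$ in the induced space $\mc{H}_{w^{-1}Pw}(\pi\circ\tx{Ad}(n))$, the operator $\mc{I}^G_{w^{-1}Pw}(\pi(n))$ post-composes the values of $f$ with the fixed linear isomorphism $\pi(n):V_\pi\rw V_\pi$, i.e. $[\mc{I}^G_{w^{-1}Pw}(\pi(n))f](g)=\pi(n)(f(g))$; likewise on the target space with $\mc{I}^G_P(\pi(n))$. On the other hand $l(\breve w)$ sends $f$ to $g\mapsto f(\breve w^{-1}g)$, a translation in the variable. Thus for any $f$ and any $g\in G(F)$ one computes directly
\[ [l(\breve w)\circ\mc{I}^G_{w^{-1}Pw}(\pi(n))f](g)=[\mc{I}^G_{w^{-1}Pw}(\pi(n))f](\breve w^{-1}g)=\pi(n)(f(\breve w^{-1}g))=\pi(n)([l(\breve w)f](g))=[\mc{I}^G_P(\pi(n))\circ l(\breve w)f](g), \]
so $l(\breve w)$ intertwines the two induced operators, i.e. $\pi(n)^{-1}\circ l(\breve w)\circ\pi(n)=l(\breve w)$ in the abbreviated notation of the statement (where $\pi(n)$ stands for $\mc{I}^G_\bullet(\pi(n))$ on the appropriate space). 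Second, multiply through by the scalars $\epsilon_P(w,\psi,\psi_F)\lambda(w,\psi_F)^{-1}$, which are independent of $n$ and commute with everything, to obtain the claimed identity for $l_P(w,\xi,\psi,\psi_F)$. One should also note a compatibility of the induced representations on the two nose spaces: $\mc{H}_{w^{-1}Pw}(\pi\circ\tx{Ad}(n))$ is carried by $l(\breve w)$ into $\mc{H}_P(\breve w(\pi\circ\tx{Ad}(n)))$, and since $\breve w$ normalizes $M$ one has $\breve w(\pi\circ\tx{Ad}(n))=(\breve w\pi)\circ\tx{Ad}(n')$ for the appropriate conjugate $n'=\tx{Ad}(\breve w)n$ — but as the statement only asks for the identity as operators $\mc{H}_{w^{-1}Pw}(\pi\circ\tx{Ad}(n))\rw\mc{H}_{P}(\pi\circ\tx{Ad}(n))$ after identifying source and target appropriately, this bookkeeping is routine and can be dispatched with a sentence.

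There is essentially no obstacle here: this is a formal verification, and the only thing to be careful about is matching up the spaces on which the various operators are defined and making sure the un-normalized operator $l(\breve w)$ is applied in the ``general position'' regime (or its meromorphic continuation) so that all the operators in sight are honestly defined — but since $l(\breve w)$ is an honest translation operator, no analytic continuation is even needed for it, and the only meromorphic object, the normalizing scalars, can be handled by the usual density/analytic-continuation argument if one wishes to phrase the statement for all $\psi$. I would therefore keep the proof to three or four lines, referring back to the argument in Lemma \ref{lem:iop123c}(1).
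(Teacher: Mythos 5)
Your proof is correct and takes essentially the same route the paper intends: the paper offers no argument at all (it says the Fact ``is immediate from the construction''), and your verification — that $l(\breve w)$ acts on the variable while $\mc{I}^G_\bullet(\pi(n))$ acts on the value, so the two commute, and the normalizing scalars $\epsilon_P(w,\psi,\psi_F)\lambda(w,\psi_F)^{-1}$ are independent of $n$ — is exactly the expected unwinding of the definition \eqref{eq:iop2}, mirroring the argument already made in the proof of Lemma \ref{lem:iop123c}(1).
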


\subsection{Local intertwining operator III} \label{sec:iop3b}
We maintain the assumptions of the previous section: $F$ is a local field, $E/F$ is a quadratic algebra, $G^*=U_{E/F}(N)$, $(M^*,P^*)$ is a proper standard parabolic pair of $G^*$ defined over $F$ and its image $(M,P)$ under $\xi$ is a parabolic pair of $G$ defined over $F$. Let $\psi \in \Psi(M^*)$ and $\pi \in \Pi_\psi(M,\xi)$.

We now assume further that we are given $z \in Z^1_\tx{G^*-bsc}(\mc{E},M^*)$ such that $(\xi,z)$ is an extended pure inner twist. The element $z$ is assumed to commute with the Langlands-Shelstad lifts $\tilde w \in N(T^*,G^*)$ of all $w \in W(M^*,G^*)^\Gamma$. Furthermore, let $u \in N(\hat T,\hat G)$ be such that it commutes with $\psi$ and preserves the $\hat B$-positive roots in $R(\hat T,\hat M)$. Let $u^\natural \in N^\natural_\psi(M,G)$ and $w \in W(\hat M,\hat G)^\Gamma$ be the images of $u$. Via the $\Gamma$-equivariant isomorphisms $W(\hat M,\hat G) \cong W(M^*,G^*) \cong W(M,G)$ we may regard $w$ as an element in any of these groups.

Recall the element
\[ s_\psi = \psi(1,\begin{bmatrix}-1\\&-1\end{bmatrix}) \in \hat G. \]
It belongs to the center of the image of $\psi$ and hence also to the center of $S_\psi(M)$.

We are going to define an intertwining operator $\pi(u^\natural)_{\xi,z} : (\breve w\pi,V_\pi) \rw (\pi,V_\pi)$. We will first treat the cases of unitary groups and linear groups individually, where the definition can be made quite explicit under additional assumptions. Afterwards, we will provide a uniform construction that works under less assumptions and has better invariance properties, but is less explicit. This will be needed in both the local and global applications ahead.

Before we continue, we recall a discussion from \cite[\S2.2]{Arthur} about using Whittaker data to normalize intertwining operators. Let $M^*_+$ be a product of groups of the form $G_{E/F}(N)$ and let $\theta$ be an automorphism of $M^*_+$ that preserves the standard pinning of that group. Let $\psi_F : F \rw \C^\times$ be a non-trivial character. Let $\pi$ be an irreducible admissible representation of $M^*_+(F)$ whose isomorphism class is stable under $\theta$. In this situation there exists a natural choice for an isomorphism $\pi\circ\theta^{-1} \rw \pi$.

In the case when $\pi$ is tempered, we choose a Whittaker functional $\mu$ for the representation $\pi$ and the Whittaker datum corresponding to $\psi_F$ and the standard pinning of $M^*_+$. Then $\mu$ is also a Whittaker functional for the representation $\pi\circ\theta^{-1}$, since the pinning of $M^*_+$ is stable under $\theta$. We choose $\pi\circ\theta^{-1} \rw \pi$ to be the unique isomorphism which preserves the Whittaker functional $\mu$.

More generally, $\pi$ is the Langlands quotient of a standard representation $\rho=\tx{Ind}_{P_0^*}^{M^*_+}(\sigma_\lambda)$ for a tempered representation $\sigma$ of a standard Levi subgroup $M_0^*$ of $M^*_+$ and a positive $\lambda \in \mf{a}_{M_0^*}^*$. Then $\pi\circ\theta^{-1} \cong \pi$ implies that $M_0^*$ is stable under $\theta$ and $\sigma\circ\theta^{-1} \cong \theta$. Since $P_0^*$ is generated by $M_0^*$ and the standard Borel subgroup of $M^*_+$, it is also invariant under $\theta$. We fix an isomorphism $\sigma\circ\theta^{-1} \rw \theta$ as above and this leads to an isomorphism $\rho\circ\theta^{-1} \rw \rho$. This isomorphism then descends to an isomorphism $\pi\circ\theta^{-1} \rw \pi$.

\subsubsection{The case of unitary groups} \label{sec:iop3u}
We assume now that $E/F$ is a quadratic field extension, so that $G^*$ is a unitary group. Moreover, we place the following assumption on the cocycle $z \in Z^1_\tx{G^*-bsc}(\mc{E},M^*)$: If we decompose it as $z=z_+ \times z_-$ according to the decomposition $M^*=M^*_+ \times M^*_-$, then the 1-cocycle $z_+$ takes the constant value $1 \in M^*_+$. This can always be achieved by changing $(\xi,z)$ within its equivalence class. In this special case the operator $\pi(u^\natural)_{\xi,z}$ can be defined as the product $\<\pi,u^\natural\>_{\xi,z} \cdot \pi(\breve w)_\xi$, where $\pi(\breve w)_\xi : (\breve w\pi,V_\pi) \rw (\pi,V_\pi)$ is an operator that depends only on the image $w \in W_\psi(M,G)$ of $u^\natural \in N^\natural_\psi(M,G)$, and $\<\pi,u^\natural\>_{\xi,z} \in \C$.

The isomorphism $\pi(\breve w)_\xi$ is given as follows. Decompose $M^*= M^*_+ \times M^*_-$, $M = M_+ \times M_-$, and accordingly $\pi = \pi_+ \otimes \pi_-$. We will choose $\pi_-(\breve w)$ and $\pi_+(\breve w)$ separately and define $\pi(\breve w)_\xi = \pi_+(\breve w) \otimes \pi_-(\breve w)$. The automorphism $\tx{Ad}(\breve w)$ centralizes $M^*_-$, hence $\breve w\pi_- = \pi_-$ and we choose $\pi_-(\breve w)= \tx{id}$. The restriction of $\xi$ to $M^*_+$ provides an isomorphism $M^*_+ \rw M_+$ defined over $F$. We treat $\pi_+$ as a representation of $M^*_+$ via this isomorphism and take  $\pi_+(\breve w) : \breve w\pi_+ \rw \pi_+$ to be the isomorphism discussed in Section \ref{sec:iop3b} above. The map $w \mapsto \pi(\breve w)_\xi$ thus defined is multiplicative.

Next we construct $\<\pi,u^\natural\>_{\xi,z} \in \C$. Consider the middle row of diagram \eqref{eq:diag}. In the setting of unitary groups, this middle row has the form
\begin{equation} \label{eq:esuni} 1 \rw \pi_0(S_\psi(M)) \rw \pi_0(N_\psi(M,G)) \stackrel{p}{\lrw} W_\psi(M,G) \rw 1. \end{equation}
We will construct a splitting of this exact sequence. Notice that we have the decomposition $N_\psi(M,G) = S_{\psi_-}(M_-) \times N_{\psi_+}(M_+,G)$, which expresses $u \in N_\psi(M,G)$ as the product $u_- \times u_+$, with $u_-$ being the middle $N_- \times N_-$-block of the matrix $u$, and $u_+$ being obtained from $u$ by replacing the middle $N_- \times N_-$-block with the identity matrix. Since $\pi_0(S_\psi(M))=\pi_0(S_{\psi_-}(M_-))$, this decomposition would provide a splitting of the sequence \eqref{eq:esuni}. This is however \emph{not} the splitting that we want. The splitting that we are going to construct will have the property that an element $u \in N_\psi(M,G)$, whose image in $W_\psi(M,G)$ belongs $W^0_\psi(M,G)$, may be mapped to a non-trivial element of $\pi_0(S_\psi(M))$. It turns out that this more sophisticated splitting is needed in order to provide the correct normalization of the compound intertwining operator, as well as its invariance under equivalences of the extended pure inner twist $(\xi,z)$. We refer the reader to Section \ref{sec:lirexp} for an example of why the naive splitting does not provide the right normalization, as well as to the proofs of Lemmas \ref{lem:iopind} and \ref{lem:lirind} for how the splitting we are about to define ensures the invariance of the compound operator and of the Local Intertwining Relation.

To construct the correct splitting, we use the distinguished pinning of $\hat G$. For any element $w \in W_\psi(M,G)$, the process described in Section \ref{subsub:wmgu} lifts $w$ first to an element of $W(\hat T,\hat G)^\Gamma$, and then to an element of $N(\hat T,\hat G)^\Gamma$ via the construction of Langlands and Shelstad. We will call this element $\tilde w$. The element $\tilde w$ preserves the $\hat M$-conjugacy class of $\psi$, but may not centralize $\psi$. However, it follows from Lemma \ref{lem:lsl} that it does centralize $^LM_-$, hence $\psi_-$. Thus there exists an element $m \in \hat M_+$ such that $m\tilde w \in N_\psi(M,G)$. Since $S_{\psi_+}(M_+)$ is connected, the image of $m\tilde w$ in $\pi_0(N_\psi(M,G))$ does not depend on the choice of $m$. We will call it $s'(w)$. We claim that the map $s' : W_\psi(M,G) \rw \pi_0(N_\psi(M,G))$ thus obtained is multiplicative. For this, recall from Section \ref{subsub:wmgu} that for any $w \in W(\hat M,\hat G)^\Gamma$ we have $\tilde w = t_{w}\hat w$ with $t_{w} =\hat w\tilde w^{-1}\in Z(\hat M)$ an element of order $2$, as in Lemma \ref{lem:lsl}. Recall that the map $w \mapsto \hat w$ is multiplicative by construction. We further write $t_w = t_{w,-}\cdot t_{w,+}$ according to $Z(\hat M)=Z(\hat M_-) \times Z(\hat M_+)$ and know that the map $w \mapsto t_{w,-}$ is multiplicative by Lemma \ref{lem:tw-} and that $Z(\hat M_+)$ is preserved by $W(\hat M,\hat G)^\Gamma$. Thus, given $w_1,w_2 \in W(\hat M,\hat G)^\Gamma$, the difference between the elements $N(A_{\hat M},\hat G)$ given by $\tilde{w_1w_2}$ and $\tilde w_1 \cdot \tilde w_2$ is given by an element of $Z(\hat M_+)$. This proves the multiplicativity of $s'$.

Recalling the decomposition $N_\psi(M,G) = S_{\psi_-}(M_-) \times N_{\psi_+}(M_+,G)$ we see that $\pi_0(S_\psi(M))$ is a central subgroup of $\pi_0(N_\psi(M,G))$. Thus the splitting $s' : W_\psi(M,G) \rw \pi_0(N_\psi(M,G))$ of the second map in \eqref{eq:esuni} leads to a splitting $s : \pi_0(N_\psi(M,G)) \rw \pi_0(S_\psi(M))$ of the first map in \eqref{eq:esuni} in the usual way, i.e. $s(u)=u\cdot s'p(u)^{-1}$. Recall the character $\<\pi,-\>_{\xi,z}$ of $\pi_0(S_\psi(M))$ associated to the representation $\pi$ by Theorem \ref{thm:locclass-single}. We define
\[ \<\pi,u^\natural\>_{\xi,z} = \<\pi,s(u^\natural)^{-1}\>_{\xi,z}. \]
Note that it is multiplicative in $u^\natural$ by construction.

\subsubsection{The case of linear groups} \label{sec:iop3l}

Assume now that $E/F$ is the split quadratic algebra $F \oplus F$, so that $G^*=\tx{GL}(N)/F$. Decompose $M^*=M^*_1 \times \dots \times M^*_n$. Conjugation by the element $\tilde w$ acts by permuting these factors. Let $a$ be the permutation of $\{1,\dots,n\}$ such that $\tx{Ad}(\tilde w)M^*_i=M^*_{a(i)}$. We assume that the element $z \in Z^1_\tx{G-bsc}(\mc{E},M^*)$ satisfies the following assumption: if we decompose it as $z=z_1 \times \dots \times z_n$ with $z_i \in Z^1_\tx{bsc}(\mc{E},M^*_i)$, then $z_{a(i)}=z_i$ for all $1 \leq i \leq n$. This is equivalent to requiring that $z$ commute with $\tilde w$.

We will define the operator $\pi(u^\natural)_{\xi,z} : (\breve w\pi,V_\pi) \rw (\pi,V_\pi)$ under this assumption. It will again be given as $\<\pi,u^\natural\>_{\xi,z}^{-1}\cdot\pi(\breve w)_\xi$ with $\<\pi,u^\natural\>_{\xi,z} \in \C$ and $\pi(\breve w)_\xi : (\breve w\pi,V_\pi) \rw (\pi,V_\pi)$ depending only on $w$.  We decompose $M=M_1 \times \dots \times M_n$ and fix an isomorphism
$\iota: (\pi,V_\pi) \rw (\pi_1,V_1) \otimes \dots \otimes (\pi_n,V_n)$ whose target we choose so that for all $i$, the vector space $V_i$ and $V_{a(i)}$ are the same, and $\tx{Ad}(\breve w)$ intertwines the representations $\pi_i$ and $\pi_{a(i)}$ on the vector space $V_i=V_{a(i)}$.
Let $\theta \in \tx{Aut}(V_1 \otimes \dots \otimes V_{n})$ be given by $\theta(v_1 \otimes \dots \otimes v_n)=(v_{a(1)},\dots,v_{a(n)})$. We define $\pi(\breve w)_\xi : (\breve w\pi,V_\pi) \rw (\pi,V_\pi)$ as the pullback of $\theta$ under $\iota$. The resulting operator $\pi(\breve w)_\xi$ does not depend on the choice of $\iota$ and its target.
Furthermore, the map $w \mapsto \pi(\breve w)_\xi$ is multiplicative.

We now define a complex number $\<\pi,u^\natural\>_{\xi,z}$. We have the decomposition $\hat M = \hat M_1 \times\dots\times \hat M_n$ parallel to the decomposition of $M$. As in the previous section, we let $\tilde u \in N(\hat T,\hat G)$ be the Langlands-Shelstad lift of the image of $u$ in $W_\phi(M,G)$. Since $G^*$ is the general linear group, the actions of $\tilde u$ and of $\hat u$ on $\hat M$ coincide and are given by $\tx{Ad}(\tilde u)\hat M_i=\hat M_{a(i)}$. We conjugate $\phi$ within $\hat M$ if necessary to arrange $\phi = \phi_1 \times \dots \times \phi_n$ with $\phi_{a(i)}=\tx{Ad}(\tilde u)\circ\phi_i$.  Then $\tilde u \in N_\phi(M,G)$ and the equation $u=s\tilde u$ defines an element $s \in S_\phi(M) = S_{\phi_1}(M_1) \times \dots \times S_{\phi_n}(M_n)$. We define $\<\pi,u^\natural\>_{\xi,z}=\rho_{\xi,z}(s)$, where $\rho_{\xi,z}\in X^*(S^{\natural\natural}_\phi(M))$ is the character associated to $\pi$ as in Section \ref{sec:loclin}. The character $\rho_{\xi,z}$ satisfies $\rho_{\xi,z}\circ\tx{Ad}(\tilde u)=\rho_{\xi,z}$.
Moreover, an explicit computation reveals that the difference $\tilde{uv}\tilde v^{-1}\tilde u^{-1} \in A_{\hat M}$ is killed by any element of $X^*(S_\psi^{\natural\natural}(M))$. This implies the multiplicativity of $\<\pi,-\>_{\xi,z}$.

\subsubsection{Uniform treatment} \label{sec:iop3}
In the previous two sections we defined an operator $\pi(u^\natural)_{\xi,z} : (\breve w\pi,V_\pi) \rw (\pi,V_\pi)$ when $G$ was either a unitary group or a linear group, by placing additional assumptions. In principle, this allows us to construct the normalized self-intertwining operator $R_P(u^\natural,(\xi,z),\pi,\psi,\psi_F)$ in those cases. However, in order to be able to prove the necessary invariance properties of this operator, as well as to be able to prove its compatibility with the canonical global intertwining operator, we need a definition of the operator $\pi(u^\natural)_{\xi,z} : (\breve w\pi,V_\pi) \rw (\pi,V_\pi)$ that works uniformly for unitary and linear groups and does not require special assumptions on the extended pure inner twist $(\xi,z)$ beyond those which can be satisfied globally. We will now give such a definition and check that it specializes to the constructions of the previous two sections under the assumptions made there.

We consider the element $u'=u^{-1} \in N_\psi(M,G)$. We use the distinguished pinning of $\hat G$ and the process described in Section \ref{subsub:wmgu} to lift the image $w' \in W(\hat M,\hat G)^\Gamma$ of $u'$ first to an element of $W(\hat T,\hat G)^\Gamma$, and then to an element of $N(\hat T,\hat G)^\Gamma$ via the construction of Langlands and Shelstad. We will call this element $\tilde u'$. We have $u^{-1}=s\cdot \tilde u'$ for some $s \in \hat M$. In fact, since we have chosen $u$ to normalize $\hat T$ and preserve the $\hat B \cap \hat M$-positive roots in $R(\hat T,\hat M)$, and since the same is true for $\tilde u'$, and since $u'$ and $\tilde u'$ lie in the same $W(\hat T,\hat M)$-coset, we conclude that $s \in \hat T \subset \hat M$. The group $\hat M$ inherits a pinning from the group $\hat G$ and the automorphism $\hat\theta := \tx{Ad}(\tilde u')$ of $\hat M$ preserves that pinning. It is moreover dual to the automorphism $\theta^*:=\tx{Ad}(\tilde w)$ of $M^*$. We now create a twisted endoscopic datum for the group $M^*$ and its automorphism $\theta^*$, where we take $\hat M$ as the dual group of $M$ and $\hat\theta$ as the automorphism of $\hat M$ dual to $\theta^*$. Let $\hat M'$ be the group of fixed points of $\tx{Ad}(s_\psi s)\circ \hat \theta = \tx{Ad}(s_\psi u^{-1})$. This group is connected and has a connected center and a simply connected derived group.
It is furthermore normalized by the image of $\psi$. This allows us to form $\mc{M'}=\hat M' \cdot \psi(L_F)$. The arguments in \cite[\S2.2]{KS99} show that $\mc{M'}$ is a split extension of $\hat M'$ by $W_F$. This extension leads to a homomorphism $W_F \rw \tx{Out}(\hat M')$ which induces a homomorphism $\Gamma_F \rw \tx{Out}(\hat M')$. Let $M'$ be the unique quasi-split group defined over $F$ with dual group $\hat M'$ and with rational structure given by the homomorphism $\Gamma_F \rw \tx{Out}(\hat M') \cong \tx{Out}(M')$. Since $Z(\hat M')$ is connected, the arguments of loc. cit. show that there exists an isomorphism $\eta : \mc{M'} \rw {^LM'}$. Then $\mf{e}_{M,\psi}=(M',s_\psi s,\eta)$ is a twisted endoscopic triple for $(M^*,\theta^*)$.

The inner twist $\xi : M^* \rw M$ intertwines the operators $\theta^*$ and $\theta=\tx{Ad}(\breve w)$. From Section \ref{sec:normtfs} we have the normalized transfer factor $\Delta[\mf{e}_{M,\psi},\xi,z]$ for the twisted group $(M,\theta)$ and the endoscopic triple $\mf{e}_{M,\psi}$. We are now ready to define the operator $\pi(u^\natural) : (\breve w\pi,V_\pi) \rw (\pi,V_\pi)$. We have the parameter $\eta\circ\psi : L_F \rw {^LM'}$. Composing the stable linear form associated to this parameter by Proposition \ref{prop:local-stable-linear} with the transfer mapping $\cH(M) \rw \mc{SI}(M')$ provided by the transfer factor $\Delta[\mf{e}_{M,\psi},\xi,z]$, we obtain a linear form $f \mapsto f^{\mf{e}_{M,\psi}}(\eta\circ\psi)$.

\begin{lem} \label{lem:iop3} For each $\pi \in \Pi_\psi(M,\xi)$ there exists a unique isomorphism $\pi(u^\natural)_{\xi,z} : (\breve w\pi,V_\pi) \rw (\pi,V_\pi)$ such that for all $f \in \mc{H}(M)$ we have
\[ f^{\mf{e}_{M,\psi}}(\eta\circ\psi) = e(M^\theta)\sum_{\pi \in \Pi_\psi(M,\xi)} \tx{tr}(\pi(u^\natural)_{\xi,z}\circ\pi(f)). \]
Given $x \in S_\psi^{\natural\natural}(M)$, we have $\pi(xu^\natural)_{\xi,z} = \<\pi,x\>_{\xi,z}^{-1}\pi(u^\natural)_{\xi,z}$.
Moreover, in the settings of Sections \ref{sec:iop3u} and \ref{sec:iop3l}, the isomorphisms $\pi(u^\natural)_{\xi,z}$ constructed there have these properties.
\end{lem}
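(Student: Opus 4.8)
\textbf{Proof plan for Lemma \ref{lem:iop3}.}

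The plan is to deduce the lemma from the twisted version of Mok's endoscopic character identity that is available to us via Proposition \ref{prop:local-stable-linear}, combined with the simple-descent formalism of Section \ref{sec:prelsimp}. First I would observe that the endoscopic triple $\mf{e}_{M,\psi}=(M',s_\psi s,\eta)$ for the twisted group $(M,\theta)$ is, in the terminology of Section \ref{sec:prelsimp}, a ``simple'' twisted datum: the relevant automorphism $\theta^*=\tx{Ad}(\tilde w)$ of $M^*$ permutes the general linear factors $M^*_i$ of $M^*$ and is the identity on the unitary factor $M^*_-$. Hence Proposition \ref{pro:simptfs} and Lemma \ref{lem:traceprod} reduce the computation of the stably-invariant linear form $f \mapsto f^{\mf{e}_{M,\psi}}(\eta\circ\psi)$, together with the twisted trace $\tx{tr}(\pi(\breve w)\circ \pi(f))$ on the $\theta$-fixed representations, to an \emph{untwisted} endoscopic character identity for the group $(M^*_+)^{\theta^*}\times M^*_-$, which is a product of unitary groups and hence covered by Proposition \ref{prop:main-local-qsplit}. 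The factor $e(M^\theta)$ is exactly the Kottwitz sign that appears when one passes between a quasi-split group and its inner twist in that identity; its presence is dictated by part 4 of Theorem \ref{thm:locclass-single} applied to the relevant twisted endoscopic group.

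Next I would establish existence and uniqueness of the operator $\pi(u^\natural)_{\xi,z}$. Uniqueness is immediate: the twisted characters $\tx{tr}(\pi(\breve w)_\xi \circ \pi(f))$, as $\pi$ ranges over $\Pi_\psi(M,\xi)$, are linearly independent by twisted linear independence of characters (which follows from the ordinary independence of characters of $(M^*_+)^{\theta^*}\times M^*_-$ via Lemma \ref{lem:traceprod}), and $\pi(\breve w)_\xi$ is fixed up to scalar by Schur's lemma since $\breve w\pi\cong\pi$; so the scalar is pinned down by the identity. For existence, the plan is to show that the right-hand side of the transfer identity, when expanded over the packet $\Pi_\psi(M,\xi)$ using Theorem \ref{thm:locclass-single}(4) and the descent identification of characters of $S^\natural_\psi(M)$, is a finite linear combination $\sum_\pi c_\pi \tx{tr}(\pi(\breve w)_\xi\circ\pi(f))$ with computable coefficients $c_\pi$; one then sets $\pi(u^\natural)_{\xi,z}:=c_\pi\,\pi(\breve w)_\xi$. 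The transformation rule $\pi(xu^\natural)_{\xi,z}=\<\pi,x\>_{\xi,z}^{-1}\pi(u^\natural)_{\xi,z}$ for $x\in S^{\natural\natural}_\psi(M)$ then follows directly from Lemma \ref{lem:tfequi}: replacing $u^\natural$ by $xu^\natural$ replaces the endoscopic element $s_\psi s$ by $s_\psi sx$, hence multiplies $\Delta[\mf{e}_{M,\psi},\xi,z]$, and therefore the transfer $f^{\mf{e}_{M,\psi}}$, by a factor $\<z,x\>$, which after matching terms is precisely $\<\pi,x\>_{\xi,z}$.

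Finally I would check that under the extra hypotheses of Sections \ref{sec:iop3u} and \ref{sec:iop3l} the explicitly-defined operators agree with the uniform one. In the unitary case, with $z_+=1$, the twisted transfer factor $\Delta[\mf{e}_{M,\psi},\xi,z]$ factors as the product of a Whittaker-normalized \emph{twisted} factor for $M^*_+$ (whose effect is exactly to produce the Whittaker-normalized isomorphism $\pi_+(\breve w)$ of Section \ref{sec:iop3b}, via the simple-descent calculation of Proposition \ref{pro:simptfs} and the paragraph on Whittaker normalization) and an \emph{untwisted} factor $\Delta[\mf{e}_{M,\psi,-},\xi,z_-]$ for the unitary factor $M^*_-$; feeding this into Proposition \ref{prop:main-local-qsplit}(3) (inner-form version from Theorem \ref{thm:locclass-single}(4)) yields exactly the scalar $\<\pi,s(u^\natural)^{-1}\>_{\xi,z}$, where $s$ is the splitting built from the Langlands–Shelstad lift $\tilde w$; the key point here is that the Langlands–Shelstad lift is the one entering the definition of the transfer factor (cf.\ \cite[\S3.4]{KS12} and Lemma \ref{lem:lsl}), so the two constructions use the \emph{same} choice of lift, which is why the naive splitting would not match. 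The linear case is analogous but easier: the descent is to a product of general linear groups, there is no twisting left after descent, and the comparison reduces to Lemma \ref{lem:tfgln} together with the identification of $\rho_{\xi,z}$ in Section \ref{sec:loclin}. The main obstacle I anticipate is bookkeeping: tracking precisely how the various normalizing constants ($e(M^\theta)$, the Whittaker normalization, the sign $a_\psi$ in the linear case, and the Langlands–Shelstad $2$-cocycle $t(w',w)$) propagate through the simple-descent isomorphisms so that the explicit and uniform operators match on the nose rather than merely up to an undetermined scalar.
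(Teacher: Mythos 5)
Your plan captures the paper's argument faithfully: uniqueness via linear independence of twisted characters and Schur's lemma, reduction to the explicit unitary and linear cases, the factorization of the twisted datum $\mf{e}_{M,\psi}$ into a twisted piece on $M^*_+$ and an ordinary piece on $M^*_-$, simple descent via Proposition \ref{pro:simptfs} and Lemma \ref{lem:traceprod}, the input of Proposition \ref{prop:local-stable-linear} (with the distinction between $\theta$ trivial, handled by parabolic descent, and $\theta$ the nontrivial pinned automorphism), and the appeal to Lemma \ref{lem:tfgln} and Section \ref{sec:loclin} in the linear case. The one place your route diverges is the $S^{\natural\natural}_\psi(M)$-equivariance $\pi(xu^\natural)_{\xi,z}=\<\pi,x\>_{\xi,z}^{-1}\pi(u^\natural)_{\xi,z}$: you derive it directly from the equivariance of the transfer factor (Lemma \ref{lem:tfequi}), whereas the paper obtains both existence and this multiplicativity in one stroke by showing that the explicit operators of Sections \ref{sec:iop3u}--\ref{sec:iop3l} already satisfy the defining equation and are multiplicative by construction, then noting these two situations cover all cases up to isomorphism. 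Both arguments are valid; your route is slightly more robust (it does not require the special position of $(\xi,z)$), while the paper's is more economical since the explicit verification is needed anyway for the last assertion of the lemma. Do be a little careful with your existence sketch: the formula $\pi(u^\natural)_{\xi,z}:=c_\pi\,\pi(\breve w)_\xi$ presupposes a distinguished $\pi(\breve w)_\xi$, which exists only under the extra assumptions of \ref{sec:iop3u}/\ref{sec:iop3l}; in the uniform setting you should phrase existence as the paper does, namely that the linear form on the right-hand side is a nonnegative-integral combination of the twisted characters, which is exactly what the explicit computation in the special cases establishes.
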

\begin{proof}
The isomorphism $\pi(u^\natural)_{\xi,z}$ is already unique up to a scalar, so the uniqueness statement follows form the linear independence of twisted characters. Its existence, together with the desired multiplicative property, will follow once we prove that the specific constructions of $\pi(u^\natural)_{\xi,z}$ made in Sections \ref{sec:iop3u} and \ref{sec:iop3l} satisfy the displayed equation, because these specific situations discussed there cover all possible cases up to isomorphism.

We consider first the situation treated in Section \ref{sec:iop3u}. Thus $E/F$ is a field extension and $G^*=U_{E/F}(N)$ is the quasi-split unitary group, $P^* \subset G^*$ is a standard parabolic subgroup, $M^*=M^*_+ \times M^*_-$ is a standard Levi subgroup of $P^*$ with $M^*_-=U_{E/F}(N_-)$ and $M^*_+=M^*_1 \times \dots \times M^*_n$ with $M^*_i=\tx{Res}_{E/F}(\tx{GL}(N_i))$, $(\xi,z) : G^* \rw G$ is an extended pure inner twist such that $P=\xi(P^*)$ and $M=\xi(M^*)$ are defined over $F$, thus $z \in Z^1_{G-\tx{bsc}}(\mc{E},M^*)$, and such that $z=z_+ \times z_-$ with $z_+=1$.

The automorphism $\theta^*$ preserves the decomposition  $M^* = M^*_+ \times M^*_-$ and acts trivially on $M^*_-$. The endoscopic triple $\mf{e}_{M,\psi}$ is therefore the product of a twisted endoscopic triple $\mf{e}_{M,+,\psi}=(M'_+,s_{\psi,+}s_+,\eta_+)$ for $(M^*_+,\theta^*)$ and an ordinary endoscopic triple $\mf{e}_{M,-,\psi}=(M'_-,s_{\psi,-}s_-,\eta_-)$ for $M^*_-$. Here we have written $s=s_+ \times s_-$ and $s_\psi=s_{\psi,+} \times s_{\psi,-}$ according to the decomposition $\hat M = \hat M_+ \times \hat M_-$. The linear form $f^{\mf{e}_{M,\psi}}(\eta\circ\psi)$ breaks up as the product $f^{\mf{e}_{M,\psi}}_+(\eta_+\circ\psi_+) \cdot f^{\mf{e}_{M,\psi}}_-(\eta_-\circ\psi_-)$. Noting that $M^\theta=M_- \times M_+^\theta$, the right hand side of the claimed equation also breaks up into a product
\begin{equation} \label{eq:iop3eq1} e(M_+^\theta)\!\!\!\sum_{\pi_+\in\Pi_{\psi_+}}\tx{tr}(\pi_+(u^\natural)_{\xi,z}\circ\pi_+(f_+)) \cdot e(M_-)\!\!\!\sum_{\pi_-\in\Pi_{\psi_-}}\tx{tr}(\pi_-(u^\natural)_{\xi,z}\circ\pi_-(f_-)). \end{equation}
We consider first the left factor in this product. Note that $\Pi_{\psi_+}$ consists of a single element $\pi_+$. The assumption $z_+=1$ implies that $\xi$ induces an isomorphism $M^*_+ \rw M_+$, thus $M_+$ is a product $M_1 \times \dots \times M_n$ of groups of the form $M_i=\tx{Res}_{E/F}(\tx{GL}(N_i))$, each endowed with its standard pinning. The automorphism $\theta$ preserves this product decomposition as well as the pinning of the group $M_+$. In particular, $M_+^\theta$ is quasi-split, so $e(M_+^\theta)=1$. We may further assume that the permutation that $\theta$ induces on the factors of $M$ is transitive, otherwise we study each orbit of this permutation separately and take the product of the results. Assuming transitivity, we find ourselves in the situation discussed in Section \ref{sec:prelsimp}. The results of that section reduce the problem to the setting where there is only one factor in the product, i.e. $M_+=\tx{Res}_{E/F}(\tx{GL}(N_+))$. Then there are two possibilities: Either $\theta$ is the trivial automorphism or the unique non-trivial pinned automorphism. In both cases, we obtain the character identity
\[ f^{\mf{e}_{M,\psi}}_+(\eta_+\circ\psi_+)=\tx{tr}(I_+\circ\pi_+(f)), \]
where $I_+ : \pi\circ\theta^{-1} \rw \pi$ is the natural isomorphism discussed in Section \ref{sec:iop3b}.
In the case $\theta=1$ this parabolic descent, while in the other case it follows from Proposition \ref{prop:local-stable-linear}.

Consider now the right factor in the product \eqref{eq:iop3eq1}. By Theorem \ref{thm:locclass-single} we have the equation
\[ f^{\mf{e}_{M,\psi}}_-(\eta_-\circ\psi_-) = e(M_-)\sum_{\pi_- \in \Pi_{\psi_-}} \<s_-,\pi_-\>_{\xi,z} \tr(\pi_-(f_-)). \]
It follows that the equation in the statement of the lemma holds with $\pi(u^\natural)_{\xi,z}$ defined as $\<\pi,s_-\>_{\xi,z}\cdot \tx{id}_{\pi_-} \otimes I_+$. But this is the construction of $\pi(u^\natural)_{\xi,z}$ given in Section \ref{sec:iop3u}, once we note that the element $s_-$ is equal to the image of $u^{-1}$ under the section $s:\pi_0(N_\psi(M,G)) \rw \pi_0(S_\psi(M))$ constructed there.

We now turn to the situation treated in Section \ref{sec:iop3l}. Thus $G^*=\tx{GL}(N)/F$, $P^* \subset G^*$ is a standard parabolic subgroup, $M^*=M^*_1 \times \dots \times M^*_n$ is a standard Levi subgroup of $P^*$ with $M^*_i=\tx{GL}(N_i)$, $(\xi,z) : G^* \rw G$ is an extended pure inner twist such that $P=\xi(P^*)$ and $M=\xi(M^*)$ are defined over $F$, thus $z \in Z^1_{G-\tx{bsc}}(\mc{E},M^*)$, and such that in fact $z \in Z^1_{G-\tx{bsc}}(\mc{E},M^{*,\theta^*})$. The Levi subgroup $M \subset G$ then breaks up accordingly as $M = M_1 \times \dots \times M_n$, with $\theta$ permuting the factors. We have $e(M^\theta)=e(M_{i_1})\cdot \dots \cdot e(M_{i_k})$, where $i_1,\dots,i_k$ are representatives for the orbits of the permutation of $\{1,\dots,n\}$ induces by $\theta$. Applying the simple descent for twisted endoscopy discussed in Section \ref{sec:prelsimp} reduces the problem to the setting where there is only one factor, i.e. $M=\tx{GL}(N/d)/D$, with $D/F$ division algebra of degree $d$, and $\theta$ acts trivially on $M$. Then $M'=\tx{GL}(N)$ and the discussion in Section \ref{sec:loclin} provides the character identity
\[ f^{\mf{e}_{M,\psi}}(\eta\circ\psi) = e(M)\<\pi,s\>_{\xi,z}\tx{tr}(\pi(f)). \]
This completes the proof.
\end{proof}
\subsection{Local intertwining operator -- the compound operator} \label{sec:iop}
In this section we are going to define a normalized self-intertwining operator on a parabolically induced representation by putting together the three operators defined in the previous sections.

We take $F$ to be a local field, $E/F$ a quadratic algebra, and $G^*=U_{E/F}(N)$. We take $\Xi : G^* \rw G$ to be an equivalence class of extended pure inner twists. Let $(M,P)$ be a proper parabolic pair for $G$. There is a unique standard parabolic pair $(M^*,P^*)$ of $G^*$ corresponding to $(M,P)$, and this determines an equivalence class $\Xi_M : M^* \rw M$ of extended pure inner twists: $\Xi_M$ consists of those pairs $(\xi,z) \in \Xi$ such that $\xi(M^*,P^*)=(M,P)$.

Let $\psi \in \Psi(M^*)$ and assume that $\Pi_\psi(M,\xi) \neq \emptyset$. Let $\pi \in \Pi_\psi(M,\xi)$ and write $V_\pi$ for the underlying vector space. Let $(\hat M,\hat P)$ be the standard parabolic pair of $\hat G$ dual to $(M^*,P^*)$. Up to equivalence $\psi$ can be chosen so that the minimal Levi subgroup of $^LM$ through which it factors is of the form $\hat M_0 \rtimes W_F$ for a standard Levi subgroup of $\hat M$, hence of $\hat G$. This implies that $\hat T_\psi^\tx{rad} := \hat T \cap S_\psi^\tx{rad}$ is a maximal torus of $S_\psi^\tx{rad}$ and $\hat M_\psi^\tx{rad} := \hat M \cap S_\psi^\tx{rad}=Z(A_{\hat M},S_\psi^\tx{rad})$ is a Levi subgroup of $S_\psi^\tx{rad}$ containing $\hat T_\psi^\tx{rad}$. We refer the reader to Section \ref{sec:diag} for the relevant notation.

Let $u^\natural \in N_\psi^\natural(M,G)$. For an arbitrary lift $u \in N(A_{\hat M},S_\psi)$ of $u^\natural$, both $\hat T_\psi^\tx{rad}$ and $\tx{Ad}(u)\hat T_\psi^\tx{rad}$ are maximal tori of $\hat M_\psi^\tx{rad}$, hence conjugate under $\hat M_\psi^\tx{rad}$. Thus we may choose the lift $u$ so that it normalizes $\hat T_\psi^\tx{rad}$. Then it also normalizes the centralizer of $\hat T_\psi^\tx{rad}$ in $\hat G$, which equals to $\hat T$. In particular, $u$ is semi-simple. We can say a bit more by exploiting the specificity of the group $G^*$. Indeed, one checks that $u$ can be chosen so that its image in $W(\hat T,\hat G)$ preserves the $\hat B \cap \hat M$-positive roots in $R(\hat T,\hat M)$.

We have $\Gamma$-equivariant isomorphisms $W(\hat M,\hat G) \cong W(M^*,G^*) \cong W(M,G)$, the first one induced by the duality of $\hat G$ and $G^*$ and the second one by the equivalence class $\Xi_M$. Let $w$ denote the image of $u$ in any of these isomorphic groups.

Choose an arbitrary lift $\dot w \in N(M,G)(F)$ of $w$ and set $\dot w\pi(m) = \pi(\dot w^{-1}m\dot w)$, acting on the vector space $V_\pi$. The isomorphism class of $\dot w\pi$ is independent of the choice of $\dot w$. We claim that $\dot w\pi \cong \pi$. Indeed, $\pi$ decomposes as $\pi_+ \otimes \pi_-$ according to the decomposition $M=M_+ \times M_-$. The restriction of $\tx{Ad}(\dot w)$ to $M_-$ is an inner automorphism and we have $\dot w\pi_- \cong \pi_-$. On the other hand, $w$ preserves the $\hat M$-equivalence class of $\psi$, hence also the $L$-packet of representations of $M_+(F)$ containing $\pi_+$. But this $L$-packet is a singleton, so $\dot w\pi_+ \cong \pi_+$. This shows that $\dot w\pi \cong \pi$ and hence the element $w$ corresponds to a self-intertwining operator on the representation $\mc{H}_P(\pi)$, well-defined up to scalar. We will give a specific normalization of this operator, called $R_P(u^\natural,\Xi,\pi,\psi,\psi_F)$.

Fix $(\xi,z) \in \Xi_M$ as in Lemma \ref{lem:c1}. We will use this choice to give a normalization $R_P(u^\natural,(\xi,z),\pi,\psi,\psi_F)$ of the local self-intertwining operator for the Weyl element $w$ acting on the representation $\mc{H}_P(\pi)$. We will then show that this normalization does not depend on the choice of $(\xi,z)$ and hence can be referred to as $R_P(u^\natural,\Xi,\pi,\psi,\psi_F)$. It will however in general depend on the element $u^\natural \in N^\natural_\psi(M,G)$, not just its image $w$ in $W_\psi(M,G)$. As we saw in Sections \ref{sec:iop3u} and \ref{sec:iop3l}, if we make a special choice of $(\xi,z)$ then we can break this dependency and define an operator $R_P(w,(\xi,z),\pi,\psi,\psi_F)$ with the property that the map $w \mapsto R_P(w,(\xi,z),\pi,\psi,\psi_F)$ is a homomorphism $W_\psi(M,G) \rw \tx{End}(\mc{H}_P(\pi))$. The operator $R_P(u^\natural,(\xi,z),\pi,\psi,\psi_F)$ is then a product $\<\pi,u^\natural\>_{\xi,z}\cdot R_P(w,(\xi,z),\pi,\psi,\psi_F)$, with $\<\pi,u^\natural\>_{\xi,z} \in \C$. While the operator $R_P(w,(\xi,z),\pi,\psi,\psi_F)$ has the appeal of only depending on $w$ and not on $u^\natural$, the operator $R_P(u^\natural,\Xi,\pi,\psi,\psi_F)$ is the more invariant object and the one we ultimately use.

We now come to the definition of the normalized local self-intertwining operator
\[ R_P(u^\natural,(\xi,z),\pi,\psi,\psi_F) : \mc{H}_P(\pi) \rw \mc{H}_P(\pi), \]
as the composition
\begin{equation} \label{eq:iop} R_P(u^\natural,(\xi,z),\pi,\psi,\psi_F) = \mc{I}_P(\pi(u^\natural)_{\xi,z}) \circ l_P(w,\xi,\psi,\psi_F) \circ R_{w^{-1}P|P}(\xi,\psi). \end{equation}

\begin{lem} \label{lem:iopind}
The operator $R_P(u^\natural,(\xi,z),\pi,\psi,\psi_F)$ does not depend on the choice of $(\xi,z) \in \Xi$.
\end{lem}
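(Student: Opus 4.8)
The operator $R_P(u^\natural,(\xi,z),\pi,\psi,\psi_F)$ is a composition of three pieces, and I will track how each piece transforms under a change of representative within $\Xi$. Since all pairs in $\Xi_M$ that satisfy the conclusion of Lemma~\ref{lem:c1} differ by (i) an inner automorphism of $M^*$, i.e.\ replacing $\xi$ by $\xi\circ\tx{Ad}(m)$ for some $m \in M^*$, possibly composed with (ii) multiplication of $z$ by a cocycle valued in $Z(M^*)$ (more precisely $z \mapsto zx$ with $x \in Z^1_\tx{alg}(\mc{E},Z(M^*))$, subject to the constraint that the new pair still lies in $\Xi_M$ and still satisfies Lemma~\ref{lem:c1}), it suffices to check invariance under each of these two elementary moves separately, and also under the choice of lift $u$ of $u^\natural$ inside $N(A_{\hat M},S_\psi)$. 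For the last point I would use Lemma~\ref{lem:iop3}: if $u$ is replaced by $xu$ for $x \in S_\psi^{\natural\natural}(M)$ then $\pi(xu^\natural)_{\xi,z} = \<\pi,x\>_{\xi,z}^{-1}\pi(u^\natural)_{\xi,z}$, and a parallel factor of $\<\pi,x\>_{\xi,z}$ appears from the change in $l_P \circ R_{w^{-1}P|P}$; more simply, the composition only depends on $u^\natural \in N^\natural_\psi(M,G)$ because the three factors are chosen consistently with the same lift, so I would just verify that the recipe is well-posed.

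First I would handle move (i). Replacing $\xi$ by $\xi\circ\tx{Ad}(m)$: by \eqref{eq:iop1xc}, the factor $R_{w^{-1}P|P}(\xi,\psi)$ picks up the ratio $\delta_{w^{-1}P}(\xi(m))^{1/2}\delta_P(\xi(m))^{-1/2}$ of modulus characters (the normalizing factor $r_{P'|P}$ is unaffected since it depends only on $\psi$). By Lemma~\ref{lem:iop123c}(1), conjugating $\pi$ by an element of $N(M,G)(F)$ does not affect $R_{P'|P}$, but here we are conjugating the inner twist itself, and I would trace through that the induced isomorphism of induced spaces $\mc{I}_P^G(\pi(m)) : \mc{H}_P(\pi\circ\tx{Ad}(m)) \to \mc{H}_P(\pi)$ intertwines the two versions of the compound operator. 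Simultaneously, $\breve w = \xi(\tilde w)$ changes to $\tx{Ad}(\xi(m))^{-1}\breve w\,\xi(\tx{Ad}(\tilde w^{-1}(m)))$ or similar, and the un-normalized $l(\breve w)$ changes accordingly; combined with Fact~\ref{fct:iop23c}, which says $\pi(n)$-conjugation leaves $l_P$ invariant, these contributions should cancel against the modulus-character contribution from $R_{w^{-1}P|P}$ and against the behaviour of $\pi(u^\natural)_{\xi,z}$ under the same conjugation. Here I would invoke Lemma~\ref{lem:iop3}: since $\pi(u^\natural)_{\xi,z}$ is characterized by the twisted character identity, and the endoscopic transfer factor $\Delta[\mf{e}_{M,\psi},\xi,z]$ transforms predictably when $\xi$ is conjugated (this is essentially the relative-versus-absolute transfer factor comparison of Proposition~\ref{pro:tf}), $\pi(u^\natural)_{\xi\circ\tx{Ad}(m),z}$ differs from $\pi(u^\natural)_{\xi,z}$ precisely by the conjugation and modulus factors needed to make the total composition invariant.

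Next, move (ii): replacing $z$ by $zx$ with $x$ valued in the center. The first two factors $R_{w^{-1}P|P}(\xi,\psi)$ and $l_P(w,\xi,\psi,\psi_F)$ do not involve $z$ at all, so only $\pi(u^\natural)_{\xi,z}$ is at issue. By Lemma~\ref{lem:tfequi}, $\Delta[\mf{e}_{M,\psi},\xi,zx] = \<x,\bar s_{\mf{e}_{M,\psi}}\>\,\Delta[\mf{e}_{M,\psi},\xi,z]$, so the transferred linear form $f \mapsto f^{\mf{e}_{M,\psi}}(\eta\circ\psi)$ is scaled by $\<x,\bar s_{\mf{e}_{M,\psi}}\>$. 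By the defining identity of Lemma~\ref{lem:iop3}, $\pi(u^\natural)_{\xi,zx} = \<x,\bar s_{\mf{e}_{M,\psi}}\>^{-1}\cdots$; on the other hand Theorem~\ref{thm:locclass-single}(2) tells us how the pairing $\<\pi,-\>_{\xi,z}$ changes, namely by $\otimes\chi_x$, and these two effects must be matched. The point is that $\bar s_{\mf{e}_{M,\psi}}$ is (the image of) $s_\psi s$ where $s = u^{-1}\tilde u'$, and the scalar $\<x,\bar s_{\mf{e}_{M,\psi}}\>$ equals $\chi_x$ evaluated on the appropriate element of $S_\psi^\natural$; because $s_\psi$ lifts to $\hat G_\tx{sc}$ it lies in the kernel of $\chi_x$, so the scaling is governed purely by the $u$-dependence and exactly cancels against the change in the character-theoretic normalization encoded in $\pi(u^\natural)$. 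Here I would use that $\Xi$ is an equivalence class, so changing $z$ within $\Xi_M$ does not change $\chi_z \in X^*(Z(\hat G^*)^\Gamma)$; hence $\<\pi,-\>_{\xi,z}$ as a character of $S_\psi^\natural$ with fixed central character $\chi_z$ either is literally unchanged or changes by a coboundary that the construction absorbs.

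\textbf{Main obstacle.} The delicate point is the interaction in move (i) between three things at once: the modulus-character ratio from \eqref{eq:iop1xc}, the conjugation-by-$\xi(m)$ behaviour of the un-normalized operator $l(\breve w)$ (whose renormalizing $\epsilon$- and $\lambda$-factors depend only on $\psi$ and $\psi_F$, hence are inert), and the transfer-factor dependence of $\pi(u^\natural)_{\xi,z}$ through Lemma~\ref{lem:iop3} and the local character identity of Theorem~\ref{thm:locclass-single}(4). Each is mild individually, but showing the product is exactly $1$ requires pinning down signs and modulus normalizations consistently---in particular checking that the $e(M^\theta)$ factor in Lemma~\ref{lem:iop3} and the Kottwitz sign do not introduce a spurious constant, and that the choice in Lemma~\ref{lem:c1} (commuting with all $\tilde w$) is genuinely used so that $\breve w$ transforms by an inner automorphism of $M$ rather than something worse. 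I expect the cleanest route is to reduce, via the parabolic-descent and simple-twisted-descent results of Sections~\ref{sec:prelsimp} and \ref{sec:loclin}, to the case where $M$ is a single general linear or single unitary factor, where $\pi(u^\natural)_{\xi,z}$ was defined explicitly in Sections~\ref{sec:iop3u}--\ref{sec:iop3l}, and verify the cancellation by hand there.
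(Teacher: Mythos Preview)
Your decomposition into two separate ``moves'' misreads the structure of the equivalence class $\Xi$. Two representatives $(\xi,z)$ and $(\xi',z')$ in $\Xi_M$ satisfying Lemma~\ref{lem:c1} are related by a \emph{single coupled} change: $\xi' = \xi\circ\tx{Ad}(b)$ and simultaneously $z'(\sigma) = b^{-1}z(\sigma)\sigma(b)$ for some $b \in M^*$. Your move (ii)---twisting $z$ by a central cocycle $x$ while holding $\xi$ fixed---does not preserve the class in $B(F,G^*)_\tx{bsc}$ unless $x$ is a coboundary, in which case all the pairings $\<x,\bar s\>$ you invoke are trivial and the move is vacuous. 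The extended discussion of (ii) via Lemma~\ref{lem:tfequi} is thus addressing a different question (it is the content of Lemma~\ref{lem:rpequiv}(2), not of this lemma). Likewise your aside about the ``choice of lift $u$ of $u^\natural$'' is off: replacing $u$ by $xu$ with $x \in S_\psi^{\natural\natural}(M)$ changes $u^\natural$ itself, so this is not a well-posedness issue but rather the equivariance recorded in Lemma~\ref{lem:rpequiv}(3).

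The paper handles the coupled change directly, and no reduction to the explicit constructions of Sections~\ref{sec:iop3u}--\ref{sec:iop3l} is needed. The constraints force $b \in M^*$ and $d := \xi(b\,\theta^*(b^{-1})) \in M(F)$, where $\theta^* = \tx{Ad}(\tilde w)$; one then has $\xi'(\tilde w) = d\cdot\breve w$. The heart of the argument is the identity $\pi(u^\natural)_{\xi',z'} = \pi(d)\circ\pi(u^\natural)_{\xi,z}$. This is proved straight from the characterizing equation of Lemma~\ref{lem:iop3}: the map $\breve\delta \mapsto \breve\delta\cdot d^{-1}$ is a bijection from $\tx{Ad}(\breve w)$-twisted conjugacy classes to $\tx{Ad}(d\breve w)$-twisted conjugacy classes in $M(F)$, and a direct computation gives $\Delta[\mf{e}_{M,\psi},\xi,z](\gamma,\breve\delta) = \Delta[\mf{e}_{M,\psi},\xi',z'](\gamma,\breve\delta\cdot d^{-1})$; matching functions then yields the claimed operator identity. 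From the definition one reads off $l_P(w,\xi',\psi,\psi_F) = \delta_P^{1/2}(d^{-1})\,\mc{I}_P(\breve w\pi(d^{-1}))\circ l_P(w,\xi,\psi,\psi_F)$, and combining these two facts via the intertwining property of $\pi(u^\natural)_{\xi,z}$ leaves exactly the modulus-character ratio $\delta_P(\xi(b))^{1/2}\delta_{w^{-1}Pw}(\xi(b))^{-1/2}$ that \eqref{eq:iop1xc} supplies for $R_{w^{-1}P|P}$. Your instinct that the three pieces should cancel is right, but the mechanism is this transfer-factor identity under the coupled change of $(\xi,z)$, not a separate treatment of $\xi$ and $z$ or a case analysis.
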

\begin{proof}
We had chosen $(\xi,z) : G^* \rw G$ to satisfy the conditions $\xi(M^*)=M$, $\xi(P^*)=P$, and $z \in Z^1_{G-bsc}(\mc{E},M^{*,\tilde w})$. An equivalent $(\xi',z')$ is of the form $\xi'=\xi\circ\tx{Ad}(b)$ and $z'(\sigma)=b^{-1}z(\sigma)\sigma(b)$ for some $b \in G^*$. If $(\xi',z')$ satisfies the same conditions as $(\xi,z)$, then we see first that $b \in M^*$, and second that $\theta^*(b^{-1}z(\sigma)\sigma(b))=b^{-1}z(\sigma)\sigma(b)$, where $\theta^*=\tx{Ad}(\tilde w)$. If we set $d:=\xi(b\theta^*(b^{-1}))$, then the latter equation implies  $d \in M(F)$. As before, we set $\breve w := \xi(\tilde w)$ and we have $\breve w \in G(F)$. We now also set $\invbreve{w}=\xi'(\tilde w) \in G(F)$. One checks immediately that $\invbreve{w}=d\breve w$.

We claim that $\pi(u^\natural)_{\xi',z'} = \pi(d)\circ\pi(u^\natural)_{\xi,z}$. Indeed, set $\breve \theta=\tx{Ad}(\breve w)$ (this automorphism was denoted by $\theta$ earlier), and $\invbreve{\theta}=\tx{Ad}(\invbreve{w})$. A direct computation shows that the map
\[ M(F) \rw M(F),\qquad \breve \delta \mapsto \invbreve{\delta} := \breve\delta \cdot d^{-1} \]
is a bijection which translates $\breve\theta$-twisted conjugacy on its source to $\invbreve{\theta}$-twisted conjugacy on its target, and moreover
\[ \Delta[\mf{e}_{M,\psi},\xi,z](\gamma,\breve\delta) = \Delta[\mf{e}_{M,\psi},\xi',z'](\gamma,\invbreve{\delta}), \]
where the transfer factors are the one constructed in Section \ref{sec:iop3}. Given a function $\breve f \in \mc{H}(M)$, let $\invbreve{f} \in \mc{H}(M)$ be defined by $\invbreve{f}(g)=\breve f(gd)$. The equality of transfer factors and the equivariance of $\breve\delta \mapsto \invbreve{\delta}$ imply that for any $\gamma \in M'(F)$ strongly $G$-regular the unstable twisted orbital integral
\[ \sum_{\breve\delta}\Delta[\mf{e}_{M,\psi},\xi,z](\gamma,\breve\delta)\int_{M_{\breve\delta\breve w}(F)\lmod M(F)}\breve f(g^{-1}\breve\delta\breve\theta(g))dg \]
is equal to
\[ \sum_{\invbreve{\delta}}\Delta[\mf{e}_{M,\psi},\xi',z'](\gamma,\invbreve{\delta})\int_{M_{\invbreve{\delta}\invbreve{w}}(F)\lmod M(F)}\invbreve{f}(g^{-1}\invbreve{\delta}\invbreve{\theta}(g))dg, \]
where the first sum runs over the $\breve\theta$-twisted conjugacy classes of strongly $\breve\theta$-regular $\breve\theta$-semi-simple elements of $M(F)$, and the second sum is the analogous sum with $\breve\ $ replaced by $\invbreve{\ }$. We conclude that, given a function $f' \in \mc{H}(M')$, the functions $f'$ and $\breve f$ are $\Delta[\mf{e}_{M,\psi},\xi,z]$-matching if and only if the functions $f'$ and $\invbreve{f}$ are $\Delta[\mf{e}_{M,\psi},\xi',z']$-matching. A glance at the statement of Lemma \ref{lem:iop3} completes the proof of the claim that $\pi(u^\natural)_{\xi',z'} = \pi(d)\circ\pi(u^\natural)_{\xi,z}$.

Turning to $l_P(w,\xi',\psi,\psi_F)$, we see directly from the definition that
\[ l_P(w,\xi',\psi,\psi_F) = \delta_P^\frac{1}{2}(d^{-1})\cdot \mc{I}_P(\breve w\pi(d^{-1}))\circ l_P(w,\xi,\psi,\psi_F). \]
Recalling that $d=\xi(b\theta^*(b^{-1}))$ and using the intertwining property of $\pi(u^\natural)_{\xi,z}$, we conclude that
\[ \mc{I}_P(\pi(u^\natural)_{\xi',z'}) \circ l(w,\xi',\psi,\psi_F) = \frac{\delta_P(\xi(b))^\frac{1}{2}}{\delta_{w^{-1}Pw}(\xi(b))^{\frac{1}{2}}}\mc{I}_P(\pi(u^\natural)_{\xi,z}) \circ l_P(w,\xi,\psi,\psi_F). \]
Here we have used the extension to $M(\ol{F})$ of the modulus characters for $P$ and $w^{-1}Pw$ that was already used in Section \ref{sec:iop1}. A glance at Equation \eqref{eq:iop1xc} completes the proof.

\end{proof}

Before stating the next lemma, recall that Kottwitz's map \eqref{eq:kotisoloc} provides a character of $Z(\hat G)^\Gamma$ from any element of $B(F,G^*)_\tx{bsc}$, as well as a character of $[\hat G/\hat G_\tx{der}]^\Gamma$ from any element of $B(F,Z(G^*))$. In particular, the class $\Xi$, being an element of $B(F,G^*)_\tx{bsc}$, provides a character of $Z(\hat G)^\Gamma$, which we denote by $\<\Xi,-\>$. Moreover, the element $u^\natural \in N_\psi^\natural(M,G)$ can be mapped to the quotient $\hat G/\hat G_\tx{der}$, where it is $\Gamma$-fixed, and for any $y \in B(F,Z(G^*))$ we can evaluate the corresponding character $\<y,-\>$ of $[\hat G/\hat G_\tx{der}]^\Gamma$ at the point $u^\natural$.

\begin{lem}\ \\[-20pt] \label{lem:rpequiv}
\begin{enumerate}
\item Let $x \in Z(\hat G)^\Gamma$. Then $R_P(xu^\natural,\Xi,\pi,\psi,\psi_F)=\<\Xi,x\>^{-1}R_P(u^\natural,\Xi,\pi,\psi,\psi_F)$.
\item Let $c \in B(F,Z(G^*))_\tx{bsc}$. Then $R_P(u^\natural,c\Xi,\pi,\psi,\psi_F)=\<c,u^\natural\>^{-1}R_P(u^\natural,\Xi,\pi,\psi,\psi_F)$.
\item Let $y \in S_\psi^{\natural\natural}(M)$. Then $R_P(yu^\natural,\Xi,\pi,\psi,\psi_F)=\<\pi,y\>_{\xi,z}^{-1}R_P(u^\natural,\Xi,\pi,\psi,\psi_F)$.
\end{enumerate}
\end{lem}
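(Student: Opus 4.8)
The three assertions are ``equivariance'' statements for the compound operator $R_P(u^\natural,\Xi,\pi,\psi,\psi_F)$ under the three distinct ways the input data can be altered: multiplying $u^\natural$ by a central element $x\in Z(\hat G)^\Gamma$, twisting $\Xi$ by a cocycle $c\in B(F,Z(G^*))_\tx{bsc}$, and multiplying $u^\natural$ by an element $y\in S_\psi^{\natural\natural}(M)$. Each will be proved by tracking its effect through the factorization \eqref{eq:iop}, i.e.\ through $R_{w^{-1}P|P}(\xi,\psi)$, $l_P(w,\xi,\psi,\psi_F)$, and $\mc{I}_P(\pi(u^\natural)_{\xi,z})$; since the scalar adjustments sit in the last factor, the plan is to isolate how $\pi(u^\natural)_{\xi,z}$ changes and then invoke Lemma \ref{lem:iopind} to discard the dependence on the chosen $(\xi,z)\in\Xi_M$.

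For part (3), $y\in S_\psi^{\natural\natural}(M)$ changes only the rightmost factor: $u^\natural$ and $yu^\natural$ have the same image $w\in W_\psi(M,G)$, so $R_{w^{-1}P|P}(\xi,\psi)$ and $l_P(w,\xi,\psi,\psi_F)$ are unaffected, and by the last sentence of Lemma \ref{lem:iop3} we have $\pi(yu^\natural)_{\xi,z}=\<\pi,y\>_{\xi,z}^{-1}\pi(u^\natural)_{\xi,z}$; pulling the scalar out of $\mc{I}_P$ gives the claim. (One should note that $y$, lying in $S_\psi^{\natural\natural}(M)=Z(A_{\hat M},S_\psi)/Z(A_{\hat M},S_\psi^\tx{rad})$, genuinely descends to $S_\psi^\natural(M)$ so that $\<\pi,y\>_{\xi,z}$ makes sense via Theorem \ref{thm:locclass-single}.) For part (1), $x\in Z(\hat G)^\Gamma$ again leaves $w$, hence $R_{w^{-1}P|P}$ and $l_P$, unchanged. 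Now $x\in Z(\hat G)^\Gamma\subset Z(\hat M)^\Gamma$, and the twisted endoscopic triple $\mf{e}_{M,\psi}=(M',s_\psi s,\eta)$ built from $u^{-1}$ in Section \ref{sec:iop3} gets replaced by $\mf{e}_{M,\psi}$ with $s_\psi s$ multiplied by $x$ (i.e.\ by $(x s_\psi s)$-fixed points, which is the same group $\hat M'$ since $x$ is central). By Lemma \ref{lem:tfequi}, $\Delta[x\mf{e}_{M,\psi},\xi,z]=\<z,x\>\,\Delta[\mf{e}_{M,\psi},\xi,z]$, and $\<z,x\>$ is exactly the Kottwitz pairing $\<\Xi,x\>$ since $z$ represents $\Xi$. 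Feeding this into the character identity of Lemma \ref{lem:iop3} multiplies $\pi(xu^\natural)_{\xi,z}$ by $\<\Xi,x\>^{-1}$ relative to $\pi(u^\natural)_{\xi,z}$, giving (1).

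For part (2) the situation is dual: $c\Xi$ means replacing $z$ by $yz$ for a cocycle $y\in Z^1_\tx{alg}(\mc{E},Z(G^*)^{\theta^*,\circ})$ representing $c$. Again $w$ is unchanged, so only $\pi(u^\natural)_{\xi,z}$ is affected, now through the transfer factor $\Delta[\mf{e}_{M,\psi},\xi,yz]$. By the second formula in Lemma \ref{lem:tfequi}, $\Delta[\mf{e}_{M,\psi},\xi,yz]=\<y,\bar s^{\mf{e}_{M,\psi}}\>\,\Delta[\mf{e}_{M,\psi},\xi,z]$, where $\bar s^{\mf{e}_{M,\psi}}$ is the image of $s_\psi s$ in $[\hat M/\hat M_\tx{der}]_{\hat\theta,\tx{free}}^\Gamma$. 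The main bookkeeping step is to identify this pairing with $\<c,u^\natural\>$: here one uses that $s_\psi$ lifts to $\hat M_\tx{sc}$ and hence dies in $\hat M/\hat M_\tx{der}$, so $\bar s^{\mf{e}_{M,\psi}}$ is the image of $s$, and $u^{-1}=s\tilde u'$ with $\tilde u'\in N(\hat T,\hat G)^\Gamma$ mapping trivially to the abelianization modulo $\hat\theta$-coinvariants; thus the image of $u^\natural$ in $[\hat G/\hat G_\tx{der}]^\Gamma$ agrees with that of $s$, matching the pairing used in defining $\<c,-\>$. Then Lemma \ref{lem:iop3} yields $\pi(u^\natural)_{\xi,yz}=\<c,u^\natural\>^{-1}\pi(u^\natural)_{\xi,z}$, and Lemma \ref{lem:iopind} guarantees consistency of the final answer across representatives of $c\Xi$. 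The hardest part is precisely this last identification: matching Kottwitz's pairing $\<y,\bar s^{\mf{e}_{M,\psi}}\>$ on the $(\hat M,\hat\theta)$-side with the pairing $\<c,u^\natural\>$ defined on the $\hat G$-side, i.e.\ checking that the coinvariants and the normalizations are compatible under the maps $[\hat M/\hat M_\tx{der}]_{\hat\theta}\to[\hat G/\hat G_\tx{der}]$; everything else is a matter of unwinding definitions and pushing scalars past $\mc{I}_P$.
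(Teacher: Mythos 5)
Your proof follows exactly the paper's own strategy: isolate the change in the factorization \eqref{eq:iop} to the rightmost term $\pi(u^\natural)_{\xi,z}$ (since $w\in W_\psi(M,G)$ and the underlying inner twist are unchanged in all three cases), and then trace the scalar through Lemma \ref{lem:tfequi} and the defining character identity of Lemma \ref{lem:iop3}. Parts (1) and (2) are handled via the two equivariance formulas of Lemma \ref{lem:tfequi} and part (3) via the multiplicativity clause of Lemma \ref{lem:iop3}, just as in the paper.

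One slip worth fixing: in Section \ref{sec:iop3} the auxiliary element $s$ is cut out from $u^{-1}$, namely $u^{-1}=s\tilde u'$ with $\tilde u'\in\hat G_\tx{der}$, not from $u$. In part (1) this means that passing from $u^\natural$ to $xu^\natural$ sends $s$ to $x^{-1}s$, hence $\mf{e}_{M,\psi}$ to $x^{-1}\mf{e}_{M,\psi}$ — not to $x\mf{e}_{M,\psi}$ as you wrote — so that $\Delta$ picks up the factor $\<z,x\>^{-1}$ and the character identity then forces $\pi(xu^\natural)_{\xi,z}=\<\Xi,x\>^{-1}\pi(u^\natural)_{\xi,z}$. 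Likewise in part (2), the image of $s$ in $[\hat G/\hat G_\tx{der}]^\Gamma$ agrees with that of $u^{-1}$ (since $\tilde u'$ dies in $\hat G/\hat G_\tx{der}$ and $\bar s_\psi$ is trivial), not with that of $u^\natural$ as you state; the pairing then comes out as $\<y,\bar s^{\mf{e}_{M,\psi}}\>=\<c,u^\natural\>^{-1}$. Your final formulas are correct, but the intermediate claims ``$s_\psi s$ multiplied by $x$'' and ``image of $u^\natural$ agrees with that of $s$'' contradict them; the $u^{-1}$ convention must be tracked consistently to get the signs to close.
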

\begin{proof}
The first two points follow essentially from Lemma \ref{lem:tfequi}. Indeed, notice that the right and middle terms in \eqref{eq:iop} depend only on the image of $u^\natural$ in $W_\psi(M,G)$ and only on the inner twist underlying $\Xi$. Thus they do not change when we pass from $u^\natural$ to $xu^\natural$, or when we pass from $\Xi$ to $c\Xi$. Only the term $\pi(u^\natural)_{\xi,z}$ changes. The reason it does is because in the defining equation in Lemma \ref{lem:iop3}, the linear form $f^{\mf{e}_{M,\psi}}(\eta\circ\psi)$ depends on the transfer factor $\Delta[\mf{e}_{M,\psi},\xi,z]$. Changing $u^\natural$ to $xu^\natural$ changes $\mf{e}_{M,\psi}$ to $x^{-1}\mf{e}_{M,\psi}$. According to Lemma \ref{lem:tfequi}, we have
\[ \Delta[x\mf{e}_{M,\psi},\xi,z] = \<z,x\>^{-1} \Delta[\mf{e}_{M,\psi},\xi,z], \]
but $\<z,x\>$ is just a different notation for $\<\Xi,x\>$. On the other hand, again by Lemma \ref{lem:tfequi}, we have
\[ \Delta[\mf{e}_{M,\psi},\xi,cz] = \<c,\bar s_\psi \bar s\> \Delta[\mf{e}_{M,\psi},\xi,z]. \]
Recall that $s$ was constructed in Section \ref{sec:iop3} to be $u'\cdot\tilde u'^{-1}$, with $u' \in \hat G$ an arbitrary lift of $u^{-1,\natural}$ and $\tilde u' \in \hat G$ a specific lift of the image of $u'$ in $W(\hat M,\hat G)$. In the statement of Lemma \ref{lem:tfequi}, $\bar s$ is the image of $s$ in $[\hat M/\hat M_\tx{der}]_{\hat\theta,\tx{free}}$ (recall notation from Lemma \ref{lem:tfequi}). However, since $c$ takes values in $Z(G^*) \subset Z(M^*)$, we may further map $s$ down to $\hat G/\hat G_\tx{der}$ before pairing it with $c$. By construction, $\tilde u' \in \hat G_\tx{der}$, so the images of $u'$ and $s$ in $\hat G/\hat G_\tx{der}$ are equal. Moreover, the image $\bar s_\psi$ of $s_\psi$ in $\hat G/\hat G_\tx{der}$ is trivial.

For the third point, we use again the fact that the images of $yu^\natural$ and $u^\natural$ in $W_\psi(M,G)$ are equal, so the only factor in \eqref{eq:iop} that is affected is $\pi(u^\natural)_{\xi,z}$, and according to Lemma \ref{lem:iop3} we have $\pi(yu^\natural)_{\xi,z}=\<\pi,y\>_{\xi,z}^{-1}\pi(u^\natural)_{\xi,z}$.
\end{proof}

\begin{lem} \label{lem:iopm} The operator $R_P(u^\natural,\Xi,\pi,\psi,\psi_F)$ is multiplicative in $u^\natural$.
\end{lem}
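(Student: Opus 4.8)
The goal is to show that the compound operator $R_P(u^\natural,\Xi,\pi,\psi,\psi_F)$, defined in \eqref{eq:iop} as a triple composition, is multiplicative in $u^\natural\in N^\natural_\psi(M,G)$. The plan is to work with a fixed convenient representative $(\xi,z)\in\Xi_M$ (legitimate by Lemma \ref{lem:iopind}) and to establish multiplicativity factor by factor, then combine. Concretely, given $u^\natural_1,u^\natural_2\in N^\natural_\psi(M,G)$ with images $w_1,w_2\in W_\psi(M,G)\subset W(M^*,G^*)$, I would expand both $R_P(u^\natural_2 u^\natural_1,\Xi,\pi,\psi,\psi_F)$ and $R_P(u^\natural_2,\Xi,w_1\pi,w_1\psi,\psi_F)\circ R_P(u^\natural_1,\Xi,\pi,\psi,\psi_F)$ using \eqref{eq:iop}, and match the three kinds of factors.

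First I would handle the $R_{w^{-1}P|P}(\xi,\psi)$ factors: by the multiplicativity of normalized intertwining operators in the parabolic subgroups (Lemma \ref{lem:iop1m}, whose hypotheses are part of the running induction) together with Lemma \ref{lem:iop123c}(2) and (3), one obtains $R_{(w_2w_1)^{-1}P|P}(\xi,\psi)$ as an appropriate composition of $R_{w_1^{-1}P|P}(\xi,\psi)$ and a translate of $R_{w_2^{-1}P|P}(\xi,w_1\psi)$, after conjugating by $l(\breve w_1)$. Next, the $l_P$ factors: Lemma \ref{lem:iop2m} gives $l_P(w_2w_1,\xi,\psi,\psi_F)=l_P(w_2,\xi,w_1\psi,\psi_F)\circ l_P(w_1,\xi,\psi,\psi_F)$ directly, and Fact \ref{fct:iop23c} controls how the $\pi(n)$-conjugations interact with $l_P$. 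Finally, the $\pi(u^\natural)_{\xi,z}$ factors: one needs $\pi(u^\natural_2 u^\natural_1)_{\xi,z}=\pi(u^\natural_2)_{\xi,z}\circ (\breve w_2\text{-twist of }\pi(u^\natural_1)_{\xi,z})$ up to the scalar discrepancy measured by the $A_{\hat M}$-valued $2$-cocycle $\tilde u'\mapsto \tilde{uv}\tilde v^{-1}\tilde u^{-1}$. For the explicit constructions of Sections \ref{sec:iop3u} and \ref{sec:iop3l} this multiplicativity was already recorded there (via Lemma \ref{lem:tw-} and the observation that the relevant element of $A_{\hat M}$ is killed by $X^*(S^{\natural\natural}_\psi(M))$), and by Lemma \ref{lem:iop3} those constructions agree with the uniform one, so the same multiplicativity holds in general.

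The main obstacle I anticipate is the bookkeeping of the non-multiplicative cocycles: the Langlands--Shelstad lift $w\mapsto\tilde w$ is not multiplicative, so $\breve w=\xi(\tilde w)$ fails multiplicativity by $\xi(t(w',w))\in A_M$, and correspondingly the $\epsilon$- and $\lambda$-normalizations in $l_P$, the $\pi(\breve w)_\xi$ in $\pi(u^\natural)_{\xi,z}$, and the renormalizing factors in $R_{w^{-1}P|P}$ all carry compensating discrepancies. The key point is that these discrepancies are all governed by the \emph{same} $A_M$-valued (equivalently $A_{\hat M}$-valued) $2$-cochain, and the defining formulas \eqref{eq:iop2}, the proof of Lemma \ref{lem:iop2m} (specifically the identity \eqref{eq:iop2eq}), and the constructions in Sections \ref{sec:iop3u}--\ref{sec:iop3l} were set up precisely so that these cancel. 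So the real work is to verify that the product of all three discrepancy factors arising from $\tilde{w_2w_1}$ versus $\tilde w_2\tilde w_1$ is trivial as an operator on $\mc{H}_P(\pi)$; this reduces, via the central character computation of $\eta_\pi$ in terms of $\phi_\psi$ (as in the proof of Lemma \ref{lem:iop2m}) and the fact that $s_\psi$ and the relevant $A_{\hat M}$-elements lift to $\hat G_\tx{der}$, to a pairing identity that is essentially already contained in \eqref{eq:iop2eq}. Once that cancellation is isolated, reassembling the three factors in the correct order — using that $\pi(u^\natural)_{\xi,z}$ acts on values and $R_{w^{-1}P|P}$ on variables, so Lemma \ref{lem:iop123c}(1) and Fact \ref{fct:iop23c} let the intertwining isomorphisms slide past each other — yields the claimed identity $R_P(u^\natural_2 u^\natural_1,\Xi,\pi,\psi,\psi_F)=R_P(u^\natural_2,\Xi,w_1\pi,w_1\psi,\psi_F)\circ R_P(u^\natural_1,\Xi,\pi,\psi,\psi_F)$, which is the asserted multiplicativity.
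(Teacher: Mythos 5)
Your proposal is correct and follows essentially the same route as the paper's (extremely terse) proof: choose a representative $(\xi,z)\in\Xi_M$ as in Section \ref{sec:iop3u} or \ref{sec:iop3l} so that $\pi(u^\natural)_{\xi,z}$ is multiplicative by construction, then combine with the multiplicativity of the other two factors via Lemmas \ref{lem:iop1m}, \ref{lem:iop123c}, \ref{lem:iop2m}, and Fact \ref{fct:iop23c}. One minor stylistic remark: you spend considerable effort worrying about re-verifying the cancellation of the $A_{\hat M}$-valued 2-cocycle, but that work is already encapsulated in Lemma \ref{lem:iop2m} and in the explicit verifications in \S\ref{sec:iop3u}–\ref{sec:iop3l}, so no further cancellation check is needed once the factor-by-factor multiplicativity is invoked.
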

\begin{proof}
Let $(\xi,z) \in \Xi$ be chosen as in Section \ref{sec:iop3u} or \ref{sec:iop3l}, so that we know by construction that $\pi(u^\natural)_{\xi,z}$ is multiplicative. The claim then follows from Lemmas \ref{lem:iop1m}, \ref{lem:iop123c}, \ref{lem:iop2m}, and Fact \ref{fct:iop23c}.
\end{proof}

\subsection{The local intertwining relation} \label{sec:lir}

Let $F$ be a local field, $E/F$ a quadratic algebra, and $G^*=U_{E/F}(N)$. Let $\Xi : G^* \rw G$ be an equivalence class of extended pure inner twists and $(M^*,P^*)$ be a proper standard parabolic pair of $G^*$.

Given $\psi \in \Psi(M^*)$ and $u \in N_\psi(M^*,G^*)$ we introduce a linear form $f \mapsto f_{G,\Xi}(\psi, u^\natural)$ where $ u^\natural$ is the image of $u$ in $N^\natural_\psi(M^*,G^*)$. If $\psi$ is not relevant for $\Xi : G^* \rw G$ then $f_{G,\Xi}(\psi, u^\natural)$ is defined to be zero (since we think of the underlying induced representation and the intertwining operator to be zero). If $\psi$ is relevant for $\Xi$, then the Levi subgroup $M^*$ of $G^*$ transfers to $G$ and we let $(M,P)$ be a parabolic pair for $G$ corresponding to $(M^*,P^*)$ and write $\Xi_M : M^* \rw M$ for the associated equivalence class of extended pure inner twists.
Let $\Pi_\psi(M,\Xi_M)$ be the packet of representations of $M(F)$ corresponding to $\psi$ via Theorem \ref{thm:locclass-single}. If $\Pi_\psi(M,\Xi_M)$ happens to be empty (which may only happen when $\psi$ is non-generic), we define $f_{G,\Xi}(\psi, u^\natural)$ to be zero again. Now assume $\Pi_\psi(M,\Xi_M)\neq \emptyset$. Using the self-intertwining operators $R_P(u^\natural,\Xi,\pi,\psi,\psi_F)$ constructed in Section \ref{sec:iop} for all members $\pi \in \Pi_\psi(M,\Xi_M)$, we define a linear form $f \mapsto f_{G,\Xi}(\psi, u^\natural)$ on $\mc{H}(G)$ as follows
\[ f_{G,\Xi}(\psi,u^\natural) = \sum_{\pi \in \Pi_\psi(M,\Xi_M)} \tx{tr}(R_P(u^\natural,\Xi,\pi,\psi,\psi_F)\mc{I}_P(\pi)(f)). \]

We construct a second linear form $f \mapsto f_{G,\Xi}'(\psi,s_\psi u^{-1})$ on $\mc{H}(G)$ in the following way. Fix $f \in \mc{H}(G)$. For any semi-simple element $s \in S_\psi$, let $\psi^{\fke}\in \Psi(G^\fke)$ and $\mf{e}\in \cE(G^*)$ be associated to $\psi$ and $s$ as in \ref{sub:endo-correspondence}. (There is no need to take $\psi^{\fke}$ as an orbit under the strict outer automorphism group since the latter group is trivial in our case.) From $\psi^{\fke}$ we obtain the stable linear form on $\mc{H}(G^\mf{e})$ in part 4 of Theorem \ref{thm:locclass-single}.
 The value of this stable linear form on all functions $f^\mf{e} \in \mc{H}(G^\mf{e})$ whose orbital integrals match those of $f$ with respect to the transfer factor $\Delta[\mf{e},\Xi]$ is the same. We call this value $f_{G,\Xi}'(\psi,s)$.

\begin{lem}\label{lem:local-indep-f'}
  For any $s,s_0\in S_{\psi,\sspl}(G^*)$ such that their images in $S^\natural_\psi(G^*)$ belong to $S^\natural_\psi(M^*,G^*)$ and are equal we have
  $f'_{G,\Xi}(\psi,s)=f'_{G,\Xi}(\psi,s_0)$.
\end{lem}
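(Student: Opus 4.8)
The plan is to reduce the statement to an equivalence of transferred stable forms, using the endoscopic correspondence of \S\ref{sub:endo-correspondence} together with the parabolic-descent property of transfer factors from Lemma \ref{lem:tflevi}. First I would observe that the hypothesis says $s$ and $s_0$ have the same image $u^\natural$ in $S^\natural_\psi(M^*,G^*)$, which by the diagram \eqref{eq:diag} means they lie in the same coset of $S^{\natural\natural}_\psi(M^*)$ inside $S^\natural_\psi(M^*,G^*)$; so $s = y s_0 c$ for some $y$ mapping to $S^{\natural\natural}_\psi(M^*)$ and some $c \in S_\psi^0$ (indeed $s_0^{-1}s$ lies in $S_\psi^\tx{rad}Z(\hat G^*)^\Gamma \cdot Z(A_{\hat M},S_\psi)$ modulo connected pieces; I would be careful to sort out exactly which subgroups are involved). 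The key point is that the connected part contributes nothing: if $s$ and $s_0$ differ by an element of $S_\psi^0$, then the associated endoscopic data $\fke$ and $\fke_0$ are isomorphic (the element $s$ only matters through its image in $\pi_0$, and even there only through the relevant quotient), and conjugating by an element of $S_\psi^0 \subset \hat G^*$ identifies the two transferred stable forms because the pairing $\cX(G^*) \leftrightarrow \cY(G^*)$ of Lemma \ref{lem:local-endo-data-corr} is $\hat G^*$-equivariant. Thus $f'_{G,\Xi}(\psi,s)$ depends only on the image of $s$ in $S^\natural_\psi(G^*)$, a fact that is implicitly used when writing the linear form as $f'_{G,\Xi}(\psi,s_\psi u^{-1})$.

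Next I would handle the genuinely new content: why the form only depends on the image in $S^\natural_\psi(M^*,G^*)$, not just in $S^\natural_\psi(G^*)$. The point is that for $s$ and $s_0$ as above, the endoscopic group $G^\fke$ contains a common Levi subgroup $M^\fke$ (namely the centralizer of the common image $u^\natural$, which lies in $\hat M^*$), and the parameter $\psi^\fke$ is the parameter for $G^\fke$ coming from a parameter on $M^\fke$ up to the twist by the character of $S^{\natural\natural}_\psi(M^*)$ distinguishing $s$ from $s_0$. I would then invoke part 4 of Theorem \ref{thm:locclass-single} (the endoscopic character identity) applied to $M^\fke$, combined with Lemma \ref{lem:tflevi}(2) which says that if $f$ and $f^\fke$ have $\Delta[\fke,\Xi]$-matching orbital integrals then the constant terms $f_M$ and $f^\fke_{M^\fke}$ have $\Delta[\fke_M,\Xi]$-matching orbital integrals, and the stable linear form on $G^\fke$ attached to a parameter induced from $M^\fke$ is (by the stable analogue of parabolic descent, which follows from the compatibility of stable orbital integrals with constant terms as in the proof of Lemma \ref{lem:tflevi}) the stable form on $M^\fke$ composed with the constant-term map. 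This reduces $f'_{G,\Xi}(\psi,s)$ and $f'_{G,\Xi}(\psi,s_0)$ to a computation entirely inside $M^\fke$, where the two differ only by the twist by the character $\chi$ of $S^{\natural\natural}_\psi(M^*)$; since $s$ and $s_0$ have equal image in $S^\natural_\psi(M^*,G^*)$, this character is trivial on the relevant element, so the two coincide.

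The main obstacle I expect is bookkeeping about \emph{which} quotient groups the element $s_0^{-1}s$ actually lives in, and making sure the relevant characters are trivial there. Concretely, one must show that the difference between $s$ and $s_0$, once pushed into the centralizer $S_\psi(M^\fke)=\tx{Cent}(\psi,\hat M^\fke)$, lies in $S^{\natural\natural}_\psi(M^*) \cdot S_\psi^0(M^\fke) \cdot Z(\hat G^*)^\Gamma$ and hence acts trivially on the stable form via Theorem \ref{thm:locclass-single}; the factor $S^{\natural\natural}_\psi(M^*)$ is killed because $\rho$ (the character attached to $\pi$) appears the same way on both sides by construction, the connected factor is killed because stable forms are invariant under $\hat M^\fke$-conjugacy, and $Z(\hat G^*)^\Gamma$ contributes a central-character factor that cancels between $s$ and $s_0$. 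A secondary technical point is that I am implicitly invoking the stable version of parabolic descent for the linear forms $f^\fke \mapsto f^\fke(\psi^\fke)$; this is standard (it is the stable counterpart of the identity used in the proof of Lemma \ref{lem:tflevi}), but I would want to cite it explicitly or spell out the one-line argument that stable orbital integrals behave like ordinary ones under constant terms because $H^1(\Gamma,M^\fke) \to H^1(\Gamma,G^\fke)$ is injective.
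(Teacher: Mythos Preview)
Your first paragraph contains a genuine error that undermines the whole argument. You claim that if $s$ and $s_0$ differ by an element of $S_\psi^0$ then the associated endoscopic data are isomorphic because ``the element $s$ only matters through its image in $\pi_0$''. This is false: the endoscopic correspondence of \S\ref{sub:endo-correspondence} takes $s$ up to \emph{conjugacy} in $S_\psi$, not up to translation by $S_\psi^0$. If $s_0 = ts$ with $t \in T_{\psi,s} \cap S_\psi^\tx{rad}$ (so $t$ commutes with $s$), then $s$ and $s_0$ are not conjugate; the element $s^\fke$ genuinely changes, and with it the transfer factor via Lemma~\ref{lem:tfequi}. Handling this central translation is the entire content of the lemma. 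Conjugation invariance alone only gets you started.

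The paper's proof proceeds exactly by isolating this central translation. After using $S_\psi^0$-conjugation to put $s$ in good position relative to a maximal torus $T_\psi$ (and invoking Arthur's reduction below \cite[(4.5.8)]{Arthur}), it suffices to show invariance of $f'_{G,\Xi}(\psi,s)$ under $s \mapsto ts$ for $t \in T_{\psi,s} \cap S_\psi^\tx{rad}$. The descent is to a Levi $M_s$ of $G$ (not of $G^\fke$, as you attempt), namely the one dual to $\hat M_s = Z_{\hat G^*}(T_{\psi,s})$; there the endoscopic datum for $(\psi_{M_s}, s_{M_s})$ is elliptic, and by Lemma~\ref{lem:tflevi} the constant term $f^\fke_{M^\fke_s}$ is a transfer of $f_{M_s}$. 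Now the translation by $t$ multiplies $f'_{M_s}$ by the character $\<z_{M_s}, t\>$, and the crucial input is that this character is trivial on $T_{\psi,s} \cap S_\psi^\tx{rad}$ --- this follows from Lemmas~\ref{lem:gbasl} and~\ref{lem:rhofact}, which you do not invoke. Your descent sketch in the second and third paragraphs is aiming in a related direction but is not well-posed: you speak of a ``common Levi $M^\fke$'' of $G^\fke$ defined as the centralizer of $u^\natural$, but $u^\natural$ is a coset in a quotient group, not an element one can centralize, and the endoscopic groups for $s$ and $s_0$ need not coincide. You have also inverted the roles of $S^\natural_\psi(M^*,G^*)$ and $S^\natural_\psi(G^*)$: the lemma asserts dependence only on the image in $S^\natural_\psi(G^*)$, with membership in $S^\natural_\psi(M^*,G^*)$ a hypothesis enabling the torus $T_\psi = A_{\hat M^*}$ to be used.
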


\begin{proof}

 Observe that $f'_{G,\Xi}(\psi,s)$ does not change when $s$ is conjugated by an element of $S_{\psi}(G^*)^0$.
 By conjugating $s$ if necessary, we may assume that $\Int(s)$ stabilizes $T_\psi$ (and a Borel subgroup $B_\psi\supset T_\psi$). Define $T_{\psi,s}:=Z_{T_\psi}(s)^0$. Arguing as in the paragraphs below \cite[(4.5.8)]{Arthur}, it suffices to show that $f'_{G,\Xi}(\psi,s)$ is unchanged if $s$ is multiplied by any element of $T_{\psi,s}\cap \srad_\psi$.

 One checks that $\hat{M}_s:=Z_{\hat G^*}(T_{\psi,s})$ is a $\Gamma$-stable Levi subgroup of $\hat G^*$ containing that $\hat M^*$. Let $M^*_s$ denote a Levi subgroup of $G^*$ which is dual to $\hat M_s$.
  Observe that $(\psi,s)$ is the image of a pair $$(\psi_{M_s},s_{M_s})$$ where $\psi_{M_s}\in \Psi(M_s)$ and $s_{M_s}\in S_{\psi_{M_s},\sspl}$. There exists an elliptic endoscopic datum $M^\fke_s$ and $\psi^\fke_{M_s}\in \Psi(M^\fke_s)$ whose image is $\psi_{M_s}$.
 We have
  \begin{equation}\label{eq:LIR-induction-f'}
    f'_{G,\Xi}(\psi,s)=f^\fke(\psi^\fke)=f^\fke_{M^\fke_s}(\psi^\fke_{M_s})%
  \end{equation}
  where $f^\fke_{M^\fke_s}$ is the constant term of $f^\fke$ along (a parabolic subgroup whose Levi subgroup is) $M^\fke_s$. If $M_s^*$ does not transfer to $G$ then $f^\fke_{M^\fke_s}(\psi^\fke_{M_s})=0$ so in particular the lemma is verified. Indeed no stable strongly regular semisimple conjugacy classes of $M^\fke_s$ have matching conjugacy classes on $G$ in that case, so one deduces that $f^\fke_{M^\fke_s}$ defines a trivial stable linear form from the fact that $f^\fke_{M^\fke_s}$ is a transfer of $f$. (For the latter fact, see the discussion in the preamble and proof of \cite[Prop 2.1.1]{Arthur} and the references therein. This can also be thought of as the degenerate case of Lemma \ref{lem:tflevi} where $M^*$ does not transfer to $G$, in which case $f_M$ should be thought of as zero, hence the stable linear form given by $f^\fke_{M^\fke_s}$ should be identically zero.)

  From now we assume that $M_s^*$ transfers to a Levi subgroup $M_s$ of $G$. Then there is an extended pure inner twist $(M_s,\xi_{M_s},z_{M_s})$ given by Lemma \ref{lem:c1}. Lemma \ref{lem:tflevi} tells us that $f^\fke_{M^\fke_s}$ is a transfer of $f_{M_s}$ with respect to $(M_s,\xi_{M_s},z_{M_s})$.
   Now if $s$ is translated by $t\in T_{\psi,s}$, then $s_{M_s}$ is translated by $t$. The effect of the translation, in light of Lemma \ref{lem:tfequi} with $M_s$ in place of $G$, is that $$f'_{M_s,\xi_{M_s},z_{M_s}}(\psi_{M_s},s_{M_s})=f^\fke_{M^\fke_s}(\psi^\fke_{M_s})$$ is multiplied by $\<z_{M_s},t\>$. According to Lemmas \ref{lem:gbasl} and \ref{lem:rhofact} (we apply the latter lemma with $\zeta=\lg z,\cdot\rg$ and $\zeta_{M^*}=\lg z_M,\cdot\rg$, which are the images of $z$ and $z_M$ under the Kottwitz maps; the former lemma ensures that $\zeta$ is simply the restriction of $\zeta_{M^*}$ to $Z(\hat G^*)^\Gamma$) we have that $\<z_{M_s},t\>=1$ if $t\in T_{\psi,s}\cap \srad_\psi$. Hence the translation of $s$ by $t\in T_{\psi,s}\cap \srad_\psi$ does not change $f'_{G,\Xi}(\psi,s)$ as we wanted to verify.

\end{proof}

We can apply the construction of $f'_{G,\Xi}(\psi,s)$ in particular to the element $s=s_\psi u^{-1}$ and obtain $f_{G,\Xi}'(\psi,s_\psi u^{-1})$. Note that $s_\psi u^{-1}$ is semi-simple, because both $s_\psi$ and $u^{-1}$ are semi-simple and commute.

\begin{thm*}[Local intertwining relation] \label{thm:lir}
Let $\psi \in \Psi(M^*)$, $u \in N_\psi(M^*,G^*)$, and $f \in \mc{H}(G)$. Then
\begin{enumerate}
\item Suppose that $\Pi_\psi(M,\Xi_M)$ is nonempty (so $\psi$ is relevant in particular). If $u^\natural$ belongs to $W_\psi(M^*,G^*)^\tx{rad}$, then $R_P(u^\natural,\Xi,\pi,\psi,\psi_F)=1$ for all representations $\pi \in \Pi_\psi(M,\Xi_M)$.
\item The complex number $f_{G,\Xi}(\psi, u^\natural)$ depends only on the image of $u$ in  $S_\psi^\natural$.
\item We have an equality
\begin{equation} \label{eq:lir} f_{G,\Xi}'(\psi,s_\psi u^{-1})=e(G)f_{G,\Xi}(\psi, u^\natural). \end{equation}
\end{enumerate}
\end{thm*}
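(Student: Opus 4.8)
\textbf{Proof proposal for Theorem \ref{thm:lir}.} The plan is to follow the strategy of \cite[\S2.4]{Arthur} (and its unitary-group adaptation in \cite[\S3.4]{Mok}), carefully tracking the modifications needed for inner twists and the extended pure inner twist normalization of all objects involved. The three assertions are logically entangled: part (1) is the statement that the compound operator is trivial on the ``radical'' Weyl group, part (2) is the descent of $f_{G,\Xi}(\psi,u^\natural)$ to $S_\psi^\natural$, and part (3) is the endoscopic identity. In this first paper we only claim the theorem for generic $\psi$, and even then the full proof is spread over this paper and the sequels; here I would set up the skeleton and reduce everything to the inductive core treated in Chapters \ref{chapter3}--\ref{chapter4}.

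First I would dispose of the easy reductions. If $\psi$ is not $\Xi$-relevant, or if $\Pi_\psi(M,\Xi_M)=\emptyset$, then by definition both sides of \eqref{eq:lir} vanish: the left side because, as in the proof of Lemma \ref{lem:local-indep-f'}, the relevant constant terms $f^\mf{e}_{M^\mf{e}}$ give trivial stable linear forms when $M^*$ does not transfer; the right side by fiat. So assume $M^*$ transfers to $M\subset G$ and $\Pi_\psi(M,\Xi_M)\neq\emptyset$. For part (2), the key input is Lemma \ref{lem:rpequiv}(3): translating $u^\natural$ by $y\in S_\psi^{\natural\natural}(M)$ multiplies $R_P(u^\natural,\Xi,\pi,\psi,\psi_F)$ by $\<\pi,y\>_{\xi,z}^{-1}$; summing the trace over $\pi\in\Pi_\psi(M,\Xi_M)$ against $\mc{I}_P(\pi)(f)$ and invoking part (1) (applied to the $W_\psi^{\tx{rad}}$-component, which by diagram \eqref{eq:diag} is what distinguishes $N^\natural_\psi(M,G)$-lifts of a fixed element of $S_\psi^\natural$ from $S^\natural_\psi(M,G)$-lifts) shows the sum is unchanged; combined with Lemma \ref{lem:iopm} (multiplicativity in $u^\natural$) this yields dependence only on the image in $S_\psi^\natural$. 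Similarly, on the endoscopic side, Lemma \ref{lem:local-indep-f'} already establishes that $f'_{G,\Xi}(\psi,s_\psi u^{-1})$ depends only on the image of $s_\psi u^{-1}$ in $S^\natural_\psi(M^*,G^*)$, so that \eqref{eq:lir} is a well-posed identity between functions on $S_\psi^\natural$.

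For the substance of parts (1) and (3), I would first handle the \emph{split} case $E=F\times F$, where $G^*=\GL_N$ and $G=\tx{Res}_{D/F}\GL_M$: here everything is explicit via Theorem \ref{thm:LLC-GLmD} and the computations of \S\ref{sec:loclin}, $S_\psi^\natural$ is a rank-one torus, and the required identity reduces to the character identity \eqref{eq:tr-GLmD} together with the transfer-factor formula of Lemma \ref{lem:tfgln}; part (1) holds because $W_\psi^{\tx{rad}}$ acts through the connected centralizer and the packets of the general linear factors are singletons, forcing the normalized operator to be the scalar $1$ by the multiplicative construction of \S\ref{sec:iop3l}. Then, for $E/F$ a field extension, I would invoke the standard local reduction steps of Arthur: (a) reduce to $\psi$ (hence $\pi$) elliptic on $M$ by parabolic descent along an intermediate Levi, using compatibility of $R_P$ with induction in stages (Lemmas \ref{lem:iop1m}, \ref{lem:iop2m}); (b) reduce the verification of the multiplicativity/triviality on $W_\psi^{\tx{rad}}$ to rank-one and rank-two sub-root-systems of $S_\psi^{\tx{rad}}$ by the commutator/product arguments of \cite[\S2.4]{Arthur}; and (c) at the remaining irreducible pieces, identify $R_P$ with the Knapp--Stein $R$-group operator via the quasi-split results imported in \S\ref{sub:results-qsuni} and the appendix on invariance of $R$-groups. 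The parts that genuinely require the global trace formula machinery — namely the cases not covered by purely local means, and in particular the real unitary group $U(3,1)$ base case — are carried out in \S\S\ref{sub:prelim-local-intertwining1}--\ref{sub:prelim-local-intertwining} and \S\ref{sub:LIR-proof}, and I would state precisely which instances are deferred there.

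The main obstacle, as the introduction emphasizes, is the \emph{correct normalization} of the compound operator and the verification that the two sides of \eqref{eq:lir} transform compatibly under all the choices built into the extended pure inner twist. Concretely: the operator $R_P(u^\natural,\Xi,\pi,\psi,\psi_F)$ depends on $\pi(u^\natural)_{\xi,z}$, whose definition (\S\ref{sec:iop3}) goes through a twisted endoscopic datum $\mf{e}_{M,\psi}$ and the transfer factor $\Delta[\mf{e}_{M,\psi},\xi,z]$, while $f'_{G,\Xi}(\psi,s_\psi u^{-1})$ depends on the transfer factor $\Delta[\mf{e},\Xi]$ for an endoscopic datum of $G^*$ attached to $s_\psi u^{-1}$; reconciling these requires Lemma \ref{lem:tflevi} (parabolic descent of transfer factors), Lemma \ref{lem:tfequi} (equivariance under $Z(\hat G^*)^\Gamma$ and $Z(G^*)$ twists), and the splitting of the sequence \eqref{eq:esuni} constructed via Langlands--Shelstad lifts in \S\ref{sec:iop3u} — and it is precisely the non-naive choice of this splitting that makes the constant $e(G)$ in \eqref{eq:lir} come out right and makes the identity independent of $(\xi,z)\in\Xi$ (Lemma \ref{lem:iopind}). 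Getting the bookkeeping of signs, Kottwitz invariants $e(G)$, $e(M^\theta)$, and the element $s_\psi$ to match across the endoscopic comparison is where essentially all the difficulty lies.
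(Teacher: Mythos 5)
Your logical order between parts (1) and (2) runs exactly backward from the paper. You propose to handle part (2) by using part (1) on the $W_\psi^{\tx{rad}}$-ambiguity (combined with Lemma \ref{lem:rpequiv} and multiplicativity). The paper acknowledges this implication in the paragraph immediately following the theorem statement — multiplicativity plus part (1) does imply part (2) — but explicitly notes that part (1) cannot be established first: there is no purely local argument available, and the paper instead proves parts (2) and (3) \emph{first} by globalization and trace-formula comparison (Lemma \ref{lem_lir_equality_first_lem}, Lemma \ref{lem_lir_part1_lem}) and only then deduces part (1) from them (Lemmas \ref{lem_lir_triv_R}, \ref{lem_lir_triv_R2}, the latter using the positivity of the trace of $(1 - R_P(u_1))\mc{I}_P$ to force the operator to be $1$). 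Your step (c), which would establish the triviality on $W_\psi^{\tx{rad}}$ locally by identifying $R_P$ with Knapp–Stein operators, is precisely the nonexistent local route: for a non-quasi-split $G$ there is no a priori control of this operator without the endoscopic character identities, which is what parts (2) and (3) provide.

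A second, related gap is your appeal to ``reduce to rank-one and rank-two sub-root-systems by the commutator/product arguments of \cite[\S2.4]{Arthur}.'' That is Arthur's local reduction machinery for quasi-split groups, and the introduction of this paper explicitly flags that it does not transfer: when $N$ is even one cannot in general globalize $(G,\xi)$ to an inner twist that is quasi-split away from the distinguished place, so the quasi-split LIR cannot be fed directly into the local reduction at the other places. The paper's route instead uses two auxiliary non-quasi-split places (e.g.\ Propositions \ref{prop_globalize_data_N_4_reduction_exceptional_case}, \ref{prop_globalize_data_N_4_exceptional_case}, \ref{prop_globalize_data_N_even_and_M_neq_G_split}), reducing inductively through elliptic and exceptional parameter types until everything rests on the explicit archimedean $U(3,1)$ computation of \S\ref{sec:lirexp}; this is the new content of the proof and is not a ``remaining case'' of Arthur's local reductions. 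Your description of the reductions in \S\ref{sub:prelim-local-intertwining1}--\ref{sub:prelim-local-intertwining} (parabolic descent, Lemmas \ref{lem:lirdesc1}--\ref{lem:lirdesc2}, the dichotomy elliptic vs.\ exceptional) and of the normalization bookkeeping in \S\ref{sec:iop3} is otherwise accurate, as is your treatment of the vacuous case when $\psi$ is not relevant.
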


Note that the multiplicativity of $R_P(u^\natural,\Xi,\pi,\psi,\psi_F)$ in $u^\natural$ formulated in Lemma \ref{lem:iopm} implies that the second point follows from the first. However, we will only be able to deduce the first point once we have proved the second by different means. Note also that the last point breaks up into two cases, namely
\begin{itemize}
  \item If $\psi$ is relevant for $\Xi$ then $f_{G,\Xi}'(\psi,s_\psi u^{-1})=e(G)f_{G,\Xi}(\psi, u^\natural)$.
  \item If $\psi$ is not relevant for $\Xi$ then $ f_G'(\psi,s_\psi u^{-1})=0$.
\end{itemize}

A further immediate consequence of the theorem is that the linear form $f_{G,\Xi}(\psi,u^\natural)$ depends only on the $\hat G^*$-equivalence class of the parameter $\psi$, and not just on the $\hat M^*$-equivalence class. We will now formulate a lemma that contains a weaker statement. This statement will be enough to conclude said independence in the case where $\psi \in \Psi_2(M^*)$ and will also be used in the global arguments that lead to the proof of Theorem \ref{thm:lir}. In the next section we will show that the form $f_{G,\Xi}(\psi,u^\natural)$ for an arbitrary $\psi \in \Psi(M^*)$ is equal to $f_{G,\Xi}(\psi_0,u^\natural)$ for a suitable $\psi_0 \in \Psi_2(M^*_0)$, so in principle we would then have said independence for all $\psi \in \Psi(M^*)$, but we will not need this stronger result in the proof of Theorem \ref{thm:lir}.

\begin{lem}\label{lem:linear-form-ind-of-M} Let $\psi \in \Psi(M^*)$ and $v \in N(A_{\hat M^*},\hat G^*)$. Given $u \in N_\psi(M^*,G^*)$ we have $f_{G,\Xi}(\psi,u^\natural)=f_{G,\Xi}(v\psi,(vuv^{-1})^\natural)$.
\end{lem}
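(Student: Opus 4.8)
The plan is to compare the two linear forms term-by-term by conjugating all the data defining $f_{G,\Xi}(\psi,u^\natural)$ by the element $v$. First I would observe that if $\psi$ is not $\Xi$-relevant then neither is $v\psi$ (conjugating by $v \in N(A_{\hat M^*},\hat G^*)$ does not change the set of Levi subgroups of $^LG^*$ containing the image, only relabels them), and in that case both sides are zero by definition. So I may assume $\psi$ is relevant, $M^*$ transfers to $G$, and $\Pi_\psi(M,\Xi_M) \neq \emptyset$. Note that $v$ normalizes $\hat M^*$, so $vuv^{-1} \in N_{v\psi}(M^*,G^*)$ and $S_{v\psi} = vS_\psi v^{-1}$, hence conjugation by $v$ identifies $S_\psi^\natural$ with $S_{v\psi}^\natural$ and sends $u^\natural$ to $(vuv^{-1})^\natural$; this is why the statement type-checks.

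The substance is to track each of the three factors in the compound operator \eqref{eq:iop}, namely $R_{w^{-1}P|P}(\xi,\psi)$, $l_P(w,\xi,\psi,\psi_F)$, and $\mc{I}_P(\pi(u^\natural)_{\xi,z})$, under replacing $\psi$ by $v\psi$. Here the key input is Lemma \ref{lem:iop123c}(3), which gives $R_{P'|P}(\tx{Ad}(\breve w_0)\circ\xi,\psi) = R_{P'|P}(\xi,w_0\psi)$ for $w_0 \in W(M^*,G^*)(F)$; but $v$ need not lie in the $F$-rational Weyl group, so I first reduce to the case where $v$ lies in $W(M^*,G^*)$ (an element of $N(A_{\hat M^*},\hat G^*)$ acts on everything relevant only through its image $w_v$ in $W(\hat M^*,\hat G^*) \cong W(M^*,G^*)$, since $\hat M^*$ itself and the normalizing factors $r_{P'|P}$, $\epsilon_P$, $\lambda$ all depend only on this Weyl-group image and are computed from the common Langlands parameter $\phi_\psi$; and for the parameter-side group-theoretic data we may conjugate $v$ within $\hat M^*$ without changing $v\psi$ up to $\hat M^*$-conjugacy, which does not affect the packet $\Pi_{v\psi}$ or the linear form). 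Having reduced to $v = w_v \in W(M^*,G^*)$, I would then use Lemma \ref{lem:iop123c}(2) and (3), Fact \ref{fct:iop23c}, and Lemma \ref{lem:iop3} (the defining twisted character identity, which is preserved under the conjugation $\mf{e}_{M,\psi} \mapsto \mf{e}_{M,v\psi}$ with the matching transfer factors) to show that conjugation by a rational lift $\breve{w}_v$ of $w_v$ on the induced space $\mc{H}_P(\pi)$ intertwines $R_P(u^\natural,\Xi,\pi,\psi,\psi_F)$ with $R_P((vuv^{-1})^\natural,\Xi,\pi',v\psi,\psi_F)$ for the corresponding $\pi' \in \Pi_{v\psi}(M,\Xi_M)$ (recall $\Pi_{v\psi}(M,\Xi_M)$ is just $\Pi_\psi(M,\Xi_M)$ reindexed, via the identification of packets under $W(M,G)$-conjugacy that was established at the end of \S\ref{subsub:local-packet-Psi+}). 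Since $\tx{tr}$ is conjugation-invariant, summing over $\pi$ gives $f_{G,\Xi}(\psi,u^\natural) = f_{G,\Xi}(v\psi,(vuv^{-1})^\natural)$.

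The main obstacle I anticipate is the reduction from a general $v \in N(A_{\hat M^*},\hat G^*)$ to one lying in (the $F$-points of) $W(M^*,G^*)$, and more precisely the bookkeeping needed to see that every ingredient appearing in $f_{G,\Xi}(\psi,u^\natural)$ — the un-normalized operator $J$, the $r$-, $\epsilon$-, $\lambda$-factors, the operator $\pi(u^\natural)_{\xi,z}$ coming from twisted endoscopy, and the packet $\Pi_\psi(M,\Xi_M)$ together with its pairing $\langle\,,\rangle_{\xi,z}$ — depends on $v$ only through its Weyl-group image and transforms equivariantly. The cleanest route is probably to first handle the case $v \in W(M^*,G^*)(F)$ completely using the already-established equivariance lemmas (this is essentially formal given Lemmas \ref{lem:iop123c}, \ref{lem:iop2m}, \ref{lem:iop3} and Fact \ref{fct:iop23c}), and then argue separately that both sides of the asserted equality are unchanged when $v$ is multiplied on the left by an element of $\hat M^*$ or, more delicately, replaced by another representative of the same class in $W(T^*,G^*)/W(T^*,M^*)$ using the Langlands–Shelstad splitting \eqref{eq:wsp} and Lemmas \ref{lem:lsl}, \ref{lem:tw-} to control the discrepancy of lifts by a central element of $M^*$, which is absorbed by the central-character relations already built into the normalizations.
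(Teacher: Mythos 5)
Your overall strategy (relevance dichotomy, bijection of packets, conjugate and use trace invariance, then appeal to Lemmas \ref{lem:iop123c}, \ref{lem:iop2m}, \ref{lem:iop3}, Fact \ref{fct:iop23c}, and the twisted-endoscopy identification for the $\pi(u^\natural)$ factor) matches the paper's argument in outline. However, the ``main obstacle'' you flag --- the need to reduce a general $v \in N(A_{\hat M^*},\hat G^*)$ to an element of $W(M^*,G^*)(F)$ --- is a red herring that the paper simply does not face. The reason is that the paper never conjugates by a bare lift $\breve w_v$ of the Weyl image of $v$; it conjugates by the \emph{normalized intertwining operator}
\[
A \;=\; l_P(v,\xi,\psi,\psi_F)\circ R_{v^{-1}P|P}(\xi,\psi)\;:\;\mc{H}_P(\pi)\lrw \mc{H}_P(v\pi),
\]
using $v$ as an element of $W(M,G)$ via the identifications $W(\hat M,\hat G)\cong W(M^*,G^*)\cong W(M,G)$ for the two rightmost factors of \eqref{eq:iop}, and using $v$ as the honest element of $N(A_{\hat M^*},\hat G^*)$ only where it is actually needed --- in the parameter $v\psi$, the element $(vuv^{-1})^\natural$, and the operator $(v\pi)(vuv^{-1})_{\xi,z}$. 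Conjugation by $A$ (rather than by $l(\breve w_v)$ alone) is what makes the cocycle identities of Lemmas \ref{lem:iop1m} and \ref{lem:iop2m} collapse the $R_{P'|P}$- and $l_P$-factors: one gets $l_P(vuv^{-1},\xi,v\psi,\psi_F)=l_P(v,\xi,\psi,\psi_F)\,l_P(u,\xi,\psi,\psi_F)\,l_P(v,\xi,\psi,\psi_F)^{-1}$ and then the whole conjugate reduces to $\mc{I}_P((v\pi)(vuv^{-1})_{\xi,z})\,l_P(u,\xi,\psi,\psi_F)\,R_{u^{-1}P|P}(\xi,\psi)\,\mc{I}_P(\pi,f)$. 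The remaining and genuinely nontrivial step is exactly the one you name almost in passing: proving $(v\pi)(vuv^{-1})_{\xi,z}=\pi(u)_{\xi,z}$ by comparing the defining equations of Lemma \ref{lem:iop3} under the isomorphism $\tx{Ad}(\breve v):(M,\tx{Ad}(\breve u))\rw(M,\tx{Ad}(\breve v\breve u\breve v^{-1}))$ of twisted groups and the dual identification of twisted endoscopic data $\mf{e}_{M,\psi}\rw\mf{e}_{M,v\psi}$. Your proposal would be complete and shorter if you dropped the reduction entirely, conjugated by $A$, and then devoted the technical care you spent on the reduction to this last comparison of twisted endoscopic data instead.
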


\begin{proof}
We assume that $\psi$ is relevant, otherwise both sides are zero. Let $(\xi,z) \in \Xi_M$ be as in Lemma \ref{lem:c1}. The left-hand side is a sum over $\pi \in \Pi_\psi(M,\Xi_M)$, while the right-hand side is a sum over $\pi' \in \Pi_{v\psi}(M,\Xi_M)$. Both index sets are in bijection via $\pi' = \breve v\pi$. To see this, decompose $M=M_+ \times M_-$ as usual and observe that $\Pi_\psi(M,\Xi_M) = \{\pi_+\} \otimes \Pi_\psi(M_-,\Xi_{M_-})$. Now $v$ acts trivially on $M_-$ and sends $\psi_+$ to $v\psi_+$, hence $\pi_+$ to $v\pi_+$. The claim follows.

Thus the right hand side is the sum over $\pi \in \Pi_\psi(M,\Xi_M)$ of the traces of the operators
\[ R_P(vuv^{-1},(\xi,z),v\pi,v\psi,\psi_F) \circ \mc{I}_P(v\pi,f), \]
each acting on the vector space $\mc{H}_P(v\pi)$. We conjugate each such operator by $l_P(v,\xi,\psi,\psi_F)\circ R_{v^{-1}P|P}(\xi,\psi)$ and obtain an operator $R$ on $\mc{H}_P(\pi)$ with the same trace. To compute that operator, we first expand
$R_P(vuv^{-1},(\xi,z),v\pi,v\psi,\psi_F)$ according to its definition \eqref{eq:iop} and obtain
\[ \mc{I}_P((v\pi)(vuv^{-1})_{\xi,z}) \circ l_P(vuv^{-1},\xi,v\psi,\psi_F) \circ R_{(vuv^{-1})^{-1}P|P}(\xi,v\psi). \]
Using Lemma \ref{lem:iop2m} one checks that
\[ l_P(vuv^{-1},\xi,v\psi,\psi_F) = l_P(v,\xi,\psi,\psi_F)\circ l_P(u,\xi,\psi,\psi_F) \circ l_P(v,\xi,\psi,\psi_F)^{-1}. \]
The intertwining property of $l_P(v,\xi,\psi,\psi_F)\circ R_{v^{-1}P|P}(\xi,\psi)$ together with Lemma \ref{lem:iop123c} and Fact \ref{fct:iop23c} imply that the operator $R$ we are computing is equal to
\[ \mc{I}_P((v\pi)(vuv^{-1})_{\xi,z}) R_{v^{-1}P|P}(\xi,\psi)^{-1}l_P(u,\xi,\psi,\psi_F)R_{u^{-1}v^{-1}P|v^{-1}P}(\xi,\psi)R_{v^{-1}P|P}(\xi,\psi)\mc{I}_P(\pi,f), \]
which by Lemmas \ref{lem:iop123c} and \ref{lem:iop1m} works out to
\[ \mc{I}_P((v\pi)(vuv^{-1})_{\xi,z}) l_P(u,\xi,\psi,\psi_F)R_{u^{-1}P|P}(\xi,\psi)\mc{I}_P(\pi,f). \]
Our final task is to show that the operators $(v\pi)(vuv^{-1})_{\xi,z}$ and $\pi(u)_{\xi,z}$ on the vector space $V_\pi$ are equal. This can be seen by considering the defining equation in Lemma \ref{lem:iop3}. We view $\tx{Ad}(\breve v)$ as an isomorphism from the twisted group $(M,\tx{Ad}(\breve u))$ to the twisted group $(M,\tx{Ad}(\breve v\breve u\breve v^{-1}))$. It allows us to compare the right hand sides of Lemma \ref{lem:iop3} for the two cases $\pi(u)$ and $(v\pi)(vuv^{-1})$ by identifying the two index sets in the sum, as well as $\pi(f)$ with $v\pi(f)$. This isomorphism is further translated under the inner twist $\xi$ to the isomorphism $\tx{Ad}(\tilde v) : (M^*,\tx{Ad}(\tilde u)) \rw (M^*,\tx{Ad}(\tilde v\tilde u\tilde v^{-1}))$, whose dual is the isomorphism $\tx{Ad}(\tilde v) : (\hat M,\tx{Ad}(\tilde u')) \rw (\hat M,\tx{Ad}(\tilde v\tilde u'\tilde v^{-1}))$, where the $\tilde\ $ are now taken on the dual side. This isomorphism sends the element $u'$ to the element $vu'v^{-1}$ and the parameter $\psi$ to the parameter $v\psi$, thereby identifying the two twisted endoscopic data used to define the left-hand side of Lemma \ref{lem:iop3} in the two cases at hand. We conclude that the two left-hand sides are equal, hence the two right-hand sides are equal. Since the summation sets are already identified with each other and $\pi(f)$ is identified with $v\pi(f)$, the claim follows.
\end{proof}

\begin{lem} \label{lem:lirind} Let $\Xi$ and $\Xi'$ be two equivalence classes of extended pure inner twists that determine the same equivalence class of inner twists. If Theorem \ref{thm:lir} holds for $\Xi$, then it also holds for $\Xi'$.
\end{lem}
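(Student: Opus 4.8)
Since $\Xi$ and $\Xi'$ underlie the same equivalence class of inner twists, there exists an element $c\in B(F,Z(G^*))_\tx{bsc}$ with $\Xi'=c\Xi$; at the level of Levi subgroups we likewise have $\Xi'_M=c\Xi_M$ (using the embedding $Z(G^*)\subset Z(M^*)$). The plan is to trace through how each of the three assertions of Theorem~\ref{thm:lir} transforms under $\Xi\rightsquigarrow c\Xi$, using the equivariance properties already established in Lemma~\ref{lem:rpequiv} for the self-intertwining operators and in Lemma~\ref{lem:tfequi} for the transfer factors, together with the fact that the character $\<c,-\>$ of $[\hat G/\hat G_\tx{der}]^\Gamma$ is trivial on the image of the group $\hat G_\tx{der}$, in particular on $s_\psi$ and on any element lifting to $\hat G_\tx{sc}$. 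Throughout I will keep $(\xi,z)\in\Xi_M$ fixed as in Lemma~\ref{lem:c1}, so that $(\xi,cz)\in\Xi'_M$ is a corresponding choice.

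\textbf{Part (1).} Here the claim is purely local on $M$ and concerns only $u^\natural\in W_\psi(M^*,G^*)^\tx{rad}$. By Lemma~\ref{lem:rpequiv}(2), $R_P(u^\natural,c\Xi,\pi,\psi,\psi_F)=\<c,u^\natural\>^{-1}R_P(u^\natural,\Xi,\pi,\psi,\psi_F)$, so I must check that $\<c,u^\natural\>=1$ when $u^\natural\in W_\psi^\tx{rad}(M^*,G^*)$. But such a $u^\natural$ is represented by an element of $S_\psi^\tx{rad}=\tx{Cent}(\psi,\hat G_\tx{der})^\circ\subset\hat G_\tx{der}$, whose image in $\hat G/\hat G_\tx{der}$ is trivial; since $\<c,-\>$ is pulled back from a character of $[\hat G/\hat G_\tx{der}]^\Gamma$, the pairing vanishes. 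Hence $R_P(u^\natural,\Xi',\pi,\psi,\psi_F)=R_P(u^\natural,\Xi,\pi,\psi,\psi_F)=1$, and the packets $\Pi_\psi(M,\Xi_M)$ and $\Pi_\psi(M,\Xi'_M)$ coincide by Theorem~\ref{thm:locclass-single}(2), so (1) transfers.

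\textbf{Parts (2) and (3).} For part (2), I would unwind $f_{G,\Xi'}(\psi,u^\natural)=\sum_{\pi}\tx{tr}(R_P(u^\natural,\Xi',\pi,\psi,\psi_F)\,\mc I_P(\pi)(f))$ and apply Lemma~\ref{lem:rpequiv}(2) termwise to get $f_{G,\Xi'}(\psi,u^\natural)=\<c,u^\natural\>^{-1}f_{G,\Xi}(\psi,u^\natural)$; since $u\mapsto\<c,u^\natural\>$ factors through the image of $u$ in $\hat G/\hat G_\tx{der}$, which itself only depends on the image of $u$ in $S_\psi^\natural$ (as $S_\psi^\tx{rad}$ maps trivially there), and since by hypothesis $f_{G,\Xi}(\psi,u^\natural)$ depends only on that image, the product does too, giving (2) for $\Xi'$. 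For part (3), the left side transforms by Lemma~\ref{lem:tfequi}: replacing $\Xi$ by $c\Xi$ replaces the transfer factor $\Delta[\mf e,\Xi]$ by $\<c,\bar s^{\mf e}\>\Delta[\mf e,\Xi]$ where $\bar s^{\mf e}$ is the image of $s^{\mf e}$ in $[\hat G/\hat G_\tx{der}]^\Gamma$; applied with the endoscopic element $s=s_\psi u^{-1}$, and using $\<c,s_\psi\>=1$ (since $s_\psi$ lifts to $\hat G_\tx{sc}$, hence dies in $\hat G/\hat G_\tx{der}$), this yields $f'_{G,\Xi'}(\psi,s_\psi u^{-1})=\<c,u^{-1}\>^{-1}f'_{G,\Xi}(\psi,s_\psi u^{-1})=\<c,u^\natural\>\,f'_{G,\Xi}(\psi,s_\psi u^{-1})$. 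Combining with the identity $f_{G,\Xi'}(\psi,u^\natural)=\<c,u^\natural\>^{-1}f_{G,\Xi}(\psi,u^\natural)$ and the relation \eqref{eq:lir} for $\Xi$, both sides of \eqref{eq:lir} for $\Xi'$ pick up the same factor $\<c,u^\natural\>^{\pm1}$ — wait, one must check the signs line up, and indeed they do: $f'_{G,\Xi'}=\<c,u^\natural\>f'_{G,\Xi}=\<c,u^\natural\>\,e(G)f_{G,\Xi}=\<c,u^\natural\>\,e(G)\<c,u^\natural\>f_{G,\Xi'}$, which forces $\<c,u^\natural\>^2=1$; this is automatic since $\<c,-\>$ takes values in a group of exponent dividing the order of the relevant torsion, or more directly one should instead track the equivariance of the \emph{non-relevant} case separately. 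The main obstacle, then, is bookkeeping: ensuring the various $\pm1$ exponents of $\<c,-\>$ on the $\Delta_{III}$-side (which enters $\Delta$ via its inverse, per Lemma~\ref{lem:tfequi}) match those on the $\pi(u^\natural)_{\xi,z}$-side, so that \eqref{eq:lir} is genuinely preserved rather than merely preserved up to a sign; I would resolve this by carefully distinguishing the relevant and non-relevant cases (in the latter both sides of \eqref{eq:lir} are identically zero for both $\Xi$ and $\Xi'$, since relevance is a property of the inner twist) and by invoking Lemma~\ref{lem:rpequiv}(2) and Lemma~\ref{lem:tfequi} with consistent orientation conventions.
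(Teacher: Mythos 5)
Your plan is the right one and coincides with the paper's: write $\Xi'=c\Xi$ for some $c\in Z^1_\tx{alg}(\mc E,Z(G^*))$, compare the two normalizations via the equivariance formulas of Lemma~\ref{lem:tfequi} for $\Delta$ and Lemma~\ref{lem:rpequiv}(2) for $R_P$, and exploit that $\<c,-\>$ is a character of $[\hat G/\hat G_\tx{der}]^\Gamma$ and hence kills the image of $\hat G_\tx{der}$, in particular $s_\psi$ and any element of $S_\psi^\tx{rad}$. Your treatment of part~(1) is correct and matches the paper's, and for part~(2) you can simply cite the remark below Theorem~\ref{thm:lir} that part~(2) is a formal consequence of part~(1) via Lemma~\ref{lem:iopm}.

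Your treatment of part~(3), however, contains a sign error that you notice but misdiagnose. Lemma~\ref{lem:tfequi} gives $\Delta[\mf e,\xi,cz]=\<c,\bar s^\fke\>\Delta[\mf e,\xi,z]$. Since $f'_{G,\Xi'}(\psi,s)$ is obtained by evaluating a stable distribution at a function $f^\fke$ that $\Delta[\mf e,\xi,cz]$-matches $f$, and scaling $\Delta$ by a constant $\lambda$ scales the matching $f^\fke$ (hence $f'$) by $\lambda$, one gets
$f'_{G,\Xi'}(\psi,s)=\<c,\bar s^\fke\>\,f'_{G,\Xi}(\psi,s)$, \emph{without} the extra inverse you inserted. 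With $s=s_\psi u^{-1}$ and $\<c,s_\psi\>=1$, the factor is $\<c,u^{-1}\>=\<c,u^\natural\>^{-1}$, which is precisely the factor by which $f_{G,\Xi'}(\psi,u^\natural)$ differs from $f_{G,\Xi}(\psi,u^\natural)$ by Lemma~\ref{lem:rpequiv}(2). So both sides of \eqref{eq:lir} pick up the same $\<c,u^\natural\>^{-1}$ and the identity transfers cleanly, with no residual constraint. Your derived constraint $\<c,u^\natural\>^2=1$ is an artifact of the sign error, not a theorem; it happens to hold when $[\hat G/\hat G_\tx{der}]^\Gamma=\{\pm1\}$ (a quadratic field extension), but that is an accident of the group and would fail in the split case the paper defers to \cite{KMS_B}, and in any case is not a legitimate step. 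The relevant/non-relevant dichotomy you appeal to is a red herring: in the non-relevant case both sides of \eqref{eq:lir} vanish for both $\Xi$ and $\Xi'$, but the sign you are worrying about lives entirely in the relevant case, and is dissolved by the correct computation rather than by case analysis.
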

\begin{proof}
By assumption we may choose $(\xi,z) \in \Xi$ and $(\xi',z') \in \Xi'$ such that the maps $\xi$ and $\xi'$ are equal. Then $z'=cz$ with $c \in Z^1_\tx{alg}(\mc{E},Z(G^*))$. By Lemma \ref{lem:tfequi} we have
\[ f'_{G,({\xi',z'})}(\psi,s_\psi u^{-1}) = \<c,s_\psi u^{-1}\>\cdot f'_{G,(\xi,z)}(\psi,u^{-1}). \]
Here we have paired $c$ with the image of $s_\psi u^{-1}$ in $[\hat G^*/\hat G^*_\tx{der}]^\Gamma$. Notice that $s_\psi \in \hat G^*_\tx{der}$, because it belongs to the image of a homomorphism $\tx{SL}_2 \rw \hat G$. Thus $\<c,s_\psi u^{-1}\>=\<c,u^{-1}\>$. According to Lemma \ref{lem:rpequiv}, we also have
\[ f_{G,(\xi',z)}(\psi,u^\natural) = \<c,u^\natural\>^{-1}\cdot f_{G,(\xi,z)}(\psi,u^\natural). \]
This deals with part 3 of Theorem \ref{thm:lir}. To see that part 1 of the theorem propagates from $(\xi,z)$ to $(\xi',z')$ we appeal to part 2 of Lemma \ref{lem:rpequiv}. Then it suffices to observe that $\lg c,u^\natural \rg=1$ when $u^\natural\in W_\psi(M^*,G^*)^{\rad}$, but this is clear since $S_\psi^{\rad}\subset \hat G^*_{\der}$ by definition (see \S\ref{sec:diag}). Since part 2 of Theorem \ref{thm:lir} follows from part 1 as remarked below the theorem, we are done.
\end{proof}

\begin{lem}\label{lem:central-action} In the same setup as in the preceding theorem,
  for any $y\in Z(\hat G^*)^\Gamma$, $f'_{G,\Xi}(\psi,s_\psi ys)=\<z,y\>f'_{G,\Xi}(\psi,s_\psi s)$ and $f_{G,\Xi}(\psi,yu)=\<z,y\>^{-1}f_{G,\Xi}(\psi,u)$, where the pairing is given Kottwitz's isomorphism \eqref{eq:kotisoloc} for the torus $G^*/G^*_\tx{der}$ and its dual $Z(\hat G^*)$.
\end{lem}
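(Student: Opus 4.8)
The plan is to reduce both identities to equivariance properties already in hand: Lemma \ref{lem:tfequi} for transfer factors and part (1) of Lemma \ref{lem:rpequiv} for the compound intertwining operator. The only genuinely new observation required is that replacing a semisimple element of $S_\psi$ by its product with a central element $y \in Z(\hat G^*)^\Gamma$ does not alter the endoscopic correspondence of \S\ref{sub:endo-correspondence} except for the endoscopic element itself. I write $\<z,-\>$ for the character of $Z(\hat G^*)^\Gamma$ attached to $z$ (equivalently to $\Xi$) by Kottwitz's map \eqref{eq:kotisoloc}, so that $\<z,-\> = \<\Xi,-\>$.

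First I would treat the statement about $f'_{G,\Xi}$. Since $y$ is central, $s_\psi y s = y\,(s_\psi s)$, so it suffices to prove $f'_{G,\Xi}(\psi, ys') = \<z,y\>\,f'_{G,\Xi}(\psi, s')$ for a semisimple $s' \in S_\psi$. Let $(\mf{e},\psi^\mf{e})$ be associated to $(\psi,s')$ as in Lemma \ref{lem:local-endo-data-corr}. Inspecting the inverse correspondence constructed in the proof of that lemma --- where $\mc{G}^\mf{e} = \hat G^\mf{e}\cdot\psi(L_F\times\SL(2,\C))$, the Galois action on $\hat G^\mf{e}$ comes from the splitting $w\mapsto\psi(w)$, and neither involves $s'$ --- one sees that $\Int(ys')$ has the same connected centralizer as $\Int(s')$ in $\hat G^*$; hence $(\psi,ys')$ is associated to the triple $y\mf{e}=(G^\mf{e},ys^\mf{e},\eta^\mf{e})$ with the \emph{same} parameter $\psi^\mf{e}$. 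By Lemma \ref{lem:tfequi}, $\Delta[y\mf{e},\xi,z] = \<z,y\>\,\Delta[\mf{e},\xi,z]$, so if $f^\mf{e} \in \mc{H}(G^\mf{e})$ has $\Delta[\mf{e},\Xi]$-matching orbital integrals with $f$ then $\<z,y\>\,f^\mf{e}$ has $\Delta[y\mf{e},\Xi]$-matching orbital integrals with $f$. Evaluating the stable linear form $f^\mf{e}\mapsto f^\mf{e}(\psi^\mf{e})$ of part 4 of Theorem \ref{thm:locclass-single} and using its linearity then gives $f'_{G,\Xi}(\psi,ys') = (\<z,y\>\,f^\mf{e})(\psi^\mf{e}) = \<z,y\>\,f'_{G,\Xi}(\psi,s')$. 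This argument is insensitive to whether $\psi$ is relevant for $\Xi$, and the degenerate case in which the pertinent Levi does not transfer (handled as in the proof of Lemma \ref{lem:local-indep-f'}, where the relevant constant term defines the zero stable form) scales the same way.

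Second, for $f_{G,\Xi}$: if $\psi$ is not $(G,\xi)$-relevant or $\Pi_\psi(M,\Xi_M)$ is empty, both sides vanish by definition, so I would assume neither. The image of $yu$ in $N^\natural_\psi(M^*,G^*)$ is $y^\natural u^\natural$, with $y^\natural$ the image of $y \in Z(\hat G^*)^\Gamma \subset Z(A_{\hat M},S_\psi)$; applying part (1) of Lemma \ref{lem:rpequiv} with $x=y$ yields $R_P(y^\natural u^\natural,\Xi,\pi,\psi,\psi_F) = \<z,y\>^{-1}R_P(u^\natural,\Xi,\pi,\psi,\psi_F)$ for every $\pi\in\Pi_\psi(M,\Xi_M)$. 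Summing $\tx{tr}\big(R_P(\cdot,\Xi,\pi,\psi,\psi_F)\,\mc{I}_P(\pi)(f)\big)$ over the packet then gives $f_{G,\Xi}(\psi,yu)=\<z,y\>^{-1}f_{G,\Xi}(\psi,u)$. I do not expect a real obstacle here; the one point needing care is the verification that central translation of the endoscopic element leaves $G^\mf{e}$, $\eta^\mf{e}$ and hence $\psi^\mf{e}$ unchanged, which as noted is immediate from the explicit description of the inverse correspondence.
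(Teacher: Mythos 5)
Your proof is correct and follows exactly the same route the paper does: the paper's proof is simply the two-sentence observation that the $f'$ identity follows from Lemma \ref{lem:tfequi} and the $f$ identity from Lemma \ref{lem:rpequiv}. You have usefully spelled out the intermediate step (that central translation of the endoscopic element $s^\fke$ leaves $G^\fke$, $\eta^\fke$, and $\psi^\fke$ unchanged), but there is no difference in approach.
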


\begin{rem}\label{rem:central-action}
  We are assuming $M^*\neq G^*$ as in the theorem but even when $M^*=G^*$, the former assertion of the lemma remains true by the same argument. The latter also follows in the same way if the local classification theorem (Theorem \ref{thm:locclass-single}) is assumed.
\end{rem}

\begin{proof}
The former follows from Lemma \ref{lem:tfequi} and the latter from Lemma \ref{lem:rpequiv}.
\end{proof}

\begin{cor}\label{c:local-indep-f'}
 Fix $\ol{x}\in \ol{\cS}_\psi(G^*)$. Suppose that there exists a lift $x\in S^\natural_\psi(G^*)$ of $\ol x$ such that the local intertwining relation holds for all $u$ mapping to $x$. Then for every lift $x\in S^\natural_\psi(G^*)$ of $\ol x$, it holds true for all $u$ mapping to $x$.
\end{cor}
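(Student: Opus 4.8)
The plan is to reduce the change of lift to the central twisting behaviour of the two linear forms recorded in Lemma \ref{lem:central-action}. First I would pin down the ambiguity in the choice of lift. Two lifts $x,x'\in S^\natural_\psi(G^*)$ of the same class $\ol x\in\ol\cS_\psi(G^*)$ differ by an element of the kernel of the surjection $S^\natural_\psi(G^*)=S_\psi/\srad_\psi\lra\ol\cS_\psi(G^*)=S_\psi/S_\psi^0 Z(\hat G^*)^\Gamma$, namely of $S_\psi^0 Z(\hat G^*)^\Gamma/\srad_\psi$. By Lemma \ref{lem:squot} we have $S_\psi^0 Z(\hat G^*)^\Gamma=\srad_\psi Z(\hat G^*)^\Gamma$, so this kernel is precisely the image of $Z(\hat G^*)^\Gamma$ in $S^\natural_\psi(G^*)$. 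Hence $x'=\ol{y_0}\cdot x$ for some $y_0\in Z(\hat G^*)^\Gamma$, where $\ol{y_0}$ denotes its image.

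Next, given an arbitrary $u'\in N_\psi(M^*,G^*)$ mapping to $x'$, I would set $u:=y_0^{-1}u'$; this lies in $N_\psi(M^*,G^*)$ because $y_0$ is central in $\hat G^*$, and it maps to $\ol{y_0}^{-1}x'=x$, so the hypothesis gives the local intertwining relation \eqref{eq:lir} for $u$. It then remains to transport \eqref{eq:lir} from $u$ to $u'=y_0u$. Applying the first assertion of Lemma \ref{lem:central-action} with the central element $y_0^{-1}$ and $s=u^{-1}$ yields
\[ f'_{G,\Xi}(\psi,s_\psi (u')^{-1})=f'_{G,\Xi}(\psi,s_\psi y_0^{-1}u^{-1})=\<z,y_0^{-1}\>\,f'_{G,\Xi}(\psi,s_\psi u^{-1}), \]
and the second assertion yields
\[ f_{G,\Xi}(\psi,(u')^\natural)=f_{G,\Xi}(\psi,(y_0u)^\natural)=\<z,y_0\>^{-1}\,f_{G,\Xi}(\psi,u^\natural). \]
Since $\<z,y_0^{-1}\>=\<z,y_0\>^{-1}$ and the Kottwitz sign $e(G)$ is untouched, the common factor $\<z,y_0\>^{-1}$ cancels from the two sides of \eqref{eq:lir}, so \eqref{eq:lir} for $u'$ is equivalent to \eqref{eq:lir} for $u$. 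As $u'$ ranges over all lifts of $x'$, this proves the corollary; parts 1 and 2 of Theorem \ref{thm:lir} make no reference to a choice of lift and are not at issue.

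I do not expect a real obstacle: the argument is a one-line cancellation, parallel to the one used in the proof of Lemma \ref{lem:lirind} when the rigidification is varied rather than the lift. The only point requiring a little care is keeping track of the exponents of the pairing $\<z,-\>$ on the two sides, which works out precisely because $Z(\hat G^*)^\Gamma$ acts on $f'_{G,\Xi}$ and on $f_{G,\Xi}$ through mutually inverse characters.
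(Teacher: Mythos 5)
Your proof is correct and fills in exactly what the paper's two-line citation to Lemmas \ref{lem:squot} and \ref{lem:central-action} is meant to convey: Lemma \ref{lem:squot} identifies the kernel of $S^\natural_\psi(G^*)\to\ol\cS_\psi(G^*)$ as the image of $Z(\hat G^*)^\Gamma$, and Lemma \ref{lem:central-action} then gives the cancelling $\<z,y_0\>^{\mp 1}$ twists on the two sides of \eqref{eq:lir}. This is the same argument as the paper's, just written out in detail.
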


\begin{proof}
This follows from Lemmas \ref{lem:squot} and \ref{lem:central-action}.
\end{proof}

As a first application of Theorem \ref{thm:lir}, we will now construct the $L$-packet $\Pi_\psi$ of representations of $G(F)$ under the assumption that $\psi \in \Psi_2(M^*)$ and $M^* \neq G^*$. We assume that $M^*$ transfers to $G$, i.e. that $\psi$ is relevant for $\Xi$, otherwise we simply set $\Pi_\psi(G,\Xi) = \emptyset$. We further assume inductively Theorem \ref{thm:locclass-single} for the Levi subgroup $M \subset G$. Finally, we assume Theorem \ref{thm:lir} for the group $G$ and its Levi subgroup $M$.

Recall that $\psi \in \Psi_2(M^*)$ implies that the map $N_\psi(M^*,G^*) \rw S_\psi^\natural$ is surjective. For any $\pi \in \Pi_\psi(M)$, the map
\[ N^\natural_\psi(M^*,G^*) \times G(F) \rw \tx{Aut}(\mc{H}_P(\pi)),\qquad (\bar u,g) \mapsto R_P(\bar u,\Xi,\pi,\psi,\psi_F)\mc{I}_P(\pi,g) \]
is a representation of $N^\natural_\psi(M^*,G^*) \times G(F)$. Let $\Pi_\psi^1(G,\Xi)$ be the direct sum of these representations, as $\pi$ runs over $\Pi_\psi(M,\Xi_M)$. The value at $\bar u \in N^\natural_\psi(M^*,G^*)$ and $f \in \mc{H}(G)$ of the character of $\Pi_\psi^1(G,\Xi)$ is equal to $f_{G,\Xi}(\psi,\bar u)$. According to Theorem \ref{thm:lir}, the representation $\Pi_\psi^1(G,\Xi)$ is inflated from $S_\psi^\natural \times G(F)$. According to Lemma \ref{lem:central-action}, this representation transforms under $Z(\hat G^*)^\Gamma \times \{1\}$ by the character $\chi_\Xi^{-1}=\<\Xi,-\>^{-1}$. Decomposing this representation into irreducible constituents, we obtain
\[ \Pi_\psi^1(G,\Xi) = \bigoplus_\pi\left( \<\pi,-\>_\Xi^{-1} \otimes \pi\right) \]
where $\pi$ now runs over representations of $G(F)$ and $\<\pi,-\>_\Xi$ are elements of $X^*(S_\psi^\natural)$ whose restriction to $Z(\hat G^*)^\Gamma$ is equal to $\chi_\Xi$. We define the packet $\Pi_\psi(G,\Xi)$ to be the disjoint union of the occurring $\pi$ and the map $\Pi_\psi(G,\Xi) \rw \tx{Irr}(S_\psi^\natural,\chi_\Xi)$ to be given by $\pi \mapsto \<\pi,-\>_\Xi$. The character identity \eqref{eq:eci} of Theorem \ref{thm:locclass-single} now follows directly from Theorem \ref{thm:lir}. It is now not hard to see that the map $\Pi_\psi(G,\Xi) \rw \tx{Irr}(S_\psi^\natural,\chi_\Xi)$ is bijective when $F$ is $p$-adic. We will postpone this discussion until Section \ref{sec:lpackns}.

\subsection{A preliminary result on the local intertwining relation I}\label{sub:prelim-local-intertwining1}

In this section we are going to prove some results on the relationship between parabolic induction and the normalized intertwining operators as well as the local intertwining relation. One consequence of these results will be the reduction of the proof of the local intertwining relation to the case of discrete parameters. This essential case will be handled in later chapters using global methods and finally completed in Section \ref{sub:LIR-proof}. As we said at the start of \S\ref{sec:iop1}, we simplify notation in this subsection by omitting $*$ in the superscript when referring to the objects on the dual group side if there is no danger of confusion.

The general situation we will discuss is that of two nested standard proper Levi subgroups $M^*_0 \subset M^*$ of $G^*$ and a parameter $\psi_0 \in \Psi(M^*_0)$, as well as an element $u \in S_\psi \cap N(A_{\hat M_0},\hat G) \cap N(A_{\hat M},\hat G)$. In this situation we have the linear forms $f_{G,\Xi}(\psi_0,u^\natural)$ and $f_{G,\Xi}(\psi,u^\natural)$ on $\mc{H}(G)$, where $\psi$ is the image in $\Psi(M^*)$ of $\psi_0$. In the special case $u \in S_\psi(M^*) \cap N(A_{\hat M_0},\hat G)$ we also have the linear form $f_{M,\Xi_M}(\psi_0,u^\natural)$ on $\mc{H}(M)$. All these linear forms are zero unless $\psi$ is relevant for $G$, so we assume now that $M_0^*$ transfers to $G$ and fix corresponding Levi subgroups $M_0 \subset M \subset G$. We further fix a parabolic subgroup $P_0 \in \mc{P}^G(M_0)$. It gives rise to $P \in \mc{P}^G(M)$ and $Q \in \mc{P}^M(M_0)$. The following two lemmas express the relationships between these three linear forms.

\begin{lem*} \label{lem:lirdesc1} For any $u \in S_\psi \cap N(A_{\hat M_0},\hat G) \cap N(A_{\hat M},\hat G)$ we have the equality $f_{G,\Xi}(\psi_0,u^\natural) = f_{G,\Xi}(\psi,u^\natural)$.
\end{lem*}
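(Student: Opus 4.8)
The statement compares two linear forms on $\mc{H}(G)$ attached to the same element $u$, one built from the parameter $\psi_0 \in \Psi(M_0^*)$ of a smaller Levi and the other from its image $\psi \in \Psi(M^*)$. The natural strategy is to unravel both sides using the definition of $f_{G,\Xi}(-,u^\natural)$ as a trace of the compound intertwining operator $R_P(u^\natural,\Xi,\pi,\psi,\psi_F)$ on $\mc{I}_P(\pi)$, and to exploit the transitivity of parabolic induction $\mc{I}_P^G \circ \mc{I}_Q^M = \mc{I}_{P_0}^G$. The packet $\Pi_\psi(M,\Xi_M)$ is, by the construction at the end of Section \ref{sec:lir} (applied to $M$ in place of $G$, with $\psi_0 \in \Psi_2(M_0^*)$ or more generally via the inductive hypothesis on Levi subgroups), obtained from $\Pi_{\psi_0}(M_0,\Xi_{M_0})$ by inducing along $Q$ and decomposing the representation of $N^\natural_{\psi}(M_0^*,M^*)$; so each $\pi \in \Pi_\psi(M,\Xi_M)$ is a constituent of $\mc{I}_Q^M(\pi_0)$ for some $\pi_0 \in \Pi_{\psi_0}(M_0,\Xi_{M_0})$, cut out by a character of the relevant centralizer quotient. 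The plan is therefore: first, to rewrite the sum over $\Pi_\psi(M,\Xi_M)$ defining the left-hand side as a sum over $\Pi_{\psi_0}(M_0,\Xi_{M_0})$ of traces of operators acting on $\mc{I}_{P_0}^G(\pi_0)$; second, to identify the operator so obtained with $R_{P_0}(u^\natural,\Xi,\pi_0,\psi_0,\psi_F)$.

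In more detail, I would fix $(\xi,z) \in \Xi_{M_0}$ as in Lemma \ref{lem:c1}, which simultaneously equips $M_0$, $M$, and $G$ with compatible extended pure inner twist structures, and I would choose the lift $u$ of $u^\natural$ normalizing $\hat T_\psi^\tx{rad}$ and preserving $\hat B \cap \hat M$-positive roots as in Section \ref{sec:iop}. The key point is that the three building blocks of the compound operator \eqref{eq:iop} — the normalized standard intertwining operator $R_{w^{-1}P|P}(\xi,\psi)$, the Langlands–Shelstad normalized operator $l_P(w,\xi,\psi,\psi_F)$, and the self-intertwining datum $\pi(u^\natural)_{\xi,z}$ — are each compatible with parabolic induction in stages. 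For $R_{w^{-1}P|P}$ this is the transitivity of the normalizing factors $r_{P'|P}$ together with the multiplicativity of the un-normalized $J_{P'|P}$ (Lemmas \ref{lem:iop1m}, \ref{lem:iop1lg}) and the fact that the adjoint representations $\rho_{P'|P}$ add up correctly over the tower $M_0 \subset M \subset G$. For $l_P(w,-)$ it follows from the multiplicativity in Lemma \ref{lem:iop2m} and the compatibility of the $\epsilon$- and $\lambda$-factors along the tower (again via Henniart \cite{Hen10}). For $\pi(u^\natural)_{\xi,z}$ the relevant input is Lemma \ref{lem:iop3}: the twisted endoscopic datum $\mf{e}_{M,\psi}$ for $(M^*,\tx{Ad}(\tilde w))$ descends from the twisted endoscopic datum $\mf{e}_{M_0,\psi_0}$ for $(M_0^*,\tx{Ad}(\tilde w))$ via the constant-term (parabolic-descent) compatibility of transfer factors in Lemma \ref{lem:tflevi}, so the stable linear form defining $\pi(u^\natural)$ on $M$ is the descent of the one on $M_0$. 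Assembling these three compatibilities, the operator $R_P(u^\natural,\Xi,\pi,\psi,\psi_F) \circ \mc{I}_P(\pi)$, summed over the constituents $\pi$ of $\mc{I}_Q^M(\pi_0)$ with the appropriate characters, matches $R_{P_0}(u^\natural,\Xi,\pi_0,\psi_0,\psi_F) \circ \mc{I}_{P_0}(\pi_0)$ after taking traces and summing over $\pi_0$.

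The main obstacle I anticipate is bookkeeping of the normalizations — specifically, checking that the splitting-dependent factors $\<\pi,u^\natural\>_{\xi,z}$ (constructed in Sections \ref{sec:iop3u}, \ref{sec:iop3l} via the distinguished-pinning section of \eqref{eq:esuni}) are compatible with the passage from $M_0$ to $M$, i.e. that the section $s: \pi_0(N_\psi(M_0,G)) \rw \pi_0(S_\psi(M_0))$ restricts correctly through the intermediate group $N_\psi(M_0,M)$. This is exactly the kind of subtlety that Section \ref{sec:iop3u} warns about, and it is why the naive decomposition of $N_\psi$ is insufficient; the argument will rely on the multiplicativity of $w \mapsto t_{w,-}$ (Lemma \ref{lem:tw-}) and of the Langlands–Shelstad lifts modulo $Z(\hat M_+)$, applied now in the tower $\hat M_0 \subset \hat M \subset \hat G$. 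Modulo this, the proof is a matching of definitions, and I would present it by first reducing (via Lemma \ref{lem:linear-form-ind-of-M}) to a convenient representative of $\psi_0$ and then carrying out the trace computation term by term along \eqref{eq:iop}.
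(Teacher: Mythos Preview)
Your overall architecture is correct and matches the paper: induction in stages, and a factorization of $R_{P_0}^G(u^\natural,\Xi,\pi_0,\psi_0,\psi_F)$ into an ``outer'' piece built from $l_P^G \circ R_{w^{-1}P|P}^G$ and an ``inner'' piece which is itself a twisted self-intertwining operator $R_Q^M(u^\natural,(\xi,z),\pi_0,\psi_0,\psi_F)$ on $\mc{H}_Q^M(\pi_0)$. The paper proves exactly the two compatibilities you anticipate for the first two building blocks (their Lemmas \ref{lem:l1} and \ref{lem:l2}), including a careful analysis of the two distinct Langlands--Shelstad lifts $\tilde w_0, \tilde w_1$ of $w$ (taken relative to $M_0^*$ and $M^*$ respectively) and the identity $\tilde w_0 = \tilde w_{01}\tilde w_1$.

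There is, however, a genuine gap in your treatment of the third piece. After the two factorizations you are left needing
\[
\bigoplus_{\pi_0 \in \Pi_{\psi_0}(M_0,\Xi_{M_0})} R_Q^M(u^\natural,(\xi,z),\pi_0,\psi_0,\psi_F)\circ\mc{I}_Q^M(\pi_0,-) \;\cong\; \bigoplus_{\pi \in \Pi_\psi(M,\Xi_M)} \pi(u^\natural)_{\xi,z}\circ\pi(-)
\]
as representations of $(N_{\psi_0}(M_0,G)\cap N_\psi(M,G))\times \mc{H}(M)$. The operator on the left is a \emph{twisted} intertwining operator for the pair $(M,\tx{Ad}(\breve w_1))$; the automorphism $\theta_1=\tx{Ad}(\breve w_1)$ is in general nontrivial on $M_+$ (it permutes the $G_{E/F}(N_i)$-factors and may hit them with the outer involution). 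So this displayed identity is precisely the \emph{twisted} local intertwining relation for the twisted group $(M_+,\theta_1)$, combined with the ordinary LIR for $M_-$ (known by induction). This is the paper's Proposition \ref{pro:tlir}, which is taken as an input from the quasi-split theory for $k=1$ and extended to general $k$ via the simple descent of Section \ref{sec:prelsimp}. You do not mention it, and it is the crux of the argument.

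Your proposed substitute --- deducing the compatibility of $\pi(u^\natural)_{\xi,z}$ with $\pi_0(u^\natural)_{\xi,z}$ from Lemma \ref{lem:iop3} together with parabolic descent of transfer factors (Lemma \ref{lem:tflevi}) --- does not suffice. Lemma \ref{lem:tflevi} is stated and proved only for \emph{ordinary} endoscopy ($\theta=1$); here the endoscopic data $\mf{e}_{M,\psi}$ and $\mf{e}_{M_0,\psi_0}$ are twisted by the nontrivial $\theta_1$ and $\theta_1|_{M_0}$, and the relation between them is not a Levi-descent of a single endoscopic datum. Moreover, even granting a twisted analogue of Lemma \ref{lem:tflevi}, what you would need to extract from it is exactly the twisted character identity that defines $\pi(u^\natural)$ in Lemma \ref{lem:iop3} matched against the trace of $R_Q^M(u^\natural,\dots)$ --- which is again the twisted LIR in disguise. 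In short: the ``bookkeeping of sections'' you flag as the main obstacle is a side issue; the real missing ingredient is Proposition \ref{pro:tlir}.
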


\begin{lem*} \label{lem:lirdesc2} For any $u \in S_\psi(M) \cap N(A_{\hat M_0},\hat G)$ and $\pi \in \Pi_{\psi_0}(M_0,\Xi_{M_0})$ we have the equality of intertwining operators
\[ R^G_{P_0}(u^\natural,\Xi,\pi,\psi_0,\psi_F) = \mc{I}_P^G(R^M_Q(u^\natural,\Xi_M,\pi,\psi_0,\psi_F)). \]
Moreover, we have the equality $f_{G,\Xi}(\psi_0,u^\natural)=f_{M,\Xi_M}(\psi_0,u^\natural)$.
\end{lem*}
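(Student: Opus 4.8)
The plan is to prove Lemma~\ref{lem:lirdesc2} by carefully unwinding the definitions of the compound intertwining operator from Section~\ref{sec:iop}, exploiting the fact that when $u \in S_\psi(M) \cap N(A_{\hat M_0},\hat G)$ the associated Weyl element lies in $W(M_0,M)$ rather than moving $M$ itself. First I would fix a convenient representative $(\xi,z) \in \Xi_{M_0}$ as provided by Lemma~\ref{lem:c1}, so that $\xi$ carries the standard parabolic pair $(M_0^*,P_0^*)$ to $(M_0,P_0)$ and the cocycle $z$ commutes with the Langlands--Shelstad lifts of all elements of $W(M_0^*,G^*)^\Gamma$; this same representative simultaneously serves for $\Xi_M$ and $\Xi$ since $M_0 \subset M$. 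Since $u$ normalizes $A_{\hat M}$ and centralizes $\psi$, and since $u \in S_\psi(M)$ means $u \in \hat M$, the image $w$ of $u$ in $W(M_0,G)$ actually lies in $W(M_0,M)$, so $w^{-1}Pw = P$ and the three constituent operators $R_{w^{-1}P_0|P_0}(\xi,\psi_0)$, $l_{P_0}(w,\xi,\psi_0,\psi_F)$, and $\mc{I}_{P_0}(\pi(u^\natural)_{\xi,z})$ all respect the induction in stages $\mc{H}^G_{P_0}(\cdot) = \mc{H}^G_P(\mc{H}^M_Q(\cdot))$.

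The key steps, in order, are: (i) show $R^G_{w^{-1}P_0|P_0}(\xi,\psi_0) = \mc{I}^G_P(R^M_{w^{-1}Q|Q}(\xi,\psi_0))$ using induction in stages for the un-normalized operator $J_{P'|P}$ (the integral over $(N_0 \cap N_0')\backslash N_0'$ factors through $M$ because $w$ fixes $P$) together with the factorization of the normalizing factor $r_{P'|P}$ coming from the decomposition of the adjoint representation $\rho_{w^{-1}P_0|P_0}$ of $^LM_0$ restricted to the portion lying inside $^LM$ --- here I would cite Lemma~\ref{lem:iop1m} or rather directly the multiplicativity properties and the Arthur-style measure comparison already established in Lemmas~\ref{lem:iop1rm}, \ref{lem:iop1pm}, \ref{lem:iop1m}; (ii) show the analogous factorization $l^G_{P_0}(w,\xi,\psi_0,\psi_F) = \mc{I}^G_P(l^M_Q(w,\xi,\psi_0,\psi_F))$, which reduces to checking that the Keys--Shahidi constant $\lambda(w,\psi_F)$ and the $\epsilon$-factor $\epsilon_{P_0}(w,\psi_0,\psi_F)$ are computed using only the roots of $A_{\hat M_0}$ in $\hat M$ (since $w \in W(M_0,M)$ those are the only roots sent to negatives), and that the un-normalized operator $l(\breve w)$ given by $[l(\breve w)f](g) = f(\breve w^{-1}g)$ manifestly respects induction in stages; (iii) show $\pi(u^\natural)_{\xi,z}$ is literally the same operator whether computed in $G$ or in $M$, by inspecting the defining character identity of Lemma~\ref{lem:iop3}: the twisted endoscopic triple $\mf{e}_{M_0,\psi_0}$ for $(M_0^*,\theta^*)$ with $\theta^* = \tx{Ad}(\tilde w)$ depends only on $u$, $\psi_0$, $M_0$ and the choice of $(\xi,z)$, none of which changes when the ambient group shrinks from $G$ to $M$, and the element $s_\psi \in \hat G$ coincides with $s_{\psi_0}$ viewed in $\hat M$ since both are $\psi_0(1,\tx{diag}(-1,-1))$. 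Combining (i)--(iii) via the composition formula \eqref{eq:iop} yields the first displayed equality.

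For the second displayed equality, once the operator identity is in hand I would take traces: $f_{G,\Xi}(\psi_0,u^\natural) = \sum_{\pi \in \Pi_{\psi_0}(M_0,\Xi_{M_0})} \tx{tr}(R^G_{P_0}(u^\natural,\Xi,\pi,\psi_0,\psi_F)\,\mc{I}^G_{P_0}(\pi,f))$, and rewrite the composite operator on $\mc{H}^G_{P_0}(\pi) = \mc{H}^G_P(\mc{H}^M_Q(\pi))$ as $\mc{I}^G_P$ applied to $R^M_Q(u^\natural,\Xi_M,\pi,\psi_0,\psi_F)\,\mc{I}^M_Q(\pi, f_M)$, where $f_M \in \mc{H}(M)$ is the constant term of $f$ along $P$; this uses the standard fact that the trace of $\mc{I}^G_P(A)\,\mc{I}^G_P(\pi,f)$ equals the trace of $A\,\mc{I}^M_Q(\pi, f_M)$ on $\mc{H}^M_Q(\pi)$ (compatibility of parabolic induction with the constant-term map on the level of distribution characters, as in \cite[\S2.3]{Arthur}). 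Summing over $\pi$ gives exactly $f_{M,\Xi_M}(\psi_0, u^\natural)$.

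I expect the main obstacle to be step (ii), the compatibility of the $\epsilon$-factor normalization $\epsilon_{P_0}(w,\psi_0,\psi_F) = \epsilon(\tfrac12,\pi_{\psi_0},\rho^\vee_{w^{-1}P_0 w|P_0},\psi_F)$ and the Keys--Shahidi constant $\lambda(w,\psi_F)$ with induction in stages: one must verify that the representation $\pi_{\psi_0}$ of $\tilde M_0(F)$ and the adjoint representation $\rho^\vee_{w^{-1}P_0 w|P_0}$ of $^LM_0$ combine so that only the ``$M$-part'' contributes when $w \in W(M_0,M)$, i.e. that the roots of $A_{\hat M_0}$ in $\hat G$ outside $\hat M$ are all fixed by $w$ and hence drop out of both $\Delta_1(w), \Delta_2(w)$ and the $\epsilon$-factor product. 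This is a purely combinatorial statement about root systems and the Langlands--Shelstad lift, essentially contained in the proof of Lemma~\ref{lem:iop2m}, but it requires being careful about how the $a$-data and $\chi$-data are inherited from $G^*$ to $M^*$ along the lines of the proof of Lemma~\ref{lem:tflevi}. Everything else is bookkeeping with induction in stages and the already-established multiplicativity and invariance properties of the three operators.
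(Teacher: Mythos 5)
Your proposal matches the paper's proof: the paper establishes Lemmas~\ref{lem:l1} and~\ref{lem:l2} for the factorizations of $R^G_{w^{-1}P_0|P_0}$ and $l^G_{P_0}$ through induction in stages, then specializes to $u \in S_\psi(M)$ by noting that the image $w$ of $u$ in $W(M,G)$ is trivial so the ``outer'' factors $R^G_{w^{-1}P|P}(\xi,\psi)$ and $l^G_P(w,\xi,\psi,\psi_F)$ are both the identity, and finally takes traces using compatibility with the constant term $f \mapsto f_M$. Your step~(iii) about $\pi(u^\natural)_{\xi,z}$ being intrinsic to $(M_0,\psi_0,u)$ and so agreeing whether computed in $M$ or in $G$ is correct and is implicitly used in the paper's chain of equalities, though not singled out there.
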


The proof of these two lemmas will occupy most of this section, but before we get to it, we will first extract an important consequence of Lemma \ref{lem:lirdesc1}.

\begin{pro} \label{pro:lirreddisc}
Assume that parts 2 and 3 of Theorem \ref{thm:lir} hold for all standard parabolic pairs $(M^*,P^*)$ of $G^*$ and all parameters $\psi \in \Psi_2(M^*)$. Then they hold for all standard parabolic pairs $(M^*,P^*)$ of $G^*$ and all parameters $\psi \in \Psi(M^*)$.
\end{pro}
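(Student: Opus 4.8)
The strategy is to reduce an arbitrary parameter $\psi \in \Psi(M^*)$ to a discrete parameter by factoring through a smaller Levi subgroup, and then to invoke Lemma \ref{lem:lirdesc1} to transport the validity of parts 2 and 3 of Theorem \ref{thm:lir} from the discrete level to the general level. Concretely, given $(M^*,P^*)$ and $\psi \in \Psi(M^*)$, I would first choose a standard Levi subgroup $M_0^* \subseteq M^*$ minimal among those through which $\psi$ factors, and a parameter $\psi_0 \in \Psi_2(M_0^*)$ whose image in $\Psi(M^*)$ is $\psi$. Such an $M_0^*$ and $\psi_0$ exist by the general structure of parameters: one takes $\hat M_0$ to be the centralizer of a maximal torus of $S_\psi^\circ$ (equivalently, a minimal Levi subgroup of $^LG^*$ containing the image of $\psi$, intersected with $\hat M$, as in Lemma \ref{lem:sphilevi}), and then $\psi$ viewed as a parameter into $^LM_0^*$ is discrete there because $S_{\psi_0}$ modulo center is finite.

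**Transport of the relations.** With $\psi_0$ fixed, take any $u \in N_\psi(M^*,G^*)$. After conjugating $u$ within $S_\psi$ — which changes neither side of \eqref{eq:lir} nor the dependence claimed in part 2 — I may arrange that $u$ normalizes $A_{\hat M_0}$ as well, i.e. $u \in S_\psi \cap N(A_{\hat M_0},\hat G^*) \cap N(A_{\hat M^*},\hat G^*)$; here one uses that $A_{\hat M_0}$ and $\mathrm{Ad}(u)A_{\hat M_0}$ are both maximal tori of $\hat M^* \cap S_\psi^{\mathrm{rad}}$-type subgroups and hence conjugate, exactly as in the normalization argument at the start of \S\ref{sec:iop}. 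Since $S_\psi = S_{\psi_0}$ under our choice of $M_0^*$, the element $u$ also lies in $N_{\psi_0}(M_0^*,G^*)$, and its image $u^\natural$ in $S_\psi^\natural$ agrees with its image in $S_{\psi_0}^\natural$ (both being quotients of $S_\psi = S_{\psi_0}$ — compare Lemma \ref{lem:squot} and the identifications around \eqref{eq:Irr(S)=Irr(S_M)}). Now Lemma \ref{lem:lirdesc1} gives $f_{G,\Xi}(\psi_0,u^\natural) = f_{G,\Xi}(\psi,u^\natural)$ on the spectral side. On the endoscopic side, $f'_{G,\Xi}(\psi,s_\psi u^{-1})$ is defined purely in terms of the $\hat G^*$-parameter $\eta^\fke\circ\psi^\fke = \psi$ and the element $s_\psi u^{-1} \in S_{\psi,\mathrm{ss}}$, which coincides with $s_{\psi_0} u^{-1}$ since $s_\psi$ depends only on the restriction of the parameter to the $\mathrm{SU}(2)$-factor and $\psi,\psi_0$ have the same such restriction; hence $f'_{G,\Xi}(\psi,s_\psi u^{-1}) = f'_{G,\Xi}(\psi_0, s_{\psi_0} u^{-1})$. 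Applying part 3 of Theorem \ref{thm:lir} for $\psi_0 \in \Psi_2(M_0^*)$ (valid by hypothesis) then yields $f'_{G,\Xi}(\psi,s_\psi u^{-1}) = e(G) f_{G,\Xi}(\psi_0,u^\natural) = e(G) f_{G,\Xi}(\psi,u^\natural)$, which is \eqref{eq:lir} for $\psi$. Part 2 for $\psi$ follows the same way: $f_{G,\Xi}(\psi,u^\natural) = f_{G,\Xi}(\psi_0,u^\natural)$ depends only on the image of $u$ in $S_{\psi_0}^\natural = S_\psi^\natural$ by part 2 for $\psi_0$. (One should also note that the degenerate cases — $\psi$ not relevant, or $\Pi_\psi(M,\Xi_M)=\emptyset$ — are handled trivially since then all the linear forms in question vanish by definition, and the same is true at the $\psi_0$ level.)

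**Main obstacle.** The delicate point is the bookkeeping that the element $u$, chosen first as an element of $N_\psi(M^*,G^*)$, can genuinely be viewed as an element of $N_\psi(M_0^*,G^*) \cap N(A_{\hat M^*},\hat G^*)$ after conjugation, so that Lemma \ref{lem:lirdesc1} applies verbatim with the hypothesis $u \in S_\psi \cap N(A_{\hat M_0},\hat G^*) \cap N(A_{\hat M^*},\hat G^*)$ satisfied; and, relatedly, checking that the linear forms $f_{G,\Xi}(\psi,u^\natural)$ and $f'_{G,\Xi}(\psi,s_\psi u^{-1})$ genuinely depend only on the data $(\psi, u^\natural)$ and not on the auxiliary choice of $\psi_0$ or of representatives. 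Most of this is routine once the identifications $S_\psi = S_{\psi_0}$, $S_\psi^\natural = S_{\psi_0}^\natural$, $s_\psi = s_{\psi_0}$ are in place, but it requires care because the definition of $f_{G,\Xi}$ involves the packets $\Pi_\psi(M,\Xi_M)$ (which arise by induction from $\Pi_{\psi_0}(M_0,\Xi_{M_0})$ via the construction in \S\ref{subsub:local-packet-Psi+}) and the definition of $f'_{G,\Xi}$ involves constant terms along endoscopic Levi subgroups, so one must make sure the two reduction processes are compatible — essentially this compatibility is exactly what Lemmas \ref{lem:lirdesc1} and \ref{lem:lirdesc2} are designed to supply.
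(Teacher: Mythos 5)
Your proposal is correct and follows essentially the same route as the paper: choose a minimal standard Levi $M_0^*$ through which $\psi$ factors discretely, conjugate $u$ so that it normalizes $A_{\hat M_0}$ as well, apply Lemma \ref{lem:lirdesc1} to identify $f_{G,\Xi}(\psi_0,u^\natural)$ with $f_{G,\Xi}(\psi,u^\natural)$, observe that $f'_{G,\Xi}(\psi,s_\psi u^{-1})=f'_{G,\Xi}(\psi_0,s_{\psi_0}u^{-1})$ since both depend only on the image in $\Psi(G^*)$, and then invoke the hypothesis for $\psi_0\in\Psi_2(M_0^*)$. The identifications $S_\psi=S_{\psi_0}$, $s_\psi=s_{\psi_0}$, and the handling of the irrelevant/empty-packet cases that you mention are exactly the bookkeeping the paper relies on.
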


\begin{proof}
As in the beginning of Section \ref{sec:iop} we choose $\psi$ within its equivalence class so that the minimal Levi subgroup of $^LM$ through which it factors is of the form $\hat M_0 \rtimes W_F$ for a standard Levi subgroup $\hat M_0$ of $\hat M$, hence of $\hat G$. Then $A_{\hat M_0}$ is a maximal torus of $S_\psi^0$. Let $M_0^*$ be the standard Levi subgroup of $G^*$ dual to $\hat M_0$ and let $P_0^*$ be the standard parabolic subgroup of $G^*$ with Levi factor $M_0^*$. We have $M_0^* \subset M^*$ and $P_0^* \subset P^*$. Moreover, $\psi \in \Psi(M^*)$ is the image of $\psi_0 \in \Psi_2(M_0^*)$ under the natural map $\Psi(M_0^*) \rw \Psi(M^*)$.

We have the linear forms $f_{G,\Xi}(\psi,u)$ and $f'_{G,\Xi}(\psi,s_\psi u^{-1})$ associated to the parabolic pair $(M^*,P^*)$ and the parameter $\psi$. They only depend on the image $u^\natural \in N_\psi^\natural(M,G)$ of $u$. We have $A_{\hat M_0} \cap S_\psi^\tx{rad}=\hat T \cap S_\psi^\tx{rad}=\hat T_\psi^\tx{rad}$ and the argument used in the beginning of Section \ref{sec:iop} allows us to assume that $u$ normalizes $\hat T_\psi^\tx{rad}$. Then it also normalizes $A_{\hat M_0}$ and thus belongs to $N_{\psi_0}(M_0,G) \cap N_\psi(M,G)$. This allows us to consider the linear forms $f_{G,\Xi}(\psi_0,u)$ and $f'_{G,\Xi}(\psi_0,s_\psi u^{-1})$. We have $f'_{G,\Xi}(\psi_0,s_\psi u^{-1})=f'_{G,\Xi}(\psi,s_\psi u^{-1})$: Indeed, both forms depend only on the images of $\psi_0$ and $\psi$ in $\Psi(G^*)$ and these are equal. On the other hand, Lemma \ref{lem:lirdesc1} asserts that $f_{G,\Xi}(\psi_0,u^\natural) = f_{G,\Xi}(\psi,u^\natural)$ and the proof is complete.
\end{proof}

The proof of Lemmas \ref{lem:lirdesc1} and \ref{lem:lirdesc2} will require the validity of the following piece of the quasi-split theory: the twisted local intertwining relation for a product of groups of the form $G_{E/F}(N)$ and an automorphism of this product that permutes the factors transitively. We will now review its statement.

Let $N_1$ and $k$ be positive integers and $H=G_{E/F}(N_1)^k$. The standard pinning of $G_{E/F}(N_1)$ gives rise to a pinning of $H$, whose maximal torus and Borel subgroup we call $T_H$ and $B_H$. Let $\theta_H$ be the automorphism of $H$ given by $\theta_H(h_1,\dots,h_k)=(\theta(h_k),h_1,\dots,h_{k-1})$, where $\theta$ is either the identity automorphism of $G_{E/F}(N_1)$, or the automorphism $\theta$ described in Section \ref{subsub:G(N)-defn}, whose fixed subgroup is $U_{E/F}(N_1)$. The Borel pair $(T_H,B_H)$ is invariant under $\theta_H$. Let $(M_H,P_H)$ be a standard parabolic pair for $H$. We are not assuming that it is invariant under $\theta_H$. Let $\psi \in \Psi(M_H)$ be a parameter and let $\pi$ be the unique representation of $M_H$ corresponding to $\psi$. For our purposes we may assume that $\psi$ is discrete. Let $S_\psi(H,\theta_H)$ denote the subset of the coset $\hat H \rtimes \hat \theta_H$ consisting of elements that centralize (the image of) $\psi$. Let $u \in N(A_{\hat M_H},S_\psi(H,\theta_H^{-1}))$ and let $w$ be its image in the Weyl set $W(\hat M_H,\hat H,\hat \theta_H^{-1})=N(A_{\hat M_H},\hat H \rtimes \hat \theta_H^{-1})/\hat M_H$, as well as in the isomorphic Weyl set $W(M_H,H,\theta_H)=N(A_{M_H},H \rtimes \theta_H)/M_H$.

Associated to the element $w$ is a self-intertwining operator $R_{P_H}(w,\psi,\psi_F)$ of the induced representation $\mc{H}_{P_H}^H(\pi)$. It is again a composition of three operators. The first operator is
\[ R_{w^{-1}P_H|P_H}(\pi) : \mc{H}_{P_H}^H(\pi) \rw \mc{H}_{w^{-1}P_H}^H(\pi). \]
This operator is given again as the evaluation at $\lambda=0$ of the product of the usual un-normalized intertwining operator $J_{w^{-1}P_H|P_H}(\pi_{\psi,\lambda},\psi_F)$ and the normalizing factor
\[ r_{w^{-1}P_H|P_H}(\psi_\lambda,\psi_F) =  \frac{L(0,\rho^\vee_{w^{-1}P_H|P_H}\circ\phi_{\psi_\lambda})}{L(1,\rho^\vee_{w^{-1}P_H|P_H}\circ\phi_{\psi_\lambda})} \frac{\epsilon(\frac{1}{2},\rho^\vee_{w^{-1}P_H|P_H}\circ\phi_{\psi_\lambda},\psi_F)}{\epsilon(0,\rho^\vee_{w^{-1}P_H|P_H}\circ\phi_{\psi_\lambda},\psi_F)}. \]

The second operator is
\[ l(w,\pi) : \mc{H}_{w^{-1}P_H}^H(\pi) \rw \mc{H}_{P_H}^H(\tilde w\pi). \]
To define it, we first let $\dot w \in N(T_H,H \rtimes \theta_H)/T_H$ be the unique element mapping to $w$ and preserving the Borel pair $(T_H,B_H \cap M_H)$ of $M_H$. Since $\theta_H$ already preserves $T_H$ we have $\dot w = \dot w_0 \rtimes \theta_H$ for some $\dot w_0 \in N(T_H,H)/T_H = W(T_H,H)$. Let $\tilde w_0 \in N(T_H,H)$ be the Langlands-Shelstad lift of $\dot w_0$ and put $\tilde w = \tilde w_0 \rtimes \theta_H$. We set $\tilde w\pi=\pi \circ \tilde w^{-1}$, where we are using the notation $\tilde w$ to denote both the element $\tilde w \in H \rtimes \theta_H$, as well as the automorphism $\tx{Ad}(\tilde w_0) \circ \theta_H$ of $H$ that it induces. Define the operator $l(w,\pi)$ by
\[ [l(w,\pi)\phi](h) = \epsilon(\frac{1}{2},\pi,\rho^\vee_{w^{-1}P_H|P_H},\psi_F)\lambda(\dot w,\psi_F)^{-1}\phi(\theta_H^{-1}(\tilde w_0^{-1}h)). \]

The third operator comes from an intertwining operator $\pi(\tilde w) : \tilde w\pi \rw \pi$. Write $\tilde w = (\tilde w_1,\dots,\tilde w_k) \rtimes \theta_H$, $M_H=M_1 \times \dots \times M_k$ and $\pi=\pi_1 \otimes \dots \otimes \pi_k$. Then we have $\tilde w_1\theta M_k=M_1$ and $\tilde w_iM_{i-1}=M_i$ for $i>1$. Furthermore we have $\pi_k\circ(\tilde w_1\theta)^{-1} \cong \pi_1$ and $\pi_{i-1}\circ \tilde w_i^{-1} \cong \pi_i$ for $i>1$. We fix arbitrary isomorphisms $\iota_1 : (V_{\pi_k},\pi_k\circ(\tilde w_1\theta)^{-1}) \rw (V_1,\pi_1)$ and $\iota_i : (V_{\pi_{i-1}},\pi_{i-1}\circ \tilde w_i^{-1}) \rw (V_{\pi_i},\pi_i)$ for $i>1$. The composition $\iota_k\dots\iota_1$ is an isomorphism $(V_{\pi_k},\pi_k\circ(\tilde w_k\tilde w_{k-1}\dots \tilde w_1\theta)^{-1}) \rw (V_{\pi_k},\pi_k)$. The automorphism $(\tilde w_k\tilde w_{k-1}\dots \tilde w_1\theta)$ preserves $M_k$ as well as the pinning of it induced from the pinning of $G_{E/F}(N_1)$. As discussed in the beginning of Section \ref{sec:iop3b}, there exists a canonical choice for the isomorphism $(V_{\pi_k},\pi_k\circ(\tilde w_k\tilde w_{k-1}\dots \tilde w_1\theta)^{-1}) \rw (V_{\pi_k},\pi_k)$ and we require that our choices of $\iota_1,\dots,\iota_k$ are such that the composition $\iota_k\dots\iota_1$ is equal to this canonical isomorphism. Then we obtain the isomorphism
\[ \pi(\tilde w) : (V_\pi,\pi\circ\tilde w^{-1}) \rw (V_\pi,\pi),\quad (v_1 \otimes \dots \otimes v_k) \mapsto (\iota_1(v_k) \otimes \iota_2(v_1) \otimes \dots \otimes \iota_k(v_{k-1})) \]
and it is independent of the choices of $\iota_1,\dots,\iota_k$.

We now let $R_{P_H}(w,\psi,\psi_F) = \mc{I}_{P_H}^H(\pi(\tilde w))\circ l(w,\pi) \circ R_{w^{-1}P_H|P_H}(\pi)$. This is an automorphism of the space $\mc{H}_{P_H}^H(\pi)$ and is furthermore an intertwining operator
\[ \mc{I}_{P_H}^H(\pi) \rw \mc{I}_{P_H}^H(\pi) \circ \theta_H. \]
For any $f \in \mc{H}(M_H)$ we also have the trace-class automorphism $\mc{I}_{P_H}^H(\pi,f)$ of $\mc{H}_{P_H}^H(\pi)$. We define the linear form $f \mapsto f_H(\psi,w)$ on $\mc{H}(H)$ by
\[ f_H(\psi,w) = \tx{tr}(R_{P_H}(w,\psi,\psi_F)\circ\mc{I}_{P_H}^H(\pi,f)). \]
On the other hand, the element $u^{-1} \in N(A_{\hat M_H},S_\psi(H,\theta_H))$, which gave rise to $w \in W(M_H,H,\theta_H)$, can also be used together with the parameter $\psi$ to produce an endoscopic triple $\mf{\tilde e}$ for the twisted group $(H,\theta_H)$. Just as in Section \ref{sec:lir}, this endoscopic triple leads to a second linear form $f \mapsto f_H'(\psi,u^{-1})$ on $\mc{H}(H)$: We let $f^\mf{\tilde e} \in \mc{H}(H^\mf{\tilde e})$ have orbital integrals matching those of $f$ with respect to the transfer factor normalized using Whittaker datum, this datum coming from the fixed pinning of $H$ and the additive character $\psi_F$. Then we evaluate at $f^\mf{e}$ the stable linear form on $\mc{H}(H^\mf{\tilde e})$ associated to the parameter $(\eta^\mf{\tilde e})^{-1}\circ\psi$.

The twisted intertwining relation we need is the following.
\begin{pro} \label{pro:tlir}
We have
\[ f_H(\psi,w) = f'_H(\psi, u^{-1}). \]
\end{pro}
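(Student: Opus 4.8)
The plan is to reduce Proposition \ref{pro:tlir} to the twisted local intertwining relation for the group $\tilde G_{E/F}(N_1)$ (Corollary 3.5.2 of \cite{Mok}, listed among our imported results in \S\ref{sub:results-qsuni}), which is precisely the $k=1$ case. The mechanism for this reduction is the simple descent for twisted endoscopy developed in \S\ref{sec:prelsimp}, applied to the quasi-split group $G^*=G_{E/F}(N_1)$, the automorphism $\theta^*=\theta$, and the integer $n=k$. With $(\xi,z)=(\tx{id},1)$, that section gives us $\tilde G^* = G_{E/F}(N_1)^k = H$, the automorphism $\tilde\theta^* = \theta_H$, and a dictionary between twisted endoscopy for $(H,\theta_H)$ and twisted endoscopy for $(G_{E/F}(N_1),\theta)$: namely the map $\mf{\tilde e} \mapsto \mf{e}$ on endoscopic triples (with $H^\mf{\tilde e}=G_{E/F}(N_1)^\mf{e}$), the multiplication map $\Phi : H \rw G_{E/F}(N_1)$, $\Phi(h_1,\dots,h_k)=h_k\cdots h_1$ on twisted conjugacy classes, Proposition \ref{pro:simptfs} on matching of transfer factors, the corollary on matching of orbital integrals under convolution, and Lemma \ref{lem:traceprod} on twisted characters of $\tilde\pi = \pi_0 \otimes \dots \otimes \pi_0$.

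First I would handle the endoscopic (unstable) side. The element $u^{-1} \in N(A_{\hat M_H}, S_\psi(H,\theta_H))$ and the parameter $\psi$ produce the twisted endoscopic triple $\mf{\tilde e}$ for $(H,\theta_H)$; under \eqref{eq:prelsimpiso} and the recipe of \S\ref{sec:prelsimp} this corresponds to a twisted endoscopic triple $\mf{e}$ for $(G_{E/F}(N_1),\theta)$. I need to verify that the parameter of $H^\mf{\tilde e}$ attached to $\psi$ via $(\eta^\mf{\tilde e})^{-1}$ agrees with the parameter of $G_{E/F}(N_1)^\mf{e}$ attached to the "descended" parameter via $(\eta^\mf{e})^{-1}$; this is essentially the commutativity of the diagram with the $^LH^\mf{e}$ vertices in the proof of Proposition \ref{pro:simptfs} together with the fact that $\psi$, being valued in the diagonal $\hat G^1 \cdot \psi(L_F)$, descends compatibly. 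Then $f'_H(\psi,u^{-1})$, which is the value of the stable linear form for $(\eta^\mf{\tilde e})^{-1}\circ\psi$ on a function $f^\mf{\tilde e}$ matching $f = f_1 \otimes \dots \otimes f_k$, equals — by the corollary following Lemma \ref{lem:traceprod} and Proposition \ref{pro:simptfs} — the value of the stable linear form for the descended parameter on a function matching $f_1 * \dots * f_k$ on $G_{E/F}(N_1)(F)$; i.e. $f'_H(\psi,u^{-1}) = (f_1*\dots*f_k)'_{G_{E/F}(N_1)}(\psi^{\tx{desc}}, (u^{\tx{desc}})^{-1})$.

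Next I would match the spectral side. The subtle point is that the compound operator $R_{P_H}(w,\psi,\psi_F)$ on $\mc{H}_{P_H}^H(\pi)$ must be identified, via Lemma \ref{lem:traceprod} (the trace-of-tensor-product identity), with the compound twisted operator on the corresponding induced representation of $G_{E/F}(N_1)(F)$ built from $f_1*\dots*f_k$. This requires checking that each of the three constituents behaves correctly under the permutation structure: the normalized operator $R_{w^{-1}P_H|P_H}(\pi)$ (the normalizing factors involve $\rho^\vee_{w^{-1}P_H|P_H}\circ\phi_{\psi_\lambda}$, which restricts/descends appropriately since the $A$-parameter and the adjoint action descend), the factor $l(w,\pi)$ (the $\epsilon$- and $\lambda(\dot w,\psi_F)$-factors descend because the Langlands–Shelstad lift of $\dot w_0$ and the pinning of $H$ are inherited coordinatewise from $G_{E/F}(N_1)$), and the intertwining operator $\pi(\tilde w)$, whose construction via $\iota_1,\dots,\iota_k$ and the canonical Whittaker-normalized isomorphism for the last coordinate is exactly engineered so that, after the cyclic identification, it becomes the canonical self-isomorphism $\pi_0 \circ (\text{descended auto})^{-1} \rw \pi_0$ used in the $k=1$ twisted intertwining operator. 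Once these identifications are in place, $f_H(\psi,w) = (f_1*\dots*f_k)_{G_{E/F}(N_1)}(\psi^{\tx{desc}}, w^{\tx{desc}})$, and Corollary 3.5.2 of \cite{Mok} gives $(f_1*\dots*f_k)_{G_{E/F}(N_1)}(\psi^{\tx{desc}}, w^{\tx{desc}}) = (f_1*\dots*f_k)'_{G_{E/F}(N_1)}(\psi^{\tx{desc}}, (u^{\tx{desc}})^{-1})$, completing the chain. A final remark: since $\mc{H}(H)$ is spanned by pure tensors $f_1 \otimes \dots \otimes f_k$, it suffices to prove the identity for such $f$, which is what the above does.

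I expect the main obstacle to be the bookkeeping in the spectral identification — specifically, verifying that the choice of $\pi(\tilde w)$ made in the statement of Proposition \ref{pro:tlir} (via the $\iota_i$ subject to the constraint that $\iota_k\cdots\iota_1$ is the Whittaker-canonical isomorphism for $M_k$) is exactly the one that descends under Lemma \ref{lem:traceprod} to the canonical twisted intertwining operator in the $k=1$ case, and that the normalizations of $\epsilon$- and $\lambda$-factors are genuinely coordinatewise-compatible with those in \cite{Mok}'s Corollary 3.5.2. This is where the careful parallel between our \S\ref{sec:prelsimp} and \S\ref{sec:iop3b} has to be invoked; the computational content is routine but the matching of conventions requires care, particularly the interplay between the cyclic permutation $\theta_H$, the Langlands–Shelstad cocycle $t(w',w)$, and the modulus characters.
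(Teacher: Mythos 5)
Your proposal follows essentially the same route as the paper's own proof, which is given by Lemma \ref{lem:tlirred} (reducing arbitrary $k$ to $k=1$ via the simple descent machinery of \S\ref{sec:prelsimp}, namely Proposition \ref{pro:simptfs} for the transfer-factor match on the endoscopic side and Lemma \ref{lem:traceprod} for the trace identity on the spectral side) together with Corollary 3.5.2 of \cite{Mok} for the base case $k=1$. The one piece of scaffolding the paper makes explicit that you gloss over is the ``more flexible'' intertwining operator $R_{P_2|P_1}(w,\psi_1,\psi_F)$ between \emph{different} standard parabolic pairs and its multiplicativity (Lemma \ref{lem:iopxm}), which is what makes the cyclic telescoping of the operators $\Lambda_i$ in the spectral identification go through cleanly.
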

Note that we could have equivalently stated this equality as $f_H(\psi,w)=f'_H(\psi,s_\psi u^{-1})$, since both $u$ and $s_\psi u$ are elements of $N_\psi(M,G)$ that map to $w$. This is a special feature of the group $G_{E/F}(N)$.

Using the arguments of Section \ref{sec:prelsimp} one can reduce this proposition to the case $k=1$, which is part of our assumptions stated in \ref{sub:results-qsuni}. This reduction however requires a more flexible definition of the intertwining operators for the group $G_{E/F}(N)$ than we currently have at our disposal. We will first give the new definition of the intertwining operator and then proceed to the reduction of Proposition \ref{pro:tlir} to the case $k=1$.

Consider two standard parabolic pairs $(M_1,P_1)$ and $(M_2,P_2)$ of $G_1=G_{E/F}(N_1)$. Let $\psi_1 \in \Psi(M_1)$ and let $\pi_1$ be the unique representation of $M_1(F)$ corresponding to $\psi_1$. We consider the set $\tx{Trans}_{G_1\rtimes\theta}(M_1,M_2)=\{g \in G_1|g\theta(M_1)g^{-1}=M_2\}$. This set may be empty. For any $\Gamma$-fixed $w \in M_2\lmod \tx{Trans}_{G_1\rtimes\theta}(M_1,M_2)$ we are going to define a representation $\tilde w\pi_1$ of $M_2(F)$ and an intertwining operator
\[ R_{P_2|P_1}(w,\psi_1,\psi_F) : \mc{H}_{P_1}^{G_1}(\pi_1) \rw \mc{H}_{P_2}^{G_1}(\tilde w\pi_1). \]
We begin by defining $\tilde w\pi_1$. Recall that $G_1$ is equipped with a standard pinning, whose Borel pair we will denote by $(T_1,B_1)$. Let $\dot w_1$ be the unique element of $N(T_1,G_1 \rtimes \theta)/T_1$ which maps to $w$ and which transports the Borel pair $(T_1,B_1 \cap M_1)$ of $M_1$ to the Borel pair $(T_1,B_1 \cap M_2)$ of $M_2$. Since $\theta$ already preserves $T_1$, we have $\dot w=\dot w_0 \theta$ for some $\dot w_0 \in N(T_1,G_1)/T_1$. Let $\tilde w_0 \in N(T_1,G_1)$ be the Langlands-Shelstad lift of $\dot w_0$ and let $\tilde w = \tilde w_0\theta$. The representation $\tilde w\pi$ of $M_2(F)$ acts on the vector space $V_{\pi_1}$ underlying $\pi_1$ and is defined by $\tilde w\pi = \pi\circ\tilde w^{-1}$, where we identify the element $\tilde w \in G_1 \rtimes \theta$ with the automorphism $\tx{Ad}(\tilde w_0)\theta$ of $G_1$.

We define the intertwining operator $R_{P_2|P_1}(w,\psi_1,\psi_F)$ as the composition $l(w,\psi_1,\psi_F) \circ R_{w^{-1}P_2|P_1}(\psi_1)$, where $R_{w^{-1}P_2|P_1}(\psi_1)$ is the usual intertwining operator, as defined in Section \ref{sec:iop1}, and $l(w,\psi_1,\psi_F)$ is an intertwining operator
\[ \mc{H}_{w^{-1}P_2}^{G_1}(\pi_1) \rw \mc{H}_{P_2}^{G_2}(\tilde w\pi_1) \]
that sends a function $\phi$ to the function
\[ g \mapsto \epsilon(\frac{1}{2},\rho_{w^{-1}P_2|P_1}^\vee,\pi_1,\psi_F)\lambda(\dot w)^{-1}\phi(\theta^{-1}(\tilde w_0^{-1}g)). \]
This completes the definition of $R_{P_2|P_1}(w,\psi_1,\psi_F)$. Note that when $(M_2,P_2)=(M_1,P_1)$, we recover the definition of the usual intertwining operator, denoted by $R_P(w,\pi,\psi)$ in \cite[(2.3.25)]{Arthur} in the untwisted case.

\begin{lem} \label{lem:iopxm} Let $(M_3,P_3)$ be a third standard parabolic pair for $G_1$ and let $w' \in \tx{Trans}_{G_1\rtimes\theta'}(M_2,M_3)/M_3$. Then we have
\[ R_{P_3|P_1}(w'w,\psi_1,\psi_F) = R_{P_3|P_2}(w',w\psi_1,\psi_F) \circ R_{P_2|P_1}(w,\psi_1,\psi_F). \]
\end{lem}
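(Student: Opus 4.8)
The statement is the multiplicativity of the twisted intertwining operators $R_{P_j|P_i}(w,\psi_1,\psi_F)$ for the group $G_1=G_{E/F}(N_1)$ equipped with the automorphism $\theta$. The strategy is to decompose $R_{P_j|P_i}$ into its two constituent pieces, the ``classical'' part $R_{w^{-1}P_j|P_i}(\psi_1)$ and the ``twisting'' part $l(w,\psi_1,\psi_F)$, and to verify the cocycle-type relations separately. First I would handle the classical factors: since $R_{w^{-1}P|P'}(\psi_1)$ is built from the unnormalized operator $J_{w^{-1}P|P'}(\pi_{1,\lambda},\psi_F)$ and the normalizing scalar $r_{w^{-1}P|P'}(\psi_{1,\lambda},\psi_F)$, the composition property is exactly Lemma~\ref{lem:iop1m} (functional equation for the normalized intertwining operators, known since $G_1$ is quasi-split and hence covered by the assumptions of \S\ref{sub:results-qsuni}), applied to the three parabolic subgroups $P_1$, $w^{-1}P_2$, $w^{-1}w'^{-1}P_3$ with Levi factor $M_1$, together with the identity $R_{w^{-1}P'|P}\circ l(w,\cdot) = l(w,\cdot)\circ R_{P'|P}$ which follows from Lemma~\ref{lem:iop123c}(2)–(3) (left translation and pinning-invariance).

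\textbf{Key steps.} Second, I would address the $l$-factors, where the failure of multiplicativity of the Langlands--Shelstad lift $w\mapsto\tilde w$ enters. Writing $\dot w=\dot w_0\theta$ and $\dot w'=\dot w_0'\theta$, the product $\dot w'\dot w$ decomposes as $\dot w_0'\theta(\dot w_0)\theta^2$ in $N(T_1,G_1\rtimes\theta)/T_1$, and the discrepancy between $\widetilde{w'w}$ and $\tilde w'\circ\tilde w$ (as automorphisms of $G_1$) is measured by the $2$-cocycle $t(w_0',\theta(w_0))\in T_1$ of \cite[\S2.1]{LS87}. The argument in the proof of Lemma~\ref{lem:iop2m} shows that this discrepancy lies in $A_{M_1}$ (using that $w_0',\theta(w_0)$ normalize $W(T_1,M_1)$), hence acts on $\pi_1$ through its central character $\eta_{\pi_1}$; and the product $\epsilon(\tfrac12,\rho^\vee_{w^{-1}P_2|P_1},\pi_1,\psi_F)\cdot\epsilon(\tfrac12,\rho^\vee_{w'^{-1}P_3|P_2},w\pi_1,\psi_F)\cdot\epsilon(\tfrac12,\rho^\vee_{(w'w)^{-1}P_3|P_1},\pi_1,\psi_F)^{-1}$ together with the analogous $\lambda$-factor product accounts for precisely that central contribution. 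This is the same bookkeeping as in equation \eqref{eq:iop2eq}, now carried out at the level of the chain of parabolics $P_1\to P_2\to P_3$ rather than the three Weyl elements; the $\hat G_1$-side determinant computation there applies verbatim since it only used properties of $W(T^*,G^*)$ and the pinning, not any assumption that the parabolics be $\theta$-stable.

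\textbf{Main obstacle.} The delicate point is keeping the base-point parabolics straight: in the definition $R_{P_j|P_i}(w,\psi_1,\psi_F)$ carries the source $(M_i,P_i)$ and the target $(M_j,P_j)$ along, and in the composition one must check that the representation $w\psi_1=\tilde w\psi_1$ appearing as the parameter of $\tilde w\pi_1$ on $M_2(F)$ is exactly the one for which $R_{P_3|P_2}(w',w\psi_1,\psi_F)$ was defined. This is where the normalization of $\pi(\tilde w)$ and the canonical isomorphism for pinning-preserving automorphisms discussed at the start of \S\ref{sec:iop3b} enter, and one should verify that the ``intertwining property'' built into $l(w,\psi_1,\psi_F)$ relative to $\mc{I}_{P_2}^{G_1}(\tilde w\pi_1)$ is compatible with the subsequent application of $l(w',w\psi_1,\psi_F)$. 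Once the parameters and operators are matched up correctly on each induced space, the identity reduces to the two cocycle relations above, and I expect no genuine analytic difficulty — the meromorphy and non-vanishing at $\lambda=0$ that one needs along the way are all furnished by Lemma~\ref{lem:iop1m} and the quasi-split results imported in \S\ref{sub:results-qsuni}. As usual, it suffices to prove the identity for $\psi_1$ in general position (so that $J$ and $r$ are honestly defined) and then invoke meromorphic continuation.
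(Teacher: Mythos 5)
Your proposed plan matches the paper's argument in substance: reduce the claimed multiplicativity of $R_{P_j|P_i}$ to the cocycle relation $l(w',w\psi_1,\psi_F)\circ l(w,\psi_1,\psi_F)=l(w'w,\psi_1,\psi_F)$ by commuting the classical normalized operators $R_{w^{-1}P'|P}(\psi_1)$ past $l(w,\cdot)$ and applying multiplicativity for the quasi-split group $G_1$, then re-run the bookkeeping of Lemma~\ref{lem:iop2m} to match the Langlands--Shelstad cocycle against the $\epsilon$- and $\lambda$-factors. The only differences are cosmetic: the paper invokes the twisted form of \cite[Lemma 2.1.A]{LS87} directly rather than reducing via $\dot w=\dot w_0\theta$ to the untwisted cocycle $t(\dot w_0',\theta(\dot w_0))$, and it records explicitly the reinterpretation of the positivity conditions required when the three parabolics have distinct Levi factors $M_1\neq M_2\neq M_3$ — a modification you acknowledge but slightly underplay by calling it ``verbatim''.
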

\begin{proof}
We have
\begin{eqnarray*}
&&R_{P_3|P_2}(w',w\psi_1,\psi_F)R_{P_2|P_1}(w,\psi_1,\psi_F)\\
&=&l(w',w\psi_1,\psi_F) \circ R_{w'^{-1}P_3|P_2}(w\psi_1)\circ l(w,\psi_1,\psi_F) \circ R_{w^{-1}P_2|P_1}(\psi_1)\\
&=&l(w',w\psi_1,\psi_F)l(w,\psi_1,\psi_F)R_{(w'w)^{-1}P_3|w^{-1}P_2}(\psi_1)\circ R_{w^{-1}P_2|P_1}(\psi_1)\\
&=&l(w',w\psi_1,\psi_F)l(w,\psi_1,\psi_F)R_{(w'w)^{-1}P_3|P_1}(\psi_1)
\end{eqnarray*}
It is thus enough to prove $l(w',w\psi_1,\psi_F)l(w,\psi_1,\psi_F)=l(w'w,\psi_1,\psi_F)$. The proof is very similar to that of Lemma \ref{lem:iop2m} so we will only give a sketch. The equality we would like to prove is
\begin{eqnarray*}
&&\phi((\tilde{w'w})^{-1}g)\epsilon(\frac{1}{2},\rho_{(w'w)^{-1}P_3|P_1}^\vee,\pi_1,\psi_F)\lambda(\dot w'\dot w)^{-1}\\
&=&\phi((\tilde w'\tilde w)^{-1}g)\epsilon(\frac{1}{2},\rho_{w'^{-1}P_3|P_2}^\vee,w\pi_1,\psi_F)\epsilon(\frac{1}{2},\rho_{w^{-1}P_2|P_1}^\vee,\pi_1,\psi_F)\lambda(\dot w')^{-1}\lambda(\dot w)^{-1}
\end{eqnarray*}
for any $\phi \in \mc{H}_{P_1}^{G_1}(\pi_1)$ and any $g \in G_1(F)$. According to \cite[Lemma 2.1.A]{LS87}, which is stated in sufficient generality so as to apply to the ``twisted'' elements $\dot w$ and $\dot w'$, we have
\[ \tilde{w'w}^{-1} = \prod_{\alpha>0,\dot w\alpha<0,\dot w'\dot w\alpha>0}\alpha^\vee(-1)\cdot (\tilde w'\tilde w)^{-1} \]
where the product is taken over the set of absolute roots of $T_1$ in $G_1$. The set $\{\alpha>0,\dot w\alpha<0,\dot w'\dot w\alpha>0\}$ is once again invariant under $\Gamma$, because $\dot w$, $\dot w'$, and $B_1$ are, as well as invariant under $W(T_1,M_1)$. For the latter, note that since $\dot w$ sends $B_1 \cap M_1$ to $B_2 \cap M_2$, the set under consideration contains only roots $\alpha$ which lie outside of $M_1$. But $W(T_1,M_1)$ preserves the set of positive roots outside of $M_1$, and furthermore $W(T_1,M_1)$ is transported to $W(T_1,M_2)$ by $\dot w$ and to $W(T_1,M_3)$ by $\dot w'\dot w$. We conclude as before that
\[ \phi((\tilde{w'w})^{-1}g)\phi((\tilde w'\tilde w)^{-1}g) =\lambda(m(x))(\tilde w'\tilde w)^{-1}, \]
where $\lambda(m(x)) \in A_{M_1}(F)$ is equal to the image under $\lambda = \sum_{\alpha>0,\dot w\alpha<0,\dot w'\dot w\alpha>0} \alpha^\vee \in X_*(A_{M_1})=X^*(\hat M_1)^\Gamma$ of the element $m(x)$ determined by $m(x) \rtimes x = \phi_\psi(x)$ for $x \in W_F$ any preimage of $-1 \in F^\times$ under the Artin reciprocity map. The equality we are proving thus becomes
\[ \lambda(m(x)) = \frac{\epsilon(\frac{1}{2},\rho_{w'^{-1}P_3|P_2}^\vee,w\pi_1,\psi_F)\epsilon(\frac{1}{2},\rho_{w^{-1}P_2|P_1}^\vee,\pi_1,\psi_F)}{\epsilon(\frac{1}{2},\rho_{(w'w)^{-1}P_3|P_1}^\vee,\pi_1,\psi_F)}
\frac{\lambda(\dot w'\dot w)}{\lambda(\dot w')\lambda(\dot w)}, \]
which is the direct analog of \eqref{eq:iop2eq}. To study the $\epsilon$-factors, we again decompose
\[ \epsilon(\frac{1}{2},\rho_{w^{-1}P_2|P_1}^\vee,\pi_1,\psi_F) = \prod_{\beta>0,w\beta<0} \epsilon(\frac{1}{2},\pi_1,\rho_\beta,\psi_F). \]
Since $w$ does not preserve $M_1$, but rather sends it to $M_2$, we need to interpret the index set of the product as going over those weights $\beta \in R(A_{\hat M_1},\hat G_1)$ which are positive, i.e. $\hat g_\beta \subset \mf{\hat n_1}$, and for which $w\beta \in R(A_{\hat M_2},\hat G_1)$ is negative, i.e. $\hat g_{w\beta} \subset \mf{\hat n_2}$. Since both $\hat P_1$ and $\hat P_2$ are standard, the notions of positivity can of course be interpreted in both cases as $\hat g_{\beta} \subset \hat b_1$ and $\hat g_{w\beta} \subset \hat b_1$, with $\hat b_1$ being the Lie algebra of the Borel subgroup $\hat B_1$. With this interpretation, the argument given in the proof of Lemma \ref{lem:iop2m} goes through and shows that
\[ \frac{\epsilon(\frac{1}{2},\rho_{w'^{-1}P_3|P_2}^\vee,w\pi_1,\psi_F)\epsilon(\frac{1}{2},\rho_{w^{-1}P_2|P_1}^\vee,\pi_1,\psi_F)}{\epsilon(\frac{1}{2},\rho_{(w'w)^{-1}P_3|P_1}^\vee,\pi_1,\psi_F)} = \tx{det}\left(\tx{Ad}(m(x) \rtimes x) \left| \bigoplus_{\alpha>0,w\alpha<0,w'w\alpha>0}\mf{\hat g}_{\alpha^\vee}\right.\right). \]
This again leaves us with having to show the analog of \eqref{eq:iop2eq2}, i.e.
\[ \tx{det}\left(\tx{Ad}(1 \rtimes x) \left| \bigoplus_{\alpha>0,w\alpha<0,w'w\alpha>0}\mf{\hat g}_{\alpha^\vee}\right.\right) = \lambda(\dot w'\dot w,\psi_F)^{-1}\lambda(\dot w,\psi_F)\lambda(\dot w',\psi_F). \]
The argument in the proof of Lemma \ref{lem:iop2m} applies verbatim in this case.
\end{proof}

Having established the more flexible notion of intertwining operators, we are now ready to reduce Proposition \ref{pro:tlir} to the case $k=1$.

\begin{lem} \label{lem:tlirred} Assume that Proposition \ref{pro:tlir} holds in the case $k=1$. Then it holds for any $k$.
\end{lem}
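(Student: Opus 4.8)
The plan is to descend the twisted intertwining relation for $(H,\theta_H)$, with $H=G_{E/F}(N_1)^k$, to the twisted intertwining relation for $(G_{E/F}(N_1),\theta)$, using the comparison machinery of Section \ref{sec:prelsimp}. Observe first that $H$ together with $\theta_H$ is exactly of the shape $\tilde G^{*},\tilde\theta^{*}$ considered there, with $G^{*}=G_{E/F}(N_1)$, $\theta^{*}=\theta$, $n=k$, and the trivial extended pure inner twist (so $z=1$). Hence every result of that section applies verbatim: the map $\Phi:H\rw G_{E/F}(N_1)$, $\Phi(h_1,\dots,h_k)=h_k h_{k-1}\dots h_1$, induces a bijection on $\theta$-twisted conjugacy classes; the endoscopic triple $\mf{\tilde e}$ for $(H,\theta_H)$ associated to $(u^{-1},\psi)$ as in Section \ref{sec:lir} descends, via \eqref{eq:prelsimpiso}, to an endoscopic triple $\mf{e}$ for $(G_{E/F}(N_1),\theta)$ with $G_{E/F}(N_1)^{\mf{e}}=H^{\mf{\tilde e}}$; Proposition \ref{pro:simptfs} gives the equality of transfer factors; and the corollary following it shows that if $f=f_1\otimes\dots\otimes f_k\in\mc{H}(H)$ and $f^{\mf{\tilde e}}\in\mc{H}(H^{\mf{\tilde e}})$ have matching orbital integrals, then the convolution $\bar f:=f_1*\dots*f_k\in\mc{H}(G_{E/F}(N_1))$ and $f^{\mf{\tilde e}}$, now viewed in $\mc{H}(G_{E/F}(N_1)^{\mf{e}})$, have matching orbital integrals. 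By linearity it is enough to treat $f=f_1\otimes\dots\otimes f_k$.

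For the endoscopic side, I would inspect the construction of $\eta^{\mf{e}}$ in Section \ref{sec:prelsimp}: it is $\eta^{\mf{\tilde e}}$ followed by the projection ${}^{L}H\rw{}^{L}G_{E/F}(N_1)$ onto the first coordinate. It follows that the parameter $(\eta^{\mf{\tilde e}})^{-1}\circ\psi$ on $H^{\mf{\tilde e}}$ is identical, as a parameter on $H^{\mf{e}}=G_{E/F}(N_1)^{\mf{e}}$, to $(\eta^{\mf{e}})^{-1}\circ\psi_0$, where $\psi_0$ is the parameter for $G_{E/F}(N_1)$ obtained from $\psi$ by the same first-coordinate projection, and similarly $u^{-1}$ gives rise to an element $u_0^{-1}\in N_{\psi_0}(M_0,G_{E/F}(N_1))$ for the corresponding Levi subgroup $M_0$, compatibly with the correspondence of endoscopic data of Section \ref{sub:endo-correspondence}. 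Since the stable linear form attached to a parameter on $H^{\mf{e}}$ is literally the same object in both situations, the transfer statement above yields $f'_H(\psi,u^{-1})=f'_{G_{E/F}(N_1)}(\psi_0,u_0^{-1})$, the endoscopic side of the $k=1$ relation for $(G_{E/F}(N_1),\theta)$.

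For the spectral side, write $M_H=M_1\times\dots\times M_k$, $P_H=Q_1\times\dots\times Q_k$, $\pi=\pi_1\otimes\dots\otimes\pi_k$ (with $\pi_i$ corresponding to the component $\psi_i$ of $\psi$), so that $\mc{H}_{P_H}^H(\pi)\cong\bigotimes_i\mc{H}_{Q_i}^{G_{E/F}(N_1)}(\pi_i)$. Writing the Langlands--Shelstad lift as $\tilde w=\tilde w_0\rtimes\theta_H$ with $\tilde w_0=(\tilde w_0^{(1)},\dots,\tilde w_0^{(k)})$, the three constituents of $R_{P_H}(w,\psi,\psi_F)$ — the operator $R_{w^{-1}P_H|P_H}(\pi)$, the operator $l(w,\pi)$, and $\mc{I}_{P_H}^H(\pi(\tilde w))$ — decompose, by additivity of $\rho^\vee_{w^{-1}P_H|P_H}$ and of the $\lambda$- and $\epsilon$-factors over the $k$ factors, as a tensor product of the flexible intertwining operators $R_{Q_i|Q_{i-1}}(w_i,\psi_{i-1},\psi_F)$ of the definition given above (indices read cyclically, with $\theta$ inserted in the step from slot $k$ to slot $1$), composed with the cyclic shift of tensor factors. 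Applying the general trace identity established in the proof of Lemma \ref{lem:traceprod} (valid for arbitrary trace-class operators, not merely those of the form $\pi(f)$) together with the multiplicativity of flexible intertwining operators (Lemma \ref{lem:iopxm}), the composite of these operators collapses to a single twisted intertwining operator for $(G_{E/F}(N_1),\theta)$ acting on $\mc{H}_{Q_1}^{G_{E/F}(N_1)}(\pi_1)$, and one obtains $f_H(\psi,w)=f_{G_{E/F}(N_1)}(\psi_0,w_0)$, the spectral side of the $k=1$ relation. Combining the two displayed equalities with the assumed $k=1$ case of Proposition \ref{pro:tlir} gives $f_H(\psi,w)=f'_H(\psi,u^{-1})$, as desired.

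I expect the main difficulty to be purely organizational: checking that the data produced on the spectral side ($\psi_0$, $w_0$, the flexible-operator decomposition) and the data produced on the endoscopic side ($\psi_0$, $u_0^{-1}$, the descended endoscopic triple $\mf{e}$) are literally the \emph{same} descended objects, so that a single application of the $k=1$ relation suffices. This is a careful but routine compatibility verification, parallel to the computations of Section \ref{sec:prelsimp}, governed by the product map $\hat H\rtimes\hat\theta_H\rw\hat G_{E/F}(N_1)\rtimes\hat\theta$ of \eqref{eq:prelsimpiso}, the element $s_\psi$, and the behavior of Langlands--Shelstad lifts under the descent; the $\psi_F$-dependent normalizations descend consistently because the product pinning of $H$ restricts to the standard pinning of $G_{E/F}(N_1)$.
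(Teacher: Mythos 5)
Your proposal reproduces the paper's own proof: tensor-decompose the induced space, split the compound twisted operator into the flexible single-slot intertwiners $R_{P_i|P_{i-1}}$ (using Lemma \ref{lem:iopxm}), collapse via the trace identity of Lemma \ref{lem:traceprod} to a convolution $f_1*\dots*f_k$ on one copy of $G_{E/F}(N_1)$, descend the endoscopic side via Proposition \ref{pro:simptfs}, and invoke the $k=1$ case. One minor imprecision to flag: the spectral and endoscopic descents do \emph{not} produce literally the same pair $(\psi_0,w_0)$ — the spectral chain naturally lands on the $k$-th slot parameter $\psi_k$ while the first-coordinate projection of Section \ref{sec:prelsimp} gives $\psi_1$, and the paper reconciles them by conjugating the endoscopic pair by $\hat\theta\cdot u_1^{-1}$ (using $u\psi=\psi$); your ``routine compatibility verification'' is exactly this conjugation, which is harmless but not tautological.
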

\begin{proof}
We begin with the discussion of $f_H(\psi,w)$. As in the construction of this linear form, we write $M_H=M_1 \times \dots \times M_k$, $\tilde w = (\tilde w_1,\dots,\tilde w_k) \rtimes \theta_H$ and $\pi=\pi_1 \otimes \dots \otimes \pi_k$. Write further $P_H=P_1 \times \dots \times P_k$ and $G_1=G_{E/F}(N_1)$. We have the isomorphism
\begin{equation} \label{eq:tlirred1} \mc{H}_{P_1}^{G_1}(\pi_1) \otimes \dots \otimes \mc{H}_{P_k}^{G_1}(\pi_k) \rw \mc{H}_{P_H}^H(\pi) \end{equation}
of $H$-representations, sending a simple tensor $\phi_1 \otimes \dots \otimes \phi_k$ to the function $H \rw V_\pi = \bigotimes_i V_{\pi_i}$ whose value at $(g_1,\dots,g_k) \in H$ is given by $\phi_1(g_1)\otimes\dots\otimes\phi_k(g_k)$. The operator $R_{P_H}(w,\psi,\psi_F)$ translates under this isomorphism to the composition of the following operators. First, the operator
\[ \mc{H}_{P_1}^G(\pi_1) \otimes \dots \otimes \mc{H}_{P_k}^G(\pi_k) \rw \mc{H}_{w_2^{-1}P_2}^G(\pi_1) \otimes \dots \otimes \mc{H}_{w_k^{-1}P_k}^G(\pi_{k-1}) \otimes \mc{H}_{(w_1\theta)^{-1}P_1}^G(\pi_k) \]
given by $R_{w_2^{-1}P_2|P_1}(\psi_1) \otimes \dots \otimes R_{w_k^{-1}P_k|P_{k-1}}(\psi_{k-1}) \otimes R_{(w_1\theta)^{-1}P_1|P_k}(\psi_k)$. Second, the operator from
\[ \mc{H}_{w_2^{-1}P_2}^G(\pi_1) \otimes \dots \otimes \mc{H}_{w_k^{-1}P_k}^G(\pi_{k-1}) \otimes \mc{H}_{(w_1\theta)^{-1}P_1}^G(\pi_k) \]
to
\[\mc{H}_{P_2}^G(\tilde w_2\pi_1) \otimes \dots \otimes \mc{H}_{P_k}^G(\tilde w_k\pi_{k-1}) \otimes \mc{H}_{P_1}^G(\tilde{w_1\theta} \pi_k) \]
given by $l(w_2,\psi_1,\psi_F) \otimes \dots \otimes l(w_k,\psi_{k-1},\psi_F) \otimes l(w_1\theta,\psi_k,\psi_F)$. And third, the operator
\[ \mc{H}_{P_2}^G(\tilde w_2\pi_1) \otimes \dots \otimes \mc{H}_{P_k}^G(\tilde w_k\pi_{k-1}) \otimes \mc{H}_{P_1}^G(\tilde{w_1\theta} \pi_k) \rw \mc{H}_{P_1}^G(\pi_1) \otimes \dots \otimes \mc{H}_{P_k}^G(\pi_k) \]
sending $\phi_1 \otimes \dots \otimes \phi_k$ to $\iota_1\circ\phi_k\otimes\iota_2\circ\phi_1 \otimes\dots\otimes\iota_k\circ\phi_{k-1}$.

Define for $i=1,\dots,k$ the operators
\[ \Lambda_i : \mc{H}_{P_{i-1}}^G(\pi_{i-1}) \rw \mc{H}_{P_i}^G(\pi_i) \]
by $\Lambda_1 = I_{P_1}^G(\iota_1)\circ R_{P_1|P_k}(w_1\theta,\psi_k,\psi_F)$ and $\Lambda_i=I_{P_i}(\iota_i)\circ R_{P_i|P_{i-1}}(w_i,\psi_{i-1},\psi_F)$ for $i>1$. Then we see that the operator $R_{P_H}(w,\psi,\psi_F)$ translates under the isomorphism \eqref{eq:tlirred1} to the composition of $\Lambda_2 \otimes \dots \otimes \Lambda_k \otimes \Lambda_1$ with the shift operator $\phi_1 \otimes \dots \otimes \phi_k \mapsto \phi_k \otimes \phi_1 \otimes \dots \otimes \phi_{k-1}$.

Consider now the isomorphism
\begin{equation} \label{eq:tlirred2} \mc{H}_{P_k}^G(\pi_k)^{\otimes k} \rw \mc{H}_{P_1}^G(\pi_1) \otimes \dots \otimes \mc{H}_{P_k}^G(\pi_k) \end{equation}
given by
\[ \Lambda_1 \otimes (\Lambda_2\circ\Lambda_1) \otimes \dots\otimes(\Lambda_k\circ\dots\circ\Lambda_1). \]
A direct computation shows that the operator $R_{P_H}(w,\psi,\psi_F)$ translates under the composition of \eqref{eq:tlirred2} and \eqref{eq:tlirred1} to the operator on $\mc{H}_{P_k}^G(\pi_k)^{\otimes k}$ that sends a simple tensor $\phi_1 \otimes\dots\otimes \phi_k$ to the simple tensor $(\Lambda_k\dots\Lambda_1\phi_k)\otimes\phi_1\otimes\dots\otimes\phi_{k-1}$. At the same time, given $f_1 \otimes \dots \otimes f_k = f \in \mc{H}(G)^{\otimes k} \cong \mc{H}(H)$, the operator $\mc{I}_{P_k}^{G_1}(\pi_k,f_1)\otimes\dots\otimes\mc{I}_{P_k}^{G_1}(\pi_k,f_k)$ translates under the composition of \eqref{eq:tlirred2} and \eqref{eq:tlirred1} to the operator $\mc{I}_{P_H}^H(\pi,f_1\otimes\dots\otimes f_k)$. Applying Lemma \ref{lem:traceprod} we conclude that
\[ f_H(\psi,w) = \tx{tr}(\mc{I}_{P_k}^{G_1}(\pi_k,f_1*\dots*f_k)\circ(\Lambda_k\circ\dots\circ\Lambda_1)). \]
Recall now that $w$ was the image of an element $u \in N(A_{\hat M_H},S_\psi(H,\theta_H))$, thus $w\psi=\psi$. From this and Lemma \ref{lem:iopxm} it follows that
\[ \Lambda_k\circ \dots\circ \Lambda_1 = \mc{I}_{P_k}^{G_1}(\iota_k\dots\iota_1)\circ R_{P_k|P_k}(w_k\dots w_1\theta,\psi_k,\psi_F). \]
The right hand side is the canonical intertwining operator on $\mc{H}_{P_k}^{G_1}(\pi_k)$. Thus
\[ f_H(\psi,w) = (f_1*\dots *f_k)_G(w_k\dots w_1\theta,\psi_k). \]
We now turn to the linear form $f'_H(\psi,u^{-1})$. Write $u=(u_1,\dots,u_k) \rtimes \hat\theta_H^{-1}$. Then $u^{-1}=(u_2^{-1},\dots,u_k^{-1},\hat\theta(u_1^{-1})) \rtimes\hat\theta_H$. According to Proposition \ref{pro:simptfs} we have
\[ f'_H(\psi,u^{-1})=(f_1*\dots*f_k)_{G_1}'(\psi_1,u_2^{-1}\dots u_k^{-1}\hat\theta(u_1^{-1})\rtimes\hat\theta). \]
From $u\psi=\psi$ we know $u_1^{-1}\psi_1=\hat\theta^{-1}(\psi_k)$, so conjugating by $\hat\theta \cdot u_1^{-1}$ the right hand side becomes
\[ (f_1*\dots*f_k)'_{G_1}(\psi_k,(u_k\dots u_1 \rtimes \hat\theta)^{-1}). \]
According to the $k=1$ case of Proposition \ref{pro:tlir}, which is our assumption, this equals the expression we obtained for $f_H(\psi,w)$ above.
\end{proof}

Combining Lemma \ref{lem:tlirred} with the assumption made in Section \ref{sub:results-qsuni} that Proposition \ref{pro:tlir} holds in the case $k=1$, we can now assume Proposition \ref{pro:tlir} for any $k$. We are now almost ready to commence with the proof of Lemmas \ref{lem:lirdesc1} and \ref{lem:lirdesc2}. For this, we return to the notation used in their statements. Thus, $G^*=U_{E/F}(N)$, $\Xi : G^* \rw G$ is an equivalence class of extended pure inner twists, $M^*_0 \subset M^*$ are standard Levi subgroup which transfer to $M_0 \subset M \subset G$, $\psi_0 \in \Psi(M^*_0)$ is a parameter whose image in $\Psi(M^*)$ we denote by $\psi$, and $u \in S_\psi \cap N(A_{\hat M_0},\hat G) \cap N(A_{\hat M},\hat G)$. In this paper, we will give the proof under the additional assumption $E/F$ is an extension of fields, leaving the case of a split quadratic algebra for \cite{KMS_B}.

By construction we have
\[ f_{G,\Xi}(\psi_0,u^\natural) = \sum_{\pi_0 \in \Pi_{\psi_0}(M_0,\Xi)} \tx{tr}(R_{P_0}^G(u^\natural,\Xi,\pi_0,\psi_0,\psi_F)\mc{I}_{P_0}^G(\pi_0)(f)). \]
We have inserted the superscript $G$ to keep track which ground we are inducing to. We view this as the character at $(u^\natural,f)$ of the representation of $N_{\psi_0}^\natural(M_0,G) \times \mc{H}(G)$ given by
\[ \bigoplus_{\pi_0 \in \Pi_{\psi_0}(M_0,\Xi)} R_{P_0}^G(-,\Xi,\pi_0,\psi_0,\psi_F)\circ\mc{I}_{P_0}^G(\pi_0,-). \]
We will compare this representation with the representation of $N_\psi^\natural(M,G) \times \mc{H}(G)$ whose character is  $f_{G,\Xi}(\psi,u^\natural)$. Setting $Q=P_0 \cap M$, we consider the induction in stages isomorphism
\[ \mc{H}_{P_0}^G(\pi_0) \rw \mc{H}_P^G(\mc{H}_Q^M(\pi_0)), x \mapsto y, x(g)=y(g,1), y(g,m)=\delta_P^{-\frac{1}{2}}(m)x(mg) \]
and study how the operator $R_{P_0}^G(-,\Xi,\pi_0,\psi_0,\psi_F)$ translates under this isomorphism and how this translated operator compares with $R_{P}^G(-,\Xi,\pi,\psi,\psi_F)$. Recall that $\Xi$ is assumed to be an equivalence class of pure inner twists. We choose $(\xi,z) \in \Xi$ with the property that if we decompose $M^*_0=M^*_{0,+} \times M^*_{0,-}$ with $M^*_{0,+}$ a product of groups of the form $G_{E/F}(N_1)$ and $M^*_{0,-}$ a group of the form $U_{E/F}(N_2)$, then in the corresponding decomposition $z=z_+ \times z_-$ we have $z_+=1$. In particular, $z$ is fixed by the Langlands-Shelstad lifts of all elements of $W(M^*_0,G^*)^\Gamma$. Furthermore, $P_0=\xi(P^*_0)$ is a parabolic subgroup of $G$ defined over $F$ with Levi factor $M_0$, and $M=\xi(M^*)$ and $P=\xi(P^*)$ form a parabolic pair of $G$ defined over $F$. Recall the definition \eqref{eq:iop}
\[ R_{P_0}^G(u^\natural,\Xi,\pi_0,\psi_0,\psi_F) = \mc{I}_{P_0}^G(\pi_0(u^\natural)_{\xi,z}) \circ l^G_{P_0}(w,\xi,\psi_0,\psi_F) \circ R^G_{w^{-1}P_0|P_0}(\xi,\psi_0), \]
where we have again added the superscript $G$ in order to keep track of the relevant groups. We have furthermore written $w$ for the image of $u^\natural$ in $W(M_0^*,G^*)$. Note that the automorphism of $A_{M_0^*}$ induced by $w$ preserves the subgroup $A_M$ and hence gives rise to a well-defined element of $W(M^*,G^*)$, which we will also call $w$. It coincides with the element corresponding to $u^\natural$.

The construction of the operator $R_{P_0}^G$ depends on lifting $w \in W(M_0^*,G^*)$ to an element of $N(T^*,G^*)(F)$, while the construction of the operator $R_P^G$ also depends on lifting $w$ to an element of $N(T^*,G^*)(F)$, but with the important difference that now $w$ is seen as an element of $W(M^*,G^*)$. The results of these two lifting procedures are in general different. More precisely, let $w_0$ be the unique element of $W(T^*,G^*)$ which normalizes $A_{M_0^*}$, preserves the Borel subgroup $B^* \cap M_0^*$, and whose image in $N(A_{M_0^*},W(T^*,G^*))/W(T^*,M_0^*) = W(M_0^*,G^*)$ is equal to $w$. Let $w_1$ be the unique element of $W(T^*,G^*)$ which normalizes $A_{M^*}$, preserves the Borel subgroup $B^* \cap M^*$, and whose image in $N(A_{M^*},W(T^*,G^*))/W(T^*,M^*) = W(M^*,G^*)$ is equal to $w$. The elements $w_0$ and $w_1$ of $W(T^*,G^*)$ will in general be different. Indeed, $w_0$ normalizes both $A_{M^*_0}$ and $A_{M^*}$ and preserves the Borel subgroup $B^* \cap M^*_0$ of $M^*_0$, while $w_1$ normalizes only $A_{M^*}$ and preserves the Borel subgroup $B^* \cap M^*$ of $M^*$. The images of $w_0$ and $w_1$ in $N(A_{M^*},W(T^*,G^*))/W(T^*,M^*)=W(M^*,G^*)$ being equal, we have $w_0=w_{01}w_1$ for some $w_{01} \in W(T^*,M^*)$. Since both elements $w_1$ and $w_0$ are fixed by $\Gamma$, so is $w_{01}$.

While it may appear tempting to use the decomposition $w_0=w_{01}w_1$ in combination with Lemma \ref{lem:iopm}, which in turn rests in particular on Lemmas \ref{lem:iop1m} and \ref{lem:iop2m}, to study $R^G_{P_0}(u^\natural,\Xi,\psi_0,\psi_F)$, we are not in a position to do so, because these lemmas only apply to the product of two elements of $N_\psi^\natural(M,G)$ or of $W(M^*,G^*)^\Gamma$, while the equation $w_0=w_{01}w_1$ expresses a relationship between two different lifts to $W(T^*,G^*)^\Gamma$ of the same element $w \in W(M^*,G^*)^\Gamma$. Instead, we need to go back to the individual constituents of the operator $R^G_{P_0}(u^\natural,\Xi,\psi_0,\psi_F)$ and study how each of them behaves.

Consider the element $w \in W(M^*_0,G^*)$ and view it as an element of $W(M_0,G)$ via the isomorphism determined by $\xi$. Since it preserves $M$ we have well-defined parabolic subgroups $w^{-1}P_0 \in \mc{P}^G(M_0$, $w^{-1}P \in \mc{P}^G(M)$ and $w^{-1}Q \in \mc{P}^{M}(M_0)$.

\begin{lem} \label{lem:l1} Under the induction in stages isomorphism we have the identification
\[ R^G_{w^{-1}P_0|P_0}(\xi,\psi_0) = R^G_{w^{-1}P|P}(\xi,\psi) \circ \mc{I}_P^G(R^M_{w^{-1}Q|Q}(\xi,\psi_0)). \]
\end{lem}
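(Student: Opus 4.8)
The plan is to peel the asserted identity apart into its un-normalized and normalizing constituents, verify each of them, and reassemble; the genuine work lies in the bookkeeping of Haar measures and of the relevant adjoint representations of the $L$-groups. First I would move $\pi$ to general position: replace $\pi$ by $\pi_\lambda$, i.e. $\psi_0$ by $\psi_{0,\lambda}=\psi_0\cdot\chi_\lambda$ for $\lambda\in\mf{a}_{M_0,\C}^*$ in the cone of absolute convergence, exactly as in Section \ref{sec:iop1}. There all three un-normalized operators $J^G_{w^{-1}P_0|P_0}(\xi,\psi_F)$, $J^G_{w^{-1}P|P}(\xi,\psi_F)$, $J^M_{w^{-1}Q|Q}(\xi,\psi_F)$ are honest integrals and the three normalizing factors are non-zero. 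The identity of rational operator-valued functions of $\lambda$, once established for generic $\lambda$, extends by meromorphic continuation; since both sides are holomorphic at $\lambda=0$ by Lemma \ref{lem:iop1m} (resp.\ Lemmas \ref{lem:iop1rm}, \ref{lem:iop1pm} according to $F$), whose hypotheses are the running inductive hypotheses of this section, the identity of Lemma \ref{lem:l1} follows by specialization at $\lambda=0$. Thus it is enough to prove, for generic $\lambda$, the two separate factorizations $r_{w^{-1}P_0|P_0}(\xi,\psi_{0,\lambda},\psi_F)=r_{w^{-1}P|P}(\xi,\psi_\lambda,\psi_F)\cdot r^M_{w^{-1}Q|Q}(\xi,\psi_{0,\lambda},\psi_F)$ and, under the induction-in-stages isomorphism, $J^G_{w^{-1}P_0|P_0}(\xi,\psi_F)=J^G_{w^{-1}P|P}(\xi,\psi_F)\circ\mc{I}_P^G\bigl(J^M_{w^{-1}Q|Q}(\xi,\psi_F)\bigr)$, where $\psi_\lambda$ denotes the image in $\Psi(M^*)$ of $\psi_{0,\lambda}$ (so $\psi_0\mapsto\psi$ at $\lambda=0$).

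For the normalizing factors I would use that $w$ normalizes both $M_0$ and $M$, that $P_0\subset P$, and that $Q=P_0\cap M$, to split the representation space of $\rho^\vee_{w^{-1}P_0|P_0}$ into an $\hat M_0$-stable direct sum: the summand gathering the roots outside $\hat{\mf m}$ — for which being negative for $P_0$ is the same as being negative for $P$, and being positive for $w^{-1}P_0$ the same as for $w^{-1}P$, all parabolics being standard — is the restriction to $^LM_0$ of the space of $\rho^\vee_{w^{-1}P|P}$, while the summand gathering the roots of $\hat M$ outside $\hat M_0$ is the space of $\rho^\vee_{w^{-1}Q|Q}$. Hence, as a representation of $^LM_0$, $\rho^\vee_{w^{-1}P_0|P_0}$ is the direct sum of $\rho^\vee_{w^{-1}P|P}\!\mid_{^LM_0}$ and $\rho^\vee_{w^{-1}Q|Q}$. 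Composing with $\phi_{\psi_{0,\lambda}}$ and recalling that $\phi_{\psi_\lambda}$ is $\phi_{\psi_{0,\lambda}}$ followed by the embedding $^LM_0\hookrightarrow {}^LM$, the multiplicativity of Artin $L$- and $\epsilon$-factors under direct sums gives the claimed factorization of $r_{w^{-1}P_0|P_0}$.

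For the un-normalized operators I would invoke the transitivity of intertwining integrals. Mirroring the Lie-algebra decomposition above, the coset variety $N_{P_0}(F)\cap N_{w^{-1}P_0}(F)\backslash N_{w^{-1}P_0}(F)\cong(\bar N_{P_0}\cap N_{w^{-1}P_0})(F)$ is (as a measure space, via the appropriate semidirect-product structure) the product of $(\bar N_P\cap N_{w^{-1}P})(F)$ and $(\bar N_Q\cap N_{w^{-1}Q})(F)$, so Fubini, read through the isomorphism $\mc{H}_{P_0}^G(\pi)\cong\mc{H}_P^G(\mc{H}_Q^M(\pi))$, identifies $J^G_{w^{-1}P_0|P_0}(\xi,\psi_F)$ with $J^G_{w^{-1}P|P}(\xi,\psi_F)$ composed with $\mc{I}_P^G$ applied fibrewise to $J^M_{w^{-1}Q|Q}(\xi,\psi_F)$ — \emph{provided the three measures match}. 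This is the step I expect to be the main obstacle. All three measures $dn'$ are produced in Section \ref{sec:iop1} from a Chevalley basis extending the distinguished pinning of $G^*$, transported by $\xi$ and rescaled to be self-dual with respect to $\psi_F$; since the pinning of $M^*$ is inherited from that of $G^*$, this Chevalley basis restricts to a Chevalley basis of $M^*$, so the top form defining $dn'$ on $\bar{\mf n}^*_{P_0}\cap\mf n^*_{w^{-1}P_0}$ is, up to sign, the wedge of the top forms defining the $M$- and $G$-level measures on the two summands, and the self-dual-measure prescription is multiplicative under such a direct sum of root-space blocks. This yields the factorization of $J^G_{w^{-1}P_0|P_0}$.

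Finally, combining $R=r^{-1}J$ for all three operators and using that $\mc{I}_P^G$ is a functor — so it commutes with composition and pulls scalar factors out — gives the desired identity for generic $\lambda$, and the specialization at $\lambda=0$ justified in the first paragraph completes the proof. The $L$-group bookkeeping is routine once the positivity conventions are fixed; the compatibility of the three Haar measures is the only genuinely delicate point, and it is handled uniformly by the fact that all of them descend from one Chevalley basis of $G^*$.
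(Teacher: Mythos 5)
Your proof is correct and takes essentially the same approach as the paper's: move to general position via an unramified twist, decompose the domain of integration for the un-normalized operator via $N_{P_0} = N_P \cdot N_Q$, decompose the adjoint representation $\rho^\vee_{w^{-1}P_0|P_0}$ correspondingly as a direct sum of $\rho^\vee_{w^{-1}P|P}|_{^LM_0}$ and $\rho^\vee_{w^{-1}Q|Q}$ for the normalizing factors, and specialize at $\lambda=0$. You give somewhat more detail on the measure compatibility via the Chevalley basis, which the paper passes over quickly, but the argument is the same.
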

\begin{proof}
We may assume that $\psi_0$ is in general position, so that
\[ R^G_{w^{-1}P_0|P_0}(\xi,\psi_0)x(g) = r^G_{w^{-1}P_0|P_0}(\xi,\psi_0,\psi_F)^{-1}J^G_{w^{-1}P_0|P_0}(\xi,\psi_F), \]
with the general case following by analytic continuation. Let $N_P$, $N_{P_0}$ and $N_Q$ be the unipotent radicals of the parabolic subgroups $P$, $P_0$, $Q$. Then we have a direct product decomposition of affine varieties $N_{P_0} = N_P \times N_Q$ and this leads to a decomposition of domain of integration in the definition of $J^G_{w^{-1}P_0|P_0}$ as
\[ w^{-1}N_{P_0}w \cap \bar N_{P_0} = (w^{-1}N_Pw \cap \bar N_P) \times (w^{-1}N_Qw \cap \bar N_Q). \]
From this one sees directly that under the induction in stages isomorphism the operator $J^G_{w^{-1}P_0|P_0}$ is identified with the composition $J^G_{w^{-1}P|P}\circ\mc{I}_P(J^M_{w^{-1}Q|Q})$. The complex vector space $w^{-1}\mf{\hat n}_{P_0}w \cap \mf{\hat{\bar n}}_{P_0}$ has a corresponding decomposition
\begin{equation} \label{eq:l1eq1} w^{-1}\mf{\hat n}_{P_0}w \cap \mf{\hat{\bar n}}_{P_0} = w^{-1}\mf{\hat n}_{P}w \cap \mf{\hat{\bar n}}_{P} \times w^{-1}\mf{\hat n}_{Q}w \cap \mf{\hat{\bar n}}_{Q}\end{equation}
and this decomposition is stable under the adjoint action of $\psi_0(L_F)$. This leads to a decomposition
\[ r^G_{w^{-1}P_0|P_0}(\xi,\psi_0,\psi_F) = r^G_{w^{-1}P|P}(\xi,\psi,\psi_F) \cdot r^M_{w^{-1}Q|Q}(\xi,\psi_0,\psi_F). \]
\end{proof}

Next we study the operator $l_{P_0}^G(w,\xi,\psi_0,\psi_F)$ and how it relates to the operators $l_P^G(w,\xi,\psi,\psi_F)$ and $l_Q^M(w,\xi,\psi_0,\psi_F)$. The operators $l_{P_0}^G$ and $l_P^G$ are those defined in Section \ref{sec:iop2}. Their definition involves the images $\breve w_0$ and $\breve w_1$ under $\xi$ of the Langlands-Shelstad lifts $\tilde w_0$ and $\tilde w_1$ of the elements $w_0$ and $w_1$ respectively. The operator $l_Q^M$ will be described now. The automorphism $\theta_1^* := \tx{Ad}(\tilde w_1)$ of $M^*$ preserves the splitting of this group inherited from $G^*$. The element $w \in W(M^*_0,G^*)$ preserves $M^*$ and provides an element of $W(M^*_0,M^*,\theta_1^*)=N(A_{M^*_0},M^* \rtimes \theta_1^*) / M^*_0$. Its lift to $W(T^*,M^*,\theta_1^*)=N(T^*,M^* \rtimes \theta_1^*)/T^*$ is equal to $w_{01}\theta_1^*$ and its Langlands-Shelstad lift is equal to $\tilde w_{01}\theta_1^*$. Both $\theta_1^*$ and $\tilde w_{01}$ commute with $z_-$, hence with $z$, so $\breve w_{01} = \xi(\tilde w_{01}) \in M(F)$ and $\theta_1=\xi\theta_1^*\xi^{-1}$ is an automorphism of $M$ defined over $F$. Just as we did earlier in this section for the group $H$, we define
\[ [l_Q^M(w,\xi,\psi_0,\psi_F)\phi](m)=\epsilon(\frac{1}{2},\pi_{\psi_0},\rho_{w^{-1}Q|Q}^\vee,\psi_F)\lambda_{M^*}(w_{01}\theta_1^*,\psi_F)^{-1}\phi(\theta_1^{-1}(\breve w_{01}^{-1}m)). \]
We have added the subscript $M^*$ to the Keys-Shahidi constant to emphasize that the ambient group is now $M^*$ and not $G^*$. The representation $\pi_\psi$ is the same representation of $\tilde M_0(F)$ that is used in the normalization of the operator $l_{P_0}^G$.

\begin{lem} \label{lem:l2} Under the induction in stages isomorphism we have the identification
\[ l^G_{P_0}(w,\xi,\psi_0,\psi_F) = \mc{I}_P^G(l_Q^M(w,\xi,\psi_0,\psi_F)) \circ l_P^G(w,\xi,\psi,\psi_F). \]
\end{lem}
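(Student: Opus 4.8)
\textbf{Proof proposal for Lemma \ref{lem:l2}.} The plan is to imitate closely the proof of Lemma \ref{lem:l1}, tracking separately the three ingredients that enter the definition of $l_{P_0}^G$: the explicit ``shift'' operator given by left translation, the representation-theoretic $\epsilon$-factor, and the Keys--Shahidi $\lambda$-factor. The key combinatorial input is the relationship $w_0 = w_{01}w_1$ between the three Langlands--Shelstad lifts, together with the fact that $w_{01} \in W(T^*,M^*)^\Gamma$, so that conjugation by $\tilde w_{01}$ is an \emph{inner} automorphism of $M^*$, while conjugation by $\tilde w_1$ gives the pinned automorphism $\theta_1^*$ of $M^*$, and conjugation by $\tilde w_0$ gives their composite.

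First I would handle the underlying un-normalized translation operators. Unwinding the induction-in-stages isomorphism $\mc{H}_{P_0}^G(\pi_0) \cong \mc{H}_P^G(\mc{H}_Q^M(\pi_0))$, the operator $l(\breve w_0) : x \mapsto (g \mapsto x(\breve w_0^{-1}g))$ translates into the operator on $\mc{H}_P^G(\mc{H}_Q^M(\pi_0))$ which first applies, inside the inner induced space, left translation by $\breve w_{01}$ composed with the automorphism $\theta_1$, and then applies left translation by $\breve w_1$ on the outer space. This is a direct computation using $\breve w_0 = \breve w_{01}\breve w_1$ (which holds because $\tilde w_0 = \tilde w_{01}\tilde w_1$ up to an element of $T^*$ that is absorbed into the identification, precisely the content of Lemma \ref{lem:lsl} since $w_{01}$, $w_1$ belong to the image of the splitting \eqref{eq:wsp}) and the fact that $\breve w_1$ normalizes $M$ while $\breve w_{01}\theta_1^*$ realizes the relevant element of $W(M^*_0,M^*,\theta_1^*)$. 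One must also bookkeep the modulus character coming from the $\delta_P^{-1/2}$ twist in the induction-in-stages formula; this $\delta_P$-factor cancels against the corresponding factor in the definition of $l_Q^M$ as a map into $\mc{H}_Q^M$, exactly as in the case of the group $H$ treated earlier in \S\ref{sub:prelim-local-intertwining1}.

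Next I would compare the normalizing constants. For the $\epsilon$-factors, the decomposition \eqref{eq:l1eq1} of the representation space $w^{-1}\mf{\hat n}_{P_0}w \cap \mf{\hat{\bar n}}_{P_0}$ into the $P$-part and the $Q$-part is stable under $\phi_{\psi_0}(L_F)$, hence multiplicativity of Artin (equivalently, via Henniart's result \cite{Hen10}, representation-theoretic) $\epsilon$-factors in direct sums gives
\[ \epsilon(\tfrac{1}{2},\pi_{\psi_0},\rho^\vee_{w^{-1}P_0|P_0},\psi_F) = \epsilon(\tfrac{1}{2},\pi_{\psi},\rho^\vee_{w^{-1}P|P},\psi_F)\cdot \epsilon(\tfrac{1}{2},\pi_{\psi_0},\rho^\vee_{w^{-1}Q|Q},\psi_F). \]
For the Keys--Shahidi constants, the multiplicativity $\lambda_{G^*}(w_0,\psi_F) = \lambda_{M^*}(w_{01}\theta_1^*,\psi_F)\cdot\lambda_{G^*}(w_1,\psi_F)$ follows from the defining product formula in \cite[(4.1)]{KS88}: one checks that the reduced relative roots of $A_{T^*}$ in $G^*$ that are made negative by $w_0$ but were positive, decompose into those lying in $M^*$ (made negative by $w_{01}\theta_1^*$, with their $\tx{Res}_{E/F}\tx{SL}(2)$ versus $SU_{E/F}(3)$ type unchanged since the Levi $M^*$ is inherited from $G^*$) and those lying outside $M^*$ (made negative by $w_1$), using that $w_0 = w_{01}w_1$ with $w_{01}$ fixing the positive roots outside $M^*$. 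This is the same kind of root-theoretic sorting as in the proof of Lemma \ref{lem:iop2m}.

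Assembling the three pieces --- the translation operators, the $\epsilon$-factor, and the inverse $\lambda$-factor --- and matching them against the definitions of $l_P^G(w,\xi,\psi,\psi_F)$ and $l_Q^M(w,\xi,\psi_0,\psi_F)$ yields the claimed identity. \textbf{The main obstacle} I anticipate is purely notational rather than conceptual: one must be scrupulous about which lift ($\tilde w_0$, $\tilde w_1$, or $\tilde w_{01}$) normalizes which group, about the order of composition in the induction-in-stages identification, and about the placement of the $\delta_P^{1/2}$ twists, since a sign or a modulus-character error there would propagate. The point is that $w_{01}$ is \emph{inner} for $M^*$ so the Langlands--Shelstad cocycle $t(w_{01},w_1)$ lands in $Z(M^*)$, in fact in $A_{M^*}$ by the argument recalled in the proof of Lemma \ref{lem:iop2m}, and the discrepancy it would introduce is killed once we pass to the relevant characters; making this precise is the only delicate step, and it is handled exactly as in that earlier lemma.
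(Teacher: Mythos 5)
Your overall strategy matches the paper's: you split $l_{P_0}^G$ into the translation operator, the $\epsilon$-factor and the Keys--Shahidi constant, use the direct-sum decomposition \eqref{eq:l1eq1} to factor the $\epsilon$-factors, and sort roots into those in $M^*$ and those outside to factor the $\lambda$-factors. Those parts are correct and are what the paper does.

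There is, however, a genuine gap in the one step you flag as ``delicate,'' and it is not a bookkeeping issue. You assert that $\tilde w_0 = \tilde w_{01}\tilde w_1$ holds only ``up to an element of $T^*$,'' that this element is the Langlands--Shelstad $2$-cocycle $t(w_{01},w_1) \in A_{M^*}$ (citing the argument in Lemma \ref{lem:iop2m}), and that the resulting discrepancy is ``killed once we pass to the relevant characters.'' None of this is right. The identity $\tilde w_0 = \tilde w_{01}\tilde w_1$ holds \emph{on the nose}: by \cite[Lemma 2.1.A]{LS87} the error term is $\prod_\alpha \alpha^\vee(-1)$ over the set $\{\alpha>0,\ w_{01}^{-1}\alpha<0,\ (w_{01}w_1)^{-1}\alpha>0\}$, and this set is empty --- since $w_{01}\in W(T^*,M^*)$, the first two conditions force $\alpha \in R(T^*,M^*)$, and then $w_1^{-1}w_{01}^{-1}\alpha>0$ is impossible because $w_1$ preserves the Borel $M^*\cap B^*$. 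The $A_{M^*}$ computation of Lemma \ref{lem:iop2m} applies to two elements of the image of the splitting \eqref{eq:wsp}; here $w_{01}$ lies in $W(T^*,M^*)$, not in that image, so that argument does not transfer and would give a weaker (and unnecessary) conclusion. Your citation of Lemma \ref{lem:lsl} is also off target: that lemma compares the Langlands--Shelstad lift with the permutation-matrix lift, which is not the multiplicativity question at hand. Note finally that the identity being proved is an exact equality of operators, not an equality up to a character or scalar, so a residual central element could not be absorbed ``by passing to the relevant characters''; the proof genuinely needs the cocycle to vanish, and it does.
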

\begin{proof}
We claim that $\tilde w_0=\tilde w_{01}\tilde w_1$. Indeed, according to \cite[Lemma 2.1.A]{LS87}, we have
$\tilde w_0 = \lambda(-1)\tilde w_{01}\tilde w_1$, where $\lambda$ is the sum of all elements of the set subset of $R(T^*,G^*)^\vee$ given by $\{\alpha>0,w_{01}^{-1}\alpha<0,(w_{01}w_1)^{-1}\alpha>0\}$. But this set is empty. Indeed, since $w_{01} \in W(T^*,M^*)$ we know that the two conditions $\alpha>0$ and $w_{01}^{-1}\alpha<0$ imply that $\alpha \in R(T^*,M^*)^\vee$. But then $w_1^{-1}w_{01}^{-1}\alpha>0$ is impossible, because $w_1$ preserves the Borel subgroup $M^* \cap B^*$ of $M^*$.

Let $x \in \mc{H}_{P_0}^G(\pi_0)$ and let $y \in \mc{H}_P^G(\mc{H}_Q^M(\pi_0))$ correspond to $x$ under the induction in stages isomorphism. Beginning with the defining equation \eqref{eq:iop2} we see
\begin{eqnarray*}
&&[l^G_{P_0}(w,\xi,\psi_0,\psi_F)x](g)\\
&=&\epsilon_{P_0}(w,\psi_0,\psi_F) \cdot \lambda_{G^*}(w_0,\psi_F)^{-1} \cdot x(\breve w_0^{-1}g) \\
&=&\epsilon_{P_0}(w,\psi_0,\psi_F) \cdot \lambda_{G^*}(w_{01}w_1,\psi_F)^{-1} \cdot x(\breve w_{01}^{-1}\breve w_1^{-1}g)\\
&=&\epsilon_{P_0}(w,\psi_0,\psi_F) \cdot \lambda_{G^*}(w_{01}w_1,\psi_F)^{-1} \cdot y(\breve w_1^{-1}g,\breve w_1^{-1}\breve w_{01}^{-1}\breve w_1)
\end{eqnarray*}
Note that $\tx{Ad}(\breve w_1)=\theta_1$. The decomposition \eqref{eq:l1eq1} implies a corresponding decomposition
\[ \epsilon_{P_0}(w,\psi_0,\psi_F) = \epsilon_{P}(w,\psi,\psi_F)\epsilon_{Q}(w,\psi_0,\psi_F). \]
We claim that $\lambda_{G^*}(w_{01}w_1,\psi_F)=\lambda_{M^*}(w_{01}w_1,\psi_F)\lambda_{G^*}(w_1,\psi_F)$. Indeed, the left-hand side is a product indexed by the subset of $R(A_{T^*},G^*)$ given by $\{\alpha>0,w_{01}w_1\alpha<0\}$. The factors corresponding to the intersection of this subset with $R(A_{T^*},M^*)$ comprise $\lambda_{M^*}(w_{01}w_1,\psi_F)$. On the other hand,
\[ \{ \alpha \in R(A_{T^*},G^*) \sm R(A_{T^*},M^*)| \alpha > 0, w_{01}w_1\alpha<0\} = \{ \alpha \in R(A_{T^*},G^*) | \alpha > 0, w_1\alpha<0\}. \]
The inclusion of the left side into the right side follows from the fact that $w_{01}$ preserves the positive elements of $R(A_{T^*},G^*) \sm R(A_{T^*},M^*)$, while the opposite inclusion follows from the fact that $w_1$ preserves the positive elements of $R(A_{T^*},M^*)$. Now the right side is precisely the index set in the definition of $\lambda_{G^*}(w_1,\psi_F)$.
\end{proof}
With the previous two lemmas at hand we conclude that the representation
\[ \bigoplus_{\pi_0 \in \Pi_{\psi_0}(M_0,\Xi)} R_{P_0}^G(-,\Xi,\pi_0,\psi_0,\psi_F)\circ\mc{I}_{P_0}^G(\pi_0,-) \]
of $N_{\psi_0}^\natural(M_0,G) \times \mc{H}(G)$ is isomorphic to the representation sending $(u^\natural,f)$ to
\begin{eqnarray*}
&&\bigoplus_{\pi_0 \in \Pi_{\psi_0}(M_0,\Xi)} l_M^G(w,\xi,\psi,\psi_F) \circ R_{w^{-1}P|P}^G(\Xi,\pi_0,\psi_0,\psi_F)\\
&\circ&\mc{I}_{P}^G( \mc{I}_{P_0}^M(\pi_0(u^\natural))\circ l_Q^M(w,\xi,\psi_0,\psi_F)\circ R^M_{w^{-1}Q|Q}(\xi,\psi_0)\circ \mc{I}_Q^M(\pi_0,f)).
\end{eqnarray*}
The first two operators in this composition are defined in terms independent of the particular representation $\pi_0$ and can be extended, using the same formulas, to the direct sum of all $\pi_0$. This allows us to move the direct sum past them. On the other hand, the first three of the four factors inside of $\mc{I}_P^G$ comprise the canonical twisted self-intertwining operator $R_{Q}^M(u^\natural,(\xi,z),\pi_0,\psi_0,\psi_F)$. The above representation becomes
\begin{eqnarray*}
&&l_P^G(w,\xi,\psi,\psi_F) \circ R_{w^{-1}P|P}^G(\Xi,\pi_0,\psi_0,\psi_F)\\
&\circ&\mc{I}_{P}^G( \bigoplus_{\pi_0 \in \Pi_{\psi_0}(M_0,\Xi)}R_Q^M(u^\natural,(\xi,z),\pi_0,\psi_0,\psi_F)\circ \mc{I}_Q^M(\pi_0,f)).
\end{eqnarray*}

We can now apply the twisted local intertwining relation to the argument of $\mc{I}_P^G$. Indeed, we have the decomposition $M^*=M^*_+ \times M^*_-$, with $M^*_+$ being a product of groups of the form $G_{E/F}(N_1)$ and $M^*_-$ being equal to $U_{E/F}(N_-)$. Write $\xi=\xi_+ \times \xi_-$ accordingly. Our assumption $z_+=1$ made earlier ensures that $\xi_+ : M^*_+ \rw M_+$ is an isomorphism over $F$, while $(\xi_-,z_-) : M^*_- \rw M_-$ is a pure inner twist. The automorphism $\tx{Ad}(\tilde w_1)$ of $M^*$ preserves the splitting of this group inherited from $G^*$. It acts trivially on $M^*_-$ and so the twisted group $(M^*,\tx{Ad}(\tilde w_1))$ decomposes as the product of the twisted group $(M^*_+,\tx{Ad}(\tilde w_1))$ and the untwisted group $M^*_-$. The automorphism $\tx{Ad}(\tilde w_1)$ acts on $M^*_+$ by permuting the individual factors of the form $G_{E/F}(N_1)$. The twisted group $(M^*_+,\tx{Ad}(\tilde w_1))$ thus decomposes as a product of twisted groups $(M^*_{+,i},\theta_i)$, where each twisted group $(M^*_{+,i},\theta_i)$ is of the form $(H,\theta_H)$ discussed earlier in this section. The twisted intertwining relation for $M$ thus follows from the usual intertwining relation for $M_-$ stated as Theorem \ref{thm:lir}, whose validity is being assumed by induction, as well as the twisted local intertwining relation for each of the twisted groups $(M^*_{+,i},\theta_i)$, which is Proposition \ref{pro:tlir}. This relation, coupled with Theorem \ref{thm:locclass-single} and Proposition \ref{prop:local-stable-linear} tell us that
\[ \bigoplus_{\pi_0 \in \Pi_{\psi_0}(M_0,\Xi)}R_Q^M(u^\natural,(\xi,z),\pi_0,\psi_0,\psi_F)\circ \mc{I}_Q^M(\pi_0,f) \cong \bigoplus_{\pi \in \Pi_{\psi}(M,\Xi_M)} \pi(u^\natural)\pi(f) \]
as representations of $(N_{\psi_0}(M_0,G) \cap N_\psi(M,G)) \times \mc{H}(M)$. With this, the above representation becomes
\begin{eqnarray*}
l_P^G(w,\xi,\psi,\psi_F) \circ R_{w^{-1}P|P}^G(\Xi,\pi,\psi,\psi_F)\circ\mc{I}_{P}^G\left( \bigoplus_{\pi \in \Pi_{\psi}(M,\Xi_M)} \pi(u^\natural)\pi(f)\right).
\end{eqnarray*}
The trace at $(u^\natural,f)$ of the latter is by definition $f_{G,\Xi}(\psi,u^\natural)$.

We have thus proved $f_{G,\Xi}(\psi_0,u^\natural)=f_{G,\Xi}(\psi,u^\natural)$, i.e. Lemma \ref{lem:lirdesc1}. In the course of the proof we have also collected all the information we need for the proof of Lemma \ref{lem:lirdesc2}. Indeed, assume now that $u \in S_\psi(M) \cap N(A_{\hat M_0},\hat G)$. For $\pi \in \Pi_{\psi_0}(M_0,\Xi_{M_0})$ we consider the intertwining operator
\[ R_{P_0}^G(u^\natural,\Xi,\pi_0,\psi_0,\psi_F) = \mc{I}_{P_0}^G(\pi_0(u^\natural)_{\xi,z}) \circ l^G_{P_0}(w,\xi,\psi_0,\psi_F) \circ R^G_{w^{-1}P_0|P_0}(\xi,\psi_0). \]
We apply Lemmas \ref{lem:l1} and \ref{lem:l2} and note that both operators $R_{w^{-1}P|P}^G(\xi,\psi)$ and $l_P^G(w,\xi,\psi,\psi_F)$ are equal to the identity, because the image of $u$ in $W(\hat M,\hat G)$ is trivial, and so $w$ is the trivial element of $W(M,G)$. Thus
\begin{eqnarray*}
R_{P_0}^G(u^\natural,\Xi,\pi_0,\psi_0,\psi_F)&=&\mc{I}_P^G(\mc{I}_Q^M(\pi_0(u^\natural)_{\xi,z}) \circ l^M_{Q}(w,\xi,\psi_0,\psi_F) \circ R^M_{w^{-1}Q|Q}(\xi,\psi_0))\\
&=&\mc{I}_P^G(R_Q^M(u^\natural,\Xi_M,\pi_0,\psi_0,\psi_F)).
\end{eqnarray*}
This is the first statement of Lemma \ref{lem:lirdesc2}. The second statement follows immediately from the first. Indeed, for $f \in \mc{H}(G)$ we have
\begin{eqnarray*}
f_{G,\Xi}(\psi_0,u^\natural)&=& \sum_{\pi_0 \in \Pi_{\psi_0}(M_0,\Xi)} \tx{tr}(R_{P_0}^G(u^\natural,\Xi,\pi_0,\psi_0,\psi_F)\circ\mc{I}_{P_0}^G(\pi_0)(f))\\
&=&\sum_{\pi_0 \in \Pi_{\psi_0}(M_0,\Xi)} \tx{tr}(\mc{I}_P^G(R_Q^M(u^\natural,\Xi_M,\pi_0,\psi_0,\psi_F))\circ\mc{I}_P^G(\mc{I}_Q^M(\pi_0))(f))\\
&=&\sum_{\pi_0 \in \Pi_{\psi_0}(M_0,\Xi)} \tx{tr}(R_Q^M(u^\natural,\Xi_M,\pi_0,\psi_0,\psi_F)\circ\mc{I}_Q^M(\pi_0)(f_M))\\
&=&f_{M,\Xi_M}(\psi_0,u^\natural),
\end{eqnarray*}
where $f_M$ is the constant term of $f$ along $P$.

\subsection{A preliminary result on the local intertwining relation II}\label{sub:prelim-local-intertwining}

In the setup of the local intertwining relation there are a proper Levi subgroup $M^*$ of $G^*$ and a parameter $\psi_{M^*}\in \Psi(M^*)$. Throughout \S\ref{sub:prelim-local-intertwining} we will write $\psi$ for the image of $\psi_{M^*}$ in $\Psi(G^*)$ and distinguish it from $\psi_{M^*}$ whenever it helps to avoid confusion. We apologize for this change of convention.
  As a further reduction step, we reduce the proof of LIR to the case when the parameter $\psi\in \Psi(G^*)$ is either elliptic or non-elliptic but belongs to one of the two very special (``exceptional'') cases. Again the argument is based on the inductive hypotheses and purely local arguments.

  Throughout this subsection $G^*$ is assumed to be a unitary group over a local field $F$.\footnote{The ``linear case'', namely the case of $\GL(m,D)$, is treated separately when it comes to the local intertwining relation; one problem is that not all Levis of $\GL(m,D)$ globalize to Levis of global unitary groups.}
  Write a local parameter $\psi\in \Psi(G^*)$ as a direct sum of simple parameters:
  $$\psi=\ell_1\psi_1 \oplus \cdots \oplus \ell_r \psi_r,\quad r\ge1,~\ell_1\ge\cdots\ge \ell_r\ge 1,$$
  where $\psi_i$ are mutually inequivalent. The parameter $\psi$ is said to be elliptic if there exists a semisimple element in $\ol{S}_{\psi}$ whose centralizer in $\ol{S}_{\psi}$ is finite. The set $\Psi_2(G^*)$ (resp. $\Psi_{\el}(G^*)$, resp. $\Psi^2_{\el}(G^*)$, resp. $\Psi^{\el}(G^*)$) denotes the subset of $\Psi(G^*)$ consisting of discrete (resp. elliptic, resp. non-discrete and elliptic, resp. non-elliptic) members. We have inclusions
  $$\Psi_2(G^*)\subset \Psi_{\el}(G^*)\subset \Psi(G^*).$$
  Recall that the local intertwining relation is concerned with non-discrete parameters $\psi\in \Psi^2(G^*)=\Psi^{\el}(G^*)\coprod\Psi^2_{\el}(G^*)$. Elliptic non-discrete parameters allow an explicit description. See \cite[(6.4.1)]{Arthur} and also \cite[(7.4.1)]{Mok}.

\begin{lem}\label{lem:local-ell-nondisc-param} The set $\Psi^2_{\el}(G^*)$ exactly consists of $\psi$ of the form
    $$\psi=2\psi_1\oplus \cdots \oplus 2\psi_q \oplus \psi_{q+1} \oplus \cdots \oplus \psi_r,\quad q\ge 1$$
    such that $S_\psi\simeq \O(2,\C)^q \times \O(1,\C)^{r-q}$. Moreover $W^0_\psi=\{1\}$ for every $\psi\in \Psi^2_{\el}(G^*)$.
\end{lem}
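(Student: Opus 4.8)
\textbf{Proof plan for Lemma \ref{lem:local-ell-nondisc-param}.}

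The plan is to analyze the structure of $S_\psi$ directly from the explicit description \eqref{e:global-centralizer} (in its local incarnation from \S\ref{subsub:local-param-U}), which says that for $\psi = \ell_1\psi_1 \oplus \cdots \oplus \ell_r\psi_r$ written with $\psi^N = \eta_\chi\psi$ decomposed into irreducibles indexed by $I^+_{\psi^N} \sqcup I^-_{\psi^N} \sqcup J_{\psi^N}$, one has $S_\psi \simeq \prod_{i \in I^+} \O(\ell_i,\C) \times \prod_{i \in I^-}\Sp(\ell_i,\C) \times \prod_{j \in J}\GL(\ell_j,\C)$. First I would recall that ellipticity of $\psi$ means there exists a semisimple $s \in \bar S_\psi$ with $|Z_{\bar S_\psi}(s)| < \infty$, and that this forces $\bar S_\psi$ — hence $S_\psi$ modulo the central $Z(\hat G^*)^\Gamma = \{\pm 1\}$ — to have a semisimple element with finite centralizer. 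A factor $\Sp(\ell,\C)$ or $\GL(\ell,\C)$ with $\ell \geq 2$ never admits a semisimple element with finite centralizer (its maximal torus is positive-dimensional and central in its own centralizer), and the same is true for $\O(\ell,\C)$ with $\ell \geq 3$. Only $\O(1,\C)$ and $\O(2,\C)$ contribute factors whose semisimple elements can have finite centralizer: in $\O(2,\C)$ the reflections have centralizer of order $4$, finite. Hence for $\psi$ elliptic, every $\Sp$-factor and every $\GL$-factor must be absent, i.e. $I^-_{\psi^N} = J_{\psi^N} = \emptyset$, and every $\O(\ell_i,\C)$-factor has $\ell_i \in \{1,2\}$.

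Next I would impose the non-discrete condition. By Lemma \ref{lem:global-disc-param} (whose proof is purely a statement about the explicit form of $S_\psi$ and applies verbatim locally), $\psi \in \Psi_2(G^*)$ iff all $\ell_i = 1$ and $I^-_{\psi^N} = J_{\psi^N} = \emptyset$. Since we have just shown $I^- = J = \emptyset$ for elliptic $\psi$, non-discreteness is equivalent to having at least one index $i$ with $\ell_i = 2$. Reordering so that the multiplicity-$2$ simple constituents come first, we get $\psi = 2\psi_1 \oplus \cdots \oplus 2\psi_q \oplus \psi_{q+1} \oplus \cdots \oplus \psi_r$ with $q \geq 1$, and correspondingly $S_\psi \simeq \O(2,\C)^q \times \O(1,\C)^{r-q}$, which is exactly the asserted description. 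Conversely, any $\psi$ of this form has $\bar S_\psi$ with a reflection-type element whose centralizer is finite (the product of the $\O(2)$-centralizers of reflections with the finite group $\O(1,\C)^{r-q}$, all modulo $\{\pm1\}$), so $\psi$ is elliptic; and it is non-discrete because $\bar S_\psi$ is infinite (each $\O(2,\C)$ contributes a one-dimensional torus). This establishes the claimed equality of sets.

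Finally, for the statement $W^0_\psi = \{1\}$: by definition $W^0_\psi = W^0_\psi(G^*,G^*)$ is the Weyl group $N(A_{\hat M},S^0_\psi)/Z(A_{\hat M},S^0_\psi)$ with $\hat M$ being all of $\hat G^*$ — more precisely it is the Weyl group of $S^0_\psi$ relative to a maximal torus, i.e. the Weyl group of the connected reductive group $S^0_\psi$. With $S_\psi \simeq \O(2,\C)^q \times \O(1,\C)^{r-q}$ we have $S^0_\psi \simeq \SO(2,\C)^q = (\C^\times)^q$, a torus. The Weyl group of a torus is trivial, so $W^0_\psi = \{1\}$. The only genuine point requiring care — and the step I expect to be the main obstacle to writing cleanly — is the case analysis showing that no classical group factor other than $\O(1)$ and $\O(2)$ can carry a semisimple element with finite centralizer, and more precisely the bookkeeping that the \emph{non-discrete} elliptic condition translates exactly into "$I^- = J = \emptyset$ plus at least one $\ell_i = 2$"; here one must be slightly careful that ellipticity is a condition on $\bar S_\psi$ rather than $S_\psi$, but since $Z(\hat G^*)^\Gamma$ is finite the distinction does not affect which semisimple elements have finite centralizer, and the argument goes through.
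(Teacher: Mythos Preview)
Your proposal is correct and follows the same approach as the paper, which gives only a two-line proof (``easy from the definition and an explicit description of $S_\psi$'' plus ``$S^0_\psi$ is abelian''); you have supplied the details the paper omits. One small notational slip: the group $W^0_\psi$ here is $W^0_\psi(M^*,G^*)$ for the Levi $M^*$ with $\psi_{M^*}\in\Psi_2(M^*)$, not $W^0_\psi(G^*,G^*)$ --- but since $A_{\hat M^*}$ is then a maximal torus of $S_\psi$, your identification of $W^0_\psi$ with the Weyl group of $S^0_\psi$ is correct, and the abelian/torus argument goes through as you wrote.
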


\begin{rem}
  In case $G^*$ is a general linear group (which is excluded in this subsection), $\Psi^2_{\el}(G^*)$ is empty.
\end{rem}

\begin{proof}
  The characterization of $\Psi^2_{\el}(G^*)$ is easy from its definition and an explicit description of $S_\psi$ for a general parameter $\psi$. The triviality of $W^0_\psi$ is immediate from the fact that $S^0_\psi$ is abelian.
\end{proof}

  The following two cases are readily checked to be non-elliptic parameters and turn out to be more difficult to treat than the other non-elliptic parameters.
    \begin{enumerate}
\item[(exc1)] $\psi=2\psi_1\oplus\psi_2\oplus \cdots \oplus \psi_r$, $S_\psi\simeq \SL(2,\C)\times \O(1,\C)^{r-1}$,
\item[(exc2)] $\psi=3\psi_1\oplus\psi_2\oplus \cdots \oplus \psi_r$, $S_\psi\simeq \O(3,\C)\times \O(1,\C)^{r-1}$,
\end{enumerate}
  Define $\Psi_{\EXC1}(G^*)$ (resp. $\Psi_{\EXC2}(G^*)$) to be the subset of $\Psi(G^*)$ consisting of non-elliptic parameters of type (exc1) (resp. (exc2)) as above. Set $\Psi_{\EXC}(G^*):=\Psi_{\EXC1}(G^*)\coprod\Psi_{\EXC2}(G^*)$. Loosely speaking, we will tackle the local intertwining relation in the following order of increasing difficulty:
\begin{itemize}
  \item when $\psi$ is non-elliptic and not of form (exc1) or (exc2),
  \item when $\psi$ is non-discrete but elliptic, or of form (exc1) or (exc2).
\end{itemize}

Here we are going to show that in the former case, the induction hypothesis implies the local intertwining relation for $\psi$ by a local method.  The local intertwining relation in the latter case will be handled in \S\ref{sub:LIR-proof} only after developing enough global machineries relying on the trace formula.

 In view of Proposition \ref{pro:lirreddisc} we are reduced to the case where the following hypothesis holds true, which will remain in effect until the end of this subsection:
\begin{itemize}
  \item $\psi_{M^*}$ belongs to $\Psi_2(M^*)$ (rather than just $\Psi(M^*)$).
\end{itemize}
  Then the torus $\ol{T}_\psi$ of $\ol{S}_\psi:=S_\psi/Z(\hat G^*)^\Gamma$ given by
   $$\ol{T}_\psi:=Z(\hat M^*)^\Gamma/Z(\hat G^*)^\Gamma=A_{\hat M^*}/(A_{\hat M^*}\cap Z(\hat G^*)^\Gamma)$$
   is a maximal torus. Similarly $T_\psi:=A_{\hat M^*}$ is a maximal torus of $S_\psi$. Observe that $W^0_\psi$ and $W_\psi$ may be thought of as the Weyl groups for $\ol{T}_\psi$ in $\ol{S}^0_\psi$ and $\ol{S}_\psi$ (as well as the Weyl groups for $T_\psi$ in $S^0_\psi$ and $S_\psi$), respectively. For $\ol{s}\in \ol{S}_\psi$ denote by $\ol{T}_{\psi,\ol{s}}$ the centralizer of $\ol{s}$ in $\ol{T}_\psi$.

\begin{lem}\label{lem:S(M,G)-S(G)}
  Under the assumption $\psi_{M^*}\in \Psi_2(M^*)$ we have canonical isomorphisms
  $S^\natural_{\psi_{M^*}}(M^*,G^*)=S^\natural_\psi(G^*)$. %
\end{lem}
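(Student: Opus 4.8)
The plan is to write down the obvious map induced by the inclusions $N(A_{\hat M^*},S_\psi^{\tx{rad}}) \subset N(A_{\hat M^*},S_\psi) \subset S_\psi$ and check it is an isomorphism; the only substantive input is that the hypothesis $\psi_{M^*}\in\Psi_2(M^*)$ forces $A_{\hat M^*}$ to be a \emph{maximal} torus of $S_\psi^\circ$, and everything else is routine structure theory of the reductive group $S_\psi^\circ$.

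\textbf{Step 1 (where discreteness enters).} Since $\psi_{M^*}$ is discrete for $M^*$, the group $\ol S_{\psi_{M^*}}(M^*)=S_{\psi_{M^*}}(M^*)/Z(\hat M^*)^\Gamma$ is finite, hence $S_{\psi_{M^*}}(M^*)^\circ = Z(\hat M^*)^{\Gamma,\circ}=A_{\hat M^*}$. Because $S_\psi(M^*):=\tx{Cent}(A_{\hat M^*},S_\psi)=\tx{Cent}(\psi,\hat M^*)=S_{\psi_{M^*}}(M^*)$ and the centralizer of a torus in the connected group $S_\psi^\circ$ is connected, we get $\tx{Cent}(A_{\hat M^*},S_\psi^\circ)=A_{\hat M^*}$, so $A_{\hat M^*}$ is a maximal torus of $S_\psi^\circ$. (Equivalently, this is the statement that $^LM^*$ is a minimal Levi subgroup of $^LG^*$ containing the image of $\psi$, via Lemma \ref{lem:sphilevi} and the remark following it.) Without this hypothesis the argument below breaks, and indeed $S^\natural_{\psi_{M^*}}(M^*,G^*)$ is then genuinely larger than $S^\natural_\psi(G^*)$; so isolating Step 1 is really the only delicate point.

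\textbf{Step 2 (the canonical map).} Note $\srad_\psi$ is normal in $S_\psi$ and $N(A_{\hat M^*},\srad_\psi)=N(A_{\hat M^*},S_\psi)\cap\srad_\psi$. Hence the composite $N(A_{\hat M^*},S_\psi)\hookrightarrow S_\psi \twoheadrightarrow S_\psi/\srad_\psi$ annihilates $N(A_{\hat M^*},\srad_\psi)$ and descends to an injective homomorphism
\[ S^\natural_{\psi_{M^*}}(M^*,G^*)=\frac{N(A_{\hat M^*},S_\psi)}{N(A_{\hat M^*},\srad_\psi)}\longrightarrow \frac{S_\psi}{\srad_\psi}=S^\natural_\psi(G^*), \]
which is the asserted canonical identification; injectivity is immediate, so it remains to prove surjectivity.

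\textbf{Step 3 (surjectivity).} It suffices to show $S_\psi=N(A_{\hat M^*},S_\psi)\cdot\srad_\psi$. Given $s\in S_\psi$, conjugation by $s$ preserves $S_\psi^\circ$, so both $A_{\hat M^*}$ and $sA_{\hat M^*}s^{-1}$ are maximal tori of $S_\psi^\circ$ by Step 1, hence conjugate by some $g\in S_\psi^\circ$; then $gs\in N(A_{\hat M^*},S_\psi)$. Writing $S_\psi^\circ=[S_\psi^\circ]_{\der}\cdot Z(S_\psi^\circ)^\circ$ and using $[S_\psi^\circ]_{\der}\subset\srad_\psi$ together with $Z(S_\psi^\circ)^\circ\subset A_{\hat M^*}\subset N(A_{\hat M^*},S_\psi)$, we get $g\in\srad_\psi\cdot N(A_{\hat M^*},S_\psi)$; since $\srad_\psi\triangleleft S_\psi$ this product is a subgroup, so $s=g^{-1}(gs)\in N(A_{\hat M^*},S_\psi)\cdot\srad_\psi$. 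Combining Steps 2 and 3 (second isomorphism theorem) yields the lemma. The canonicity is clear since no choices were made. The main obstacle, such as it is, is simply recognizing the need for Step 1; the rest is bookkeeping with the reductive group $S_\psi^\circ$.
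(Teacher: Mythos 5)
Your proposal is correct. Both injectivity (your Step 2, a tautology since $N(A_{\hat M^*},S_\psi^\tx{rad})=N(A_{\hat M^*},S_\psi)\cap S_\psi^\tx{rad}$) and surjectivity are established, and Step 1 correctly isolates where discreteness enters (making $A_{\hat M^*}$ a maximal torus of $S_\psi^\circ$, which the paper uses silently).

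The route for surjectivity is genuinely different from the paper's. The paper factors the map through $\ol{\cS}_\psi=S_\psi/S_\psi^\circ Z(\hat G^*)^\Gamma$: it notes that $N(A_{\hat M^*},S_\psi)$ meets every component of $S_\psi$ (the maximal-torus conjugacy observation you also use), so the composite to $\ol{\cS}_\psi$ is onto; then it invokes Lemma \ref{lem:squot} to identify $\ker(S^\natural_\psi\to\ol{\cS}_\psi)$ with the image of $Z(\hat G^*)^\Gamma$, which visibly lies in the image of $N(A_{\hat M^*},S_\psi)$, and concludes. You instead prove the cleaner equality $S_\psi=N(A_{\hat M^*},S_\psi)\cdot S_\psi^\tx{rad}$ directly, using the decomposition $S_\psi^\circ=[S_\psi^\circ]_\tx{der}\cdot Z(S_\psi^\circ)^\circ$ together with $[S_\psi^\circ]_\tx{der}\subset S_\psi^\tx{rad}$ and $Z(S_\psi^\circ)^\circ\subset A_{\hat M^*}$. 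Your argument avoids any detour through $\ol{\cS}_\psi$ and Lemma \ref{lem:squot}, is more self-contained, and makes explicit why the identification holds; the paper's version is terser because it leans on machinery already set up. The common kernel of both proofs is the conjugacy of maximal tori in $S_\psi^\circ$.

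One small inaccuracy in your aside: if one drops the hypothesis $\psi_{M^*}\in\Psi_2(M^*)$, the canonical map is still injective (injectivity is independent of discreteness), so $S^\natural_{\psi_{M^*}}(M^*,G^*)$ would become a possibly \emph{proper subgroup} of $S^\natural_\psi(G^*)$, not a larger group. This doesn't affect the proof but the parenthetical should be flipped.
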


\begin{proof}
  Consider the natural maps
  $$S^\natural_{\psi_{M^*}}(M^*,G^*)=\frac{N(A_{\hat M^*},S_\psi)}{N(A_{\hat M^*},S^{\rad}_\psi)} \ra S^\natural_\psi(G^*)=\frac{S_\psi}{S^{\rad}_\psi}
\ra \ol{\cS}_\psi(G^*)=\frac{S_\psi}{S^0_\psi Z(\hat G^*)}.$$
  It is easy to see that $N(A_{\hat M^*},S_\psi)$ meets every connected component of $S_\psi$, so the composition map is onto. The second map is the quotient map by the image of $Z(\hat G^*)$, so surjective. Since $Z(\hat G^*)\subset N(A_{\hat M^*},S_\psi)$, we conclude that the first map above is onto.
\end{proof}

  Following \cite[(4.1.7)]{Arthur} we define $S_{\psi,\el}$ (resp. $\ol{S}_{\psi,\el}$) to be the set of $s\in S_{\psi,\sspl}$ (resp. $s\in \ol{S}_{\psi,\sspl}$ such that $Z(\Cent(s,S_\psi^0))$ (resp. $Z(\Cent(s,\ol{S}_\psi^0))$ has finite cardinality.
  It is easy to see that the natural surjection $S_\psi\ra \ol{S}_\psi$ carries $S_{\psi,\el}$ onto $\ol{S}_{\psi,\el}$.
  Considering the natural surjections $S_{\psi}\ra S^\natural_{\psi}$ and $\ol{S}_\psi\ra \ol{\cS}_\psi$, we define $$S^\natural_{\psi,\el}\quad\mbox{and}\quad \ol{\cS}_{\psi,\el}$$ to be the images of $S_{\psi,\el}$ and $\ol{S}_{\psi,\el}$, respectively. (Our $\ol{S}_{\psi,\el}$ is the same as Arthur's defined on \cite[p.331]{Arthur}.) Since the two natural composition maps $S_\psi\ra \ol{S}_\psi\ra \ol{\cS}_\psi$ and $S_\psi\ra S^\natural_\psi\ra \ol{\cS}_\psi$ agree, we see that $S^\natural_{\psi,\el}$ has the same image in $\ol{\cS}_\psi$ as $\ol{S}_{\psi,\el}$, namely $\ol{\cS}_{\psi,\el}$.

\begin{lem}\label{lem:local-exc}
  Suppose that $\psi\in \Psi^{\el}(G^*)$ does not fall into (exc1) or (exc2). Then
  \begin{enumerate}
  \item  every simple reflection $w\in W^\rad_\psi$ centralizes a torus of positive dimension in $\ol{T}_\psi$ and
  \item  $\dim\ol{T}_{\psi,\ol{s}}\ge 1,~\forall \ol{s}\in \ol{S}_\psi$.
  \end{enumerate}
\end{lem}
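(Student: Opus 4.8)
\textbf{Proof plan for Lemma \ref{lem:local-exc}.}

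The plan is to exploit the explicit description of the complex reductive group $\ol S_\psi$ afforded by the formula \eqref{e:global-centralizer} (or its local analogue in \S\ref{subsub:local-param-U}), which for a non-discrete elliptic parameter $\psi = \ell_1\psi_1 \oplus \cdots \oplus \ell_r\psi_r$ takes the shape
\[ S_\psi \simeq \prod_{i\in I^+_{\psi^N}} \O(\ell_i,\C) \times \prod_{i\in I^-_{\psi^N}} \Sp(\ell_i,\C), \]
with $J_{\psi^N}=\emptyset$ since $\psi \in \Psi^{\el}(G^*)$, and then $\ol S_\psi$ is the quotient by the diagonally embedded $\{\pm1\}=Z(\hat G^*)^\Gamma$. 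Passing to $\ol S_\psi^0 = \prod_i \SO(\ell_i,\C)^{\phantom{-}} \times \prod_i \Sp(\ell_i,\C)$ modulo the (now trivial or finite) central image, the maximal torus $\ol T_\psi$ is a product of the standard maximal tori of the factors, and $W_\psi$, $W^0_\psi$, $W^\rad_\psi$ decompose accordingly as products of the corresponding Weyl groups. Since $\psi$ is \emph{elliptic} but \emph{not discrete}, at least one multiplicity $\ell_i$ is $\geq 2$; and since $\psi$ avoids (exc1) and (exc2), we know more precisely that no factor is $\SL(2,\C)=\Sp(2,\C)$ sitting over a single $\ell_i=2$ with all other $\ell_j=1$, and no factor is $\O(3,\C)$ over a single $\ell_i=3$ with all others $1$. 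The first step is thus to translate these exclusions into the statement that the root system of $\ol S^0_\psi$ has rank at least $2$, or more precisely that $\dim \ol T_\psi \geq 2$, \emph{except} possibly when a single large factor accounts for everything, in which case that factor is $\SO(\ell,\C)$ or $\Sp(\ell,\C)$ with $\ell$ large enough that its Weyl group still has the desired property.

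For part (1): a simple reflection $w\in W^\rad_\psi$ lives in the Weyl group of one of the simple factors of $\ol S^0_\psi$, call it $H$ (an odd or even special orthogonal group, or a symplectic group), and acts trivially on all the other factors' tori; hence it centralizes $\ol T_\psi / \ol T_H$, a torus of dimension $\dim \ol T_\psi - \dim\ol T_H$. If there is more than one simple factor, or if the unique factor $H$ has $\tx{rk}(\ol T_H) \geq 2$, this difference is positive and we are done. The remaining case is that $\ol S^0_\psi$ is simple of rank $1$, i.e. $\ol T_\psi$ is one-dimensional; here $H$ is $\SL(2,\C)$ (from an $\Sp(2,\C)=\Sp(\ell_i,\C)$ with $\ell_i=2$ and everything else trivial after taking the quotient) or $\mathrm{PGL}(2,\C)=\SO(3,\C)$ (from $\O(3,\C)$) — but these are exactly the configurations excluded by (exc1) and (exc2) once one checks that no other $\ell_j>1$ can occur (otherwise the rank would exceed $1$). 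So the genuinely rank-one elliptic non-discrete parameters are precisely the two exceptional families, and the hypothesis of the lemma removes them. The remaining bookkeeping — verifying that a rank-one $\ol S^0_\psi$ forces $\psi$ into (exc1) or (exc2), keeping track of whether the orthogonal factors are $\O$ or $\SO$ and how $Z(\hat G^*)^\Gamma$ embeds — is the main technical nuisance but is entirely mechanical given the formulas in \S\ref{subsub:local-param-U}.

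For part (2): given $\ol s \in \ol S_\psi$, we must show $\dim \ol T_{\psi,\ol s}\geq 1$, i.e. the centralizer of $\ol s$ in $\ol T_\psi$ is infinite. Conjugating within $\ol S_\psi^0$ we may assume $\ol s$ normalizes $\ol T_\psi$, and after a further conjugation that $\ol s \in \ol T_\psi \rtimes (\text{Weyl-type elements})$; in each simple factor $H$ of $\ol S^0_\psi$, the fixed-point subtorus of $\ol T_H$ under $\tx{Ad}(\ol s)$ has dimension at least $\tx{rk}(\ol T_H) - (\text{number of }\pm1\text{ eigenvalues the reflection part can introduce})$, and for $\SO$ and $\Sp$ groups of rank $\geq 2$ this is automatically positive; the rank-one factors were again excluded as above. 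When $\ol S^0_\psi$ is a product of several factors one simply takes the product of the fixed subtori. Here the main obstacle I anticipate is handling disconnected phenomena carefully: elements $\ol s$ lying in components of $\O(\ell_i,\C)$ outside $\SO(\ell_i,\C)$ act on $\ol T_\psi$ by reflections, so one must check that even then the fixed subtorus does not collapse — this works because $\ell_i \geq 2$ in every non-discrete elliptic factor and a single reflection on a torus of dimension $\geq 2$ (or a suitable combination across factors) always fixes a positive-dimensional subtorus, with the rank-one exceptions being exactly (exc1), (exc2). I would organize the argument by first disposing of the case "$\ol S_\psi^0$ has a factor of rank $\geq 2$ or at least two nontrivial factors," where both parts are immediate, and then treating the residual rank-one case by direct inspection, showing it coincides with the exceptional families and is therefore vacuous under the hypothesis.
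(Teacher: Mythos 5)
There is a genuine gap, and it starts with a misreading of the hypothesis. The set $\Psi^{\el}(G^*)$ in the lemma denotes the \emph{non-elliptic} parameters, not the elliptic non-discrete ones. Right before this lemma the paper fixes the convention that a subscript $\el$ means elliptic and a superscript $\el$ means non-elliptic (so $\Psi_{\el}$ is elliptic, $\Psi^2_{\el}$ is non-discrete elliptic, and $\Psi^{\el}$ is non-elliptic). Your entire argument is built on the premise that $\psi$ is elliptic and non-discrete, which is a different, and in fact disjoint, class of parameters from the one the lemma addresses.

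This matters in two concrete ways. First, your very first reduction, that $J_{\psi^N}=\emptyset$, is not a hypothesis for non-elliptic $\psi$. The paper's proof treats $J_{\psi^N}\neq\emptyset$ up front and observes that it produces a $\GL(\ell_j,\C)$ factor of $S_\psi$, hence a positive-dimensional central torus in $\ol S_\psi$, which immediately yields both conclusions; only after that is $J_{\psi^N}=\emptyset$ assumed. Second, if the lemma really were about $\psi\in\Psi^2_{\el}(G^*)$, the statement would be false: by Lemma \ref{lem:local-ell-nondisc-param} such a $\psi$ has $S_\psi\simeq\O(2,\C)^q\times\O(1,\C)^{r-q}$, so $\ol S^0_\psi$ is a torus (not a product of simple groups, as your argument assumes), and an element $\ol s$ lying in the non-identity component of every $\O(2)$ factor acts on $\ol T_\psi\simeq(\C^\times)^q/\{\pm 1\}$ by inversion, fixing only finitely many points, so condition 2 fails. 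Your rank-one analysis identifying the bad factors $\Sp(2,\C)$ and $\O(3,\C)$ is aimed at the right shapes for (exc1) and (exc2), but those factors simply never occur in the elliptic non-discrete centralizers $\O(2)^q\times\O(1)^{r-q}$, so the case split does not close. The paper's actual proof works with non-elliptic $\psi$, disposes of $J\neq\emptyset$ as above, and then splits according to whether $I^-_\psi$ is empty or is a single $\Sp(2,\C)$ factor, using that a direct computation in the remaining cases forces $\psi$ to be elliptic (contradiction) or to have $\dim Z(\ol S_\psi)\geq 1$ (done) or to be one of the two exceptional shapes.
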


\begin{rem}
  Note that condition 1 is satisfied $\dim \ol{T}_\psi\ge 2$. If $\dim Z(\ol{S}_\psi)\ge 1$ then both conditions 1 and 2 hold.
\end{rem}

\begin{proof}
  We are supposing that $\psi$ is non-elliptic. It suffices to show that $\psi$ is of the form (exc1) or (exc2) under the assumption that at least one of conditions 1 or 2 is false. As we closely follow the discussion in the middle of the proof of \cite[Prop 4.5.1]{Arthur}, some details will be omitted. The starting point is the description
  $$ S_\psi = \left( \prod_{i\in I^+_\psi(G^*)} \O(\ell_i,\C)\right) \times \left(\prod_{i\in I^-_\psi(G^*)} \Sp(\ell_i,\C)\right) \times \left(\prod_{j\in J_\psi(G^*)} \GL(\ell_j,\C)\right)$$
  along with suitable integers $N_i$ for $i\in  I^+_\psi(G^*)\coprod I^-_\psi(G^*)$ and $N_j$ for $j\in J_\psi(G^*)$ such that
  $\sum_{i\in I^+_\psi(G^*)} N_i + \sum_{i\in I^-_\psi(G^*)} N_i+ 2 \sum_{j\in J_\psi(G^*)}N_j=N$ and $|I^+_\psi(G^*)|+ | I^-_\psi(G^*)|+2|J_\psi(G^*)|=r$.

  We may assume that $ J_\psi(G^*)$ is empty; otherwise $S_\psi$ contains a central torus of positive dimension, so clearly conditions 1 and 2 are satisfied. Likewise we may assume that either (i) $I^-_\psi(G^*)=\emptyset$ or (ii) $|I^-_\psi(G^*)|=1$ with $\Sp(\ell_1,\C)=\Sp(2,\C)$. Now we examine what happens over the parameter set $I^+_\psi(G^*)$. In case (i) an easy explicit computation shows that either $\ell_i\le 2$ for every $i\in I^+_\psi(G^*)$ or exactly one of the $\ell_i$'s is 3 and all the others are 1 in $I^+_\psi(G^*)$. The latter case is (exc2). In the former case a direct computation shows that either $\psi$ is elliptic or $\dim Z(\ol{S}_\psi)\ge 1$, but these are excluded by our assumption. In the remaining case (ii), the assumption on $\psi$ forces that $\O(\ell_i,\C)=\O(1,\C)$ for every $i\in I^+_\psi(G^*)$, resulting in (exc1).

\end{proof}

\begin{lem}\label{lem:S-elliptic} Let $\psi\in \Psi(G^*)$.
\begin{enumerate}
  \item
  The natural map $S^\natural_{\psi}\ra \ol{\cS}_{\psi}$ carries $S^\natural_{\psi,\el}$ onto $\ol{\cS}_{\psi,\el}$.
\item
  If $\psi\in \Psi_{\EXC}(G^*)$ then $S^\natural_{\psi,\el}=S^\natural_{\psi}$ and $\ol{\cS}_{\psi,\el}=\ol{\cS}_{\psi}$.
\end{enumerate}

\end{lem}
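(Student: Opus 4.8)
The plan is to deduce part 1 formally from the observation recorded just before the statement of the lemma, and to establish part 2 by inspecting the explicit shape of $S_\psi$ for exceptional parameters.

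For part 1, write $p : S_\psi \ra \ol{S}_\psi$, $p_1 : S_\psi \ra S^\natural_\psi$, $p_2 : \ol{S}_\psi \ra \ol{\cS}_\psi$ for the three natural surjections and $q : S^\natural_\psi \ra \ol{\cS}_\psi$ for the map induced by the inclusion $S^{\rad}_\psi \subset S^0_\psi Z(\hat G^*)^\Gamma$, so that $q\circ p_1 = p_2\circ p$. Combining the definitions of $S^\natural_{\psi,\el}$ and $\ol{\cS}_{\psi,\el}$ with the fact, noted earlier in this subsection, that $p$ carries $S_{\psi,\el}$ onto $\ol{S}_{\psi,\el}$, we obtain
\[ q(S^\natural_{\psi,\el}) = q(p_1(S_{\psi,\el})) = p_2(p(S_{\psi,\el})) = p_2(\ol{S}_{\psi,\el}) = \ol{\cS}_{\psi,\el}, \]
which is exactly the assertion of part 1.

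For part 2, consider first $\psi \in \Psi_{\EXC1}(G^*)$, so $S_\psi \simeq \SL(2,\C) \times \O(1,\C)^{r-1}$ with identity component $S^0_\psi = \SL(2,\C) \times \{1\}^{r-1}$. Since $S^0_\psi$ is semisimple we have $[S^0_\psi]_\der = S^0_\psi \subset S^{\rad}_\psi$, and because $S^{\rad}_\psi$ is a connected subgroup of $S_\psi$ it is contained in $S^0_\psi$; hence $S^{\rad}_\psi = S^0_\psi$ and $S^\natural_\psi = S_\psi/S^0_\psi = \cS_\psi$. Any element $s = (\pm I_2,\vec{\epsilon})$, $\vec{\epsilon}\in \O(1,\C)^{r-1}$, is semisimple, and since $\pm I_2$ is central in $\SL(2,\C)$ one has $\Cent(s,S^0_\psi) = S^0_\psi$, whose center $\{\pm I_2\}\times\{1\}^{r-1}$ is finite; thus $s \in S_{\psi,\el}$. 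As $\vec{\epsilon}$ varies, the images of these $s$ exhaust $\pi_0(S_\psi) = \cS_\psi$ (the factor $\SL(2,\C)$ being connected), so $S^\natural_{\psi,\el} = \cS_\psi = S^\natural_\psi$. For $\psi \in \Psi_{\EXC2}(G^*)$ the same argument applies with $S_\psi \simeq \O(3,\C)\times \O(1,\C)^{r-1}$ and $S^0_\psi = \SO(3,\C)\times\{1\}^{r-1}$: again $S^{\rad}_\psi = S^0_\psi$ (as $\SO(3,\C)$ is semisimple) and $S^\natural_\psi = \cS_\psi$, and the elliptic elements $(\pm I_3,\vec{\epsilon})$ — for which $\Cent((\pm I_3,\vec{\epsilon}),S^0_\psi) = S^0_\psi$ has trivial center $\{I_3\}$ — surject onto $\pi_0(S_\psi)$, because $I_3$ lies in the identity component $\SO(3,\C)$ while $-I_3$, having determinant $-1$, lies in the other component of $\O(3,\C)$. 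In both cases $S^\natural_{\psi,\el} = S^\natural_\psi$; the equality $\ol{\cS}_{\psi,\el} = \ol{\cS}_\psi$ then follows from part 1, since $\ol{\cS}_{\psi,\el} = q(S^\natural_{\psi,\el}) = q(S^\natural_\psi) = \ol{\cS}_\psi$, the last equality because $q$ is onto.

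The argument is essentially bookkeeping; the only points that need a moment of care are the identification $S^{\rad}_\psi = S^0_\psi$ for exceptional parameters (which is what makes $S^\natural_\psi$ coincide with $\cS_\psi$), and, in the $(\EXC2)$ case, the fact that $-I_3$ represents the nonidentity component of $\O(3,\C)$, which is precisely what lets the elliptic elements hit every component of $S_\psi$.
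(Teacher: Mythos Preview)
Your proof is correct and follows essentially the same approach as the paper. For part 1 you spell out the diagram chase that the paper had already recorded just before the statement; for part 2 you verify $S^\natural_{\psi,\el}=S^\natural_\psi$ by checking that central elements $(\pm I,\vec\epsilon)$ are elliptic and hit every component, which is exactly the paper's argument (the paper uses the identification $\O(3,\C)\simeq \SO(3,\C)\times\O(1,\C)$ to write $S_\psi$ as $\SO(3)\times\O(1)^r$, but this is only a cosmetic difference from your presentation). Your explicit justification that $S^{\rad}_\psi=S^0_\psi$ via semisimplicity of the identity component is a detail the paper leaves implicit.
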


\begin{proof}
  The first part was already proved in the paragraph above Lemma \ref{lem:local-exc}. For part 2 we only need to check that $S^\natural_{\psi,\el}=S^\natural_{\psi}$ by part 1. This amounts to the assertion that the natural map $S_{\psi,\el}\ra S^\natural_\psi$ is onto. To see this recall that when $\psi$ is exceptional, $S_\psi$ is isomorphic to either $\Sp(2)\times \O(1)^{r-1}$ or $\SO(3)\times \O(1)^{r}$ for some $r\ge 1$. In either case the surjection $S_\psi\ra S^\natural_\psi$ is just the projection onto $\O(1)^{r-1}$ or $\O(1)^r$. Since every element of $\O(1)^{r-1}$ or $\O(1)^r$ (with trivial entry in $\Sp(2)$ or $\SO(3)$) belongs to $S_{\psi,\el}$, we get the desired surjectivity.

\end{proof}

  Let $(G,\Xi)$ be an equivalence class of extended pure inner twists of $G^*$ as before. The lemma below is proved by the same argument as on page 204 of \cite{Arthur}.

\begin{lem}\label{lem:local-indep-f} %
  Assume that either
\begin{enumerate}
  \item $\psi$ is elliptic, or
  \item every simple reflection $w\in W^\rad_\psi$ centralizes a torus of positive dimension in $\ol{T}_\psi$.
\end{enumerate}
  Then parts 1 and 2 of Theorem \ref{thm:lir} hold for $\psi$ (and for all $u$ and $f$).

\end{lem}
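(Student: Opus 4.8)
The plan is to prove part 1 of Theorem \ref{thm:lir} in this situation by descent to a proper Levi subgroup, and then to read off part 2. First observe that, by the multiplicativity of $R_P(u^\natural,\Xi,\pi,\psi,\psi_F)$ in $u^\natural$ (Lemma \ref{lem:iopm}), once we know $R_P(u^\natural,\Xi,\pi,\psi,\psi_F)=1$ for all $u^\natural\in W_\psi(M^*,G^*)^\tx{rad}=W_\psi^0(M^*,G^*)$ and all $\pi\in\Pi_\psi(M,\Xi_M)$, the homomorphism $u^\natural\mapsto R_P(u^\natural,\Xi,\pi,\psi,\psi_F)$ factors through $N_\psi^\natural(M^*,G^*)/W_\psi^0(M^*,G^*)=S_\psi^\natural(M^*,G^*)$ (diagram \eqref{eq:diag}), which maps injectively into $S_\psi^\natural$, and then $f_{G,\Xi}(\psi,u^\natural)=\sum_\pi\tx{tr}(R_P(u^\natural,\Xi,\pi,\psi,\psi_F)\mc{I}_P(\pi)(f))$ depends only on the image of $u$ in $S_\psi^\natural$, i.e.\ part 2 holds (by Lemma \ref{lem:S(M,G)-S(G)}, $S_\psi^\natural(M^*,G^*)=S_\psi^\natural$ here, since $\psi_{M^*}\in\Psi_2(M^*)$ is in force throughout this subsection). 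We may assume $\psi$ is relevant for $(G,\xi)$ and $\Pi_\psi(M,\Xi_M)\neq\emptyset$, as otherwise all the operators and linear forms in sight vanish by definition. In case (1), $\psi$ is elliptic; since $M^*\neq G^*$ forces $\ol S_\psi$ to be infinite, $\psi$ lies in $\Psi^2_\el(G^*)$, so $W_\psi^0(M^*,G^*)=\{1\}$ by Lemma \ref{lem:local-ell-nondisc-param} and there is nothing to prove. So I assume condition (2) from now on.

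Viewing $W_\psi^0(M^*,G^*)$ as the Weyl group of the maximal torus $A_{\hat M^*}$ of the connected reductive group $S_\psi^0$ (reductive as in the discussion around Lemma \ref{lem:sphilevi}), with positive system compatible with $\hat P$, it is generated by its simple reflections, and by multiplicativity it suffices to prove $R_P(w,\Xi,\pi,\psi,\psi_F)=1$ for a single simple reflection $w=s_\alpha$, where $\alpha$ is the corresponding root of $A_{\hat M^*}$ in $S_\psi^0$. The key step is to descend $w$ to a proper Levi subgroup. Condition (2) says precisely that $\ker(\alpha)$, which has codimension $1$ in $A_{\hat M^*}$, has positive dimension, i.e.\ $\dim A_{\hat M^*}\ge 2$; put $\hat T_w:=(\ker\alpha)^\circ$, a positive-dimensional subtorus of $A_{\hat M^*}$ fixed pointwise by $\Gamma$. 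Since $G^*=U_{E/F}(N)$ with $E/F$ a field, $Z(\hat G^*)^\Gamma=\{\pm1\}$, so $A_{\hat M^*}\cap Z(\hat G^*)$ is finite and $\hat T_w\not\subset Z(\hat G^*)$; hence $\hat M_w^*:=\tx{Cent}(\hat T_w,\hat G^*)$ is a $\Gamma$-stable proper Levi subgroup of $\hat G^*$ containing $\hat M^*$, dual to a Levi subgroup $M_w^*$ with $M^*\subsetneq M_w^*\subsetneq G^*$. Since $M^*$ transfers to $G$, so does $M_w^*$ by Corollary \ref{cor:relinc} (via Lemma \ref{lem:tranrel}); fix corresponding $M\subset M_w\subset G$ and $P\subset P_w\subset G$. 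Because $\hat T_w\subseteq\ker\alpha$, the root spaces $(\mf{s}_\psi^0)_{\pm\alpha}$ lie in $\tx{Lie}(\hat M_w^*)\cap[\mf{s}_\psi^0]_\tx{der}\subset\mf{s}_\psi^\tx{rad}$, so $w$ is a simple reflection in $W_\psi^0(M^*,M_w^*)=W_\psi(M^*,M_w^*)^\tx{rad}$, and we may choose a lift $u\in S_\psi(M_w)\cap N(A_{\hat M},\hat G)\cap N(A_{\hat M},S_\psi^\tx{rad})$ of $w$.

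Now I apply Lemma \ref{lem:lirdesc2} with the pair $(M_0^*,M^*)$ of that lemma taken to be $(M^*,M_w^*)$: it gives
\[ R_P^G(u^\natural,\Xi,\pi,\psi,\psi_F)=\mc{I}_{P_w}^G\bigl(R_Q^{M_w}(u^\natural,\Xi_{M_w},\pi,\psi,\psi_F)\bigr),\qquad Q=P\cap M_w. \]
By the induction hypothesis, Theorem \ref{thm:lir} holds for the proper Levi subgroup $M_w\subsetneq G$, and its part 1 yields $R_Q^{M_w}(u^\natural,\Xi_{M_w},\pi,\psi,\psi_F)=1$ since $u^\natural\in W_\psi(M^*,M_w^*)^\tx{rad}$. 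Therefore $R_P^G(u^\natural,\Xi,\pi,\psi,\psi_F)=\mc{I}_{P_w}^G(1)=1$. This proves part 1 for simple reflections, hence for all of $W_\psi(M^*,G^*)^\tx{rad}$ by multiplicativity, and part 2 follows as in the first paragraph.

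The step requiring the most care is the descent itself: verifying that $\hat T_w$ yields a genuinely proper Levi $M_w^*$ of $G^*$ that transfers to $G$, and that the lift $u$ can be chosen \emph{simultaneously} in $S_\psi(M_w)$ and normalizing both $A_{\hat M}$ and $S_\psi^\tx{rad}$, so that the hypotheses of Lemma \ref{lem:lirdesc2} are met and $u^\natural$ is indeed a radical Weyl element for $(M^*,M_w^*)$. The remaining bookkeeping — matching $w$ across the canonical isomorphisms $W(\hat M^*,\hat M_w^*)\cong W(M^*,M_w^*)$ and the compatibility of the Langlands--Shelstad lifts entering the definition of the intertwining operators — is routine given the structural results of Sections \ref{subsub:centlevi} and \ref{sec:diag}.
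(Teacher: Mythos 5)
Your proof follows essentially the same route as the paper's: both reduce to a simple reflection $w$ via multiplicativity (Lemma \ref{lem:iopm}), descend to the intermediate Levi $M_w^* = \tx{Cent}((\ker\alpha)^\circ,\hat G^*)$ — which is exactly the $M_s$ the paper extracts by reusing the notation of Lemma \ref{lem:local-indep-f'} — and then apply Lemma \ref{lem:lirdesc2} together with the inductive validity of Theorem \ref{thm:lir} on the proper Levi $M_w\subsetneq G$. The only stylistic difference is that you spell out the construction of the intermediate Levi from $\ker\alpha$ and the verification that the root subgroups land in the radical, while the paper refers to its earlier proof for that bookkeeping; the content is the same.
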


\begin{proof}

  We assume that $M^*$ transfers to a Levi subgroup $M$ of $G$ since $f(\psi,u^\natural)=0$ otherwise.
  If $\psi$ is elliptic then $W^\rad_\psi$ is trivial by Lemma \ref{lem:local-ell-nondisc-param}. So there exists a unique $u^\natural$ mapping to a given $x$. The desired assertion is obvious.
  From now we may work under the hypotheses that $\psi$ is non-elliptic and that condition 2 holds. In view of Lemma \ref{lem:iopm} it is enough to show that $$R_P(u^\natural,\Xi,\pi,\psi,\psi_F)=1,\quad u^\natural\in W^\rad_{\psi}$$ when $u^\natural$ corresponds to a simple reflection $w\in W^0_\psi$ via $W^\rad_\psi\simeq W^0_\psi$. We adopt the notation of the proof of Lemma \ref{lem:local-indep-f'} for any lift $s$ of $x$, and apply Lemma \ref{lem:lirdesc2} with $M_0=M$, $\psi_0=\psi$, and $M=M_s$, noticing that the assumption of the current lemma shows $M_s\subsetneq G$. Thus it is enough to verify that $R_{P\cap M_s}(u^\natural,\Xi_M,\pi,\psi,\psi_F)=1$, which holds true since $f_{M_s,\Xi_M}(\psi,u^\natural)$ is known to depend only on $u^\natural$ modulo $W^\rad_\psi$ by the induction hypothesis.

\end{proof}

\begin{lem}\label{lem:local-intertwining-non-exceptional} Let $x\in S_\psi^\natural(M^*,G^*)$. %
 Assume that either
\begin{enumerate}
  \item $\psi$ is elliptic and $x\notin S^\natural_{\psi,\el}$, %
   or
  \item $\psi$ is non-elliptic, every simple reflection $w\in W^\rad_\psi$ centralizes a torus of positive dimension in $\ol{T}_\psi$ (e.g. $\dim \ol{T}_\psi \ge 2$), and $\dim\ol{T}_{\psi,\ol{s}}\ge 1$ for all $\ol{s}\in \ol{S}_{\psi,\sspl}$.
\end{enumerate}
  Then $f'_{G,\Xi}(\psi,s_{\psi} s^{-1})=e(G) f_{G,\Xi}(\psi,u^\natural)$ whenever $u^\natural$ belongs to $N_\psi^\natural(M^*,G^*)$ and $s\in S_{\psi,\sspl}$ maps to $x$.
\end{lem}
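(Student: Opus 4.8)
The plan is to imitate the treatment of non-elliptic parameters in the proof of \cite[Prop.~4.5.1]{Arthur}, transporting both sides to a \emph{proper} Levi subgroup on which the local intertwining relation is available by induction, via the parabolic descent results of \S\ref{sub:prelim-local-intertwining1}. By Proposition \ref{pro:lirreddisc} we may assume $\psi_{M^*}\in\Psi_2(M^*)$, so that $T_\psi=A_{\hat M^*}$ and $\ol T_\psi$ are maximal tori of $S_\psi$ and $\ol S_\psi$. Under either hypothesis, Lemma \ref{lem:local-indep-f} already gives parts 1 and 2 of Theorem \ref{thm:lir} for $\psi$; in particular $f_{G,\Xi}(\psi,u^\natural)$ depends only on the image $x$ of $u^\natural$ in $S^\natural_\psi(M^*,G^*)$ (which the statement tacitly takes to be the fixed $x$, via $N^\natural_\psi(M^*,G^*)\twoheadrightarrow S^\natural_\psi(M^*,G^*)$), while by Lemma \ref{lem:local-indep-f'} the dependence of $f'_{G,\Xi}(\psi,s_\psi s^{-1})$ on $s$ also factors through $x$. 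It therefore suffices to verify the identity for a single conveniently chosen semisimple representative. We may also assume $\psi$ is $\Xi$-relevant, i.e.\ $M^*$ transfers to $G$: otherwise the right-hand side vanishes by definition, and the left-hand side vanishes by the vanishing of the relevant stable linear form, exactly as in the proof of Lemma \ref{lem:local-indep-f'}, noting that if the Levi $M_t^*$ produced below does transfer to $G$ then $M^*$ does not transfer to $M_t$, whence $f_{M_t,\Xi_{M_t}}(\psi_{M^*},\cdot)=0$.

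Put $t:=s_\psi s^{-1}$, which is semisimple since $s_\psi$ is semisimple and central in $S_\psi$ (being in $\Im\psi$). Conjugating within $S_\psi^\circ$ — which changes neither side — we may take $\Int(t)$ to stabilize $T_\psi$ together with a Borel of $S_\psi$ containing it, and set $T_{\psi,t}:=Z_{T_\psi}(t)^\circ$. The two hypotheses are precisely what ensures $\dim T_{\psi,t}\ge 1$. In case (1), since $\psi$ is elliptic the group $T_\psi=S_\psi^\circ$ is a torus; as $s_\psi$ is central, $\Int(t)|_{T_\psi}=\Int(s^{-1})|_{T_\psi}$, and $x\notin S^\natural_{\psi,\el}$ forces $s\notin S_{\psi,\el}$ by Lemma \ref{lem:S-elliptic}, so $Z_{T_\psi}(t)$ is infinite. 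In case (2), the assumption $\dim\ol T_{\psi,\ol t}\ge 1$ gives the claim via $\ol T_\psi=T_\psi/(T_\psi\cap Z(\hat G^*)^\Gamma)$, the latter group being finite. As $Z(\hat G^*)^\Gamma$ is finite for a unitary group, $T_{\psi,t}\not\subseteq Z(\hat G^*)$, so $\hat M_t:=Z_{\hat G^*}(T_{\psi,t})$ is a proper $\Gamma$-stable Levi subgroup of $\hat G^*$ containing $\hat M^*$; let $M_t^*\subsetneq G^*$ be the dual Levi and $\psi_{M_t^*}\in\Psi(M_t^*)$ the image of $\psi_{M^*}$. As in the proof of Lemma \ref{lem:local-indep-f'}, the pair $(\psi,t)$ descends to a pair $(\psi_{M_t},t_{M_t})$ for $M_t$, carried by an endoscopic datum $\mf{e}_{M_t}$ and parameter $\psi^{\mf{e}_{M_t}}_{M_t}\in\Psi(M^{\mf{e}_{M_t}}_t)$, with $f'_{G,\Xi}(\psi,t)=f^{\mf{e}_{M_t}}_{M^{\mf{e}_{M_t}}_t}(\psi^{\mf{e}_{M_t}}_{M_t})$.

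It remains to chain the descent identities. If $M_t^*$ transfers to $G$, then by Lemma \ref{lem:tflevi} the function $f^{\mf{e}_{M_t}}_{M^{\mf{e}_{M_t}}_t}$ is a transfer of $f_{M_t}$ with respect to $\Delta[\mf{e}_{M_t},\xi_{M_t},z_{M_t}]$, so $f'_{G,\Xi}(\psi,t)=f'_{M_t,\Xi_{M_t}}(\psi_{M_t},t_{M_t})$; by the induction hypothesis Theorem \ref{thm:lir} holds for the proper Levi $M_t$, yielding $e(M_t)f_{M_t,\Xi_{M_t}}(\psi_{M_t},\bar u^\natural)$, where $\bar u\in N_{\psi_{M_t}}(M_t^*,M_t)$ is any lift with $s_{\psi_{M_t}}\bar u^{-1}=t_{M_t}$ — so $\bar u$ maps to $x$ and centralizes $A_{\hat M_t^*}=T_{\psi,t}$. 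Applying Lemma \ref{lem:lirdesc1} within $M_t$ replaces $\psi_{M_t}$ by $\psi_{M^*}$, and the ``moreover'' clause of Lemma \ref{lem:lirdesc2} (for $M^*\subset M_t^*$) gives $f_{M_t,\Xi_{M_t}}(\psi_{M^*},\bar u^\natural)=f_{G,\Xi}(\psi_{M^*},\bar u^\natural)$; since $\bar u^\natural$ and $u^\natural$ have the same image $x$, part 2 of Theorem \ref{thm:lir} for $G$ identifies this with $f_{G,\Xi}(\psi,u^\natural)$. Using $e(M_t)=e(G)$ for a Levi subgroup, we conclude $f'_{G,\Xi}(\psi,s_\psi s^{-1})=e(G)f_{G,\Xi}(\psi,u^\natural)$, the case where $M_t^*$ does not transfer to $G$ having already been disposed of. I expect the principal difficulty to be purely organizational: keeping the nested Levi subgroups $M^*\subset M_t^*\subset G^*$, their extended pure inner twist structures, and the compatible choices of the lifts $u,\bar u,t$ straight, so that the hypotheses of Lemmas \ref{lem:lirdesc1}, \ref{lem:lirdesc2} and \ref{lem:tflevi} are met verbatim — especially the verification that $A_{\hat M_t^*}=T_{\psi,t}$ and that $\bar u$ can be chosen to centralize it — rather than any genuinely new analytic or representation-theoretic input.
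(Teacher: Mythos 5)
Your argument is essentially the paper's proof: reduce the dependence on $u^\natural$ and $s$ to the image $x$ via Lemmas \ref{lem:local-indep-f} and \ref{lem:local-indep-f'}, use the hypotheses to show $\hat M_s = Z_{\hat G^*}(T_{\psi,s})$ is a proper Levi, descend both linear forms to $M_s$ via Lemma \ref{lem:tflevi} and Lemma \ref{lem:lirdesc2}, apply the induction hypothesis there, and close with $e(M_s)=e(G)$. The only differences are cosmetic: you work with $t=s_\psi s^{-1}$ rather than $s$ (harmless, since $s_\psi$ is central in $S_\psi$ so $T_{\psi,t}=T_{\psi,s}$), and you spell out more explicitly than the paper does the use of Lemma \ref{lem:lirdesc1} within $M_s$ and the case split when $M^*$ fails to transfer.
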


\begin{proof}

  Lemmas \ref{lem:local-indep-f'} and \ref{lem:local-indep-f} tell us that $f_{G,\Xi}(\psi,u^\natural)=f_{G,\Xi}(\psi,x)$ and $f'_{G,\Xi}(\psi,s_\psi s^{-1})=f'_{G,\Xi}(\psi,x^{-1})$. We need to show $f'_{G,\Xi}(\psi,s_\psi x^{-1})= f_{G,\Xi}(\psi,x)$.
 We argue as in the proof of Lemma \ref{lem:local-indep-f'} up to \eqref{eq:LIR-induction-f'}, adopting the same notation.
 In particular $M^*_s$ is a Levi subgroup of $G^*$ corresponding to $\hat M_s:=Z_{\hat G^*}(T_{\psi,s})$ and that there are $\psi_{M_s}\in \Psi(M_s^*)$ and $s_{M_s}\in S_{\psi_{M_s},\sspl}$ whose image is $(\psi,s)$. The condition that
  $\dim\ol{T}_{\psi,\ol{s}}\ge 1$ implies that $M_s$ is a \emph{proper} Levi subgroup of $G^*$ (since then $T_{\psi,s}$ is not contained in the center of $Z_{\hat G}$, so $\hat M_s$ is smaller than $\hat G$). If we instead have the condition that $x\notin S^\natural_{\psi,\el}$ then every lift of $x$ in $\cE_{\psi}$ lies outside $\cE_{\psi,\el}$, so we deduce that $M_s$ is again a proper Levi subgroup. If $M^*_s$ does not transfer to $G$ then $\psi$ is irrelevant so $f_{G,\Xi}(\psi,u^\natural)=0$ while the argument of Lemma \ref{lem:local-indep-f'} shows that $f'_{G,\Xi}(\psi,s_\psi s^{-1})=0$, so we are done.

 Now we assume that $M^*_s$ does transfer to a Levi subgroup $M_s$ of $G$, equipped with an extended pure inner twist $(M_s,\xi_{M_s},z_{M_s})$ given by $\Xi$.
  When $s$ is replaced with $s_\psi s$, there is no change in $M_s$ and we get (again as in the proof of Lemma \ref{lem:local-indep-f'})
   $$f'_{G,\Xi}(\psi,s_\psi x^{-1})=f'_{M_s,\xi_{M_s},z_{M_s}}(\psi_{M_s},s_{\psi_{M_s}}x^{-1}_{M_s}).$$
   On the other hand, by taking $M=M_s$, $M_0=M$ in Lemma \ref{lem:lirdesc2}, noticing that $x$ can be lifted to $u^\natural$ as in that lemma, we obtain
   \begin{equation}\label{eq:LIR-induction-f}
  f_{G,\Xi}(\psi,x)=f_{M_s,\xi_{M_s},z_{M_s}}(\psi_{M_s},x).
  \end{equation}
  It follows from the induction hypothesis that
  $$f'_{M_s,\xi_{M_s},z_{M_s}}(\psi_{M_s},s_{\psi_{M_s}}x^{-1}_{M_s})=e(M_s) f_{M_s,\xi_{M_s},z_{M_s}}(\psi_{M_s},x_{M_s}).$$
  Hence we conclude that $f'_{G,\Xi}(\psi,s_\psi x^{-1})= e(G) f_{G,\Xi}(\psi,x)$ since $e(G)=e(M_s)$ by \cite[Cor (6)]{KotSign}.

\end{proof}

\begin{cor}\label{cor:local-intertwining-step1}
  Let $\psi$ be as above and $x\in S_\psi^\natural(M^*,G^*)$. Then Theorem \ref{thm:lir} holds for all $u^\natural\in N_\psi^\natural(M^*,G^*)$ mapping to $x$ unless either
  \begin{itemize}
    \item  $\psi$ is elliptic and $x\in S^\natural_{\psi,\el}(M^*,G^*)$, or
    \item  $\psi \in \Psi_{\EXC}(G^*)$ (then automatically $x\in S^\natural_{\psi,\el}(M^*,G^*)$ by Lemma \ref{lem:S-elliptic}).
  \end{itemize}
\end{cor}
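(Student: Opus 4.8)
The plan is to obtain Corollary~\ref{cor:local-intertwining-step1} essentially by combining the preceding three lemmas in the section, after first cleaning up the dichotomy of cases for $\psi \in \Psi^2(G^*)$. Recall from Proposition~\ref{pro:lirreddisc} that we are in the situation $\psi_{M^*} \in \Psi_2(M^*)$, so that $T_\psi = A_{\hat M^*}$ is a maximal torus of $S_\psi$ and $\ol T_\psi$ is a maximal torus of $\ol S_\psi$; also, by Lemma~\ref{lem:S(M,G)-S(G)}, $S^\natural_{\psi_{M^*}}(M^*,G^*)=S^\natural_\psi(G^*)$, so talking about $x \in S^\natural_\psi(M^*,G^*)$ is the same as talking about $x \in S^\natural_\psi(G^*)$. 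First I would split according to whether $\psi$ is elliptic or non-elliptic. In the elliptic non-discrete case $\psi \in \Psi^2_{\el}(G^*)$, Lemma~\ref{lem:local-ell-nondisc-param} gives $W^0_\psi = \{1\}$, hence $W^\tx{rad}_\psi = \{1\}$, so parts~1 and~2 of Theorem~\ref{thm:lir} follow from Lemma~\ref{lem:local-indep-f} (case~1). For part~3, when $x \notin S^\natural_{\psi,\el}(M^*,G^*)$ Lemma~\ref{lem:local-intertwining-non-exceptional} (case~1) gives the identity $f'_{G,\Xi}(\psi,s_\psi s^{-1}) = e(G) f_{G,\Xi}(\psi,u^\natural)$ for any $u^\natural$ mapping to $x$, which is exactly the local intertwining relation. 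So in the elliptic case the only surviving obstruction is $x \in S^\natural_{\psi,\el}(M^*,G^*)$, matching the first bullet.

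Next I would treat the non-elliptic case $\psi \in \Psi^{\el}(G^*)$. If $\psi \notin \Psi_{\EXC}(G^*)$, then Lemma~\ref{lem:local-exc} tells us that every simple reflection $w \in W^\tx{rad}_\psi$ centralizes a torus of positive dimension in $\ol T_\psi$, and that $\dim \ol T_{\psi,\ol s} \geq 1$ for every $\ol s \in \ol S_\psi$ (in particular for every semisimple $\ol s$). These are precisely the hypotheses of Lemma~\ref{lem:local-indep-f} (case~2) and of Lemma~\ref{lem:local-intertwining-non-exceptional} (case~2). Thus parts~1 and~2 of Theorem~\ref{thm:lir} hold by the former, and part~3 holds for every $x \in S^\natural_\psi(M^*,G^*)$ and every lift $u^\natural$ by the latter. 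Hence for non-elliptic $\psi$ outside $\Psi_{\EXC}(G^*)$ the local intertwining relation holds unconditionally (given the induction hypotheses), and the only excluded case is $\psi \in \Psi_{\EXC}(G^*)$, matching the second bullet; the parenthetical remark that then automatically $x \in S^\natural_{\psi,\el}(M^*,G^*)$ is Lemma~\ref{lem:S-elliptic}(2). Assembling the two cases gives the statement of the corollary: Theorem~\ref{thm:lir} holds for all $u^\natural$ mapping to $x$ except in the two listed situations.

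I do not anticipate a serious obstacle here; the real content has already been extracted into Lemmas~\ref{lem:local-exc}, \ref{lem:S-elliptic}, \ref{lem:local-indep-f}, and \ref{lem:local-intertwining-non-exceptional}, so the corollary is a matter of organizing the case analysis and invoking the right lemma with the right hypothesis. The one point requiring a little care is the bookkeeping between part~3 of Theorem~\ref{thm:lir} as stated (an equality of linear forms indexed by $u$) and Lemma~\ref{lem:local-intertwining-non-exceptional} (an equality indexed by a semisimple lift $s$ of $x$); this is bridged by Lemmas~\ref{lem:local-indep-f'} and~\ref{lem:local-indep-f}, which say that $f_{G,\Xi}(\psi,u^\natural)$ and $f'_{G,\Xi}(\psi,s_\psi u^{-1})$ depend only on $x$, together with Corollary~\ref{c:local-indep-f'}, which removes the dependence on the particular lift of $\ol x$. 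A second minor point is to confirm that the hypothesis ``$\psi_{M^*} \in \Psi_2(M^*)$'' carried over from Proposition~\ref{pro:lirreddisc} is harmless for the reduction, i.e. that once the corollary is proved under that hypothesis it yields Theorem~\ref{thm:lir} in the stated generality — but this is exactly what Proposition~\ref{pro:lirreddisc} provides, so no extra work is needed. I would also note in passing, as the remark after Lemma~\ref{lem:local-exc} records, that the hypothesis on simple reflections in $W^\tx{rad}_\psi$ is automatic when $\dim \ol T_\psi \geq 2$, which covers the bulk of the non-exceptional non-elliptic parameters.
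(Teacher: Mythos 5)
Your proposal is correct and follows exactly the paper's approach: the paper's proof is the one-line "Immediate from Lemmas \ref{lem:local-exc}, \ref{lem:local-indep-f}, and \ref{lem:local-intertwining-non-exceptional}," and your write-up simply spells out the case analysis (elliptic versus non-elliptic) that makes this combination work. The only unnecessary digression is the worry about Proposition \ref{pro:lirreddisc}; the corollary is stated within the standing hypothesis $\psi_{M^*}\in\Psi_2(M^*)$ of \S\ref{sub:prelim-local-intertwining}, so no extra reconciliation is needed there.
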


\begin{proof}
  Immediate from Lemmas \ref{lem:local-exc}, \ref{lem:local-indep-f}, and \ref{lem:local-intertwining-non-exceptional}.
\end{proof}

  Define $W_{\psi,\reg}(M^*,G^*)$ to be the subset of $w\in W_{\psi}(M^*,G^*)$ such that $w$ has finitely many fixed points on $\ol T_\psi$. (This is consistent with the definition of $W_{\reg}(S)$ in \S\ref{sub:stable-multiplicity} below.)
  We write $N^\natural_{\psi,\reg}(M^*,G^*)$ for the preimage of $W_{\psi,\reg}(M^*,G^*)$ in $N^\natural_{\psi}(M^*,G^*)$.

\begin{lem}\label{lem:w_u-regular-enough}

  Assume that $\psi \in \Psi_{\EXC}(G^*)$ and $u\notin N^\natural_{\psi,\reg}(M^*,G^*)$ (i.e. $u\in N^\natural_\psi(M^*,G^*)$ has the property that $w_u\notin W_{\psi,\reg}(M^*,G^*)$). Then $f'_{G,\Xi}(\psi,s_\psi s^{-1})=e(G) f_{G,\Xi}(\psi,u)$ provided that $u$ and $s\in S_{\psi,\sspl}$ have the same image in $S_\psi^\natural(M^*,G^*)$.

\end{lem}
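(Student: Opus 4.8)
The plan is to show that for an exceptional parameter the hypothesis $w_u\notin W_{\psi,\reg}(M^*,G^*)$ forces $w_u=1$, and then to prove the asserted identity in that degenerate situation by descending both sides to $M$: the side $f_{G,\Xi}(\psi,u)$ to the defining character identity of $\pi(u^\natural)_{\xi,z}$ from Lemma \ref{lem:iop3}, and the side $f'_{G,\Xi}(\psi,s_\psi s^{-1})$ to parabolic descent of the endoscopic linear form, after which the induction hypothesis (here in the form of Lemma \ref{lem:iop3} for $M$) finishes the comparison.

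First I would record the structural input. By the standing hypothesis $\psi_{M^*}\in\Psi_2(M^*)$, so $A_{\hat M^*}$ is a maximal torus of $S_\psi$ and $W_\psi(M^*,G^*)$ is the Weyl group $W(A_{\hat M^*},S_\psi)$ of the (possibly disconnected) reductive group $S_\psi$, acting faithfully on $\bar T_\psi$. For $\psi$ of type (exc1) or (exc2) the explicit descriptions $S_\psi\simeq\SL(2,\C)\times\O(1,\C)^{r-1}$ and $S_\psi\simeq\O(3,\C)\times\O(1,\C)^{r-1}$, together with the fact that the $\O(1)$-factors centralise the one-dimensional maximal torus, give $W_\psi(M^*,G^*)\cong\Z/2\Z$, the non-trivial element acting on $\bar T_\psi\cong\C^\times$ by inversion. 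An inversion has finite fixed-point set, hence lies in $W_{\psi,\reg}(M^*,G^*)$, so $W_{\psi,\reg}(M^*,G^*)=W_\psi(M^*,G^*)\smallsetminus\{1\}$ and the hypothesis $w_u\notin W_{\psi,\reg}(M^*,G^*)$ forces $w_u=1$.

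Now assume $w_u=1$. I would fix a representative $u\in N(A_{\hat M^*},S_\psi)$ of $u^\natural$ normalising $\hat T$ as in \S\ref{sec:iop}; since $w_u=1$ this $u$ centralises $A_{\hat M^*}$, so $u\in Z(A_{\hat M^*},\hat G^*)\cap S_\psi=\hat M^*\cap S_\psi=S_{\psi_{M^*}}(M^*)$, and in particular $u$ is semisimple. In the composition \eqref{eq:iop} the operators $R_{w^{-1}P|P}(\xi,\psi_{M^*})$ and $l_P(w,\xi,\psi_{M^*},\psi_F)$ are the identity, so $R_P(u^\natural,\Xi,\pi,\psi_{M^*},\psi_F)=\mc{I}_P(\pi(u^\natural)_{\xi,z})$; hence, writing $f_P$ for the constant term of $f$ along $P$, the standard formula for the character of a parabolically induced representation gives $f_{G,\Xi}(\psi_{M^*},u^\natural)=\sum_{\pi\in\Pi_{\psi_{M^*}}(M,\Xi_M)}\tx{tr}(\pi(u^\natural)_{\xi,z}\circ\pi(f_P))$. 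On the other hand, since $w_u=1$ the automorphism $\tx{Ad}(\breve w)$ of $M$ is trivial, $M^\theta=M$, and the datum $\mf{e}_{M,\psi}$ of \S\ref{sec:iop3} is the ordinary endoscopic triple $(M',s_\psi u^{-1},\eta)$ for $M$ associated to $(\psi_{M^*},s_\psi u^{-1})$ as in \S\ref{sub:endo-correspondence}. Lemma \ref{lem:iop3} then reads
\[ f^{\mf{e}_{M,\psi}}(\eta\circ\psi_{M^*})=e(M)\sum_{\pi\in\Pi_{\psi_{M^*}}(M,\Xi_M)}\tx{tr}(\pi(u^\natural)_{\xi,z}\circ\pi(f)),\qquad f\in\mc{H}(M), \]
and since the linear form $f\mapsto f^{\mf{e}_{M,\psi}}(\eta\circ\psi_{M^*})$ on $\mc{H}(M)$ is by definition $f\mapsto f'_{M,\Xi_M}(\psi_{M^*},s_\psi u^{-1})$ of \S\ref{sec:lir} (valid also in the degenerate case where $M$ itself plays the role of the ambient group, cf. Remark \ref{rem:central-action}), evaluating at $f_P$ yields $(f_P)'_{M,\Xi_M}(\psi_{M^*},s_\psi u^{-1})=e(M)\,f_{G,\Xi}(\psi_{M^*},u^\natural)$.

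It remains to descend $f'_{G,\Xi}(\psi,s_\psi s^{-1})$ to $M$ and compare. By Lemma \ref{lem:S(M,G)-S(G)} the image of $u$ in $S^\natural_\psi(M^*,G^*)$ agrees with its image in $S^\natural_\psi(G^*)$, which equals that of $s$; hence $s_\psi s^{-1}$ and $s_\psi u^{-1}$ are semisimple with the same image in $S^\natural_\psi$, so $f'_{G,\Xi}(\psi,s_\psi s^{-1})=f'_{G,\Xi}(\psi,s_\psi u^{-1})$ by Lemma \ref{lem:local-indep-f'}. Since $u\in\hat M^*$ and $s_\psi\in\hat T\subseteq\hat M^*$ both centralise $Z(\hat M^*)\supseteq A_{\hat M^*}$, the element $s_\psi u^{-1}$ centralises $A_{\hat M^*}$, so the Levi subgroup $\hat M_{s_\psi u^{-1}}=Z_{\hat G^*}\big(Z_{A_{\hat M^*}}(s_\psi u^{-1})^\circ\big)$ attached to $s_\psi u^{-1}$ as in the proof of Lemma \ref{lem:local-indep-f'} is exactly $\hat M^*$, and the pair $(\psi,s_\psi u^{-1})$ is the image of $(\psi_{M^*},s_\psi u^{-1})$. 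If $M^*$ does not transfer to $G$ then $\psi_{M^*}$ is not $(G,\xi)$-relevant, so $f_{G,\Xi}(\psi_{M^*},u^\natural)=0$, and the descent argument of Lemma \ref{lem:local-indep-f'} (there are no matching strongly regular semisimple classes, so the transferred function defines the zero stable linear form) forces $f'_{G,\Xi}(\psi,s_\psi u^{-1})=0$; thus the lemma holds trivially. Otherwise $M^*$ transfers to a Levi $M\subseteq G$ carrying the extended pure inner twist $(M,\xi_M,z_M)$ induced by $\Xi$, and arguing exactly as in the proofs of Lemmas \ref{lem:local-indep-f'} and \ref{lem:local-intertwining-non-exceptional}---using $f'_{G,\Xi}(\psi,s_\psi u^{-1})=f^\fke(\psi^\fke)=f^\fke_{M^\fke}(\psi^\fke_{M^*})$ and Lemma \ref{lem:tflevi} to identify $f^\fke_{M^\fke}$ with a $\Delta[\mf{e}_{M^*},\Xi_M]$-transfer of $f_P$---one obtains $f'_{G,\Xi}(\psi,s_\psi u^{-1})=(f_P)'_{M,\Xi_M}(\psi_{M^*},s_\psi u^{-1})$. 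Combining the displayed identities with $e(M)=e(G)$ (\cite[Cor.\ (6)]{KotSign}) gives $f'_{G,\Xi}(\psi,s_\psi s^{-1})=e(G)\,f_{G,\Xi}(\psi,u)$. The main point requiring care is the bookkeeping that the reduction of the ``$f$-side'' via Lemma \ref{lem:iop3} and that of the ``$f'$-side'' via the endoscopic descent of Lemma \ref{lem:local-indep-f'} land on the \emph{same} object over $M$---in particular that, for $w_u=1$, the twisted endoscopic triple of \S\ref{sec:iop3} degenerates to the ordinary triple $(M',s_\psi u^{-1},\eta)$ and the Levi attached to $s_\psi u^{-1}$ is precisely $M^*$.
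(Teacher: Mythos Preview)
Your proof is correct and follows essentially the same route as the paper. The paper's argument is terse: it observes that $w_u=1$ forces $u$ to come from $Z_{S_\psi}(T_\psi)$, hence $\dim\ol{T}_{\psi,s}=1$, and then simply says ``the descent argument exactly as in the proof of Lemma \ref{lem:local-intertwining-non-exceptional} applies.'' You have unpacked precisely what that descent becomes in the special situation $M_s^*=M^*$: the $f'$-side descends to $M$ via Lemma \ref{lem:tflevi} as in \eqref{eq:LIR-induction-f'}, while the $f$-side descends because the non-trivial factors in \eqref{eq:iop} collapse when $w_u=1$, leaving $\mc{I}_P(\pi(u^\natural)_{\xi,z})$ and hence the constant-term identity. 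Your invocation of Lemma \ref{lem:iop3} at the final comparison step is exactly the content of the ``induction hypothesis for $M_s$'' that Lemma \ref{lem:local-intertwining-non-exceptional} uses, specialised to the case $M_s=M$ and $\psi_{M^*}\in\Psi_2(M^*)$ (where the LIR for the proper Levi degenerates to the endoscopic character identity of Theorem \ref{thm:locclass-single} part 4 for $M$, which is precisely what Lemma \ref{lem:iop3} encodes when $\theta$ is trivial). So there is no genuine difference in method, only in the level of detail at which the reduction is spelled out.
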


\begin{proof}
  For each $u$ it is enough to prove for any one $s$ which has the same image in $S_\psi^\natural(M^*,G^*)$
   by Lemma \ref{lem:local-indep-f'}.
  It is seen from the explicit form of the parameter in (exc1) and (exc2) that $|W^0_\psi|=|W_\psi|=2$ and $\dim \ol{T}_\psi=1$ in either case, and exactly one element of $W_\psi$ is regular. So the hypothesis implies that $w_u=1$, which in turn tells us in view of \eqref{eq:diag} that $u$ is the image of some element $s\in Z_{S_\psi}(T_\psi)$ in $N^\natural_\psi$. Then obviously $\dim\ol{T}_{\psi,s}=\dim\ol{T}_\psi=1$. In this case the descent argument exactly as in the proof of Lemma \ref{lem:local-intertwining-non-exceptional} applies to show that $f'_{G,\Xi}(\psi,s_\psi s^{-1})=e(G) f_{G,\Xi}(\psi,u)$.

\end{proof}

  Unless we are in case (exc1) or (exc2), Lemmas \ref{lem:local-exc} and \ref{lem:local-indep-f} tell us that $f_{G,\Xi}(\psi,u)$ depends only on the image of $u$ in $S^\natural_\psi(M^*,G^*)$ so that $f_{G,\Xi}(\psi,x)$ is well-defined for $x\in S^\natural_\psi(M^*,G^*)$. If $\psi\in \Psi_{\EXC}(G^*)$ then we may only consider
  \begin{itemize}
    \item $u\in N^\natural_{\psi}$ such that $w_u\in W_{\psi,\reg}(M^*,G^*)$
  \end{itemize}
  thanks to Lemma \ref{lem:w_u-regular-enough}. Then one checks from the explicit form of (exc1) and (exc2) that there is a unique element $u\in N^\natural_{\psi,\reg}$ in the fiber over $x$ such that $w_u\in W_{\psi,\reg}(M^*,G^*)$. (In either case $|W^0_\psi|=2$ so there are two elements in the fiber over $x$. One of them maps to a regular element in $W_{\psi}(M^*,G^*)$ but the other has trivial image, which is thus not regular.) Hence we may and will choose the convention that whenever $\psi\in \Psi_{\EXC}(G^*)$ and $x\in S^\natural_\psi(M^*,G^*)$, we set $f_{G,\Xi}(\psi,x)$ to be $f_{G,\Xi}(\psi,u)$ for the unique $u$ just mentioned.

  The harder cases of the local intertwining relation, left over from Corollary \ref{cor:local-intertwining-step1} and Lemma \ref{lem:w_u-regular-enough}, will be treated in \S\ref{chapter4} after the method of comparing the trace formula is developed.
  The discussion in this subsection has a close analogue in the global situation (see \S\ref{sub:global-intertwining}).

\subsection{A proof of the local intertwining relation in a special case} \label{sec:lirexp}

In this section we are going to prove a special case of Theorem \ref{thm:lir} by a direct computation. This special case will serve as a basis for the inductive proof of the general case of Theorem \ref{thm:lir}, which will use global techniques and will be interwoven with the proofs of the main local and global theorems. This section will also provide a good illustration of the objects involved in Theorem \ref{thm:lir} and their interplay.

The special case we are concerned with here is the following. We take $E/F=\C/\R$ and $N=4$ and thus the quasi-split group is $G^*=\U_{\C/\R}(4)$. Fix the standard additive character $\psi_\R : \R \rw \C^\times$ given by $\psi_\R(x)=\exp(2\pi i x)$. The inner form we are interested in is the real reductive group $G$ with $G(\C)=\tx{GL}_4(\C)$ and Galois action given by $\sigma(g)=\tx{Ad}(\delta_{3,1}J_4)\bar g^{-t}$, where $\sigma \in \Gamma_{\C/\R}$ is the non-trivial element, $\bar\ $ denotes complex conjugation of complex numbers, as well as complex conjugation of the entries of a complex matrix, and $\delta_{3,1}$ is the matrix
\[ \delta_{3,1} = \begin{bmatrix} 1\\ &0&i\\&-i&0\\ &&&1 \end{bmatrix}. \]
The group $G$ is isomorphic to the unitary group $\U(3,1)$ and $(\tx{id},\delta_{3,1}) : G^* \rw G$ is a pure inner twist. We are using the short-hand notation here and throughout this section which identifies a 1-cocycle of $\Gamma_{\C/\R}$ with its value at $\sigma$. We write $(\xi,z)=(\tx{id},\delta_{3,1})$.

We consider the group $M^*=\tx{Res}_{\C/\R}(\mb{G}_m) \times \U_{\C/\R}(2)$. If we represent $\tx{Res}_{\C/\R}(\mb{G}_m)$ by viewing its $\C$-points as $\C^\times \times \C^\times$ with $\sigma(a,b) = (\bar b,\bar a)$, then $M^*$ embeds as a standard Levi subgroup of $G^*$ by the map
\[ m^* : M^* \rw G^*,\qquad (a,b,B) \mapsto \begin{bmatrix} a\\ &B\\ &&b^{-1} \end{bmatrix}. \]
We take $P^*$ to be the unique standard parabolic subgroup of $G^*$ with Levi factor $M^*$. It is clear that $(\xi,z)$ restricts to a pure inner twist $M^* \rw M$, where $M$ is the group $\tx{Res}_{\C/\R}(\mb{G}_m) \times U(2,0)$ and is embedded as a Levi subgroup of $G$ by the map $m : M \rw G$ given by the same formula as for $m^*$. We let $P=\xi(P^*)$.

Now consider the standard parabolic pair $(\hat M,\hat P)$ of $\hat G$ dual to $(M^*,P^*)$. The Weil-group $W_\R$ is a non-split extension of $\C^\times$ by $\Gamma_{\C/\R}$. We fix a lift $j \in W_\R$ of $\sigma \in \Gamma_{\C/\R}$ that satisfies $j^2=-1$ and denote elements of $\C^\times \subset W_\R$ by $z$. We then take, for $x \in \frac{1}{2}\Z$, the Langlands parameter $\phi : W_\R \rw {^LG}$ given by $\phi(w)=\phi^0(w) \rtimes w$ with
\[ \phi^0(z) = \begin{bmatrix} z^x\bar z^{-x}\\& z^\frac{1}{2}\bar z^{-\frac{1}{2}}\\&& z^{-\frac{1}{2}}\bar z^{\frac{1}{2}}\\ &&& z^x\bar z^{-x} \end{bmatrix},\quad \phi^0(j) = \begin{bmatrix} 1\\ &&1\\ &-1\\ &&& (-1)^{2x} \end{bmatrix}. \]
It is clear that the image of $\phi$ belongs to $^LM$.

Consider the restriction of $\phi$ to $W_\C$. On the one hand, according to \cite[Thm. 8.1]{GGP12}, this is a conjugate-symplectic representation of $W_\C$. On the other hand, it decomposes as a product of characters
\[ 2(x,-x)\oplus (\frac{1}{2},-\frac{1}{2}) \oplus (-\frac{1}{2},\frac{1}{2}) \]
where we have denoted by $(a,b)$ the character of $W_\C = \C^\times$ given by $z \mapsto z^a\bar z^b$. Since the character $(a,b)$ is conjugate-symplectic if $a-b \in \Z \smallsetminus 2\Z$ and conjugate-orthogonal if $a-b \in 2\Z$, we infer from \cite[\S4]{GGP12} that $S_\phi^\natural=\pi_0(S_\phi)$ is given by the following table
\[ \begin{tabular}{|>{$}c<{$}|>{$}c<{$}|}
\hline
x&S_\phi^\natural\\
\hline
\Z&\tx{Sp}(2) \times \O(1) \times \O(1)\\
\hline
\frac{1}{2}&\O(3) \times \O(1)\\
\hline
-\frac{1}{2}&\O(1) \times \O(3)\\
\hline
\frac{1}{2}\Z \smallsetminus \Z \cup \{-\frac{1}{2},\frac{1}{2}\}& \O(2) \times \O(1) \times \O(1)\\
\hline
\end{tabular} \]
On the other hand, $S_\phi(M)=\O(1) \times \O(1)$ for all $x$. We see that for $x \in \Z \cup \{-\frac{1}{2},\frac{1}{2}\}$ the endoscopic $R$-group $R_\phi(M,G)$ is trivial. We see furthermore that for $x \in \Z$ the parameter $\phi$ is of type (exc1), while for $x \in \{-\frac{1}{2},\frac{1}{2}\}$ it is of type (exc2), where (exc1) and (exc2) where the exceptional parameter types discussed in Section \ref{sub:prelim-local-intertwining}.

\begin{pro} \label{pro:u31lir}
Theorem \ref{thm:lir} is valid for the parabolic pair $(M,P)$ of $G$ and the parameter $\phi$ whenever $x \in \{-2,-1,-\frac{1}{2},0,\frac{1}{2},1,2\}$.
\end{pro}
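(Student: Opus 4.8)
The plan is to reduce Theorem \ref{thm:lir} for $(M,P)\subset G$ and the parameter $\phi$ with the listed values of $x$ to an explicit computation on the rank-one group $\U_{\C/\R}(2)$ and its Levi $\tx{Res}_{\C/\R}(\mathbb{G}_m)$, using the product structure $M^*=\tx{Res}_{\C/\R}(\mathbb{G}_m)\times \U_{\C/\R}(2)$ and the descent lemmas already established. First I would dispose of part 3 of Theorem \ref{thm:lir} in the cases $x\notin\mathbb{Z}\cup\{-\tfrac12,\tfrac12\}$: by the table, for those $x$ the parameter $\phi$ is non-elliptic (it is $\O(2)\times\O(1)\times\O(1)$, with $\ol T_\phi$ of dimension $1$, but $W^0_\psi$ nontrivial), and one checks directly that $\dim\ol T_{\phi,\bar s}\ge 1$ for all semisimple $\bar s$, so Lemma \ref{lem:local-intertwining-non-exceptional}(2) already gives the identity $f'_{G,\Xi}(\psi,s_\psi u^{-1})=e(G)f_{G,\Xi}(\psi,u^\natural)$; combined with Corollary \ref{cor:local-intertwining-step1} this leaves only the exceptional parameters $x\in\mathbb{Z}$ (type (exc1)) and $x\in\{-\tfrac12,\tfrac12\}$ (type (exc2)). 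The statement only asks for $x\in\{-2,-1,-\tfrac12,0,\tfrac12,1,2\}$, so I must handle $x\in\{-2,-1,0,1,2\}$ (exc1) and $x=\pm\tfrac12$ (exc2) by hand.

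For those remaining cases I would invoke Lemmas \ref{lem:lirdesc1} and \ref{lem:lirdesc2}: since $\phi$ is pulled back from the Levi $M_0^*=\tx{Res}_{\C/\R}(\mathbb{G}_m)^2\times\tx{(something)}$ — more precisely, because the $\U_{\C/\R}(2)$-part of $\phi$ is the discrete parameter attached to the two characters $(\tfrac12,-\tfrac12)\oplus(-\tfrac12,\tfrac12)$ and the $\tx{Res}$-factor contributes multiplicity $2$ to the block $(x,-x)$ — the local intertwining operator $R_P(u^\natural,\Xi,\pi,\phi,\psi_\R)$ is built from: the normalized operator $R_{w^{-1}P|P}(\xi,\phi)$ (well-defined and non-zero at $\lambda=0$ by Lemma \ref{lem:iop1rm}, since $F=\R$ and $\phi\in\Phi_{\tx{bdd}}$), the Keys–Shahidi/Langlands–Shelstad factor $l_P(w,\xi,\phi,\psi_\R)$ from Section \ref{sec:iop2}, and the finite-dimensional intertwining operator $\pi(u^\natural)_{\xi,z}$ from Section \ref{sec:iop3u}. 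Because $\U_{\C/\R}(2)$ has a well-understood representation theory (discrete series and their $L$-packets are explicit, $R$-groups are known from Knapp–Stein theory), I would compute the single Weyl element $w$ acting here, verify by Lemma \ref{lem:iop2m} and the explicit Langlands constant $\lambda(E/F,\psi_\R)$ formula that the scalars work out, and match the resulting operator against the endoscopic side $f'_{G,\Xi}(\phi,s_\psi u^{-1})$, where the endoscopic datum attached to $s\in S_\phi$ is (by the correspondence of Section \ref{sub:endo-correspondence}) a product of unitary groups of small rank whose stable linear forms are given by Mok's quasi-split theorem (Proposition \ref{prop:local-stable-linear}) and the known archimedean Langlands–Shelstad classification. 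The key identity to verify is that the splitting $s:\pi_0(N_\psi(M,G))\to\pi_0(S_\psi(M))$ constructed in Section \ref{sec:iop3u} — the "non-naive" one using Langlands–Shelstad lifts — produces exactly the sign that makes the character identity \eqref{eq:eci} hold; this is precisely the point the remark at the end of Section \ref{sec:iop3u} warns about, and checking it in the (exc1)/(exc2) cases for $U(3,1)$ is the content of the proposition.

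The main obstacle I expect is the sign bookkeeping in the exceptional cases: for type (exc1) with $S_\phi\simeq\tx{Sp}(2)\times\O(1)\times\O(1)$, the Weyl element $w$ one is testing lies in $W^0_\psi$ (a single reflection in the $\tx{Sp}(2)$-factor), so the naive product splitting would make $\pi(u^\natural)_{\xi,z}$ trivial, whereas the correct splitting via $\tilde w=t_w\hat w$ with $t_w\in Z(\hat M)$ of order $2$ (Lemmas \ref{lem:lsl}, \ref{lem:tw-}) can contribute a nontrivial character value $\langle\pi,s(u^\natural)^{-1}\rangle_{\xi,z}$ on the $U(2,0)$-part $\pi_-$. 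I would compute $t_{w,-}\in Z(\hat M_-)=\{\pm1\}$ explicitly — by Lemma \ref{lem:tw-} it equals $(-1)^{N_i}$ for the relevant $\tx{GL}(N_i)$-block, here $N_1=1$, so $t_{w,-}=-1$ — and verify that the resulting sign, fed through the Kottwitz pairing with the pure-inner-twist class $z=\delta_{3,1}$, exactly accounts for the discrepancy between $U(3,1)$ and the quasi-split $U_{\C/\R}(4)$ predicted by $e(G)$ and the central element $s_\psi$. The cross-check is that for $x=\pm\tfrac12$ (type (exc2), $S_\phi\simeq\O(3)\times\O(1)$) the unique regular Weyl element must likewise yield the correct sign, and the asymmetry $x=\tfrac12$ vs.\ $x=-\tfrac12$ in the table ($\O(3)\times\O(1)$ vs.\ $\O(1)\times\O(3)$) should correspond to swapping the two blocks, which the computation must respect. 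Once these finitely many explicit character-and-sign identities are verified, the three parts of Theorem \ref{thm:lir} follow for the stated $x$; in particular part 1 (triviality of $R_P(u^\natural,\dots)$ on $W^{\tx{rad}}_\psi$) comes out of the same computation since $W^{\tx{rad}}_\psi\simeq W^0_\psi$ is a single reflection whose operator we are evaluating directly.
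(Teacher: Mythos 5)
There is a genuine gap, and the central idea of your plan is off track. You propose to ``reduce Theorem \ref{thm:lir} \dots to an explicit computation on the rank-one group $\U_{\C/\R}(2)$'' by invoking Lemmas \ref{lem:lirdesc1} and \ref{lem:lirdesc2}. This cannot work here. Those lemmas descend the parameter from a proper Levi $M^*$ of $G^*$ down to a nested Levi $M_0^*\subsetneq M^*$ of $G^*$ via $\psi\mapsto\psi_0$; but in the present setting $M^*=\tx{Res}_{\C/\R}(\mb{G}_m)\times\U_{\C/\R}(2)$ is already a \emph{maximal} Levi of $G^*=\U_{\C/\R}(4)$ and the parameter $\phi_{M^*}$ is already \emph{discrete} on $M^*$, so there is nowhere to descend. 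Moreover, $\U_{\C/\R}(2)$ is a factor of the Levi $M$, not the ambient group: the intertwining operator $R_{w^{-1}P|P}(\xi,\phi)$ is an operator on representations of $G(\R)=\tx{U}(3,1)$, defined by an integral over the unipotent radical $\bar N(\R)$, a five-real-dimensional Heisenberg-type unipotent group in $\tx{U}(3,1)$. There is no reduction of that integral to a rank-one calculation on $\U_{\C/\R}(2)$; the ``rank one'' here only means $|W(M,G)|=2$. Finally, for exactly the exceptional types (exc1) and (exc2) the descent mechanism of Corollary \ref{cor:local-intertwining-step1} is what \emph{fails} --- that is the whole reason this proposition exists as an independent input.

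The second and deeper gap is that your outline has no mechanism for actually evaluating the scalar. You correctly observe that the $R$-group is trivial so $\mc{H}_P(\pi)$ is irreducible and $R_P(u^\natural,\Xi,\pi,\phi,\psi_\R)$ is a scalar, and the statement reduces to showing $R_P(a_3,(\xi,z),\pi,\phi,\psi_\R)=1$. But your plan to ``verify by Lemma \ref{lem:iop2m} and the explicit Langlands constant'' only addresses the multiplicativity of $l_P$ and the factor $\lambda(\C/\R,\psi_\R)$; neither gives the \emph{value} of the operator. The paper computes it directly: it chooses a concrete test function $f_v\in\mc{H}_P(\pi_v)$ supported on the big Bruhat cell $N(\R)M(\R)\breve w N(\R)$, writes out the unnormalized integral $\int_{\bar N(\R)} f_v(n')\,dn'$ in explicit real coordinates (using the decomposition $\bar n\colon\C^2\oplus\R\to\mf{\bar n}(\R)$, the Bruhat relations \eqref{eq:lirexxy}, and the involution of Lemma \ref{lem:lirexinvo}), evaluates it via hyperspherical coordinates and Gamma-function identities, multiplies by the $L$- and $\epsilon$-factor ratios computed from the explicit decomposition of $\rho^\vee_{\bar P|P}\circ\phi_v$, and takes the limit as $v\to 0^+$. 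That analytic computation --- producing the second table of values $-1,-1,-1,-1,+1,-1,-1$ --- is the substance of the proof, and nothing in your outline supplies it.

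You are on the right track in one respect: the non-naive splitting $s\colon\pi_0(N_\psi(M,G))\to\pi_0(S_\psi(M))$ from Section \ref{sec:iop3u} does contribute a nontrivial sign via $\<\pi,s(a_3)^{-1}\>_{\xi,z}$, and computing $t_{w,-}$ via Lemma \ref{lem:tw-} is part of that. The paper computes $\pi(a_3)_{\xi,z}$ to be $-1,\,+1,\,-1$ for $x\in\Z,\,\tfrac12,\,-\tfrac12$ respectively, using the explicit Langlands--Shelstad lift $s'(w)$ and the parametrization of discrete series $L$-packets of $M(\R)$. But this table must then be \emph{matched} against the independently computed table from the integral, and the entire point of the proposition is that the two agree --- which cannot be seen without actually carrying out the integral.
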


The rest of this section is devoted to the proof of this proposition. We begin by making explicit Diagram \eqref{eq:diag}. Since we are working with unitary groups, $S_\phi^\tx{rad}=S_\phi^0$. Furthermore, $S_\phi^0 \cap S_\phi(M)=S_\phi(M)^0$. Diagram \eqref{eq:diag} specialized to this case has the form (with $C_2=\Z/2\Z$)
\[ \xymatrix{
&&0\ar[d]&\\
&&\{0\} \times \{0\} \times C_2\ar@{=}[r]\ar[d]&\{0\} \times \{0\} \times C_2\ar@{=}[d]\\
0\ar[r]&C_2 \times C_2\times \{0\}\ar@{=}[d]\ar[r]&C_2 \times C_2 \times C_2 \ar[r]\ar[d]&\{0\} \times \{0\} \times C_2\ar[r]&0\\
0\ar[r]&C_2 \times C_2\times \{0\}\ar@{=}[r]&C_2 \times C_2\times \{0\}\ar[d]\\
&&0
} \]
For any value of $x$, the non-trivial elements in the first two copies of $C_2$ can be represented by the diagonal matrices with diagonal entries $(1,-1,1,1)$ and $(1,1,-1,1)$. These matrices belong to $S_\phi(M) \subset S_\phi(G)$ and we will call them $a_1$ and $a_2$. A matrix representing the non-trivial element in the third copy of $C_2$ however depends on the value of $x$. We have the three cases
\[ x\in \Z: \begin{bmatrix}&&&1 \\ &1\\ &&1\\ -1 \end{bmatrix},\quad x=\frac{1}{2}: \begin{bmatrix}&&&1\\ &-1\\ &&1\\ 1\end{bmatrix},\quad x=-\frac{1}{2}:\begin{bmatrix}&&&1\\ &1\\ &&-1\\ 1 \end{bmatrix} \]
This matrix belongs to $S_\phi^0 \sm S_\phi(M)$ and normalizes $A_{\hat M}$. We will call it $a_3$.

We now take $u = a_1^{\epsilon_1} \cdot a_2^{\epsilon_2} \cdot a_3^{\epsilon_3} \in N_\phi(M,G)$ for $\epsilon_i\in \{0,1\}$ and consider the linear form $f(\phi,u)$ that is the essential ingredient of the right-hand side of Theorem \ref{thm:lir}. The $L$-packet $\Pi_\phi(M,\xi)$ contains a single element $\pi$. It is the representation of $M(\R) = \C^\times \times U(2,0)(\R)$ which is the tensor product of the character $z \mapsto z^x\bar z^{-x}$ of $\C^\times$ and the trivial representation of $U(2,0)(\R)$. The equality of the representation-theoretic and endoscopic $R$-groups is known for real groups \cite[\S3]{KZ79} and implies that $\mc{H}_P(\pi)$ is irreducible and thus that $R_P(u,(\xi,z),\pi,\phi,\psi_\R)$ is a scalar. Using the multiplicativity of $R_P(u,(\xi,z),\pi,\phi,\psi_\R)$ in $u$ guaranteed by Lemma \ref{lem:iopm}, we can write the right-hand side of equation \eqref{eq:lir} as
\[ e(G)R_P(a_3^{\epsilon_3},(\xi,z),\pi,\phi,\psi_\R)R_P(a_1^{\epsilon_1}a_2^{\epsilon_2},(\xi,z),\pi,\phi,\psi_\R)\tx{tr}(\mc{I}_P(\pi)(f)). \]
Consider the scalar operator $R_P(a_1^{\epsilon_1}a_2^{\epsilon_2},(\xi,z),\pi,\phi,\psi_\R)$. Since $a_1^{\epsilon_1}a_2^{\epsilon_2}$ has trivial image in $W_\phi(M,G)$, the right and middle factors in \eqref{eq:iop} are easily seen to be trivial. For the construction of $\pi(a_1^{\epsilon_1}a_2^{\epsilon_2})_{\xi,z}$ we can use Section \ref{sec:iop3u}, because the 1-cocycle $z$ satisfies the assumption $z_+=1$ under which the constructions of that section are valid. We then see that $\pi(a_1^{\epsilon_1}a_2^{\epsilon_2})_{\xi,z}=\<a_1^{\epsilon_1}a_2^{\epsilon_2},\pi\>_{\xi,z}^{-1}$, where the pairing $\<-,-\>_{\xi,z}$ is the pairing between the $L$-packet $\Pi_\phi(M)$ and the centralizer $S_\phi(M)$ associated to the pure inner twist $(\xi,z) : M^* \rw M$ by Theorem \ref{thm:locclass-single}. For this we just need to note that $a_1,a_2 \in S_\phi(M)$ and thus the section $s : \pi_0(N_\phi(M,G)) \rw \pi_0(S_\phi(M))$ employed in the construction of $\pi(\bar u)_{\xi,z}$ fixes them. It follows that the right-hand side of equation \eqref{eq:lir} has the form
\[ e(G)R_P(a_3^{\epsilon_3},(\xi,z),\pi,\phi,\psi_\R)\<a_1^{\epsilon_1}a_2^{\epsilon_2},\pi\>_{\xi,z}^{-1}\tx{tr}(\mc{I}_P(\pi)(f)). \]
We now turn to the left-hand side of Theorem \ref{thm:lir}. The triviality of the endoscopic $R$-group implies that the pairing $\<-,-\>_{\xi,z}$ between the $L$-packet $\Pi_\phi$ on $G$ and $S_\phi$ is compatible with the pairing $\<-,-\>_{\xi,z}$ between the $L$-packet $\Pi_\phi(M)$ and $S_\phi(M)$ in the sense that $\<\mc{I}_P(\pi),s\>_{\xi,z} = \<\pi,s\>_{\xi,z}$ for any $s \in \pi_0(S_\phi(M))=\pi_0(S_\phi)$. An exposition of this can be found in \cite[\S5.6]{Kal13}. Noting that the images of $a_1^{\epsilon_1}\cdot a_2^{\epsilon_2}\cdot a_3^{\epsilon_3}$ and $a_1^{\epsilon_1} \cdot a_2^{\epsilon_2}$ in $\pi_0(S_\phi)$ coincide, the endoscopic character identities for real groups \cite{She82}, \cite{SheTE2}, \cite{SheTE3}, (see also \cite[\S5.6]{Kal13} for an exposition in the case of pure inner forms) imply that the left-hand side of equation \eqref{eq:lir}, given by $f'(\phi,a_1^{-\epsilon_1} \cdot a_2^{-\epsilon_2} \cdot a_3^{-\epsilon_3})$, is equal to
\[
e(G)\<\mc{I}_P(\pi),a_1^{-\epsilon_1} \cdot a_2^{-\epsilon_2} \cdot a_3^{-\epsilon_3}\>_{\xi,z}\tx{tr}(\mc{I}_P(\pi)(f))=
e(G)\<\pi,a_1^{\epsilon_1} \cdot a_2^{\epsilon_2}\>_{\xi,z}^{-1}\tx{tr}(\mc{I}_P(\pi)(f)). \]
Comparing this with the expression we obtained for $f(\phi,a_1^{\epsilon_1} \cdot a_2^{\epsilon_2} \cdot a_3^{\epsilon_3})$ above, we see that equation \eqref{eq:lir} will be proved once we show
\begin{equation} \label{eq:lirexp1} R_P(a_3^{\epsilon_3},(\xi,z),\pi,\phi,\psi_\R) = 1.\end{equation}
The remaining parts of Theorem \ref{thm:lir} follow as well from this. We may of course assume $\epsilon_3=1$, for otherwise the statement is trivial. Denote by $w$ the image of $a_3$ in any of the groups $W(\hat M,\hat G) \cong W(M^*,G^*) \cong W(M,G)$, where the isomorphisms are given by the parabolic pairs $(\hat M,\hat P)$, $(M^*,P^*)$, and $(M,P)$. The element $w$ belongs to the subgroup $W_\phi(M,G)$ of $W(\hat M,\hat G)$ and is thus $\Gamma$-fixed. According to \eqref{eq:iop} we are to show
\[ \pi(a_3)_{\xi,z} \circ l(w,\xi,\phi,\psi_F) \circ R_{w^{-1}Pw|P}(\xi,\phi) = 1. \]
For this we may choose an arbitrary $y \in G(\R)$ and an arbitrary smooth function $f \in \mc{H}_P(\pi)$ normalized so that $f(y)=1$. The above operator preserves the smooth vectors in this representation and sends $f$ to another smooth function. We must then show that
\[ [\pi(a_3)_{\xi,z} \circ l_P(w,\xi,\phi,\psi_F) \circ R_{w^{-1}Pw|P}(\xi,\phi)f](y) = 1. \]

We focus first on $\pi(a_3)_{\xi,z}$. The representation $\pi$ acts on the $1$-dimensional vector space $\C$, with $z \in \C^\times = \tx{Res}_{\C/\R}(\R)$ acting by the character $z^x\bar z^{-x}$, and $B \in U(2,0)(\R)$ acting trivially. The operator $\pi(w)$ constructed in Section \ref{sec:iop3u} is equal to the identity. On the other hand, the scalar $\<\pi,a_3\>_{\xi,z} \in \C^\times$ could be non-trivial. Indeed, it is constructed as the value $\<\pi,s(a_3)\>_{\xi,z}$, where $\<\pi,-\>_{\xi,z}$ is the character of $\pi_0(S_\phi(M))$ corresponding to $\pi$ \ref{thm:locclass-single} and $s(a_3)$ is an element of $\pi_0(S_\phi(M))$ whose construction we now review. Let $s'(w) \in N(\hat T,\hat G)^\Gamma$ be the Langlands-Shelstad lift of $w$, which in our case is
\[ s'(w) = \begin{bmatrix} &&&1 \\ &-1 \\ &&-1 \\ -1 \end{bmatrix}. \]
We choose $m \in \hat M_+$ such that $m\cdot a_3 \cdot s'(w) \in S_\phi(M)$ and then let $s(u)$ be the image of this element in $\pi_0(S_\phi(M))$. We have argued in Section \ref{sec:iop3u} that $s(u)$ is independent of the choice of $m$.
Comparing with the possible values of $a_3$ above we see that $s(a_3)$ is equal to the following
\[ x \in \Z:\begin{bmatrix}1\\ &-1 \\ && -1 \\ &&& 1 \end{bmatrix}, x = \frac{1}{2}:\begin{bmatrix}1\\ &1\\ &&-1 \\ &&&1 \end{bmatrix}, x=-\frac{1}{2}:\begin{bmatrix}1\\ &-1\\ &&1 \\ &&&1 \end{bmatrix} \]
and we have chosen $m$ to be the identity matrix in the first case and the diagonal matrix with entries $(1,1,1,-1)$ in the latter two cases. Working through the parameterization of discrete series $L$-packets of real groups described in \cite[\S5.6]{Kal13}, one sees that the character $\<\pi,-\>_{\xi,z} : \pi_0(S_\phi(M)) \rw \{ \pm 1\}$ is given by $a_1^{\epsilon_1}a_2^{\epsilon_2} \mapsto (-1)^{\epsilon_1}$. We arrive at the following table
\[ \begin{tabular}{|>{$}c<{$}||>{$}c<{$}|>{$}c<{$}|>{$}c<{$}|}
\hline
x&\Z&\frac{1}{2}&-\frac{1}{2}\\
\hline
\pi(a_3)_{\xi,z}&-1&1&-1\\
\hline
\end{tabular} \]
We note at this point, in reference to the remarks made in Section \ref{sec:iop3u}, that if we had used the naive splitting of the exact sequence \eqref{eq:esuni}, then the value of $\pi(a_3)_{\xi,z}$ would always be equal to $1$. As we shall see below, this would lead to a violation of \eqref{eq:lirexp1}.

We turn now to the scalar $[l(w,\xi,\phi,\psi_F)\circ R_{w^{-1}Pw|P}(\xi,\phi)f](y)$. It is defined by analytic continuation at $v=0$ of the family of scalars obtained by replacing $\phi$ by its twist $\phi_v$ for $v \in \mf{a}_{M,\C}^*$. We will evaluate it as follows. We have $X^*(M)^\Gamma \cong \Z$, with $n \in \Z$ acting as the character $(a,b,B)^n=(ab)^n$. Hence $\mf{a}_{M,\C}^* \cong \C$ and for $v \in \C$ the representation $\pi_v$ acts on the 1-dimensional vector space $\C$ by the character which sends $(a,\bar a,B) \in M(\R)$ to the scalar $\pi_v(a,\bar a,B) = a^x \bar a^{-x} (a\bar a)^v$. The representation $\mc{I}_P(\pi_v)$ acts on the space of functions
\[ \{f_v : G(\R) \rw \C| f_v(n\cdot m(a,\bar a,B)\cdot g) = a^x\bar a^{-x}(a\bar a)^{\frac{3}{2}+v}f(g) \} \]
whose restriction to a maximal compact subgroup $K \subset G(\R)$ is square\-integrable. Choosing a continuous in $v$ family of smooth functions $f_v \in \mc{H}(\pi_v)$ with $f_v(y)=1$, we seek to compute
\[ \lim_{v \rw 0} [l(w,\xi,\phi_v,\psi_F)\circ R_{w^{-1}Pw|P}(\xi,\phi_v)f_v](y). \]
For $v \in \R_{>0}$ the above scalar is equal to
\[ \lambda(w,\psi_\R)^{-1}\epsilon(0,\rho_{\bar P|P}^\vee\circ\phi_v,\psi_\R) \frac{L(1,\rho_{\bar P|P}^\vee\circ\phi_v)}{L(0,\rho_{\bar P|P}^\vee\circ\phi_v)} \int_{\bar N(\R)} f_v(n' \breve w^{-1}y) dn', \]
where we have written $\bar P$ for $w^{-1}Pw$, because it happens to be the parabolic subgroup that is $M$-opposite to $P$.

Let us first evaluate all constants in front of the integral. The relative roots of $A_{T^*}$ are all reduced, and the positive ones which $w$ maps to negative are precisely the one occurring in $\mf{n^*}$. There are three such, one contributing a 1-parameter group with simply connected cover $\tx{SL}_2$, and the other two contributing a 1-parameter group with simply connected cover $\tx{Res}_{\C/\R}\tx{SL}_2$. Thus $\lambda(w,\psi_\R) = \lambda(\C/\R,\psi_\R)^2=-1$. Next, the representation $\rho_{\bar P|P}^\vee\circ\phi_v$ decomposes as the direct sum
\[ \tx{Ind}_{\C^\times}^{W_\R}(z^{x+v-\frac{1}{2}}\bar z^{-x+v+\frac{1}{2}}) \oplus \tx{Ind}_{\C^\times}^{W_\R}(z^{x+v+\frac{1}{2}}\bar z^{-x+v-\frac{1}{2}}) \oplus
\tx{sgn}^{|2x|} \]
and according to \cite{LanRice} we have the following table:
\[ \begin{tabular}{|>{$}c<{$}||>{$}c<{$}||>{$}c<{$}|}
\hline
x&\lambda(w,\psi_\R)^{-1}\epsilon(0,\rho_{\bar P|P}^\vee\circ\phi_v,\psi_\R)& \frac{L(0,\rho_{\bar P|P}^\vee\circ\phi_v)}{L(1,\rho_{\bar P|P}^\vee\circ\phi_v)}\\
\hline
-2&+1&4\pi^\frac{5}{2}\Gamma(v)\Gamma(v+\frac{3}{2})\Gamma(v+\frac{1}{2})^{-1}\Gamma(v+\frac{7}{2})^{-1}\\
\hline
-1&+1&4\pi^\frac{5}{2}\Gamma(v)\Gamma(v+\frac{5}{2})^{-1}\\
\hline
-\frac{1}{2}&-i&4\pi^\frac{5}{2} \Gamma(v)\Gamma(v+\frac{1}{2})\Gamma(v+1)^{-1}\Gamma(v+2)^{-1}\\
\hline
0&-1&4\pi^\frac{5}{2}\Gamma(v)\Gamma(v+\frac{1}{2})\Gamma(v+\frac{3}{2})^{-2}\\
\hline
\frac{1}{2}&-i&4\pi^\frac{5}{2} \Gamma(v)\Gamma(v+\frac{1}{2})\Gamma(v+1)^{-1}\Gamma(v+2)^{-1}\\
\hline
1&+1&4\pi^\frac{5}{2}\Gamma(v)\Gamma(v+\frac{5}{2})^{-1}\\
\hline
2&+1&4\pi^\frac{5}{2}\Gamma(v)\Gamma(v+\frac{3}{2})\Gamma(v+\frac{1}{2})^{-1}\Gamma(v+\frac{7}{2})^{-1}\\
\hline
\end{tabular} \]

We now turn to the integral itself. One checks that we have isomorphisms of $\R$-vector spaces
\[ n : \C^2 \oplus \R \rw \mf{n}(\R),\qquad (Y_1,Y_2,Y_3) \mapsto \left\{\begin{bmatrix}&Y_1&Y_2&Y_3\\&&&i\bar Y_1\\&&&i \bar Y_2\\&&& \end{bmatrix}\right\} \]
and
\[ \bar n : \C^2 \oplus \R \rw \mf{\bar n}(\R),\qquad (X_1,X_2,X_3) \mapsto \left\{\begin{bmatrix}\\X_1&\\X_2&&\\X_3&-i\bar X_1&-i \bar X_2& \end{bmatrix}\right\}. \]
Let $\eta^*$ be the top form on $\mf{\bar n^*}$ whose value on the basis given by elementary matrices (all entries $0$ except a single entry equal to $1$) is equal to $1$. This form is defined over $\R$.
The action of $\tx{Ad}(\delta_{3,1})$ on $\mf{\bar n}^*$ has determinant $1$, so if $\eta$ denotes the transport of $\eta^*$ from $\mf{\bar n^*}$ to $\mf{\bar n}$ via $\xi=\tx{id}$, then $\eta$ is still defined over $\R$. Its pull-back via $\bar n$ sends the standard basis $((1,0,0),(i,0,0),(0,1,0),(0,i,0),(0,0,1))$ of $\C^2 \oplus \R$ to $-4$. We conclude that the measure $dn'$ is given by
\[ \int_{N'(\R)} \phi(n')dn' = 4\int_{\C \times \C \times \R} \phi(\exp(\bar n(X_1,X_2,X_3))) dX_1dX_2dX_3. \]
In order to fix a good function $f_v$, we use the relative Bruhat decomposition
\[ G(\R) = N(\R) \sqcup N(\R)M(\R) \breve w N(\R). \]
We have
\[ \breve w = \begin{bmatrix} &&&1 \\ &-1\\ &&-1 \\ -1 \end{bmatrix}. \]
We will define a function $f_v$ on $G(\R)$ which will be supported on the open set $N(\R)M(\R)\breve wN(\R)$ and will vanish rapidly towards its complement in $G(\R)$. To integrate this function over $\bar N(\R)$, we need to know how the element $\exp(\bar n(X_1,X_2,X_3)) \in \bar N(\R)$ decomposes as a product
\[ \exp(n(V_1,V_2,V_3))m(a,\bar a,B)\breve w\exp(n(Y_1,Y_2,Y_3)). \]
A direct computation reveals the following relations
\begin{eqnarray} \label{eq:lirexxy}
a&=&-(X_3+\frac{1}{2}i(X_1\bar X_1+X_2 \bar X_2))^{-1}\\
Y_1&=&i\bar a \bar X_1 \nonumber\\
Y_2&=&i\bar a \bar X_2 \nonumber\\
Y_3&=&a \bar a X_3 \nonumber
\end{eqnarray}
The following Lemma sheds some light on these formulas.
\begin{lem} \label{lem:lirexinvo} For $X_1,X_2 \in \C$, $X_3 \in \R$, define
\[ c(X_1,X_2,X_3)=X_3-i\frac{1}{2}(X_1\bar X_1+X_2\bar X_2). \]
Then then the map
\[ \C\oplus\C\oplus\R - \{0\} \rw \C\oplus\C\oplus\R - \{0\},\qquad (X_1,X_2,X_3) \mapsto (Y_1,Y_2,Y_3) \]
defined by
\[ Y_1=-i\bar X_1/c,\qquad Y_2=-i\bar X_2/c,\qquad Y_3=X_3/(c\bar c), \]
is a well-defined involution. Furthermore
\[ c(Y_1,Y_2,Y_3) = \ol{c(X_1,X_2,X_3)}^{-1}. \]
\end{lem}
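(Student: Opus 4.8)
\textbf{Proof proposal for Lemma \ref{lem:lirexinvo}.}

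The plan is a direct computation, organized so that the messy algebra is contained in one clean bookkeeping step. First I would record two elementary identities. Writing $q = X_1\bar X_1 + X_2\bar X_2 \in \R_{\geq 0}$, so that $c = c(X_1,X_2,X_3) = X_3 - \tfrac{i}{2}q$, I note that $c\bar c = X_3^2 + \tfrac14 q^2$, and that $c = 0$ forces $X_3 = 0$ and $q = 0$, i.e. $(X_1,X_2,X_3) = 0$; hence on the complement of the origin $c \neq 0$ and the formulas for $(Y_1,Y_2,Y_3)$ make sense. Next I would check that $(Y_1,Y_2,Y_3) \neq 0$: if it were zero then $\bar X_1 = \bar X_2 = 0$ (from $Y_1 = Y_2 = 0$, since $c \neq 0$) and then $Y_3 = X_3/(c\bar c) = 0$ gives $X_3 = 0$, contradiction. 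So the map does send the punctured space to itself.

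The core of the proof is the identity $c(Y_1,Y_2,Y_3) = \bar c^{\,-1}$. I would compute $Y_1\bar Y_1 + Y_2\bar Y_2 = (\bar X_1 X_1 + \bar X_2 X_2)/(c\bar c) = q/(c\bar c)$ and $Y_3 = X_3/(c\bar c)$, whence
\[
c(Y_1,Y_2,Y_3) = \frac{X_3}{c\bar c} - \frac{i}{2}\cdot\frac{q}{c\bar c} = \frac{1}{c\bar c}\left(X_3 - \frac{i}{2}q\right) = \frac{c}{c\bar c} = \frac{1}{\bar c}.
\]
This is the key formula, and everything else follows from it. In particular $\overline{c(Y_1,Y_2,Y_3)} = c^{-1}$.

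Finally, involutivity. Apply the map again to $(Y_1,Y_2,Y_3)$, writing $c' := c(Y_1,Y_2,Y_3) = \bar c^{\,-1}$ and $c'\bar c' = (c\bar c)^{-1}$. Then the first new coordinate is
\[
-i\bar Y_1/c' = -i\cdot\frac{iX_1}{c}\cdot\bar c = \frac{X_1\bar c}{c}\cdot\frac{1}{\,?\,}
\]
— more carefully, $\bar Y_1 = \overline{-i\bar X_1/c} = i X_1/\bar c$, so $-i\bar Y_1/c' = -i\cdot (iX_1/\bar c)\cdot \bar c = X_1$; similarly the second coordinate returns $X_2$; and the third is $Y_3/(c'\bar c') = (X_3/(c\bar c))\cdot(c\bar c) = X_3$. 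Hence the map squares to the identity, completing the proof. I expect no real obstacle here: the only place to be careful is the repeated use of $\overline{c(Y,Y,Y)} = c^{-1}$ and keeping the conjugations straight, which the identity above makes routine.
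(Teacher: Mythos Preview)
Your proof is correct and complete; the paper itself does not give a proof, stating only that it ``is elementary and is left to the reader,'' so your direct computation is exactly what is expected. The only cosmetic point is the aborted line in your involutivity check (the ``$\frac{1}{\,?\,}$'' fragment) before you restart more carefully---you could simply delete that false start.
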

The proof of this lemma is elementary and is left to the reader.

We can now define a good test function $f_v \in \mc{H}_P(\pi_v)$ as follows. It will be supported on the open set $N(\R)M(\R)wN(\R)$ of $G(\R)$ and will be given there by the formula
\[ f_v(\exp(n(V_1,V_2,V_3))m(a,\bar a,B)\breve w\exp(n(Y_1,Y_2,Y_3))) = a^x\bar a^{-x}(a\bar a)^{\frac{3}{2}+v}\cdot e^{-|c(Y_1,Y_2,Y_3)|}. \]
The uniqueness of the decomposition implies that the formula on the right provides a well-defined function on the given open set. It is smooth and satisfies $f_v(\breve w)=1$. The argument of the function approaches $N(\R)$ precisely when $|c(Y_1,Y_2,Y_3)|$ approaches $\infty$, and then the value of the function approaches $0$ very fast. This allows us to extend $f_v$ smoothly to $G(\R)$ by setting it to be equal to $0$ on $N(\R)$. Upon restriction to $K$ this function attains a maximum which depends continuously on the parameter $v$, so we obtain a continuous family $f_v|_K$. This family is in particular continuous at $v=0$.

We can now evaluate the integral
\begin{equation} \label{eq:lirexint}
\int_{\bar N(\R)} f_v(n' \breve w^{-1}y) dn'
\end{equation}
at $y = \breve w$. It is given by
\[ \int_{\bar N(\R)} f_v(n')dn' = 4\int_{\C \times \C \times \R} f_v(\exp(\bar n(X_1,X_2,X_3)))dX_1dX_2dX_3. \]
An elementary manipulation involving \eqref{eq:lirexxy} and Lemma \ref{lem:lirexinvo} shows that it is equal to
\[ 4(-1)^{2x}\int_{\C\times\C\times\R} c^{2x} \frac{1}{|c|^{3+2x+2v}}e^{-\frac{1}{|c|}}dX_1dX_2dX_3, \]
where we have abbreviated $c=c(X_1,X_2,X_3)$. We write $X_1=x_1+ix_2$, $X_2=x_3+ix_4$, $X_3=x_5$, with $(x_1,\dots,x_5) \in \R^5$, and then we have
$c = x_5-\frac{1}{2}i(x_1^2+x_2^2+x_3^2+x_4^2)$. We replace $(x_1,\dots,x_4)$ with hyperspherical coordinates
\begin{eqnarray*}
x_1&=&r\cos(\phi_1),\\
x_2&=&r\sin(\phi_1)\cos(\phi_2),\\
x_3&=&r\sin(\phi_1)\sin(\phi_2)\cos(\phi_3),\\
x_4&=&r\sin(\phi_1)\sin(\phi_2)\sin(\phi_3),
\end{eqnarray*}
and rename $x_5=a$, thereby obtaining $c=a-\frac{1}{2}ir^2$. The integral now has the form
\[ 4C(-1)^{2x}\int_{a=-\infty}^{a=+\infty}\int_{r=0}^{r=\infty}r^3\frac{(a-\frac{1}{2}ir^2)^{2x}}{(a^2+\frac{1}{4}r^4)^{\frac{3}{2}+x+v}}
\exp(-(a^2+\frac{1}{4}r^4)^\frac{-1}{2})dadr, \]
where $r^3$ comes from the Jacobian of the hypershperical transformation and $C$ is the positive constant
\[ C = 2\pi\int_0^\pi\int_0^\pi \sin^2(\phi_1)\sin(\phi_2)d\phi_1d\phi_2 = 2\pi^2. \]
We substitute $r$ by $\sqrt{2}r$, split the domain of integration along $a$ into $(-\infty,0)$ and $(0,\infty)$ and obtain
\[ \int_{\bar N(\R)} f_v(n') dn' = I(x,v)+(-1)^{2x}\ol{I(x,v)} \]
where
\[ I(x,v) = 16C(-1)^{2x}\int_{a=0}^{a=+\infty}\int_{r=0}^{r=\infty}r^3\frac{(a-ir^2)^{2x}}{(a^2+r^4)^{\frac{3}{2}+x+v}}
\exp(-(a^2+r^4)^\frac{-1}{2})dadr.   \]
To evaluate $I(x,v)$, we make the substitutions $r \mapsto r^4$, $r \mapsto a^2r$ and $a \mapsto a\sqrt{1+r}$. This gives
\[ I(x,v) = 4C(-1)^{2x}\Gamma(2v)\int_0^\infty \frac{(1-ir^\frac{1}{2})^{2x}}{\sqrt{1+r}^{3+2x}} dr. \]
We arrive at the formula
\[ \int_{\bar N(\R)} f_v(n') dn' = 8\pi^2(-1)^{2x}\Gamma(2v)\int_0^\infty \frac{(1-ir^\frac{1}{2})^{2x}+(-1-ir^\frac{1}{2})^{2x}}{\sqrt{1+r}^{3+2x}} dr \]
whose evaluation leads to the following table
\[ \begin{tabular}{|>{$}c<{$}||>{$}c<{$}|}
\hline
x&\int_{\bar N(\R)} f_v(n') dn'\\
\hline
-2&8\pi^2\Gamma(2v)\cdot (-\frac{4}{15})\\
-1&8\pi^2\Gamma(2v)\cdot (-\frac{4}{3})\\
-\frac{1}{2}&8\pi^3\Gamma(2v)\cdot (-i)\\
0&8\pi^2\Gamma(2v)\cdot 4\\
\frac{1}{2}&8\pi^3\Gamma(2v)\cdot i\\
1&8\pi^2\Gamma(2v)\cdot (-\frac{4}{3})\\
2&8\pi^2\Gamma(2v)\cdot (-\frac{4}{15})\\
\hline
\end{tabular} \]
Combining these values with those for the normalizing factors and taking the limit as $v$ is a positive real number approaching $0$, we obtain the following table
\[ \begin{tabular}{|>{$}c<{$}||>{$}c<{$}|>{$}c<{$}|>{$}c<{$}|>{$}c<{$}|>{$}c<{$}|>{$}c<{$}|>{$}c<{$}|}
\hline
x&-2&-1&-\frac{1}{2}&0&\frac{1}{2}&1&2\\
\hline
\lim_{v \rw 0} [l(w,\xi,\phi_v,\psi_F)\circ R_{w^{-1}Pw|P}(\xi,\phi_v)f_v](\breve w)&-1&-1&-1&-1&1&-1&-1\\
\hline
\end{tabular} \]
This matches the table of values for $\pi(a_3)_{\xi,z}$ and the proof is complete.

\section{Chapter 3: The trace formula}\label{chapter3}

\subsection{The discrete part of the untwisted trace formula}\label{sub:I_disc} %

  Our first task is to recall the definition of the discrete part of the Arthur-Selberg trace formula in the untwisted case, given in \eqref{eq:I_disc-defn} below. Only in this subsection we will treat arbitrary reductive groups, and then return to unitary groups. Let us start by setting up some notation. Our explanation will be brief; the reader is referred to section 3.1 of \cite{Arthur} for complete details.

   Let $G$ be a connected reductive group over a number field $F$. Fix a maximal compact subgroup $K\subset G(\A)$ and a minimal Levi subgroup $M_0\subset G$ in a good position relative to $K$. Write $\cL=\cL(M_0)$ for the finite set of Levi subgroups of $G$ containing $M_0$. Define $\fka_G:=\Hom_\Z(X(G)_F,\R)$ where $X(G)_F$ is the group of $F$-rational characters on $G$, and similarly $\fka_M$ for $M\in \cL(M_0)$. Put $\fka^G_M$ to be the canonical complement of $\fka_G$ in $\fka_M$. The maximal $F$-split torus in $Z(M)$ is denoted $A_M$. In the relative Weyl group $W^G(M) :=N_G(A_M)/M$, introduce the subset of regular elements
  $$W^G(M)_{\reg}:=\{w\in W^G(M): \det(w-1)_{\fka^G_M}\neq 0\}.$$
  We agree to write $W^M_0$ and $W^G_0$ for $W^M(M_0)$ and $W^G(M_0)$, respectively.

  We write $\A=\A_F$ for the adele ring over $F$. The subgroup $G(\A)^1\subset G(\A)$ is defined as usual. Let $\fkX_G$ be a closed subgroup of $Z(G(\A))$ such that $\fkX_GZ(G(F))$ is closed and cocompact in $Z(G(\A))$, and fix a character $\omega:\fkX_G\ra \C^\times$ trivial on $Z(G(F))\cap \fkX_G$. The pair $(\fkX_G,\omega)$ is called a central character datum for $G$. Let $P=N_PM$ be an $F$-rational parabolic subgroup of $G$ with Levi component $M\in \cL(M_0)$ and unipotent radical $N_P$. The datum $(\fkX_G,\omega)$ gives rise to a central character datum $(\fkX_M,\omega)$ for $M$, where $\omega$ in the latter designates by abuse of notation a pullback of $\omega$ in the former via $\fkX_M\ra \fkX_G$. Let $L^2_{\disc}(M(F)\bs M(\A),\omega)$ denote the space of functions on $M(\A)$ which are $\omega$-equivariant under $\fkX_M$ and square-integrable modulo $\fkX_M$. Its parabolic induction from $P$ to $G$ is written as $\cI_P(\omega)=\cI^G_P(\omega)$ with underlying Hilbert space $\cH_P(\omega)$. Let us fix a Weyl-invariant Hermitian metric $\|\cdot \|$ on the dual of the Cartan subalgebra $\fkh$ in the Lie algebra of $(\Res_{F/\Q}G)(\C)$. Given an irreducible representation $\pi$ of $G(\A)$, the infinitesimal character of its archimedean component gives a linear form $\mu_{\pi}:\fkh\ra \C$ with imaginary part $\Im(\mu_{\pi})$. This leads to a decomposition
  $$\cI_P(\omega)=\bigoplus_{t\ge 0} \cI_{P,t}(\omega),\quad \cH_P(\omega)=\bigoplus_{t\ge 0} \cH_{P,t}(\omega)$$
  where $\cI_{P,t}(\omega)$ is the direct summand whose irreducible constituents $\pi$ satisfy $\| \Im(\mu_{\pi})\|=t$. Let $\cH(G,\omega)$ be the Hecke algebra of smooth $K$-finite complex-valued functions on $G(\A)$ which are $\omega^{-1}$-equivariant under $\fkX_G$ and compactly supported modulo $\fkX_G$. For $f\in \cH(G,\omega)$ we define $\cI_{P,t}(\omega,f):\cH_{P,t}\ra \cH_{P,t}$ by
  $$\cI_{P,t}(\omega,f):=\int_{G(\A)/\fkX_G} f(x)\cI_{P,t}(\omega,x)dx.$$
  The intertwining operator $M_P(w,\omega):\cH_P(\omega)\ra \cH_P(\omega)$ for $w\in W^G(M)_{\reg}$ is defined via meromorphic continuation of the standard intertwining integral. It stabilizes $(\cI_{P,t}(\omega),\cH_{P,t}(\omega))$ for each $t\ge 0$, and the induced operator on the subspace is denoted $M_{P,t}(w,\omega)$.

  The discrete part of the trace formula $I^G_{\disc,t}(f)$ for $f\in \cH(G,\omega)$ is defined as

\begin{equation}\label{eq:I_disc-defn} %
  I^G_{\disc,t}(f):=\sum_{M\in \cL} \frac{|W^M_0|}{|W^G_0|}\sum_{w\in W^G(M)_{\reg}} |\det(w-1)_{\fka^G_M}|^{-1} \tr(M_{P,t}(w,\omega)\cI_{P,t}(\omega,f)).
\end{equation}
  The convergence of this linear form for every $f$ is a result of M\"uller (\cite{Mul89}).

  Now we describe the stabilization of the expression \eqref{eq:I_disc-defn} as envisioned by Langlands, Shelstad and Kottwitz and established by Arthur (\cite{ArtSTF1}, \cite{ArtSTF2}, \cite{ArtSTF3}). At the time Arthur's result had relied on the validity of the ordinary and weighted fundamental lemmas, which were verified recently by Ng\^o, Waldspurger and Chaudouard-Laumon (\cite{Ngo10}, \cite{Wal06}, \cite{Wal09}, \cite{CLWFL1}, \cite{CLWFL2}).\footnote{At the time of writing, Chaudouard and Laumon have not completed their series of papers on the proof of the weighted fundamental lemma in positive characteristic.} We remark that the fundamental lemma for unitary groups was proven earlier by Laumon and Ng\^o (\cite{LN08}).

  Let us introduce some notation.
  Write $\ol\cE_{\el}(G)$ (resp. $\ol\cE(G)$) for the set of isomorphism classes of elliptic (resp. all) endoscopic data $(G^\fke,\cG^\fke,s^\fke,\eta^\fke)$ over $F$.\footnote{Arthur restricts the definition to those elliptic endoscopic data such that for every place $v$ of $F$, $G^\fke(F_v)$ contains an element that is a norm from $G(F_v)$ in the sense of \cite[p.29]{KS99} but we need not do it.} In general $\ol\cE_{\el}(G)$ can be infinite.

  We may and will assume for simplicity that $G$ has simply connected derived subgroup for the rest of this chapter. There is no loss of generality for our purpose since unitary groups (quasi-split or not) do have this property. Under the assumption, we may find a representative $\fke=(G^\fke,\cG^\fke,s^\fke,\eta^\fke)$ in each isomorphism class of endoscopic data such that $\cG^\fke={}^L G^\fke$ and an $L$-embedding $\eta^\fke:{}^L G^\fke \hra {}^L G$, cf. \cite{Lan79}. (In general $\cG^\fke$ is a split extension of $W_F$ by $\hat{G}^\fke$ which may not be isomorphic to ${}^L G^\fke$.) By abuse of notation we often write $G^\fke$ rather than $\fke$ to denote a member of $\ol\cE_{\el}(G)$.

  To each $G^\fke\in \ol\cE_{\el}(G)$ is associated a nonzero rational number (cf. \cite[(3.2.4)]{Arthur} but note that $|\pi_0(\kappa_G)|=1$ in the untwisted trace formula)
  \begin{equation}\label{eq:iota(G,G')}
    \iota(G,G^\fke):=k(G,G^\fke)|\ol{Z}(\hat{G}^\fke)^{\Gamma}|^{-1}|\Out_G(G^\fke)|^{-1},
    \end{equation}
    where $k(G,G^\fke):=|\ker^1(F,Z(\hat{G}^0))|^{-1}|\ker^1(F,Z(\hat{G}^\fke))|$ but we will see that $k(G,G^\fke)=1$ for the groups this paper is mainly concerned with. As explained in section 3.2 of \cite{Arthur} (our case is simpler since $\tilde{G}^\fke=G^\fke$), the central character datum $(\fkX_G,\omega)$ for $G$ gives rise to the analogous datum $(\fkX_{G^\fke},\omega^\fke)$ for $G^\fke$. Moreover there is a transfer mapping   $$\cH(G,\omega)\ra \cS(G^\fke,\omega^\fke),\quad f\mapsto f^\fke=f^{G^\fke},$$
  characterized by an identity of orbital integrals and conjectured to exist by Langlands and Shelstad. The existence is now a consequence of Ng\^o's proof of the fundamental lemma for Lie algebras in positive characteristic via Waldspurger's results (\cite{Wal97}, \cite{Wal06}) on the change of characteristic, the reduction of the (original) fundamental lemma to the Lie algebra version, and the assertion that the fundamental lemma implies the transfer conjecture. More generally Arthur conjectured that the weighted fundamental lemma as he formulated is true. This is now a theorem by Chaudouard-Laumon's proof in positive characteristic combined with another result of Waldspurger (\cite{Wal09}) on going from positive characteristic to characteristic zero. As long as we accept Arthur's stabilization of the trace formula, we need not and will not recall here the precise statements of the fundamental lemma, transfer mapping, and their weighted variants.

  The stabilization is a decomposition

\begin{equation}\label{eq:stabilization} %
   I^G_{\disc,t}(f)=  \sum_{G^\fke\in \ol\cE_{\el}(G)} \iota(G,G^\fke)\hat{S}^\fke_{\disc,t}(f^\fke), \quad f\in \cH(G,\omega)
\end{equation}
where $\hat{S}^\fke_{\disc,t}=\hat{S}^{G^\fke}_{\disc,t}:\cH(G^\fke,\omega^\fke)\ra \C$ is a stable linear form (i.e. linear form factoring through $\cS(G^\fke,\omega^\fke)$) depending only on $G^\fke$ and not on $G$. Note that for any given $f$, the sum has finitely many nonzero summands (even if $\ol\cE_{\el}(G)$ is infinite). When $G$ is quasi-split, the main point of the stabilization is that
\begin{equation}\label{eq:define-S-form}
  I^G_{\disc,t} -  \sum_{G^\fke\in\ol \cE_{\el}(G)\smallsetminus\{G\}} \iota(G,G^\fke)\hat{S}^\fke_{\disc,t}(f^\fke)
\end{equation}
is a stable linear form on $\cH(G,\omega)$, which is then taken as the definition of the stable linear form $\hat{S}_{\disc,t}(f)$. In the non quasi-split case, which is of main interest in this paper, the two sides of \eqref{eq:stabilization} are defined independently. The assertion of \eqref{eq:stabilization} is that the two are equal.

  Let $(\fkX_G,\omega)$ be a central character datum for $G$ as above. Let $S$ be a finite set of places of $F$ outside of which $G$ is unramified. Let $\cC^S_{\A}(G)$ be the set consisting of families of semisimple conjugacy classes $\{c_v:v\notin S\}$ in ${}^L G$ such that the image of each $c_v$ under the natural projections ${}^L G\ra W_{F_v}\ra W_{F_v}/I_{F_v}$ is the Frobenius element. Recall that $\fkX_G$ is a subgroup of $Z(G(\A))$. Each $c_v$ corresponds via the Satake transform to an irreducible unramified representation of $G(F_v)$. Let $\zeta_v$ denote the restriction to $Z(G(F_v))$ of the central character of the latter representation. By definition the subset $\cC^S_{\A}(G,\omega)$ comprises $\{c_v:v\notin S\}$ with the property that there exists an extension of $\omega$ to $Z(G(\A))$ whose $v$-component is $\zeta_v$ at $v\notin S$. Define
  $$\cC_{\A}(G,\omega):=\dirlim{S} \cC^S_{\A}(G,\omega).$$
  Then we have a decomposition of the automorphic spectrum
  $$L^2_\disc(G(F)\bs G(\A),\omega)=\bigoplus_{c\in \cC_{\A}(G,\omega)\atop t\ge 0} L^2_{\disc,t,c}(G(F)\bs G(\A),\omega)$$
  such that $L^2_{\disc,t,c}(G(F)\bs G(\A),\omega)$ is the direct sum of $\pi$, where the central character of $\pi$ on $\mathfrak{X}_G$ is $\omega$, $\|\Im(\mu_\pi)\|=t$, and $c_v$ corresponds to $\pi_v$ via the Satake transform away from a sufficiently large finite set $S$.
  Hence the regular representation $R_{\disc}$ acting on the left hand side also decomposes as the sum of $R_{\disc,\psi}$. In particular
  $$\tr R_{\disc}(f) = \sum_{c\in \cC_{\A}(G,\omega)\atop t\ge 0} \tr R_{\disc,t,c}(f),\qquad f\in \cH(G).$$

  For an endoscopic datum $G^\fke\in \ol\cE(G)$, let $(\fkX_{G^\fke},\omega^\fke)$ be as above. Then the $L$-morphism $\eta^\fke:{}^L G^\fke\ra {}^L G$ induces a transfer mapping $\cC_{\A}(G^\fke,\omega^\fke)\ra \cC_{\A}(G,\omega)$. For a quasi-split group $G^*$ and $c\in \cC_{\A}(G^*,\omega)$ we define a stable linear form $S^{G^*}_{\disc,t,c}$ exactly as in Lemma 3.3.1 of \cite{Arthur}. (The basic idea is to make sense of the $c$-part of \eqref{eq:define-S-form} and take it as the definition.) Now let $c\in \cC_{\A}(G,\omega)$ and allow $G$ to be any connected reductive group. In the same lemma Arthur shows the decomposition
\begin{equation}\label{eq:stabilization-t,c} %
   I^G_{\disc,t,c}(f)=  \sum_{G^\fke\in \ol \cE_{\el}(G)} \iota(G,G^\fke)\hat{S}^\fke_{\disc,t,c}(f^\fke), \quad f\in \cH(G,\omega).
\end{equation}
  Here the Weyl-invariant metric $\|\cdot\|$ on the linear dual of $\fkh$ induces analogous metrics for endoscopic groups as explained in \cite[3.2]{Arthur}.

\subsection{The vanishing of coefficients}\label{sub:vashing-coefficients}

  We continue to be in the general setup of \S\ref{sub:I_disc}. We prove an analogue of the corollary 3.5.3 of \cite{Arthur} on the vanishing of coefficients in a certain linear relation involving $G$ (and its Levi subgroups). Later the result will be applied to $G$ which is an inner form of a unitary group. Our case is simpler than Arthur's in that in his situation there may appear several groups which are twisted endoscopic groups of a general linear group simultaneously. %

  So far our consideration has been global but let us introduce some local definitions and notation following \cite{Arthur}. Temporarily let $F$ be a local field and $G$ be a connected reductive group over $F$ with a minimal Levi subgroup $M_0$. We define $\cL=\cL^G(M_0)$ and $W^G_0$ as in the global case. For $M\in \cL$, denote by $\Pi_2(M)$ the set of isomorphism classes of square-integrable representations of $M(F)$. The $R$-group of $\sigma\in \Pi_2(M)$ in $G$ is written as $R(\sigma)$. After choosing a finite central extension
  $$1\ra Z_\sigma \ra \tilde R(\sigma) \ra R(\sigma) \ra 1$$
  it is possible to define a homomorphism $r\mapsto R_P(r,\sigma)$ from $\tilde R(\sigma)$ to $\End_G(\cI_P(\sigma))$ such that $R_P(zr,\sigma)=\omega_\sigma(z)^{-1}R_P(r,\sigma)$ where $\omega_\sigma:Z_\sigma\ra \C^\times$ is a fixed character.
  Define $T(G)$ (resp. $\tilde T(G)$) to be the set of $W^G_0$-orbits of triples
  $$ \tau=\tau_r=(M,\sigma,r)$$
  where $M\in \cL$, $\sigma\in \Pi_2(M)$, and $r\in R(\sigma)$ (resp. $r\in \tilde R(\sigma)$). For $\tau\in \tilde T(G)$ as above, put
  \begin{equation}\label{eq:f_G(tau)}
    f_{G}(\tau)=f_{G}(\tau_r):=\tr(R_P(r,\sigma)\cI_P(\sigma,f)),\quad f\in \cH(G).
  \end{equation}
  If $M=G$ then $R(\sigma)$ is trivial and $\tau$ is identified with an element of $\Pi_2(G)$. So the notation of \eqref{eq:f_G(tau)} is consistent with our convention to write $f_{G}(\pi):=\tr \pi(f_G)$ for any irreducible admissible representation $\pi$ of $G(F)$.
   For $\pi\in \Pi(G)$, one has a ``change of basis''
   $$f_G(\pi)=\sum_{\tau\in T(G)}n(\pi,\tau)f_G(\tau),\quad f\in \cH(G),$$
   cf. \cite[(3.5.6)]{Arthur}. Even though $n(\pi,\tau)$ and $f_G(\tau)$ depend on the choice of a lift of $\tau$ to $\tilde T(G)$, their product depends only on $\tau$ since $n(\pi,z\tau)=\omega_\sigma(z)n(\pi,\tau)$ (following from the definition in \S3.5 of \cite{Arthur}) and $f_G(z\tau)=\omega_\sigma^{-1}(z)f_G(\tau)$.

   In this paragraph and the following lemma we recall some notation and fact from the section 3.5 of \cite{Arthur}, omitting details. Fix a minimal parabolic subgroup $P_0$ whose Levi factor is $M_0$. Let $\fka_0=\fka_{M_0}$ be as in \S\ref{sub:I_disc}, and write $(\ol{\fka^*_0})^+$ for the closed dual chamber equipped with partial ordering $\le$ corresponding to $P_0$. Let $\pi\in \Pi(G)$ and $\tau\in T_{\el}(M)$ for a Levi subgroup $M$ of $G$. Arthur defines linear forms $\Lambda_\pi$ and $\Lambda_\tau$ which belong to $(\ol{\fka^*_0})^+$ and whose deviation from $0$ measures the non-temperedness of the representation, loosely speaking. To $\pi$ is associated a standard representation which is induced from a character twist of a discrete series $\sigma_\pi$ on a Levi subgroup $M_\pi$ of $G$. (This data is well defined up to conjugacy.) Let $\tau_\pi$ denote the element in $T(G)$ or $\tilde T(G)$ given by $(M_\pi,\sigma_\pi,1)$. The following result serves as a key ingredient in the desired vanishing result. See the discussion between the statements of the proposition 3.5.1 and the lemma 3.5.2 in \cite{Arthur}.

  \begin{lem}\label{lem:n(pi,tau)} Use the local notation as above.
    \begin{enumerate}
      \item $\Lambda_{\tau_\pi}=\Lambda_\pi$ and $n(\pi,\tau_\pi)>0$.
      \item Let $\pi$ and $\tau$ be as above and satisfy that $n(\pi,\tau)\neq 0$. Then $\Lambda_\tau\le \Lambda_\pi$. If $\Lambda_\tau=\Lambda_\pi$ then $M_\tau$ contains (a suitable $W^G_0$-translate of) $M_\pi$. If $\Lambda_\tau=\Lambda_\pi$ and $M_\tau=M_\pi$ then $\tau=\tau_\pi$. and $n(\tau,\pi)>0$.
    \end{enumerate}

  \end{lem}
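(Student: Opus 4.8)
The plan is to follow Arthur's treatment in the passage around \cite[Prop.~3.5.1]{Arthur}; because $G$ is an inner form of a unitary group, none of the twisted-endoscopy complications of \cite{Arthur} are present here, so the argument is if anything simpler and only notational adjustments are needed. Recall the relevant objects. For $\pi\in\Pi(G)$ one has its Langlands data: a Levi $M_\pi\in\cL$, a square-integrable representation $\sigma_\pi$ of $M_\pi(F)$, and a positive central exponent $\lambda_\pi$, so that $\pi$ is the unique irreducible quotient of, and occurs with multiplicity one in, the standard representation $\rho_\pi=\cI_{P_\pi}(\sigma_{\pi,\lambda_\pi})$; and $\tau_\pi=(M_\pi,\sigma_\pi,1)\in\tilde T(G)$ is the triple it determines, with implicit continuous parameter $\lambda_\pi$. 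The linear forms $\Lambda_\pi$ and $\Lambda_\tau$ in $(\ol{\fka^*_0})^+$ are, by their construction in \cite[\S3.5]{Arthur}, read off from the real part of $\lambda_\pi$, resp.\ of the continuous parameter of $\tau$, transported to the closed dominant chamber.

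\emph{Part 1.} The equality $\Lambda_{\tau_\pi}=\Lambda_\pi$ is immediate from the definitions, as both sides are computed from $(M_\pi,\lambda_\pi)$. For $n(\pi,\tau_\pi)>0$ I would factor the change of basis into its two classical constituents. First, in the Grothendieck group one has $[\pi]=[\rho_\pi]+\sum_\rho m(\pi,\rho)[\rho]$, the sum running over standard representations $\rho$ strictly below $\rho_\pi$ in the Langlands ordering; this is the inversion of the unitriangular matrix $[\rho]=[\pi_\rho]+(\text{lower terms})$ of the Langlands classification, whence $m(\pi,\rho_\pi)=1$. Second, each standard character $f_G(\rho)$ decomposes over the $\tau$-basis according to the Knapp--Stein $R$-group acting on the tempered part of its inducing datum, every $\tau$ that occurs having $\Lambda_\tau=\Lambda_\rho$, and the $\tau$ attached to the Langlands quotient $\pi_\rho$ — the one carrying the trivial $R$-group datum — occurring with a positive coefficient. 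Composing the two steps and extracting the coefficient of $\tau_\pi$: only standards $\rho$ with $\Lambda_\rho=\Lambda_\pi$ can contribute, the dominant such being $\rho_\pi$, whose contribution is $m(\pi,\rho_\pi)$ times a positive number; a direct check that the remaining terms do not cancel it — as carried out in \cite[\S3.5]{Arthur} — gives $n(\pi,\tau_\pi)>0$.

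\emph{Part 2.} Monotonicity is inherited step by step: $m(\pi,\rho)\ne0$ forces $\Lambda_\rho\le\Lambda_\pi$, while the passage from standards to the $\tau$-basis leaves $\Lambda$ unchanged, so $n(\pi,\tau)\ne0$ implies $\Lambda_\tau\le\Lambda_\pi$. In the equality case $\Lambda_\tau=\Lambda_\pi$, the triple $\tau$ can only come from a standard $\rho$ occurring in $[\pi]$ with $\Lambda_\rho=\Lambda_\pi$; tracking, via the precise definition of $\Lambda$ and the structure of the Langlands order, which such $\rho$ occur and which triples their $R$-group decompositions produce, one finds that $M_\tau$ must contain a $W^G_0$-conjugate of $M_\pi$. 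If moreover $M_\tau=M_\pi$, then $\rho$ has the same Langlands Levi as $\rho_\pi$ and, after normalization, the same central exponent, while $\pi$ is a subquotient of both; uniqueness in the Langlands classification then forces $\rho=\rho_\pi$, so $\tau=(M_\pi,\sigma_\pi,r)$ for some $r$, and since $\pi$ is the Langlands quotient of $\rho_\pi$ — corresponding to the trivial element of the $R$-group — only $r=1$ survives. Thus $\tau=\tau_\pi$, and $n(\pi,\tau_\pi)>0$ by Part 1.

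The work involved is bookkeeping rather than new ideas: one has to set up compatibly the Langlands ordering, the unitriangularity of the standard-versus-irreducible character matrix, and the Knapp--Stein decomposition of standard modules, and verify that $\Lambda$ is monotone along the first and constant along the second and that the leading coefficient is genuinely positive (no cancellation). I expect this verification, rather than any single step, to be the main obstacle; but it is classical local harmonic analysis insensitive to the particular group, so Arthur's argument in \cite[\S3.5]{Arthur} applies with only the obvious changes.
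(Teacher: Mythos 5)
The paper gives no proof of this lemma at all: the text preceding the statement says explicitly ``we recall some notation and fact from the section 3.5 of [Arthur], omitting details'' and points the reader to ``the discussion between the statements of the proposition 3.5.1 and the lemma 3.5.2'' in Arthur's book. Your reconstruction — factor the change of basis through the standard modules, use the unitriangularity of the Langlands classification, then pass from standards to the elliptic $\tau$-basis via Knapp--Stein, with $\Lambda$ strictly decreasing along the first step and constant along the second — is exactly the argument Arthur carries out there, so you have faithfully recovered the proof the paper is delegating to.

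One small caveat worth flagging in your Part 2: the phrase ``only $r=1$ survives'' deserves a word more. The reason $\tau=(M_\pi,\sigma_\pi,r)$ with $r\neq 1$ cannot carry a nonzero coefficient is that, for the Langlands datum $(M_\pi,\sigma_{\pi,\lambda_\pi})$ with $\lambda_\pi$ strictly in the positive chamber, the stabilizer $W_{\sigma_{\pi,\lambda_\pi}}$ in $W(M_\pi)$ is trivial, so the only $\tau$ that the standard $\rho_\pi$ produces with Levi exactly $M_\pi$ is $\tau_\pi$ itself; the other standards appearing in the inversion of $[\pi]$ either have strictly smaller $\Lambda$ or strictly larger Levi and so contribute nothing to that coefficient. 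You gesture at this with ``corresponding to the trivial element of the $R$-group,'' which is the right idea; just note that it is the regularity of $\lambda_\pi$ (forced by positivity) doing the work, not a property of $\pi$ as a quotient.
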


  Now revert back to the global case. The notation in the preceding paragraph over $F_v$ will be written with subscript $v$ suitably inserted.

\begin{lem}\label{lem:vanishing-coeff} Let $c_G(\pi)\in \R_{\ge 0}$ for each $\pi\in \Pi(G)$.
  Suppose that
$$\sum_{\pi\in \Pi(G)} c_G(\pi)f_G(\pi)=\sum_{\tau_v\in T(G_v)} d(\tau_v,f^v)f_{v,G}(\tau_v),\quad f=f^v f_v\in \cH(G)$$
where the coefficient $d(\tau_v,f^v)\in \C$, as a function of $\tau_v$, is supported on a finite set that depends only on a choice of Hecke type for $f_v$, and equal to 0 for any $\tau_v$ of the form $(M_v,\sigma_v,1)$. Then for all  $\pi\in \Pi(G)$, $\tau_v\in T(G_v)$, and $f^v\in \cH(G(\A^v))$,
$$c_G(\pi)=0,\quad d(\tau_v,f^v)=0.$$
\end{lem}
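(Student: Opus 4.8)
The statement is the global vanishing lemma that drives Arthur's induction (cf. \cite[Cor. 3.5.3]{Arthur}), and the strategy is to bootstrap from the local change-of-basis information in Lemma \ref{lem:n(pi,tau)} by an argument on the partial ordering of the exponents $\Lambda_\pi$. First I would expand the left-hand side using the change of basis $f_G(\pi) = \sum_{\tau \in T(G)} n(\pi,\tau) f_G(\tau)$, applied only at the place $v$: writing $f = f^v f_v$ and $\pi = \pi^v \otimes \pi_v$, one gets $f_G(\pi) = f^v_{G}(\pi^v) \cdot \sum_{\tau_v} n(\pi_v,\tau_v) f_{v,G}(\tau_v)$, where I abbreviate $f^v_G(\pi^v) = \operatorname{tr}\pi^v(f^v)$. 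Substituting this into the hypothesis and using the linear independence of the distributions $f_{v,G}(\tau_v)$ on $\mc{H}(G_v)$ as $\tau_v$ ranges over $T(G_v)$ (this is the basic linear independence of characters of standard-module-type distributions, which underlies the existence of the coefficients $n(\pi,\tau)$ in the first place), I would obtain for each fixed $\tau_v$ the identity
\[ \sum_{\pi^v \in \Pi(G(\A^v))} \left(\sum_{\pi_v : \pi = \pi^v \otimes \pi_v} c_G(\pi)\, n(\pi_v,\tau_v)\right) f^v_G(\pi^v) = d(\tau_v, f^v), \quad f^v \in \mc{H}(G(\A^v)). \]

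\textbf{The ordering argument.} The core is to show that the coefficients $c_G(\pi)$ must vanish by inducting downwards on the exponent $\Lambda_{\pi_v}$. Suppose $c_G(\pi_0) > 0$ for some $\pi_0$; I would choose $\pi_0$ so that $\Lambda_{\pi_{0,v}}$ is \emph{maximal} among all $\pi$ with $c_G(\pi) \neq 0$ sharing the given archimedean-away component (or more precisely work place-by-place as Arthur does). Consider $\tau_v = \tau_{\pi_{0,v}} = (M_{\pi_{0,v}}, \sigma_{\pi_{0,v}}, 1)$, which by hypothesis has $d(\tau_v, f^v) = 0$. By Lemma \ref{lem:n(pi,tau)}(1), $n(\pi_{0,v}, \tau_{\pi_{0,v}}) > 0$, and by Lemma \ref{lem:n(pi,tau)}(2), if $n(\pi_v, \tau_{\pi_{0,v}}) \neq 0$ for some other $\pi_v$ then $\Lambda_{\tau_{\pi_{0,v}}} = \Lambda_{\pi_{0,v}} \leq \Lambda_{\pi_v}$; combined with maximality of $\Lambda_{\pi_{0,v}}$ this forces $\Lambda_{\pi_v} = \Lambda_{\pi_{0,v}}$, hence $M_{\pi_v} = M_{\pi_{0,v}}$ (no room to strictly contain), hence $\pi_v = \pi_{0,v}$ by the last clause of the lemma. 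Therefore, for this particular $\tau_v$, the inner sum over $\pi_v$ in the displayed identity collapses to the single term $c_G(\pi_0)\, n(\pi_{0,v}, \tau_{\pi_{0,v}})$, and the identity reads $\sum_{\pi^v} c_G(\pi_0^v \otimes \pi_{0,v})\, n(\pi_{0,v},\tau_{\pi_{0,v}})\, f^v_G(\pi^v) = 0$ for all $f^v$ (where I should be slightly more careful and sum over all $\pi^v$ with $c_G(\pi^v \otimes \pi_{0,v}) \neq 0$; those all have the same $\pi_{0,v}$ by the argument just given applied uniformly). Linear independence of characters $f^v_G(\pi^v)$ on $\mc{H}(G(\A^v))$ then gives $c_G(\pi_0^v \otimes \pi_{0,v})\, n(\pi_{0,v},\tau_{\pi_{0,v}}) = 0$; since the $n$-factor is positive and $c_G \geq 0$, we conclude $c_G(\pi_0) = 0$, a contradiction. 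Hence $c_G(\pi) = 0$ for all $\pi$.

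\textbf{Vanishing of $d$ and the main obstacle.} Once $c_G(\pi) = 0$ for all $\pi$, the hypothesis becomes $\sum_{\tau_v} d(\tau_v, f^v) f_{v,G}(\tau_v) = 0$ for all $f = f^v f_v$; fixing $f^v$ and varying $f_v$, the linear independence of the distributions $\{f_{v,G}(\tau_v)\}_{\tau_v \in T(G_v)}$ immediately yields $d(\tau_v, f^v) = 0$ for every $\tau_v$ and every $f^v$. The main subtlety I expect is not the ordering combinatorics itself — that is essentially Arthur's argument transcribed — but rather the bookkeeping needed to run the maximality argument coherently: the family $\{c_G(\pi)\}$ is a priori infinite, so one must ensure that for each fixed Hecke type of $f_v$ only finitely many $\tau_v$ contribute (which is part of the hypothesis on the support of $d(\tau_v, f^v)$) and that the exponents $\Lambda_{\pi_v}$ attain a maximum on the relevant (finite, for fixed Hecke data) set. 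As in \cite{Arthur}, one handles this by fixing a Hecke type and noting that $f_G(\pi) \neq 0$ forces $\pi_v$ to lie in a finite set determined by that type, which makes the maximality selection legitimate; I would also need to recall that the whole setup is compatible with the central character datum $(\fkX_G,\omega)$, which plays no essential role here beyond restricting the set of $\pi$. Modulo these routine finiteness checks, the proof is the ordering argument outlined above, and I would present it in that order: expand via the change of basis at $v$, isolate $\tau_v = \tau_{\pi_v}$ using $d(\tau_{\pi_v},f^v)=0$, apply Lemma \ref{lem:n(pi,tau)} to collapse the sum, and conclude by linear independence and positivity.
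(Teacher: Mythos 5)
Your overall strategy matches the paper's: expand $f_G(\pi)$ via the change-of-basis $n(\pi_v,\tau_v)$ at the place $v$, exploit the hypothesis that $d(\tau_v,f^v)$ vanishes on triples of the form $(M_v,\sigma_v,1)$, then run a maximality argument using Lemma~\ref{lem:n(pi,tau)}. The organizational difference — you work at the single place $v$ and appeal to linear independence of the characters $f^v_G(\pi^v)$ away from $v$, whereas the paper fixes a finite set $S\supset\{v\}$ of places, packages the global coefficients into the finite family $c_{G_S}(\pi_S)$, and uses the Paley--Wiener/decomposition $\cI(G_S)=\bigoplus_{M_S}\cI_\cusp(M_S)^{W(M_S)}$ — is largely a matter of bookkeeping and would work once the following gap is repaired.

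The gap is in the parenthetical ``(no room to strictly contain).'' You choose $\pi_0$ to maximize only $\Lambda_{\pi_{0,v}}$. After applying Lemma~\ref{lem:n(pi,tau)}(2) to any $\pi_v$ with $n(\pi_v,\tau_{\pi_{0,v}})\neq 0$ appearing in your inner sum, maximality does give $\Lambda_{\pi_v}=\Lambda_{\pi_{0,v}}$, and then the lemma gives only that $M_{\tau_{\pi_{0,v}}}=M_{\pi_{0,v}}$ \emph{contains} (a $W_0^G$-translate of) $M_{\pi_v}$. Nothing in that lemma rules out a \emph{strict} containment $M_{\pi_{0,v}}\supsetneq M_{\pi_v}$, in which case the final clause of Lemma~\ref{lem:n(pi,tau)}(2) does not apply and you cannot conclude $\tau_{\pi_v}=\tau_{\pi_{0,v}}$, so the inner sum over $\pi_v$ need not collapse. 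The paper closes this hole by choosing the extremal representation with respect to the \emph{refined} ordering $\preceq$ on $\R_{\ge 0}\times\Z_{\ge 0}$: among representations of maximal $\lVert\Lambda_{\pi_S}\rVert$, one further \emph{minimizes} $\dim M_{\pi_S}$. With that extra minimality, the inclusion $M_{\pi_{0,v}}\supset M_{\pi_v}$ forces $\dim M_{\pi_v}\ge\dim M_{\pi_{0,v}}$, hence equality, hence $M_{\pi_v}=M_{\pi_{0,v}}$ up to conjugation, and only then does the last clause of Lemma~\ref{lem:n(pi,tau)} deliver $\tau_{\pi_v}=\tau_{\pi_{0,v}}$ and $n(\pi_v,\tau_{\pi_{0,v}})>0$. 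So you need to strengthen your selection of $\pi_0$ from ``$\Lambda_{\pi_{0,v}}$ maximal'' to ``$(\lVert\Lambda_{\pi_{0,v}}\rVert,\dim M_{\pi_{0,v}})$ maximal for $\preceq$''; the rest of your argument, including the finiteness remarks about the Hecke type, then goes through as you indicate.
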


\begin{proof}

  Assume that $c_G(\pi)$ are all zero. Arthur's version of the trace Paley-Wiener theorem (\cite[\S4]{ArtLCR}) then allows us to find an $f_v$ such that $f_{v,G}$ is nonzero at one $\tau_v$ and zero at all the other elements of $T(G_v)$. So all $d(\tau_v,f^v)$ must vanish.

  So it suffices to show that $c_G(\pi)$ are all zero. The argument proceeds as in the proof of \cite[Cor 3.5.3]{Arthur}, with simplifications thanks to the fact that there are no groups other than $G$ involved. Fix a finite set of places $S$ containing $v$ and all infinite places of $F$. Fix a Hecke type $(S,\{(\tau_\infty,\kappa^\infty)\})$ for $f$ in the sense of Arthur. This means that $f=f^\infty f_\infty$, $\kappa^\infty$ is an open compact subgroup of $G(\A^\infty)$ which is hyperspecial away from $S$, $\tau_\infty$ is a finite set of irreducible representations of a maximal compact subgroup $K_\infty$ of $G(\A_\infty)$ such that $f^\infty$ is bi-invariant under $\kappa^\infty$ and $f_\infty$ transforms under $K_\infty$ on the left and right according to representations in $\tau_\infty$. Define
$$
    c_{G_S}(\pi_S):=\sum_{\pi'} c_G(\pi'),
$$
  where $\pi'$ runs over the finite set of irreducible representations of $G(\A)$ which satisfy that $\pi'_S\simeq \pi_S$ and have nonzero trace against some function of the given Hecke type (in particular $\pi$ are unramified outside $S$). Clearly it is enough to check that
    \begin{equation}\label{eq:c(pi)=0} c_{G_S}(\pi_S)=0.  \end{equation}
  Following the procedure in the proof of the proposition 3.5.1 (or the corollary 3.5.3) in \cite{Arthur}, we can rewrite the left hand side as
  $$\sum_{M_S} \sum_{\tau_S\in T_{\el}(M_S)} \sum_{\pi_S\in\Pi(G_S)} \frac{|W_0^G(\tau_S)|}{|W_0^G|} c_{G_S}(\pi_S) n(\pi_S,\tau_S)f_{M_S}(\tau_S),$$
  where the first sum runs over the Levi subgroups of $G_S=G(F_S)$ containing the minimal Levi subgroup. By writing subscript $S$ in the notation, as usual, we have naturally extended the definition of a local object at a single place to an analogous object at the set $S$ of finitely many places. Recall that $\cI(G_S)=\bigoplus_{M_S} \cI_{\cusp}(M_S)^{W(M_S)}$, cf. \cite[\S6]{ArtLCR}. Combining this with Arthur's trace Paley-Wiener theorem, as used earlier, we may find an element $f_S=f_vf^v_S$ such that the summand above does not vanish for exactly one $M_S$ and exactly one $\tau_S\in T_{\el}(M_S)$ for any given $M_S$ and $\tau_S$. If $\tau_S$ has $v$-component of the form $(M_v,\sigma_v,1)$ then the initial assumption implies that the right hand side vanishes. Hence
  \begin{equation}\label{eq:tau=(M,sigma,1)-vanishing}
    \sum_{\pi_S\in\Pi(G_S)}  c_{G_S}(\pi_S) n(\pi_S,\tau_S)=0,\quad \mbox{if}~\tau_v~\mbox{has form}~(M_v,\sigma_v,1).
  \end{equation}

    Our plan is to prove \eqref{eq:c(pi)=0} by contradicting \eqref{eq:tau=(M,sigma,1)-vanishing}. To this end, assume the existence of $\pi$ such that $c_{G_S}(\pi_S)\neq 0$. Introduce a $\tau$-equivalence relation $\sim$ on $\Pi(G_S)$ such that $\pi_S\sim \pi'_S$ if and only if there exists a pair $(M_S,\tau_S)$ such that $n(\pi_S,\tau_S)\neq0 $ and $n(\pi'_S,\tau_S)\neq 0$. By assumption there exists a $\tau$-equivalence class $\mC_S$ such that
    $$\mC'_S:=\{\pi_S\in \mC_S: c_{G_S}(\pi_S)\neq 0\}$$
    is non-empty. Equip $\R_{\ge 0}\times \Z_{\ge 0}$ with a partial ordering $\preceq$ such that $(\lambda_1,\mu_1)\preceq (\lambda_2,\mu_2)$ if and only if either $\lambda_1\le \lambda_2$ or $\lambda_1=\lambda_2$ and $\mu_1\ge \mu_2$. There is a map
    $$\mC'_S\ra \R_{\ge 0}\times \Z_{\ge 0},\quad \pi_S~\mapsto~ (\|\Lambda_{\pi_S}\|,\dim M_{\pi_S}),$$
    where $\|\cdot\|$ is the Hermitian norm on $\fka_0^*$ as in \S\ref{sub:I_disc}. Choose a maximal element $(\lambda',\mu')$ in the image of $\mC'_S$ and also a $\pi'_S\in \mC'_S$ in the preimage of $(\lambda',\mu')$. For every $\pi_S\in \Pi(G_S)$ such that $c_{G_S}(\pi_S)n(\pi_S,\tau_{\pi'_S})\neq 0$, we have $\pi_S\in \mC'_S$ by definition, so $$\|\Lambda_{\tau_{\pi'_S}}\|\le \|\Lambda_{\pi_S}\|,\quad  M_{\tau_{\pi'_S}} \supset M_{\pi_S} $$
    by Lemma \ref{lem:n(pi,tau)}. But the maximality of $\pi'_S$ implies that $\|\Lambda_{\tau_{\pi'_S}}\|= \|\Lambda_{\pi_S}\|$ and that $M_{\tau_{\pi'_S}}= M_{\pi_S}$. Again by Lemma \ref{lem:n(pi,tau)}, we see that $\tau_{\pi'_S}=\tau_{\pi_S}$ and that $n(\pi_S,\tau_{\pi_S})>0$.
    Finally let us apply \eqref{eq:tau=(M,sigma,1)-vanishing} to $\tau_{\pi'_S}$, which has the required form by definition. We have just seen that every nonzero summand has to be positive, and clearly $\pi'_S$ provides a nonzero summand (by the same lemma). Thus we are led to contradiction, proving that every $c_{G_S}(\pi_S)$ must vanish for each fixed Hecke type.

\end{proof}

\subsection{Stable multiplicity formula for unitary groups}\label{sub:stable-multiplicity}

  From here on we restrict ourselves to unitary groups. Our goal is to state the stable multiplicity formula for them after discussing the decomposition of the trace formula according to parameters and introducing the yet undefined players in the formula.

  Let $G^*=U_{E/F}(N)$ be the quasi-split unitary group in $N$ variables associated with a quadratic extension $E/F$ of number fields. \emph{Fix a character $\chi\in \mathcal{Z}_E$ once and for all.} Denote by $\eta_{\chi}: {}^L G^* \ra {}^L \GL(N)$ the associated $L$-morphism so that $(G^*,\eta_\chi)$ constitutes a twisted endoscopic datum for $G_{E/F}(N)=\Res_{E/F} \GL(N)$ with respect to a unitary involution. %

  Let $(G,\xi,z)$ be an extended pure inner twist data for $G^*$.
  We would like to have the analogue \eqref{eq:stabilization-t,c} with parameters in place of $(t,c)$.
  Let $\psi^N\in \tilde\Psi(N)$ and $G^\fke\in \ol\cE(G)$. To the former is associated $t(\psi^N)$ and $c(\psi^N)$. For $L$-group embeddings $\eta_\chi:{}^L G\hra {}^L \GL(N)$ and $\eta^\fke_\chi:{}^L G^\fke\hra {}^L \GL(N)$, define
  $$I^G_{\disc,\psi^N,\eta_\chi}:=\sum_{c\mapsto c(\psi^N)} I^G_{\disc,t(\psi^N),c}$$
   $$S^{G^\fke}_{\disc,\psi^N,\eta^\fke_\chi}:=\sum_{c\mapsto c(\psi^N)} S^{G^\fke}_{\disc,t(\psi^N),c}$$
  where the first (resp. second) sum runs over $c\in \cC_{\A}(G)$ which maps to $c(\psi^N)$ via $\eta_\chi$ (resp. $\eta^\fke_\chi$).

  We are ready to refine or regroup the two expansions \eqref{eq:I_disc-defn} and \eqref{eq:stabilization-t,c} according to $\psi^N$ and $\eta_\chi$. We are taking $\mathfrak {X}_G=\{1\}$ and $\chi=1$ in \S\ref{sub:I_disc}. The same procedure as above produces a direct summand $\cI_{P,\psi^N,\eta_\chi}(1)$ of the induced representation $\cI_{P,t}(1)$ and accordingly $ M_{P,\psi^N,\eta_\chi}(w,1)$ acting on $\cI_{P,\psi^N,\eta_\chi}(1)$. From now on we omit $1$ from the notation in favor of simplicity. We arrive at the following refinement of \eqref{eq:I_disc-defn}:
\begin{equation}\label{eq:I_disc-psi-spec} %
  I^G_{\disc,\psi^N,\eta_\chi}(f):=\sum_{M\in \cL} \frac{|W^M_0|}{|W^G_0|}\sum_{w\in W^G(M)_{\reg}} |\det(w-1)_{\fka^G_M}|^{-1} \tr(M_{P,\psi^N,\eta_\chi}(w)\cI_{P,\psi^N,\eta_\chi}(f)).
\end{equation}
   Taking the sum of \eqref{eq:stabilization-t,c} over $(t,c)$ such that $c$ maps to $c(\psi^N)$, we obtain
\begin{equation}\label{eq:I_disc-stabilized} %
   I^G_{\disc,\psi^N,\eta_\chi}=  \sum_{(G^\fke,\zeta^\fke)\in \ol\cE_{\el}(G)} \iota(G,G^\fke)\hat{S}^\fke_{\disc,\psi^N,\eta_\chi}(f^\fke)
\end{equation}
  Analogously we define $L^2_{\disc,\psi^N,\eta_\chi}(G(F)\bs G(\A_F))$ and the regular representation $R_{\disc,\psi^N,\eta_\chi}$ on it.
When $\psi=(\psi^N,\tilde\psi)\in \Psi(G^*,\eta_\chi)$, the above objects are going to be denoted simply
$$I^G_{\disc,\psi},~S^G_{\disc,\psi},~L^2_{\disc,\psi}(G(F)\bs G(\A_F)),~\mbox{and}~R_{\disc,\psi}.$$

  The stable multiplicity formula computes each $\hat{S}^\fke_{\disc,\psi,\eta_\chi\zeta^\fke}(f^\fke)$, thereby provides a crucial input for $I^G_{\disc,\psi,\eta_\chi}$. To explain it we need more notation.
  Let us introduce a global stable linear form $f^{G^*}(\psi)$ for $f\in \cH(G^*)$ and $\psi\in \Psi(G^*,\eta_\chi)$. It is enough to consider $f=\prod_v f_v$. The local theorem in the quasi-split case, cf. Proposition \ref{prop:local-stable-linear}, provides a stable linear form $f^{G^*}_v(\psi_v)$. We simply take the product $$f^{G^*}(\psi):=\prod_v f^{G^*}_v(\psi_v).$$

  Recall that $\cS_{\psi}$, $ \epsilon^{G^*}_\psi(\cdot)$ and $s_{\psi}$ were defined in Chapter \ref{s:param-main-thm}. To introduce a few more invariants attached to $\psi$ in desired generality, consider a possibly non-connected complex reductive group $S^+$ over $F$, whose neutral component is denoted $S^0=(S^+)^0$. Let $S$ be a union of some connected components of $S^+$. When $s\in S$ is semisimple, write $S_s$ for the centralizer of $s$ in $S^0$. Choose a maximal torus $T$ of $S^0$. Write $W(S)$ and $W(S^0)$ respectively for the Weyl sets $N_S(T)/T$ and $N_{S^0}(T)/T$. Denote by $W_{\reg}(S)$ the set of $w\in W(S)$ which has only finitely many fixed points in $T$. The sign $\sgn^0(w)\in \{\pm 1\}$ is defined to be the parity of the number of positive roots of $(S^0,T)$ mapped by $w$ to negative roots. Finally $\det(w-1)$ for $w\in W_{\reg}(S)$ will designate the determinant of $w-1$ on $\Hom(X^*(T),\R)$ as a real vector space. In our specific setup where a parameter $\psi\in\Psi(G^*,\eta_\chi)$ is given, we may take for example $S^+=\ol{S}_\psi$, in which case $W(S^0)=W^0_\psi$.

    Arthur defined real numbers $i(S)$, $e(S)$ and $\sigma(S)$. The first one is explicitly defined\footnote{In \cite[(4.1.5)]{Arthur} he writes $|W(S)|^{-1}$ in place of $|W(S^0)|^{-1}$. We believe the latter is correct as in (8.1) of \cite{ArtUAR2}.}
  \begin{equation}\label{eq:i(S)}
    i(S):=|W(S^0)|^{-1} \sum_{w\in W_{\reg}(S)} s^0(w)|\det(w-1)|^{-1},
  \end{equation}
  Let $S_{\sspl}$ denote the set of semisimple elements in $S$. Define $S_{\el}$ to be the subset consisting of $s\in S_{\sspl}$ such that $Z(S_s^0)$ is finite. Given a subset $\Sigma$ of $S$ invariant under $S^0$-conjugation, define $\cE(\Sigma)$ to be the set of equivalence classes on $\Sigma\cap S_{\sspl}$, where $\sigma_1$ and $\sigma_2$ are considered equivalent if there exist $s\in S^0$ and $z\in Z(S_{\sigma_1}^0)^0$ such that $\sigma_2=s z\sigma_1 s^{-1}$. Set $\cE_{\el}(S):=\cE(S_{\el})$.
 Then he showed (\cite[\S8]{ArtUAR2}, cf. \cite[Prop 4.1.1]{Arthur}) that there is a unique way to assign real numbers to $\sigma(S)$ for all such $S$ in order that $i(S)=e(S)$, where
  \begin{equation}\label{eq:e(S)}
  e(S):=\sum_{s\in \cE_{\el}(S)} |\pi_0(S_s)|^{-1}\sigma(S^0_s).
   \end{equation}
   Let $x\in \ol\cS_\psi$. We define $i_\psi(x)=i^{G^*}_\psi(x)$ and $e_\psi(x)=e^{G^*}_\psi(x)$ to be $i(S)$ and $e(S)$, respectively, in the special case when $S$ is the union of connected components of $\ol S_\psi$ which map to $x$, cf. \eqref{eq:i(x)} and \eqref{eq:e(x)} below. We have
     \begin{equation}\label{eq:i(S)=e(S)}
  i_\psi(x)=e_\psi(x),\qquad x\in \ol\cS_\psi.
  \end{equation}

  One of the most important results in the classification for quasi-split unitary groups is the following. As explained in \S\ref{sub:results-qsuni} we take this for granted. Note that on the right hand side the dependence on $\eta_\chi$ is in the transfer $f\mapsto f^{G^*}$ for the twisted endoscopic data arising from $(G^*,\eta_\chi)$.

\begin{prop}(Stable multiplicity formula, \cite[Thm 5.1.2]{Mok})\label{p:stable-multiplicity} %
   Let $G^*$ and $\eta_\chi$ be as above and $\psi\in \Psi(G^*,\eta_\chi)$. Then
\begin{equation}\label{eq:stable-mult}
   S^{G^*}_{\disc,\psi}(f)=|\cS_{\psi}|^{-1} \epsilon^{G^*}_\psi(s_{\psi}) \sigma(\ol{S}_{\psi}^0)f^{G^*}(\psi),\quad f\in \cH(G).
\end{equation}
If $\psi^N\in \tilde\Psi(N)$ does not belong to $\eta_{\chi,*}\Psi(G^*,\eta_\chi)$ then
 \begin{equation}\label{eq:stable-mult-0}
   S^{G^*}_{\disc,\psi^N,\eta_\chi}(f)=0,\quad f\in \cH(G).
\end{equation}
\end{prop}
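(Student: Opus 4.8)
Since this proposition is \cite[Theorem 5.1.2]{Mok}, one of the main results in the quasi-split case that we declared in \S\ref{sub:results-qsuni} to accept on faith, the proof here is simply a reference to that theorem. Let me nonetheless indicate the architecture of the argument, which is modelled on Arthur's proof of the stable multiplicity formula in \cite{Arthur}, since that is what one would have to carry out to prove the statement from scratch.

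The plan is a global induction on $N$, run in parallel with the main local theorem (Proposition \ref{prop:main-local-qsplit}) and the local intertwining relation for quasi-split unitary groups. First I would invoke the standard model: for fixed $\psi^N\in\tilde\Psi(N)$ one compares the spectral expansion \eqref{eq:I_disc-psi-spec} of $I^{G^*}_{\disc,\psi^N,\eta_\chi}$, coming from \eqref{eq:I_disc-defn}, with its endoscopic expansion \eqref{eq:I_disc-stabilized}, together with the analogous twisted identity for $G_{E/F}(N)$ relative to $\eta_\chi$. When $\psi\in\Psi(G^*,\eta_\chi)$ factors through a proper Levi subgroup, parabolic descent expresses both sides in terms of smaller unitary and general linear groups, where the induction hypothesis applies; the local intertwining relation and the spectral and endoscopic sign lemmas (Lemmas 5.5.1 and 5.6.1 of \cite{Mok}) are then used to match the two sides and to produce the prefactor $|\cS_\psi|^{-1}\epsilon^{G^*}_\psi(s_\psi)$. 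This reduces matters to $\psi\in\Psi_{\el}(G^*,\eta_\chi)$, and then, using the explicit shape of non-simple elliptic parameters together with descent inside the endoscopic side, to the simple case $\psi^N\in\tilde\Psi_{\simp}(N)$.

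For $\psi$ simple the only remaining unknown is the single scalar $S^{G^*}_{\disc,\psi}(f)-|\cS_\psi|^{-1}\epsilon^{G^*}_\psi(s_\psi)\sigma(\ol{S}_\psi^0)f^{G^*}(\psi)$, and I would argue it vanishes by feeding the standard-model comparison into a vanishing-of-coefficients argument of the type of Lemma \ref{lem:vanishing-coeff}: a nonzero discrepancy would yield a linear relation among characters of $G^*(\A)$ with sign-definite coefficients, which is impossible. The number $\sigma(\ol{S}_\psi^0)$ is the universal constant furnished by Arthur's identity $i(S)=e(S)$ of \cite[\S8]{ArtUAR2}; since $\ol{S}_\psi^0$ is a product of orthogonal, symplectic and general linear factors as in \eqref{e:global-centralizer}, one can evaluate it explicitly. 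For the vanishing statement \eqref{eq:stable-mult-0}, I would use the first seed theorem (Proposition \ref{prop:1st-seed-thm}): if $\psi^N\notin\eta_{\chi,*}\Psi(G^*,\eta_\chi)$ then no family $c\in\cC_{\A}(G^*)$ with $\eta_\chi(c)=c(\psi^N)$ comes from $\cA_2(G^*)$ or from any proper endoscopic contribution, so by the induction the corresponding $c$-part of every stable linear form appearing in \eqref{eq:I_disc-stabilized} vanishes, whence $S^{G^*}_{\disc,\psi^N,\eta_\chi}=0$.

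The hard part, and the reason this is imported rather than reproved, is that the whole construction rests on the stabilization of the twisted trace formula for $G_{E/F}(N)$ (only conditionally available when \cite{Mok} was written) and on the weighted fundamental lemma, and is inextricably interwoven with the full inductive proof of the main local and global theorems for quasi-split unitary groups; there is no way to extract a short self-contained proof of \eqref{eq:stable-mult} in isolation.
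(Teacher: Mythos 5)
Your proposal is correct and takes exactly the paper's approach: the paper offers no proof for this proposition, treating it as one of the results imported from Mok's work (listed in \S\ref{sub:results-qsuni} as taken on faith), and your opening sentence correctly identifies this. Your sketch of what the actual argument in \cite{Mok} looks like is an accurate summary of Arthur's/Mok's stable-multiplicity-formula induction, but it is supplementary context rather than a required part of the proof in this paper.
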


  As explained in \cite[3.2]{Arthur}, we may and do choose the Weyl-invariant Hermitian metric on the linear dual of the Cartan subalgebra of the archimedean Lie algebra compatibly for $G$ (as well as $G^*$) and $G(N)$. (Otherwise the condition $t=t(\psi^N)$ needs to be modified.)

\begin{cor}\label{cor:outside-Psi(G)}
  Let $f\in \cH(G)$, $t\ge 0$, and $c\in \cC_{\A}(G)$. Then $I^G_{\disc,t,c}(f)=0$ and $L^2_{\disc,t,c}(G(F)\bs G(\A_F))=0$ unless $t=t(\psi)$ and $c\mapsto c(\psi)$ for some $\psi^N\in \tilde{\Psi}(N)$.  If $\psi^N\in \tilde\Psi(N)$ does not lie in $\eta_{\chi,*}\Psi(G^*,\eta_\chi)$ then $I^G_{\disc,\psi^N,\eta_\chi}(f)=0$ and $L^2_{\disc,\psi^N,\eta_\chi}(G(F)\bs G(\A_F))=0$.

\end{cor}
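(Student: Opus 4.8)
The plan is to derive the corollary from the stabilized expansion \eqref{eq:stabilization-t,c} (equivalently its $\psi^N$-refined form \eqref{eq:I_disc-stabilized}) together with the stable multiplicity formula, Proposition \ref{p:stable-multiplicity}, proceeding by induction on $\dim G$ (equivalently on $N$). Since $G^\fke$ always turns out to be quasi-split, Proposition \ref{p:stable-multiplicity} is available for all the endoscopic groups that occur, and the induction is needed only to handle proper Levi subgroups.

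First I would establish the vanishing of the stable linear forms on the right of \eqref{eq:stabilization-t,c}. Because $G$ is an inner twist of $G^*=U_{E/F}(N)$, its elliptic endoscopic data coincide with those of $G^*$ (cf. \S\ref{subsub:endoscopic-triples}), so each $\fke=(G^\fke,s^\fke,\eta^\fke)\in\ol\cE_{\el}(G)$ has $G^\fke$ quasi-split and, by \S\ref{subsub:endo-unitary}, isomorphic to a product $\prod_i U_{E/F}(n_i)\times\prod_j G_{E/F}(m_j)$. As $\hat S^\fke_{\disc,t,c}$ depends only on $G^\fke$ and is a stable linear form, it factors as a product over these factors. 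On a factor $G_{E/F}(m_j)=\Res_{E/F}\GL(m_j)$, whose only elliptic endoscopic datum for ordinary endoscopy is the group itself, $\hat S^{G_{E/F}(m_j)}_{\disc,t,c}$ coincides with the corresponding discrete term for $\GL(m_j)$ over $E$, which by the Jacquet--Shalika and Moeglin--Waldspurger theory recalled in \S\ref{subsub:global-param-GL} vanishes unless the relevant component of $c$ is $c(\psi^{m_j})$ for some $\psi^{m_j}\in\Psi(m_j)$ over $E$. On a factor $U_{E/F}(n_i)$, the character $\chi'$ relative to which the stable form is taken (cut out by $\eta^\fke$ and $\eta_\chi$) is conjugate self-dual, and \eqref{eq:stable-mult-0} of Proposition \ref{p:stable-multiplicity} shows that $\hat S^{U_{E/F}(n_i)}_{\disc,\psi^{n_i},\eta_{\chi'}}=0$ unless $\psi^{n_i}\in\eta_{\chi',*}\Psi(U_{E/F}(n_i),\eta_{\chi'})$, i.e. unless the relevant component of $c$ comes from a conjugate self-dual isobaric parameter over $E$ of the appropriate parity. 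Assembling these constraints over the factors of $G^\fke$ and using the correspondence of endoscopic data of \S\ref{sub:endo-correspondence} (Lemmas \ref{lem:local-endo-data-corr}, \ref{lem:global-endo-data-corr}) forces $t=t(\psi^N)$ and $c\mapsto c(\psi^N)$ for some $\psi^N\in\tilde\Psi(N)$ whenever $\hat S^\fke_{\disc,t,c}(f^\fke)\neq 0$; summing the finitely many nonzero terms in \eqref{eq:stabilization-t,c} yields $I^G_{\disc,t,c}(f)=0$ in the remaining cases. For the refinement, fix $\psi^N\in\tilde\Psi(N)$ with $\psi^N\notin\eta_{\chi,*}\Psi(G^*,\eta_\chi)$: if $\hat S^\fke_{\disc,\psi^N,\eta_\chi}\neq 0$ for some $\fke\in\ol\cE_{\el}(G)$, then the same factorization, now keeping track of the parities $\kappa_i$ cut out by $\eta^\fke$ together with the fixed $\eta_\chi$ (cf. \S\ref{subsub:param-end-U}), forces $\tilde\psi^N=\eta_\chi\circ\eta^\fke\circ\tilde\psi^\fke$ for some $L$-homomorphism $\tilde\psi^\fke:\cL_{\psi^N}\times\SL(2,\C)\to{}^L G^\fke$; then $\eta^\fke\circ\tilde\psi^\fke$ exhibits $\psi^N$ as an element of $\eta_{\chi,*}\Psi(G^*,\eta_\chi)$, a contradiction. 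Hence every term of \eqref{eq:I_disc-stabilized} vanishes and $I^G_{\disc,\psi^N,\eta_\chi}(f)=0$.

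Next I would pass from the vanishing of the $I^G_{\disc}$-linear forms to that of the discrete automorphic spectrum. By the definition \eqref{eq:I_disc-defn}, $I^G_{\disc,t,c}(f)$ is the sum of $\tr R_{\disc,t,c}(f)$ (the $M=G$, $w=1$ term) and terms indexed by proper Levi subgroups $M\subsetneq G$, each built from $\cI_{P,t,c}(f)$ and hence from representations of $M(\A_F)$ occurring in $L^2_\disc(M(F)\bs M(\A_F))$ in the $(t,c)$-range. A proper Levi subgroup of $G$ is a product of an inner form of a quasi-split unitary group of smaller rank and groups $\Res_{D/F}\GL$; by the induction hypothesis (for the unitary factors) and by the already-treated general linear case combined with Jacquet--Langlands (for the $\Res_{D/F}\GL$ factors), the $(t,c)$-part, resp. the $(\psi^N,\eta_\chi)$-part, of the discrete spectrum of such an $M$ is trivial unless $c\mapsto c(\psi^N)$ for some $\psi^N\in\tilde\Psi(N)$, resp. unless $\psi^N\in\eta_{\chi,*}\Psi(G^*,\eta_\chi)$. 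Thus, in the cases at hand, all Levi terms of \eqref{eq:I_disc-defn} already vanish, so $\tr R_{\disc,t,c}(f)=0$ (resp. $\tr R_{\disc,\psi^N,\eta_\chi}(f)=0$) for every $f\in\cH(G)$; by linear independence of characters the associated regular representation is zero, which is exactly the asserted vanishing of $L^2_{\disc,t,c}$ (resp. $L^2_{\disc,\psi^N,\eta_\chi}$).

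I expect the main obstacle to be the bookkeeping in the refinement step: showing that non-vanishing of $\hat S^\fke_{\disc,\psi^N,\eta_\chi}$ for the \emph{fixed} $\eta_\chi$ genuinely forces $\psi^N$ to descend through $\eta_\chi$, and not merely through some $\eta_{\chi'}$ with $\chi'\neq\chi$. This requires matching the parity signs produced by the endoscopic embedding $\eta^\fke$ against those of $\eta_\chi$ inside the product decomposition of the stable multiplicity formula, and distinguishing which simple constituents of $\psi^N$ are conjugate self-dual and which occur in $\star$-pairs. The remaining ingredients — the factorization of $\hat S^\fke$ over the factors of $G^\fke$, the general linear and Jacquet--Langlands input, and the induction on $\dim G$ — are routine given the results already assembled.
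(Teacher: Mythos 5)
Your proposal is correct and follows exactly the paper's two-step strategy: the vanishing of the $I^G_{\disc}$-terms is read off from the stabilization \eqref{eq:stabilization-t,c} together with the stable multiplicity formula (Proposition \ref{p:stable-multiplicity}), and the vanishing of the $L^2$-spaces then follows by the spectral induction over proper Levi subgroups as in \cite[Cor.~3.4.3]{Arthur}. One small slip in the first step: the factors $G_{E/F}(m_j)$ do not occur in the \emph{elliptic} endoscopic groups of $U_{E/F}(N)$, which are precisely the products $U_{E/F}(N_1)\times U_{E/F}(N_2)$ (see \S\ref{subsub:endo-unitary}); the $\Res_{E/F}\GL$ factors (and the Jacquet--Langlands input for their inner forms) enter only through the Levi subgroups in the second step, not through the endoscopic expansion \eqref{eq:stabilization-t,c}. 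This does not affect the validity of your argument, as the case you treat is simply vacuous there.
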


\begin{proof}

  The vanishing of $I^G_{\disc,t,c}(f)$ and $I^G_{\disc,\psi^N,\eta_\chi}(f)$ is immediate from \eqref{eq:stabilization-t,c} and the stable multiplicity formula. The assertions on the $L^2$-spaces are deduced from the vanishing of $I^G_{\disc,t,c}(f)$, resp. $I^G_{\disc,\psi^N,\eta_\chi}(f)$, and the inductive hypotheses, by arguing exactly as in the proof of \cite[Cor 3.4.3]{Arthur}.
\end{proof}

As a consequence (using the fact that $\psi=(\psi^N,\tilde\psi)\mapsto \psi^N$ is an injection) we obtain decompositions
$$L^2_\disc(G(F)\bs G(\A_F))=\bigoplus_{\psi\in \Psi(G^*,\eta_\chi)} L^2_{\disc,\psi}(G(F)\bs G(\A_F)),$$
$$ \tr R_\disc(f)=\sum_{\psi\in \Psi(G^*,\eta_\chi)} \tr R_{\disc,\psi}(f),\quad f\in \cH(G).$$

\subsection{The global intertwining operator} \label{sec:giop}
Let $E/F$ be a quadratic extension of number fields and let $G^*=U_{E/F}(N)$ be the quasi-split unitary group in $N$ variables. Let $\Xi : G^* \rw G$ be an equivalence class of inner twists. Let $(M,P)$ be a parabolic pair for $G$, i.e. a pair consisting of a parabolic subgroup $P$ of $G$ and a Levi subgroup $M$ of a $P$, both defined over $F$. There exists a unique standard parabolic pair $(M^*,P^*)$ of $G^*$ with the property that for some $\xi \in \Xi$ we have $\xi(M^*,P^*)=(M,P)$. The set of such $\xi$ forms an equivalence class of inner twists $\Xi_M : M^* \rw M$. Let $(\hat M,\hat P)$ be the standard parabolic pair of $\hat G$ dual to $(M^*,P^*)$.

We assume that $M\neq G$. Let $\pi_M$ be an irreducible constituent of $L^2_\tx{disc}(A_M(\R)^0 M(F)\lmod M(\A_F))$. Let $\psi_{M^*} \in \Psi_2(M^*,\eta_\chi)$ be the corresponding global parameter given by the induction hypothesis. Here we should really have written $\Psi_2(M^*,i_{M^*}\eta_\chi)$ in place $\Psi_2(M^*,\eta_\chi)$, where $i_{M^*}:{}^L M^*\hra {}^L G^*$ is a Levi embedding, but we will continue this abuse of language as there is no danger of confusion.
 Let $u \in N_{\psi_{M^*}}(M,G) = \tx{Cent}(\psi_{M^*},\hat G) \cap \tx{Norm}(A_{\hat M},\hat G)$ and write $w=w_u \in W(\hat G,\hat M)^\Gamma$ for the image of $u$. There are canonical $\Gamma$-equivariant isomorphisms $W(\hat M,\hat G) \cong W(M^*,G^*) \cong W(M,G)$ via which we view $w$ as an element of $W(M,G)^\Gamma$.

Consider the induced representation $\mc{I}_P(\pi_M)$ acting by the right regular representation on the Hilbert space of measurable functions
\[ \mc{H}_P(\pi_M) = \{ f : G(\A_F) \rw V_{\pi_M}| f(nmg) = \delta_P^\frac{1}{2}(m)\pi(m)f(g) \} \]
whose restriction to an open compact subgroup $K \subset G(\A_F)$ is square-integrable.
Here $V_{\pi_M}$ is the Hilbert space on which $\pi_M$ acts and is a subspace of $L^2_\tx{disc}(A_M(\R)^0M(F)\lmod M(\A_F))$. Given a second parabolic subgroup $P'$ with Levi factor $M$, Langlands' theory of Eisenstein series provides an intertwining operator
\[ J_{P'|P} : H_P(\pi_M) \rw H_{P'}(\pi_M). \]
If we replace $\pi_M$ by a twist $\pi_{M,\lambda}$ for $\lambda \in \mf{a}_{M,\C}^*$, the operator $J_{P'|P}$ is defined by the integral formula
\begin{equation} \label{eq:giop1} [J_{P'|P}f](g) =  \int_{N(\A_F) \cap N'(\A_F) \lmod N'(\A_F)} f(n'g) dn' \end{equation}
which converges absolutely whenever the real part of $\lambda$ lies in a certain open cone. The measure $dn'$ is taken with respect to an arbitrary top form on the vector space $\mf{n} \cap \mf{n'} \lmod \mf{n'}$ defined over $F$, as well as the Haar measure on $\A_F$ which assigns the quotient $\A_F/F$ volume $1$. Here $\mf{n}$ denotes the Lie-algebra of $N$, and the measure is independent of the choice of top form by the product formula. Langlands has shown that, as a function of $\lambda$, the operator $J_{P'|P}$ has meromorphic continuation and is defined and unitary at $\lambda=0$. This defines $J_{P'|P}$ for the representation $\pi_M=\pi_{M,0}$.

We will be particularly interested in the case $P'=w^{-1}Pw$. There are two more intertwining operators, defined in elementary terms as follows. Let $\breve{w} \in N(M,G)(F)$ be a lift of $w$. Define the representation $\breve w\pi_M$ on the Hilbert space $V_{\pi_M}$ by $\breve w\pi_M(m) = \pi_M(\breve w^{-1}m\breve w)$. The two intertwining operators are given by
\[ l(\breve w) : \mc{H}_{w^{-1}Pw}(\pi_M) \rw \mc{H}_P(\breve w\pi_M),\qquad [l(\breve w)f](g) = f(\breve w^{-1}g) \]
and
\[ C_{\breve w} : (\breve w\pi_M,V_{\pi_M}) \rw (\pi_M,V_{\pi_M}),\qquad [C_{\breve w}f](m) = f(\breve w^{-1}m\breve w). \]
The last operator extends pointwise to an intertwining operator $C_{\breve w} : \mc{H}_P(\breve w\pi_M) \rw \mc{H}_P(\pi_M)$. The composition $$M_P(w,\pi_M)=C_{\breve w}\circ l(\breve w)\circ J_{w^{-1}Pw|P}$$ is then a self\-intertwining operator of the representation $\mc{I}_P(\pi_M)$ on the space $\mc{H}_P(\pi_M)$. A simple calculation, using the automorphy of the functions comprising $V_{\pi_M}$, shows that $M_P(w,\pi_M)$ does not depend on the choice of $\breve w$.

The un-normalized operators $J_{P'|P}$ and $M_P(w,\pi_M)$ can be normalized to obtain operators $R_{P'|P}(\pi_M,\psi_{M^*})$ and $R_P(w,\pi_M,\psi_{M^*})$ by setting

\begin{equation} \label{eq:giop1r}
  R_{P'|P}(\pi_M,\psi_{M^*}) = r_{P'|P}(\psi_{M^*})^{-1} \cdot J_{P'|P},
\end{equation}
and
\begin{equation}\label{e:R=rM}
  R_P(w,\pi_M,\psi_{M^*}) = r_P(w,\psi_{M^*})^{-1} \cdot M_P(w,\pi_M),
\end{equation}
where $r_{P'|P}(\psi_{M^*})$ and $r_P(w,\psi_{M^*})$ are the global normalizing factors determined as follows. The Levi subgroup $M^*$ of $G^*=U_{E/F}(N)$ is of the form $G_{E/F}(N_1)\times \cdots G_{E/F}(N_k) \times U_{E/F}(N_-)$. %
 Concretely the parameter $\psi_{M^*}$ of $\pi_M$ consists of automorphic representations of $G_{E/F}(N_i,\A_F)$ for $1\le i\le k$ and a parameter for $U_{E/F}(N-)$ arising from a conjugate self-dual automorphic representation of $G_{E/F}(N-)$. Let $\rho_{P'|P}$ be the adjoint representation of $^LM$ on the vector space $\mf{\hat n} \cap \mf{\hat n'} \lmod \mf{\hat n'}$, where $\mf{\hat n}$ and $\mf{\hat n'}$ are the Lie algebras of the unipotent radicals of $\hat P$ and $\hat P'$. Associated to $\psi_{M^*}$ and the contragredient representation $\rho_{P'|P}^\vee$ we have automorphic $L$- and $\epsilon$-factors, and we set

\[ r_{P'|P}(\psi_{M^*}) := \frac{L(0,\psi_{M^*},\rho^\vee_{P'|P})}{L(1,\psi_{M^*},\rho^\vee_{P'|P})}\frac{\epsilon(\frac{1}{2},\psi_{M^*},\rho^\vee_{P'|P})}{\epsilon(0,\psi_{M^*},\rho^\vee_{P'|P})} \]
and
\[ r_P(w,\psi_{M^*}) := r_{w^{-1}P|P}^{-1} \cdot \epsilon(\frac{1}{2},\psi_{M^*},\rho^\vee_{w^{-1}Pw|P})^{-1}. \]

  The importance of the global intertwining operator is clear from the discrete part of the trace formula. In the next subsection we will see that the normalized operator $R_P(w,\pi_M,\psi_{M^*})$ admits a product decomposition and enters the global intertwining relation.

\subsection{The global intertwining relation}\label{sub:global-intertwining}

  In this subsection we introduce the two global linear forms $f_G(\psi_{M^*},u)$ and $f'_G(\psi_{M^*},s)$, whose local versions were introduced in  \S\ref{sec:lir}. Note the change of notation from the local setting that we are now denoting a global parameter in $\Psi(M^*,\eta_\chi)$ by $\psi_{M^*}$ rather than $\psi$, where $M^*$ is a Levi subgroup of $G^*$.
  The definition requires the localization of the parameter $\psi_{M^*}$, which we recall relies on a highly nontrivial result (Proposition \ref{prop:2nd-seed-thm}) established in the quasi-split case. The global linear forms will appear in the spectral and endoscopic expansions after we go through the first step (``standard model'') in comparing formulas \eqref{eq:I_disc-defn} and \eqref{eq:I_disc-stabilized}, and as such play a key role in later arguments. The two linear forms are closely related via the global intertwining relation to be stated below. It will follow as a corollary of the local intertwining relation to be proved later in Section \ref{chapter4}.

   Let $\psi_{M^*}\in \Psi_2(M^*,\eta_\chi)$. We may write $\psi_{M^*}=(\psi_{M^*}^N,\tilde\psi_{M^*})$ as in \eqref{eq:param-end-U-glo} (taking $M^*$ to be the endoscopic group) and define $\psi=(\psi^N,\tilde\psi)$ by $\psi^N:=\psi_{M^*}^N$ and $\tilde\psi:=i_{M^*}\tilde\psi_{M^*})$, where $i_{M^*}:{}^L M^*\hra {}^L G^*$. In particular the image of $\tilde\psi$ is centralized by the image of $A_{\hat M^*}:=(Z(\hat M^*)^\Gamma)^0$ under $i_{M^*}$. Since the latter property is destroyed if $\tilde\psi$ is replaced by an arbitrary $\hat G^*$-conjugate (as the image of ${}^ g \tilde\psi$ is centralized not by $i_{M^*}(A_{\hat M^*})$ but by the image of $A_{\hat M^*}$ under $g i_{M^*} g^{-1}$), we do not consider $\psi$ as an equivalence class in $\Psi(G^*,\eta_\chi)$ for the moment.  So when we want to be precise we use $\psi_{M^*}$ rather than $\psi$ in the definition of various groups below. Put $S_{\psi}:=S_\psi(G^*)$ and $\ol{S}_{\psi}:=\ol S_\psi(G^*)$. Recall from \S\ref{subsub:global-param-U} the definition of $\ol{S}_\psi:=S_\psi/Z(\hat G^*)^\Gamma$, $\cS_\psi:=\pi_0(S_\psi)$, $\ol{\cS}_\psi:=\pi_0(\ol{S}_\psi)$, $\srad_\psi:=(S^0_\psi\cap \hat{G}^*_{\der})^0$, and $S^\natural_\psi:=S_\psi/\srad_\psi$ in the global setting. In addition,
   write $Z_{S_{\psi}}$, $Z_{S^0_{\psi}}$, and $Z_{\srad_{\psi}}$ for the centralizers of $A_{\hat M^*}:=(Z(\hat M^*)^\Gamma)^0$ in $S_\psi$, $S^0_\psi$, and $\srad_\psi$, respectively, and $N_{S_{\psi}}$ and $N_{S^0_{\psi}}$ for the normalizers of $A_{\hat M^*}$ in $S_\psi$ and $S^0_\psi$, respectively.

   We need two global diagrams, where the first is the same as in \S\ref{sec:diag} (but recalled here for the reader's convenience). For simplicity we omitted $(M^*,G^*)$ from the notation in the second and third columns. For instance $N^\natural_{\psi_{M^*}}$ and $\ol{\cN}_{\psi_{M^*}}$ should be $N^\natural_{\psi_{M^*}}(M^*,G^*)$ and $\ol{\cN}_{\psi_{M^*}}(M^*,G^*)$ to be more precise.

   \begin{equation}\label{eq:diagram-extended-pure-inner-global}
  \xymatrix{
     & W^{\rad}_{\psi_{M^*}}=\frac{N_{\srad_{\psi}}}{Z_{\srad_{\psi}}} \ar[d] \ar@{=}[r] & W^0_{\psi_{M^*}}:=\frac{N_{S_{\psi}^0}}{Z_{S_{\psi}^0}} \ar[d] \\
    S^{\natural\natural}_{\psi_{M^*}}(M^*):=\frac{Z_{S_{\psi}}}{Z_{\srad_{\psi}}} \ar[r] \ar@{=}[d] & N^\natural_{\psi_{M^*}}:=\frac{N_{S_{\psi}}}{Z_{\srad_{\psi}}} \ar[r] \ar[d] & W_{\psi_{M^*}}:=\frac{N_{S_{\psi}}}{Z_{S_{\psi}}}  \ar[d] \\
    S^{\natural\natural}_{\psi_{M^*}}(M^*) \ar[r]  & S^\natural_{\psi_{M^*}}:=\frac{N_{S_{\psi}}}{N_{\srad_{\psi}}} \ar[r]  & R_{\psi_{M^*}}  :=\frac{N_{S_{\psi}}}{N_{S^0_{\psi}} Z_{S_{\psi}}}
  }
\end{equation}

\begin{equation}\label{eq:diagram-mod-central-global}
  \xymatrix{
     & W^0_{\psi_{M^*}}=\frac{N_{S_{\psi}^0}}{Z_{S_{\psi}^0}} \ar[d] \ar@{=}[r] & W^0_{\psi_{M^*}}=\frac{N_{S_{\psi}^0}}{Z_{S_{\psi}^0}} \ar[d] \\
    \ol{\cS}_{\psi_{M^*}}(M^*):=\frac{Z_{S_{\psi}}}{Z^0_{S_{\psi}} Z(\hat G)^\Gamma} \ar[r] \ar@{=}[d] & \ol{\cN}_{\psi_{M^*}}:=\frac{N_{S_{\psi}}}{Z^0_{S_{\psi}}Z(\hat G)^\Gamma} \ar[r] \ar[d] & W_{\psi_{M^*}}=\frac{N_{S_{\psi}}}{Z_{S_{\psi}}}  \ar[d] \\
    \ol{\cS}_{\psi_{M^*}}(M^*) \ar[r]  & \ol{\cS}_{\psi_{M^*}}:=\frac{N_{S_{\psi}}}{N_{S^0_{\psi}}Z(\hat G)^\Gamma} \ar[r]  & R_{\psi_{M^*}}  =\frac{N_{S_{\psi}}}{N_{S^0_{\psi}} Z_{S_{\psi}}}
  }
\end{equation}

Now if $\psi=(\psi^N,\tilde\psi)$ is conjugated by an element $g\in \hat G^*$ then both diagrams are conjugated by $g$. So $\psi$ represents an element of $\Psi(G^*,\eta_\chi)$, which is a $\hat G^*$-conjugacy orbit, then not elements but only conjugacy classes in the groups $W^0_{\psi_{M^*}}$, $N^\natural_{\psi_{M^*}}$, etc are well defined. However various quantities are still well defined. For instance the cardinality $|W_{\psi_{M^*}}|$ is well defined, and a sum over a subset of $\ol{\cN}_{\psi_{M^*}}$ is well defined as long as the sum is seen to be invariant under the $\hat G^*$-conjugation.

The diagram \eqref{eq:diagram-extended-pure-inner-global} is the global version of the local diagram in \S\ref{sec:diag}. The localization maps in \S\ref{subsub:localizations-U} induce functorial localization maps from \eqref{eq:diagram-extended-pure-inner-global} to the local diagram at each place $v$ (i.e. the diagram in \S\ref{sec:diag} with $\psi_v$ in place of $\psi$ there) in the sense that the induced maps commute with all maps in the global and local diagrams.

   The second diagram above is exactly the same as in the quasi-split case, cf. \cite[(4.2.3)]{Arthur}. Our notation is slightly different from theirs in that we insert the bar notation to emphasize that the construction of the groups involves dividing out by $Z(\hat G^*)^\Gamma$. For instance $\ol{S}_\psi:=S_\psi/Z(\hat G^*)^\Gamma$ and $\ol{\cS}_\psi:=\pi_0(\ol{S}_\psi)$ whereas $\cS_\psi:=\pi_0(S_\psi)$. This is to be consistent with our notation in the local setting, where it is essential not to divide out by a central subgroup. However when it comes to \emph{global} inner forms, it turns out that one can still work with the groups as in \eqref{eq:diagram-mod-central-global}. Indeed a typical observation is going to be that a global linear form defined in terms of an element of $N_{\psi_{M^*}}^\natural$ or $S_{\psi_{M^*}}^\natural$ descends to a linear form in $\ol{\cN}_{\psi_{M^*}}$ or $\ol{\cS}_{\psi_{M^*}}$. For this it is useful to know that the second diagram is the first diagram modulo (the image of) $Z(\hat G^*)^\Gamma$ in the lower left square, cf. the lemma below.

   \begin{lem}\label{lem:quot-by-central-subgroup} The following are true.
  \begin{enumerate}
    \item $Z^0_{S_\psi}=Z_{S^0_\psi}$.
    \item $Z_{\srad_{\psi}} Z(\hat G^*)^\Gamma = Z_{S^0_\psi} Z(\hat G^*)^\Gamma$ and $N_{\srad_{\psi}} Z(\hat G^*)^\Gamma = N^0_{S_\psi} Z(\hat G^*)^\Gamma$.
    \item The quotient of $\cS^{\natural\natural}_{\psi_{M^*}}(M^*)$ (resp. $\cN^\natural_{\psi_{M^*}}$, resp. $\cS^\natural_{\psi_{M^*}}$) by the image of $Z(\hat G^*)^\Gamma$ is canonically isomorphic to $\ol{\cS}_{\psi_{M^*}}(M^*)$ (resp. $\ol{\cN}_{\psi_{M^*}}$, resp. $\ol{\cS}_{\psi_{M^*}}$). The maps in \eqref{eq:diagram-mod-central-global} are induced by those in \eqref{eq:diagram-extended-pure-inner-global} by passing to quotients.
  \end{enumerate}
\end{lem}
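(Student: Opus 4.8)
The lemma is entirely a statement about the internal structure of the complex reductive group $S_\psi$ (more precisely $S_{\psi_{M^*}}$) and the torus $A_{\hat M^*}$, together with its central subgroup $Z(\hat G^*)^\Gamma$, so the proof is a bookkeeping exercise in algebraic group theory with no input from the trace formula. The plan is to prove the three parts in order, since (3) is a formal consequence of (1) and (2).

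\textbf{Part (1).} I would first observe that $Z_{S^0_\psi}=Z(A_{\hat M^*},S^0_\psi)=S^0_\psi\cap \ts M$ where $\ts M$ is the minimal Levi subgroup of ${}^LG^*$ containing $\tilde\psi(\cL_\psi\times\SL_2)$ attached to $A_{\hat M^*}$; by the analogue of Lemma \ref{lem:sphilevi} applied to the global centralizer group (which is legitimate since $S_\psi$ is a genuine complex reductive group by the discussion in \S\ref{subsub:global-param-U}), this centralizer is a Levi subgroup of $S^0_\psi$, hence connected. On the other hand $Z^0_{S_\psi}=Z(A_{\hat M^*},S_\psi)^0$, and since $Z(A_{\hat M^*},S_\psi)\supset Z(A_{\hat M^*},S^0_\psi)$ with the latter already connected, the identity components agree: $Z^0_{S_\psi}=Z_{S^0_\psi}^0=Z_{S^0_\psi}$. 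This is the content of (1).

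\textbf{Part (2).} The key point is that $A_{\hat M^*}=Z(\hat M^*)^{\Gamma,\circ}$ is a maximal torus of $S_{\psi}^\tx{rad}$ (as remarked in \S\ref{sec:iop}) — more precisely, choosing the representative of $\psi$ so that the minimal Levi it factors through is standard, $\hat T_\psi^\tx{rad}=\hat T\cap S_\psi^\tx{rad}$ is a maximal torus of $S_\psi^\tx{rad}$ and $\hat M_\psi^\tx{rad}=\hat M\cap S_\psi^\tx{rad}=Z(A_{\hat M^*},S_\psi^\tx{rad})$ is a Levi subgroup of $S_\psi^\tx{rad}$ containing it, hence connected. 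The inclusion $Z_{\srad_\psi}Z(\hat G^*)^\Gamma\subset Z_{S^0_\psi}Z(\hat G^*)^\Gamma$ is clear. For the reverse, take $s\in Z_{S^0_\psi}$; by part (1) $Z_{S^0_\psi}=Z^0_{S_\psi}$ is connected, and $Z^0_{S_\psi}=Z(\hat M^*)^{\Gamma,\circ}$ (this is the statement $A_{\ts M}=Z(\hat M^*)^{\Gamma,\circ}$ from the proof of Lemma \ref{lem:squot}). Now Lemma \ref{lem:cart} gives $Z(\hat M^*)^{\Gamma}=Z(\hat M^*_\tx{sc})^\Gamma Z(\hat G^*)^\Gamma$ and \cite[Lemma 1.1]{Art99} gives $Z(\hat M^*_\tx{sc})^\Gamma=Z(\hat M^*_\tx{sc})^{\Gamma,\circ}$, so $Z(\hat M^*)^{\Gamma,\circ}=Z(\hat M^*_\tx{sc})^{\Gamma,\circ}Z(\hat G^*)^\Gamma$; since $Z(\hat M^*_\tx{sc})^{\Gamma,\circ}$ maps into $\hat G^*_\tx{der}$ and is connected it lies in $\srad_\psi$, hence in $Z_{\srad_\psi}$. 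This gives the first equality of (2). The second, $N_{\srad_\psi}Z(\hat G^*)^\Gamma=N_{S^0_\psi}Z(\hat G^*)^\Gamma$, follows from the identity $N(A_{\hat M^*},S^0_\psi)=N(A_{\hat M^*},S^\tx{rad}_\psi)\cdot Z(A_{\hat M^*},S^0_\psi)$ recorded in \S\ref{sec:diag}, combined with the first equality applied to the factor $Z(A_{\hat M^*},S^0_\psi)=Z_{S^0_\psi}$.

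\textbf{Part (3).} This is now formal. For $\ol{\cS}_{\psi_{M^*}}(M^*)=Z_{S_\psi}/(Z^0_{S_\psi}Z(\hat G^*)^\Gamma)$: using (2), $Z^0_{S_\psi}Z(\hat G^*)^\Gamma=Z_{\srad_\psi}Z(\hat G^*)^\Gamma$, so this is exactly $S^{\natural\natural}_{\psi_{M^*}}(M^*)=Z_{S_\psi}/Z_{\srad_\psi}$ divided by the image of $Z(\hat G^*)^\Gamma$. The same substitution gives $\ol{\cN}_{\psi_{M^*}}=N_{S_\psi}/(Z^0_{S_\psi}Z(\hat G^*)^\Gamma)=N_{S_\psi}/(Z_{\srad_\psi}Z(\hat G^*)^\Gamma)=N^\natural_{\psi_{M^*}}/\tx{im}(Z(\hat G^*)^\Gamma)$, and $\ol{\cS}_{\psi_{M^*}}=N_{S_\psi}/(N_{S^0_\psi}Z(\hat G^*)^\Gamma)=N_{S_\psi}/(N_{\srad_\psi}Z(\hat G^*)^\Gamma)=S^\natural_{\psi_{M^*}}/\tx{im}(Z(\hat G^*)^\Gamma)$ by the second equality of (2). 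That the maps in \eqref{eq:diagram-mod-central-global} are the ones induced by passing to these quotients is immediate by inspection, since all the maps in both diagrams are the obvious ones induced by inclusions of normalizers/centralizers. I do not anticipate a genuine obstacle here; the only mildly delicate point — and the one I would be most careful about — is the justification that $A_{\hat M^*}$ is a maximal torus of $S^\tx{rad}_\psi$ in the \emph{global} setting, i.e. that the structural results of \S\ref{subsub:centlevi} apply verbatim to the formally-defined global centralizer group $S_\psi$; this is exactly where one invokes that $S_\psi$ has been "properly defined" as a complex reductive group via \S\ref{sub:global-param}, as the excerpt already flags in \S\ref{sec:diag}.
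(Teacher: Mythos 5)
Your proposal is correct in substance but takes a noticeably longer route than the paper, and the conclusion of part 1 is phrased imprecisely.

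For part 1, the paper argues: the embedding $Z_{S_\psi}/Z_{S^0_\psi}\hra S_\psi/S^0_\psi$ shows $[Z_{S_\psi}:Z_{S^0_\psi}]<\infty$; $Z_{S^0_\psi}$ is connected (centralizer of a torus in a connected reductive group); hence $Z_{S^0_\psi}$ is a finite-index connected subgroup of the connected group $Z^0_{S_\psi}$, forcing equality. You also establish connectedness of $Z_{S^0_\psi}$ (via Lemma \ref{lem:sphilevi} rather than the torus-centralizer fact, which is equivalent), but then conclude ``since $Z_{S_\psi}\supset Z_{S^0_\psi}$ with the latter connected, the identity components agree.'' As stated that deduction is false --- from $A\supset B$ with $B$ connected one only gets $B\subset A^0$, not $A^0=B$ (take $A=\tx{GL}_2\supset T$). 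What salvages it is the observation that $Z^0_{S_\psi}$, being connected, lies in $S^0_\psi$, hence in $Z_{S_\psi}\cap S^0_\psi=Z_{S^0_\psi}$; that is the reverse inclusion you need. A one-line fix, but it must be said explicitly.

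For part 2, the paper does not unwind anything: it quotes Lemma \ref{lem:squot} ($\srad_\psi Z(\hat G^*)^\Gamma=S^0_\psi Z(\hat G^*)^\Gamma$) and, using that $Z(\hat G^*)^\Gamma$ is central, computes directly
\[Z_{\srad_\psi}Z(\hat G^*)^\Gamma=Z_{\srad_\psi Z(\hat G^*)^\Gamma}(A_{\hat M^*})=Z_{S^0_\psi Z(\hat G^*)^\Gamma}(A_{\hat M^*})=Z_{S^0_\psi}Z(\hat G^*)^\Gamma,\]
and similarly with $N$ in place of $Z$. Your proof instead re-derives the content of Lemma \ref{lem:squot} from scratch: you identify $Z^0_{S_\psi}$ with $Z(\hat M^*)^{\Gamma,\circ}$ (which uses $\psi_{M^*}\in\Psi_2(M^*)$, so that $A_{\hat M^*}$ is a maximal torus of $S^0_\psi$ and is therefore its own centralizer there) and then repeat the Lemma \ref{lem:cart} plus \cite[Lemma 1.1]{Art99} argument to show $Z(\hat M^*)^{\Gamma,\circ}\subset Z_{\srad_\psi}Z(\hat G^*)^\Gamma$. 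This is precisely the argument inside the proof of Lemma \ref{lem:squot}. It is correct, and in fact slightly more concrete, but the paper's abstract intersection argument is shorter, does not depend on the explicit description of $Z^0_{S_\psi}$, and handles the $N$-equality with no extra work (whereas you invoke the decomposition $N_{S^0_\psi}=N_{\srad_\psi}\cdot Z_{S^0_\psi}$ from \S\ref{sec:diag}, which does also work). Part 3 is formal in both treatments.
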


\begin{proof}
  Since $Z_{S_\psi}/Z_{S^0_\psi}\hra S_\psi/S^0_\psi$, we have $[Z_{S_\psi}:Z_{S^0_\psi}]<\infty$. As the centralizer of a torus in a connected reductive group, $Z_{S_\psi^0}$ is connected. Hence $ Z_{S^0_\psi}$ is a finite index subgroup of $Z^0_{S_\psi}$, and they are equal since the latter is connected. This verifies part 1.
  Recall from Lemma \ref{lem:squot} that $\srad_{\psi} Z(\hat G^*)^\Gamma=S^0_{\psi} Z(\hat G^*)^\Gamma$. Since $Z(\hat G^*)^\Gamma$ is in the center of $S_\psi$, we have
  $$Z_{\srad_{\psi}} Z(\hat G^*)^\Gamma = Z_{\srad_{\psi} Z(\hat G^*)^\Gamma}(A_{\hat M^*})=
  Z_{S^0_{\psi} Z(\hat G^*)^\Gamma}(A_{\hat M^*})=Z_{S^0_\psi} Z(\hat G^*)^\Gamma.$$
  In the same manner one proves $N_{\srad_{\psi}} Z(\hat G^*)^\Gamma = N^0_{S_\psi} Z(\hat G^*)^\Gamma$, completing the proof of part 2.
  Part 3 follows from the earlier parts and the definition of the groups.

\end{proof}

  Recalling that $\psi_{M^*}\in \Psi_2(M^*)$, we have the following.

\begin{lem}\label{lem:S(M,G)-S(G)-global}
  We have canonical isomorphisms $S^\natural_{\psi_{M^*}}(M^*,G^*)=S^\natural_\psi(G^*)$ and $\ol{\cS}_{\psi_{M^*}}(M^*,G^*)=\ol{\cS}_{\psi}(G^*)$.
\end{lem}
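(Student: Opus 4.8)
The plan is to run the global analogue of the proof of the local Lemma \ref{lem:S(M,G)-S(G)}, using the diagram \eqref{eq:diagram-extended-pure-inner-global} in place of the local one and Lemma \ref{lem:quot-by-central-subgroup} to pass between \eqref{eq:diagram-extended-pure-inner-global} and \eqref{eq:diagram-mod-central-global}. Write $A:=A_{\hat M^*}=(Z(\hat M^*)^\Gamma)^0$. The one substantive input is geometric: since $\psi_{M^*}\in\Psi_2(M^*)$, the centralizer $S_{\psi_{M^*}}(M^*)=\tx{Cent}(A,S_\psi)$ has identity component exactly $A$, so $A$ is a maximal torus of the connected reductive group $S_\psi^0$. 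This is the global counterpart of the remark preceding Lemma \ref{lem:local-ell-nondisc-param}, and it is the only place discreteness of $\psi_{M^*}$ is used; I would justify it either from the explicit description \eqref{e:global-centralizer} of $S_\psi$ or abstractly via the fact that a torus $T$ in a connected reductive group $H$ is maximal precisely when $Z_H(T)^0=T$.

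Next I would analyse the natural map
\[ S^\natural_{\psi_{M^*}}(M^*,G^*)=\frac{N(A,S_\psi)}{N(A,S^{\rad}_\psi)} \lra S^\natural_\psi(G^*)=\frac{S_\psi}{S^{\rad}_\psi} \]
induced by the inclusion $N(A,S_\psi)\hra S_\psi$. Injectivity is immediate because $N(A,S^{\rad}_\psi)=N(A,S_\psi)\cap S^{\rad}_\psi$ by the very definition of the normalizers. For surjectivity I would combine two observations: first, because any two maximal tori of $S_\psi^0$ are $S_\psi^0$-conjugate, the subgroup $N(A,S_\psi)$ meets every connected component of $S_\psi$; second, $A\supseteq Z(S_\psi^0)^\circ$ and, by \S\ref{subsub:centlevi} (cf. \S\ref{sec:diag}), the map $Z(S_\psi^0)^\circ\to S_\psi^0/S^{\rad}_\psi$ is surjective, so the image of $N(A,S_\psi^0)$ in $S_\psi^0/S^{\rad}_\psi$ is everything. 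Given $x\in S_\psi/S^{\rad}_\psi$, choose a representative, use the first observation to split off an element of $N(A,S_\psi)$ leaving a factor in $S_\psi^0$, and use the second to absorb that factor modulo $S^{\rad}_\psi$ into $N(A,S_\psi^0)$. Hence the first map is an isomorphism, and it is canonical, being induced by an inclusion of subgroups of $S_\psi$.

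Finally, the equality $\ol{\cS}_{\psi_{M^*}}(M^*,G^*)=\ol{\cS}_\psi(G^*)$ follows by passing to quotients. On the $\psi_{M^*}$-side, $\ol{\cS}_{\psi_{M^*}}(M^*,G^*)$ is the quotient of $S^\natural_{\psi_{M^*}}(M^*,G^*)=N(A,S_\psi)/N(A,S^{\rad}_\psi)$ by the image of $N(A,S^0_\psi)Z(\hat G^*)^\Gamma$, which by parts 2 and 3 of Lemma \ref{lem:quot-by-central-subgroup} coincides with $N_{S_\psi}/N_{S^0_\psi}Z(\hat G^*)^\Gamma$; under the isomorphism just established this image corresponds (again by the surjectivity observation above, applied to $N(A,S^0_\psi)$) to $S^0_\psi Z(\hat G^*)^\Gamma/S^{\rad}_\psi$, so the quotient is $S_\psi/S^0_\psi Z(\hat G^*)^\Gamma=\ol{\cS}_\psi(G^*)$, and the resulting isomorphism is again the one induced by inclusion of subgroups. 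I do not anticipate a genuine obstacle; the only point needing care is the maximality of $A$ in $S_\psi^0$, and everything else is a formal translation of the local argument.
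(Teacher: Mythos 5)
Your proposal is correct and follows essentially the same route as the paper, which simply says the first equality is proved as in the local Lemma \ref{lem:S(M,G)-S(G)} and the second follows by taking quotients by $Z(\hat G^*)$. You correctly identify the one substantive input (that $A_{\hat M^*}$ is a maximal torus of $S^0_\psi$, which is where $\psi_{M^*}\in\Psi_2(M^*)$ is used), injectivity by intersection of normalizers, and surjectivity from $N(A,S_\psi)$ meeting every component plus the surjectivity of $Z(S^0_\psi)^\circ\to S^0_\psi/S^{\rad}_\psi$. The only cosmetic difference is that the paper's local argument deduces surjectivity by composing with the further quotient to $\ol{\cS}_\psi$ and invoking Lemma \ref{lem:squot}, whereas you argue directly inside $S^\natural_\psi$; both use the same underlying facts, and your version is if anything slightly cleaner.
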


\begin{proof}
  The first assertion is proved in the same way as Lemma \ref{lem:S(M,G)-S(G)}. The second follows from the first by taking quotients by $Z(\hat G^*)$.
\end{proof}

  In order to introduce the first global linear form we need some preparation regarding intertwining operators. Now assume that the Levi subgroup $M^*$ of $G^*$ is proper. %
  Choose any equivalence class $\Xi$ of extended pure inner twists $G^*\ra G$.
  Let $(M,P)$ and $(M^*,P^*)$ be parabolic pairs for $G$ and $G^*$ exactly as at the start of \S\ref{sec:giop}. In particular we have a representative $(G,\xi,z)\in\Xi$ such that $\xi(M^*,P^*)=(M,P)$. %
   Consider a parameter $\psi_{M^*}\in\Psi_2(M^*,\eta_\chi)$. Let $u\in \cN^\natural_{\psi_{M^*}}(M^*,G^*)$, and $\psi_F : \A_F/F \rw \C^\times$ be a nontrivial additive character.
  Denote the localization of $\Xi$, $\psi_{M^*}$, $u$, and $\psi_F$ at each place $v$ by $\Xi_v$, $\psi_{M^*,v}$, $u_v$, and $\psi_{F,v}$, respectively. Write $w_u$ for the image of $u$ in $W_{\psi_{M^*}}(M^*,G^*)$.
We have constructed the local normalized intertwining operators $R_P( u,\Xi_v,\pi_{M,v},\psi_{M^*,v},\psi_{F_v})$ at all places $v$ of $F$. We would like to compare the canonical global intertwining operator $R_P(w,\pi_M,\psi_{M^*})$ of section \ref{sec:giop} with
$$R_P( u,\Xi,\pi_M,\psi_{M^*},\psi_{F}):=\bigotimes_v R_P( u,\Xi_v,\pi_{M,v},\psi_{M^*,v},\psi_{F_v}).$$

\begin{prop*} \label{pro:iop3lg} Let $G^*=U_{E/F}(N)$, $(\xi,z) : G^* \rw G$, $\psi_{M^*} \in \Psi_2(M^*)$ be as above. Assume that $\pi_M\in \Pi_{\psi_{M^*}}(M,\xi)$ is automorphic in that $\lg \pi_M,\cdot\rg_{\xi}=\epsilon_{\psi_{M^*}}$. (See \S\ref{sub:main-global-thm} for the definition of $\Pi_{\psi_{M^*}}(M,\xi)$. Note that $\pi_M$ may not be irreducible.) Let $C_{\breve w}$ be as in section \ref{sec:giop}. For each $u \in N^\natural_{\psi_{M^*}}(M^*,G^*)$, we have an equality of isomorphisms $(\breve w\pi,V_{\pi_M})\ra (\pi_M,V_{\pi_M})$:
$$\bigotimes_v \pi_{M,v}(u)_{\xi_v,z_v}=\epsilon_{\psi_{M^*}}(u)\cdot C_{\breve w},$$
where the local components on the left are the operators defined in Subsection \ref{sec:iop3}.
\end{prop*}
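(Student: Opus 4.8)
The plan is to build the isomorphism on the left-hand side place by place and check that it is globally defined (i.e. comes from an automorphic eigenvector), then identify the resulting scalar. First I would unwind the definition: for each place $v$, Lemma \ref{lem:iop3} characterizes $\pi_{M,v}(u)_{\xi_v,z_v}$ by a twisted character identity involving the local transfer factor $\Delta[\mf{e}_{M,\psi},\xi_v,z_v]$ and the local stable linear form attached to $\eta\circ\psi_{M^*,v}$. The restricted tensor product $\bigotimes_v \pi_{M,v}(u)_{\xi_v,z_v}$ therefore also satisfies an adelic twisted character identity, this time with the adelic transfer factor $\prod_v \Delta[\mf{e}_{M,\psi},\xi_v,z_v] = \Delta_\A[\mf{e}_{M,\psi},\xi]$. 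By Proposition \ref{prop:trans-factor-coincide} this adelic transfer factor coincides (up to inverse) with the canonical adelic factor of \cite{KS99}, so its product formula holds and the normalization is the ``right'' global one. The key point is that $C_{\breve w}$ is, by its very definition (section \ref{sec:giop}), the canonical isomorphism $(\breve w\pi_M,V_{\pi_M}) \rw (\pi_M,V_{\pi_M})$ arising from automorphy, i.e. it acts on the space of automorphic forms by $[C_{\breve w}f](m) = f(\breve w^{-1}m\breve w)$; it is the unique intertwiner respecting the $M(F)$-rational structure.

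Second, I would compare the two operators. Both $\bigotimes_v \pi_{M,v}(u)_{\xi_v,z_v}$ and $C_{\breve w}$ are isomorphisms $(\breve w\pi_M,V_{\pi_M}) \rw (\pi_M,V_{\pi_M})$; since $\pi_M$ is a (possibly reducible, but multiplicity-one) subrepresentation of $L^2_\tx{disc}$, and since for the unitary factor $M_-$ the representation $\pi_{M,-}$ is irreducible while for the linear factors $M_+$ the local packets are singletons (so $\breve w\pi_{M,+}\cong\pi_{M,+}$ canonically via the Whittaker/Langlands-quotient normalization of section \ref{sec:iop3b}), Schur's lemma on each isotypic block shows the two operators differ by a scalar which a priori depends on the block. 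I would then pin down this scalar using the global classification: the condition $\lg\pi_M,\cdot\rg_\xi = \epsilon_{\psi_{M^*}}$ (automorphy of $\pi_M$ in the sense of Theorem \ref{thm:main-global}) together with the product formula $\prod_v\lg\cdot,\pi_{M,v}\rg_{\xi_v,z_v} = \epsilon_{\psi_{M^*}}$ forces the product of the local scalars $\pi_{M,v}(u)_{\xi_v,z_v}$ (relative to the fixed $C_{\breve w}$ at each place) to equal $\epsilon_{\psi_{M^*}}(u)$. Concretely, on the unitary part each local operator contributes $\lg u^\natural,\pi_{M,v}\rg_{\xi_v,z_v}^{-1}$ (or its inverse, matching the sign conventions of section \ref{sec:iop3u}), and multiplying over all $v$ and using that $u^\natural$ has trivial image in each $W_{\psi_{M^*,v}}(M,G)$-quotient modulo what is absorbed into $C_{\breve w}$, the global character evaluated at $u$ is precisely $\epsilon_{\psi_{M^*}}(u)$.

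Third, I would handle the linear (general linear) factors separately. There the operator $\pi_{M,v}(u)_{\xi_v,z_v}$ is $\rho_{\xi_v,z_v}(s_v)\cdot(\text{canonical permutation operator})$ by section \ref{sec:iop3l}, and the canonical permutation operator at each place glues to $C_{\breve w}$ on the linear block by construction; the product $\prod_v\rho_{\xi_v,z_v}(s_v)$ is trivial because $z=(z_v)$ satisfies the global product condition of \S\ref{subsub:inner-GL} (the entries sum to zero), and $\epsilon_{\psi_{M^*}}$ restricted to the corresponding generators is also trivial, so both sides agree. Assembling the unitary and linear contributions gives the claimed identity on all of $V_{\pi_M}$.

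The main obstacle I anticipate is the careful bookkeeping of normalizations: reconciling the local operator $\pi_{M,v}(u)_{\xi_v,z_v}$ — whose definition in section \ref{sec:iop3} goes through a twisted endoscopic triple $\mf{e}_{M,\psi}$, a choice of Whittaker datum at $v$, and the representation-theoretic $\epsilon$-factors — with the purely representation-theoretic operator $C_{\breve w}$, and tracking how the (inverse) signs from Lemma \ref{lem:tfequi}, the $\lambda(w,\psi_{F_v})$-constants, and the section $s:\pi_0(N_\psi(M,G))\rw\pi_0(S_\psi(M))$ of section \ref{sec:iop3u} interact. I expect one must invoke Lemma \ref{lem:iopind} (independence of the choice of $(\xi,z)\in\Xi$) at each place, and the compatibility of the local Whittaker normalizations with Henniart's $\epsilon$-factor comparison, to be sure the local scalars multiply cleanly to a quantity that the product formula identifies with $\epsilon_{\psi_{M^*}}(u)$; the rest is the trace-formula-free input already packaged in Theorem \ref{thm:locclass-single} and Proposition \ref{prop:local-stable-linear}.
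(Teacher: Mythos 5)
The overall strategy of your proposal is sound — decompose $M = M_+ \times M_-$, use the description of $\pi_{M,v}(u)_{\xi_v,z_v}$ from Section \ref{sec:iop3u}, and extract $\epsilon_{\psi_{M^*}}(u)$ from the automorphy hypothesis via $\prod_v\langle\pi_{M,v},u\rangle_{\xi_v,z_v}=\epsilon_{\psi_{M^*}}(u)$. But there is a genuine gap at the crucial step: you assert that ``the canonical permutation operator at each place glues to $C_{\breve w}$ on the linear block by construction.'' This is precisely what needs to be proved, and it is not true by construction. The operator $C_{\breve w,+}$ is defined on the space of automorphic forms by $[C_{\breve w}f](m)=f(\breve w^{-1}m\breve w)$, while the local operators $\pi_{M,v,+}(\breve w)_{\xi_v}$ are pinned down by a Whittaker normalization. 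These are a priori unrelated normalizations, and identifying them requires an actual argument.

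The paper's proof supplies exactly this missing step, by showing that \emph{both} $C_{\breve w,+}$ and $\bigotimes_v \pi_{M,v,+}(\breve w)_{\xi_v}$ preserve a nonzero global Whittaker functional on $\pi_{M,+}$, and then invoking uniqueness. For the local tensor product this is essentially by construction of the Whittaker-normalized operators. For $C_{\breve w,+}$ the paper verifies it by a direct change-of-variables computation: for $f\in V_{\pi_M}$ one has
$\int_{N_M(F)\backslash N_M(\A_F)}(\breve w\pi(n)f)(m)\psi_F(n)\,dn=\int_{N_M(F)\backslash N_M(\A_F)}f(mn)\psi_F(\breve w^{-1}n\breve w)\,dn=\int_{N_M(F)\backslash N_M(\A_F)}f(mn)\psi_F(n)\,dn$,
using that $n\mapsto\breve w^{-1}n\breve w$ preserves the measure on $N_M(F)\backslash N_M(\A_F)$ and that the Whittaker datum is taken with respect to a $w$-invariant Borel. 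Your proposal never mentions a Whittaker functional and so cannot close this gap. You also do not address the case where $\psi_{M,+}$ is not generic, where the paper replaces $\pi_{M,+}$ by the induced representation from a cuspidal support and applies the same Whittaker argument there before descending.

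A secondary issue: your reduction of the $M_+$-scalar to the ``global product condition of \S\ref{subsub:inner-GL}'' is not the right mechanism here. The paper makes the product of local scalars on the $M_+$-part trivial by the \emph{choice} $z_+=1$ in the decomposition $z=z_+\times z_-$ (permitted by Lemma \ref{lem:c2}), after which $\xi$ is an honest $F$-isomorphism $M^*_+\to M_+$ and Section \ref{sec:iop3u} applies literally. Invoking Lemma \ref{lem:iop3} and adelic transfer factors as you propose is both heavier and does not give the Whittaker identification on which the whole argument rests.
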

\begin{proof}
We postpone the general proof of this proposition to \cite{KMS_B}. Presently, we will give the proof in the special case of global pure inner twists of unitary groups. More precisely, let $M^*=M^*_+ \times M^*_-$ and decompose $z=z_+ \times z_-$ accordingly. We assume that $z_+=1$ and $z_- \in Z^1(\Gamma,M^*_-)$. This is the global analog of the setting of Section \ref{sec:iop3u}. Moreover we temporarily assume that $\psi_{M,+}$, namely the $M^*_+$-part of $\psi_{M^*}$, is a generic parameter so that $\pi_{M,+}$ admits a nonzero global Whittaker functional.

In that case, we can use the description of $\pi_{M,v}(u)_{\xi_v,z_v}$ given in that section. Namely, write $\pi_{M,v}=\pi_{M,v,+} \otimes \pi_{M,v,-}$. Then $\pi_{M,v}(u)_{\xi_v,z_v} = \<\pi_{M,v},u\>_{\xi_v,z_v} \cdot \pi_{M,v,+}(\breve w)_{\xi_v} \otimes \tx{id}_{\pi_{M,v,-}}$. Since $\pi_M$ is automorphic, we have $\prod_v \<\pi_{M,v},u\>_{\xi_v,z_v}=\epsilon_{\psi_{M^*}}(u)$. %
 Furthermore, $\bigotimes_v \pi_{M,v,+}(\breve w)_{\xi_v}$ is the unique intertwining operator $\breve w\pi_{M,+} \rw \pi_{M,+}$ that preserves a global Whittaker functional.

On the other hand, we can also decompose $C_{\breve w}$ as $C_{\breve w,+} \otimes C_{\breve w,-}$ according to $\pi_M = \pi_{M,+} \otimes \pi_{M,-}$. As $\tx{Ad}(\breve w)$ acts trivially on $M_-$, we see $C_{\breve w,-} =\tx{id}_{\pi_{M,-}}$. To complete the proof, it is enough to show that  $C_{\breve w,+}$ is the unique intertwining operator $\breve w\pi_{M,+} \rw \pi_{M,+}$ that preserves a global Whittaker functional. For this we may assume that $M=M^*=M^*_+$ so that $\pi_M=\pi_{M,+}$. The proof is quickly reduced to the case where $M=M^*_+=\GL(r)\times \cdots \GL(r)$ ($k$ times), $w$ acts as a (transitive) permutation of the $k$ factors, and the Whittaker datum is taken with respect to a $w$-invariant Borel subgroup $B_M\subset M$. Write $N_M$ for the unipotent radical of $B_M$. By slight abuse of notation, the nondegenerate additive character of $N_M(\A_F)$ induced by $\psi_F$ is again denoted $\psi_F$. Then we have to check that for $f\in V_{\pi_M}$,
$$\int_{N_M(F)\bs N_M(\A_F)} (\breve w\pi(n) f)(m)\psi_F(n)dn = \int_{N_M(F)\bs N_M(\A_F)} (\pi(n) f)(m)\psi_F(n)dn.$$
The equality holds since the left hand side is computed as
$$\int_{N_M(F)\bs N_M(\A_F)} f(m\breve w^{-1} n\breve w)\psi_F(n)dn $$
$$= \int_{N_M(F)\bs N_M(\A_F)} f(m n)\psi_F(\breve w^{-1} n\breve w)dn
=  \int_{N_M(F)\bs N_M(\A_F)} f(m n)\psi_F(n)dn,$$
using the fact that the transformation $n\mapsto \breve w^{-1} n\breve w$ preserves the measure on $N_M(F)\bs N_M(\A_F)$.

Now we drop the assumption that $\psi_{M,+}$ is generic. Then $\pi_{M_+}$ is a discrete automorphic representation of $M^*_+(\A_F)$ so appears as an irreducible quotient of an induced representation from a cuspidal automorphic representation on a Levi subgroup of $M^*_+$. Then the above argument still goes through if we replace $\pi_{M_+}$ by the induced representation and use a nonzero Whittaker functional on the induced representation. (Then we obtain that $\bigotimes_v \pi_{M,v,+}(\breve w)_{\xi_v}=C_{\breve w,+}$ by observing that they are the unique intertwining operator preserving the nonzero Whittaker functional.)

\end{proof}

\begin{pro} \label{pro:ioplg}
 Let $\psi_{M^*}\in \psi_2(M^*)$ and $u \in N^\natural_{\psi_{M^*}}(M^*,G^*)$. Let $\pi_M\in \Pi_{\psi_{M^*}}(M,\xi)$.
\begin{enumerate}
  \item $R_P( yu,\Xi,\pi_M,\psi_{M^*},\psi_{F})=\lg \pi_M,\ol y\rg_{\xi} R_P( u,\Xi,\pi_M,\psi_{M^*},\psi_{F})$, where $y\in S^{\natural\natural}_\psi(M^*)$, its image in $ \ol\cS_{\psi_{M^*}}(M^*)$ is denoted $\ol y$, and $\lg \cdot,\pi_M\rg_\xi$ is the character of $\ol\cS_{\psi_{M^*}}$ introduced in \S\ref{sub:main-global-thm}.
  \item $R_P( u,\Xi_1,\pi_M,\psi_{M^*},\psi_{F})=R_P( u,\Xi_2,\pi_M,\psi_{M^*},\psi_{F})$ if $\Xi_1$ and $\Xi_2$ give rise to the same inner twist.
  \item Suppose %
  that $\pi_M$ is automorphic. Then $$R_P(w_u,\pi_M,\psi_{M^*})=\epsilon_{\psi_{M^*}}(u) R_P(u,\Xi,\pi_{M},\psi_{M},\psi_F).$$
\end{enumerate}

\end{pro}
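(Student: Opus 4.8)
The plan is to deduce all three parts from the definition of the global intertwining operator as a restricted tensor product
$R_P(u,\Xi,\pi_M,\psi_{M^*},\psi_{F})=\bigotimes_v R_P(u,\Xi_v,\pi_{M,v},\psi_{M^*,v},\psi_{F_v})$,
combined with the local statements of Lemma~\ref{lem:rpequiv}, the global product formulas for the various normalizing constants, and Proposition~\ref{pro:iop3lg}. A preliminary point, to be dispatched first, is that this restricted tensor product is well defined: at every place $v$ where all the data are unramified the local operator $R_P(u,\Xi_v,\pi_{M,v},\psi_{M^*,v},\psi_{F_v})$ preserves the spherical line and acts by the scalar $1$ on it (by part~5 of Theorem~\ref{thm:locclass-single} together with the unramified Gindikin--Karpelevich computation that pins down the normalizing factors, and the triviality of $\epsilon_P$ and $\lambda$ at unramified places), so almost all tensor factors are trivial.

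For part~1, I would apply Lemma~\ref{lem:rpequiv}(3) at each place $v$ to the image $y_v$ of $y$ under the localization map $S^{\natural\natural}_{\psi_{M^*}}(M^*)\ra S^{\natural\natural}_{\psi_{M^*,v}}(M_v)$, obtaining $R_P(y_vu,\Xi_v,\pi_{M,v},\psi_{M^*,v},\psi_{F_v})=\langle\pi_{M,v},y_v\rangle_{\xi_v,z_v}^{-1}R_P(u,\Xi_v,\pi_{M,v},\psi_{M^*,v},\psi_{F_v})$; taking $\bigotimes_v$ and recalling from \S\ref{sub:main-global-thm} that $\langle\pi_M,\ol y\rangle_\xi=\prod_v\langle\pi_{M,v},y_v\rangle_{\xi_v,z_v}$, together with the fact that $\ol{\cS}_{\psi_{M^*}}(M^*)$ is $2$-torsion for unitary groups (so this character is its own inverse), yields the asserted identity. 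For part~2, if $\Xi_1$ and $\Xi_2$ induce the same inner twist then $\Xi_2=c\,\Xi_1$ for some $c\in B(F,Z(G^*))_\tx{bsc}$, which is trivial at almost all $v$; Lemma~\ref{lem:rpequiv}(2) gives $R_P(u,\Xi_{2,v},\pi_{M,v},\psi_{M^*,v},\psi_{F_v})=\langle c_v,u^\natural\rangle^{-1}R_P(u,\Xi_{1,v},\pi_{M,v},\psi_{M^*,v},\psi_{F_v})$ at every $v$, and $\prod_v\langle c_v,u^\natural\rangle=1$ because $c$ is a global class --- this is the product formula expressed by the exactness of \eqref{eq:kotisolocglo}, in the same form used above to establish the well-definedness and $z$-independence of $\Delta_\A[\fke,\xi]$.

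Part~3 is the substantial one. I would expand the canonical global operator as $R_P(w,\pi_M,\psi_{M^*})=r_P(w,\psi_{M^*})^{-1}\,C_{\breve w}\circ l(\breve w)\circ J_{w^{-1}Pw|P}$ and match it factor by factor against $\bigotimes_v\big(\mathcal I_P(\pi_{M,v}(u)_{\xi_v,z_v})\circ l_P(w,\xi_v,\psi_{M^*,v},\psi_{F_v})\circ R_{w^{-1}P|P}(\xi_v,\psi_{M^*,v})\big)$. First, Lemma~\ref{lem:iop1lg} identifies $\bigotimes_v R_{w^{-1}P|P}(\xi_v,\psi_{M^*,v})$ with the normalized global operator $R_{w^{-1}Pw|P}(\pi_M,\psi_{M^*})=r_{w^{-1}Pw|P}(\psi_{M^*})^{-1}J_{w^{-1}Pw|P}$. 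Second, $\bigotimes_v l_P(w,\xi_v,\psi_{M^*,v},\psi_{F_v})=\big(\prod_v\epsilon_P(w,\psi_{M^*,v},\psi_{F_v})\big)\big(\prod_v\lambda(w,\psi_{F_v})^{-1}\big)\,l(\breve w)$, where $\prod_v\lambda(w,\psi_{F_v})=1$ by the product formula for the Langlands $\lambda$-constants $\lambda(E/F,\psi_{F_v})$, and $\prod_v\epsilon_P(w,\psi_{M^*,v},\psi_{F_v})=\epsilon(\tfrac12,\psi_{M^*},\rho^\vee_{w^{-1}Pw|P})$ is the global functional equation for the $\epsilon$-factor (using also Henniart's comparison of the representation-theoretic and Artin $\epsilon$-factors). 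Third, $\bigotimes_v\mathcal I_P(\pi_{M,v}(u)_{\xi_v,z_v})=\mathcal I_P\big(\bigotimes_v\pi_{M,v}(u)_{\xi_v,z_v}\big)=\epsilon_{\psi_{M^*}}(u)\,C_{\breve w}$ by Proposition~\ref{pro:iop3lg}. Assembling the three identifications and collecting the scalar normalizing factors --- which by the definitions of $r_P(w,\psi_{M^*})$ and $r_{w^{-1}Pw|P}(\psi_{M^*})$ telescope to $\epsilon(\tfrac12,\psi_{M^*},\rho^\vee_{w^{-1}Pw|P})$ --- gives $R_P(u,\Xi,\pi_M,\psi_{M^*},\psi_F)=\epsilon_{\psi_{M^*}}(u)^{-1}R_P(w,\pi_M,\psi_{M^*})$, which is the claim (again using that $\epsilon_{\psi_{M^*}}$ is $\pm1$-valued).

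The main obstacle is Proposition~\ref{pro:iop3lg} itself: in this paper it is available only when $(\xi,z)$ comes from a global pure inner twist (first for generic $\psi_{M,+}$, then reduced to the discrete case), with the general statement deferred to \cite{KMS_B}; consequently part~3 of the present proposition inherits exactly this restriction, whereas parts~1 and~2 hold unconditionally. A secondary, purely bookkeeping nuisance is keeping careful track of the inverses and of which normalizing factor ($r_{P'|P}$, $r_P(w,-)$, the $\epsilon$-factor, the $\lambda$-constant) enters where, so that the scalars genuinely cancel down to the single factor $\epsilon_{\psi_{M^*}}(u)$; this is the same computation as in \cite[\S2.3]{Arthur}, adapted to the inner-form normalization of the intertwining operators.
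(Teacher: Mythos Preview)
Your proposal is correct and follows essentially the same approach as the paper's proof: parts~1 and~2 via Lemma~\ref{lem:rpequiv}(3) and~(2) respectively together with the relevant product formulas, and part~3 by decomposing both sides into the three factors and matching them using Lemma~\ref{lem:iop1lg}, the product formula for the $\lambda$-constants, and Proposition~\ref{pro:iop3lg}. The paper's proof is considerably more terse (it omits explicit mention of the $\epsilon$-factor product formula and the well-definedness of the restricted tensor product), but the skeleton is identical; your handling of the inverse in part~1 via the $2$-torsion of $\ol\cS_{\psi_{M^*}}(M^*)$ is a clean way to reconcile the sign, and your remark that part~3 inherits the pure-inner-twist restriction from Proposition~\ref{pro:iop3lg} is exactly right.
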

\begin{proof}
Part 1 follows from part 3 of Proposition \ref{lem:rpequiv} and the fact that $\lg \pi_M,\cdot\rg_{\xi}=\prod_v \lg \pi_{M,v},\cdot\rg_{\xi_v,z_v}$ is trivial on $Z(\hat G^*)^\Gamma$, cf. \S\ref{sub:main-global-thm}. (The dependence only on $\ol y$ is also implied by part~1 of the same proposition.) Part~2 is proved similarly using part~2 of Proposition \ref{lem:rpequiv}. (Compare with the proof of Lemma \ref{lem:lirind}.) To prove part~3,
choose a global extended pure inner twist $(\xi,z)$ as in Lemma \ref{lem:c2}. Use the localizations $(\xi_v,z_v)$ to construct the operators $R_P(u^\natural,(\xi_v,z_v),\pi_{M,v},\psi_{M^*,v},\psi_F)$. The claim now follows from Lemma \ref{lem:iop1lg} and Proposition \ref{pro:iop3lg}, noting that $\breve w$ belongs to $N(M,G)(F)$ and the product of all $\lambda_v(w,\psi_F)$ is equal to $1$.
\end{proof}

  We are ready to define the first global linear form $$f\in \cH(G)\quad \mapsto \quad f_{G,\Xi}(\psi_{M^*},u). $$ It suffices to consider the case $f=\prod_v f_v$. By slight abuse we still write $u$ for its image in $\cN^\natural_{\psi_{M^*,v}}(M^*,G^*)$. Define
  \begin{equation}
    \label{eq:f_G(psi,u)-definition}\begin{aligned}
    f_{G,\Xi}(\psi_{M^*},u) & :=\prod_v f_{v,G,\Xi_v}(\psi_{M^*,v},u) \\ & = \sum_{\pi_M\in \Pi_{\psi_{M^*}}}  \tr (R_P(u,\Xi,\pi_M,\psi_{M^*},\psi_F) \cI_P(\pi_{M,v},f_v)).
  \end{aligned}
  \end{equation}
  (The global packet $\Pi_{\psi_{M^*}}$ is defined as in \S\ref{sub:main-global-thm} via induction hypothesis.) There may be several choices of $\psi_{M^*}$ but $f_{G,\Xi}(\psi_{M^*},u)$ is well defined independently of the choice thanks to Lemma \ref{lem:linear-form-ind-of-M}. Since $M^*$ transfers to $G$ over $F$ if and only if $M^*$ transfers to $G$ over $F_v$ for all places $v$ of $F$ (Lemma \ref{lem:transfer-Levi-locglo}), we see that $f_{G,\Xi}(\psi_{M^*},u)=0$ for all $f$ unless $M^*$ transfers to $G$ globally in view of the definition of $f_{v,G,\Xi}(\psi_{M^*,v},u)$ in \S\ref{sec:lir}.

\begin{lem}\label{lem:independence-f_G}
  The linear form $f_{G,\Xi}(\psi_{M^*},u)$ depends only on the underlying inner twist for $\Xi$ and the image of $u$ in $ \ol{\cN}_\psi(M^*,G^*)$. (Namely if $\Xi_1$ and $\Xi_2$ give rise to isomorphic inner twists and if $u_1,u_2$ have the same image in $ \ol{\cN}_\psi(M^*,G^*)$ then $f_{G,\Xi_1}(\psi_{M^*},u_1)=f_{G,\Xi_2}(\psi_{M^*},u_2)$.)   %
\end{lem}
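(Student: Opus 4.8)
\textbf{Proof plan for Lemma \ref{lem:independence-f_G}.} The statement has two independent parts: independence of the choice of $\Xi$ within a fixed inner twist class, and dependence on $u$ only through its image in $\ol{\cN}_{\psi_{M^*}}(M^*,G^*)$. Both will follow by assembling the local statements already proved, together with the structural facts from diagrams \eqref{eq:diagram-extended-pure-inner-global}--\eqref{eq:diagram-mod-central-global} and Lemma \ref{lem:quot-by-central-subgroup}. The plan is to work from the product formula \eqref{eq:f_G(psi,u)-definition}, i.e. $f_{G,\Xi}(\psi_{M^*},u)=\prod_v f_{v,G,\Xi_v}(\psi_{M^*,v},u_v)$, and transfer the local invariance properties place by place, checking that the resulting global factors patch.

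\emph{Independence of $\Xi$.} Suppose $\Xi_1$ and $\Xi_2$ determine the same inner twist. Then for each place $v$, the localizations $\Xi_{1,v}$ and $\Xi_{2,v}$ also determine the same local inner twist. By part 2 of Proposition \ref{pro:ioplg}, $R_P(u,\Xi_{1,v},\pi_{M,v},\psi_{M^*,v},\psi_{F_v})=R_P(u,\Xi_{2,v},\pi_{M,v},\psi_{M^*,v},\psi_{F_v})$ for every $\pi_{M,v}$ and every $v$. Since the global packet $\Pi_{\psi_{M^*}}$ and the operators $\cI_P(\pi_{M,v},f_v)$ do not involve $\Xi$, taking the product over $v$ and summing over $\pi_M\in\Pi_{\psi_{M^*}}$ gives $f_{G,\Xi_1}(\psi_{M^*},u)=f_{G,\Xi_2}(\psi_{M^*},u)$. (One should note here that $\Pi_{\psi_{M^*}}(M,\xi)$ depends only on the inner twist $(M,\xi)$ and not on the rigidification, which is part of the induction hypothesis from Theorem \ref{thm:locclass-single} as used in \S\ref{sub:main-global-thm}; the same is used to make sense of $f_{G,\Xi}(\psi_{M^*},u)$ in the first place.)

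\emph{Dependence on $u$ only via $\ol{\cN}_{\psi_{M^*}}$.} Fix $\Xi$ and a representative $(\xi,z)\in\Xi$ as in Lemma \ref{lem:c2}. Suppose $u_1,u_2\in N^\natural_{\psi_{M^*}}(M^*,G^*)$ have the same image in $\ol{\cN}_{\psi_{M^*}}(M^*,G^*)$. By part 3 of Lemma \ref{lem:quot-by-central-subgroup}, the kernel of $N^\natural_{\psi_{M^*}}\thra \ol{\cN}_{\psi_{M^*}}$ is the image of $Z(\hat G^*)^\Gamma$, so $u_2=yu_1$ for some $y\in Z(\hat G^*)^\Gamma$ (more precisely, for $y$ lifting to $Z(\hat G^*)^\Gamma$; one uses that $Z(\hat G^*)^\Gamma\subset S^{\natural\natural}_{\psi_{M^*}}(M^*)$ maps onto this kernel). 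Localizing, at each $v$ we have $u_{2,v}=y_v u_{1,v}$ with $y_v\in Z(\hat G^*)^{\Gamma_v}$. By part 1 of Lemma \ref{lem:rpequiv}, $R_P(y_v u_{1,v},\Xi_v,\pi_{M,v},\psi_{M^*,v},\psi_{F_v})=\<\Xi_v,y_v\>^{-1}R_P(u_{1,v},\Xi_v,\pi_{M,v},\psi_{M^*,v},\psi_{F_v})$, where $\<\Xi_v,-\>=\<z_v,-\>$ is the character of $Z(\hat G^*)^{\Gamma_v}$ attached to $z_v$ via Kottwitz's map \eqref{eq:kotisoloc}. Taking the product over $v$, the scalars $\<z_v,y\>$ multiply to $\prod_v\<z_v,y\>$, which equals $1$ by the product formula \eqref{eq:kotisolocglo} (the image of $z$ in $\bigoplus_v X^*(Z(\hat G^*)^{\Gamma_v})$ sums to zero in $X^*(Z(\hat G^*)^\Gamma)$, and $y\in Z(\hat G^*)^\Gamma$). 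Hence $f_{G,\Xi}(\psi_{M^*},u_2)=f_{G,\Xi}(\psi_{M^*},u_1)$. The same product-formula argument, combined with part 1 of Proposition \ref{pro:ioplg} for elements $y\in S^{\natural\natural}_{\psi_{M^*}}(M^*)$, handles the remaining ambiguity coming from the difference between $N^\natural_{\psi_{M^*}}$ and $\ol{\cN}_{\psi_{M^*}}$ modulo $\srad$, using that $\<\pi_M,\cdot\>_\xi=\prod_v\<\pi_{M,v},\cdot\>_{\xi_v,z_v}$ is trivial on the relevant central subgroup.

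\emph{Main obstacle.} The only genuinely subtle point is bookkeeping: ensuring that the local scalars $\<z_v,y\>$ really assemble via the adelic product formula \eqref{eq:kotisolocglo} and that the element $y$ relating $u_1$ and $u_2$ can be chosen in $Z(\hat G^*)^\Gamma$ (not merely componentwise at each place), which is exactly what part 3 of Lemma \ref{lem:quot-by-central-subgroup} provides; and that the chosen $(\xi,z)\in\Xi$ localizes to the $(\xi_v,z_v)$ used in the definition of each local factor, which is the content of Lemma \ref{lem:c2}. Everything else is a direct transcription of the local equivariance lemmas \ref{lem:rpequiv}, Proposition \ref{pro:ioplg}, and the diagram \eqref{eq:diagram-mod-central-global} to the global product.
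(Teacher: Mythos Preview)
Your proof is correct and takes essentially the same approach as the paper, which simply cites parts 1 and 2 of Proposition \ref{pro:ioplg}. You unwind these global statements back to the local Lemma \ref{lem:rpequiv} plus the product formula \eqref{eq:kotisolocglo}, which is exactly how Proposition \ref{pro:ioplg} is itself proved. One minor redundancy: once you know from Lemma \ref{lem:quot-by-central-subgroup} part 3 that the kernel of $N^\natural_{\psi_{M^*}}\to\ol{\cN}_{\psi_{M^*}}$ is exactly the image of $Z(\hat G^*)^\Gamma$, your first argument (via part 1 of Lemma \ref{lem:rpequiv} and the product formula) already suffices; the additional paragraph invoking part 1 of Proposition \ref{pro:ioplg} for general $y\in S^{\natural\natural}_{\psi_{M^*}}(M^*)$ is not needed.
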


  In light of the lemma we have a well-defined linear form $f\mapsto f_{G}(\psi_{M^*},\ol u)$ for $\ol u\in \ol{\cN}_{\psi_{M^*}}(M^*,G^*)$ without reference to $\Xi$. Compare this with the paragraph following Lemma \ref{lem:independence-end} below.

\begin{proof}

  This is clear from parts 1 and 2 of Proposition \ref{pro:ioplg}.

\end{proof}

  Next we define the second global linear form. Let $\psi\in \Psi(G^*,\eta_\chi)$ and $s\in S_{\psi,\sspl}$. To $(\psi,s)$ corresponds a pair $(\fke,\psi^\fke)$ (see \S\ref{sub:endo-correspondence}), where $\fke\in \ol\cE(G^*)$ and $\psi^\fke$ is an $\Out_G(G^\fke)$-orbit in $\Psi(G^\fke,\eta_\chi\eta^\fke)$.%
  Write $s_v\in S_{\psi_v,\sspl}$ for the image of $s$ at each place $v$. Define a linear form
  $$f\in \cH(G)\quad\mapsto\quad f'_{G,\Xi}(\psi,s)$$
  given, whenever $f=\prod_v f_v$, by formula
  $$\begin{aligned}
    f'_{G,\Xi}(\psi,s) & := f^\fke(\psi^\fke)=\prod_v f^\fke_v(\psi^\fke_v) =\prod_v f'_{v,G,\Xi}(\psi_v,s_v).
  \end{aligned}$$
  Unlike in the local setting $\fke$ is an isomorphism class, not a strict isomorphism class, but $f^\fke$ and $f^\fke(\psi^\fke)$ are well-defined. This is due to Proposition \ref{prop:trans-factor-coincide} and the fact that the adelic Kottwitz-Shelstad transfer factor depends only on the isomorphism class.
  Notice that the definition does not involve any choice of $M^*$ or $\psi_{M^*}$, as already observed in Lemma \ref{lem:linear-form-ind-of-M} in the local setup. However we also write $f'_{G,\Xi}(\psi_{M^*},s)$ for $f'_{G,\Xi}(\psi,s)$ if $\psi$ is the image of $\psi_{M^*}\in \Psi_2(M^*,\eta_\chi)$.

\begin{lem}\label{lem:independence-end} For $\psi\in \Psi(G^*,\eta_\chi)$,
  $f'_{G,\Xi}(\psi,s)$ is dependent only on the underlying inner twist of $\Xi$ and the image $\ol{s}$ of $s$ in $\ol{\cS}_{\psi}$ (in the same sense as in the preceding lemma).

\end{lem}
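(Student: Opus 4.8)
The plan is to prove Lemma \ref{lem:independence-end} by reducing it to the corresponding equivariance statements for the adelic transfer factor and for the endoscopic correspondence of parameters. First I would fix $f = \prod_v f_v \in \cH(G)$ and recall that by definition $f'_{G,\Xi}(\psi,s) = f^\fke(\psi^\fke)$, where $(\fke,\psi^\fke)$ corresponds to $(\psi,s)$ via the bijection $\cY(G^*) \simeq \cX(G^*)$ of Lemma \ref{lem:global-endo-data-corr}. The value $f^\fke(\psi^\fke)$ is obtained by evaluating the global stable linear form attached to $\psi^\fke$ (a product of local stable linear forms, see Proposition \ref{prop:local-stable-linear} and the discussion after it) on any $f^\fke \in \cH(G^\fke)$ whose adelic orbital integrals match those of $f$ with respect to the transfer factor $\Delta_\A[\fke,\xi]$. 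By Proposition \ref{prop:trans-factor-coincide}, $\Delta_\A[\fke,\xi]$ coincides with the inverse of the canonical Kottwitz--Shelstad adelic transfer factor, which depends only on the isomorphism class of $\fke$ and on $\xi$, and crucially is independent of $z$ by the remark preceding Proposition \ref{prop:trans-factor-coincide} (this follows from Lemma \ref{lem:tfequi} together with the exact sequence \eqref{eq:kotisolocglo}). This already handles the dependence on the choice of extended-pure-inner-twist structure: if $\Xi_1$ and $\Xi_2$ determine the same inner twist, then the adelic transfer factors agree, the matching functions $f^\fke$ agree, and hence $f'_{G,\Xi_1}(\psi,s) = f'_{G,\Xi_2}(\psi,s)$.

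Next I would address the dependence on $s$ only through its image $\bar s \in \ol\cS_\psi$. There are two independent reductions. First, the correspondence $(\psi,s) \mapsto (\fke,\psi^\fke)$ is $\hat G^*$-equivariant, so replacing $s$ by an $S_\psi^0$-conjugate (in fact by any $\hat G^*$-conjugate preserving $\psi$) does not change the isomorphism class of $(\fke,\psi^\fke)$, hence does not change $f^\fke(\psi^\fke)$; this shows $f'_{G,\Xi}(\psi,s)$ depends only on the image of $s$ in $\pi_0(S_\psi)/(\text{conjugacy}) = \cS_\psi$. Second, I must show it descends further to $\ol\cS_\psi = \cS_\psi/Z(\hat G^*)^\Gamma$, i.e.\ that multiplying $s$ by an element $y \in Z(\hat G^*)^\Gamma$ does not change the value. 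Here I would invoke Lemma \ref{lem:central-action}: for $y \in Z(\hat G^*)^\Gamma$ one has $f'_{G,\Xi}(\psi, ys) = \<z,y\> f'_{G,\Xi}(\psi, s)$, where the pairing is via Kottwitz's isomorphism \eqref{eq:kotisoloc}. Globally, the relevant pairing is $\prod_v \<z_v, y\>$, and by the product formula \eqref{eq:kotisolocglo} — precisely the fact that the image of \eqref{eq:h1algloc} lies in the kernel of \eqref{eq:kotisolocglo} — we get $\prod_v \<z_v, y\> = 1$ for all $y \in Z(\hat G^*)^\Gamma$. Therefore $f'_{G,\Xi}(\psi, ys) = f'_{G,\Xi}(\psi, s)$, and the linear form factors through $\ol\cS_\psi$.

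Assembling these: I would state that $f'_{G,\Xi}(\psi,s)$ is unchanged when $\Xi$ is replaced by any other extended pure inner twist with the same underlying inner twist (by the $z$-independence of the adelic transfer factor), and when $s$ is replaced by any element with the same image $\bar s \in \ol\cS_\psi$ (by $\hat G^*$-equivariance of the endoscopic correspondence together with the product formula applied to the central twist). The one subtlety to be careful about is the mild ambiguity in the definition of $f^\fke(\psi^\fke)$ coming from $\fke$ being only an isomorphism class (not a strict isomorphism class) and $\psi^\fke$ being an $\Out_G(G^\fke)$-orbit; but as already noted in the text this is harmless because the canonical adelic Kottwitz--Shelstad transfer factor depends only on the isomorphism class, and the stable linear form attached to $\psi^\fke$ is $\Out_G(G^\fke)$-invariant. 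I do not expect a serious obstacle here: the proof is a bookkeeping exercise stringing together Proposition \ref{prop:trans-factor-coincide}, Lemma \ref{lem:tfequi}, Lemma \ref{lem:central-action}, and the product formula \eqref{eq:kotisolocglo}, exactly parallel to (but simpler than) the local Lemma \ref{lem:lirind} and Corollary \ref{c:local-indep-f'}. The only point requiring a little care is to make sure that the global central pairing really is $\prod_v \<z_v,y\>$ and that the product vanishes; this is where Kottwitz's Proposition 15.6 (quoted as the statement about the image of \eqref{eq:h1algloc}) does the work.
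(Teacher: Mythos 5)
There is a genuine gap in your second reduction, the one that tries to show $f'_{G,\Xi}(\psi,s)$ depends on $s$ only through its image in $\cS_\psi=\pi_0(S_\psi)$. You argue that the $\hat G^*$-equivariance of the endoscopic correspondence makes $f'_{G,\Xi}(\psi,s)$ invariant under replacing $s$ by an $S_\psi^0$-conjugate, and then assert that this gives descent to $\pi_0(S_\psi)$. But conjugacy-invariance does \emph{not} imply constancy on connected components: two semisimple elements $s, s_0 s$ in the same component of $S_\psi$ (with $s_0 \in S_\psi^0$) are in general not $S_\psi^0$-conjugate — in the extreme case where $S_\psi^0$ is a torus, conjugacy is trivial and the reduction collapses entirely. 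So the conclusion that $f'$ factors through ``$\pi_0(S_\psi)/(\text{conjugacy})=\cS_\psi$'' (which is also a misidentification; $\cS_\psi$ is $\pi_0$, not a set of conjugacy classes) does not follow.

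What is actually needed, and what the paper uses, is the local descent Lemma \ref{lem:local-indep-f'}: for $s_0 \in S_\psi^\tx{rad}$ the localizations $(s_0 s)_v$ and $s_v$ have the same image in $S^\natural_{\psi_v}$, and Lemma \ref{lem:local-indep-f'} (whose own proof relies on a parabolic descent to the Levi $M^*_s$, not on conjugation) gives $f'_{v,G,\Xi}(\psi_v,(s_0s)_v)=f'_{v,G,\Xi}(\psi_v,s_v)$ at each place, hence $f'_{G,\Xi}(\psi,s_0s)=f'_{G,\Xi}(\psi,s)$. Combined with your correct treatment of the central twist $y \in Z(\hat G^*)^\Gamma$ via Lemma \ref{lem:central-action} and the product formula \eqref{eq:kotisolocglo}, and with Lemma \ref{lem:squot} identifying $\ol{\cS}_\psi = S_\psi/S_\psi^\tx{rad}Z(\hat G^*)^\Gamma$, this yields the desired factorization. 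Your first reduction (independence of $\Xi$ via Proposition \ref{prop:trans-factor-coincide}) and your handling of $Z(\hat G^*)^\Gamma$ are both correct and are exactly what the paper does, so the fix is localized: replace the equivariance argument by an appeal to Lemma \ref{lem:local-indep-f'}.
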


  As a consequence, the linear form $f\mapsto f'_{G}(\psi,x)$ is unambiguously defined on $\cH(G)$ for every $\psi\in \Psi(G^*,\eta_\chi)$ and $x\in \ol{\cS}_{\psi}$ regardless of the choice of $\Xi$. The analogue for $f_{G,\Xi}(\psi,x)$ is more complicated; a version of it is stated at the end of this subsection.

\begin{proof}
  The independence results from the fact that the adelic transfer factor is independent of the choice of $\Xi$, cf. Proposition \ref{prop:trans-factor-coincide}, implying that the global transfer $f\ra f^\fke$ is independent of the choice of $\Xi$. To see the latter assertion, let $s_0\in \srad_\psi$. Since $(s_0s)_v$ and $s_v$ have the same image in $S^\natural_{\psi_v}$, Lemma \ref{lem:local-indep-f'} gives us $f'_{v,G,\Xi}(\psi,(s_0 s)_v)=f'_{v,G,\Xi}(\psi,s_v)$, hence $f'_{G,\Xi}(\psi,s_0 s)=f'_{G,\Xi}(\psi,s)$. On the other hand for each $y\in Z(\hat G^*)^\Gamma$, we have $f'_{G,\Xi}(\psi,y s)=f'_{G,\Xi}(\psi,s)$ by Lemma \ref{lem:central-action} and the exact sequence \eqref{eq:kotisolocglo}. This completes the proof as $\ol{\cS}_\psi=S_{\psi}/\srad_\psi Z(\hat G^*)^\Gamma$ by Lemma \ref{lem:squot}.

\end{proof}

  Once the local intertwining relation (Theorem \ref{thm:lir}) is established for localizations of $G$, the following should be an immediate corollary by taking product over all places.

\begin{thm}(Global intertwining relation)\label{thm:global-intertwining} Let $M^*$ be a proper Levi subgroup of $G^*$ and $\psi_{M^*}\in \Psi_2(M^*,\eta_\chi)$. Write $\psi$ for the image of $\psi_{M^*}$ in $\Psi(G^*)$. Assume that the local intertwining relation holds for the pair $(M^*,G^*)$ and $\psi_{M^*,v}$ for each place $v$ of $F$.
  Then for every pair of $\ol{u}\in \ol{\cN}_{\psi_{M^*}}(M^*,G^*)$ and $s\in S_{\psi_{M^*},\sspl}(G)$ having the same image in $\ol{\cS}_{\psi_{M^*}}(G^*)$,
  \begin{equation}\label{eq:GIR}f'_{G}(\psi,s_\psi s^{-1})=f_{G}(\psi_{M^*},\ol{u}),\quad f\in \cH(G).\end{equation}
  In particular $f_{G}(\psi_{M^*},\ol{u})$ depends only on $\psi$ rather than $\psi_{M^*}$ itself, and
    if $M^*$ does not transfer to a Levi subgroup of $G$ then $f'_{G}(\psi,s_\psi s)=0$.
\end{thm}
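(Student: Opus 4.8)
The plan is to deduce the Global Intertwining Relation \eqref{eq:GIR} from the Local Intertwining Relation (Theorem \ref{thm:lir}) by factoring both sides of \eqref{eq:GIR} into local pieces and matching them place by place. First I would recall that by construction $f'_{G}(\psi,s_\psi s^{-1}) = f'_{G,\Xi}(\psi,s_\psi s^{-1}) = \prod_v f'_{v,G,\Xi_v}(\psi_v,(s_\psi s^{-1})_v)$, where we have chosen any equivalence class $\Xi$ of extended pure inner twists lifting the given inner twist on $G$ (using Lemma \ref{lem:c2} in the global case), and that by \eqref{eq:f_G(psi,u)-definition} we have $f_{G,\Xi}(\psi_{M^*},u) = \prod_v f_{v,G,\Xi_v}(\psi_{M^*,v},u_v)$, where $u \in \cN^\natural_{\psi_{M^*}}(M^*,G^*)$ is any lift of $\ol u$. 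Note that $(s_\psi)_v = s_{\psi_v}$ since $s_\psi$ is defined by evaluating $\psi$ at $-1 \in \SU(2)$, and the localization map is compatible with this. Thus \eqref{eq:GIR} will follow once we know the local identity $f'_{v,G,\Xi_v}(\psi_v,s_{\psi_v} s_v^{-1}) = e(G_v) f_{v,G,\Xi_v}(\psi_{M^*,v},u_v)$ at every place $v$, together with the product formula $\prod_v e(G_v) = 1$ for the Kottwitz sign (which holds since $G$ is defined over the global field $F$).

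The main point is therefore to verify the compatibility of the local data: that the element $u_v \in \cN^\natural_{\psi_{M^*,v}}(M^*,G^*)$ and the element $(s_\psi s^{-1})_v \in S_{\psi_v,\mathrm{ss}}(G^*)$ have matching images in $S^\natural_{\psi_v}(M^*,G^*)$, so that Theorem \ref{thm:lir}(3) applies at each $v$. Since $u$ and $s_\psi s^{-1}$ have the same image in $\ol{\cS}_{\psi_{M^*}}(G^*) = \ol{\cS}_\psi(G^*)$ by hypothesis (using Lemma \ref{lem:S(M,G)-S(G)-global}), and the localization maps of \S\ref{subsub:localizations-U} are functorial with respect to the diagrams \eqref{eq:diagram-extended-pure-inner-global} and \eqref{eq:diagram-mod-central-global}, their images in $\ol{\cS}_{\psi_v}(G^*)$ agree. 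By Corollary \ref{c:local-indep-f'} (which rests on Lemmas \ref{lem:squot} and \ref{lem:central-action}), the validity of the LIR for one lift of a given class in $\ol{\cS}_{\psi_v}$ propagates to all lifts, so it suffices to have the LIR hold for $u_v$ and $(s_\psi s^{-1})_v$ as elements mapping to that common class. Applying the hypothesized LIR at each $v$ and multiplying, we obtain $f'_{G}(\psi,s_\psi s^{-1}) = \prod_v e(G_v) f_{v,G,\Xi_v}(\psi_{M^*,v},u_v) = f_{G}(\psi_{M^*},\ol u)$. The independence of $f_{G,\Xi}(\psi_{M^*},u)$ from the choice of $\Xi$ (Lemma \ref{lem:independence-f_G}) and from the choice of $\psi_{M^*}$ lifting $\psi$ then follows, since the right side of \eqref{eq:GIR} now equals $f'_{G}(\psi,s_\psi s^{-1})$, which by Lemma \ref{lem:independence-end} depends only on $\psi$ and the image of $s$ in $\ol{\cS}_\psi$; this justifies writing $f_G(\psi_{M^*},\ol u)$ without reference to $\Xi$ and gives the ``in particular'' clause.

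Finally, for the last assertion: if $M^*$ does not transfer to a Levi subgroup of $G$, then by Lemma \ref{lem:transfer-Levi-locglo} there is some place $v$ at which $M^*$ does not transfer to $G$ over $F_v$; alternatively, we argue directly that all $f_{v,G,\Xi_v}(\psi_{M^*,v},u_v) = 0$ since the relevant local packet or intertwining operator is zero (as in the definition in \S\ref{sec:lir}), hence $f_{G,\Xi}(\psi_{M^*},\ol u) = 0$ for all $f$, and then \eqref{eq:GIR} forces $f'_{G}(\psi,s_\psi s^{-1}) = 0$. Alternatively one may observe that $\psi$ is not $(G,\xi)$-relevant, so already at the non-transferring place the argument of Lemma \ref{lem:local-indep-f'} (the degenerate case where $M^*_s$ does not transfer) shows $f'_{v,G,\Xi_v}(\psi_v, \cdot) = 0$, whence $f'_{G}(\psi,s_\psi s^{-1}) = 0$.

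I expect the main obstacle to be not in the argument above—which is essentially bookkeeping of localization maps and the product formula—but in the hypothesis itself: the requirement that the LIR holds for $\psi_{M^*,v}$ at \emph{every} place $v$. Establishing that hypothesis is precisely the content of Chapter \ref{chapter4}, and within the present statement I would need to be careful that the localization $\psi_{M^*,v}$, which a priori lies only in $\Psi^+(M^*_v)$ (because of the possible failure of Ramanujan), still fits the framework of Theorem \ref{thm:lir}; this is handled by the reduction in \S\ref{sub:prelim-local-intertwining1} (Proposition \ref{pro:lirreddisc} together with Lemma \ref{lem:lirdesc1}), which reduces the general case to discrete parameters, so that at almost all places $\psi_{M^*,v}$ is already discrete and tempered and at the finitely many remaining places one invokes the full strength of the local results. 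The only genuinely delicate checks are that the canonical element $s_\psi$ localizes to $s_{\psi_v}$ and that the matching of images in the $S^\natural$ and $\ol{\cS}$ groups is preserved under localization, both of which follow from the functoriality built into \S\ref{subsub:localizations-U} and the diagrams of \S\ref{sub:global-intertwining}.
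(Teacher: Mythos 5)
Your overall strategy — localize both sides, apply the local intertwining relation place by place, and use the product formula $\prod_v e(G_v)=1$ for the Kottwitz sign — is exactly the paper's strategy, and the bookkeeping you cite (functoriality of localization, Lemmas \ref{lem:independence-f_G} and \ref{lem:independence-end}, $(s_\psi)_v = s_{\psi_v}$) is all relevant. The treatment of the last assertion via Lemma \ref{lem:transfer-Levi-locglo} / Lemma \ref{l:relevance-and-Levi} and the degenerate case of Lemma \ref{lem:local-indep-f'} is also correct.

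However, there is a gap in the key step of the main identity. You argue that since $\ol u$ and $s$ have the same image in $\ol{\cS}_{\psi_{M^*}}$, their localizations agree in $\ol{\cS}_{\psi_v}$, and then invoke Corollary \ref{c:local-indep-f'} to conclude that ``the LIR holds for $u_v$ and $(s_\psi s^{-1})_v$ as elements mapping to that common class.'' But Theorem \ref{thm:lir}(3) requires $u_v$ and $s_v$ to have the \emph{same} image in $S^\natural_{\psi_v}$, not merely the same image in $\ol{\cS}_{\psi_v}$; and Corollary \ref{c:local-indep-f'} only says that if the LIR holds for one lift $x\in S^\natural_{\psi_v}$ of $\ol x_v$ then it holds for every lift — it does not allow you to apply the LIR with $u_v$ mapping to $x_1$ and $s_v$ mapping to $x_2 \neq x_1$. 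As written, the step from ``same image in $\ol{\cS}$'' to ``LIR applies'' is unjustified. What is actually needed, and what the paper does, is an explicit diagram chase in the commutative ladder with exact rows
\[ \xymatrix{ W^0_{\psi_{M^*}} \ar[r]\ar@{=}[d] & N^\natural_{\psi_{M^*}} \ar[r]\ar@{->>}[d] & S^\natural_{\psi_{M^*}} \ar@{->>}[d] \\ W^0_{\psi_{M^*}} \ar[r] & \ol{\cN}_{\psi_{M^*}} \ar[r] & \ol{\cS}_{\psi_{M^*}} } \]
to choose a lift $u\in N^\natural_{\psi_{M^*}}$ of $\ol u$ whose image in $S^\natural_{\psi_{M^*}}$ literally equals that of $s$. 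Then the localizations of $u$ and $s$ have the same image in $S^\natural_{\psi_v}$ at each place and the local intertwining relation applies directly. (Alternatively one could fix the lift $u$ arbitrarily, account for the discrepancy $y_v\in Z(\hat G^*)^{\Gamma_v}$ at each place via Lemma \ref{lem:central-action}, and appeal to the product formula \eqref{eq:kotisolocglo} to make the $\prod_v\langle z_v,y_v\rangle$ factors cancel — but you would need to carry this out explicitly rather than gesture at it.)
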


\begin{rem}
  Eventually \eqref{eq:GIR} will be valid even when $M^*=G^*$ and thus $\psi\in \Psi_2(G^*,\eta_\chi)$, but can be stated only after the local classification theorem is known for the definition of $f_{G}(\psi,\ol{u})$ is conditional on it; see the next subsection. The case $M^*\neq G^*$ is treated before the local classification theorem and indeed serves as an input for the proof.
\end{rem}

\begin{proof}
  The last assertion follows from Lemma \ref{l:relevance-and-Levi} and (the paragraph below) Theorem \ref{thm:lir}. It remains to verify \eqref{eq:GIR}.
  Since passing from diagram \eqref{eq:diagram-extended-pure-inner-global} to diagram \eqref{eq:diagram-mod-central-global} is functorial, we have a commutative diagram where both rows are short exact sequences.
$$\xymatrix{
  W^0_{\psi_{M^*}} \ar[r] \ar@{=}[d] & N^\natural_{\psi_{M^*}} \ar[r] \ar@{->>}[d] & S^\natural_{\psi_{M^*}} \ar@{->>}[d]\\
  W^0_{\psi_{M^*}} \ar[r]& \ol{\cN}_{\psi_{M^*}} \ar[r]  & \ol{\cS}_{\psi_{M^*}}
}$$
    A simple diagram chase enables us to choose a lift $u\in N^\natural_{\psi_{M^*}}(M^*,G^*)$ of $\ol{u}$ such that the image of $u$ in $S^\natural_{\psi_{M^*}}(M^*,G^*)$ is equal to that of $s$. Then the localization $u_v\in N^\natural_{\psi_{M^*,v}}(M^*,G^*)$ of $u$ has the same image as $s_v$ in $S^\natural_{\psi_{M^*,v}}(M^*,G^*)$. So the local intertwining relation is applicable and yields $f'_{v,G,\Xi}(\psi_{M^*,v},s_{\psi_{M^*}} s_v^{-1})=e(G_v)f_{v,G,\Xi}(\psi_{M^*,v},u_v)$. We conclude by taking the product over all $v$.
\end{proof}

  We have a preliminary result on the global intertwining relation analogous to \S\ref{sub:prelim-local-intertwining}. As in the local case we define elliptic parameters by the existence of a semisimple element $\ol{s}\in \ol{S}_{\psi_{M^*}}$ whose centralizer in $\ol{S}_{\psi_{M^*}}$ is finite. There is again a chain of inclusions for the sets of discrete, elliptic, and all parameters:
    $$\Psi_2(G^*,\eta_\chi)\subset \Psi_{\el}(G^*,\eta_\chi)\subset \Psi(G^*,\eta_\chi).$$
 Decompose $\psi_{M^*}$ as a formal sum of simple parameters:
  $$\psi_{M^*}=\ell_1\psi_1 \boxplus \cdots \boxplus \ell_r \psi_r,\quad r\ge1,~\ell_1\ge\cdots\ge \ell_r\ge 1.$$
  In parallel with the analogous local setup, our basic strategy is to study $\psi$ in the increasing order of difficulty: non-elliptic parameters, elliptic non-discrete parameters, and then discrete parameters. Elliptic non-discrete parameters have the form
\begin{equation}\label{eq:globla-ell-param}\psi=2\psi_1\boxplus \cdots\boxplus 2\psi_q \boxplus \psi_{q+1} \boxplus \cdots\boxplus \psi_r, ~~ S_\psi\simeq \O(2,\C)^q\times \O(1,\C)^{r-q}, ~~q\ge 1.\end{equation}
   Among non-elliptic parameters the following two exceptional cases, cf. (5.7.12) and (5.7.13) of \cite{Mok} (also see (4.5.11) and (4.5.12) of \cite{Arthur}), turn out to be the most difficult.
\begin{enumerate}
\item[(exc1)] $\psi=2\psi_1\boxplus\psi_2\boxplus \cdots \boxplus \psi_r$, $S_\psi\simeq Sp(2,\C)\times O(1)^{r-1}$,
\item[(exc2)] $\psi=3\psi_1\boxplus\psi_2\boxplus \cdots \boxplus \psi_r$, $S_\psi\simeq O(3,\C)\times O(1)^{r-1}$.
\end{enumerate}

 Define $\Psi_{\EXC1}(G^*,\eta_\chi)$ and $\Psi_{\EXC2}(G^*,\eta_\chi)$ to be the subsets of non-elliptic parameters of $\Psi(G^*,\eta_\chi)$ which have the form (exc1) and (exc2), respectively. Write $\Psi_{\EXC}(G^*,\eta_\chi):=\Psi_{\EXC1}(G^*,\eta_\chi)\coprod \Psi_{\EXC2}(G^*,\eta_\chi)$.
 We record some basic lemmas on exceptional and non-exceptional parameters.

 \begin{lem}\label{lem:global-exc}
  Suppose that $\psi$ is a non-elliptic non-exceptional parameter, i.e. $\psi\in \Psi^{\el}(G^*,\eta_\chi)\bs \Psi_{\EXC}(G^*,\eta_\chi)$. Then
  \begin{enumerate}
  \item  every simple reflection $w\in W^0_\psi$ centralizes a torus of positive dimension in $\ol{T}_\psi$ and
  \item  $\dim\ol{T}_{\psi,\ol{s}}\ge 1,~\forall \ol{s}\in \ol{S}_\psi$.
  \end{enumerate}
\end{lem}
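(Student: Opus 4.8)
The statement is the exact analogue of the local Lemma~\ref{lem:local-exc}, whose proof occupied a short argument modeled on the middle of the proof of \cite[Prop.~4.5.1]{Arthur}. The plan is to run the same argument in the global setting, the only change being that the role of the local groups $I^\pm_\psi(G^*)$, $J_\psi(G^*)$ from \S\ref{subsub:local-param-U} is now played by the global index sets $I^\pm_{\psi^N}$, $J_{\psi^N}$ of \S\ref{subsub:global-param-U}, and the explicit description \eqref{e:global-centralizer} of $S_\psi$ replaces its local counterpart. As in the local case, it suffices to show the contrapositive: if at least one of conditions~1 or~2 fails, then $\psi$ must be of type (exc1) or (exc2). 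So I would assume $\psi$ is non-elliptic and that either some simple reflection $w\in W^0_\psi$ fixes only finitely many points of $\ol T_\psi$, or some $\ol s\in\ol S_\psi$ has $\dim\ol T_{\psi,\ol s}=0$.

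\textbf{Key steps.} First I would recall, via \eqref{e:global-centralizer}, the decomposition
\[
S_\psi\simeq \prod_{i\in I^+_{\psi^N}}\O(\ell_i,\C)\times \prod_{i\in I^-_{\psi^N}}\Sp(\ell_i,\C)\times\prod_{j\in J_{\psi^N}}\GL(\ell_j,\C),
\]
with $Z(\hat G^*)^\Gamma=\{\pm1\}$ embedded diagonally, and note that a maximal torus $\ol T_\psi$ of $\ol S_\psi$ is built from the diagonal tori of these factors modulo the diagonal $\{\pm1\}$. Second, I would observe that if $J_{\psi^N}\neq\emptyset$ then $S_\psi$ contains a central torus of positive dimension (coming from the center of a $\GL(\ell_j,\C)$ factor), hence so does $\ol S_\psi$, so $\dim Z(\ol S_\psi)\ge1$ and both conditions~1 and~2 hold automatically (as noted in the Remark after the statement, and exactly as in the local proof); thus we may assume $J_{\psi^N}=\emptyset$. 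Third, I would reduce the symplectic part: since each $\Sp(\ell_i,\C)$ has $\ell_i$ even and contributes a torus of rank $\ell_i/2\ge1$ to $\ol T_\psi$ whose fixed-point behavior under Weyl elements is nontrivial, a short computation (identical to the local one) shows that unless $I^-_{\psi^N}$ is either empty or a singleton with $\ell=2$, one again produces a central torus or enough regularity to force conditions 1 and 2; so we are left with cases (i) $I^-_{\psi^N}=\emptyset$ and (ii) $I^-_{\psi^N}=\{i_0\}$ with $\ell_{i_0}=2$. Fourth, in case (i) I would analyze the orthogonal part: an explicit computation of the Weyl group of $\prod_{i\in I^+_{\psi^N}}\O(\ell_i,\C)$ acting on its diagonal torus modulo $\{\pm1\}$ shows that if every $\ell_i\le 2$ then either $\psi$ is elliptic (contradiction) or $\dim Z(\ol S_\psi)\ge1$ (so conditions 1, 2 hold); if exactly one $\ell_i=3$ and all others equal $1$, this is precisely (exc2); and $\ell_i\ge 4$ for some $i$ is impossible because then $\O(\ell_i,\C)$ already contains $\SO(4)$ and a central torus appears after quotienting. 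Fifth, in case (ii) the constraint that $\psi$ be non-elliptic forces all remaining $\ell_i=1$ in $I^+_{\psi^N}$, which is exactly (exc1). Assembling these cases gives the contrapositive, hence the lemma.

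\textbf{Main obstacle.} I expect the only real work to be the bookkeeping in the third and fourth steps: keeping careful track of what $W^0_\psi$ and $W_{\psi,\reg}$ look like for products of $\O(\ell,\C)$'s and $\Sp(\ell,\C)$'s after passing to $\ol S_\psi=S_\psi/\{\pm1\}$, and verifying that ``a simple reflection fixes only finitely many points of $\ol T_\psi$'' together with ``some $\ol s$ has $\dim\ol T_{\psi,\ol s}=0$'' genuinely forces all the $\ell_i$ to collapse to the exceptional patterns. This is not conceptually hard --- it is the same linear-algebra-over-$\Z/2$ computation as in \cite[Prop.~4.5.1]{Arthur} and in the proof of Lemma~\ref{lem:local-exc} --- but it is the step where one must be precise rather than wave hands. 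Everything else (the reduction $J_{\psi^N}=\emptyset$, the treatment of a single $\Sp(2,\C)$, the identification of (exc1) and (exc2)) is a direct transcription of the local argument, since the global and local descriptions of $S_\psi$ in \eqref{e:global-centralizer} and \S\ref{subsub:local-param-U} are formally identical. Accordingly I would write the proof tersely, citing the local Lemma~\ref{lem:local-exc} and \cite[Prop.~4.5.1]{Arthur} for the details of the computation and only indicating the substitution of global index sets for local ones.
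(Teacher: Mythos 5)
Your high‑level plan is exactly what the paper does: the paper's entire proof of Lemma~\ref{lem:global-exc} is the one sentence ``the proof of Lemma~\ref{lem:local-exc} carries over word by word,'' and your proposed final write‑up (cite the local lemma and Arthur, note the substitution of index sets) would be indistinguishable from it. The reduction to $J_{\psi^N}=\emptyset$ and the identification of (exc1) and (exc2) are also correct.

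However, the rationale you give in the fourth step for the key dichotomy is wrong, and this is the only part of the ``easy explicit computation'' that actually requires care. You claim that $\ell_i\geq 4$ is ruled out ``because then $\O(\ell_i,\C)$ already contains $\SO(4)$ and a central torus appears after quotienting.'' That is false: $Z(\SO(\ell,\C))$ is $\{\pm1\}$ for $\ell\geq 3$, and neither $\O(\ell,\C)$ nor $\O(\ell,\C)/\{\pm1\}$ contains a positive‑dimensional central torus, so no central torus ``appears.'' (Likewise, $\Sp(2k,\C)$ for $k\geq 2$ or $\Sp(2,\C)^2$ do not contain one, so the ``central torus'' alternative in your third step is also spurious; only the ``regularity'' alternative is operative there.) The actual reason a factor $\O(\ell_i,\C)$ with $\ell_i\geq 4$ forces both conditions is that it already contributes rank $\lfloor\ell_i/2\rfloor\geq 2$ to $\ol T_\psi$: each simple reflection in $W^0_\psi$ then fixes a codimension‑one subtorus of positive dimension (condition~1), and any semi‑simple $\ol s$ normalized so as to preserve $T_\psi$ and a Borel of $S_\psi^0$ acts on the $\O(\ell_i)$‑torus through a diagram automorphism whose fixed subtorus still has positive dimension (condition~2). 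This distinction matters: if one tries to verify condition~2 for an arbitrary $\ol s$ --- e.g.\ the $\SO(4)$‑element acting as $-1$ on the torus --- the fixed torus is $0$‑dimensional, so one must use the Borel‑preserving normalization implicit in Lemma~\ref{lem:local-indep-f'} and Lemma~\ref{lem:local-intertwining-non-exceptional}. Your write‑up would be fine since it ultimately just cites the local lemma, but the central‑torus explanation is not a sound account of why the case $\ell_i\geq 4$ is harmless, and you should also make sure the case analysis covers patterns like two indices with $\ell_i=3$, which your enumeration silently omits.
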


\begin{proof}
  Except that the local setup is replaced with the global one, the proof of Lemma \ref{lem:local-exc} carries over word by word.
\end{proof}

  Let us define a subset $\ol{\cS}_{\psi,\el}\subset \ol{\cS}_\psi$ exactly as in the local case, i.e. as in the paragraph below Lemma \ref{lem:local-exc}.

\begin{lem}\label{lem:global-S-elliptic}
  If $\psi\in \Psi_{\EXC}(G^*,\eta_\chi)$ then $\ol{\cS}_{\psi,\el}=\ol{\cS}_\psi$.
\end{lem}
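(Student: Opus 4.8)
The plan is to reduce this to the corresponding statement for quasi-split unitary groups, namely the analogous description of $\ol{\cS}_{\psi,\el}$ established in \cite{Arthur} and imported in \cite{Mok} (this is the global counterpart of Lemma \ref{lem:local-exc} and Lemma \ref{lem:S-elliptic} above). Indeed, $\ol{\cS}_{\psi,\el}$ depends only on the complex reductive group $\ol{S}_\psi$ together with its maximal torus $\ol{T}_\psi$ and the action of $W^0_\psi$; it has nothing to do with the inner twist $(G,\xi)$ or with the chosen $L$-embedding $\eta_\chi$. So the statement is intrinsic to the pair $(\psi,\eta_\chi)$ and, by the explicit formula \eqref{e:global-centralizer} for $S_\psi$, is governed entirely by the combinatorics of the decomposition \eqref{eq:psi-global-dec} of $\psi^N$. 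Thus the argument will be purely group-theoretic.

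First I would recall that for $\psi \in \Psi_{\EXC}(G^*,\eta_\chi)$ the group $S_\psi$ is, by the very definition of (exc1) and (exc2) above, isomorphic to either $\Sp(2,\C)\times \O(1,\C)^{r-1}$ or $\O(3,\C)\times \O(1,\C)^{r-1}$, with $Z(\hat G^*)^\Gamma = \{\pm 1\}$ embedded diagonally. In either case $S_\psi^\tx{rad} = S_\psi^0$, and the quotient $\ol{S}_\psi = S_\psi/\{\pm 1\}$ has neutral component $\ol{S}_\psi^0$ equal to $\PGL(2,\C)$ (in case (exc1), since $\Sp(2)=\SL(2)$) or to $\SO(3,\C)$ (in case (exc2)), while $\ol{\cS}_\psi = \pi_0(\ol{S}_\psi) \cong (\Z/2\Z)^{r-1}$. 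This is, word for word, the computation carried out in the proof of Lemma \ref{lem:S-elliptic}(2) in the local setting, and nothing changes when one passes to the global parameter sets; I would simply cite that argument.

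Next I would verify that $\ol{\cS}_{\psi,\el} = \ol{\cS}_\psi$. By definition $\ol{\cS}_{\psi,\el}$ is the image in $\ol{\cS}_\psi$ of the set $\ol{S}_{\psi,\el}$ of semisimple $s \in \ol{S}_\psi$ with $Z(\tx{Cent}(s,\ol{S}_\psi^0))$ finite. It suffices to show every coset of $\ol{S}_\psi^0$ in $\ol{S}_\psi$ contains such an element. Write an arbitrary element of $\ol{\cS}_\psi \cong (\Z/2\Z)^{r-1}$ and lift it to an element $s \in \ol{S}_\psi$ that lies in the $\O(1,\C)^{r-1}$-part, i.e. with trivial component in the $\PGL(2,\C)$- (resp. $\SO(3,\C)$-) factor. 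For such $s$ the centralizer $\tx{Cent}(s,\ol{S}_\psi^0)$ is all of $\ol{S}_\psi^0$, which is either $\PGL(2,\C)$ or $\SO(3,\C)$; in both cases this group has trivial, hence finite, center. Therefore $s \in \ol{S}_{\psi,\el}$ and its image exhausts $\ol{\cS}_\psi$. Combining with the obvious inclusion $\ol{\cS}_{\psi,\el} \subset \ol{\cS}_\psi$ gives equality.

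The only thing that requires a moment's care — and this is the mild obstacle — is to make sure the group-theoretic facts really are insensitive to whether we are in the local or global setting and to the presence of the inner twist. But this is immediate: $\ol{S}_\psi$ and $\ol{\cS}_\psi$ for a global $\psi \in \Psi(G^*,\eta_\chi)$ are defined in \S\ref{subsub:global-param-U} by exactly the same recipe as in the local case, with the explicit shape \eqref{e:global-centralizer}, and the definitions of $\ol{S}_{\psi,\el}$ and $\ol{\cS}_{\psi,\el}$ in \S\ref{sub:stable-multiplicity} mirror the local ones verbatim. Hence the proof reduces to a one-line invocation of the local computation in Lemma \ref{lem:S-elliptic}(2), applied to the group $S = \ol{S}_\psi$. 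I would write the proof as: ``This follows from the explicit description of $S_\psi$ for parameters of type (exc1) and (exc2), exactly as in the proof of Lemma \ref{lem:S-elliptic}(2), applied in the global setting.''
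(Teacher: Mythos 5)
Your argument is correct and follows essentially the same route as the paper: the paper's proof is a one-line reference to the local Lemma \ref{lem:S-elliptic}(2), which likewise reads off the explicit form of $S_\psi$ for (exc1) and (exc2) and observes that elements lying in the $\O(1)$-factors (with trivial $\Sp(2)$- or $\SO(3)$-component) are elliptic and exhaust the relevant quotient. One cosmetic slip: in case (exc1) with $r\ge 2$ the group $\ol{S}_\psi^0$ is isomorphic to $\SL(2,\C)$, not $\PGL(2,\C)$ (the diagonal central $\{\pm 1\}$ meets $S_\psi^0 = \Sp(2)\times\{1\}^{r-1}$ trivially because its $\O(1)$-coordinates are $-1$), but since $Z(\SL(2,\C))$ is still finite the conclusion is unaffected.
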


\begin{proof}
  The proof is identical to that of Lemma \ref{lem:S-elliptic}.
\end{proof}

 In the non-elliptic non-exceptional case the global intertwining relation follows essentially from the induction hypothesis. More precisely we have the following results analogous to those in the local case, cf. \S\ref{sub:prelim-local-intertwining}. The proofs are omitted as the arguments in that subsection carry over with no essential change. (Also the reader may compare with the details in the global setup as in \cite[\S4.5]{Arthur}.) We keep the same notation as in the global intertwining relation.

\begin{lem}\label{lem:exceptional-global-param} Let $M$ be a proper Levi subgroup of $G$. Let $\psi_{M^*}\in \Psi(M^*,\eta_\chi)$, and $\psi\in \Psi(G^*,\eta_\chi)$ be the image of $\psi_{M^*}$. Let $x\in \ol{\cS}_{\psi_{M^*}}(M^*,G^*)$. Assume that either
\begin{enumerate}
  \item $\psi$ is elliptic, or
  \item every simple reflection $w\in W^0_\psi$ centralizes a torus of positive dimension in $\ol{T}_{\psi_{M^*}}$.
\end{enumerate}
  Then $f_{G}(\psi_{M^*},u)$ is the same for every $u\in \ol{\cN}_{\psi_{M^*}}(M^*,G^*)$ mapping to $x$.

\end{lem}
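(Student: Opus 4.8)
\textbf{Proof plan for Lemma \ref{lem:exceptional-global-param}.}
The statement asserts that under either hypothesis (1) or (2), the linear form $f_G(\psi_{M^*},u)$ depends only on the image $x$ of $u$ in $\ol{\cS}_{\psi_{M^*}}(M^*,G^*)$, not on the particular lift $u \in \ol{\cN}_{\psi_{M^*}}(M^*,G^*)$. The first step is to reduce this to a statement about a single simple reflection: by the commutative diagram of short exact sequences relating \eqref{eq:diagram-extended-pure-inner-global} to \eqref{eq:diagram-mod-central-global}, two lifts of $x$ differ by an element of $W^0_{\psi_{M^*}}$, and since any element of $W^0_{\psi_{M^*}}$ is a product of simple reflections, it suffices to show that $f_G(\psi_{M^*},wu)=f_G(\psi_{M^*},u)$ whenever $w \in W^0_{\psi_{M^*}}$ is a simple reflection; equivalently, after choosing an equivalence class $\Xi$ of extended pure inner twists, that $R_P(w^{\natural}u,\Xi,\pi_M,\psi_{M^*},\psi_F)$ is unchanged when multiplied by a lift of such a $w$. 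By the product decomposition \eqref{eq:f_G(psi,u)-definition}, it is enough to establish the corresponding statement place by place, i.e. $f_{v,G,\Xi_v}(\psi_{M^*,v},w u_v) = f_{v,G,\Xi_v}(\psi_{M^*,v},u_v)$ for each $v$, which by the multiplicativity of the local intertwining operator (Lemma \ref{lem:iopm}) reduces to showing $R_P(w^{\natural},\Xi_v,\pi_{M,v},\psi_{M^*,v},\psi_{F_v})=1$ for $w^{\natural} \in W^{\rad}_{\psi_{M^*,v}}$ corresponding to a simple reflection of $W^0_{\psi_{M^*,v}}$.

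The key point is then purely local and is exactly what was carried out in \S\ref{sub:prelim-local-intertwining}: under hypothesis (1), $\psi_v$ is elliptic so that $W^{\rad}_{\psi_{M^*,v}}$ is trivial by Lemma \ref{lem:local-ell-nondisc-param}, and there is nothing to prove; under hypothesis (2), the global condition that a simple reflection $w \in W^0_{\psi_{M^*}}$ centralizes a positive-dimensional torus in $\ol{T}_{\psi_{M^*}}$ localizes to the analogous condition on $\ol{T}_{\psi_{M^*,v}}$ (the localization maps between the global diagram \eqref{eq:diagram-extended-pure-inner-global} and its local counterparts being functorial, so that the image of $w$ at $v$ is again a simple reflection fixing a positive-dimensional subtorus), and then Lemma \ref{lem:local-indep-f} applies directly to give $R_P(w^{\natural},\Xi_v,\pi_{M,v},\psi_{M^*,v},\psi_{F_v})=1$. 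The descent argument underlying Lemma \ref{lem:local-indep-f} uses the induction hypothesis — namely that Theorem \ref{thm:lir} (parts 1 and 2) is known for the proper Levi subgroups $M_s \subsetneq G$ arising from semisimple $s$ — via Lemma \ref{lem:lirdesc2}; this induction hypothesis is in force in the global setup as well since we are running the same long induction.

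Assembling these pieces, I would: (i) fix $\Xi$ and reduce, via \eqref{eq:f_G(psi,u)-definition}, to the place-by-place assertion; (ii) reduce, via the exact-sequence diagram and Lemma \ref{lem:iopm}, to the vanishing of the normalized intertwining operator attached to a simple reflection of $W^{\rad}$; (iii) invoke Lemma \ref{lem:local-ell-nondisc-param} in case (1) and the functoriality of the localization maps together with Lemma \ref{lem:local-indep-f} in case (2). The main obstacle — though it is really an obstacle already resolved in the local discussion rather than a new difficulty here — is verifying that hypothesis (2) genuinely localizes, i.e. that a global simple reflection centralizing a positive-dimensional torus in $\ol{T}_{\psi_{M^*}}$ remains a simple reflection with the same fixed-torus property at each $v$. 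This is handled by the compatibility of the global diagram \eqref{eq:diagram-extended-pure-inner-global} with its localizations and by the fact that $\ol{T}_{\psi_{M^*}}$ maps to $\ol{T}_{\psi_{M^*,v}}$ compatibly with the $W$-actions; once this is in hand the proof is a formal consequence of the already-established local results and the induction hypothesis, exactly as on page 204 of \cite{Arthur}.
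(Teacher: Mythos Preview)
Your handling of case (1) contains a genuine error: global ellipticity of $\psi$ does \emph{not} imply that each localization $\psi_v$ is elliptic. For example, distinct global simple factors $\psi_i$ may coincide at some place $v$, producing a local centralizer $S_{\psi_v}$ with larger orthogonal or symplectic blocks. So you cannot invoke Lemma~\ref{lem:local-ell-nondisc-param} for $\psi_v$. The correct argument in case (1) is entirely at the global level and does not localize at all: since the global $\psi$ is elliptic, the global group $W^0_{\psi_{M^*}}$ is trivial (by the global form of Lemma~\ref{lem:local-ell-nondisc-param}, i.e.\ the explicit description \eqref{eq:globla-ell-param} of $S_\psi$), hence the fiber of $\ol{\cN}_{\psi_{M^*}} \to \ol{\cS}_{\psi_{M^*}}$ over $x$ is a single point and there is nothing to prove.

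Your approach to case (2)---localize and then apply the local Lemma~\ref{lem:local-indep-f} at each $v$---also has a gap. That local lemma requires the \emph{local} hypothesis (1) or (2) to hold for $\psi_v$, and your proposed justification (that a global simple reflection ``remains a simple reflection fixing a positive-dimensional subtorus'' at each place) is not correct: the local Weyl group $W^0_{\psi_v}$ is generally strictly larger than the image of the global one, so the image of a global simple reflection need not be simple there; and separately, the local $\psi_{M^*,v}$ need not lie in $\Psi_2(M^*_v)$, which is a standing hypothesis of \S\ref{sub:prelim-local-intertwining}. The paper's intended argument (``the same proof works globally'') avoids these issues by staying at the global level: the global condition (2) furnishes, for each global simple reflection $w\in W^0_{\psi_{M^*}}$, a positive-dimensional torus in $\ol T_{\psi_{M^*}}$ and hence a proper \emph{global} Levi $M_s^*\subsetneq G^*$ with a lift of $w$ lying in $S_\psi(M_s^*)$. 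One then applies the local Lemma~\ref{lem:lirdesc2} at each place---its only hypothesis, that $u_v\in S_{\psi_v}(M_{s,v}^*)$, \emph{does} localize from the global condition $u\in S_\psi(M_s^*)$---to reduce to the intertwining operator inside the fixed proper Levi $M_s$, where the induction hypothesis (part~1 of Theorem~\ref{thm:lir} for $M_{s,v}$) gives the vanishing. The point is that one fixes the descent Levi globally rather than trying to manufacture one locally at each place.
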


\begin{proof}
  The same proof for Lemma \ref{lem:local-indep-f} works globally.
\end{proof}

\begin{lem}\label{lem:global-intertwining-step1}
  Let $M$, $\psi_{M^*}$, $\psi$ and $x$ be as in the preceding lemma. Then $f_{G,\Xi}(\psi_{M^*},u)=f'_{G,\Xi}(\psi,s_\psi s^{-1})$ whenever $u\in \ol{\cN}_{\psi_{M^*}}(M^*,G^*)$ and $s\in S_{\psi_{M^*},\sspl}$ map to $x$ unless
  \begin{itemize}
    \item  $\psi$ is elliptic and $x\in \ol{\cS}_{\psi_{M^*},\el}$, or
    \item  $\psi\in \Psi_{\EXC}(G^*,\eta_\chi)$.
  \end{itemize}
\end{lem}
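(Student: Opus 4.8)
The statement \ref{lem:global-intertwining-step1} is the global analogue of Lemma \ref{lem:local-intertwining-non-exceptional} together with Corollary \ref{cor:local-intertwining-step1}, and the plan is to follow the same descent argument that was used there, replacing the local objects by their global counterparts from \S\ref{sub:global-intertwining}. As in the proof of Lemma \ref{lem:global-intertwining-step1}'s local predecessor, I would first invoke Lemma \ref{lem:exceptional-global-param}: under either hypothesis of that lemma, $f_{G,\Xi}(\psi_{M^*},u)$ depends only on the image $x$ of $u$ in $\ol{\cS}_{\psi_{M^*}}(M^*,G^*)$, so it makes sense to write $f_{G,\Xi}(\psi_{M^*},x)$, and similarly $f'_{G,\Xi}(\psi,s_\psi s^{-1})$ depends only on the image $\ol s$ of $s$ by Lemma \ref{lem:independence-end}. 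Thus the task reduces to proving $f_{G,\Xi}(\psi_{M^*},x)=f'_{G,\Xi}(\psi,s_\psi x^{-1})$ for $x \in \ol{\cS}_{\psi_{M^*}}(M^*,G^*)$ outside the two excluded situations.

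The core is the descent to a \emph{proper} Levi subgroup. Fix a semisimple lift $s\in S_{\psi_{M^*},\sspl}$ of $x$ and, conjugating within $S_\psi^0$, arrange that $\tx{Int}(s)$ stabilizes $T_\psi$ (and a Borel). Set $T_{\psi,s}=Z_{T_\psi}(s)^0$ and $\hat M_s := Z_{\hat G^*}(T_{\psi,s})$, a $\Gamma$-stable Levi subgroup of $\hat G^*$ containing $\hat M^*$; let $M_s^*$ be a dual Levi of $G^*$. The key point—exactly as in the proof of Lemma \ref{lem:local-intertwining-non-exceptional}—is that under our hypotheses $M_s^*$ is a \emph{proper} Levi of $G^*$: if $\psi$ is elliptic and $x\notin\ol{\cS}_{\psi_{M^*},\el}$ then every lift of $x$ in $\cE_\psi$ lies outside $\cE_{\psi,\el}$, so $T_{\psi,s}$ is not contained in $Z(\hat G^*)$; if instead hypothesis (2) of Lemma \ref{lem:exceptional-global-param} holds and $\psi$ is non-elliptic non-exceptional, then by Lemma \ref{lem:global-exc}(2) we have $\dim \ol T_{\psi,\ol s}\geq 1$, again forcing $M_s^*\subsetneq G^*$. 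If $M_s^*$ does not transfer to $G$, then $\psi$ is not $(G,\xi)$-relevant, so $f_{G,\Xi}(\psi_{M^*},u)=0$ by the definition in \S\ref{sub:global-intertwining}, while $f'_{G,\Xi}(\psi,s_\psi s^{-1})=0$ by the argument in Lemma \ref{lem:local-indep-f'} (the transfer $f^\fke_{M_s^\fke}$ defines the zero stable linear form since no strongly regular stable classes of $M_s^\fke$ match classes of $G$); so the equality holds trivially. Hence assume $M_s^*$ transfers to a Levi $M_s\subset G$, equipped by $\Xi$ with an extended pure inner twist $(M_s,\xi_{M_s},z_{M_s})$.

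Now I would run the descent identities. On the endoscopic side, the pair $(\psi,s)$ is the image of a pair $(\psi_{M_s},s_{M_s})$ with $\psi_{M_s}\in\Psi(M_s^*,\eta_\chi)$ and $s_{M_s}\in S_{\psi_{M_s},\sspl}$, and using Lemma \ref{lem:tflevi} (parabolic descent of matching functions) together with the global transfer-factor compatibility of Proposition \ref{prop:trans-factor-coincide}, one obtains $f'_{G,\Xi}(\psi,s_\psi x^{-1}) = f'_{M_s,\xi_{M_s},z_{M_s}}(\psi_{M_s},s_{\psi_{M_s}}x_{M_s}^{-1})$, exactly the global analogue of \eqref{eq:LIR-induction-f'}. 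On the spectral side, applying Lemma \ref{lem:lirdesc2} place-by-place with $M_0=M$, $M=M_s$ (note $x$ lifts to $u^\natural$ as in that lemma) and taking the product over all $v$, one gets $f_{G,\Xi}(\psi_{M^*},x) = f_{M_s,\xi_{M_s},z_{M_s}}(\psi_{M_s},x)$, the analogue of \eqref{eq:LIR-induction-f}. Since $M_s$ is a proper Levi subgroup, the induction hypothesis (the global intertwining relation for $M_s$, i.e.\ Theorem \ref{thm:global-intertwining} applied to $(M_s^*,M_s)$ and $\psi_{M_s}$, which itself rests on the validity of Theorem \ref{thm:lir} at the localizations of $M_s$) gives $f'_{M_s,\xi_{M_s},z_{M_s}}(\psi_{M_s},s_{\psi_{M_s}}x_{M_s}^{-1}) = f_{M_s,\xi_{M_s},z_{M_s}}(\psi_{M_s},x_{M_s})$—here $e(M_s)=1$ since we are working with the global (adelic) linear forms, for which the Kottwitz signs at the various places multiply to $1$ by the product formula \cite[Cor.\ (6)]{KotSign}. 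Combining the three displayed equalities yields $f_{G,\Xi}(\psi_{M^*},u)=f'_{G,\Xi}(\psi,s_\psi s^{-1})$.

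\textbf{Main obstacle.} The routine bookkeeping—choosing compatible lifts, chasing the two diagrams \eqref{eq:diagram-extended-pure-inner-global} and \eqref{eq:diagram-mod-central-global}, and matching $M_s$-equivalence classes of conjugacy classes via the injectivity of $H^1(\Gamma,M_s^\fke)\to H^1(\Gamma,G^\fke)$—is the bulk of the work but presents no conceptual difficulty. The genuinely delicate point is the same one isolated in the local case: verifying that $M_s^*$ is a proper Levi under each of the two hypotheses, and handling cleanly the degenerate case where $M_s^*$ fails to transfer to $G$ (so that both sides vanish). This requires the explicit structure of $\ol S_\psi$ and $\ol T_{\psi,\ol s}$ for non-elliptic non-exceptional $\psi$, i.e.\ exactly the content of Lemma \ref{lem:global-exc}, together with the observation (Lemma \ref{lem:global-S-elliptic} or its local analogue) that for elliptic $\psi$ the hypothesis $x\notin\ol{\cS}_{\psi_{M^*},\el}$ genuinely excludes the problematic lifts. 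Everything else is a place-by-place application of results already available from \S\ref{sub:prelim-local-intertwining1}, \S\ref{sub:prelim-local-intertwining}, and the transfer-factor lemmas of \S\ref{subsub:normtf}.
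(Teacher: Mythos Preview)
Your proposal is correct and follows essentially the same approach as the paper: the paper's own proof simply says ``Just like Corollary \ref{cor:local-intertwining-step1}, the lemma follows from Lemmas \ref{lem:exceptional-global-param} and [\ref{lem:global-exc}] as well as the global analogue of Lemma \ref{lem:local-intertwining-non-exceptional}, the latter being verified by the same argument as in the local case,'' and you have accurately unpacked that descent argument (properness of $M_s^*$ via Lemma \ref{lem:global-exc} or the non-ellipticity of $x$, vanishing when $M_s^*$ does not transfer, parabolic descent on both sides, and the induction hypothesis on the proper Levi $M_s$ with the Kottwitz signs cancelling in the product over places).
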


\begin{proof}
  Just like Corollary \ref{cor:local-intertwining-step1}, the lemma follows from Lemmas \ref{lem:exceptional-global-param} and \ref{lem:global-intertwining-step1} as well as the global analogue of Lemma \ref{lem:local-intertwining-non-exceptional}, the latter being verified by the same argument as in the local case.
\end{proof}

  We define the subsets $W_{\psi,\reg}(M^*,G^*)\subset W_{\psi}(M^*,G^*)$ and $N^\natural_{\psi,\reg}(M^*,G^*)\subset N^\natural_{\psi}(M^*,G^*)$ as in the local case, cf. the paragraph below Corollary \ref{cor:local-intertwining-step1}. So $W_{\psi,\reg}(M^*,G^*)$ is the subset of $w$ with finitely many fixed points on $\ol T_\psi$, and $N^\natural_{\psi,\reg}(M^*,G^*)$ is the preimage of $W_{\psi,\reg}(M^*,G^*)$.

   Unless $\psi\in \Psi_{\EXC}(G^*,\eta_\chi)$, the above lemmas imply that $f_{G}(\psi_{M^*},u)$ depends only on the image of $u$ in $\ol{\cS}_{\psi_{M^*}}(M^*,G^*)$ so that $f_{G}(\psi_{M^*},x)$ is well-defined for $x\in \ol{\cS}_{\psi_{M^*}}(M^*,G^*)$. In the situation that $\psi\in \Psi_{\EXC}(G^*,\eta_\chi)$, there is no harm in restricting our attention to $u\in N^\natural_{\psi_{M^*}}$ such that $w_u\in W_{\psi_{M^*},\reg}(M^*,G^*)$, since only elements in $W_{\psi_{M^*},\reg}(M^*,G^*)$ (rather than the larger $W_{\psi_{M^*}}(M^*,G^*)$) contribute to the trace formula. (So the global analogue of Lemma \ref{lem:w_u-regular-enough} is not needed.) Similarly to the local setting, one checks the uniqueness of $u\in N^\natural_{\psi_{M^*}}$ in the fiber over $x$ such that $w_u\in W_{\psi_{M^*},\reg}(M^*,G^*)$. (As in the local case, the fiber over $x$ has two elements, only one of which lands in $W_{\psi_{M^*},\reg}(M^*,G^*)$ inside $W_{\psi_{M^*}}(M^*,G^*)$.) This allows us to take $f_{G}(\psi_{M^*},x)$ to be $f_{G}(\psi_{M^*},u)$ for the unique $u$ just mentioned, when analyzing the spectral side of the standard model below. The upshot is that the linear form
   $$f_{G}(\psi_{M^*},x) ~\mbox{has unequivocal meaning}$$
   in all cases appearing in the trace formula.

  We remarked that the global intertwining relation follows from a complete proof of the local intertwining relation. The strategy to obtain the latter is to embed the local problem in some simple global situation where the global intertwining relation is already known (as in Lemma \ref{lem:global-intertwining-step1}) or where a good approximation to the global intertwining relation can be derived from a comparison of the trace formulas, cf. \S\ref{sub:elliptic-parameters} and \S\ref{subsection_elliptic_parameters_2} below.

\subsection{The standard model}\label{sub:std-model}

  Here we initiate the comparison of the trace formulas by a procedure named the standard model by Arthur. This major global input constitutes the backbone of our argument.
  Recall that we have the spectral expansion \eqref{eq:I_disc-defn} and the endoscopic decomposition \eqref{eq:I_disc-stabilized} for $I_{\disc,\psi}(f)$ with $f\in \cH(G)$. To compare the two we need to decompose each of them further and reorganize the terms into a more amenable form. Let us start the investigation of \eqref{eq:I_disc-stabilized}. Thanks to the stable multiplicity formula \eqref{eq:stable-mult}, we may write \eqref{eq:I_disc-stabilized} as
\begin{equation}\label{eq:std-model-end1}
   I^G_{\disc,\psi,\eta_\chi}(f)=  \sum_{(G^\fke,\zeta^\fke)\in \ol\cE_{\el}(G)\atop \psi\in (\eta\zeta^\fke)_*\Psi(G^\fke,\eta\zeta^\fke)} \iota(G,G^\fke) |\cS_{\psi}|^{-1} \epsilon^{G^\fke}_\psi(s_{\psi}) \sigma(\ol{S}_{\psi}^0)f^{G^\fke}(\psi).
\end{equation}

  Recall from Lemma \ref{lem:independence-end} that the linear form $f'_{G,\Xi}(\psi,x)$ is well-defined for all $\psi\in \Psi(G^*,\eta_\chi)$ and $x\in \ol{\cS}_\psi$. Set $\ol\cE_{\psi,\el}:=\cE_{\el}(\ol{S}_\psi)$ and write $\cE_{\psi,\el}(x)$ for the fiber in $\cE_{\el}(S_\psi)$ over $x\in \ol{\cS}_\psi$. The definition of $e_\psi(x)$ in \S\ref{sub:stable-multiplicity} may be rephrased as
  \begin{equation}\label{eq:e(x)}
    e^{G^*}_\psi(x):=\sum_{s\in \ol\cE_{\psi,\el}(x)} |\pi_0(\ol{S}_{\psi,s})|^{-1} \sigma(\ol{S}^0_{\psi,s}).
  \end{equation}

\begin{prop}\label{p:std-model-end} For $\psi\in \Psi(G^*,\eta_\chi)$,
  $$I^G_{\disc,\psi}(f)=  |\ol{\cS}_\psi|^{-1}\sum_{x\in \ol{\cS}_\psi}e^{G^*}_\psi(x)\epsilon^{G^*}_\psi(x)f'_{G,\Xi}(\psi,s_\psi x^{-1}).$$
\end{prop}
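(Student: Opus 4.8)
The plan is to derive the asserted identity from the endoscopic expansion, starting from \eqref{eq:std-model-end1}, which is \eqref{eq:I_disc-stabilized} after substituting the stable multiplicity formula (Proposition \ref{p:stable-multiplicity}) into each $\hat{S}^\fke_{\disc,\psi,\eta_\chi\zeta^\fke}$. The goal is to transcribe the sum over elliptic endoscopic data $(G^\fke,\zeta^\fke)$ together with their parameters $\psi^\fke$ into a single sum over pairs $(\psi,s)$ with $s$ a (semisimple, in fact elliptic) conjugacy class in $\ol{S}_\psi$. This transcription is exactly the global correspondence of endoscopic data $\cX(G^*)\leftrightarrow\cY(G^*)$ of Lemma \ref{lem:global-endo-data-corr}: a pair $(\fke,\psi^\fke)$ with $\eta^\fke\tilde\psi^\fke=\tilde\psi$ and $\psi^N$ fixed corresponds to the $\hat{G}^*$-conjugacy class of the endoscopic element $s^\fke\in\ol{S}_{\psi,\sspl}$, and the ellipticity of $G^\fke$ (i.e. $\eta^\fke(Z(\hat G^\fke)^\Gamma)^\circ\subset Z(\hat G^*)$) translates into the condition that this class lies in $\ol\cE_{\psi,\el}$, since $\eta^\fke(\hat G^\fke)$ is the identity component of $Z_{\hat G^*}(s^\fke)$.

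First I would match the numerical coefficients. Recalling that $k(G,G^\fke)=1$ for unitary groups, we have $\iota(G,G^\fke)=|\ol{Z}(\hat G^\fke)^\Gamma|^{-1}|\Out_G(G^\fke)|^{-1}$; the factor $|\Out_G(G^\fke)|^{-1}$ accounts for the passage between $\hat G^*$-orbits and genuine elements of $\Psi(G^\fke)$, and the product of $|\ol{Z}(\hat G^\fke)^\Gamma|^{-1}$, of $|\cS_{\psi^\fke}|^{-1}$ appearing in the stable multiplicity formula, and of the cardinality of the fibre of the correspondence collapses by an elementary $\pi_0$-count to $|\ol\cS_\psi|^{-1}|\pi_0(\ol{S}_{\psi,s})|^{-1}$. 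Simultaneously, $\sigma(\ol{S}^0_{\psi^\fke})$ becomes $\sigma(\ol{S}^0_{\psi,s})$ under the canonical isomorphism $\ol{S}^0_{\psi^\fke}\cong\ol{S}^0_{\psi,s}$, the sign $\epsilon^{G^\fke}_{\psi^\fke}(s_{\psi^\fke})$ becomes the value $\epsilon^{G^*}_\psi(x)$ of the sign character attached to $\psi$ and $G^*$ — this is precisely the endoscopic sign lemma (Lemma 5.6.1 of \cite{Mok}, listed in \S\ref{sub:results-qsuni}) — where $x\in\ol\cS_\psi$ is the image of $s_\psi(s^\fke)^{-1}$, and $f^{G^\fke}(\psi^\fke)$ becomes $f'_{G,\Xi}(\psi,s^\fke)$ by the very definition of the second global linear form in \S\ref{sub:global-intertwining}, using Proposition \ref{prop:trans-factor-coincide} to identify the adelic transfer factor implicit in $f'_{G,\Xi}$ with the Kottwitz--Shelstad one. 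By Lemma \ref{lem:independence-end}, $f'_{G,\Xi}(\psi,s^\fke)$ depends only on the image of $s^\fke$ in $\ol\cS_\psi$, which equals $s_\psi x^{-1}$; so after re-indexing the conjugacy class $s^\fke$ by $x$ (using that $s\mapsto s_\psi s^{-1}$ is a bijection on semisimple classes preserving centralizers) the sum reads $|\ol\cS_\psi|^{-1}\sum_{s\in\ol\cE_{\psi,\el}}|\pi_0(\ol{S}_{\psi,s})|^{-1}\sigma(\ol{S}^0_{\psi,s})\,\epsilon^{G^*}_\psi(x)\,f'_{G,\Xi}(\psi,s_\psi x^{-1})$.

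Finally I would group the terms according to the image $x\in\ol\cS_\psi$: for fixed $x$ the inner sum $\sum_{s\in\ol\cE_{\psi,\el}(x)}|\pi_0(\ol{S}_{\psi,s})|^{-1}\sigma(\ol{S}^0_{\psi,s})$ is by definition exactly $e^{G^*}_\psi(x)$ in the sense of \eqref{eq:e(x)} (again the centralizer-preserving reindexing makes the two sums line up). This yields $|\ol\cS_\psi|^{-1}\sum_{x\in\ol\cS_\psi}e^{G^*}_\psi(x)\epsilon^{G^*}_\psi(x)f'_{G,\Xi}(\psi,s_\psi x^{-1})$, which is the claim. Note that the reduction from ``all semisimple'' to ``elliptic'' is not an extra step here, because we begin already with the sum over \emph{elliptic} endoscopic data; alternatively one could start from the full endoscopic expansion and use Arthur's identity $i(S)=e(S)$ \eqref{eq:i(S)=e(S)} together with descent to Levi subgroups of $\ol{S}_\psi$ to absorb the non-elliptic contributions.

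The main obstacle will be the purely combinatorial coefficient-matching of the first step: verifying that the translation through $\cX\leftrightarrow\cY$ turns $\iota(G,G^\fke)$, $|\cS_{\psi^\fke}|^{-1}$, and the multiplicities coming from $\Out_G(G^\fke)$ and from the fibres of the correspondence into precisely $|\ol\cS_\psi|^{-1}|\pi_0(\ol{S}_{\psi,s})|^{-1}$, and that the auxiliary data $\zeta^\fke$ and the passage to the Weil form of the $L$-group do not introduce extra constants. This passage is essentially identical to the corresponding computation in \cite[\S4.3--4.4]{Arthur} and its unitary-group counterpart in \cite[\S5.5,\,\S5.7]{Mok}; the only thing genuinely to check is that the inner-twist features — the class $\Xi$, the extended pure inner twist data $(\xi,z)$, and the normalized transfer factors $\Delta[\fke,\Xi]$ — contribute nothing new, since they enter only through the linear form $f'_{G,\Xi}(\psi,s)$ and have already been normalized consistently by Propositions \ref{prop:trans-factor-coincide} and \ref{pro:tf}.
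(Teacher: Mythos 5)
Your outline follows the paper's own proof, which is essentially a citation of \cite[Cor.\ 4.4.3]{Arthur} augmented by remarks on the transfer-factor normalization (accounting for the inverse on $x$), the vanishing of Arthur's residual term $^0s^{G}_{\disc,\psi^N}$ via Proposition \ref{p:stable-multiplicity}, and the endoscopic sign lemma. Most of your reindexing is correct: setting $x$ to the image of $s_\psi(s^\fke)^{-1}$ so that $f^{G^\fke}(\psi^\fke)=f'_{G,\Xi}(\psi,s_\psi x^{-1})$, extracting $\epsilon^{G^*}_\psi(x)$ from $\epsilon^{G^\fke}_{\psi^\fke}(s_{\psi^\fke})$ via the sign lemma together with $s_\psi^2=1$, and the $\pi_0$-count collapsing $\iota(G,G^\fke)\,|\cS_{\psi^\fke}|^{-1}$ and related factors into $|\ol{\cS}_\psi|^{-1}|\pi_0(\ol{S}_{\psi,s})|^{-1}$.

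One step is, however, wrong as stated. You claim that ellipticity of the endoscopic datum $\fke$ (the condition $\eta^\fke(Z(\hat G^\fke)^\Gamma)^\circ\subset Z(\hat G^*)$) translates into $s^\fke$ lying in $\ol{\cE}_{\psi,\el}$, so that ``the reduction from `all semisimple' to `elliptic' is not an extra step.'' That inference fails: take $\fke$ to be the trivial datum ($\hat G^\fke=\hat G^*$, $s^\fke=1$) and $\psi$ any non-discrete parameter; then $\fke$ is elliptic, but $Z(\ol{S}^0_\psi)$ has positive dimension, so $1\notin\ol{\cE}_{\psi,\el}$. Such pairs do arise in the expansion \eqref{eq:std-model-end1}. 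The reason they drop out is not the ellipticity of $\fke$ but the vanishing of the coefficient $\sigma(\ol{S}^0_{\psi^\fke})$: by \cite[Prop.\ 4.1.1]{Arthur}, $\sigma(S)=0$ whenever $Z(S)$ is infinite, and under the correspondence $(\fke,\psi^\fke)\leftrightarrow(\psi,s^\fke)$, the group $Z(\ol{S}^0_{\psi^\fke})$ being infinite is exactly the failure of $s^\fke$ to be elliptic in $\ol{S}_\psi$. Once this vanishing is invoked the surviving sum is indexed by $\ol{\cE}_{\psi,\el}$ and regroups into $e^{G^*}_\psi(x)$ as you describe; this is precisely the argument of \cite[\S4.4]{Arthur}, which is what the paper's proof points to.
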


\begin{proof}
  This follows exactly as in the case of quasi-split classical groups \cite[Cor 4.4.3]{Arthur}, noting that $f'_{G,\Xi}(\psi,x)$ depends only on $x$ as explained in \S\ref{sub:global-intertwining} and that the linear form $^0 s^G_{\disc,\psi^N}$ in loc. cit. vanishes by the stable multiplicity formula thanks to Proposition \ref{p:stable-multiplicity}. %
  The reason for the inverse on $x$ in the summand is explained by the fact that $f'_{G,\Xi}(\psi,s_\psi x^{-1})$ is equal to what Arthur wrote $f'_{G}(\psi,s_\psi x)$ in his book in general. This traces back to the difference in the normalization of the transfer factors: \cite{Arthur} adopts the Langlands-Shelstad definition \cite{LS87} but we specialize the definition in \cite{KS12} to untwisted endoscopy. Finally we remark that the endoscopic sign lemma for quasi-split unitary groups, cf. \S\ref{sub:results-qsuni}, is used in the proof.

\end{proof}

  The next step in the standard model proceeds in parallel with the endoscopic expansion above and aims to turn the spectral expansion \eqref{eq:I_disc-defn} into an expression amenable for comparison. We begin by relating $\tr (M_{P,\psi}\cI_{P,\psi}(\pi,f))$ to the global linear forms $f_{G}(\psi,x)$. As usual $\Psi(M^*,\psi,w)$ denotes the set of $\psi_{M^*}\in \Psi(M^*,\eta_\chi)$ such that $\psi_{M^*}$ maps to $\psi$. Define $\Psi(M^*,\psi,w)$ to be the subset of $\Psi(M^*,\psi,w)$ given by the condition that $w\psi_{M^*}=\psi_{M^*}$.

\begin{lem}\label{lem:M(w)-expansion-prelim} We have an equality
  $\tr (M_{P,\psi}(w)\cI_{P,\psi}(f))$
  $$ =\sum_{\psi_{M^*}\in \Psi(M^*,\psi,w)}|\ol\cS_{\psi_{M^*}}|^{-1} \sum_{u\in \ol\cN_{\psi_{M^*}}(w)}
  r_P(w,\psi_{M^*})\epsilon_{\psi_{M^*}}(x_u)f_G(\psi_{M^*},u).$$
\end{lem}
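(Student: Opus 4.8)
The plan is to follow the computation underlying the spectral side of the ``standard model'' for quasi-split classical groups in Arthur's book, adapted to the extended pure inner twist $(G,\Xi)$ and the normalizations of Chapter \ref{chapter2}.

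First I would decompose $\cH_{P,\psi}$ using the induction hypothesis, i.e.\ the validity of Theorem \ref{thm:main-global} for the proper Levi $M$: the discrete automorphic representations $\pi_M$ of $M(\A_F)$ whose conjugacy class data maps to $c(\psi)$ are precisely the members of the global packets $\Pi_{\psi_{M^*}}(M,\xi,\epsilon_{\psi_{M^*}})$, as $\psi_{M^*}$ runs over the parameters in $\Psi_2(M^*,\eta_\chi)$ mapping to $\psi$. Thus $\cH_{P,\psi}=\bigoplus_{\psi_{M^*}}\bigoplus_{\pi_M\in\Pi_{\psi_{M^*}}(M,\xi,\epsilon_{\psi_{M^*}})}\cH_P(\pi_M)$. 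Since $\cI_{P,\psi}(f)$ preserves each summand $\cH_P(\pi_M)$ while $M_{P,\psi}(w)$ carries $\cH_P(\pi_M)$ into $\cH_P(\breve w\pi_M)$, only those $\pi_M$ with $\breve w\pi_M\cong\pi_M$ contribute to the trace; this forces $w\psi_{M^*}=\psi_{M^*}$, i.e.\ $\psi_{M^*}\in\Psi(M^*,\psi,w)$, and then, using $M=M_+\times M_-$ with $M_+$ a product of linear-type factors whose packets are singletons and $M_-$ fixed pointwise by $\breve w$, the element $w$ fixes every member of $\Pi_{\psi_{M^*}}$. On each contributing block the operator is $M_P(w,\pi_M)\circ\cI_P(\pi_M,f)$.

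Next I would rewrite $M_P(w,\pi_M)$ via the normalized operators: by \eqref{e:R=rM}, $M_P(w,\pi_M)=r_P(w,\psi_{M^*})R_P(w,\pi_M,\psi_{M^*})$, and since each such $\pi_M$ is automorphic, part 3 of Proposition \ref{pro:ioplg} gives $R_P(w,\pi_M,\psi_{M^*})=\epsilon_{\psi_{M^*}}(x_u)R_P(u,\Xi,\pi_M,\psi_{M^*},\psi_F)$ for any lift $u\in\ol{\cN}_{\psi_{M^*}}(w)$ of $w$, the right-hand side being independent of the choice of $u$. To convert the sum over the automorphic members of $\Pi_{\psi_{M^*}}$ into the full packet sum defining $f_G(\psi_{M^*},u)$, and simultaneously produce the sum over lifts $u$, I would run the standard orthogonality argument: the fiber $\ol{\cN}_{\psi_{M^*}}(w)$ is a torsor under $\ol{\cS}_{\psi_{M^*}}(M^*,G^*)$, which equals $\ol{\cS}_{\psi}(G^*)$ by Lemma \ref{lem:S(M,G)-S(G)-global} since $\psi_{M^*}\in\Psi_2(M^*)$; averaging over this group and invoking part 1 of Proposition \ref{pro:ioplg}, one finds that $\sum_{u\in\ol{\cN}_{\psi_{M^*}}(w)}\epsilon_{\psi_{M^*}}(x_u)\,\tr\!\big(R_P(u,\Xi,\pi_M,\psi_{M^*},\psi_F)\cI_P(\pi_M,f)\big)$ equals $|\ol{\cS}_{\psi_{M^*}}|$ times the corresponding term when $\lg\pi_M,\cdot\rg_\xi=\epsilon_{\psi_{M^*}}$, and vanishes otherwise. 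This accounts exactly for the factor $|\ol{\cS}_{\psi_{M^*}}|^{-1}$ on the right-hand side, whose cancellation recovers the sum over automorphic $\pi_M$ we began with.

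Assembling the pieces, swapping the finite sums over $\psi_{M^*}$, $u$, $\pi_M$, and using part 3 of Proposition \ref{pro:ioplg} together with $\epsilon_{\psi_{M^*}}^2=1$ to turn $\epsilon_{\psi_{M^*}}(x_u)R_P(u,\Xi,\pi_M,\psi_{M^*},\psi_F)$ back into $R_P(w,\pi_M,\psi_{M^*})$ and then into $r_P(w,\psi_{M^*})^{-1}M_P(w,\pi_M)$, yields $\tr(M_{P,\psi}(w)\cI_{P,\psi}(f))$. The main obstacle is the bookkeeping of the averaging step: one must carefully match the orthogonality over the component group with the torsor structure of $\ol{\cN}_{\psi_{M^*}}(w)$, keep track of the $\{\pm1\}$-valued character $\epsilon_{\psi_{M^*}}$ alongside the global analytic normalizing factor $r_P(w,\psi_{M^*})$, and confirm that the off-diagonal blocks of $M_{P,\psi}(w)$ contribute nothing to the trace — routine but requiring care.
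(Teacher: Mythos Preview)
Your proposal is essentially the paper's own argument: expand $\tr(M_{P,\psi}(w)\cI_{P,\psi}(f))$ via the induction hypothesis for $M$, normalize using \eqref{e:R=rM} and Proposition \ref{pro:ioplg} part 3 on the automorphic $\pi_M$, and then use the torsor/orthogonality trick with Proposition \ref{pro:ioplg} part 1 to pass from the automorphic members to the full packet sum defining $f_G(\psi_{M^*},u)$. The paper even isolates your averaging step as the key identity \eqref{eq:M(w)-expansion}.

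One notational slip to fix: the fiber $\ol{\cN}_{\psi_{M^*}}(w)$ is a torsor under $\ol{\cS}_{\psi_{M^*}}(M^*)$ (the left column of diagram \eqref{eq:diagram-mod-central-global}), not under $\ol{\cS}_{\psi_{M^*}}(M^*,G^*)=\ol{\cS}_\psi(G^*)$; the relevant identification is $\ol{\cS}_{\psi_{M^*}}(M^*)\simeq\ol{\cS}_\psi(M^*)$ (argued directly in the paper's proof via $Z_{S_\psi}=S_{\psi_{M^*}}(M^*)$ and $Z(\hat M^*)^\Gamma=(Z(\hat M^*)^\Gamma)^0 Z(\hat G^*)^\Gamma$), not Lemma \ref{lem:S(M,G)-S(G)-global}. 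This matters for the count, since $|\ol{\cS}_\psi(G^*)|=|\ol{\cS}_{\psi_{M^*}}(M^*)|\cdot|R_{\psi_{M^*}}|$ in general, and the $|\ol{\cS}_{\psi_{M^*}}|^{-1}$ in the statement is the size of the former group, matching the fiber cardinality. With this correction your orthogonality argument goes through exactly as written.
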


\begin{proof} We will only sketch the argument as it is essentially the same as that of \cite[Cor 4.2.4]{Arthur}. Following \cite[pp.180-181]{Arthur}, we expand $\tr (M_{P,\psi}(w)\cI_{P,\psi}(f))$ as a sum
$$\sum_{\psi_{M^*}\in \Psi(M^*,\psi,w)} r_P(w,\psi_{M^*}) \sum_{\pi_M\in \Pi_{\psi_{M^*}}} m_{\psi_{M^*}}(\pi_M)\tr(R_P(w,\pi_M,\psi_{M^*})\cI_P(\pi_M,f)),$$
where $m_{\psi_{M^*}}(\pi_M)\in \Z_{\ge 0}$ denotes the multiplicity of $\pi_M$ in $L^2_{\disc,\psi_{M^*}}(M(F)\bs M(\A_F))$ (defined as on p.180 of loc. cit). Theorem \ref{thm:main-global} for $M$, available as part of the induction hypothesis, tells us that $\lg \pi_M,\cdot\rg_{\xi}=\epsilon_{\psi_{M^*}}$ as the character of the group $\ol{\cS}_{\psi_{M^*}}(M^*)$ if and only if $\pi_M$ is automorphic, in which case $m_{\psi_{M^*}}(\pi_M)=1$. Note that there is a canonical isomorphism $\ol{\cS}_{\psi_{M^*}}(M^*)\simeq \ol{\cS}_{\psi}(M^*)$, the latter defined as in \S\ref{sub:global-intertwining}. (It suffices to observe that $Z_{S_\psi}=S_{\psi_{M^*}}(M^*)$ and that $S^0_{\psi_{M^*}}(M^*)Z(\hat G^*)^\Gamma=S^0_{\psi_{M^*}}(M^*)Z(\hat M^*)^\Gamma$. The latter follows from $Z(\hat M^*)^\Gamma=(Z(\hat M^*)^\Gamma)^0 Z(\hat G^*)^\Gamma$, which is in turn deduced from the second display in the proof of Lemma \ref{lem:rhofact}, for instance.) In view of the definition of $f_G(\psi,u)$, the proof boils down to the claim that
\begin{equation}\label{eq:M(w)-expansion}m_{\psi_{M^*}}(\pi_M)\tr(R_P(w,\pi_M,\psi_{M^*})\cI_P(\pi_M,f))\end{equation}
$$=|\ol{\cS}_{\psi_{M^*}}(M^*)|^{-1}\sum_{u\in \ol{\cN}_{\psi_{M^*}}(w)} \epsilon_{\psi_{M^*}}(u)\tr (R_P(u,\Xi,\pi_M,\psi_{M^*},\psi_F)\cI_P(\pi_M,f)).$$
This is immediate from Part 2 of Proposition \ref{pro:ioplg} if $\pi_M$ is automorphic. Otherwise the left hand side vanishes, so we need to show that the right hand side is also zero. Fixing $u_0\in \ol{\cN}_{\psi_{M^*}}(w)$ and writing $u=yu_0$ for elements of $\ol{\cN}_{\psi_{M^*}}(w)$, we can rewrite the right hand side as $|\ol{\cS}_{\psi_{M^*}}(M^*)|^{-1}\epsilon_{\psi_{M^*}}(u_0)$ times
$$\tr(R_P(u_0,\Xi,\pi_M,\psi_{M^*},\psi_F)\cI_P(\pi_M,f)) \left(\sum_{y\in \ol{\cS}_{\psi_{M^*}}(M^*)} \epsilon_{\psi_{M^*}}(y)\lg \pi_M,y\rg_{\xi}\right)$$
by part 1 of Proposition \ref{pro:ioplg}. Since the bracketed term vanishes, we are done.

\end{proof}

  Let $\psi\in \Psi(G^*,\eta_\chi)$. It is convenient to introduce a linear form
  $$^0 r^G_{\disc,\psi}(f),\quad \psi\in \Psi(G^*,\eta_\chi),~f\in \cH(G),$$
  which will be shown to be zero eventually. If $\psi\notin \Psi_2(G^*,\eta_\chi)$ then set
  $$^0 r^G_{\disc,\psi}(f):=\tr R^G_{\disc,\psi}(f).$$
  In this case recall from \S\ref{sub:global-intertwining} (especially the last paragraph there) that the linear form $f\mapsto f_{G}(\psi,x)$ on $\cH(G)$ is well-defined for all $x\in \ol{\cS}_\psi$.
  Now consider $\psi\in \Psi_2(G^*,\eta_\chi)$. Let us introduce a hypothesis on $\psi$ which will be verified before a complete proof of global theorems:
  \begin{hypo}\label{hypo:Hyp(psi)}
    The local classification theorem for $\psi_v$ is proven for every $v$ (so that $\Pi_\psi=\Pi_\psi(G,\xi)$ is defined, cf. \S\ref{sub:main-global-thm}, and also that $\lg x,\pi \rg$ is defined for all $x\in \ol{\cS}_\psi$ and $\pi\in \Pi_\psi$).%
  \end{hypo}
    Choose an extended pure inner twist $\Xi$ giving rise to $(G,\xi)$. Under the above hypothesis one makes sense of the definition (which is the $M=G$ case of \eqref{eq:f_G(psi,u)-definition})
   $$f_{G}(\psi,x):=\sum_{\pi\in \Pi_\psi} \lg x,\pi\rg f_{G}(\pi)=\sum_{\pi\in \Pi_\psi}\left(\prod_v \lg x,\pi_v\rg f_{G}(\pi_v)\right).$$
   Even though we assumed $\psi\notin \Psi_2(G^*,\eta_\chi)$ and $M^*\neq G^*$ in \S\ref{sub:global-intertwining}, the argument for Lemma \ref{lem:independence-f_G} still shows (appealing to Lemma \ref{lem:central-action} in view of Remark \ref{lem:central-action} and noting that $ \ol{\cN}_\psi= \ol{\cS}_\psi$ when $M^*=G^*$) that $f_{G}(\psi,x)$ is well-defined independently of the choice of $\Xi$.
    Put\footnote{Compare with \cite[(5.5.16)]{Mok} and \cite[(4.3.7)]{Arthur}. In their notation we always have $\kappa_G=m_\psi=1$ and $C_\psi$ there is equal to our $|\cS_\psi|^{-1}$.}
     \begin{equation}\label{eq:0rG}
       ^0 r^G_{\disc,\psi}(f):=\tr R^G_{\disc,\psi}(f)- |\ol{\cS}_\psi|^{-1} \sum_{x\in \ol{\cS}_\psi} \epsilon^{G^*}_\psi(x) f_{G}(\psi,x).
     \end{equation}
 Write $W_{\psi}(x)$ for the image in $W_\psi$ of the subset of $\ol{\cN}$ which is the fiber over $x\in \ol{\cS}_\psi$. Set $W_{\psi,\reg}:=W_{\psi}\cap W_{\reg}(M)$ and $W_{\psi,\reg}(x):=W_{\psi}(x)\cap W_{\psi,\reg}$. Then the definition of $i_\psi(x)$ in \S\ref{sub:stable-multiplicity} was that
  \begin{equation}\label{eq:i(x)}
    i_\psi(x):= |W^0_\psi|^{-1}\sum_{w\in W_{\psi,\reg}(x)} \sgn^0_\psi(w)|\det(w-1)|^{-1}.
  \end{equation}
  The spectral expansion in the non quasi-split case below looks identical to \cite[Cor 4.3.3]{Arthur} in the quasi-split case. The difference is that our $f_{G}(\psi,x)$ is zero by definition if $\psi$ is not a relevant parameter for $G$. (In contrast, every parameter is relevant for $G^*$.)

\begin{prop}\label{p:std-model-spec} Let $\psi\in \Psi(G^*,\eta_\chi)$. Assume Hypothesis \ref{hypo:Hyp(psi)} on $\psi$ if $\psi\in \Psi_2(G^*,\eta_\chi)$ (but no hypothesis if $\psi\notin \Psi_2(G^*,\eta_\chi)$). Then
  $$I^G_{\disc,\psi}(f)={}^0 r^G_{\disc,\psi}(f) + |\ol{\cS}_\psi|^{-1}\sum_{x\in \ol{\cS}_\psi}i^{G^*}_\psi(x)\epsilon^{G^*}_\psi(x)f_{G}(\psi,x),$$

\end{prop}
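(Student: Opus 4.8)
The plan is to derive this spectral expansion of $I^G_{\disc,\psi}(f)$ from the refined spectral expansion \eqref{eq:I_disc-psi-spec} of the discrete part of the trace formula, following the standard model argument of \cite[\S4.3]{Arthur}. First I would substitute Lemma \ref{lem:M(w)-expansion-prelim} into \eqref{eq:I_disc-psi-spec}, which rewrites $I^G_{\disc,\psi}(f)$ as a double sum over Levi subgroups $M^* \subset G^*$ (more precisely, those to which the corresponding $M$ transfers, the others contributing zero since $f_G(\psi_{M^*},u)$ vanishes otherwise) and over Weyl elements $w \in W^G(M)_\tx{reg}$, with the inner summand being $\frac{|W^M_0|}{|W^G_0|}|\det(w-1)_{\fka^G_M}|^{-1}$ times the expression furnished by the lemma. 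The terms coming from $M^*=G^*$ are exactly $\tr R^G_{\disc,\psi}(f)$ (the Weyl element being trivial), which accounts for the appearance of $^0 r^G_{\disc,\psi}(f)$ once we isolate the remaining piece of the $M^*=G^*$ contribution. The terms with $M^*\neq G^*$ get combined with the non-identity Weyl-group part of the $M^*=G^*$ term.

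The next step is the combinatorial reorganization of the resulting sum, which is the heart of the matter. For a fixed parameter $\psi \in \Psi(G^*,\eta_\chi)$, the parameters $\psi_{M^*} \in \Psi(M^*,\psi,w)$ appearing across all $(M^*,w)$ that map to $\psi$ are all identified (up to the relevant equivalences) with $\psi$ itself; one uses this to pull the sum over $\psi_{M^*}$ outside and to recognize that summing over pairs $(M^*,w,u)$ with $u \in \ol\cN_{\psi_{M^*}}(w)$ mapping to a fixed $x \in \ol\cS_\psi$ reassembles, after bookkeeping with the groups in diagram \eqref{eq:diagram-mod-central-global}, into the coefficient $i^{G^*}_\psi(x)$ as defined in \eqref{eq:i(x)}. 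Here one must match the normalizing factors: the factor $r_P(w,\psi_{M^*})$ produced by Lemma \ref{lem:M(w)-expansion-prelim}, together with the sign $\sgn^0_\psi(w)$ that emerges from the interplay of $|\det(w-1)_{\fka^G_M}|$ with the normalized intertwining operators, combines with the Haar measure factors $\frac{|W^M_0|}{|W^G_0|}|\det(w-1)|^{-1}$ to give precisely $|W^0_\psi|^{-1}\sgn^0_\psi(w)|\det(w-1)|^{-1}$, summed over $W_{\psi,\tx{reg}}(x)$. This is the standard but delicate counting argument carried out in \cite[\S4.3]{Arthur}, and in the present setting it goes through verbatim because the groups in \eqref{eq:diagram-mod-central-global} coincide with Arthur's, as noted after that diagram; the key enabling fact is that $f_{G}(\psi_{M^*},u)$ depends only on the image of $u$ in $\ol\cS_\psi$ (by the last paragraph of \S\ref{sub:global-intertwining}, using Lemmas \ref{lem:exceptional-global-param} and the convention fixed there for the exceptional parameters), so we may group together all $u$ with a given image $x$ and factor out $f_{G}(\psi,x)$, leaving the purely combinatorial coefficient $i^{G^*}_\psi(x)$.

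Finally one inserts $\epsilon_{\psi_{M^*}}(x_u) = \epsilon^{G^*}_\psi(x)$ (the character $\epsilon_\psi$ being the same as in the quasi-split case and independent of the inner twist, as recalled in \S\ref{sub:main-global-thm}) and, when $\psi \in \Psi_2(G^*,\eta_\chi)$, uses Hypothesis \ref{hypo:Hyp(psi)} to make sense of the linear form $f_{G}(\psi,x)$ for $M^*=G^*$ and to identify the leftover piece of the $M^*=G^*$ contribution with $\tr R^G_{\disc,\psi}(f) - {}^0 r^G_{\disc,\psi}(f)$ via the definition \eqref{eq:0rG}. When $\psi \notin \Psi_2(G^*,\eta_\chi)$ no hypothesis is needed, since then $f_{G}(\psi,x)$ is already defined unconditionally and $^0 r^G_{\disc,\psi}(f) = \tr R^G_{\disc,\psi}(f)$ by definition. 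The main obstacle I anticipate is the bookkeeping in the combinatorial reorganization — matching the normalizing factors $r_P(w,\psi_{M^*})$ and the signs $\sgn^0_\psi(w)$ correctly, and verifying that the exceptional parameters of type (exc1) and (exc2) do not cause trouble — but all of this is parallel to \cite[\S4.3]{Arthur} and \cite[\S4.3]{Mok}, so the argument can be carried out by following those references with the (notationally minor) modifications forced by the inner form $G$ and the bar-notation conventions.
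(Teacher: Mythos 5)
Your proposal is correct and follows essentially the same route as the paper: substitute Lemma \ref{lem:M(w)-expansion-prelim} into the refined spectral expansion \eqref{eq:I_disc-psi-spec} and carry out Arthur's combinatorial reorganization from \cite[\S4.3]{Arthur} verbatim, invoking Lemma \ref{lem:linear-form-ind-of-M} for the constancy of the summand on fibers of $V_\psi\to\{W_{\psi,\reg}\}$ and the conventions at the end of \S\ref{sub:global-intertwining} for the unambiguity of $f_G(\psi,x)$, then split off the $M^*=G^*$ term to produce $^0 r^G_{\disc,\psi}(f)$ via the definition \eqref{eq:0rG}. One small imprecision worth flagging: the sign $\sgn^0_\psi(w)$ in $i^{G^*}_\psi(x)$ comes from the spectral sign lemma (one of the quasi-split inputs listed in \S\ref{sub:results-qsuni}) applied to the normalizing factor $r_P(w,\psi_{M^*})$, not from any ``interplay'' with $|\det(w-1)_{\fka^G_M}|$; likewise there is no ``non-identity Weyl-group part'' of the $M^*=G^*$ term since $W^G(G)_{\reg}=\{1\}$, though neither slip affects the validity of the argument.
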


\begin{proof}

  We argue as on \cite[pp.186-188]{Arthur}, appealing to our Lemma \ref{lem:M(w)-expansion-prelim} in place of his Corollary 4.2.4.  Then we have an equality of $I_{\disc,\psi}(f)-\tr R_{\disc,\psi}(f)$ with (see the top paragraph on page 187, loc. cit.)
 $$\sum_{M\neq G}|W(M)|^{-1}\sum_{w\in W_{\reg}(M)}|\det(w-1)_{\fka^G_M}|^{-1}$$
  $$\times \sum_{\psi_{M^*}\in\Psi(M^*,\psi,w)}\sum_{u\in \ol\cN_\psi(w)} |\ol\cS_{\psi_{M^*}}|^{-1}r_P(w,\psi_{M^*}) \epsilon_{\psi_{M^*}}(u) f_G(\psi_{M^*},u).$$
  Of course $M^*$ in the inner sum denotes a (standard) Levi subgroup of $G^*$ which is a quasi-split inner form of $M$. We may replace the first sum with the sum over proper Levi subgroups $M^*$ of $G^*$. Indeed if $M^*$ does not transfer to $G$ locally everywhere then $f_G(\psi_{M^*},u)=0$ by definition so the sum does not change.

  To proceed we argue as on page 187 of \cite{Arthur}. Recall that his set $V_\psi$ is the set of pairs $(w,\psi_{M^*})\in W_{\reg}(M)\times \Psi(M^*,\psi)$ such that $w\psi_{M^*}=\psi_{M^*}$. The first projection induces a map from $V_\psi$ to $\{W_{\psi,\reg}\}$, the set of conjugacy classes in $W_{\psi,\reg}$. The key point in his argument is to see that the last summand in the display above is constant on each fiber of the map $V_\psi\ra\{W_{\psi,\reg}\}$. This point is rather easy to check from the definitions except for the factor $f_G(\psi_{M^*},u)$, where we appeal to Lemma \ref{lem:linear-form-ind-of-M}. The rest of Arthur's argument on that page remains the same and shows that the display above is equal to
  $$ |W_{\psi}|^{-1} \sum_{w\in W_{\psi_{M^*},\reg}}\sum_{u\in \ol{\cN}_{\psi_{M^*}}(w)} \frac{ r_P(w,\psi_{M^*}) \epsilon_{\psi_{M^*}}(u) f_G(\psi_{M^*},u)}{  |\det(w-1)_{\fka^G_M}||\ol\cS_{\psi_{M^*}}|},$$
    which is the same as \cite[(4.3.2)]{Arthur} (except that we keep $\psi_{M^*}$ in our notation), as an intermediate result.\footnote{We remind the reader that we cannot write $f_G(\psi_{M^*},u)$ as $f_G(\psi,u)$ until we know that $f_G(\psi_{M^*},u)$ has the same value for every $\psi_{M^*}\in \Psi(M^*,\psi)$, which would only follow from the intertwining relation.}
    From here on Arthur's argument applies without change. Thereby we arrive at the equality in the proposition. %
     We merely remark that the spectral sign lemma for quasi-split unitary groups, cf. \S\ref{sub:results-qsuni}, is used in the proof.

\end{proof}

  Now the comparison of the endoscopic and spectral expansions leads to the following formulas.

\begin{cor}\label{cor:std-model-result}
  For a non-discrete parameter $\psi\in \Psi^2(G^*,\eta_\chi)$ and $f\in \cH(G)$,
  $$\tr R^G_{\disc,\psi}(f)=|\ol{\cS}_\psi|^{-1}\sum_{x\in \ol{\cS}_\psi}i^{G^*}_\psi(x)\epsilon^{G^*}_\psi(x)(f'_{G}(\psi,s_\psi x^{-1})-f_{G}(\psi,x)).$$
  When $\psi\in \Psi_2(G^*,\eta_\chi)$, assuming Hypothesis \ref{hypo:Hyp(psi)} we have for $f\in \cH(G)$ that
   $$^0 r^G_{\disc,\psi}(f)=|\ol{\cS}_\psi|^{-1}\sum_{x\in \ol{\cS}_\psi}i^{G^*}_\psi(x)\epsilon^{G^*}_\psi(x)(f'_{G}(\psi,s_\psi x^{-1})-f_{G}(\psi,x)).$$
\end{cor}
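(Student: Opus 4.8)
The plan is to derive both identities by equating the two evaluations of $I^G_{\disc,\psi}(f)$ already established: the endoscopic expansion of Proposition \ref{p:std-model-end} and the spectral expansion of Proposition \ref{p:std-model-spec}. Subtracting one from the other isolates $^0 r^G_{\disc,\psi}(f)$ as a difference of two sums over $\ol{\cS}_\psi$, and Arthur's combinatorial identity $i^{G^*}_\psi(x)=e^{G^*}_\psi(x)$ recorded in \eqref{eq:i(S)=e(S)} collapses this difference into the single sum appearing in the statement. No further global input is required.

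First I would treat the non-discrete case $\psi\in\Psi^2(G^*,\eta_\chi)$. Such a $\psi$ lies outside $\Psi_2(G^*,\eta_\chi)$, so Proposition \ref{p:std-model-spec} applies with no auxiliary hypothesis and, by the definition of the linear form in this range, $^0 r^G_{\disc,\psi}(f)=\tr R^G_{\disc,\psi}(f)$. Equating the right-hand sides of Propositions \ref{p:std-model-end} and \ref{p:std-model-spec} and solving for $^0 r^G_{\disc,\psi}(f)$ gives
\[
{}^0 r^G_{\disc,\psi}(f) = |\ol{\cS}_\psi|^{-1}\sum_{x\in \ol{\cS}_\psi}\epsilon^{G^*}_\psi(x)\bigl(e^{G^*}_\psi(x)\,f'_{G,\Xi}(\psi,s_\psi x^{-1}) - i^{G^*}_\psi(x)\,f_{G}(\psi,x)\bigr).
\]
Since $e^{G^*}_\psi(x)=i^{G^*}_\psi(x)$ for every $x\in\ol{\cS}_\psi$ by \eqref{eq:i(S)=e(S)}, the bracket factors as $i^{G^*}_\psi(x)\bigl(f'_{G,\Xi}(\psi,s_\psi x^{-1})-f_{G}(\psi,x)\bigr)$; replacing $f'_{G,\Xi}$ by $f'_{G}$ via the $\Xi$-independence of Lemma \ref{lem:independence-end} yields exactly the first formula, and $^0 r^G_{\disc,\psi}(f)=\tr R^G_{\disc,\psi}(f)$ converts it to the stated identity for $\tr R^G_{\disc,\psi}(f)$.

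For the discrete case $\psi\in\Psi_2(G^*,\eta_\chi)$ the argument is verbatim the same, with one preliminary bookkeeping point: one must first impose Hypothesis \ref{hypo:Hyp(psi)} on $\psi$, which is precisely what makes the packet $\Pi_\psi$, the pairings $\langle x,\pi\rangle$, the linear forms $f_{G}(\psi,x)$, and hence the truncated trace $^0 r^G_{\disc,\psi}(f)$ of \eqref{eq:0rG} well defined; under exactly this hypothesis Proposition \ref{p:std-model-spec} is available. Equating it with Proposition \ref{p:std-model-end}, subtracting, and applying \eqref{eq:i(S)=e(S)} and Lemma \ref{lem:independence-end} as above gives the second formula.

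I do not expect a genuine obstacle here: the substance has already been absorbed into Propositions \ref{p:std-model-end} and \ref{p:std-model-spec}, whose proofs rest on the stable multiplicity formula (Proposition \ref{p:stable-multiplicity}), the induction hypothesis on proper Levi subgroups, and the sign lemmas imported in \S\ref{sub:results-qsuni}. The only things demanding care are the compatibilities needed to make the term-by-term subtraction legitimate: that both expansions are indexed by the same set $\ol{\cS}_\psi$ with the same overall factor $|\ol{\cS}_\psi|^{-1}$ and the same character $\epsilon^{G^*}_\psi$, and that the hypotheses of the two propositions coincide in each case — which they do automatically in the non-discrete case and do after imposing Hypothesis \ref{hypo:Hyp(psi)} in the discrete case.
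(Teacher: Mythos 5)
Your proposal is correct and matches the paper's own proof: subtract the spectral expansion of Proposition \ref{p:std-model-spec} from the endoscopic expansion of Proposition \ref{p:std-model-end}, then collapse the resulting bracket using the identity $i^{G^*}_\psi(x)=e^{G^*}_\psi(x)$ of \eqref{eq:i(S)=e(S)}, handling the two cases (non-discrete, where $^0 r^G_{\disc,\psi}=\tr R^G_{\disc,\psi}$ by definition; discrete, where Hypothesis \ref{hypo:Hyp(psi)} is needed to make $^0 r^G_{\disc,\psi}$ well defined) exactly as the paper does.
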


\begin{proof}
   The corollary is an immediate consequence of \eqref{eq:i(S)=e(S)} as well as Propositions \ref{p:std-model-end} and \ref{p:std-model-spec}.
\end{proof}

\subsection{On global non-elliptic exceptional parameters}\label{sub:non-elliptic-exceptional}

  The global intertwining relation for a non-elliptic parameter $\psi\in \Psi^{\el}(G^*,\eta_\chi)$ was essentially deduced from the induction hypothesis in Lemma \ref{lem:global-intertwining-step1} except when $\psi\in \Psi_\EXC(G^*,\eta_\chi)$. In the exceptional case we will prove a weaker identity by the comparison of the trace formulas. This will provide a global input for the later proof of the local intertwining relation in (exc1) and (exc2).

\begin{lem}\label{lem:EXC1-EXC2}
  Let $\psi\in \Psi_{\EXC}(G^*,\eta_\chi)$, i.e. it is a non-elliptic parameter of form (exc1) or (exc2). Suppose that $\psi$ comes from a discrete parameter on a proper Levi subgroup $M^*$ of $G^*$. Let $x\in \ol{\cS}_\psi$. If $M^*$ does not transfer locally everywhere to $G^*$ then $$f'_{G}(\psi,s_\psi x^{-1})=f_{G}(\psi,x)=0,\quad f\in \cH(G).$$ If $M^*$ does transfer to a Levi subgroup $M$ of $G$ then
  $$ \sum_{x\in \ol{\cS}_{\psi}} \epsilon^{G^*}_\psi(x)(f'_{G}(\psi,s_\psi x^{-1})-f_{G}(\psi,x))=0,\quad f\in \cH(G)$$
  and $R_P(w,\pi_M,\psi_{M^*})=1$.
\end{lem}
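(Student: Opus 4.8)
The plan is to combine the explicit shape of exceptional parameters with the standard model of Section~\ref{sub:std-model} and the vanishing–of–coefficients principle of Lemma~\ref{lem:vanishing-coeff}. First I record what (exc1) and (exc2) supply: in both cases $\dim\ol T_\psi=1$, $W^0_\psi=W_\psi\cong\Z/2$ with a single nontrivial element $w$ which is regular, and over every $x\in\ol\cS_\psi$ there is a unique lift $u=u_x\in\ol\cN_{\psi_{M^*}}(M^*,G^*)$ with $w_{u_x}=w$; moreover $\ol\cS_{\psi,\el}=\ol\cS_\psi$ by Lemma~\ref{lem:global-S-elliptic}. From \eqref{eq:i(x)} one reads off that $i^{G^*}_\psi(x)=|W^0_\psi|^{-1}\sgn^0_\psi(w)\,|\det(w-1)_{\ol T_\psi}|^{-1}=-\tfrac14$ is a nonzero constant, independent of $x$.

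For the first assertion, assume $M^*$ does not transfer to $G$ locally everywhere. Then $\psi$ is not $(G,\xi)$-relevant by Lemma~\ref{l:relevance-and-Levi}, so $\Pi_{\psi_{M^*}}(M,\xi)=\emptyset$ and $f_G(\psi,x)=0$ by the definition of the linear form. Since $s_\psi$ lies in $Z(S_\psi)$ and every connected component of $S_\psi$ meets $Z_{S_\psi}$ (the centralizer of $A_{\hat M^*}$), one may choose a semisimple representative $s$ of the coset $s_\psi x^{-1}$ lying in $Z_{S_\psi}$; then $T_{\psi,s}=A_{\hat M^*}$, so the Levi $M^*_s$ of $G^*$ attached to $s$ as in Section~\ref{sub:endo-correspondence} equals $M^*$, which does not transfer to $G$ at some place $v$. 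As in the degenerate case of parabolic descent treated in the proof of Lemma~\ref{lem:local-indep-f'}, the relevant stable linear form on $G^\fke_v$ obtained by descent from a transfer of $f_v$ is identically zero, so $f'_{v,G,\Xi}(\psi_v,s_v)=0$ and hence $f'_G(\psi,s_\psi x^{-1})=\prod_v f'_{v,G,\Xi}(\psi_v,s_v)=0$.

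For the second assertion, assume $M^*$ transfers to a Levi subgroup $M\subset G$. Applying Corollary~\ref{cor:std-model-result} to the non-discrete parameter $\psi\in\Psi^{\el}(G^*,\eta_\chi)$ and using $i^{G^*}_\psi(x)=-\tfrac14$ gives
\[ \tr R^G_{\disc,\psi}(f)=-\frac{1}{4|\ol\cS_\psi|}\sum_{x\in\ol\cS_\psi}\epsilon^{G^*}_\psi(x)\bigl(f'_G(\psi,s_\psi x^{-1})-f_G(\psi,x)\bigr),\qquad f\in\cH(G). \]
The left side equals $\sum_\pi m_\psi(\pi)f_G(\pi)$ with multiplicities $m_\psi(\pi)\ge 0$. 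On the right I expand $f_G(\psi,x)=f_G(\psi_{M^*},u_x)$ via its definition and the change of basis $f_{v,G}(\pi)=\sum_{\tau_v}n(\pi,\tau_v)f_{v,G}(\tau_v)$, and I expand $f'_G(\psi,s_\psi x^{-1})$ by descent (Lemma~\ref{lem:tflevi}, applicable because the Levi of $G^\fke$ carrying the descended parameter transfers to $M$), using Theorem~\ref{thm:locclass-single} and the intertwining relation for $M$ — both available by induction since $M\subsetneq G$ — together with the quasi-split results of Section~\ref{sub:results-qsuni} for the (quasi-split) endoscopic groups. The ``standard'' contributions, indexed by triples $\tau_v$ of the form $(M_v,\sigma_v,1)$, then cancel between the two sides in each summand, by the cases of the intertwining relation already proved (Corollary~\ref{cor:local-intertwining-step1}) and the descent Lemma~\ref{lem:lirdesc2}. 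What remains is an identity $\sum_\pi m_\psi(\pi)f_G(\pi)=\sum_{\tau_v}d(\tau_v,f^v)f_{v,G}(\tau_v)$ with $d(\cdot,f^v)$ supported on finitely many $\tau_v$ and vanishing on all $\tau_v$ with trivial $R$-group element. Lemma~\ref{lem:vanishing-coeff} then forces $m_\psi(\pi)=0$ for every $\pi$, whence $\tr R^G_{\disc,\psi}\equiv 0$ and therefore $\sum_x\epsilon^{G^*}_\psi(x)(f'_G(\psi,s_\psi x^{-1})-f_G(\psi,x))=0$; it also forces $d(\tau_v,f^v)\equiv 0$, which says that $R_P(u_x,\Xi,\pi_M,\psi_{M^*},\psi_F)$ contributes no $\tau_v$ with nontrivial $R$-group element, i.e. agrees with the standard normalization. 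Combining this with Proposition~\ref{pro:ioplg}(3) (which for automorphic $\pi_M$ identifies $R_P(w,\pi_M,\psi_{M^*})$ with $\epsilon_{\psi_{M^*}}(u_x)R_P(u_x,\Xi,\pi_M,\psi_{M^*},\psi_F)$) and the multiplicativity of $R_P$ (Lemma~\ref{lem:iopm}) yields $R_P(w,\pi_M,\psi_{M^*})=1$.

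The main obstacle is the bookkeeping inside the third paragraph: one must verify that, after substituting the induction hypotheses, the right-hand side of the displayed identity genuinely has no ``$(M_v,\sigma_v,1)$–part'', so that Lemma~\ref{lem:vanishing-coeff} applies. Concretely, this requires tracking, for each $x$ and each place $v$, how the descended endoscopic form decomposes via the local classification and local intertwining relation on $M_v$, and matching it term by term against the expansion of $f_{v,G}(\psi_{M^*,v},u_x)$ — the cancellation of the standard parts being exactly the content of the already-known cases of the local intertwining relation, while the genuinely new information, carried by the regular Weyl element $w$, is what survives and is then killed by Lemma~\ref{lem:vanishing-coeff}.
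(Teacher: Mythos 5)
Your treatment of the first assertion (when $M^*$ does not transfer) and your identification of $i^{G^*}_\psi(x)=-\tfrac14$ via Corollary~\ref{cor:std-model-result} match the paper, but for the second assertion you have chosen a genuinely different route from the one the paper uses, and there is a concrete gap in it.

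The paper's argument, after reducing (via the analysis of $M^*_s=M^*$ as in the proof of Lemma~\ref{lem:local-indep-f'} and \eqref{eq:M(w)-expansion}) both linear forms to
\[
\frac{1}{|\ol\cS_\psi|}\sum_x \epsilon^{G^*}_\psi(x)f'_{G}(\psi,s_\psi x^{-1})
=\sum_{\pi_M}m_{\psi_{M^*}}(\pi_M)\tr(\cI_P(\pi_M,f)),
\qquad
\frac{1}{|\ol\cS_\psi|}\sum_x \epsilon^{G^*}_\psi(x)f_{G}(\psi,x)
=\sum_{\pi_M}m_{\psi_{M^*}}(\pi_M)\tr(R_P(w,\pi_M,\psi_{M^*})\cI_P(\pi_M,f)),
\]
observes that since $w$ has order two, $1-R_P(w,\pi_M,\psi_{M^*})$ is twice a projection, so the identity
\[
\tr R^G_{\disc,\psi}(f)+\tfrac14\sum_{\pi_M}m_{\psi_{M^*}}(\pi_M)\tr\bigl((1-R_P(w,\pi_M,\psi_{M^*}))\cI_P(\pi_M,f)\bigr)=0
\]
is a sum of two nonnegative linear combinations of irreducible characters. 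This positivity forces both terms to vanish at once; no Paley--Wiener input and no vanishing-of-coefficients lemma is needed.

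Your route, via the change of basis to $f_{v,G}(\tau_v)$ and Lemma~\ref{lem:vanishing-coeff}, mimics Lemma~\ref{lem_analgoue_of_arthur_lem_543_544}, but the key step---that after ``cancellation of the standard contributions'' no $\tau_v$ of the form $(M_v,\sigma_v,1)$ survives on the right-hand side---is precisely where that strategy fails for exceptional parameters. In Lemma~\ref{lem_analgoue_of_arthur_lem_543_544} that hypothesis is verified by showing every $\ol x\in\ol{\cS}_{\dot\phi,\el}$ has nontrivial image in the global $R$-group, hence in $R_{\dot\phi_{v_1}}$, so $w_{\ol x,v_1}\neq 1$. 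For $\psi\in\Psi_\EXC$, however, $W_\psi=W^0_\psi$ and $R_\psi$ is \emph{trivial}: the regular element $w$ lies in $W^0_\psi$ and has trivial image in every $R$-group, local and global. Concretely, fixing a place $v$ and writing $f=f_vf^v$, the $(M_v,\pi_{M,v},1)$-coefficient of
\[
\sum_{\pi_M}m_{\psi_{M^*}}(\pi_M)\bigl[\tr(\cI_P(\pi_{M,v},f_v))\tr(\cI_P(\pi_M^v,f^v))-\tr(R_{P,v}\cI_P(\pi_{M,v},f_v))\tr(R_P^v\cI_P(\pi_M^v,f^v))\bigr]
\]
is $m_{\psi_{M^*}}(\pi_M)\tr(\cI_P(\pi_M^v,f^v))$ plus a correction that vanishes only if $R_{P,v}$ is not scalar on the constituents; since $w$ is trivial in the local $R$-group, this coefficient is nonzero in general. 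Its vanishing is logically equivalent to $R_P(w,\pi_M,\psi_{M^*})=1$, which is exactly the conclusion you want. So the appeal to Lemma~\ref{lem:vanishing-coeff} is circular here, and the hypotheses of that lemma are simply not verified. You should replace the third paragraph by the paper's positivity argument: note $R_P(w,\pi_M,\psi_{M^*})^2=1$, observe both summands in the resulting identity are nonnegative linear combinations of characters, and deduce their simultaneous vanishing.
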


\begin{proof}
  Our proof proceeds as in the proof of \cite[Cor 4.5.2]{Arthur} but is simpler in that no groups other than $G$ need to be considered. So we outline the argument with small details omitted. %

  Using the explicit form of (exc1) and (exc2), we find that $R_\psi$ is trivial, that $|W^0_\psi|=|W_\psi|=2$, and that $W_{\psi,\reg}$ has a unique element $w$. For the latter $w$ we have $\sgn^0_\psi(w)=-1$ and $|\det(w-1)|=2$. Hence $i_\psi(x)=-1/4$ in the first identity of Corollary \ref{cor:std-model-result}. The identity reads
    \begin{equation}\label{eq:std-model-EXC1-EXC2}
      \tr R^G_{\disc,\psi}(f)=-\frac{1}{4|\ol{\cS}_\psi|}\sum_{x\in \ol{\cS}_\psi}\epsilon^{G^*}_\psi(x)(f'_{G}(\psi,s_\psi x^{-1})-f_{G}(\psi,x)),\quad f\in \cH(G).
    \end{equation}
  Fix an element $\psi_{M^*}\in \Psi_2(M^*,\eta_\chi)$ which maps to $\psi$. We argue as in the proof of Lemma \ref{lem:local-indep-f'}, noting that $M_s^*=M^*$ in our special case. The argument there shows that if $M^*$ does not transfer to $G$ locally everywhere then $f'_{G}(\psi,s_\psi x^{-1})=0$, and if $M^*$ does transfer then $f'_{G}(\psi,s_\psi x^{-1})=f'_{M}(\psi_{M^*},s_\psi s^{-1})$. In the former case $f_G(\psi,x)=0$ by definition, so the proof is finished. From now on we assume that $M^*$ transfers to a Levi subgroup $M$ of $G$ by Lemma \ref{lem:transfer-Levi-locglo}. Then a unique inner twist $(M,\xi_M)$ is determined on $M$.
  Arguing as at the top of \cite[p.210]{Arthur}, we obtain
  $$|\ol{\cS}_\psi|^{-1}\sum_{x\in \ol{\cS}_\psi}\epsilon^{G^*}_\psi(x)f'_{G}(\psi,s_\psi x^{-1})
  = \sum_{\pi_M\in \Pi_{\psi_{M^*}}}  m_{\psi_{M^*}} (\pi_M) \tr (\cI_P(\pi_M,f)).$$
  We have $|\ol{\cS}_\psi|=|\ol{\cS}_{\psi_{M^*}}|$ from the diagram \eqref{eq:diagram-mod-central-global} and the fact that $\ol{\cS}_\psi=\ol{\cS}_{\psi}(M^*,G^*)$, $\ol{\cS}_{\psi_{M^*}}=\ol{\cS}_{\psi}$ (cf. proof of Lemma \ref{lem:M(w)-expansion-prelim}), and $|W^0_\psi|=|W_\psi|=2$. The surjection $\ol{\cN}_\psi\ra \ol{\cS}_\psi$ restricts to a bijection $\ol{\cN}_\psi(w)\ra \ol{\cS}_\psi$, and we have
  $$\sum_{x\in \ol{\cS}_\psi}\epsilon^{G^*}_\psi(x)f_{G}(\psi, x)=\sum_{u\in \ol{\cN}_\psi(w)}\sum_{\pi_M\in \Pi_{\psi_{M^*}}} \epsilon^{G^*}_\psi(u) \tr (R_P(u,\Xi,\pi_M,\psi_{M^*},\psi_F)\cI_P(\pi_M,f)).$$
  Via \eqref{eq:M(w)-expansion}, this equals
  $$ \sum_{\pi_M\in \Pi_{\psi_{M^*}}} m_{\psi_{M^*}} (\pi_M) \tr (R_P(w, \pi_M,\psi_{M^*})\cI_P(\pi_M,f)).$$
  Therefore
  $$\tr R^G_{\disc,\psi}(f)+ \frac{1}{4} \sum_{x\in \ol{\cS}_\psi} \tr ( (1-R_P(w,\pi_M,\psi_{M^*}))\cI_P(\pi_M,f))=0.$$
  Since $w$ is an element of the group $W_\psi$ of order 2, the square of $R_P(w, \pi_M,\psi_{M^*})$ is the identity map. This implies that $\tr ( (1-R_P(w,\pi_M,\psi_{M^*}))\cI_P(\pi_M,f))$ is a nonnegative linear combination of the traces of irreducible representations (as a linear form in $f$), therefore that the whole left hand side is such a nonnegative linear combination. Hence we find that $R^G_{\disc,\psi}=0$ and that the summand for each $x$ is zero. The lemma results from this vanishing and \eqref{eq:std-model-EXC1-EXC2}.

\end{proof}

\subsection{On global elliptic parameters I}\label{sub:elliptic-parameters}

  For global elliptic parameters we have made little progress toward the global intertwining relation due to the fact that the induction hypothesis was not of any immediate help. As in the previous section \ref{sub:non-elliptic-exceptional}, our plan is to settle for a weaker identity for now, which is still good enough for deriving the local intertwining relation for local elliptic parameters later.

  Any elliptic non-discrete parameter $\psi\in \Psi^2_{\el}(G^*,\eta_\chi)$ has the following form, where $\psi_i$ are mutually non-isomorphic simple parameters, cf. \cite[(5.2.4)]{Arthur}, \cite[(6.2.1)]{Mok}:
  $$\psi=2\psi_1\boxplus \cdots \boxplus 2\psi_q \boxplus \psi_{q+1} \boxplus \cdots \boxplus \psi_r,$$
  $$ S_\psi\simeq O(2,\C)^q \times O(1,\C)^{r-q},\quad q\ge 1.$$

\begin{lem} %
 Let $\psi\in \Psi^2_{\el}(G^*,\eta_\chi)$. For every $f\in \cH(G)$,
$$ \tr R^G_{\disc,\psi}(f) = 2^{-q}|\ol{\cS}_\psi|^{-1}\sum_{x\in \ol{\cS}_{\psi,\el}} \epsilon^{G^*}_\psi(x)(f'_{G}(\psi,s_\psi x^{-1})-f_{G}(\psi,x)).$$

\end{lem}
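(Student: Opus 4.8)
The plan is to specialize the second formula of Corollary \ref{cor:std-model-result} (the one for $^0 r^G_{\disc,\psi}$, valid after invoking Hypothesis \ref{hypo:Hyp(psi)}) and then analyze the combinatorics of the groups $\ol{\cS}_\psi$, $W_\psi$, $W^0_\psi$ explicitly for parameters of the elliptic non-discrete form displayed above, exactly mirroring the treatment of the analogous statement for quasi-split unitary groups in \cite[Lem 5.2.1]{Mok}, cf. \cite[\S5.2]{Arthur}. First I would observe that, by the explicit description of $S_\psi \simeq \O(2,\C)^q \times \O(1,\C)^{r-q}$, one has $W^0_\psi \simeq (\Z/2\Z)^q$ and $W_\psi \simeq (\Z/2\Z)^q$, with $\ol{\cS}_\psi$ being the quotient of $\ol{S}_\psi$ by its identity component; the point is that $S^0_\psi$ is a torus of dimension $q$, so $\srad_\psi \subset S^0_\psi$ contributes the factor making $W^0_\psi$ nontrivial. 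Then for $x \in \ol{\cS}_\psi$ one computes $i^{G^*}_\psi(x)$ from \eqref{eq:i(x)}: the sum over $w \in W_{\psi,\reg}(x)$ has at most one term since $W_{\psi,\reg}$ consists of the single element acting by $-1$ on each of the $q$ two-dimensional summands, and $\sgn^0_\psi(w) = (-1)^q$, $|\det(w-1)| = 2^q$. Comparing with $e^{G^*}_\psi(x)$ via \eqref{eq:e(x)}, one isolates the $x \in \ol{\cS}_{\psi,\el}$ as exactly those for which these regular contributions are nonzero; for $x \notin \ol{\cS}_{\psi,\el}$ the contributions to both $i_\psi$ and $e_\psi$ vanish, so those terms drop out of the formula in Corollary \ref{cor:std-model-result}.

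\textbf{Key steps.} (1) Verify that $^0 r^G_{\disc,\psi}(f) = \tr R^G_{\disc,\psi}(f)$ for $\psi$ non-discrete (this is the \emph{definition} of $^0 r^G_{\disc,\psi}$ from the first clause of \S\ref{sub:std-model}, since $\psi \notin \Psi_2(G^*,\eta_\chi)$), so that the second identity of Corollary \ref{cor:std-model-result} directly yields
\[ \tr R^G_{\disc,\psi}(f) = |\ol{\cS}_\psi|^{-1}\sum_{x\in \ol{\cS}_\psi}i^{G^*}_\psi(x)\epsilon^{G^*}_\psi(x)(f'_{G}(\psi,s_\psi x^{-1})-f_{G}(\psi,x)). \]
(2) Show $i^{G^*}_\psi(x) = 0$ for $x \notin \ol{\cS}_{\psi,\el}$, using the structure of $W_{\psi,\reg}$ and the fact that a non-elliptic $x$ has $W_{\psi,\reg}(x) = \emptyset$ (every $w$ in the fiber over $x$ fixes a positive-dimensional subtorus of $\ol{T}_\psi$). (3) For $x \in \ol{\cS}_{\psi,\el}$, evaluate $i^{G^*}_\psi(x) = |W^0_\psi|^{-1}\sgn^0_\psi(w)|\det(w-1)|^{-1} = 2^{-q}\cdot(-1)^q\cdot 2^{-q}$; here the sign $(-1)^q$ gets absorbed because, following the normalization conventions and the appearance of $\epsilon^{G^*}_\psi$, the relevant quantity entering the final answer is $2^{-q}$ in absolute value — I would track this carefully against \cite[(5.2.5)]{Mok} and \cite[proof of Lem 5.2.1]{Arthur} to fix the sign. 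Actually the cleanest route is to note $|W^0_\psi| = 2^q$ and $|W_{\psi,\reg}(x)| \le 1$, giving $|i^{G^*}_\psi(x)| = 2^{-2q}$; but one also has $|\ol{\cS}_\psi| = 2^{q}\cdot 2^{r-q}/2 = 2^{r-1}$ versus $|\ol{\cS}_{\psi,\el}|$, and reorganizing the sum over $\ol{\cS}_\psi$ into a sum over $\ol{\cS}_{\psi,\el}$ (each $\ell$-orbit in $\ol{\cS}_\psi$ meeting $\ol{\cS}_{\psi,\el}$ in a controlled way) produces the stated coefficient $2^{-q}|\ol{\cS}_\psi|^{-1}$.

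\textbf{Verifying the hypothesis.} A subtlety is that the second formula of Corollary \ref{cor:std-model-result} requires Hypothesis \ref{hypo:Hyp(psi)} only when $\psi \in \Psi_2(G^*,\eta_\chi)$; since here $\psi$ is elliptic but \emph{non-discrete}, it lies in $\Psi^2_{\el}(G^*,\eta_\chi) \subset \Psi^2(G^*,\eta_\chi)$, so the \emph{first} formula of the Corollary (the unconditional one for $\tr R^G_{\disc,\psi}$) applies with no hypothesis at all. This is the version I would use, so no appeal to the local classification theorem for $\psi_v$ is needed. I would state this explicitly at the start of the proof.

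\textbf{Main obstacle.} The genuine difficulty is the bookkeeping of signs and cardinalities: getting $|W^0_\psi|$, $|W_\psi|$, $|\ol{\cS}_\psi|$, $|\ol{\cS}_{\psi,\el}|$, $\sgn^0_\psi(w)$ and $|\det(w-1)|$ all correct for the $\O(2,\C)^q \times \O(1,\C)^{r-q}$ centralizer, and checking that they combine to exactly $2^{-q}|\ol{\cS}_\psi|^{-1}$ rather than some other power of $2$ — together with confirming that the restriction of the summation index from $\ol{\cS}_\psi$ to $\ol{\cS}_{\psi,\el}$ is legitimate (i.e. the dropped terms really contribute zero, which reduces to step (2) above, itself a consequence of Lemma \ref{lem:global-exc}-type reasoning applied to the elliptic setting). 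All of this is carried out in the quasi-split case in \cite{Mok} and the passage to the inner form $G$ changes nothing in this combinatorial core, since $\ol{\cS}_\psi$, $W_\psi$, $W^0_\psi$, $i^{G^*}_\psi$, $\epsilon^{G^*}_\psi$ are all defined via $\hat{G}^* = \hat{G}$ and are insensitive to the inner twist; the only genuinely new inputs — that $f_G(\psi,x)$ is well-defined and vanishes for irrelevant $\psi$, and that $f'_G(\psi,x)$ depends only on $x$ — are already established in \S\ref{sub:global-intertwining} and fed into Corollary \ref{cor:std-model-result}. Hence I expect the proof to be short: cite the first formula of Corollary \ref{cor:std-model-result}, then transcribe the combinatorial computation.
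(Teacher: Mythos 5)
Your overall approach is exactly the paper's: apply the first (unconditional, non‑discrete) formula of Corollary \ref{cor:std-model-result}, show $i^{G^*}_\psi(x)=0$ off $\ol{\cS}_{\psi,\el}$, and evaluate $i^{G^*}_\psi(x)$ on $\ol{\cS}_{\psi,\el}$. Your self‑correction about which formula of the corollary to invoke (the unconditional one, since $\psi$ is non‑discrete) is also right.

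However, the numerical step (3) is wrong, and the attempted repair by ``reorganizing the sum'' is not an argument. The error is in the identification of $W^0_\psi$. For $S_\psi\simeq\O(2,\C)^q\times\O(1,\C)^{r-q}$ the identity component $S^0_\psi\simeq\SO(2,\C)^q=(\C^\times)^q$ is a torus, and $W^0_\psi$ is by definition its Weyl group with respect to $\ol T_\psi$ (equivalently $N_{S^0_\psi}(T)/T$), which is therefore \emph{trivial}; you have written $W^0_\psi\simeq(\Z/2\Z)^q$, which is in fact $W_\psi$, the full relative Weyl set. For the same reason $\sgn^0_\psi(w)=+1$ for every $w$: $\sgn^0_\psi$ counts positive roots of $S^0_\psi$ flipped by $w$, and a torus has no roots. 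With $|W^0_\psi|=1$, $\sgn^0_\psi(w)=1$, $|W_{\psi,\reg}(x)|=1$ for $x\in\ol{\cS}_{\psi,\el}$, and $|\det(w-1)|=2^q$ (the unique regular element acting by $-1$ on all $q$ coordinates), the formula \eqref{eq:i(x)} gives $i^{G^*}_\psi(x)=2^{-q}$ directly, and the sum over $\ol{\cS}_\psi$ in Corollary \ref{cor:std-model-result} collapses to a sum over $\ol{\cS}_{\psi,\el}$ with coefficient $2^{-q}$ term by term --- no rearrangement is needed, and none is available, since $i^{G^*}_\psi$ is literally the coefficient in the corollary. Your value $|i^{G^*}_\psi(x)|=2^{-2q}$ is off by a factor of $2^q$, and the claimed cancellation against $|\ol{\cS}_\psi|$ vs.\ $|\ol{\cS}_{\psi,\el}|$ does not occur; those cardinalities never enter.
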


\begin{proof}
  An easy observation from the definition of $i_\psi$ is that $i_\psi(x)=0$ if $x\notin \ol{\cS}_{\psi,\el}$. In case $x\in \ol{\cS}_{\psi,\el}$ it is easily computed again from the definition that $i_\psi(x)=2^{-q}$ as appearing in the proof of Lemmas 5.2.1 and 5.2.2 of \cite{Arthur}. Now the lemma is just a special case of Corollary \ref{cor:std-model-result}.

\end{proof}

\section{Chapter 4: Globalizations and the local classification}\label{chapter4}

  The aim of this chapter is to establish the local intertwining relation (Theorem \ref{thm:lir}) and the local classification theorem (Theorem \ref{thm:locclass-single}) for generic parameters. In both proofs we utilize the powerful global method based on the trace formula. To this end we embed the local setup of each theorem in amenable global situations by exploiting a large degree of freedom in choosing the global data.
  After constructing globalizations up to \S\ref{sub:Globalization-param} we prove the local intertwining relation in \S\ref{subsection_elliptic_parameters_2} and \S\ref{sub:LIR-proof} and the local classification theorem in the last three subsections.

  We could have constructed global extended pure inner twists in the globalizations but the reader will notice that we work with pure inner twists instead. The main reason is to keep the results in this paper as unconditional as possible, since certain facts about the transfer factors needed in our work are available for pure inner twists but not for extended pure inner twists at the moment (see the discussion of \S\ref{subsub:normtf}).

\subsection{Globalization of the group}

When $\dot F$ is a totally real number field and $\dot E$ is a totally imaginary quadratic extension of $\dot F$, we say
that $\dot E/\dot F$ is a \textbf{CM extension} (of number fields) for simplicity.

\begin{lem}
\label{lem_globalization_field}
Let $E/F$ be a quadratic extension of local fields of characteristic $0$.
Let $r_0 \in \N$.
Then there exists a CM extension of number fields  $\dot{E}/\dot{F}$ and a place $u$ of $\dot{F}$ such that
\begin{itemize}
	\item $\dot{F}$ has at least $r_0$ real places,
	 and
	\item $\dot{E}_u/\dot{F}_u = E/F$.
\end{itemize}
\end{lem}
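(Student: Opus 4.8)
The plan is to build $\dot F$ first (totally real, with at least $r_0$ real places and a place $u$ with $\dot F_u\cong F$), then to build the quadratic extension $\dot E/\dot F$ on top of it (totally imaginary, with $\dot E_u\cong E$); the resulting pair $(\dot E/\dot F,u)$ is then as required, since by definition a totally imaginary quadratic extension of a totally real field is a CM extension. As a preliminary I would produce a totally real multiquadratic auxiliary field: letting $p$ be the rational prime below $F$ (in the archimedean case $F=\R$, $E=\C$, take any prime $p$), choose a totally real $L/\Q$ with $[L:\Q]\ge r_0$ in which $p$ splits completely. For $p$ odd take $L=\Q(\sqrt{q_1},\dots,\sqrt{q_k})$ with $q_1,\dots,q_k$ distinct primes $\ne p$ that are quadratic residues mod $p$ (infinitely many exist by Dirichlet) and $2^k\ge r_0$; then $p$ is unramified and split in each $\Q(\sqrt{q_i})$, so $\mathrm{Frob}_p$ is trivial in $\Gal(L/\Q)\cong(\Z/2)^k$ and $p$ splits completely in $L$. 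The case $p=2$ is identical with $q_i\equiv 1\pmod 8$.

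Next I would globalize $F$. If $F=\R$, set $\dot F_1=\Q$ and let $u_1$ be the real place. If $F$ is non-archimedean, write $F=\Q_p(\alpha)$ with $\alpha$ a root of a monic irreducible $f\in\Q_p[x]$ of degree $n=[F:\Q_p]$; by weak approximation applied to the coefficient space $\Q^n\subset\Q_p^n\times\R^n$, choose a monic $g\in\Q[x]$ of degree $n$ that is $p$-adically close enough to $f$ for Krasner's lemma to give that $g$ is irreducible over $\Q_p$ with $\Q_p[x]/(g)\cong F$, and that has $n$ distinct real roots. Then $g$ is irreducible over $\Q$, and $\dot F_1:=\Q[x]/(g)$ is totally real with a place $u_1\mid p$ satisfying $\dot F_{1,u_1}\cong F$. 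In either case put $\dot F:=\dot F_1\cdot L\subset\overline{\Q}$: it is totally real as a compositum of totally real fields, and $[\dot F:\Q]\ge[L:\Q]\ge r_0$. Since $p$ splits completely in $L$ one has $L\otimes_{\Q}\dot F_{1,u_1}\cong\dot F_{1,u_1}^{[L:\Q]}$, whence $\dot F\otimes_{\dot F_1}\dot F_{1,u_1}$, being a quotient of this, is a nonzero product of copies of $\dot F_{1,u_1}$; any place $u$ of $\dot F$ above $u_1$ then has $\dot F_u\cong\dot F_{1,u_1}\cong F$.

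Finally I would globalize $E/F$. As $\operatorname{char}F=0$ and $[E:F]=2$, write $E=F(\sqrt a)$ with $a\in F^\times\setminus(F^\times)^2$. The subgroup $(F^\times)^2$ is open in $F^\times$, and the negative reals form an open nonempty subset of $\dot F_v^\times=\R^\times$ for each real place $v$ of $\dot F$; by weak approximation for $\mathbb{G}_m$ over $\dot F$ applied to the finite set $\{u\}\cup\{v\mid\infty\}$ there is $\dot a\in\dot F^\times$ with $\dot a\in a\cdot(F^\times)^2$ and $\dot a<0$ at every real place of $\dot F$. Put $\dot E:=\dot F(\sqrt{\dot a})$. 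Then $\dot a$ is not a square in $\dot F$ (it is negative at some real place), so $\dot E/\dot F$ is genuinely quadratic, and it is totally imaginary since $\dot a<0$ at all real places; thus $\dot E/\dot F$ is a CM extension. Moreover $\dot E\otimes_{\dot F}\dot F_u=\dot F_u(\sqrt{\dot a})=F(\sqrt{\dot a})=F(\sqrt a)=E$, which is a field, so $\dot E_u/\dot F_u=E/F$, completing the construction.

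The main obstacle is the globalization of $F$ itself: realizing a prescribed non-archimedean completion while simultaneously keeping the number field totally real. This is exactly the classical Krasner-plus-weak-approximation argument sketched above. Once that is in hand, the requirement of at least $r_0$ real places is bought cheaply by composing with the auxiliary field $L$, which is precisely why $p$ was required to split completely in $L$, and globalizing the quadratic extension is then a routine weak-approximation step.
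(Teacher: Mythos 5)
Your construction is correct, and it is exactly the explicit Krasner--plus--weak-approximation argument that the paper invokes by citing Arthur's Lemma~6.2.1 as "standard": first globalize $F$ to a totally real $\dot F_1$, enlarge by a totally real compositum $L$ in which $p$ splits completely to pick up the $r_0$ real places, then use weak approximation on $\mathbb{G}_m/\dot F$ to choose $\dot a$ matching $a$ modulo squares at $u$ and negative at all real places, producing the CM extension $\dot E = \dot F(\sqrt{\dot a})$. Since this is precisely the standard argument the paper's one-line proof defers to, there is nothing to compare; your write-up simply supplies the omitted details.
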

\begin{proof}
This is standard and easily deduced from \cite[Lem 6.2.1]{Arthur}.
\end{proof}

\begin{lem}
\label{lem_globalization_gp}
Let $E/F$, $\dot{E}/\dot{F}$ and $u$ be as in Lemma \ref{lem_globalization_field}.
Let $v_{2} \neq u$ be a non-split place of $\dot{F}$ in $\dot{E}$.
Let $(G,\xi)$ be an inner twist of $U_{E/F}(N)$.
Then there exists
$(\dot{G},\dot \xi)$ an inner twist of $\dot G^*=U_{\dot{E}/\dot{F}}(N)$ such that
\begin{enumerate}
	\item $(\dot{G}_u,\dot{\xi}_u) = (G,\xi)$, and
	\item $\dot{G}_v$ is quasi-split for all $v \not\in \{u, v_2\}$.
\item
if $N$ is odd, we can also assume that $\dot{G}_{v_2}$ is quasi-split, and
\item
if $N$ is even and $v_2$ is an archimedean place, then we can assume that
 $\dot{G}_{v_2}$ is isomorphic to either $U_{\dot{E}_{v_2}/\dot{F}_{v_2}}(N/2, N/2)$ or
$U_{\dot{E}_{v_2}/\dot{F}_{v_2}}(N/2 - 1, N/2 + 1)$.
\end{enumerate}
  Moreover if $(G,\xi,z)$ is a pure inner twist of $U_{E/F}(N)$ (in fact every $(G,\xi)$ can be promoted to a pure inner twist since the top horizontal arrow is onto in the diagram of \S\ref{subsub:inner-U}) then there exists a pure inner twist $(\dot G,\dot \xi,\dot z)$ of $U_{\dot E/\dot F}(N)$ such that
  \begin{enumerate}
    \item $(\dot{G}_u,\dot{\xi}_u,\dot z_u) = (G,\xi,z)$,
    \item $(\dot{G}_v,\dot\xi_v,\dot z_u)$ is the trivial pure inner twist (in particular $\dot G_v$ is quasi-split) for all $v \not\in \{u, v_2\}$, and
    \item condition 3 above holds true verbatim.
  \end{enumerate}

\end{lem}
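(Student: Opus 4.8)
The plan is to prove the statement by local-to-global cohomology, using the explicit description of the cohomology sets for (extended) pure inner twists of unitary groups from \S\ref{subsub:inner-U} and the local-global exact sequence for $B(\dot F,\dot G^*)_\tx{bsc}$. First I would treat the general inner twist case. An inner twist $(G,\xi)$ of $U_{E/F}(N)$ is classified by a class $x_u \in H^1(\Gamma_u,U_{E/F}(N)_\tx{ad})$. We want a global class in $H^1(\Gamma,\dot G^*_\tx{ad})$ whose localization at $u$ is $x_u$, whose localization at every $v \notin\{u,v_2\}$ is trivial, and whose localization at $v_2$ is a prescribed class $x_{v_2}$ chosen as follows: if $N$ is odd take $x_{v_2}=1$; if $N$ is even take $x_{v_2}$ to be whatever class is needed to make the adelic product condition work, and (when $v_2$ is archimedean) we additionally arrange it to correspond to one of the two named real forms $U(N/2,N/2)$ or $U(N/2-1,N/2+1)$. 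The existence of such a global class is governed by the description at the end of \S\ref{subsub:inner-U}: a collection $(\Xi_v)$ of local inner twists (with invariants $a_v$) globalizes iff almost all $a_v$ vanish and, when $N$ is even, $\sum_v a_v = 0$ in $\Z/2\Z$ (when $N$ is odd the sum condition is vacuous). Since our prescribed collection has $a_v=0$ for $v\notin\{u,v_2\}$, the only constraint is $a_u+a_{v_2}=0$ in $\Z/2\Z$ for even $N$; this we satisfy by choosing $a_{v_2}:=a_u$ (equivalently $-a_u$). When $v_2$ is archimedean and $N$ is even, $a_u \in \Z/2\Z$ pulls back from either of $U(N/2,N/2)$ (invariant $0$ if $N/2$ is even, $1$ if odd — in the $\Z/2\Z$-image) or $U(N/2-1,N/2+1)$; more precisely the two real forms $U(N/2,N/2)$ and $U(N/2-1,N/2+1)$ have invariants $\lfloor N/2\rfloor + N/2 \equiv 0$ and $\lfloor N/2\rfloor + (N/2+1)\equiv 1 \pmod 2$ respectively (using the formula $(p,q)\mapsto \lfloor N/2\rfloor + q$ from \S\ref{subsub:inner-U}), so exactly one of them has the required image of $a_u$ in $\Z/2\Z$, and we pick that one. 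This gives $(\dot G,\dot\xi)$ satisfying (1)--(4) after transporting $\dot\xi$ so that $\dot\xi_u$ literally equals $\xi$ (possible since the localization only determines the equivalence class of inner twists, and we may adjust by an element of $G^*_\tx{ad}(\ol{F}_u)$).

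Next I would upgrade to the pure inner twist case. Given $(G,\xi,z)$ with $z \in Z^1(\Gamma_u,U_{E/F}(N))$ representing a class in $H^1(\Gamma_u,U_{E/F}(N))$, I want a global $\dot z \in Z^1(\Gamma,\dot G^*)$ with the prescribed localizations: $\dot z_u$ cohomologous to (in fact equal to) $z$, $\dot z_v$ trivial for $v\notin\{u,v_2\}$, and $\dot z_{v_2}$ the pure inner twist class mapping to the chosen $x_{v_2} \in H^1(\Gamma_{v_2},\dot G^*_\tx{ad})$ (with condition 3 on $\dot G_{v_2}$ preserved). Here the relevant device is the description of $B(\dot F,U_{\dot E/\dot F}(N))_\tx{bsc}$: by \S\ref{subsub:inner-U}, since $\tx{ker}^1(\dot F,\C^\times_{(-1)})$ vanishes, the localization map $B(\dot F,U_{\dot E/\dot F}(N))_\tx{bsc} \rw \coprod_v B(\dot F_v,U_{\dot E_v/\dot F_v}(N))_\tx{bsc}$ is injective with image equal to the kernel of \eqref{eq:kotisolocglo}, i.e. a collection of local classes with invariants $a_v$ (living in the groups enumerated there) globalizes iff almost all $a_v=0$ and, after projecting to $\Z/2\Z$, $\sum_v a_v=0$. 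Since in the local unitary case $H^1(\Gamma_v,U_{\dot E_v/\dot F_v}(N)) \rw B(\dot F_v,U_{\dot E_v/\dot F_v}(N))_\tx{bsc}$ is a bijection, the very same analysis as above applies: pick $\dot z_v$ trivial for $v\notin\{u,v_2\}$, $\dot z_u$ the class of $z$, and $\dot z_{v_2}$ a class whose invariant cancels that of $z$ modulo $2$ (automatic for $N$ odd), respecting condition 3 when $v_2$ is archimedean. The compatibility of the pure-inner-twist picture with the adjoint picture — i.e. that the global $\dot z$ we produce has image in $H^1(\Gamma,\dot G^*_\tx{ad})$ equal to the global adjoint class we built in the first step — follows because both are determined by the same invariants $a_v$ at all places and the localization maps are injective; if one wishes to be careful one chooses the adjoint class to be precisely the image of $\dot z$. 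Finally, as in the inner-twist case, one replaces $\dot z$ by a cohomologous cocycle and composes $\dot\xi$ with an inner automorphism over $\ol{\dot F}_u$ so that $(\dot G_u,\dot\xi_u,\dot z_u)$ equals $(G,\xi,z)$ on the nose, and similarly trivializes $(\dot G_v,\dot\xi_v,\dot z_v)$ for $v\notin\{u,v_2\}$.

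The main obstacle I anticipate is not conceptual but bookkeeping: one must keep straight the several cohomology sets in play ($H^1$ with adjoint, $H^1$ with the unitary group, and $B(\dot F,-)_\tx{bsc}$), the precise form of the product formula \eqref{eq:kotisolocglo} in each case, and the normalization of invariants at archimedean inert places (the formula $(p,q)\mapsto \lfloor N/2\rfloor + q \bmod 2$) in order to pin down which of $U(N/2,N/2)$, $U(N/2-1,N/2+1)$ realizes the needed class. A secondary subtlety is ensuring that the parity bookkeeping is consistent between the two halves of the lemma, i.e. that the adjoint globalization produced for the first half is literally the one underlying the pure-inner-twist globalization in the second; I would handle this by constructing the pure inner twist first and defining the adjoint class as its image, rather than constructing them independently. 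There is also the minor point that $v_2$ is allowed to be non-archimedean in parts (1)--(3): then for $N$ even we are simply choosing the non-quasi-split inner twist of $U_{\dot E_{v_2}/\dot F_{v_2}}(N)$ (which exists and is unique, with invariant $1\in\Z/2\Z$) exactly when $a_u=1$, and the quasi-split one when $a_u=0$; no archimedean refinement is claimed in that case, consistently with the statement.
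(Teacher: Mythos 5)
The proposal is correct and takes essentially the same approach as the paper: both construct the required global class via Kottwitz's local-global product formula for unitary Galois cohomology, choosing the trivial class away from $\{u,v_2\}$ and a compensating class at the inert place $v_2$ whose invariant cancels that of $z$, and both use the same invariant formula $(p,q)\mapsto\lfloor N/2\rfloor+q\bmod 2$ to single out $U(N/2,N/2)$ versus $U(N/2-1,N/2+1)$ at a real $v_2$. The paper's write-up is streamlined by constructing the pure inner twist directly and deducing the bare inner twist case from it, exactly the reorganization you propose in your "secondary subtlety" paragraph; the only other difference is cosmetic, as you phrase things in terms of $B(\dot F,-)_{\tx{bsc}}$ while the paper uses the $H^1$ exact sequence of \cite{Kot86}, and these coincide for unitary groups.
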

\begin{proof}
  It suffices to check the assertion about pure inner twists. From \cite[Cor 2.5, Prop 2.6]{Kot86} we get an exact sequence
  $$H^1(F,\dot G^*)\ra \bigoplus_v H^1(F_v,\dot G^*)\stackrel{\sum_{\alpha_v}}{\ra} \pi_0(Z(\hat {\dot G}^*)^\Gamma)^D \ra 0,$$
  where $D$ denotes the Pontryagin dual. Note that $\pi_0(Z(\hat {\dot G}^*)^\Gamma)^D\simeq \Z/2\Z$ and the map $\alpha_v:H^1(F_v,\dot G^*)\ra \pi_0(Z(\hat {\dot G}^*)^\Gamma)^D $ is identified with the left vertical map in the diagram of \S\ref{subsub:inner-U}). Since $u$ and $v_2$ do not split in $\dot E$ we see that the map $H^1(F_v,\dot G^*)\ra \Z/2\Z$ is onto for $v\in \{u,v_2\}$. Now choose an element $(G_v,\xi_v,z_v)$ at each place $v$ such that $(G_u,\xi_u,z_u)=(G,\xi,z)$, $\alpha_u(G_u,\xi_u,z_u)=\alpha_{v_2}(G_{v_2},\xi_{v_2},z_{v_2})$, and $(G_v,\xi_v,z_v)$ is trivial for $v\not\in \{u,v_2\}$. If $N$ is even and $v_2$ is archimedean then we can impose condition 4 because the real unitary groups $U(N/2,N/2)$ and $U(N/2-1,N/2+1)$ have different images under $\alpha_{v_2}$. By the exact sequence there exists $(\dot G,\dot \xi,\dot z)\in H^1(F,\dot G^*)$ localising to $(G_v,\xi_v,z_v)$ at every $v$, thus satisfying all the conditions of the lemma.
\end{proof}

We shall also occasionally have need of the following globalization, which is easily obtained via the same process.
\begin{lem}
\label{lem_globalization_field_gp_double_places}
Let $E/F$ be a quadratic extension of local fields of characteristic $0$.
Let $r_0 \in \N$.  Let  $\chi \in \mathcal{Z}^\kappa_E$.  Let $(G,\xi)$ be
an inner twist of $U_{E/F}(N)$.
 Then  there exists a CM extension
$\dot{E}/\dot{F}$, two places $u_1$, $u_2$ of $\dot F$,
$\dot{\chi} \in \mathcal{Z}^\kappa_{\dot{E}}$, and
$(\dot{G},\dot{\xi})$ an inner twist of $U_{\dot{E}/\dot{F}}(N)$
such that
\begin{itemize}
\item $\dot{F}$ has at least $r_0$ real places,
\item $\dot{E}_u/\dot{F}_u = E/F$ for $u=u_1,u_2$,
\item $\dot{\chi}_u = \chi$ for $u = u_1, u_2$,
\item  $(\dot{G}_u,\dot\xi_u) = G$ for $u=u_1, u_2$, and
\item $\dot{G}_v$ is quasi-split for all $v \not\in \{u_1, u_2\}$.
\end{itemize}
Moreover we can promote $(\dot{G},\dot{\xi})$ to a pure inner twist $(\dot{G},\dot{\xi},\dot z)$.
\end{lem}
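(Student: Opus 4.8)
The proof will follow the same pattern as Lemmas \ref{lem_globalization_field} and \ref{lem_globalization_gp}, so the plan is to first globalize the field-theoretic data and then globalize the inner twist using Kottwitz's exact sequence for $H^1$ of the quasi-split unitary group.

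\textbf{Step 1: Globalization of the CM extension and the character.}
First I would apply Lemma \ref{lem_globalization_field} (equivalently \cite[Lem 6.2.1]{Arthur}), but arranging that \emph{two} places $u_1, u_2$ of $\dot F$ remain non-split in $\dot E$ with $\dot E_{u_i}/\dot F_{u_i} \cong E/F$; this is no harder than arranging one such place, since one simply imposes two local conditions in the approximation argument and still has room to make $\dot F$ totally real with at least $r_0$ real places. Next I would globalize the character: I need $\dot\chi \in \mathcal Z^\kappa_{\dot E}$ with prescribed localizations $\dot\chi_{u_1} = \dot\chi_{u_2} = \chi$. Since $\mathcal Z^\kappa_{\dot E}$ consists of conjugate self-dual Hecke characters of $C_{\dot E}$ with restriction to $C_{\dot F}$ equal to $1$ (if $\kappa = +1$) or $\omega_{\dot E/\dot F}$ (if $\kappa = -1$), and such characters can be prescribed at finitely many places by weak approximation on the group of conjugate self-dual characters (using that $\chi$ itself satisfies $\chi \circ c = \chi^{-1}$ and $\chi|_{C_F}$ matches the local component of $\omega_{\dot E/\dot F}$ at $u_i$, which is automatic since $\dot E_{u_i}/\dot F_{u_i} = E/F$), such a $\dot\chi$ exists. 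This part is standard and I would state it with a brief reference.

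\textbf{Step 2: Globalization of the (pure) inner twist.}
For the inner twist, I would reproduce the argument of Lemma \ref{lem_globalization_gp}. Let $\dot G^* = U_{\dot E/\dot F}(N)$. From \cite[Cor 2.5, Prop 2.6]{Kot86} we have the exact sequence
\[ H^1(\dot F, \dot G^*) \rw \bigoplus_v H^1(\dot F_v, \dot G^*) \stackrel{\sum \alpha_v}{\lrw} \pi_0(Z(\hat{\dot G}^*)^\Gamma)^D \rw 0, \]
where $\pi_0(Z(\hat{\dot G}^*)^\Gamma)^D \cong \Z/2\Z$ and $\alpha_v$ is the left vertical map of the diagram in \S\ref{subsub:inner-U}. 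Since $u_1$ and $u_2$ are both non-split in $\dot E$, the maps $\alpha_{u_1}$ and $\alpha_{u_2}$ are surjective onto $\Z/2\Z$. I would then choose a local class $(G_v, \xi_v, z_v) \in H^1(\dot F_v, \dot G^*)$ at each place: take $(G_{u_1}, \xi_{u_1}, z_{u_1}) = (G_{u_2}, \xi_{u_2}, z_{u_2}) = (G, \xi, z)$ (the given pure inner twist, which exists at all local places by the surjectivity of the top horizontal arrow in the diagram of \S\ref{subsub:inner-U}), and take the trivial class at all other places. The sum of the $\alpha_v$ of these classes is $\alpha_{u_1}(G,\xi,z) + \alpha_{u_2}(G,\xi,z) = 2\alpha_{u_1}(G,\xi,z) = 0$ in $\Z/2\Z$, so the collection lies in the kernel of $\sum \alpha_v$ and hence comes from a global class $(\dot G, \dot\xi, \dot z) \in H^1(\dot F, \dot G^*)$. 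By construction $(\dot G_u, \dot\xi_u, \dot z_u) = (G, \xi, z)$ for $u = u_1, u_2$ and $\dot G_v$ is quasi-split (indeed the trivial pure inner twist) for $v \notin \{u_1, u_2\}$.

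\textbf{Remarks on difficulty.}
The only genuinely delicate point is bookkeeping with the compatibility conditions when invoking weak/strong approximation for the Hecke characters and the simultaneous double-place condition on $\dot E/\dot F$; both are routine but must be checked to hold together with the total-reality and real-place-count requirements. I expect no obstacle in the cohomological step since the parity cancellation $2\alpha_{u_1}(G,\xi,z) = 0$ is forced precisely because we are prescribing the \emph{same} local class at two non-split places, which is the structural reason the statement is true. I would conclude the proof with a short sentence noting that the $r_0$ real places are arranged exactly as in Lemma \ref{lem_globalization_field} and require nothing new.
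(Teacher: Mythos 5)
The paper gives no detailed proof of this lemma, stating only that it is "easily obtained via the same process" as Lemmas \ref{lem_globalization_field} and \ref{lem_globalization_gp}; your proposal is a correct spelling-out of exactly that process, and in particular you correctly identify the structural reason the parity obstruction vanishes — placing the same local class at two non-split places contributes $2\alpha_{u_1}(G,\xi,z)=0$ in $\Z/2\Z$.
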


\subsection{Globalization of the representation}
In order to be able to globalize a  parameter, we must first globalize individual representations.
The globalization is carried out on quasi-split unitary groups.

\begin{lem}
\label{lemma_globalization_of_the_representation_1}
Let $\dot{E}/\dot{F}$ be a CM extension of number fields such that $\dot{F}$ has at least two archimedean places.
Let  $\dot{G}^* = U_{\dot{E}/\dot{F}}(N)$ denote the associated quasi-split unitary group.
Let $V$ be a finite set of places of $\dot{F}$ that do not split in $\dot{E}$ and assume that at least one archimedean place $v_\infty$ of $\dot F$ is not contained in $V$.
For all $v \in V$, let $\pi_v \in \Pi_{2, \mathrm{temp}}(\dot{G}^*_v)$ be a square integrable representation.
Then there exists a cuspidal automorphic representation $\dot{\pi}$ of $\dot{G}^*(\A_{\dot{F}})$ such that for all $v \in V$,
$\dot{\pi}_v = \pi_v$ and $\dot\pi_{v_\infty}$ has sufficiently regular infinitesimal character.
\end{lem}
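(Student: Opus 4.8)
The statement is a standard trace-formula globalization, and the plan is to imitate the argument of \cite[Lemma 6.2.2]{Arthur} (or its analogue \cite[Prop.\ 6.3.1]{Mok} for unitary groups), using the simple form of the invariant trace formula for $\dot G^*$. First I would fix, at each place $v \in V$, a pseudo-coefficient $f_v \in \mc{H}(\dot G^*_v)$ for the square-integrable representation $\pi_v$, so that $\tx{tr}\,\sigma_v(f_v)$ vanishes for every tempered $\sigma_v$ not equal to $\pi_v$ in a given $L$-packet and equals $1$ for $\pi_v$; by a theorem of Clozel--Delorme (or Kottwitz's argument with Euler--Poincar\'e functions) such functions exist, and the key point is that their orbital integrals are supported on elliptic elements. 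At the distinguished archimedean place $v_\infty \notin V$, I would choose $f_{v_\infty}$ to be an Euler--Poincar\'e function (or a truncated matrix coefficient) attached to a discrete series $L$-packet with sufficiently regular infinitesimal character $\lambda$; as $\lambda$ runs over regular dominant weights, these pin down the archimedean component up to that $L$-packet and force sufficient regularity. At all remaining finite places outside $V \cup \{v_\infty\}$, I would take $f_v$ to be the unit of the unramified Hecke algebra (adjusting a finite auxiliary set of places if $\dot G^*$ or the chosen data are ramified there, inserting a fixed idempotent whose support I can control).

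The main step is then to show that the spectral side of the trace formula for $f = \bigotimes_v f_v$ is nonzero, which forces the existence of an automorphic $\dot\pi$ with the prescribed local behaviour. Here I would invoke the simple trace formula: because $f_{v_\infty}$ is cuspidal (its orbital integrals vanish on non-elliptic elements) and the $f_v$ for $v \in V$ are supported on elliptic conjugacy classes, all the parabolic and weighted orbital integral terms on the geometric side vanish, so the trace formula reduces to $\sum_{\dot\pi} m(\dot\pi)\,\tx{tr}\,\dot\pi(f) = \sum_{\gamma} \tx{vol}(\ldots)\,O_\gamma(f)$, a sum over (elliptic) rational conjugacy classes. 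One then exhibits a rational elliptic $\gamma$ whose local conjugacy classes at $v \in V$ lie in the support of $f_v$ and at $v_\infty$ in the support of the Euler--Poincar\'e function, so that the geometric side is a nonzero (positive, by the positivity of the measures and a sign analysis at $v_\infty$) sum; this uses weak approximation to build $\gamma$ from prescribed local data and uses that $\dot F$ has enough archimedean places to accommodate the regularity. Since the geometric side is nonzero, some $\dot\pi$ with $m(\dot\pi) > 0$ contributes, and $\tx{tr}\,\dot\pi(f) \neq 0$ forces $\dot\pi_v \cong \pi_v$ for $v \in V$ (pseudo-coefficient property) and $\dot\pi_{v_\infty}$ to have infinitesimal character $\lambda$, hence to be as regular as we wish. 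Finally, cuspidality of $\dot\pi$ is arranged because the nontempered/residual contributions to the discrete spectrum are excluded: at $v \in V$ the representation $\dot\pi_v$ is square-integrable, so $\dot\pi$ cannot be a residual Eisenstein representation (which would be non-tempered almost everywhere, or at least non-square-integrable at $v$), and one appeals to the classification of the residual spectrum for $U_{\dot E/\dot F}(N)$ --- or, cleanly, to the already-available quasi-split results quoted in \S\ref{sub:results-qsuni} --- to conclude $\dot\pi$ is cuspidal.

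\textbf{Expected main obstacle.} The routine parts are the local choices and the vanishing of the non-elliptic geometric terms. The subtle point I anticipate is twofold: first, ensuring that the archimedean test function contributes with a definite (nonzero) sign to the relevant terms of the geometric side --- this is the usual Euler--Poincar\'e/stable-trace-formula sign bookkeeping, and one typically resolves it by using the stable Euler characteristic or by summing over an $L$-packet so that the alternating signs add up rather than cancel; second, verifying that $\dot\pi$ is genuinely \emph{cuspidal} and not merely discrete automorphic. The latter is harmless here given the hypothesis that $V$ contains a place where $\dot\pi$ is square-integrable, but it does require invoking the description of $L^2_{\mathrm{disc}}$ for quasi-split unitary groups (Proposition~\ref{prop:main-local-qsplit} and the global results cited in \S\ref{sub:results-qsuni}), so strictly speaking the lemma rests on the imported quasi-split classification; I would note this dependence explicitly rather than attempt an independent argument.
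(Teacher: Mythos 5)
Your plan runs the simple trace formula by hand (pseudo-coefficients at $V$, Euler--Poincar\'e at $v_\infty$, nonvanishing of the elliptic geometric side), whereas the paper simply cites Shin's automorphic Plancherel density theorem \cite[Thm.~4.8]{Shi12}: one observes that the characteristic function of $\{\pi_v\}_{v\in S}$ (with $S=V\cup\{v_\infty\}$) has positive Plancherel measure, because the discrete series $\pi_v$ are isolated unitary points when the centre of $\dot G^*(\dot F_v)$ is compact, and then lets the family of counting measures $\widehat\mu_n$ converge to $\widehat\mu^{\mathrm{pl}}_S$ to conclude that some cuspidal $\dot\pi$ has the prescribed components. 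That theorem is exactly the packaged, signed-and-summed version of the estimate you are trying to produce, and quoting it is strictly shorter and cleaner.

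Two steps in your sketch are genuine gaps rather than bookkeeping. First, pseudo-coefficients annihilate only \emph{tempered} representations off $\pi_v$; they have no control over non-tempered $\dot\pi_v$, so $\tx{tr}\,\dot\pi_v(f_v)\neq 0$ does not force $\dot\pi_v\cong\pi_v$. The standard remedy is to take a sequence of Euler--Poincar\'e functions at $v_\infty$ with regularity $\lambda\to\infty$ and show that the contribution of non-tempered representations (and of the unwanted $L$-packet members) becomes negligible relative to the identity term --- but that limit argument is precisely the content of Shin's theorem, not a footnote to your construction. Second, you flag the sign analysis on the geometric side as the ``expected obstacle'' but do not resolve it: a single auxiliary rational elliptic $\gamma$ with the right local support does not suffice, because the pseudo-coefficient orbital integrals oscillate in sign and the finitely many elliptic terms can cancel. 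Again, the fix is the $\lambda\to\infty$ limit where the identity contribution dominates, together with positivity of the Plancherel density --- i.e.\ exactly what \cite[Thm.~4.8]{Shi12} delivers. If you want an independent proof you should make that limiting estimate explicit; otherwise it is cleaner (and what the paper does) to cite Shin.
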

\begin{proof}
The result will follow from an application of Theorem 4.8 of \cite{Shi12}. Choose a discrete series representation $\pi_{v_\infty}\in \Pi_{2, \mathrm{temp}}(\dot{G}^*_{v_\infty})$ whose infinitesimal character is sufficiently regular.
 In the notation of \cite{Shi12}, we take
$S=V\cup \{v_\infty\}$ and  $\hat{f}_S = \otimes_{v \in S} \hat{f}_v$ where $\hat f_v$ is taken to be the characteristic
function on the singleton set $\{ \pi_v\} \subset \widehat{\dot{G}^*(F_v)}$ (which has positive Plancherel measure i.e. $\widehat{\mu}_S^{\mathrm{pl}}(\hat{f}_S) >0$, since the center of each $\dot{G}^*(F_v)$ is compact).
Theorem 4.8 of \cite{Shi12}
implies that
\[
	\lim_{n\ra\infty} \widehat{\mu}_n(\hat{f}_S) = \widehat{\mu}_S^{\mathrm{pl}}(\hat{f}_S) > 0
\]
This shows that for some $n\in\Z_{\ge1}$, $\widehat{\mu}_n(\hat{f}_S) \neq 0$.
Unwinding the definition gives
the existence of $\dot{\pi}$ such that $\hat{f}_v(\dot\pi_v)\neq 0$ for each $v\in S$, so the lemma follows.
\end{proof}

We shall often have need of the following strengthening of  Lemma \ref{lemma_globalization_of_the_representation_1}.
\begin{lem}
\label{lemma_globalization_of_the_representation_2}
Let $\dot{E}/\dot{F}$, $\dot{G}^*$, $V$ be as in Lemma \ref{lemma_globalization_of_the_representation_1}.
For all $v \in V$, let
$M^*_v$ be a Levi subgroup of $\dot{G}^*_v$ such that $M^*_v=\dot G^*_v$ if $v$ is archimedean. For each $v\in V$ let
$\pi_{M^*_v} \in \Pi_{2, \mathrm{temp}}(M^*_v)$.
Then there exists a cuspidal automorphic representation $\dot{\pi}$ of $\dot{G}^*(\A_{\dot{F}})$ such that for all $v \in V$,
if $M^*_v=\dot G^*_v$ then $\dot\pi_v=\pi_{M^*_v}$ and if $M^*_v\neq \dot G^*_v$ then
$\dot{\pi}_v$ is an irreducible subquotient of the  induced representation
 $\cI_{M^*_v}^{\dot{G}^*_{v}}(\pi_{M^*_v} \otimes \chi)$
for some unramified unitary character $\chi \in \Psi_{u}(M^*_v)$ (cf. \cite[\S 2.3]{Shi12}). %
\end{lem}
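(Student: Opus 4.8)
The plan is to deduce Lemma \ref{lemma_globalization_of_the_representation_2} from Lemma \ref{lemma_globalization_of_the_representation_1} by a simple reduction that replaces each induced local situation by a genuine square-integrable local situation on a \emph{larger} group. The point is that Lemma \ref{lemma_globalization_of_the_representation_1} only globalizes square-integrable representations at the finite places of $V$, whereas here we are handed, at some finite places $v \in V$, only a discrete series $\pi_{M^*_v}$ on a proper Levi $M^*_v \subsetneq \dot G^*_v$, and we want the global automorphic representation $\dot\pi$ to be locally an irreducible subquotient of $\cI_{M^*_v}^{\dot G^*_v}(\pi_{M^*_v}\otimes\chi)$ for \emph{some} unramified unitary twist $\chi$. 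So the strategy is: (i) use Lemma \ref{lemma_globalization_of_the_representation_1} to globalize the $\pi_{M^*_v}$ themselves, as automorphic representations of a suitable global Levi-type group; (ii) parabolically induce the resulting global cuspidal automorphic representation up to $\dot G^*$ and extract an irreducible automorphic subquotient of the resulting (not necessarily cuspidal, but still automorphic, indeed residual or at worst in the continuous spectrum — but we only need it to be automorphic and to have the right local components) representation; (iii) observe that the local components at $v \in V$ with $M^*_v = \dot G^*_v$ and at the auxiliary archimedean place are exactly as prescribed, and at the places $v \in V$ with $M^*_v \neq \dot G^*_v$ the local component is by construction an irreducible subquotient of $\cI_{M^*_v}^{\dot G^*_v}(\pi_{M^*_v}\otimes\chi_v)$ with $\chi_v$ the localization of an automorphic unramified unitary character, hence in $\Psi_u(M^*_v)$.

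More concretely, I would first choose, for each $v \in V$ with $M^*_v \neq \dot G^*_v$, a standard parabolic–Levi structure and use the flexibility of the globalization to select a global standard Levi subgroup $\dot M^* \subset \dot G^*$ whose localization at each such $v$ is (conjugate to) $M^*_v$, while at the remaining places of $V$ (the ones where $M^*_v = \dot G^*_v$, in particular all archimedean places in $V$) and at the auxiliary archimedean place $v_\infty$ we simply set $\dot M^*_v = \dot G^*_v$. This is possible because the local Levi subgroups of unitary groups are classified (cf. \S\ref{subsub:Levi-U}) and, up to enlarging the finite set of places and choosing $\dot E/\dot F$ appropriately, any prescribed collection of local Levi subgroups at finitely many non-split places arises from a single global standard Levi subgroup. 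Next, I apply Lemma \ref{lemma_globalization_of_the_representation_1} to the group $\dot M^*$ in place of $\dot G^*$: at each $v \in V$ we feed in the square-integrable representation $\pi_{M^*_v}$ of $\dot M^*_v = M^*_v$ (which is square-integrable by hypothesis; note that at archimedean $v \in V$ we have $M^*_v = \dot G^*_v$ so $\pi_{M^*_v} = \pi_v$ is the prescribed discrete series), and at $v_\infty$ we get a cuspidal automorphic representation $\dot\sigma$ of $\dot M^*(\A_{\dot F})$ with $\dot\sigma_v = \pi_{M^*_v}$ for all $v \in V$ and $\dot\sigma_{v_\infty}$ of sufficiently regular infinitesimal character.

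Then I form the parabolically induced representation $\cI_{\dot M^*}^{\dot G^*}(\dot\sigma)$; since $\dot\sigma$ is cuspidal automorphic on $\dot M^*$, every irreducible constituent of its Langlands quotient (or more precisely, the relevant automorphic constituent appearing in $L^2_{\disc}$ or in the automorphic spectrum of $\dot G^*$, obtained either as a residue or, if we twist $\dot\sigma$ by a character to reach a point of unitarity/reducibility, by Langlands' theory of Eisenstein series) is an automorphic representation $\dot\pi$ of $\dot G^*(\A_{\dot F})$ whose local component at each place is an irreducible subquotient of $\cI_{\dot M^*_v}^{\dot G^*_v}(\dot\sigma_v \otimes \chi_v)$ for the appropriate unramified character $\chi_v$ coming from the inducing data. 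At the places $v \in V$ with $M^*_v = \dot G^*_v$ this forces $\dot\pi_v = \pi_{M^*_v}$; at $v_\infty$ it gives the regular infinitesimal character; and at $v \in V$ with $M^*_v \neq \dot G^*_v$ it gives exactly the required shape. One subtle point I would need to address is that Lemma \ref{lemma_globalization_of_the_representation_1} produces a \emph{cuspidal} global representation, whereas here $\dot\pi$ will in general only be automorphic, not cuspidal; but the statement of Lemma \ref{lemma_globalization_of_the_representation_2} only asks for a cuspidal automorphic $\dot\pi$ — so in fact the honest route is slightly different: rather than inducing and extracting a constituent (which loses cuspidality), I should instead apply the trace-formula-based globalization argument of \cite{Shi12} directly to $\dot G^*$ with test functions at $v \in V$ ($M^*_v \neq \dot G^*_v$) supported on the image of $\cI_{M^*_v}^{\dot G^*_v}(\pi_{M^*_v}\otimes\Psi_u(M^*_v))$, exactly as in the proof of Lemma \ref{lemma_globalization_of_the_representation_1} but with $\hat f_v$ a nonnegative Plancherel-density-positive function supported on that family of induced representations instead of on a singleton. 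The limit-multiplicity statement of \cite[Thm 4.8]{Shi12} then yields a cuspidal $\dot\pi$ with the desired local components. The main obstacle, and the step I would expect to require the most care, is precisely this choice of local test functions at the non-archimedean $v \in V$: one must verify that the set of irreducible subquotients of $\cI_{M^*_v}^{\dot G^*_v}(\pi_{M^*_v}\otimes\chi)$, as $\chi$ ranges over the unramified unitary characters $\Psi_u(M^*_v)$, carries positive Plancherel measure and that a nonnegative $\hat f_v$ supported on it can be chosen, so that $\widehat\mu_S^{\mathrm{pl}}(\hat f_S) > 0$ and the limit multiplicity argument applies verbatim — this is where the hypothesis that the $M^*_v$ are \emph{Levi} subgroups and that $\pi_{M^*_v}$ is \emph{tempered} (so the induced representations are tempered, hence in the support of the Plancherel measure) is used.
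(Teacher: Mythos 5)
Your final ``honest route'' is exactly the paper's proof: apply the same limit-multiplicity argument of \cite[Thm 4.8]{Shi12} as in Lemma \ref{lemma_globalization_of_the_representation_1}, but with $\hat f_v$ at the places where $M^*_v \neq \dot G^*_v$ taken to be the characteristic function of (or a nonnegative bump supported on) the set of irreducible subquotients of $\cI_{M^*_v}^{\dot G^*_v}(\pi_{M^*_v}\otimes\chi)$ as $\chi$ ranges over $\Psi_u(M^*_v)$. You correctly recognized that the initial ``globalize on the Levi, then induce'' detour loses cuspidality and discarded it; the paper goes straight to the test-function modification.
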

\begin{proof}
The result follows from an application of \cite[Thm 4.8]{Shi12} in the same
 way as
Lemma \ref{lemma_globalization_of_the_representation_1} if we take
  $\hat{f}_v$ to be the characteristic function on the set of irreducible subquotients of the
induced representations
  $\cI_{M^*_v}^{\dot{G}^*_{v}}(\pi_{M^*,v} \otimes \chi)$
where $\chi$ runs through the unramified unitary characters $\chi \in \Psi_{u}(M^*_v)$ (cf. Example 5.6 of \cite{Shi12}).
\end{proof}

\begin{lem} \label{lem:globalize_for_iop1}
Let $E/F$ be a quadratic extension of local fields, $\xi : U_{E/F}(N) \rw G$ an inner twist, $M^* \subset U_{E/F}(N)$ a standard Levi subgroup such that $M:=\xi(M^*)$ is defined over $F$, and $\pi \in \Pi_2(M)$ a discrete series representation.

Then there exists a CM-extension $\dot E/\dot F$ and two places $u,v$ of $\dot F$ that do not split in $E$, with $v$ archimedean, and $\dot E_u/\dot F_u \cong E/F$. There exists furthermore an inner twist $\dot\xi : U_{\dot E/\dot F}(N) \rw \dot G$ with $\dot G$ quasi-split away from $u$ and $v$, as well as an isomorphism $\iota : \dot G_u \rw G$ such that $\xi = \iota \circ \dot\xi_u$. Moreover, if $\dot M^* \subset U_{\dot E/\dot F}(N)$ is the standard Levi subgroup with $\dot M^*_u=M^*$, then $\dot M := \dot\xi(\dot M^*)$ is defined over $F$ and $\iota$ provides an isomorphism $\dot M_u \rw M$.

Finally, there exists a discrete automorphic representation $\dot\pi$ of $\dot M$ such that $\iota$ identifies $\dot\pi_u$ with $\pi$.
\end{lem}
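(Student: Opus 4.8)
\textbf{Proof plan for Lemma \ref{lem:globalize_for_iop1}.}

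The plan is to assemble the globalization from the three preceding lemmas, applied in sequence and then glued at the end. First I would invoke Lemma \ref{lem_globalization_field} to produce a CM extension $\dot E/\dot F$ and a place $u$ of $\dot F$ with $\dot E_u/\dot F_u \cong E/F$, requiring enough real places so that at least one archimedean place $v$ of $\dot F$ can be chosen distinct from $u$ and non-split in $\dot E$ (any totally imaginary $\dot E$ makes every real place of $\dot F$ non-split, so $r_0=2$ suffices and we just take $v$ to be any real place $\ne u$). This fixes the auxiliary archimedean place $v$ and the global quasi-split group $\dot G^*=U_{\dot E/\dot F}(N)$, together with the standard Levi subgroup $\dot M^* \subset \dot G^*$ whose localization at $u$ is $M^*$.

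Next I would globalize the representation. Since $\pi$ is a discrete series of $M=\xi(M^*)$, its base change / transfer to the quasi-split inner form $M^*_u$ is a tempered packet; more simply, we may first globalize a representation of $\dot M^*$ using Lemma \ref{lemma_globalization_of_the_representation_1} applied to the reductive group $\dot M^*$ in place of $\dot G^*$ (valid because $\dot M^*$ is a product of quasi-split unitary and restriction-of-scalars-of-$\mathrm{GL}$ factors, and $\dot F$ has at least two archimedean places). At the place $u$ we prescribe the base change of $\pi$ (the quasi-split member of the packet of $\pi$ under the local classification for $M$, which is part of the induction hypothesis), and at $v$ we ask only for a sufficiently regular infinitesimal character. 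This yields a cuspidal automorphic representation $\dot\pi^*$ of $\dot M^*(\A_{\dot F})$ with the prescribed behavior at $u$ and $v$; in particular it is discrete and its packet on $\dot M^*$ corresponds to a global parameter $\dot\psi_{\dot M^*}\in\Psi_2(\dot M^*)$.

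Then I would globalize the inner twist so as to match the desired local inner form at $u$ while staying quasi-split everywhere except $u$ and $v$. This is exactly Lemma \ref{lem_globalization_gp} applied with $v_2=v$: it produces an inner twist $\dot\xi:\dot G^*\rw\dot G$ with $(\dot G_u,\dot\xi_u)$ isomorphic to $(G,\xi)$ via some $\iota$, with $\dot G_w$ quasi-split for $w\notin\{u,v\}$, and — crucially — with $\dot G_v$ either quasi-split (if $N$ odd) or one of the two unitary groups $U(N/2,N/2)$, $U(N/2-1,N/2+1)$ (if $N$ even, $v$ archimedean). One must check that $\dot M^*$ transfers to $\dot G$; this is the local-global transfer principle of Lemma \ref{lem:transfer-Levi-locglo} together with the fact that $M^*$ transfers to $G$ at $u$ by hypothesis and $\dot M^*$ transfers trivially at all other places since $\dot G$ is quasi-split there and $\dot G_v$ is chosen in the image of the relevant cohomology (using the explicit description in \S\ref{subsub:inner-U} and Lemma \ref{lem:tranrel}). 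Hence $\dot M:=\dot\xi(\dot M^*)$ is defined over $\dot F$ and $\iota$ restricts to $\dot M_u\rw M$.

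Finally, to obtain the automorphic representation $\dot\pi$ of $\dot M$ itself (not of $\dot M^*$), I would run the induction hypothesis, namely Theorem \ref{thm:main-global} for the group $\dot M$ — available because $\dot M$ is a proper Levi of $\dot G$ and the theorem is assumed for proper Levi subgroups — applied to the parameter $\dot\psi_{\dot M^*}$. Since the local packets $\Pi_{\dot\psi_{\dot M^*,w}}(\dot M_w,\dot\xi_w)$ are nonempty away from $u$ (there $\dot M_w$ is quasi-split, or $w=v$ where the regular infinitesimal character and the explicit form of $\dot G_v$ guarantee relevance) and nonempty at $u$ (the packet of $\pi$), the global multiplicity-one statement of Theorem \ref{thm:main-global} produces a discrete automorphic $\dot\pi=\otimes_w\dot\pi_w$ whose component at $u$ we can arrange to be $\pi$ up to the identification $\iota$: we prescribe the local components so that $\langle\cdot,\dot\pi_w\rangle=1$ away from $u$ and at $u$ equals the character attached to $\pi$, and then check the product of characters equals $\epsilon_{\dot\psi_{\dot M^*}}=1$ (the parameter being generic) by the product formula for the local pairings, possibly after adjusting the component at one auxiliary finite split place where the packet is a singleton. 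This gives $\iota$-compatibility $\dot\pi_u\cong\pi$ and completes the construction.

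The main obstacle I anticipate is the last step: ensuring that the prescribed collection of local members glues to a genuinely \emph{automorphic} representation of $\dot M$, i.e. that the product of the local characters $\langle\cdot,\dot\pi_w\rangle$ over all places equals $\epsilon_{\dot\psi_{\dot M^*}}$ on $\overline{\cS}_{\dot\psi_{\dot M^*}}$. For generic parameters $\epsilon\equiv1$, so this is a product-formula computation, but it still requires that we have enough freedom at the unconstrained places — guaranteed here because $\dot M$ is quasi-split at all $w\notin\{u,v\}$ so every character of the relevant component group is realized, and because the globalization lemmas were set up to leave infinitely many such places available. The verification that $\dot M^*$ transfers to $\dot G$ globally (so that $\dot M$ and the pairing $\langle\cdot,\cdot\rangle_{\dot\xi}$ even make sense) is the other point needing care, but it is handled cleanly by Lemmas \ref{lem:transfer-Levi-locglo} and \ref{lem:tranrel} as indicated.
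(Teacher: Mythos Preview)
The paper states this lemma without proof, so there is no paper argument to compare against directly. Your overall assembly from Lemmas \ref{lem_globalization_field}, \ref{lem_globalization_gp}, and \ref{lemma_globalization_of_the_representation_1} is the intended strategy, and the verification that $\dot M^*$ transfers to $\dot G$ globally via Lemmas \ref{lem:transfer-Levi-locglo} and \ref{lem:tranrel} is correct.

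However, your final step has a genuine gap. You propose to arrange automorphy of $\dot\pi$ by taking $\langle\cdot,\dot\pi_w\rangle=1$ away from $u$, ``possibly after adjusting the component at one auxiliary finite split place where the packet is a singleton.'' This is exactly backwards: at split places the packet \emph{is} a singleton and there is no freedom whatsoever. Moreover, at the archimedean place $v$ the group $\dot M_v$ need not be quasi-split, so $\langle\cdot,\dot\pi_v\rangle$ must restrict to the nontrivial character $\chi_{z_v}$ on $Z(\widehat{\dot M})^{\Gamma_v}$ and cannot be set to $1$. The product $\prod_w\langle\cdot,\dot\pi_w\rangle$ is some character of $\overline{\cS}_{\dot\psi_{\dot M^*}}$, and to kill it you need a non-split quasi-split place $w_0$ at which the localization $\overline{\cS}_{\dot\psi_{\dot M^*}}\to\overline{\cS}_{\dot\psi_{\dot M^*,w_0}}$ is an isomorphism, so that every character is realized locally. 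Your step~2 (a bare application of Lemma \ref{lemma_globalization_of_the_representation_1}) does not build in such a place; one must impose a $v_1$-type condition as in the later globalization propositions.

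A more direct route, which the placement of the lemma in the paper suggests, avoids the detour through $\dot M^*$ and the global classification altogether: after globalizing the field and the inner twist (so that $\dot M$ is defined over $\dot F$), decompose $\dot M=\dot M_-\times\dot M_+$ and globalize the representation directly on $\dot M$. For the unitary factor $\dot M_-$ (an inner form of $U_{\dot E/\dot F}(N_-)$, with compact center at every place) the argument of Lemma \ref{lemma_globalization_of_the_representation_1} via \cite[Thm 4.8]{Shi12} applies verbatim; for the linear factors $\dot M_+=\prod\Res_{\dot E/\dot F}\GL(N_i)$ one globalizes each discrete series component to a cuspidal automorphic representation of $\GL(N_i,\A_{\dot E})$ by standard methods. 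This yields a cuspidal, hence discrete, $\dot\pi$ on $\dot M$ with $\dot\pi_u\cong\pi$ without invoking the inductive local or global classification at all.
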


\subsection{Globalization of a simple parameter}
We shall globalize a simple parameter. It is worth remembering that a global parameter can be localized thanks to Proposition \ref{prop:2nd-seed-thm}. We shall perform two different globalizations both of which shall later be required.

 For every CM extension $\dot E/\dot F$ we fix characters $\chi_+\in \cZ^+_{\dot E}$ and $\chi_-\in \cZ^-_{\dot E}$. As usual $\dot\eta_\lambda:{}^L U_{\dot E/\dot F}(N)\hra {}^L G_{\dot E/\dot F}$ denote the $L$-morphism $\dot\eta_{\chi_\lambda}$ for each $N\ge 1$ and each $\lambda\in \{\pm 1\}$. Fix a sign $\kappa\in \{\pm1\}$ once and for all. For simplicity we often write just $\dot \eta$ for  $\dot\eta_\kappa$.

\begin{lem}
\label{lemma_globalization_of_simple_parameter_1}
Let $\dot{E}/\dot{F}$ be a CM extension such that $\dot{F}$ has at least two archimedean places.
Let $\dot{G}^{*}:=U_{\dot E/\dot F}(N)$ and $\dot\eta=\dot\eta_\kappa$ as above.
Let $V$ be a finite set of  places of $\dot{F}$ that do not split in $\dot{E}$.  For all $v \in V$, let
 $\phi_v \in \Phi_{2, \mathrm{bdd}}(\dot{G}^*_{v})$ be a square integrable parameter.
Furthermore, assume that for at least one $v \in V$, $ \phi_v \in {\Phi}_{\mathrm{sim}}(\dot{G}^*_v)$.
Then there exists a simple generic global parameter $\dot{\phi} \in \Phi_{\mathrm{sim}}(\dot{G}^*, \dot\eta)$ such that
for all $v \in V$,
\[
	\dot{\phi}_v = \phi_v.
\]
\end{lem}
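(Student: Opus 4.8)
The plan is to globalize the local representations underlying the $\phi_v$ to a cuspidal automorphic representation $\dot\pi$ of $\dot G^*$, extract its global parameter $\dot\psi$ from the quasi-split classification, and then force $\dot\psi$ to be simple and generic, using one auxiliary archimedean place together with the ``simple'' place among $V$.

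First I would pick, for each $v\in V$, a member $\pi_v\in\Pi_{\phi_v}(\dot G^*_v)$; since $\phi_v\in\Phi_{2,\bdd}(\dot G^*_v)$, Proposition~\ref{prop:main-local-qsplit} shows that $\pi_v$ is square-integrable and tempered. Fix an archimedean place $v_\infty$ of $\dot F$ not lying in $V$ (available by the hypothesis that $\dot F$ has at least two archimedean places, $V$ being assumed to omit one of them). Applying Lemma~\ref{lemma_globalization_of_the_representation_1} to $V$ and $(\pi_v)_{v\in V}$ gives a cuspidal automorphic representation $\dot\pi$ of $\dot G^*(\A_{\dot F})$ with $\dot\pi_v=\pi_v$ for $v\in V$ and $\dot\pi_{v_\infty}$ a discrete series representation whose infinitesimal character I would take to be so regular that any two of its Harish--Chandra parameter coordinates differ by more than $1$. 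By Mok's classification of the discrete automorphic spectrum of quasi-split unitary groups (cf. Corollary~\ref{cor:outside-Psi(G)} and \cite{Mok}), $\dot\pi$ lies in $L^2_{\disc,\dot\psi}(\dot G^*(\dot F)\bs\dot G^*(\A_{\dot F}))$ for a unique $\dot\psi\in\Psi_2(\dot G^*,\dot\eta)$, with $\dot\pi\in\Pi_{\dot\psi}(\dot G^*,\epsilon_{\dot\psi})$; in particular $\dot\pi_v\in\Pi_{\dot\psi_v}(\dot G^*_v)$ at every $v$, the localizations $\dot\psi_v\in\Psi^+(\dot G^*_v)$ being furnished by the second seed theorem (Proposition~\ref{prop:2nd-seed-thm}).

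Next I would determine the shape of $\dot\psi$. By Lemma~\ref{lem:global-disc-param} write $\dot\psi^N=\boxplus_{i=1}^r\mu_i\boxtimes\nu_i$ with $\mu_i\in\cA_{\cusp}(N_i')$, $\nu_i=\Sym^{n_i-1}$, the summands distinct, conjugate self-dual and of ``$I^+$''-type (so the parity is compatible with $\dot\eta$). For genericity: the infinitesimal character of $\dot\pi_{v_\infty}\in\Pi_{\dot\psi_{v_\infty}}(\dot G^*_{v_\infty})$ equals that of $\phi_{\dot\psi_{v_\infty}}$; if some $n_i>1$ then, within the $i$-th block, the Arthur $\SL(2,\C)$-factor contributes two of these coordinates differing by exactly $1$, contradicting the chosen regularity. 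Hence all $n_i=1$, $\dot\psi=:\dot\phi$ is generic, and $\dot\phi^N=\boxplus_{i=1}^r\mu_i$. For $v\in V$, the tempered representation $\dot\pi_v=\pi_v$ lies both in $\Pi_{\dot\phi_v}(\dot G^*_v)$ and in the $L$-packet $\Pi_{\phi_v}(\dot G^*_v)$; since $\dot\phi$ is generic, $\dot\phi_v$ is an $L$-parameter, and a non-bounded one would have a packet built by induction in an open chamber (cf. \S\ref{subsub:local-packet-Psi+}), containing no tempered representation, so $\dot\phi_v\in\Phi_{\bdd}(\dot G^*_v)$. Disjointness of bounded $L$-packets (Proposition~\ref{prop:main-local-qsplit}), with the localization compatibility of Lemma~\ref{lem:localization-compat}, then yields $\dot\phi_v=\phi_v$ for all $v\in V$. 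For simplicity: taking $v=v_0\in V$ with $\phi_{v_0}\in\Phi_{\simp}(\dot G^*_{v_0})$, the representation $\dot\eta\circ\phi_{v_0}=\dot\phi^N_{v_0}$ is irreducible, whereas $\dot\phi^N=\boxplus_{i=1}^r\mu_i$ localizes to $\boxplus_{i=1}^r\phi_{\mu_{i,w_0}}$, forcing $r=1$ (with $N_1'=N$ and $\mu_{1,w_0}$ square-integrable). Thus $\dot\phi^N=\mu_1\in\cA_{\cusp}(N)$ is cuspidal conjugate self-dual, hence $\dot\phi\in\Phi_{\simp}(\dot G^*,\dot\eta)$, and $\dot\phi_v=\phi_v$ for all $v\in V$ as required.

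The step I expect to be the main obstacle is pinning down the shape of $\dot\psi$. The genericity argument is delicate precisely because a non-generic $A$-packet can contain tempered, even discrete series, members (Saito--Kurokawa-type phenomena), so one cannot argue merely from temperedness at $v_\infty$; the robust substitute is the infinitesimal-character regularity, which forces the Arthur $\SL(2,\C)$-factor to be trivial. Forcing $r=1$ relies on the compatibility of global and local parameters at the simple place, i.e. on the second seed theorem (Proposition~\ref{prop:2nd-seed-thm}) — one of the deepest of the imported quasi-split inputs — together with disjointness of local $L$-packets. The remaining ingredients, namely the globalization of the representation (via \cite{Shi12}) and the parity bookkeeping, are routine.
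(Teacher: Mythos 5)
Your proof is correct and follows essentially the same route as the paper's: choose a discrete series $\pi_v$ in each packet $\Pi_{\phi_v}$, invoke Lemma~\ref{lemma_globalization_of_the_representation_1} to globalize to a cuspidal $\dot\pi$ with very regular infinitesimal character at an auxiliary archimedean place, read off the global parameter $\dot\phi$ from the quasi-split classification, use the archimedean regularity to force genericity, use disjointness of tempered $L$-packets (which requires genericity to have been settled first — the paper's footnote makes precisely this point) to get $\dot\phi_v=\phi_v$, and use irreducibility of $\dot\phi^N_{v_0}$ at the simple place to force $r=1$. You spell out the genericity and boundedness steps in more detail than the paper does, but the underlying argument and the ingredients are identical.
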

\begin{proof}
For  each $v \in V$, we choose  $\pi_v \in \Pi_{\phi_v}$, a square integrable representation of
the quasi-split unitary group  $\dot{G}^{*}(F_v)$
in the $L$-packet associated to $\phi_v$.
Applying Lemma  \ref{lemma_globalization_of_the_representation_1},
we obtain a cuspidal automorphic representation $\dot{\pi}$ of the quasi-split unitary group $\dot{G}^{*}(\A_{\dot{F}})$ such that $\dot\pi_v=\pi_v$ and $\dot\pi_{v_\infty}$ has sufficiently regular infinitesimal character.
Let $\dot{\phi}$ be the global parameter such that $\dot{\pi}\in \Pi_{\dot \phi}$. The condition on $\dot\pi_{v_\infty}$ implies that $\dot\phi$ is a generic parameter.
By construction, we know that $\dot{\phi}_v = \phi_v$ for all $v \in V$.\footnote{Here we use the fact that if $\pi_v \in \Pi_{2,\mathrm{temp}}(\dot{G}^{*}(F_v))$ belongs to $\Pi_{\phi_{v,1}}$ and $\Pi_{\phi_{v,2}}$ for two generic parameters $\phi_{v,1}$ and $\phi_{v,2}$ then $\phi_{v,1}=\phi_{v,2}$. However this may fail if either is non-generic.}
It remains to verify that $\dot{\phi}$ is simple.
This follows from the fact
that for some $v \in V$, $ \dot{\phi}_v \in {\Phi}_{\mathrm{sim}}(\dot{G}^*_{v})$.
\end{proof}

\begin{lem}
\label{lemma_globalization_of_simple_parameter_2}
Let $N \geq 2$.
Let $\dot{E}/\dot{F}$, $\dot{F}$, $\dot{G}^{*}$, $\dot \eta$, $V$, $\{\phi_v\}_{v\in V}$ be as in Lemma \ref{lemma_globalization_of_simple_parameter_1}.
Assume that for at least one $v \in V$, $\phi_v \in {\Phi}_{\mathrm{sim}}(\dot{G}^*_v)$.
Let $v_2 \not \in V$ be a finite place that does not split in $\dot{E}$.
Then there exists a simple generic global parameter $\dot{\phi}=(\dot\phi^N,\tilde{\dot{\phi}}) \in \Phi_{\mathrm{sim}}(\dot{G}^*,\dot \eta)$ such that
\begin{itemize}
\item
$\dot{\phi}_v = \phi_v$ for all $v \in V$, and
\item the parameter $\dot{\phi}_{v_2}^N$ has the decomposition
\[
 \dot{\phi}^N_{v_2} =  \phi^{N-2}_{-}  \oplus \phi^1_+ \oplus (\phi^1_+)^\star
\]
where $\phi^{N-2}_{-} \in \widetilde{\Phi}_{\mathrm{sim}}(N - 2)$, and $\phi^1_{+} \in \Phi(1)$ is a character which is
not conjugate self-dual.

\end{itemize}
\end{lem}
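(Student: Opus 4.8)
The idea is to reduce to Lemma \ref{lemma_globalization_of_simple_parameter_1} by constructing a suitable local parameter at $v_2$ and applying the globalization of representations at the enlarged set of places $V \cup \{v_2\}$. First I would fix a character $\phi^1_+ \in \Phi(1)$ of $\dot E_{v_2}^\times$ which is \emph{not} conjugate self-dual; such characters exist because conjugate self-duality is a codimension-positive condition (the norm map $\dot E_{v_2}^\times \to \dot F_{v_2}^\times$ is not surjective, so there are characters of $\dot E_{v_2}^\times$ whose restriction to $\dot F_{v_2}^\times$ avoids both the trivial character and $\omega_{\dot E_{v_2}/\dot F_{v_2}}$; for such $\phi^1_+$ one has $(\phi^1_+)^\star \ncong \phi^1_+$). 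Next I would choose a simple conjugate self-dual parameter $\phi^{N-2}_- \in \widetilde\Phi_{\mathrm{sim}}(N-2)$ of the appropriate parity so that $\phi^{N-2}_- \oplus \phi^1_+ \oplus (\phi^1_+)^\star$ has the right parity to lie in the image of $\eta_{\chi_\kappa,*}$; the decomposition \eqref{eq:decompose-rho} and Lemma \ref{lem:image-xi_chi} tell us which parity is needed, and $\phi^{N-2}_-$ can itself be produced locally (e.g. as a twist of the standard base change parameter, or — since $N-2 \geq 0$ and we only need local existence — by a direct construction). This gives a parameter $\phi_{v_2}^N \in \widetilde\Phi(N)$ of the stated form, hence an element $\phi_{v_2} \in \Phi_{\mathrm{bdd}}(\dot G^*_{v_2})$ via Lemma \ref{lem:image-xi_chi}; note $\phi_{v_2}$ is generic and tempered but \emph{not} discrete (because of the reducible constituent $\phi^1_+ \oplus (\phi^1_+)^\star$), so $\phi_{v_2}$ comes from a proper Levi subgroup $M^*_{v_2} = \mathrm{Res}_{\dot E_{v_2}/\dot F_{v_2}}\GL(1) \times U_{\dot E_{v_2}/\dot F_{v_2}}(N-2)$ of $\dot G^*_{v_2}$.

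The second step is to apply a globalization of representations at the set $V' := V \cup \{v_2\}$. Since $\phi_{v_2}$ is not discrete, I cannot invoke Lemma \ref{lemma_globalization_of_simple_parameter_1} directly with $v_2 \in V'$; instead I use Lemma \ref{lemma_globalization_of_the_representation_2}, which permits at the place $v_2$a Levi subgroup $M^*_{v_2} \neq \dot G^*_{v_2}$ and an irreducible subquotient of $\mathcal{I}_{M^*_{v_2}}^{\dot G^*_{v_2}}(\pi_{M^*_{v_2}} \otimes \chi)$ for a suitable unramified unitary twist. At the places $v \in V$ I take $M^*_v = \dot G^*_v$ and $\pi_v \in \Pi_{\phi_v}$ a square-integrable member of the packet, exactly as in the proof of Lemma \ref{lemma_globalization_of_simple_parameter_1}, and at the auxiliary archimedean place $v_\infty$ I impose sufficiently regular infinitesimal character. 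Lemma \ref{lemma_globalization_of_the_representation_2} then produces a cuspidal automorphic representation $\dot\pi$ of $\dot G^*(\A_{\dot F})$ with $\dot\pi_v = \pi_v$ for $v \in V$, with $\dot\pi_{v_\infty}$ regular, and with $\dot\pi_{v_2}$ an irreducible subquotient of $\mathcal{I}_{M^*_{v_2}}^{\dot G^*_{v_2}}(\pi_{M^*_{v_2}} \otimes \chi)$ for some unramified unitary $\chi$. Let $\dot\phi = (\dot\phi^N, \tilde{\dot\phi})$ be the global parameter with $\dot\pi \in \Pi_{\dot\phi}$; regularity at $v_\infty$ forces $\dot\phi$ to be generic, and simplicity of $\dot\phi$ follows because $\dot\phi_v = \phi_v \in \Phi_{\mathrm{sim}}(\dot G^*_v)$ for at least one $v \in V$ (the same argument as in Lemma \ref{lemma_globalization_of_simple_parameter_1}).

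The third step is to verify the local shape of $\dot\phi^N$ at $v_2$. By the compatibility of localization with the base change embedding (Proposition \ref{prop:2nd-seed-thm}), $\dot\phi^N_{v_2} = \dot\eta \circ \dot\phi_{v_2}$ is the parameter of the $\GL(N)$-representation that is the base change of $\dot\pi_{v_2}$. Since $\dot\pi_{v_2}$ is a constituent of $\mathcal{I}_{M^*_{v_2}}^{\dot G^*_{v_2}}(\pi_{M^*_{v_2}} \otimes \chi)$, its parameter factors through ${}^L M^*_{v_2} \hookrightarrow {}^L \dot G^*_{v_2}$, so $\dot\phi^N_{v_2}$ has the form $\phi' \oplus \mu \oplus \mu^\star$ with $\phi' \in \widetilde\Phi(N-2)$ and $\mu \in \Phi(1)$ depending on $\chi$. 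The unramified unitary twist $\chi$ is harmless here: twisting a non-conjugate-self-dual character by an unramified unitary character keeps it non-conjugate-self-dual (conjugate self-duality would force the product $\mu \mu^{\star}$ to be trivial, and an unramified unitary twist cannot repair the failure of this for $\phi^1_+$), so $\mu$ is still not conjugate self-dual, and $\phi' = \phi^{N-2}_-$ up to the twist is conjugate self-dual and simple. After relabelling, $\dot\phi^N_{v_2} = \phi^{N-2}_- \oplus \phi^1_+ \oplus (\phi^1_+)^\star$ with $\phi^1_+$ not conjugate self-dual, as required.

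\textbf{Main obstacle.} The delicate point is controlling the constituent at $v_2$ precisely enough: Lemma \ref{lemma_globalization_of_the_representation_2} only guarantees \emph{some} irreducible subquotient of \emph{some} unramified unitary twist of the chosen induced representation, and a priori one must rule out that the twist $\chi$ or the choice of subquotient destroys the desired non-conjugate-self-dual constituent. I expect this to be manageable because the property "has a non-conjugate-self-dual direct summand of dimension $1$ in its base change" is stable under unramified unitary twists of the inducing data and survives passage to irreducible subquotients when the inducing parameter is already in general enough position (one should arrange $\pi_{M^*_{v_2}}$ and the relevant twist so that the induced representation is irreducible, which is automatic if $\phi^{N-2}_-$ and $\phi^1_+$ are chosen in general position; alternatively one tracks the Langlands data directly). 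Ensuring that the parity bookkeeping via Lemma \ref{lem:image-xi_chi} is consistent — i.e. that the globally-produced $\dot\phi$ lies in $\Phi_{\mathrm{sim}}(\dot G^*, \dot\eta_\kappa)$ for the prescribed $\kappa$ — is the other bookkeeping step, handled by choosing the parity of $\phi^{N-2}_-$ correctly at the outset so that $\phi^{N-2}_- \oplus \phi^1_+ \oplus (\phi^1_+)^\star$ has total parity matching $(-1)^{N-1}\kappa$ in the sense of \eqref{eq:kappa(psi)-kappa(phi)}.
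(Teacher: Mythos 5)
Your overall plan is the same as the paper's: apply Lemma \ref{lemma_globalization_of_the_representation_2} to $V' = V \cup \{v_2\}$, using at $v_2$ the Levi $M^*_{v_2} = U_{\dot E_{v_2}/\dot F_{v_2}}(N-2) \times G_{\dot E_{v_2}/\dot F_{v_2}}(1)$ with a square-integrable representation $\pi_- \times \pi_+$, and then conclude simplicity of $\dot\phi$ as in Lemma \ref{lemma_globalization_of_simple_parameter_1}.

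However, there is a genuine gap, and it sits precisely where you flagged the "main obstacle." Your parenthetical argument — that twisting a non-conjugate-self-dual character of $\dot E_{v_2}^\times$ by an unramified unitary character keeps it non-conjugate-self-dual — is false in general. Write $\mu = \phi^1_+$; then $\mu$ is conjugate self-dual iff $\mu\mu^c = 1$, and for an unramified $\chi$ one has $\chi^c = \chi$ so $(\mu\chi)(\mu\chi)^c = \mu\mu^c \cdot \chi^2$. The map $\chi \mapsto \chi^2$ is surjective onto unramified unitary characters, so whenever $\mu\mu^c$ \emph{is} an unramified unitary character (which can happen even if it is nontrivial, i.e.\ even if $\mu$ fails to be conjugate self-dual), some unramified unitary twist $\mu\chi$ \emph{does} become conjugate self-dual. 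Since Lemma \ref{lemma_globalization_of_the_representation_2} gives you no control over which twist $\chi$ appears, your construction could globalize to a parameter whose $v_2$-component lacks the required non-conjugate-self-dual summand of dimension $1$.

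The fix — and this is exactly what the paper does — is to impose at the outset the \emph{stronger} condition that $(\pi_+)^c \pi_+^{-1}$ (equivalently $\lambda_+^{-1}\lambda_+^\star$) is not merely nontrivial but is \emph{not an unramified character}. This condition \emph{is} stable under unramified unitary twists of $\pi_+$, since $\bigl((\pi_+\chi)^c(\pi_+\chi)^{-1}\bigr)$ differs from $(\pi_+)^c\pi_+^{-1}$ by a factor of the unramified character $\chi^c\chi^{-1}$, hence the two are simultaneously unramified or not. With that modification the rest of your argument goes through: every irreducible constituent $\sigma$ of $\cI_{M^*_{v_2}}^{\dot G^*_{v_2}}(\pi_{M^*_{v_2}}\otimes\chi)$ for unramified unitary $\chi$ then has $\dot\eta_{v_2}\phi_\sigma = \phi^{N-2}_- \oplus \phi^1_+ \oplus (\phi^1_+)^\star$ with $\phi^1_+$ an unramified twist of $\lambda_+$, hence still not conjugate self-dual, and the parity bookkeeping you describe is automatic.
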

\begin{proof}
The proof follows the same lines as the proof of Lemma \ref{lemma_globalization_of_simple_parameter_1}.
The difference is that we shall apply Lemma \ref{lemma_globalization_of_the_representation_2}
to the set $V' = V \cup \{v_2\}$
 instead of
Lemma \ref{lemma_globalization_of_the_representation_1} to the set $V$.

For  each $v \in V$, we choose  $\pi_v \in \Pi_{\phi_v}$, a square integrable representation of
the quasi-split unitary group  $\dot{G}^{*}(F_v)$
in the $L$-packet associated to $\phi_v$.

Let $M^*_{v_2}$ be a maximal proper Levi subgroup of $\dot{G}^*_{v_2}$ of the form
\[
	M^*_{v_2} = M^*_{v_2, -} \times G_{\dot{E}_{v_2}/\dot{F}_{v_2}}(1),
\]
where $M^*_{v_2, - } =  U_{\dot{E}_{v_2}/\dot{F}_{v_2}}(N-2)$.  We fix
a representation $\pi_{M^*_{v_2}} = \pi_{-} \times \pi_+ \in \Pi_{2, \mathrm{temp}}(M^*_{v_2})$
such that $(\pi_+)^c\pi_+^{-1}$ is not an unramified character (viewing $\pi_+$ as a character of $\GL(1,\dot E_{v_2})$; here $ c\in \Gal(E_{v_2}/F_{v_2})$ is the nontrivial element). Write $\dot\eta_{\dot M^*}$ for the composition of $\dot\eta$ with the Levi embedding $^L \dot M^*\hra {}^L \dot G^*$. Denote by $\phi_{\pi_{M^*_{v_2}}}$ the generic parameter associated to $\pi_{M^*_{v_2}}$. Then we have
\[
	\dot\eta_{M^*, v_2} \phi_{\pi_{M^*_{v_2}}} = \phi^{N-2}_{-} \oplus  \lambda_{+} \oplus  \lambda_{+}^\star,
\]
where  $ \phi^{N-2}_{-} \in \widetilde{\Phi}_{\mathrm{sim}}(N - 2)$, $\lambda_{+}\in \Phi(1)$, and
 the character $\lambda_{+}^{-1} \lambda_+^\star$ is not unramified.
This implies that the character
${\phi}^N_{\pi_1} \cdot \chi$ is not conjugate self-dual for all unramified unitary characters $\chi$.
We observe that if $\sigma$ is an irreducible constituent of the induced representation
$\cI_{M^*_{v_2}}^{\dot{G}^*_{v_2}}(\pi_{M^*_{v_2}} \otimes \chi)$
for some unramified unitary character $\chi \in \Psi_u(M^*_{v_2})$, then the associated parameter $\phi_\sigma$ satisfies
\[
	\dot\eta_{v_2} \phi_{\sigma} =  \phi^{N-2}_{-}  \oplus \phi^1_+ \oplus (\phi^1_+)^\star
\]
where $\phi^1_+\in \Phi(1)$ is an unramified twist of $\lambda_+$, which is therefore not conjugate self-dual.

The result follows by arguing as in the proof of Lemma \ref{lemma_globalization_of_simple_parameter_1}
and appealing to Lemma \ref{lemma_globalization_of_the_representation_2}
with the set $V \cup \{v_2\}$
instead of
Lemma \ref{lemma_globalization_of_the_representation_1} with the set $V$.
\end{proof}

\subsection{Globalization of a parameter}\label{sub:Globalization-param}
  As before let $E/F$ be a quadratic extension of local fields of characteristic $0$. Let $M^*$ be a Levi subgroup of $G^*:=U_{E/F}(N)$.
This subsection consists of various globalizations of groups and parameters to be needed for later proofs in the generic case. To explain this we introduce new terminology: a Levi subgroup $M^*$ of $G^*$ is called \textit{linear} if it is isomorphic to a product of Weil restriction of scalars of linear groups. The propositions are then organized according to the following cases.
\begin{enumerate}
\item $N$ is odd. (Proposition \ref{prop_globalize_data_N_odd})
\item $N$ is even.
\begin{enumerate}
\item $M^*=G^*$. (Proposition \ref{prop_globalize_data_N_even_M_eq_G})
\item $M^*\neq G^*$, $M^*$ is not linear, $N\ge 6$. (Proposition \ref{prop_globalize_data_N_even_and_M_neq_G_and_N_geq_6})
\item $M^*\neq G^*$, $M^*$ is not linear, $N=4$. (Propositions \ref{prop_globalize_data_N_4_reduction_exceptional_case}, \ref{prop_globalize_data_N_4_exceptional_case})
\item $M^*\neq G^*$, $M^*$ is linear. (Proposition \ref{prop_globalize_data_N_even_and_M_neq_G_split})
\end{enumerate}
\end{enumerate}
The complication in the even case is caused by the parity obstruction that an inner twist $(G,\xi)$ of $G^*$ does not always admit a global inner twist $(\dot G,\dot\xi)$ which localizes to $(G,\xi)$ at one place and is quasi-split at all the other places. The most subtle case of all turns out to be the case 2(c), in which case we have to check the local intertwining relation explicitly at archimedean places for enough cases. In fact it is fair to say that the case 2(c) is at the basis of the whole induction argument to prove LIR.

  Let $E/F$ be a quadratic extension of local fields and $G^*:=U_{E/F}(N)$. Temporarily choose a sign $\kappa\in\{\pm1\}$ and a character $\chi_\kappa\in \cZ_E^\kappa$. (In practice $\kappa$ and $\chi_\kappa$ will be determined by the global choices given below.)\footnote{When $E/F$ globalizes to a CM extension $\dot E/\dot F$, we can globalize $\chi_\kappa$ to $\dot\chi_\kappa\in \cZ_{\dot E}^\kappa$ as in \cite[Lem 7.2.2]{Mok} but we do not need such a result.}
Let $\phi \in \Phi_{\mathrm{bdd}}(G^*)$ be a generic bounded parameter.
As usual $\eta_\kappa\phi$ admits a decomposition into simple parameters
\[
\eta_\kappa \phi=  \oplus_{i=1}^r \ell_i\phi^{N_i}_i,\qquad \phi_i^{N_i}\in \Phi_{\simp}(N_i),~~\ell_i\in \Z_{\ge 1}.
\]
Suppose that $\phi\in \Phi_\el(G^*)\cup \Phi_{\EXC}(G^*)$ (namely it is not simultaneously non-elliptic and non-exceptional).
Such a parameter can be coarsely classified up to a reordering of factors as follows. Observe that the classification is independent of the choice of $\kappa$ and $\chi_\kappa$.
\begin{itemize}
	\item[$\Phi_2(G^*)$:] $\eta_\kappa \phi = \phi^{N_1}_1 \oplus  \cdots \oplus \phi^{N_r}_r$, where $S_\phi \simeq \O(1,\C)^r$.
	\item[$\Phi^2_{\mathrm{ell}}(G^*)$:] $\eta_\kappa \phi = 2 \phi^{N_1}_1 \oplus \cdots 2 \phi^{N_q}_q \oplus \phi^{N_{q+1}}_{q+1} \oplus \cdots \oplus \phi^{N_r}_{r}$, where
		$S_\phi \simeq \O(2,\C)^q \times \O(1,\C)^{r-q}$, $q\ge1$.
	\item[exc1:] $\eta_\kappa \phi = 2 \phi^{N_1}_1 \oplus \phi^{N_2}_2 \oplus \cdots \oplus \phi^{N_r}_r$, where
		$S_\phi \simeq \Sp(2, \C) \times \O(1,\C)^{r-1}$.
	\item[exc2:] $\eta_\kappa \phi = 3\phi^{N_1}_1 \oplus \phi^{N_2}_2 \oplus \cdots \oplus \phi^{N_r}_r$, where
		$S_\phi \simeq \O(3,\C) \times \O(1,\C)^{r-1}$.
\end{itemize}

There exists a Levi subgroup $M^*$ of $G^*$ unique up to conjugation such that $\Phi_2(M^*, \phi)$
is non-empty, that is, there exists a parameter
$\phi_{M^*} \in \Phi_2(M^*)$ which maps to $\phi$ via (any) Levi embedding $i_{M^*}:{}^L M^*\hra {}^L G^*$.
Explicitly $M^*$ has the form
\[
	M^* = M^*_{-} \times G_{E/F}(N_1)^{\ell'_1} \times \cdots \times G_{E/F} (N_r)^{\ell'_r},
\]
where $\ell'_i = \lfloor \ell_i/2\rfloor$, $N_{-} = \sum_{\ell_i \textrm{ odd}} N_i$, and
$M^*_{-} = U_{E/F}(N_{-})$. By restricting the $L$-embedding $\eta_{M^*}=\eta_\kappa i_{M^*}$ we obtain $\eta_-:{}^L M^*_-\hra {}^L G(N_-)$ and $\eta'_i:{}^L G(N_i)\hra {}^L G(2N_i)$ for each $1\le i\le r$.
In particular this allows us to view $(M^*_{-}, \eta_{-}) \in  \widetilde{\mathcal{E}}_{\simp}(N_{-})$.
The parameter $\phi_{M^*}=\phi_-\times \prod_{i=1}^r (\phi'_i)^{\ell'_i}$ is determined by the condition that $\eta_-\phi_-=\oplus_{2\nmid\ell_i} \phi_i^{N_i}$ and $\eta'_i\phi'_i=2\phi^{N_i}_i$. Moreover $\phi_-$ is bounded since $\phi$ is.

Let us set up some common notation from here throughout the end of \S\ref{sub:LIR-proof}.
As in the previous subsection we fix a sign $\kappa\in \{\pm1\}$ and $\dot\chi_\lambda\in \cZ^\lambda_{\dot E}$ for all CM extensions $\dot E/\dot F$ and all $\lambda\in \{\pm1\}$. By $\dot G^*$ we always denote the global unitary group $U_{\dot E/\dot F}(N)$, and often write $\dot\eta=\dot\eta_\kappa$ for the $L$-embedding $^L \dot G^* \hra {^L G}_{\dot E/\dot F}(N)$. Typically we will have a place $u$ of $\dot F$ such that $\dot E_u/\dot F_u=E/F$ and $\dot G_u=G$. In this case we write $\eta$ for $\dot \eta_u$, the restriction of $\dot\eta$ to the embedding of $L$-groups relative to the local field $\dot F_u$. In every lemma or proposition below we aim to construct global data with certain properties starting from (a subset of) the following local data. (We already discussed $G^*$, $\phi$, $M^*$, and $\phi_{M^*}$ above.)
\begin{itemize}
  \item $E/F$ is a quadratic extension of local fields,
  \item $G^*=U_{E/F}(N)$ and $M^*$ is a Levi subgroup of $G^*$,
  \item $(G,\xi)$ is an inner twist of $G^*$,
  \item $\phi\in \Phi_{\bdd}(G^*)$ is the image of $\phi_{M^*}\in \Phi_{2}(M^*)$.
\end{itemize}

\begin{lem}
\label{lem_globalize_data_N_odd}
Given $(E/F, G^*,   \phi, M^*, \phi_{M^*})$ as above, we can find the global data
$$(\dot{E}/\dot{F}, u, v_1,  \dot{G}^*, \dot{\phi}, \dot{M}^*, \dot{\phi}_{\dot{M}^*}),$$
where $\dot{E}/\dot{F}$ is a CM extension, $u$ is a place of $\dot{F}$ that does not split in $\dot{E}$,
$v_1$ is a finite place of $\dot{F}$ that does not split in $\dot{E}$,
$\dot{G}^*=U_{\dot E/\dot F}(N)$, $\dot{\phi} \in \Phi(\dot{G}^*,  \dot{\eta})$,
$\dot{M}^*$ is a Levi subgroup of $\dot{G}^*$, and $\dot{\phi}_{\dot{M}^*} \in \Phi_2(\dot{M}^*, \dot{\phi})$,
such that
\begin{enumerate}
\item $\dot E_u/\dot F_u=E/F$ and
   $
 	(\dot{G}^*_u, \dot{\phi}_u, \dot{M}^*_u, \dot{\phi}_{\dot{M}^*,u}) = (G^*, \phi, M^*, \phi_{M^*})
  $,
\item if $\phi \in \Phi_2(G^*)$ (resp. $\Phi^2_{\el}(G^*)$, resp. $\Phi_{\EXC1}(G^*)$, resp. $\Phi_{\EXC2}(G^*)$) then  $\dot\phi \in \Phi_2(\dot G^*)$ (resp. $ \Phi^2_{\el}(\dot G^*)$, resp. $ \Phi_{\EXC1}(\dot G^*)$, resp. $\Phi_{\EXC2}(\dot G^*)$),
\item $\dot{\phi}_{v_1} \in \Phi_{\mathrm{bdd}}(\dot{G}^*_{v_1})$ and
$\dot{\phi}_{M^*, v_1} \in \Phi_{\mathrm{2, bdd}}(\dot{M}^*_{v_1})$
and
\item
 the canonical maps
\begin{eqnarray*}
	{{S}}_{\dot{\phi}_{M^*}} &\rightarrow& {{S}}_{\dot{\phi}_{M^*,v}} 	\\
	{{S}}_{\dot{\phi}} &\rightarrow& {{S}}_{\dot{\phi}_v} 	\\
\end{eqnarray*}
are isomorphisms for $v \in \{u, v_1\}$. %
\end{enumerate}
\end{lem}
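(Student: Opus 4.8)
\textbf{Proof plan for Lemma~\ref{lem_globalize_data_N_odd}.}
The plan is to combine the globalization of the group (Lemma~\ref{lem_globalization_gp}) with the globalization of a simple parameter (Lemmas~\ref{lemma_globalization_of_simple_parameter_1} and~\ref{lemma_globalization_of_simple_parameter_2}), applied to each simple constituent of $\phi$, and then to assemble the pieces. First I would invoke Lemma~\ref{lem_globalization_field} to produce a CM extension $\dot E/\dot F$ with a place $u$ such that $\dot E_u/\dot F_u=E/F$ and with $\dot F$ having at least two archimedean places (take $r_0=2$); I then pick an auxiliary finite place $v_1\neq u$ of $\dot F$ that does not split in $\dot E$. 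Since $N$ is odd, Lemma~\ref{lem_globalization_gp} (part~3) lets me globalize $(G,\xi)$ to an inner twist $(\dot G^*,\dot\xi)\rightarrow\dot G$ that is quasi-split away from $u$ (we do not even need a second bad place here because $N$ odd forces the local twists to be quasi-split outside $u$), with $(\dot G_u,\dot\xi_u)=(G,\xi)$. This settles the group part of the data.

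Next I would globalize the parameter, constituent by constituent. Write $\eta_\kappa\phi=\bigoplus_{i=1}^r\ell_i\phi_i^{N_i}$ with $\phi_i^{N_i}\in\Phi_{\simp}(N_i)$ as in the text, and recall the Levi $M^*=M^*_-\times\prod_i G_{E/F}(N_i)^{\ell'_i}$ together with $\phi_{M^*}=\phi_-\times\prod_i(\phi'_i)^{\ell'_i}$, where $\phi_-\in\Phi_{2,\bdd}(M^*_-)$ is the descent to $U_{E/F}(N_-)$ of the sum of the odd-multiplicity constituents. The key point is that each $\phi_i^{N_i}$ is a simple local parameter coming from a simple endoscopic datum, so Lemma~\ref{lemma_globalization_of_simple_parameter_1} applies: taking $V=\{u,v_1\}$ and prescribing at $u$ the local parameter $\phi_i^{N_i}$ (and at $v_1$ any square-integrable parameter for the corresponding quasi-split unitary group, simple at least at one place so that the hypothesis "for at least one $v$ the parameter is in $\Phi_{\simp}$" is met — here it is simple at both $u$ and $v_1$), I obtain a simple generic global parameter $\dot\phi_i\in\Phi_{\simp}(U_{\dot E/\dot F}(N_i),\dot\eta_{\kappa_i})$ localizing to $\phi_i^{N_i}$ at $u$ and to a bounded discrete parameter at $v_1$. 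I then set
\[
\dot\phi \;=\; \text{the image in }\Phi(\dot G^*,\dot\eta)\text{ of }\;\Big(\bigoplus_{2\nmid\ell_i}\dot\phi_i\Big)\;\boxplus\;\Big(\bigoplus_{2\mid\ell_i}\tfrac{\ell_i}{2}\,(\dot\phi_i\oplus\dot\phi_i)\Big)\ \text{suitably},
\]
i.e. I take $\dot\phi_{\dot M^*}=\dot\phi_-\times\prod_i(\dot\phi'_i)^{\ell'_i}$ with $\dot\phi_-$ the descent of $\bigoplus_{2\nmid\ell_i}\dot\phi_i$ and $\dot\phi'_i$ the descent of $\dot\phi_i\oplus\dot\phi_i$, and let $\dot\phi=i_{\dot M^*}\dot\phi_{\dot M^*}$. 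Here $\dot M^*$ is the Levi subgroup of $\dot G^*$ of the same shape as $M^*$, so $\dot M^*_u=M^*$. By construction $\dot\phi_u=\phi$, $\dot\phi_{\dot M^*,u}=\phi_{M^*}$, $\dot\phi_{v_1}\in\Phi_{\bdd}(\dot G^*_{v_1})$ and $\dot\phi_{\dot M^*,v_1}\in\Phi_{2,\bdd}(\dot M^*_{v_1})$, giving parts~1 and~3. For part~2, the multiplicity pattern and the parity of the $\dot\phi_i$ are the same globally as locally at $u$ because each $\dot\phi_i$ is simple and $\dot\phi_{i,u}=\phi_i^{N_i}$, so the explicit shape of $S_{\dot\phi}$ (as $\O(1)^r$, $\O(2)^q\times\O(1)^{r-q}$, $\Sp(2)\times\O(1)^{r-1}$, or $\O(3)\times\O(1)^{r-1}$) is inherited; the distinctness of the $\dot\phi_i$ can be arranged by choosing the auxiliary local data at $v_1$ to distinguish those $\dot\phi_i$ whose localizations at $u$ happen to coincide — this uses the footnote-type remark in Lemma~\ref{lemma_globalization_of_simple_parameter_1} and the freedom in Lemma~\ref{lemma_globalization_of_the_representation_1}.

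Finally, part~4 — that the localization maps $S_{\dot\phi_{\dot M^*}}\rightarrow S_{\dot\phi_{\dot M^*,v}}$ and $S_{\dot\phi}\rightarrow S_{\dot\phi_v}$ are isomorphisms for $v\in\{u,v_1\}$ — follows from the explicit description of the centralizer groups in \S\ref{subsub:global-param-U} and \S\ref{subsub:local-param-U}: both are determined by the multiplicities $\ell_i$ and the signs $\kappa(\psi_i)$, which agree between the global parameter and its localization at any place where all constituents remain distinct and of the same parity. Since each $\dot\phi_i$ is simple and conjugate self-dual of a definite parity that is detected already at $u$ (and at $v_1$ by our choice), the partition $I^+/I^-$ and the values $\ell_i$ are the same for $\dot\phi$ and for $\dot\phi_v$, $v\in\{u,v_1\}$, whence the stated maps are isomorphisms. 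The main obstacle in this argument is bookkeeping rather than conceptual: one must be careful to choose the auxiliary data at $v_1$ so that (i) all the globalized simple parameters $\dot\phi_i$ are pairwise distinct, and (ii) they stay simple and of the correct parity at $v_1$, so that both the "shape" assertion (part~2) and the centralizer-comparison (part~4) hold at the auxiliary place as well as at $u$; this is exactly the kind of argument carried out in \cite[\S6.2]{Arthur} and \cite[\S7.2]{Mok}, and I would follow it closely, pointing to those references for the routine verifications.
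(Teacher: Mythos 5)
Your proposal follows the same route as the paper's proof: globalize the field with enough archimedean places via Lemma~\ref{lem_globalization_field}, fix a finite auxiliary place $v_1$, globalize each simple constituent $\phi_i^{N_i}$ using Lemma~\ref{lemma_globalization_of_simple_parameter_1} with pairwise-distinct prescribed data at $v_1$, then assemble $\dot\phi^N=\bigoplus_i\ell_i\dot\phi_i^{N_i}$ and $\dot\phi_{\dot M^*}$ and read off conditions 1--4 from the explicit centralizer descriptions. Two remarks. First, the invocation of Lemma~\ref{lem_globalization_gp} to globalize $(G,\xi)$ is out of place: this lemma concerns only the quasi-split data $(\dot G^*,\dot\phi,\dot M^*,\dot\phi_{\dot M^*})$, and the inner twist enters only in the subsequent Proposition~\ref{prop_globalize_data_N_odd}. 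Second, there is a genuine (if small) omission: you never check that the assembled conjugate self-dual parameter $\dot\phi^N$ descends via $\dot\eta_\kappa$ rather than $\dot\eta_{-\kappa}$. The paper handles this with a parity argument using Propositions~\ref{prop:1st-seed-thm}, \ref{prop:2nd-seed-thm} and Lemma~\ref{lem:image-xi_chi}: if $\dot\phi^N$ descended through $\dot\eta_{-\kappa}$, then $\dot\phi^N_u$ would have sign $-(-1)^{N-1}\kappa$, contradicting the fact that $\dot\phi^N_u=\eta_\kappa\phi$ has sign $(-1)^{N-1}\kappa$. Without this step, the element $\dot\phi\in\Phi(\dot G^*,\dot\eta)$ is not even defined, and part~1 of the lemma (in particular $\dot\phi_u=\phi$) does not follow from the construction of the simple constituents alone.
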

\begin{proof}
By Lemma  \ref{lem_globalization_field}, we obtain $(\dot{E}/\dot{F}, u)$ which globalizes the local extension $E/F$.
We fix a non-archimedean place $v_1$ of $\dot{F}$ that does not split in $\dot{E}$.
We can associate to each $\phi^{N_i}_i$ a pair
$$(G^*_i, \eta_{i})= (U_{\dot{E}/\dot{F}}(N_i),\dot\eta_{\chi_{\kappa_i}})\in \widetilde{\mathcal{E}}_{\mathrm{sim}}(N_i)$$ with unique sign $\kappa_i\in \{\pm 1\}$.
We also fix some simple parameters $ \phi_{v_1, i}  \in {\Phi}_{\mathrm{sim}}(\dot{G}^*_{i, v_1})$
such that $\phi_{v_1, i} \neq \phi_{v_1, j}$ for all $i \neq j$ (hence also $\dot{\eta}_{i,v_1}\phi_{v_1, i} \neq \dot{\eta}_{j,v_1}\phi_{v_1, j}$).
We  apply Lemma \ref{lemma_globalization_of_simple_parameter_1}
to obtain a simple global parameter
$\dot{\phi}_i \in \Phi_{\mathrm{sim}}(\dot{G}^*_i, \dot{\eta}_i)$ such that $\dot{\phi}_{i, u} = \phi_i$ and
$\dot{\phi}_{i, v_1} = \phi_{v_1, i}$.

We  define the global parameter
\[
	\dot{\phi}^N = \ell_1  \dot{\phi}_1^{N_1} \oplus \cdots \oplus \ell_r \dot{\phi}^{N_r}_r \in \widetilde{\Phi}(N).
\]
By construction, we observe that $\dot{\phi}^N_{u} = \eta_\kappa \phi$.
We must check that $\dot{\phi}^N$ is in the image of $\Phi(\dot{G}^*, \dot{\eta}_\kappa)$. If this were not true,
Proposition \ref{prop:1st-seed-thm} tells us that $\dot{\phi}^N$ comes from a parameter $\dot\phi$ of $\Phi(\dot{G}^*, \dot{\eta}_{-\kappa})$.
Hence $\dot{\phi}^N_{u}$ has sign $-(-1)^{N-1}\kappa$ in view of Proposition \ref{prop:2nd-seed-thm}. However $\eta_\kappa\phi$ has sign $(-1)^{N-1}\kappa$, cf. Lemma \ref{lem:image-xi_chi}, so this is a contradiction. So $\dot{\phi}^N$ comes from $\dot\phi\in\Phi(\dot{G}^*, \dot{\eta}_{\kappa})$.

Set $\dot{M}^*_{-} := U_{\dot{E}/\dot{F}}(N_{-})$ and define
\[
	\dot{M}^* := \dot{M}^*_{-} \times G_{\dot{E}/\dot{F}}(N_1)^{\ell'_1} \times \cdots
					\times G_{\dot{E}/\dot{F}} (N_r)^{\ell'_r}.
\]
as a Levi-subgroup of $\dot{G}^*$. By restricting $\dot\eta$ to ${}^L \dot M^*$, we obtain an $L$-embedding $\dot\eta_{\dot M^*}:^L \dot M^* \hra ^L G(N)$.
 We define a parameter in $\Phi_2(\dot{M}^*, \dot{\eta}_{\dot{M}^*})$
 $$\dot{\phi}_{\dot{M}^*} = \dot\phi_-\times (\dot\phi'_1)^{\ell'_1}\times \cdots \times (\dot\phi'_r)^{\ell'_r},\quad
\dot\phi_-\in \Phi(\cdot M^*_-,\dot\eta_-),~~\dot\phi'_i\in \Phi(G(N_i)),$$
by requiring that $\dot{\phi}_{\dot{M}^*}^N  = (\oplus_{2\nmid \ell_i} \dot\phi_i^{N_i})\oplus (\oplus_{i=1}^r 2\ell_i'\dot\phi_i^{N_i})$ in the way that $\dot\phi_-$ maps to $(\oplus_{2\nmid \ell_i} \dot\phi_i^{N_i})$ and each $\dot\phi'_i$ to $2\ell_i'\dot\phi_i^{N_i}$.

We have constructed the globalized data
 $(\dot{E}/\dot{F}, u, v_1,  \dot{G}^*, \dot{\eta}, \dot{\phi}, \dot{M}^*, \dot{\phi}_{\dot{M}^*})$.  It remains
to verify that conditions $1$ through $4$ are satisfied.  Conditions $1$ and $3$ are clearly satisfied.
Our construction is an adaptation of Arthur's construction in \cite[Prop 6.3.1]{Arthur}.
It follows form the construction and the definition of the local and global groups that condition 4 is satisfied. (To see this one utilizes the explicit description of $S_{\dot\phi}$, $S_{\dot\phi_v}$, etc in \S\ref{subsub:local-param-U} and \S\ref{subsub:global-param-U} and argues as on the last paragraph on page 324 of \cite{Arthur}.)
Finally, it follows from the construction and the isomorphism
${{S}}_{\dot{\phi}} \simeq {{S}}_{\phi}$ that condition $2$ is satisfied.
\end{proof}

 We would like to adapt the preceding lemma, which concerns the quasi-split case, to the setting of inner twists. The following globalization shall act as our analogue of \cite[Prop 6.3.1]{Arthur} when  $N$ is odd.
\begin{prop}
\label{prop_globalize_data_N_odd}
Let $(E/F, G^*,(G,\xi),\phi,M^*,\phi_{M^*})$ be as above. Assume that $N$ is odd.  %
There exists the global data
$$
(\dot{E}/\dot{F}, u, v_1,  \dot{G}^*,  (\dot{G}, \dot{\xi}, \dot{z}), \dot{\phi}, \dot{M}^*,  \dot{\phi}_{\dot{M}^*})
$$
where $\dot{E}/\dot{F}$ is a CM extension, $u$ is a place of $\dot{F}$ not split in $\dot{E}$,
$v_1$ is a finite place of $\dot{F}$ not split in $\dot{E}$,
$\dot{G}^*=U_{\dot E/\dot F}(N)$,
$ (\dot{G}, \dot{\xi}, \dot{z})$ is a pure inner twist of $\dot{G}^*$,
$\dot{\phi} \in \Phi(\dot{G}^*,  \dot{\eta})$,
$\dot{M}^*$ is a Levi subgroup of $\dot{G}^*$, and $\dot{\phi}_{\dot{M}^*} \in \Phi_2(\dot{M}^*, \dot{\phi})$,
such that
\begin{enumerate}
\item $\dot E_u/\dot F_u=E/F$ and $
	(\dot{G}_u, \dot\xi_u,\dot{\phi}_u, \dot{M}^*_u, \dot{\phi}_{\dot{M},u}) = (G,\xi, \phi, M^*, \phi_{M^*})
$,
\item
 $\dot{G}_v$ is quasi-split for all $v \neq u$,

\item if $\phi \in \Phi_2(G^*)$ (resp. $\phi \in \Phi^2_{\el}(G^*)$, resp. $\phi \in \Phi_{\EXC1}(G^*)$, resp. $\phi \in \Phi_{\EXC2}(G^*)$) then  $\dot\phi \in \Phi_2(\dot G^*)$ (resp. $\dot\phi \in \Phi^2_{\el}(\dot G^*)$, resp. $\dot\phi \in \Phi_{\EXC1}(\dot G^*)$, resp. $\dot\phi \in \Phi_{\EXC2}(\dot G^*)$),
\item $\dot{\phi}_{v_1} \in \Phi_{\mathrm{bdd}}(\dot{G}^*_{v_1})$ and
$\dot{\phi}_{\dot{M}^*, v_1} \in \Phi_{\mathrm{2, bdd}}(\dot{M}^*_{v_1})$,
\item
 the canonical maps
\begin{eqnarray*}
	{{S}}_{\dot{\phi}_{M^*}} &\rightarrow& {{S}}_{\dot{\phi}_{M^*,v}} 	\\
	{{S}}_{\dot{\phi}} &\rightarrow& {{S}}_{\dot{\phi}_v} 	\\
\end{eqnarray*}
are isomorphisms for $v \in \{u, v_1\}$.

\end{enumerate}

\end{prop}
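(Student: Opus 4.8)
The strategy is to upgrade the quasi-split globalization of Lemma \ref{lem_globalize_data_N_odd} to the setting of inner twists by exploiting the absence of a parity obstruction when $N$ is odd. First I would apply Lemma \ref{lem_globalize_data_N_odd} to the data $(E/F,G^*,\phi,M^*,\phi_{M^*})$, obtaining a CM extension $\dot E/\dot F$, places $u,v_1$, a global quasi-split group $\dot G^*=U_{\dot E/\dot F}(N)$, a global parameter $\dot\phi\in\Phi(\dot G^*,\dot\eta)$, a Levi $\dot M^*$, and $\dot\phi_{\dot M^*}\in\Phi_2(\dot M^*,\dot\phi)$ satisfying conditions (1)--(4) of that lemma (which correspond to conditions (1),(3),(4),(5) here, with (1) there giving the quasi-split-localization part of (1) here). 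The only remaining task is to produce the pure inner twist $(\dot G,\dot\xi,\dot z)$ of $\dot G^*$ meeting conditions (1) and (2) here, compatibly with the chosen $u$.

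The construction of $(\dot G,\dot\xi,\dot z)$ proceeds exactly as in Lemma \ref{lem_globalization_gp}, in its $N$-odd case. We are given a pure inner twist $(G,\xi,z)$ of $G^*=U_{E/F}(N)$ at the place $u$ (recall from \S\ref{subsub:inner-U} that every inner twist $(G,\xi)$ of a unitary group can be promoted to a pure inner twist since the top horizontal map $H^1(\Gamma,U_{E/F}(N))\to H^1(\Gamma,U_{E/F}(N)_{\tx{ad}})$ is surjective). Using the exact sequence of \cite[Cor 2.5, Prop 2.6]{Kot86}
\[ H^1(\dot F,\dot G^*)\ra \bigoplus_v H^1(\dot F_v,\dot G^*)\stackrel{\sum\alpha_v}{\lra}\pi_0(Z(\hat{\dot G}^*)^\Gamma)^D\ra 0, \]
with $\pi_0(Z(\hat{\dot G}^*)^\Gamma)^D\cong\Z/2\Z$, I would prescribe the local classes $(\dot G_v,\dot\xi_v,\dot z_v)$ to equal $(G,\xi,z)$ at $v=u$ and to be trivial at all $v\neq u$. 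Since $N$ is odd, the map $\alpha_u:H^1(\dot F_u,\dot G^*)\to\Z/2\Z$ is zero (the image of the left vertical map in the diagram of \S\ref{subsub:inner-U} is $0$ when $\delta=1$), so the obstruction $\sum_v\alpha_v$ vanishes automatically, and the prescribed collection lifts to a global class $(\dot G,\dot\xi,\dot z)\in H^1(\dot F,\dot G^*)$. This gives a pure inner twist of $\dot G^*$ with $(\dot G_u,\dot\xi_u,\dot z_u)=(G,\xi,z)$ and $\dot G_v$ quasi-split for all $v\neq u$, which is conditions (1) and (2).

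It remains to note that the two globalizations are mutually compatible: the place $u$ and the CM extension $\dot E/\dot F$ are fixed in advance by Lemma \ref{lem_globalization_field}, and both Lemma \ref{lem_globalize_data_N_odd} and the cohomological construction above use the same $u$, so there is no conflict. Conditions (3), (4), (5) transfer verbatim from conditions (2), (3), (4) of Lemma \ref{lem_globalize_data_N_odd}, noting that $S_{\dot\phi}$ and its Levi analogue depend only on $\dot\phi$ and $\dot M^*$, not on the inner form, and that the classification of $\dot\phi$ into $\Phi_2$, $\Phi^2_{\el}$, $\Phi_{\EXC1}$, or $\Phi_{\EXC2}$ is likewise insensitive to the inner twist. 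Finally the local-at-$u$ part of condition (1) is immediate: $(\dot G_u,\dot\xi_u)=(G,\xi)$ by the cohomological construction, while $(\dot\phi_u,\dot M^*_u,\dot\phi_{\dot M^*,u})=(\phi,M^*,\phi_{M^*})$ is part (1) of Lemma \ref{lem_globalize_data_N_odd} (here we use the canonical identification $\Phi(\dot G_u)=\Phi(\dot G^*_u)$ for inner twists, \S\ref{subsub:L-param-A-param}). There is no real obstacle here beyond bookkeeping: the whole point of restricting to odd $N$ is that the parity obstruction governing whether a collection of local inner twists comes from a global one is vacuous, so the quasi-split globalization and the cohomological globalization of the inner form can be carried out independently and then combined.
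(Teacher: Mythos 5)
Your high-level plan — apply Lemma~\ref{lem_globalize_data_N_odd} to globalize the parameter data, then separately globalize the pure inner twist via the cohomological exact sequence of \cite[Cor 2.5, Prop 2.6]{Kot86} — matches the paper's proof, which simply invokes Lemma~\ref{lem_globalize_data_N_odd} followed by Lemma~\ref{lem_globalization_gp}. However, the spelled-out version of the cohomological step contains a genuine error.

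You assert that $\alpha_u : H^1(\dot F_u,\dot G^*)\to\Z/2\Z$ is the zero map when $N$ is odd, and on this basis you prescribe the local class to be $(G,\xi,z)$ at $u$ and trivial at every $v\neq u$, claiming the total obstruction $\sum_v\alpha_v$ automatically vanishes. This is wrong: the map you call $\alpha_u$ is the \emph{left} vertical map in the diagram of \S\ref{subsub:inner-U}, $H^1(\Gamma,U_{E/F}(N))\to\Z/2\Z$, which is bijective for $p$-adic $F$ and surjective for real $F$ \emph{regardless} of the parity of $N$. What vanishes when $N$ is odd is the target $\Z/\delta\Z=0$ of the \emph{right} vertical map $H^1(\Gamma,U_{E/F}(N)_{\tx{ad}})\to\Z/\delta\Z$, i.e.\ the parity invariant of the underlying inner twist — not the invariant of the pure inner twist. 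With your prescription and an arbitrary choice of $z$ lifting $(G,\xi)$ one would have $\sum_v\alpha_v=\alpha_u(z)$, which is generically nonzero, so the class need not lift to $H^1(\dot F,\dot G^*)$.

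There are two ways to repair this, and the paper takes the first. Lemma~\ref{lem_globalization_gp} works with a second non-split place $v_2$ and chooses a local pure inner twist there with $\alpha_{v_2}=\alpha_u$, so the sum is $2\alpha_u=0$; when $N$ is odd the group $\dot G_{v_2}$ is automatically quasi-split because $H^1(\Gamma_{v_2},U(N)_{\tx{ad}})$ is trivial, even though the pure inner twist class at $v_2$ may be nontrivial. This is why the statement of that lemma introduces $v_2$ and then records, as item~3, that $\dot G_{v_2}$ is quasi-split for $N$ odd. Alternatively, since condition~1 of the proposition constrains only $(\dot G_u,\dot\xi_u)$ and not $\dot z_u$, one could justify that the lift $z$ at $u$ may be chosen with $\alpha_u(z)=0$ (for odd $N$ the two lifts of a given inner twist class have opposite parities, both $p$-adically and over $\R$), and then your all-trivial-away-from-$u$ prescription would work. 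Either fix requires an argument that your proposal currently lacks.
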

\begin{proof}
By applying Lemma \ref{lem_globalize_data_N_odd},
we globalize the data
$(E/F, G^*,  \phi, M^*, \phi_{M^*})$ to $(\dot{E}/\dot{F}, u, v_1,  \dot{G}^*, \dot{\phi}, \dot{M}^*, \dot{\phi}_{\dot{M}^*})$.
We apply Lemma \ref{lem_globalization_gp}
to obtain a pure inner twist $(\dot{G},\dot\xi,\dot z)$ of $\dot{G}^*$ such that $(\dot{G}_u,\dot\xi_u) = (G,\xi)$ and
$\dot{G}_v$ is quasi-split for all $v \neq u$.
\end{proof}

We are left to deal with the case that $N$ is even.
In this case, the globalized  group $\dot{G}_{v}$ will be quasi-split at all places $v \not\in\{u, v_2\}$ for some auxiliary place $v_2$, however the group
$\dot{G}_{v_2}$ need not be quasi-split.  This causes additional complications as we wish that the globalized parameter
$\dot{\phi}$ be both relevant and of a type for which the main local theorems are known outside of the place $u$.

We begin by dealing with the case that $M^* = G^*$ below. Then we shall be left to deal with the case that $N$ is even and $M^* \neq G^*$. When $G_{v_2}$ is indeed not quasi-split, we have only complete results at $v_2$, via the induction hypothesis, when the local parameter is neither elliptic nor exceptional. In this case we carry out further case-by-case analysis as explained at the start of this subsection.

\begin{prop}
\label{prop_globalize_data_N_even_M_eq_G}
Given $(E/F,G^*,(G,\xi),\phi)$ as above, assume that $N$ is even and $M^* = G^*$. Then we can find the global data
$$(\dot{E}/\dot{F}, u, v_1,v_2,  \dot{G}^*,   (\dot{G}, \dot{\xi}, \dot{z}), \dot{\phi}),$$
where $\dot{E}/\dot{F}$ is a CM extension, $u$ is a place of $\dot{F}$ that does not split in $\dot{E}$,
$v_1$ is a finite place of $\dot{F}$ that does not split in $\dot{E}$,
$v_2$ is an archimedean place of $\dot{F}$,
$\dot{G}^*=U_{\dot E/\dot F}(N)$,
$ (\dot{G}, \dot{\xi}, \dot{z})$ is a pure inner twist of $\dot{G}^*$,
$\dot{\phi} \in \Phi_2(\dot{G}^*,  \dot{\eta})$ is relevant  for $\dot{G}$,
such that
\begin{enumerate}
\item $\dot E_u/\dot F_u=E/F$ and
$
	(\dot{G}_u, \dot\xi_u,\dot{\phi}_u) = (G, \xi,\phi)
$,
\item
 $\dot{G}_v$ is quasi-split and the character $\<\dot z_v,-\>$ of $Z(\hat{\dot G})^{\Gamma_v}$ is trivial for all $v \notin \{u, v_2\}$,
\item $\dot{\phi}_{v_1} \in \Phi_{2}(\dot{G}^*_{v_1})$,
\item $\dot{\phi}_{v_2} \in \Phi_{2}(\dot{G}^*_{v_2})$,
\item
 the canonical map
\begin{eqnarray*}
	{{S}}_{\dot{\phi}} &\rightarrow& {{S}}_{\dot{\phi}_v} 	\\
\end{eqnarray*}
is an isomorphism for $v \in \{u, v_1\}$.

\end{enumerate}
\end{prop}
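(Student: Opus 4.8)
The strategy mirrors that of Proposition \ref{prop_globalize_data_N_odd}, but now we must cope with the parity obstruction for even $N$ while keeping $M^*=G^*$. First I would apply Lemma \ref{lem_globalize_data_N_odd} with $M^*=G^*$ to globalize the tuple $(E/F,G^*,\phi)$ to $(\dot E/\dot F,u,v_1,\dot G^*,\dot\phi)$, so that conditions (1), (3), (5) of the present proposition (minus the group-theoretic part of (1) and (2)) are already in hand: $\dot\phi\in\Phi_2(\dot G^*,\dot\eta)$ with $\dot\phi_u=\phi$, $\dot\phi_{v_1}\in\Phi_2(\dot G^*_{v_1})$, and the maps $S_{\dot\phi}\to S_{\dot\phi_v}$ are isomorphisms for $v\in\{u,v_1\}$. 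Since $M^*=G^*$ means $\phi\in\Phi_2(G^*)$, Lemma \ref{lem_globalize_data_N_odd}(2) guarantees $\dot\phi$ stays discrete.

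The subtle point is arranging the local shape of $\dot\phi$ at the auxiliary archimedean place $v_2$ together with a compatible pure inner twist $(\dot G,\dot\xi,\dot z)$. Here I would modify the construction inside Lemma \ref{lem_globalize_data_N_odd}: when choosing the globalizations $\dot\phi_i\in\Phi_{\simp}(\dot G^*_i,\dot\eta_i)$ via Lemma \ref{lemma_globalization_of_simple_parameter_1}, I would also prescribe square-integrable local parameters $\phi_{v_2,i}\in\Phi_{2,\mathrm{bdd}}(\dot G^*_{i,v_2})$ at a chosen archimedean place $v_2\neq u$ (distinct from each other), so that the resulting $\dot\phi_{v_2}=\oplus_i\ell_i\dot\phi_{i,v_2}^{N_i}$ lies in $\Phi_2(\dot G^*_{v_2})$, giving condition (4). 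This also forces $\dot\phi_{v_2}$ to be \emph{relevant} for any inner form of $\dot G^*_{v_2}$, which is the key to obtaining relevance of $\dot\phi$ for $\dot G$ everywhere: at $u$ it is relevant by hypothesis (we are given $\phi$ is the image of $\phi_{M^*}$ with $M^*=G^*$, so $\phi$ is discrete hence relevant), at $v_2$ it is relevant because discrete parameters for unitary groups over $\R$ are relevant for every inner form, and at all other places $\dot G_v$ will be quasi-split so relevance is automatic.

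Next I would apply Lemma \ref{lem_globalization_gp} (the pure-inner-twist version) with the non-split place $v_2$ as the auxiliary place: since $N$ is even and $v_2$ is archimedean, the lemma produces a pure inner twist $(\dot G,\dot\xi,\dot z)$ of $\dot G^*$ with $(\dot G_u,\dot\xi_u,\dot z_u)=(G,\xi,z)$, with $(\dot G_v,\dot\xi_v,\dot z_v)$ trivial for $v\notin\{u,v_2\}$ — in particular $\dot G_v$ quasi-split and $\langle\dot z_v,-\rangle$ trivial on $Z(\hat{\dot G})^{\Gamma_v}$, which is condition (2) — and with $\dot G_{v_2}$ isomorphic to $U(N/2,N/2)$ or $U(N/2-1,N/2+1)$. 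The compatibility of the cocycle conditions in Lemma \ref{lem_globalization_gp} (the requirement $\alpha_u=\alpha_{v_2}$ in $\Z/2\Z$) can always be met because $v_2$ is non-split and hence $H^1(\dot F_{v_2},\dot G^*)\to\Z/2\Z$ is surjective. This assembles the full tuple $(\dot E/\dot F,u,v_1,v_2,\dot G^*,(\dot G,\dot\xi,\dot z),\dot\phi)$ with all six conditions satisfied.

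\textbf{Main obstacle.} The one genuinely delicate point is ensuring simultaneously that $\dot\phi$ is relevant for $\dot G$ and that $\dot\phi_{v_2}$ is square-integrable — these are not independent, since relevance at $v_2$ is a constraint on how $\dot\phi_{v_2}$ interacts with the specific inner form $\dot G_{v_2}$ handed to us by Lemma \ref{lem_globalization_gp}. The resolution is exactly that a \emph{discrete} (square-integrable) $L$-parameter on an $\R$-unitary group is relevant for \emph{all} its pure inner forms, so any choice of archimedean discrete $\phi_{v_2,i}$ works regardless of which of $U(N/2,N/2)$, $U(N/2-1,N/2+1)$ comes out; I would isolate this as a small lemma if it is not already implicit in the relevance discussion of \S\ref{subsub:transfer-Levi-parabolic} and \S\ref{subsub:local-param-U}. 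Everything else — that $\dot\phi$ remains discrete globally, that the centralizer maps at $u$ and $v_1$ stay isomorphisms — is inherited verbatim from Lemma \ref{lem_globalize_data_N_odd} and requires no new argument.
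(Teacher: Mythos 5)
Your proposal is correct and takes essentially the same approach as the paper: globalize the parameter via the $M^*=G^*$ case of Lemma~\ref{lem_globalize_data_N_odd} with an added discreteness condition at an archimedean $v_2$, then use the pure-inner-twist version of Lemma~\ref{lem_globalization_gp} with $v_2$ as the auxiliary place. The only cosmetic difference is that the paper, rather than citing Lemma~\ref{lem_globalization_gp} directly, spells out the Kottwitz duality argument (\cite[Prop 2.6]{Kot86}, surjectivity of $H^1(\Gamma_v,\dot G^*)\to H^1(\Gamma_v,\dot G^*_{\tx{ad}})$) to produce $\dot z$ — which is the same content.
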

\begin{proof}
This is proved in the same way as Proposition \ref{prop_globalize_data_N_odd} is deduced from Lemmas \ref{lem_globalization_gp} and \ref{lem_globalize_data_N_odd} except the following modifications in the statement and proof of Lemma \ref{lem_globalize_data_N_odd}: Firstly fix any archimedean place $v_2$ of the totally real field $\dot F$. Secondly impose the conditions that $\dot G$ is quasi-split at all places $v\notin \{u,v_2\}$ and that $\dot\phi_{v_2}\in \Phi_2(\dot G^*_{v_2})$. The latter condition at $v_2$ is achieved in the same way as at $v_1$.

To obtain $\dot z$, we argue as follows. We have for each place $v$ of $\dot F$
\[ Z(\hat{\dot G^*}_\tx{sc})^{\Gamma_v} = Z(\hat{\dot G^*})^{\Gamma_v} = \begin{cases} 1,& v \textrm{ splits in }\dot E\\
\{\pm 1\},&\textrm{ else}. \end{cases}, \]
as well as
\[ Z(\hat{\dot G^*}_\tx{sc})^{\Gamma} = Z(\hat{\dot G^*})^{\Gamma} = \{\pm 1\}. \]
By assumption $\dot\xi_v \in H^1(\Gamma_v,\dot G^*_\tx{ad})$ is trivial for all $v \notin \{u,v_2\}$. By \cite[Prop 2.6]{Kot86} we have an equality $\<\dot\xi_u,-\>=\<\dot\xi_{v_2},-\>^{-1}$ of characters of $Z(\hat{\dot G}_\tx{sc})^{\Gamma}$. Using the surjectivity of $H^1(\Gamma_v,\dot G^*) \rw H^1(\Gamma_v,\dot G^*_\tx{ad})$ we choose a collection of elements $\dot h_v \in H^1(\Gamma_v,\dot G^*)$ satisfying $\dot h_v=1$ for $v \notin \{u,v_2\}$ and lifting the collection $\dot\xi_v$. Then we must have an equality $\<\dot h_u,-\>=\<\dot h_{v_2},-\>^{-1}$ of characters of $Z(\hat{\dot G^*})^{\Gamma_u}=\{\pm 1\}=Z(\hat{\dot G}^*)^{\Gamma_{v_2}}$. According to \cite[Prop 2.6]{Kot86}, there exists $\dot h \in H^1(\Gamma,\dot G^*)$ whose localization at $v$ is equal to $\dot h_v$ for all $v$. Both elements $\dot h$ and $\dot\xi$ are uniquely determined by their localizations, from which we see that $\dot h$ lifts $\dot\xi$. Hence there exists $\dot z \in \dot h$ such that $(\dot\xi,\dot z)$ is a pure inner twist.

\end{proof}

\begin{prop}
\label{prop_globalize_data_N_even_and_M_neq_G_and_N_geq_6}
Let $(E/F, G^*,  (G, \xi), \phi, M^*, \phi_{M^*})$ be as above (right before Lemma \ref{lem_globalize_data_N_odd}).
Assume that $N$ is even, $N \geq 6$, and $M^* \neq G^*$ is not linear.
 We can find the global data $$(\dot{E}/\dot{F}, u, v_1, v_2,  \dot{G}^*,   (\dot{G}, \dot{\xi}, \dot{z}), \dot{\phi}, \dot{M}^*,  \dot{\phi}_{\dot{M}^*}),$$
where $\dot{E}/\dot{F}$ is a CM extension, $u$ is a place of $\dot{F}$ that does not split in $\dot{E}$,
$v_1$ and $v_2$  are finite places of $\dot{F}$ that do not split in $\dot{E}$,
$\dot{G}^*=U_{\dot E/\dot F}(N)$,
$ (\dot{G}, \dot{\xi}, \dot{z})$ is a pure inner twist of $\dot{G}^*$,
$\dot{\phi} \in \Phi(\dot{G}^*,  \dot{\eta})$,
$\dot{M}^*$ is a Levi subgroup of $\dot{G}^*$, and $\dot{\phi}_{\dot{M}^*} \in \Phi_2(\dot{M}^*, \dot{\phi})$,
such that
\begin{enumerate}
\item $\dot E_u/\dot F_u=E/F$ and
$
	(\dot{G}_u, \dot \xi_u, \dot{\phi}_u, \dot{M}^*_u, \dot{\phi}_{\dot{M}^*,u}) = (G, \xi, \phi, M^*, \phi_{M^*})
$,
\item
 $\dot{G}_v$ is quasi-split for all $v \not\in \{u,v_2\}$,
\item if $\phi \in \Phi^2_{\el}(G^*)$ (resp. $\phi \in \Phi_{\EXC1}(G^*)$, resp. $\phi \in \Phi_{\EXC2}(G^*)$) then  $\dot\phi \in \Phi^2_{\el}(\dot G^*)$ (resp. $\dot\phi \in \Phi_{\EXC1}(\dot G^*)$, resp. $\dot\phi \in \Phi_{\EXC2}(\dot G^*)$),
\item $\dot{\phi}_{v_1} \in \Phi_{\mathrm{bdd}}(\dot{G}^*_{v_1})$ and
$\dot{\phi}_{\dot{M}^*, v_1} \in \Phi_{\mathrm{2, bdd}}(\dot{M}^*_{v_1})$,
 \item the parameter $\dot{\phi}_{v_2} \in \Phi_{\mathrm{bdd}}(\dot{G}^*_{v_2})$ is relevant for $(\dot{G}_{v_2},\dot\xi_{v_2})$, and it is neither elliptic
nor exceptional,
\item
 the canonical maps
\begin{eqnarray*}
	{{S}}_{\dot{\phi}_{\dot{M}^*}} &\rightarrow& {{S}}_{\dot{\phi}_{\dot{M}^*,v}} 	\\
	{{S}}_{\dot{\phi}} &\rightarrow& {{S}}_{\dot{\phi}_v} 	\\
\end{eqnarray*}
are isomorphisms for $v \in \{u, v_1\}$.

\end{enumerate}

\end{prop}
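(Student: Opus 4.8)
\textbf{Proof plan for Proposition \ref{prop_globalize_data_N_even_and_M_neq_G_and_N_geq_6}.}

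The plan is to follow the same overall strategy as in the odd case (Proposition \ref{prop_globalize_data_N_odd}), but with an extra place $v_2$ dedicated to absorbing the parity obstruction that prevents a global inner twist from being quasi-split everywhere away from $u$. The crucial difference from Proposition \ref{prop_globalize_data_N_even_M_eq_G} is that now $M^* \neq G^*$, so we are not free to put $v_2$ at an archimedean place (the main local theorems at non-quasi-split real groups for the relevant non-elliptic parameters are available, but we want $v_2$ finite here so that the auxiliary parameter can be arranged explicitly). The role of $v_2$ is twofold: it is where $\dot G$ is allowed to be non-quasi-split, and it is where we force the localization $\dot\phi_{v_2}$ to be a relevant, non-elliptic, non-exceptional parameter for $(\dot G_{v_2},\dot\xi_{v_2})$, so that the induction hypothesis (Lemma \ref{lem:global-intertwining-step1}, or rather the local results it invokes) applies there.

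First I would invoke Lemma \ref{lem_globalization_field} to globalize $E/F$ to a CM extension $\dot E/\dot F$ with $u$ the chosen non-split place, and fix two auxiliary finite non-split places $v_1, v_2 \neq u$. Next, exactly as in the proof of Lemma \ref{lem_globalize_data_N_odd}, I would write $\eta_\kappa\phi = \bigoplus_{i=1}^r \ell_i \phi_i^{N_i}$ into simple constituents, associate to each $\phi_i^{N_i}$ a simple twisted endoscopic datum $(U_{\dot E/\dot F}(N_i),\dot\eta_{\chi_{\kappa_i}})$, and globalize each $\phi_i^{N_i}$ to a simple generic global parameter $\dot\phi_i$. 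The point is that for the factor(s) which are simple (i.e.\ $N_i = N$, $\ell_i=1$, which only occurs if $\phi$ itself is simple — but here $M^*\neq G^*$ so this does not happen and every $\ell_i$ with $\ell_i$ giving a discrete constituent of $\phi_{M^*}$ contributes), I would apply Lemma \ref{lemma_globalization_of_simple_parameter_2} at the place $v_2$ rather than Lemma \ref{lemma_globalization_of_simple_parameter_1}: this produces $\dot\phi_{i,v_2}^N$ with a decomposition $\phi_-^{N_i-2}\oplus\phi_+^1\oplus(\phi_+^1)^\star$ in which $\phi_+^1$ is a non-conjugate-self-dual character. Assembling $\dot\phi^N = \bigoplus \ell_i\dot\phi_i^{N_i}$ and the Levi $\dot M^* = \dot M^*_- \times \prod_i G_{\dot E/\dot F}(N_i)^{\ell_i'}$ with its parameter $\dot\phi_{\dot M^*}$ exactly as before, the sign argument via Propositions \ref{prop:1st-seed-thm} and \ref{prop:2nd-seed-thm} shows $\dot\phi^N$ comes from $\dot\phi\in\Phi(\dot G^*,\dot\eta_\kappa)$. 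Conditions 1, 3, 4, 6 then follow verbatim from the odd case (condition 3 uses that the $S$-group type is preserved, condition 6 from the explicit description of centralizers as on p.\ 324 of \cite{Arthur}).

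The heart of the matter is condition 5: I need $\dot\phi_{v_2}$ to be relevant for $(\dot G_{v_2},\dot\xi_{v_2})$ and neither elliptic nor exceptional. Because the decomposition of $\dot\phi^N_{v_2}$ produced above contains the extra pair $\phi_+^1\oplus(\phi_+^1)^\star$ of distinct non-conjugate-self-dual characters, the group $S_{\dot\phi_{v_2}}$ acquires a $\GL(1,\C)$-factor; inspecting the list (exc1), (exc2), $\Phi^2_{\el}$ of exceptional/elliptic types — all of which have $S_\phi \cong \O(2)^q\times\O(1)^{r-q}$ or $\Sp(2)\times\O(1)^{r-1}$ or $\O(3)\times\O(1)^{r-1}$, none with a $\GL$-factor — shows $\dot\phi_{v_2}$ cannot be elliptic or exceptional. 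For relevance I would use Lemma \ref{lem:global-exc} (or rather its local analogue, Lemma \ref{lem:local-exc}, combined with $N\geq 6$ so that $\dim\bar T_{\dot\phi_{v_2}}\geq 2$ is within reach) together with the fact that $\dot G_{v_2}$ is, by Lemma \ref{lem_globalization_gp}, a fixed inner twist determined by the parity constraint, so that the Levi subgroup $\dot M^*_{v_2}$ transfers to $\dot G_{v_2}$: relevance of a non-elliptic non-exceptional parameter is then automatic by Lemma \ref{lem:triv-on-ZGcapZM} since the centralizer character $\chi_{\dot z_{v_2}}$ can be arranged to be trivial on the relevant intersection. Finally, to produce $(\dot G,\dot\xi,\dot z)$ I would apply Lemma \ref{lem_globalization_gp}, choosing the local inner twist data to be trivial away from $u,v_2$, to localize to $(G,\xi)$ at $u$, and to satisfy the parity matching $\alpha_u = \alpha_{v_2}$ forced by the product formula; the even case of that lemma gives exactly such a $(\dot G,\dot\xi,\dot z)$ with $\dot G$ quasi-split outside $\{u,v_2\}$. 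The main obstacle I anticipate is checking relevance of $\dot\phi_{v_2}$ for the specific non-quasi-split $\dot G_{v_2}$ that the parity constraint hands us — one must verify that the Hasse-invariant-type data of $\dot G_{v_2}$ is compatible with the Levi $\dot M^*_{v_2}$ transferring, which is where the condition $N\geq 6$ and the requirement that $M^*$ is not linear genuinely enter, and where a short case analysis using the classification of Levi subgroups of unitary groups in \S\ref{subsub:Levi-U} is needed.
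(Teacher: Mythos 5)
Your overall strategy is on the right track and agrees with the paper in spirit — introduce non-conjugate-self-dual factors at $v_2$ via Lemma \ref{lemma_globalization_of_simple_parameter_2} to force a $\GL$-factor in $S_{\dot\phi_{v_2}}$, and deduce relevance from the fact that $\dot M^*_{v_2}$ transfers to $\dot G_{v_2}$. However, there are two genuine gaps.

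First, you apply Lemma \ref{lemma_globalization_of_simple_parameter_2} without specifying to \emph{which} constituent $\phi_s$, and without checking that the resulting Levi $M^*_{v_2}$ (the one for which $\Phi_2(M^*_{v_2},\dot\phi_{v_2})\neq\emptyset$) is \emph{non-linear}. This is the whole point: the reason relevance of $\dot\phi_{v_2}$ is automatic is that at the finite non-archimedean place $v_2$, a non-linear Levi subgroup of $\dot G^*_{v_2}$ always transfers to the (possibly non-quasi-split) $\dot G_{v_2}$, whereas a linear Levi transfers only to the quasi-split form. If one applies Lemma \ref{lemma_globalization_of_simple_parameter_2} to a constituent with $N_s=2$ while all the other multiplicities $\ell_t$ ($t\neq s$) are even, the Hermitian factor $M^*_{v_2,-}$ of $M^*_{v_2}$ is $U(0)$, so $M^*_{v_2}$ is linear, and the argument collapses. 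The paper separates out precisely this situation: the construction you describe corresponds to the paper's ``case 1'' (some $N_s\geq 3$, or $N_s=2$ with at least one odd $\ell_t$, $t\neq s$), and it is only in this case that Lemma \ref{lemma_globalization_of_simple_parameter_2} applied to $\phi_s$ produces a non-linear $M^*_{v_2}$.

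Second, you do not address the complementary ``case 2'': all $N_i=1$, or $N_s=2$ with all other $N_i\in\{1,2\}$ and $\ell_i$ even. In particular when all $N_i=1$, Lemma \ref{lemma_globalization_of_simple_parameter_2} cannot be invoked at all since it requires $N\geq 2$. Here the paper abandons the non-self-dual-character trick entirely and instead makes explicit choices of \emph{coinciding} conjugate-self-dual characters at $v_2$: for example in the (exc1) subcase with all $N_i=1$, one sets $\dot\phi^{N_1}_{1,v_2}=\chi_1$, $\dot\phi^{N_2}_{2,v_2}=\chi_2$, and all remaining $\dot\phi^{N_i}_{i,v_2}=\chi$ for three pairwise distinct characters, so that $S_{\dot\phi_{v_2}}\cong\Sp(2,\C)\times\O(1,\C)\times\O(N-3,\C)$. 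Since $N\geq 6$, this is neither elliptic nor exceptional, and the corresponding $M^*_{v_2}$ is $U(2)\times G(1)^{(N-2)/2}$, which is non-linear. Similar explicit choices (with different target forms for $S_{\dot\phi_{v_2}}$ — $\O(3)\times\O(N-3)$, $\O(3)\times\O(3)$, $\O(N-1)\times\O(1)$, $\O(1)\times\O(N-2)$) handle the remaining (exc2) and $\Phi^2_{\el}$ subcases; the hypothesis $N\geq 6$ is used in each to rule out ellipticity/exceptionality. Without this case analysis, your construction does not cover all parameters $\phi\in\Phi^2_{\el}(G^*)\cup\Phi_{\EXC}(G^*)$ with $M^*\neq G^*$ non-linear.
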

\begin{proof}
The proposition is proved in the same way as Proposition \ref{prop_globalize_data_N_odd} except we appeal
here to a modification of Lemma \ref{lem_globalize_data_N_odd}. This modification, whose proof will be given momentarily, is the following. We choose any place $v_2$ of $\dot F$ different from $u$ and $v_1$ as in the proposition. Now we construct a globalization as in Lemma \ref{lem_globalize_data_N_odd} satisfying conditions 1-4 there but with the extra condition that
\begin{enumerate}
\item[5.] the parameter $\dot{\phi}_{v_2} \in \Phi_{\mathrm{bdd}}(\dot{G}^*_{v_2})$  is bounded and it is neither elliptic nor exceptional,
furthermore
$\Phi_{{2}}(M^*_{v_2}, \dot{\phi}_{v_2}) \neq \emptyset$ for some non-linear Levi subgroup $M^*_{v_2}$.
\end{enumerate}
Suppose for now that this is constructed. Then it only remains to check that the parameter $\dot{\phi}_{v_2}$ is relevant for $(\dot{G}_{v_2},\dot\xi_{v_2})$. Let $M^*_{v_2}$ be as in condition 5 above. Since $\dot{M}^*_{v_2}$ is a non-linear Levi subgroup (as $M^*=\dot M^*_u$ is non-linear), we see that $\dot{M}^*_{v_2}$ transfers to $\dot{G}_{v_2}$. Indeed, $\dot{G}_{v_2}$ is a unitary group over a non-archimedean field as $v_2$ is finite and non-split in $\dot E$, so any non-linear Levi subgroup of $\dot{G}^*_{v_2}$ transfers to $\dot{G}_{v_2}$. Hence $\dot{\phi}_{v_2}$ is relevant and the proof will be complete.

The rest of the proof is devoted to the construction of the desired globalization, based upon a slight adaptation of the construction appearing in Lemma \ref{lem_globalize_data_N_odd}. Given the construction, conditions 1-4 of Lemma \ref{lem_globalize_data_N_odd} will be satisfied in the same way, so the main issue will be to verify the new condition 5 above.
We shall divide the proof into the following two cases, which we shall  consider separately.
\begin{enumerate}
\item There exists $1 \leq s \leq r$ such that either $N_ s \geq 3$  or
	$N_s = 2$  and $\ell_t$ is odd for some $s\neq t$.
\item Either $N_i = 1$ for all $i=1,\ldots,r$, or there exists $1 \leq s \leq r$ such that $N_s= 2$ and $N_i \in \{1,2\}$ and $\ell_i$ even for all $i \neq s$.
\end{enumerate}

We shall begin by considering the first case.
We follow the construction appearing in Lemma \ref{lem_globalize_data_N_odd} with the following two changes.
Firstly, we begin by globalising
 the parameter $\phi_s$, however we appeal to Lemma \ref{lemma_globalization_of_simple_parameter_2}
instead of Lemma \ref{lemma_globalization_of_simple_parameter_1}.  The localization of the parameter
$\dot{\phi}_s$ can be written as
\[
  \dot{\phi}^{N_s}_{s, v_2} = {\phi}^{N_s-2}_{s, v_2, -}  \oplus {\phi}^1_{s, v_2,+} \oplus {{\phi}}^{1,\star}_{s, v_2, +},
\]
where ${\phi}^{N_s-2}_{s, v_2, -} \in \widetilde{\Phi}_{\mathrm{sim}}(N_s - 2)$ and ${\phi}^1_{s, v_2, +} \in \Phi(1)$ is a character which is
not conjugate self-dual.
Secondly, when globalising the parameters
$\phi_i$ for $i\neq s$, we fix some simple parameters $\phi_{i, v_2} \in \Phi_{\mathrm{sim}}(\dot{G}^*_{i,v_2})$
so that  $\dot{\eta}_{i, v_2} \phi_{i, v_2} \neq \dot{\phi}^{N_s-2}_{s, v_2, -}$ and  $\dot{\eta}_{i, v_2} \phi_{i, v_2} \neq \dot{\eta}_{j, v_2} \phi_{j, v_2}$ for all $i \neq j$.

Proceeding as in  Lemma \ref{lem_globalize_data_N_odd}, we obtain
the data $(\dot{E}/\dot{F}, u, v_1, v_2,  \dot{G}^*, \dot{\eta}, \dot{\phi}, \dot{M}^*, \dot{\phi}_{\dot{M}^*})$.
The first four conditions are satisfied, and it remains to check the fifth condition.
By construction, we have that
\[
	 \dot{\phi}_{v_2} = \ell_s ({\phi}^{N_s-2}_{s, v_2, -}  \oplus  \phi^1_{s, v_2, +} \oplus {\phi}^{1,\star}_{s, v_2, +})
				\oplus \bigoplus_{i\neq s} \ell_i \dot{\eta}_{i, v_2} {\phi}_{i, v_2}.
\]
Such a parameter is bounded, and it is also neither elliptic nor exceptional due to the
appearance of the character $\phi^1_{s, v_2, +}$ which is not conjugate self-dual.
The Levi subgroup $M^*_{v_2}$ for which
 $\Phi_{{2}}(M^*_{v_2}, \dot{\phi}_{v_2}) \neq \emptyset$ can be seen to be
\[
	M^*_{v_2} = M^*_{v_2, -} \times
	 G_{\dot{E}_{v_2}/\dot{F}_{v_2}}(1)^{\ell_s} \times
	 G_{\dot{E}_{v_2}/\dot{F}_{v_2}} (N_s - 2)^{\ell'_s}
	 \times \prod_{i \neq s}  G_{\dot{E}_{v_2}/\dot{F}_{v_2}} (N_i)^{\ell'_i}
\]
where $\ell'_i = \lfloor \ell_i/2 \rfloor$ and
$ M^*_{v_2, -}$ is the quasi-split unitary group in $N - 2 \ell_s -  2 \ell'_s (N_s -2) - 2 \sum_{i \neq s} \ell'_i N_i $ variables.
It follows that the Levi $M^*_{v_2}$ is not linear.
This completes the proof of the result for the first case.

Consider now the second case.
Let us deal firstly with the parameters of type (exc1).  In this case,
\[
\eta_\kappa\phi = 2 \phi^{N_1}_1 \oplus \phi^{N_2}_2 \oplus \cdots \oplus \phi^{N_r}_r
\]
 and $S_\phi \simeq \Sp(2, \C) \times \O(1,\C)^{r-1}$.
As $N \geq 6$ by considering the possible parameters,  we see that
$N_1 = N_2 = \cdots = N_r = 1$ and $r \geq 5$.
We follow the   construction appearing in Lemma \ref{lem_globalize_data_N_odd}
but impose the
following
additional  conditions at the $v_2$-place.
\begin{itemize}
\item	$\dot{\phi}^{N_1}_{1, v_2} = \chi_1$ for some character $\chi_1 \in {\Phi}_{\mathrm{bdd}}(\dot{G}_{1, v_2})$.
\item $\dot{\phi}^{N_2}_{2, v_2} = \chi_2$ for some character $\chi_2 \in {\Phi}_{\mathrm{bdd}}(\dot{G}_{2, v_2})$  such that
	$\chi_2 \neq \chi_1$.
\item For $i=3,\ldots,r$, $\dot{\phi}^{N_i}_{i, v_2} = \chi$ for some character $\chi \in {\Phi}_{\mathrm{bdd}}(\dot{G}_{2, v_2}) = {\Phi}_{\mathrm{bdd}}(\dot{G}_{i, v_2})$
such that $\chi \neq \chi_2$ and $\chi \neq \chi_1$.
\end{itemize}
Under this globalization, we observe that the group $S_{\dot{\phi}_{v_2}} \simeq \Sp(2,\C) \times \O(1,\C) \times \O(N-3,\C)$.
As $N \geq 6$, such a parameter is neither elliptic nor exceptional.
The Levi subgroup $M^*_{v_2}$ such that $\Phi_2(M^*_{v_2},\dot\phi_{v_2})$ is nonempty has the form
$M^*_{v_2} = U_{\dot{E}_{v_2}/\dot{F}_{v_2}}(2) \times  G_{\dot{E}_{v_2}/\dot{F}_{v_2}} (1)^{\frac{N-2}{2}}$
so the last condition of the lemma is satisfied. Hence the proof is complete in this case.

Let us  now deal with the parameters of type (exc2).
In this case,
\[
\eta_\kappa\phi = 3\phi^{N_1}_1 \oplus \phi^{N_2}_2 \oplus \cdots \oplus \phi^{N_r}_r
\]
 and $S_\phi \simeq \O(3,\C) \times \O(1,\C)^{r-1}$.  As $N\geq 6$ by considering the possible parameters, we see that either
\begin{itemize}
\item[(i)] $N_1 = N_2 = \cdots = N_r = 1$ and $r \geq 4$ or
\item[(ii)] $N=6$ and $\phi = 3 \phi_1$.
\end{itemize}

Consider case (i).
We shall globalize in a similar fashion to the case of parameters of type (exc1).  However, here we impose
the following conditions at the $v_2$-place.
\begin{itemize}
\item	$\dot{\phi}^{N_1}_{1, v_2} = \chi_1$ for some character $\chi_1 \in {\Phi}_{\mathrm{bdd}}(\dot{G}_{1,v_2})$.
\item For $i=2,\ldots,r$, $\dot{\phi}^{N_i}_{i, v_2} = \chi_1$ for some character $\chi \in {\Phi}_{\mathrm{bdd}}(\dot{G}_{1,v_2}) = {\Phi}_{\mathrm{bdd}}(\dot{G}_{i,v_2})$  		such that $\chi \neq \chi_1$.
\end{itemize}
Under this globalization, we observe that the group $S_{\dot{\phi}_{v_2}} \simeq \O(3,\C) \times \O(N-3,\C)$.
As $N \geq 6$, such a parameter is neither elliptic nor exceptional.
The Levi subgroup $M^*_{v_2}$ as in the last condition of the lemma is not linear as it is isomorphic to
$U_{\dot{E}_{v_2}/\dot{F}_{v_2}}(2) \times  G_{\dot{E}_{v_2}/\dot{F}_{v_2}} (1)^{\frac{N-2}{2}}$.
It follows that our globalization is as required.

Consider case (ii).
We shall globalize in a similar fashion to the case of parameters of type (exc1).  However, here we impose
the following conditions at the $v_2$-place: firstly
${\eta}_{1, v_2} \dot{\phi}_{1,v_2} = \chi_1 \oplus \chi_2$ for some pair of characters $\chi_1$ and $\chi_2$, and secondly
 	$\dot{\phi}_{1,v_2} \in \Phi_{2, \mathrm{bdd}}(\dot{G}_{1, v_2})$ (so $\chi_1$ and $\chi_2$ are conjugate self-dual characters which are mutually distinct). Under this globalization, we observe that the group $S_{\dot{\phi}_{v_2}} \simeq \O(3,\C) \times \O(3,\C)$.
	Such a parameter is neither elliptic nor exceptional.
	One can see that the Levi subgroup $M^*_{v_2}$ in the last condition
 is isomorphic to $ U_{\dot{E}_{v_2}/\dot{F}_{v_2}}(2)\times  G_{\dot{E}_{v_2}/\dot{F}_{v_2}} (1)^2$, which is again not linear.
It follows that our globalization is as required.

Let us deal with the remaining case of elliptic non-square-integrable parameters.
In this case
\[
\eta_\kappa\phi = 2 \phi^{N_1}_1 \oplus \cdots \oplus 2 \phi^{N_q}_q \oplus \phi^{N_{q+1}}_{q+1} \oplus \cdots\oplus \phi^{N_r}_{r}, \ \ q \geq 1,
\]
and
$S_\phi \simeq \O(2,\C)^q \times \O(1,\C)^{r-q}$.
We see that either
\begin{itemize}
\item[(i)] $N_1 = N_2 = \ldots = N_r = 1$, or
\item[(ii)]  $N_1 =N_2  = \cdots = N_{r-1} = 1$, $N_r = 2$  and $q = {r-1}$.
\end{itemize}

Consider case (i).  We have that $q \neq r$ as $M^*$ is a non-linear Levi of $G^*$.
We shall globalize in a similar fashion to the case of parameters of type (exc1).  However, here we impose
the following conditions at the $v_2$-place.
\begin{itemize}
\item For $i=1,\ldots,r-1$, $\dot{\phi}_{i, v_2} = \chi_1$ for some character $\chi_1 \in {\Phi}_{\mathrm{bdd}}(\dot{G}_{1,v_2}) = {\Phi}_{\mathrm{bdd}}(\dot{G}_{i,v_2})$.
\item	$\dot{\phi}_{r, v_2} = \chi$ for some character $\chi \in {\Phi}_{\mathrm{bdd}}(\dot{G}_{r,v_2})$
	such that $\chi \neq \chi_1$.
\end{itemize}
Under this globalization, we observe that the group $S_{\dot{\phi}_{v_2}} \simeq \O(N-1,\C) \times \O(1,\C)$.
As $N \geq 6$, such a parameter is neither elliptic nor exceptional.
One verifies the last condition of the lemma as $M^*_{v_2}$ there is isomorphic to
 $U_{\dot{E}_{v_2}/\dot{F}_{v_2}}(2) \times G_{\dot{E}_{v_2}/\dot{F}_{v_2}} (1)^{\frac{N-2}{2}}$.
It follows that our globalization is as required.

Consider case (ii).
We shall globalize in a similar fashion to the case of parameters of type (exc1).  However, here we impose
the following conditions at the $v_2$-place.
\begin{itemize}
\item
$\dot{\phi}_{r,v_2} = \sigma$ for some simple parameter $\sigma \in \Phi_{\mathrm{sim}}(\dot{G}_{r,v_2})$.
\item For $i=1,\ldots,r-1$, $\dot{\phi}_{i, v_2} = \chi$ for some character $\chi \in {\Phi}_{\mathrm{bdd}}(\dot{G}_{1,v_2}) = {\Phi}_{\mathrm{bdd}}(\dot{G}_{i,v_2})$.
\end{itemize}
Under this globalization, we observe that the group $S_{\dot{\phi}_{v_2}} \simeq \O(1,\C) \times \O(N-2,\C)$.
As $N \geq 6$, such a parameter is neither elliptic nor exceptional.
The last condition of the lemma is satisfied as $M^*_{v_2}$ is non-linear and rather isomorphic to $U_{\dot{E}_{v_2}/\dot{F}_{v_2}}(2) \times  G_{\dot{E}_{v_2}/\dot{F}_{v_2}} (1)^{\frac{N-2}{2}}$.
We conclude the proof in this final case.

\end{proof}

We shall now deal with the case that $N=4$ and $M^* \neq G^*$ is the unique non-linear Levi subgroup.
This case is more difficult than the previous cases.
The problem is that we can no longer ensure that the globalized parameter is relevant and neither elliptic nor exceptional at the place $v_2$.   We shall reduce the problem to some examples of exceptional parameters at an archimedean place.

\begin{prop}
\label{prop_globalize_data_N_4_reduction_exceptional_case}
Let $(E/F, G^*, (G, \xi),\phi, M^*, \phi_{M^*})$ be as above. Assume that $N=4$, that $M^* \neq G^*$ is not linear, and
that the parameter  $\phi \in \Phi_{\mathrm{ell}}^2(G^*)$ is elliptic.
We can find the global data
$$(\dot{E}/\dot{F}, u, v_1, v_2,  \dot{G}^*,  (\dot{G}, \dot{\xi}, \dot{z}), \dot{\phi}, \dot{M}^*,  \dot{\phi}_{\dot{M}}),$$
where $\dot{E}/\dot{F}$ is a CM extension, $u$ is a place of $\dot{F}$ that does not split in $\dot{E}$,
$v_1$ and $v_2$  are finite places of $\dot{F}$ that do not split in $\dot{E}$,
$\dot{G}^*=U_{\dot E/\dot F}(N)$,
$ (\dot{G}, \dot{\xi}, \dot{z})$ is a pure inner twist of $\dot{G}^*$,
$\dot{\phi} \in \Phi(\dot{G}^*,  \dot{\eta})$,
$\dot{M}^*$ is a Levi subgroup of $\dot{G}^*$, and $\dot{\phi}_{\dot{M}} \in \Phi_2(\dot{M}^*, \dot{\phi})$,
such that
\begin{enumerate}
\item $\dot E_u/\dot F_u=E/F$ and
$
	(\dot{G}_u, \dot\xi_u,\dot{\phi}_u, \dot{M}^*_u, \dot{\phi}_{\dot{M},u}) = (G, \xi,\phi, M^*, \phi_{M^*})
$,
\item
 $\dot{G}_v$ is quasi-split for all $v \not\in \{u,v_2\}$,
\item   $ \dot{\phi} \in\Phi^2_{\mathrm{ell}}(\dot{G}^*)$,
\item $\dot{\phi}_{v_1} \in \Phi_{\mathrm{bdd}}(\dot{G}^*_{v_1})$ and
$\dot{\phi}_{M^*, v_1} \in \Phi_{\mathrm{2, bdd}}(\dot{M}^*_{v_1})$,
\item $\dot{\phi}_{v_2}$ belongs to $\Phi_{\mathrm{bdd},\EXC2}(\dot{G}^*_{v_2})$ and is $(\dot{G}^*_{v_2},\dot\xi_{v_2})$-relevant,
\item
 the canonical maps
\begin{eqnarray*}
	\overline{{S}}_{\dot{\phi}_{M}} &\rightarrow& \overline{{S}}_{\dot{\phi}_{M,v}} 	\\
	\overline{{S}}_{\dot{\phi}} &\rightarrow& \overline{{S}}_{\dot{\phi}_v} 	\\
\end{eqnarray*}
are isomorphisms for $v \in \{u, v_1\}$.

\end{enumerate}

\end{prop}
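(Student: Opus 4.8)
The plan is to follow the same template as in the proof of Proposition~\ref{prop_globalize_data_N_even_and_M_neq_G_and_N_geq_6}, but now exploiting the extra freedom provided by allowing $v_2$ to be archimedean, or rather a finite non-split place at which we arrange the localization of the parameter to fall into case (exc2). The point is that, unlike in the $N\ge6$ situation, when $N=4$ and $\phi$ is elliptic (so of type $\Phi_2(G^*)$ or $\Phi^2_{\el}(G^*)$) the globalized parameter $\dot\phi_{v_2}$ cannot be made simultaneously relevant and neither-elliptic-nor-exceptional: every relevant bounded parameter of a non-quasi-split inner form of $U_{E_{v_2}/F_{v_2}}(4)$ whose underlying packet is nonempty is of one of the exceptional types. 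So instead of trying to dodge the exceptional locus, the strategy is to aim straight for it: we will globalize so that $\dot\phi_{v_2}\in\Phi_{\bdd,\EXC2}(\dot G^*_{v_2})$ and is $(\dot G_{v_2},\dot\xi_{v_2})$-relevant. This is precisely what condition~5 demands, and it is designed so that the subsequent trace formula argument (in \S\ref{subsection_elliptic_parameters_2} and \S\ref{sub:LIR-proof}) can use the already-established LIR for $U(3,1)$-type exceptional parameters from \S\ref{sec:lirexp} at the archimedean place.

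Concretely, I would first apply Lemma~\ref{lem_globalization_field} to globalize $E/F$ to a CM extension $\dot E/\dot F$ with a distinguished place $u$, and fix two further finite non-split places $v_1,v_2$. Write $\eta_\kappa\phi=\phi_1^{N_1}\oplus\cdots\oplus\phi_r^{N_r}$ or $2\phi_1^{N_1}\oplus\cdots$ according to whether $\phi\in\Phi_2(G^*)$ or $\Phi^2_{\el}(G^*)$; in all cases with $N=4$ the simple constituents have $N_i\in\{1,2\}$. For each simple factor $\phi_i^{N_i}$ I would globalize via Lemma~\ref{lemma_globalization_of_simple_parameter_1}, prescribing $\dot\phi_{i,u}=\phi_i$ and $\dot\phi_{i,v_1}\in\Phi_{\simp}(\dot G^*_{i,v_1})$ exactly as in Lemma~\ref{lem_globalize_data_N_odd} so that conditions 1, 3, 4, 6 hold. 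At the place $v_2$ I would impose the local conditions forcing an (exc2)-type configuration: in the case $\eta_\kappa\phi=2\phi_1^{N_1}\oplus\cdots$ with $N_1=1$ (i.e.\ $\phi$ of type $\Phi^2_{\el}$ with all $N_i=1$) I would arrange $\dot\phi_{i,v_2}=\chi$ a common conjugate self-dual character for three of the indices and a distinct one for the fourth, so $S_{\dot\phi_{v_2}}\simeq\O(3,\C)\times\O(1,\C)$; in the case $N_1=2$ I would arrange $\dot\phi_{1,v_2}\in\Phi_{2,\bdd}(\dot G^*_{1,v_2})$ decomposing as a direct sum of distinct conjugate self-dual characters, again producing the (exc2) shape. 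One checks that after the global-parity argument (using Propositions~\ref{prop:1st-seed-thm} and \ref{prop:2nd-seed-thm} and Lemma~\ref{lem:image-xi_chi} as in Lemma~\ref{lem_globalize_data_N_odd}) the resulting $\dot\phi^N$ descends to $\Phi(\dot G^*,\dot\eta_\kappa)$, and that $\dot\phi\in\Phi^2_{\el}(\dot G^*)$, giving condition~3. Then I would apply Lemma~\ref{lem_globalization_gp} to produce the pure inner twist $(\dot G,\dot\xi,\dot z)$ with $(\dot G_u,\dot\xi_u)=(G,\xi)$ and $\dot G$ quasi-split away from $\{u,v_2\}$, which is condition~2; since $N=4$ is even this requires the parity bookkeeping of Lemma~\ref{lem_globalization_gp}, matching the invariant of $(G,\xi)$ at $u$ against a chosen non-quasi-split inner twist at $v_2$.

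The main obstacle, and the one point requiring genuine care, is verifying condition~5 that $\dot\phi_{v_2}$ is $(\dot G_{v_2},\dot\xi_{v_2})$-relevant. By construction $\dot\phi_{v_2}$ has a Levi descent $\Phi_2(M^*_{v_2},\dot\phi_{v_2})\neq\emptyset$ with $M^*_{v_2}$ of the form $U_{\dot E_{v_2}/\dot F_{v_2}}(2)\times G_{\dot E_{v_2}/\dot F_{v_2}}(1)$, in particular $M^*_{v_2}$ is non-linear; and $\dot G_{v_2}$ is a unitary group over the non-archimedean field $\dot F_{v_2}$ since $v_2$ is finite non-split, so every non-linear Levi subgroup of $\dot G^*_{v_2}$ transfers to $\dot G_{v_2}$ (the relevant class in $H^1$ is always hit, by the computation in \S\ref{subsub:inner-U}). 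Hence the smallest Levi through which $\dot\phi_{v_2}$ factors transfers to $\dot G_{v_2}$, so by Lemma~\ref{lem:tranrel} and Definition~\ref{def:Xi-relevant-general} the parameter is $(\dot G_{v_2},\dot\xi_{v_2})$-relevant, as required. One also has to double-check that the chosen non-quasi-split inner twist at $v_2$ compatible with the parity constraint of Lemma~\ref{lem_globalization_gp} is indeed one to which $M^*_{v_2}$ transfers --- but since $N=4$ there is a unique non-quasi-split inner twist of $U_{\dot E_{v_2}/\dot F_{v_2}}(4)$, namely the one associated to the appropriate Hermitian form, and the Levi $U(2)\times\Res\GL(1)$ transfers to it, so this is automatic. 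The remaining conditions (1, 4, 6) propagate verbatim from the argument of Lemma~\ref{lem_globalize_data_N_odd} and require no new input.
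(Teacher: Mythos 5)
Your proposal follows the same strategy and the same construction as the paper's proof: globalize to a CM field, impose conditions at $u$, $v_1$, and a finite non-split place $v_2$ forcing the localization $\dot\phi_{v_2}$ to be of type (exc2), and then verify relevance at $v_2$ by observing that the non-linear Levi $U(2)\times G(1)$ always transfers to the unique non-quasi-split inner form of $U(4)$ over a $p$-adic field. Two small points are worth flagging, however. First, it is the companion Proposition \ref{prop_globalize_data_N_4_exceptional_case} — not this one — that takes $v_2$ archimedean and invokes the explicit $U(3,1)$ computation of \S\ref{sec:lirexp}; here $v_2$ is a finite non-split place, and the LIR input needed at $v_2$ in the downstream application (Lemma \ref{lem_lir_equality_first_lem}) is the non-archimedean exceptional case for $N=4$, which has already been settled by the preceding step using that companion proposition. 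Second, your "case $N_1=2$" with $\eta_\kappa\phi=2\phi_1^2$ cannot occur under the hypothesis that $M^*$ is non-linear (it would force $M^*\simeq G_{E/F}(2)$); the genuine second case is $N_1=1$, $N_2=2$, and there the crucial feature of the prescription at $v_2$ is that the rank-two factor decomposes as $\chi_1\oplus\chi_2$ with $\chi_1$ \emph{equal} to $\dot\phi_{1,v_2}$ — without enforcing that overlap one lands in $\O(2)\times\O(1)\times\O(1)$ (elliptic, not exceptional) rather than the desired $\O(3)\times\O(1)$. The same overlap requirement applies in your first case: of the four one-dimensional constituents at $v_2$ one needs three to coincide, so two of the three distinct global simple factors must localize to the same character.
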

\begin{proof}
The proposition is proved in the same way as Proposition \ref{prop_globalize_data_N_even_and_M_neq_G_and_N_geq_6} except we appeal
here to the following variant of Lemma \ref{lem_globalize_data_N_odd}. When globalising, we will choose a place $v_2$ of $\dot F$ which does not split in $\dot E$ such that $v_2\notin \{u,v_1\}$, and impose not only conditions 1-4 of the lemma but also the extra condition at $v_2$ that
\begin{enumerate}
  \item[5.] $\dot{\phi}_{v_2}$ belongs to $\Phi_{\mathrm{bdd},\EXC2}(\dot{G}^*_{v_2})$ and is $(\dot{G}^*_{v_2},\dot\xi_{v_2})$-relevant;
furthermore $\Phi_{{2}}(M^*_{v_2}, \dot{\phi}_{v_2}) \neq \emptyset$ for some non-linear Levi $M^*_{v_2}$.
\end{enumerate}
Note that condition 2 simply says that $\dot\phi\in \Phi^2_{\el}(\dot G^*,\dot \eta)$ as we are assuming $\phi\in \Phi^2_{\el}(G^*)$.
To complete the proof it is enough to construct this variant globalization.

As the parameter $\phi$ is elliptic and $M^* \neq G^*$ is not linear, it must be of the form
\[
\phi^N = 2 \phi^{N_1}_1 \oplus \phi^{N_2}_2 \oplus \phi^{N_3}_3\]
where either
 $N_1 = N_2 = N_3 = 1$ and
 $S_\phi \simeq \O(2,\C) \times \O(1,\C)^2$; or $N_1 = 1$, $N_2 = 2$, $N_3 = 0$ and
 $S_\phi \simeq \O(2,\C) \times \O(1,\C)$.

We follow the   construction appearing in Lemma \ref{lem_globalize_data_N_odd}
but impose the
following
additional  conditions at the $v_2$-place.
If $N_1 = N_2 = N_3 = 1$, then
\begin{itemize}
\item	$\dot{\phi}_{1, v_2} = \chi_1$ for some character $\chi_1 \in {\Phi}_{\mathrm{bdd}}(\dot{G}_{1, v_2})$,
\item	$\dot{\phi}_{2, v_2} = \chi_1$, and
\item $\dot{\phi}_{3, v_2} = \chi_2$ for some character $\chi_2 \in {\Phi}_{\mathrm{bdd}}(\dot{G}_{2, v_2})$  such that
	$\chi_2 \neq \chi_1$.
\end{itemize}
Otherwise if $N_1 =1$, $N_2 = 2$ and $N_3 = 0$, then
\begin{itemize}
\item	$\dot{\phi}_{1, v_2} = \chi_1$ for some character $\chi_1 \in {\Phi}_{\mathrm{bdd}}(\dot{G}_{1, v_2})$, and
\item	$\dot{\phi}_{2, v_2} = \chi_1 \oplus \chi_2$ for some character $\chi_2 \in {\Phi}_{\mathrm{bdd}}(\dot{G}_{1, v_2})$  such 	that 	$\chi_2 \neq \chi_1$.
\end{itemize}

Under this globalization, we observe that the group $S_{\dot{\phi}_{v_2}} \simeq \O(3,\C) \times \O(1,\C)$.
Such a parameter is of the type (exc2).
One can see that the Levi subgroup
$M^*_{v_2} = M^*_{v_2, -} \times  G_{\dot{E}_{v_2}/\dot{F}_{v_2}} (1)$
where $M^*_{v_2, -} = U_{\dot{E}_{v_2}/\dot{F}_{v_2}}(2)$.
It follows that the data $(\dot{E}/\dot{F}, u, v_1, v_2,  \dot{G}^*, \dot{\phi}, \dot{M}^*, \dot{\phi}_{\dot{M}^*})$
meets all the requirements.

\end{proof}

When $N=4$ and $M^*$ is a non-linear proper Levi subgroup, the proof of LIR for elliptic parameters will be reduced to the case for exceptional parameter by the previous proposition. In order to address the problem for exceptional parameters, we construct a globalization which will allow us to further reduce to some special instances of exceptional parameters at archimedean places, where we already verified LIR in \S\ref{sec:lirexp} by an explicit computation.

\begin{prop}
\label{prop_globalize_data_N_4_exceptional_case}
Let $(E/F, G^*, (G, \xi),\phi, M^*, \phi_{M^*})$ be as above. Let $\heartsuit\in \{\EXC1,\EXC2\}$.
Assume that $N=4$,
that $G$ is not quasi-split, and that $M^* \neq G^*$ is not linear, and
 that $\phi \in \Phi_{\mathrm{bdd},\heartsuit}(G^*)$.
 There exists the data
$$(\dot{E}/\dot{F}, u, v_1, v_2,  \dot{G}^*,  (\dot{G}, \dot{\xi}, \dot{z}), \dot{\phi}, \dot{M}^*,  \dot{\phi}_{\dot{M}})$$
where $\dot{E}/\dot{F}$ is a CM extension, $u$ is a place of $\dot{F}$ that does not split in $\dot{E}$,
$v_1$ is a finite places of $\dot{F}$ that does not split in $\dot{E}$, $v_2$ is an archimedean place
of $\dot{F}$ that does not split in $\dot{E}$,
$\dot{G}^*=U_{\dot E/\dot F}(N)$,
$ (\dot{G}, \dot{\xi}, \dot{z})$ is a pure inner twist of $\dot{G}^*$,
$\dot{\phi} \in \Phi(\dot{G}^*,  \dot{\eta})$,
$\dot{M}^*$ is a Levi subgroup of $\dot{G}^*$, and $\dot{\phi}_{\dot{M}} \in \Phi_2(\dot{M}^*, \dot{\phi})$,
such that
\begin{enumerate}
\item $\dot E_u/\dot F_u=E/F$ and
$
	(\dot{G}_u,\dot\xi_u, \dot{\phi}_u, \dot{M}^*_u, \dot{\phi}_{\dot{M},u}) = (G,\xi, \phi, M^*, \phi_{M^*})
$,
\item
 $\dot{G}_v$ is quasi-split for all $v \not\in \{u,v_2\}$,

\item   $ \dot{\phi} \in\Phi_\heartsuit(\dot{G}^*)$,

\item $\dot{\phi}_{v_1} \in \Phi_{\mathrm{bdd}}(\dot{G}^*_{v_1})$ and
$\dot{\phi}_{M^*, v_1} \in \Phi_{\mathrm{2, bdd}}(\dot{M}^*_{v_1})$,

\item the parameter $\dot{\phi}_{v_2}  \in \Phi_{\mathrm{bdd},\heartsuit}(\dot{G}^*_{v_2})$ is $(\dot G_{v_2},\dot\xi_{v_2})$-relevant and satisfies Theorem \ref{thm:lir} relative to the Levi subgroup $\dot M^*_{v_2}$.
\item
 the canonical maps
\begin{eqnarray*}
	{{S}}_{\dot{\phi}_{M}} &\rightarrow& {{S}}_{\dot{\phi}_{M,v}} 	\\
	{{S}}_{\dot{\phi}} &\rightarrow& {{S}}_{\dot{\phi}_v} 	\\
\end{eqnarray*}
are isomorphisms for $v \in \{u, v_1\}$.

\end{enumerate}

\end{prop}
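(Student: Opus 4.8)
The plan is to follow the now-familiar architecture of Lemma \ref{lem_globalize_data_N_odd} and Proposition \ref{prop_globalize_data_N_even_and_M_neq_G_and_N_geq_6}, but with the new feature that the auxiliary place $v_2$ must be \emph{archimedean} and non-split in $\dot E$, and at $v_2$ the parameter $\dot\phi_{v_2}$ must be forced into one of the very explicit exceptional families over $\C/\R$ for which Theorem \ref{thm:lir} was checked by hand in \S\ref{sec:lirexp} (Proposition \ref{pro:u31lir}). First I would invoke Lemma \ref{lem_globalization_field} to obtain a CM extension $\dot E/\dot F$ and a place $u$ with $\dot E_u/\dot F_u = E/F$, arranging that $\dot F$ has at least two real places, and I would fix one real place $v_2$ of $\dot F$ (necessarily inert in the CM extension $\dot E$), together with a finite non-split auxiliary place $v_1 \notin\{u,v_2\}$. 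Since $N=4$, the exceptional parameter $\phi$ has one of the two coarse shapes $\eta_\kappa\phi = 2\phi_1^{N_1}\oplus\phi_2^{N_2}\oplus\cdots$ (type $\EXC1$, so $N_1=1$ and all factors $1$-dimensional, $r=4$, $S_\phi\simeq \Sp(2,\C)\times\O(1,\C)^3$) or $\eta_\kappa\phi = 3\phi_1^{N_1}\oplus\phi_2^{N_2}$ (type $\EXC2$; here $N=4$ forces $N_1=1$, $r=2$, $S_\phi\simeq \O(3,\C)\times\O(1,\C)$), and in either case there is a unique non-linear proper Levi $M^*$ with $\Phi_2(M^*,\phi)\neq\emptyset$.

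Next I would globalize the simple constituents $\phi_i^{N_i}$ via Lemma \ref{lemma_globalization_of_simple_parameter_1}, prescribing their localizations at $u$, at $v_1$ (where I impose mutual distinctness of the simple local parameters, so that conditions 4 and 6 on the centralizer maps $S_{\dot\phi}\to S_{\dot\phi_v}$, $S_{\dot\phi_{\dot M^*}}\to S_{\dot\phi_{\dot M^*,v}}$ hold exactly as in Lemma \ref{lem_globalize_data_N_odd}), and crucially at $v_2$. At $v_2=\C/\R$ I would choose the characters $\dot\phi_{i,v_2}$ to be among the explicit characters $(a,-a)$ of $W_\C$ appearing in \S\ref{sec:lirexp}: for type $\EXC2$ one prescribes $\eta_{1,v_2}\dot\phi_{1,v_2} = \chi\oplus\chi$ (or a conjugate-symplectic/orthogonal pattern) so that $S_{\dot\phi_{v_2}}\simeq \O(3,\C)\times\O(1,\C)$ is exactly the $x\in\{-\tfrac12,\tfrac12\}$ case of the table in \S\ref{sec:lirexp}, and for type $\EXC1$ one prescribes the constituents so that $S_{\dot\phi_{v_2}}\simeq\Sp(2,\C)\times\O(1,\C)^3$ realizing the $x\in\{-2,-1,0,1,2\}$ case of that table; in every instance the resulting $\dot\phi_{v_2}$ is of the same exceptional type $\heartsuit$, comes from the non-linear Levi $\dot M^*_{v_2}\simeq U_{\dot E_{v_2}/\dot F_{v_2}}(2)\times G_{\dot E_{v_2}/\dot F_{v_2}}(1)$, and satisfies Theorem \ref{thm:lir} by Proposition \ref{pro:u31lir}. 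I would then assemble $\dot\phi^N = \ell_1\dot\phi_1^{N_1}\oplus\cdots$, verify via Propositions \ref{prop:1st-seed-thm} and \ref{prop:2nd-seed-thm} (matching of parities at $u$, exactly as in Lemma \ref{lem_globalize_data_N_odd}) that $\dot\phi^N$ is in the image of $\Phi(\dot G^*,\dot\eta)$, giving $\dot\phi$; and set $\dot M^* = U_{\dot E/\dot F}(N_-)\times\prod G_{\dot E/\dot F}(N_i)^{\ell_i'}$ with its parameter $\dot\phi_{\dot M^*}\in\Phi_2(\dot M^*,\dot\phi)$. Condition 3, $\dot\phi\in\Phi_\heartsuit(\dot G^*)$, is then immediate from $S_{\dot\phi}\simeq S_\phi$ via condition 6.

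The group $\dot G$ is produced by Lemma \ref{lem_globalization_gp} (or rather its pure-inner-twist refinement), using the parity obstruction: since $N=4$ is even and $G$ is not quasi-split, $\dot\xi_u$ and $\dot\xi_{v_2}$ must have matching images under the Kottwitz maps $\alpha_u,\alpha_{v_2}$, which is exactly why $v_2$ is taken archimedean and non-split — the map $H^1(\Gamma_{v_2},\dot G^*)\to\Z/2\Z$ is surjective there, so one can choose the local inner twist at $v_2$ to absorb the obstruction while keeping $\dot G_v$ quasi-split for all $v\notin\{u,v_2\}$, and then assemble $(\dot G,\dot\xi,\dot z)$ globally via \cite[Prop 2.6]{Kot86}. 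Finally, relevance of $\dot\phi_{v_2}$ for $(\dot G_{v_2},\dot\xi_{v_2})$ must be checked: this is the one genuinely delicate point, and it is handled by the explicit computation of \S\ref{sec:lirexp}, where the inner form in question is precisely $U(3,1)=G$ with Levi $\tx{Res}_{\C/\R}(\mb G_m)\times U(2,0)$ and the table of values for $\pi(a_3)_{\xi,z}$ confirms both relevance and the validity of Theorem \ref{thm:lir} for exactly the range of exceptional $x$ realized above. \textbf{The main obstacle} I anticipate is precisely the bookkeeping at $v_2$: one must choose the local constituents of $\dot\phi_{v_2}$ so that simultaneously (a) the centralizer $S_{\dot\phi_{v_2}}$ has the right isomorphism type realizing one of the cases $x\in\{-2,-1,-\tfrac12,0,\tfrac12,1,2\}$ of \S\ref{sec:lirexp}, (b) $\dot\phi_{v_2}$ is $(\dot G_{v_2},\dot\xi_{v_2})$-relevant for the non-quasi-split group $\dot G_{v_2}$ forced by the parity matching, and (c) the type $\heartsuit$ is preserved and $\dot M^*_{v_2}$ is the non-linear Levi. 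Threading all three through a single choice of archimedean characters, for both $\EXC1$ and $\EXC2$, is the crux; once it is done the remaining verifications are parallel to the already-established propositions.
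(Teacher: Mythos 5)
Your proposal follows the same architecture as the paper's proof: globalize to a CM field with at least two real places, fix a real $v_2$ (automatically inert since $\dot E$ is totally imaginary), pin the localization $\dot\phi_{v_2}$ to be precisely the explicit $U(3,1)$ parameter of \S\ref{sec:lirexp} (with $x\in\{0,\pm1,\pm2\}$ for $\EXC1$ and $x\in\{\pm\tfrac12\}$ for $\EXC2$), produce the global pure inner twist via the parity-matching argument at $\{u,v_2\}$, and invoke Proposition \ref{pro:u31lir} for condition 5. Two small corrections to your bookkeeping: at $N=4$ the $\EXC1$ shapes are $2\phi_1^1\oplus\phi_2^2$ ($r=2$, $S_\phi\simeq\Sp(2,\C)\times\O(1,\C)$) or $2\phi_1^1\oplus\phi_2^1\oplus\phi_3^1$ ($r=3$, $S_\phi\simeq\Sp(2,\C)\times\O(1,\C)^2$), not $r=4$ with $\O(1)^3$ (that would require $N=5$); and the relevance of $\dot\phi_{v_2}$ is immediate from the construction (the non-linear Levi $U(2)\times\tx{Res}_{\C/\R}\mathbb G_m$ transfers to $U(3,1)$), so \S\ref{sec:lirexp} is needed only for condition 5, not for relevance.
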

\begin{proof}
The proposition is proved in a similar way to Proposition \ref{prop_globalize_data_N_even_and_M_neq_G_and_N_geq_6} but
instead of Lemma \ref{lem_globalize_data_N_odd}, we appeal to the following variant. When globalising we consider a real place $v_2$ of the totally real field $\dot F$ different from $u$ and $v_1$, and impose not only conditions 1-4 of that lemma but also that
\begin{enumerate}
  \item[5.] the parameter $\dot{\phi}_{v_2} \in \Phi_{\mathrm{bdd}}(\dot{G}^*_{v_2})$ is equal to the parameter $\phi$ of \S\ref{sec:lirexp} with $x\in \{0,\pm 1,\pm 2\}$ (resp. $x\in \{\pm \frac{1}{2}\}$) if $\heartsuit$ is $\EXC1$ (resp. $\EXC2$).
\end{enumerate}
Note that condition 2 just amounts to $\dot\phi\in \Phi_{\heartsuit}(\dot G^*)$ here. Condition 5 just stated implies condition 5 of the proposition by Proposition \ref{pro:u31lir}. (The relevance is clear from the construction and Proposition \ref{pro:u31lir} is not needed.) So we will be done once we establish this variant of Lemma \ref{lem_globalize_data_N_odd}.

We remind the reader that the subgroup $M^* = U_{E/F}(2) \times G_{E/F} (1)$ and the parameter $\phi$ is of
the following form.
\begin{itemize}
	\item if $\phi$ is (exc1), then either $\eta \phi = 2 \phi^{N_1}_1 \oplus \phi^{N_2}_2$ and $S_\phi = \Sp(2,\C) \times \O(1,\C)$, or
			$\eta\phi = 2 \phi^{N_1}_1 \oplus \phi^{N_2}_2 \oplus \phi^{N_3}_3$ and $S_\phi = \Sp(2,\C) \times \O(1,\C)^2$, and
	\item  If $\phi$ is (exc2), then $\phi^N = 3 \phi^{N_1}_1 \oplus \phi^{N_2}_2$ and $S_\phi = \O(3,\C) \times \O(1,\C)$.
\end{itemize}
The result is proved in the same way as Lemma \ref{lem_globalize_data_N_odd}.
The only difference is that when globalising each $\phi^{N_i}_i$ to obtain $\dot\phi_i$, we impose an appropriate extra local condition at $v_2$ such that $\dot\phi_{v_2}$ satisfies condition 5 above. To see that this is possible, it is enough to remark that the archimedean parameter of \S\ref{sec:lirexp} is of type (exc1) if $x\in \{0,\pm 1,\pm 2\}$ and of type (exc2) if $x\in \{\pm \frac{1}{2}\}$.
\end{proof}

We are left to deal with the case that $N$ is even and $M^* \neq G^*$ is a linear Levi subgroup.
In our applications to the proof of the local intertwining relation, the group $G$ will not be quasi-split (the quasi-split case having already been treated
by Mok).  In this case, the Levi $M^*$ will not transfer to $G$, that is the parameters in question will not be relevant and the
desired local intertwining relation reduces to a vanishing statement. The following globalization shall suffice. Although it will be applied only when $M^*$ is linear, the proposition is true without such an assumption.

\begin{prop}
\label{prop_globalize_data_N_even_and_M_neq_G_split}
Let $(E/F,G^*,(G,\xi),\phi,M^*,\phi_{M^*})$ be as above.
Assume that $N$ is even and that $M^* \neq G^*$.
There exists the global data
$$(\dot{E}/\dot{F}, u_1, u_2, v_1,  \dot{G}^*,  (\dot{G}, \dot{\xi}, \dot{z}), \dot{\phi}, \dot{M}^*,  \dot{\phi}_{\dot{M}}),$$
where $\dot{E}/\dot{F}$ is a CM extension, $u_1, u_2$  are places of $\dot{F}$ that do not split in $\dot{E}$,
$v_1$ is a finite place of $\dot{F}$ that does not split in $\dot{E}$,
$\dot{G}^*=U_{\dot E/\dot F}(N)$,
$ (\dot{G}, \dot{\xi}, \dot{z})$ is a  pure inner twist of $\dot{G}^*$,
$\dot{\phi} \in \Phi(\dot{G}^*,  \dot{\eta})$,
$\dot{M}^*$ is a Levi subgroup of $\dot{G}^*$, and $\dot{\phi}_{\dot{M}} \in \Phi_2(\dot{M}^*, \dot{\phi})$,
such that
\begin{enumerate}
\item $\dot E_u/\dot F_u=E/F$ and
$(\dot{G}_u, \dot \xi_u,\dot{\phi}_u, \dot{M}^*_u, \dot{\phi}_{\dot{M},u}) = (G, \xi,\phi, M^*, \phi_{M^*})$
	for $u\in \{u_1,u_2\}$,

\item
 $\dot{G}_v$ is quasi-split for all $v \not\in\{u_1,u_2\}$,
\item if $\phi \in \Phi_2(G^*)$ (resp. $\Phi^2_{\mathrm{ell}}(G^*)$, resp. $\Phi_{\EXC1}(G^*)$, resp. $\Phi_{\EXC2}(G^*)$)
then  $ \dot{\phi} \in \Phi_2(\dot{G}^*)$ (resp. $\Phi^2_{\mathrm{ell}}(\dot{G}^*)$, resp. $\Phi_{\EXC1}(\dot G^*)$, resp. $\Phi_{\EXC2}(\dot G^*)$),
\item $\dot{\phi}_{v_1} \in \Phi_{\mathrm{bdd}}(\dot{G}^*_{v_1})$ and
$\dot{\phi}_{M^*, v_1} \in \Phi_{\mathrm{2, bdd}}(\dot{M}^*_{v_1})$,
\item
 the canonical maps
\begin{eqnarray*}
	{{S}}_{\dot{\phi}_{M}} &\rightarrow& {{S}}_{\dot{\phi}_{M,v}} 	\\
	{{S}}_{\dot{\phi}} &\rightarrow& {{S}}_{\dot{\phi}_v} 	\\
\end{eqnarray*}
are isomorphisms for $v \in \{u_1, u_2, v_1\}$.

\end{enumerate}

\end{prop}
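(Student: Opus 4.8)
\textbf{Proof strategy for Proposition \ref{prop_globalize_data_N_even_and_M_neq_G_split}.}

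The plan is to follow the pattern established by the previous globalization results (Propositions \ref{prop_globalize_data_N_odd} through \ref{prop_globalize_data_N_4_exceptional_case}), but exploiting the freedom afforded by having \emph{two} non-split places $u_1, u_2$ at which we localize to the given data, rather than one place $u$ and one auxiliary place $v_2$. The key point is that when $N$ is even, the parity obstruction in $\bigoplus_v H^1(F_v, \dot G^*) \to \Z/2\Z$ (from the exact sequence of \cite{Kot86} recalled in the proof of Lemma \ref{lem_globalization_gp}) forces the sum of local invariants to vanish in $\Z/2\Z$; since $u_1$ and $u_2$ do not split in $\dot E$, the maps $H^1(\dot F_{u_i}, \dot G^*) \to \Z/2\Z$ are surjective, so placing the \emph{same} non-quasi-split class at both $u_1$ and $u_2$ (and the trivial class everywhere else) already satisfies the sum condition. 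This is why no third auxiliary non-quasi-split place is needed and why $\dot G_v$ can be quasi-split for all $v \notin \{u_1, u_2\}$.

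First I would apply Lemma \ref{lem_globalization_field_gp_double_places} to globalize $E/F$ and $(G,\xi)$ simultaneously to a CM extension $\dot E/\dot F$ with two places $u_1, u_2$ such that $\dot E_{u_i}/\dot F_{u_i} = E/F$ and $(\dot G_{u_i}, \dot\xi_{u_i}) = (G,\xi)$ for $i=1,2$, with $\dot G$ quasi-split away from $\{u_1,u_2\}$, and such that $\dot F$ has at least two archimedean places (needed for the representation-globalization lemmas). Next I would globalize the parameter: decompose $\eta_\kappa\phi = \oplus_i \ell_i \phi_i^{N_i}$ into simple factors, attach to each $\phi_i^{N_i}$ a simple datum $(U_{\dot E/\dot F}(N_i), \dot\eta_{\chi_{\kappa_i}}) \in \widetilde{\mathcal{E}}_{\mathrm{sim}}(N_i)$, and use Lemma \ref{lemma_globalization_of_simple_parameter_1} applied to the set $V = \{u_1, u_2, v_1\}$ to produce simple global parameters $\dot\phi_i \in \Phi_{\mathrm{sim}}(\dot G^*_i, \dot\eta_i)$ with $\dot\phi_{i,u_1} = \dot\phi_{i,u_2} = \phi_i$ and $\dot\phi_{i,v_1} = \phi_{v_1,i}$ chosen to be mutually distinct simple parameters over $\dot F_{v_1}$. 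Assembling $\dot\phi^N = \oplus_i \ell_i \dot\phi_i^{N_i}$, the sign argument using Propositions \ref{prop:1st-seed-thm}, \ref{prop:2nd-seed-thm} and Lemma \ref{lem:image-xi_chi} (exactly as in the proof of Lemma \ref{lem_globalize_data_N_odd}) shows $\dot\phi^N$ descends to $\dot\phi \in \Phi(\dot G^*, \dot\eta_\kappa)$. Then define $\dot M^* = U_{\dot E/\dot F}(N_-) \times \prod_i G_{\dot E/\dot F}(N_i)^{\ell'_i}$ and $\dot\phi_{\dot M^*}$ as before.

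The verification of conditions (1)--(5) is then essentially bookkeeping following Lemma \ref{lem_globalize_data_N_odd}: condition (1) and (4) are immediate from construction; condition (3) follows from the construction together with the isomorphism $S_{\dot\phi} \cong S_\phi$ (which, by the explicit description of these centralizer groups in \S\ref{subsub:local-param-U}, \S\ref{subsub:global-param-U}, is preserved because we matched the local parameters at $u_1$ exactly); condition (5) follows from the mutual distinctness of the $\phi_{v_1,i}$ and the exact-matching at $u_1, u_2$, arguing as on the last paragraph of page 324 of \cite{Arthur}; and condition (2) is guaranteed by Lemma \ref{lem_globalization_field_gp_double_places}. The main obstacle I anticipate is ensuring that the canonical maps $S_{\dot\phi_{\dot M^*}} \to S_{\dot\phi_{\dot M^*,v}}$ and $S_{\dot\phi} \to S_{\dot\phi_v}$ are isomorphisms \emph{simultaneously} at all three places $u_1, u_2, v_1$ — at $v_1$ this is the standard argument (distinct simple local factors rigidify the centralizer), but at both $u_1$ and $u_2$ one needs the localization $\dot\phi_{u_i}^N$ to realize exactly the same factorization type as $\eta_\kappa\phi$, which is automatic here since we imposed $\dot\phi_{i,u_i} = \phi_i$ on the nose; one just has to check no spurious coincidences among the $\dot\phi_i$ are introduced globally, which is handled by the distinctness at $v_1$. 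Finally, the pure inner twist $(\dot G, \dot\xi, \dot z)$ is produced as in Lemma \ref{lem_globalization_gp} (the version in Lemma \ref{lem_globalization_field_gp_double_places}), completing the proof.
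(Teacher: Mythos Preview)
Your proposal is correct and follows essentially the same approach as the paper: the paper's proof says to argue as in Proposition \ref{prop_globalize_data_N_odd}, replacing Lemma \ref{lem_globalization_gp} by Lemma \ref{lem_globalization_field_gp_double_places} and modifying Lemma \ref{lem_globalize_data_N_odd} so that the conditions at $u$ are imposed at both $u_1$ and $u_2$, which is exactly what you do. Your discussion of the parity obstruction is a helpful gloss on why Lemma \ref{lem_globalization_field_gp_double_places} works, but the proof itself needs only to invoke that lemma.
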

\begin{proof}
This is proved in the same way as Proposition \ref{prop_globalize_data_N_odd} is deduced from Lemmas \ref{lem_globalization_gp} and \ref{lem_globalize_data_N_odd} except that we use Lemma \ref{lem_globalization_field_gp_double_places} in place of Lemma \ref{lem_globalization_gp} and that Lemma \ref{lem_globalize_data_N_odd} is modified as follows: Instead of the place $u$, we consider two places $u_1$ and $u_2$ of $\dot F$ which do not split in $\dot E$, and impose the same condition at $u_1$ and $u_2$ as we did at $u$ in Lemma \ref{lem_globalize_data_N_odd} when globalising the data (so that condition 1 of that lemma holds for $u\in \{u_1,u_2\}$ and condition 4 for $v\in \{u_1,u_2,v_1\}$).
\end{proof}

\subsection{On global elliptic parameters, II}\label{subsection_elliptic_parameters_2}

The following result shall act as our analogue of Lemmas 5.4.3 and 5.4.4 of \cite{Arthur}.
\begin{lem}
\label{lem_analgoue_of_arthur_lem_543_544}
Let $\dot{E}/\dot{F}$ be a quadratic extension of number fields.
Let $\dot{G}^* = U_{\dot{E}/\dot{F}}(N)$ and let $(\dot G,\dot\xi)$ be an inner twist of $\dot G^*$.
Let $\dot{M}^* $ be a proper Levi subgroup of $\dot G^*$.
Let $\dot{\phi} \in \Phi^2_{\mathrm{ell}}(\dot{G}^*, \dot{\eta})$ and suppose that $\dot{\phi}_{\dot{M}^*} \in \Phi_2(\dot{M}^*, \dot{\phi})$.
Assume that there exists a place $v_1$ of $\dot{F}$ that does not split in $\dot{E}$ such that the following conditions hold.
\begin{itemize}
\item $\dot{G}_{v_1}$ is quasi-split,
\item $\dot{\phi}_{v_1} \in \Phi_{\mathrm{bdd}}(\dot{G}^*_{v_1})$ and
	$\dot{\phi}_{\dot{M}^*, {v_1}} \in \Phi_2(\dot{M}^*_{v_1}, \dot{\phi}_{v_1})$, and
	\item
	 the canonical maps
\begin{eqnarray*}
	{{S}}_{\dot{\phi}_{\dot{M}^*}} &\rightarrow& {{S}}_{\dot{\phi}_{\dot{M}^*,v_1}} 	\\
	{{S}}_{\dot{\phi}} &\rightarrow& {{S}}_{\dot{\phi}_{v_1}} 	\\
\end{eqnarray*}
are isomorphisms.
 \end{itemize}
Then for all $\dot{f} \in \mathcal{H}(\dot{G})$,
\[
	\mathrm{tr} R^{\dot{G}}_{\mathrm{disc}, \dot{\phi}}(\dot{f}) =
	 \sum_{\overline{x} \in \overline{\cS}_{\dot{\phi}, \mathrm{ell}}}
		(\dot{f}'_{\dot{G}}(\dot{\phi}, \overline{x}) - \dot{f}_{\dot{G}}(\dot{\phi}, \overline{x})) = 0.
\]
\end{lem}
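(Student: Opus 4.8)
The statement has two parts: the equality of the trace $\tr R^{\dot G}_{\disc,\dot\phi}(\dot f)$ with the sum over $\ol x \in \ol\cS_{\dot\phi,\el}$ of the differences $\dot f'_{\dot G}(\dot\phi,\ol x) - \dot f_{\dot G}(\dot\phi,\ol x)$, and the vanishing of that sum. For the first equality, I would start from the standard-model identity for elliptic non-discrete parameters already proved in the excerpt (the lemma immediately preceding ``On global elliptic parameters, I'' gives $\tr R^{\dot G}_{\disc,\dot\phi}(\dot f) = 2^{-q}|\ol\cS_{\dot\phi}|^{-1}\sum_{\ol x \in \ol\cS_{\dot\phi,\el}} \epsilon^{\dot G^*}_{\dot\phi}(\ol x)(\dot f'_{\dot G}(\dot\phi, s_{\dot\phi}\ol x^{-1}) - \dot f_{\dot G}(\dot\phi,\ol x))$). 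Thus both parts will follow once I show that, under the hypotheses, for each $\ol x \in \ol\cS_{\dot\phi,\el}$ the coefficient $\epsilon^{\dot G^*}_{\dot\phi}(\ol x)$ can be normalized away and the individual difference $\dot f'_{\dot G}(\dot\phi, s_{\dot\phi}\ol x^{-1}) - \dot f_{\dot G}(\dot\phi,\ol x)$ both realizes the claimed expression and vanishes. The combinatorial normalization ($2^{-q}|\ol\cS_{\dot\phi}|^{-1}$ versus a bare sum) is handled exactly as in Arthur's Lemmas 5.4.3--5.4.4: one counts that $|\ol\cS_{\dot\phi,\el}| = 2^q$ relative to $|\ol\cS_{\dot\phi}|$ for an elliptic parameter of the shape $\dot\phi = 2\dot\phi_1 \boxplus \cdots \boxplus 2\dot\phi_q \boxplus \dot\phi_{q+1} \boxplus \cdots$, and that $\epsilon^{\dot G^*}_{\dot\phi}$ is trivial here because $\dot\phi$ is generic. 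Writing $\dot f_{\dot G}(\dot\phi,\ol x)$ for the normalized $\dot f'_{\dot G}(\dot\phi, s_{\dot\phi}\ol x^{-1})$ in the notation compatible with Corollary~\ref{cor:std-model-result} then turns the coefficient identity into the displayed equation.

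\textbf{The global intertwining relation as the engine.} The vanishing is where the real content lies, and the mechanism is the global intertwining relation (Theorem~\ref{thm:global-intertwining}): if the local intertwining relation holds at every place $v$ of $\dot F$ for the pair $(\dot M^*, \dot G^*)$ and $\dot\phi_{\dot M^*,v}$, then $\dot f'_{\dot G}(\dot\phi, s_{\dot\phi}\ol x^{-1}) = \dot f_{\dot G}(\dot\phi_{\dot M^*},\ol u)$ whenever $\ol u$ and $s_{\dot\phi}\ol x^{-1}$ have matching images in $\ol\cS_{\dot\phi_{\dot M^*}}$, which makes every summand zero. So the plan is: first verify LIR locally everywhere. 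Away from a problematic place, $\dot G_v$ is quasi-split (arranged by the globalization producing $\dot\phi$, e.g. Proposition~\ref{prop_globalize_data_N_4_reduction_exceptional_case}), so LIR there is part of the imported quasi-split results of \S\ref{sub:results-qsuni}. At the place $v_1$ in the hypotheses, $\dot G_{v_1}$ is quasi-split and $\dot\phi_{v_1}$ is bounded, so again LIR holds by the quasi-split theory; moreover the isomorphisms $\ol S_{\dot\phi} \cong \ol S_{\dot\phi_{v_1}}$ and $\ol S_{\dot\phi_{\dot M^*}} \cong \ol S_{\dot\phi_{\dot M^*,v_1}}$ let me transfer the combinatorics of $\ol\cS_{\dot\phi,\el}$ to the local group at $v_1$. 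At the place $u$ (the place of real interest), one uses the induction hypothesis: either $\dot\phi_u$ is non-elliptic non-exceptional, where LIR follows from Corollary~\ref{cor:local-intertwining-step1}, or it falls into (exc1)/(exc2), where one invokes the explicit $U(3,1)$ computation of \S\ref{sec:lirexp} together with Proposition~\ref{pro:u31lir} via the archimedean reduction of Proposition~\ref{prop_globalize_data_N_4_exceptional_case}, or the weaker trace-formula identity of Lemma~\ref{lem:EXC1-EXC2}. For the elliptic case proper at $u$, where the induction does not immediately apply, I would use the present lemma's own output in a bootstrap: the sum on the right is $\sum_{\ol x}(\dot f'_{\dot G}(\dot\phi,\ol x) - \dot f_{\dot G}(\dot\phi,\ol x))$, and one has to check it vanishes; I expect this to be established precisely by combining LIR at all \emph{other} places (which is known) with the positivity/nonnegativity argument from the proof of Lemma~\ref{lem:EXC1-EXC2} applied at $u$, forcing each term to zero.

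\textbf{Carrying it out.} Concretely, the steps in order are: (i) apply the standard-model formula for elliptic non-discrete parameters to rewrite $\tr R^{\dot G}_{\disc,\dot\phi}(\dot f)$ as the normalized sum; (ii) compute the combinatorial factors $i^{\dot G^*}_{\dot\phi}(\ol x) = 2^{-q}$ on $\ol\cS_{\dot\phi,\el}$, zero elsewhere, and $\epsilon^{\dot G^*}_{\dot\phi} \equiv 1$ (genericity), matching Corollary~\ref{cor:std-model-result}; (iii) reconcile notation $\dot f_{\dot G}(\dot\phi,\ol x)$ versus $\dot f'_{\dot G}(\dot\phi,s_{\dot\phi}\ol x^{-1})$ using the conventions of \S\ref{sub:global-intertwining}, establishing the first displayed equality; (iv) verify LIR at every place of $\dot F$ using the case analysis above (quasi-split places $\Rightarrow$ \S\ref{sub:results-qsuni}; place $v_1$ $\Rightarrow$ quasi-split + boundedness; place $u$ $\Rightarrow$ induction hypothesis plus the exceptional/elliptic arguments), then invoke Theorem~\ref{thm:global-intertwining} to conclude each $\dot f'_{\dot G}(\dot\phi, s_{\dot\phi}\ol x^{-1}) - \dot f_{\dot G}(\dot\phi_{\dot M^*},\ol u) = 0$, hence the sum vanishes. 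The main obstacle is step (iv) at the place $u$ when $\dot\phi_u$ is itself elliptic or exceptional: there the induction hypothesis does not directly supply LIR, and one is forced into the more delicate argument combining the explicit archimedean computations of \S\ref{sec:lirexp}, the globalizations of Propositions~\ref{prop_globalize_data_N_4_reduction_exceptional_case} and~\ref{prop_globalize_data_N_4_exceptional_case}, and the nonnegativity trick of Lemma~\ref{lem:EXC1-EXC2}, rather than a single clean citation. The remaining steps are essentially bookkeeping modeled on Arthur's Lemmas 5.4.3--5.4.4 and present no conceptual difficulty.
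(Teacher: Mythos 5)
You propose to ``verify LIR at every place of $\dot F$'' and then invoke Theorem~\ref{thm:global-intertwining} to make each summand vanish. But this lemma is exactly what the paper later feeds into Lemma~\ref{lem_lir_equality_first_lem} to \emph{establish} LIR for elliptic parameters at the non-quasi-split place(s) of $\dot G$. You cannot assume LIR there; you acknowledge the obstacle and gesture at a ``bootstrap,'' but the proposed remedy — invoking the explicit $U(3,1)$ computations of \S\ref{sec:lirexp}, the globalizations of Propositions~\ref{prop_globalize_data_N_4_reduction_exceptional_case} and \ref{prop_globalize_data_N_4_exceptional_case}, and the nonnegativity argument of Lemma~\ref{lem:EXC1-EXC2} — belongs to the \emph{caller} of this lemma, not to its proof. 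As stated the lemma must be proved without any local information at a place where $\dot G_v$ is not quasi-split, and your outline does not do this.

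The actual argument sidesteps LIR at the bad place entirely. One applies LIR only at the places where $\dot G_v$ is quasi-split (where it is imported from \cite{Mok}), reducing the display from \S\ref{sub:elliptic-parameters} to the form \eqref{equation_lem_analgoue_of_arthur_lem_543_544}, with a factor $\dot f_{\dot G,\dot\Xi,v_1}(\dot\phi_{v_1},\dot u_{\ol x,v_1})$ isolated at $v_1$. Because $\dot G_{v_1}$ is quasi-split and $\dot\phi_{v_1}$ is bounded, LIR and the local classification at $v_1$ identify the endoscopic and representation-theoretic $R$-groups, so $\dot f_{\dot G,\dot\Xi,v_1}(\dot\phi_{v_1},\dot u_{\ol x,v_1})$ expands in the basis $T(\dot G_{v_1})$ as in \eqref{eq:proof-4.5}. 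The whole expression then becomes $\sum_{\tau_{v_1}} d(\tau_{v_1},\dot f^{v_1})\,\dot f_{\dot G,\dot\Xi,v_1}(\tau_{v_1})$, matching the shape required by Lemma~\ref{lem:vanishing-coeff}. The decisive observation — entirely absent from your proposal — is that since $\dot\phi\in\Phi^2_\el$, every $\ol x \in \ol\cS_{\dot\phi,\el}$ has nontrivial image in the global $R$-group $R_{\dot\phi}$, and the hypothesis that $S_{\dot\phi}\to S_{\dot\phi_{v_1}}$ and $S_{\dot\phi_{\dot M^*}}\to S_{\dot\phi_{\dot M^*,v_1}}$ are isomorphisms transfers this to $R_{\dot\phi_{v_1}}$, forcing $w_{\ol x,v_1}\neq 1$. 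Hence no triple of the form $(M_{v_1},\pi_{v_1},1)$ occurs in the expansion, the hypotheses of Lemma~\ref{lem:vanishing-coeff} are met, and all coefficients (hence both sides) vanish. The role of the auxiliary place $v_1$ and the transfer of regularity via the $S$-group isomorphisms is the entire point of the lemma's hypotheses, and your plan does not use them.
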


\begin{proof}
Fix an equivalence class of pure inner twists $\dot \Xi$ as in Lemma \ref{lem_globalization_gp} giving rise to $(\dot G,\dot \xi)$.
Let $\dot{f} = \otimes_v \dot f_v \in \mathcal{H}(\dot{G})$ be a decomposable function.
Let $\dot{u}_{\overline{x}} \in N^\natural_{\dot{\phi}}$ be an element that maps to $\overline{x}$.
Then we have that
\[
\dot{f}_{\dot{G}}(\dot{\phi}, \overline{x}) =  \dot{f}_{\dot{G}, \dot{\Xi}}(\dot{\phi}, \dot{u}_{\overline{x}}) =
	 \dot{f}_{\dot{G}, \dot{\Xi}, v_1}(\dot{\phi}_{v_1}, \dot{u}_{\overline{x}, v_1})
	 \dot{f}_{\dot{G}, \dot{\Xi}}^{v_1} (\dot{\phi}^{v_1}, \dot{u}_{\overline{x}}^{v_1}).
\]
Let $\dot{s}_{\overline{x}} \in S_{\dot{\phi}, \mathrm{ss}}$ be an element that has the same image as $\dot{u}_{\overline{x}}$ in
$S^\natural_{\dot{\phi}}$, and consequently maps to $\overline{x}$.  We have that
\[
\dot{f}'_{\dot{G}}(\dot{\phi}, \overline{x}) =  \dot{f}'_{\dot{G}, \dot{\Xi}}(\dot{\phi}, \dot{s}_{\overline{x}})
=
\dot{f}'_{\dot{G}, \dot{\Xi}, v_1}(\dot{\phi}_{v_1}, \dot{s}_{\overline{x},v_1})
(\dot{f}')_{\dot{G}, \dot{\Xi}}^{v_1}(\dot{\phi}^{v_1}, \dot{s}_{\overline{x}}^{v_1}).
\]

Since $\dot{\phi}$ is generic, we have that $s_{\dot{\phi}} = 1$ and $\epsilon^{\dot{G}}_{\dot{\phi}}(\overline{x}) = 1$.
We also know the local intertwining relation on $\dot{G}_{v_1}$, which implies that
\[
\dot{f}'_{\dot{G}, \dot{\Xi}, v_1} (\dot{\phi}_{v_1}, \dot{s}_{\overline{x}, v_1}) = e(\dot{G}_{v_1}) \dot{f}_{\dot{G}, \dot{\Xi}, v_1} (\dot{\phi}_{v_1}, \dot{u}_{\overline{x}, v_1}).
\]
 The expression in Section \ref{sub:elliptic-parameters} reduces to
\begin{equation}
\label{equation_lem_analgoue_of_arthur_lem_543_544}
\mathrm{tr} R^{\dot{G}}_{\mathrm{disc}, \dot{\phi}}(\dot{f}) = c \sum_{\overline{x} \in \overline{\cS}_{\dot{\phi}, \mathrm{ell}}}
\dot{f}_{\dot{G}, \dot{\Xi}, v_1}(\dot{\phi}_{v_1}, \dot{u}_{\overline{x}, v_1})
\left((\dot{f}')^{v_1}_{\dot{G}, \dot{\Xi}}(\dot{\phi}^{v_1}, \dot{s}_{\overline{x}}^{v_1})
-   \dot{f}_{\dot{G}, \dot{\Xi}}^{v_1} (\dot{\phi}^{v_1}, \dot{u}_{\overline{x}}^{v_1})\right)
\end{equation}
for some constant $c>0$.

We recall the definition
\[
	\dot{f}_{\dot{G}, \dot{\Xi}, v_1} (\dot{\phi}_{v_1}, \dot{u}_{\overline{x}, v_1})
		= \sum_{\pi_{v_1} \in \Pi_{\dot{\phi}_{M^*,v_1}}(M^*_{v_1})}
		\mathrm{tr}(R_P(\dot{u}_{\overline{x}, v_1}, \dot{\Xi}_{v_1},  \pi_{v_1}, \dot{\phi}_{v_1}, \dot{\phi}_{\dot{M}^*, v_1}))
		 \mathcal{I}_P(\pi_{v_1}) (\dot{f}_{v_1}).
\]
As in the proof of \cite[Lem 5.4.3]{Arthur},
the local intertwining relation and the local classification theorem for the quasi-split group  $\dot{G}_{v_1}$
 imply that there are natural isomorphisms from the
$R$-group $R_{\dot{\phi}_{v_1}}$
onto the representation theoretic $R$-groups $R(\pi_{v_1})$ for each $\pi_{v_1}$.
This enables  a $ W^0_{\dot{\phi}, {v_1}}$-conjugacy class
\[
	({M}_{v_1}, \pi_{v_1}, w_{\overline{x}, v_1})
\]
to be identified with an element in the basis
$T(\dot{G}_{v_1})$
where $w_{\overline{x}, v_1}$  denotes the image of $\dot{u}_{\overline{x}, v_1}$ in $W_{\dot{\phi}_{v_1}}$
 (see \S\ref{sub:vashing-coefficients} for the appropriate definitions).
It follows that we can write
\begin{eqnarray*}
&&\langle \pi_{v_1}, k_{{M}_{v_1}, \dot{G}_{v_1}}(x_{\overline{x}, v_1}) \rangle_{\dot{\xi}_{{M}_{v_1}}}
		\mathrm{tr}(R_P(w_{\overline{x}, v_1}, \dot{\xi}_{{M}_{v_1}}, \pi_{v_1}, \dot{\phi}_{v_1}) \mathcal{I}_P(\pi_{v_1}, \dot{f}_{v_1}))	\\
		&&= a(\pi_{v_1}, \dot{u}_{\overline{x}, v_1}) f_{\dot{G}, \dot{\Xi}, v_1}({M}_{v_1}, \pi_{v_1}, w_{\overline{x}, v_1})
\end{eqnarray*}
for complex coefficients $ a(\pi_{v_1}, \dot{u}_{\overline{x}, v_1})$.   Consequently, we can write
\begin{equation}\label{eq:proof-4.5}
	\dot{f}_{\dot{G}, \dot{\Xi}, v_1} (\dot{\phi}_{v_1}, \dot{u}_{\overline{x}, v_1})
		= \sum_{\pi_{v_1} \in \Pi_{\dot{\phi}_{M^*,v_1}}(M_{v_1})}
		a(\pi_{v_1}, \dot{u}_{\overline{x}, v_1}) \dot{f}_{\dot{G}, \dot{\Xi}, v_1}({M}_{v_1}, \pi_{v_1}, w_{\overline{x}, v_1}).
\end{equation}

  The expression (\ref{equation_lem_analgoue_of_arthur_lem_543_544})
can now be rearranged to the sum
\[
	\sum_{\tau_{v_1} \in T(\dot{G}_{v_1})} d(\tau_{v_1}, \dot{f}^{v_1}) \dot{f}_{\dot{G}, \dot{\Xi}, v_1}(\tau_{v_1}),
\]
where $d(\tau_{v_1}, \dot{f}^{v_1})$ equals to the sum
\[
	c \sum_{\overline{x}} \sum_{\pi_{v_1}}  a(\pi_{v_1}, \dot{u}_{\overline{x}, v_1}) \left(
	(\dot{f}')^{v_1}_{\dot{G}, \dot{\Xi}}(\dot{\phi}^{v_1}, \dot{s}_{\overline{x}}^{v_1})
-   \dot{f}_{\dot{G}, \dot{\Xi}}^{v_1} (\dot{\phi}^{v_1}, \dot{u}_{\overline{x}}^{v_1})\right)
\]
over elements $\overline{x}\in \overline{\cS}_{\dot{\phi}, \mathrm{ell}}$
and $ \pi_{v_1} \in \Pi_{\dot{\phi}_{v_1}}(M_{v_1})$
such that the triplet $({M}_{v_1}, \pi_{v_1}, w_{\overline{x}, v_1})$ belongs in the  $ W^0_{\dot{\phi}_{v_1}}$-conjugacy class
represented by $\tau_{v_1}$. We are now in a position to apply Lemma \ref{lem:vanishing-coeff}. Thereby the proof is reduced to showing that
$d(\tau_{v_1}, \dot{f}^{v_1})=0$ if $\tau_{v_1}$ is represented by a triple of the form
$({M}_{v_1}, \pi_{v_1}, 1)$.

  We have the following commutative diagram
\[
  \xymatrix{
    \ol{\cS}_{\dot{\phi}_{\dot{M}^*}} \ar[r] \ar[d] & \ol{\cS}_{\dot{\phi}}  \ar[r] \ar[d] & R_{\dot{\phi}} \ar[d]  \\
    \ol{\cS}_{\dot{\phi}_{\dot{M}^*, v_1}}  \ar[r] & \ol{\cS}_{\dot{\phi}_{v_1}} \ar[r] & R_{\dot{\phi}_{v_1}}   \\
  }
\]
where the two rows are short exact sequences and the vertical maps are the canonical morphisms.
  By the third condition of the lemma (and the fact that $v_1$ does not split in $\dot E$ so that $Z(\hat G^*)^\Gamma=Z(\hat G^*)^{\Gamma_v}$ and $Z(\hat M^*)^\Gamma=Z(\hat M^*)^{\Gamma_v}$), all vertical maps are isomorphisms.
Since $\dot\phi\in \Phi^2_{\el}(\dot G^*,\dot \eta)$, any element $\overline{x} \in \overline{\cS}_{\dot{\phi}, \mathrm{ell}}$ maps to a non-trivial element in the global $R$-group $R_{\dot{\phi}}$. (If $\dot\phi$ is written in the form \eqref{eq:globla-ell-param} then $\overline{\cS}_{\dot{\phi}, \mathrm{ell}}$ consists of $(\ol{s}_i)\in \{\pm 1\}^r$ such that $\ol{s}_i\neq 1$ for $1\le i\le q$ whereas $R_{\dot{\phi}}\simeq \{\pm 1\}^q$. The map $\overline{\cS}_{\dot{\phi}}\ra R_{\dot{\phi}}$ is the projection onto the first $q$ components.) By the previous diagram, we see that
 $\overline{x}$ must also map to a non-trivial element in the local $R$-group
$R_{\dot{\phi}_{v_1}}$.
It follows that $ w_{\overline{x}, v_1} \neq 1$. Hence $w_{\overline{x}, v_1}=1$ never occurs in the expansion \eqref{eq:proof-4.5}. This forces
 the coefficients $d(\tau_{v_1}, \dot{f}^{v_1})$ to vanish if $\tau_{v_1}$ has the form
$({M}_{v_1}, \pi_{v_1}, 1)$.

\end{proof}

\subsection{Proof of the local intertwining relation}\label{sub:LIR-proof}
Our main task in this section is to prove Theorem \ref{thm:lir}, i.e. the local intertwining relation, for unitary groups. The case of inner forms of linear groups, corresponding to $E=F\times F$, will be treated in \cite{KMS_B}. The main case to treat is that of a discrete parameter of a Levi subgroup. More precisely, let $E/F$ be a quadratic extension of local fields. %
Let $G^*=U_{E/F}(N)$ and let $(M^*,P^*)$ be a standard parabolic pair for $G^*$. Let $\phi_{M^*} \in \Phi_{2,\tx{bdd}}(M^*)$ and denote its image in $\Phi_\tx{bdd}(G^*)$ by $\phi$. Let $(G,\xi)$ be an inner twist of $G^*$. Our convention for $\dot\eta$ and $\eta$ will be the same as in \S\ref{sub:Globalization-param}.

\begin{lem}
\label{lem_lir_equality_first_lem}
Assume that either
\begin{itemize}
	\item $\phi \in \Phi^2_{\mathrm{ell}}(G^*)$ is elliptic non-square integrable, or
	\item $\phi$ is of the form (exc1) or (exc2).
\end{itemize}
Then there exists an equivalence class of pure inner twists $\Xi : G^* \rightarrow G$ extending $\xi$ such that
 for every $\overline{x} \in \overline{\cS}_{\phi, \mathrm{ell}}$, there exists a lift
$x \in S^{\natural}_{\phi}$ of $\overline{x}$ for which
\[
f'_{G,\Xi}(\phi, x^{-1}) = e(G) f_{G, \Xi} (\phi, x) \textrm{ for all } f \in \mathcal{H}(G).
\]
\end{lem}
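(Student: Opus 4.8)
The plan is to deduce the asserted local identity from a global one, in the spirit of \cite[\S6.4--6.5]{Arthur} adapted to inner forms. We may assume $G$ is not quasi-split: if $G=G^*$ the identity (for the trivial pure inner twist) is among the results imported from \cite{Mok} in \S\ref{sub:results-qsuni}, while if $G$ is non-quasi-split and $M^*$ does not transfer to $G$ both sides vanish (the left by the globalization argument below, the right by definition). It is also convenient to treat the cases (exc1) and (exc2) before $\phi\in\Phi^2_{\mathrm{ell}}(G^*)$, and within the elliptic case to treat $N\geq 6$ before $N=4$.

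First I would globalize. Applying the appropriate proposition from \S\ref{sub:Globalization-param} --- Proposition \ref{prop_globalize_data_N_odd} when $N$ is odd; Proposition \ref{prop_globalize_data_N_even_and_M_neq_G_and_N_geq_6} when $N$ is even, $N\geq 6$ and $M^*$ non-linear; Proposition \ref{prop_globalize_data_N_4_exceptional_case} for (exc1)/(exc2) with $N=4$ and $M^*$ non-linear; and Proposition \ref{prop_globalize_data_N_even_and_M_neq_G_split} when $M^*$ is linear --- we obtain a CM extension $\dot E/\dot F$, a place $u$ with $\dot E_u/\dot F_u=E/F$, an auxiliary finite non-split place $v_1$, an auxiliary place $v_2$, a pure inner twist $(\dot G,\dot\xi,\dot z):\dot G^*\to\dot G$ localizing to $(G,\xi,z)$ at $u$ and quasi-split away from $\{u,v_2\}$, and $\dot\phi\in\Phi(\dot G^*,\dot\eta)$ of the same elliptic/exceptional type as $\phi$ with $\dot\phi_u=\phi$, arising from $\dot\phi_{\dot M^*}\in\Phi_2(\dot M^*,\dot\phi)$ with $\dot M^*_u=M^*$. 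The localization maps induce isomorphisms $S_{\dot\phi}\to S_{\dot\phi_v}$ for $v\in\{u,v_1\}$, hence $\overline{\cS}_{\dot\phi,\mathrm{ell}}\cong\overline{\cS}_{\phi,\mathrm{ell}}$; we let $\Xi$ be the class of pure inner twists $G^*\to G$ given by $(\dot\xi_u,\dot z_u)$, which extends $\xi$. The place $v_2$ is chosen so that the local intertwining relation relative to $\dot M^*_{v_2}$ is already available there: either $\dot G_{v_2}$ is quasi-split (use \cite{Mok} and Lemmas \ref{lem:iop1pm}, \ref{lem:iop1rm}), or $\dot\phi_{v_2}$ is relevant and neither elliptic nor exceptional (Corollary \ref{cor:local-intertwining-step1}), or, in the $N=4$ exceptional case, $v_2$ is archimedean and $\dot\phi_{v_2}$ is one of the parameters of \S\ref{sec:lirexp} for which Proposition \ref{pro:u31lir} applies.

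Next I would invoke the global input. Since $\dot\phi$ is generic, $s_{\dot\phi}=1$ and $\epsilon_{\dot\phi}\equiv 1$. In the elliptic case Lemma \ref{lem_analgoue_of_arthur_lem_543_544} (whose hypotheses are exactly conditions on $\dot G_{v_1}$, $\dot\phi_{v_1}$ and the centralizer isomorphisms supplied by the globalization) gives
\[
\sum_{\overline x\in\overline{\cS}_{\dot\phi,\mathrm{ell}}}\bigl(\dot f'_{\dot G,\dot\Xi}(\dot\phi,\overline x^{-1})-\dot f_{\dot G,\dot\Xi}(\dot\phi,\overline x)\bigr)=0,\qquad \dot f\in\mathcal{H}(\dot G),
\]
while in the cases (exc1) and (exc2) the analogous vanishing is Lemma \ref{lem:EXC1-EXC2} (using $\overline{\cS}_{\dot\phi,\mathrm{ell}}=\overline{\cS}_{\dot\phi}$ by Lemma \ref{lem:global-S-elliptic}, and noting that irrelevance of $\dot M^*$ makes both sides vanish). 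Write $\dot f=f\otimes\dot f^u$ with $f=\dot f_u\in\mathcal{H}(G)$, factor each global form into its local components, and use the product formula $\prod_v e(\dot G_v)=1$ together with the local intertwining relation at every $v\neq u$ (available as above) to collapse the factors away from $u$. Interpreting $f_{G,\Xi}(\phi,\cdot)$ and $f'_{G,\Xi}(\phi,\cdot)$ on $S^{\natural}_\phi$ via the identifications $S^{\natural}_{\phi_{M^*}}(M^*,G^*)=S^{\natural}_\phi(G^*)$ (Lemma \ref{lem:S(M,G)-S(G)}) and the regular-Weyl-element convention of \S\ref{sub:prelim-local-intertwining}, this reduces the displayed identity to
\[
\sum_{\overline x}\bigl(e(G)\,f'_{G,\Xi}(\phi,x_{\overline x}^{-1})-f_{G,\Xi}(\phi,x_{\overline x})\bigr)\,\dot f^{u}_{\dot G,\dot\Xi^{u}}(\dot\phi^{u},u^{u}_{\overline x})=0,
\]
for lifts $x_{\overline x}\in S^{\natural}_\phi$ of $\overline x$ compatible with the lifts used away from $u$.

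Finally I would separate the terms. The $v_1$-component of $\dot f^{u}_{\dot G,\dot\Xi^{u}}(\dot\phi^{u},u^{u}_{\overline x})$, at the quasi-split place $v_1$ where the local intertwining relation and the local classification theorem hold, expands --- exactly as in the proof of Lemma \ref{lem_analgoue_of_arthur_lem_543_544}, using the identification of the endoscopic and representation-theoretic $R$-groups and the bookkeeping of \S\ref{sub:vashing-coefficients} --- into a sum of distributions $\dot f_{v_1}(\tau_{v_1})$ indexed by $T(\dot G_{v_1})$, with distinct $\overline x\in\overline{\cS}_{\dot\phi,\mathrm{ell}}$ producing distinct triples, none of the form $(M,\sigma,1)$. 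Applying Arthur's trace Paley-Wiener theorem and Lemma \ref{lem:vanishing-coeff} (varying $\dot f_{v_1}$ and $\dot f^{u,v_1}$) forces each coefficient $e(G)f'_{G,\Xi}(\phi,x_{\overline x}^{-1})-f_{G,\Xi}(\phi,x_{\overline x})$ to vanish, which is the assertion. For $N=4$ with $\phi\in\Phi^2_{\mathrm{ell}}(G^*)$ one instead uses Proposition \ref{prop_globalize_data_N_4_reduction_exceptional_case}, whose auxiliary place $v_2$ carries a relevant (exc2) parameter for which the local intertwining relation has just been established, and the same argument applies. The main obstacle is this separation step: extracting the individual $\overline x$ from a single global identity, which hinges on having the local intertwining relation and classification at $v_1$ (so the $R$-group bookkeeping is meaningful) and on the globalization preserving the centralizer groups $S_{\dot\phi}\cong S_{\dot\phi_u}\cong S_{\dot\phi_{v_1}}$; a secondary difficulty is the apparently circular $N=4$ exceptional case, which is broken by the explicit archimedean computation of \S\ref{sec:lirexp}.
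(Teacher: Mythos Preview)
Your overall strategy matches the paper's: globalize via the propositions of \S\ref{sub:Globalization-param}, input the global vanishing (Lemma \ref{lem_analgoue_of_arthur_lem_543_544} in the elliptic case, Lemma \ref{lem:EXC1-EXC2} in the exceptional case), apply the local intertwining relation at every place $v\neq u$, and then isolate the contribution at $u$ by exploiting the place $v_1$. The ordering of cases (exceptional before elliptic when $N=4$, using Proposition \ref{pro:u31lir} as the base) is also correct.

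There is, however, a genuine gap in your separation step at $v_1$. You expand $\dot f_{v_1}(\dot\phi_{v_1},\dot u_{\overline x,v_1})$ as a sum over triples $(M_{v_1},\pi_{v_1},w_{\overline x,v_1})\in T(\dot G_{v_1})$ and assert that distinct $\overline x\in\overline{\cS}_{\dot\phi,\mathrm{ell}}$ give distinct triples. This is false in general: for an elliptic parameter $2\psi_1\boxplus\cdots\boxplus 2\psi_q\boxplus\psi_{q+1}\boxplus\cdots\boxplus\psi_r$ with $q<r$, every element of $\overline{\cS}_{\dot\phi,\mathrm{ell}}$ maps to the \emph{same} (unique regular) element of $R_{\dot\phi}$, hence gives the same Weyl element $w$ and the same triple for each $\pi_{v_1}$. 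Trace Paley--Wiener therefore only yields, for each fixed $\pi_{v_1}$, a single linear relation among the $2^{r-q}$ quantities indexed by $\overline x$, not their individual vanishing. (Lemma \ref{lem:vanishing-coeff} is also not the right tool here: its hypothesis demands a side of the form $\sum c_G(\pi)f_G(\pi)$ with $c_G(\pi)\geq 0$, which is absent.) The paper avoids this by expanding the $v_1$-factor over the packet on the \emph{group} $\dot G_{v_1}$ rather than over triples on the Levi: by the local classification for the quasi-split group $\dot G_{v_1}$ one has
\[
\dot f_{v_1}(\dot\phi_{v_1},\dot u_{\overline x,v_1})=\sum_{\pi_{v_1}\in\Pi_{\dot\phi_{v_1}}(\dot G_{v_1})}\langle\pi_{v_1},\dot x_{v_1}\rangle\,\dot f_{v_1}(\pi_{v_1}),
\]
and perfectness of the pairing $\Pi_{\dot\phi_{v_1}}\times\overline{\cS}_{\dot\phi_{v_1}}\to\C$ makes these linear forms in $\dot f_{v_1}$ linearly independent as $\overline x$ varies. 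That immediately isolates each $\overline x$.

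A second, smaller gap is the linear-Levi case. Proposition \ref{prop_globalize_data_N_even_and_M_neq_G_split} gives \emph{two} places $u_1,u_2$ with identical local data and $\dot G_{u_i}\simeq G$ non-quasi-split; after the separation at $v_1$ one is left not with a single vanishing at $u$ but with a product
\[
\dot f'_{\dot G,\dot\Xi,u_1}(\dot\phi_{u_1},\dot s_{\overline x,u_1}^{-1})\cdot \dot f'_{\dot G,\dot\Xi,u_2}(\dot\phi_{u_2},\dot s_{\overline x,u_2}^{-1})=0,
\]
and one concludes by noting that the two factors are the same linear form on $\mathcal H(G)$ (since the local data agree), so one of them---hence the one we want---vanishes. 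Your write-up treats this case as if the single-place argument applied directly.
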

\begin{proof}
\underline{Case $N$ odd:} We begin by applying Proposition \ref{prop_globalize_data_N_odd} to globalize
the data
$(E/F, G^*, \phi, M^*, \phi_{M^*})$
to
$(\dot{E}/\dot{F}, u, v_1,  \dot{G}^*, \dot{\Xi} = (\dot{G}, \dot{\xi}, \dot{z}), \dot{\phi}, \dot{M}^*,  \dot{\phi}_{\dot{M}^*})$.

If $\phi \in \Phi^2_{\mathrm{ell}}(G^*)$
(resp. $\phi\in\Phi_\EXC(G^*)$)
 then we can apply Lemma \ref{lem_analgoue_of_arthur_lem_543_544}
(resp. Lemma \ref{lem:EXC1-EXC2} and use the fact that $\overline{\cS}_{\dot{\phi}, \mathrm{ell}} = \overline{\cS}_{\dot{\phi}}$ by Lemma \ref{lem:global-S-elliptic})
 to deduce that  for all  $\dot{f}  = \otimes_v \dot f_v \in \mathcal{H}(\dot{G})$,
\begin{equation}
\label{equation_1_lem_lir_equality_first_lem_N_odd}
\sum_{\overline{x} \in \overline{\cS}_{\dot{\phi}, \mathrm{ell}}}(\dot{f}'_{\dot{G}, \dot{\Xi}}(\dot{\phi}, \overline{x}?) - \dot{f}_{\dot{G}, \dot{\Xi}}(\dot{\phi}, \overline{x})) = 0
\end{equation}

 Let $\overline{u}_{\overline{x}} \in \overline{\cN}_{\dot{\phi}}$  be a lift of $\overline{x}$
which is
\begin{itemize}
  \item the unique element whose image lies in $W_{\dot{\phi}, \mathrm{reg}}$ if $\dot{\phi}$ is exceptional,
  \item arbitrary if $\dot\phi$ is elliptic.
\end{itemize}
 Then by definition, $\dot{f}_{\dot{G}, \dot{\Xi}}(\dot{\phi}, \overline{x})= \dot{f}_{\dot{G}, \dot{\Xi}}(\dot{\phi}, \overline{u}_{\overline{x}})$.
We can find a $\dot{s}_{\ol{x}} \in S_{\dot{\phi}, \mathrm{ss}}$ and $\dot{u}_{\ol{x}} \in N^{\natural}_{\dot{\phi}}$ such that
 $\dot{s}_{\overline{x}}$ and $\dot{u}_{\overline{x}}$ have the same image $\dot{x} \in S^{\natural}_{\dot{\phi}}$ and $\dot{u}_{\ol{x}}$ lies over $\overline{u}_{\overline{x}}$.  Then
\[
	\dot{f}'_{\dot{G}, \dot{\Xi}}(\dot{\phi}, \overline{x}^{-1}) = \dot{f}'_{\dot{G}, \dot{\Xi}}(\dot{\phi}, \dot{s}_{\overline{x}}^{-1})
		=  \prod_{v}\dot{f}'_{\dot{G}, \dot{\Xi}, v}(\dot{\phi}_v, \dot{s}_{\overline{x}, v}^{-1})
\]
and
\[
	\dot{f}_{\dot{G}, \dot{\Xi}}(\dot{\phi}, \overline{x}) = \dot{f}_{\dot{G}, \dot{\Xi}}(\dot{\phi}, \dot{u}_{\overline{x}})
		=  \prod_{v}\dot{f}_{\dot{G}, \dot{\Xi}, v}(\dot{\phi}_v, \dot{u}_{\overline{x}, v}).
\]

The local intertwining relation for $\dot\phi_v$ where $v\neq u$ is known as the group $\dot{G}_v$ is quasi-split.
This implies that for all $v\neq u$,
$\dot{f}'_{\dot{G}, \dot{\Xi}, v}(\dot{\phi}_v, \dot{s}^{-1}_{\overline{x}, v}) = e(\dot{G}_v) \dot{f}_{\dot{G}, \dot{\Xi}, v}(\dot{\phi}_v, \dot{u}_{\overline{x}, v})$. (Of course $e(\dot{G}_v)=1$ for $v\neq u$.)
Consequently, Equation \eqref{equation_1_lem_lir_equality_first_lem_N_odd} becomes
\begin{equation}
\label{equation_2_lem_lir_equality_first_lem_N_odd}
\sum_{\overline{x} \in \overline{\cS}_{\dot{\phi}, \mathrm{ell}}} \left(
\prod_{v\neq u}  \dot{f}_{\dot{G}, \dot{\Xi}, v}(\dot{\phi}_v, \dot{u}_{\overline{x}, v})\right)
(\dot{f}'_{\dot{G}, \dot{\Xi}, u}(\dot{\phi}_u, \dot{s}^{-1}_{\overline{x}, u}) - e(\dot{G}_u) \dot{f}_{\dot{G}, \dot{\Xi}, u}(\dot{\phi}_u, \dot{u}_{\overline{x}, u})) = 0.
\end{equation}

As $\dot{G}_{v_1}$ is quasi-split, both the
local intertwining relation and the local classification theorem for $\dot{\phi}_{v_1}$ are known.
It follows that
\[
\dot{f}_{\dot{G}, \dot{\Xi}, v_1}(\dot{\phi}_{v_1}, \dot{u}_{\overline{x}, v_1}) =
	\sum_{\pi_{v_1} \in \Pi_{\dot{\phi}_{v_1}}} \langle \pi_{v_1} ,\dot{x}_{v_1} \rangle \dot{f}_{\dot{G}, \dot{\Xi}, v_1}(\pi_{v_1}).
\]
The place $v_1$ is finite and $\dot{\phi}_{v_1}$ is a generic bounded  parameter.
  Consequently
\[
	\langle \, , \, \rangle : \Pi_{\dot{\phi}_{v_1}} \times \overline{\cS}_{\dot{\phi}_{v_1}}
		\rightarrow \C
\]
is a perfect pairing in the sense that it induces a bijection from $\Pi_{\dot{\phi}_{v_1}}$ onto $\Irr(\overline{\cS}_{\dot{\phi}_{v_1}})$.
It follows that the linear forms $\dot{f}_{\dot{G}, \dot{\Xi}, v_1}(\dot{\phi}_{v_1}, \dot{u}_{\overline{x}, v_1})$
are linearly independent as $\ol{x}$ runs over the set $\ol{\cS}_{\dot\phi,\el}$.
We may identify $\overline{\cS}_{\dot{\phi}, \mathrm{ell}}$ with
$\overline{\cS}_{\dot{\phi}_{v_1}, \mathrm{ell}}$.
Using linear independence, we deduce that
\[
\prod_{v\neq u} \dot{f}_{\dot{G}, \dot{\Xi}, v}(\dot{\phi}_v, \dot{u}_{\overline{x}, v})
(\dot{f}'_{\dot{G}, \dot{\Xi}, u}(\dot{\phi}_u, \dot{s}^{-1}_{\overline{x}, u}) - e(\dot{G}_u) \dot{f}_{\dot{G}, \dot{\Xi}, u}(\dot{\phi}_u, \dot{u}_{\overline{x}, u})) = 0.
\]
As the parameter $\dot{\phi}_v$ is relevant for all $v \neq u$, the linear form
$\prod_{v\neq u} \dot{f}_{\dot{G}, \dot{\Xi}, v}(\dot{\phi}_v, \dot{u}_{\overline{x}, v})$ does not vanish identically.
It follows that
$$\dot{f}'_{\dot{G}, \dot{\Xi}, u}(\dot{\phi}_u, \dot{s}^{-1}_{\overline{x}, u}) = e(\dot{G}_u)  \dot{f}_{\dot{G}, \dot{\Xi}, u}(\dot{\phi}_u, \dot{u}_{\overline{x}, u}).$$

By definition $\dot{f}'_{\dot{G}, \dot{\Xi}, u}(\dot{\phi}_u, \dot{s}^{-1}_{\overline{x}, u})
= \dot{f}'_{\dot{G}, \dot{\Xi}, u}(\dot{\phi}_u, \dot{x}^{-1}_{u})$.
If $\phi$ is exceptional, then the image of $\dot{u}_{\overline{x}, u}$ in $W_{\dot{\phi}_u}$
lies in $W_{\dot{\phi}_u, \mathrm{reg}}$.  Consequently by definition,
$\dot{f}_{\dot{G}, \dot{\Xi}, u}(\dot{\phi}_u, \dot{u}_{\overline{x}, u})
= \dot{f}_{\dot{G}, \dot{\Xi}, u}(\dot{\phi}_u, \dot{x}_{u})$.  Thus, we have shown that
$\dot{f}'_{\dot{G}, \dot{\Xi}, u}(\dot{\phi}_u, \dot{x}^{-1}_{u}) =  e(\dot{G}_u)  \dot{f}_{\dot{G}, \dot{\Xi}, u}(\dot{\phi}_u, \dot{x}_{u})$.
Our globalization allows to identify $\overline{\cS}_{\dot{\phi}, \mathrm{ell}}$ with $\overline{\cS}_{\dot{\phi}_{u}, \mathrm{ell}}$ and $S^\natural_\phi$ with $S^\natural_{\dot\phi_u}$, and also to rewrite the above identity as
$$f'_{G,\Xi}(\phi,\dot x^{-1}_u)=e(G)f_{G,\Xi}(\phi,\dot x_u),$$ where $\dot x_u$ lifts $\ol{x}$ by construction.

\medskip

\underline{Case $N$ even:}  The method of proof is much the same as the odd case except we must appeal to the more complicated globalizations. Let us begin with the case that $M^*$ is not linear.
If $N \neq 4$, then the proof is the same as for the case $N$ odd. The only difference is that we appeal  to
Proposition \ref{prop_globalize_data_N_even_and_M_neq_G_and_N_geq_6} instead of Proposition \ref{prop_globalize_data_N_odd}, and we note that via the induction hypothesis, we know the local intertwining relation for the  non elliptic non exceptional  relevant parameter $\dot{\phi}_{v_2}$.

Assume now that $N=4$.  We firstly obtain the result for exceptional parameters  by arguing as above and appealing to
Proposition \ref{prop_globalize_data_N_4_exceptional_case} using the fact that by Proposition \ref{pro:u31lir} the local intertwining relation is already known for a specific choice of exceptional parameter of both type (exc1) and (exc2) at archimedean places for $N=4$.
We then obtain the results for elliptic parameters by arguing as above and appealing to Proposition
\ref{prop_globalize_data_N_4_reduction_exceptional_case} and using the fact that the local intertwining relation is now known for all exceptional parameters for $N=4$.

We are left to consider the case that $M^*$ is  linear.  If the group is $G$ is quasi-split, then we know the local intertwining relation and hence the result is known.  We may now assume that $G$ is not quasi-split.  For such a group the Levi $M^*$ will not transfer to a Levi of $G$.  By definition, the form
$f_{G,\Xi}(\phi, x)  = 0$ vanishes and we must show that the form $f'_{G,\Xi}(\phi, x^{-1}) = 0$ also vanishes for some
lift $x \in S^{\natural}_{\phi}$ of $\overline{x}$.  By arguing as in the proof of
the case $N$ odd, but appealing to Proposition \ref{prop_globalize_data_N_even_and_M_neq_G_split}, we deduce using the obvious notation that
\[
\dot{f}'_{\dot{G}, \dot{\Xi}, u_1}(\dot{\phi}_{u_1}, \dot{u}^{-1}_{\overline{x}, u_1})
\dot{f}'_{\dot{G}, \dot{\Xi},  u_2}(\dot{\phi}_{u_2}, \dot{u}^{-1}_{\overline{x}, u_2})
= 0.
\]
That is either $\dot{f}'_{\dot{G}, \dot{\Xi}, u_1}(\dot{\phi}_{u_1}, \dot{u}^{-1}_{\overline{x}, u_1}) = 0$
or
$\dot{f}'_{\dot{G}, \dot{\Xi}, u_2}(\dot{\phi}_{u_2}, \dot{u}^{-1}_{\overline{x}, u_2})
= 0$.  The result follows.
\end{proof}

  We remind the reader that our convention here is to write $\phi_{M^*}$ for $\psi$ of Theorem \ref{thm:lir} when the parameter is generic. The image of $\phi_{M^*}$ in $\Phi_\tx{bdd}(G^*)$ is denoted by $\phi$ as before.
  Now we are ready to complete the proof of LIR for all generic parameters.

\begin{lem}
\label{lem_lir_part1_lem}
Parts 2 and 3 of Theorem \ref{thm:lir} hold for  generic  parameters.
\end{lem}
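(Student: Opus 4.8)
<br>

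The plan is to combine Proposition \ref{pro:lirreddisc}, which reduces parts 2 and 3 of Theorem \ref{thm:lir} for arbitrary $\psi\in\Psi(M^*)$ to the case of discrete parameters $\phi_{M^*}\in\Phi_{2,\tx{bdd}}(M^*)$, with the results already established for non-elliptic non-exceptional $\phi$ (Corollary \ref{cor:local-intertwining-step1} and Lemma \ref{lem:w_u-regular-enough}) and with Lemma \ref{lem_lir_equality_first_lem} for the remaining cases. Since every generic parameter in $\Psi(M^*)$ is the image of some $\phi_{M^*}\in\Phi_{2,\tx{bdd}}(M_0^*)$ for a smaller Levi $M_0^*$, by Proposition \ref{pro:lirreddisc} it suffices to prove parts 2 and 3 of Theorem \ref{thm:lir} when $\psi=\phi_{M^*}$ is a bounded discrete parameter of the Levi $M^*$, and we denote by $\phi$ its image in $\Phi_{\tx{bdd}}(G^*)$.

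First I would fix $\overline{x}\in\overline{\cS}_\phi$ and reduce to showing that $f'_{G,\Xi}(\phi,s_\phi s^{-1})=e(G)f_{G,\Xi}(\phi,u^\natural)$ whenever $u^\natural\in N^\natural_\phi(M^*,G^*)$ and $s\in S_{\phi,\sspl}$ have the same image in $S^\natural_\phi(M^*,G^*)$ — recall that $S^\natural_{\phi_{M^*}}(M^*,G^*)=S^\natural_\phi(G^*)$ by Lemma \ref{lem:S(M,G)-S(G)}, and that part 1 of the theorem follows from part 2 via Lemma \ref{lem:iopm}, while part 2 is the case $u^\natural\in W^\tx{rad}_\phi$ of part 3 once the independence statement is in hand. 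Since $s_\phi=1$ for generic $\phi$, the identity to prove is $f'_{G,\Xi}(\phi,s^{-1})=e(G)f_{G,\Xi}(\phi,u^\natural)$. If $\phi$ is non-elliptic and not of type (exc1) or (exc2), this is exactly Corollary \ref{cor:local-intertwining-step1}; if $\phi$ is non-elliptic of type (exc1) or (exc2), it follows from Corollary \ref{cor:local-intertwining-step1} together with Lemma \ref{lem:w_u-regular-enough}, after the convention fixed at the end of \S\ref{sub:prelim-local-intertwining} for the meaning of $f_{G,\Xi}(\phi,x)$ in the exceptional case. It remains to treat $\phi$ elliptic (i.e.\ $\phi\in\Phi_2(G^*)$ or $\phi\in\Phi^2_\el(G^*)$) together with the exceptional $\phi$ restricted to $\overline{x}\in\overline{\cS}_{\phi,\el}$ — but by Lemma \ref{lem:S-elliptic} and Lemma \ref{lem:local-indep-f'}, in all these remaining cases one is reduced to verifying the relation for every $\overline{x}\in\overline{\cS}_{\phi,\el}$, for at least one compatible pair $(u^\natural,s)$ mapping to $\overline{x}$. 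This is precisely the content of Lemma \ref{lem_lir_equality_first_lem}, which provides an equivalence class of pure inner twists $\Xi:G^*\rw G$ extending $\xi$ together with, for each $\overline{x}\in\overline{\cS}_{\phi,\el}$, a lift $x\in S^\natural_\phi$ for which $f'_{G,\Xi}(\phi,x^{-1})=e(G)f_{G,\Xi}(\phi,x)$.

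To upgrade this to all of Theorem \ref{thm:lir}, I would invoke Lemma \ref{c:local-indep-f'}: once the local intertwining relation holds for \emph{some} lift $x$ of a given $\overline{x}\in\overline{\cS}_\phi$, it holds for every lift; combined with Lemma \ref{lem:central-action} and Lemma \ref{lem:squot}, this gives part 3 for all $u$ mapping to that $\overline{x}$. Then Lemma \ref{lem:lirind} propagates the statement from the chosen $\Xi$ to an arbitrary equivalence class of extended pure inner twists with the same underlying inner twist, so the result does not depend on the auxiliary choice of $\Xi$ made in Lemma \ref{lem_lir_equality_first_lem}. Having established part 3 (hence part 2) for all $\overline{x}$, the independence statement in part 2 follows, and the proof is complete.

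The main obstacle, which is essentially already absorbed into Lemma \ref{lem_lir_equality_first_lem}, is the case $N$ even with $M^*$ non-linear and $N=4$: there the globalized parameter $\dot\phi_{v_2}$ cannot be forced to be both relevant and non-elliptic non-exceptional at the auxiliary place $v_2$, so one must fall back on the explicit archimedean computations of Proposition \ref{pro:u31lir} for $U(3,1)$ (feeding into Proposition \ref{prop_globalize_data_N_4_exceptional_case}) to handle the exceptional parameters, and then bootstrap to elliptic parameters via Proposition \ref{prop_globalize_data_N_4_reduction_exceptional_case}. The remaining bookkeeping — keeping track of the signs $e(G)$, the inverse on $x$ coming from the $\Delta'$-normalization of transfer factors, and the identification of the local and global centralizer groups under the globalization — is routine given the lemmas of \S\ref{sub:Globalization-param} and \S\ref{subsection_elliptic_parameters_2}.
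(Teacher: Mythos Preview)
Your proposal follows the paper's approach and assembles the right ingredients: the reduction via Proposition \ref{pro:lirreddisc} to $\phi_{M^*}\in\Phi_{2,\tx{bdd}}(M^*)$, the preliminary reductions of \S\ref{sub:prelim-local-intertwining}, Lemma \ref{lem_lir_equality_first_lem} for the outstanding $(\phi,\overline{x})$, and the propagation via Corollary \ref{c:local-indep-f'} and Lemma \ref{lem:lirind}.

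There is one point where your logic is muddled. You write that for exceptional $\phi$ the relation ``follows from Corollary \ref{cor:local-intertwining-step1} together with Lemma \ref{lem:w_u-regular-enough}''. But Corollary \ref{cor:local-intertwining-step1} \emph{excludes} the exceptional case entirely, and Lemma \ref{lem:w_u-regular-enough} handles only those $u^\natural$ with $w_u\notin W_{\phi,\reg}$. The regular-$u^\natural$ part of the exceptional case is settled only by Lemma \ref{lem_lir_equality_first_lem}, which you do eventually invoke (since $\ol{\cS}_{\phi,\el}=\ol{\cS}_\phi$ for exceptional $\phi$ by Lemma \ref{lem:S-elliptic}), but your sentence structure obscures this. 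The paper makes the split explicit: for elliptic $\phi$ it records i)--iii) from \S\ref{sub:prelim-local-intertwining} (including Lemma \ref{lem:local-indep-f} for the independence of $f_{G,\Xi}(\phi,u)$ over a fixed $x$ --- you cite only the $f'$-side Lemma \ref{lem:local-indep-f'}) and then deduces iv) from Lemma \ref{lem_lir_equality_first_lem} and Lemma \ref{lem:central-action}; for exceptional $\phi$ it runs the parallel argument, with Lemma \ref{lem:w_u-regular-enough} furnishing the non-regular case.

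Two minor remarks: the implication between parts 1 and 2 of Theorem \ref{thm:lir} via multiplicativity runs in the direction $1\Rightarrow 2$, not the reverse as you state (irrelevant here, since what you actually need and use is that part 3 implies part 2); and your final paragraph retraces the proof of Lemma \ref{lem_lir_equality_first_lem} rather than the present lemma, which simply takes that lemma as input.
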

\begin{proof}
Clearly part 2 is implied by part 3, so we will concentrate on the latter. Given the results of Sections \ref{sub:prelim-local-intertwining1} and \ref{sub:prelim-local-intertwining} we can assume that $\phi_{M^*}\in \Phi_{2,\tx{bdd}}(M^*)$ and $\phi\in \Phi^2_{\el}(G^*)\coprod \Phi_{\EXC}(G^*)$.

If $\phi \in \Phi^2_{\mathrm{ell}}(G^*)$ is elliptic non-square integrable, then by the induction hypothesis we know the following.
\begin{itemize}
\item[i)]
$f_{G, \Xi}(\phi, u)$ is the same for every $u \in N^{\natural}_\phi$ mapping to the same $x \in S^{\natural}_\phi$.
\item[ii)]
$f'_{G, \Xi}(\phi, s)$ is the same for every $s \in S_{\phi, \mathrm{ss}}$ mapping to the same
	$x \in S^{\natural}_\phi$.
\item[iii)]
$f'_{G, \Xi}(\phi, s^{-1}) = e(G) f_{G, \Xi}(\phi, u) $ for all  $u \in N^{\natural}_\phi$ and $s \in S_{\phi, \mathrm{ss}}$ mapping to the same $x \not\in S^{\natural}_{\phi}$.

\end{itemize}

We shall now deduce
\begin{itemize}
  \item[iv)] $f'_{G, \Xi}(\phi, s^{-1}) = e(G) f_{G, \Xi}(\phi, u) $ for all  $u \in N^{\natural}_\phi$ and $s \in S_{\phi, \mathrm{ss}}$ mapping to the same $x \in S^{\natural}_{\phi}$.
\end{itemize}
after possibly changing $\Xi$, but not the underlying inner twist.
Fix any $x \in S^{\natural}_{\phi,\mathrm{ell}}$ and let $\overline{x} \in \overline{\cS}_{\phi}$ denote the image of $x$, which belongs to $\ol{\cS}_{\phi,\el}$ by Lemma \ref{lem:S-elliptic}.
Lemma \ref{lem_lir_equality_first_lem} implies that $f'_{G, \Xi}(\phi, s^{-1}) = e(G) f_{G, \Xi}(\phi, u) $ whenever $u$ and $s$ map to some $x'\in S^{\natural}_{\phi,\mathrm{ell}}$ which lifts $\ol{x}$. Then we conclude that iv) holds true for the given $x$ thanks to Lemma \ref{lem:central-action}. Now that i)-iv) above give us the desired result for elliptic parameters for a particular class of (extended) pure inner twists, we extend the same result to all extended pure inner twists with the same underlying inner twist by Lemma \ref{lem:lirind}.

We shall now consider exceptional parameters and argue in a similar way.
If $\phi$ is an exceptional parameter,
then by the induction hypothesis we know the following.
\begin{itemize}
\item[i)]
$f_{G, \Xi}(\phi, u)$ is the same for every $u \in N_{\phi, \mathrm{reg}}^{\natural}$
(resp. $u \not\in  N_{\phi, \mathrm{reg}}^{\natural}$)
 mapping to the same $x \in S^{\natural}_{\phi}$.
\item[ii)]
$f'_{G, \Xi}(\phi, s)$ is the same for every $s \in S_{\phi, \mathrm{ss}}$ mapping to the same
	$x \in S^{\natural}_\phi$.
\item[iii)]
$f'_{G, \Xi}(\phi, s^{-1}) = e(G) f_{G, \Xi}(\phi, u) $ for all  $u \not\in N^{\natural}_{\phi, \mathrm{reg}}$ and $s \in S_{\phi, \mathrm{ss}}$ mapping to the same $x \in S^{\natural}_{\phi, \mathrm{ell}}$.

\end{itemize}

  Now we are about to prove
\begin{itemize}
  \item[iv)] $f'_{G, \Xi}(\phi, s^{-1}) = e(G) f_{G, \Xi}(\phi, u) $ for all  $u \in N^{\natural}_{\phi, \mathrm{reg}}$ and $s \in S_{\phi, \mathrm{ss}}$ mapping to the same $x \in S^{\natural}_{\phi, \mathrm{ell}}$.
\end{itemize}
Again fix $x \in S^{\natural}_{\phi,\mathrm{ell}}$ and let $\overline{x} \in \overline{\cS}_{\phi,\el}$ denote its image. We deduce from Lemmas \ref{lem_lir_equality_first_lem} and \ref{lem:central-action} that $f'_{G, \Xi}(\phi, s^{-1}) = e(G) f_{G, \Xi}(\phi, u) $ whenever $u$ and $s$ as in iv) map to the particular $x\in S^{\natural}_{\phi,\mathrm{ell}}$, after possibly changing $\Xi$, but not the underlying inner twist. (Recall that $f_{G,\Xi}(\phi,x)$ is defined to be $f_{G,\Xi}(\phi,u)$ for the unique $u \in N^{\natural}_{\phi, \mathrm{reg}}$ lifting $x$ in this case.) It follows from i)-iv) that the theorem holds for $\phi\in \Phi_{\EXC}(G^*)$ and a particular equivalence class of extended pure inner twists with the underlying inner twist given by the original $\Xi$. Then the theorem holds for all extended pure inner twists by Lemma \ref{lem:lirind}.

\end{proof}

\begin{lem}
\label{lem_lir_triv_R}
Let $\Xi : G^* \rightarrow G$ be an equivalence class of extended pure inner twist and let $\phi_{M^*} \in \Phi_{2,\tx{bdd}}(M^*)$.
Then for all $u^\natural \in W^\tx{rad}_{\phi}(M,G)$ and $\pi_M \in \Pi_{\phi_{M^*}}$ we have
\[
R_P(u^\natural, \Xi, \pi_M, \phi_{M^*}, \psi_F) = 1
\]
\end{lem}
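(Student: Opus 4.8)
The statement is Lemma \ref{lem_lir_triv_R}: for $\phi_{M^*} \in \Phi_{2,\tx{bdd}}(M^*)$, $u^\natural \in W^\tx{rad}_{\phi}(M,G)$ and $\pi_M \in \Pi_{\phi_{M^*}}$, the normalized self-intertwining operator $R_P(u^\natural,\Xi,\pi_M,\phi_{M^*},\psi_F)$ is the identity. This is precisely part 1 of Theorem \ref{thm:lir} in the generic discrete-on-$M^*$ case, so the first thing I would do is recall that part 1 of Theorem \ref{thm:lir} follows formally from part 2 once part 2 is established by independent means (as noted right below the statement of Theorem \ref{thm:lir}), together with the multiplicativity of $R_P(u^\natural,\Xi,\pi_M,\psi,\psi_F)$ in $u^\natural$ recorded in Lemma \ref{lem:iopm}. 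Indeed, by Lemma \ref{lem:iopm} the assignment $u^\natural \mapsto R_P(u^\natural,\Xi,\pi_M,\phi_{M^*},\psi_F)$ (equivalently, after choosing a good $(\xi,z)$, the homomorphism $w \mapsto R_P(w,(\xi,z),\pi_M,\phi_{M^*},\psi_F)$ on $W_\phi(M,G)$) is a representation of $N^\natural_\phi(M,G)$; the linear form $f_{G,\Xi}(\phi_{M^*},u^\natural)$ is the character of $\bigoplus_{\pi_M} R_P(u^\natural,\Xi,\pi_M,\phi_{M^*},\psi_F)\mc{I}_P(\pi_M,f)$. The content of part 2 — already proved for generic parameters in Lemma \ref{lem_lir_part1_lem} — is that this character, hence this representation, is inflated along $N^\natural_\phi(M,G) \to S^\natural_\phi$, i.e. it is trivial on $W^\tx{rad}_\phi(M,G) = \ker(N^\natural_\phi(M,G) \to S^\natural_\phi)$ (cf. diagram \eqref{eq:diag}, where $W^\tx{rad}_\psi = W^0_\psi$ sits as the kernel). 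So the plan is: deduce from Lemma \ref{lem_lir_part1_lem} that the representation of $N^\natural_\phi(M,G)$ afforded by the $R_P(u^\natural,\Xi,\pi_M,\phi_{M^*},\psi_F)$ is inflated from $S^\natural_\phi$, and then conclude that each individual operator $R_P(u^\natural,\Xi,\pi_M,\phi_{M^*},\psi_F)$ is the identity on $W^\tx{rad}_\phi(M,G)$.

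The one genuine gap between "the direct sum representation is inflated from $S^\natural_\phi$" and "each summand $R_P(u^\natural,\Xi,\pi_M,\phi_{M^*},\psi_F)$ is $1$ for $u^\natural \in W^\tx{rad}_\phi(M,G)$" is that a-priori an element of $W^\tx{rad}_\phi(M,G)$ could act on a single $\mc{H}_P(\pi_M)$ by a non-scalar operator of trace equal to $\dim$, or by a scalar root of unity on different constituents that cancels in the sum. To rule this out I would use two facts. First, the multiplicativity from Lemma \ref{lem:iopm} makes $w \mapsto R_P(w,(\xi,z),\pi_M,\phi_{M^*},\psi_F)$ a homomorphism $W_\phi(M,G) \to \tx{Aut}(\mc{H}_P(\pi_M))$, so the image of $W^\tx{rad}_\phi(M,G)$ consists of operators of finite order (since $W^\tx{rad}_\phi(M,G)$ is finite). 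Second, $R_P(u^\natural,\Xi,\pi_M,\phi_{M^*},\psi_F)$ for $u^\natural\in W^\tx{rad}_\phi$ is obtained purely from $R_{w^{-1}P|P}(\xi,\psi)$, $l_P(w,\xi,\psi,\psi_F)$ and $\pi_M(u^\natural)_{\xi,z}$ via \eqref{eq:iop}; here, since the image of $u^\natural$ in $W_\psi(M,G)$ is $w \in W^0_\psi$ and $W^0_\psi$ is the Weyl group of the connected group $S^\tx{rad}_\psi \supseteq [S^0_\psi]_\tx{der}$, Lemma \ref{lem:lirdesc2} lets us perform a descent: we have $f_{G,\Xi}(\phi_{M^*},u^\natural) = f_{M_s,\Xi_M}(\phi_{M^*},u^\natural)$ for an appropriate proper Levi $M_s$ (arising as in the proof of Lemma \ref{lem:local-indep-f}), and by the inductive hypothesis on proper Levi subgroups $R^{M_s}_{P\cap M_s}(u^\natural,\Xi_{M_s},\pi_M,\phi_{M^*},\psi_F) = 1$, which via the first statement of Lemma \ref{lem:lirdesc2} ($R^G_{P}(u^\natural,\Xi,\pi,\phi_{M^*},\psi_F) = \mc{I}^G_P(R^{M_s}_{P\cap M_s}(u^\natural,\Xi_{M_s},\pi,\phi_{M^*},\psi_F))$ in the case $w$ trivial in $W(M_s,G)$) gives $R^G_P(u^\natural,\Xi,\pi_M,\phi_{M^*},\psi_F) = 1$ directly.

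So concretely the argument I would write is: reduce to $\phi_{M^*}\in \Phi_{2,\tx{bdd}}(M^*)$ and $\phi$ either elliptic non-discrete or exceptional, the other cases being already handled by the reduction steps of \S\ref{sub:prelim-local-intertwining1}, \S\ref{sub:prelim-local-intertwining} (specifically, for $\phi$ neither elliptic nor exceptional, Lemma \ref{lem:local-indep-f} together with the descent via Lemma \ref{lem:lirdesc2} already gives $R_P(u^\natural,\cdot)=1$ on $W^\tx{rad}_\phi$); for $\phi \in \Phi^2_\el(G^*)$, use that $W^\tx{rad}_\phi = W^0_\phi = \{1\}$ by Lemma \ref{lem:local-ell-nondisc-param}, so the statement is vacuous; and for $\phi \in \Phi_{\EXC}(G^*)$, $W^\tx{rad}_\phi \cong W^0_\phi$ has order $2$, and we must kill the one nontrivial element — this is the only real work. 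For that nontrivial $u^\natural$, its image $w$ in $W_\phi(M,G)$ is the nontrivial element of $W^0_\phi$; taking $s\in S_\phi$ central enough (a lift with the image of $u^\natural$ trivial in $W_\phi$ — which exists by the structure of (exc1),(exc2), cf. the discussion after Corollary \ref{cor:local-intertwining-step1}) and applying Lemma \ref{lem:lirdesc2} with $M_0=M$, $M=M_s\subsetneq G$, I get $R_P(u^\natural,\Xi,\pi_M,\phi_{M^*},\psi_F) = \mc{I}_P(R^{M_s}_{P\cap M_s}(u^\natural,\Xi_M,\pi_M,\phi_{M^*},\psi_F))$, and the right-hand side is $1$ by the inductive hypothesis (Theorem \ref{thm:lir} for the proper Levi $M_s$). \textbf{The main obstacle} is checking that in the exceptional cases the element $u^\natural \in W^\tx{rad}_\phi(M,G)$ really does lie over an element $s \in S_\phi$ for which the descent Levi $M_s$ of Lemma \ref{lem:lirdesc2} is proper in $G$ — i.e. that the relevant torus $T_{\phi,s}$ is not central in $\hat G$; this is where the explicit form of (exc1), (exc2) and the computation $\dim \ol T_{\phi,s} = \dim \ol T_\phi = 1$ (already used in the proof of Lemma \ref{lem:w_u-regular-enough}) is needed, and one must be careful that for $u^\natural$ with trivial image in $W_\phi(M,G)$ the hypothesis "$u \in S_\psi(M) \cap N(A_{\hat M_0},\hat G)$" of Lemma \ref{lem:lirdesc2} is genuinely satisfied.
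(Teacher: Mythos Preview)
Your reductions for the elliptic case ($W^\tx{rad}_\phi=\{1\}$ by Lemma \ref{lem:local-ell-nondisc-param}) and for the non-elliptic non-exceptional case (Lemma \ref{lem:local-indep-f} via descent) are correct and match the paper. The genuine gap is in the exceptional case, and it is precisely the obstacle you flag but do not resolve.

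For $\phi\in\Phi_{\EXC}(G^*)$ one has $\dim\ol T_\phi=1$ and the nontrivial $u^\natural\in W^\tx{rad}_\phi$ maps to the unique nontrivial element $w\in W^0_\phi\subset W_\phi$, which acts by inversion on $\ol T_\phi$. You conflate two different quotients of $N^\natural_\phi$: the image of $u^\natural$ in $S^\natural_\phi$ is indeed trivial (that is what $W^\tx{rad}_\phi=\ker(N^\natural_\phi\to S^\natural_\phi)$ means), but its image in $W_\phi$ is $w\neq 1$. The hypothesis of Lemma \ref{lem:lirdesc2} requires the lift $u$ to lie in $S_\psi(M_s)=\tx{Cent}(A_{\hat M_s},S_\psi)$ for a proper Levi $M_s\supsetneq M$; equivalently $w$ must centralize a positive-dimensional subtorus of $T_\phi$. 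Since $\dim\ol T_\phi=1$ and $w$ acts by $-1$, no such subtorus exists, and indeed $M$ is already a maximal proper Levi of $G$. So no local descent is available here --- this is exactly why these parameters are singled out as exceptional (failure of condition 1 in Lemma \ref{lem:local-exc}).

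The paper handles this case by a global argument. One globalizes (Proposition \ref{prop_globalize_data_N_odd} for $N$ odd, Proposition \ref{prop_globalize_data_N_even_and_M_neq_G_split} for $N$ even) and invokes the last assertion of Lemma \ref{lem:EXC1-EXC2}, namely that the \emph{global} normalized operator $R_P(w,\pi_M,\psi_{M^*})=1$. Writing this as a product of local operators (Proposition \ref{pro:ioplg}) and using that the lemma is known at quasi-split places gives the result directly when $N$ is odd; when $N$ is even one only obtains $R_P(u_1,\Xi,\pi_M,\phi_{M^*},\psi_F)^{\otimes 2}=1$, hence the local operator is the scalar $\pm 1$. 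The sign $-1$ is then excluded by the positivity argument: Lemma \ref{lem_lir_part1_lem} gives $f_{G,\Xi}(\phi,u_1)=f_{G,\Xi}(\phi,u_0)$, so $\sum_{\pi_M}\tr((1-R_P(u_1,\dots))\mc{I}_P(\pi_M,f))=0$, and since each $1-R_P(u_1,\dots)\in\{0,2\}$ this is a nonnegative combination of characters forcing every term to vanish.
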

\begin{proof}
If $\phi$ is either elliptic, or non-elliptic and non-exceptional, then the lemma is already known by Lemma \ref{lem:local-indep-f} (and Lemma \ref{lem:local-exc}). So we are reduced to the case that $\phi$ is of the form (exc1) or (exc2). For each inner twist $(G,\xi)$, we remark that the operator $R_P(u^\natural, \Xi, \pi_M, \phi_{M^*}, \psi_F)$ is the same for every $\Xi$ whose underlying inner twist is $(G,\xi)$ in view of part 2 of Lemma \ref{lem:rpequiv} and the fact that the pairing $\lg c,u^\natural\rg$ there is trivial if $u^\natural \in W^\tx{rad}_{\phi}(M,G)$ (since $N(A_{\hat M^*},S^{\rad}_{\psi_{M^*}})\subset \hat G_{\der}$).

We recall that $\lvert W^\rad_\phi \rvert = 2$ when $\phi\in \Phi_{\EXC}(G^*)$.
Let $u_0 \in W^\rad_\phi$ (resp. $u_1 \in W^\rad_\phi$) denote the trivial (resp. non-trivial) element.
By construction, the intertwining operator is trivial for the identity element $u_0$.

We globalize as in the proof of Lemma \ref{lem_lir_equality_first_lem} if $N$ is odd and as in Proposition \ref{prop_globalize_data_N_even_and_M_neq_G_split} if $N$ is even, and then apply the last part of Lemma \ref{lem:EXC1-EXC2}. Since Lemma \ref{lem_lir_triv_R} is known at quasi-split places (which we take on faith; see the discussion of \S\ref{sub:results-qsuni}), we deduce that $R_P(u_1, \Xi, \pi_M, \phi_{M^*}, \psi_F)$ is the identity operator if $N$ is odd, and
$R_P(u_1, \Xi, \pi_M, \phi_{M^*}, \psi_F)^{\otimes 2}$ is the identity on $\cH_P(\pi_M)^{\otimes 2}$ and thus
$$R_P(u_1, \Xi, \pi_M, \phi_{M^*}, \psi_F)~=~+1~\mbox{or}~-1$$
if $N$ is even. To be precise we may have changed $\Xi$ without disturbing the underlying inner twist in the globalization, but we keep the same notation $\Xi$ as this does not affect the intertwining operator by the remark above.

We may focus on the even case from now. Lemma \ref{lem_lir_part1_lem} implies that $f_{G,\Xi}(\phi,u_1u_0)=f_{G,\Xi}(\phi,u_0)$ so we obtain by unraveling the definition that
$$\sum_{\pi_M\in \Pi_{\psi_{M^*}}(M,\Xi)}  \tr \left((1-R_P(u_1,\Xi,\pi_M,\phi_{M^*},\psi_F))\cI_P(\pi_M)(f)\right)=0.$$
Since the left hand side is the trace of a linear combination of (nonzero) representations with nonnegative integer coefficients, it follows that $R_P(u_1,\Xi,\pi_M,\phi_{M^*},\psi_F)=1$ as desired.

\end{proof}

We have now completed the proof of Theorem \ref{thm:lir} for all parameters $\phi_{M^*} \in \Phi_{2,\tx{bdd}}(M^*)$. We will now argue that Theorem \ref{thm:lir} holds in the more general case $\phi_{M^*} \in \Phi_\tx{bdd}(M^*)$. Most of the work for this has already been done in Section \ref{sub:prelim-local-intertwining1}. Namely, Proposition \ref{pro:lirreddisc} tells us that parts 2 and 3 of Theorem \ref{thm:lir} are valid for any $\phi_{M^*} \in \Phi_\tx{bdd}(M^*)$. We will now treat part 1. At the same time, we will show the closely related statement that the representation-theoretic and endoscopic $R$-groups coincide.

We recall that the $R$-group $R_{\pi_M}(M,G)$ of a $\pi_M \in \Pi_{\phi_{M^*}}$ is a priori different from the $R$-group $R_\phi(M,G)$ of $\phi$. We have seen that the stabilizer $W_\phi$ of $\phi_{M^*}$ is identified with a subgroup of $W(M,G)(F)$, the Weyl group that
contains the stabilizer $W_{\pi_M}(M,G)$ of $\pi_M$. It is a consequence of the disjointness of tempered $L$-packets for $M$ that $W_\phi$ contains $W_{\pi_M}(M,G)$.  Conversely, it follows from the form of $M$ and the induction hypothesis that any element in $W_\phi(M,G)$ stabilizes $\pi_M$.  Therefore $W_\phi(M,G) = W_{\pi_M}(M,G)$.

If we assume that $W^0_\phi(M,G)$ acts trivially on the induced representation $\mathcal{I}_P(\pi_M)$, then $W^0_\phi(M,G) \subset W_{\pi_M}^0(M,G)$ and we obtain a surjection
\begin{equation}
\label{equation_map_of_r_groups_surjective_generic}
	R_\phi(M,G) = W_\phi(M,G)/W^0_\phi(M,G) \twoheadrightarrow R_{\pi_M}(M,G) = W_{\pi_M}(M,G)/W^0_{\pi_M}(M,G).
\end{equation}
We will now see that this $W^0_\phi(M,G)$ indeed acts trivially on the induced representation $\mathcal{I}_P(\pi_M)$ and that moreover \eqref{equation_map_of_r_groups_surjective_generic} is a bijection.

\begin{lem}
\label{lem_lir_triv_R2}
Let $\Xi : G^* \rightarrow G$ be an equivalence class of extended pure inner twist and let $\phi_{M^*} \in \Phi_\tx{bdd}(M^*)$.
Then for all $u^\natural \in W^\tx{rad}_{\phi}(M,G)$ and $\pi_M \in \Pi_{\phi_{M^*}}$ we have
\[
R_P(u^\natural, \Xi, \pi_M, \phi_{M^*}, \psi_F) = 1
\]
\end{lem}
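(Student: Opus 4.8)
<br>

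The proof of Lemma~\ref{lem_lir_triv_R2} is a routine reduction to the discrete-parameter case Lemma~\ref{lem_lir_triv_R}, using exactly the descent machinery of Section~\ref{sub:prelim-local-intertwining1}. The plan is as follows. As in the beginning of Section~\ref{sec:iop}, conjugate $\phi_{M^*}$ within its equivalence class so that the minimal Levi subgroup of $^LM$ through which it factors is of the form $\hat M_0 \rtimes W_F$ for a standard Levi subgroup $\hat M_0$ of $\hat M$, hence of $\hat G$; let $M_0^* \subset M^*$ be the corresponding standard Levi subgroup of $G^*$ and $P_0^* \subset P^*$ the standard parabolic subgroup with Levi factor $M_0^*$. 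Then $\phi_{M^*}$ is the image of a parameter $\phi_0 \in \Phi_{2,\mathrm{bdd}}(M_0^*)$. If $M_0^*$ does not transfer to $G$ then $\phi_{M^*}$ is not relevant for $\Xi$, so $\Pi_{\phi_{M^*}}(M,\Xi_M)$ is empty and there is nothing to prove; so I assume $M_0^* \subset M \subset G$ with corresponding Levi subgroups.

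Now $A_{\hat M_0}$ is a maximal torus of $S_{\phi_{M^*}}^0$ (in the notation of the diagram in Section~\ref{sec:diag}, written for the pair $(M^*,G^*)$), and $A_{\hat M_0} \cap S_{\phi_{M^*}}^\tx{rad} = \hat T_{\phi_{M^*}}^\tx{rad}$; arguing as in Section~\ref{sec:iop}, any lift of $u^\natural \in W^\tx{rad}_{\phi_{M^*}}(M,G)$ may be chosen to normalize $\hat T_{\phi_{M^*}}^\tx{rad}$, hence $A_{\hat M_0}$, so that it in fact lies in $S_{\phi_0}(M) \cap N(A_{\hat M_0},\hat G)$ and maps to the trivial element of $W(\hat M,\hat G)$. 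Thus the image $w$ of $u$ in $W(M,G)$ is trivial, and Lemma~\ref{lem:lirdesc2} applies: writing $Q = P_0 \cap M$, we get
\[
R_{P_0}^G(u^\natural,\Xi,\pi_0,\phi_0,\psi_F) = \mc{I}_P^G(R_Q^M(u^\natural,\Xi_M,\pi_0,\phi_0,\psi_F))
\]
for every $\pi_0 \in \Pi_{\phi_0}(M_0,\Xi_{M_0})$. Since $\phi_0 \in \Phi_{2,\mathrm{bdd}}(M_0^*)$ is a discrete parameter and $M_0^*$ is a proper Levi subgroup of $M^*$ (because $\phi_{M^*}$ is bounded but not discrete when $M_0^* \subsetneq M^*$; if $M_0^* = M^*$ then $\phi_{M^*}$ is itself discrete and we are already done by Lemma~\ref{lem_lir_triv_R}), and since $u^\natural$ lies in $W^\tx{rad}_{\phi_0}(M_0,M)$ — here one checks directly from the diagram in Section~\ref{sec:diag} applied to the pair $(M_0^*,M^*)$ that $W^\tx{rad}_{\phi_{M^*}}(M,G)$ maps into $W^\tx{rad}_{\phi_0}(M_0,M)$ under the relevant identification — Lemma~\ref{lem_lir_triv_R} (the already-proven discrete case, applied to the group $M$ and its proper Levi subgroup $M_0$, which is part of the induction hypothesis) gives $R_Q^M(u^\natural,\Xi_M,\pi_0,\phi_0,\psi_F) = 1$. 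Hence $R_{P_0}^G(u^\natural,\Xi,\pi_0,\phi_0,\psi_F) = 1$.

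It remains to pass from $\pi_0$ and the parabolic pair $(M_0,P_0)$ to $\pi_M$ and $(M,P)$. This is exactly the content of the induction-in-stages comparison carried out in the proof of Lemma~\ref{lem:lirdesc1}: the representation $\bigoplus_{\pi_0} R_{P_0}^G(-,\Xi,\pi_0,\phi_0,\psi_F) \circ \mc{I}_{P_0}^G(\pi_0,-)$ of $N^\natural_{\phi_0}(M_0,G) \times \mc{H}(G)$ is identified, via that argument, with $\bigoplus_{\pi_M} R_P^G(-,\Xi,\pi_M,\phi_{M^*},\psi_F) \circ \mc{I}_P^G(\pi_M,-)$ over $N^\natural_{\phi_{M^*}}(M,G) \times \mc{H}(G)$. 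Since every $R_{P_0}^G(u^\natural,\Xi,\pi_0,\phi_0,\psi_F)$ is the identity and the identification is compatible with the action of $W^\tx{rad}$, we conclude that $R_P^G(u^\natural,\Xi,\pi_M,\phi_{M^*},\psi_F) = 1$ for all $\pi_M \in \Pi_{\phi_{M^*}}$ and $u^\natural \in W^\tx{rad}_{\phi_{M^*}}(M,G)$. The only genuinely delicate point — and the one I would be most careful about — is verifying that $u^\natural$ really does land in $W^\tx{rad}_{\phi_0}(M_0,M)$ rather than just in $W^\tx{rad}_{\phi_0}(M_0,G)$, i.e. that the reduction is internal to $M$; this is where the choice of $u$ normalizing $\hat T^\tx{rad}_{\phi_{M^*}}$ and having trivial image in $W(\hat M,\hat G)$ is essential. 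With this the identity $R_P(u^\natural,\Xi,\pi_M,\phi_{M^*},\psi_F) = 1$, which is precisely part~1 of Theorem~\ref{thm:lir} for bounded generic $\phi_{M^*}$, follows; and since \eqref{equation_map_of_r_groups_surjective_generic} is then a surjection $R_\phi(M,G) \twoheadrightarrow R_{\pi_M}(M,G)$, one gets — using that both $R$-groups have the same order, the endoscopic one by Lemma~\ref{lem:iopm} together with the disjointness of tempered packets for $M$ and the representation-theoretic one by the standard Knapp–Stein theory — that it is an isomorphism, completing the identification of the two $R$-groups.
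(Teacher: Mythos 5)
Your proof contains a genuine gap in the central step. You claim that a lift $u$ of a nontrivial $u^\natural \in W^\tx{rad}_{\phi_{M^*}}(M,G)$ can be chosen to lie in $S_{\phi_0}(M) \cap N(A_{\hat M_0},\hat G)$ with trivial image in $W(\hat M, \hat G)$, and you then invoke Lemma~\ref{lem:lirdesc2} (which is formulated precisely for elements of $S_\psi(M) \cap N(A_{\hat M_0},\hat G)$, i.e.\ for elements \emph{centralizing} $A_{\hat M}$). But this is not possible: by construction $W^\tx{rad}_{\phi_{M^*}}(M,G) = N(A_{\hat M},S_{\phi_{M^*}}^\tx{rad})/Z(A_{\hat M},S_{\phi_{M^*}}^\tx{rad})$, so a nontrivial class is represented only by elements of $N(A_{\hat M},S_{\phi_{M^*}}^\tx{rad})$ that act nontrivially on $A_{\hat M}$, hence have nontrivial image in $W(\hat M,\hat G)$ and do \emph{not} lie in $S_\phi(M)$. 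Choosing $u$ to normalize $\hat T^\tx{rad}_{\phi_{M^*}}$ and hence $A_{\hat M_0}$ is fine, but that does not force $u$ to centralize $A_{\hat M}$; the two conditions are independent. Consequently the passage through Lemma~\ref{lem:lirdesc2}, the appeal to the discrete-case lemma applied to the group $M$ with Levi $M_0$, and the ``reduction internal to $M$'' all collapse.

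The paper avoids exactly this trap. It applies Lemma~\ref{lem_lir_triv_R} to the discrete parameter $\phi_{M^*_0}$ on the Levi $M_0 \subset G$ \emph{inside the big group} $G$ (using that the image of $u$ in $N^\natural_{\phi_{M^*_0}}(M_0,G)$ lands in $W^\tx{rad}_{\phi_{M^*_0}}(M_0,G)$, as observed in the proof of Proposition~\ref{pro:lirreddisc}). That gives $R_{P_0}(u^\natural,\Xi,\pi_{M_0},\phi_{M^*_0},\psi_F)=1$. It then deduces from the already-established surjection \eqref{equation_map_of_r_groups_surjective_generic} for $M_0$ that the representation-theoretic $R$-group is abelian, hence $\mc{I}_{P_0}(\pi_{M_0})$ is multiplicity-free; by induction in stages $\mc{I}_P(\pi_M)$ is then also multiplicity-free, so $R_P(u^\natural,\Xi,\pi_M,\phi_{M^*},\psi_F)$ is a scalar on each constituent. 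Finally, rather than an operator-level identification of the two induced families, the paper compares the two linear forms via Lemma~\ref{lem:lirdesc1} (which applies here because $u$ normalizes both $A_{\hat M_0}$ and $A_{\hat M}$, no centralizing required): from $f_{G,\Xi}(u^\natural,\phi_{M^*})=f_{G,\Xi}(u^\natural,\phi_{M^*_0})$ one reads off that every scalar is $1$. Your last paragraph does gesture at the Lemma~\ref{lem:lirdesc1}-type comparison, but as written it rests on the false reduction through Lemma~\ref{lem:lirdesc2}, and it also lacks the multiplicity-free step, without which a trace identity alone does not pin down the individual scalars.
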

\begin{proof}
The case of $\phi_{M^*} \in \Phi_{2,\tx{bdd}}(M^*)$ was already treated in Lemma \ref{lem_lir_triv_R}. So we may assume that $\phi_{M^*}$ is the image of $\phi_{M^*_0} \in \Phi_{2,\tx{bdd}}(M^*_0)$ for a proper standard Levi subgroup $M^*_0 \subset M^*$. As in the proof of Proposition \ref{pro:lirreddisc} we choose a lift $u \in N_{\phi_{M^*_0}}(M_0,G) \cap N_{\phi_{M^*}}(M,G)$ of $u^\natural$. The image of $u$ in $N_{\phi_{M^*_0}}^\natural(M_0,G)$ belongs to $W_{\phi_{M^*_0}}^\tx{rad}(M_0,G)$. Lemma \ref{lem_lir_triv_R} then implies that for all $\pi_{M_0} \in \Pi_{\phi_{M^*_0}}(M_0,\Xi)$ we have
\[ R_P(u^\natural, \Xi, \pi_{M_0}, \phi_{M^*_0}, \psi_F) = 1. \]
By definition then
\[ f_{G,\Xi}(u^\natural,\phi_{M^*_0}) = \sum_{\pi_{M_0} \in \Pi_{\phi_{M^*_0}}(M_0,\Xi)} \tx{tr}(\mc{I}_{P_0}(\pi_{M_0},f)). \]
We now consider the homomorphism \eqref{equation_map_of_r_groups_surjective_generic} for the Levi subgroup $M_0$ and the parameter $\phi_{M^*_0}$. This homomorphism is well-defined according to \ref{lem_lir_triv_R} and tells us that the representation-theoretic $R$-group $R_{\pi_{M_0}}(M_0,G)$ is abelian. As a result, the representation $\mc{I}_{P_0}(\pi_{M_0})$ decomposes as a direct sum of irreducible representations of $G(F)$, each occurring with multiplicity 1. The linear form $f_G(u^\natural,\phi_{M^*_0})$ is then the sum of the traces of all these constituents, as $\pi_{M_0}$ varies over $\Pi_{\phi_{M^*_0}}(M_0,\Xi)$.

Now let $\pi_M \in \Pi_\phi(M,\Xi)$. By induction in stages we know that $\mc{I}_P(\pi_M)$ must also be multiplicity-free. This forces the intertwining operator $R_P(u^\natural, \Xi, \pi_M, \phi_{M^*}, \psi_F)$ to act as the multiplication by scalar on each constituent of $\mc{I}_P(\pi_M)$. A-priori the scalars for different constituents could be different, but we will see momentarily that this is not the case. At any rate, this tells us that the linear form $f_{G,\Xi}(u^\natural,\phi_{M^*})$ is a linear combination of the traces of the constituents of $\mc{I}_P(\pi_M)$, as $\pi_M$ varies over $\Pi_{\phi_{M^*}}(M,\Xi)$. But recalling the construction of $\Pi_{\phi_{M^*}}(M,\Xi)$ given in Section \ref{sec:lir} and using induction in stages we see that $f_G(u^\natural,\phi_{M^*})$ is in fact a linear combination of the traces of the constituents of $\mc{I}_{P_0}(\pi_{M_0})$, as $\pi_{M_0}$ varies over $\Pi_{\phi_{M^*_0}}(M_0,\Xi)$.

We now apply Lemma \ref{lem:lirdesc1} and see $f_{G,\Xi}(u^\natural,\phi_{M^*})=f_{G,\Xi}(u^\natural,\phi_{M^*_0})$. This tells us that all the scalars in the linear combination of the constituents of $\mc{I}_P(\pi_M)$ that make out $f_{G,\Xi}(u^\natural,\phi_{M^*})$ are equal to 1.
\end{proof}

With this lemma, the proof of Theorem \ref{thm:lir} is finally complete for all $\phi_{M^*} \in \Phi_\tx{bdd}(M^*)$. Furthermore, we now know that the homomorphism \eqref{equation_map_of_r_groups_surjective_generic} is well-defined for all $\phi_{M^*} \in \Phi_\tx{bdd}(M^*)$. We will now show that it is in fact an isomorphism. For this, recall that the definition of regular %
 implies that the set $R_{\phi, \mathrm{reg}}$ is the pre-image of $R_{\mathrm{reg}}(\pi_M)$ under this map. Recall also that the set $R_{\phi, \mathrm{reg}}$ is empty if $\phi$ is not elliptic, and it has cardinality $1$ if $\phi$ is elliptic.  (Compare with the last paragraph in the proof of Proposition 6.3.1 in \cite{Arthur}.)

\begin{lem}
\label{lem_tempered_generic_repns_r_groups_isomorphic}
For any relevant $\phi_{M^*} \in \Phi_\tx{bdd}(M^*)$ the homomorphism \eqref{equation_map_of_r_groups_surjective_generic} is an isomorphism.
\end{lem}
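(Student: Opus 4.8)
The plan is to prove Lemma \ref{lem_tempered_generic_repns_r_groups_isomorphic} by showing that the surjection \eqref{equation_map_of_r_groups_surjective_generic} is also injective, and to do so by a counting/structural argument together with the regularity information already recorded. First I would reduce to the case $\phi_{M^*} \in \Phi_{2,\tx{bdd}}(M^*)$: for general $\phi_{M^*}$ the parameter is the image of some $\phi_{M^*_0} \in \Phi_{2,\tx{bdd}}(M^*_0)$ with $M^*_0 \subseteq M^*$ a standard Levi, and by Lemma \ref{lem:lirdesc2} (applied with the relevant nested Levi subgroups $M_0 \subset M \subset G$) the self-intertwining operators and hence the $R$-groups for $(M,G)$ agree, via induction in stages, with those for $(M_0,G)$; more precisely $W_\phi(M,G) \cong W_{\phi_{M^*_0}}(M_0,G)$ and the $W^0$'s match, so the statement for $(M_0,G)$ gives the statement for $(M,G)$. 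So assume $\phi_{M^*}$ discrete.

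In the discrete case I would use the explicit form of $\phi$ after pushing forward to $\Phi_\tx{bdd}(G^*)$ via $\eta$, together with the explicit description of $S_\phi$ from \S\ref{subsub:local-param-U}. When $\phi$ is neither elliptic nor exceptional, Lemma \ref{lem:local-exc} combined with Lemma \ref{lem:local-indep-f} shows $f_{G,\Xi}(\phi,u^\natural)$ depends only on the image in $S^\natural_\phi$, which already gives that $W^0_\phi(M,G)$ acts trivially (Lemma \ref{lem_lir_triv_R2}) and moreover forces injectivity of \eqref{equation_map_of_r_groups_surjective_generic} since in that range $R_\phi(M,G)$ can be computed to coincide with its image. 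For the elliptic and exceptional cases I would argue as follows: the homomorphism is surjective by construction, so it suffices to compare cardinalities, i.e.\ to show $W^0_\phi(M,G) = W^0_{\pi_M}(M,G)$. We already know $W_\phi(M,G) = W_{\pi_M}(M,G)$ (stated just above, from disjointness of tempered $L$-packets for $M$ and the induction hypothesis on the form of $M$), and $W^0_\phi(M,G) \subseteq W^0_{\pi_M}(M,G)$ by Lemma \ref{lem_lir_triv_R2}. The reverse inclusion is equivalent to the statement that no nontrivial element of $W^0_{\pi_M}(M,G)$ can act trivially on $\mc{I}_P(\pi_M)$ unless it already lies in $W^0_\phi(M,G)$, which I would deduce from the multiplicativity of the operators $R_P(u^\natural,\Xi,\pi_M,\psi_{M^*},\psi_F)$ (Lemma \ref{lem:iopm}) and the fact — recorded in the paragraph before the lemma — that $R_{\phi,\mathrm{reg}}$ is the preimage of $R_{\mathrm{reg}}(\pi_M)$, together with $|R_{\phi,\mathrm{reg}}| = 1$ when $\phi$ is elliptic and $R_{\phi,\mathrm{reg}} = \emptyset$ otherwise.

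Concretely, the argument in the elliptic/exceptional case runs: since $|R_{\phi,\mathrm{reg}}| = 1$ and this element maps to a regular element of $R_{\pi_M}(M,G)$, and since by Theorem \ref{thm:lir} (now fully available for all $\phi_{M^*}\in\Phi_\tx{bdd}(M^*)$) the linear form $f_{G,\Xi}(\phi,u^\natural)$ depends only on the image of $u$ in $S^\natural_\phi$ — hence, via the $R$-group pairing, on the image in $R_\phi(M,G)$ — the operators $R_P(u^\natural,\Xi,\pi_M,\psi_{M^*},\psi_F)$ for $u^\natural$ ranging over $W_\phi(M,G)$ separate the constituents of $\mc{I}_P(\pi_M)$ exactly according to characters of $R_\phi(M,G)$; since $\mc{I}_P(\pi_M)$ is multiplicity-free (established in the proof of Lemma \ref{lem_lir_triv_R2}), the number of its constituents equals $|R_{\pi_M}(M,G)|$, and the decomposition we have just produced has $|R_\phi(M,G)|$ pieces, forcing $|R_\phi(M,G)| = |R_{\pi_M}(M,G)|$. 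Combined with surjectivity this yields that \eqref{equation_map_of_r_groups_surjective_generic} is a bijection. The main obstacle I anticipate is making the separation-of-constituents step rigorous in the exceptional cases (exc1), (exc2), where $W^0_\phi$ is nontrivial and one must carefully check that the operator $R_P(u^\natural,\Xi,\pi_M,\psi_{M^*},\psi_F)$ for $u^\natural \in W^0_\phi$ really is the identity on the whole induced representation (not just scalar on each constituent) — but this is precisely Lemma \ref{lem_lir_triv_R2}, so in fact the bulk of the difficulty has already been absorbed there, and what remains is the cardinality bookkeeping sketched above.
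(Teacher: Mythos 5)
Your proposal diverges from the paper's own proof and contains a gap that makes the final step circular.

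The paper's argument is much more direct and does not pass through a cardinality comparison of the number of constituents of $\mc{I}_P(\pi_M)$. It simply observes: (a) by the remark recorded just before the lemma, $R_{\phi,\mathrm{reg}}$ is exactly the preimage of $R_{\mathrm{reg}}(\pi_M)$ under \eqref{equation_map_of_r_groups_surjective_generic}; (b) when $\phi$ is elliptic, $|R_{\phi,\mathrm{reg}}|=1$. For a group homomorphism, every nonempty fiber has size equal to the kernel, so a singleton fiber forces trivial kernel, and surjectivity then gives the isomorphism. For non-elliptic $\phi$, the paper decomposes $R_\phi$ as a disjoint union $\bigsqcup_L R_{\phi_L,\mathrm{reg}}$ over intermediate Levi subgroups $L$ for which $\phi$ is the image of an elliptic $\phi_L$, and applies the same singleton-fiber observation to each piece. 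You have the key fact $|R_{\phi,\mathrm{reg}}| = 1$ in hand, but then make a detour through a counting argument instead of using it to directly read off triviality of the kernel.

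The detour is the problem. You assert that the multiplicativity of the operators $R_P(u^\natural,\Xi,\pi_M,\psi_{M^*},\psi_F)$ produces a decomposition of $\mc{I}_P(\pi_M)$ into $|R_\phi(M,G)|$ pieces, which you then set equal to the number of constituents $|R_{\pi_M}(M,G)|$. But the decomposition of the representation of $R_\phi(M,G) \times G(F)$ on $\mc{H}_P(\pi_M)$ is a priori a sum $\bigoplus_\mu \mu^\vee \otimes \pi_\mu$ over characters $\mu$ of $R_\phi(M,G)$ in which some $\pi_\mu$ may be zero: precisely those $\mu$ not trivial on $\ker(R_\phi(M,G) \to R_{\pi_M}(M,G))$ contribute nothing. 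So the number of nonzero pieces is $|R_{\pi_M}(M,G)|$, not $|R_\phi(M,G)|$, and equating them is exactly the conclusion you are trying to prove. The ``separation of constituents according to characters of $R_\phi$'' is true only once injectivity is known; asserting it here is circular. A secondary issue is your opening reduction via Lemma \ref{lem:lirdesc2}: passing from $(M,G)$ to $(M_0,G)$ changes the pair of groups in question, so $R_\phi(M,G)$ and $R_{\pi_M}(M,G)$ are replaced by the $R$-groups relative to $M_0$; you would need a further argument relating the two, whereas the paper avoids this entirely by working with $\phi$ and decomposing $R_\phi$ over intermediate Levis $L$ with $M \subset L \subset G$.
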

\begin{proof}
If $\phi$ is elliptic, then the set $R_{\phi, \mathrm{reg}}$ has cardinality $1$.
As  the set $R_{\phi, \mathrm{reg}}$ is the pre-image of
$R_{\mathrm{reg}}(\pi_M)$ under the morphism  \eqref{equation_map_of_r_groups_surjective_generic}, it
follows that the kernel of this morphism has cardinality $1$.  That is
 the
surjective
morphism  \eqref{equation_map_of_r_groups_surjective_generic} is an isomorphism.

Let us now show that this holds even without the additional hypothesis that $\phi$ be elliptic.
The set $R_{\phi}$ is equal to the  disjoint union of $R_{\phi_L, \mathrm{reg}}$ where $L$ ranges over all the Levi subgroups $L$ of $G$  containing $M$ such that $\phi$ is the image of an elliptic $\phi_L \in \Phi_{\mathrm{ell}}(L)$.
By arguing as above for each $\phi_L$, we deduce that the surjective morphism $R_\phi  \rightarrow R(\pi_M)$ is bijective and hence an isomorphism.
\end{proof}

\subsection{Local packets for non-discrete parameters} \label{sec:lpackns}

Let $G^*=U_{E/F}(N)$ and let $\Xi: G^* \rightarrow G$ be an equivalence class of extended pure inner twists. Having proved the local intertwining relation in Section \ref{sub:LIR-proof} we can now complete the proof of the local classification theorem \ref{thm:locclass-single} for parameters $\phi \in \Phi_{\mathrm{bdd}}(G^*) \sm \Phi_2(G^*)$. We remind the reader that the archimedean case is already known by the work of Langlands \cite{Lan89}, and Shelstad \cite{She82}, \cite{SheTE2}, \cite{SheTE3}. The reader may also consult \cite[\S5.6]{Kal13} for an exposition that parallels closely the statement of Theorem \ref{thm:locclass-single}. Thus, we assume from now on until the rest of the chapter that $E/F$ is a quadratic extension of non-archimedean local fields.

Let $(\xi, z) \in \Xi$ be as in Section \ref{sec:iop3u}. Recall that the $L$-packet $\Pi_\phi(G,\Xi)$ and the map $\Pi_\phi(G,\Xi) \rw \tx{Irr}(S_\phi^\natural,\chi_\Xi)$ was already defined at the end of Section \ref{sec:lir}. Moreover, the character identities expressed in part 4 of Theorem \ref{thm:locclass-single} were also deduced there. Both the construction as well as the proof of the character identities were conditional on the validity of Theorem \ref{thm:lir}, which has now been established.

Thus parts 1, 2, and 4 of Theorem \ref{thm:locclass-single} have now been proved for $\phi \in \Phi_{\mathrm{bdd}}(G^*) \backslash \Phi_2(G^*)$. What remains to be shown is the bijectivity of the map $\Pi_\phi(G,\Xi) \rw \tx{Irr}(S_\phi^\natural,\chi_\Xi)$ and the fact that the packets $\Pi_\phi(G,\Xi)$ are disjoint and exhaust $\Pi_\tx{temp}(G)$.

Recall the general classification of $\Pi_{\mathrm{temp}}(G)$ by harmonic analysis from \cite[\S1]{ArtETC} (cf. \cite[p.154]{Arthur}). It characterizes $\Pi_{\mathrm{temp}}(G)$ as the image of a bijection
\[
	\{(M, \sigma, \mu) \} \stackrel{\sim}{\rightarrow} \{\pi_\mu\}, \ M \in \mathcal{L}, \sigma \in \Pi_{2, \mathrm{temp}}(M), \mu \in \Pi(\tilde R_\sigma(M,G)).
\]
The left hand side is comprised of $G(F)$-orbits of triples consisting of a Levi subgroup $M$, a discrete series representation $\sigma$ of $M(F)$, and an irreducible representation $\mu$ of the representation-theoretic $R$-group $R_\sigma(M,G)$. While in general one needs to use an extension of $R_\sigma(M,G)$, in the case of unitary groups this extension is not necessary, as we have shown in Section \ref{sec:iop3u} that $\sigma$ has a canonical extension to the group $M(F) \rtimes W_\sigma(M,G)$. The right hand side is the set of irreducible constituents of $\mc{I}_P^G(\sigma)$, where $P$ is any parabolic subgroup of $G$ with Levi factor $M$.

This discussion immediately implies that the packets $\Pi_\phi(G,\Xi)$ exhaust the set $\Pi_\tx{temp}(G)$. It also implies that two packets $\Pi_{\phi_i}(G,\Xi)$ for inequivalent $\phi_1,\phi_2 \in \Phi_{\mathrm{bdd}}(G^*) \backslash \Phi_2(G^*)$ are disjoint. Indeed, the packet $\Pi_{\phi_1}(G,\Xi)$ is constructed to consist of the irreducible constituents of $\mc{I}_P(\sigma)$ for all $\sigma \in \Pi_{\phi_{1,M}}(M,\Xi)$, where $\phi_{1,M} \in \Phi_{2,\tx{bdd}}(M^*)$ is any parameter with image $\phi_1$. An non-trivial intersection between $\Pi_{\phi_1}(G,\Xi)$ and $\Pi_{\phi_2}(G,\Xi)$ would imply by the above discussion that both $\phi_1$ and $\phi_2$ are images of two discrete parameters $\phi_{1,M_1},\phi_{2,M_2}$ for the $W_0^G$-conjugate Levi subgroups $M_1,M_2$ of $G$. We may assume $M_1=M_2=M$ after replacing $\phi_2$ by an equivalent parameter. Furthermore, we must have $\sigma_1 \in \Pi_{\phi_{1,M}}(M,\Xi)$ and $\sigma_2 \in \Pi_{\phi_{2,M}}(M,\Xi)$ with $\mc{I}_P(\sigma_1) \cap \mc{I}_P(\sigma_2) \neq \emptyset$, but by \cite[Prop 1.1]{ArtETC} this implies $\sigma_1 = \sigma_2$, which by the disjointness of tempered $L$-packets for $M$, assumed by induction, implies $\phi_{1,M}=\phi_{2,M}$ and thus $\phi_1=\phi_2$.

What remains to be shown is the bijectivity of the map $\Pi_\phi(G,\Xi) \rw \tx{Irr}(S_\phi^\natural,\chi_\Xi)$. For this we recall that the map $(M,\sigma,\mu) \mapsto \pi_\mu$ discussed above can be explicitly described. It depends on a choice of normalization of the self-intertwining operator $R_P(r,\sigma) \in \tx{Aut}_G(\mc{I}_P^G(\sigma))$ for each $r \in R_\sigma(M,G)$ so that the map
\[ R_\sigma(M,G) \rw \tx{Aut}_G(\mc{I}_P^G(\sigma)),\qquad r \mapsto R_P(r,\sigma) \]
is a homomorphism. Given this choice, the bijection is determined by decomposing the representation $\mc{R}$ of $R_\sigma(M,G) \times G(F)$ on the space $\mc{H}_P^G(\sigma)$ given by $\mc{R}(r,f) = R_P(r,\sigma)\circ\mc{I}_P^G(\sigma,f)$. This decomposition takes the form
\[ \mc{R} = \bigotimes_\mu \mu^\vee \otimes \pi_\mu, \]
where $\mu$ runs over the set of characters of the (in our case abelian) group $R_\sigma(M,G)$. This decomposition has the character-theoretic form
\[ \tx{tr}(R_P(r,\sigma)\circ \mc{I}_P^G(\sigma,f)) = \sum_\mu \mu^\vee(r)\tx{tr}(\pi_\mu(f)),\qquad r \in R_\sigma(M,G), f \in \mc{H}(G). \]
According to Lemma \ref{lem_tempered_generic_repns_r_groups_isomorphic} we have $R_\phi(M,G) = R_\sigma(M,G)$. This allows us to inflate the homomorphism $R_\sigma(M,G) \rw \tx{Aut}_G(\mc{I}_P(\sigma))$ to a homomorphism $S_\phi^\natural \rw \tx{Aut}_G(\mc{I}_P(\sigma))$. On the other hand, we have a second homomorphism with the same source and target, namely
\[ S_\phi^\natural \rw \tx{Aut}_G(\mc{I}_P(\sigma)),\qquad s \mapsto R_P(u^\natural,\Xi,\sigma,\phi,\psi_F), \]
where $u^\natural \in N_\phi^\natural(M,G)$ is any lift of $s$, and where the operator on the right is the normalization of the self-intertwining operator introduced in Section \ref{sec:iop}. We have implicitly identified $\phi$ with an element of $\Phi_{2,\tx{bdd}}(M^*)$ here. The right hand side is independent of the choice of lift due to part 1 of Theorem \ref{thm:lir}. If $r \in R_\sigma(M,G)$ is the image of $s \in S_\phi^\natural$, then up to a scalar multiple the two operators $R_P(u^\natural,\Xi,\sigma,\phi,\psi_F)$ and $R_P(r,\sigma)$ are equal. Thus there exists a character $\chi : S_\phi^\natural \rw \C^\times$ such that $R_P(u^\natural,\sigma,\phi,\psi_F)=\chi(s)R_P(r,\sigma)$. Of course, $\chi$ depends on the (arbitrary) choice of $R_P(r,\sigma)$ we are using here. It follows that
\[ \tx{tr}(R_P(u^\natural,\Xi,\sigma,\phi,\psi_F)\circ \mc{I}_P^G(\sigma,f)) = \sum_\mu \chi(s)\mu^\vee(s)\tx{tr}(\pi_\mu(f)),\quad s\in S_\phi^\natural, f \in \mc{H}(G), \]
again with the understanding that $u^\natural \in N_\phi^\natural(M,G)$ is an arbitrary lift of $s$. The summation index is the set of characters $\mu$ of $S_\phi^\natural$ that are inflated from $R_\phi(M,G)$. Since $S_\phi^\natural$ is abelian, this set is in bijection with the set of characters whose restriction to $S_\phi^{\natural\natural}(M)$ is equal to the restriction of $\chi^{-1}$ to that group. Thus, after reindexing, we arrive at
\[ \tx{tr}(R_P(u^\natural,\sigma,\phi,\psi_F)\circ \mc{I}_P^G(\sigma,f)) = \sum_{\mu'} (\mu')^\vee(s)\tx{tr}(\pi_{\mu'}(f)),\qquad s\in S_\phi^\natural, f \in \mc{H}(G), \]
where now $\mu'$ runs over the set of characters of $S_\phi^\natural$ whose restriction to $S_\phi^{\natural\natural}(M)$ is equal to the restriction of $\chi^{-1}$. But the restriction of $\chi$ can be easily determined. Indeed, for $s \in S_\phi^{\natural\natural}(M)$ we have $\chi(s)=R_P(s,\sigma,\phi,\psi_F)=\<\sigma,s\>_{\xi,z}^{-1}$ by Lemma \ref{lem:rpequiv}. It thus follows that $\mu'$ runs over the set of characters of $S_\phi^\natural$ that extend the character $\<\sigma,-\>_{\xi,z}$ of $S_\phi^{\natural\natural}(M)$. Taking the sum over all $\sigma \in \Pi_\phi(M,\Xi_M)$ on the left is equivalent to taking the sum on the right over all characters $\mu'$ of $S_\phi^\natural$ whose restriction to $Z(\hat G)^\Gamma$ is equal to the character $\<z,-\>$. We then arrive at the equation that defines the map $\mu \mapsto \pi_\mu$ in Section \ref{sec:lir}. Thus we conclude that the map $\mu' \mapsto \pi_{\mu'}$ defined there is a bijection, as claimed. We summarize the results of this discussion.
\begin{prop}
The local classification Theorem \ref{thm:locclass-single} holds for $\Xi : G^* \rw G$ and generic parameters $\phi$ in the complement of $\Phi_{2,\tx{bdd}}(G^*)$ in $\Phi_{\mathrm{bdd}}(G^*)$.
\end{prop}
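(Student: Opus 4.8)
The plan is to deduce the local classification theorem for a generic non-discrete parameter $\phi \in \Phi_{\mathrm{bdd}}(G^*) \setminus \Phi_{2,\tx{bdd}}(G^*)$ from two ingredients that are now at our disposal: (i) the local intertwining relation (Theorem \ref{thm:lir}), which has just been proved in full for all $\phi_{M^*} \in \Phi_\tx{bdd}(M^*)$, and (ii) the harmonic-analytic classification of $\Pi_\tx{temp}(G)$ via triples $(M,\sigma,\mu)$ from \cite{ArtETC}, together with the fact (Lemma \ref{lem_tempered_generic_repns_r_groups_isomorphic}) that the endoscopic and representation-theoretic $R$-groups coincide. Write $\phi$ as the image of a discrete parameter $\phi_{M^*} \in \Phi_{2,\tx{bdd}}(M^*)$ for a proper standard Levi $M^* \subsetneq G^*$, which exists and is unique up to association since $\phi$ is not discrete; if $\phi$ is not $\Xi$-relevant set $\Pi_\phi(G,\Xi) = \emptyset$ and there is nothing to prove, so assume $M^*$ transfers to $M \subset G$.

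\textbf{Key steps.} First I would recall that the packet $\Pi_\phi(G,\Xi)$ and the pairing $\Pi_\phi(G,\Xi) \rw \tx{Irr}(S_\phi^\natural,\chi_\Xi)$ were already defined at the end of Section \ref{sec:lir} by decomposing the representation $\Pi_\psi^1(G,\Xi)$ of $S_\phi^\natural \times G(F)$, and that the endoscopic character identity \eqref{eq:eci} of part 4 followed directly from Theorem \ref{thm:lir} there; since Theorem \ref{thm:lir} is now unconditional, parts 1, 2, and 4 hold without further argument, and part 3 (the central character) follows by parabolic descent from the known statement for $M$. The substantive new work is parts 5 and 6. For exhaustion and disjointness (part 6), I would invoke the harmonic-analytic classification: every tempered $\pi$ is a constituent of some $\mc{I}_P^G(\sigma)$ with $\sigma \in \Pi_{2,\tx{temp}}(M)$, and by the inductive disjointness of tempered $L$-packets for $M$ we have $\sigma \in \Pi_{\phi_{M^*}}(M,\Xi_M)$ for a unique discrete $\phi_{M^*}$, hence $\pi$ lies in $\Pi_\phi(G,\Xi)$ for the image $\phi$; a nontrivial intersection $\Pi_{\phi_1}(G,\Xi) \cap \Pi_{\phi_2}(G,\Xi)$ would force, via \cite[Prop.\ 1.1]{ArtETC}, a common $\sigma$, whence $\phi_1 = \phi_2$. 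For the bijectivity of $\Pi_\phi(G,\Xi) \rw \tx{Irr}(S_\phi^\natural,\chi_\Xi)$ (part 5, $p$-adic case), I would compare two homomorphisms $S_\phi^\natural \rw \tx{Aut}_G(\mc{I}_P(\sigma))$: the one inflated along $R_\phi(M,G) = R_\sigma(M,G)$ (Lemma \ref{lem_tempered_generic_repns_r_groups_isomorphic}) from the normalized Knapp--Stein operators $R_P(r,\sigma)$, and the one $s \mapsto R_P(u^\natural,\Xi,\sigma,\phi,\psi_F)$ from Section \ref{sec:iop}, which is well-defined by part 1 of Theorem \ref{thm:lir}. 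They differ by a character $\chi$ of $S_\phi^\natural$, and by Lemma \ref{lem:rpequiv} the restriction of $\chi$ to $S_\phi^{\natural\natural}(M)$ equals $\<\sigma,-\>_{\xi,z}^{-1}$. Taking traces against $\mc{I}_P^G(\sigma,f)$ and summing over $\sigma \in \Pi_{\phi_{M^*}}(M,\Xi_M)$, the Fourier decomposition of \cite{ArtETC} reindexes to show that the constituents of the $\mc{I}_P(\sigma)$ are parametrized bijectively by characters of $S_\phi^\natural$ restricting to $\<z,-\>$ on $Z(\hat G)^\Gamma$, i.e.\ by $\tx{Irr}(S_\phi^\natural,\chi_\Xi)$.

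\textbf{Main obstacle.} I expect the delicate point to be the bookkeeping in the bijectivity argument: one must verify that the normalized self-intertwining operator $R_P(u^\natural,\Xi,\sigma,\phi,\psi_F)$ of Section \ref{sec:iop} and the abstract Knapp--Stein normalization $R_P(r,\sigma)$ of \cite{ArtETC} differ by a genuine character of $S_\phi^\natural$ (multiplicativity of both is needed — the first comes from Lemma \ref{lem:iopm}, the second from the standard theory), and that the identification $R_\phi(M,G) = R_\sigma(M,G)$ from Lemma \ref{lem_tempered_generic_repns_r_groups_isomorphic} is compatible with the extension $S_\phi^\natural \twoheadrightarrow R_\phi(M,G)$ so that the reindexing of characters is exactly the passage from characters inflated from $R_\phi$ to all characters extending $\<\sigma,-\>_{\xi,z}$ on $S_\phi^{\natural\natural}(M)$. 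Everything else is essentially formal once Theorem \ref{thm:lir} and the harmonic-analytic classification are in hand; this is also why the archimedean case can simply be cited from \cite{Lan89}, \cite{She82}, \cite{SheTE2}, \cite{SheTE3} (cf.\ \cite[\S5.6]{Kal13}), reducing us to $E/F$ non-archimedean throughout.
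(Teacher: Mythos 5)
Your proposal is correct and follows essentially the same route as the paper: recall that $\Pi_\phi(G,\Xi)$, its map to $\tx{Irr}(S_\phi^\natural,\chi_\Xi)$, and the character identity were already built from Theorem \ref{thm:lir} at the end of Section \ref{sec:lir}; use the harmonic-analytic classification of $\Pi_\tx{temp}(G)$ together with Lemma \ref{lem_tempered_generic_repns_r_groups_isomorphic} for disjointness and exhaustion; and establish bijectivity by comparing the two normalizations of the self-intertwining operators, computing the restriction to $S_\phi^{\natural\natural}(M)$ of the discrepancy character via Lemma \ref{lem:rpequiv}, and reindexing. The bookkeeping concern you flag is precisely the content of that comparison, and the paper resolves it exactly as you describe.
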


\subsection{Elliptic orthogonality relations}
\label{subsection_elliptic_orthog_reln}
In this section we will discuss the orthogonality relations for elliptic tempered characters, originally derived in \cite{ArtETC}, from the point of view of the normalized intertwining operators introduced in Section \ref{sec:iop}. This discussion will serve as a technical input for the proof of the local classification theorem for generic discrete parameters in the next section. As before $E/F$ is a quadratic extension of non-archimedean local fields, $G^*=U_{E/F}(N)$, and $\Xi: G^* \rightarrow G$ denotes an equivalence class of extended pure inner twists.

In \cite{ArtETC}, Arthur introduces the set $T_\tx{ell}(G)$ of $G(F)$-orbits of triples $\tau=(M,\sigma,r)$, where $M$ is a Levi subgroup of $G$, $\sigma$ is a unitary discrete series representation of $M(F)$, and $r \in R_\sigma(M,G)$ is a regular element. As recalled in the previous section, given such a triple $\tau$ and a parabolic subgroup $P$ of $G$ with Levi factor $M$, there is a normalized intertwining operator $R_P(r,\sigma)$ of the induced representation $\mc{I}_P(\sigma)$, well-defined up to a scalar multiple. Note that the set $\Pi_{2,\tx{temp}}(G)$ of equivalence classes of unitary discrete series representations of $G$ is naturally a subset of $T_\tx{ell}(G)$ by interpreting $\pi \in \Pi_{2,\tx{temp}}(G)$ as the triple $(G,\pi,1) \in T_\tx{ell}(G)$. In this case we take $R_P(1,\sigma)=\tx{id}$. We will denote the complement of $\Pi_{2,\tx{temp}}(G)$ in $T_\tx{ell}(G)$ by $T_\tx{ell}^2(G)$.

The elliptic tempered character of $\tau$ is the distribution
\[ f \mapsto f_G(\tau) = \tx{tr}(R_P(r,\sigma)\circ\mc{I}_P(\sigma,f)), \]
itself well-defined up to a scalar multiple, unless $\tau \in \Pi_{2,\tx{temp}}(G)$. This distribution can be restricted to the space $\mc{H}_\tx{cusp}(G)$ of cuspidal functions. In this way, each $f_G \in \mc{I}_\tx{cusp}(G)$ gives a function
\[ T_\tx{ell}(G) \rw \C,\qquad \tau \mapsto f_G(\tau). \]
A central result of \cite{ArtETC}, tracing back to work of Kazhdan, is that the map $f_G \rw f_G(\tau)$ is an isomorphism of $\C$-vector spaces between $\mc{I}_\tx{cusp}(G)$ and the space of complex valued functions on $T_\tx{ell}(G)$ of finite support.

We will now formulate a version of this result involving the specific normalization of the intertwining operators introduced in Section \ref{sec:iop}. For this, we consider the set $T^{2,\natural}_\tx{ell}(G)$ of $G(F)$-conjugacy classes of triples $(M,\sigma,s)$, where $M$ is a proper Levi subgroup of $G$, $\sigma$ is a unitary discrete series representation of $M$, and $s \in S_{\phi_\sigma}^\natural$ is an element whose image in $R_{\phi_\sigma}(M,G) \cong R_\sigma(M,G)$ is regular. We have used here the isomorphism given by Lemma \ref{lem_tempered_generic_repns_r_groups_isomorphic}, as well as the validity of Theorem \ref{thm:locclass-single} for $M$, which assigns to $\sigma$ a discrete generic parameter ${\phi_\sigma} \in \Phi_{2,\tx{bdd}}(M^*)$ whose image in $\Phi_\tx{bdd}(G^*)$ we also denote by ${\phi_\sigma}$. To an element $\tau^\natural=(M,\sigma,s) \in T^{2,\natural}_\tx{ell}(G)$ we assign the distribution
\[ f_G(\tau^\natural)=\tx{tr}(R_P(u^\natural,\Xi,\sigma,{\phi_\sigma},\psi_F)\circ\mc{I}_P(\sigma,f)), \]
where $u^\natural \in N_{\phi_\sigma}^\natural(M,G)$ is any preimage of $s$. According to Theorem \ref{thm:lir}, which we have now proved, the choice of $u^\natural$ doesn't matter. We set $T^\natural_\tx{ell}(G)=T^{2,\natural}_\tx{ell}(G) \sqcup \Pi_{2,\tx{temp}}(G)$. To an element $\sigma \in \Pi_{2,\tx{temp}}(G)$ we assign the distribution $f_G(\sigma)=\tx{tr}(\sigma,f)$.

For an element $\tau=(M,\sigma,r) \in T^2_\tx{ell}(G)$ we write $\phi_\tau=\phi_\sigma \in \Phi_{2,\tx{bdd}}(M^*)$ for the unique parameter with $\sigma \in \Pi_{\phi_\tau}(M,\Xi_M)$ as well as
\[ \<\tau,y\> = \<\sigma,y\>_{\Xi_M}, \qquad y \in S_{\phi_\tau}^{\natural\natural}(M) \]
where on the right the character $\<\sigma,-\>_{\Xi_M}$ is the one associated to the representation $\sigma$ by Theorem \ref{thm:locclass-single}.

There is an obvious surjective map $T^\natural_\tx{ell}(G) \rw T_\tx{ell}(G)$. If $\tau^\natural_1,\tau^\natural_2 \in T^{2,\natural}_\tx{ell}(G)$ belong to the same fiber of this map, then we have $\tau^\natural_1=(M,\sigma,s)$ and $\tau^\natural_2=(M,\sigma,ys)$ with $y \in S_{\phi_\tau}^{\natural\natural}(M)$. We will write $\tau_2^\natural=y\tau_1^\natural$ for short. Lemma \ref{lem:rpequiv} then asserts that
\begin{equation} \label{eq:tpwn} f_G(\tau^\natural_2) = \<\tau,y\>^{-1} \cdot f_G(\tau^\natural_1). \end{equation}
The trace Paley-Wiener theorem in this setting asserts that the map $f_G \mapsto f_G(\tau^\natural)$ is an isomorphism of $\C$-vector spaces between $\mc{I}_\tx{cusp}(G)$ and the space of complex valued functions on $T^\natural_\tx{ell}(G)$ of finite support (recall that the groups $S_{\phi_\tau}^{\natural\natural}(M)$ are finite for unitary groups) and satisfy the equivariance property \eqref{eq:tpwn}.

To a pair $(\phi, x)$ where $\phi \in \Phi_{\mathrm{ell}}(G^*)$ and $x \in S_{\phi, \mathrm{ell}}^\natural$,
we can associate the endoscopic pair $(\fke,\phi^\fke)$ by \S\ref{sub:endo-correspondence} and thus obtain the functional
\begin{eqnarray*}
	\mathcal{H}(G) &\mapsto& \C	\\
	f &\mapsto& f^\fke(\phi^\fke).
\end{eqnarray*}
We shall consider the restriction of this functional to the space $\mathcal{H}_{\mathrm{cusp}}(G)$. The trace Paley-Wiener theorem then implies that for $f \in \mathcal{H}_{\mathrm{cusp}}(G)$ the following identity holds,
\begin{equation} \label{eq:weakcharid}
	f'_{G, \Xi}(\phi,x)=f^\fke(\phi^\fke) = e(G)\sum_{\tau \in T_{\mathrm{ell}}(G)} c_{\phi, x}(\tau^\natural) f_G(\tau^\natural),
\end{equation}
where $\tau^\natural \in T^\natural_\tx{ell}(G)$ is any lift of $\tau$ and $c_{\phi,x}(\tau^\natural) \in \C$ are uniquely determined scalar coefficients having the equivariance property
\begin{equation} \label{eq:cpxeq1} c_{\phi,x}(y\tau^\natural) = \<\tau,y\> \cdot c_{\phi,x}(\tau^\natural),\qquad y \in S_{\phi_\tau}^{\natural\natural}(M), \tau^\natural \in T^{2,\natural}_\tx{ell}(G). \end{equation}
The coefficients $c_{\phi,x}(\tau^\natural)$ depend only upon $\phi$, $x$, and $\tau^\natural$. Furthermore, the dependence on $x$ has an equivariance property under translations by $Z(\hat G)^\Gamma$ parallel to that of $f'_{G, \Xi}(\phi,x)$, namely
\begin{equation} \label{eq:cpxeq2} c_{\phi,yx}=\<\Xi,y\>\cdot c_{\phi,x},\qquad y \in Z(\hat G)^\Gamma. \end{equation}

When $\phi \in \Phi_\tx{ell}^2(G^*)$, the local intertwining relation provides a formula for the coefficients $c_{\phi,x}(\tau^\natural)$. Indeed, let $M^*$ be the minimal standard Levi subgroup of $G^*$ through which $\phi$ factors. We have the $L$-packet $\Pi_\phi(M)$ and can form the subset of $T_\tx{ell}(G)$ given by
\[ T_{\phi,\tx{ell}}(G) = \{\tau=(M,\sigma,r)| \sigma \in \Pi_\phi(M),r \in {R_\sigma(M,G)}_\tx{reg} \} \]
as well as its preimage $T^\natural_{\phi,\tx{ell}}(G)$ in $T^{\natural}_\tx{ell}(G)$.
\begin{prop}
\label{prop_definition_coefficients_agrees_with_pairing_using_lir}
Let $\phi \in \Phi_\tx{ell}^2(G^*)$, $x \in S_{\phi,\tx{ell}}^\natural$, and let $\tau^\natural\in T^\natural_\tx{ell}(G)$. Then
\[
	c_{\phi, x}(\tau^\natural) = \begin{cases}
		\<\tau,xs\>, & \textrm{ if } \tau^\natural=(M,\sigma,s)  \in T^\natural_{\phi,\tx{ell}}(G) \\
		0 & \textrm{ else }
	\end{cases}
\]
\end{prop}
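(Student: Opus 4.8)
The plan is to compute the coefficients $c_{\phi,x}(\tau^\natural)$ by pairing the identity \eqref{eq:weakcharid} against the elliptic tempered characters and using the orthogonality relations of \cite{ArtETC} to invert, exactly as Arthur does in the quasi-split case, but now feeding in the local intertwining relation (Theorem \ref{thm:lir}) that we have just proved. The left-hand side $f'_{G,\Xi}(\phi,x)=f^{\mf{e}}(\phi^{\mf{e}})$ is, for $\phi \in \Phi_\tx{ell}^2(G^*)$ and $x \in S_{\phi,\tx{ell}}^\natural$, governed by the local intertwining relation via the correspondence of \S\ref{sub:endo-correspondence}. Indeed, lifting $x$ to $s \in S_{\phi,\sspl}$ and to $u^\natural \in N_\phi^\natural(M,G)$ with the same image in $S_\phi^\natural$, Theorem \ref{thm:lir}(3) gives $f'_{G,\Xi}(\phi,s_\psi s^{-1})=e(G)f_{G,\Xi}(\phi,u^\natural)$. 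Since $\phi$ is generic, $s_\psi=1$, so $f'_{G,\Xi}(\phi,s^{-1})=e(G)f_{G,\Xi}(\phi,u^\natural)$, and $f_{G,\Xi}(\phi,u^\natural)=\sum_{\sigma\in\Pi_\phi(M,\Xi_M)}\tx{tr}(R_P(u^\natural,\Xi,\sigma,\phi,\psi_F)\mc{I}_P(\sigma)(f))$ by the definition of $f_{G,\Xi}(\psi,u^\natural)$ in \S\ref{sec:lir}.

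\textbf{Key steps.} First I would rewrite this last sum over $\Pi_\phi(M,\Xi_M)$ in terms of the basis $T^\natural_\tx{ell}(G)$: for each $\sigma$, the operator $R_P(u^\natural,\Xi,\sigma,\phi,\psi_F)$ is exactly the distribution $f_G(\tau^\natural)$ associated to the triple $\tau^\natural=(M,\sigma,s)$, provided the image of $s$ in $R_{\phi_\sigma}(M,G)\cong R_\sigma(M,G)$ is regular; here $\phi_\sigma=\phi$ because $\sigma\in\Pi_\phi(M,\Xi_M)$. The condition $x \in S_{\phi,\tx{ell}}^\natural$ together with the isomorphism $R_\phi(M,G)\cong R_\sigma(M,G)$ of Lemma \ref{lem_tempered_generic_repns_r_groups_isomorphic} ensures precisely that the image of $xs=x\cdot(xs\text{ lift})$ — more carefully, that the relevant $R$-group element is regular, so $(M,\sigma,s)$ genuinely lies in $T^{2,\natural}_{\phi,\tx{ell}}(G)$. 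Thus
\[
	f'_{G,\Xi}(\phi,s^{-1}) = e(G)\sum_{\sigma\in\Pi_\phi(M,\Xi_M)} f_G((M,\sigma,s))
\]
(after replacing $x$ by $x^{-1}$, i.e. running the argument with $s^{-1}$ in place of $s$, which only changes the parametrizing element and not the structure). Second, comparing with \eqref{eq:weakcharid} and invoking the linear independence of the functionals $f_G \mapsto f_G(\tau^\natural)$ on $\mc{I}_\tx{cusp}(G)$ (the trace Paley-Wiener theorem recalled in \S\ref{subsection_elliptic_orthog_reln}), I read off: $c_{\phi,x}(\tau^\natural)=0$ unless $\tau^\natural$ projects into $T_{\phi,\tx{ell}}(G)$, and for $\tau^\natural=(M,\sigma,s)\in T^\natural_{\phi,\tx{ell}}(G)$, the coefficient picks up the value of the pairing $\<\sigma,\cdot\>_{\Xi_M}=\<\tau,\cdot\>$ at the appropriate element of $S_{\phi_\tau}^{\natural\natural}(M)$. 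Tracking the bookkeeping through the definition of $\<\tau,y\>$ and the translation equivariance \eqref{eq:tpwn}, \eqref{eq:cpxeq1}, one obtains $c_{\phi,x}(\tau^\natural)=\<\tau,xs\>$ for $\tau^\natural=(M,\sigma,s)$, where $xs$ is interpreted (via the product in $S_\phi^\natural$ and the identification with $S_{\phi_\tau}^{\natural\natural}(M)$) as an element at which $\<\tau,\cdot\>$ is defined; the key point is that $x$ and the element of $S_\phi^\natural$ underlying $s$ both lie in, and multiply within, the correct group so that $\<\tau,xs\>$ makes sense.

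\textbf{Main obstacle.} The delicate part is the third step: matching up the indexing conventions so that $xs$ lands in $S_{\phi_\tau}^{\natural\natural}(M)$ and $\<\tau,xs\>$ is meaningful. One must verify that for $\tau=(M,\sigma,r)$ with $\sigma\in\Pi_\phi(M,\Xi_M)$ and $r$ regular, choosing a lift $u^\natural$ of $x$ and a lift of $s$ from the definition of $\tau^\natural$, the product $xs$ (computed in $S_\phi^\natural$) indeed lies in the subgroup $S_{\phi_\tau}^{\natural\natural}(M)=S_\phi(M)^\natural$ on which the character $\<\sigma,\cdot\>_{\Xi_M}$ lives — this rests on the regularity of the $R$-group image of $xs^{-1}$ forcing the $W$-component to vanish, i.e. on $x$ and $s$ having the same image in $W_\phi(M,G)$ up to the regular reflection — and then that the resulting value is independent of the lift $u^\natural$, which is Theorem \ref{thm:lir}(1). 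The remaining checks — the vanishing of $c_{\phi,x}(\tau^\natural)$ off $T_{\phi,\tx{ell}}(G)$, which follows because $f'_{G,\Xi}(\phi,s^{-1})$ is a linear combination only of the $f_G(\tau^\natural)$ with $\sigma\in\Pi_\phi(M,\Xi_M)$, and the consistency with \eqref{eq:cpxeq2}, which follows from Lemma \ref{lem:central-action} applied to $f'_{G,\Xi}$ — are then routine.
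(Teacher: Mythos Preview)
Your approach is correct and is essentially the paper's, but you have made it harder than necessary. The paper's proof is one sentence: by the equivariance \eqref{eq:cpxeq1} it suffices to check the formula for the particular lift $\tau^\natural=(M,\sigma,x^{-1})$, where $\<\tau,xs\>=\<\tau,1\>=1$, and then the assertion is immediate from the local intertwining relation applied to $f'_{G,\Xi}(\phi,x)$ together with the definition of $f_G(\tau^\natural)$.

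Your ``main obstacle'' dissolves once you notice that the fibers of $T^\natural_\tx{ell}(G)\to T_\tx{ell}(G)$ are torsors under $S^{\natural\natural}_{\phi_\tau}(M)$ (this is exactly the content surrounding \eqref{eq:tpwn}). For $\phi\in\Phi^2_\tx{ell}(G^*)$ the group $R_\phi(M,G)=W_\phi(M,G)$ has a \emph{unique} regular element, so both $x\in S^\natural_{\phi,\tx{ell}}$ and the $s$ from $\tau^\natural=(M,\sigma,s)\in T^\natural_{\phi,\tx{ell}}(G)$ map to it; hence $xs$ lies in the kernel $S^{\natural\natural}_\phi(M)$ and $\<\tau,xs\>$ is well-defined. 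This is the same observation that makes the paper's equivariance reduction work (one translates from $(M,\sigma,x^{-1})$ to $(M,\sigma,s)$ by $y=sx\in S^{\natural\natural}_\phi(M)$), so your direct computation and the paper's reduction are two presentations of the same argument. Your parenthetical ``after replacing $x$ by $x^{-1}$\dots'' is garbled: what you need is simply to set $s=x^{-1}$ in your displayed LIR identity to obtain $f'_{G,\Xi}(\phi,x)=e(G)\sum_\sigma f_G((M,\sigma,x^{-1}))$, then compare with \eqref{eq:weakcharid} and use \eqref{eq:tpwn} to translate.
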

\begin{proof}
Given the equivariance property \eqref{eq:cpxeq1} it is enough to assume $x=s^{-1}$, in which case the statement follows from applying the local intertwining relation to the definition of $c_{\phi, x}$ coming from our chosen normalization.
\end{proof}

We can now state the orthogonality relations. Given $\tau = (M,\sigma,r) \in T_\tx{ell}(G)$ we define
\[ d(\tau) := \det(r-1)_{\mathfrak{a}_M^G},\quad R(\tau) = R_\sigma(M,G),\quad b(\tau) = \abs{d(\tau)} \cdot \abs{R(\tau)}. \]
Let $\tau^\natural \in T^\natural_\tx{ell}(G)$ be a lift of $\tau$. Then for any two pairs $(\phi_i,x_i)$, $i=1,2$, the product
\[ c_{\phi_1, x_1}(\tau^\natural) \overline{c_{\phi_2, x_2}(\tau^\natural)} \]
depends only on $\tau$ and not on the choice of lift $\tau^\natural$, as one sees from the equivariance property \eqref{eq:cpxeq1}. Furthermore, for a single pair $(\phi,x)$, the product
\[ c_{\phi,x}(\tau^\natural) \ol{c_{\phi,x}(\tau^\natural)} \]
depends on $x$ only through its image in $\ol{\cS_\phi}$, as one sees from the equivariance property \eqref{eq:cpxeq2}.

\begin{prop}
\label{prop_orthog_rel_1}
Suppose that $(\phi_i, x_i)$ for $i=1,2$ are two pairs where $\phi_i \in \Phi_{\mathrm{ell}}(G^*)$ and $x_i \in S_{\phi, \mathrm{ell}}^\natural$. If either $\phi_1 \neq \phi_2$, or $\phi_1=\phi_2=\phi$ but the images of $x_1$ and $x_2$ in $\ol{\cS}_{\phi}$ are not equal, we have
\[ \sum_{\tau \in T_{\mathrm{ell}}(G)} b(\tau) c_{\phi_1, x_1}(\tau^\natural) \overline{c_{\phi_2, x_2}(\tau^\natural)} = 0. \]
On the other hand,
\[ \sum_{\tau \in T_{\mathrm{ell}}(G)} b(\tau) c_{\phi, x}(\tau^\natural) \overline{c_{\phi, x}(\tau^\natural)} = \abs{\ol{\cS}_{\phi}}. \]
\end{prop}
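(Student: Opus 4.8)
The plan is to deduce Proposition \ref{prop_orthog_rel_1} from Arthur's elliptic orthogonality relations for the space $\mc{I}_\tx{cusp}(G)$ (cf. \cite{ArtETC}), combined with the endoscopic character identity \eqref{eq:weakcharid} and the stabilization of the elliptic inner product. The starting point is the stable version of the formula $f'_{G,\Xi}(\phi,x)=f^\fke(\phi^\fke)$: on the space $\mc{I}_\tx{cusp}(G)$, the pairing $(f_1,f_2) \mapsto \<f_1,f_2\>_\tx{ell}$ built from the elliptic characters $f_G(\tau^\natural)$ weighted by $b(\tau)$ decomposes, via the stabilization of the elliptic inner product (this is the import of \cite[Proposition 6.5.1]{Arthur}, cf. \cite[Proposition 7.5.3]{Mok}, which is in our list of imported results in \S\ref{sub:results-qsuni}), into a sum of products of stable elliptic inner products on the endoscopic groups $G^\fke$. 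The key point is that for $\phi\in \Phi_\tx{ell}(G^*)$ the stable distribution $f^\fke \mapsto f^\fke(\phi^\fke)$ is, after restriction to cuspidal functions, a specific "stable virtual character" attached to $(\fke,\phi^\fke)$, and for two such data $(\fke_1,\phi_1^\fke)$ and $(\fke_2,\phi_2^\fke)$ the stable elliptic inner product is $0$ unless the data are isomorphic and equals $|\ol{S}_{\phi^\fke}|^{-1}$-type normalizing constant in the matching case.

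First I would make the left-hand side intrinsic. Using \eqref{eq:weakcharid} we have $f'_{G,\Xi}(\phi_i,x_i) = e(G)\sum_\tau c_{\phi_i,x_i}(\tau^\natural)f_G(\tau^\natural)$, so by the trace Paley-Wiener isomorphism $\mc{I}_\tx{cusp}(G) \cong \C^{(T^\natural_\tx{ell}(G))}$ (the space of finitely supported functions, with the equivariance \eqref{eq:tpwn}) the sum $\sum_{\tau} b(\tau)c_{\phi_1,x_1}(\tau^\natural)\ol{c_{\phi_2,x_2}(\tau^\natural)}$ is precisely the Hermitian elliptic pairing $\<f'_{\cdot}(\phi_1,x_1), f'_{\cdot}(\phi_2,x_2)\>_\tx{ell}$ of the two cuspidal "transfers", up to the factor $e(G)^2=1$. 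Here one uses that $b(\tau)$ is exactly the weight appearing in Arthur's orthogonality relations \cite[Theorem 6.1]{ArtETC}, and that the normalized operators $R_P(u^\natural,\Xi,\sigma,\phi,\psi_F)$ differ from Arthur's unnormalized $R_P(r,\sigma)$ only by scalars, which cancel in the product $c\bar c$ once one accounts for the equivariance properties. Thus the whole statement reduces to computing $\<f'_{\cdot}(\phi_1,x_1), f'_{\cdot}(\phi_2,x_2)\>_\tx{ell}$.

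Next I would compute this pairing by stabilization. By \eqref{eq:weakcharid} and its reformulation via the correspondence of \S\ref{sub:endo-correspondence}, $f \mapsto f'_{G,\Xi}(\phi,x)$ is, modulo $e(G)$, the transfer to $G$ of the stable elliptic distribution on $G^\fke$ attached to $\phi^\fke$, where $(\fke,\phi^\fke)$ corresponds to $(\phi,s)$ with $s$ any preimage of $x$. The stabilization of the elliptic inner product then expresses $\<f'_\cdot(\phi_1,x_1),f'_\cdot(\phi_2,x_2)\>_\tx{ell}^G$ as a weighted sum over pairs of Levi-and-endoscopic data of stable pairings $\<\cdot,\cdot\>_\tx{ell}^{G^{\fke}}$. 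Because $\phi_i$ are elliptic, only the "top" term survives (the stable distributions attached to proper Levi or non-elliptic endoscopic contributions are orthogonal to elliptic ones on cuspidal functions), and one is left with the stable elliptic pairing of $\phi_1^{\fke_1}$ and $\phi_2^{\fke_2}$ on their common endoscopic group. The stable multiplicity/orthogonality on quasi-split groups (again in the imported list) gives that this vanishes unless $(\fke_1,\phi_1^\fke)$ and $(\fke_2,\phi_2^\fke)$ are isomorphic, i.e. unless $\phi_1=\phi_2=\phi$ and $x_1,x_2$ have the same image in $\ol{\cS}_\phi$ (the images of $s_i$ in $\ol{S}_\phi$ being conjugate is exactly the condition that the endoscopic data coincide up to isomorphism). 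In the matching case, the normalization constants combine to give exactly $|\ol{\cS}_\phi|$; here I would track the constants carefully using $i_\phi(x)=e_\phi(x)$ from \S\ref{sub:stable-multiplicity}, Proposition \ref{prop_definition_coefficients_agrees_with_pairing_using_lir} when $\phi\in\Phi^2_\tx{ell}(G^*)$, and a direct endoscopic computation when $\phi\in\Phi_2(G^*)$ (where $c_{\phi,x}(\tau^\natural)$ is simply $\<\pi,x\>$ for $\pi\in\Pi_\phi(G,\Xi)$ by Theorem \ref{thm:locclass-single}, already proved for non-discrete $\phi$, and the orthogonality of the characters of $\ol{\cS}_\phi$ gives $|\ol{\cS}_\phi|$ directly).

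The main obstacle I anticipate is bookkeeping of normalization constants: matching the factor $b(\tau)=|d(\tau)||R(\tau)|$ and the factor $|\ol{\cS}_\phi|$ on the nose requires reconciling Arthur's normalization of the elliptic pairing in \cite{ArtETC} with our normalization of the intertwining operators $R_P(u^\natural,\Xi,\cdot)$ and of the transfer factors $\Delta[\mf{e},\xi,z]$, including the sign subtleties discussed in \S\ref{subsub:normtf} (the $s$ versus $s^{-1}$ issue) and the relation $i_\phi=e_\phi$. A secondary point requiring care is handling the case $\phi\in\Phi_\tx{ell}(G^*)\setminus\Phi^2_\tx{ell}(G^*)$, i.e. $\phi\in\Phi_2(G^*)$, for which the local classification theorem for discrete parameters is not yet available at this point in the paper; here one must argue directly on the endoscopic side, using that $f'_{G,\Xi}(\phi,x)$ is defined purely in terms of the stable linear forms on proper endoscopic groups, so the orthogonality follows from the stable orthogonality relations on those (quasi-split, lower-rank) groups, which are part of the imported results and the induction hypothesis. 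Once the constants are pinned down, both displayed identities follow simultaneously from the single computation of the stabilized elliptic pairing.
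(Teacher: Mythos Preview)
Your approach is essentially the same as the paper's: compare the spectral expansion of the elliptic inner product $I(f_G,g_G)$ coming from the local trace formula with its endoscopic expansion obtained by stabilizing $I=\sum_\fke\iota(G,G^\fke)S(f^\fke,g^\fke)$ and then inserting the stable spectral expansion $S(f^\fke,g^\fke)=\sum_{\phi^\fke}|\ol{\cS_{\phi^\fke}}|f^\fke(\phi^\fke)\ol{g^\fke(\phi^\fke)}$ on the quasi-split groups $G^\fke$ (imported in \S\ref{sub:results-qsuni}); the proposition then follows by elementary linear algebra (dual bases).

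Two remarks on your anticipated obstacles. First, the paper's argument is uniform in $\phi\in\Phi_\tx{ell}(G^*)$: neither Proposition \ref{prop_definition_coefficients_agrees_with_pairing_using_lir} nor Theorem \ref{thm:locclass-single}, nor the identity $i_\psi=e_\psi$, is used, and no case split between $\Phi_2$ and $\Phi^2_\tx{ell}$ is needed, because the coefficients $c_{\phi,x}(\tau^\natural)$ are defined purely by the trace Paley--Wiener expansion \eqref{eq:weakcharid} of the endoscopic linear form, independently of any internal description of packets. Second, your phrasing ``the Hermitian elliptic pairing $\<f'_\cdot(\phi_1,x_1),f'_\cdot(\phi_2,x_2)\>_\tx{ell}$'' is slightly imprecise: the $f'_{G,\Xi}(\phi,x)$ are distributions, not elements of $\mc{I}_\tx{cusp}(G)$, and the weight in the spectral expansion of $I$ is $b(\tau)^{-1}$, not $b(\tau)$; the linear-algebra step is exactly the passage from one orthogonality relation to the dual one (inverting a change-of-basis matrix), which is where $b(\tau)$ appears.
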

\begin{proof}
This result is analogous to \cite[Cor 6.5.2]{Arthur} and \cite[Prop 6.5.1]{Arthur}, and is obtained in the same way. We will limit ourselves to a brief sketch. One begins by considering the elliptic inner product $I(f_G,g_G)$ on the space $\mc{I}_\tx{cusp}(G)$ defined in \cite[\S1]{ArtLCR} by
\[ I(f_G,g_G) = \int_{\Gamma_\tx{ell}(G)} f_G(\gamma)\ol{g_G(\gamma)}d\gamma, \]
where $\Gamma_\tx{ell}(G)$ is the space of regular semi-simple elliptic conjugacy classes in $G(F)$ and $d\gamma$ is a certain natural measure on this set. There are two spectral expressions for this scalar product. The first one is furnished by the local trace formula and takes the form
\[ I(f_G,g_G) = \sum_{\tau \in T_\tx{ell}(G)} |d(\tau)|^{-1}|R(\tau)|^{-1}f_G(\tau) \ol{g_G(\tau)}. \]
This is a specialization of \cite[Cor. 3.2]{ArtETC}, see also \cite[(6.5.6)]{Arthur}. This formula is valid with an arbitrary normalization of the distributions $f_G(\tau)$, since the effect of changing normalization cancels in the product $f_G(\tau)\ol{g_G(\tau)}$. In particular, we are free to use the normalization $f_G(\tau^\natural)$ that is encoded in the cover $T^\natural_\tx{ell}(G)$ of $T_\tx{ell}(G)$ introduced above and in the normalization of the intertwining operators introduced in Chapter \ref{chapter2}. Thus
\begin{equation} \label{eq:eips1} I(f_G,g_G) = \sum_{\tau \in T_\tx{ell}(G)} |d(\tau)|^{-1}|R(\tau)|^{-1}f_G(\tau^\natural) \ol{g_G(\tau^\natural)},  \end{equation}
where $\tau^\natural$ is an arbitrary lift of $\tau$, and the independence of the lift follows from the equivariance property \eqref{eq:cpxeq1}.

The second spectral expansion of $I(f_G,g_G)$ is of endoscopic nature. It takes the form
\begin{equation} \label{eq:eips2} I(f_G,g_G) = \sum_{\phi \in \Phi_\tx{ell}(G)} |\ol{\mc{S}_\phi}|^{-1} \sum_{x \in \ol{\mc{S}}_{\phi,\tx{ell}}} f'(s,\phi)\ol{g'(s,\phi)}, \end{equation}
where $s \in S_\phi$ is any lift of $x$, and the product $f'(s,\phi)\ol{g'(s,\phi)}$ is independent of the choice of $s$. To obtain this expression, we follow the argument of \cite[Prop 6.5.1]{Arthur}, and consider the stabilization identity \cite[(6.5.8)]{Arthur}
\[ I(f_G,g_G) = \sum_{\mf{e} \in \mc{E}_\tx{ell}(G)}\iota(G,G^\mf{e})S(f^\mf{e},g^\mf{e}). \]
This identity is proved for general reductive groups in \cite[Prop 3.5]{ArtLCR}. The terms $S(f^\mf{e},g^\mf{e})$ on the right hand side are the stable analogs of the elliptic inner product $I(f_G,g_G)$. They have a spectral expansion similar to that of $I(f,g)$, namely
\[ S(f^\mf{e},g^\mf{e}) = \sum_{\phi^\mf{e} \in \Phi_{2,\tx{bdd}}(G^\mf{e})} |\ol{\cS_{\phi^\mf{e}}}| f^\mf{e}(\phi^\mf{e}) \ol{g^\mf{e}(\phi^\mf{e})}. \]
This expansion, stated as \cite[(6.5.12)]{Arthur}, applies to the quasi-split groups $G^\mf{e}$ and is part of our assumptions stated in \ref{sub:results-qsuni}. Combining the last two identities leads to \eqref{eq:eips2}.

With the two identities \eqref{eq:eips1} and \eqref{eq:eips2} at hand, the statement follows from elementary linear algebra, as explained in the proof of \cite[Cor. 6.5.2]{Arthur}.
\end{proof}

\begin{prop}
\label{prop_orthog_rel_2}

\begin{enumerate}
\item[i)]
	Given a pair $(\phi, x)$ where  $\phi \in \Phi_{2,\tx{bdd}}(G^*)$ and $x \in S^{\natural}_{\phi, \mathrm{ell}}$, we have that
$c_{\phi, x}(\tau^\natural) = 0$ for all $\tau^\natural \in T^{2,\natural}_{\mathrm{ell}}(G)$.

\item[ii)] Suppose that
\[
	y_i = (\phi_i, x_i), \ i=1,2
\]
are two pairs where $\phi_i \in \Phi_{2,\tx{bdd}}(G^*)$ and $x_i \in S_{\phi, \mathrm{ell}}^\natural$.
Then the expression
\[
	\sum_{\pi\in\Pi_{2,\tx{temp}}(G)} c_{\phi_1, x_1}(\pi) \overline{c_{\phi_2, x_2}(\pi)}
\]
is equal to zero, unless $\phi_1 = \phi_2$ and the images of $x_1$ and $x_2$ in $\ol{\cS}_{\phi_1}$ are equal. Moreover, \[ \sum_{\pi\in\Pi_{2,\tx{temp}}(G)} c_{\phi, x}(\pi) \overline{c_{\phi, x}(\pi)}=|\ol{\cS_\phi}|. \]
\end{enumerate}

\end{prop}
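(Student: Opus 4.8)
The strategy is to combine the two orthogonality identities of Proposition \ref{prop_orthog_rel_1} with the elliptic orthogonality identities for discrete series characters to isolate the discrete-series contribution. Recall that Proposition \ref{prop_orthog_rel_1} gives
\[
\sum_{\tau \in T_{\mathrm{ell}}(G)} b(\tau) c_{\phi_1,x_1}(\tau^\natural)\overline{c_{\phi_2,x_2}(\tau^\natural)} = \begin{cases} |\ol{\cS}_\phi| & \phi_1=\phi_2=\phi,\ \ol{x}_1=\ol{x}_2\\ 0 & \text{otherwise}.\end{cases}
\]
The sum over $T_{\mathrm{ell}}(G)$ breaks up as the sum over $\Pi_{2,\tx{temp}}(G)$ (where $b(\tau)=1$ since $d(\tau)=1$ and $R(\tau)=1$) plus the sum over $T^2_{\mathrm{ell}}(G)$. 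So for part ii) it suffices to show that for $\phi_i \in \Phi_{2,\tx{bdd}}(G^*)$ the contribution of $T^2_{\mathrm{ell}}(G)$ to the above sum vanishes. This is precisely the content of part i), which says $c_{\phi,x}(\tau^\natural)=0$ for every $\tau^\natural \in T^{2,\natural}_{\mathrm{ell}}(G)$ when $\phi\in \Phi_{2,\tx{bdd}}(G^*)$. Thus the main work is to prove i), and then ii) follows immediately by subtracting the (vanishing) proper-Levi part from Proposition \ref{prop_orthog_rel_1}.

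To prove i), I would argue as follows. Fix $\phi \in \Phi_{2,\tx{bdd}}(G^*)$ and $x\in S^\natural_{\phi,\mathrm{ell}}$. By the weak character identity \eqref{eq:weakcharid}, the linear form $f\mapsto f'_{G,\Xi}(\phi,x)$ on $\mc{H}_{\mathrm{cusp}}(G)$ is the stable-distribution transfer associated to the endoscopic pair $(\fke,\phi^\fke)$, and its expansion coefficients against the basis $\{f_G(\tau^\natural)\}$ are the $c_{\phi,x}(\tau^\natural)$. The key point is that for $\phi$ discrete the endoscopic datum $\fke$ is \emph{elliptic} with $G^\fke$ a product of smaller (quasi-split) unitary groups, and $\phi^\fke$ is a discrete parameter for $G^\fke$. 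The transfer $f \mapsto f^\fke$ carries cuspidal functions to cuspidal functions, and the stable linear form attached to a \emph{discrete} parameter $\phi^\fke$ of the quasi-split group $G^\fke$ is supported, on the cuspidal part, on stable discrete series characters — this is part of the quasi-split input recalled in \S\ref{sub:results-qsuni} (the stable multiplicity formula together with the main local theorem for $G^\fke$, which gives the endoscopic character identity \eqref{eq:eci} for the packet $\Pi_{\phi^\fke}(G^\fke)$ consisting of square-integrable representations). Consequently $f'_{G,\Xi}(\phi,x)$, expanded in the basis $\{f_G(\tau^\natural)\}$, involves only terms with $\tau^\natural \in \Pi_{2,\tx{temp}}(G)$; equivalently $c_{\phi,x}(\tau^\natural)=0$ for $\tau^\natural\in T^{2,\natural}_{\mathrm{ell}}(G)$. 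An alternative, perhaps cleaner, route to i) is to run the dimension/linear-algebra argument of Proposition \ref{prop_orthog_rel_1} directly on the endoscopic side: the endoscopic expansion \eqref{eq:eips2} of the elliptic inner product, restricted to the part of $\Phi_{\mathrm{ell}}(G^*)$ consisting of discrete parameters, must match the $\Pi_{2,\tx{temp}}(G)$-part of the spectral expansion \eqref{eq:eips1}, because the remaining parameters in $\Phi_{\mathrm{ell}}(G^*)$ (those factoring through a proper Levi) pair, via the local intertwining relation and Proposition \ref{prop_definition_coefficients_agrees_with_pairing_using_lir}, exactly with $T^2_{\mathrm{ell}}(G)$; a counting of the $T^\natural_{\mathrm{ell}}(G)$-supports forces the discrete parameters to contribute only to $\Pi_{2,\tx{temp}}(G)$.

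With i) established, part ii) is immediate: restrict the first identity of Proposition \ref{prop_orthog_rel_1} to the case $\phi_1,\phi_2 \in \Phi_{2,\tx{bdd}}(G^*)$, note $b(\pi)=1$ for $\pi\in\Pi_{2,\tx{temp}}(G)$, and use i) to kill all terms indexed by $T^2_{\mathrm{ell}}(G)$; what survives is exactly $\sum_{\pi\in\Pi_{2,\tx{temp}}(G)} c_{\phi_1,x_1}(\pi)\overline{c_{\phi_2,x_2}(\pi)}$, which therefore equals $|\ol{\cS}_\phi|$ when $\phi_1=\phi_2=\phi$ and $\ol{x}_1=\ol{x}_2$, and $0$ otherwise. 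I expect the main obstacle to be a careful bookkeeping in step i): one must check that the support statement for stable linear forms attached to discrete parameters on quasi-split unitary groups — on the cuspidal part, supported on discrete series — is legitimately available as an induction hypothesis/quasi-split input here, and that the transfer $f\mapsto f^\fke$ for the relevant elliptic endoscopic datum preserves cuspidality and matches up the bases $\{f_G(\tau^\natural)\}$ and $\{f^\fke_{G^\fke}((\tau^\fke)^\natural)\}$ compatibly with the equivariance \eqref{eq:cpxeq1}; the linear-algebra bookkeeping around \eqref{eq:eips1}–\eqref{eq:eips2} is then routine.
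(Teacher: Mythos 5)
Your high-level plan is correct and matches the paper for part ii): once i) is known, subtract the $T^2_{\mathrm{ell}}(G)$-contribution from the first identity of Proposition \ref{prop_orthog_rel_1}, using $b(\pi)=1$ for $\pi\in\Pi_{2,\tx{temp}}(G)$. The paper does exactly this. The gap is in your proof of i).

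Your first argument for i) — that the stable linear form attached to a discrete $\phi^\fke$ on $G^\fke$ is supported on stable discrete series, and therefore its transfer expands only against $\Pi_{2,\tx{temp}}(G)$ — is circular. That the transfer of the stable character of a discrete parameter $\phi^\fke$ has, after pullback to $\mc{H}_{\tx{cusp}}(G)$, vanishing coefficients against $T^{2,\natural}_{\mathrm{ell}}(G)$ is precisely what part i) asserts: it is not a consequence of the quasi-split input, because the quasi-split input gives you control over the stable form on $G^\fke$, not over how its $\Delta[\fke,\xi,z]$-transfer to the inner form $G$ decomposes against the elliptic tempered basis. The endoscopic character identity \eqref{eq:eci} for discrete $\phi$ on $G$ is what is being built here, and cannot be assumed.

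Your second argument points to the right tools (the formula in Proposition \ref{prop_definition_coefficients_agrees_with_pairing_using_lir}, and a comparison of the two expansions of the elliptic inner product) but stays at the level of a counting heuristic and omits the decisive manipulation. The paper's actual argument is more pointed: fix $\tau_1^\natural=(M_1,\sigma_1,s_1)\in T^{2,\natural}_{\mathrm{ell}}(G)$, let $\phi_1=\phi_{\tau_1}\in\Phi^2_{\mathrm{ell}}(G^*)$ be the (necessarily non-discrete, but elliptic) parameter it determines, and use Proposition \ref{prop_definition_coefficients_agrees_with_pairing_using_lir} together with the fact that $S^\natural_{\phi_1,\mathrm{ell}}$ is an $S^{\natural\natural}_{\phi_1}(M)$-torsor to show that the weighted average
\[
\frac{1}{\abs{S^\natural_{\phi_1,\mathrm{ell}}}}\sum_{x_1\in S^\natural_{\phi_1,\mathrm{ell}}}\<\tau_1,x_1 s_1\>\,\ol{c_{\phi_1,x_1}(\tau^\natural)}
\]
is $\<\tau,s_1 s^{-1}\>$ if $\tau=\tau_1$ and $0$ otherwise. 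Inserting this indicator into the sum $\sum_\tau b(\tau)c_{\phi,x}(\tau^\natural)(\cdots)$, interchanging the order of summation, and applying Proposition \ref{prop_orthog_rel_1} with the pairs $(\phi,x)$ and $(\phi_1,x_1)$ yields $b(\tau_1)c_{\phi,x}(\tau_1^\natural)=0$, because $\phi\in\Phi_{2,\tx{bdd}}(G^*)$ and $\phi_1\in\Phi^2_{\mathrm{ell}}(G^*)$ are necessarily distinct; since $b(\tau_1)\neq 0$ this gives i). Without this explicit inner-product step — in particular the torsor-averaging that produces the indicator of $\tau_1$ and the identification of $\phi_1$ as elliptic non-discrete — your sketch does not close.
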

\begin{proof}
Part (ii) follows at once from part (i) and Proposition \ref{prop_orthog_rel_1} whilst noting that $b(\tau) = 1$ for $\tau = \pi \in \Pi_{2,\tx{temp}}(G)$. It is thus enough to show part (i).

Let  $\tau_1=(M_1,\sigma_1,r_1) \in T^2_{\mathrm{ell}}(G)$ and let $\tau_1^\natural=(M_1,\sigma_1,s_1)$ be a lift. Then $\phi_1 := \phi_{\tau_1} \in \Phi^2_{\mathrm{bdd}}(G)$. The isomorphism of $R$-groups $R_{\phi_1}(M,G) = R(\sigma_1)$ provided by Lemma \ref{lem_tempered_generic_repns_r_groups_isomorphic} implies that $R_{\phi_1, \mathrm{reg}} \neq \emptyset$, that is $\phi_1 \in \Phi^2_{\mathrm{ell}}(G)$.

For any $\tau^\natural=(M,\sigma,s) \in T^\natural_\tx{ell}(G)$, the formula for $c_{\phi_1, x_1}(\tau^\natural)$
appearing in Proposition \ref{prop_definition_coefficients_agrees_with_pairing_using_lir} and the fact that the subset of elliptic elements $S^{\natural}_{\phi_1, \mathrm{ell}} \subset S^{\natural}_{\phi_1}$ is a $S^{\natural\natural}_{\phi_1}(M)$-torsor imply
\[
	\frac{1}{\abs{S_{\phi_1,\tx{ell}}^{\natural}}} \sum_{x_1 \in S_{\phi_1,\tx{ell}}^{\natural}} \<\tau_1,x_1 s_1\> \cdot  \ol{c_{\phi_1,x_1}(\tau^\natural)}
		= \begin{cases}
			\<\tau,s_1 s^{-1}\> & \textrm{ if } \tau = \tau_1,	\\
			0 & \textrm{otherwise.}
		\end{cases}
\]
Hence, for any $\phi \in \Phi_{2,\tx{bdd}}(G^*)$ and  $x \in S^{\natural}_{\phi_1, \mathrm{ell}}$, we have
\[ b(\tau_1)c_{\phi,x}(\tau_1^\natural) = \sum_{\tau \in T_\tx{ell}(G)} b(\tau)c_{\phi,x}(\tau^\natural)\frac{1}{\abs{S_{\phi_1,\tx{ell}}^{\natural}}} \sum_{x_1 \in S_{\phi_1,\tx{ell}}^{\natural}} \<\tau_1,x_1 s_1\> \cdot  \ol{c_{\phi_1,x_1}(\tau^\natural)}. \]
Here $\tau^\natural$ is an arbitrary lift of $\tau$, and the summand is independent of the choice of lift. Interchanging the two sums and applying Proposition \ref{prop_orthog_rel_1} we see that the entire expression is equal to zero, since $\phi_1 \in \Phi^2_\tx{ell}(G^*)$ and $\phi \in \Phi_{2,\tx{bdd}}(G^*)$ preclude $\phi_1 = \phi$. But $b(\tau_1) \neq 0$, forcing $c_{\phi,x}(\tau_1^\natural)=0$.
\end{proof}

\subsection{Local packets for square-integrable parameters}
Let $E/F$ be a quadratic extension of non-archimedean local fields and let $G^*=U_{E/F}(N)$. Let $\Xi : G^* \rw G$ be an equivalence class of extended pure inner twists. In this section we are going to complete the proof of the local classification theorem \ref{thm:locclass-single} in the generic case by treating generic discrete parameters $\phi \in \Phi_{\tx{bdd},2}(G^*)$. Unlike the treatment of non-discrete parameters in Section \ref{sec:lpackns}, which was purely local, the treatment of discrete parameters here will have to be global.

We first make the following reduction. According to Proposition \ref{p:local-thm-other-ext-pure-inner-tw}, it is enough to prove the local classification theorem for any equivalence class $\Xi' : G^* \rw G$ of extended pure inner twists which gives rise to the same equivalence class of inner twists as $\Xi$. If $N$ is odd, then $\Xi$ necessarily gives rise to the trivial equivalence class of inner twists and we may thus take $\Xi'$ to be the trivial equivalence class of extended pure inner twists. The local classification theorem then follows by assumption from the quasi-split case.

The case that $N$ is even is our main concern here. We use Proposition  \ref{prop_globalize_data_N_even_M_eq_G} to obtain the datum $(\dot{E}/\dot{F}, u, v_1,v_2,  \dot{G}^*, \dot{\eta},  \dot{\Xi}: \dot{G}^* \rightarrow \dot{G}, \dot{\phi})$. We will prove the local classification theorem for the equivalence class of pure inner twists $\dot\Xi_u : \dot G^*_u \rw \dot G_u$.

\begin{lemm}
\label{lemm_loc_sq_integrable_packets_global_reln} For every $\dot f \in \mc{H}(\dot G)$ the following equality holds
\[
\sum_{\dot{\pi}} n_{\dot\phi}(\dot{\pi}) \dot{f}_{\dot{G}}(\dot{\pi})
	= \frac{1}{\abs{\ol{\cS_{\dot{\phi}}}}} \sum_{\overline{x} \in \ol{\cS}_{\dot{\phi}}} \dot{f}^\fke(\dot{\phi}, \overline{x}),
\]
where $\dot{\pi}$ runs over the irreducible representations of ${\dot{G}}(\dot{\A})$ and
$n_{\dot\phi}(\dot{\pi})$ denotes the multiplicity of $\dot{\pi}$ in $R^{\dot{G}}_{\mathrm{disc}, \dot{\phi}}$.
\end{lemm}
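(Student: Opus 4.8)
The plan is to deduce this global multiplicity formula from the standard model worked out in Chapter \ref{chapter3}, specialized to the discrete parameter $\dot\phi\in\Phi_2(\dot G^*,\dot\eta)$ which is generic and relevant for $\dot G$. First I would invoke Corollary \ref{cor:std-model-result}, which in the generic case (where $s_{\dot\phi}=1$ and $\epsilon^{\dot G^*}_{\dot\phi}(x)=1$ for all $x$) gives
\[
{}^0 r^{\dot G}_{\disc,\dot\phi}(\dot f)=|\ol\cS_{\dot\phi}|^{-1}\sum_{x\in\ol\cS_{\dot\phi}}i^{\dot G^*}_{\dot\phi}(x)\bigl(\dot f'_{\dot G}(\dot\phi,s_{\dot\phi}x^{-1})-\dot f_{\dot G}(\dot\phi,x)\bigr).
\]
To apply this I must first check Hypothesis \ref{hypo:Hyp(psi)} for $\dot\phi$, i.e. that the local classification theorem is known at every place. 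This holds: away from $u$ and $v_2$ the group $\dot G_v$ is quasi-split, so Proposition \ref{prop:main-local-qsplit} applies; at the archimedean place $v_2$ it is the theorem of Langlands--Shelstad; and at $u$, the non-discrete generic case was settled in Section \ref{sec:lpackns}, while $\dot\phi_u=\phi$ (for whatever parameter we are ultimately after) — in the induction the discrete case at $u$ is exactly what is being proved, so I would instead set this lemma up as a statement to be combined with the orthogonality relations, rather than assuming the discrete local theorem at $u$. Concretely the cleanest route is: since $\dot\phi_{v_1}\in\Phi_2(\dot G^*_{v_1})$ and the canonical maps $S_{\dot\phi}\to S_{\dot\phi_{v_1}}$, $S_{\dot\phi_u}\to\cdots$ are isomorphisms (part 5 of Proposition \ref{prop_globalize_data_N_even_M_eq_G}), the global side is rigid enough that the local theorem at all places except $u$ suffices to make sense of every term.

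The key step is then to rewrite the right-hand side of Corollary \ref{cor:std-model-result} using the global intertwining relation. Since $\dot\phi\in\Phi_2(\dot G^*,\dot\eta)$ is discrete, the group $W^0_{\dot\phi}$ is trivial, so $i^{\dot G^*}_{\dot\phi}(x)=|W^0_{\dot\phi}|^{-1}\sum_{w\in W_{\dot\phi,\reg}(x)}\sgn^0(w)|\det(w-1)|^{-1}$ collapses: $W_{\dot\phi,\reg}(x)$ is nonempty (in fact a single element, the identity coset) precisely when the relevant regularity holds, and for discrete parameters one gets $i^{\dot G^*}_{\dot\phi}(x)=1$ for all $x\in\ol\cS_{\dot\phi}$ (this is the point where I would carefully cite the explicit computation, cf. the remark after \eqref{eq:i(x)} and the analogous computation in \cite{Arthur}, \cite{Mok}). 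Next, by the global intertwining relation (Theorem \ref{thm:global-intertwining}), which is now available since the local intertwining relation has been proved for all localizations in Section \ref{sub:LIR-proof}, we have for each $x$ that $\dot f'_{\dot G}(\dot\phi,s_{\dot\phi}x^{-1})$ equals what in the $M^*=G^*$ case plays the role of $\dot f_{\dot G}(\dot\phi,x)$; but one must be careful here because Theorem \ref{thm:global-intertwining} as stated requires $M^*\neq G^*$. So the correct statement to use is that $\dot f'_{\dot G}(\dot\phi,s_{\dot\phi}x^{-1})=\dot f^\fke(\dot\phi,\ol x)$ by definition of the left side (via the endoscopic correspondence of \S\ref{sub:endo-correspondence}), and I leave that term as $\dot f^\fke(\dot\phi,\ol x)$. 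Thus
\[
{}^0 r^{\dot G}_{\disc,\dot\phi}(\dot f)=|\ol\cS_{\dot\phi}|^{-1}\sum_{\ol x\in\ol\cS_{\dot\phi}}\dot f^\fke(\dot\phi,\ol x)-|\ol\cS_{\dot\phi}|^{-1}\sum_{\ol x\in\ol\cS_{\dot\phi}}\dot f_{\dot G}(\dot\phi,\ol x),
\]
where the second sum is the $M^*=G^*$ version of $f_{\dot G}(\dot\phi,x)$, i.e. $\sum_{\dot\pi\in\Pi_{\dot\phi}}\langle\ol x,\dot\pi\rangle\dot f_{\dot G}(\dot\pi)$ — but this presupposes the discrete local packet at $u$, which we do not yet have.

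The resolution, and the main obstacle, is to break the circularity: instead of assuming the discrete local theorem at $u$, I would run the argument with $\dot\phi$ chosen so that $\dot\phi_u$ ranges over an exhausting family, use the orthogonality relations of Propositions \ref{prop_orthog_rel_2} and \ref{prop_orthog_rel_1} at the place $u$ to pin down the linear independence and multiplicity-one of the constituents, and thereby simultaneously \emph{define} the packet $\Pi_{\dot\phi_u}=\Pi_{\phi}(G,\xi)$ at $u$ and prove the formula. Concretely: recall that ${}^0 r^{\dot G}_{\disc,\dot\phi}(\dot f)=\tr R^{\dot G}_{\disc,\dot\phi}(\dot f)-|\ol\cS_{\dot\phi}|^{-1}\sum_x\dot f_{\dot G}(\dot\phi,x)$ by \eqref{eq:0rG}; substituting this into the displayed identity and solving for $\tr R^{\dot G}_{\disc,\dot\phi}(\dot f)=\sum_{\dot\pi}n_{\dot\phi}(\dot\pi)\dot f_{\dot G}(\dot\pi)$ yields exactly
\[
\sum_{\dot\pi}n_{\dot\phi}(\dot\pi)\dot f_{\dot G}(\dot\pi)=\frac{1}{|\ol\cS_{\dot\phi}|}\sum_{\ol x\in\ol\cS_{\dot\phi}}\dot f^\fke(\dot\phi,\ol x),
\]
\emph{provided} the two occurrences of the ``$f_{\dot G}(\dot\phi,x)$'' term cancel, i.e. provided ${}^0 r^{\dot G}_{\disc,\dot\phi}(\dot f)$ combined with the $-|\ol\cS_{\dot\phi}|^{-1}\sum\dot f_{\dot G}(\dot\phi,x)$ from \eqref{eq:0rG} reproduce only $\tr R^{\dot G}_{\disc,\dot\phi}(\dot f)$ minus the endoscopic sum. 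A short bookkeeping check shows this is automatic: the $\dot f_{\dot G}(\dot\phi,x)$ terms appear with opposite signs and equal coefficients on the two sides, hence cancel, and one is left precisely with the claimed equality — and this does not require knowing the discrete local theorem at $u$ in advance, only the already-proven local and global intertwining relations, the stable multiplicity formula (Proposition \ref{p:stable-multiplicity}) feeding into Corollary \ref{cor:std-model-result}, and the discreteness computation $i^{\dot G^*}_{\dot\phi}(x)=1$. The most delicate point I expect is verifying that $i^{\dot G^*}_{\dot\phi}(x)=1$ uniformly for discrete $\dot\phi$ and that the sign factors $\sgn^0$ and the determinant denominators behave as claimed; this is where I would lean on the explicit structure of $\ol S_{\dot\phi}\cong\prod\O(\ell_i,\C)$ from \eqref{e:global-centralizer} (with all $\ell_i=1$ by Lemma \ref{lem:global-disc-param}), making $W^0_{\dot\phi}$ trivial and the computation immediate.
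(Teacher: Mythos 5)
Your proposal has a genuine circularity problem, and as a result it also misses the key step of the paper's proof.

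You want to deduce the claimed identity by invoking Corollary \ref{cor:std-model-result} together with \eqref{eq:0rG} and observing that the $\dot f_{\dot G}(\dot\phi,x)$ terms cancel. The algebra is right, but it does not break the circle: Corollary \ref{cor:std-model-result} is derived by comparing Proposition \ref{p:std-model-end} (the endoscopic expansion) with Proposition \ref{p:std-model-spec} (the spectral expansion), and the latter is stated and proved only under Hypothesis \ref{hypo:Hyp(psi)} when $\psi\in\Psi_2(G^*,\eta_\chi)$, because it needs the packet $\Pi_{\dot\phi}(G,\xi)$ and the pairing $\langle\cdot,\cdot\rangle$ in order to even \emph{define} $f_G(\psi,x)$ at $M^*=G^*$. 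Since the Hypothesis at the place $u$ is precisely what the lemma is ultimately being used to establish, this route is not admissible. Your proposed fix --- varying $\dot\phi_u$ over an exhausting family and appealing to the orthogonality relations --- is not needed at the level of this lemma, and in any case does not by itself remove the dependence on Proposition \ref{p:std-model-spec}.

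The clean fix is to use \emph{only} Proposition \ref{p:std-model-end}, which is unconditional: it gives $I^{\dot G}_{\disc,\dot\phi}(\dot f)=|\ol{\cS}_{\dot\phi}|^{-1}\sum_{\ol x}e^{\dot G^*}_{\dot\phi}(\ol x)\epsilon^{\dot G^*}_{\dot\phi}(\ol x)\dot f'_{\dot G}(\dot\phi,s_{\dot\phi}\ol x^{-1})$, and here $s_{\dot\phi}=1$, $\epsilon_{\dot\phi}=1$ since $\dot\phi$ is generic, and $e_{\dot\phi}(\ol x)=1$ since $\dot\phi$ is square-integrable (this replaces your appeal to $i_{\dot\phi}(x)=1$, but note it is $e_{\dot\phi}$, not $i_{\dot\phi}$, that enters Proposition \ref{p:std-model-end}; they are of course equal). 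What is then genuinely missing from your proposal is an independent proof that $I^{\dot G}_{\disc,\dot\phi}(\dot f)=\tr R^{\dot G}_{\disc,\dot\phi}(\dot f)$, i.e.\ that the proper Levi terms in the spectral expansion \eqref{eq:I_disc-psi-spec} drop out. The paper proves this directly by an argument about the shape of discrete parameters: if $\dot\phi$ contributed to the discrete spectrum of a proper Levi $\dot M$, it would come (by induction on $\dot M$) from some $\dot\phi_{\dot M}\in\Psi_2(\dot M,\dot\eta)$. Decomposing $\dot M=\dot M_-\times\dot M_+$ into hermitian and linear parts, the simple factors of $\dot\phi$ arising from $\dot M_+$ are either non-conjugate-self-dual or conjugate-self-dual with even multiplicity. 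But $\dot\phi$ is discrete, so by Lemma \ref{lem:global-disc-param} every $\dot\phi_i^{N_i}$ is conjugate self-dual with multiplicity one. This contradiction kills the proper Levi contribution without any reference to $f_G(\dot\phi,x)$ or the local theorem at $u$, and the lemma follows immediately by combining the two identities. You should add this Levi-vanishing argument and drop the detour through Corollary \ref{cor:std-model-result}.
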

\begin{proof}
The globalization propositions ensure that $\dot\phi \in \Phi_2(\dot G,\dot\eta)$. We claim that $\dot\phi$ cannot contribute to the discrete spectrum of any proper Levi subgroup $\dot M$ of $\dot G$. Indeed, if $\dot\phi=(\dot\phi^N,\tilde{\dot\phi})$ did contribute then it should come from $\dot\phi_{\dot M}\in\Phi_2(\dot M,\dot\eta)$ by the induction hypothesis applied to $\dot M$. Consider the decomposition into simple parameters $$\dot\phi^N=\boxplus_{i=1}^r \ell_i \dot\phi_i^{N_i}.$$ Write $\dot\phi_{\dot M}=\dot\phi_{\dot M_-}\times \dot\phi_{\dot M_+}$ according to the decomposition $\dot M = \dot M_- \times \dot M_+$ into linear and hermitian parts. A simple factor of $\dot\phi_{\dot M_-}$ will contribute a factor with even multiplicity if it is conjugate self-dual, and a factor which is not conjugate self-dual otherwise. However $\dot\phi$ is a discrete parameter so all $\dot\phi_i^{N_i}$ are conjugate self-dual with multiplicity one by Lemma \ref{lem:global-disc-param}. The resulting contradiction proves the claim.

  Equation \eqref{eq:I_disc-psi-spec} thus reduces to
\[	I^{\dot{G}}_{\mathrm{disc}, \dot{\phi}}(\dot{f}) = \tr R^{\dot{G}}_{\mathrm{disc}, \dot{\phi}}(\dot{f})
		= \sum_{\dot{\pi}} n_{\dot\phi}(\dot{\pi}) \dot{f}_{\dot{G}}(\dot{\pi}).
\]
On the other hand, Proposition \ref{p:std-model-end} gives the expansion
\[
I^{\dot{G}}_{\mathrm{disc}, \dot{\phi}}(\dot{f}) = \frac{1}{\abs{\ol{\cS_{\dot{\phi}}}}} \sum_{\overline{x} \in \ol{\cS}_{\dot{\phi}}}
	 \dot{f}_{\dot G,\dot \Xi}(\dot{\phi}, \overline{x})
\]
where we have used the fact that $s_{\dot{\phi}} = 1$ and $\epsilon_{\dot{\phi}}(\overline{x}) = 1$ as $\dot{\phi}$ is generic and that
 $e_{\dot{\phi}}(\overline{x}) = 1$ as $\dot{\phi}$ is square-integrable. The result follows.
\end{proof}

We shall now apply Lemma \ref{lemm_loc_sq_integrable_packets_global_reln} with a decomposable function $\dot f = \otimes_v \dot f_v$ whose $u$-component $\dot f_u$ belongs to $\mathcal{H}_{\mathrm{cusp}}(G)$. To interpret the right-hand side of the equation of the lemma, we use the character identities, which are part 4 of Theorem \ref{thm:locclass-single}, i.e.
\begin{equation}
\label{eq:expand-part4-thm1.6.1}
	\dot{f}'_{\dot G_v,\dot\Xi_v}(\dot{\phi}_v, \dot{x}_v) = e(\dot{G}_v)
	\sum_{\dot{\pi}_v \in \Pi_{\dot{\phi}_v}} \langle \dot{\pi}_v, \dot{x}_v \rangle_{\dot\Xi_v}
	 f_{v, \dot{G}_v}(\dot{\pi}_v),\qquad \dot{x} \in S^{\natural}_{\dot{\phi}}.
\end{equation}
These character identities are known for $v \notin \{u, v_2\}$, because then the group $\dot{G}_v$ is quasi-split and these identities are part of our list of assumptions in Section \ref{sub:results-qsuni}. At the place $v=v_2$ the group $\dot{G}_v$ is an archimedean unitary group.
The character identities are thus known by the work of Shelstad. We refer the reader to \cite[\S5.6]{Kal13} for an exposition in the language of pure inner forms.

At the place $v=u$ we do not have the character identities yet. Instead, we have the following weaker statement, coming from \eqref{eq:weakcharid} together with Proposition \ref{prop_orthog_rel_2}, part i,
\begin{eqnarray}
\label{eqn_loc_sq_int_packets_char_expr_prelim}
	\dot{f}'_{\dot G_u,\dot\Xi_u}(\dot{\phi}_u, {\dot{x}_u}) =  e(\dot G_u)\sum_{\pi \in \Pi_{2,\tx{temp}}(G)}  c_{\phi, \dot{x}_u}(\pi) \dot{f}_{u, G}(\pi),\qquad \dot{x} \in S^{\natural}_{\dot{\phi}}.
\end{eqnarray}

Lemma \ref{lemm_loc_sq_integrable_packets_global_reln} thus leads to the equation
\begin{equation}
\label{equation_used_sq_int_generic_1}
\sum_{\dot{\pi}} n_{\dot\phi}(\dot{\pi}) \dot{f}_{\dot{G}}(\dot{\pi})
	= \frac{1}{\abs{\ol{\cS_{\dot{\phi}}}}}  \sum_{\dot{\pi}^u} \sum_{\pi \in \Pi_{2,\tx{temp}}(G)}
		 \sum_{\overline{x} \in \ol{\cS}_{\dot{\phi}}} c_{\phi, \dot{x}_u}(\pi) \langle \dot{\pi}^u, \dot{x}^u  \rangle_{\dot\Xi^u}
		\dot{f}_{\dot{G}}(\pi \otimes \dot{\pi}^u).
\end{equation}
Here on the left $\dot\pi$ runs over the irreducible unitary representations of $\dot G(\A)$, while on the right $\dot\pi^u$ runs over the elements in the packet
\begin{eqnarray*}
	\Pi_{\dot{\phi}^u} &=& \otimes_{v \neq u} \Pi_{\dot{\phi}_v}	\\
			&=& \left\{
				\dot{\pi} = \otimes_{v \neq u} \dot{\pi}_v  :  \dot{\pi}_v \in \Pi_{\dot{\phi}_v}, \langle \dot{\pi}_v, \cdot \rangle_{\dot\Xi_v} = 1
					\textrm{ for almost all $v$}
			\right\}.
\end{eqnarray*}
As before, $\dot{x} \in S^{\natural}_{\dot{\phi}}$ denotes a lift of $\overline{x} \in \ol{\cS}_{\dot{\phi}}$. The pairing $\langle \dot{\pi}^u, \dot{x}^u  \rangle_{\dot{\Xi}^u}$ is given by the product over $v \neq u$ of the local pairings $\<\dot\pi_v,\dot x_v\>_{\dot\Xi_v}$.

Recall that our particular globalization satisfies
\[ S^{\natural}_{\phi} = S^{\natural}_{\dot{\phi}_u} = S^{\natural}_{\dot{\phi}_{v_1}} = S^{\natural}_{\dot{\phi}}. \]
as well as $\<\dot\Xi_v,-\>=1$ %
 for $v \notin \{u,v_2\}$ and $\<\dot\Xi_{v_2},-\>=\<\dot\Xi_u,-\>^{-1}$, the latter being an equality of characters of
\[ Z(\hat{\dot G})^{\Gamma_{v_2}}=Z(\hat{\dot G})^\Gamma=Z(\hat{\dot G})^{\Gamma_u}. \]
This allows us to fix the representation $\dot{\pi}^{u,v_1,v_2} \in \Pi_{\dot{\phi}^{u,v_1,v_2}}$ given by the restricted tensor product $\otimes'_{v\neq u,v_1,v_2} \pi_v$, where $\pi_v \in\Pi_{\dot\phi_v}$ is the unique element with $\<\pi_v,-\>_{\dot\Xi_v}=1$. Fix furthermore $\dot\pi_{v_2} \in \Pi_{\dot\phi_{v_2}}$ arbitrarily. Then the character $\<\dot\pi^{u,v_1},-\>_{\dot\Xi^{u,v_1}}$ restricts to $\<\dot\Xi_{v_2},-\>$ on $Z(\hat{\dot G})^\Gamma$.

Let $\mu \in X^*(S^{\natural}_{\phi})$ be a character restricting to $\<\dot\Xi_u,-\>$ on $Z(\hat{\dot G})^{\Gamma}$. Then $\mu\cdot\<\dot\pi^{u,v_1},-\>_{\dot\Xi^{u,v_1}}$ restricts trivially to $Z(\hat{\dot G})^\Gamma$ and we may choose $\dot{\pi}_{v_1} \in \Pi_{\dot{\pi}_{v_1}}$ to be the element for which $\<\dot\pi^{u},-\>_{\dot\Xi^u}=\mu^{-1}$ holds. We then define for all $\pi \in \Pi_{2,\tx{temp}}(G)$,
\[
	n_{\phi}(\mu, \pi) := n_{\dot\phi}(\pi \otimes \dot{\pi}^u)
\]
which is a non-negative integer

\begin{prop}
\label{prop_exp_n_phi_xi_pi}
We have the equality
\[
	n_\phi(\mu, \pi) = \frac{1}{\abs{\ol{\cS_{{\phi}}}}}  \sum_{x} c_{\phi, x}(\pi) \mu(x)^{-1}
\]
where $x \in S^{\natural}_\phi$ runs through a set of representatives for the quotient $\ol{\cS}_\phi$.
In particular the non-negative integer $n_\phi(\mu, \pi)$ depends only upon $\mu$, $\phi$ and $\pi$.
\end{prop}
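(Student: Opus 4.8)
\textbf{Proof plan for Proposition \ref{prop_exp_n_phi_xi_pi}.}

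The plan is to start from the master identity \eqref{equation_used_sq_int_generic_1} and extract the coefficient of a given pair $(\pi,\dot\pi^u)$ by Fourier inversion over the finite abelian group $\ol{\cS}_{\dot\phi}$. First I would invoke the linear independence of characters of $G(\dot\A)$: since the representations $\dot\pi = \pi\otimes\dot\pi^u$ occurring on both sides are pairwise inequivalent irreducible unitary representations, two linear combinations of their distributions $\dot f_{\dot G}(\dot\pi)$ that agree for all $\dot f = \otimes_v \dot f_v$ with $\dot f_u \in \mc{H}_\tx{cusp}(G)$ must have matching coefficients for every $\dot\pi$ whose $u$-component is square-integrable (note the right-hand side of \eqref{equation_used_sq_int_generic_1} only involves $\pi \in \Pi_{2,\tx{temp}}(G)$, so comparing coefficients is legitimate precisely for those $\dot\pi$). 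Applying this to the specific $\dot\pi = \pi\otimes\dot\pi^u$ built above (with fixed $\dot\pi^{u,v_1,v_2}$, fixed $\dot\pi_{v_2}$, and $\dot\pi_{v_1}$ chosen so that $\<\dot\pi^u,-\>_{\dot\Xi^u} = \mu^{-1}$) yields
\[
n_{\dot\phi}(\pi\otimes\dot\pi^u) = \frac{1}{\abs{\ol{\cS}_{\dot\phi}}}\sum_{\bar x \in \ol{\cS}_{\dot\phi}} c_{\phi,\dot x_u}(\pi)\,\<\dot\pi^u,\dot x^u\>_{\dot\Xi^u},
\]
where $\dot x \in S^\natural_{\dot\phi}$ is any lift of $\bar x$ and the summand is independent of the lift (this independence being exactly the compatibility of the equivariance properties \eqref{eq:cpxeq1}, \eqref{eq:cpxeq2} with the product formula for $\<\dot\pi^u,-\>_{\dot\Xi^u}$).

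Next I would translate this back to the local group $G = \dot G_u$ using our globalization's properties. By construction $S^\natural_\phi = S^\natural_{\dot\phi}$ and $S^\natural_\phi = S^\natural_{\dot\phi_u}$, so $\dot x_u$ ranges over $S^\natural_\phi$ as $\bar x$ ranges over $\ol{\cS}_{\dot\phi} = \ol{\cS}_\phi$, and $c_{\phi,\dot x_u}(\pi)$ depends only on $\dot x_u$ (via the local definition of the coefficients). It remains to identify the character $\bar x \mapsto \<\dot\pi^u,\dot x^u\>_{\dot\Xi^u}$ of $\ol{\cS}_{\dot\phi}$ with $\mu^{-1}$ viewed as a character on the appropriate quotient. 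This is where the careful choice of $\dot\pi^u$ enters: we arranged $\<\dot\pi^u,-\>_{\dot\Xi^u} = \mu^{-1}$ as characters of $S^\natural_\phi$, and since $\mu$ restricts to $\<\dot\Xi_u,-\>$ on $Z(\hat{\dot G})^\Gamma$ while $c_{\phi,\cdot}(\pi)$ transforms under $Z(\hat{\dot G})^\Gamma$ by $\<\dot\Xi_u,-\>$ via \eqref{eq:cpxeq2}, the product $c_{\phi,x}(\pi)\mu(x)^{-1}$ descends to a well-defined function on $\ol{\cS}_\phi = S^\natural_\phi/\srad_\phi Z(\hat{\dot G})^\Gamma$ (using Lemma \ref{lem:squot}). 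Summing over a set of representatives $x$ of $\ol{\cS}_\phi$ then reproduces $\frac{1}{\abs{\ol{\cS}_\phi}}\sum_x c_{\phi,x}(\pi)\mu(x)^{-1}$, which is the asserted formula; since $n_{\dot\phi}(\pi\otimes\dot\pi^u) = n_\phi(\mu,\pi)$ by definition, we are done, and the right-hand side visibly depends only on $\mu$, $\phi$, $\pi$.

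The main obstacle I anticipate is the bookkeeping around which characters of $S^\natural_\phi$ are realized by which global choices — i.e. verifying that $\dot\pi_{v_1}$ with $\<\dot\pi^u,-\>_{\dot\Xi^u} = \mu^{-1}$ genuinely exists for \emph{every} $\mu$ restricting correctly to $Z(\hat{\dot G})^\Gamma$, and that the resulting expression is independent of the auxiliary choices ($\dot\pi_{v_2}$, the identification $S^\natural_\phi = S^\natural_{\dot\phi_{v_1}}$, etc.). This hinges on the surjectivity of $\Pi_{\dot\phi_{v_1}} \to \tx{Irr}(S^\natural_{\dot\phi_{v_1}},\chi_{\dot\Xi_{v_1}})$ — available since $v_1$ is a finite place where $\dot G$ is quasi-split, so part 5 of Theorem \ref{thm:locclass-single} (in the form of Proposition \ref{prop:main-local-qsplit}) applies and gives a bijection — together with the product-formula identity $\prod_v \<\dot\Xi_v,-\> = 1$ coming from \eqref{eq:kotisolocglo}. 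Once these compatibilities are pinned down the rest is the straightforward character-inversion computation sketched above.
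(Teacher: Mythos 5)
Your proposal is correct and takes essentially the same approach as the paper: extract the coefficient of $\pi\otimes\dot\pi^u$ from equation \eqref{equation_used_sq_int_generic_1} by linear independence of characters (the paper does this in two stages, first away from $u$ and then at $u$ on cuspidal functions; you compress it into one), use the arranged identity $\<\dot\pi^u,-\>_{\dot\Xi^u}=\mu^{-1}$ to replace the product pairing, and observe via the equivariance properties \eqref{eq:cpxeq1}, \eqref{eq:cpxeq2} that the summand descends to $\ol{\cS}_\phi$. The concerns you flag at the end — existence of $\dot\pi_{v_1}$ realizing each admissible $\mu$, via the bijectivity in Proposition~\ref{prop:main-local-qsplit} at the quasi-split finite place $v_1$, and consistency with the product formula from \eqref{eq:kotisolocglo} — are precisely what the paper's preparatory construction before the proposition is designed to secure.
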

Note that each summand depends only on the image of $x$ in $\ol{\cS}_\phi$, as its two factors have a cancelling equivarance behavior under translations by $Z(\hat G)^\Gamma$. Hence the choice of set of representatives is irrelevant.
\begin{proof}
Applying our choice of $\dot{\pi}^u$ to Equation \eqref{equation_used_sq_int_generic_1} and using linear independence of
characters of $\dot{G}(\A^u)$, we see that for each $f \in \mathcal{H}_{\mathrm{cusp}}(G)$ we have the equality %
\[
	\sum_{{\pi} \in \Pi_{2,\tx{temp}}(\dot{G}_u)} n_{\phi}(\mu, \pi)  \dot{f}_{\dot{G}_u}({\pi})
	= \frac{1}{\abs{\ol{\cS_{{\phi}}}}}  \sum_{\pi \in \Pi_{2,\tx{temp}}(G)}
		 \sum_{\overline{x} \in \ol{\cS}_{{\phi}}} c_{\phi, \dot{x}_u}(\pi) \mu(\dot{x})^{-1}
		\dot{f}_{\dot{G}_u}(\pi).
\]
The result follows by using the linear independence of the representations in $\Pi_{2,\tx{temp}}(G)$ as characters of
$\mathcal{H}_{\mathrm{cusp}}(G)$.
\end{proof}

\begin{prop}
\label{prop_sq_int_local_l_packet_generic_pi_exists_associated_to_char}
For all discrete parameters $\phi, \phi' \in \Phi_{2,\tx{bdd}}(G)$ and all characters
$\mu \in X^*(S^{\natural}_{\phi})$ and $\mu' \in X^*(S^{\natural}_{\phi'})$ whose restriction to $Z(\hat G)^\Gamma$ is equal to $\<\dot\Xi_u,-\>$, we have that
\[
	\sum_{\pi \in \Pi_{2,\tx{temp}}(G)} n_{\phi}(\mu, \pi) n_{\phi'}(\mu', \pi) = \begin{cases}
		1 & \textrm{ if } (\phi, \mu) = (\phi', \mu'),	\\
		0 & \textrm{ otherwise.}	\\
	\end{cases}
\]
\end{prop}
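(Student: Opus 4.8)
The plan is to deduce this orthogonality statement for the multiplicities $n_\phi(\mu,\pi)$ from the orthogonality relations for the coefficients $c_{\phi,x}(\pi)$ established in Proposition \ref{prop_orthog_rel_2}, part (ii), via the explicit formula of Proposition \ref{prop_exp_n_phi_xi_pi}. Concretely, one substitutes
\[ n_\phi(\mu,\pi) = \frac{1}{|\ol{\cS_\phi}|}\sum_{x \in \ol{\cS}_\phi} c_{\phi,x}(\pi)\mu(x)^{-1},\qquad n_{\phi'}(\mu',\pi) = \frac{1}{|\ol{\cS_{\phi'}}|}\sum_{y \in \ol{\cS}_{\phi'}} \ol{c_{\phi',y}(\pi)}\,\ol{\mu'(y)^{-1}}, \]
into the sum over $\pi\in\Pi_{2,\tx{temp}}(G)$, where I have used that $c_{\phi',y}(\pi)$ and $\mu'(y)$ are valued on the unit circle (characters of a finite group, and scalars coming from characters thereof) so $\ol{c_{\phi',y}(\pi)}=c_{\phi',y}(\pi)^{-1}$ etc. Interchanging the order of summation, the expression becomes
\[ \frac{1}{|\ol{\cS_\phi}|\,|\ol{\cS_{\phi'}}|}\sum_{x,y} \mu(x)^{-1}\ol{\mu'(y)^{-1}} \left(\sum_{\pi\in\Pi_{2,\tx{temp}}(G)} c_{\phi,x}(\pi)\ol{c_{\phi',y}(\pi)}\right). \]
By Proposition \ref{prop_orthog_rel_2}(ii), the inner sum vanishes unless $\phi=\phi'$ and the images of $x,y$ in $\ol{\cS}_{\phi}$ agree, in which case (since $\ol{\cS}_\phi$ is already the quotient, so ``images agree'' means $x=y$) it equals $|\ol{\cS_\phi}|$. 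Hence the whole expression vanishes if $\phi\neq\phi'$, and when $\phi=\phi'$ it collapses to $|\ol{\cS_\phi}|^{-1}\sum_{x\in\ol{\cS}_\phi}\mu(x)^{-1}\ol{\mu'(x)^{-1}} = |\ol{\cS_\phi}|^{-1}\sum_x (\mu'\mu^{-1})(x)$.

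The remaining step is orthogonality of characters of the finite abelian group $\ol{\cS}_\phi$: the last sum is $1$ if $\mu=\mu'$ as characters of $\ol{\cS}_\phi$ and $0$ otherwise. There is one subtlety to address: $\mu$ and $\mu'$ are a priori characters of $S_\phi^\natural$, not of $\ol{\cS}_\phi=S_\phi^\natural/Z(\hat G)^\Gamma$-image; but the formula of Proposition \ref{prop_exp_n_phi_xi_pi} only involves $\mu$ evaluated on representatives of $\ol{\cS}_\phi$, and the summand there is well-defined on $\ol{\cS}_\phi$ because the equivariance properties \eqref{eq:cpxeq2} of $c_{\phi,x}$ under $Z(\hat G)^\Gamma$-translation exactly cancel the change in $\mu(x)^{-1}$ (both $\mu$ and $\mu'$ restrict to $\<\dot\Xi_u,-\>$ on $Z(\hat G)^\Gamma$). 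Thus $n_\phi(\mu,\pi)$ depends on $\mu$ only through a well-defined character of $\ol{\cS}_\phi$ in the relevant sense, and ``$\mu=\mu'$'' in the conclusion should be read accordingly; I would add a sentence making this precise, perhaps noting that since both $\mu,\mu'$ have the same restriction to $Z(\hat G)^\Gamma$, equality of their induced data on $\ol{\cS}_\phi$ is equivalent to $\mu=\mu'$ in $X^*(S_\phi^\natural)$.

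I do not anticipate a genuine obstacle here — this is essentially a bilinear-algebra manipulation combining two already-established orthogonality relations. The one place to be careful, and the ``hard part'' only in the sense of bookkeeping, is tracking the interplay between the groups $S_\phi^\natural$, $S_\phi^{\natural\natural}(M)$ (irrelevant here since $\phi$ is discrete, so $M=G$ and $S_\phi^{\natural\natural}(M)$ is trivial), and $\ol{\cS}_\phi=S_\phi^\natural/Z(\hat G)^\Gamma$, together with the $Z(\hat G)^\Gamma$-equivariance of all objects, to be sure that every sum over ``$x$ running through representatives of $\ol{\cS}_\phi$'' is independent of the choice of representatives. Once this is checked, Proposition \ref{prop_orthog_rel_2}(ii) and character orthogonality on $\ol{\cS}_\phi$ finish the argument immediately.
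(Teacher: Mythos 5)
Your proof follows the same route as the paper's: substitute the formula of Proposition \ref{prop_exp_n_phi_xi_pi} for both multiplicities, interchange sums, apply Proposition \ref{prop_orthog_rel_2}(ii), and finish with character orthogonality on $\ol{\cS}_\phi$, checking that $\mu'\mu^{-1}$ descends to $\ol{\cS}_\phi$. One small slip: you justify replacing $c_{\phi',y}(\pi)\mu'(y)^{-1}$ by its conjugate by asserting that the $c_{\phi',y}(\pi)$ lie on the unit circle — this is not known a priori (indeed $c_{\phi',y}(\pi)$ vanishes for all but one $\pi$, as you deduce \emph{after} this proposition); the correct and simpler justification, used by the paper, is that $n_{\phi'}(\mu',\pi)$ is a nonnegative integer, hence real, so $n_{\phi'}(\mu',\pi)=\overline{n_{\phi'}(\mu',\pi)}$.
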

\begin{proof}
By Proposition \ref{prop_exp_n_phi_xi_pi}, we obtain that
\begin{eqnarray*}
&&\sum_{\pi \in \Pi_{2,\tx{temp}}(G)} n_{\phi}(\mu, \pi) n_{\phi'}(\mu', \pi)\\
&=&\sum_{\pi \in \Pi_{2,\tx{temp}}(G)} n_{\phi}(\mu, \pi) \overline{n_{\phi'}(\mu', \pi)} 	\\
&=& \frac{1}{\abs{\ol{\cS_{{\phi}}}}\abs{\ol{\cS_{{\phi'}}}}}
		 \sum_{x} \sum_{x'}  \mu(x)^{-1} \mu'(x')
		\sum_{\pi \in \Pi_{2,\tx{temp}}(G)}
		c_{\phi, x}(\pi) \overline{c_{\phi', x'}(\pi)}
\end{eqnarray*}
According to Proposition \ref{prop_orthog_rel_2}, part ii, the inner sum is zero unless $\phi = \phi'$ and
the image of $x$ and $x'$ in $\ol{\cS_{\phi}}$ is equal. Thus the double sum over $x,x'$ collapses to a single sum over $x$ and another look at Proposition \ref{prop_orthog_rel_2} shows that the above expression is equal to
\[ \frac{1}{\abs{\ol{\cS_{{\phi}}}}} \sum_{x} \mu(x)^{-1} \mu'(x). \]
The sum over $x$ still runs over a set of representatives in $S_\phi^\natural$ for the quotient $\ol{\cS_\phi}$. However, the summand now descends to this quotient and the result follows.
\end{proof}

Proposition \ref{prop_sq_int_local_l_packet_generic_pi_exists_associated_to_char} and the fact that
$n_{\phi}(\mu, \pi)$ is a non-negative integer imply that for every $\mu \in X^*(S^{\natural}_{\phi})$
lying above $\<\dot\Xi_u,-\>$ there exists a unique $\pi = \pi(\mu) \in \Pi_{2,\tx{temp}}(G)$ such that $n_\phi(\mu, \pi) \neq 0$, in which case in fact $n_\phi(\mu, \pi) = 1$. Moreover, the assignment
\[
	\mu \mapsto \pi(\mu)
\]
is injective.
We define the packet
\[
	\Pi_\phi = \left\{
		\pi(\mu) : \mu \in X^*(S^{\natural}_{\phi}), \mu|_{Z(\hat G)^\Gamma}=\<\dot\Xi_u,-\>\right\},
\]
as well as the character
\[
	\langle \pi(\mu), \cdot \rangle_{\dot\Xi_u} := \mu(\cdot).
\]
Proposition \ref{prop_sq_int_local_l_packet_generic_pi_exists_associated_to_char} implies that these packets are disjoint from each other. Moreover, the map $\pi \mapsto \<\pi,-\>_{\dot\Xi_u}$ is by construction a bijection from $\Pi_\phi$ to the set of characters of $S_\phi^\natural$ lying above $\<\dot\Xi_u,-\>$. The following proposition shows that the desired character identity holds.
\begin{prop}
For $x \in S_\phi^\natural$ and $f \in \mathcal{H}(G)$ the following equality holds
\[
	f^\fke(\phi, x) = e(G)\sum_{\pi \in \Pi_\phi} \langle \pi, x \rangle_{\dot\Xi_u} f_G(\pi).
\]
\end{prop}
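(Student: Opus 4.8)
The plan is to prove the character identity by unwinding the definitions of the two sides and invoking the expansion \eqref{eqn_loc_sq_int_packets_char_expr_prelim} together with the explicit description of the packet $\Pi_\phi$ and its pairing just constructed. First I would fix $f\in\mathcal H(G)$ and a semi-simple $s\in S_\phi$ lying over the given $x\in S_\phi^\natural$. By definition of the endoscopic linear form and the correspondence of \S\ref{sub:endo-correspondence}, $f^\fke(\phi,x)$ equals $f'_{G,\dot\Xi_u}(\phi,s)$, which by \eqref{eqn_loc_sq_int_packets_char_expr_prelim} (recall $\phi\in\Phi_{2,\tx{bdd}}(G^*)$ is discrete, so only the terms from $\Pi_{2,\tx{temp}}(G)$ survive by Proposition \ref{prop_orthog_rel_2}, part i) can be written as $e(G)\sum_{\pi\in\Pi_{2,\tx{temp}}(G)} c_{\phi,x}(\pi)f_G(\pi)$. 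Thus the claim reduces to identifying, for each $\pi\in\Pi_{2,\tx{temp}}(G)$, the coefficient $c_{\phi,x}(\pi)$ with $\langle\pi,x\rangle_{\dot\Xi_u}$ when $\pi\in\Pi_\phi$ and with $0$ otherwise.

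Next I would use Proposition \ref{prop_exp_n_phi_xi_pi}, which gives $n_\phi(\mu,\pi)=|\ol{\cS_\phi}|^{-1}\sum_{x}c_{\phi,x}(\pi)\mu(x)^{-1}$ for every character $\mu$ of $S_\phi^\natural$ lying over $\langle\dot\Xi_u,-\rangle$. This is a finite Fourier-type relation on the finite abelian group $\ol{\cS_\phi}$ relating the (known, by the orthogonality relations) matrix of coefficients $c_{\phi,x}(\pi)$ to the multiplicities $n_\phi(\mu,\pi)$. Since $\Pi_\phi$ was defined precisely as $\{\pi(\mu)\}$ with $n_\phi(\mu,\pi(\mu))=1$ and $n_\phi(\mu,\pi)=0$ for $\pi\neq\pi(\mu)$, and since $\langle\pi(\mu),\cdot\rangle_{\dot\Xi_u}:=\mu$, Fourier inversion on $\ol{\cS_\phi}$ (using that $s_\phi=1$ and $c_{\phi,x}(\pi)$ descends appropriately modulo $Z(\hat G)^\Gamma$ by \eqref{eq:cpxeq2}) yields $c_{\phi,x}(\pi)=\langle\pi,x\rangle_{\dot\Xi_u}$ if $\pi\in\Pi_\phi$ and $c_{\phi,x}(\pi)=0$ if $\pi\notin\Pi_\phi$. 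For the vanishing when $\pi\notin\Pi_\phi$ one also needs that $\pi$ is annihilated by every $c_{\phi,\cdot}$, which follows since otherwise the orthogonality relation of Proposition \ref{prop_orthog_rel_2}, part ii, would force $\pi$ to pair nontrivially, contradicting injectivity of $\mu\mapsto\pi(\mu)$ and the fact that the $c_{\phi,x}(\pi)$ for fixed $\pi$, as $x$ varies, determine whether $\pi$ occurs. Substituting back into the expansion of $f^\fke(\phi,x)$ gives the asserted identity.

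The main obstacle I anticipate is bookkeeping around the $Z(\hat G)^\Gamma$-equivariance: the coefficients $c_{\phi,x}(\pi)$ transform by $\langle\dot\Xi_u,-\rangle$ under translation of $x$ by $Z(\hat G)^\Gamma$ (this is \eqref{eq:cpxeq2}, with the class $\Xi$ there being $\dot\Xi_u$), so the sum over a set of representatives for $\ol{\cS_\phi}$ in Proposition \ref{prop_exp_n_phi_xi_pi} must be handled carefully — one checks that each summand $c_{\phi,x}(\pi)\mu(x)^{-1}$ is genuinely a function on $\ol{\cS_\phi}$ because $\mu$ restricts to $\langle\dot\Xi_u,-\rangle$ on $Z(\hat G)^\Gamma$. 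Once this is in place, the Fourier inversion over the finite group $\ol{\cS_\phi}$ is purely formal. A secondary point is to confirm that $\Pi_\phi$ as defined here, via the globalization at the auxiliary places, is independent of the globalization data and coincides with the intended $L$-packet; but this follows because $n_\phi(\mu,\pi)$ was shown in Proposition \ref{prop_exp_n_phi_xi_pi} to depend only on $\mu$, $\phi$, and $\pi$, and the resulting character identity pins down $\Pi_\phi$ uniquely by linear independence of tempered characters.
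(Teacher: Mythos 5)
Your plan reproduces the paper's argument for $f\in\mathcal{H}_{\mathrm{cusp}}(G)$, but it has a gap as written: you fix an arbitrary $f\in\mathcal{H}(G)$ and then invoke \eqref{eqn_loc_sq_int_packets_char_expr_prelim}, but that identity (derived from \eqref{eq:weakcharid} via the trace Paley--Wiener theorem restricted to the cuspidal part $\mathcal{I}_{\mathrm{cusp}}(G)$) is only established for cuspidal test functions. Once you've obtained $c_{\phi,x}(\pi)=\langle\pi,x\rangle_{\dot\Xi_u}$ for $\pi\in\Pi_\phi$ and $c_{\phi,x}(\pi)=0$ otherwise by Fourier inversion on $\ol{\cS_\phi}$ — which is exactly what the paper does, combining Propositions \ref{prop_exp_n_phi_xi_pi} and \ref{prop_sq_int_local_l_packet_generic_pi_exists_associated_to_char} — you have proved the character identity only against cuspidal $f$. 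The extension to general $f\in\mathcal{H}(G)$ is a genuine additional step: one has to compare the stable linear form $f^\mathfrak{e}(\phi^\mathfrak{e})$ with the proposed virtual character on the non-cuspidal part of the trace Paley--Wiener space, which is done as in \cite[Cor 6.7.4]{Arthur} using the already-established character identities for proper Levi subgroups and the behavior of both sides under parabolic descent. Your proof should state this reduction explicitly.

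A smaller remark: your justification for $c_{\phi,x}(\pi)=0$ when $\pi\notin\Pi_\phi$ via Proposition \ref{prop_orthog_rel_2}(ii) and injectivity is more circuitous than needed. Since $\ol{\cS_\phi}$ is a finite abelian group, Fourier inversion of $n_\phi(\mu,\pi)=|\ol{\cS_\phi}|^{-1}\sum_x c_{\phi,x}(\pi)\mu(x)^{-1}$ gives $c_{\phi,x}(\pi)=\sum_\mu n_\phi(\mu,\pi)\mu(x)$ directly, and the right side vanishes because $n_\phi(\mu,\pi)=0$ for every $\mu$ when $\pi$ is not one of the $\pi(\mu)$ — that is how $\Pi_\phi$ was defined. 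No appeal to the second orthogonality relation is needed at this step.
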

\begin{proof}
Firstly consider the case where $f \in \mathcal{H}_{\mathrm{cusp}}(G)$. Propositions \ref{prop_exp_n_phi_xi_pi} and \ref{prop_sq_int_local_l_packet_generic_pi_exists_associated_to_char} imply that
\[
	\frac{1}{\abs{\ol{\cS_\phi}}} \sum_{{x} \in \ol{\cS_\phi}} c_{\phi, x}(\pi) \mu(x)^{-1} = n_\phi(\mu,\pi) =
		\begin{cases}
			1 & \textrm{ if } \pi = \pi(\mu),	\\
			0 & \textrm{otherwise}.
		\end{cases}
\]
Inverting this equation, we have that
\[
	c_{\phi, x}(\pi) =
		\begin{cases}
			\mu(x) = \langle \pi, x\rangle_{\dot{\Xi}_u} & \textrm{ if } \pi = \pi(\mu),	\\
				0 & \textrm{otherwise}.
		\end{cases}
\]
Applying this to Equation \eqref{eqn_loc_sq_int_packets_char_expr_prelim}, we obtain the desired result.

It remains to show that the character identity holds for non-cuspidal $f \in \mathcal{H}(G)$. This can be done by arguing as in the proof of \cite[Cor 6.7.4]{Arthur}.

\end{proof}

 The proposition implies that the packet $\Pi_\phi$ depends only on $\phi$ and not on the globalization $\dot\phi$.
 It follows from our construction that these packets are disjoint.
  To show that they exhaust the set $\Pi_{2,\tx{temp}}(G)$, it is enough to show that for each $\pi \in \Pi_{2,\tx{temp}}(G)$ there exist $\phi \in \Phi_{2,\tx{bdd}}(G)$ and $\mu \in S_\phi^\natural$ lying above $\<\Xi,-\>$ such that $n_\phi(\mu,\pi) \neq 0$. For this it suffices to find $\dot\pi$ and $\dot\phi$ such that $\dot\pi_u = \pi$, $\dot\pi^u \in \Pi_{\dot\phi^u}$ and $n_{\dot\phi}(\dot \pi) \neq 0$. This is further reduced to
just finding $\dot\pi$ with $\dot\pi_u=\pi$, as $\dot\phi$ is then determined by weak base change of $\dot\pi$, an argument implicit in the display below Corollary \ref{cor:outside-Psi(G)}, and then descend via Proposition \ref{p:stable-multiplicity}. But the existence of $\dot\pi$ with $\dot\pi_u=\pi$ is immediate from Lemma \ref{lemma_globalization_of_the_representation_1}.

This completes the proof of Theorem \ref{thm:locclass-single} for discrete generic parameters, which was also an inductive assumption in the proof of this theorem for general generic parameters. In other words, Theorem \ref{thm:locclass-single} has now been established for all generic parameters and all pure inner twists of unitary groups.

\section{Proof of the main global theorem}\label{chapter5}

  This last chapter is devoted to the proof of Theorem \ref{thm:main-global}.   It turns out that the argument is much simpler than the analogue in the quasi-split case. For one thing the twisted trace formula for general linear groups plays no more direct roles.
  
   We adopt the global notation so that $G^*=U_{E/F}(N)$ is a global quasi-split unitary group. Fix $\kappa\in \{\pm1\}$ as well as $\chi=\chi_\kappa\in \cZ_E^\kappa$. Let $(G,\xi)$ be an inner twist of $G^*$. 
  Recall from \S\ref{sub:stable-multiplicity} that
$L^2_\disc(G(F)\bs G(\A_F))=\oplus_{\psi} L^2_{\disc,\psi}(G(F)\bs G(\A_F))$ as $\psi$ runs over $\Psi(G^*,\eta_\chi)$ and that there is a corresponding decomposition $\tr R_{\disc}(f)=\sum_{\psi} \tr R_{\disc,\psi}(f)$ for $f\in \cH(G)$. For the proof it suffices to identify $L^2_{\disc,\psi}(G(F)\bs G(\A_F))$ for each $\psi\in\Psi(G^*,\eta_\chi)$. 
  We assume the following on $\psi$:
 \begin{itemize}
   \item Hypothesis \ref{hypo:Hyp(psi)} (i.e. the local classification theorem holds for $\psi_v$ at every $v$) and
   \item Theorem \ref{thm:lir} (the local intertwining relation) for $\psi_v$ at every place $v$.
 \end{itemize}
 These have been established if $\psi=\phi$ is generic and if $(G,\xi)$ is realized as a pure inner twist of $G^*$. (The latter condition implies that $G_v$ is a split group at every place $v$ of $F$ split in $E$ since general linear groups do not have any nontrivial pure inner twists.) In the remaining cases the above assumptions will be resolved in \cite{KMS_A} and \cite{KMS_B}.

\begin{thm}\label{thm:global-conditional} Let $\psi\in\Psi(G^*,\eta_\chi)$. Under the two assumptions above, 
\begin{enumerate}
  \item $L^2_{\disc,\psi}(G(F)\bs G(\A_F))=0~$ if $\psi\notin\Psi_2(G^*,\eta_\chi)$.
  \item $L^2_{\disc,\psi}(G(F)\bs G(\A_F))=\bigoplus_{\pi\in \Pi_\psi(G,\xi,\epsilon_\psi)} \pi~$ if $\psi\in \Psi_2(G^*,\eta_\chi)$.
\end{enumerate}
\end{thm}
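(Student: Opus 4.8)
The proof is the endgame of the long induction: having established the local classification theorem and the local intertwining relation (in the generic/pure-inner-twist cases) for all localizations $\psi_v$, everything is in place to read off the $\psi$-part of the discrete spectrum from the standard model comparison of Chapter~\ref{chapter3}. The strategy is first to dispose of the non-discrete case via the vanishing of $^0r^G_{\disc,\psi}$, and then to analyze the discrete case by combining the formula of Corollary~\ref{cor:std-model-result} with the global intertwining relation (Theorem~\ref{thm:global-intertwining}), exactly as in \cite[\S4.3--\S4.7]{Arthur} but with the simplifications that Mok's stable multiplicity formula and the seed theorems are available as input, and the twisted trace formula plays no direct role.

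\textbf{Step 1: the non-discrete case.} Suppose $\psi\notin\Psi_2(G^*,\eta_\chi)$. By Corollary~\ref{cor:outside-Psi(G)} we already know $L^2_{\disc,\psi}=0$ unless $\psi$ comes from $\eta_{\chi,*}\Psi(G^*,\eta_\chi)$, so we may assume $\psi\in\Psi(G^*,\eta_\chi)$. Here $\psi$ is non-discrete, so it comes from a discrete parameter $\psi_{M^*}$ on a proper Levi subgroup $M^*$. I would apply Corollary~\ref{cor:std-model-result} (first display, for $\psi\in\Psi^2(G^*,\eta_\chi)$), whose right-hand side is $|\ol\cS_\psi|^{-1}\sum_x i^{G^*}_\psi(x)\epsilon^{G^*}_\psi(x)(f'_{G}(\psi,s_\psi x^{-1})-f_{G}(\psi,x))$. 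The global intertwining relation, now unconditionally available since the local intertwining relation holds at every place $v$ for $\psi_v$, gives $f'_{G}(\psi,s_\psi x^{-1})=f_{G}(\psi,x)$ for every $x$ (note $s_\psi=1$ for generic $\psi$, and the case $M^*$ not transferring to $G$ is handled by the last assertion of Theorem~\ref{thm:global-intertwining}). Hence the right-hand side vanishes identically, so $\tr R^G_{\disc,\psi}(f)=0$ for all $f$, giving $L^2_{\disc,\psi}(G(F)\bs G(\A_F))=0$.

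\textbf{Step 2: the discrete case.} Now $\psi\in\Psi_2(G^*,\eta_\chi)$. I would invoke the second display of Corollary~\ref{cor:std-model-result}, valid under Hypothesis~\ref{hypo:Hyp(psi)} (which is assumed), together with the global intertwining relation in the form valid for $M^*=G^*$; as the remark after Theorem~\ref{thm:global-intertwining} notes, this is legitimate once the local classification theorem is known, which is our standing assumption. Again $f'_{G}(\psi,s_\psi x^{-1})=f_{G}(\psi,x)$ for all $x\in\ol\cS_\psi$, so the right-hand side is zero and $^0r^G_{\disc,\psi}(f)=0$. Unwinding the definition \eqref{eq:0rG}, this reads
\[
\tr R^G_{\disc,\psi}(f) = |\ol\cS_\psi|^{-1}\sum_{x\in\ol\cS_\psi}\epsilon^{G^*}_\psi(x)\,f_G(\psi,x),
\]
and expanding $f_G(\psi,x)=\sum_{\pi\in\Pi_\psi(G,\xi)}\langle x,\pi\rangle_\xi f_G(\pi)$ and interchanging the sums, the coefficient of $f_G(\pi)$ becomes $|\ol\cS_\psi|^{-1}\sum_{x}\epsilon^{G^*}_\psi(x)\langle x,\pi\rangle_\xi$, which by orthogonality of characters of the finite abelian group $\ol\cS_\psi$ (recall $\langle\cdot,\pi\rangle_\xi$ descends to $\ol\cS_\psi$, cf. \S\ref{sub:main-global-thm}) equals $1$ if $\langle\cdot,\pi\rangle_\xi=\epsilon_\psi$ and $0$ otherwise. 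Therefore $\tr R^G_{\disc,\psi}(f)=\sum_{\pi\in\Pi_\psi(G,\xi,\epsilon_\psi)}f_G(\pi)$ for all $f\in\cH(G)$, which by linear independence of characters forces
\[
L^2_{\disc,\psi}(G(F)\bs G(\A_F)) = \bigoplus_{\pi\in\Pi_\psi(G,\xi,\epsilon_\psi)}\pi,
\]
each occurring with multiplicity one.

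\textbf{Main obstacle.} The substantive content is entirely upstream: the whole point is that once the local intertwining relation and local classification theorem are available at every place, the trace-formula bookkeeping collapses cleanly. The one point requiring care is the legitimacy of applying the global intertwining relation with $M^*=G^*$ in Step~2 — this is circular unless Hypothesis~\ref{hypo:Hyp(psi)} (i.e. the full strength of the local classification at every place) has already been secured, which is why Theorem~\ref{thm:global-conditional} is stated conditionally on precisely those two assumptions, to be discharged in \cite{KMS_A} and \cite{KMS_B}. A secondary technical check is that the orthogonality-of-characters manipulation is compatible with the possible non-irreducibility or non-unitarity of members of $\Pi_\psi(G,\xi)$ for non-generic $\psi$; but since $\Pi_\psi(G,\xi,\epsilon_\psi)$ is cut out inside $\Pi_\psi(G,\xi)$ and only genuinely automorphic $\pi$ survive, and for generic $\psi$ (the case actually proved here) all members are tempered and irreducible, this causes no difficulty in the present scope.
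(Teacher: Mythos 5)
Your proposal is correct and matches the paper's own proof: both reduce the non‑discrete case to the vanishing of the right side of Corollary \ref{cor:std-model-result} via the global intertwining relation, and in the discrete case both deduce $^0r^G_{\disc,\psi}(f)=0$ from the same corollary and then read off the multiplicity formula from the Fourier transform on $\ol{\cS}_\psi$ (the paper does the Fourier-transform rewrite first and then checks $^0r^G_{\disc,\psi}=0$, you do the two steps in the opposite order, but the content is identical). One small remark: in Step 1 the appeal to Corollary \ref{cor:outside-Psi(G)} is superfluous since the theorem already assumes $\psi\in\Psi(G^*,\eta_\chi)$.
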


\begin{proof}

If $\psi\notin \Psi_2(G^*,\eta_\chi)$ then Corollary \ref{cor:std-model-result} and the local intertwining relation imply (via Theorem \ref{thm:global-intertwining}) that $\tr R_{\disc,\psi}(f)=0$ identically, so $\psi$ does not contribute to the discrete spectrum. It remains to check that for $\psi\in \Psi_2(G^*,\eta_\chi)$,
$$\tr R_{\disc,\psi}(f) = \sum_{\pi\in \Pi_\psi(G,\xi,\epsilon_\psi)}  f(\pi),\qquad f\in \cH(G).$$
As usual our notation is that $f(\pi)=\tr \pi(f)$. Fourier transform on the finite group $\ol{\cS}_\psi$ allows us to rewrite the right hand side as
$$\frac{1}{|\ol{\cS}_\psi|} \sum_{\pi\in \Pi_\psi(G,\xi)} \sum_{x\in \ol{\cS}_\psi} \epsilon^{G^*}_\psi(x)\lg x,\pi\rg_{G,\xi}f(\pi)
=\frac{1}{|\ol{\cS}_\psi|}\sum_{x\in \ol{\cS}_\psi} \epsilon^{G^*}_\psi(x)f_G(\psi,x).$$
In view of the definition \eqref{eq:0rG} it is enough to show that ${}^0 r^G_{\disc,\psi}(f)=0$. This results from Corollary \ref{cor:std-model-result} since the local intertwining relation and the local classification theorem imply that $f_G(\psi,x)=f'_G(\psi,s_\psi x^{-1})$.

\end{proof}

\appendix
\section{The Aubert involution and $R$-groups}\label{sec:appendix}
In \cite{Ban, Ban2}, Ban shows that the Knapp-Stein $R$-group of a connected reductive group $G$ is invariant under the Aubert involution $\pi \mapsto \widehat{\pi}$. More precisely she proves:
\begin{thm}
Let $M$ be any Levi subgroup of $G$ and let $\pi$ be a square integrable irreducible representation  of  $M$. Denote by $R$  the Knapp-Stein $R$-group for $\pi$. Then the set of normalized self-intertwining operators
\[\{R_P(r,\widehat{\pi}), r \in R\}
\]
is a basis for the commuting algebra $\End_G\left(i_M^G(\widehat{\pi})\right)$.
\end{thm}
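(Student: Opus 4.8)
The plan is to follow Ban's strategy \cite{Ban, Ban2}, whose engine is the compatibility of the Aubert involution $D_G$ with parabolic induction and Jacquet restriction, and to add the bookkeeping needed for (possibly non-connected) unitary groups and their twisted analogues. First I would record the two structural facts that drive everything: $D_G$ commutes, up to an overall sign that depends only on the split rank of the cuspidal support, with $i_M^G$ and with the Jacquet functor $r_M^G$; and $D_G$ is an involution carrying irreducibles to $\pm$ irreducibles. Since all irreducible constituents of $i_M^G(\pi)$ share one cuspidal support $[M_\pi,\sigma]$, the same sign applies to all of them, so $D_G(i_M^G(\pi))$ equals, in the Grothendieck group, a fixed sign times $i_M^G(\widehat\pi)$, and the latter therefore has exactly the same constituents with the same multiplicities as $i_M^G(\pi)$. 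In particular $\dim\End_G(i_M^G(\widehat\pi)) = \dim\End_G(i_M^G(\pi)) = |R|$ (the latter being the dimension of the Knapp-Stein commuting algebra $\mathbb{C}[R]_\eta$).

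Next I would identify the two $R$-groups abstractly. From $w\pi\cong\pi \Leftrightarrow w\widehat\pi = \widehat{w\pi}\cong\widehat\pi$ one gets that the stabilizers $W_\pi$ and $W_{\widehat\pi}$ in $W(M,G)$ coincide. For the connected part $W_\pi^0$, generated by the reflections $s_\alpha$ with $i_M^{M_\alpha}(\pi)$ reducible (equivalently $\mu_\alpha(\pi)=0$), I would apply $D_{M_\alpha}\circ i_M^{M_\alpha} = \pm\, i_M^{M_\alpha}\circ D_M$ on Grothendieck groups together with bijectivity of $D_{M_\alpha}$ on irreducibles and the uniform-sign observation above: this forces $i_M^{M_\alpha}(\pi)$ and $i_M^{M_\alpha}(\widehat\pi)$ to have the same number of constituents, hence the same rank-one reducibility pattern, so $W_\pi^0 = W_{\widehat\pi}^0$ and $R = W_\pi/W_\pi^0 = W_{\widehat\pi}/W_{\widehat\pi}^0$ is literally the same subquotient of $W(M,G)$. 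Along the way one checks that the Plancherel (resp. $\mu$-function) normalizing factors used to pass from unnormalized to Knapp-Stein normalized operators are $D_G$-invariant, which again follows from $D_G$-invariance of the rank-one $\mu$-functions just used.

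The decisive step is linear independence of $\{R_P(r,\widehat\pi)\}_{r\in R}$; since there are $|R|$ of them and $\dim\End_G(i_M^G(\widehat\pi)) = |R|$, this makes them a basis. Here I would invoke the compatibility of the standard intertwining operators with $D_G$: realizing $D_G$ on the bounded derived category of smooth representations (the Euler–Poincaré / Schneider–Stuhler–Aubert incarnation), and noting that for modules induced from square-integrable data the relevant cohomology is concentrated in one degree where it returns the actual induced representation, the operator $J_{P'|P}(\pi)$ is carried to a nonzero scalar multiple of $J_{P'|P}(\widehat\pi)$; combined with $D_G$-invariance of the normalizing factors this gives the same for the normalized $R_{P'|P}$. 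Hence $D_G$ induces a linear isomorphism $\End_G(i_M^G(\pi))\to\End_G(i_M^G(\widehat\pi))$ sending each $R_P(r,\pi)$ to a nonzero multiple of $R_P(r,\widehat\pi)$; since $\{R_P(r,\pi)\}_{r\in R}$ is a basis by Knapp–Stein, so is $\{R_P(r,\widehat\pi)\}_{r\in R}$.

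I expect the main obstacle to be exactly this last compatibility statement: pinning down the precise relation between the standard intertwining operators and $D_G$ — in particular the scalars and signs, and the upgrade of $D_G$ from an operator on Grothendieck groups to something acting on the relevant spaces of operators — requires the derived-category picture and careful normalization tracking. A secondary difficulty, needed for the ``and beyond'' in the statement and for the application to $\tilde G_{E/F}(N)$, is to run the whole argument in the twisted setting, i.e. for the twisted Aubert involution on $G\rtimes\theta$ and twisted $R$-groups, where one must additionally follow the $\theta$-action through every step above.
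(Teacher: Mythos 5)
Your proposal follows the same broad strategy as the paper's Appendix A: upgrade the Aubert involution from Grothendieck groups to an exact functor on (suitable subcategories of) $\tx{Alg}(G)$, use the resulting isomorphism on $\Hom$-spaces, and show that this isomorphism sends normalized self-intertwining operators to (scalar multiples of) normalized self-intertwining operators. The paper also needs your unitarity hypothesis on $\widehat\pi$ at exactly the place you anticipate. So the skeleton is right.

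There is, however, a genuine gap in your decisive step. You propose to compare unnormalized and normalized operators separately: first show $J_{P'|P}(\pi)$ is carried to a nonzero multiple of the corresponding $J$-operator for $\widehat\pi$, then assert that the normalizing factors $r_{P'|P}$ (built from rank-one $\mu$-functions) are $D_G$-invariant. That second assertion is unjustified, and it is not obviously true: the Arthur-style normalizing factors are functions of the $L$-parameter via $L$- and $\epsilon$-factors, and $\pi$ and $\widehat\pi$ do not have the same $L$-parameter (the involution changes the $\mathrm{SU}(2)$-structure). The paper's Proposition \ref{an:propban} deliberately avoids any direct comparison of normalizing factors. Instead it works with the normalized operator $\mathbf{R}_P(\tilde w,\pi_\chi) = l(w)\circ R_{w^{-1}P|P}(\pi_\chi)$ from the start, observes that both $\widehat{\mathbf{R}_P(\tilde w,\pi_\chi)}$ and $\mathbf{R}_{P^-}(\tilde w,\widehat\pi_\chi)$ are rational families (this is where the paper's "rationality" proposition enters), forms the composition of one with the inverse of the other, notes that for generic $\chi$ the target $i_{P^-}^G(\widehat\pi_\chi)$ is irreducible so Schur's lemma forces the composition to be a scalar rational function $a(\chi)$, and then specializes at $\chi=1$ using unitarity of $\pi,\widehat\pi$ to see $a(1)\in\C^\times$. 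This gives $\widehat{\mathbf{R}_P(\tilde w,\pi)}=\mathbf{R}_{P^-}(\tilde w,\widehat\pi)\bmod\C^\times$ with no claim about how the individual $J$'s or $r$'s transform.

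Two smaller points. First, the Aubert functor turns $i_P^G$ into $i_{P^-}^G$, so the correct comparison is between $\widehat{\mathbf{R}_P(\tilde w,\pi)}$ and $\mathbf{R}_{P^-}(\tilde w,\widehat\pi)$; your statement drops the parabolic flip. Second, your intermediate claim that $R(\pi)=R(\widehat\pi)$ "as subquotients of $W(M,G)$" presupposes a Knapp–Stein $R$-group theory for $\widehat\pi$, which is not available in general since $\widehat\pi$ need not be tempered. The theorem, and the paper's proof of it, is careful to avoid this: the basis $\{R_P(r,\widehat\pi)\}$ is indexed by $r\in R(\pi)$, and what is proven is that this set is a basis of $\End_G(i_M^G(\widehat\pi))$, not that $\widehat\pi$ has its own $R$-group coinciding with $R(\pi)$. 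With the corrected Proposition \ref{an:propban} in hand, the conclusion follows immediately from the paper's Corollary \ref{ap:co221} and the isomorphism $\End_G(i_{P^-}^G(\pi))\to\End_G(i_P^G(\widehat\pi))$, without any dimension count.
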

In \cite{Ban, Ban2} the following three hypothesis are needed:
\begin{enumerate}
\item $G$ is a split group.
\item One can normalize the self-intertwining operators $R_P(w,\pi)$ to be multiplicative on $w$.
\item The Aubert involution of a (square integrable) unitary representation of $M$ is still unitary.
\end{enumerate}
In this appendix we get rid of the two first hypothesis. However Hypothesis (3) will be still needed to grant that the normalized intertwining operator $R_{P',P}(\widehat{\pi}_\chi)$ is well defined at $\chi=1$ (see below for more details on notation).

For our purposes, Hypothesis (3) will not be a problem since all representations we will care about appear as local components of automorphic representations and are thus unitary. By induction hypothesis and Theorem \ref{thm:locclass-single}(1) they will be automatically unitary.
\subsection{Notation}
Let $F$ ba a local non-archimedean field. Denote by $| \, |_F$ the normalized absolute value of $F$. Let $G$ be any connected reductive group defined over $F$. Write $A_G$ for the maximal split torus in the center of $G$. By a representation of $G(F)$ we will always mean a smooth representation.

Define $X(G)=\Hom(G,\mathbb{G}_m)$ and $\mathfrak{a}_G=\Hom(X(G), \R)$. The dual of this space is denoted by $\mathfrak{a}_G^\ast$ and we set $\mathfrak{a}^\ast_{G,\C}=\mathfrak{a}^\ast_G\otimes \C$. For every Levi subgroup $M$ there is a canonically split short exact sequence:
\[0 \to \mathfrak{a}_G \to \mathfrak{a}_M \to \mathfrak{a}_M^G \to 0.
\]

Let $\Psi(G)$ be the complex torus of unramified characters of $G$. It is endowed with a structure of complex algebraic variety coming from the surjection
\begin{eqnarray}\label{ap:HC}
\mathfrak{a}^\ast_{G,\C} & \longrightarrow& \Psi(G)\\
\chi \otimes s &\mapsto &(g \mapsto |\chi(g)|_F^s). \notag
\end{eqnarray}

For any $\chi \in\Psi(G)$  and any representation $\pi$ of $G(F)$ we will write $\pi_\chi$ for $\pi \otimes \chi$.

Fix a minimal parabolic subgroup $P_0=M_0U_0$ of $G$. We set $\Delta_0$ to be the set of simple roots. For any $\Theta \subset \Delta_0$, let $P_\Theta$ be the associated standard parabolic.

Denote by $\mathcal{L}^G$ the set of standard Levi subgroups of $G$. For any Levi subgroup $M$ in $\mathcal{L}^G$, denote by $\mathcal{P}(M)$ the set of parabolic subgroups of $G$ with Levi component $M$ and by $W(M)$ the Weyl group relative to $M$. For any $w \in W(M)$ with representative $\tilde{w} \in G(F)$ and any representation $\pi$ of $M(F)$, we define $\tilde{w}\pi$ to be the representation on the same underlying vector space as $\pi$ with action $(\tilde{w}\pi)(m)=\pi(\tilde{w}^{-1} m \tilde{w})$ (observe that the equivalence class depends only on $w$ and if no confusion arises we will write $w\pi$ for $\tilde{w}\pi$.) We also define $$W_\pi=\{w\in W(M)\,:\,w\pi\simeq \pi\}.$$

For any Levi subgroup $M$ in $\mathcal{L}^G$, and any parabolic $P$ containing $M$, denote by $i_P^G$ the normalized parabolic induction (so that $i_P^G(\pi)$ is unitary if $\pi$ is) and
 by $r_P^G$ its left adjoint functor, the Jacquet functor. If $P$ is the standard parabolic subgroup of $G$ containing $M$ we will simply write $i_M^G$ and $r_M^G$. For any parabolic subgroup $P$ denote by $P^-$ the parabolic subgroup opposite to $P$.

Write $\mc{K}(G)$ for the Grothendieck group of finite length representations of $G(F)$, that is, the set of finite integral linear combinations of isomorphism classes of irreducible representations of $G(F)$. For any smooth representation  $\pi$ of $G(F)$ write $\pi^\vee$ for its contragredient and, if we suppose moreover that $\pi$ is of finite length, denote by $[\pi]$ its image in $\mc{K}(G)$.

We also use the notation $\tx{Alg}(G)$ for the category of smooth representations of $G$, $\Pi(G)$ for the set of isomorphism classes of irreducible representations of $G$ and $\Pi_2(G)$ (resp. $\Pi_{\tx{unit}}(G)$) for its subset consisting of square integrable (resp. unitary) representations.

\subsection{The Aubert involution}
\subsubsection{The Aubert involution at the level of Grothendieck groups}
The functors $i_M^G$ and $r_M^G$ induce, by linearity, homomorphisms between Gro\-thendieck groups that, by a little abuse of notation, we will still write $i_M^G:\mc{K}(M) \to \mc{K}(G)$ and $r_M^G:\mc{K}(G) \to \mc{K}(M)$ respectively.  The Aubert involution \cite{Aubert,Aubert2} is the homomorphism of $\Z$-modules
$\tb{D}_G: \mc{K}(G) \to \mc{K}(G)$ defined by
$$\tb{D}_G= \underset{M \in \mathcal{L}^G}{\sum}(-1)^{\dim(A_{M_0}/A_M)}i_M^G\circ r_M^G.$$

A key property of the Aubert involution is that it preserves irreducibility up a sign. Namely for any irreducible representation $\pi$ of $G(F)$, there exists $\beta(\pi)\in \{\pm 1\}$ such that $\beta(\pi)\tb{D}_G([\pi])$ is represented by an irreducible representation of $G(F)$, to be denoted by $[\widehat{\pi}]$.

\subsubsection{The Aubert involution on the level of representations}\label{ap:22}
Let $\pi$ be a smooth representation of $G$ and let $P \subset P' \subset G$ be two standard parabolic subgroups of $G$. Denote by $\pi_{P,P'}$ the canonical projection $r_P^G(\pi) \to r_{P'}^G(\pi) $ and define the map
\begin{eqnarray}\label{eq:defd}
d_{P,P'}: i_{P}^G\circ r_{P}^G(\pi) & \longrightarrow & i_{P'}^G\circ r_{P'}^G(\pi)  \notag\\
f & \mapsto & f_{P,P'}.
\end{eqnarray}
where  $f_{P,P'}(g) = \pi_{P,P'}f(g)$.

Let $\Theta_0 \subset \Delta_0$. We denote by $\tx{Alg}(\Theta_0)$ the full abelian subcategory of $\tx{Alg}(G)$ consisting of $\pi$ such that every subquotient of $\pi$ is a subquotient of a representation of the form $i_{P}^G(\rho)$ where $P$ is a standard parabolic subgroup with Levi component conjugate to $M_{\Theta_0 }$, and $\rho$ is a cuspidal representation of the Levi quotient of $P(F)$.

Aubert defined a map
\begin{eqnarray*}
\tx{Alg}(\Theta_0) &\longrightarrow &\tx{Alg}(\Theta_0)\\
\pi & \mapsto& \widehat\pi
\end{eqnarray*}
in the following way. She proved in \cite[Th\'eor\`eme 3.6]{Aubert} that the following sequence of representations of $G(F)$ is exact:
\begin{align}\label{eq:seqAubert}
0 \to \pi \overset{d_{|\Delta_0|}}{\longrightarrow }\underset{\begin{array}{c} |\Theta|=|\Delta_0|-1\\ \Theta_0 \subset \Theta \end{array}}{\bigoplus}i_{P_\Theta}^G\circ r_{P_\Theta}^G(\pi) \overset{d_{|\Delta_0|-1}}{\longrightarrow } \dots \to \underset{\begin{array}{c} |\Theta|=|\Theta_0| \\  \Theta_0 \subset \Theta \end{array} }{\bigoplus}i_{P_\Theta}^G\circ r_{P_\Theta}^G(\pi),
\end{align}
where $$d_i= \underset{\begin{array}{c} |\Theta|=i, |\Theta'|=i-1, \\ \Theta_0 \subset \Theta' \subset \Theta\end{array}}{\bigoplus}d_{P_\Theta,P_{\Theta'}} \epsilon_{\Theta,\Theta'},$$
and $\epsilon_{\Theta,\Theta'}$ is a sign defined in \cite[page 2187]{Aubert}. Then she set $\widehat\pi$ the cokernel of $d_{|\Theta_0|+1}$.

Let $\chi$ be a character of the center of $G(F)$ and $\Omega$ be a Bernstein block of $\tx{Alg}(G)$. Denote by $\tx{Alg}^{\tx{fl}}_\chi(\Omega)$  the full subcategory of $\tx{Alg}(G)$ of finite length representations whose irreducible subquotients have central character $\chi$ and are all of type $\Omega$.

Schneider-Stuhler in  \cite[III.3.1]{SchStu} and Bernstein-Bezrukavnikov in \cite[IV.5.1]{Bernstein} and \cite[\textsection 4]{Bezru} have defined, in terms of cohomology, an involution
$$\mc{E}:\tx{Alg}^{\tx{fl}}_\chi(\Omega) \longrightarrow \tx{Alg}^{\tx{fl}}_{\chi^{-1}}(\Omega)$$
that coincides, by \cite[p. 184]{SchStu},  with the contragredient of the Aubert involution. That is, for every $\pi \in \tx{Alg}^{\tx{fl}}_\chi(\Omega)$ we have that $\mc{E}(\pi^\vee)=\widehat{\pi}$.

The Aubert involution has the following properties:\\

{\bf Irreducibility.} The representation $\widehat{\pi}$ is  irreducible if and only if $\pi$ is irreducible \cite[Corollaire 3.9]{Aubert}.\\

{\bf Induction.} If $P$ is a parabolic subgroup of $G$ with Levi component $M$  then \cite[Theorem 31(3)]{Bernstein}
\begin{equation}\label{ap:ber}
\widehat{i_P^G(\cdot)} = i_{P^-}^G(\widehat{\cdot}).
\end{equation}

{\bf Conjugation.} Let $P$ be a parabolic subgroup of $G$ and let $h\in G(F)$. Let $\pi \in \tx{Alg}(\Theta_0)$ and denote by $h\pi$ the representation of $G(F)$ with same underlying vector space as $\pi$ and action given by $h\pi(g)=\pi(h^{-1}gh)$. The canonical isomorphism
\begin{eqnarray*}
h(i_P^G r_P^G(\pi)) & \longrightarrow  &i_{hP}^G r_{hP}^G(h\pi)\\
f & \mapsto & f(h^{-1} \cdot h)
\end{eqnarray*}
gives rise to a canonical identification $h\widehat{\pi}=\widehat{h\pi}$.\\

{\bf Functoriality.} The involution
\begin{eqnarray*}\tx{Alg}^{\tx{fl}}_\chi(\Omega)&\longrightarrow &\tx{Alg}^{\tx{fl}}_\chi(\Omega) \\
\pi &\mapsto&\widehat{\pi}
\end{eqnarray*}
is an exact covariant involutive functor \cite[III.3]{SchStu}. If $\pi, \pi' \in \tx{Alg}^{\tx{fl}}_\chi(\Omega)$, we deduce that there is a canonical involutive isomorphism
\begin{eqnarray}\label{ap:222}
\Hom_G\left(\pi, \pi'\right)& \longrightarrow &\Hom_G\left(\widehat{\pi}, \widehat{\pi'} \right) \notag \\
\varphi & \mapsto & \widehat{\varphi}
\end{eqnarray}
such that $\widehat{\varphi(\pi)}=\widehat{\varphi}(\widehat{\pi})$. Indeed this isomorphism can be constructed as follows:
Let $\varphi \in \Hom_G\left(\pi, \pi'\right)$. For any  parabolic subgroup $P$ of $G$ the map $\varphi$ induces, by functoriality, \begin{equation}\label{ap:phiP}\varphi_P\in \Hom_G\left(i_{P}^G\circ r_{P}^G(\pi),i_{P}^G\circ r_{P}^G(\pi')\right).\end{equation} For any two  parabolic subgroups $P \subset P'$, by definition \eqref{eq:defd}, these maps make the following diagram commutative:
\[ \xymatrix{ i_{P}^G\circ r_{P}^G(\pi)  \ar[d]^{\varphi_P}\ar[r]^{d_{P,P'}} & i_{P'}^G\circ r_{P'}^G(\pi) \ar[d]^{\varphi_{P'}}\\  i_{P}^G\circ r_{P}^G(\pi')  \ar[r]^{d_{P,P'}} & i_{P'}^G\circ r_{P'}^G(\pi') }
\]
We deduce from \eqref{eq:seqAubert} a commutative diagram:

\[\hspace{-2cm}\xymatrix{
0 \ar[r]\ar[d]& \pi \ar[d]^\varphi \ar[r]& \underset{|\Theta|=|\Delta_0|-1}{\bigoplus}i_{P_\Theta}^G\circ r_{P_\Theta}^G(\pi) \ar[r]\ar[d]^{\varphi_{P_{|\Delta_0|-1}}}& \dots \ar[r]& \underset{|\Theta|=|\Theta_0|}{\bigoplus}i_{P_\Theta}^G\circ r_{P_\Theta}^G(\pi) \ar[r]\ar[d]^{\varphi_{|\Theta_0|}}&\widehat{\pi}  \ar[r]& 0\\
0 \ar[r]& \pi' \ar[r]& \underset{|\Theta|=|\Delta_0|-1}{\bigoplus}i_{P_\Theta}^G\circ r_{P_\Theta}^G(\pi') \ar[r]& \dots \ar[r]& \underset{|\Theta|=|\Theta_0|}{\bigoplus}i_{P_\Theta}^G\circ r_{P_\Theta}^G(\pi') \ar[r]&\widehat{\pi'}  \ar[r]& 0
}\]
(to simplify notation we drop the fact that the sum is taken over parabolic subgroups $P_\Theta$ such that $\Theta_0 \subset \Theta $).
Thus there exists a unique morphism $\widehat{\varphi}\in  \Hom_G\left(\widehat{\pi}, \widehat{\pi'} \right)$ making the diagram

\begin{equation}
\label{eq:commBan10}\hspace{-2cm}\xymatrix{
0 \ar[r]\ar[d]& \pi \ar[d]^\varphi\ar[r]& \underset{|\Theta|=|\Delta_0|-1}{\bigoplus}i_{P_\Theta}^G\circ r_{P_\Theta}^G(\pi) \ar[r]\ar[d]^{\varphi_{P_{|\Delta_0|-1}}}& \dots \ar[r]& \underset{|\Theta|=|\Theta_0|}{\bigoplus}i_{P_\Theta}^G\circ r_{P_\Theta}^G(\pi) \ar[r]\ar[d]^{\varphi_{|\Theta_0|}}&\widehat{\pi}  \ar[r]\ar[d]^{\widehat{\varphi}}& 0\\
0 \ar[r]& \pi' \ar[r]& \underset{|\Theta|=|\Delta_0|-1}{\bigoplus}i_{P_\Theta}^G\circ r_{P_\Theta}^G(\pi') \ar[r]& \dots \ar[r]& \underset{|\Theta|=|\Theta_0|}{\bigoplus}i_{P_\Theta}^G\circ r_{P_\Theta}^G(\pi') \ar[r]&\widehat{\pi'}  \ar[r]& 0
}
\end{equation}
commute. The commutativity of the diagram implies that  $\widehat{\varphi(\pi)}=\widehat{\varphi}(\widehat{\pi})$.\\

{\bf Rationality.} Here we follow \cite[\textsection IV.1]{Walds} and \cite{BernDel}. Let $M \in \mathcal{L}^G$ and $(\pi,V)$ a smooth admissible representation of $M(F)$. Let $B$ be  the polynomial algebra on the variety $\Psi(M)$ and, for $m \in M(F)$, denote $b_m\in B$ the polynomial defined by $b_m(\chi)=\chi(m)$.

Define the algebraic $B$-family $(\pi_B,V_B)$ of admissible representations (in the sense of \cite[1.16]{BernDel}, \cite[\textsection I.5]{Walds}) of $M(F)$ by
\[V_B=V \otimes_\C B, \qquad \pi_B(m)(v \otimes b)=\pi(m)v \otimes b_m b
\]
for every $m \in M(F)$, $v \in V$ and $b \in B$.

For $\chi \in \Psi(M)$ set $\tx{sp}_\chi V_B:= V_B \otimes_B (B/B_\chi)$, the specialization map, where $B_\chi$ is the maximal ideal in $B$ made of functions that are zero on $\chi$. As a representation of $M$, $\tx{sp}_\chi V_B$ is isomorphic to $\pi _\chi$.

As the functors $i_P^G$ and $r_P^G$ of parabolic induction and restriction are exact, they commute with $\otimes_B N$ for any $B$-module $N$ (\cite[2.5]{BernDel}). We deduce that $\tx{sp}_\chi i_M^G(V_B) $ is isomorphic to $i_M^G(\pi_\chi)$.

Let now $P,P' \in \mathcal{P}(M)$ and $\pi,\pi'$ admissible representations of $M(F)$. Suppose that we have a family of intertwining operators
$$A(\chi):i_P^G(\pi _\chi) \longrightarrow i_{P'}^G(\pi' _\chi)$$
for every $\chi \in \Psi(M)$. We say that $A(\chi)$ is \emph{rational} if there exists a homomorphism of $G-B$-modules
$$A_B:i_P^G(\pi_B) \longrightarrow i_{P'}^G(\pi'_B)$$
and $b\in B$ such that $b(\chi)A(\chi)\tx{sp}_\chi=\tx{sp}_\chi A_B$.

The Aubert involution preserves rationality in the following sense.
\begin{prop} Let $\pi,\pi'$ be smooth  finite length representations of $M(F)$ in the same Bernstein block.
With notation as above, suppose that the family of intertwining operators
$$A(\chi):i_P^G(\pi _\chi) \longrightarrow i_{P'}^G(\pi' _\chi)$$
is rational. Then the family of intertwining operators
$$\widehat{A(\chi)}:\widehat{i_P^G(\pi _\chi)} \longrightarrow \widehat{i_{P'}^G(\pi' _\chi)}$$
is also rational.
\end{prop}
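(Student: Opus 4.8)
The plan is to run the Schneider--Stuhler/Aubert construction in families over the base ring $B$, exploiting that $i_P^G$ and $r_P^G$ are exact and commute with $-\otimes_B N$ for every $B$-module $N$ (as recalled above, following \cite{BernDel} and \cite{Walds}), so that in particular $\tx{sp}_\chi=-\otimes_B(B/B_\chi)$ commutes with parabolic induction and restriction.

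First I would recall that for a finite-length $\sigma$ lying in a single Bernstein block (so $\sigma\in\tx{Alg}(\Theta_0)$ for the associated $\Theta_0\subset\Delta_0$) the Aubert dual $\widehat\sigma$ is, by definition, the cokernel of the terminal differential of the complex \eqref{eq:seqAubert}, whose differentials are assembled from the canonical natural transformations $d_{P_\Theta,P_{\Theta'}}$ of \eqref{eq:defd} and the signs $\epsilon_{\Theta,\Theta'}$. Since $\pi$ is of finite length, $i_P^G(\pi_B)$ is an algebraic $B$-family; applying the functors $i_{P_\Theta}^G r_{P_\Theta}^G$ and the maps $d_{P_\Theta,P_{\Theta'}}$, which extend to $B$-families by naturality (the $\Theta_0$ being the same for all $\chi$ because unramified twists preserve the Bernstein component), produces a complex $C_\bullet\big(i_P^G(\pi_B)\big)$ of $G$-$B$-modules whose specialization at $\chi\in\Psi(M)$ is the complex of \eqref{eq:seqAubert} attached to $i_P^G(\pi_\chi)$. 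I then set $\widehat{i_P^G(\pi_B)}$ to be the cokernel of the terminal differential of $C_\bullet\big(i_P^G(\pi_B)\big)$, a finitely generated admissible $B$-family. Since $\tx{sp}_\chi$ is right exact and commutes with $i_P^G$, $r_P^G$ and the $d_{P_\Theta,P_{\Theta'}}$, taking the specialization of this cokernel gives the cokernel of the specialized map; hence
\[ \tx{sp}_\chi\,\widehat{i_P^G(\pi_B)}=\widehat{i_P^G(\pi_\chi)}, \]
and likewise $\tx{sp}_\chi\,\widehat{i_{P'}^G(\pi'_B)}=\widehat{i_{P'}^G(\pi'_\chi)}$; here the hypothesis that $\pi$ and $\pi'$ lie in the same block is used to run the two constructions with a single $\Theta_0$.

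Next, let $A_B:i_P^G(\pi_B)\to i_{P'}^G(\pi'_B)$ and $b\in B$ witness the rationality of $A(\chi)$, so $b(\chi)A(\chi)\tx{sp}_\chi=\tx{sp}_\chi A_B$. Applying the exact, $B$-linear functors $i_{P_\Theta}^G r_{P_\Theta}^G$ to $A_B$ and using that the $d_{P_\Theta,P_{\Theta'}}$ are natural, one obtains --- exactly as in diagram \eqref{eq:commBan10}, but now over $B$ --- a morphism of complexes $C_\bullet\big(i_P^G(\pi_B)\big)\to C_\bullet\big(i_{P'}^G(\pi'_B)\big)$, and passing to the terminal cokernels a $G$-$B$-module homomorphism $\widehat{A_B}:\widehat{i_P^G(\pi_B)}\to\widehat{i_{P'}^G(\pi'_B)}$. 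I then specialize. Since forming the complex, its differentials, and its terminal cokernel all commute with $\tx{sp}_\chi$, and since $\varphi\mapsto\widehat\varphi$ is $\C$-linear in $\varphi$ (this is the uniqueness in \eqref{ap:222}/\eqref{eq:commBan10}), we get
\[ \tx{sp}_\chi\,\widehat{A_B}=\widehat{\tx{sp}_\chi A_B}=\widehat{b(\chi)A(\chi)}=b(\chi)\,\widehat{A(\chi)} \]
as maps $\widehat{i_P^G(\pi_\chi)}\to\widehat{i_{P'}^G(\pi'_\chi)}$. Thus $\widehat{A(\chi)}$ is rational, witnessed by the homomorphism $\widehat{A_B}$ of $G$-$B$-modules and the same polynomial $b$, which completes the proof.

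The only genuine work lies in the second paragraph: one must check that the complex \eqref{eq:seqAubert}, written a priori for a single finite-length representation, makes sense verbatim over $B$ and that its terminal cokernel commutes with specialization. This reduces entirely to the exactness of $i_P^G$ and $r_P^G$ and their commutation with $-\otimes_B N$ \cite{BernDel}, the right-exactness of $\tx{sp}_\chi$, and the naturality of the maps $d_{P_\Theta,P_{\Theta'}}$ together with the combinatorics of the signs $\epsilon_{\Theta,\Theta'}$; none of this requires a new idea. I would stress in particular that one never needs exactness of the full resolution \eqref{eq:seqAubert} over $B$ --- which would require controlling the base-change behaviour of its homology modules --- because only the terminal cokernel enters the definition of $\widehat{(\cdot)}$, and right-exactness of $\tx{sp}_\chi$ already handles that.
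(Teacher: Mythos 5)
Your proof is correct and takes essentially the same approach as the paper's: show that $A_B$ propagates through the functors $i_Q^G r_Q^G$ and the natural maps $d_{P_\Theta,P_{\Theta'}}$ to give a morphism of the resolution complexes over $B$, pass to the terminal cokernels to get $\widehat{A_B}$, and use that specialization commutes with all of this (in particular with cokernels, by right-exactness) to conclude $\tx{sp}_\chi\widehat{A_B}=b(\chi)\widehat{A(\chi)}\tx{sp}_\chi$. The paper's write-up is more terse — it records the rationality of each $A(\chi)_Q$ and then appeals to the diagram \eqref{eq:commBan10} — but the underlying argument is the same, and your explicit remark that only right-exactness of $\tx{sp}_\chi$ (not exactness of the whole resolution over $B$) is needed is a welcome clarification of the step the paper leaves implicit.
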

\begin{proof}
Let $Q$ be a parabolic subgroup of $G$. Then the intertwining operator
$$A(\chi)_Q: i_{Q}^G\circ r_{Q}^G(i^G_P(\pi_\chi))  \to  i_{Q}^G\circ r_{Q}^G(i^G_{P}(\pi'_\chi)),$$
induced by functoriality,
is rational. Indeed, let
$$A_{B,Q}: i_{Q}^G\circ r_{Q}^G(i^G_P(\pi_B))  \to  i_{Q}^G\circ r_{Q}^G(i^G_{P}(\pi'_B)),$$
be the intertwining deduced by functoriality from $A_B$. Then, for every $\chi \in \Psi(M)$, we have
\begin{eqnarray*}
\tx{sp}_\chi A_{B,Q} &=& (\tx{sp}_\chi A_{B})_{Q} \qquad \text{ ( $\tx{sp}_\chi$ commutes to $r_{Q}^G\circ i^G_P$)} \\
&=& (b(\chi)A(\chi)\tx{sp}_\chi)_{Q} \qquad \text{ ( $A(\chi)$ is rational)}\\
 &=& b(\chi)A(\chi)_{Q}\tx{sp}_\chi \qquad \text{ ( $\tx{sp}_\chi$ commutes to $r_{Q}^G\circ i^G_P$)}.
\end{eqnarray*}
We deduce from our construction \eqref{eq:commBan10} that the operator $$\widehat{A(\chi)}:\widehat{i_P^G(\pi _\chi)} \longrightarrow \widehat{i_{P'}^G(\pi' _\chi)}$$
is also rational.
\end{proof}

\subsection{Intertwining operators and $R$-groups}\label{sub:A2}

\subsubsection{Definition and properties}
The main references here are \cite{ArtIOR1} \cite{Walds}. Let $G$ be a connected reductive group. Fix $M \in \mathcal{L}^G$, $P,P' \in \mathcal{P}(M)$, $\pi$ a smooth representation of $M(F)$ of finite length and $\chi \in \Psi(M)$. Write $U_P$ and $U_{P'}$ for the unipotent radicals of $P$ and $P'$, respectively.
Let
\[J_{P'|P}(\pi_\chi) : i^G_P(\pi_\chi)  \to i^G_{P'}(\pi_\chi)
\]
be the standard intertwining operator defined by the absolutely convergent integral
\[(J_{P'|P}(\pi_\chi) \phi)(x)= \int_{U_P(F)\cap U_{P'}(F)\backslash U_{P'}(F)}\phi(ux)du, \qquad x \in G(F),
\]
when the real part of $\chi$ lies in a certain chamber (see \cite[IV.1]{Walds} for a precise meaning of this convergence). This intertwining operator satisfies a series of properties (properties $J_1-J_5$ in \cite[pag. 26]{ArtIOR1}). In particular, the family of intertwining operators is rational in the sense explained before.

\subsubsection{Normalized self-intertwining operators and the Knapp-Stein $R$-group}\label{subsub:A22}

Let $r_{P'|P}(\pi_\chi)$ be the normalizing factor, that is a rational function on $\Psi(M)$, such that the \textit{normalized} intertwining operator
\[R_{P'|P}(\pi_\chi):=r_{P'|P}(\pi_\chi)^{-1}J_{P'|P}(\pi_\chi)
\] satisfies properties $R_1-R_8$ in \cite[pag. 28]{ArtIOR1}. The family of such normalizing factors exists by \cite[Theorem 2.1]{ArtIOR1} and  $R_{P'|P}(\pi_\chi)$ is a rational family (in $\chi$) of intertwining operators $i^G_P(\pi_\chi) \to i^G_{w^{-1}P}(\pi_\chi)$. If $R_{P'|P}(\pi_\chi)$ is well defined at $\chi=1$, we set $R_{P'|P}(\pi)=R_{P'|P}(\pi_1)$ (for example if $\pi$ unitary \cite[pag. 28, $R_4$]{ArtIOR1}).

Now let $\pi \in \Pi_{\tx{unit}}(M)$, $w \in W_\pi$ and fix $P \in \mathcal{P}(M)$. Choose a representative $\tilde{w} \in G(F)$ of $w$ and an isomorphism $\pi(\tilde{w}):\tilde{w}\pi \to \pi$.
Define the normalized self-intertwining operator $R_P( \tilde{w},\pi)$ by composition of the following intertwining operators:
\[i^G_P(\pi) \overset{R_{w^{-1}P|P}(\pi)}{\longrightarrow}i^G_{w^{-1}P}(\pi) \overset{l(w):\phi\mapsto \phi(\tilde{w}^{-1}\cdot)}{\longrightarrow}i^G_P(\tilde{w}\pi) \overset{\pi(\tilde{w})}{\longrightarrow}i^G_P(\pi) \]

\begin{rem}
The class of $R_P( \tilde{w}, \pi) \mod \C^\times$ is independent of the choices of $\tilde{w}$ and $\pi(\tilde{w})$  and  for any $w,w' \in W_\pi$, $R_P( \tilde{w},\pi) R_P( \tilde{w}',\pi) =R_P( \tilde{w}\tilde{w}',\pi) \mod \C^\times$.

However it depends on the choice of $P$. For another parabolic subgroup $Q \in \mathcal{P}(M)$ one has
\[R_Q( \tilde{w}, \pi)=R_{P|Q}(  \pi)^{-1} R_P( \tilde{w}, \pi)R_{P|Q}(  \pi).
\]
So in particular, $R_P( \tilde{w}, \pi)=R_{P^-}( \tilde{w}, \pi)\mod \C^\times$.
\end{rem}

Suppose now that $\pi\in \Pi_2(M)$. The Knapp-Stein $R$-group $R(\pi)$ can be defined as a subgroup of $W_\pi$ satisfying the following properties (this is a theorem of Harish-Chandra and Silberger \cite{Silberger}):
\begin{enumerate}
\item $W_\pi =  \{w \in W_\pi: R_P(\tilde{w},\pi) \in \C^\times \} \rtimes R(\pi)$.
\item\label{eq:Ban2} The set $\{ R_P( \tilde{w},\pi), r \in R_\pi \}$ is a basis for $\End_G\left(i_M^G(\pi)\right)$.
\end{enumerate}

\subsubsection{Intertwining operators and the Aubert involution}
Let $M$ be a Levi subgroup of $G$ and let $\pi\in \Pi_{\tx{unit}}(M)$. Let $P$ be a parabolic subgroup with Levi component $M$. Let $w \in W(M)$ and for any $\chi \in \Psi(M)$, put
$${\bf R}_P(\tilde{w},\pi_\chi):= l(w) \circ R_{w^{-1}P|P}(\pi_\chi):i^G_P(\pi_\chi) \longrightarrow i^G_P(\tilde{w}\pi_\chi).$$
As $\pi$ is unitary, the operator ${\bf R}_P(\tilde{w},\pi_\chi)$ is well defined at $\chi=1$ so we define
${\bf R}_P(\tilde{w},\pi)={\bf R}_P(\tilde{w},\pi_1)$.

\begin{prop}\label{an:propban}
Suppose moreover that $\widehat{\pi}$ is unitary (so that the operator ${\bf R}_P(\tilde{w},\widehat{\pi}_\chi)$ is well defined at $\chi=1$). Then $\widehat{{\bf R}_P(\tilde{w},\pi)} ={\bf R}_{P^-}(\tilde{w},\widehat{\pi})\mod \C^\times$.
\end{prop}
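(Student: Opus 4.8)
The strategy is to extend the asserted identity to the family of unramified twists $\pi_\chi$, $\chi\in\Psi(M)$, where for $\chi$ in general position it follows from a one-dimensionality argument, and then to recover the value at $\chi=1$ by a rationality argument. Fix once and for all a representative $\tilde w\in G(F)$ of $w$. First I would record the compatibilities of the Aubert involution that are needed. Since $\tb{D}_M$ is built from the exact functors $i_L^M$, $r_L^M$, it commutes with twisting by an element of $\Psi(M)$, so $\widehat{\pi_\chi}=\widehat{\pi}_\chi$ for all $\chi\in\Psi(M)$, and likewise $\widehat{\tilde w\pi_\chi}=\tilde w\,\widehat{\pi}_\chi$ by the conjugation property of Section \ref{ap:22}. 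Combined with \eqref{ap:ber} this gives canonical identifications $\widehat{i_P^G(\pi_\chi)}=i_{P^-}^G(\widehat{\pi}_\chi)$ and $\widehat{i_P^G(\tilde w\pi_\chi)}=i_{P^-}^G(\tilde w\widehat{\pi}_\chi)$. By functoriality \eqref{ap:222} of the Aubert involution (applicable since $i_P^G(\pi_\chi)$ and $i_P^G(\tilde w\pi_\chi)$ lie in the same Bernstein block with the same central character) we obtain, for each $\chi$, an operator
\[ A(\chi):=\widehat{{\bf R}_P(\tilde w,\pi_\chi)}\;:\;i_{P^-}^G(\widehat{\pi}_\chi)\longrightarrow i_{P^-}^G(\tilde w\widehat{\pi}_\chi), \]
while $B(\chi):={\bf R}_{P^-}(\tilde w,\widehat{\pi}_\chi)$ is an operator between the very same spaces. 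Both $\chi\mapsto A(\chi)$ and $\chi\mapsto B(\chi)$ are rational families: $B$ by the standard properties of the normalized operators $R_{Q'|Q}$ recalled in Section \ref{subsub:A22}, and $A$ by the Proposition on preservation of rationality by the Aubert involution, together with the fact that the construction of $\tb{D}$ commutes with the specialization maps $\tx{sp}_\chi$ (both being built from $\otimes_B(-)$ applied to exact functors and from cokernels, hence compatible with base change). Because $\pi$ and $\widehat{\pi}$ are unitary — this is the point where Hypothesis (3) enters — the operators $R_{w^{-1}P|P}(\pi_\chi)$ and $R_{w^{-1}P^-|P^-}(\widehat{\pi}_\chi)$ are regular, indeed isomorphisms, at $\chi=1$; hence ${\bf R}_P(\tilde w,\pi)$ and $B(1)={\bf R}_{P^-}(\tilde w,\widehat{\pi})$ are isomorphisms, and since the Aubert involution is an exact involutive functor it carries isomorphisms to isomorphisms, so $A(1)=\widehat{{\bf R}_P(\tilde w,\pi)}$ is an isomorphism as well.

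Next I would treat $\chi$ in general position. There, $R_{w^{-1}P|P}(\pi_\chi)$ and $R_{w^{-1}P^-|P^-}(\widehat{\pi}_\chi)$ are isomorphisms (a rational family of normalized intertwining operators is generically invertible, its inverse being the normalized operator in the opposite direction), hence so are $A(\chi)$ and $B(\chi)$; in particular $i_{P^-}^G(\widehat{\pi}_\chi)$ is irreducible for such $\chi$, so $\End_G\bigl(i_{P^-}^G(\widehat{\pi}_\chi)\bigr)=\C$ and $A(\chi)=c(\chi)B(\chi)$ for a unique scalar $c(\chi)\in\C^\times$. By construction $c$ is a rational function on $\Psi(M)$, and it is exactly the value $c(1)$ that needs to be shown to lie in $\C^\times$ to obtain the Proposition.

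To see that $c$ is regular at $\chi=1$, choose a section $\phi_B$ of the $B$-family underlying $i_{P^-}^G(\widehat{\pi}_\chi)$ and a linear functional $\lambda$ on the target such that $\langle\lambda,B(1)\phi_1\rangle\neq0$, which is possible since $B(1)\neq0$. Then near $\chi=1$ one has $c(\chi)=\langle\lambda,A(\chi)\phi_\chi\rangle/\langle\lambda,B(\chi)\phi_\chi\rangle$, a ratio of functions regular at $\chi=1$ (as $A$ and $B$ are) whose denominator does not vanish at $\chi=1$; hence $c$ is regular at $\chi=1$. The identity $A(\chi)=c(\chi)B(\chi)$, valid for generic $\chi$ between three families all regular at $\chi=1$, then extends by continuity to $\chi=1$, giving $A(1)=c(1)B(1)$. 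Since $A(1)$ is an isomorphism, in particular non-zero, we conclude $c(1)\neq0$, and therefore $\widehat{{\bf R}_P(\tilde w,\pi)}=A(1)$ and $B(1)={\bf R}_{P^-}(\tilde w,\widehat{\pi})$ differ by the non-zero scalar $c(1)$, which is precisely the assertion $\widehat{{\bf R}_P(\tilde w,\pi)}={\bf R}_{P^-}(\tilde w,\widehat{\pi})\mod\C^\times$.

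I expect the main obstacle to be the last step: controlling the proportionality scalar $c$ at $\chi=1$, where the naive one-dimensionality of the relevant $\Hom$-space is lost and one must instead argue via the rational $B$-family structure and the unitarity hypotheses (which are exactly what guarantee that both operators are defined and invertible at $\chi=1$). Everything else is a formal consequence of the already established compatibilities of the Aubert involution with parabolic induction, conjugation, unramified twisting, functoriality, and rationality.
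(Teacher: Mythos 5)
Your proof is correct and follows essentially the same approach as the paper: both establish that $\widehat{{\bf R}_P(\tilde w,\pi_\chi)}$ and ${\bf R}_{P^-}(\tilde w,\widehat{\pi}_\chi)$ are rational families, both invoke generic irreducibility of $i_{P^-}^G(\widehat{\pi}_\chi)$ to get a rational proportionality scalar, and both use the regularity and bijectivity of the operators at $\chi=1$ (coming from the unitarity hypothesis) to conclude that the scalar specializes to a nonzero constant. The only cosmetic difference is that the paper composes $\widehat{{\bf R}_P(\tilde w,\pi_\chi)}$ with ${\bf R}_{P^-}(\tilde w^{-1},\tilde w\widehat{\pi}_\chi)$ to get a self-endomorphism and applies Schur's lemma, whereas you compare the two operators directly via one-dimensionality of the Hom-space and spell out, via the section-and-functional argument, the regularity of the ratio $c(\chi)$ at $\chi=1$, a step the paper treats as immediate.
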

\begin{proof}
The intertwining operator
\[
\widehat{{\bf R}_P(\tilde{w},\pi_\chi)} :i_{P^-}^G(\widehat{\pi}_\chi) \longrightarrow :i_{P^-}^G(\tilde{w}\widehat{\pi}_\chi)
\]
is by construction rational and bijective whenever ${\bf R}_P(\tilde{w},\pi_\chi)$ is (in particular it is well defined at $\chi=1$).

If we compose it with ${\bf R}_{P^-}(\tilde{w}^{-1},\widehat{\tilde{w}\pi}_\chi)$, which equals ${\bf R}_{P^-}(\tilde{w}^{-1},\tilde{w}\widehat{\pi}_\chi)$ by the conjugation property, we get a self-intertwining operator of $i_{P^-}^G(\widehat{\pi}_\chi)$. For regular $\sigma$ this latter representation is irreducible so we deduce
\[ {\bf R}_{P^-}(\tilde{w}^{-1},\tilde{w}\widehat{\pi}_\chi) \circ \widehat{{\bf R}_P(\tilde{w},\pi_\chi)}= a(\chi)\cdot \tx{Id}_{i_{P^-}^G(\widehat{\pi}_\chi)}
\]
where $a(\chi)$ is a rational function on $\Psi(M)$. By hypothesis and construction both intertwining operators $ {\bf R}_{P^-}(\tilde{w}^{-1},\tilde{w}\widehat{\pi}_\chi)$ and $\widehat{{\bf R}_P(\tilde{w},\pi_\chi)}$ are well defined at $\chi=1$ and  bijective. Hence
\[ {\bf R}_{P^-}(\tilde{w}^{-1},\tilde{w}\widehat{\pi}) \circ \widehat{{\bf R}_P(\tilde{w},\pi)} = a \cdot \tx{Id}_{i_{P^-}^G(\widehat{\pi})}
\]
with $a \in \C^\times$. The intertwining operator $\widehat{{\bf R}_P(\tilde{w},\pi)}$ is the inverse of ${\bf R}_{P^-}(\tilde{w}^{-1},\tilde{w}\widehat{\pi})  $ up to a constant so it must be equal to ${\bf R}_{P^-}(\tilde{w},\widehat{\pi})$ up to a constant.
\end{proof}

\begin{coro}\label{ap:co221}
Let $M$ be a Levi subgroup of $G$ and let $\pi\in \Pi_{\tx{unit}}(M)$. Suppose $\widehat{\pi}$ is unitary.  Let $P$ be a parabolic subgroup with Levi component $M$ and let $w \in W_\pi$. Then  $R_{P^-}( \tilde{w},\widehat{\pi})=\widehat{R_P( \tilde{w}, \pi)} \mod \C^\times$.
\end{coro}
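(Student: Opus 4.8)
The plan is to reduce the statement directly to Proposition \ref{an:propban} by factoring the normalized self-intertwining operator and tracking how each factor behaves under the Aubert involution. Recall from \ref{subsub:A22} that, having fixed a representative $\tilde w$ of $w$ and an isomorphism $\pi(\tilde w):\tilde w\pi \to \pi$, one has the factorization
\[ R_P(\tilde w,\pi) = i_P^G(\pi(\tilde w)) \circ {\bf R}_P(\tilde w,\pi), \]
where ${\bf R}_P(\tilde w,\pi) = l(w)\circ R_{w^{-1}P|P}(\pi) : i_P^G(\pi) \to i_P^G(\tilde w\pi)$ is precisely the operator analyzed in Proposition \ref{an:propban}. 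Since $w\in W_\pi$ the representations $\pi$ and $\tilde w\pi$ are isomorphic, hence lie in the same Bernstein block and share the same central character; so the Aubert involution is an exact covariant functor on the relevant category $\tx{Alg}^{\tx{fl}}_\chi(\Omega)$ and, through \eqref{ap:222}, is compatible with composition of morphisms. Therefore $\widehat{R_P(\tilde w,\pi)} = \widehat{i_P^G(\pi(\tilde w))} \circ \widehat{{\bf R}_P(\tilde w,\pi)}$, and all these operators are defined at $\chi=1$ because $\pi$ is unitary and $\widehat\pi$ is assumed unitary.

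First I would invoke Proposition \ref{an:propban} to rewrite the second factor as $\widehat{{\bf R}_P(\tilde w,\pi)} = {\bf R}_{P^-}(\tilde w,\widehat\pi)$ modulo $\C^\times$. Next I would treat the first factor using two structural properties of the Aubert involution recalled in \ref{ap:22}: the induction identity \eqref{ap:ber}, read as a natural isomorphism of functors $\widehat{i_P^G(-)} \cong i_{P^-}^G(\widehat{-})$, and the conjugation identity $\widehat{\tilde w\pi} = \tilde w\widehat\pi$. Naturality of \eqref{ap:ber} identifies $\widehat{i_P^G(\pi(\tilde w))}$ with $i_{P^-}^G(\widehat{\pi(\tilde w)})$, and by the conjugation identity $\widehat{\pi(\tilde w)}$ is an isomorphism $\tilde w\widehat\pi \to \widehat\pi$, hence an admissible choice for the isomorphism $\widehat\pi(\tilde w)$ entering the definition of $R_{P^-}(\tilde w,\widehat\pi)$. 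Combining the two pieces gives
\[ \widehat{R_P(\tilde w,\pi)} = i_{P^-}^G(\widehat\pi(\tilde w)) \circ {\bf R}_{P^-}(\tilde w,\widehat\pi) = R_{P^-}(\tilde w,\widehat\pi) \bmod \C^\times, \]
which is the assertion (the independence of this operator modulo $\C^\times$ from the choices of $\tilde w$ and of $\pi(\tilde w)$ is already built into the formulation).

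The only point needing care — and it is bookkeeping rather than a real obstacle, since the analytic content has already been extracted in Proposition \ref{an:propban} — is checking that the various canonical identifications ($\widehat{i_P^G(-)}\cong i_{P^-}^G(\widehat{-})$, $\widehat{\tilde w\pi}\cong \tilde w\widehat\pi$, and the functoriality map \eqref{ap:222}) are mutually compatible, so that the displayed factorization of $R_P(\tilde w,\pi)$ is transformed coherently into the displayed factorization of $R_{P^-}(\tilde w,\widehat\pi)$.
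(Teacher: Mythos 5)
Your proof is correct and follows essentially the same route as the paper's: factor $R_P(\tilde w,\pi)$ into ${\bf R}_P(\tilde w,\pi)$ followed by $i_P^G(\pi(\tilde w))$, apply Proposition \ref{an:propban} to the first factor, and handle the second factor via \eqref{ap:ber} and the observation (Schur's lemma) that $\widehat{\pi(\tilde w)}$ is a valid choice for $\widehat\pi(\tilde w)$ up to a scalar. The paper phrases the last step as the commutativity up to isomorphism of the square \eqref{ap:square}, which is the same content as your "admissible choice" remark.
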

\begin{proof}
Recall that the normalized self-intertwining operator $R_P( \tilde{w},\pi)$ is defined by composition of the following intertwining operators:
\[i^G_P(\pi) \overset{{\bf R}_P(\tilde{w},\pi)}{\longrightarrow} i^G_P(\tilde{w}\pi) \overset{\pi(\tilde{w})}{\longrightarrow}i^G_P(\pi). \]
By functoriality of \eqref{ap:222} and Proposition \ref{an:propban}, it is enough to prove that we have commutative diagram
\begin{equation}\label{ap:square} \xymatrix{i^G_P(\tilde{w}\pi)  \ar[d]^{\widehat{\,}}  \ar[r]^{\pi(\tilde{w})} & i^G_P(\pi)\ar[d]^{\widehat{\,}}
\\  i^G_{P^-}(\tilde{w}\widehat{\pi})\ar[r]^{\widehat{\pi}(\tilde{w})} &   i^G_{P^-}(\widehat{\pi})}
\end{equation}
up to isomorphism.
 But
\[\widehat{\pi(\tilde{w})}
\]
is an isomorphism between  $\tilde{w}\widehat{\pi}$ and $\widehat{\pi}$. By Schur's Lemma it is equal to $\widehat{\pi}(\tilde{w})$ up to a nonzero constant. We deduce the commutativity of the diagram by \eqref{ap:ber}.
\end{proof}

\begin{rem}
In the article we will be in the following setting.
Let $M$ be a Levi subgroup of $G$ and let $\pi\in \Pi_{\tx{unit}}(M)$. Suppose we have a group $N_\pi$ endowed with a surjective morphism $p: N_\pi \to W_\pi$. Suppose moreover that $\widehat{\pi}$ is unitary and that we have normalizations $R_{P}( u,\pi)$, $u \in N_\pi$, of the intertwining operators such that:
\begin{enumerate}
\item $R_{P}( u,\pi)=R_{P}( p(u),\pi) \mod \C^\times,$ for every $\pi\in \Pi_{\tx{unit}}(M)$ and every $u \in N_\pi$.
\item The intertwining operators $R_{P}( u,\pi)$  are mutiplicative on $N_\pi$ \textit{i.e.} they are such that for any $u,u' \in N_\pi$, $R_P( u,\pi) R_P( u',\pi) =R_P( uu',\pi)$.
\end{enumerate}
Then there exists a multiplicative character $\epsilon_\pi:N_\pi \to \C^\times$ such that for every $u \in N_\pi$ and every $f$ in the Hecke algebra of $G(F)$ we have
\[f_G(\hat{\pi},u)=\epsilon_\pi(u) \tx{tr}(\widehat{R_P( u,\pi)}\widehat{i_P^G(\pi)},f) .\]

\end{rem}

We finish this appendix proving the corollary announced in the introduction.
\begin{coro}\label{ap:co222}
Let $M$ be any Levi subgroup of $G$ and let $\pi\in \Pi_{2}(M)$. Suppose $\widehat{\pi}$ is unitary. Let $P$ be a parabolic subgroup with Levi component $M$. Denote by $R(\pi)$  the Knapp-Stein $R$-group for $\pi$. Then the set of normalized self-intertwining operators
\[\{R_P(r,\widehat{\pi}), r \in R(\pi)\}
\]
is a basis for the commuting algebra $\End_G\left(i_M^G(\widehat{\pi})\right)$.
\end{coro}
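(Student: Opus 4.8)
The plan is to transport the Knapp--Stein basis of $\End_G(i_M^G(\pi))$ across the Aubert involution, using the functoriality of $\pi\mapsto\widehat\pi$ established in \ref{ap:22} together with Corollary \ref{ap:co221}. Throughout, recall that $\pi\in\Pi_2(M)\subset\Pi_{\tx{unit}}(M)$ and that $\widehat\pi$ is unitary by hypothesis, so that every normalized intertwining operator occurring below is defined (and invertible) at $\chi=1$.

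First I would check that $W_\pi=W_{\widehat\pi}$. This is immediate from the conjugation property $\widehat{w\pi}\simeq w\widehat\pi$ recalled in \ref{ap:22}, together with the fact that $\rho\mapsto\widehat\rho$ is an involution, hence a bijection, on irreducible representations of $M(F)$. In particular $R(\pi)\subset W_\pi=W_{\widehat\pi}$, so that the self-intertwining operators $R_P(\tilde r,\widehat\pi)$ for $r\in R(\pi)$ are well defined (modulo $\C^\times$) for any $P\in\mathcal{P}(M)$.

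Next, fix $P\in\mathcal{P}(M)$. By the Knapp--Stein theorem (\ref{subsub:A22}, property \eqref{eq:Ban2}), together with the conjugation relation $R_Q(\tilde w,\pi)=R_{P|Q}(\pi)^{-1}R_P(\tilde w,\pi)R_{P|Q}(\pi)$ which canonically identifies all the algebras $\End_G(i_Q^G(\pi))$, $Q\in\mathcal{P}(M)$, the family $\{R_P(\tilde r,\pi):r\in R(\pi)\}$ is a basis of $\End_G(i_P^G(\pi))$. Since $i_P^G(\pi)$ has finite length, a single central character, and lies in one Bernstein block, the functorial isomorphism \eqref{ap:222} gives a linear bijection $\End_G(i_P^G(\pi))\to\End_G(\widehat{i_P^G(\pi)})$, and $\widehat{i_P^G(\pi)}=i_{P^-}^G(\widehat\pi)$ by \eqref{ap:ber}. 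Hence the images of the $R_P(\tilde r,\pi)$ form a basis of $\End_G(i_{P^-}^G(\widehat\pi))$; and by Corollary \ref{ap:co221} these images are $\widehat{R_P(\tilde r,\pi)}=R_{P^-}(\tilde r,\widehat\pi)\bmod\C^\times$ (using $r\in R(\pi)\subset W_\pi$ and that $\widehat\pi$ is unitary). Since scaling a basis by nonzero constants gives again a basis, $\{R_{P^-}(\tilde r,\widehat\pi):r\in R(\pi)\}$ is a basis of $\End_G(i_{P^-}^G(\widehat\pi))$. Finally, conjugation by the normalized operator $R_{P|P^-}(\widehat\pi):i_{P^-}^G(\widehat\pi)\to i_P^G(\widehat\pi)$ --- invertible because $\widehat\pi$ is unitary --- is an algebra isomorphism onto $\End_G(i_P^G(\widehat\pi))$ which, by the conjugation relation of \ref{subsub:A22} applied to $\widehat\pi$, carries $R_{P^-}(\tilde r,\widehat\pi)$ to $R_P(\tilde r,\widehat\pi)$ up to a scalar. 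Thus $\{R_P(\tilde r,\widehat\pi):r\in R(\pi)\}$ is a basis of $\End_G(i_P^G(\widehat\pi))$, which is $\End_G(i_M^G(\widehat\pi))$ under the canonical identification of these algebras.

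The argument is formal once Corollary \ref{ap:co221} and the functoriality of the Aubert involution are available: the real content was supplied by Proposition \ref{an:propban}, and there is no serious obstacle remaining. The only point requiring care is that $\widehat\pi$ need not be square-integrable, so one cannot run the Knapp--Stein construction for $\widehat\pi$ itself: the basis of $\End_G(i_M^G(\widehat\pi))$ is indexed by the $R$-group $R(\pi)$ of $\pi$ and is obtained by pushing the basis for $\pi$ through $\widehat{\,}$, rather than by an intrinsic construction on the $\widehat\pi$ side. One should also keep in mind that the normalized self-intertwining operators are only defined modulo $\C^\times$, so all statements about bases are understood up to these harmless scalar ambiguities, which the functorial isomorphism \eqref{ap:222} respects.
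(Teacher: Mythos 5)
Your proposal is correct and uses the same essential strategy as the paper's proof: transport the Knapp--Stein basis through the functorial isomorphism \eqref{ap:222}, identify $\widehat{i_Q^G(\pi)}$ with $i_{Q^-}^G(\widehat\pi)$ via \eqref{ap:ber}, and apply Corollary \ref{ap:co221} to recognize the transported operators as normalized self-intertwining operators for $\widehat\pi$. The only cosmetic difference is that the paper starts directly from the basis of $\End_G(i_{P^-}^G(\pi))$, so that the Aubert functor lands immediately in $\End_G(i_P^G(\widehat\pi))$ and no final conjugation by $R_{P|P^-}(\widehat\pi)$ is needed; your extra conjugation step is harmless and the opening observation that $W_\pi=W_{\widehat\pi}$ is a correct point the paper leaves implicit.
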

\begin{proof}
The set $\{ R_{P^-}( \tilde{w},\pi), r \in R_\pi \}$ is a basis for $\End_G\left(i_{P^-}^G(\pi)\right)$. Thus by \eqref{ap:222}, the set $\{ \widehat{R_{P^-}( \tilde{w},\pi)}, r \in R_\pi \}$ is a basis for $\End_G\left(i_P^G(\widehat{\pi})\right)$, so by Corollary \ref{ap:co221} we deduce that the set $\{ R_P(r,\widehat{\pi}), r \in R(\pi), r \in R_\pi \}$ is a basis for $\End_G\left(i_P^G(\widehat{\pi})\right)$.

\end{proof}

\bibliographystyle{amsalpha}
\bibliography{unitary}

\bigskip
  \footnotesize{

  Tasho Kaletha, \textsc{Department of Mathematics, Harvard University,
   Cambridge, MA 02138, USA}\par\nopagebreak
  \textit{E-mail address}: \texttt{tasho.kaletha@gmail.com}

  \medskip

  Alberto Minguez, \textsc{Institut de Math\'ematiques de Jussieu - Paris Rive Gauche, Universit\'e Paris 6,
        4 place Jussieu, 75005, Paris, France}\par\nopagebreak
  \textit{E-mail address}: \texttt{alberto.minguez@imj-prg.fr}

  \medskip

  Sug Woo Shin, \textsc{Department of Mathematics, University of California, Berkeley,
  Berkeley, CA 94720-3840, USA / Korea Institute for Advanced Study, Dongdaemun-gu, Seoul 130-722, Republic of Korea}\par\nopagebreak
  \textit{E-mail address}: \texttt{prmideal@gmail.com}

\medskip

  Paul-James White, \textsc{Mathematical Institute, University of Oxford, %
 Oxford, OX2 6GG, UK}\par\nopagebreak
  \textit{E-mail address}: \texttt{pauljames.white@gmail.com}

}

\end{document}